\let\OLDthebibliography\thebibliography
\renewcommand\thebibliography[1]{
	\OLDthebibliography{#1}
	\setlength{\parskip}{0pt}
	\setlength{\itemsep}{4pt}
}
\newcommand{\R}{\mathbb{R}}
\newcommand{\C}{\mathbb{C}}
\newcommand{\N}{\mathbb{N}}
\newcommand{\Z}{\mathbb{Z}}
\renewcommand{\Re}{\mathop{\mathrm{Re}}}
\renewcommand{\Im}{\mathop{\mathrm{Im}}}
\renewcommand{\bar}{\overline}
\renewcommand{\hat}{\widehat}
\numberwithin{equation}{section}
\newtheorem{thm}{Theorem}[section]
\newtheorem{cor}[thm]{Corollary}
\newtheorem{lem}[thm]{Lemma}
\newtheorem{prop}[thm]{Proposition}
\theoremstyle{remark}
\newtheorem{rem}{Remark}[section]
\newcommand{\Del}[1]{}
\def\sign{\mathrm{sign}}
\def\f{\frac}
\newcommand{\boxalign}[2][.87\textwidth]{
	\par\noindent\tikzstyle{mybox} = [draw=black,inner sep=6pt]
	\begin{center}\begin{tikzpicture}
			\node [mybox] (box){%
				\begin{minipage}{#1}{\vspace{-2mm}#2}\end{minipage}
			};
	\end{tikzpicture}\end{center}
}
\begin{document}
	
	\title[Blow up for the Zakharov system]{Finite time blow up for the energy critical\\ Zakharov system II: exact solutions}
	
	\author[J. Krieger]{Joachim Krieger}
	\author[T. Schmid]{Tobias Schmid}
	\address{EPFL SB MATH PDE,
		B\^{a}timent MA,
		Station 8,
		CH-1015 Lausanne}
	\email{joachim.krieger@epfl.ch}
	\email{tobias.schmid@epfl.ch}

	\subjclass{35L05, 35B40}
	
	\keywords{critical Zakharov system, blowup}
	\begin{abstract} \hypersetup{hidelinks}
		Based on the companion paper \cite{KrSchm}, we show that the 4D energy critical Zakharov system admits finite time type II blow up solutions, similar to the ones constructed in \cite{Schm}. The main new difficulty this work deals with is the appearance of a term in the linearization around the approximate solution, which is non-local with respect to both space and time. In particular this cannot be handled by straightforward adaptation of the methods developed in \cite{KST2},  \cite{KST1},  \cite{KST}. The key new ingredients we use are a type of approximate modulation theory, taking advantage of frequency localisations, and the exploitation of an inhomogeneous wave equation with both a non-local,  as well as a local potential term. These terms arise for the main non-perturbative component of the ion density $n$ and can be solved via inversion of a certain Fredholm type operator, as well as by using distorted Fourier methods. Our result relies on a number of numerical non-degeneracy assumptions. 
	\end{abstract}
	
	\maketitle
	\begingroup
	\hypersetup{colorlinks=false,
		linkbordercolor=gray!19}
	\tableofcontents
	\endgroup
	\hypersetup{hidelinks}

	\section{Introduction}
	In the present article we consider the Zakharov system, i.e. we are interested in solutions of the system
		\begin{equation}\label{Zakharov} 
			\begin{cases}
				\;\; i\partial_t \psi+ \triangle \psi= -n\psi &\text{in}~  (0, t_0] \times \R^d \\[4pt]
				\;\; \square n = \triangle(|\psi|^2) &\text{in}~ (0, t_0] \times \R^d ,
			\end{cases}
		\end{equation}
		where $ \square  = -\partial_t^2+ \triangle $ is the wave operator.  The system \eqref{Zakharov} is Hamiltonian with (formally) conserved energy 
		\begin{align*}
			E_Z(\psi, n, \partial_t n)_{|_{t}} = \f12 \int_{\R^d} |\nabla \psi(t)|^2 + \f12 ||\nabla |^{-1} \partial_tn(t)|^2 + \f12 |n(t)|^2  + n(t) |\psi(t)|^2~dx,
		\end{align*}
		and mass $ \|\psi\|_{L^2}^2 $ along solutions $(\psi(t), n(t))$. This model was first introduced in \cite{zakharov} with $ d =3$ dimensions in order to describe rapid oscillations in the electric field of a weakly magnetized plasma, i.e. to explain for instance the phenomena of Langmuir waves. In particular the function $ n : \R^{d+1} \to \R$ in \eqref{Zakharov} models the ion density of the plasma and  $ \psi : \R^{d+1}\to \C $ is a complex envelope for the electric field.\\[3pt] 
		
		For results concerning local well-posedness of this system, as well as 'large' global solutions, see for instance \cite{Bej-Herr-Holmer-Tataru},  \cite{Borgain-Coll}, \cite{Candy-Herr-Nakanishi-Global},  \cite{Candy-Herr-Nakanishi-sharp}, \cite{Ginibre-Tsutsumi-Velo}, \cite{Guo-Nakanishi1}, \cite{Guo-Nakanishi2(Threshold)}, \cite{Kenig-Pon-veg-limit}, to name only few. We refer to our companion paper \cite{KrSchm} for more details and a discussion of some of the known results. 
		Although finite time blow up solutions are expected for the Zakharov system in various dimensions based on numerical evidence, see e. g. the discussion in \cite{Merle}, to the best of our knowledge, the only rigorously known finite time blow up solutions for \eqref{Zakharov} were constructed in $d = 2$ dimensions in the pioneering work~\cite{GM1} (see also \cite{GM2}). The strategy there is based on an ansatz analogous to the explicit pseudo-conformal blow up for the $L^2$-critical focusing nonlinear Schr\"odinger equation. 
	\\	
		
	In this paper we consider the Zakharov system on $\mathbb{R}^{4+1}$, and more specifically, we restrict to the class of radial solutions. 
	\begin{equation}\label{eq:ZakharovMain}\begin{split}
			&i\partial_t\psi + \triangle \psi = -n\psi\\
			&(-\partial_{tt} + \triangle)n = \triangle\big(|\psi\big|^2\big) 
	\end{split}\end{equation}
	Our main theorem is the following 
	\begin{thm}\label{thm:main} 
	Let $W(x) = \big(1+\frac{|x|^2}{8}\big)^{-1}$ and $\nu>1$ be a sufficiently large, irrational number. Also denote $W_{\lambda}(x) = \lambda\cdot W(\lambda x)$ and 
	further $\lambda(t) = t^{-\frac12-\nu}$. Then there is $t_0 = t_0(\nu)>0$ such that the system 
		\eqref{eq:ZakharovMain} admits a finite time radial blow solution $(\psi, n)$ on $(0, t_0]\times \R^4$ of the form 
		\begin{align*}
			&\psi(t, x) = W_{\lambda(t)}(x) +\tilde{\psi}(t, x),\\
			&n(t, x) = W_{\lambda(t)}^2(x) + \tilde{n}(t, x).
		\end{align*}
		We have $\tilde{\psi}(t, \cdot)\in H^2_{\mathbb{R}^4, loc}\cap L^\infty_{\mathbb{R}^4},\,\tilde{n}(t, \cdot)\in H^1_{\mathbb{R}^4,loc}\cap L^\infty_{\mathbb{R}^4}$ for each $t\in (0, t_0]$ and the asymptotic vanishing property 
		\begin{align*}
			&\lim_{t\rightarrow 0}\big\|\tilde{\psi}(t,\cdot)\big\|_{H^1_{\mathbb{R}^4}(|x|\leq t^{\frac12})} = 0,\\
			&\lim_{t\rightarrow 0 }\big\|\tilde{n}(t, \cdot)\big\|_{L^2_{\mathbb{R}^4}(|x|\leq t^{\frac12})} = 0. 
		\end{align*}  
		Moreover, the limits 
		\begin{align*}
			\tilde{\psi}_0: = \lim_{t\rightarrow 0}\tilde{\psi}(t,\cdot),\,\tilde{n}_0: =  \lim_{t\rightarrow 0}\tilde{n}(t,\cdot)
		\end{align*}
		exists in the $\dot{H}^1_{\mathbb{R}^4}$ and the $L^2_{\mathbb{R}^4}$-topology, respectively. 
	\end{thm}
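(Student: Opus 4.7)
The approach follows the blow-up construction paradigm of Krieger--Schlag--Tataru for critical wave equations, adapted to the Schr\"odinger--wave coupling of the Zakharov system. The starting point is that $W$ is the Aubin--Talenti ground state on $\mathbb{R}^4$ solving $-\Delta W = W^3$, so inserting the leading profile $(W_{\lambda(t)}, W_{\lambda(t)}^2)$ into \eqref{eq:ZakharovMain} produces residuals only from the time derivatives, namely $i\partial_t W_{\lambda(t)}$ in the Schr\"odinger equation and $-\partial_{tt}(W_{\lambda(t)}^2)$ in the wave equation; both are controlled by $\dot\lambda/\lambda$ and its derivatives. The rate $\lambda(t)=t^{-\frac12-\nu}$ is chosen so that these residuals decay in self-similar weighted norms on $|x|\lesssim \lambda^{-1}$ but are not integrable in the scaling-critical norms, which forces an iterative correction scheme. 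I would first invoke the companion paper \cite{KrSchm} to produce a high-order approximate solution $(\psi_{\mathrm{app}},n_{\mathrm{app}})$ whose residual vanishes to sufficiently high order in $t$ in suitable weighted norms; irrationality of $\nu$ is exactly what excludes small-denominator obstructions during this iteration.

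Next, writing $\psi=\psi_{\mathrm{app}}+\tilde\psi$ and $n=n_{\mathrm{app}}+\tilde n$, the error system decouples at leading order into an inhomogeneous Schr\"odinger equation with slowly varying potential $n_{\mathrm{app}}$ acting on $\tilde\psi$, and an inhomogeneous wave equation for $\tilde n$ whose source contains $2\,\Delta\,\Re(\psi_{\mathrm{app}}\overline{\tilde\psi})$, coupled back to $\tilde\psi$ through the cross term $\tilde n\cdot \psi_{\mathrm{app}}$. The novel feature emphasized in the abstract is that, after using the wave Duhamel formula to express $\tilde n$ in terms of $\tilde\psi$, the Schr\"odinger equation acquires an effective potential that is non-local in both space and time. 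I would handle this by an approximate modulation theory: decompose $\tilde\psi$ using Littlewood--Paley projections adapted to the scale $\lambda(t)$, impose an approximate orthogonality against the scaling mode $\Lambda W_{\lambda(t)}$ (which yields a modulation equation governing $\lambda$, consistent with the prescribed $t^{-\frac12-\nu}$), and split $\tilde n$ into a bulk part, produced by inversion of a Fredholm-type operator whose invertibility is among the numerical non-degeneracy assumptions, and a radiation part propagated by the free wave evolution.

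Once the non-local coupling has been algebraically inverted, the Schr\"odinger problem for $\tilde\psi$ is analyzed by the distorted Fourier transform of the radial scattering operator $-\Delta - 3W^2$, in the spirit of \cite{KST,KST1,KST2}. Rescaling this transform by $\lambda(t)$ and propagating along the inhomogeneous evolution would yield parametrix bounds of the form $\|\tilde\psi(t)\|_{H^1_{loc}}\lesssim t^\delta$ and $\|\tilde n(t)\|_{L^2_{loc}}\lesssim t^{\delta'}$ for some small $\delta,\delta'>0$. Closing the nonlinear problem is then a contraction argument in a norm encoding both this $t^\delta$ decay and the $H^2 \times H^1$ local regularity, and the existence of the limits $\tilde\psi_0\in \dot H^1$, $\tilde n_0 \in L^2$ follows from time integrability of $\partial_t\tilde\psi$, $\partial_t\tilde n$ in the appropriate topologies. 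The principal obstacle is precisely the non-local potential: unlike the purely local terms treated in \cite{KST,KST1,KST2}, its inversion requires both the Fredholm non-degeneracy hypothesis on the bulk operator and a careful commutation of the non-local kernel with the distorted Fourier variables, which is where the numerical assumptions quoted in the theorem enter the argument.
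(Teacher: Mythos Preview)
Your outline captures the broad strokes—approximate solution from \cite{KrSchm}, modulation, distorted Fourier analysis, Fredholm inversion for a non-local piece—but several of the concrete choices you make would not close, and you miss the structural mechanisms that actually drive the paper's proof.

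First, the linearized Schr\"odinger operator is $\mathcal{L} = -\Delta - W^2$, not $-\Delta - 3W^2$. The latter (called $\tilde{\mathcal{L}}$ in the paper) appears only as a derived operator in the low wave-temporal-frequency regime after the non-local term has been simplified; the distorted Fourier transform used throughout is that of $\mathcal{L}$, which has a \emph{resonance} at zero (the function $W$ itself). This resonance is precisely what forces the modulation step, and your description of modulation as ``orthogonality against $\Lambda W_{\lambda(t)}$ yielding an equation for $\lambda$'' is off in two ways. The primary rate $\lambda(t)=t^{-\frac12-\nu}$ is fixed from the outset; the modulation parameters $\tilde\lambda,\tilde\alpha$ are secondary perturbations around $1$ and $0$. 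More importantly, because the Zakharov system has \emph{no} scaling invariance, modulating in $\tilde\lambda$ produces errors in the wave equation whose feedback into the Schr\"odinger equation is itself non-local in time—so $\tilde\lambda$ ends up solving a differential-integral equation (the paper's (8.4)), not an ODE coming from an orthogonality condition. Solving that equation requires temporal Fourier methods and microlocal cutoffs in the wave-time variable.

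Second, your handling of the non-local term is too vague to succeed. The paper's key innovation is a double frequency decomposition: first restrict the distorted spatial frequency $\xi$ to $[\epsilon_1,\epsilon_1^{-1}]$ (the extremes being perturbative by direct estimates), then split the remaining principal part $z_{nres}^{prin}$ by \emph{wave-temporal} frequency $\hat{\tilde\tau}$ into three regimes $<\gamma_1$, $[\gamma_1,\gamma_1^{-1}]$, $>\gamma_1^{-1}$. The outer two regimes reduce to elliptic problems because $\Box^{-1}$ becomes essentially $\Delta^{-1}$ or $\partial_{\tilde\tau}^{-2}$. In the intermediate regime, the paper \emph{changes the primary unknown} from $z_{nres}^{prin}$ to $n_{prin}=\lambda^{-2}\Box^{-1}(\lambda^2\Delta\,\Re(W\overline{z_{nres}^{prin}}))$, deriving for it a wave equation $\Box\tilde n_{prin}+2W^2\tilde n_{prin}-K\tilde n_{prin}=\text{error}$ with both a local potential $2W^2$ and a non-local integral operator $K$. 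The Fredholm inversion you allude to is then performed not on a generic bulk operator but specifically on $I-(\hat\tau^2+\Delta+2W^2)_{good}^{-1}\circ K_{main}$ after applying the temporal Fourier transform, with surjectivity proved via a Carleman estimate. None of this structure—the two distinct time scales $\tau$ and $\tilde\tau$, the change of variable to $n_{prin}$, the specific wave operator $\Box+2W^2-K$—appears in your sketch, and without it there is no clear route from ``non-local potential'' to ``Fredholm-type operator''.
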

	\begin{rem}\label{rem:thm:main} The regularity of the functions $\tilde{\psi}(t,\cdot), \tilde{n}(t,\cdot)$ is in fact better, and of class $H^{2\nu-1-}_{\mathbb{R}^4,loc}, H^{2\nu-2-}_{\mathbb{R}^4,loc} $
		for any $t\in (0, t_0]$. The restriction to irrational $\nu >1$ is only due to the corresponding approximation result in \cite{KrSchm}.
	\end{rem}
	
	\section{Outline of the strategy and the principal difficulties}
	
	The proof of the theorem will rely on the existence of an approximate finite time blow up solution $(\psi_*, n_*)$ constructed in the companion paper \cite{KrSchm}. The latter followed closely the method developed in \cite{Perelman1}. This approximate solution admits a description like $(\psi, n)$ in the statement of the theorem, and a more precise algebraic description relevant details of which are recalled in subsection~\ref{subsec:approxsolnbasics}. Our strategy for constructing $(\psi, n)$ will be to add correction terms $\tilde{z}, y$ to $\psi_*, n_*$, respectively, and which will be chosen to vanish at the blow up time $t = 0$; thus the {\it{radiation part}} $\tilde{\psi}_0, \tilde{n}_0$ in the statement of the theorem will come exclusively from the approximate solution. Passing to the {\it{Schr\"odinger time}}
	\[
	\tau: = \int_t^\infty \lambda^2(s)\,ds
	\]
	and the re-scaled variable $R = \lambda(t)\cdot r$, $r = |x|$, and assuming a general leading part $\lambda(t)\cdot e^{i\alpha(t)}W(\lambda(t)r)$ for $\psi_*$, where $\alpha(t) = \alpha_0\log t$ for some constant $\alpha_0$ (later on in the construction, we shall impose $\alpha_0 = 0$), we arrive upon using the notation $\tilde{z} = e^{i\alpha(t)}\lambda(t)z$, at an equation for $z$ whose leading order linear part, see \eqref{eq:zeqn2}, is given by  
	\begin{equation}\label{eq:mainlinearpart}
		-i(z_{\tau}+ \frac{\lambda_{\tau}}{\lambda}R\partial_R z)-  c\alpha_0\tau^{-1}z - i\frac{\lambda_{\tau}}{\lambda}z+ \big(\triangle_R + W^2(R)\big)z + \lambda^{-2}y_z\cdot W,
	\end{equation}
	where $y_z = \Box^{-1}\big(2\lambda^2\triangle\big(\Re(W\overline{z})\big)\big)$, with $\Box^{-1}$ the inhomogeneous Duhamel propagator vanishing at $t = 0$ (or equivalently at $\tau = +\infty$). The {\it{key novel difficulty}} compared to earlier works pursuing construction of finite time blow up solutions along the broad line implemented in this paper stems from the term $\lambda^{-2}y_z\cdot W$, which effectively constitutes a {\it{non-local (with respect to space and time) linear operator}} acting on $z$, and which cannot be treated perturbatively. We further make the following observations, which sometimes reference methods developed in the earlier papers \cite{KST2}, \cite{KST1}, \cite{KST}:
	\begin{itemize}
		\item After conjugation to a one dimensional operator on $\mathbb{R}_+$ and re-scaling the operator $-\mathcal{L}: = \triangle_R + W^2$ becomes $\frac{\partial^2}{\partial R^2}  -\frac{3}{4R^2} + \frac{8}{(1+R^2)^2}$, which is precisely the linearization occurring in \cite{KST2}. This operator has a resonance at frequency $0$, corresponding to the function $W(R)$. 
		\item The inhomogeneous propagator for the operator 
		\begin{equation}\label{eq:linearsimplified}
			-i(z_{\tau}+ \frac{\lambda_{\tau}}{\lambda}R\partial_R z)-  c\alpha_0\tau^{-1}z - i\frac{\lambda_{\tau}}{\lambda}z+ \big(\triangle_R + W^2(R)\big)z
		\end{equation}
		can be constructed as in \cite{KST2} by using the spectral representation associated to $\mathcal{L}$ via the distorted Fourier transformation $\mathcal{F}$ and replacing $R\partial_R$ by $\xi\partial_{\xi}$, with $\xi$ denoting the frequency variable. This generates errors which can be described in terms of the {\it{transference operator}} essentially given by $\mathcal{F}\circ\big(R\partial_R\big)\circ\mathcal{F}^{-1} - \xi\partial_{\xi}$. Albeit these errors are linear in $z$, the fact that they come with a temporally decaying weight $\frac{\lambda_{\tau}}{\lambda}\sim \tau^{-1}$ and we shall work with functions decaying rapidly with respect to $\tau$ means that these errors will be of perturbative nature and can be iterated away. We observe that the latter observation is responsible for the fact that the resonance at frequency $0$ plays no essential role in the perturbation theory developed in \cite{KST2}, \cite{KST}. 
		\item The additional term $\lambda^{-2}y_z\cdot W$ in \eqref{eq:mainlinearpart} does not come with a temporally decaying weight, and cannot be treated perturbatively. Still, our strategy shall be to think of $z$ as being obtained by applying the inhomogeneous Schr\"odinger propagator associated to \eqref{eq:linearsimplified}, to all the source terms, as well as the linear term $\lambda^{-2}y_z\cdot W$. It turns out that restricting to either the very small (distorted) frequency regime $\xi<\epsilon_1$, or the very large (distorted) frequency regime $\xi>\epsilon_1^{-1}$, for some $\epsilon_1\ll 1$, and taking advantage of a special cancellation for the resonant part by implementing a form of modulation theory, the term $\lambda^{-2}y_z\cdot W$ can be treated perturbatively. We give more details on this modulation part in the discussion below. 
		\item It then remains to deal with the most delicate case when $z$ is restricted to (distorted) frequency $\xi\in [\epsilon_1, \epsilon_1^{-1}]$. From the preceding discussion, it appears natural that the most delicate contribution to $P_{[\epsilon_1,\epsilon_1^{-1}]}z$ arises when applying the Schr\"odinger propagator associated to \eqref{eq:linearsimplified} to the troublesome source term $ P_{[\epsilon_1,\epsilon_1^{-1}]}\big(\lambda^{-2}y_z\cdot W\big)$. We call this contribution, when reduced to its non-resonant part\footnote{See discussion below} (in addition to a similarly troublesome term arising through the modulation procedure) $z_{nres}^{prin}$. Up until this stage, only the {\it{frequencies with respect to the spatial variable}} have been used, but at this juncture, it turns out to be important to also invoke {\it{frequencies with respect to time}}. In fact, there are two different time variables that play an essential role: we already mentioned the Schr\"odinger time above, but we shall also take advantage of the {\it{wave time}}, which we define as 
		\begin{align*}
			\tilde{\tau}: = \int_t^\infty \lambda(s)\,ds. 
		\end{align*}
		We note that the wave time changes much more slowly than the Schr\"odinger time (when we restrict to very large $\tau$ (and hence also $\tilde{\tau}$)), and so we can think of the term $\lambda^{-2}y_z\cdot W$, where the wave propagator can be expressed via the wave time $\tilde{\tau}$, see e.g. \eqref{eq:wavepropagator}, \eqref{eq:nflatfourierrepresent}, as having a kind of smoothing effect with respect to its dependence on Schr\"odinger time $\tau$. Performing integration by parts with respect to Schr\"odinger time in the Schr\"odinger propagator applied to $P_{[\epsilon_1,\epsilon_1^{-1}]}\big(\lambda^{-2}y_z\cdot W\big)$, one then reformulates the equation for $z_{nres}^{prin}$ in the form
		\begin{equation}\label{eq:znprinreformulated}
			\mathcal{L}z_{nres}^{prin} - 2n_{prin}\cdot W - \lambda^{-2}y_{\tilde{\lambda}}\cdot W = \tilde{E}_{main} + z_{nres, small}^{prin},
		\end{equation}
		see \eqref{eq:znreskeyreofrmulation1}, where the third term on the left is again due to modulating, and we neglect it for this discussion, while the term 
		\[
		n_{prin} = \lambda^{-2}\Box^{-1}\big(\lambda^2\triangle \Re\big(W\overline{z_{nres}^{prin}}\big)\big); 
		\]
		the remaining two terms on the right are perturbative in nature. Restricting the frequency with respect to wave time $\tilde{\tau}$ to size $<\gamma_1$ or $>\gamma_1^{-1}$ for some $\gamma_1\ll 1$, equation \eqref{eq:znprinreformulated} turns to leading order into an elliptic equation, which can be solved easily, 
		\item It remains to deal with the most delicate situation when $z_{nres}^{prin}$ is reduced to wave temporal frequency in $[\gamma_1, \gamma_1^{-1}]$. The strategy here is to {\it{change point of view}} and consider the variable $n_{prin}$, instead of $z_{nres}^{prin}$. It turns out that the function $\tilde{n}_{prin}: = \lambda^2\cdot n_{prin}$ solves a wave equation with a local as well as a non-local potential term, of the form 
		\[
		\Box \tilde{n}_{prin} +2W^2 \tilde{n}_{prin} -K \tilde{n}_{prin} = \text{error}, 
		\]
		where $K$ is a certain time independent integral operator. Restricting wave temporal frequencies to the interval $[\gamma_1,\gamma_1^{-1}]$ here, we shall be able to solve this equation by {\it{applying the Fourier transform with respect to $\tilde{\tau}$}}, and reducing things to the invertibility of a certain Fredholm type operator, see \eqref{eq:keyoperatortobeinverted}. The use of the temporal Fourier transform to translate the solution of the wave equation to an elliptic inversion appears reminiscent of procedures commonly applied in {\it{control theory}}. 
		\item Once control over $n_{prin}$ is obtained, one also obtains control over $z_{nres}^{prin}$ via \eqref{eq:znprinreformulated}, again ignoring here the term $\lambda^{-2}y_{\tilde{\lambda}}\cdot W$. 
	\end{itemize}
	
	Let us now discuss the modulation technique used to control the resonant part of the variable $z$. As we shall work with the Schr\"odinger propagator associated to \eqref{eq:linearsimplified}, we shall expand $z$ via the spectral representation associated to the operator $\mathcal{L}: = -\triangle - W^2$; this representation was already used in \cite{KST2}. Letting $\phi(R;\xi)$ the generalized Fourier basis, we write 
	\begin{equation}\label{eq:zFourierrep}
		z = \int_0^\infty \phi(R;\xi)\cdot \mathcal{F}(z)(\xi)\cdot\rho(\xi)\,d\xi,
	\end{equation}
	where $\rho(\xi)$ is the spectral measure. We shall then use the splitting 
	\begin{equation}\label{eq:znreszres}\begin{split}
			z &= \kappa\cdot\phi(R;0) + \int_0^\infty [\phi(R;\xi) - \phi(R;0)]\cdot \mathcal{F}(z)(\xi)\cdot\rho(\xi)\,d\xi =: z_{res} + z_{nres}\\
			&\kappa =  \int_0^\infty\mathcal{F}(z)(\xi)\cdot\rho(\xi)\,d\xi,
	\end{split}\end{equation}
	where $\mathcal{F}(z)$ is the (distorted) Fourier transform associated to $\mathcal{L}$. While the non-resonant part $ z_{nres}$ enjoys better bounds and in fact one does not lose temporal decay upon using suitable weighted norms and letting $\mathcal{F}(z)$ be the Schr\"odinger propagator of the source terms for the $z$-equation (expressed on the Fourier side), the resonant part appears to suffer a loss of one power of decay and destroy any iterative scheme. This situation is in principle quite analogous to the one encountered in \cite{KS}, where one uses a natural {\it{scaling invariance}} of the equation to enact a {\it{modulation step}} which essentially eliminates the resonant part entirely. 
	\\
	
	While the system \eqref{eq:ZakharovMain} is invariant under the phase shift $(\psi, n)\longrightarrow (e^{i\alpha}\psi, n)$, $\alpha\in \mathbb{R}$, it {\it{does not admit}} a natural scaling transformation. Nonetheless, the family of special approximate finite time blow up solutions in the companion paper \cite{KrSchm} can be embedded into a one parameter family $(\psi_{*}^{(\tilde{\lambda})}, n_*^{(\tilde{\lambda})})$, which {\it{to leading order}} corresponds to re-scaling $(\psi_*, n_*)$ according to the scaling $\psi_*(\cdot, R)\rightarrow \tilde{\lambda}\psi_*(\cdot,\tilde{\lambda}R)$, $n_*(\cdot, R)\rightarrow \tilde{\lambda}^2n_*(\cdot, \tilde{\lambda}R)$. For this see Remarks 2.32, 3.35, 4.4 in \cite{KrSchm}.\\
	The strategy of this paper is to replace $(\psi_*, n_*)$ by a {\it{modulated approximate solution}} $(\psi_*^{(\tilde{\lambda},\underline{\tilde{\alpha}})}, n_*^{(\tilde{\lambda},\underline{\tilde{\alpha}})})$ where the parameters $\tilde{\lambda}, \tilde{\alpha}$ shall be picked as {\it{time dependent functions}} in such fashion as to essentially eliminate the resonant part of $z$. A technical difficulty arising here has to do with the fact that the principal error generated by making $\tilde{\lambda}$ time dependent arises in the wave equation for $n$, and this in turn leads to the main contribution $-\lambda^{-2}y_{\tilde{\lambda}}\cdot W$ in the equation for $z$ (see \eqref{eq:znprinreformulated}), where $y_{\tilde{\lambda}}$ depends in non-local fashion on $\tilde{\lambda}$, being a forward wave propagator applied to certain (local) error terms. This leads to a kind of differential-integral equation for $\tilde{\lambda}$, see \eqref{eq:tildelambda}, whose solution relies on Fourier methods\footnote{Here by 'Fourier' we mean the standard Fourier transform, in one dimension.}. Furthermore, in order to cope with a certain {\it{degeneracy}} of this equation, we shall have to implement certain frequency truncations with respect to the (wave) temporal variable $\tilde{\tau}$, which is manifested by the operators $Q^{(\tilde{\tau})}_{<\tau^{\delta}}$ as well as further implicit such localizations in the equation. It appears that the use of microlocal methods of this type in a modulation theoretic context is a novel feature of this work. \\
	By comparison to $\tilde{\lambda}$, the equation for the second modulation parameter $\tilde{\alpha}$ is much simpler, and given by \eqref{eq:tildealpha1}. Again suitable temporal frequency localizations will be necessary for technical reasons. 
	\\
	
	The construction of the solution $(\psi, n)$ of the theorem will then be obtained by passing to the representation \eqref{eq:finalansatz}, and letting the perturbations $(z, y)$ solve \eqref{eq:zeqn2}\, \eqref{eq:y2def}, \eqref{eq:yzdfn}, while the modulation parameters $\tilde{\lambda}, \tilde{\alpha}$ satisfy
	\eqref{eq:tildelambda}, \eqref{eq:tildealpha1}. In order to infer bounds on $z$, we shall refer to the decomposition \eqref{eq:zdecompbasic} into the resonant and non-resonant parts, with the resonant part in turn solving \eqref{eq:kappa1eqn}, \eqref{eq:kappa2eqn}. The modulation equations in turn are designed in order to ensure good bounds for the solution of the latter pair of equations. 
	
	\section{Numerical non-degeneracy assumptions}
	
	We shall rely on a small number of explicit numerically verifiable non-degeneracy assumptions, see subsection~\ref{subsec:numerics}. These will be verified in a separate paper \cite{KSchmNum}. 
	
	\section{Organization of the paper}
	
	This paper is essentially divided into two parts: the main argument, comprising sections~\ref{section:variablesandeqns} - ~\ref{sec:solncompletion}, and the remaining but lengthy section~\ref{sec:appendix}, where technical tools and completions of certain proofs are given. 
	In section~\ref{section:variablesandeqns}, we introduce the modulated bulk parts $\psi_*^{(\tilde{\lambda}, \underline{\tilde{\alpha}})},\,n_*^{(\tilde{\lambda}, \underline{\tilde{\alpha}})}$ and we set up the equations governing the correction terms $(z, y)$. We further give some background on the distorted Fourier transform associated to the important operator $\mathcal{L}$, and we introduce the norms by means of which we control $z, y$. Finally, we record the evolution equations for the resonant part at the end of section~\ref{section:variablesandeqns}.
	In section~\ref{sec:linpropagators}, we  describe the propagator for the fundamental Schr\"odinger operator \eqref{eq:schrodingermodel} by passing to the distorted Fourier side. This involves in particular the {\it{transference operator}}, whose basic properties we recall. 
	We furthermore, derive a number of technical weighted energy type estimates for the {\it{wave propagator}}, and which shall be useful in controlling the delicate linear term $\lambda^{-2}y_z\cdot W$. 
	Section~\ref{sec:modlnparam} gives the precise modulation equations. The whole remainder of the paper is then concerned with establishing the existence of a solution for the combined system governing $(z, y)$ as well as $\tilde{\lambda}, \tilde{\alpha}$. This eventually leads to  
	Proposition~\ref{prop:existenceofsolution}, which then easily implies Theorem~\ref{thm:main}.
	
	\section{Remarks on notation}
	
	We shall often use expressions such as $\|\langle R\partial_R\rangle f\|_{X}$ for some norm $\|\cdot \|_{X}$. By this we shall mean 
	\[
	\|\langle R\partial_R\rangle f\|_{X}: = \big\|f\big\|_{X} + \big\|R\partial_R f\big\|_{X}. 
	\]
	We shall also use interpolation to define {\it{fractional operators}} such as $(R\partial_R)^{1+\delta}$, $0<\delta\ll 1$. We then define
	\[
	\|\langle R\partial_R\rangle^{1+\delta} f\|_{X}: = \big\|f\big\|_{X} + \big\|(R\partial_R)^{1+\delta} f\big\|_{X}. 
	\]
	We also frequently use norms of the form 
	\[
	\big\|f(R)\big\|_{g(R)\cdot L^p_{R^3\,dR}}: = \big\|g^{-1}\cdot f\big\|_{L^p_{R^3\,dR}},
	\]
	and similarly for other variables. 
	\\
	The notation $\tau^{-N+}$ shall mean $\tau^{-N+\delta_1}$ for some $0<\delta_1\ll 1$. Similarly, the notation $\tau^{-N-}$ shall mean $\tau^{-N-\delta_1}$ for some $0<\delta_1\ll 1$.
	\\
	We frequently encounter integrals of the schematic form $\int_0^\infty \frac{g(\xi)}{\hat{\tau}^2-\xi^2}\,d\xi$, where $\hat{\tau}\in \mathbb{R}_+$. Our tacit convention shall be that such integrals are to be interpreted in the {\it{principal value sense}}, i. e. as 
	\begin{align*}
	\lim_{\epsilon\downarrow 0}\int_0^\infty\chi_{|\xi-\hat{\tau}|\geq \epsilon} \frac{g(\xi)}{\hat{\tau}^2-\xi^2}\,d\xi
	\end{align*}

	The relation $A\lesssim B$ shall mean that there is an absolute constant $C$ such that $A\leq CB$, and similarly for $A\gtrsim B$. If $\epsilon>0$ is a small parameter, we write 
	\[
	A\ll_{\epsilon} B
	\]
	to mean that for any $\delta>0$ there is $\epsilon_*>0$ small enough such that for $0<\epsilon<\epsilon_*$ we have $A<\delta B$. If $0<X$ is a large parameter we say 
	\[
	A\ll_{X} B
	\]
	if the same relation as before holds with $\epsilon: = X^{-1}$. 
	\\
	
	Throughout, with $\nu\gg 1$ fixed, we shall let $\tau_*\gg 1$ a very large number, which will be the largest of a hierarchy of constants. We also use $N$ (coming from the temporal decay of the approximate solution), $0<\epsilon_1\ll 1$, and $0<\gamma_1\ll1 $, where we assume that 
	\[
	\tau_*^{-1}\ll N^{-1}\ll \epsilon_1\ll \gamma_1\ll\nu^{-1}. 
	\]
	Throughout we shall need to solve our equations on $[\tau_*,\infty)\times \mathbb{R}^4$, where the interval $[\tau_*,\infty)$ refers to {\it{Schr\"odinger time}}, or alternatively $[\tilde{\tau}_*,\infty)$ in terms of {\it{wave time}}. We can and shall assume that all functions that we use and sometimes apply non-local operators to are supported at Schr\"odinger time $\tau\geq \frac{\tau_*}{2}$, if necessary by applying suitable smooth cutoffs identically equal one on $[\tilde{\tau}_*,\infty)$. This being assumed, we don't always indicate the time interval $[\tau_*,\infty)$ in norms such as $\|\cdot\|_{\tau^{-N}L^2_{d\tau}}$, it being implicitly assumed that it is restricted to $[\tau_*,\infty)$.  
	\\
	
	Finally, we note that we shall use the notation $Q^{(\tilde{\tau})}_{<a}$ to denote frequency localization\footnote{In other contexts, the notation $Q_{<a}$ is sometimes used to refer to the {\it{modulation}} of waves, i. e. the distance of the Fourier support to the light cone, but our usage is different.} to frequencies $<a$ with respect to {\it{wave time}} $\tilde{\tau}$. 
	
	\section{Description of the system in terms of dynamical variables, including modulation parameters, and their equations}\label{section:variablesandeqns}
	
	\subsection{Initial setup and modulation ansatz}
	
	Our point of departure is an approximate solution $(\psi_*, n_*)$ which solves the system 
	\begin{equation}\label{eq:mainapprox}\begin{split}
			&i\partial_t\psi_* + \triangle \psi_* = -n_*\psi_* + E_1\\
			&(-\partial_{tt} + \triangle)n_* = \triangle\big(|\psi_*\big|^2\big) + E_2,
	\end{split}\end{equation}
	where the error terms vanish rapidly towards $t = 0$:
	\[
	\big\|E_1\big\|_{L^2_{R^3\,dR}}\leq t^N,\,\big\|E_2\big\|_{L^2_{R^3\,dR}}\leq t^N.
	\]
	For this see Theorem 1.1 in \cite{KrSchm}. 
	Throughout we write $R = \lambda(t)r$, $\lambda(t) = t^{-\frac12-\nu}$, and as before we define the {\it{Schr\"odinger time}} as 
	\[
	\tau: = \int_t^\infty \lambda^2(s)\,ds
	\]
	as well as the {\it{wave time}} as 
	\[
	\tilde{\tau}: =  \int_t^\infty \lambda(s)\,ds.
	\]
	In particular, we infer the algebraic relation 
	\[
	\tilde{\tau}\sim_{\nu}\tau^{\frac12 - \frac{1}{4\nu}}. 
	\]
	Setting 
	\[
	\psi_*(t, x) = e^{i\alpha(t)}\lambda(t)\cdot u_*(t, x), 
	\]
	In principle we intend to perturb $(u_*, n_*)$ to $(u_* + z, n_* + y)$. However, in order to control the growth of the resonant part of $z$, we shall have to modify the bulk part $(u_*, n_*)$. Precisely, we shall replace 
	\[
	(\psi_*, n_*)
	\]
	by {\it{partially modulated expressions}}. First, note that we have the phase modulation 
	\[
	\big(\psi, n\big)\longrightarrow \big(e^{i\alpha}\psi, n\big). 
	\]
	We intend to phase modulate $\psi$ but also in dependence on $r = |x|$. In order to avoid uncontrollable errors arising in the wave equation, we carefully choose 
	\begin{align*}
		\psi_*\longrightarrow e^{i\chi_1(r,t)\cdot \tilde{\alpha}}\psi_*. 
	\end{align*}
	Here the $C^\infty$-function $\chi_1(r, t)$ smoothly localizes to the region $r\leq\frac12 t^{\frac12+\epsilon}$, and specifically it equals $1$ for $r\in [0, \frac14 t^{\frac12+\tilde{\epsilon}}]$ for some $\tilde{\epsilon}>\epsilon$. We shall require the somewhat technical condition that 
	\begin{equation}\label{eq:chi1vanishingcond}
		\langle \chi_1W, W\rangle_{L^2_{R^3\,dR}} = 0,
	\end{equation}
	where we recall that $R = r\cdot\lambda$. This can be achieved for $\chi_1$ bounded in absolute value by a constant $C(\epsilon, \tilde{\epsilon},\nu)$. Furthermore, we can assume the bounds 
	\begin{equation}\label{eq:chi1bound}
		\big|\partial_R^l \chi_1\big|\lesssim_{\epsilon,\tilde{\epsilon},\nu} \sigma^{-l(\frac12-\frac{\tilde{\epsilon}}{2\nu})},\,l\geq 0. 
	\end{equation}
	Observe that we have 
	\begin{align*}
		e^{i\chi_1(r,t)\cdot \tilde{\alpha}}\psi_* - \psi_* &= i\chi_1(r,t)\cdot \tilde{\alpha}\psi_* + O\big(|\tilde{\alpha}|^2\big)\\
		& =\chi_1(r,t)e^{i\tilde{\alpha}}\psi_* + \chi_2(r, t)\psi_* -\psi_* +  O\big|\tilde{\alpha}|^2\big),
	\end{align*}
	where we define the second cutoff by $\chi_1 + \chi_2 = 1$. 
	Thus at the level of the Schr\"odinger equation the effect of including the spatial cutoffs into the phase is equivalent to truncating separately from the phase, up to quadratic and hence harmless errors. 
	On the other hand, we have that 
	\begin{align*}
		\triangle \Big|e^{i\chi_1(r,t)\cdot \tilde{\alpha}}\psi_*\Big|^2 = \triangle \Big|\psi_*\Big|^2, 
	\end{align*}
	whence we have not introduced new errors into the wave equation, provided we do not modify $n_*$. 
	\\
	
	We shall also have to modulate {\it{with respect to scaling}}. This is in principle unnatural, as the Zakharov system is not scaling invariant, but we interpret this to mean that we construct the approximate solution 
	\[
	(\psi_*, n_*)
	\]
	by using $\tilde{\lambda}\cdot \lambda(t)$, for $\tilde{\lambda}$ a positive constant, resulting in 
	\[
	(\psi_*^{(\tilde{\lambda})}, n_*^{(\tilde{\lambda})}), 
	\]
	and then letting $\tilde{\lambda}$ depend on time. 
	Finally, we arrive at the following precise modulation ansatz: 
	\begin{equation}\label{eq:psinmodulated}\begin{split}
			&(\psi_*, n_*)\longrightarrow\big(\psi_*^{(\tilde{\lambda}, \underline{\tilde{\alpha}})},\,n_*^{(\tilde{\lambda}, \underline{\tilde{\alpha}})}\big)\\
			&\psi_*^{(\tilde{\lambda}, \underline{\tilde{\alpha}})} = \chi_3\cdot e^{i\chi_1(r,t)\cdot \tilde{\alpha}}\cdot \psi_*^{(\tilde{\lambda})} + (1-\chi_3)\cdot \psi_*,\\
			& n_*^{(\tilde{\lambda}, \underline{\tilde{\alpha}})} = \chi_3\cdot n_*^{(\tilde{\lambda})} + (1-\chi_3)\cdot n_*. 
	\end{split}\end{equation}
	Here the cutoff $\chi_3(r, t)$ smoothly localizes to the inner Schr\"odinger zone $r\leq t^{\frac12+\epsilon}$ where $\epsilon\ll 1$ is as in the construction of the approximate solution. It is chosen to equal $1$ for $r<\frac12 t^{\frac12+\epsilon}$ and to equal zero for $r>2t^{\frac12+\epsilon}$. In particular, we have 
	\[
	\psi_*^{(\tilde{\lambda}, \underline{\tilde{\alpha}})} = e^{i\chi_1(r,t)\cdot \tilde{\alpha}}\cdot \psi_*^{(\tilde{\lambda})}
	\]
	on the support of $\chi_1$. 
	\subsection{The equation for the modulated ansatz}
	
	Recalling that we have 
	\begin{align*}
		\big(i\partial_t + \triangle\big)\psi_*^{(\tilde{\lambda})} = -n_*^{(\tilde{\lambda})}\cdot \psi_*^{(\tilde{\lambda})} + E_1,\,(-\partial_{tt} + \triangle)n_*^{(\tilde{\lambda})} = \triangle\big(|\psi_*^{(\tilde{\lambda})}\big|^2\big) + E_2,
	\end{align*}
	we infer the following modified equations for the modulated bulk term: 
	\begin{equation}\label{eq:Modbulkeqns}\begin{split}
			&\big(i\partial_t + \triangle\big)\psi_*^{(\tilde{\lambda}, \underline{\tilde{\alpha}})} = -n_*^{(\tilde{\lambda}, \underline{\tilde{\alpha}})}\cdot \psi_*^{(\tilde{\lambda}, \underline{\tilde{\alpha}})} + E_1 + E_1^{\text{mod}},\\
			&(-\partial_{tt} + \triangle)n_*^{(\tilde{\lambda},\underline{\tilde{\alpha}})} = \triangle\big(|\psi_*^{(\tilde{\lambda},\underline{\tilde{\alpha}})}\big|^2\big) + E_2 + E_2^{\text{mod}},
	\end{split}\end{equation}
	and where we set 
	\begin{equation}\label{eq:E1mod}\begin{split}
			E_1^{\text{mod}} &= \big(i\partial_t + \triangle\big)( \chi_3)\cdot \big(\psi_*^{(\tilde{\lambda})} - \psi_*\big) + 2\partial_r( \chi_3)\cdot (\partial_r\psi_*^{(\tilde{\lambda})} - \partial_r\psi_*)\\
			& +  i\big(i\partial_t + \triangle\big)(\chi_1)\tilde{\alpha}\cdot \psi_*^{(\tilde{\lambda})} + 2i\partial_r(\chi_1)\cdot\tilde{\alpha}\cdot \partial_r \psi_*^{(\tilde{\lambda})}\\
			& - \chi_1\cdot \partial_t\tilde{\alpha}\cdot \psi_*^{(\tilde{\lambda})} - E_{\text{nl}}^{\text{mod}} + i\tilde{\lambda}_t\cdot\partial_{\tilde{\lambda}}\big(\psi_*^{(\tilde{\lambda})}\big)+O\big(|\tilde{\alpha}|^2\big)
	\end{split}\end{equation}
	with 
	\begin{align*}
		E_{\text{nl}}^{\text{mod}} = \big(\chi_3^2 - \chi_3\big)\cdot \Big[n_*^{(\tilde{\lambda})}\cdot \psi_*^{(\tilde{\lambda})} - n_*\cdot\psi_*\Big],
	\end{align*}
	and further 
	\begin{equation}\label{eq:E2mod}\begin{split}
			E_2^{\text{mod}} &= -\tilde{\lambda}_{tt}\cdot \partial_{\tilde{\lambda}}n_*^{(\tilde{\lambda})} - \tilde{\lambda}_t\cdot \partial_t\big(\partial_{\tilde{\lambda}}n_*^{(\tilde{\lambda})}\big)\\
			& - 2\partial_t(\chi_3)\cdot  \tilde{\lambda}_t\cdot \partial_{\tilde{\lambda}}n_*^{(\tilde{\lambda})}+ \Box(\chi_3)\cdot \big(n_*^{(\tilde{\lambda})} - n_*\big)\\
			&- 2\partial_r(\chi_3)\cdot \partial_r\big(n_*^{(\tilde{\lambda})} - n_*\big) +\triangle_r(\chi_3)\cdot\big(|\psi_*^{(\tilde{\lambda})}|^2 - |\psi_*|^2\big)\\
			&+2\partial_r(\chi_3)\cdot \partial_r\big(|\psi_*^{(\tilde{\lambda})}|^2 - |\psi_*|^2\big)\\
	\end{split}\end{equation}
	
	\subsection{The equations for the final corrections $(z, y)$}
	
	We shall seek to construct a solution of the following form:
	\begin{equation}\label{eq:finalansatz}\begin{split}
			&\psi(t, x) = \psi_*^{(\tilde{\lambda}, \underline{\tilde{\alpha}})}  + e^{i\alpha(t)}\lambda(t)\cdot z\\
			&n(t, x) = n_*^{(\tilde{\lambda}, \underline{\tilde{\alpha}})} + y. 
	\end{split}\end{equation}
	Then we derive the following system of equations for $(z, y)$, where we observe that $\alpha(t) = \alpha_0\log t$, and we divide by $\lambda^3(t)$ in the equation for $z$ to account for the factor $\lambda$ in the preceding equation, as well as the change of coordinates. To simplify the notation, write
	\begin{align*}
		\psi_*^{(\tilde{\lambda}, \underline{\tilde{\alpha}})} = e^{i\alpha(t)}\lambda(t)\cdot \tilde{u}_*^{(\tilde{\lambda}, \underline{\tilde{\alpha}})}. 
	\end{align*}
	Observe that the new variable $\tilde{u}_*^{(\tilde{\lambda}, \underline{\tilde{\alpha}})}$ satisfies the following equation in terms of the partially re-scaled coordinates $(R, t)$:
	\begin{equation}\label{eq:ustartilde}\begin{split}
			&-\alpha'(t)\lambda^{-2}(t)\tilde{u}_*^{(\tilde{\lambda}, \underline{\tilde{\alpha}})}(t,R) + i\lambda'(t)\lambda^{-3}(t)\tilde{u}_*^{(\tilde{\lambda}, \underline{\tilde{\alpha}})}(t,R)\\&\hspace{2cm} + i\lambda^{-2}(t)(\tilde{u}^{(\tilde{\lambda}, \underline{\tilde{\alpha}})}_{*,t} + \frac{\lambda_t}{\lambda} R\partial_R \tilde{u}_*^{(\tilde{\lambda}, \underline{\tilde{\alpha}})}) + \triangle \tilde{u}_*^{(\tilde{\lambda}, \underline{\tilde{\alpha}})}\\
			&  = -\lambda^{-2}(t)n_*^{(\tilde{\lambda}, \underline{\tilde{\alpha}})} \tilde{u}_*^{(\tilde{\lambda}, \underline{\tilde{\alpha}})} + e_1 + e_1^{\text{mod}}, \\
	\end{split}\end{equation}
	where we set 
	\begin{equation}\label{eq:e1moddef}
		e_1^{\text{mod}} = \lambda^{-3}(t)\cdot  E_1^{\text{mod}},
	\end{equation}
	and $\triangle = \partial_R^2 + \frac{3}{R}\partial_R$. Moreover, we recall that $\alpha'(t) = \alpha_0 t^{-1}$. 
	\\
	We can now write down the combined system for $(z, y)$. We shall record the equation for $z$ in terms of $(\tau, R)$:
	\begin{equation}\label{eq:zeqn1}\begin{split}
			&-c\alpha_0\tau^{-1}z - i\frac{\lambda_{\tau}}{\lambda}z - i(z_{\tau} + \frac{\lambda_{\tau}}{\lambda}R\partial_R z) + \triangle_R z\\
			& = -\lambda^{-2}y \tilde{u}_*^{(\tilde{\lambda}, \underline{\tilde{\alpha}})} - \lambda^{-2}n_*^{(\tilde{\lambda}, \underline{\tilde{\alpha}})} z - \lambda^{-2}yz + e_1 + e_1^{\text{mod}}
	\end{split}\end{equation}
	where $c = c(\nu)>0$ satisfies 
	\[
	t^{-1}\lambda^{-2}(t) = c\cdot \tau^{-1}. 
	\]
	On the other hand, we formulate the equation for $y$ in terms of the $(t, r)$-coordinates for now:
	\begin{equation}\label{eq:yeqn1}\begin{split}
			\Box y = 2\lambda^2\triangle\big(\Re(\tilde{u}_*^{(\tilde{\lambda}, \underline{\tilde{\alpha}})} \overline{z})\big) + \lambda^2 \triangle\big(|z^2|\big) + E_2 +  E_2^{\text{mod}}
	\end{split}\end{equation}
	
	The equation for $z$ needs to be re-arranged to reveal the principal linear part, which is given by the expression 
	\begin{equation}\label{eq:zeqnprincipallinear}
		\triangle_R z + W^2(R)z + \lambda^{-2}y_z\cdot W, 
	\end{equation}
	where $y_z$ is defined implicitly as the solution vanishing at $\tilde{\tau} = +\infty$ of the following equation 
	\begin{equation}\label{eq:yzdfn}
		\Box y_z = 2\lambda^2\triangle\big(\Re(W\overline{z})\big).
	\end{equation}
	Instead of working with $y$, we shall then work with the 'better' difference term 
	\begin{equation}\label{eq:y2def}
		y_2: = y - y_z.
	\end{equation}
	We now arrive at the following system, which shall serve as the basis to derive the required estimates:
	\begin{equation}\label{eq:zeqn2}\begin{split}
			&-i(z_{\tau}+ \frac{\lambda_{\tau}}{\lambda}R\partial_R z)-  c\alpha_0\tau^{-1}z - i\frac{\lambda_{\tau}}{\lambda}z+ \big(\triangle_R + W^2(R)\big)z + \lambda^{-2}y_z\cdot W\\& =  -\lambda^{-2}\big(y \tilde{u}_*^{(\tilde{\lambda}, \underline{\tilde{\alpha}})} - y_z\cdot W\big)-\big(\lambda^{-2}n_*^{(\tilde{\lambda}, \underline{\tilde{\alpha}})} - W^2\big) z - \lambda^{-2}yz +  e_1 + e_1^{\text{mod}}\\
			&\Box y_2 = 2\lambda^2\triangle\big(\Re(\tilde{u}_*^{(\tilde{\lambda}, \underline{\tilde{\alpha}})}  \overline{z})\big) - 2\lambda^2\triangle\big(\Re(W\overline{z})\big) + \lambda^2 \triangle\big(|z^2|\big) + E_2 +  E_2^{\text{mod}}. 
	\end{split}\end{equation}
	Again the first equation is written in terms of the coordinates $(\tau, R)$, while the second equation is in terms of the original coordinates $(t, r)$. 
	At this point, we observe that we have not yet specified the evolution of the modulation parameters. In fact, this shall be done later on, with the goal of controlling the evolution of the resonant part of $z$. 
	\\
	
	For later reference, we already introduce a further important component of $y_2 = y - y_z$, which arises due to modulating in $\tilde{\lambda}$. In fact, the source term 
	\[
	-\tilde{\lambda}_{tt}\cdot \partial_{\tilde{\lambda}}n_*^{(\tilde{\lambda})} - \tilde{\lambda}_t\cdot \partial_{t}(\partial_{\tilde{\lambda}}n_*^{(\tilde{\lambda})})
		\]
	can be equated to leading order with 
	\begin{equation}\label{eq:tildelambdaleadcontrib}
		-2\tilde{\lambda}_{tt}\cdot\lambda^2 \Lambda W\cdot W - 2\frac{\tilde{\lambda}_t}{t}\cdot  (t\partial_t)\big(\lambda^2 \Lambda W\cdot W\big),\,\Lambda W = \partial_{\tilde{\lambda}}W_{\tilde{\lambda}}\big|_{\tilde{\lambda} = 1},\,W_{\tilde{\lambda}}(R) = \tilde{\lambda}\cdot W(\tilde{\lambda} R).
         \end{equation}
	Then let us set 
	\begin{equation}\label{eq:ytildelamba}
		y_{\tilde{\lambda}}: = -\Box^{-1}\big(2\tilde{\lambda}_{tt}\cdot\lambda^2 \Lambda W\cdot W + 2\frac{\tilde{\lambda}_t}{t}\cdot  (t\partial_t)\big(\lambda^2 \Lambda W\cdot W\big), 
	\end{equation}
	where as usual $\Box^{-1}$ is defined by imposing vanishing towards $\tilde{\tau} = +\infty$. Note that this term leads to a contribution to the {\it{first equation}} in \eqref{eq:zeqn2}, which to leading order is the source term 
	\begin{equation}\label{eq:ytildelambdaleadingcontr}
		-\lambda^{-2}\cdot y_{\tilde{\lambda}}\cdot W.
	\end{equation}
	
	\subsection{Basic spectral theory}\label{subsec:basicfourier}
	
	We use Fourier representations with respect to the reference operator $\mathcal{L} = -\triangle_R - W^2$, in light of \eqref{eq:zeqn2}. Thus a general function $f\in L^2_{R^3\,dR}$ admits an expansion 
	\begin{equation}\label{eq:Lrep}
		f(R) =  \int_0^\infty \phi(R;\xi)\mathcal{F}(f)(\xi)\cdot\rho(\xi)\,d\xi
	\end{equation}
	where the generalized Fourier basis $\phi(R;\xi)$ is essentially\footnote{Here we change the variable $\xi^{\frac12}$ from \cite{KST2} to $\xi$ for convenience.}  given in \cite{KST2}. In particular, we record the following:
	\begin{enumerate}
		\item We have $\phi(R; 0) = W(R)\sim R^{-2}$ as $R\rightarrow\infty$. Furthermore, in the non-oscillatory regime $R\xi\lesssim 1$ there is an expansion 
		\[
		\phi(R;\xi) = \phi(R; 0)\cdot [1+\sum_{j\geq 1}(R\xi)^{2j}\phi_j(R^2)]
		\]
		which converges absolutely, and where the functions $\phi_j(u)$ are smooth (in fact, analytic), and satisfy the bounds
		\[
		\big|\phi_j(u)\big|\leq \frac{C^j}{j!}
		\]
		and similar bounds for the derivatives, where $C$ is a suitable constant. 
		\item If $\xi\lesssim 1$ and we are in the oscillatory regime $R\xi\gtrsim 1$, we can write 
		\[
		\phi(R; \xi) = \sum_{\pm}\xi^{\frac12} \frac{e^{\pm iR\xi}}{R^{\frac32}}\cdot a_{\pm}(R;\xi),
		\]
		where the functions $a(R;\xi)$ are smooth on $[0,\infty)\times (0,1]$ and  satisfy the bounds 
		\[
		\big|a_{\pm}(R;\xi)\big|\lesssim \langle\log\xi\rangle ,
		\]
		as well as symbol bounds with respect to derivatives in terms of both variables. In the high frequency regime $\xi\gtrsim 1$, we can write 
		\[
		\phi(R;\xi) = \sum_{\pm}\frac{e^{\pm iR\xi}}{R^{\frac32}\xi^{\frac32}}\cdot a_{\pm}(R;\xi)
		\]
		where the functions $a(R;\xi)$ are bounded, smooth and satisfy symbol bounds with respect to both $R$ and $\xi$. 
		\item The spectral measure is smooth on $(0,\infty)$ and has the asymptotic behavior 
		\[
		\rho(\xi)\sim \frac{1}{\xi\log^2\xi}
		\]
		for $0<\xi\lesssim 1$, as well as 
		\[
		\rho(\xi)\sim \xi^3
		\]
		for $\xi\gtrsim 1$. It also satisfies symbol bounds. 
	\end{enumerate}
	
	The singular behavior of the spectral measure $\rho(\xi)$ when $\xi\rightarrow 0+$, reflecting the presence of the resonance $\phi(R; 0)$, is a key difficulty when controlling the evolution of $z$. To deal with it, we shall split functions into a 'resonant' and a 'non-resonant' part: using \eqref{eq:Lrep}, we split
	\begin{equation}\label{eq:resnonressplit}\begin{split}
			&f(R) = f_{res}(R) + f_{nres}(R)\\
			&f_{nres}(R): = \int_0^\infty [\phi(R;\xi) - \phi(R;0)]\mathcal{F}(f)(\xi)\cdot\rho(\xi)\,d\xi
	\end{split}\end{equation}
	Note that we can also formally write 
	\begin{equation}\label{eq:fnralternative}
		f_{nres} = \mathcal{L}^{-1}\big(\mathcal{L}f\big), 
	\end{equation}
	where the inverse $\mathcal{L}^{-1}$ is uniquely determined by imposing vanishing at the origin $R = 0$. It follows that we can write 
	\begin{align*}
		f_{res}(R) = \kappa\cdot \phi(R; 0), 
	\end{align*}
	and the function $f$ is completely determined in terms of the pair $\big(\kappa, f_{nres}\big)$. In order to describe $z$, we shall revert to this setting. 
	
	\subsection{Introducing the norm controlling the non-resonant part of $z$, and the norm for $y$}
	
	We shall control the non-resonant part $z_{nres}$ of $z$ in terms of the following norm:
	\begin{equation}\label{eq:Snormdefi}\begin{split}
			\big\|z_{nres}\big\|_{S}: &= \big\|\langle R\rangle^{-\delta_0}z_{nres}\big\|_{\tau^{-N}L^2_{d\tau}L^\infty_{dR}} + \big\|\langle R\rangle^{\frac12-\delta_0}\nabla_R z_{nres}\big\|_{\tau^{-N}L^2_{d\tau}L^\infty_{dR}}\\
			& + \big\|\mathcal{L}^2z_{nres}\big\|_{U} + \big\|z_{nres}\big\|_{\tau^{-N+1+}L^2_{d\tau}(L^{2+}_{R^3\,dR}+L^{\frac83+}_{R^3\,dR})}
	\end{split}\end{equation}
	where we define\footnote{The notation $f\in \mathcal{L}^{-\frac14}S'$ means that if $f(R) = \int_0^\infty \phi(R;\xi)\mathcal{F}(f)(\xi)\rho(\xi)\,d\xi$ then $\mathcal{L}^{\frac14}f = \int_0^\infty \phi(R;\xi)\xi^{\frac12}\mathcal{F}(f)(\xi)\rho(\xi)\,d\xi\in S'$.}
	\begin{align*}
		\big\|f\big\|_{U}: = \big\|f\big\|_{\tau^{-N}L^2_{d\tau}(L^{2+}_{R^3\,dR} + \tau^{-\frac12-\frac{1}{4\nu}+}[\langle R\rangle^{\delta_0}L^\infty_{R^3\,dR}\cap \mathcal{L}^{-\frac14}(\langle R\rangle^{-\frac12+\delta_0}L^\infty_{R^3\,dR})]}
	\end{align*}
	For $y$ we use the simpler norm (where $\nabla = \partial_R$)
	\begin{equation}\label{eq:ynorm}
		\big\|y\big\|_{Y}: = \big\|\lambda^{-2}\langle R\rangle^{\delta_0}\langle \nabla^2\rangle y\big\|_{\tau^{-N+1}L^2_{d\tau}L^{2+}_{R^3\,dR}} + \big\|\lambda^{-2}\langle \nabla^2\rangle y\big\|_{\tau^{-N+1}L^2_{d\tau}L^{2}_{R^3\,dR}}.
	\end{equation}

	\subsection{Equations for the resonant and non-resonant parts of $z$} We decompose (we write $\phi_0(R): = \phi(R; 0)$)
	\begin{equation}\label{eq:zdecompbasic}\begin{split}
			&z(\tau, R) = z_{res}(\tau, R) + z_{nres}(\tau, R),\,z_{res}(\tau, R) = \kappa(\tau)\cdot \phi_0(R),\\
			&\kappa(\tau) = \kappa_1(\tau) + i\kappa_2(\tau),\,\kappa_j(\tau)\in \R,\,j = 1, 2. 
	\end{split}\end{equation}
	The evolution equation of $\kappa(\tau)$, and more specifically, its real and imaginary parts, is then obtained by evaluating the first equation of \eqref{eq:zeqn2} at $R = 0$:
	\begin{equation}\label{eq:kappa1eqn}\begin{split}
			-\kappa_{1,\tau} - \frac{\lambda_{\tau}}{\lambda}\kappa_1(\tau) - c\alpha_0\tau^{-1}\kappa_2(\tau) &= \Im\big(\mathcal{L}z\big)|_{R = 0} - \Im\big(\lambda^{-2}(y \tilde{u}_*^{(\tilde{\lambda}, \underline{\tilde{\alpha}})})\big)|_{R = 0}\\
			&-\Im\big[\big(\lambda^{-2}n_*^{(\tilde{\lambda}, \underline{\tilde{\alpha}})} - W^2\big) z\big]|_{R = 0} - \Im\big(\lambda^{-2}yz\big)|_{R = 0}\\& +  \Im\big(e_1 + e_1^{\text{mod}}\big)|_{R = 0},  
	\end{split}\end{equation}
	\begin{equation}\label{eq:kappa2eqn}\begin{split}
			\kappa_{2,\tau} + \frac{\lambda_{\tau}}{\lambda}\kappa_2(\tau) - c\alpha_0\tau^{-1}\kappa_1(\tau) &=  \Re\big(\mathcal{L}z\big)|_{R = 0} - \Re\big(\lambda^{-2}(y \tilde{u}_*^{(\tilde{\lambda}, \underline{\tilde{\alpha}})})\big)|_{R = 0}\\
			&-\Re\big[\big(\lambda^{-2}n_*^{(\tilde{\lambda}, \underline{\tilde{\alpha}})} - W^2\big) z\big]|_{R = 0} - \Re\big(\lambda^{-2}yz\big)|_{R = 0}\\& +  \Re\big(e_1 + e_1^{\text{mod}}\big)|_{R = 0}.
	\end{split}\end{equation}
	As for the non-resonant part $z_{nres}(\tau, R)$, we can describe it in terms of the distorted Fourier representation as in \eqref{eq:resnonressplit}:
	\begin{equation}\label{eq:znres}
		z_{nres}(\tau, R): = \int_0^\infty [\phi(R;\xi) - \phi(R;0)]\mathcal{F}(z)(\tau, \xi)\cdot\rho(\xi)\,d\xi
	\end{equation}
	Here $\mathcal{F}(z)(\tau, \xi)$ shall be obtained by directly solving \eqref{eq:zeqn2} via translation to the Fourier side. 
	\\
	
	To close this subsection, we remark that the modulation parameters $\tilde{\lambda}, \tilde{\alpha}$ will be chosen in such a way as to essentially  eliminate $\kappa_{1,2}$. More precisely, the role of $\tilde{\alpha}$ shall be to control (but not completely eliminate) $\kappa_2$, while the role of the remaining modulation parameter shall be to control (but not completely eliminate) $\kappa_1$. 
	
	\section{Basic linear propagators for the Schr\"odinger and wave equations}\label{sec:linpropagators}
	
	\subsection{Translation of the Schr\"odinger equation to the Fourier side}
	
	Consider a model equation of the form 
	\begin{equation}\label{eq:schrodingermodel}
		-i(z_{\tau}+ \frac{\lambda_{\tau}}{\lambda}R\partial_R z)-  c\alpha_0\tau^{-1}z - i\frac{\lambda_{\tau}}{\lambda}z+ \big(\triangle_R + W^2(R)\big)z = E(\tau, R). 
	\end{equation}
	For us we shall mostly be concerned with the source function $E$ which is the difference of the right hand side of  the first equation in \eqref{eq:zeqn2} and $\lambda^{-2}y_z\cdot W$. 
	Our intention is to reformulate this equation in terms of the distorted Fourier transform of $z$, for which the presence of the scaling operator $R\partial_R $ poses a technical obstacle. This is dealt with by taking advantage of the  {\it{transference operator}} $\mathcal{K}$, which is defined as follows: 
	\begin{equation}\label{eq: transference1}
		\mathcal{F}\big((R\partial_R) f\big)(\xi) = -(\xi\partial_{\xi})\mathcal{F}(f)(\xi)- 3f(\xi) + \mathcal{K}\big(\mathcal{F}(f)\big)(\xi)
	\end{equation}
	From \cite{KST2} we note that the operator $\mathcal{K}$ is given in terms of a kernel $F(\xi,\eta)$ by means of the formula
	\begin{equation}\label{eq:Kformula}
		\mathcal{K}f(\xi) = \int_{0}^\infty \frac{F(\xi,\eta)\rho(\eta)}{\xi - \eta}f(\eta)\,d\eta. 
	\end{equation}
	We record the following basic result from \cite{KST2}, which implies the required boundedness properties of operator $\mathcal{K}$, see also \cite{KMS} for the 'trace' derivative bounds:
	\begin{lem}\label{lem:KboundsKST} The kernel $F(\xi,\eta)$ is of class $C^2\big((0,\infty)\times (0, \infty)\big)$, and satisfies the pointwise bounds 
		\begin{equation}\begin{split}
				\big|F(\xi,\eta)\big|\lesssim \Bigg\{\begin{array}{cc}\xi+\eta & \xi+\eta\leq 1\\ (\xi+\eta)^{-3}\cdot(1+|\xi - \eta|)^{-N} & \xi+\eta>1\end{array}
		\end{split}\end{equation}
		as well as the derivative bounds 
		\begin{equation}\begin{split}
				\sup_{j+k=2}\big|\partial_{\xi}^j \partial_{\eta}^k F(\xi,\eta)\big|\lesssim \Bigg\{\begin{array}{cc}|\log(\xi+\eta)|^3 & \xi+\eta\leq 1\\ (\xi+\eta)^{-5}\cdot(1+|\xi - \eta|)^{-N} & \xi+\eta>1\end{array}.
		\end{split}\end{equation}
		We also have the 'trace' derivative bounds (for $l\geq 1$ arbitrary)
		\begin{equation}\begin{split}
				\big|(\partial_{\xi} + \partial_{\eta})^lF(\xi,\eta)\big|\lesssim_l \Bigg\{\begin{array}{cc}1 & \xi+\eta\leq 1\\ (\xi+\eta)^{-3}\cdot(1+|\xi - \eta|)^{-N} & \xi+\eta>1\end{array}.
		\end{split}\end{equation}
		For any $p\in (1, \infty)$, the operator $\mathcal{K}$ acts in bounded fashion on $L^p_{\rho\,d\eta}$ as well as on $L^p_{d\eta}$. 
	\end{lem}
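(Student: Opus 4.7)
The pointwise and derivative bounds on the kernel $F$ are proved in detail in \cite{KST2}, while the trace derivative estimates appear in \cite{KMS}; the plan is only to indicate the strategy. The starting point is an integral representation of $F$ derived from the spectral inversion formula. Beginning with $\mathcal{F}((R\partial_R)f)(\xi) = \int_0^\infty \phi(R;\xi)(R\partial_R f)R^3\,dR$, an integration by parts in $R$ together with insertion of $f(R) = \int_0^\infty \phi(R;\eta)\mathcal{F}(f)(\eta)\rho(\eta)\,d\eta$ yield
\begin{align*}
\frac{F(\xi,\eta)\rho(\eta)}{\xi - \eta} = -\int_0^\infty \bigl[(R\partial_R - \xi\partial_\xi)\phi(R;\xi)\bigr]\,\phi(R;\eta)\,R^3\,dR.
\end{align*}
The fact that the prefactor $(\xi - \eta)^{-1}$ may be extracted, i.e. that $F$ is regular across the diagonal, reflects a Sturm--Liouville-type cancellation exploiting the eigenvalue equations $\mathcal{L}\phi(\cdot;\xi) = \xi^2\phi(\cdot;\xi)$ and $\mathcal{L}\phi(\cdot;\eta) = \eta^2\phi(\cdot;\eta)$: commuting the scaling operator against $\mathcal{L}$ produces a factor of $\xi^2 - \eta^2$ up to terms that can be absorbed into the amplitude.

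The pointwise bounds then follow by case analysis using the representations of $\phi(R;\xi)$ recalled in Subsection~\ref{subsec:basicfourier}. In the non-oscillatory regime $R(\xi + \eta)\lesssim 1$, one substitutes the convergent series expansion; the operator $R\partial_R - \xi\partial_\xi$ annihilates the resonant piece $\phi(R;0) = W(R)$, producing the algebraic factor $\xi + \eta$ in the low-frequency bound. In the oscillatory regime $R(\xi + \eta)\gtrsim 1$, one inserts the WKB-type representation with amplitudes $a_\pm(R;\xi)$ and performs non-stationary phase analysis in $R$ with phases $\pm R\xi \pm R\eta$; successive integrations by parts give the $(1 + |\xi - \eta|)^{-N}$ decay for $\xi + \eta > 1$, while the $(\xi + \eta)^{-3}$ factor comes from the WKB prefactors combined with the measure $R^3\,dR$. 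The second-order derivative bounds are obtained by differentiating under the integral, with the $|\log(\xi + \eta)|^3$ loss arising from $\partial_\xi$-derivatives hitting the amplitudes $a_\pm(R;\xi)$, whose bounds carry a $\langle \log\xi\rangle$ factor.

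The trace derivative bounds improve over the mixed ones because the vector field $\partial_\xi + \partial_\eta$ is tangent to the diagonal $\xi = \eta$, along which the Sturm--Liouville cancellation is stable: applied to the WKB phases $\pm R(\xi \pm \eta)$ and to the product of amplitudes, $\partial_\xi + \partial_\eta$ produces uniformly bounded factors for $\xi+\eta\leq 1$, without the logarithmic losses encountered for the general second derivatives. Finally, the $L^p$ boundedness of $\mathcal{K}$ is obtained by splitting the kernel $K(\xi,\eta) = F(\xi,\eta)\rho(\eta)/(\xi - \eta)$ at the diagonal: the near-diagonal contribution behaves as a one-dimensional Hilbert-type operator with smooth, bounded coefficients, controlled by the second-order derivative bounds on $F$ and standard Calder\'on--Zygmund theory, while the far-from-diagonal part is estimated by Schur's test using the pointwise bounds on $F$ and the symbol bounds for $\rho$. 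Since $\rho$ is smooth and bounded away from $0$ and $\infty$ on compact subintervals of $(0,\infty)$, $L^p_{d\eta}$ boundedness follows from the $L^p_{\rho\,d\eta}$ version. The main technical obstacle throughout is the careful matching of the two regimes for $\phi(R;\xi)$ at $R\xi \sim 1$ and the tracking of logarithmic losses caused by the resonance at $\xi = 0$, which is precisely where the improvement afforded by the trace derivatives becomes crucial.
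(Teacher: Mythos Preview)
The paper does not give a proof of this lemma at all: it simply records the result as coming from \cite{KST2}, with the trace derivative bounds attributed to \cite{KMS}, and then moves on. Your proposal therefore goes beyond what the paper provides by sketching the actual mechanism (the integral representation of $F$ via the scaling vector field, the oscillatory/non-oscillatory case split for $\phi(R;\xi)$, and the Hilbert-type plus Schur decomposition for the $L^p$ bounds), which is indeed the strategy carried out in those references. Since the paper's own ``proof'' is just the citation, your sketch is consistent with it and in fact more informative.
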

	Using the notation $\hat{z}: = \mathcal{F}(z)$ for simplicity, we can then reformulate \eqref{eq:schrodingermodel} in the following way:
	\begin{equation}\label{eq:zFourier1}
		-i\big(\partial_{\tau} - (2+i\tilde{c}\alpha_0)\frac{\lambda_{\tau}}{\lambda} - \frac{\lambda_{\tau}}{\lambda}\xi\partial_{\xi}+\frac{\lambda_{\tau}}{\lambda}\mathcal{K}\big)\hat{z}(\xi) - \xi^2\hat{z}(\xi) = \mathcal{F}\big(E\big), 
	\end{equation}
	for suitable $\tilde{c}\in \R$. 
	In fact, due to the rapid polynomial decay of the functions we are working with, we shall be able to move the transference operator part to the right hand side. Thus we shall instead consider 
	\begin{equation}\label{eq:zFourier2}
		-i\big(\partial_{\tau} - (2+i\tilde{c}\alpha_0)\frac{\lambda_{\tau}}{\lambda} - \frac{\lambda_{\tau}}{\lambda}\xi\partial_{\xi}\big)\hat{z}(\xi) - \xi^2\hat{z}(\xi) = -i\frac{\lambda_{\tau}}{\lambda}\mathcal{K}\hat{z}(\xi) + \mathcal{F}\big(E\big), 
	\end{equation}
	
	\subsection{The linear propagator for the Schr\"odinger operator on the distorted Fourier side}\label{subsec:Schrodingerpropagator}
	
	Here we study the precise solution of 
	\begin{equation}\label{eq:simplelinear}
		-i\big(\partial_{\tau} - (2+i\tilde{c}\alpha_0)\frac{\lambda_{\tau}}{\lambda} - \frac{\lambda_{\tau}}{\lambda}\xi\partial_{\xi}\big)\hat{z}(\tau, \xi) - \xi^2\hat{z}(\tau, \xi) = G(\tau, \xi). 
	\end{equation}
	In fact, we have 
	\begin{prop}\label{prop:linpropagator} The solution of \eqref{eq:simplelinear} vanishing at $\tau = +\infty$ is given by 
		\begin{equation}\label{eq:inhomprop}
			\hat{z}(\tau, \xi) =( -i)\int_{\tau}^\infty\big( \frac{\lambda(\tau)}{\lambda(\sigma)}\big)^{\theta}\cdot e^{i\lambda^2(\tau)\xi^2\cdot \int_{\sigma}^{\tau}\lambda^{-2}(s)\,ds}\cdot G\big(\sigma, \frac{\lambda(\tau)}{\lambda(\sigma)}\xi\big)\,d\sigma,\,\theta = 2+i\tilde{c}\alpha_0.
		\end{equation}
		We shall also write the right hand expression for simplicity as 
		\begin{align*}
			\int_\tau^\infty S(\tau, \sigma, \xi)\cdot G\big(\sigma, \frac{\lambda(\tau)}{\lambda(\sigma)}\xi\big)\,d\sigma =: S(G)(\tau,\xi).
		\end{align*}
		and use the notation $S_1 = \Re(iS), S_2 = \Im(iS)$ for later reference. 
	\end{prop}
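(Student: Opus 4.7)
The plan is to establish Proposition~\ref{prop:linpropagator} by the classical method of characteristics, observing that after moving the transference operator to the right-hand side, the principal operator in \eqref{eq:simplelinear} is first-order in $(\tau,\xi)$ except for the multiplicative term $\xi^2 \hat{z}$, which can be absorbed into an oscillatory integrating factor.

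First I would identify the characteristic vector field. The transport part of \eqref{eq:simplelinear} is $\partial_\tau - \frac{\lambda_\tau}{\lambda}\xi\partial_\xi$, with characteristic ODE $\frac{d\xi}{d\sigma} = -\frac{\lambda_\sigma}{\lambda(\sigma)}\xi(\sigma)$, so $\xi\lambda$ is a constant of motion. Parametrizing the characteristic passing through $(\tau,\xi)$ by arc-parameter $\sigma$, one has $\xi(\sigma) = \frac{\lambda(\tau)}{\lambda(\sigma)}\xi$, with $\xi(\tau) = \xi$. Setting $h(\sigma) := \hat{z}(\sigma,\xi(\sigma))$ and using the chain rule, substitution of \eqref{eq:simplelinear} (after multiplying by $i$) converts the PDE into the linear ODE
\[
\frac{dh}{d\sigma} - \theta \frac{\lambda_\sigma}{\lambda(\sigma)} h - i\xi(\sigma)^2 h = iG(\sigma,\xi(\sigma)), \qquad \theta = 2 + i\tilde{c}\alpha_0.
\]

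Next I would solve this ODE via the integrating factor
\[
\mu(\sigma) := \lambda(\sigma)^{-\theta}\exp\Big(-i\int_\tau^\sigma \xi(s)^2\,ds\Big),
\]
giving $\frac{d}{d\sigma}\bigl(\mu h\bigr) = i\mu G$. Imposing the vanishing condition at $\sigma = +\infty$ and integrating from $\tau$ to $+\infty$ yields, after noting $\mu(\tau) = \lambda(\tau)^{-\theta}$,
\[
\hat{z}(\tau,\xi) = -i\int_\tau^\infty \Big(\frac{\lambda(\tau)}{\lambda(\sigma)}\Big)^\theta\, e^{-i\int_\tau^\sigma \xi(s)^2\,ds}\, G\bigl(\sigma,\tfrac{\lambda(\tau)}{\lambda(\sigma)}\xi\bigr)\,d\sigma.
\]
Substituting $\xi(s)^2 = \lambda(\tau)^2\lambda(s)^{-2}\xi^2$ and reversing the orientation of the interval converts the phase into $\exp\bigl(i\lambda^2(\tau)\xi^2\int_\sigma^\tau \lambda^{-2}(s)\,ds\bigr)$, exactly matching \eqref{eq:inhomprop}. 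A direct verification that the right-hand side formally solves \eqref{eq:simplelinear} can then be performed by differentiating under the integral, using $\partial_\tau \bigl(\lambda(\tau)^\theta \bigr) = \theta \frac{\lambda_\tau}{\lambda}\lambda(\tau)^\theta$, $\partial_\tau\bigl(\lambda(\tau)\xi\bigr) - \frac{\lambda_\tau}{\lambda}\xi\partial_\xi\bigl(\lambda(\tau)\xi\bigr) = 0$ so that $G(\sigma,\tfrac{\lambda(\tau)}{\lambda(\sigma)}\xi)$ is annihilated by the transport operator, and the identity $\bigl(\partial_\tau - \tfrac{\lambda_\tau}{\lambda}\xi\partial_\xi\bigr)\bigl[\lambda(\tau)^2\xi^2\int_\sigma^\tau \lambda^{-2}\,ds\bigr] = \xi^2$, which precisely matches the $\xi^2$ term of \eqref{eq:simplelinear}. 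The lower endpoint contribution at $\sigma = \tau$ supplies the source $-iG(\tau,\xi)$, as needed.

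The only non-routine point is justifying that the boundary term vanishes as $\sigma\to +\infty$, but this is automatic for the class of functions considered in the paper: by the conventions recorded at the end of the section on notation, all $G$ (and thus $\hat z$) are implicitly supported at Schr\"odinger time $\sigma \gtrsim \tau_*/2$ and placed in norms such as $\tau^{-N}L^2_{d\tau}$ with $N$ large, so the integral converges absolutely and the limit at $+\infty$ of $\mu h$ is zero. The remaining content of the proposition is simply the notational identification of $\hat z$ with $S(G)$ and the definitions of $S_1, S_2$, both of which are immediate from the formula above.
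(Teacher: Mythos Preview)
Your proof is correct and is precisely the ``direct computation'' the paper alludes to: the method of characteristics with an integrating factor is the canonical way to solve this first-order transport equation with a multiplicative $\xi^2$ potential, and your verification checks out line by line. There is no meaningful difference in approach to report.
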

	\begin{proof} Direct computation. 
	\end{proof}
	
	In order to derive basic bounds for the preceding propagator in terms of the source term, we state the following simple
	\begin{lem}\label{lem:basicL2} Assuming $N = N(\nu)$ is sufficiently large, we have the bound 
		\begin{align*}
			\Big\|\big\|\hat{z}\big\|_{L^2_{\rho\,d\xi}}\Big\|_{\tau^{1-N} L^2_{d\tau}}\lesssim\frac{1}{\sqrt{N}}\cdot \Big\|\big\|G\big\|_{L^2_{\rho\,d\xi}}\Big\|_{\tau^{-N}L^2_{d\tau}}. 
		\end{align*}
	\end{lem}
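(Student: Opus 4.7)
The plan is to deduce the bound from the explicit representation in Proposition~\ref{prop:linpropagator} via three steps: (i) Minkowski's integral inequality to pull the $L^2_{\rho\,d\xi}$ norm inside the $\sigma$-integral, (ii) a change of variable in the spectral representation to absorb the dilation $\xi\mapsto\tfrac{\lambda(\tau)}{\lambda(\sigma)}\xi$, and (iii) a one-variable weighted Hardy/Schur inequality in $\tau$.

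For (i), the exponential factor in \eqref{eq:inhomprop} has modulus one since its argument is real, and $|(\lambda(\tau)/\lambda(\sigma))^\theta|=(\lambda(\tau)/\lambda(\sigma))^{2}$ because $\Re\theta=2$. Minkowski then gives
\begin{equation*}
\|\hat z(\tau,\cdot)\|_{L^2_{\rho\,d\xi}}\;\leq\;\int_\tau^\infty\Big(\tfrac{\lambda(\tau)}{\lambda(\sigma)}\Big)^{2}\Big\|G\big(\sigma,\tfrac{\lambda(\tau)}{\lambda(\sigma)}\,\cdot\,\big)\Big\|_{L^2_{\rho\,d\xi}}\,d\sigma.
\end{equation*}
For (ii), set $a:=\lambda(\tau)/\lambda(\sigma)\in(0,1]$; substituting $\eta=a\xi$ yields $\|G(\sigma,a\,\cdot\,)\|_{L^2_\rho}^{2}=a^{-1}\int_0^\infty|G(\sigma,\eta)|^{2}\rho(\eta/a)\,d\eta$, and the asymptotics of $\rho$ from subsection~\ref{subsec:basicfourier} ($\rho(\eta)\sim\eta^{3}$ for $\eta\gtrsim 1$; $\rho(\eta)\sim\eta^{-1}|\log\eta|^{-2}$ for $\eta\lesssim 1$) yield the pointwise comparison $a^{-1}\rho(\eta/a)\lesssim a^{-4}|\log a|^{O(1)}\rho(\eta)$ uniformly in $\eta$. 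Therefore $\|G(\sigma,a\,\cdot\,)\|_{L^2_\rho}\lesssim a^{-2}|\log a|^{O(1)}\|G(\sigma,\cdot)\|_{L^2_\rho}$, the factor $a^{2}$ in the Minkowski bound cancels the $a^{-2}$, and absorbing $|\log a|^{O(1)}$ into $(\sigma/\tau)^{\epsilon}$ via $a=(\tau/\sigma)^{1/2+1/(4\nu)}$ gives
\begin{equation*}
\|\hat z(\tau,\cdot)\|_{L^2_\rho}\;\lesssim\;\int_\tau^\infty(\sigma/\tau)^{\epsilon}\,\|G(\sigma,\cdot)\|_{L^2_\rho}\,d\sigma
\end{equation*}
for any $\epsilon>0$.

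For (iii), the claim reduces to showing that the integral operator on $L^2_{d\sigma}([\tau_*,\infty))$ with kernel $K(\tau,\sigma)=\tau^{N-1-\epsilon}\sigma^{-N+\epsilon}\chi_{\sigma\geq\tau}$ is bounded with norm $\lesssim 1/\sqrt N$; indeed, after setting $\tilde g(\sigma)=\sigma^N\|G(\sigma,\cdot)\|_{L^2_\rho}$, the desired inequality reads precisely $\|\tau^{N-1}\int_\tau^\infty(\sigma/\tau)^\epsilon\sigma^{-N}\tilde g\,d\sigma\|_{L^2_{d\tau}}\lesssim N^{-1/2}\|\tilde g\|_{L^2_{d\sigma}}$. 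Schur's test with test weight $w(x)=x^{1/2}$ produces
\begin{equation*}
\int_\tau^\infty K(\tau,\sigma)\,\sigma^{1/2}\,d\sigma\lesssim\frac{\tau^{1/2}}{N},\qquad \int_{\tau_*}^\sigma K(\tau,\sigma)\,\tau^{1/2}\,d\tau\lesssim\frac{\sigma^{1/2}}{N},
\end{equation*}
so the operator norm is $\lesssim 1/N$, stronger than the claimed $1/\sqrt N$. The main obstacle is step (ii): the non-homogeneity of $\rho$ (logarithmic at the origin, polynomial at infinity) prevents a clean scaling identity on $L^2_{\rho\,d\xi}$, but the required bound with the mild logarithmic loss follows from an elementary case split using the asymptotics of $\rho$, and that loss is harmlessly absorbed thanks to the substantial gap between the $N^{-1}$ gain just obtained and the $N^{-1/2}$ stated in the lemma.
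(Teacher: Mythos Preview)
Your proof is correct and follows the same approach as the paper: Minkowski in $\sigma$, a scaling estimate on $L^2_{\rho\,d\xi}$ to absorb the dilation $\xi\mapsto\tfrac{\lambda(\tau)}{\lambda(\sigma)}\xi$ (which the paper packages into the single factor $(\sigma/\tau)^C$ without spelling out the case analysis on $\rho$), and then Schur's test on the resulting one-variable kernel. The only cosmetic difference is that the paper applies unweighted Schur to the kernel $(\sigma/\tau)^C\chi_{\tau\le\sigma}\tau^{N-1}\sigma^{-N}$ directly, while you use the weighted version with $w(x)=x^{1/2}$; both yield the $1/N$ bound, which is stronger than the stated $1/\sqrt N$.
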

	\begin{proof} This is a consequence of Schur's criterion, since we have the bounds
		\begin{align*}
			\big\|\big(\frac{\sigma}{\tau}\big)^C\cdot \chi_{\tau\leq \sigma}\tau^{N-1}\sigma^{-N}\big\|_{L_{\sigma}^{\infty}L^1_{d\tau}}\lesssim \frac{1}{N},\,\big\|\big(\frac{\sigma}{\tau}\big)^C\cdot\chi_{\tau\leq \sigma}\tau^{N-1}\sigma^{-N}\big\|_{L_\tau^{\infty}L^1_{d\sigma}} \lesssim \frac{1}{N}. 
		\end{align*}
		for fixed $C$ and $N\geq N_*(C)$ sufficiently large. 
		
	\end{proof}

	\subsection{A formal expansion of the exact solution of \eqref{eq:zFourier2}} In the preceding subsection we have given the propagator corresponding to the left hand side of \eqref{eq:zFourier2}, neglecting the delicate term 
	\[
	-i\frac{\lambda_{\tau}}{\lambda}\mathcal{K}\hat{z}(\xi)
	\]
	on the right hand side. Observe that the latter is still linear in the variable $\hat{z}$, but comes with a decaying factor $\frac{\lambda_{\tau}}{\lambda}\sim \tau^{-1}$. The latter compensates for the integration in the propagator $S$ in Prop.~\ref{prop:linpropagator}. We can then formally write the solution of \eqref{eq:zFourier2} in the form
	\begin{equation}\label{eq:formalexpansion}
		\hat{z}(\tau,\xi) = \sum_{j=0}^\infty S\big(-i\frac{\lambda_{\tau}}{\lambda}\mathcal{K}\circ S\big)^j\big(\mathcal{F}(E)\big)
	\end{equation}
	The convergence of this sum in the sense of the norms of Lemma~\ref{lem:basicL2}  follows easily (recalling also Lemma~\ref{lem:KboundsKST} ) by picking $N$ sufficiently large (deppending on $\nu$). This simple estimate results in a loss of one power of $\tau$, however, and in the sequel we shall have to rely on more sophisticated estimates in order to avoid such a loss.
	In the future, it shall be important to split the right hand side of \eqref{eq:formalexpansion} into a principal part and an error part, namely 
	\begin{equation}\label{eq:ScalK1}
		\hat{z}(\tau,\xi) = S\big(\mathcal{F}(E)\big) + S_{\mathcal{K}}\big(\mathcal{F}(E)\big),
	\end{equation}
	where we set 
	\begin{equation}\label{eq:ScalK2}
		S_{\mathcal{K}}\big(\mathcal{F}(E)\big): =  \sum_{j=1}^\infty S\big(-i\frac{\lambda_{\tau}}{\lambda}\mathcal{K}\circ S\big)^j\big(\mathcal{F}(E)\big).
	\end{equation}

	\subsection{Some useful identities}
	
	Recalling \eqref{eq:kappa1eqn}, \eqref{eq:kappa2eqn}, it is useful to have explicit formulae for the source terms $\Im\big(\mathcal{L}z\big)|_{R = 0}, \Re\big(\mathcal{L}z\big)|_{R = 0}$. This we can easily accomplish by means of the Fourier propagator described in the preceding. In fact, assume that $E(\tau, R)$ is a {\it{real-valued}} source term in \eqref{eq:zeqn2}, such as $\lambda^{-2}y_z\cdot W$ (which here we interpret as a source term). Application of the propagator \eqref{eq:inhomprop} to the source term $G(\tau, \xi) = \mathcal{F}\big(E(\tau, \cdot)\big)(\xi)\in \R$ and reversing the distorted Fourier transform results in 
	\begin{align*}
		z(\tau, R) := \int_\tau^\infty \int_0^\infty \phi(R;\xi)\cdot S(\tau, \sigma, \xi)\cdot \mathcal{F}(E)\big(\sigma, \frac{\lambda(\tau)}{\lambda(\sigma)}\xi\big)\,\rho(\xi)\,d\xi d\sigma. 
	\end{align*}
	This implies that 
	\begin{equation}\label{eq:ImzR0}\begin{split}
			&\Im \mathcal{L}z|_{R = 0}\\& = -\int_\tau^\infty \int_0^\infty \big(\frac{\lambda(\tau)}{\lambda(\sigma)}\big)^2\xi^2\cdot \cos\big(\lambda^2(\tau)\xi^2\int_{\sigma}^{\tau}\lambda^{-2}(s)\,ds\big)\cdot \mathcal{F}(E)\big(\sigma, \frac{\lambda(\tau)}{\lambda(\sigma)}\xi\big)\,\rho(\xi)\,d\xi d\sigma
	\end{split}\end{equation}
	provided we set $\alpha_0 = 0$. \\
	Similarly, we infer 
	\begin{equation}\label{eq:ReR0}\begin{split}
			&\Re \mathcal{L}z|_{R = 0}\\& = \int_\tau^\infty \int_0^\infty \big(\frac{\lambda(\tau)}{\lambda(\sigma)}\big)^2\xi^2\cdot \sin\big(\lambda^2(\tau)\xi^2\int_{\sigma}^{\tau}\lambda^{-2}(s)\,ds\big)\cdot \mathcal{F}(E)\big(\sigma, \frac{\lambda(\tau)}{\lambda(\sigma)}\xi\big)\,\rho(\xi)\,d\xi d\sigma
	\end{split}\end{equation}
	
	\subsection{The linear propagator of the wave equation in the scaled coordinates}\label{subse:waveparametrix}\label{subsec:subsec:standardFourieronR4}
	
	Here we provide details as to how to control the propagator $\Box^{-1}F$, which by definition is the solution of the inhomogeneous wave equation $\Box u = F$ and which vanishes toward $\tau = +\infty$, it being given that the sources $F$ always vanish polynomially towards $\tau = +\infty$. In fact, rather than work with the Schr\"odinger time $\tau = \int_t^\infty \lambda^2(s)\,ds$, we work with the wave time $\tilde{\tau} = \int_t^\infty \lambda(s)\,ds$. As the elliptic part of $\Box $ is simply the standard $\triangle_{\R^4}$ in the radial setting, given (when re-scaled by $\lambda^{-2}$) by 
	\begin{align*}
		\partial_R^2 + \frac{3}{R}\partial_R, 
	\end{align*}
	we first exhibit the corresponding Fourier base $\phi_{\R^4}(R;\xi)$. From 
	\begin{align*}
		\big(\partial_R^2 + \frac{3}{R}\partial_R\big)\phi_{\R^4}(R;\xi) = -\xi^2\phi_{\R^4}(R;\xi), 
	\end{align*}
	we infer 
	\begin{align*}
		\big(-\partial_R^2 + \frac{3}{4R^2}\big)\big(R^{\frac32}\phi_{\R^4}\big)(R;\xi) = \xi^2R^{\frac32}\phi_{\R^4}(R;\xi). 
	\end{align*}
	From \cite{KST2} we can set 
	\[
	R^{\frac32}\phi_{\R^4}(R;\xi) = \xi^{-1}R^{\frac12}J_1(R\xi), 
	\]
	with associated spectral density $\rho_{\R^4}(\xi) = c\xi^3$ for suitable $c\in \mathbb{R}_+$. In particular, there is no transference operator since 
	\[
	(R\partial_R)\phi_{\R^4}(R;\xi) = (\xi\partial_{\xi})\phi_{\R^4}(R;\xi).
	\]
	In terms of the coordinates $(\tilde{\tau}, R)$, the equation $\Box n = F$ is transformed into 
	\begin{equation}\label{eq:nFintildetauR}
		-\big(\partial_{\tilde{\tau}}+ \frac{\lambda_{\tilde{\tau}}}{\lambda}R\partial_R\big)^2 n- \frac{\lambda_{\tilde{\tau}}}{\lambda}\big(\partial_{\tilde{\tau}}+ \frac{\lambda_{\tilde{\tau}}}{\lambda}R\partial_R\big)n + \big(\partial_R^2 + \frac{3}{R}\partial_R\big)n = \lambda^{-2}F. 
	\end{equation}
	Using the Fourier representation of $n$ with $\hat{n}(\tilde{\tau}, \xi) :=\mathcal{F}_{\R^4}(n):= x(\tilde{\tau},\xi)$, say, we obtain 
	\begin{align*}
		R\partial_{R}n &= c\int_0^\infty (R\partial_R)\phi(R;\xi) x(\xi)\xi^3\,d\xi\\
		& = c\int_0^\infty (\xi\partial_{\xi})\phi(R;\xi) x(\xi)\xi^3\,d\xi\\
		& = -c\int_0^\infty \phi(R;\xi) (\xi\partial_{\xi} + 4)x(\xi)\xi^3\,d\xi\\
	\end{align*}
	Thus in terms of the Fourier coefficients, introducing the dilation type operator 
	\begin{align*}
		\tilde{\mathcal{D}}_{\tau}: = \partial_{\tilde{\tau}} - \frac{\lambda_{\tilde{\tau}}}{\lambda}(\xi\partial_{\xi} + 4), 
	\end{align*}
	the preceding wave equation is transformed into (with $ \beta_{\tilde{\tau}} =  \frac{\lambda_{\tilde{\tau}}}{\lambda}$)
	\begin{align*}
		\tilde{\mathcal{D}}_{\tilde{\tau}}^2x(\tilde{\tau},\xi) + \beta_{\tilde{\tau}}\tilde{\mathcal{D}}_{\tilde{\tau}}x + \xi^2 x(\tilde{\tau}, \xi) = -\lambda^{-2}\mathcal{F}_{\R^4}(F)(\tilde{\tau},\xi)
	\end{align*}
	The inhomogeneous propagator for the operator on the left can be written down explicitly as follows: 
	\begin{equation}\label{eq:wavepropagator}\begin{split}
			x(\tilde{\tau},\xi) &= \int_{\tilde{\tau}}^\infty \frac{\lambda^3(\tilde{\tau})}{\lambda^3(\tilde{\sigma})}\cdot \frac{\sin\big[\lambda(\tilde{\tau})\xi\int_{\tilde{\tau}}^{\tilde{\sigma}}\lambda^{-1}(s)\,ds\big]}{\xi}\cdot \lambda^{-2}(\tilde{\sigma})\mathcal{F}_{\R^4}(F)(\tilde{\sigma},\frac{\lambda(\tilde{\tau})}{\lambda(\tilde{\sigma})}\xi)\,d\tilde{\sigma}\\
			&=:  \int_{\tilde{\tau}}^\infty U(\tilde{\tau}, \tilde{\sigma}, \xi)\cdot \lambda^{-2}(\tilde{\sigma})\mathcal{F}_{\R^4}(F)(\tilde{\sigma},\frac{\lambda(\tilde{\tau})}{\lambda(\tilde{\sigma})}\xi)\,d\tilde{\sigma},\\
	\end{split}\end{equation}
	corresponding to the 'physical function' 
	\begin{equation}\label{eq:nflatfourierrepresent}
		n(\tilde{\tau}, R) = \int_0^\infty \phi_{\R^4}(R;\xi)x(\tilde{\tau}, \xi)\rho_{\R^4}(\xi)\,d\xi. 
	\end{equation}
	To control the size of the propagator, we have 
	\begin{lem}\label{lem:wavebasicinhom} With the preceding notations, we have the bound 
		\begin{align*}
			\Big\|\big\|\langle R\rangle^{-1-\delta_0}n\big\|_{L^2_{R^3\,dR}}\Big\|_{L^2_{\tau^{-N}\,d\tau}}&\lesssim_{\delta_0}\Big\|\big\|\lambda^{-2}\xi^{-1-\delta_0}\langle\xi\rangle^{\delta_0}\mathcal{F}_{\R^4}(F)\big\|_{L^2_{\rho_{\R^4}\,d\xi}}\Big\|_{L^2_{\tau^{-N}\,d\tau}}\\
			& + \Big\|\big\|\lambda^{-2}\langle\xi\rangle^{-1}\langle\partial_{\xi}\rangle^{1+\delta_0}\mathcal{F}_{\R^4}(F)\big\|_{L^2_{\rho_{\R^4}\,d\xi}}\Big\|_{L^2_{\tau^{-N}\,d\tau}}.\\
		\end{align*}
		Further, denoting by $\tilde{\tau}$ the 'wave time', we have the estimate 
		\begin{align*}
			\Big\|\big\|\langle R\rangle^{-1-\delta_0}\partial_{\tilde{\tau}}n\big\|_{L^2_{R^3\,dR}}\Big\|_{L^2_{\tau^{-N}\,d\tau}}&\lesssim_{\delta_0}\Big\|\big\|\lambda^{-2}\xi^{-\delta_0}\langle\xi\rangle^{\delta_0}\mathcal{F}_{\R^4}(F)\big\|_{L^2_{\rho_{\R^4}\,d\xi}}\Big\|_{L^2_{\tau^{-N}\,d\tau}}\\
			& + \Big\|\big\|\lambda^{-2}\langle\partial_{\xi}\rangle^{1+\delta_0}\mathcal{F}_{\R^4}(F)\big\|_{L^2_{\rho_{\R^4}\,d\xi}}\Big\|_{L^2_{\tau^{-N}\,d\tau}}.\\
		\end{align*}
		In particular we have the estimate 
		\begin{align*}
			\Big\|\big\|\langle R\rangle^{-1-\delta_0}\partial_{\tau}n\big\|_{L^2_{R^3\,dR}}\Big\|_{L^2_{\tau^{-N-\frac12-\frac{1}{4\nu}}\,d\tau}}&\lesssim_{\delta_0}\Big\|\big\|\lambda^{-2}\xi^{-\delta_0}\langle\xi\rangle^{\delta_0}\mathcal{F}_{\R^4}(F)\big\|_{L^2_{\rho_{\R^4}\,d\xi}}\Big\|_{L^2_{\tau^{-N}\,d\tau}}\\
			& + \Big\|\big\|\lambda^{-2}\langle\partial_{\xi}\rangle^{1+\delta_0}\mathcal{F}_{\R^4}(F)\big\|_{L^2_{\rho_{\R^4}\,d\xi}}\Big\|_{L^2_{\tau^{-N}\,d\tau}}.\\
		\end{align*}
		We also have the estimates 
		\begin{align*}
			&\Big\|\big\|n\big\|_{L^2_{R^3\,dR}}\Big\|_{L^2_{\tau^{-N}\,d\tau}}\lesssim \Big\|\tilde{\tau}\cdot\big\|\lambda^{-2}\xi^{-1}\mathcal{F}_{\R^4}(F)\big\|_{L^2_{\rho_{\R^4}\,d\xi}}\Big\|_{L^2_{\tau^{-N}\,d\tau}}\\
			&\Big\|\big\|n\big\|_{\dot{H}^1_{R^3\,dR}}\Big\|_{L^2_{\tau^{-N}\,d\tau}}\lesssim \Big\|\tilde{\tau}\cdot\big\|\lambda^{-2}\mathcal{F}_{\R^4}(F)\big\|_{L^2_{\rho_{\R^4}\,d\xi}}\Big\|_{L^2_{\tau^{-N}\,d\tau}}\\
			&\Big\|\big\|n\big\|_{L^2_{R^3\,dR}}\Big\|_{L^2_{\tau^{-N}\,d\tau}}\lesssim \Big\|\tilde{\tau}^2\cdot\big\|\lambda^{-2}\mathcal{F}_{\R^4}(F)\big\|_{L^2_{\rho_{\R^4}\,d\xi}}\Big\|_{L^2_{\tau^{-N}\,d\tau}}.\\
		\end{align*}
		Recall that $\tau$ is the 'Schr\"odinger time while $\tilde{\tau}$ is the 'wave time'. 
	\end{lem}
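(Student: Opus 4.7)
The plan is to prove each estimate by passing to the distorted Fourier side associated with the radial $\mathbb{R}^4$ Laplacian (where Plancherel gives $\|n\|_{L^2_{R^3\,dR}} \sim \|x\|_{L^2_{\rho_{\mathbb{R}^4}\,d\xi}}$), applying the explicit propagator formula \eqref{eq:wavepropagator}, and then invoking a Schur-type bound in $(\tilde{\tau},\tilde{\sigma})$ controlled by the $\lambda^3(\tilde{\tau})/\lambda^3(\tilde{\sigma})$ factor. The key elementary input is the pair of pointwise bounds on the propagator kernel,
\[
|U(\tilde{\tau},\tilde{\sigma},\xi)| \;\lesssim\; \frac{\lambda^3(\tilde{\tau})}{\lambda^3(\tilde{\sigma})}\cdot \min\!\Bigl(\xi^{-1},\; \lambda(\tilde{\tau})\!\int_{\tilde{\tau}}^{\tilde{\sigma}}\!\lambda^{-1}(s)\,ds\Bigr),
\]
coming from $|\sin\theta|\leq\min(1,|\theta|)$. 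The first bound is used when we can afford $\xi^{-1}$ on the source (the weighted $\langle R\rangle^{-1-\delta_0}$ estimates), while the second (which is $\lesssim \tilde{\sigma}-\tilde{\tau}\lesssim \tilde{\tau}$ after absorbing one $d\tilde{\sigma}$-integration into the Schur weight) is used for the unweighted $L^2_{R^3\,dR}$ and $\dot H^1$ estimates that carry the $\tilde{\tau}$ or $\tilde{\tau}^2$ factor on the right.

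The unweighted estimates are the easiest. By Plancherel, $\|n\|_{\dot H^1_{R^3\,dR}}\sim \|\xi x\|_{L^2_{\rho_{\mathbb{R}^4}\,d\xi}}$, so using the second pointwise bound on $U$ I estimate
\[
\bigl\|\xi x(\tilde{\tau},\cdot)\bigr\|_{L^2_{\rho_{\mathbb{R}^4}\,d\xi}}\;\lesssim\;\int_{\tilde{\tau}}^\infty \frac{\lambda^3(\tilde{\tau})}{\lambda^3(\tilde{\sigma})}\cdot\bigl(\lambda(\tilde{\tau})\!\int_{\tilde{\tau}}^{\tilde{\sigma}}\!\lambda^{-1}(s)\,ds\bigr)\cdot \lambda^{-2}(\tilde{\sigma})\bigl\|\mathcal{F}_{\mathbb{R}^4}(F)(\tilde{\sigma},\cdot)\bigr\|_{L^2_{\rho_{\mathbb{R}^4}\,d\xi}}\,d\tilde{\sigma},
\]
after absorbing the rescaling in $\xi$ via the change of variable $\xi\mapsto \frac{\lambda(\tilde{\tau})}{\lambda(\tilde{\sigma})}\xi$, which contributes only bounded powers of $\lambda(\tilde{\tau})/\lambda(\tilde{\sigma})$. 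A Schur argument in $(\tau,\sigma)$ with weight $\tau^{-N}$, analogous to Lemma~\ref{lem:basicL2} and using $\tilde{\tau}\sim_\nu \tau^{1/2-1/(4\nu)}$, then converts the inner $d\tilde{\sigma}$-integral to a bounded operator on $L^2_{\tau^{-N}d\tau}$ at the cost of the factor $\tilde{\tau}$ on the right, giving the $\dot H^1$ bound; the $L^2_{R^3\,dR}$ bound with $\tilde{\tau}^2$ on the right comes identically but with the extra $\xi^{-1}$ from Plancherel handled by the same second kernel bound, while the sharper $L^2_{R^3\,dR}$ bound with $\tilde{\tau}$ on the right and $\xi^{-1}$ on the source uses the first kernel bound $|U|\lesssim \xi^{-1}\frac{\lambda^3(\tilde{\tau})}{\lambda^3(\tilde{\sigma})}$.

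For the weighted estimates with $\langle R\rangle^{-1-\delta_0}$, I observe that on the Fourier side, multiplication by $\langle R\rangle^{-1-\delta_0}$ corresponds, after dualization, to roughly $\xi^{-1-\delta_0}$ at low frequency and $\langle\partial_\xi\rangle^{1+\delta_0}$ at high frequency: concretely, I bound
\[
\bigl\|\langle R\rangle^{-1-\delta_0}n\bigr\|_{L^2_{R^3\,dR}}\;\lesssim\;\bigl\|\chi_{\xi\leq 1}\xi^{-1-\delta_0}x\bigr\|_{L^2_{\rho_{\mathbb{R}^4}\,d\xi}} + \bigl\|\chi_{\xi>1}\langle\xi\rangle^{-1}\langle\partial_\xi\rangle^{1+\delta_0}x\bigr\|_{L^2_{\rho_{\mathbb{R}^4}\,d\xi}},
\]
which follows from the asymptotics of $\phi_{\mathbb{R}^4}(R;\xi) = \xi^{-1}R^{-1}J_1(R\xi)$, Bessel asymptotics, and fractional integration by parts in $\xi$. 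I then apply this bound to $x = x(\tilde{\tau},\cdot)$ from \eqref{eq:wavepropagator}: the low frequency piece absorbs one factor of $\xi^{-1}$ from $|U|\lesssim \xi^{-1}\frac{\lambda^3(\tilde{\tau})}{\lambda^3(\tilde{\sigma})}$, leaving $\xi^{-\delta_0}$ to hit the source; the high-frequency piece requires distributing $\partial_\xi$ either on $U$ or on $\mathcal{F}_{\mathbb{R}^4}(F)(\tilde{\sigma},\frac{\lambda(\tilde{\tau})}{\lambda(\tilde{\sigma})}\xi)$. On $U$ each $\partial_\xi$ produces a factor $\lambda(\tilde{\tau})\!\int\!\lambda^{-1}\lesssim \tilde{\tau}$, which is innocuous at high frequency because of the additional $\langle\xi\rangle^{-1}$ weight; on the source it gives $\partial_\xi\mathcal{F}_{\mathbb{R}^4}(F)$ at a rescaled frequency, which is exactly what the right-hand side norms control.

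The $\partial_{\tilde{\tau}}$ estimate is handled by computing $\partial_{\tilde{\tau}}U$ directly: the dominant term is the one where $\partial_{\tilde{\tau}}$ hits the phase, producing a $\cos(\cdot)$ with coefficient $\lambda(\tilde{\tau})\xi\cdot[1+\frac{\lambda_{\tilde{\tau}}}{\lambda}\!\int\lambda^{-1}]$, which cancels one power of $\xi^{-1}$ in $|U|$ and therefore explains the weaker requirement $\xi^{-\delta_0}$ on the source instead of $\xi^{-1-\delta_0}$; the subdominant terms where $\partial_{\tilde{\tau}}$ hits $\lambda^3(\tilde{\tau})$ or the rescaling factor in the argument of $\mathcal{F}_{\mathbb{R}^4}(F)$ produce either a factor $\frac{\lambda_{\tilde{\tau}}}{\lambda}\lesssim \tilde{\tau}^{-1}$ which is absorbable, or a $\xi\partial_\xi$ on the source that is controlled by the $\langle\partial_\xi\rangle^{1+\delta_0}$ norm. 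The final estimate for $\partial_\tau n$ is then immediate from $\partial_\tau = \lambda^{-1}\partial_{\tilde{\tau}}$ together with the algebraic relation $\tilde{\tau}\sim_\nu \tau^{1/2-1/(4\nu)}$, which converts the $\tau^{-N}L^2_{d\tau}$ norm with $\lambda^{-1}\sim\tau^{-1/2-1/(4\nu)+}$ into the stated $\tau^{-N-1/2-1/(4\nu)}L^2_{d\tau}$ norm. The main subtlety throughout is bookkeeping the Jacobian and the powers of $\lambda(\tilde{\tau})/\lambda(\tilde{\sigma})$ from the rescaling $\xi\mapsto \frac{\lambda(\tilde{\tau})}{\lambda(\tilde{\sigma})}\xi$ so that Schur's test applies uniformly in all regimes; this is the one place where the assumption $N$ large (depending on $\nu$) is used.
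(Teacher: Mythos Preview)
Your treatment of the three unweighted estimates at the end is correct and matches the paper's one-line remark that these follow ``by direct application of Plancherel's theorem.''

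For the weighted estimate, however, there is a genuine gap. Your strategy is to factorize: first bound $\|\langle R\rangle^{-1-\delta_0}n\|_{L^2}$ by Fourier-side quantities of $x(\tilde{\tau},\xi)$, and then bound those quantities via the propagator formula. The problem is that this factorization severs the link between the physical weight $\langle R\rangle^{-1-\delta_0}$ and the $\tilde{\sigma}$-integration. Concretely, when $\partial_\xi$ falls on $U(\tilde{\tau},\tilde{\sigma},\xi)$ you \emph{produce} a factor $\lambda(\tilde{\tau})\int_{\tilde{\tau}}^{\tilde{\sigma}}\lambda^{-1}\sim (\tilde{\sigma}-\tilde{\tau})$; your claim that this is ``innocuous at high frequency because of the additional $\langle\xi\rangle^{-1}$ weight'' cannot be right, since $\langle\xi\rangle^{-1}$ is a frequency weight and has no bearing on the growth in $\tilde{\sigma}-\tilde{\tau}$. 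The resulting Schur kernel carries a factor $(\tilde{\sigma}-\tilde{\tau})$ rather than $\langle\tilde{\sigma}-\tilde{\tau}\rangle^{-1-\delta_0}$, so its $L^1_{d\tilde{\sigma}}$-norm is $\sim\tilde{\tau}^2$, and the estimate fails. A related issue appears at low frequency: with $|U|\lesssim\xi^{-1}$ you would need $\xi^{-2-\delta_0}\hat F$ on the right, not $\xi^{-1-\delta_0}\hat F$, and again there is no integrability in $\tilde{\sigma}$.

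The paper's proof avoids this by \emph{not} separating the two steps. It decomposes the double integral by smooth cutoffs according to whether $R\ll\tilde{\sigma}-\tilde{\tau}$, $R\sim\tilde{\sigma}-\tilde{\tau}$, or $R\gg\tilde{\sigma}-\tilde{\tau}$. In the diagonal region the weight $\langle R\rangle^{-1-\delta_0}$ becomes $\langle\tilde{\sigma}-\tilde{\tau}\rangle^{-1-\delta_0}$ and directly furnishes the Schur kernel $\chi_{\tilde{\sigma}\geq\tilde{\tau}}\frac{\tilde{\tau}^N}{\tilde{\sigma}^N}\langle\tilde{\sigma}-\tilde{\tau}\rangle^{-1-\delta_0}$, yielding $\|\xi^{-1}\hat F\|_{L^2_\rho}$ on the right. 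In the region $R\ll\tilde{\sigma}-\tilde{\tau}$ one integrates by parts in $\xi$, but now using the \emph{combined} oscillatory phase from $\phi_{\mathbb{R}^4}(R;\xi)$ and $U(\tilde{\tau},\tilde{\sigma},\xi)$: since these are non-resonant there, the integration by parts \emph{gains} a factor $(\tilde{\sigma}-\tilde{\tau})^{-1-\delta_0}$ (rather than losing $(\tilde{\sigma}-\tilde{\tau})$), and the $\partial_\xi$ landing on $\hat F$ is what produces the second norm on the right-hand side. The region $R\gg\tilde{\sigma}-\tilde{\tau}$ is handled by the weight alone. The essential point you are missing is that the $\langle R\rangle^{-1-\delta_0}$ weight and the phase of $U$ must be played off against each other region by region; they cannot be processed sequentially.
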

	\begin{rem}\label{rem:wavepropagatorpartialtau} We stress the important temporal decay improving feature of the third estimate which is due to the difference between $\tau$ and $\tilde{\tau}$. 
	\end{rem}
	\begin{rem}\label{rem:wavepropagatormorerefined} The following proof in fact reveals that we can write $n = n_1 + n_2$ such that 
		\begin{align*}
			&\Big\|\big\|\langle R\rangle^{-1-\delta_0}n_1\big\|_{L^2_{R^3\,dR}}\Big\|_{L^2_{\tau^{-N}\,d\tau}}\lesssim_{\delta_0}\Big\|\big\|\lambda^{-2}\xi^{-1-\delta_0}\langle\xi\rangle^{\delta_0}\mathcal{F}_{\R^4}(F)\big\|_{L^2_{\rho_{\R^4}\,d\xi}}\Big\|_{L^2_{\tau^{-N}\,d\tau}},\\&
			\Big\|\big\|n_2\big\|_{L^2_{R^3\,dR}}\Big\|_{L^2_{\tau^{-N}\,d\tau}}\lesssim_{\delta_0} \Big\|\big\|\lambda^{-2}\langle\xi\rangle^{-1}\langle\partial_{\xi}\rangle^{1+\delta_0}\mathcal{F}_{\R^4}(F)\big\|_{L^2_{\rho_{\R^4}\,d\xi}}\Big\|_{L^2_{\tau^{-N}\,d\tau}}.
		\end{align*}
	\end{rem}
	\begin{proof} Observe that 
		\begin{align*}
			\lambda(\tilde{\tau}, \tilde{\sigma}): = \lambda(\tilde{\tau})\cdot \int_{\tilde{\tau}}^{\tilde{\sigma}}\lambda^{-1}(s)\,ds\sim_{\nu} \tilde{\sigma} - \tilde{\tau}
		\end{align*}
		for $\tilde{\tau}\leq \tilde{\sigma}\lesssim \tilde{\tau}$ and $\lambda(\tilde{\tau}, \tilde{\sigma})\sim \tilde{\tau}$ for $\tilde{\sigma}\gg \tilde{\tau}$. Decompose 
		\begin{align*}
			&\int_0^\infty \phi_{\R^4}(R;\xi)x(\tilde{\tau}, \xi)\rho_{\R^4}(\xi)\,d\xi\\&
			=  \int_0^\infty \int_{\tilde{\tau}}^\infty \phi_{\R^4}(R;\xi)U(\tilde{\tau}, \tilde{\sigma}, \xi)\cdot \lambda^{-2}(\tilde{\sigma})\mathcal{F}_{\R^4}(F)(\tilde{\sigma},\frac{\lambda(\tilde{\tau})}{\lambda(\tilde{\sigma})}\xi)\rho_{\R^4}(\xi)\,d\tilde{\sigma}d\xi\\
			& = \sum_{j=1}^3 n_j(\tilde{\tau}, R), 
		\end{align*}
		where we define 
		\begin{equation}\label{eq:n1first}\begin{split}
				&n_1(\tilde{\tau}, R) : \\&= \int_0^\infty \int_{\tilde{\tau}}^\infty \chi_{R\ll \tilde{\sigma}-\tilde{\tau}}\phi_{\R^4}(R;\xi)U(\tilde{\tau}, \tilde{\sigma}, \xi)\cdot \lambda^{-2}(\tilde{\sigma})\mathcal{F}_{\R^4}(F)(\tilde{\sigma},\frac{\lambda(\tilde{\tau})}{\lambda(\tilde{\sigma})}\xi)\rho_{\R^4}(\xi)\,d\tilde{\sigma}d\xi
		\end{split}\end{equation}
		and $n_j(\tilde{\tau}, R), j = 2, 3$ are defined similarly by inclusion of cutoffs $ \chi_{R\sim \tilde{\sigma}-\tilde{\tau}},\,  \chi_{R\gg\tilde{\sigma}-\tilde{\tau}}$, respectively. Consider first the case $j = 2$. By inspection, we get 
		\begin{align*}
			&\Big|U(\tilde{\tau}, \tilde{\sigma}, \xi)\cdot \lambda^{-2}(\tilde{\sigma})\mathcal{F}_{\R^4}(F)(\tilde{\sigma},\frac{\lambda(\tilde{\tau})}{\lambda(\tilde{\sigma})}\xi)\Big|\\
			&\lesssim \big(\frac{\lambda(\tilde{\tau})}{\lambda(\tilde{\sigma})}\big)^4\cdot \lambda^{-2}(\tilde{\sigma})\Big|\Big(\xi^{-1}\mathcal{F}_{\R^4}(F)\Big)(\tilde{\sigma},\frac{\lambda(\tilde{\tau})}{\lambda(\tilde{\sigma})}\xi)\Big|,
		\end{align*}
		and so 
		\begin{align*}
			\Big\|U(\tilde{\tau}, \tilde{\sigma}, \xi)\cdot \lambda^{-2}(\tilde{\sigma})\mathcal{F}_{\R^4}(F)(\tilde{\sigma},\frac{\lambda(\tilde{\tau})}{\lambda(\tilde{\sigma})}\xi)\Big\|_{L^2_{\rho_{\R^4}\,d\xi}}\lesssim \big(\frac{\lambda(\tilde{\tau})}{\lambda(\tilde{\sigma})}\big)^2\cdot \Big\|\xi^{-1}\mathcal{F}_{\R^4}(\lambda^{-2}F)(\sigma, \cdot)\Big\|_{L^2_{\rho_{\R^4}\,d\xi}}. 
		\end{align*}
		Using Plancherel's theorem, we infer that 
		\begin{align*}
			\Big\|\langle R\rangle^{-1-\delta_0}n_2(\tau, R)\Big\|_{L^2_{R^3\,dR}}\lesssim  \int_{\tilde{\tau}}^\infty\langle\tilde{\sigma} - \tilde{\tau}\rangle^{-1-\delta_0}\cdot \Big\|\xi^{-1}\mathcal{F}_{\R^4}(\lambda^{-2}F)(\sigma, \cdot)\Big\|_{L^2_{\rho_{\R^4}\,d\xi}}\,d\tilde{\sigma}. 
		\end{align*}
		Setting $K(\tilde{\sigma}, \tilde{\tau}): = \chi_{\tilde{\sigma}\geq \tilde{\tau}}\frac{\tilde{\tau}^N}{\tilde{\sigma}^N}\cdot \langle\tilde{\sigma} - \tilde{\tau}\rangle^{-1-\delta_0}$ and applying Schur's test, we deduce 
		\begin{align*}
			\Big\|\big\|\langle R\rangle^{-1-\delta_0}n_2(\tau, R)\big\|_{L^2_{R^3\,dR}}\Big\|_{\tau^{-N}d\tau}\lesssim \Big\|\big\|\xi^{-1}\mathcal{F}_{\R^4}(\lambda^{-2}F)(\sigma, \cdot)\big\|_{L^2_{\rho_{\R^4}\,d\xi}}\Big\|_{\sigma^{-N}L^2_{d\sigma}}, 
		\end{align*}
		as desired. 
		\\
		
		Next, to estimate $n_1$, we need to perform integration by parts with respect to $\xi$, letting $\delta_0 = 1$ for now. Since $R\ll \tilde{\sigma} - \tilde{\tau}$, this leads to the {\it{schematically written}} relation 
		\begin{align*}
			&n_1(\tilde{\tau}, R) : \\&= \int_0^\infty \int_{\tilde{\tau}}^\infty \chi_{R\ll \tilde{\sigma}-\tilde{\tau}}\phi_{\R^4}(R;\xi)U(\tilde{\tau}, \tilde{\sigma}, \xi)\cdot \lambda^{-2}(\tilde{\sigma})\mathcal{F}_{\R^4}(F)(\tilde{\sigma},\frac{\lambda(\tilde{\tau})}{\lambda(\tilde{\sigma})}\xi)\rho_{\R^4}(\xi)\,d\tilde{\sigma}d\xi\\
			& = n_{11} + n_{12},
		\end{align*}
		where we set 
		\begin{align*}
			&n_{11}: = \int_0^\infty \int_{\tilde{\tau}}^\infty \chi_{R\ll \tilde{\sigma}-\tilde{\tau}}\phi_{\R^4}(R;\xi)\frac{U(\tilde{\tau}, \tilde{\sigma}, \xi)}{\xi^{1+\delta_0}(\tilde{\sigma} - \tilde{\tau})^{1+\delta_0}}\\&\hspace{6cm}\cdot \lambda^{-2}(\tilde{\sigma})\mathcal{F}_{\R^4}(F)(\tilde{\sigma},\frac{\lambda(\tilde{\tau})}{\lambda(\tilde{\sigma})}\xi)\rho_{\R^4}(\xi)\,d\tilde{\sigma}d\xi\\
			& n_{12}: =  \int_0^\infty \int_{\tilde{\tau}}^\infty \chi_{R\ll \tilde{\sigma}-\tilde{\tau}}\phi_{\R^4}(R;\xi)\frac{U(\tilde{\tau}, \tilde{\sigma}, \xi)}{(\tilde{\sigma} - \tilde{\tau})^{1+\delta_0}}\\&\hspace{4.5cm}\cdot \lambda^{-2}(\tilde{\sigma})\partial_{\xi}^{1+\delta_0}\Big(\mathcal{F}_{\R^4}(F)(\tilde{\sigma},\frac{\lambda(\tilde{\tau})}{\lambda(\tilde{\sigma})}\xi)\Big)\rho_{\R^4}(\xi)\,d\tilde{\sigma}d\xi\\
		\end{align*}
		Then arguing precisely as for the term $n_2$, we easily infer the desired bound in the {\it{high-frequency regime}} $\xi\gtrsim 1$ first with $\delta_0 = 1$, and then for $0<\delta_0<1$ via interpolation. On the other hand, for the low frequency regime, we use 
		\begin{align*}
			\big\|\langle R\rangle^{-1-\delta_0}\phi_{\R^4}(R;\xi)\big\|_{L^2_{R^3\,dR}}\lesssim \xi^{-1+\delta_0}\langle\log\xi\rangle
		\end{align*}
		for $\delta_0 = 1$, 
		and so 
		\begin{align*}
			&\Big\|\langle R\rangle^{-1-\delta_0}\int_0^\infty \int_{\tilde{\tau}}^\infty \chi_{\xi\lesssim1}\chi_{R\ll \tilde{\sigma}-\tilde{\tau}}\phi_{\R^4}(R;\xi)\frac{U(\tilde{\tau}, \tilde{\sigma}, \xi)}{\xi^{1+\delta_0}(\tilde{\sigma} - \tilde{\tau})^{1+\delta_0}}\\&\hspace{5cm}\cdot \lambda^{-2}(\tilde{\sigma})\mathcal{F}_{\R^4}(F)(\tilde{\sigma},\frac{\lambda(\tilde{\tau})}{\lambda(\tilde{\sigma})}\xi)\rho_{\R^4}(\xi)\,d\tilde{\sigma}d\xi\Big\|_{L^2_{R^3\,dR}}\\
			&\lesssim \int_0^\infty \int_{\tilde{\tau}}^\infty  \chi_{\xi\lesssim1}\cdot \xi^{-2+\delta_0-}\cdot \xi^{-1-\delta_0}\Big|\frac{\lambda^{-2}(\tilde{\sigma})\mathcal{F}_{\R^4}(F)(\tilde{\sigma},\frac{\lambda(\tilde{\tau})}{\lambda(\tilde{\sigma})}\xi)}{(\tilde{\sigma} - \tilde{\tau})^{1+\delta_0}}\Big|\rho_{\R^4}(\xi)\,d\tilde{\sigma}d\xi
		\end{align*}
		Using the Cauchy-Schwarz inequality with respect to $\xi$ (using that $\rho_{\R^4}(\xi)\sim \xi^3$) and using the Schur's criterion as for $n_0$, we conclude that 
		\begin{align*}
			\Big\|\big\|\langle R\rangle^{-1-\delta_0}P_{\lesssim 1}n_{11}\big\|_{L^2_{R^3\,dR}}\Big\|_{\tau^{-N}d\tau}\lesssim \Big\|\big\|\xi^{-1-\delta_0}\lambda^{-2}(\tilde{\sigma})\mathcal{F}_{\R^4}(F)(\tilde{\sigma},\cdot)\big\|_{L^2_{\rho\,d\xi}}\Big\|_{\sigma^{-N}L^2_{d\sigma}},
		\end{align*}
		with the case $0<\delta_0<1$ again following by interpolation. 
		The argument for $n_{12}$ is analogous. Finally, for the term $n_3$ it suffices to use $|R \pm (\tilde{\sigma} - \tilde{\tau})|\gg (\tilde{\sigma} - \tilde{\tau})$, whence the factor $\langle R\rangle^{-1-\delta_0}$ ensures time integrability. The remaining estimates are proved similarly, or in the case of the last three estimates, by direct application of Plancherel's theorem. 
	\end{proof}
	
	We shall also require the following more detailed structural result:
	\begin{lem}\label{lem:wavebasicinhomstructure} Using the same notation as before, we can write 
		\begin{align*}
			n(\tau, R) = \sum_{\pm} \int_0^\infty e^{\pm iR\eta}\cdot N_{\pm}(R,\eta;\tau)\,d\eta + n_2(\tau, R), 
		\end{align*}
		where upon setting 
		\[
		\Phi_{\pm}(R,\xi,\eta): =  \chi_{R\xi\gtrsim 1}\cdot \tilde{\sigma}(R,\xi)\cdot \frac{e^{i(\pm\xi\pm\eta)R}}{R^{\frac12}\xi}
		\]
		for a bounded function $\tilde{\sigma}(R,\xi)\in C^\infty(\R_+\times \R_+)$, with symbol behavior with respect to both arguments, we have the bounds
		\begin{align*}
			&\Big\|\int_0^\infty\Phi_{\pm}(R,\xi,\eta)\cdot W(R)\cdot N_{\pm}(R,\eta;\tau) R^3\,dR d\eta\Big\|_{\tau^{-N+} L^2_{d\tau} L^2_{d\xi}}\\&\lesssim 
			\big\|\triangle^{-1}\big(\lambda^{-2}F\big)\big\|_{\tau^{-N} L^2_{d\tau}L^2_{R^3\,dR}}, \\
			&\Big\|\int_0^\infty\xi^{2+}\Phi_{\pm}(R,\xi,\eta)\cdot W(R)\cdot N_{\pm}(R,\eta;\tau) R^3\,dR d\eta\Big\|_{\tau^{-N+} L^2_{d\tau} L^2_{d\xi}}\\&\lesssim 
			\big\|\triangle^{0+}\big(\lambda^{-2}F\big)\big\|_{\tau^{-N} L^2_{d\tau}L^2_{R^3\,dR}} 
		\end{align*}
		as well as 
		\begin{align*}
			&\Big\|W \cdot n_2(\tau, R)\Big\|_{\tau^{-N+} L^2_{d\tau}L^1_{R^3\,dR}}\lesssim \big\|\triangle^{-1}\nabla\big(\langle R\rangle\cdot\lambda^{-2}F\big)\big\|_{\tau^{-N} L^2_{d\tau}L^2_{R^3\,dR}} +  \big\|\triangle^{-1}\big(\lambda^{-2}F\big)\big\|_{\tau^{-N} L^2_{d\tau}L^2_{R^3\,dR}}\\
			&\Big\|\nabla_R^{1+}\big(R^{\frac52+}W\cdot n_2\big)\Big\|_{\tau^{-N+} L^2_{d\tau}L^2_{dR}}\lesssim 
			\big\| \triangle^{0+}\big(\langle R\rangle\cdot\lambda^{-2}F\big)\big\|_{\tau^{-N} L^2_{d\tau}L^2_{R^3\,dR}} +  \big\|\triangle^{-1}\big(\lambda^{-2}F\big)\big\|_{\tau^{-N} L^2_{d\tau}L^2_{R^3\,dR}} \\&\hspace{5cm}
			+ \big\|\triangle^{-1}\nabla\big(\langle R\rangle\cdot\lambda^{-2}F\big)\big\|_{\tau^{-N} L^2_{d\tau}L^2_{R^3\,dR}}
		\end{align*}
		As a consequence of the preceding bounds, we infer that for $\kappa\in \{0, 2\}$, we have 
		\begin{align*}
			&\Big\|\langle \xi\partial_{\xi}\rangle^{1+\delta_1}\big(\chi_{R\xi\gtrsim 1}\big(n\cdot W\big), \xi^{-\kappa}\phi(R;\xi)\rangle_{L^2_{R^3\,dR}}\Big\|_{\tau^{-N+} L^2_{d\tau}L^2_{\rho(\xi)\,d\xi}}\\&\lesssim 
			\big\| \triangle^{0+}\big(\langle R\rangle\cdot\lambda^{-2}F\big)\big\|_{\tau^{-N} L^2_{d\tau}L^2_{R^3\,dR}} +  \big\|\triangle^{-1}\big(\lambda^{-2}F\big)\big\|_{\tau^{-N} L^2_{d\tau}L^2_{R^3\,dR}} \\&\hspace{5cm}
			+ \big\|\triangle^{-1}\nabla\big(\langle R\rangle\cdot\lambda^{-2}F\big)\big\|_{\tau^{-N} L^2_{d\tau}L^2_{R^3\,dR}}
		\end{align*}
		The terms $\langle R\rangle\cdot \lambda^{-2}F$ can be replaced by $\tilde{\tau}\cdot \lambda^{-2}F$ on the right hand side. 
	\end{lem}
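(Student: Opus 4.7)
The plan is to build on Lemma~\ref{lem:wavebasicinhom} by extracting the oscillatory structure of the wave propagator \eqref{eq:wavepropagator} together with the asymptotic expansion of the flat Fourier basis $\phi_{\R^4}(R;\xi) = \xi^{-1}R^{-1}J_1(R\xi)$. In the oscillatory regime $R\xi\gtrsim 1$ one has the standard asymptotic $R^{1/2}J_1(R\xi) = \sum_{\pm}e^{\pm iR\xi}\,\tilde{\sigma}_{\pm}(R,\xi)$ with $\tilde{\sigma}_{\pm}$ bounded, smooth, and satisfying symbol bounds, while for $R\xi\lesssim 1$ the basis $\phi_{\R^4}(R;\xi)$ is regular of size $\xi^2$. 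Inserting a smooth cutoff $\chi_{R\xi\gtrsim 1}$ into the representation \eqref{eq:nflatfourierrepresent} splits $n$ into an oscillatory main contribution together with a non-oscillatory remainder; the former will produce the integrals $\sum_{\pm}\int_0^\infty e^{\pm iR\eta}N_{\pm}(R,\eta;\tau)\,d\eta$ and the latter, together with integration-by-parts corrections, will form $n_2$.

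For the oscillatory piece I would expand the sine kernel of $U(\tilde{\tau},\tilde{\sigma},\xi)$ via
\[
U(\tilde{\tau},\tilde{\sigma},\xi) = \sum_{\pm}\frac{\pm 1}{2i\xi}\frac{\lambda^3(\tilde{\tau})}{\lambda^3(\tilde{\sigma})}\exp\Big(\pm i\lambda(\tilde{\tau})\xi\int_{\tilde{\tau}}^{\tilde{\sigma}}\lambda^{-1}(s)\,ds\Big),
\]
rename the spectral variable $\xi \to \eta$, and collect all the amplitudes $\rho_{\R^4}(\eta)$, $\tilde{\sigma}_{\pm}(R,\eta)$, the factor $\eta^{-1}$ coming from $\sin/\eta$, and the rescaled Fourier data $\mathcal{F}_{\R^4}(F)(\tilde{\sigma},\tfrac{\lambda(\tilde\tau)}{\lambda(\tilde\sigma)}\eta)$ into the profile $N_{\pm}(R,\eta;\tau)$, with the $\tilde{\sigma}$-integral absorbed into its definition. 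The pairing $\int \Phi_{\pm}\cdot W\cdot N_{\pm}\,R^3\,dR\,d\eta$ then, after performing the $R$-integral against the product of oscillatory factors $e^{i(\pm\xi\pm\eta)R}R^{-1/2}$ modulated by $W(R)\tilde{\sigma}(R,\xi)$, becomes a Fourier-type pairing in $(\xi,\eta)$. The factor $\xi^{-1}$ from $\Phi_{\pm}$ times the $\eta^{-1}$ in $N_{\pm}$ yields $\xi^{-2}$ on the Fourier side, i.e.\ an instance of $\triangle^{-1}$, and Plancherel together with a Schur bound in the $(\tilde{\tau},\tilde{\sigma})$-variables converts the estimate into $\|\triangle^{-1}(\lambda^{-2}F)\|_{\tau^{-N}L^2_{d\tau}L^2_{R^3\,dR}}$. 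The weighted version with $\xi^{2+}$ is analogous, two powers being shifted onto the source to give $\triangle^{0+}$.

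For the remainder $n_2$ I collect the low-frequency piece $\chi_{R\xi\lesssim 1}$ together with the terms produced by one or two integrations by parts in $\xi$ needed to obtain spatial localization, just as in the proof of Lemma~\ref{lem:wavebasicinhom}. Each such integration by parts introduces a factor $R\pm\lambda(\tilde{\tau})\int_{\tilde{\tau}}^{\tilde{\sigma}}\lambda^{-1}\,ds$ in the denominator and transfers one $\partial_\xi$ either onto the rescaled source (producing the weight $\langle R\rangle$, or equivalently $\tilde{\tau}$, on $\lambda^{-2}F$) or onto the spectral measure and amplitudes. Since $W(R)\in L^p_{R^3\,dR}$ for every $p$, multiplication by $W$ yields the $L^1_{R^3\,dR}$ bound. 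The derivative estimate $\|\nabla_R^{1+}(R^{5/2+}W\cdot n_2)\|_{L^2}$ is obtained by absorbing $R^{5/2+}$ into the amplitudes and paying one extra $\partial_R$, which costs either $\xi^{0+}$ on the source (giving the $\triangle^{0+}$ term) or an extra $\partial_\xi$ on the data (giving the $\triangle^{-1}\nabla$ term).

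The final consequence for $\langle \xi\partial_{\xi}\rangle^{1+\delta_1}\langle \chi_{R\xi\gtrsim 1}(n\cdot W),\xi^{-\kappa}\phi(R;\xi)\rangle_{L^2_{R^3\,dR}}$ follows by inserting the oscillatory expansion of $\phi(R;\xi)$ recalled in subsection~\ref{subsec:basicfourier}, which in the regime $R\xi\gtrsim 1$ has exactly the form $\chi_{R\xi\gtrsim 1}\tilde{\sigma}(R,\xi)e^{\pm iR\xi}R^{-1/2}\xi^{-1}$ up to the power of $\xi$ dictated by $\kappa$, i.e.\ the kernel $\Phi_{\pm}$; substituting the decomposition of $n\cdot W$ into the pairing reduces matters to the previously established bounds for $\int\Phi_{\pm}\cdot W\cdot N_{\pm}$ and for $n_2$. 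The transference $\xi\partial_\xi$ is passed across the pairing, producing either a $R\partial_R$ on $N_\pm$ (absorbed into the $\langle R\rangle$ weight on the source) or a symbol derivative of $\tilde{\sigma}$. The \emph{main obstacle} is the careful bookkeeping of weights when integrating by parts in $\xi$: the resulting factors $R\pm\lambda(\tilde{\tau})\int_{\tilde{\tau}}^{\tilde{\sigma}}\lambda^{-1}\,ds$ must be apportioned between the spatial decay of $n_2$ and the weight $\langle R\rangle\sim\tilde{\tau}$ on the source, while simultaneously accommodating the singular behavior $\rho(\xi)\sim\xi/\log^2\xi$ of the distorted spectral measure and matching the cutoffs $\chi_{R\xi\gtrsim 1}$ in $\Phi_{\pm}$ with those implicit in $\phi(R;\xi)$ without incurring boundary contributions at the transition $R\xi\sim 1$.
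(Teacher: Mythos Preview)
Your proposal captures the broad strategy but misses the organizing principle of the paper's decomposition, and one step is actually incorrect.

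The split you describe is based solely on the oscillatory/non-oscillatory dichotomy $R\eta\gtrsim 1$ versus $R\eta\lesssim 1$. The paper's decomposition is finer: after a dyadic localization $\chi_{R\sim 2^j}$, $0\le j\le\log\tau$, it further cuts according to whether the wave propagation distance $\lambda(\tilde\tau)\int_{\tilde\tau}^{\tilde\sigma}\lambda^{-1}\,ds\sim\tilde\sigma-\tilde\tau$ is comparable to $R$, much larger, or much smaller. Only the \emph{resonant} piece $n_{I1}$, where $R\sim 2^j\sim\tilde\sigma-\tilde\tau$ \emph{and} $2^j\eta\gtrsim 1$, is written as $\sum_\pm\int e^{\pm iR\eta}N_\pm\,d\eta$; the cutoff $\chi_{\tilde\sigma-\tilde\tau\sim 2^j}$ is built into $N_\pm^{(j)}$ and is what allows one factor $R^{-1}\sim 2^{-j}\sim(\tilde\sigma-\tilde\tau)^{-1}$ to be spent on time integrability. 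The non-resonant pieces $n_{II},n_{III}$ (where $R\ll\tilde\sigma-\tilde\tau$ or $R\gg\tilde\sigma-\tilde\tau$) and the far piece $n_{IV}$ ($R\gtrsim\tau$) are placed into $n_2$ after integration by parts in $\eta$, which is non-degenerate precisely because the combined phase $R\eta\pm(\tilde\sigma-\tilde\tau)\eta$ is non-stationary there. Your description of $n_2$ as ``low-frequency piece plus integration-by-parts corrections'' does not distinguish these cases, and without the resonance localization in $N_\pm$ you cannot trade $R^{-1}$ for $(\tilde\sigma-\tilde\tau)^{-1}$.

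Second, the sentence ``the factor $\xi^{-1}$ from $\Phi_\pm$ times the $\eta^{-1}$ in $N_\pm$ yields $\xi^{-2}$ on the Fourier side, i.e.\ an instance of $\triangle^{-1}$'' is wrong: $\xi$ and $\eta$ are different variables and their inverse powers do not combine. In the paper, $\triangle^{-1}$ enters by writing $\mathcal{F}_{\R^4}(F)=-\eta^2\mathcal{F}_{\R^4}(\triangle^{-1}F)$, and the gained $\eta^2$ is used to balance the $\eta^{-1}$ from $U$ and the $(R\eta)^{-3/2}$ from the Bessel asymptotics. What remains is an operator $f\mapsto\int g_j(\xi,\eta)f(\eta)\,d\eta$ with kernel $g_j(\xi,\eta)=\frac{\eta}{\xi}\int\chi_{R\xi\gtrsim 1}\chi_{R\sim 2^j}\tilde\sigma\,\sigma^{(1)}e^{iR(\pm\xi\pm\eta)}\frac{2^j}{R}\,dR$, whose $L^2_\eta\to L^2_\xi$ boundedness is \emph{not} Plancherel: it requires a trichotomy $\xi\ll\eta$ (two integrations by parts in $R$ swap $\eta/\xi$ for $\xi/\eta$), $\xi\sim\eta$ (factor as composition of two genuine Fourier-type operators $T_1,T_2$), and $\eta\ll\xi$ (pointwise bound $|g_j|\lesssim\eta/\xi^2$ and Schur). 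This operator bound is the heart of the first inequality and is absent from your sketch.
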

	
	The technical proof is relegated to section~\ref{sec:appendix}. 
	\begin{rem}\label{rem:wavebasicinhomstructure} By splitting into the cases $\xi<\tau^{-M}, \xi>\tau^{-M}$ for some large $M$ and using interpolation as well as Lemma~\ref{lem:wavebasicinhom} in the latter regime, we can replace 
		\[
		L^2_{\rho(\xi)\,d\xi}
		\]
		by 
		\[
		L^{2+}_{\rho(\xi)\,d\xi}
		\]
		in the last inequality.
	\end{rem}
	We shall use the preceding lemma for a very specific source term $F$, namely the one figuring in the following 
	\begin{lem}\label{lem:specialF1} Let 
		\[
		F = \lambda^2\triangle\Re\big(W\cdot \bar{z}\big).
		\]
		Then defining $n = \lambda^{-2}\Box^{-1}F$ (as usual via the Duhamel parametrix), we can write
		\begin{align*}
			n(\tau, R) = \sum_{\pm} \int_0^\infty e^{\pm iR\eta}\cdot N_{\pm}(R,\eta;\tau)\,d\eta + n_2(\tau, R), 
		\end{align*}
		where $ \sum_{\pm} \int_0^\infty e^{\pm iR\eta}\cdot N_{\pm}(R,\eta;\tau)\,d\eta , n_2$ satisfy the same bounds as in the preceding lemma and remark but with $\tau^{-N+}$ replaced by $\tau^{-N+\frac12+}$ and the right hand side replaced by $\big\|z\big\|_{S}$.
	\end{lem}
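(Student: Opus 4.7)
The strategy is to apply Lemma~\ref{lem:wavebasicinhomstructure} to the specific source $F=\lambda^2\triangle\Re(W\bar z)$, for which $\lambda^{-2}F=\triangle\Re(W\bar z)$, so that the right-hand side quantities $\|\triangle^{-1}(\lambda^{-2}F)\|$, $\|\triangle^{-1}\nabla(\langle R\rangle\cdot\lambda^{-2}F)\|$, and $\|\triangle^{0+}(\langle R\rangle\cdot\lambda^{-2}F)\|$ collapse to norms of $\Re(W\bar z)$ and its weighted fractional derivatives. It remains to estimate these by $\|z\|_S$, with the stated $\tau^{\frac12+}$ loss in the LHS weight tracking precisely the passage between Schr\"odinger and wave time.

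First, decompose $z=z_{res}+z_{nres}$ with $z_{res}=\kappa(\tau)\,W$. For the resonant piece, $W\cdot z_{res}=\kappa W^2$ lies in every $L^p_{R^3\,dR}$ with rapid decay, while $\mathcal{L}W=0$ ensures that fractional-Laplacian powers of $Wz_{res}$ reduce to rapidly decaying functions times $\kappa(\tau)$; the function $\kappa$ inherits its $\tau^{-N+}$ temporal control from the $\langle R\rangle^{-\delta_0}L^\infty$ component of the $S$-norm via evaluation at $R=0$ (since $\phi_0(0)\neq 0$). For the nonresonant piece, estimate $\|W\bar z_{nres}\|_{L^2_{R^3\,dR}}$ by H\"older, pairing the $L^{2+}$ and $L^{\frac83+}$ components of the $S$-norm with $W\in L^p_{R^3\,dR}$ for every $p>2$; the weighted $L^\infty$ component absorbs the contribution near $R=0$.

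Second, for the fractional derivative bounds $\|\triangle^{0+}(\langle R\rangle\cdot\Re(W\bar z))\|$, distribute derivatives via Leibniz. Contributions where derivatives fall on $W$ produce rapidly decaying factors and are estimated trivially. Contributions where $\triangle$ strikes $\bar z_{nres}$ are rewritten using $\triangle\bar z_{nres}=-\mathcal{L}\bar z_{nres}-W^2\bar z_{nres}$ and controlled by the $\|\mathcal{L}^2 z_{nres}\|_U$ component of the $S$-norm (together with the boundedness of the inverse spectral multiplier on the non-resonant space of subsection~\ref{subsec:basicfourier}); interpolation between the two pieces of the $U$-norm closes the small fractional gap $\triangle^{0+}$. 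The weight $\langle R\rangle$ is systematically replaced by $\tilde\tau$ using the concluding statement of Lemma~\ref{lem:wavebasicinhomstructure}, which avoids spatial growth issues at infinity.

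The temporal exponent shift $\tau^{-N+}\to\tau^{-N+\frac12+}$ is exactly the combined effect of (i) the $L^{2+}$ component of the $S$-norm living at the slower level $\tau^{-N+1+}L^2_{d\tau}$ rather than $\tau^{-N}L^2_{d\tau}$, and (ii) the gain of one factor of $\tilde\tau\sim_\nu\tau^{\frac12-\frac{1}{4\nu}}$ purchased when converting $\langle R\rangle$ to $\tilde\tau$ on the right-hand side; the net effect is precisely $\tau^{-N+\frac12+}$. The principal obstacle in the proof is the mixed-norm bookkeeping: one must align the various spatial components of the $S$-norm ($L^{2+}$, $L^{\frac83+}$, the weighted $L^\infty$, and the $\mathcal{L}^2\in U$ piece) with the Sobolev-type expressions on the right-hand side of Lemma~\ref{lem:wavebasicinhomstructure}, particularly in the transition region $R\sim 1$ where $W$ and $\langle R\rangle^{-\delta_0}$ are both of unit order, while ensuring that the temporal decay rates combine to land exactly at $\tau^{-N+\frac12+}$ and not at a worse exponent.
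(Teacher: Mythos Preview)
Your approach has a genuine gap in the temporal bookkeeping for the term $\|\triangle^{-1}(\lambda^{-2}F)\|_{L^2_{R^3\,dR}} = \|\Re(W\bar z)\|_{L^2_{R^3\,dR}}$. You propose to control $\|W\bar z_{nres}\|_{L^2}$ via H\"older against the $L^{2+}/L^{\frac83+}$ components of the $S$-norm, but those live at temporal level $\tau^{-N+1+}$, so the output lands at $\tau^{-N+1+}$, not the required $\tau^{-N+\frac12+}$. The $\tilde\tau$ gain you invoke in point (ii) applies only to the terms carrying the explicit weight $\langle R\rangle$ on the right-hand side of Lemma~\ref{lem:wavebasicinhomstructure}; the unweighted term $\|\triangle^{-1}(\lambda^{-2}F)\|$ has no such $\langle R\rangle$ to trade, so no compensation is available. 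Using the weighted $L^\infty$ component instead also fails, since $W\langle R\rangle^{\delta_0}\sim R^{-2+\delta_0}$ just misses $L^2_{R^3\,dR}$.

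The paper closes this gap by a spatial decomposition $F=F_1+F_2$ with $F_1$ carrying the cutoff $\chi_{R\lesssim\tau}$. For $F_1$ one uses the $\tau^{-N}$-level weighted $L^\infty$ bound on $z$ and pays only $\|\chi_{R\lesssim\tau}W\langle R\rangle^{\delta_0}\|_{L^2}\lesssim\tau^{\delta_0}$, landing at $\tau^{-N+}$; the $\langle R\rangle$-weighted terms for $F_1$ then cost at most $\tau^{\frac12}$ more, matching the target. The far piece $F_2$ is handled by an entirely separate argument: one writes $\triangle=\nabla\cdot\nabla$, uses one $\nabla$ to cancel the $\xi^{-1}$ in the wave propagator, and exploits the extra decay $\chi_{R\gtrsim\tau}W\nabla\bar z\in\tau^{-N-\frac12+}L^2L^2$ coming from the second $S$-norm component, placing $\lambda^{-2}\Box^{-1}F_2$ directly into $n_2$. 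Your decomposition $z=z_{res}+z_{nres}$ is orthogonal to this spatial splitting and does not recover the missing half-power.
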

	\begin{proof} We decompose 
		\[
		F = F_1 + F_2
		\]
		where we set 
		\begin{align*}
			F_1: =  \lambda^2\triangle\Re\big(\chi_{R\lesssim \tau}W\cdot \bar{z}\big),\,F_2: =  \lambda^2\triangle\Re\big(\chi_{R\gtrsim \tau}W\cdot \bar{z}\big)
		\end{align*}
		We first deal with the contribution of $F_2$. Write 
		\begin{align*}
			\nabla\Re\big(\chi_{R\gtrsim \tau}W\cdot \bar{z}\big) = \Re\big(\nabla(\chi_{R\gtrsim \tau}W)\cdot \bar{z}\big) + \Re\big(\chi_{R\gtrsim \tau}W\cdot \nabla\bar{z}\big).
		\end{align*}
		From the definition \eqref{eq:Snormdefi} we infer the inclusion
		\begin{align*}
			\chi_{R\gtrsim \tau}W\nabla\bar{z}\in \tau^{-N-\frac12+\delta_0+}L^2_{R^3\,dR}
		\end{align*}
		and so 
		\begin{align*}
			\Big\|\tilde{\tau}\Re\big(\chi_{R\gtrsim \tau}W\cdot \nabla\bar{z}\big)\Big\|_{\tau^{-N-\frac{1}{4\nu}+\delta_0+}L^2_{d\tau}L^2_{R^3\,dR}}\lesssim \big\|z\big\|_{S}. 
		\end{align*}
		An even better bound (without the $\delta_0$) obtains for the term 
		\[
		\Re\big(\nabla(\chi_{R\gtrsim \tau}W)\cdot \bar{z}\big).
		\]
		If we then spell out $\Box^{-1}F_2$ using the Fourier parametrix, write $\triangle = \nabla\cdot\nabla$ and use one operator $\nabla$ to counteract the inverse frequency in $U(\tilde{\tau}, \tilde{\sigma}, \eta)$ (recall the preceding proof), we easily check that (here $\Box^{-1}$ is in the sense of applying the Duhamel parametrix \eqref{eq:wavepropagator})
		\begin{align*}
			&\Big\|\chi_{R\lesssim \tau}W(R)\cdot \Box^{-1}F_2\Big\|_{\tau^{-N}L^2_{d\tau}L^1_{R^3\,dR}}\lesssim \big\|z\big\|_{S}, \\
			&\Big\|\nabla_R^{1+}\big(R^{\frac52+}\chi_{R\lesssim \tau}W(R)\cdot \Box^{-1}F_2\big)\Big\|_{\tau^{-N}L^2_{d\tau}L^2_{R^3\,dR}}\lesssim \big\|z\big\|_{S}
		\end{align*}
		whence we can place $\chi_{R\lesssim \tau}\lambda^{-2}\Box^{-1}F_2$ into $n_2$. For the remaining term $\chi_{R\gtrsim\tau}\Box^{-1}F_2$, assuming that we pass to the radial variable $R_1$ to describe $F_2$ and we have $R_1\gtrsim R$, we can modify the gain of $\tau^{-\frac12+\delta_0+}$ above slightly to also gain $R_1^{0-}\lesssim R^{0-}$, ensuring that $W(R)\cdot R^{0-}\in L^2_{R^3\,dR}$, and the above bounds again hold. In case $R_1\ll R$,   we can proceed as for the term $n_{IV}$ in the proof of the preceding lemma in section~\ref{sec:appendix}, since integration by parts with respect to the frequency variable gains $R^{-1}\lesssim R^{-\frac12+\frac{1}{4\nu}}\cdot\tilde{\tau}^{-1}$, and we can use that $W(R)\cdot  R^{-\frac12+\frac{1}{4\nu}}\in L^2_{R^3\,dR}$, while the extra $\tilde{\tau}^{-1}$ compensates for the time integral in the Duhamel propagator.\\
		As for the contribution of $F_1$, we shall use the preceding Lemma~\ref{lem:wavebasicinhomstructure}. In fact, we easily verify that 
		\begin{align*}
			&\big\|\triangle^{-1}\lambda^{-2}F_1\big\|_{\tau^{-N+}L^2_{d\tau}L^2_{R^3\,dR}}\lesssim \big\|z\big\|_{S},\,\big\|\triangle^{-1}\nabla\big(\langle R\rangle\cdot \lambda^{-2}F_1\big\|_{\tau^{-N+\frac12}L^2_{d\tau}L^2_{R^3\,dR}}\lesssim \big\|z\big\|_{S},\\
			&\big\|\triangle^{0+}\big(\langle R\rangle\cdot \lambda^{-2}F_1\big)\big\|_{\tau^{-N+\frac12}L^2_{d\tau}L^2_{R^3\,dR}}\lesssim \big\|z\big\|_{S}.
		\end{align*}
	\end{proof}
	
	A more basic estimate related to the term $F$ from the preceding lemma is the following, which we call a corollary due to its using the same proof ingredients:
	\begin{cor}\label{cor:yzW}  With $F$ as in the preceding lemma, we have the estimates
		\begin{align*}
			&\Big\|\langle \xi\partial_{\xi}\rangle^{1+\delta_0}\langle \lambda^{-2}\Box^{-1}(F)\cdot W, \phi(R;\xi)\rangle_{L^2_{R^3\,dR}}\Big\|_{\tau^{-N+}L^2_{d\tau}L^2_{\rho(\xi)\,d\xi}}\lesssim \big\|z\big\|_{S}\\
			&\Big\|\langle \xi\partial_{\xi}\rangle^{1+\delta_0}\langle \lambda^{-2}\Box^{-1}(F)\cdot W, \phi(R;\xi)\rangle_{L^2_{R^3\,dR}}\Big\|_{\tau^{-N+}L^2_{d\tau}L^\infty_{\rho(\xi)\,d\xi}}\lesssim \big\|z\big\|_{S}\\
		\end{align*}
	\end{cor}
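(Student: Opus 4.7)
The proof packages what is already established in the proofs of Lemmas~\ref{lem:wavebasicinhomstructure} and \ref{lem:specialF1}. Set $n := \lambda^{-2}\Box^{-1}F$ and use the decomposition $n = n_{osc} + n_2$ with $n_{osc} := \sum_{\pm}\int_0^\infty e^{\pm iR\eta}\,N_{\pm}(R,\eta;\tau)\,d\eta$ provided by Lemma~\ref{lem:specialF1}. The plan is to split
\begin{equation*}
\langle nW,\phi(R;\xi)\rangle_{L^2_{R^3\,dR}} = \langle \chi_{R\xi\gtrsim 1}\,nW,\phi(R;\xi)\rangle + \langle \chi_{R\xi\lesssim 1}\,nW,\phi(R;\xi)\rangle,
\end{equation*}
so that the first piece falls directly into the scope of the final consequence of Lemma~\ref{lem:wavebasicinhomstructure} with $\kappa = 0$. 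Combined with the three input bounds
\begin{equation*}
\bigl\|\triangle^{-1}(\lambda^{-2}F)\bigr\|,\;\bigl\|\triangle^{-1}\nabla(\langle R\rangle\lambda^{-2}F)\bigr\|,\;\bigl\|\triangle^{0+}(\langle R\rangle\lambda^{-2}F)\bigr\|\lesssim \|z\|_S
\end{equation*}
already verified inside the proof of Lemma~\ref{lem:specialF1}, this yields the required $L^2_{\rho\,d\xi}$-control of the high-frequency piece with $\tau^{-N+}$ weight.

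For the low-frequency piece I invoke the analytic expansion of item (i) in subsection~\ref{subsec:basicfourier}: for $R\xi\lesssim 1$ one has $\phi(R;\xi) = W(R)\bigl[1 + \sum_{j\geq 1}(R\xi)^{2j}\phi_j(R^2)\bigr]$ with $|\phi_j|\leq C^j/j!$. Successive applications of $\xi\partial_\xi$ either fall on the cutoff $\chi_{R\xi\lesssim 1}$ (localizing to $R\xi\sim 1$) or on an $(R\xi)^{2j}$-factor, producing at worst a factor $R^k$ with $k\leq 2(1+\delta_0)$ against coefficients uniformly bounded in $\xi$. The resulting expressions reduce to moments of $nW^2\,R^k$ restricted to $R\lesssim \xi^{-1}$. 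The contribution of $n_2$ is controlled by $\|Wn_2\|_{\tau^{-N+}L^2_{d\tau}L^1_{R^3\,dR}}$ and the weighted derivative bound on $\nabla_R^{1+}(R^{5/2+}Wn_2)$ from Lemma~\ref{lem:specialF1}; the contribution of $n_{osc}$ is handled by integration by parts in $\eta$, each step trading the phase $e^{\pm iR\eta}$ for an $R^{-1}$-decay and reducing matters again to the same weighted moments. Summation over $j$ converges because of the factorial decay of the coefficients $\phi_j$.

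The $L^\infty_{\rho\,d\xi}$-variant is obtained by rerunning the same splitting pointwise in $\xi$. The low-frequency piece is already uniform in $\xi$ since each factor $(R\xi)^{2j}$ is bounded by $1$ on its support; in the high-frequency piece one pairs the $\Phi_\pm$-representation of $n$ directly against the oscillatory asymptotic form of $\phi(R;\xi)$ and applies Cauchy--Schwarz in the $(R,\eta)$-variables, the $\eta$-integral from $n_{osc}$ playing a role analogous to the distorted Fourier variable $\xi$. The main obstacle to anticipate is the low-$\xi$ regime, where the singular spectral density $\rho(\xi)\sim \xi^{-1}|\log\xi|^{-2}$ meets a cutoff $\chi_{R\xi\lesssim 1}$ which forces $R$ to large values: it is essential that the decay $W^2\sim\langle R\rangle^{-4}$, together with the moment weights produced by the $\xi$-differentiations, yields $R$-integrals whose $\xi$-dependence remains integrable against $\rho(\xi)\,d\xi$ (respectively uniformly bounded in $\xi$) up to $\xi = 0$. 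Tracking this compatibility carefully is the technical heart of the argument and the only point where it goes beyond the statement of Lemma~\ref{lem:specialF1}.
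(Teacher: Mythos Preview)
Your strategy of routing the estimate through the oscillatory decomposition of Lemma~\ref{lem:specialF1} and the final consequence of Lemma~\ref{lem:wavebasicinhomstructure} has a temporal-decay gap. The input bounds that the proof of Lemma~\ref{lem:specialF1} actually verifies for the specific source $F = \lambda^2\triangle\Re(W\bar{z})$ are
\[
\big\|\triangle^{-1}\nabla(\langle R\rangle\lambda^{-2}F_1)\big\|_{\tau^{-N+\frac12}L^2_{d\tau}L^2_{R^3\,dR}},\quad \big\|\triangle^{0+}(\langle R\rangle\lambda^{-2}F_1)\big\|_{\tau^{-N+\frac12}L^2_{d\tau}L^2_{R^3\,dR}},
\]
not at weight $\tau^{-N}$; the factor $\langle R\rangle$ combined with the borderline spatial decay of $\nabla(W\bar{z})$ forces the $\tau^{\frac12}$ loss after truncation at $R\lesssim\tau$. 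This is precisely why Lemma~\ref{lem:specialF1} carries the weaker weight $\tau^{-N+\frac12+}$ in its conclusion. Feeding these inputs into Lemma~\ref{lem:wavebasicinhomstructure} therefore only delivers $\tau^{-N+\frac12+}$ on the output, not the $\tau^{-N+}$ that the corollary asserts (and which Remark~\ref{rem:cor:yzW} identifies as coming solely from the small operator exponent $1+\delta_0$).

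The paper avoids this by not passing through the $n_{osc}+n_2$ decomposition at all for the first estimate. It splits the source $F=F_1+F_2$ by the spatial support of $z$ and then $\lambda^{-2}F_1=F_{11}+F_{12}$ by derivative structure; it applies Remark~\ref{rem:wavepropagatormorerefined} (the refined two-piece output of the wave parametrix) to $F_{11}$ and a direct $\dot H^1\hookrightarrow L^4$ bound from Lemma~\ref{lem:wavebasicinhom} to $F_{12}$, obtaining $\lambda^{-2}\Box^{-1}F_1$ in $\tau^{-N}L^2_{d\tau}(\langle R\rangle^{1+\delta_0}L^2+L^2+L^4)$ with no $\tau^{\frac12}$ loss. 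The operator $\langle\xi\partial_\xi\rangle^{1+\delta_0}$ then costs only $\langle R\rangle^{1+\delta_0}$, which the extra factor $W$ absorbs. Your low-frequency treatment via the Taylor expansion of $\phi(R;\xi)$ is closer in spirit to what the paper does in the appendix for the $L^\infty_\xi$ estimate, but it inherits the same $\tau^{\frac12}$ deficit wherever it draws on the $n_2$-bounds of Lemma~\ref{lem:specialF1}.
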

	\begin{proof} As in the preceding proof we split $F = F_1 + F_2$. The contribution of $F_2$ is easy to handle since we already saw that $\lambda^{-2}\Box^{-1}(F_2)\in \tau^{-N}L^2_{d\tau}L^2_{R^3\,dR}$, and the operator $\langle \xi\partial_{\xi}\rangle^{1+\delta_0}$ 'costs'$ \langle R\rangle^{1+}$ which gets more than absorbed by the factor $W(R)$. For the contribution of $F_1$ we write 
		\begin{align*}
			\lambda^{-2}F_1 &= \triangle \Re\big(\chi_{R\lesssim\tau}W\cdot \bar{z}\big) = [\triangle(\chi_{R\lesssim\tau}W)\cdot \bar{z} + 2\nabla(\chi_{R\lesssim\tau}W)\cdot \nabla\bar{z}] + \chi_{R\lesssim\tau}W\cdot \triangle\bar{z}\\
			& =: F_{11} + F_{12}, 
		\end{align*}
		From Lemma~\ref{lem:wavebasicinhom}  and more precisely from Remark~\ref{rem:wavepropagatormorerefined} we infer that $\lambda^{-2}\Box^{-1}F_{11} = n_{11}+ n_{12}$ where using Sobolev's embedding we get 
		\begin{align*}
			&\big\|\langle R\rangle^{-1-\delta_0}n_{11}\big\|_{\tau^{-N}L^2_{d\tau}L^2_{R^3\,dR}}\lesssim \big\|F_{11}\big\|_{\tau^{-N}L^2_{d\tau}L^{\frac43-}_{R^3\,dR}}\lesssim \big\|z\big\|_{S},\\
			&\big\|n_{12}\big\|_{\tau^{-N}L^2_{d\tau}L^2_{R^3\,dR}}\lesssim \big\|\langle R\rangle^{1+\delta_0}F_{11}\big\|_{\tau^{-N}L^2_{d\tau}L^{2}_{R^3\,dR}}\lesssim \big\|z\big\|_{S}. 
		\end{align*}
		For the remaining term $F_{12}$ using Lemma~\ref{lem:wavebasicinhom} and the Sobolev embedding we have the estimate 
		\begin{align*}
			\big\|\lambda^{-2}\Box^{-1}F_{12}\big\|_{\tau^{-N-\frac{1}{2\nu}+}L^2_{d\tau}L^4_{R^3\,dR}}\lesssim \big\|\tilde{\tau}\cdot F_{12}\big\|_{\tau^{-N-\frac{1}{2\nu}+}L^2_{d\tau}L^2_{R^3\,dR}}\lesssim \big\|z\big\|_{S}. 
		\end{align*}
		Combining these bounds  and using the Plancherel's theorem we easily infer the estimate 
		\begin{align*}
			\Big\|\langle \xi\partial_{\xi}\rangle^{1+\delta_0}\langle \chi_{R\lesssim \tau^{100}}W(R)\cdot \lambda^{-2}\Box^{-1}F_1, \phi(R;\xi)\rangle_{L^2_{R^3\,dR}}\Big\|_{\tau^{-N+}L^2_{d\tau}L^2_{\rho(\xi)\,d\xi}}\lesssim \big\|z\big\|_{S}.
		\end{align*}
		On the other hand, using integration by parts with respect to the frequency in the Fourier representation of $\Box^{-1}F_1$ we get 
		\begin{align*}
			\big\|\chi_{R\gtrsim \tau^{100}}\lambda^{-2}\Box^{-1}F_1\big\|_{\tau^{-N}L^2_{d\tau}L^2_{R^3\,dR}}\lesssim \tau^{-99}\big\|\triangle^{-1}F_1\big\|_{\tau^{-N}L^2_{d\tau}L^2_{R^3\,dR}}\lesssim \tau^{-99+}\cdot\big\|z\big\|_{S},
		\end{align*}
		and from here 
		\begin{align*}
			\Big\|\langle \xi\partial_{\xi}\rangle^{1+\delta_0}\langle \chi_{R\gtrsim \tau^{100}}W(R)\cdot \lambda^{-2}\Box^{-1}F_1, \phi(R;\xi)\rangle_{L^2_{R^3\,dR}}\Big\|_{\tau^{-N-99+}L^2_{d\tau}L^2_{\rho(\xi)\,d\xi}}\lesssim \big\|z\big\|_{S}.
		\end{align*}
		This completes the proof of the first estimate. We relegate the proof of the second estimate to section~\ref{sec:appendix}. 
	\end{proof}
	\begin{rem}\label{rem:cor:yzW} The reason for the small loss of temporal decay expressed by the $\tau^{-N+}$ factor comes from the operator $\langle \xi\partial_{\xi}\rangle^{1+\delta_0}$. Replacing this by $\langle \xi\partial_{\xi}\rangle^{1-\delta_1}$, $\delta_1\gg\delta_0$, we obtain a better estimate with $\tau^{-N+}$ replaced by $\tau^{-N}$. There are also straightforward variations of the preceding corollary and its proof, such as 
		\[
		\big\|\lambda^{-2}\Box^{-1}F\cdot W\big\|_{\tau^{-N}L^2_{d\tau}L^{2-}_{R^3\,dR}}\lesssim \big\|z\big\|_{S}. 
		\]
	\end{rem}
	
	We complement the preceding with the following technical lemma, whose proof is also relegated to section~\ref{sec:appendix}:
	\begin{lem}\label{lem:yzWnonosc} Defining $F$ as in the preceding lemma, assume that $\psi(R;\xi)$ is a smooth and bounded function with symbol type bounds for its derivatives:
		\[
		\big\|\langle\xi\rangle^{2+}(\xi\partial_{\xi})^{l_1}(R\partial_R)^{\l_2}\psi\big\|_{L^\infty_{R,\xi}}\lesssim_{l_1,l_2} 1. 
		\]
		Then  we have the bound 
		\begin{align*}
			\big\|\langle \xi\partial_{\xi}\rangle^{1+\delta_0}\langle \int_0^\infty\Box^{-1}\big(F\big)\cdot W(R), \psi(R;\xi)\rangle_{L^2_{R^3\,dR}}\big\|_{\tau^{-N+}L^2_{d\tau}L^2_{\rho(\xi)\,d\xi}}\lesssim \big\|z\big\|_{S}. 
		\end{align*}
		One may also replace the norm $L^2_{\rho(\xi)\,d\xi}$ by $L^{2+}_{\rho(\xi)\,d\xi}$, under the weaker hypothesis $\big\|\langle\xi\rangle^{2}(\xi\partial_{\xi})^{l_1}(R\partial_R)^{\l_2}\psi\big\|_{L^\infty_{R,\xi}}\lesssim_{l_1,l_2} 1$. 
	\end{lem}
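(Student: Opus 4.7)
The plan is to generalize the argument behind Corollary~\ref{cor:yzW}, in which the distorted eigenfunction $\phi(R;\xi)$ is now replaced by the arbitrary symbol $\psi(R;\xi)$. The starting point is the structural decomposition supplied by Lemma~\ref{lem:specialF1},
\[
\lambda^{-2}\Box^{-1}(F) = \sum_{\pm}\int_0^\infty e^{\pm iR\eta}\,N_{\pm}(R,\eta;\tau)\,d\eta + n_2(\tau,R),
\]
with quantitative control of $N_{\pm}$ and $n_2$ in terms of $\|z\|_S$. I would split the pairing $\langle \lambda^{-2}\Box^{-1}(F)\cdot W,\psi(R;\xi)\rangle_{L^2_{R^3\,dR}}$ into the oscillatory contribution and the $n_2$ contribution, and treat each piece separately. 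The operator $\langle\xi\partial_\xi\rangle^{1+\delta_0}$ acts only on $\psi$, and under the hypothesis $\|\langle\xi\rangle^{2+}(\xi\partial_\xi)^{l_1}(R\partial_R)^{l_2}\psi\|_{L^\infty}\lesssim 1$ it preserves the required $R$- and $\xi$-symbol bounds on $\psi$, so it may be absorbed into $\psi$ from the outset.

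The $n_2$ contribution is the easier one: by H\"older's inequality the pairing is bounded by $\|Wn_2\|_{L^1_{R^3\,dR}}\cdot \|\psi(R;\xi)\|_{L^\infty_{R}}$, which when combined with the $L^2_{d\tau}L^1_{R^3\,dR}$-bound on $Wn_2$ furnished by Lemma~\ref{lem:specialF1} and the $\langle\xi\rangle^{-2-}$ decay of $\|\psi\|_{L^\infty_R}$ closes this part in $L^2_{\rho(\xi)\,d\xi}$. For the oscillatory piece I would compute
\[
\int_0^\infty\!\int_0^\infty e^{\pm iR\eta}\,W(R)\psi(R;\xi)\,N_{\pm}(R,\eta;\tau)\,R^3\,dR\,d\eta.
\]
Unlike in Corollary~\ref{cor:yzW}, where the oscillatory asymptotics of $\phi(R;\xi)$ delivered the matching phase $e^{i(\pm\xi\pm\eta)R}$ fitting directly into the kernel $\Phi_{\pm}$ of Lemma~\ref{lem:wavebasicinhomstructure}, here $\psi$ carries no oscillation in $R$. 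Instead, I would integrate by parts in $R$ against $e^{\pm iR\eta}$, each integration producing a factor $\eta^{-1}$ at the cost of differentiating $W\psi N_{\pm}$. The symbol bounds on $\psi$, the polynomial decay of $W$, and the $R$-regularity of $N_{\pm}$ implicit in the construction of the decomposition guarantee that each iteration leaves a comparable object; sufficiently many iterations yield a Schur-integrable kernel in $\eta$, after which Cauchy--Schwarz against the $\eta$-bounds on $N_\pm$ recovers $\|z\|_S$ with the asserted temporal weight.

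The main obstacle is the low-frequency regime $\eta\to 0^{+}$, where integration by parts fails or generates uncontrolled boundary contributions. Here one must exploit the rapid decay of $W$ together with the smoothness of $\psi$ in $R$ to bound the inner integral directly, then close the outer integration using that $\rho_{\R^4}(\eta)\sim\eta^3$ provides polynomial weight near the origin, compensating any logarithmic loss. One also has to verify that the temporal loss $\tau^{-N+\frac12+}$ of Lemma~\ref{lem:specialF1} upgrades to the target $\tau^{-N+}$, which is exactly the point where the non-oscillatory nature of $\psi$ pays off: the $\eta^{-1}$ factors gained by integration by parts translate, in the relevant spectral range, into additional gain in wave time $\tilde{\tau}$ and hence in Schr\"odinger time $\tau$. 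Finally, the $L^{2+}_{\rho(\xi)\,d\xi}$ variant under the weaker hypothesis $\|\langle\xi\rangle^{2}(\xi\partial_\xi)^{l_1}(R\partial_R)^{l_2}\psi\|_{L^\infty}\lesssim 1$ follows by the same scheme, absorbing the arbitrarily small additional $\xi$-loss into the relaxed output norm, in parallel with Remark~\ref{rem:wavebasicinhomstructure}.
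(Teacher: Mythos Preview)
Your approach takes a genuinely different route from the paper, and I think it has a real gap concerning the temporal weight.

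The paper does \emph{not} go through the decomposition of Lemma~\ref{lem:specialF1}. Instead it writes $\Box^{-1}F$ directly via the flat Fourier representation \eqref{eq:wavepropagator}, \eqref{eq:nflatfourierrepresent}, observes that
\[
A:=\big\|\eta^{-(2-)}\mathcal{F}_{\R^4}(F)\big\|_{\sigma^{-N}L^2_{d\sigma}L^2_{\rho_{\R^4}d\eta}}+\big\|\mathcal{F}_{\R^4}(F)\big\|_{\sigma^{-N}L^2_{d\sigma}L^2_{\rho_{\R^4}d\eta}}\lesssim\|z\|_S,
\]
and then splits according to the relative sizes of $|\tilde\sigma-\tilde\tau|$, $R$, and $\eta^{-1}$. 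In each region one reduces the $R$-integral to an auxiliary kernel $g(\eta,\xi)=\int_0^\infty\chi_{R\eta\lesssim1}\psi(R;\xi)W\phi_{\R^4}(R;\eta)R^3\,dR$ (or its analogue), bounds $g$ pointwise, and closes with Cauchy--Schwarz in $\eta$ plus Schur in $\tilde\sigma$. The $\tau^{-N+}$ weight comes out directly from this argument; no upgrade is needed.

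By contrast, your route through Lemma~\ref{lem:specialF1} starts from bounds that already carry the weight $\tau^{-N+\frac12+}$. For the $n_2$ piece you use only the black-box $\|Wn_2\|_{L^1_{R^3\,dR}}$ bound, which is in $\tau^{-N+\frac12+}L^2_{d\tau}$; pairing with $\psi$ cannot recover the lost half power since $\psi$ carries no temporal structure. Your proposed upgrade mechanism (integration by parts in $R$ against $e^{\pm iR\eta}$ gaining $\eta^{-1}$) applies only to the oscillatory piece, and even there the translation ``$\eta^{-1}$ gain $\Rightarrow$ wave-time gain'' is not justified: the $\eta$-dependence of the propagator $U(\tilde\tau,\tilde\sigma,\eta)$ does not convert $\eta^{-1}$ into $|\tilde\sigma-\tilde\tau|^{-1}$ in a way that survives the subsequent $\tilde\sigma$-integration. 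Moreover, the bounds on $N_\pm$ furnished by Lemma~\ref{lem:wavebasicinhomstructure} are bounds on integrals against the specific oscillatory test functions $\Phi_\pm(R,\xi,\eta)$, not pointwise or $R$-derivative bounds on $N_\pm$ itself; so the integration-by-parts step you describe is not supported by what is actually proved there. To make your route work you would have to reopen the construction of $N_\pm$ and $n_2$ and exploit the non-oscillatory nature of $\psi$ from the inside, at which point you are essentially redoing the paper's direct argument.
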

	By Schr\"odinger time differentiating the source term $F$, we obtain an improved bound analogous to the third estimate in Lemma~\ref{lem:wavebasicinhom}, with similar proof to the preceding:
	\begin{cor}\label{cor:yzWpartialtau} If $F$ is as in the preceding two lemmas, then we have the bound 
		\begin{align*}
			\big\|\langle \xi\partial_{\xi}\rangle^{1+\delta_1}\langle \partial_{\tau}\Box^{-1}(F)\cdot W,\,\phi(R;\xi)\rangle_{L^2_{R^3\,dR}}\big\|_{\tau^{-N-\frac12-\frac{1}{4\nu}+}L^2_{d\tau}L^2_{\rho(\xi)\,d\xi}}\lesssim \big\|z\big\|_{S}. 
		\end{align*}
	\end{cor}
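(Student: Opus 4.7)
The plan is to follow the proof of Corollary~\ref{cor:yzW} essentially verbatim, exploiting the key identity $\partial_\tau = \lambda^{-1}\partial_{\tilde\tau}$ at fixed $R$ (which follows from $d\tau/d\tilde\tau = \lambda$, a calculation carried out at constant $R$). Since $\lambda^{-1}(t)\sim \tau^{-\frac12-\frac{1}{4\nu}}$, the substitution of a $\tilde\tau$-derivative for the $\tau$-derivative is precisely what accounts for the additional loss of $\tau^{-\frac12-\frac{1}{4\nu}}$ in the temporal weight compared to Corollary~\ref{cor:yzW}. The parametrix \eqref{eq:wavepropagator} is differentiable in $\tilde\tau$ without boundary contribution (the $\sin$-factor vanishes at $\tilde\sigma = \tilde\tau$), so $\partial_{\tilde\tau}\Box^{-1}F$ retains the same structural form as $\Box^{-1}F$, with the kernel $\sin[\lambda(\tilde\tau)\xi\int_{\tilde\tau}^{\tilde\sigma}\lambda^{-1}]$ replaced by either (i) a $\cos$ of the same argument multiplied by an extra $\xi\cdot(\ldots)$, or (ii) a benign $\tilde\tau^{-1}$ term from differentiating the $\lambda^3(\tilde\tau)$ prefactor. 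In (i) the loss of the $\xi^{-1}$-gain is absorbed by one of the derivatives already present in $\triangle = \nabla\cdot\nabla$ inside $F$; in (ii) no new difficulty arises.

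Concretely, I would split $F = F_1 + F_2$ as in Lemma~\ref{lem:specialF1}. For the far-field piece $F_2$, the argument given there goes through unchanged modulo the conversion $\partial_\tau = \lambda^{-1}\partial_{\tilde\tau}$, producing control in $\tau^{-N-\frac12-\frac{1}{4\nu}+}L^2_{d\tau}L^p_{R^3\,dR}$ by $\|z\|_S$; the $\langle\xi\partial_\xi\rangle^{1+\delta_1}$-pairing with $\phi(R;\xi)$ costs at most $\langle R\rangle^{1+}$, which is more than absorbed by $W(R)\sim R^{-2}$. For the near-field piece I decompose $\lambda^{-2}F_1 = F_{11} + F_{12}$ with $F_{12} = \chi_{R\lesssim\tau}W\cdot\triangle\bar z$ and $F_{11}$ collecting the commutator terms, exactly as in Corollary~\ref{cor:yzW}. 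Invoking now the third estimate of Lemma~\ref{lem:wavebasicinhom} (and Remark~\ref{rem:wavepropagatormorerefined}) in place of the first, one obtains the analogous splitting $\partial_\tau\Box^{-1}\lambda^2 F_{11} = \tilde n_{11} + \tilde n_{12}$ with
\[
\big\|\langle R\rangle^{-1-\delta_0}\tilde n_{11}\big\|_{\tau^{-N-\frac12-\frac{1}{4\nu}}L^2_{d\tau}L^2_{R^3\,dR}} + \big\|\tilde n_{12}\big\|_{\tau^{-N-\frac12-\frac{1}{4\nu}}L^2_{d\tau}L^2_{R^3\,dR}}\lesssim \|z\|_S,
\]
by the same Sobolev-embedding computation, only with the weight shifted by $\tau^{-\frac12-\frac{1}{4\nu}}$. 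Similarly $\partial_\tau\Box^{-1}\lambda^2 F_{12}$ is estimated in $\tau^{-N-\frac12-\frac{1}{2\nu}+}L^2_{d\tau}L^{4}_{R^3\,dR}$, again via the third bound of Lemma~\ref{lem:wavebasicinhom} and Sobolev embedding.

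Combining the pieces, the pairing against $W(R)\cdot\phi(R;\xi)$ is treated by Plancherel as in the preceding corollary, with $\langle\xi\partial_\xi\rangle^{1+\delta_1}$ moved onto $W\cdot(\partial_\tau\Box^{-1}F)$ at the cost of a factor $\langle R\rangle^{1+\delta_1}$, comfortably absorbed by $W(R)$. The mild loss $``+"$ in the exponent $-N-\frac12-\frac{1}{4\nu}+$ is, exactly as in Remark~\ref{rem:cor:yzW}, traced to the $\langle\xi\partial_\xi\rangle^{1+\delta_1}$-weight, and is recovered by interpolating between a $\langle\xi\partial_\xi\rangle^{1-}$-estimate (which incurs no loss) and a $\langle\xi\partial_\xi\rangle^{1+\delta}$-estimate for larger $\delta$. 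The main obstacle I anticipate is purely bookkeeping: one must verify that the integration-by-parts arguments in $\xi$ underpinning Lemmas~\ref{lem:wavebasicinhomstructure} and~\ref{lem:yzWnonosc} continue to apply after differentiating the kernel in $\tilde\tau$; this is immediate since $\partial_{\tilde\tau}$ acts only on $\tilde\tau$-dependent factors and preserves the symbol-type structure in the $(R,\xi)$-variables.
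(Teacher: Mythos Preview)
Your proposal is correct and takes essentially the same approach as the paper, which simply states that the result follows by Schr\"odinger-time differentiating the source and invoking the third estimate of Lemma~\ref{lem:wavebasicinhom} in place of the first, with the rest of the argument as in Corollary~\ref{cor:yzW}. Your identification of $\partial_\tau = \lambda^{-1}\partial_{\tilde\tau}$ at fixed $R$ as the source of the extra $\tau^{-\frac12-\frac{1}{4\nu}}$ is exactly the mechanism the paper alludes to (cf.\ Remark~\ref{rem:wavepropagatorpartialtau}), and your detailed case-by-case treatment of $F_1 = F_{11}+F_{12}$ and $F_2$ correctly fills in the omitted details.
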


	In the sequel we shall also need a slight variation of the preceding Lemma~\ref{lem:wavebasicinhomstructure} involving slightly different norms. The proof is entirely analogous:
	\begin{lem}\label{lem:wavebasicinhomstructure1} We have the bounds
		\begin{align*}
			&\big\|\langle\xi\partial_{\xi}\rangle^{1+\delta_0}\langle \Box^{-1}(F)\cdot W,\,\phi(R;\xi)\rangle_{L^2_{R^3\,dR}}\big\|_{\tau^{-N+}L^2_{d\tau}L^{\infty}_{d\xi}}\lesssim \big\|\lambda^{-2}F\big\|_{\tau^{-N}L^2_{d\tau}L^{1+}_{R^3\,dR}}, \\
			&\big\|\langle \Box^{-1}(F)\cdot W,\,\phi(R;\xi)\rangle_{L^2_{R^3\,dR}}\big\|_{\tau^{-N}L^2_{d\tau}L^{\infty}_{d\xi}}\lesssim \big\|\lambda^{-2}F\big\|_{\tau^{-N}L^2_{d\tau}L^{1+}_{R^3\,dR}}.
		\end{align*}
		Furthermore, we also have 
		\begin{align*}
			&\big\|\langle\xi\partial_{\xi}\rangle^{1+\delta_0}\partial_{\tilde{\tau}}^2\langle \Box^{-1}(F)\cdot W,\,\phi(R;\xi)\rangle_{L^2_{R^3\,dR}}\big\|_{\tau^{-N+}L^2_{d\tau}L^{\infty}_{d\xi}}\lesssim \big\|\lambda^{-2}\nabla_R F\big\|_{\tau^{-N}L^2_{d\tau}L^{1+}_{R^3\,dR}}, \\
			&\big\|\partial_{\tilde{\tau}}^2\langle \Box^{-1}(F)\cdot W,\,\phi(R;\xi)\rangle_{L^2_{R^3\,dR}}\big\|_{\tau^{-N}L^2_{d\tau}L^{\infty}_{d\xi}}\lesssim \big\|\lambda^{-2}\nabla_R F\big\|_{\tau^{-N}L^2_{d\tau}L^{1+}_{R^3\,dR}}.
		\end{align*}
	\end{lem}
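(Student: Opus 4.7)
The plan is to follow the same scheme as in Lemma~\ref{lem:wavebasicinhomstructure}, but to exploit the fact that the pairing against $W(R)\phi(R;\xi)$ greatly simplifies matters compared to estimating the full function $n$ in weighted $L^2_{R^3\,dR}$; this simplification is exactly what allows us to replace the source norm of $\lambda^{-2}F$ by the weaker $L^{1+}_{R^3\,dR}$. Concretely, insert the Fourier parametrix \eqref{eq:wavepropagator}--\eqref{eq:nflatfourierrepresent} into the inner product and commute the $R$-integration inside:
\[
\langle \Box^{-1}(F)\cdot W,\,\phi(R;\xi)\rangle_{L^2_{R^3\,dR}} = \int_{\tilde\tau}^\infty\!\!\int_0^\infty U(\tilde\tau,\tilde\sigma,\eta)\cdot K(\xi,\eta)\cdot \lambda^{-2}(\tilde\sigma)\,\mathcal{F}_{\R^4}(F)\bigl(\tilde\sigma,\tfrac{\lambda(\tilde\tau)}{\lambda(\tilde\sigma)}\eta\bigr)\rho_{\R^4}(\eta)\,d\eta\, d\tilde\sigma,
\]
where $K(\xi,\eta):=\int_0^\infty \phi_{\R^4}(R;\eta)\,\phi(R;\xi)\,W(R)\,R^3\,dR$. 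Because $W(R)\lesssim\langle R\rangle^{-2}$, the kernel $K$ is absolutely convergent uniformly in $\xi,\eta$ and enjoys symbol-type bounds in both variables; in particular, $\langle\xi\partial_\xi\rangle^{1+\delta_0}$ effectively costs a factor of $\langle R\rangle^{1+\delta_0}$ which $W$ absorbs, so $\langle\xi\partial_\xi\rangle^{1+\delta_0}K(\xi,\eta)$ remains bounded (with at worst a harmless logarithm near $\xi+\eta=0$ coming from the resonance $\phi(R;0)=W(R)$).

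Second, estimate $\mathcal{F}_{\R^4}(F)(\tilde\sigma,\cdot)$ pointwise in its frequency argument by $\|\lambda^{-2}F(\tilde\sigma,\cdot)\|_{L^{1+}_{R^3\,dR}}$, using the uniform boundedness of $\phi_{\R^4}(R;\eta)$ in the oscillatory regime together with a crude Sobolev bound at low frequencies; the remaining $\eta$-integral against $U(\tilde\tau,\tilde\sigma,\eta)\,K(\xi,\eta)\rho_{\R^4}(\eta)$ converges uniformly in $\xi$ thanks to the rapid decay of $W$ inside $K$. Combining these with Schur's test in $(\tilde\tau,\tilde\sigma)$ against the weight $\chi_{\tilde\sigma\geq\tilde\tau}(\tilde\tau/\tilde\sigma)^N$, exactly as in the proofs of Lemma~\ref{lem:wavebasicinhom} and Lemma~\ref{lem:wavebasicinhomstructure}, delivers the first pair of estimates in the $\tau^{-N+}L^2_{d\tau}L^\infty_{d\xi}$, respectively $\tau^{-N}L^2_{d\tau}L^\infty_{d\xi}$, norms.

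For the second pair of estimates, differentiate the propagator twice in wave time $\tilde\tau$. Using the governing ODE $\tilde{\mathcal D}_{\tilde\tau}^2 x +\beta_{\tilde\tau}\tilde{\mathcal D}_{\tilde\tau} x +\eta^2 x=-\lambda^{-2}\mathcal F_{\R^4}(F)$ from subsection~\ref{subsec:subsec:standardFourieronR4}, the principal contribution of $\partial_{\tilde\tau}^2$ to the propagator $U(\tilde\tau,\tilde\sigma,\eta)$ produces the extra factor $\eta^2$, while the remaining scaling pieces carry a weight $\lambda_{\tilde\tau}/\lambda\lesssim\tilde\tau^{-1}$ and are strictly better. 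Writing $\eta^2=\eta\cdot\eta$, one factor of $\eta$ is absorbed by the standard Fourier identity $\eta\cdot\mathcal F_{\R^4}(F)\sim \mathcal F_{\R^4}(\nabla_R F)$, converting the source norm into $\|\lambda^{-2}\nabla_R F\|_{L^{1+}_{R^3\,dR}}$; the other factor of $\eta$ is transferred onto the kernel $K(\xi,\eta)$, where it is again absorbed using that $W$ has rapid decay (so $\int \nabla_R[\phi_{\R^4}(R;\eta)]\cdot\phi(R;\xi)W\,R^3\,dR$ is bounded after integration by parts onto $\phi(R;\xi)W$). The resulting estimate is then closed by Schur's test exactly as before.

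The main technical point will be to ensure uniformity in the $\xi$-variable while accommodating the mild low-frequency singularities of $\phi(R;\xi)$ and $\rho_{\R^4}(\eta)$; here the saving grace is that $W(R)$ forces the kernel $K(\xi,\eta)$ to be integrable down to $\xi=0$ and $\eta=0$, and that the measure $\rho_{\R^4}(\eta)\sim\eta^3$ gives abundant room at the low-frequency end. The small loss $\tau^{-N+}$ in the estimates involving $\langle\xi\partial_\xi\rangle^{1+\delta_0}$ arises, as in Corollary~\ref{cor:yzW} and Remark~\ref{rem:cor:yzW}, from using logarithmically weighted bounds on the kernel $K$ near $\xi=0$.
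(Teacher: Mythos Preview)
Your kernel approach via $K(\xi,\eta)=\int_0^\infty \phi_{\R^4}(R;\eta)\phi(R;\xi)W(R)\,R^3\,dR$ is natural, but there is a genuine gap at the Schur step. You assert that the $\eta$-integral $\int_0^\infty U(\tilde\tau,\tilde\sigma,\eta)K(\xi,\eta)\rho_{\R^4}(\eta)\,d\eta$ ``converges uniformly in $\xi$'' and then apply Schur's test with the bare weight $\chi_{\tilde\sigma\geq\tilde\tau}(\tilde\tau/\tilde\sigma)^N$. But this weight is \emph{not} integrable in $\tilde\sigma$: one has $\int_{\tilde\tau}^\infty(\tilde\tau/\tilde\sigma)^N\,d\tilde\sigma\sim\tilde\tau$, which blows up. In the proofs of Lemma~\ref{lem:wavebasicinhom} and~\ref{lem:wavebasicinhomstructure} the Schur kernel always carries an additional factor $\langle\tilde\sigma-\tilde\tau\rangle^{-1-\delta_0}$, and it is precisely producing this factor that is the heart of the argument. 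Your $\eta$-integral, once you have pulled $\|\lambda^{-2}F\|_{L^{1+}}$ out via the $L^\infty_\eta$ bound on $\mathcal{F}_{\R^4}(F)$, is only $O(1)$ in $\tilde\sigma-\tilde\tau$: indeed with $\mu:=\lambda(\tilde\tau)\int_{\tilde\tau}^{\tilde\sigma}\lambda^{-1}\,ds$ and $|U|\lesssim\min\{\mu,\eta^{-1}\}$, the rapid $\eta$-decay of $K$ only gives $\int_0^\infty\min\{\mu,\eta^{-1}\}|K|\eta^3\,d\eta\lesssim 1$ for $\mu\gg1$, with no decay.

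The fix is exactly the mechanism the paper uses (following the proof of Lemma~\ref{lem:wavebasicinhomstructure}): split the $\eta$-integral into $\eta\lesssim\mu^{-1}$, where the volume factor $\rho_{\R^4}\sim\eta^3$ gives $\mu^{-3}$; and $\eta\gtrsim\mu^{-1}$, where one performs $(1+\delta_0)$-fold integration by parts in $\eta$ against the oscillatory phase in $U$, exploiting that $\partial_\eta^{1+\delta_0}K(\xi,\eta)$ remains bounded and rapidly decaying (the extra $R^{1+\delta_0}$ is absorbed by $W$). This yields the missing $\langle\tilde\sigma-\tilde\tau\rangle^{-1-\delta_0}$, after which Schur closes. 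Equivalently, one may pass back to physical space and decompose according to $R\sim\tilde\sigma-\tilde\tau$, $R\ll\tilde\sigma-\tilde\tau$, $R\gg\tilde\sigma-\tilde\tau$ as in the appendix proof; in the diagonal region the decay $W(R)\lesssim R^{-2}\sim(\tilde\sigma-\tilde\tau)^{-2}$ supplies time integrability directly, while the off-diagonal regions are again handled by integration by parts in $\eta$. Once this is in place, the rest of your outline (including the $\partial_{\tilde\tau}^2$ estimates, where one $\eta$ is traded for $\nabla_R$ on $F$ and the other absorbed by $K$) goes through.
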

	Note that application of $\partial_{\tilde{\tau}}^2$, where $\tilde{\tau}$ is the wave time, 'costs' one derivative due to the smoothing effect of $\Box^{-1}$.  
	\\
	We shall later also require further specializations, whose proofs are simple variations of the preceding: 
	\begin{lem}\label{lem:wavebasicinhomstructure2} Let $F$ be as in Lemma~\ref{lem:specialF1}. Then for $\delta\gg\delta_1>0$ we have the bounds 
		\begin{align*}
			&\big\|\chi_{\xi<\tau^{-\delta}}\langle \xi\partial_{\xi}\rangle^{1+\delta_0}\langle\Box^{-1}(F)\cdot W,\,\phi(R;\xi)\rangle_{L^2_{R^3\,dR}}\big\|_{\tau^{-N-}L^2_{d\tau}L^2_{\rho(\xi)\,d\xi}}\lesssim \big\|z\big\|_{S}\,\\
			&\big\|\chi_{\xi<\tau^{-\delta}}\langle \xi\partial_{\xi}\rangle^{1+\delta_0}\langle\Box^{-1}(F)\cdot W,\,\phi(R;\xi)\rangle_{L^2_{R^3\,dR}}\big\|_{\tau^{-N-}L^2_{d\tau}L^\infty_{\rho(\xi)\,d\xi}}\lesssim \big\|z\big\|_{S}
		\end{align*}
		Moreover, we also have the estimate 
		\begin{align*}
			\big\|\langle \xi\partial_{\xi}\rangle^{1+\delta_0}\langle\chi_{R>\tau^{\frac12-\frac{1}{4\nu}+\delta}}\cdot\Box^{-1}(F)\cdot W,\,\phi(R;\xi)\rangle_{L^2_{R^3\,dR}}\big\|_{\tau^{-N-}L^2_{d\tau}L^2_{\rho(\xi)\,d\xi}}\lesssim \big\|z\big\|_{S}. 
		\end{align*}
	\end{lem}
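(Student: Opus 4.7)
The plan is to refine the arguments of Corollary~\ref{cor:yzW} and Lemma~\ref{lem:yzWnonosc}, both of which produced a marginal loss $\tau^{-N+}$; the restrictions imposed in the present lemma should supply additional smallness sufficient to upgrade this to $\tau^{-N-}$. As in the proof of the corollary, I would begin by splitting $F = F_1 + F_2$ with $F_1 = \lambda^2\triangle\Re(\chi_{R\lesssim\tau}W\bar z)$, $F_2 = \lambda^2\triangle\Re(\chi_{R\gtrsim\tau}W\bar z)$, and further decomposing $F_1 = F_{11}+F_{12}$ with $F_{12} = \chi_{R\lesssim\tau}W\cdot\triangle\bar z$; this last piece is precisely the source of the marginal loss in the corollary. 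The contribution of $F_2$ is handled by the pointwise smallness of $\chi_{R\gtrsim\tau}W$ exactly as in the proof of Lemma~\ref{lem:specialF1}, and offers ample decay to spare.

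For the third estimate (spatial restriction $R>\tau^{\frac12-\frac{1}{4\nu}+\delta}$), I would exploit the pointwise bound
\[
|W(R)|\lesssim R^{-2}\lesssim \tau^{-1+\frac{1}{2\nu}-2\delta}\qquad\text{on }R>\tau^{\frac12-\frac{1}{4\nu}+\delta}.
\]
Following Lemma~\ref{lem:wavebasicinhomstructure}, the operator $\langle\xi\partial_\xi\rangle^{1+\delta_0}$ is transferred via integration by parts in $\xi$ into weights of the form $\langle R\rangle^{1+\delta_0}$ acting against $\Box^{-1}F\cdot W$ and $\phi(R;\xi)$. The composite $\chi_{R>\tau^{\frac12-\frac{1}{4\nu}+\delta}}\langle R\rangle^{1+\delta_0}W$ retains a strong pointwise $\tau^{-c\delta}$ bound provided $\delta_0\ll\delta$, and coupling this with the $L^p_R$ bounds on $\Box^{-1}F_{\bullet}$ furnished by Lemma~\ref{lem:wavebasicinhom} and Remark~\ref{rem:wavepropagatormorerefined} yields the claim exactly as in the corollary.

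For the first two estimates (frequency restriction $\xi<\tau^{-\delta}$), observe that for $R$ in the effective support of $W$ and $\xi<\tau^{-\delta}$ one has $R\xi\ll 1$, placing us in the non-oscillatory regime described in subsection~\ref{subsec:basicfourier}. Using the expansion
\[
\phi(R;\xi) = W(R)\Big(1+\sum_{j\geq 1}(R\xi)^{2j}\phi_j(R^2)\Big)
\]
and setting $g(\tau,\xi):=\langle\Box^{-1}(F)\cdot W,\phi(R;\xi)\rangle_{L^2_{R^3\,dR}}$, one obtains $g(\tau,\xi) = g_0(\tau) + \sum_{j\geq 1}\xi^{2j}g_j(\tau)$ with uniformly controlled coefficients $g_j(\tau)$. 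The derivative part of $\langle\xi\partial_\xi\rangle^{1+\delta_0}$ annihilates the $\xi$-independent leading term $g_0$ and produces at least a factor of $\xi^{2-c\delta_0}$ on the remainder; at $\xi<\tau^{-\delta}$ this gives the desired polynomial gain $\tau^{-(2-c\delta_0)\delta}$. The identity part, namely $g$ itself, is bounded via the variant of Corollary~\ref{cor:yzW} recorded in Remark~\ref{rem:cor:yzW}, which avoids the fractional derivative and hence the associated $\tau^{+}$ loss; the $L^2_\rho$-integration at low $\xi$ provides additional smallness through $\int_0^{\tau^{-\delta}}\rho(\xi)\,d\xi$, while the $L^\infty_\rho$-norm reduces to a pointwise supremum.

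The principal obstacle is the careful bookkeeping required to verify that the polynomial gain $\tau^{-c\delta}$ genuinely dominates each marginal $\tau^{+\delta_1}$ arising both from Sobolev embeddings that cost factors of $\tilde\tau\sim\tau^{\frac12-\frac{1}{4\nu}}$ and from transferring fractional $\xi$-derivatives through the transition $R\xi\sim 1$. The hypothesis $\delta\gg\delta_1$ is used precisely to secure this bookkeeping; once it is in place, the estimates of Lemma~\ref{lem:wavebasicinhom} and Lemma~\ref{lem:wavebasicinhomstructure1} apply without modification.
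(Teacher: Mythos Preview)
Your overall strategy—reducing to Corollary~\ref{cor:yzW}, splitting into oscillatory ($R\xi\gtrsim 1$) and non-oscillatory ($R\xi\lesssim 1$) regimes, and extracting smallness from the restrictions—matches the paper's ``simple variation'' description. The derivative part $(\xi\partial_\xi)^{1+\delta_0}g$ is handled correctly: the loss $\tau^{0+}$ in Corollary~\ref{cor:yzW} arises solely from the oscillatory regime, and under $\xi<\tau^{-\delta}$ this forces $R>\tau^{\delta}$, where the extra factor of $W(R)\lesssim\tau^{-2\delta}$ supplies a genuine polynomial gain beating the loss.

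There is, however, a gap in your treatment of the \emph{identity} part of $\langle\xi\partial_\xi\rangle^{1+\delta_0}$, i.e.\ the undifferentiated coefficient $g(\tau,\xi)$ itself. Remark~\ref{rem:cor:yzW} gives $\tau^{-N}$ (no loss) for this, not $\tau^{-N-}$. Your claimed ``additional smallness through $\int_0^{\tau^{-\delta}}\rho(\xi)\,d\xi$'' is only logarithmic, since $\rho(\xi)\sim(\xi\log^2\xi)^{-1}$ gives $\int_0^{\tau^{-\delta}}\rho\,d\xi\sim(\delta\log\tau)^{-1}$; and for $L^\infty_\xi$ you concede no gain at all. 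Concretely, the $\xi$-independent leading term $g_0(\tau)=\langle\lambda^{-2}\Box^{-1}F\cdot W,\,W\rangle_{L^2_{R^3\,dR}}$ survives undiminished by the cutoff $\chi_{\xi<\tau^{-\delta}}$, and there is no structural reason for it to decay faster than $\tau^{-N}$. So as written your argument establishes only $\tau^{-N}$ for the first two estimates, not the stated $\tau^{-N-}$. You should either locate a mechanism giving polynomial gain for $g_0$ itself, or note explicitly that the identity component sits at $\tau^{-N}$ and verify that this suffices in the two applications (Lemmas~\ref{lem:znresperturbative1} and~\ref{lem:tildealphasourceterms}).

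For the third estimate, your idea of using the pointwise smallness of $\chi_{R>\tau^{\frac12-\frac{1}{4\nu}+\delta}}W$ is correct in spirit, but the bookkeeping is incomplete: after transferring $\langle\xi\partial_\xi\rangle^{1+\delta_0}$ to a weight $\langle R\rangle^{1+\delta_0}$, the composite $\langle R\rangle^{1+\delta_0}W\sim R^{-1+\delta_0}$ paired with $n\in\langle R\rangle^{1+\delta_0}L^2_{R^3\,dR}$ (from Lemma~\ref{lem:wavebasicinhom}) does not obviously close with gain. The cleaner route is to invoke the non-resonance mechanism of Lemma~\ref{lem:refinedwavepropagatorwithphysicallocalization}: since $R>\tau^{\frac12-\frac{1}{4\nu}+\delta}\gg\tilde\tau$, the oscillatory phases in the Duhamel representation are out of resonance by a factor $\tau^{\delta}$, and integration by parts in the frequency variable $\eta$ gains $\tau^{-\delta}$ directly.
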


	\subsection{Technical interlude: estimates with temporal frequency localization}\label{subsec:tempfreqloc}
	
	We shall have to localize functions with respect to temporal frequency below. Here time will be the wave time $\tilde{\tau}$. Specifically, let $\chi$ be a smooth compactly supported function which equals $1$ on $[-1, 1]$, and define the operator 
	\begin{equation}\label{eq:temploc}
		Q^{(\tilde{\sigma})}_{<a}f(\tilde{\sigma}) = \int_{-\infty}^{\infty} a\check{\chi}\big(a(\tilde{\sigma} - \tilde{s})\big)\cdot f(\tilde{s})\,d\tilde{s}
	\end{equation}
	The superscript $\tilde{\sigma}$ is to emphasize that this operator acts with respect to the wave time. We shall be particularly interested in the action of this operator on functions of the form $\Box^{-1}F$. Observe that if $a>1$, say, then the composition
	\[
	\chi_{\tau>\tau*}\circ Q^{(\tilde{\tau})}_{<a}
	\]
	maps $\tau^{-N}L^2_{d\tau}$ into itself, with uniform mapping bound provided $\tau_*$ is sufficiently large. We also observe here that the parameter $a$ may be chosen as a function of $\tilde{\sigma}$. 
	
	For later use we record the following 
	\begin{lem}\label{lem:largemodgeneralboxinverse} 
		Let the equation 
		\[
		\Box n = F
		\]
		be reformulated as \eqref{eq:nFintildetauR} and solved on the Fourier side by \eqref{eq:wavepropagator}. Then we have the bound 
		\begin{align*}
			\Big\|\chi_{\tau>\tau*}\circ Q^{(\tilde{\tau})}_{\geq a}n\Big\|_{\tau^{-N} L^2_{d\tau} a^{-2}\langle R\rangle^{1+\delta_0}L^2_{R^3\,dR}}\lesssim \big\|\lambda^{-2}\langle R\rangle^2\langle\nabla\rangle F\big\|_{\tau^{-N} L^2_{d\tau} L^2_{R^3\,dR}}.
		\end{align*}
	\end{lem}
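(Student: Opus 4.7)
The plan is to exploit the fact that the Fourier multiplier $Q^{(\tilde{\tau})}_{\geq a}\partial_{\tilde{\tau}}^{-2}$ has $L^2_{\tilde{\tau}}\to L^2_{\tilde{\tau}}$ operator norm bounded by $a^{-2}$; this is the mechanism by which the factor $a^{-2}$ in the target norm is generated. First, I would rearrange the reformulated wave equation \eqref{eq:nFintildetauR} in the form
\begin{equation*}
\partial_{\tilde{\tau}}^2 n = \triangle_R n - \lambda^{-2}F - \mathcal{E}(n),
\end{equation*}
where $\mathcal{E}(n)$ collects all remaining terms from \eqref{eq:nFintildetauR}, each carrying a prefactor that is a positive power of $\frac{\lambda_{\tilde{\tau}}}{\lambda}\sim \tilde{\tau}^{-1}$ times one of the expressions $R\partial_R\partial_{\tilde{\tau}}n$, $(R\partial_R)^2 n$, $\partial_{\tilde{\tau}}n$, $R\partial_R n$. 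Since the functions we work with are supported in $\tilde{\tau}\geq \tilde{\tau}_*/2$ and vanish as $\tilde{\tau}\to+\infty$ (because $\Box^{-1}$ is defined by Duhamel from $+\infty$), applying the temporal Fourier transform is legitimate, and I would use
\begin{equation*}
\chi_{\tau>\tau_*}\circ Q^{(\tilde{\tau})}_{\geq a}\, n \;=\; -\,\chi_{\tau>\tau_*}\circ Q^{(\tilde{\tau})}_{\geq a}\partial_{\tilde{\tau}}^{-2}\big(\triangle_R n - \lambda^{-2}F - \mathcal{E}(n)\big),
\end{equation*}
together with the $O(a^{-2})$ multiplier bound, to reduce everything to controlling the right-hand side in the unweighted (in $a$) norm $\tau^{-N}L^2_{d\tau}\langle R\rangle^{1+\delta_0}L^2_{R^3\,dR}$.

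Next I would estimate the three contributions separately. The term $\lambda^{-2}F$ is disposed of trivially: $\|\langle R\rangle^{-1-\delta_0}\lambda^{-2}F\|_{L^2_{R^3\,dR}}\lesssim\|\lambda^{-2}F\|_{L^2}\lesssim\|\lambda^{-2}\langle R\rangle^{2}\langle\nabla\rangle F\|_{L^2_{R^3\,dR}}$. The error terms $\mathcal{E}(n)$ are perturbative because each carries an extra $\tilde{\tau}^{-1}$ factor: for instance, $\frac{\lambda_{\tilde\tau}}{\lambda}\partial_{\tilde\tau}n$ is controlled by the second estimate of Lemma~\ref{lem:wavebasicinhom} after absorbing the $\tilde{\tau}^{-1}$ prefactor into the time-weighted $\tau^{-N}L^2_{d\tau}$ norm (this step uses $\tilde{\tau}\sim_\nu\tau^{1/2-1/(4\nu)}$), and similarly for the $R\partial_R n$ and mixed terms via the first estimate and the commutation of $R\partial_R$ with $\phi_{\R^4}(R;\xi)$. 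The delicate piece is $\triangle_R n$. I would start from the Duhamel formula \eqref{eq:wavepropagator} and write
\begin{equation*}
\triangle_R n(\tilde{\tau}, R) = -\int_0^\infty\phi_{\R^4}(R;\xi)\,\xi\int_{\tilde\tau}^\infty\tfrac{\lambda^3(\tilde\tau)}{\lambda^3(\tilde\sigma)}\sin\!\big[\lambda(\tilde\tau)\xi\!\int_{\tilde\tau}^{\tilde\sigma}\!\lambda^{-1}\big]\,\lambda^{-2}(\tilde\sigma)\hat{F}(\tilde\sigma,\tfrac{\lambda(\tilde\tau)}{\lambda(\tilde\sigma)}\xi)\,d\tilde\sigma\,\rho_{\R^4}(\xi)\,d\xi,
\end{equation*}
and use the identity $\xi\sin[\cdot]=-\tfrac{\lambda(\tilde\sigma)}{\lambda(\tilde\tau)}\partial_{\tilde\sigma}\cos[\cdot]$ to integrate by parts in $\tilde\sigma$. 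The boundary term at $\tilde\sigma=\tilde\tau$ contributes an instantaneous piece $-\lambda^{-2}(\tilde\tau)F(\tilde\tau,R)$ (which merely doubles the $-\lambda^{-2}F$ term already present and is controlled as above), while the remaining integral has a bounded cosine kernel multiplied by $\partial_{\tilde\sigma}$ acting on either the $\lambda$ weights (producing harmless $\tilde\sigma^{-1}$ factors) or on $\hat{F}$, and in the latter case the chain rule via $\partial_{\tilde\sigma}(\lambda(\tilde\tau)/\lambda(\tilde\sigma))\cdot\xi\partial_\xi\hat F$ yields a $\xi\partial_\xi\hat F$ contribution controlled by the $\langle R\rangle^2$-weighted term on the right.

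The main obstacle is precisely this last piece: the integration by parts produces $\partial_{\tilde\sigma}\hat{F}$ for which the hypothesis provides no direct control. I would handle this by a spatial decomposition $R\lesssim\tilde\sigma-\tilde\tau$ versus $R\sim\tilde\sigma-\tilde\tau$ versus $R\gg\tilde\sigma-\tilde\tau$ exactly as in the proof of Lemma~\ref{lem:wavebasicinhom}, integrating by parts in $\xi$ in the non-stationary regions (which gains inverse powers of $\tilde\sigma-\tilde\tau$ at the cost of $\partial_\xi\hat F$, absorbed by the $\langle R\rangle^2$-weight on the right), and treating the near-light-cone strip $R\sim\tilde\sigma-\tilde\tau$ directly via Plancherel and the weight $\langle R\rangle^{-1-\delta_0}$ on the left; the $\langle\nabla\rangle F$-weight compensates for the extra factor of $\xi$ coming from $\triangle_R$ in the high-frequency regime $\xi\gtrsim 1$. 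Combining all the cases yields the required bound on $\triangle_R n$, and Schur's criterion with respect to $\tilde\tau,\tilde\sigma$ (as in the proof of Lemma~\ref{lem:wavebasicinhom}) transports the estimate to the $\tau^{-N}L^2_{d\tau}$ norm, finishing the proof.
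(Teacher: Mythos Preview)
Your overall strategy matches the paper exactly: use that $Q^{(\tilde\tau)}_{\geq a}\partial_{\tilde\tau}^{-2}$ gains $a^{-2}$, reduce to estimating $\|\langle R\rangle^{-1-\delta_0}\partial_{\tilde\tau}^2 n\|_{\tau^{-N}L^2_{d\tau}L^2_{R^3\,dR}}$, and invoke a modification of Lemma~\ref{lem:wavebasicinhom}. Your rewriting $\partial_{\tilde\tau}^2 n = \triangle_R n - \lambda^{-2}F - \mathcal{E}(n)$ is equivalent to the paper's ``directly computing the effect of $\partial_{\tilde\tau}^2$ on \eqref{eq:wavepropagator}'': differentiating the Duhamel in $\tilde\tau$ twice produces exactly these pieces (the leading phase derivatives give the $-\xi^2 x$ term, the boundary gives $\lambda^{-2}\hat F$, and derivatives of the $\lambda(\tilde\tau)$-weights and of the rescaled frequency argument in $\hat F$ give the $\tilde\tau^{-1}$-suppressed terms, involving at worst $\xi\partial_\xi\hat F$). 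Crucially, this computation \emph{never} produces $\partial_{\tilde\sigma}\hat F$.

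The gap is in your treatment of $\triangle_R n$. The integration by parts in $\tilde\sigma$ you perform, via $\xi\sin[\cdot]=-\tfrac{\lambda(\tilde\sigma)}{\lambda(\tilde\tau)}\partial_{\tilde\sigma}\cos[\cdot]$, introduces the genuine time derivative $(\partial_{\tilde\sigma}\hat F)(\tilde\sigma,\cdot)$, which the hypothesis does not control. Your proposed resolution---spatial decomposition and integration by parts in $\xi$---does not touch this term: in the near-light-cone strip $R\sim\tilde\sigma-\tilde\tau$ you would still need $\|\partial_{\tilde\sigma}\hat F\|_{L^2_{\rho}}$ pointwise in $\tilde\sigma$. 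The remedy is simply to \emph{skip} this integration by parts in $\tilde\sigma$ and apply the Lemma~\ref{lem:wavebasicinhom} machinery directly to the Duhamel representation of $\triangle_R n=-\mathcal{F}_{\R^4}^{-1}(\xi^2 x)$, which carries an extra factor of $\xi^2$ relative to the original $n$. In the diagonal region $R\sim\tilde\sigma-\tilde\tau$ one then needs $\|\xi\hat F\|_{L^2_\rho}$, absorbed by the $\langle\nabla\rangle$ on the right; in the off-diagonal regions one integrates by parts in $\xi$ \emph{twice} (rather than $1+\delta_0$ times), gaining $(\tilde\sigma-\tilde\tau)^{-2}$ at the cost of $\partial_\xi^2\hat F$, absorbed by the $\langle R\rangle^2$ on the right. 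This is precisely the ``simple modification of the proof of Lemma~\ref{lem:wavebasicinhom}'' the paper invokes, and your final paragraph in fact describes it---you should apply it directly to $-\xi^2 x$ rather than after the superfluous $\tilde\sigma$-integration by parts.
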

	\begin{proof} This follows by observing that 
		\begin{align*}
			\Big\|\chi_{\tau>\tau*}\circ Q^{(\tilde{\tau})}_{\geq a}n\Big\|_{\tau^{-N} L^2_{d\tau} S}\lesssim a^{-2}\big\|\partial_{\tilde{\tau}}^2n\big\|_{\tau^{-N} L^2_{d\tau} S}
		\end{align*}
		and directly computing the effect of $\partial_{\tilde{\tau}}^2$ on \eqref{eq:wavepropagator}, as well as using a simple modification of the proof of Lemma~\ref{lem:wavebasicinhom}.
	\end{proof}
	
	\section{The choice of the modulation parameters}\label{sec:modlnparam}
	
	\subsection{The equation for $\tilde{\alpha}$}
	We now determine the choice of the parameters $\tilde{\lambda}, \tilde{\alpha}$, in light of \eqref{eq:kappa1eqn}, \eqref{eq:kappa2eqn} as well as \eqref{eq:E1mod}, \eqref{eq:E2mod}. Observe that (recall \eqref{eq:e1moddef})
	\begin{align*}
		&e_1^{\text{mod}}|_{R = 0} \\&= -\lambda^{-2}\partial_t\tilde{\alpha}(\tau)\cdot\lambda^{-1}\big(\psi_*^{(\tilde{\lambda})}\big)|_{R = 0} + i\lambda^{-2}(\tau)\tilde{\lambda}_t\cdot\lambda^{-1}\partial_{\tilde{\lambda}}\big(\psi_*^{(\tilde{\lambda})}\big)|_{R = 0}+O\big(|\tilde{\alpha}|^2\big),
	\end{align*}
	and furthermore we have 
	\begin{align*}
		\lambda^{-1}\partial_{\tilde{\lambda}}\big(\psi_*^{(\tilde{\lambda})}\big)|_{R = 0}  = \Lambda W|_{R = 0} + O(\tilde{\lambda}),
	\end{align*}
	where we have used the notation 
	\begin{align*}
		\Lambda W = \partial_{\tilde{\lambda}}\Big(W_{\tilde{\lambda}}\Big)|_{\tilde{\lambda} = 1}. 
	\end{align*}
	In light of \eqref{eq:kappa2eqn} describing the evolution of the {\it{imaginary part of $\kappa$}}, we now impose the condition 
	\boxalign[12cm]{\begin{equation}\label{eq:tildealpha1}
		\partial_{\tau}\tilde{\alpha} = -Q_{<0}^{(\tilde{\tau})}\Big[\Re\big(\mathcal{L}z\big)|_{R = 0} - \Re\big(\lambda^{-2}(y \tilde{u}_*^{(\tilde{\lambda}, \underline{\tilde{\alpha}})})\big)|_{R = 0} + \Re( e_1)|_{R = 0}\Big].
	\end{equation}}
	The reason for including the multiplier $Q_{<0}^{(\tilde{\tau})}$ is somewhat subtle, and has to do with the fact that $\tilde{\alpha}$ in turn contributes source terms to the evolution of $\tilde{\lambda}$ via its effect on $E_1^{\text{mod}}$, see \eqref{eq:E1mod}. The equation governing the evolution of $\tilde{\lambda}$ introduced below becomes degenerate in a certain sense for very high temporal frequencies, which motivates limiting $\tilde{\alpha}$ to small temporal frequencies.  The price we have to pay is that $\kappa_2$ will in fact lose some temporal decay, but since this parameter describes the imaginary part of the resonant part of $z$, it turns out that the system accommodates such a loss. 
	
	\subsection{The equation for $\tilde{\lambda}$}\label{subsec:tildelambdaeqn}
	The choice of $\tilde{\lambda}$, which is used to control the {\it{real part of $\kappa$}}, is more subtle, and require a closer look at \eqref{eq:kappa1eqn}.  At first sight, keeping in mind the definition of $e_1^{\text{mod}}$ and its value at $R = 0$ given above, one might be tempted to treat the term 
	\[
	i\lambda^{-2}(\tau)\tilde{\lambda}_t\cdot\lambda^{-1}\partial_{\tilde{\lambda}}\big(\psi_*^{(\tilde{\lambda})}\big)|_{R = 0}
	\]
	as the main source term in \eqref{eq:kappa1eqn} to force the required cancellation required to control $\kappa_1(\tau)$.  However, it turns out that the main source term here really is 
	\[
	\Im\big(\mathcal{L}z\big)|_{R = 0} ,
	\]
	where we recall that $z$ (involving both the resonant as well as the non-resonant parts, with the former getting annihilated upon applying $\mathcal{L}$) solves \eqref{eq:zeqn2}. There the main contribution comes from the following source terms: 
	\begin{equation}\label{eq:kappa1mainsourceterms}
		-\lambda^{-2}y_z\cdot W  -\lambda^{-2}\big(y \tilde{u}_*^{(\tilde{\lambda}, \underline{\tilde{\alpha}})} - y_z\cdot W\big) + e_1^{\text{mod}}. 
	\end{equation}
	The modulation parameter $\tilde{\lambda}$ influences $y-y_z = y_2$ via the second equation in \eqref{eq:zeqn2}, see \eqref{eq:ytildelamba}, which in turn leads to the {\it{real valued contribution}} \eqref{eq:ytildelambdaleadingcontr} to the first equation in \eqref{eq:zeqn2}. 
	The main point will be to counteract the contribution of $-\lambda^{-2}y_z\cdot W$. To determine $\tilde{\lambda}$, we are guided again by the expression on the right in \eqref{eq:ImzR0}. Define 
	\begin{equation}\label{eq:Xdef}\begin{split}
			&\mathcal{F}\Big( -Q^{(\tilde{\tau})}_{<\tau^{\frac12+}}\big(\lambda^{-2}y_z\cdot W\big) - Q^{(\tilde{\tau})}_{<\tau^{\frac12+}}\big(\lambda^{-2}y_z\cdot (\tilde{u}_*^{(\tilde{\lambda}, \tilde{\alpha})}-W)\big)  \\
			& - Q^{(\tilde{\tau})}_{<\tau^{\frac12+}}\big(\lambda^{-2}(y - y_z)\cdot \tilde{u}_*^{(\tilde{\lambda}, \tilde{\alpha})} -  \lambda^{-2}y^{\text{mod}}_{\tilde{\lambda}}\cdot W\big)  - Q^{(\tilde{\tau})}_{<\tau^{\frac12+}}\big((\lambda^{-2}n_*^{(\tilde{\lambda}, \tilde{\alpha})} - W^2)z\big)\Big)(\tau, \xi)\\
			&= : X^{(\tilde{\lambda})}(\tau, \xi), 
	\end{split}\end{equation}
	where we have introduced the part of $y$ depending on terms in $E_2^{\text{mod}}$ contributed by modulating on $\tilde{\lambda}$
	\begin{equation}\label{eq:ylamndatildemod}
		y^{\text{mod}}_{\tilde{\lambda}}: = \Box^{-1}E_2^{\text{mod}}. 
	\end{equation}
	Furthermore, as far as the terms $y^{\text{mod}}_{\tilde{\lambda}}$ are concerned, we shall require below a modification of them for technical reasons as follows: we shall set 
	\begin{equation}\label{eq:ytildemodmodified}
		\tilde{y}^{\text{mod}}_{\tilde{\lambda}}: = \Box^{-1}\tilde{E}_2^{\text{mod}},
	\end{equation}
	where the term on the right is defined as follows, keeping in mind \eqref{eq:E2mod}:
	\begin{equation}\label{eq:tildeE2mod}\begin{split}
			\tilde{E}_2^{\text{mod}} &= -2\tilde{\lambda}_{tt}\cdot \lambda^2\Lambda W\cdot W -2 Q^{(\tilde{\tau})}_{<\tilde{\tau}^{\frac{10}{\nu}}}\big[\tilde{\lambda}_t\cdot \partial_t\big(\lambda^2\Lambda W\cdot W\big)\big]\\
			& - Q^{(\tilde{\tau})}_{<\tilde{\tau}^{\frac{10}{\nu}}}\big[\tilde{\lambda}_t\cdot \partial_t\big(\partial_{\tilde{\lambda}}n_*^{(\tilde{\lambda})} - 2\lambda^2\Lambda W\cdot W\big)\big]\\
			&-Q^{(\tilde{\tau})}_{<1}\Big[\tilde{\lambda}_{tt}\cdot \big(\partial_{\tilde{\lambda}}n_*^{(\tilde{\lambda})}-\lambda^2\Lambda W\cdot W\big)\\
			& + 2\partial_t(\chi_3)\cdot  \tilde{\lambda}_t\cdot \partial_{\tilde{\lambda}}n_*^{(\tilde{\lambda})}- \Box(\chi_3)\cdot \big(n_*^{(\tilde{\lambda})} - n_*\big)\\
			&+ 2\partial_r(\chi_3)\cdot \partial_r\big(n_*^{(\tilde{\lambda})} - n_*\big) -\triangle_r(\chi_3)\cdot\big(|\psi_*^{(\tilde{\lambda})}|^2 - |\psi_*|^2\big)\\
			&-2\partial_r(\chi_3)\cdot \partial_r\big(|\psi_*^{(\tilde{\lambda})}|^2 - |\psi_*|^2\big)\Big]\\
	\end{split}\end{equation}
	It is to be noted that the principal part consists of the first line on the right; it is the sum of these two terms, particularly their restriction to the low temporal frequency regime, which will mostly determine how to choose $\tilde{\lambda}$. 
	We shall use the decomposition \eqref{eq:resnonressplit} and we shall use a more refined representation of the real part of the coefficient of resonant part, $\kappa_1$, as follows:
	\begin{equation}\label{eq:kapparefined}
		\kappa_1 = c_*\cdot Q^{(\tilde{\tau})}_{\geq \gamma^{-1}}\tilde{\lambda} + \tilde{\kappa}_1,
	\end{equation}
	where $\gamma = \gamma(\tau_*)>0$ satisfies $\lim_{\tau_*\rightarrow\infty}\gamma(\tau_*) = 0$; we shall set $\gamma(\tau_*) = (\log\log\tau*)^{-1}$. 
	Here $c_*$ is a certain universal constant also made explicit in the sequel(see \eqref{eq:cstardef}) . Hence we shall use 
	\begin{equation}\label{eq:zedcomprefined}
		z = \big(c_*\cdot Q^{(\tilde{\tau})}_{\geq \gamma^{-1}}\tilde{\lambda} + \tilde{\kappa}_1 + i\kappa_2\big)\phi_0(R) + z_{nres}, 
	\end{equation}
	and in particular, this decomposition shall be used for $z$ in the source term \eqref{eq:Xdef}. 
	\\
	
	Ideally, we would choose $\tilde{\lambda}$ in such fashion that the contribution of the source term $\tilde{y}^{\text{mod}}_{\tilde{\lambda}}\cdot W$ to $\Im \mathcal{L}z|_{R = 0}$ via the Schr\"odinger propagation cancels the contribution of $X^{(\tilde{\lambda})}$, as well as the contribution from $e_1^{\text{mod}}$. However, technical issues linked to the degenercay of the resulting equation for $\tilde{\lambda}$, force a somewhat more delicate choice. 
	\\
	Let $\rho_1(\xi)$ be a function on $(0,\infty)$ which is $C^\infty$, agrees with $\rho(\xi)$ on $(0, 1]$ and equals $\xi^{-2}$ on $[2,\infty)$. Then we shall {\it{essentially}} require the relation (see Proposition~\ref{prop:linpropagator} for the definition of $S$, as well as \eqref{eq:ScalK1}, \eqref{eq:ScalK2} for the definition of $S_{\mathcal{K}}$)
	\boxalign[12cm]{\begin{align}
			\label{eq:tildelambda}
			&\Im \int_{\tau}^\infty\int_0^\infty \xi^2\cdot S(\tau, \sigma;\xi)\cdot X^{(\tilde{\lambda})}(\sigma, 0)\rho_1(\xi)\,d\xi d\sigma  + L^{(\tilde{\lambda})}_{\text{small}} + L^{(\tilde{\lambda})}_{\mathcal{K}}=\\[2pt]& \nonumber
			 - \Im\int_{\tau}^\infty\int_0^\infty \xi^2\cdot S(\tau, \sigma;\xi)\cdot\mathcal{F}\big(\lambda^{-2}\tilde{y}_{\tilde{\lambda}}^{\text{mod}}\cdot W\big)(\sigma, 0)\rho_1(\xi)\,d\xi d\sigma +  R^{(\tilde{\lambda})}_{\text{small}} + R^{(\tilde{\lambda})}_{\mathcal{K}} \\[2pt] \nonumber
			& - \Im \int_{\tau}^\infty\int_0^\infty \xi^2\cdot (S+S_{\mathcal{K}})(\tau, \sigma;\xi)\cdot \mathcal{F}\big(Q^{(\tilde{\tau})}_{<\tau^{\delta}}\big(e_1^{\text{mod}}\big)(\sigma, \frac{\lambda(\tau)}{\lambda(\sigma)}\xi)\rho(\xi)\,d\xi d\sigma\\[2pt] \nonumber
			& - \Im Q^{(\tilde{\tau})}_{<\tau^{\delta}}(e_1^{mod})|_{R = 0}, 
\end{align}}
	where the additional correction terms $L^{(\tilde{\lambda})}_{\text{small}}, R^{(\tilde{\lambda})}_{\text{small}}, R^{(\tilde{\lambda})}_{\mathcal{K}}, L^{(\tilde{\lambda})}_{\mathcal{K}}$ are chosen to play a perturbative role in the resulting equation for $\tilde{\lambda}$, but are still too large to lead to good contributions to $\kappa_1$. These functions are defined in detail as follows:
	\begin{equation}\label{eq:Ltildelambda}\begin{split}
			L^{(\tilde{\lambda})}_{\text{small}}: &= \Im \int_{\tau}^\infty\int_0^\infty \xi^2\cdot S(\tau, \sigma,\xi)\cdot Q^{(\tilde{\sigma})}_{<\sqrt{\gamma}^{-1}}X^{(\tilde{\lambda})}(\sigma, \frac{\lambda(\tau)}{\lambda(\sigma)}\xi)[\rho(\xi) - \rho_1(\xi)]\,d\xi d\sigma\\
			& + \Im \int_{\tau}^\infty\int_0^\infty \xi^2\cdot S(\tau, \sigma,\xi)\cdot Q^{(\tilde{\sigma})}_{<\sqrt{\gamma}^{-1}}(\triangle)X^{(\tilde{\lambda})}(\sigma, \frac{\lambda(\tau)}{\lambda(\sigma)}\xi)\rho_1(\xi)\,d\xi d\sigma
	\end{split}\end{equation}
	as well as 
	\begin{equation}\label{eq:Rtildelambda}\begin{split}
			&R^{(\tilde{\lambda})}_{\text{small}}: \\&= \Im \int_{\tau}^\infty\int_0^\infty \xi^2\cdot S(\tau, \sigma,\xi)\cdot\mathcal{F}\big(\lambda^{-2}Q^{(\tilde{\sigma})}_{<\gamma^{-1}}\tilde{y}_{\tilde{\lambda}}^{\text{mod}}\cdot W\big)(\sigma, \frac{\lambda(\tau)}{\lambda(\sigma)}\xi)[\rho(\xi) - \rho_1(\xi)]\,d\xi d\sigma\\
			& + \Im \int_{\tau}^\infty\int_0^\infty \xi^2\cdot S(\tau, \sigma,\xi)\cdot(\triangle)\mathcal{F}\big(\lambda^{-2}Q^{(\tilde{\sigma})}_{<\gamma^{-1}}\tilde{y}_{\tilde{\lambda}}^{\text{mod}}\cdot W\big)(\sigma, \frac{\lambda(\tau)}{\lambda(\sigma)}\xi)\rho_1(\xi)\,d\xi d\sigma\\
	\end{split}\end{equation}
	Here and in the sequel, we shall employ the notation 
	\begin{equation}\label{eq:triangleinparentheses}
		(\triangle)X(\sigma, \xi): = X(\sigma, \xi) - X(\sigma, 0). 
	\end{equation}
	These terms thus enjoy vanishing at $\xi = 0$, which makes them much better behaved in some sense. 
	\\
	Finally, we set 
	\begin{equation}\label{eq:LtildelambdacalK}\begin{split}
			&L^{(\tilde{\lambda})}_{\mathcal{K}}: = \Im \int_0^\infty \xi^2\cdot S_{\mathcal{K}}\big(X^{(\tilde{\lambda})}(\cdot, 0)\big)(\tau,\xi)\rho_1(\xi)\,d\xi + L^{(\tilde{\lambda})}_{\mathcal{K},\text{small}},\\
			&L^{(\tilde{\lambda})}_{\mathcal{K},\text{small}}: = \Im \int_0^\infty \xi^2\cdot S_{\mathcal{K}}\big(Q^{(\tilde{\sigma})}_{<\sqrt{\gamma}^{-1}}(\triangle)X^{(\tilde{\lambda})}(\cdot, \cdot)\big)(\tau,\xi)\rho_1(\xi)\,d\xi\\
			&\hspace{1.5cm} +  \Im \int_0^\infty \xi^2\cdot S_{\mathcal{K}}\big(Q^{(\tilde{\sigma})}_{<\sqrt{\gamma}^{-1}}X^{(\tilde{\lambda})}(\cdot, \cdot)\big)(\tau,\xi)[\rho(\xi) - \rho_1(\xi)]\,d\xi\\
	\end{split}\end{equation}
	and an analogous definition for $R^{(\tilde{\lambda})}_{\mathcal{K}}, R^{(\tilde{\lambda})}_{\mathcal{K},\text{small}}$. We included the term {\it{essentially}} before since we will only be able to solve \eqref{eq:tildelambda} {\it{up to an error term}} $\partial_{\tau}E$ for a 'nice' function $E(\tau)$. 
	
	\subsection{Remarks on the details of \eqref{eq:tildelambda}} The reason for including the multipliers $Q^{(\tilde{\tau})}_{<\tau^{\frac12+}}$ in front of the various terms in $X^{(\tilde{\lambda})}$ stems from the need to limit the (wave) temporal frequency of $\tilde{\lambda}$ to size $<\tau^{\frac12+}$ up to rapidly decaying tails, in turn required to control the contribution of the term $i\tilde{\lambda}_{\tau}\cdot \partial_{\tilde{\lambda}}\big(\psi_*^{(\tilde{\lambda})}\big)$ to $e_1^{\text{mod}}$; this term would become problematic in the regime of very large (wave) temporal frequency. 
	Similar reasonings motivate including the remaining multipliers of similar type. 
	
	\subsection{A comment on the ensuing sections~\ref{sec:modulneqnsbounds} - ~\ref{sec:nonresII}} In the following four sections, we shall use \eqref{eq:zeqn2}, \eqref{eq:kappa1eqn}, \eqref{eq:kappa2eqn}, \eqref{eq:tildealpha1}, \eqref{eq:tildelambda} to derive {\it{a priori bounds}} on the variables 
	\[
	z_{nres}, \tilde{\kappa}_1, \kappa_2, \tilde{\lambda}, \tilde{\alpha}.
	\] 
	In particular, we shall freely re-apply the equation for $z$ in these derivations. This somewhat formal procedure, which assumes that we already have a solution to this combined system of equations, in fact can be implemented in exactly the same manner for the iterative scheme in section~\ref{sec:solncompletion}, which then indeed establishes the existence of the solution constructively. We choose to present things in this way in order to prevent cluttering up the already complicated equations with further lists of indices. 
	
	\section{Solving the modulation equations}\label{sec:modulneqnsbounds}
	
	\subsection{Main statement on solution of \eqref{eq:tildelambda}} We shall formulate here the main result which, upon assuming that $z_{nres}, \tilde{\kappa}_1, \kappa_2$ as in \eqref{eq:zedcomprefined}
	given, allows us to then determine $\tilde{\lambda}$ solving \eqref{eq:tildelambda}. We still need to indicate the missing constant $c_*$, which we do here\footnote{The integral converges at $\xi_1 = 0$ since the Fourier coefficient vanishes there quadratically.}:
	\begin{equation}\label{eq:cstardef}
		c_*: = \int_0^\infty \sqrt{\xi_1}\rho(\sqrt{\xi_1})\cdot \xi_1^{-2}\mathcal{F}\big(\Lambda W\cdot W^2\big)(\sqrt{\xi}_1)\,d\xi_1
	\end{equation}
In order to measure the coefficients $\tilde{\kappa}_1, \kappa_2$, we shall use the following notation:
\begin{equation}\label{eq:tildekappa1kappa2norm}
 \big\|(\tilde{\kappa}_1, \kappa_2)\big\|_{\tau^{-N}L^2_{d\tau}} = \big\|\tilde{\kappa}_1\big\|_{\tau^{-N}L^2_{d\tau}} + \big\|\kappa_2\big\|_{\tau^{-N+\frac12+\frac{1}{4\nu}+}L^2_{d\tau}}.
\end{equation}
The weaker decay of $\kappa_2$, the imaginary part of the function $\kappa(\tau)$, see \eqref{eq:zdecompbasic}, is affordable since this term does not contribute to the delicate non-local term $y_z$ in light of \eqref{eq:yzdfn}.

	\begin{prop}\label{prop:solnoftildelambdaeqn}  Assume that $X^{(\tilde{\lambda})}(\sigma, \xi)$ is given by \eqref{eq:Xdef} with $y$ defined by \eqref{eq:yeqn1}, and $y_{\tilde{\lambda}}^{\text{mod}}, \tilde{y}_{\tilde{\lambda}}^{\text{mod}}$ defined as in subsection~\ref{subsec:tildelambdaeqn}, while we also recall \eqref{eq:yzdfn} for the definition of $y_z$. Write \eqref{eq:zedcomprefined} with $z_{nres}, \tilde{\kappa}_1, \kappa_2$ given functions, and also assume that $\tilde{\alpha}$ with $\tilde{\alpha}_{\tau}\in \log^{-1}\tau\cdot\tau^{-N}L^2_{d\tau}$ is given. 
		
		Then up to an error of the form $\partial_\tau E$, \eqref{eq:tildelambda} admits a solution $\tilde{\lambda}$ on $[\tau_*, \infty)$ and we have the bounds 
		\begin{align*}
			&\big\|\partial_{\tilde{\tau}}^2\langle \partial_{\tilde{\tau}}^2\rangle^{-1}\tilde{\lambda}\big\|_{\tau^{-N}L^2_{d\tau}}\lesssim \big\|z_{nres}\big\|_{S} + \big\|(\tilde{\kappa}_1, \kappa_2)\big\|_{\tau^{-N}L^2_{d\tau}} + \big\|\tilde{\alpha}_{\tau}\big\|_{\log^{-1}\tau\cdot\tau^{-N}L^2_{d\tau}}\\
			&\big\|E\big\|_{\tau^{-N-}L^2_{d\tau}}\lesssim \big\|z_{nres}\big\|_{S} + \big\|(\tilde{\kappa}_1, \kappa_2)\big\|_{\tau^{-N}L^2_{d\tau}} + \big\|\tilde{\alpha}_{\tau}\big\|_{\log^{-1}\tau\cdot\tau^{-N}L^2_{d\tau}}
		\end{align*}
		Furthermore, we can write 
		\begin{align*}
			&\tilde{\lambda}_{\tilde{\tau}\tilde{\tau}}(\tau) =  \tilde{\lambda}_{prin,\tilde{\tau}\tilde{\tau}}(\tau) + \delta\tilde{\lambda}(\tau),
		\end{align*}
		and denoting the (standard) Fourier transform with respect to $\tilde{\tau}$ by $\mathcal{F}_{\tilde{\tau}}$ we have 
		\begin{align*}
			&\mathcal{F}_{\tilde{\tau}}\big(\langle\partial_{\tilde{\tau}}^2\rangle^{-1}\big(\tilde{\lambda}_{prin,\tilde{\tau}\tilde{\tilde{\tau}}}\big)(\hat{\tilde{\tau}})+c_3(\hat{\tilde{\tau}}, \nu)\mathcal{F}_{\tilde{\tau}}\big(\frac{\tilde{\lambda}_{prin,\tilde{\tau}}}{\tilde{\tau}}\big)(\hat{\tilde{\tau}})\\& = \langle \hat{\tilde{\tau}}^2\rangle\beta_*(\hat{\tilde{\tau}})\cdot \mathcal{F}_{\tilde{\tau}}\big(\Pi^{(\tilde{\tau})}\int_0^\infty Q^{(\tilde{\tau})}_{<\tau^{\frac12+}}\lambda^{-2}\Box^{-1}\triangle\Re\big(\lambda^2 z_{nres}W\big)\cdot W^2 R^3\,dR\big)(\hat{\tilde{\tau}})
		\end{align*}
		where $\beta_*(\hat{\tilde{\tau}})$ is a smooth and bounded complex valued function with non-vanishing imaginary part for $\hat{\tilde{\tau}}\neq 0,\,\pm \hat{\tilde{\tau}}_*$ for suitable $\hat{\tilde{\tau}}_*\in \R_+$, and $\Pi^{(\tilde{\tau})}$ is a suitable projection operator defined further below. Also, the function $c_3 = c_3(\hat{\tilde{\tau}},\nu)$ is supported on $(-1,1)$, bounded and $C^\infty$ away from $\hat{\tilde{\tau}} = 0$, and satisfies the conjugation symmetry $c_3(-\hat{\tilde{\tau}}, \nu) = \overline{c_3(\hat{\tilde{\tau}}, \nu)}$, as well as symbol type bounds near $\hat{\tilde{\tau}} = 0$.
		Furthermore, we have 
		\begin{align*}
			\Big\|\langle\partial_{\tilde{\tau}\tilde{\tau}}\rangle^{-1} \delta\tilde{\lambda}(\tau)\Big\|_{\tau^{-N}L^2_{d\tau}}\leq c(\tau_*)(\big\|z_{nres}\big\|_{S} + \big\|\tilde{\alpha}_{\tau}\big\|_{\log^{-1}\tau\cdot\tau^{-N}L^2_{d\tau}}) +  \big\|(\tilde{\kappa}_1, \kappa_2)\big\|_{\tau^{-N}L^2_{d\tau}}
		\end{align*}
		where 
		\[
		\lim_{\tau_*\rightarrow\infty}c(\tau_*) = 0. 
		\]
		We also have the limiting relation 
		\[
		\lim_{\hat{\tilde{\tau}}\rightarrow 0}\beta_*(\hat{\tilde{\tau}}) = \Big(-\frac{1}{2}\cdot \int_0^\infty \triangle^{-1}\big(\Lambda W\cdot W)\cdot W^2 R^3\,dR\Big)^{-1}.
		\]
	\end{prop}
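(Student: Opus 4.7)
The strategy is to treat \eqref{eq:tildelambda} as an implicit linear equation for $\tilde{\lambda}_{\tilde{\tau}\tilde{\tau}}$ by isolating the diagonal self-coupling buried on the right hand side and inverting it on the Fourier side in the wave time $\tilde{\tau}$. The self-coupling comes precisely from the first term $-2\tilde{\lambda}_{tt}\cdot\lambda^2\Lambda W\cdot W$ appearing in \eqref{eq:tildeE2mod}: when fed through $\Box^{-1}$ via \eqref{eq:wavepropagator}, multiplied by $W$, and inserted into the Schr\"odinger propagator $S$ via the explicit formula \eqref{eq:ImzR0}, it contributes a bounded linear operator $\mathcal{A}$ acting on $\tilde{\lambda}_{tt}$. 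After freezing the slowly varying factor $\lambda(\tau)/\lambda(\sigma)$ (the error made thereby being absorbable into $\delta\tilde\lambda$ and $\partial_\tau E$), $\mathcal{A}$ becomes a convolution in $\tilde{\tau}$, hence a Fourier multiplier. The first step of the proof is to move $\mathcal{A}(\tilde{\lambda}_{\tilde{\tau}\tilde{\tau}})$ from the right to the left of \eqref{eq:tildelambda} and identify its symbol as $\langle\hat{\tilde{\tau}}^2\rangle\beta_*(\hat{\tilde{\tau}})^{-1}$, where $\beta_*$ is determined by the spectral pairing of the standard $\R^4$-resolvent $(-\triangle_{\R^4}-\hat{\tilde{\tau}}^2)^{-1}$ applied to $\Lambda W\cdot W$ tested against $W^2$.

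The zero-frequency limit $\lim_{\hat{\tilde{\tau}}\to 0}\beta_*(\hat{\tilde{\tau}}) = \bigl(-\tfrac12\int_0^\infty\triangle^{-1}(\Lambda W\cdot W)\cdot W^2 R^3\,dR\bigr)^{-1}$ follows directly: in that limit the wave propagator collapses to $-\triangle^{-1}$, and the factor $-\tfrac12$ arises from unfolding the $\Im$-part in $S$ through Proposition~\ref{prop:linpropagator}. The lower-order correction $c_3(\hat{\tilde{\tau}},\nu)\cdot\mathcal{F}_{\tilde{\tau}}(\tilde{\lambda}_{prin,\tilde{\tau}}/\tilde{\tau})$ is produced by the second piece of the top line of \eqref{eq:tildeE2mod}, namely $-2Q^{(\tilde{\tau})}_{<\tilde{\tau}^{10/\nu}}\bigl[\tilde{\lambda}_t\partial_t(\lambda^2\Lambda W\cdot W)\bigr]$; the factor $1/\tilde{\tau}$ encodes the temporal weight $\lambda_{\tilde{\tau}}/\lambda\sim\tilde{\tau}^{-1}$, the wave-frequency cutoff $Q^{(\tilde{\tau})}_{<\tilde{\tau}^{10/\nu}}$ is what forces $c_3$ to be supported in $(-1,1)$, and the reality of $\tilde{\lambda}$ gives the conjugation symmetry $c_3(-\hat{\tilde{\tau}},\nu) = \overline{c_3(\hat{\tilde{\tau}},\nu)}$.

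Once this principal equation is extracted, the remaining contributions to \eqref{eq:tildelambda} — the off-principal pieces of $X^{(\tilde{\lambda})}$, the ``small'' terms $L^{(\tilde{\lambda})}_{\text{small}}, R^{(\tilde{\lambda})}_{\text{small}}, L^{(\tilde{\lambda})}_{\mathcal{K}}, R^{(\tilde{\lambda})}_{\mathcal{K}}$, the trailing terms of $\tilde{E}_2^{\text{mod}}$, and the $e_1^{\text{mod}}$ source — are controlled in the target $\tau^{-N}L^2_{d\tau}$ norm by Schur's test combined with Lemma~\ref{lem:basicL2}, Lemma~\ref{lem:KboundsKST}, Lemma~\ref{lem:wavebasicinhom}, Cor.~\ref{cor:yzW}, and Lemma~\ref{lem:largemodgeneralboxinverse}. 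In each case smallness is harvested either from the vanishing at $\xi=0$ supplied by the operator $(\triangle)$ of \eqref{eq:triangleinparentheses}, from the support of $\rho - \rho_1$ in the high-frequency regime, from the smoothing factor $\langle\partial_{\tilde{\tau}}^2\rangle^{-1}$ absorbing a $Q^{(\tilde{\tau})}_{<\sqrt{\gamma}^{-1}}$, or from an extra factor $\tau_*^{-\delta}$ produced by a large-$N$ temporal Schur bound; together these yield the constant $c(\tau_*)\to 0$ appearing in the bound on $\delta\tilde{\lambda}$. The inversion itself is then effected on the Fourier side by dividing by $\langle\hat{\tilde{\tau}}^2\rangle\beta_*(\hat{\tilde{\tau}})^{-1}$: by the numerical non-degeneracy hypotheses of \S 3, $\Im\beta_*(\hat{\tilde{\tau}})\neq 0$ away from the isolated set $\{0,\pm\hat{\tilde{\tau}}_*\}$, so the only obstructions to inverting on $L^2_{d\tilde{\tau}}$ are finite-dimensional cokernels localized at those exceptional frequencies. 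The finite-rank projector $\Pi^{(\tilde{\tau})}$ removes them from the source, and the linear functionals they dualize to are absorbed into $\partial_\tau E$ by rewriting them as $\tau$-derivatives of antiderivatives belonging to $\tau^{-N-}L^2_{d\tau}$.

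The principal obstacle is the resonance analysis near $\hat{\tilde{\tau}}=\pm\hat{\tilde{\tau}}_*$ and near $\hat{\tilde{\tau}}=0$: one must show that, after subtraction of the finitely many cokernel projections encoded by $\Pi^{(\tilde{\tau})}$, the quotient symbol genuinely extends to a bounded Fourier multiplier on $L^2_{d\tilde{\tau}}$ with the right high-frequency decay to accommodate the norm $\|\partial_{\tilde{\tau}}^2\langle\partial_{\tilde{\tau}}^2\rangle^{-1}\tilde{\lambda}\|_{\tau^{-N}L^2_{d\tau}}$, and one must transport the resulting bound through Paley--Wiener type arguments back to the Schr\"odinger-time $\tau^{-N}L^2_{d\tau}$ topology. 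This is precisely where the numerical non-degeneracy assumptions on $\beta_*$ play their essential role, and where the argument departs fundamentally from the perturbative schemes of \cite{KST2}, \cite{KST}.
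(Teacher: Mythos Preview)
Your proposal misses the essential two-stage structure of the argument. Equation \eqref{eq:tildelambda} has the Schr\"odinger propagator $S$ applied on \emph{both} sides to a scalar function of $\sigma$ evaluated at $\xi=0$; the paper exploits this by first making the ansatz \eqref{eq:Phitildelambdaansatz}, which matches the principal parts under $S$, and then inverting $S$ for the perturbative remainder $\delta\Phi^{(\tilde{\lambda})}$ via Proposition~\ref{prop: solnforylambdaW}. This inversion is a Fourier-multiplier step in \emph{Schr\"odinger time} $\tau$ (Lemma~\ref{lem:Fouriertransform1}), with its own degeneracy (a $\log^{-2}\hat{\tau}$ vanishing of the symbol at $\hat{\tau}=0$) handled by the projector $\Pi$ of \eqref{eq:Pidefinition}. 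Only \emph{after} this reduction does one face the inner equation $\Phi^{(\tilde{\lambda})}(\sigma,0)=f(\sigma)$, which is solved via Fourier transform in \emph{wave time} $\tilde{\tau}$ (Proposition~\ref{prop:Phitildelambdasolution}, Lemma~\ref{lem:Fouriertransform2}); this is where $\beta_*$ and $c_3$ arise. Your attempt to treat the composite $S\circ\Box^{-1}$ as a single convolution in $\tilde{\tau}$ cannot work: $S$ is a convolution in $\tau$, $\Box^{-1}$ in $\tilde{\tau}$, and the relation $\tilde{\tau}\sim\tau^{\frac12-\frac{1}{4\nu}}$ is nonlinear, so freezing $\lambda(\tau)/\lambda(\sigma)$ does not make the composite a $\tilde{\tau}$-multiplier.

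You also misidentify the role of $\Pi^{(\tilde{\tau})}$ and the points $\pm\hat{\tilde{\tau}}_*$. The multiplier $\beta_*$ is bounded \emph{away from zero} everywhere (this is what assumptions {\bf{(C2)}}, {\bf{(C3)}} provide, via Lemmas~\ref{lem:FourierNV1}, \ref{lem:FourierNV2}); the vanishing of $\Im\beta_*$ at $\hat{\tilde{\tau}}_*$ creates no obstruction to division. The projector $\Pi^{(\tilde{\tau})}$ (Lemma~\ref{lem:Pitildetaudef}) is not a cokernel corrector at exceptional frequencies but rather forces vanishing of $\mathcal{F}_{\tilde{\tau}}(f)$ to order $M\sim 2N$ at $\hat{\tilde{\tau}}=0$, which is needed so that division by $\tilde{\beta}$ and the ODE solution in the proof of Lemma~\ref{lem:Phitildelambdamodeleqn} preserve membership in $W^{M,2}_{\hat{\tilde{\tau}}}\cap \hat{\tilde{\tau}}^M L^2$, hence the weighted decay $\tau^{-N}L^2_{d\tau}$ after inverse Fourier transform. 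The error $\partial_\tau E$ arises separately (Lemma~\ref{lem:Phitildelambdahighfreqsimplfied} and the simplification steps in Lemmas~\ref{lem:reductionsteps2}, \ref{lem:reductionsteps3}), not from cokernel subtraction.
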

	\begin{rem} The point of our construction of $\tilde{\lambda}$ is that if we now determine $\kappa_{1}$ in accordance with \eqref{eq:kappa1eqn} and use the finer representation \eqref{eq:kapparefined}, we can infer the bound 
		\begin{align*}
			\big\|\tilde{\kappa}_1\big\|_{\tau^{-N}L^2_{d\tau}}\ll_{\tau_*}\big\|z_{nres}\big\|_{S} + \big\|(\tilde{\kappa}_1,\kappa_2)\big\|_{\tau^{-N}L^2_{d\tau}},
		\end{align*}
		as substantiated in section~\ref{sec:improvkappabounds}.
	\end{rem}
	\begin{proof} It proceeds in several steps:
		\\
		
		{\bf{Step 1}}: {\it{collecting the non-perturbative terms in \eqref{eq:tildelambda} on the right-hand side}}. Recalling \eqref{eq:Xdef} and its principal term 
		\[
		-Q^{(\tilde{\tau})}_{<\tau^{\frac12+}}\big(\lambda^{-2}y_z\cdot W\big),
		\]
		with $z$ represented by \eqref{eq:zedcomprefined}, we shall move the term involving from $\tilde{\lambda}$ up to smaller error to the right hand side. Precisely, we add  the term 
		\begin{align*}
			&c_*\cdot\Im \int_{\tau}^\infty \int_0^\infty \xi^2\cdot S(\tau, \sigma,\xi)\\&\hspace{2cm}\cdot \mathcal{F}\big(\big(\lambda^{-2}(\sigma)\partial_{\tilde{\sigma}}^{-2}\triangle\big(\lambda^2(\sigma)Q^{(\tilde{\sigma})}_{\gamma^{-1}<\cdot<\sigma^{\frac12+}}\tilde{\lambda}W^2\big)\cdot W\big)(\sigma, 0)\rho_1(\xi)\,d\xi d\sigma
		\end{align*}
		to both sides of \eqref{eq:tildelambda}, replacing the principal term in $X^{(\tilde{\lambda})}$ by 
		\[
		-Q^{(\tilde{\tau})}_{<\tau^{\frac12+}}\big(\lambda^{-2}y_{\tilde{z}}\cdot W\big) - \delta\tilde{z},\,\tilde{z}: = (\tilde{\kappa}_1 + i\kappa_2)\phi_0(R) + z_{nres},
		\]
		where we set 
		\begin{align*}
			\delta\tilde{z}: =& -c_*\lambda^{-2}\partial_{\tilde{\sigma}}^{-2}\triangle\big(\lambda^2Q^{(\tilde{\sigma})}_{\gamma^{-1}<\cdot<\sigma^{\frac12+}}\tilde{\lambda}W^2\big)\cdot W
			+c_*\lambda^{-2}\Box^{-1}\triangle\big(\lambda^2Q^{(\tilde{\sigma})}_{\gamma^{-1}<\cdot<\sigma^{\frac12+}}\tilde{\lambda}W^2\big)\cdot W
		\end{align*}
		
		Furthermore, we change the first term on the right hand side of \eqref{eq:tildelambda} to\footnote{Recall the definition of $c_*$ in \eqref{eq:cstardef}} 
		\begin{equation}\label{eq:Phitildelambdadef}\begin{split}
				&-\Im \int_{\tau}^\infty \int_0^\infty \xi^2\cdot S(\tau, \sigma,\xi)\cdot \Phi^{(\tilde{\lambda})}(\sigma,0)\rho_1(\xi)\,d\xi d\sigma,\\
				& \Phi^{(\tilde{\lambda})}(\sigma,0): = \lambda^{-2}\mathcal{F}\big(\tilde{y}_{\tilde{\lambda}}^{\text{mod}}\cdot W\big)(\sigma, 0) \\
				&\hspace{1.5cm}-c_*\lambda^{-2}\mathcal{F}\big(\partial_{\tilde{\sigma}}^{-2}\triangle\big(\lambda^2Q^{(\tilde{\sigma})}_{\gamma^{-1}<\cdot<\sigma^{\frac12+}}\tilde{\lambda}W^2\big)\cdot W\big)(\sigma, 0)  .
		\end{split}\end{equation}
		Let us call $\tilde{X}^{\tilde{\lambda}}(\sigma, 0)$ the modified left hand term in \eqref{eq:tildelambda} with principal term 
		\[
		-Q^{(\tilde{\tau})}_{<\tau^{\frac12+}}\big(\lambda^{-2}y_{\tilde{z}}\cdot W\big) - \delta\tilde{z}.  
		\]
		{\bf{Step 2}}: {\it{estimates for the perturbative source terms}}.\\
		Having thus reformulated \eqref{eq:tildelambda}, our task is now to solve it via a suitable fixed point argument for $\tilde{\lambda}$. The first order of the day is to analyse the source terms 
		\begin{align*}
			\Im \int_{\tau}^\infty\int_0^\infty \xi^2\cdot S(\tau, \sigma;\xi)\cdot \tilde{X}^{(\tilde{\lambda})}(\sigma, 0)\rho_1(\xi)\,d\xi d\sigma,\, L^{(\tilde{\lambda})}_{\text{small}}
		\end{align*}
		\begin{lem}\label{lem: tildelambdaeqnsourcebounds1} 
			We have a decomposition
			\begin{align*}
				&\Im \int_{\tau}^\infty\int_0^\infty \xi^2S(\tau, \sigma;\xi)\cdot \tilde{X}^{(\tilde{\lambda})}(\sigma, 0)\rho_1(\xi)\,d\xi d\sigma \\
				& = -\Im \int_{\tau}^\infty\int_0^\infty\xi^2 S(\tau, \sigma;\xi)\cdot \mathcal{F}\big(Q^{(\tilde{\tau})}_{<\tau^{\frac12+}}\big(\lambda^{-2}y_{z_{nres}}\cdot W\big)\big)(\sigma, 0)\rho_1(\xi)\,d\xi d\sigma\\
				& + Z(\tau), \\
			\end{align*}
			where we have the bounds 
			\begin{align*}
				&\big\|\langle \partial_{\tilde{\tau}}^2\rangle Z\big\|_{\log^{-2}\tau\cdot \tau^{-N}L^2_{d\tau}} \lesssim c(\tau_*)[\big\|z_{nres}\big\|_{S} + \big\|\tilde{\lambda}\big\|_{\tau^{-N}L^2_{d\tau}}]+ \big\|(\tilde{\kappa}_1,\kappa_2)\big\|_{\tau^{-N}L^2_{d\tau}}
			\end{align*}
			with $\lim_{\tau_*\rightarrow\infty} c(\tau_*) = 0$. 
		\end{lem}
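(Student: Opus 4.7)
The plan is to substitute the definition of $\tilde X^{(\tilde\lambda)}$ obtained in Step 1, expand via linearity, and isolate the piece driven by $z_{nres}$ while absorbing all remaining contributions into the error term $Z$. Concretely, from Step 1 one has
\[
\tilde X^{(\tilde\lambda)} \;=\; -Q^{(\tilde\tau)}_{<\tau^{\frac12+}}\bigl(\lambda^{-2}y_{\tilde z}\cdot W\bigr) \;-\; \delta\tilde z \;+\; R^{(\tilde\lambda)}_{\mathrm{aux}},
\]
where $R^{(\tilde\lambda)}_{\mathrm{aux}}$ collects the remaining terms of $X^{(\tilde\lambda)}$, namely the contributions of $\lambda^{-2}y_z(\tilde u_*^{(\tilde\lambda,\tilde\alpha)}-W)$, $\lambda^{-2}(y-y_z)\tilde u_*^{(\tilde\lambda,\tilde\alpha)}-\lambda^{-2}y^{\mathrm{mod}}_{\tilde\lambda}W$, and $(\lambda^{-2}n_*^{(\tilde\lambda,\tilde\alpha)}-W^2)z$, each precomposed with $Q^{(\tilde\tau)}_{<\tau^{\frac12+}}$.

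First I would use the linearity of the map $z\mapsto y_z$ together with the representation $\tilde z=(\tilde\kappa_1+i\kappa_2)\phi_0+z_{nres}$. Because $\Re(W\cdot\overline{i\kappa_2 W})=0$ in the source term defining $y_{\tilde z}$ via \eqref{eq:yzdfn}, only $\tilde\kappa_1 W$ and $z_{nres}$ contribute, giving $y_{\tilde z}=y_{z_{nres}}+y_{\tilde\kappa_1 W}$. The first summand yields the main term displayed in the statement, while the contribution of $y_{\tilde\kappa_1 W}$ is estimated by combining Corollary~\ref{cor:yzW} (which replaces the loss from $\langle\xi\partial_\xi\rangle^{1+\delta_0}$ by a $\tau^{-N+}$ factor), Lemma~\ref{lem:basicL2} for the Schr\"odinger propagator $S$, and the evident linearity in $\tilde\kappa_1$, yielding a bound of the form $\|\tilde\kappa_1\|_{\tau^{-N}L^2_{d\tau}}$, absorbed into $\|(\tilde\kappa_1,\kappa_2)\|_{\tau^{-N}L^2_{d\tau}}$. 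The term $\delta\tilde z$ was engineered precisely so that, after applying $\Box^{-1}$ versus $\partial_{\tilde\sigma}^{-2}$, its contribution is suppressed by the low temporal frequency cut $Q^{(\tilde\sigma)}_{\gamma^{-1}<\cdot<\sigma^{\frac12+}}$: the wave propagator and the time-differential inversion agree to leading order on this window, so the difference carries a factor $c(\tau_*)\to 0$ (coming from $\gamma=\gamma(\tau_*)=(\log\log\tau_*)^{-1}$), and feeds into $Z$ multiplied by $\|\tilde\lambda\|_{\tau^{-N}L^2_{d\tau}}$.

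Next I would deal with $R^{(\tilde\lambda)}_{\mathrm{aux}}$ term by term. For $\lambda^{-2}y_z(\tilde u_*^{(\tilde\lambda,\tilde\alpha)}-W)$ one uses that $\tilde u_*^{(\tilde\lambda,\tilde\alpha)}-W$ is polynomially small in $\tau^{-1}$ on the Schr\"odinger scale (a consequence of the approximate-solution construction in~\cite{KrSchm}, combined with the rescaling and the smallness of $\tilde\lambda-1$ enforced by our a priori setup); pairing this with Lemma~\ref{lem:specialF1} and Lemma~\ref{lem:basicL2} gives the required $c(\tau_*)\|z_{nres}\|_S$ bound. The difference $y-y_z=y_2$ is controlled via the $Y$-norm \eqref{eq:ynorm}, which together with the decay of $\tilde u_*^{(\tilde\lambda,\tilde\alpha)}$ at infinity yields the same kind of small bound; the correction $-\lambda^{-2}y^{\mathrm{mod}}_{\tilde\lambda}W$ is precisely what is left over after the cancellation with the main $\tilde y^{\mathrm{mod}}_{\tilde\lambda}$ piece on the right-hand side of \eqref{eq:tildelambda}, and contributes via $\|\tilde\lambda\|_{\tau^{-N}L^2_{d\tau}}$ with a $c(\tau_*)$ prefactor coming from the low-frequency cut $Q^{(\tilde\sigma)}_{<\gamma^{-1}}$ inside $R^{(\tilde\lambda)}_{\mathrm{small}}$. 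Finally, $\lambda^{-2}n_*^{(\tilde\lambda,\tilde\alpha)}-W^2$ vanishes to leading order in the Schr\"odinger zone (again from~\cite{KrSchm}), and pairing with $z=z_{res}+z_{nres}$ produces estimates controlled by $\|z_{nres}\|_S+\|(\tilde\kappa_1,\kappa_2)\|_{\tau^{-N}L^2_{d\tau}}$, each time with a $c(\tau_*)$ gain.

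For the $\langle\partial_{\tilde\tau}^2\rangle$-upgrade of $Z$, I would invoke the second half of Lemma~\ref{lem:wavebasicinhomstructure1}, which shows that applying $\partial_{\tilde\tau}^2$ to the wave-propagator output $\Box^{-1}F\cdot W$ costs only one spatial derivative on $F$, so the same arguments as above apply after replacing $\lambda^{-2}F$ by $\lambda^{-2}\nabla_R F$; the additional logarithmic weight $\log^{-2}\tau$ is absorbed by the $\log^{-1}\tau$ loss produced when converting the $Q^{(\tilde\tau)}_{<\tau^{\frac12+}}$ localizations into a genuine $L^2_{d\tau}$ estimate (two instances of such a loss arise, one from each temporal derivative). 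The main obstacle in this proof is not any single estimate but the careful bookkeeping of the many small factors: the gain $c(\tau_*)\to 0$ must be extracted without losing temporal integrability, and this forces one to exploit simultaneously the rapid decay of the approximate solution, the localization $Q^{(\tilde\sigma)}_{<\tau^{\frac12+}}$, the smoothing of $\Box^{-1}$ with respect to Schr\"odinger time (Remark~\ref{rem:wavepropagatorpartialtau}), and the low-frequency cancellation engineered into $\delta\tilde z$ in Step 1.
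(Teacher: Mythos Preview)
Your overall decomposition of $\tilde X^{(\tilde\lambda)}$ into the $z_{nres}$-piece, the $(\tilde\kappa_1,\kappa_2)$-piece, $\delta\tilde z$, and the auxiliary remainders matches the paper's structure, and the observation that $y_{i\kappa_2\phi_0}=0$ (since $\Re(W\overline{i\kappa_2 W})=0$) is correct. However, the mechanism you propose for actually bounding the resulting Schr\"odinger-propagator integrals is wrong, and as written the argument does not close.

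The central issue is your use of Lemma~\ref{lem:basicL2}. That lemma controls $S(G)$ in $\tau^{1-N}L^2_{d\tau}$, i.e.\ it \emph{loses a full power of} $\tau$; it cannot produce a bound in $\log^{-2}\tau\cdot\tau^{-N}L^2_{d\tau}$. The paper does not use Lemma~\ref{lem:basicL2} here at all. Instead it relies on Lemma~\ref{lem:K_frefined}, which is designed precisely for the operator $K_f(\tau)=\int_\tau^\infty\int_0^\infty\xi^2 S_1(\tau,\sigma;\xi)f(\sigma,\tfrac{\lambda(\tau)}{\lambda(\sigma)}\xi)\rho_1(\xi)\,d\xi\,d\sigma$: it exploits the extra $\xi^2$ via integration by parts in $\sigma$ and then in $\xi$, and this is exactly what yields the $\log^{-2}\tau$ weight without any power loss. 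For the $(\tilde\kappa_1,\kappa_2)$ contribution the paper combines Lemma~\ref{lem:K_frefined} with the $L^\infty_\xi$ bound from Lemma~\ref{lem:wavebasicinhomstructure1} (not Corollary~\ref{cor:yzW}, which gives $L^2_{\rho\,d\xi}$ bounds with a $\tau^{0+}$ loss that you would have to explain away). Your proposed origin of the $\log^{-2}\tau$ weight --- two $\log^{-1}\tau$ losses from the $Q^{(\tilde\tau)}_{<\tau^{1/2+}}$ localizers --- is not how this works; those localizers are just bounded temporal multipliers and do not produce logarithmic factors.

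Your treatment of $\delta\tilde z$ is also too impressionistic. The paper's argument is not simply ``$\Box^{-1}\approx\partial_{\tilde\sigma}^{-2}$ on the frequency window''; one first truncates the outer $W$ to $R<\gamma^{-1/100}$ (showing the far-field piece is small via Lemma~\ref{lem:wavebasicinhom}), then rewrites $\partial_{\tilde\tau}^2 u$ for $u=\Box^{-1}(\ldots)$ via the wave equation in $(\tilde\tau,R)$-coordinates (see \eqref{eq:FintildetauRanalogue}), and separately estimates the commutator-type terms involving $\tfrac{\lambda_{\tilde\tau}}{\lambda}R\partial_R$ and the spatial Laplacian piece $Q^{(\tilde\tau)}_{\gamma^{-1+}<\cdot<\sigma^{1/2+}}(\partial_{RR}+\tfrac{3}{R}\partial_R)u$ via Lemma~\ref{lem:largemodgeneralboxinverse}. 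Only after these reductions does Lemma~\ref{lem:K_frefined} finish the job. You would need to supply this level of detail; the high-frequency gain does not follow from a one-line comparison of propagators.
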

		\begin{proof}(lemma) We need to control the various contributions from the terms constituting $ \tilde{X}^{(\tilde{\lambda})}$. Recall that the latter is given by \eqref{eq:Xdef} but with the first term replaced by 
			\[
			-Q^{(\tilde{\tau})}_{<\tau^{\frac12+}}\big(\lambda^{-2}y_{\tilde{z}}\cdot W\big) - \delta\tilde{z}. 
			\]
			In turn we can decompose
			\begin{align*}
				-Q^{(\tilde{\tau})}_{<\tau^{\frac12+}}\big(\lambda^{-2}y_{\tilde{z}}\cdot W\big)  = -Q^{(\tilde{\tau})}_{<\tau^{\frac12+}}\big(\lambda^{-2}y_{z_{nres}}\cdot W\big) - Q^{(\tilde{\tau})}_{<\tau^{\frac12+}}\big(\lambda^{-2}y_{(\tilde{\kappa}_1 + \kappa_2)\phi_0}\cdot W\big)
			\end{align*}
			
			{\it{(1): Contribution of $- Q^{(\tilde{\tau})}_{<\tau^{\frac12+}}\big(\lambda^{-2}y_{(\tilde{\kappa}_1 + i\kappa_2)\phi_0}\cdot W\big)$.}} This function being real valued, we reduce to estimating 
			\begin{align*}
				\int_{\tau}^\infty\int_0^\infty\xi^2 S_1(\tau, \sigma;\xi)\cdot \mathcal{F}\big(Q^{(\tilde{\sigma})}_{<\sigma^{\frac12+}}\big(\lambda^{-2}y_{(\tilde{\kappa}_1 + i\kappa_2)\phi_0}\cdot W\big)\big)(\sigma, 0)\rho_1(\xi)\,d\xi d\sigma
			\end{align*}
			Our main tool to accomplish this will be Lemma~\ref{lem:K_frefined}, in the simpler situation when the function $f(\sigma, \xi)$ is actually independent of $\xi$. Taking advantage of the simple estimate 
			\begin{align*}
				\big\|\partial_{\sigma}\big(Q^{(\tilde{\sigma})}_{<\sigma^{\frac12+}}f\big)\big\|_{\sigma^{-N-\delta_0}L^2_{d\sigma}}\lesssim \big\|f\big\|_{\sigma^{-N}L^2_{d\sigma}}
			\end{align*}
			provided $\delta_0\ll \nu^{-1}$, and further the bound
			\begin{align*}
				&\Big\|\langle\partial_{\tilde{\sigma}}^2\rangle\mathcal{F}\big(\lambda^{-2}y_{(\tilde{\kappa}_1 + i\kappa_2)\phi_0}\cdot W\big)(\sigma, \xi)\Big\|_{\sigma^{-N}L^2_{d\sigma} L^\infty_{d\xi}}\lesssim \big\|(\tilde{\kappa}_1, \kappa_2)\big\|_{\tau^{-N}L^2_{d\tau}},
			\end{align*}
			in turn a consequence of Lemma~\ref{lem:wavebasicinhomstructure1} , we deduce from Lemma~\ref{lem:K_frefined} the bound 
			\begin{align*}
				&\Big\|\langle\partial_{\tilde{\tau}}^2\rangle\int_{\tau}^\infty\int_0^\infty\xi^2 S_1(\tau, \sigma;\xi)\cdot \mathcal{F}\big(Q^{(\tilde{\sigma})}_{<\sigma^{\frac12+}}\big(\lambda^{-2}y_{(\tilde{\kappa}_1 + i\kappa_2)\phi_0}\cdot W\big)\big)(\sigma, 0)\rho_1(\xi)\,d\xi d\sigma\Big\|_{\log^{-2}\tau\cdot\tau^{-N}L^2_{d\tau}}\\
				&\lesssim \big\|(\tilde{\kappa}_1, \kappa_2)\big\|_{\tau^{-N}L^2_{d\tau}}, 
			\end{align*}
			and hence verifying the asserted bound for this contribution. 
			\\
			
			{\it{(2): Contribution of $-\delta\tilde{z}$.}} This term, which depends on $\tilde{\lambda}$, needs to be shown to enjoy a smallness gain, depending on $\gamma$. Consider the second term in the definition of $\delta\tilde{z}$, just before \eqref{eq:Phitildelambdadef}. 
			To begin with, using Lemma~\ref{lem:wavebasicinhom}, we have the estimate 
			\begin{align*}
				&\Big\|\mathcal{F}\Big(\lambda^{-2}\Box^{-1}\triangle\big(\lambda^2Q^{(\tilde{\sigma})}_{\gamma^{-1}<\cdot<\sigma^{\frac12+}}\tilde{\lambda}W^2\big)\cdot \chi_{R>\gamma^{-\frac{1}{100}}}W\Big)(\sigma, 0)\Big\|_{\sigma^{-N}L^2_{d\sigma}}\\&\ll_{\gamma}\big\|Q^{(\tilde{\sigma})}_{\gamma^{-1}<\cdot<\sigma^{\frac12+}}\tilde{\lambda}\big\|_{\sigma^{-N}L^2_{d\sigma}}, 
			\end{align*}
			and application of $\partial_{\sigma}$ to the left hand term inside the norm allows us to strengthen the norm on the left to $\big\|\cdot\big\|_{\sigma^{-N-\delta_0}L^2_{d\sigma}}$. 
			The same estimate applies to the first term in the definition of $-\delta\tilde{z}$, and the same bound still applies after applying $\partial_{\tilde{\sigma}}^2$ to the expressions inside the norms. 
			Using Lemma~\ref{lem:K_frefined} we see that these terms lead to contributions verifying the bound of the lemma. 
			\\
			We shall henceforth restrict the third factor $W$ in both terms constituting $-\delta\tilde{z}$ to the range $R<\gamma^{-\frac{1}{100}}$. 
			Write 
			\[
			u: = \Box^{-1}\triangle\big(\lambda^{2}Q^{(\tilde{\sigma})}_{\gamma^{-1}<\cdot<\sigma^{\frac12+}}\tilde{\lambda}W^2\big),
			\]
			We can easily restrict the frequency with respect to the standard Laplacian of this expression to range $<\gamma^{-\frac{1}{100}}$ up to errors of size $\gamma^N$. 
			Using Lemma~\ref{lem:freqlocalpropag1}, we can include here a localiser $Q^{(\tilde{\sigma})}_{\gamma^{-1+}<\cdot<\sigma^{\frac12+}}$ in front up to terms gaining smallness. Recalling \eqref{eq:nFintildetauR}, we have that 
			\begin{equation}\label{eq:FintildetauRanalogue}\begin{split}
					&-\partial_{\tilde{\tau}}^2u  = \big((\partial_{\tilde{\tau}}+\frac{\lambda_{\tilde{\tau}}}{\lambda}R\partial_R)^2 - \partial_{\tilde{\tau}}^2\big)u + \frac{\lambda_{\tilde{\tau}}}{\lambda}(\partial_{\tilde{\tau}}+\frac{\lambda_{\tilde{\tau}}}{\lambda}R\partial_R)u - (\partial_{RR} + \frac{3}{R}\partial_R)u + F,\\
					&F = \triangle_R\big(\lambda^{2}Q^{(\tilde{\tau})}_{\gamma^{-1}<\cdot<\tau^{\frac12+}}\tilde{\lambda}W^2\big)
			\end{split}\end{equation}
			Then we easily verify that (recall Lemma~\ref{lem:wavebasicinhom})
			\begin{align*}
				\Big\|\lambda^{-2}\chi_{R<\gamma^{-\frac{1}{100}}}\big((\partial_{\tilde{\tau}}+\frac{\lambda_{\tilde{\tau}}}{\lambda}R\partial_R)^2 - \partial_{\tilde{\tau}}^2\big)u\Big\|_{\tau^{-N-\frac12-\frac{1}{4\nu}}L^2_{d\tau}\langle R\rangle^{1+\delta_0}L^2_{R^3\,dR}}\lesssim_{\gamma}\big\|\tilde{\lambda}\big\|_{\tau^{-N}L^2_{d\tau}}.  
			\end{align*}
			and similar bound applies to the expression
			\[
			\frac{\lambda_{\tilde{\tau}}}{\lambda}(\partial_{\tilde{\tau}}+\frac{\lambda_{\tilde{\tau}}}{\lambda}R\partial_R)u.
			\]
			For the remaining expression, recalling Lemma~\ref{lem:largemodgeneralboxinverse}, we have the somewhat weaker bound 
			\begin{align*}
				\big\|\lambda^{-2}Q^{(\tilde{\tau})}_{\gamma^{-1+}<\cdot<\sigma^{\frac12+}}(\partial_{RR} + \frac{3}{R}\partial_R)u\big\|_{\tau^{-N}L^2_{d\tau}\langle R\rangle^{1+\delta_0}L^2_{R^3\,dR}}\ll_{\gamma} \big\|\tilde{\lambda}\big\|_{\tau^{-N}L^2_{d\tau}},
			\end{align*}
			and the norm on the left can again be slightly strengthened by inclusion of $\tau^{-\delta_0}$ upon application of $\partial_{\tau}$. 
			Applying $Q^{(\tilde{\sigma})}_{\gamma^{-1+}<\cdot<\sigma^{\frac12+}}$ to \eqref{eq:FintildetauRanalogue} and inverting $\partial_{\tilde{\tau}}^2$ via division by the Fourier symbol, we find the bound 
			\begin{align*}
				\Big\|\lambda^{-2}Q^{(\tilde{\tau})}_{\gamma^{-1+}<\cdot<\tau^{\frac12+}}u + \lambda^{-2}\partial_{\tilde{\tau}}^{-2}F\Big\|_{\tau^{-N}L^2_{d\tau}\langle R\rangle^{1+\delta_0}L^2_{R^3\,dR}}\ll_{\gamma, \tau_*}\big\|\tilde{\lambda}\big\|_{\tau^{-N}L^2_{d\tau}}, 
			\end{align*}
			and the bound can be improved as described before upon application of $\partial_{\tau}$ to the term on the left(namely replacing $\tau^{-N}$ by $\tau^{-N-}$). Furthermore, these estimates remain valid upon application of $\partial_{\tilde{\tau}}^2$. We conclude that the function 
			\begin{align*}
				\mathcal{F}\big(\big[\lambda^{-2}Q^{(\tilde{\sigma})}_{\gamma^{-1+}<\cdot<\sigma^{\frac12+}}u + \lambda^{-2}\partial_{\tilde{\sigma}}^{-2}F\big]\cdot W\big)(\sigma, 0)
			\end{align*} 
			satisfies the requirements for application of Lemma~\ref{lem:K_frefined}, which in conjunction with the preceding considerations implies the desired bound for the contribution of $\delta\tilde{z}$. 
			\\
			
			{\it{(3): contribution of the remaining terms in $\tilde{X}^{(\tilde{\lambda})}(\sigma, 0)$.}} This is done in section~\ref{sec:appendix}. 
		\end{proof}
		
		For the remaining error term $L_{\text{small}}^{(\tilde{\lambda})}$ on the left hand side of \eqref{eq:tildelambda} we have a stronger bound 
		\begin{lem}\label{lem:Lsmalltildelambda} We have the estimate 
			\begin{align*}
				\Big\|\langle\partial_{\tilde{\tau}}^2\rangle L_{\text{small}}^{(\tilde{\lambda})}\Big\|_{\tau^{-N-}L^2_{d\tau}}\lesssim_{\gamma} \big\|z_{nres}\big\|_{S} + \big\|\langle\partial_{\tilde{\tau}}^2\rangle^{-2}\partial_{\tilde{\tau}}^2\tilde{\lambda}\big\|_{\tau^{-N}L^2_{d\tau}} + \big\|(\tilde{\kappa}_1,\kappa_2)\big\|_{\tau^{-N}L^2_{d\tau}}
			\end{align*}
		\end{lem}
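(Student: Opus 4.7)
The plan is to unpack the definition \eqref{eq:Ltildelambda} of $L^{(\tilde{\lambda})}_{\text{small}}$, split $X^{(\tilde{\lambda})}$ into the pieces appearing in \eqref{eq:Xdef}, and handle each contribution by exploiting the two structural gains built into $L^{(\tilde{\lambda})}_{\text{small}}$: (i) in the first summand the weight $\rho(\xi)-\rho_1(\xi)$ is supported on $\{\xi\gtrsim 1\}$, so all frequencies at which the spectral density is logarithmically singular are cut out; (ii) in the second summand the factor $(\triangle)X^{(\tilde{\lambda})}(\sigma,\xi)=X^{(\tilde{\lambda})}(\sigma,\xi)-X^{(\tilde{\lambda})}(\sigma,0)$ vanishes at $\xi=0$, which compensates the singular behaviour $\rho_1(\xi)\sim 1/(\xi\log^2\xi)$ near the origin. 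Combined with the temporal cutoff $Q^{(\tilde{\sigma})}_{<\sqrt{\gamma}^{-1}}$, which prevents any loss when we later apply $\langle\partial_{\tilde{\tau}}^2\rangle$, this produces the sharper $\tau^{-N-}L^2_{d\tau}$ bound (as opposed to the $\log^{-2}\tau\cdot\tau^{-N}L^2_{d\tau}$ bound in Lemma~\ref{lem: tildelambdaeqnsourcebounds1}).

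For the principal component $-Q^{(\tilde{\sigma})}_{<\sigma^{\frac12+}}(\lambda^{-2}y_{z_{nres}}\cdot W)$ of $X^{(\tilde{\lambda})}$, I will invoke Lemma~\ref{lem:specialF1} together with Corollary~\ref{cor:yzW} and Lemma~\ref{lem:yzWnonosc} to estimate the distorted Fourier coefficient and its weighted derivatives in terms of $\|z_{nres}\|_S$. Thanks to (i), the resulting integral over $\{\xi\gtrsim 1\}$ converges robustly once one uses the smooth symbol behaviour of $\rho-\rho_1$ at high $\xi$ and the $\mathcal{L}^2$-control on $z_{nres}$ contained in $\|\cdot\|_S$; thanks to (ii), one may write $(\triangle)X^{(\tilde{\lambda})}(\sigma,\xi)=\xi\int_0^1 \partial_\xi X^{(\tilde{\lambda})}(\sigma,t\xi)\,dt$, which removes the $\xi^{-1}$ singularity of $\rho_1$. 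The contribution of $-Q^{(\tilde{\sigma})}_{<\sigma^{\frac12+}}(\lambda^{-2}y_{(\tilde\kappa_1+i\kappa_2)\phi_0}\cdot W)$ will be treated exactly as in Step (1) of the proof of Lemma~\ref{lem: tildelambdaeqnsourcebounds1}, using Lemma~\ref{lem:wavebasicinhomstructure1}, with the output controlled by $\|(\tilde\kappa_1,\kappa_2)\|_{\tau^{-N}L^2_{d\tau}}$. For the $\delta\tilde{z}$-type contribution coming from the reshuffling performed in Step 1 of Proposition~\ref{prop:solnoftildelambdaeqn}, I will adapt Step (2) of that proof, where the gain in $\gamma$ is produced by the frequency-localisation identities for $\partial_{\tilde{\tau}}^{-2}$ versus $\Box^{-1}$ — this is where the $\|\langle\partial_{\tilde{\tau}}^2\rangle^{-2}\partial_{\tilde{\tau}}^2\tilde{\lambda}\|_{\tau^{-N}L^2_{d\tau}}$-term on the right arises. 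The remaining summands in $X^{(\tilde{\lambda})}$ (involving $y-y_z$, $y^{\text{mod}}_{\tilde{\lambda}}$ and $(\lambda^{-2}n_*^{(\tilde{\lambda},\tilde{\alpha})}-W^2)z$) will be handled by the same inputs that appear in Step (3) of the proof of Lemma~\ref{lem: tildelambdaeqnsourcebounds1}, deferred to Section~\ref{sec:appendix}; here the smoothing effect of $\Box^{-1}$ and the algebraic decay of $n_*^{(\tilde{\lambda},\tilde{\alpha})}-W^2$ supply more than enough room.

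To promote the pointwise-in-$\tau$ bound into one weighted by $\langle\partial_{\tilde{\tau}}^2\rangle$, I will differentiate twice in $\tilde{\tau}$ inside the propagator integral. Each derivative of the phase $e^{i\lambda^2(\tau)\xi^2\int_\sigma^\tau\lambda^{-2}(s)\,ds}$ costs a factor $\xi^2$, which is absorbed by the high-frequency tail of $\rho-\rho_1$ in the first summand (it is $L^\infty$ with respect to $\xi$-localized windows once we exploit the regularity of $X^{(\tilde{\lambda})}$ in $\xi$), and by the low-frequency vanishing of $(\triangle)X^{(\tilde{\lambda})}$ in the second summand; derivatives falling on the $Q^{(\tilde{\sigma})}_{<\sqrt{\gamma}^{-1}}$ cutoffs are bounded by $\sqrt{\gamma}^{-1}$ and hence cost only a $\gamma$-dependent constant, matching the $\lesssim_\gamma$ in the claimed estimate. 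A Schur test in $(\tau,\sigma)$ of the form used in Lemma~\ref{lem:basicL2}, together with the temporal gain $\tau^{-N-}$ coming from the decay of $\lambda_\tau/\lambda\sim \tau^{-1}$ and from the slight improvement available in Corollary~\ref{cor:yzWpartialtau}, closes the estimate.

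The main obstacle will be checking that the piece involving $y_z$ (with $z$ replaced by its resonant part $(\tilde\kappa_1+i\kappa_2)\phi_0$ and by the decomposition \eqref{eq:zedcomprefined}) truly enjoys both the $\langle\partial_{\tilde{\tau}}^2\rangle$-regularity and the sharp $\tau^{-N-}$ decay, since $y_z$ is non-local in space and time and does not decay pointwise in any obvious norm. For the resonant part one must expand the wave propagator \eqref{eq:wavepropagator} in wave time, integrate by parts once in $\tilde{\sigma}$ to transfer derivatives onto the low-frequency cutoff $Q^{(\tilde{\sigma})}_{<\sigma^{\frac12+}}$, and then combine Lemma~\ref{lem:wavebasicinhomstructure1} with the identity $\mathcal{L}\phi_0=0$ (so that $(\triangle)X^{(\tilde{\lambda})}|_{\text{resonant}}$ is already $O(\xi^2)$ near $\xi=0$) to confirm that no uncontrolled resonant growth contaminates $L^{(\tilde{\lambda})}_{\text{small}}$.
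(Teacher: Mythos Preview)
Your proposal correctly identifies the two structural ingredients built into $L^{(\tilde{\lambda})}_{\text{small}}$ (high-frequency support of $\rho-\rho_1$; vanishing of $(\triangle)X^{(\tilde{\lambda})}$ at $\xi=0$), and you name the right auxiliary results for the subsidiary terms. However, the mechanism you offer for the actual $\tau^{-N-}$ gain is not the one that works, and as written your argument would not close.

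The key step in the paper is an integration by parts \emph{in $\sigma$}, not in $\xi$ or $\tilde{\tau}$. After the change of variable $\tilde{\xi}=\frac{\lambda(\tau)}{\lambda(\sigma)}\xi$, one uses the algebraic identity
\[
\tilde{\xi}^2\,\tilde{S}_1(\tau,\sigma,\tilde{\xi})=\zeta(\tau,\sigma)\,\partial_{\sigma}\tilde{S}_2(\tau,\sigma,\tilde{\xi}),\qquad \zeta(\tau,\sigma)=\big[\partial_{\sigma}\big(\lambda^2(\sigma)\textstyle\int_{\sigma}^{\tau}\lambda^{-2}(s)\,ds\big)\big]^{-1},
\]
so that the $\partial_{\sigma}$ lands on the source factor $\zeta(\tau,\sigma)\frac{\lambda(\sigma)}{\lambda(\tau)}\,\frac{(\triangle)X^{(\tilde{\lambda})}(\sigma,\tilde{\xi})}{\tilde{\xi}^2}\,\rho_1\big(\frac{\lambda(\sigma)}{\lambda(\tau)}\tilde{\xi}\big)$. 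The second-order vanishing of $(\triangle)X^{(\tilde{\lambda})}$ at $\tilde{\xi}=0$ makes the quotient $(\triangle)X^{(\tilde{\lambda})}/\tilde{\xi}^2$ well-behaved (this is precisely Lemma~\ref{lem:yzWbound1}), and the resulting $\partial_{\sigma}$ on the source is what triggers the improved temporal decay via the $l\geq 1$ case of Lemma~\ref{lem:yzWbound1} (itself resting on Corollary~\ref{cor:yzWpartialtau}). One then feeds the differentiated source into Lemma~\ref{lem:tildeKfcontrol}, which handles the remaining $\tilde{\xi}^2\tilde{S}_2$-integral.

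Your route bypasses this step. Writing $(\triangle)X^{(\tilde{\lambda})}(\sigma,\xi)=\xi\int_0^1\partial_\xi X^{(\tilde{\lambda})}(\sigma,t\xi)\,dt$ extracts only one power of $\xi$, not the two that are actually available from $\phi(R;\xi)-\phi(R;0)=O(\xi^2)$; worse, $\partial_\xi X^{(\tilde{\lambda})}$ involves $\partial_\xi\phi(R;\xi)$, which in the oscillatory regime costs a factor of $R$ and undoes the decay from $W$. More importantly, the Schur test you invoke (Lemma~\ref{lem:basicL2}) \emph{loses} one power of $\tau$ rather than gaining, and there is no factor $\lambda_\tau/\lambda$ present in $L^{(\tilde{\lambda})}_{\text{small}}$ to compensate. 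Your mention of Corollary~\ref{cor:yzWpartialtau} is the right input, but you need the $\sigma$-integration by parts above to place a $\partial_\sigma$ on $y_z$ before that corollary can be applied; without it, there is no source for the $\tau^{-N-}$ improvement.
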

		\begin{proof} Recalling the definition \eqref{eq:Ltildelambda}, the main observation is that we have an 'extra factor' $\xi^2$ to take advantage of, which allows us to perform an integration by parts with respect to $\sigma$ to gain additional smallness. The inequality needs to again be verified for all the different terms constituting $X^{(\tilde{\lambda})}$, and we deal here with the most delicate principal term, relegating the other ones to section~\ref{sec:appendix}. Moreover, we deal here with the term on the second line in \eqref{eq:Ltildelambda}, since the term on the first line is supported in the regime $\xi\gtrsim 1$, while the chief difficulties arise in the low-frequency regime $\xi\ll 1$. Thus for the rest of this proof we shall set 
			\begin{equation}\label{eq:assumption1}
				X^{(\tilde{\lambda})}(\sigma, \xi) = \mathcal{F}\Big( -Q^{(\tilde{\sigma})}_{<\sigma^{\frac12+}}\big(\lambda^{-2}y_z\cdot W\big)\Big)(\sigma,\xi), 
			\end{equation}
			with $z$ in turn given by \eqref{eq:zedcomprefined}. Observe that since the preceding term is real-valued, we can write the second line in \eqref{eq:Ltildelambda} as 
			\begin{align*}
				\int_{\tau}^\infty\int_0^\infty \xi^2\cdot S_1(\tau, \sigma,\xi)\cdot Q^{(\tilde{\sigma})}_{<\sqrt{\gamma}^{-1}}(\triangle)X^{(\tilde{\lambda})}(\sigma, \frac{\lambda(\tau)}{\lambda(\sigma)}\xi)\rho_1(\xi)\,d\xi d\sigma
			\end{align*}
			where we recall the notation $(\triangle)X(\sigma,\xi) = X(\sigma,\xi) - X(\sigma, 0)$. Introduce the variable $\tilde{\xi}: = \frac{\lambda(\tau)}{\lambda(\sigma)}\xi$, and note the relation 
			\begin{align*}
				S_1(\tau,\sigma,\xi) = \frac{\lambda^2(\tau)}{\lambda^2(\sigma)}\cos\big(\lambda^2(\sigma)\tilde{\xi}^2\int_{\sigma}^{\tau}\lambda^{-2}(s)\,ds\big) =:\frac{\lambda^2(\tau)}{\lambda^2(\sigma)}\tilde{S}_1(\tau,\sigma,\tilde{\xi})
			\end{align*}
			and so 
			\begin{align*}
				\tilde{\xi}^2\tilde{S}_1(\tau,\sigma,\tilde{\xi}) = \zeta(\tau,\sigma)\partial_{\sigma}\tilde{S}_2(\tau,\sigma,\tilde{\xi}),
			\end{align*}
			where we put 
			\begin{align*}
				\tilde{S}_2(\tau,\sigma,\tilde{\xi} =\sin\big(\lambda^2(\sigma)\tilde{\xi}^2\int_{\sigma}^{\tau}\lambda^{-2}(s)\,ds\big),\, \zeta(\tau,\sigma) = \big[\partial_{\sigma}\big(\lambda^2(\sigma)\int_{\sigma}^{\tau}\lambda^{-2}(s)\,ds\big)\big]^{-1},
			\end{align*}
			whence $\zeta(\tau,\sigma)\sim \frac{\sigma\cdot\lambda^2(\tau)}{\tau\lambda^2(\sigma)}$. We can then reformulate the above double integral as 
			\begin{align*}
				& \int_{\tau}^\infty\int_0^\infty \xi^2\cdot S_1(\tau, \sigma,\xi)\cdot Q^{(\tilde{\sigma})}_{<\sqrt{\gamma}^{-1}}(\triangle)X^{(\tilde{\lambda})}(\sigma, \frac{\lambda(\tau)}{\lambda(\sigma)}\xi)\rho_1(\xi)\,d\xi d\sigma\\
				& = - \int_{\tau}^\infty\int_0^\infty \tilde{\xi}^2\cdot\tilde{S}_2(\tau, \sigma,\tilde{\xi})\\&\hspace{2cm}\cdot \partial_{\sigma}\big(\zeta(\tau,\sigma)\cdot\frac{\lambda(\sigma)}{\lambda(\tau)}\cdot \frac{Q^{(\tilde{\sigma})}_{<\sqrt{\gamma}^{-1}}(\triangle)X^{(\tilde{\lambda})}(\sigma,\tilde{\xi})}{\tilde{\xi}^2}\rho(\frac{\lambda(\sigma)}{\lambda(\tau)}\tilde{\xi})\big)\,d\tilde{\xi} d\sigma
			\end{align*}
			The required estimate then follows by applying Lemma~\ref{lem:tildeKfcontrol} as well as Lemma~\ref{lem:yzWbound1} and taking advantage of the symbol behavior of $\rho$ (see subsection~\ref{subsec:basicfourier}.
			)
		\end{proof}
		
		In a similar vein, we have the next lemma, which disposes of the perturbative term $L_{\mathcal{K},\text{small}}^{(\tilde{\lambda})}$: 
		\begin{lem}\label{lem:LsmallcalKtildelambda} We have the estimate
			\begin{align*}
				\Big\|\langle\partial_{\tilde{\tau}}^2\rangle L_{\mathcal{K},\text{small}}^{(\tilde{\lambda})}\Big\|_{\log^{-2}\tau\cdot\tau^{-N}L^2_{d\tau}}&\ll_{\tau_*}\big\|z_{nres}\big\|_{S} + \big\|\langle\partial_{\tilde{\tau}}^2\rangle^{-2}\partial_{\tilde{\tau}}^2\tilde{\lambda}\big\|_{\tau^{-N}L^2_{d\tau}} + \big\|(\tilde{\kappa}_1,\kappa_2)\big\|_{\tau^{-N}L^2_{d\tau}}\\
				& + \big\|\tilde{\alpha}_{\tau}\big\|_{\log^{-1}\tau\cdot \tau^{-N}L^2_{d\tau}}. 
			\end{align*}
		\end{lem}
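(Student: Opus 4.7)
The plan is to reduce the estimate to a repeated application of the argument used for Lemma~\ref{lem:Lsmalltildelambda}, while taking advantage of the fact that every factor of the form $S_{\mathcal{K}}$ is smaller than $S$ by at least one power of $\tau_*^{-1}$. Concretely, recall from \eqref{eq:ScalK2} the series representation
\[
S_{\mathcal{K}}(G)(\tau,\xi) = \sum_{j \geq 1} S\Big(\big(-i\tfrac{\lambda_{\sigma}}{\lambda}\mathcal{K}\circ S\big)^{j}(G)\Big)(\tau,\xi),
\]
so that upon decomposing $L^{(\tilde{\lambda})}_{\mathcal{K},\text{small}}=\sum_{j\geq 1}L^{(\tilde{\lambda})}_{\mathcal{K},j}$ according to the number of inner iterations, it suffices to show that each $L^{(\tilde{\lambda})}_{\mathcal{K},j}$ contributes a term controlled by $\tau_*^{-j}$ times the right hand side of the lemma.

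The key observation is that on the \emph{outermost} $S$, precisely the same integration by parts in $\sigma$ that was used in the proof of Lemma~\ref{lem:Lsmalltildelambda} can be performed: the factor $\xi^{2}$ in front allows us to write $\xi^{2} S_{1}(\tau,\sigma,\xi) = \zeta(\tau,\sigma)\,\partial_{\sigma} \tilde S_{2}(\tau,\sigma,\tilde\xi)$, and the derivative hitting the inner argument either produces an additional $\sigma^{-\delta_0}$ (acting on $Q^{(\tilde{\sigma})}_{<\sqrt{\gamma}^{-1}}$-type localisations) or else is absorbed by the pointwise bound for $\partial_{\sigma}$ of the inner expression. The inner arguments, when $j\geq 1$, always carry at least one factor $\frac{\lambda_{\sigma}}{\lambda}\sim\sigma^{-1}$, which is integrable against the kernel of $S$ on $[\tau_*,\infty)$ and hence produces an extra $\tau_*^{-1}$ gain after applying Schur's criterion on the outermost integration, exactly as in Lemma~\ref{lem:basicL2}.

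Between consecutive $S$-factors the operator $\mathcal{K}$ appears; by Lemma~\ref{lem:KboundsKST} it is bounded on $L^{p}_{\rho\,d\eta}$ and on $L^{p}_{d\eta}$ for $p\in(1,\infty)$, and moreover it preserves both the vanishing of $(\triangle)X^{(\tilde{\lambda})}$ at $\xi=0$ (up to smooth remainders controlled by the $C^{2}$ kernel bounds for $F(\xi,\eta)$) and the large-$\xi$ localisation imposed by the factor $\rho-\rho_1$. Hence at every intermediate stage we can reapply Lemma~\ref{lem:basicL2} combined with the bound on $\mathcal{K}$, and each level of the composition contributes an additional factor that is bounded by $C\cdot\tau_*^{-1}$ in view of the $\frac{\lambda_{\sigma}}{\lambda}$ weight. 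Summing the geometric series yields $\sum_{j\geq 1}(C\tau_*^{-1})^{j}\ll_{\tau_*} 1$, which after invoking Lemma~\ref{lem: tildelambdaeqnsourcebounds1} (to estimate the outermost source $X^{(\tilde{\lambda})}$ in terms of $\|z_{nres}\|_{S}$, $\|(\tilde{\kappa}_1,\kappa_2)\|_{\tau^{-N}L^2_{d\tau}}$, $\|\tilde{\alpha}_\tau\|_{\log^{-1}\tau\cdot\tau^{-N}L^2_{d\tau}}$ and $\|\langle\partial_{\tilde{\tau}}^2\rangle^{-2}\partial_{\tilde{\tau}}^2\tilde{\lambda}\|_{\tau^{-N}L^2_{d\tau}}$) yields the stated bound.

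The main technical obstacle lies in handling the outer multiplier $\langle\partial_{\tilde{\tau}}^2\rangle$ in the norm: one must check that each $\partial_{\tilde{\tau}}$ landing on the iterated expression either falls on $\frac{\lambda_{\sigma}}{\lambda}$ (producing an even better temporal decay), on $S(\tau,\sigma,\xi)$ (where the resulting factors of $\xi^{2}$ are harmless because the inner arguments vanish at $\xi=0$ to sufficient order), or on the innermost source, where the hypothesis provides control via $\langle\partial_{\tilde{\tau}}^2\rangle$-weighted norms. A careful commutator argument, analogous to the one that was implicitly used in bounding the principal integration-by-parts term in Lemma~\ref{lem:Lsmalltildelambda}, handles these terms, and the overall $\log^{-2}\tau$ factor in the target norm is accommodated by the fact that only the first iteration requires the full $\log^{-2}\tau$ loss from Lemma~\ref{lem: tildelambdaeqnsourcebounds1}, while subsequent iterations yield strictly better decay.
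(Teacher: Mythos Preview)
Your overall strategy---expand $S_{\mathcal{K}}$ as a series and gain smallness from each factor $\tfrac{\lambda_\sigma}{\lambda}$---is the right starting point, and the paper's argument indeed hinges on this. But several of your steps are either imprecise or miss a genuine obstacle.

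First, the smallness gain per iteration is $(\sqrt{N})^{-1}$ (from Lemma~\ref{lem:basicL2} combined with $\tfrac{\lambda_\sigma}{\lambda}\sim\sigma^{-1}$), not $\tau_*^{-1}$; this is minor, since either suffices. More seriously, naive iteration of Lemma~\ref{lem:basicL2} only controls $L^2_{\rho\,d\xi}$-norms of the iterates, not the scalar quantity $\int_0^\infty \xi^2(\cdots)\rho_1(\xi)\,d\xi$. To pass from one to the other without losing a power of $\tau$, the paper uses the concatenation lemma (Lemma~\ref{lem:concatenation1}), which splits each $\mathcal{K}$ into a \emph{resonant} and \emph{non-resonant} part and combines the oscillatory phases across all $S$-factors into a single non-stationary phase in $\xi$ (see \eqref{eq:resprodxiphase1}). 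Your integration-by-parts-on-the-outermost-$S$ proposal does not capture this: when $\partial_\sigma$ falls on the inner chain $(-i\tfrac{\lambda_\sigma}{\lambda}\mathcal{K}\circ S)^{j}(\ldots)$ you get terms that are not obviously better, and your claim that ``$\mathcal{K}$ preserves the vanishing of $(\triangle)X^{(\tilde\lambda)}$ at $\xi=0$'' is not justified---$\mathcal{K}$ is a singular integral operator and does not in general preserve pointwise vanishing at the origin.

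Second, and most importantly, you do not address the most delicate contribution to $X^{(\tilde\lambda)}$, namely the term $Q^{(\tilde\tau)}_{<\tau^{1/2+}}\big((\lambda^{-2}n_*^{(\tilde\lambda,\tilde\alpha)}-W^2)z\big)$. The factor $\lambda^{-2}n_*^{(\tilde\lambda,\tilde\alpha)}-W^2$ has poor spatial decay in the region $R\gtrsim\tau^{1/2-1/(4\nu)}$ (only like $\lambda^{-2}$, cf.\ Lemma~\ref{lem:approxsolasymptotics3}), so one cannot simply invoke Lemma~\ref{lem: tildelambdaeqnsourcebounds1} for this term. The paper instead treats multiplication by this factor as a \emph{pseudo-transference operator} $\mathcal{K}_*$ (Lemma~\ref{lem:pseudotransferenceoperator2}), re-iterates the equation for $z$ via \eqref{eq:formalexpansion}, and appeals to the extended concatenation lemma (Lemma~\ref{lem:concatenation2}) which handles mixed strings of $\mathcal{K}$ and $\mathcal{K}_*$. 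This re-iteration step, together with the phase-combining machinery of Lemma~\ref{lem:concatenation1}, is the technical heart of the proof and is absent from your sketch.
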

		An outline of the technical proof is deferred to section~\ref{sec:appendix}. 
		\\
		
		The following lemma is entirely analogous to the preceding two, treating the error terms $R_{\text{small}}^{(\tilde{\lambda})},R_{\mathcal{K},\text{small}}^{(\tilde{\lambda})}$:
		\begin{lem}\label{lem:Rsmalltildelambda} We have the estimate 
			\begin{align*}
				&\Big\|\langle\partial_{\tilde{\tau}}^2\rangle R_{\text{small}}^{(\tilde{\lambda})}\Big\|_{\tau^{-N-\frac12-\frac{1}{4\nu}}L^2_{d\tau}}\lesssim_{\gamma}\big\|\langle\partial_{\tilde{\tau}}^2\rangle^{-1}\partial_{\tilde{\tau}}^2\tilde{\lambda}\big\|_{\tau^{-N}L^2_{d\tau}},\\
				&\Big\|\langle\partial_{\tilde{\tau}}^2\rangle R_{\mathcal{K},\text{small}}^{(\tilde{\lambda})}\Big\|_{\log^{-2}\tau\cdot\tau^{-N}L^2_{d\tau}}\ll_{\tau_*} \big\|\langle\partial_{\tilde{\tau}}^2\rangle^{-2}\partial_{\tilde{\tau}}^2\tilde{\lambda}\big\|_{\tau^{-N}L^2_{d\tau}}. 
			\end{align*}
		\end{lem}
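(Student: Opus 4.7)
The plan is to mimic the proofs of Lemmas~\ref{lem:Lsmalltildelambda} and~\ref{lem:LsmallcalKtildelambda}, exploiting that both $R_{\text{small}}^{(\tilde{\lambda})}$ and $R_{\mathcal{K},\text{small}}^{(\tilde{\lambda})}$ inherit the two crucial structural features used there: first, the inner factor is either $(\triangle)\mathcal{F}(\lambda^{-2}Q^{(\tilde{\sigma})}_{<\gamma^{-1}}\tilde{y}_{\tilde{\lambda}}^{\text{mod}}\cdot W)(\sigma,\cdot)$, which vanishes at $\xi=0$, or comes weighted by $\rho(\xi)-\rho_1(\xi)$, which is supported in the high-frequency regime $\xi\gtrsim 1$; second, the factor $\xi^2$ in front of the Schr\"odinger propagator $S(\tau,\sigma,\xi)$ permits the same integration by parts in $\sigma$ as in Lemma~\ref{lem:Lsmalltildelambda}, using the identity
\[
\xi^2\tilde{S}_1(\tau,\sigma,\tilde{\xi}) = \zeta(\tau,\sigma)\cdot\partial_\sigma \tilde{S}_2(\tau,\sigma,\tilde{\xi}),\quad \zeta(\tau,\sigma)\sim \frac{\sigma\lambda^2(\tau)}{\tau\lambda^2(\sigma)}.
\]
Thus every such integration by parts nets a factor $\frac{\sigma\lambda^2(\tau)}{\tau\lambda^2(\sigma)}\sim \tau^{-\frac12-\frac{1}{4\nu}}\cdot (\text{benign weight in }\sigma)$ after changing variables back, which is precisely the source of the enhanced decay $\tau^{-N-\frac12-\frac{1}{4\nu}}$ in the first claim.

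For the first estimate, the key analytic ingredient is a replacement of the bound used in Lemma~\ref{lem:yzWbound1} adapted to $\tilde{y}^{\text{mod}}_{\tilde{\lambda}}$. The idea is that, appealing to \eqref{eq:tildeE2mod} and Lemma~\ref{lem:wavebasicinhomstructure1} (more precisely the second pair of estimates involving $\partial_{\tilde{\tau}}^2$), one has schematically
\[
\big\|\langle \xi\partial_{\xi}\rangle^{1+\delta_0}\partial_{\tilde{\sigma}}^2\mathcal{F}\big(\lambda^{-2}Q^{(\tilde{\sigma})}_{<\gamma^{-1}}\tilde{y}_{\tilde{\lambda}}^{\text{mod}}\cdot W\big)\big\|_{\tau^{-N}L^2_{d\tau}L^{\infty}_{d\xi}}\lesssim_\gamma \big\|\langle\partial_{\tilde{\tau}}^2\rangle^{-1}\partial_{\tilde{\tau}}^2\tilde{\lambda}\big\|_{\tau^{-N}L^2_{d\tau}},
\]
since the leading part of $\tilde{E}_2^{\text{mod}}$ is $-2\tilde{\lambda}_{tt}\lambda^2\Lambda W\cdot W - 2Q^{(\tilde{\tau})}_{<\tilde{\tau}^{10/\nu}}[\tilde\lambda_t\partial_t(\lambda^2\Lambda W\cdot W)]$, which is morally $\partial_{\tilde{\tau}}^2\tilde{\lambda}$ times a Schwartz-type profile, while the remaining terms in \eqref{eq:tildeE2mod} contribute perturbatively thanks to the temporal cutoff $Q^{(\tilde{\tau})}_{<1}$. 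Inserting this bound into the integration by parts identity above, handling the two pieces in \eqref{eq:Rtildelambda} as in the proof of Lemma~\ref{lem:Lsmalltildelambda}, and invoking Schur's test with kernel $\chi_{\sigma\geq\tau}(\frac{\tau}{\sigma})^N\cdot\frac{\sigma}{\tau}\cdot \frac{\lambda^2(\tau)}{\lambda^2(\sigma)}$, yields the desired bound with $\tau^{-N-\frac12-\frac{1}{4\nu}}$ decay.

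For the second estimate on $R_{\mathcal{K},\text{small}}^{(\tilde{\lambda})}$, the strategy mirrors the proof of Lemma~\ref{lem:LsmallcalKtildelambda}: one expands $S_{\mathcal{K}}$ via the formal series \eqref{eq:ScalK2} and each occurrence of the transference operator $\mathcal{K}$ (bounded on $L^p_{\rho\,d\eta}$ by Lemma~\ref{lem:KboundsKST}) is accompanied by a factor $\frac{\lambda_\tau}{\lambda}\sim\tau^{-1}$. Combining the resulting $\log^{-2}\tau$-type smallness from absorbing one such factor using the $\tau_*$-large-ness hypothesis, together with the same integration by parts gain of $\zeta(\tau,\sigma)$ and the bound above on $\tilde{y}_{\tilde{\lambda}}^{\text{mod}}$, gives the $\ll_{\tau_*}$ smallness claimed, since now one uses the weaker weight $\langle\partial_{\tilde{\tau}}^2\rangle^{-2}$ on the right hand side to absorb an additional derivative that arises when differentiating the cutoffs $Q^{(\tilde{\sigma})}_{<\sqrt{\gamma}^{-1}}$ as in the analogous step in the proof of Lemma~\ref{lem:LsmallcalKtildelambda}. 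The main obstacle will be bookkeeping of the various cutoffs and spectral weights so that everything inside $\mathcal{F}(\cdot\cdot W)$ fits into the framework of Lemmas~\ref{lem:K_frefined} and~\ref{lem:tildeKfcontrol}; the technicalities are identical to those treated in the preceding two lemmas, and so we defer the detailed verification to section~\ref{sec:appendix}.
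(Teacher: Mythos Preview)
Your approach is correct and matches the paper's: the paper explicitly states that this lemma ``is entirely analogous to the preceding two'' (Lemmas~\ref{lem:Lsmalltildelambda} and~\ref{lem:LsmallcalKtildelambda}) and gives no separate proof, so mimicking those arguments with $X^{(\tilde{\lambda})}$ replaced by $\mathcal{F}(\lambda^{-2}Q^{(\tilde{\sigma})}_{<\gamma^{-1}}\tilde y^{\text{mod}}_{\tilde\lambda}\cdot W)$ is exactly what is intended.

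There is, however, one point where your explanation is off. You attribute the enhanced decay $\tau^{-N-\frac12-\frac{1}{4\nu}}$ in the first estimate to the factor $\zeta(\tau,\sigma)\sim\frac{\sigma\lambda^2(\tau)}{\tau\lambda^2(\sigma)}$ arising from the integration by parts identity. But this factor is $\sim(\tau/\sigma)^{1/(2\nu)}\leq 1$, not $\tau^{-\frac12-\frac{1}{4\nu}}$, and it appears identically in the proof of Lemma~\ref{lem:Lsmalltildelambda}, which only achieves $\tau^{-N-}$. The genuine source of the extra decay is instead the Schr\"odinger-to-wave-time conversion: after the integration by parts, $\partial_\sigma$ lands on $\mathcal{F}(\lambda^{-2}Q^{(\tilde\sigma)}_{<\gamma^{-1}}\tilde y^{\text{mod}}_{\tilde\lambda}\cdot W)$, and since $\tilde y^{\text{mod}}_{\tilde\lambda}=\Box^{-1}\tilde E_2^{\text{mod}}$ is a wave-propagated quantity, one writes $\partial_\sigma=\frac{\partial\tilde\sigma}{\partial\sigma}\partial_{\tilde\sigma}$ with $\frac{\partial\tilde\sigma}{\partial\sigma}\sim\sigma^{-\frac12-\frac{1}{4\nu}}$ (this is the mechanism behind the third estimate of Lemma~\ref{lem:wavebasicinhom} and Corollary~\ref{cor:yzWpartialtau}). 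The $\partial_{\tilde\sigma}$-derivative of $\Box^{-1}(\ldots)$ costs no temporal decay, so the net gain is precisely $\sigma^{-\frac12-\frac{1}{4\nu}}$. Your subsequent invocation of Lemma~\ref{lem:wavebasicinhomstructure1} and the Schwartz-type localization of $\tilde E_2^{\text{mod}}$ is correct and is what makes this work cleanly here (whereas for $y_z$ in Lemma~\ref{lem:Lsmalltildelambda} one incurs the $\tau^{0+}$ loss of Corollary~\ref{cor:yzW}); you just mislocated which factor delivers the gain.
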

		Finally, recalling \eqref{eq:tildelambda}, it remains to estimate the terms on the last two lines there, which are also of perturbative character:
		\begin{lem}\label{lem:e1modgooderrors} Setting 
			\begin{align*}
				&E(\tau):\\& = \Im \int_{\tau}^\infty\int_0^\infty \xi^2\cdot S(\tau, \sigma;\xi)\cdot \mathcal{F}\big(Q^{(\tilde{\tau})}_{<\tau^{\delta}}\big(e_1^{\text{mod}}\big)(\sigma, \frac{\lambda(\tau)}{\lambda(\sigma)}\xi)\rho(\xi)\,d\xi d\sigma\\& + \Im Q^{(\tilde{\tau})}_{<\tau^{\delta}}(e_1^{mod})|_{R = 0},
			\end{align*}
			we have the bound 
			\begin{align*}
				\Big\|\langle\partial_{\tilde{\tau}}^2\rangle E(\tau)\Big\|_{\log^{-2}\tau\cdot \tau^{-N}L^2_{d\tau}}&\ll_{\tau_*}\big\|z_{nres}\big\|_{S} + \big\|\langle\partial_{\tilde{\tau}}^2\rangle^{-2}\partial_{\tilde{\tau}}^2\tilde{\lambda}\big\|_{\tau^{-N}L^2_{d\tau}} + \big\|(\tilde{\kappa}_1,\kappa_2)\big\|_{\tau^{-N}L^2_{d\tau}}
				\\&\hspace{0.5cm}+\big\|\tilde{\alpha}_{\tau}\big\|_{\log^{-1}\tau\cdot\tau^{-N}L^2_{d\tau}}.
			\end{align*}
		\end{lem}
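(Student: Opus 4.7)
\emph{Proof proposal.} The strategy is to decompose $e_1^{\mathrm{mod}}=\lambda^{-3}E_1^{\mathrm{mod}}$ according to \eqref{eq:E1mod} into its constituent pieces---the cutoff error terms involving $\chi_3$, the phase-modulation errors involving $\chi_1$ and $\tilde{\alpha}$, the scaling-modulation error $i\tilde{\lambda}_t\partial_{\tilde{\lambda}}\psi_*^{(\tilde{\lambda})}$, the nonlinear error $E_{\mathrm{nl}}^{\mathrm{mod}}$, and the quadratic remainder $O(|\tilde{\alpha}|^2)$---and to estimate the contribution of each piece separately to both the propagator integral and the boundary term $\Im Q^{(\tilde{\tau})}_{<\tau^{\delta}}(e_1^{\mathrm{mod}})|_{R=0}$, then invoke the propagator estimate of Lemma~\ref{lem:K_frefined} as used in the proofs of Lemmas~\ref{lem: tildelambdaeqnsourcebounds1}--\ref{lem:LsmallcalKtildelambda}. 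For the boundary term, observe that $\chi_1$ and $\chi_3$ equal $1$ identically in a full neighborhood of $R=0$, so all cutoff-derivative pieces and $E_{\mathrm{nl}}^{\mathrm{mod}}$ vanish at the origin, leaving only the contributions of $-\chi_1\partial_t\tilde{\alpha}\cdot\psi_*^{(\tilde{\lambda})}$, $i\tilde{\lambda}_t\cdot\partial_{\tilde{\lambda}}\psi_*^{(\tilde{\lambda})}$ and the $O(|\tilde{\alpha}|^2)$ remainder, which after conversion to Schr\"odinger time and extraction of imaginary parts are controlled pointwise by $|\tilde{\alpha}_\tau|$ and $|\tilde{\lambda}_\tau|$ (via $\lambda^{-1}\partial_{\tilde{\lambda}}\psi_*^{(\tilde{\lambda})}|_{R=0}=\Lambda W|_{R=0}+O(\tilde{\lambda})$), with the quadratic piece absorbed by Cauchy--Schwarz.

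For the propagator integral, the cutoff-supported pieces $(i\partial_t+\triangle)(\chi_3)\cdot(\psi_*^{(\tilde{\lambda})}-\psi_*)$, $\partial_r(\chi_3)\cdot\partial_r(\psi_*^{(\tilde{\lambda})}-\psi_*)$ and $E_{\mathrm{nl}}^{\mathrm{mod}}$ are supported in the far zone $r\gtrsim t^{\frac12+\epsilon}$, i.e. $R\gtrsim \tau^{\epsilon/(2\nu)}$, where $\psi_*^{(\tilde{\lambda})}-\psi_*$ is linear in $\tilde{\lambda}-1$ with strong polynomial decay in $R$; combining this with the generalized Fourier bounds of subsection~\ref{subsec:basicfourier} produces, after Lemma~\ref{lem:K_frefined}, a contribution controlled by a $\tau_*^{-1}$-multiple of $\|\langle\partial_{\tilde{\tau}}^2\rangle^{-2}\partial_{\tilde{\tau}}^2\tilde{\lambda}\|_{\tau^{-N}L^2_{d\tau}}$. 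The $\chi_1$-supported pieces proportional to $\tilde{\alpha}$ are handled analogously, landing on the $\tilde{\alpha}_\tau$ norm after one integration by parts in $t$ exploiting the far support of the commutator $(i\partial_t+\triangle)(\chi_1)$. The principal origin-supported terms $-\chi_1\partial_t\tilde{\alpha}\cdot\psi_*^{(\tilde{\lambda})}$ and $i\tilde{\lambda}_t\partial_{\tilde{\lambda}}\psi_*^{(\tilde{\lambda})}$ are then dispatched by a direct application of Lemma~\ref{lem:K_frefined} composed with the Schur-type bound of Lemma~\ref{lem:basicL2}.

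The main obstacle will be extracting the $\ll_{\tau_*}$ smallness together with the improvement from the baseline $\tau^{-N}L^2_{d\tau}$ norm on the left to the sharper $\log^{-2}\tau\cdot\tau^{-N}L^2_{d\tau}$ norm once the operator $\langle\partial_{\tilde{\tau}}^2\rangle$ has been applied. The smallness must be assembled from three combined sources: the temporal cutoff $Q^{(\tilde{\tau})}_{<\tau^{\delta}}$, which permits an integration by parts in $\sigma$ of the type $\xi^2 S_1=\zeta(\tau,\sigma)\partial_\sigma\tilde{S}_2$ used in the proof of Lemma~\ref{lem:Lsmalltildelambda} at a cost of only $\tau^{O(\delta)}$; the explicit $\log^{-1}\tau$ weight in the right-hand-side norm for $\tilde{\alpha}_\tau$, which compensates the logarithmic losses incurred when applying $\langle\partial_{\tilde{\tau}}^2\rangle$ to the $\tilde{\alpha}$-dependent terms; and the $\langle\partial_{\tilde{\tau}}^2\rangle^{-2}\partial_{\tilde{\tau}}^2$ weight on the $\tilde{\lambda}$-norm, which inverts to furnish exactly the two wave-time derivatives generated on the left. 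The appearance of $\|z_{nres}\|_S$ and $\|(\tilde{\kappa}_1,\kappa_2)\|_{\tau^{-N}L^2_{d\tau}}$ on the right-hand side traces back to the quadratic remainder $O(|\tilde{\alpha}|^2)$ together with the dependence of $\tilde{u}_*^{(\tilde{\lambda},\underline{\tilde{\alpha}})}$ in the ansatz \eqref{eq:finalansatz} on the full approximate solution, to be expanded via \eqref{eq:tildealpha1} and the decomposition \eqref{eq:zedcomprefined} when closing the estimate.
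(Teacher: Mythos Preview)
Your overall decomposition strategy and identification of which pieces contribute to the boundary term are sound, and your handling of the $\chi_3$-supported and $E_{\mathrm{nl}}^{\mathrm{mod}}$ pieces matches the paper. However, there is a genuine gap in your treatment of the $\tilde{\alpha}$-dependent terms.

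You propose that the term $-\chi_1\partial_t\tilde{\alpha}\cdot\psi_*^{(\tilde{\lambda})}$ (the fifth term of $E_1^{\mathrm{mod}}$) is ``dispatched by a direct application of Lemma~\ref{lem:K_frefined}''. After rescaling this contributes the real-valued source $-\chi_1\tilde{\alpha}_\tau W$, and feeding it directly into Lemma~\ref{lem:K_frefined} yields a bound of order $\|\tilde{\alpha}_\tau\|$ with \emph{no} small constant $\ll_{\tau_*}$: neither the $\log^{-1}\tau$ weight discrepancy nor the cutoff $Q^{(\tilde{\tau})}_{<\tau^\delta}$ provides smallness for this particular term, because $\tilde{\alpha}$ is already at wave-temporal frequency $<1$ by \eqref{eq:tildealpha1}. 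The paper's mechanism is a \emph{cancellation}: the purely imaginary terms $i\triangle(\chi_1)\tilde{\alpha}\cdot\lambda W+2i\partial_r(\chi_1)\tilde{\alpha}\cdot\partial_r(\lambda W)$ combine to $-i\tilde{\alpha}\cdot\mathcal{L}(\chi_1 W)$, and since $\mathcal{L}$ produces a factor $\xi^2$ on the Fourier side, one integrates by parts in $\sigma$ inside the $S$-propagator. The terms where $\partial_\sigma$ hits $\tilde{\alpha}$ or $\chi_1$ cancel \emph{exactly} against the real-valued contributions of the third and fifth terms of $E_1^{\mathrm{mod}}$. What remains carries only $\tilde{\alpha}/\sigma$ rather than $\tilde{\alpha}_\sigma$, which is smaller by $N^{-1}$. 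A second integration by parts then uses the vanishing condition \eqref{eq:chi1vanishingcond}, $\langle\chi_1 W,W\rangle=0$, to close the bound. You make no mention of either the cancellation or the role of \eqref{eq:chi1vanishingcond}, and without them your argument for these terms does not give the required smallness.

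A minor point: your explanation for why $\|z_{nres}\|_S$ and $\|(\tilde{\kappa}_1,\kappa_2)\|$ appear on the right is off---$e_1^{\mathrm{mod}}$ depends only on $\tilde{\lambda},\tilde{\alpha}$, not on $z$. These norms are present in the statement only for uniformity with the surrounding lemmas; the proof itself bounds everything by the $\tilde{\lambda}$ and $\tilde{\alpha}$ norms alone (plus the quadratic $O(|\tilde{\alpha}|^2)$, which is harmless).
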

		\begin{proof} Recalling \eqref{eq:E1mod} as well as \eqref{eq:e1moddef}, we first consider the contribution of  $Q^{(\tilde{\tau})}_{<\tau^{\delta}}(\tilde{\lambda}_{\tau})$ coming from $ \Im Q^{(\tilde{\tau})}_{<\tau^{\delta}}(e_1^{mod})|_{R = 0}$. Observe that 
			\begin{align*}
				Q^{(\tilde{\tau})}_{<\tau^{\delta}}(\tilde{\lambda}_{\tau}) = Q^{(\tilde{\tau})}_{<\tau^{\delta}}\big(\frac{\tilde{\tau}}{\tau}\cdot \tilde{\lambda}_{\tilde{\tau}}\big). 
			\end{align*}
			Thus writing 
			\begin{align*}
				\tilde{\lambda}_{\tilde{\tau}} = Q_{0\leq \cdot<\tau^{\delta}}\tilde{\lambda}_{\tilde{\tau}} + Q^{(\tilde{\tau})}_{<0}\tilde{\lambda}_{\tilde{\tau}},
			\end{align*}
			we have that 
			\begin{align*}
				\big\|Q_{0\leq \cdot<\tau^{\delta}}\tilde{\lambda}_{\tilde{\tau}}\big\|_{\tau^{-N+\delta}L^2_{d\tau}}\lesssim \big\|\langle \partial_{\tilde{\tau}}^2\rangle^{-2}\partial_{\tilde{\tau}}^2\tilde{\lambda}\big\|_{\tau^{-N}L^2_{d\tau}}, 
			\end{align*}
			and further 
			\begin{align*}
				\big\|Q^{(\tilde{\tau})}_{<0}\tilde{\lambda}_{\tilde{\tau}}\big\|_{\tau^{-N+\frac12-\frac{1}{4\nu}}L^2_{d\tau}}\lesssim \big\|\langle \partial_{\tilde{\tau}}^2\rangle^{-2}\partial_{\tilde{\tau}}^2\tilde{\lambda}\big\|_{\tau^{-N}L^2_{d\tau}}.
			\end{align*}
			These estimates in turn imply that 
			\begin{align*}
				&\Big\|Q^{(\tilde{\tau})}_{<\tau^{\delta}}\big(\frac{\tilde{\tau}}{\tau}\cdot Q^{(\tilde{\tau})}_{0\leq \cdot<\tau^{\delta}}\tilde{\lambda}_{\tilde{\tau}}\big)\Big\|_{\tau^{-N+\delta - \frac12-\frac{1}{4\nu}}L^2_{d\tau}}\lesssim \big\|\langle \partial_{\tilde{\tau}}^2\rangle^{-2}\partial_{\tilde{\tau}}^2\tilde{\lambda}\big\|_{\tau^{-N}L^2_{d\tau}},\\ 
				&\Big\|Q^{(\tilde{\tau})}_{<\tau^{\delta}}\big(\frac{\tilde{\tau}}{\tau}\cdot Q^{(\tilde{\tau})}_{<0}\tilde{\lambda}_{\tilde{\tau}}\big)\Big\|_{\tau^{-N -\frac{1}{2\nu}}L^2_{d\tau}}\lesssim \big\|\langle \partial_{\tilde{\tau}}^2\rangle^{-2}\partial_{\tilde{\tau}}^2\tilde{\lambda}\big\|_{\tau^{-N}L^2_{d\tau}}.
			\end{align*}
			which implies the desired estimate as long as $\delta<\min\{\frac12, \frac{1}{2\nu}\}$. To conclude the contribution of the term at $R = 0$, it remains to deal with the term $O\big(|\tilde{\alpha}|^2\big)$, for which we have the bound 
			\begin{align*}
				\big\|O\big(|\tilde{\alpha}|^2\big)\big\|_{L^{2-2N+}L^2_{d\tau}}\lesssim \big\|\tilde{\alpha}_{\tau}\big\|_{\log^{-1}\tau\cdot\tau^{-N}L^2_{d\tau}}^2,
			\end{align*}
			which is of course much better than needed for $N\gg 1$.
			\\
			
			It remains to treat the integral term involving $e_1^{\text{mod}}$, for whose fine structure we recall \eqref{eq:E1mod}, as well as \eqref{eq:e1moddef}. We shall treat the contributions of the first, second and seventh term there, relegating the remaining ones to section~\ref{sec:appendix}. 
			\\
			
			{\it{(1): contribution of the term $\big(i\partial_t + \triangle\big)( \chi_3)\cdot \big(\psi_*^{(\tilde{\lambda})} - \psi_*\big)$ in $E_1^{\text{mod}}$.}} 
			\\
			
			This term leads to the re-scaled term $\big(i\partial_\tau + \triangle_R\big)( \chi_3)\cdot \lambda^{-1}\big(\psi_*^{(\tilde{\lambda})} - \psi_*\big)$. Then observe that 
			\begin{align*}
				\lambda^{-1}\big(\psi_*^{(\tilde{\lambda})} - \psi_*\big) = \tilde{\lambda}\cdot\big(W(R) + O(\log R\cdot\tau^{-1})\big)
			\end{align*}
			on the support of $\chi_3$, which we recall is confined to the region $r\lesssim t^{\frac12+\epsilon}$, or $R\lesssim \tau^{\frac12-\frac{\epsilon}{2\nu}}$, and the term $O(\ldots)$ has symbol behavior with respect to $R$. We further use 
			\begin{align*}
				\big|\partial_{\tau}\chi_3\big|\lesssim \tau^{-1},\, \big|\triangle_R\chi_3\big|\lesssim \tau^{-1+\frac{\epsilon}{\nu}}, 
			\end{align*}
			and observe the symbol behavior of all terms involved with respect to $R$, as well as the following bound resulting from the preceding ones:
			\begin{align*}
				&\Big\|\partial_{\tau}\mathcal{F}\Big(Q^{(\tilde{\tau})}_{<\tau^{\delta}}\big(i\partial_\tau + \triangle_R\big)( \chi_3)\cdot \lambda^{-1}\big(\psi_*^{(\tilde{\lambda})} - \psi_*\big)\Big)(\tau,\xi)\Big\|_{\tau^{-N-\delta_0}L^2_{d\tau}L_{d\xi}^\infty}\\&\hspace{8cm}\lesssim \big\|\langle \partial_{\tilde{\tau}}^2\rangle^{-2}\partial_{\tilde{\tau}}^2\tilde{\lambda}\big\|_{\tau^{-N}L^2_{d\tau}}
			\end{align*}
			provided $\delta_0\ll\nu^{-1}$, as well as the bound 
			\begin{align*}
				&\Big\|\langle\xi\partial_{\xi}\rangle^{1+\delta_0}\mathcal{F}\Big(Q^{(\tilde{\tau})}_{<\tau^{\delta}}\big(i\partial_\tau + \triangle_R\big)( \chi_3)\cdot \lambda^{-1}\big(\psi_*^{(\tilde{\lambda})} - \psi_*\big)\Big)(\tau,\xi)\Big\|_{\tau^{-N-\delta_0}L^2_{d\tau}L_{d\xi}^\infty}\\&\hspace{8cm}\lesssim \big\|\langle \partial_{\tilde{\tau}}^2\rangle^{-2}\partial_{\tilde{\tau}}^2\tilde{\lambda}\big\|_{\tau^{-N}L^2_{d\tau}},
			\end{align*}
			and in both estimates the norm $L^\infty_{d\xi}$ can be replaced by $L^2_{\rho(\xi)\,d\xi}$. It follows that the conditions for Lemma~\ref{lem:K_frefined}, Lemma~\ref{lem:tildeKfcontrol}, are satisfied, resulting in the desired bound for this contribution (where we apply one lemma or the other depending on the real and imaginary parts of the source term). 
			\\
			
			{\it{(2): contribution of the term $2\partial_r( \chi_3)\cdot (\partial_r\psi_*^{(\tilde{\lambda})} - \partial_r\psi_*)$ in $E_1^{\text{mod}}$.}} This term is analogous to the term $\triangle(\chi_3)\cdot \big(\psi_*^{(\tilde{\lambda})} - \psi_*\big)$ treated before. 
			\\
			
			{\it{(3): contribution of the term $Q^{(\tilde{\tau})}_{<\tau^{\delta}}\big(i\tilde{\lambda}_t\cdot\partial_{\tilde{\lambda}}\big(\psi_*^{(\tilde{\lambda})}\big)\big)$ in $E_1^{\text{mod}}$.}} After re-scaling (and disregarding negligible errors), we arrive at the source term 
			\[
			Q^{(\tilde{\tau})}_{<\tau^{\delta}}\big(i\tilde{\lambda}_{\tau}\cdot\partial_{\tilde{\lambda}}\big(\psi_*^{(\tilde{\lambda})}\big)\big)
			\]
			for $e_1^{\text{mod}}$. Then proceeding as for the source term $Q^{(\tilde{\tau})}_{<\tau^{\delta}}(\tilde{\lambda}_{\tau})$ treated at the beginning, we see that we have 
			\begin{align*}
				&\Big\|\partial_{\tau}\mathcal{F}\Big(Q^{(\tilde{\tau})}_{<\tau^{\delta}}\big(i\tilde{\lambda}_{\tau}\cdot\partial_{\tilde{\lambda}}\big(\psi_*^{(\tilde{\lambda})}\big)\big)\Big)(\tau,\xi)\Big\|_{\tau^{-N-\delta_0}L^2_{d\tau}L^\infty_{d\xi}}\lesssim \big\|\langle \partial_{\tilde{\tau}}^2\rangle^{-2}\partial_{\tilde{\tau}}^2\tilde{\lambda}\big\|_{\tau^{-N}L^2_{d\tau}}\\
				&\Big\|\langle\xi\partial_{\xi}\rangle^{1+\delta_0}\mathcal{F}\Big(Q^{(\tilde{\tau})}_{<\tau^{\delta}}\big(i\tilde{\lambda}_{\tau}\cdot\partial_{\tilde{\lambda}}\big(\psi_*^{(\tilde{\lambda})}\big)\big)\Big)(\tau,\xi)\Big\|_{\tau^{-N-\delta_0}L^2_{d\tau}L^\infty_{d\xi}}\lesssim \big\|\langle \partial_{\tilde{\tau}}^2\rangle^{-2}\partial_{\tilde{\tau}}^2\tilde{\lambda}\big\|_{\tau^{-N}L^2_{d\tau}}\\
			\end{align*}
			provided $\delta,\delta_0\ll \nu^{-1}$, and the space $L^\infty_{d\xi}$ can be replaced by $L^2_{\rho(\xi)\,d\xi}$. The desired conclusion follows again by applying Lemma~\ref{lem:K_frefined}, Lemma~\ref{lem:tildeKfcontrol}.
		\end{proof}
		
		The following lemma is quite similar; its proof uses the same kinds of estimates in addition to Lemma~\ref{lem:concatenation1}: 
		\begin{lem}\label{lem:e1modgooderrorsSK} Setting 
			\begin{align*}
				&E_{\mathcal{K}}(\tau):\\& = \Im \int_{\tau}^\infty\int_0^\infty \xi^2\cdot S_{\mathcal{K}}(\tau, \sigma;\xi)\cdot \mathcal{F}\big(Q^{(\tilde{\tau})}_{<\tau^{\delta}}\big(e_1^{\text{mod}}\big)(\sigma, \frac{\lambda(\tau)}{\lambda(\sigma)}\xi)\rho(\xi)\,d\xi d\sigma,
			\end{align*}
			we have the bound 
			\begin{align*}
				\Big\|\langle\partial_{\tilde{\tau}}^2\rangle E_{\mathcal{K}}(\tau)\Big\|_{\log^{-2}\tau\cdot \tau^{-N}L^2_{d\tau}}&\ll_{\tau_*}  \big\|z_{nres}\big\|_{S} + \big\|\langle\partial_{\tilde{\tau}}^2\rangle^{-2}\partial_{\tilde{\tau}}^2\tilde{\lambda}\big\|_{\tau^{-N}L^2_{d\tau}} + \big\|(\tilde{\kappa}_1,\kappa_2)\big\|_{\tau^{-N}L^2_{d\tau}}\\&\hspace{0.5cm}+\big\|\tilde{\alpha}_{\tau}\big\|_{\log^{-1}\tau\cdot\tau^{-N}L^2_{d\tau}}.
			\end{align*}
		\end{lem}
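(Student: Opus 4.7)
The plan is to exploit the series representation \eqref{eq:ScalK1}, \eqref{eq:ScalK2},
\[
S_{\mathcal{K}}\big(\mathcal{F}(E)\big) = \sum_{j=1}^\infty S\big(-i\tfrac{\lambda_{\tau}}{\lambda}\mathcal{K}\circ S\big)^j\big(\mathcal{F}(E)\big),
\]
so that $E_{\mathcal{K}}(\tau)$ decomposes as a sum of $j$-fold iterates, each of which carries an extra $j$-fold concatenation of the transference operator $\mathcal{K}$ composed with $S$ and weighted by the factor $\frac{\lambda_{\sigma}}{\lambda}\sim \sigma^{-1}$.  The idea is that applied to the source $\mathcal{F}\big(Q^{(\tilde{\tau})}_{<\tau^{\delta}}(e_1^{\text{mod}})\big)$ already controlled in the proof of Lemma~\ref{lem:e1modgooderrors}, Lemma~\ref{lem:concatenation1} allows us to pass each $\mathcal{K}\circ S$ factor through at the cost of absorbing one decaying weight $\frac{\lambda_{\sigma}}{\lambda}$.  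Since each such pass produces a small multiplicative constant $C\cdot\tau_*^{-\eta}$ for some $\eta>0$ (uniform in $j$), the resulting series converges geometrically with constant $\ll_{\tau_*} 1$.

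First I would dispose of the contribution of $ Q^{(\tilde{\tau})}_{<\tau^{\delta}}(e_1^{\mathrm{mod}})|_{R=0}$: this is already absent from $E_{\mathcal{K}}(\tau)$ (which only involves the integral term), so there is nothing to do here.  Next, for the integral term, I would perform the same term-by-term inventory as in the proof of Lemma~\ref{lem:e1modgooderrors}, following the decomposition of $e_1^{\text{mod}}$ given by \eqref{eq:E1mod} and \eqref{eq:e1moddef}.  For each summand (the $\triangle(\chi_3)$ term, the $\partial_r(\chi_3)$ terms, the modulated wave-equation contribution $i\tilde{\lambda}_t\partial_{\tilde{\lambda}}(\psi_*^{(\tilde{\lambda})})$, the $\chi_1$-based contributions from $\tilde{\alpha}$, the quadratic $O(|\tilde{\alpha}|^2)$ error, and the nonlinear $E^{\mathrm{mod}}_{\mathrm{nl}}$) the proof of Lemma~\ref{lem:e1modgooderrors} already supplies the controlling bounds on
\[
\big\|\partial_{\tau}\mathcal{F}(Q^{(\tilde{\tau})}_{<\tau^{\delta}}(\cdot))\big\|_{\tau^{-N-\delta_0}L^2_{d\tau}L^\infty_{d\xi}\cap \tau^{-N-\delta_0}L^2_{d\tau}L^2_{\rho\,d\xi}}
\]
and similarly for $\langle\xi\partial_{\xi}\rangle^{1+\delta_0}$, each time bounded by $\|z_{nres}\|_S + \|\langle\partial_{\tilde{\tau}}^2\rangle^{-2}\partial_{\tilde{\tau}}^2\tilde{\lambda}\|_{\tau^{-N}L^2_{d\tau}} + \|(\tilde{\kappa}_1,\kappa_2)\|_{\tau^{-N}L^2_{d\tau}} + \|\tilde{\alpha}_{\tau}\|_{\log^{-1}\tau\cdot\tau^{-N}L^2_{d\tau}}$.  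These are precisely the hypotheses required to apply Lemma~\ref{lem:K_frefined} and Lemma~\ref{lem:tildeKfcontrol} for a single pass through $S$.

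The key new step is the iteration: after the first application of $S$ on the Fourier side, we must pass the result through $j$ factors of $\big(-i\tfrac{\lambda_{\tau}}{\lambda}\mathcal{K}\big)\circ S$.  Lemma~\ref{lem:concatenation1} is exactly the mapping result for such concatenations; combined with Lemma~\ref{lem:KboundsKST} on the transference kernel $F(\xi,\eta)$, it yields a uniform bound on the $j$-th iterate of the form $C^j\tau_*^{-j\eta}$ acting on the ambient norm in which the previous lemma's estimates live, after absorbing each $\frac{\lambda_{\sigma}}{\lambda}$ into either a Schur-type kernel estimate or into the polynomial weight $\sigma^{-N}$.  Thus each term contributes a bound of the form
\[
\big\|\langle\partial_{\tilde{\tau}}^2\rangle (\text{$j$-th summand})\big\|_{\log^{-2}\tau\cdot\tau^{-N}L^2_{d\tau}} \lesssim (C\tau_*^{-\eta})^j\cdot \mathrm{RHS},
\]
and geometric summation over $j\geq 1$ gives the prefactor $\ll_{\tau_*}1$ claimed in the lemma.

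The main obstacle I anticipate is the delicate fact that the bounds on $e_1^{\text{mod}}$ supplied by Lemma~\ref{lem:e1modgooderrors} are expressed simultaneously on $L^\infty_{d\xi}$ and $L^2_{\rho\,d\xi}$ together with a gain of $\langle\xi\partial_\xi\rangle^{1+\delta_0}$: one has to verify that after one pass through $\mathcal{K}\circ S$ (which costs the transference kernel's derivative bounds and some low-frequency regularity) the resulting function still lies in the same class with only a $\tau_*^{-\eta}$ loss, so that the estimates of Lemma~\ref{lem:K_frefined} and Lemma~\ref{lem:tildeKfcontrol} can be reapplied for the outermost $S$.  This is precisely the content of Lemma~\ref{lem:concatenation1}, and verifying the hypothesis stably under iteration is the only non-routine part; once this is in hand the rest of the argument is the same line-by-line repetition of the previous proof with a geometric series at the end.
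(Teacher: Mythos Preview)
Your proposal is essentially correct and follows the same route the paper indicates: the paper's entire proof is the one-line remark that ``its proof uses the same kinds of estimates in addition to Lemma~\ref{lem:concatenation1}'', and you have expanded exactly this --- the series expansion \eqref{eq:ScalK2} for $S_{\mathcal{K}}$, the term-by-term source bounds on $e_1^{\mathrm{mod}}$ inherited from the proof of Lemma~\ref{lem:e1modgooderrors}, and the concatenation lemma to control each $j$-fold iterate with geometric summation.

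One minor inaccuracy worth flagging: the smallness gain per iterate furnished by Lemma~\ref{lem:concatenation1} is $(\sqrt{N})^{-1}$, not $\tau_*^{-\eta}$; the mechanism is the rapid polynomial temporal decay absorbing the extra time integration, not a direct power of $\tau_*$. In the paper's parameter hierarchy $\tau_*^{-1}\ll N^{-1}$ this still yields $\ll_{\tau_*}1$ after summation, so the conclusion is unaffected. You also do not spell out how the $\log^{-2}\tau$ weight and the $\langle\partial_{\tilde{\tau}}^2\rangle$ are recovered; the former is built into the concatenation argument itself (compare the closely related Lemma~\ref{lem:SKdeltaPhi}, whose proof produces the logarithmic gain directly from the low-frequency structure of $\rho$ combined with the oscillatory integral in Lemma~\ref{lem:oscillatoryintegral1}), and the latter is harmless because the source is already localized to wave-temporal frequency $<\tau^{\delta}$ with $\delta\ll\nu^{-1}$. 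These are routine completions rather than gaps.
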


		{\bf{Step 3}}: {\it{Formulation of \eqref{eq:tildelambda} as perturbative problem, solution of model equation, and solving for $\Phi^{(\tilde{\lambda})}(\sigma, 0)$}}. Recalling the quantity $\Phi^{(\tilde{\lambda})}(\sigma, 0)$ from Step 1, see \eqref{eq:Phitildelambdadef}, and also recalling Lemma~\ref{lem: tildelambdaeqnsourcebounds1},  
		and recalling from Step 1 that we replace the first expression on the right hand side of \eqref{eq:tildelambda} by the first expression in \eqref{eq:Phitildelambdadef}, by simple comparison we see that it is natural to set 
		\begin{equation}\label{eq:Phitildelambdaansatz}
			\Phi^{(\tilde{\lambda})}(\sigma, 0) =  -\mathcal{F}\big(Q^{(\tilde{\tau})}_{<\tau^{\frac12+}}\big(\lambda^{-2}y_{z_{nres}}\cdot W\big)\big)(\sigma, 0) + \delta\Phi^{(\tilde{\lambda})}(\sigma, 0),
		\end{equation}
		where the last term $ \delta\Phi^{(\tilde{\lambda})}(\sigma, 0)$ is perturbative in nature. To show this, we first need the following auxiliary proposition, which will allow us to solve for $\delta\Phi^{(\tilde{\lambda})}(\sigma, 0)$:
		\begin{prop}\label{prop: solnforylambdaW} Assume that $f(\sigma)\in \sigma^{-N} L^2_{d\sigma}([\tau_*,\infty))$. Then there exists $z(\sigma)\in \log^{2}\sigma\cdot\sigma^{-N} L^2_{d\sigma}([\tau_*, \infty))$, depending linearly on $f$, with the property that 
			\begin{equation}\label{eq:zfrelation}
				\int_{\tau}^\infty\int_0^\infty \xi^2\cdot S_1(\tau, \sigma;\xi)\cdot z(\sigma)\rho_1(\xi)\,d\xi d\sigma = f(\tau),\,\forall \tau\in [\tau_*, \infty), 
			\end{equation}
			and we have the bound 
			\begin{align*}
				\big\|z\big\|_{\log^{2}\tau\cdot\tau^{-N} L^2_{d\tau}([\tau_*,\infty))}\lesssim_N \big\|f\big\|_{\tau^{-N} L^2_{d\tau}([\tau_*,\infty))}.
			\end{align*}
			If $f(\sigma)\in \log^{-2}(\sigma)\sigma^{-N} L^2_{d\sigma}([\tau_*,\infty))$, we get the better property 
			\[
			z(\sigma)\in \sigma^{-N} L^2_{d\sigma}([\tau_*, \infty)),
			\]
			with a corresponding estimate analogous to the preceding one. 
		\end{prop}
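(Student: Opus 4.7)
Since $z(\sigma)$ does not depend on $\xi$, the $\xi$-integral in \eqref{eq:zfrelation} collapses to a scalar kernel and the equation becomes the Volterra integral equation of the first kind
\[
\int_\tau^\infty K(\tau,\sigma)\,z(\sigma)\,d\sigma = f(\tau),\qquad K(\tau,\sigma) := \bigl(\tfrac{\lambda(\tau)}{\lambda(\sigma)}\bigr)^2 G\bigl(\Lambda(\tau,\sigma)\bigr),
\]
with $\Lambda(\tau,\sigma) = \lambda^2(\tau)\!\int_\sigma^\tau\! \lambda^{-2}(s)\,ds$ and $G(\mu) = \int_0^\infty \xi^2\cos(\mu\xi^2)\rho_1(\xi)\,d\xi$. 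The first step is to analyze $G$: substituting $s=\xi^2$ converts it to a Fourier cosine transform in $s$, and the high-$\xi$ tail $\rho_1(\xi)=\xi^{-2}$ contributes a Fresnel-type singularity $G(\mu)\sim c|\mu|^{-1/2}$ as $\mu \to 0$, while the low-$\xi$ region (where $\rho_1(\xi)\sim (\xi\log^2\xi)^{-1}$) is bounded and yields polynomial decay of $G$ at infinity. Since $|\Lambda(\tau,\sigma)|\sim |\sigma-\tau|$ near the diagonal and grows polynomially in $\sigma/\tau$ far off, we obtain $|K(\tau,\sigma)|\lesssim |\sigma-\tau|^{-1/2}$ with rapid decay for $\sigma\gg\tau$, so $K$ defines a bounded Volterra operator on the weighted $L^2_{d\tau}$ spaces in question.

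Next, I would reduce the equation to a model Abel equation. Changing the time variable to $T = -t(\tau)$ (so that $d\tau=-\lambda^2\,dt$ and $\int_\sigma^\tau \lambda^{-2}(s)\,ds = T - T'$ with $T'=-t(\sigma)$), and absorbing the homogeneous prefactor $(\lambda(\tau)/\lambda(\sigma))^2 = (\tau/\sigma)^{1+1/(2\nu)}$ via a weighted rescaling $Z(T) := w(T)\,z(\tau(T))$, we arrive, to leading order, at the convolution-type Abel equation
\[
F(T) = \int_T^0 c\,(T'-T)^{-1/2}\,Z(T')\,dT' + (\text{smoothing correction}),
\]
which is classical and inverted explicitly by the one-sided Laplace transform as $Z = c'\,\partial_T^{1/2}F$; this is a bounded operator on the image of $\tau^{-N}L^2_{d\tau}$ under the change of variables. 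The smoothing correction and the perturbative mismatch produced by the rescaling are both lower-order Volterra operators of small norm on the relevant weighted $L^2$ space, with smallness supplied by $\tau_*^{-\delta}$ and by the polynomial decay of the off-diagonal kernel. The full equation is then solved by a Neumann iteration centered at the Abel model inversion.

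Finally, the logarithmic loss in the statement is tracked as follows: the inversion of the Fresnel-principal (high-$\xi$) part of $K$ carries no log loss, but the low-frequency contribution comes from $\rho(\xi)\sim (\xi\log^2\xi)^{-1}$, whose dual under the Laplace inversion introduces a factor of $\log^2\tau$ when inverted against data living in $\tau^{-N}L^2_{d\tau}$. This accounts precisely for the claimed bound $z\in\log^2\tau\cdot\tau^{-N}L^2_{d\tau}$; conversely, when $f$ already carries a $\log^{-2}\tau$ prefactor, this loss is absorbed and the sharp bound $z\in\tau^{-N}L^2_{d\tau}$ follows. \emph{The main obstacle} is the simultaneous handling of (i) the non-scale-invariance of the weighted $L^2_{d\tau}$ spaces against the Laplace-based Abel inversion, and (ii) the propagation of the logarithmic weights from the low-frequency singularity of $\rho_1$ through the Neumann iteration; a secondary difficulty is arranging the weighted rescaling that absorbs the prefactor so as to leave a genuinely higher-order smoothing residual.
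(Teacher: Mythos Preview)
Your Abel-equation model captures the wrong regime. The near-diagonal singularity $K(\tau,\sigma)\sim c|\sigma-\tau|^{-1/2}$ comes from the high-$\xi$ tail $\rho_1(\xi)=\xi^{-2}$ and corresponds, on the \emph{temporal} Fourier side, to the behavior of the operator at \emph{large} $|\hat\tau|$: indeed, if you compute the multiplier of the model operator $U_*z(\tau)=\int_\tau^\infty\int_0^\infty \xi^2\cos((\tau-\sigma)\xi^2)z(\sigma)\rho_1(\xi)\,d\xi\,d\sigma$ (this is the constant-coefficient version of the operator in \eqref{eq:zfrelation}), you get
\[
m(\hat\tau)=c_1\sqrt{|\hat\tau|}\,\rho_1(\sqrt{|\hat\tau|})+ic_2\int_0^\infty\frac{\hat\tau\sqrt{\xi_1}}{\hat\tau^2-\xi_1^2}\rho_1(\sqrt{\xi_1})\,d\xi_1,
\]
which is $\sim|\hat\tau|^{-1/2}$ for $|\hat\tau|\gg1$ (matching your Abel symbol) but vanishes like $\log^{-2}|\hat\tau|$ as $\hat\tau\to0$, because $\sqrt{s}\,\rho_1(\sqrt{s})\sim(\log s)^{-2}$ there. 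The Abel multiplier, by contrast, \emph{blows up} like $|\hat\tau|^{-1/2}$ at $0$. So the difference between the true operator and your Abel model has a multiplier that diverges at $\hat\tau=0$; it is not a ``smoothing correction'' and cannot be absorbed by a Neumann iteration centered at the Abel inverse. The $\log^2\tau$ loss is precisely the cost of dividing by $m(\hat\tau)\sim\log^{-2}|\hat\tau|$ at low temporal frequency --- it is the \emph{principal} effect there, not a perturbative residual.

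The paper's argument proceeds in the opposite order: it first reduces the true kernel $S_1(\tau,\sigma,\xi)$ to the translation-invariant model $\cos((\tau-\sigma)\xi^2)$ (showing the discrepancy, together with the far-off-diagonal piece $\sigma-\tau\gtrsim\tau$, is small in $\log^{-2}\tau\cdot\tau^{-N}L^2$); then it takes the temporal Fourier transform to obtain exactly the multiplier $m(\hat\tau)$ above. To divide by $m$ despite its vanishing at $0$, one first applies a projection $\Pi$ that modifies $f$ on $[\tau_*/2,\tau_*]$ so that $\hat f$ vanishes to order $N-1$ at $\hat\tau=0$; this costs nothing on $[\tau_*,\infty)$ but makes $m^{-1}\cdot\widehat{\Pi f}$ land in the right weighted space (with the $\log^2$ loss). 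The perturbative errors from the first reduction are then removed by a fixed point, using the smallness from $\log^{-2}\tau_*$. If you want to salvage your approach, you would have to replace the Abel principal part by a model whose inverse multiplier has the correct $\log^{2}|\hat\tau|$ growth at $0$; but at that point you are essentially doing the paper's Fourier argument.
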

		We relegate the proof of this to section~\ref{sec:appendix}. Armed with it, we can now formulate the following 
		\begin{lem}\label{lem:deltaPhibound} There is a choice of $\delta\Phi^{(\tilde{\lambda})}(\sigma, 0)$ as in \eqref{eq:Phitildelambdaansatz} such that the equation \eqref{eq:tildelambda} is satisfied on $[\tau_*, \infty)$ and moreover we have the bound 
			\begin{align*}
				\Big\|\langle\partial_{\tilde{\sigma}}^2\rangle \delta\Phi^{(\tilde{\lambda})}(\sigma, 0)\Big\|_{\sigma^{-N}L^2_{d\sigma}}\leq c(\tau_*)[\big\|z_{nres}\big\|_{S} + \big\|\partial_{\tilde{\tau}}^2\langle\partial_{\tilde{\tau}}^2\rangle^{-1}\tilde{\lambda}\big\|_{\tau^{-N}L^2_{d\tau}}]+ \big\|(\tilde{\kappa}_1,\kappa_2)\big\|_{\tau^{-N}L^2_{d\tau}}
			\end{align*}
			with $\lim_{\tau_*\rightarrow\infty}c(\tau_*) = 0$. 
		\end{lem}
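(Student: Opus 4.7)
My plan is to insert the ansatz \eqref{eq:Phitildelambdaansatz} into the Step~1 reformulation of \eqref{eq:tildelambda}, use the cancellation supplied by Lemma~\ref{lem: tildelambdaeqnsourcebounds1} to match principal terms on both sides, and then invert the resulting linear integral operator acting on $\delta\Phi^{(\tilde{\lambda})}$ by means of Proposition~\ref{prop: solnforylambdaW}.

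More concretely, after substituting \eqref{eq:Phitildelambdaansatz} the piece $-\mathcal{F}(Q^{(\tilde{\tau})}_{<\tau^{\frac12+}}(\lambda^{-2}y_{z_{nres}}\cdot W))(\sigma,0)$ on the right-hand side gets carried by the Schr\"odinger propagator into exactly the principal term extracted in Lemma~\ref{lem: tildelambdaeqnsourcebounds1}, and cancels against it. What remains is the scalar identity
$$
-\Im\int_\tau^\infty\int_0^\infty \xi^2\,S(\tau,\sigma;\xi)\,\delta\Phi^{(\tilde{\lambda})}(\sigma,0)\,\rho_1(\xi)\,d\xi\,d\sigma = F(\tau),
$$
where $F(\tau)$ gathers the good error $Z(\tau)$ from Lemma~\ref{lem: tildelambdaeqnsourcebounds1} together with $L^{(\tilde{\lambda})}_{\text{small}},\,L^{(\tilde{\lambda})}_{\mathcal{K}},\,R^{(\tilde{\lambda})}_{\text{small}},\,R^{(\tilde{\lambda})}_{\mathcal{K}}$ and the $e_1^{\text{mod}}$-contributions $E(\tau)+E_{\mathcal{K}}(\tau)$. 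Combining Lemmas~\ref{lem: tildelambdaeqnsourcebounds1},~\ref{lem:Lsmalltildelambda},~\ref{lem:LsmallcalKtildelambda},~\ref{lem:Rsmalltildelambda},~\ref{lem:e1modgooderrors},~\ref{lem:e1modgooderrorsSK} yields
$$
\bigl\|\langle\partial_{\tilde{\tau}}^2\rangle F\bigr\|_{\log^{-2}\tau\cdot\tau^{-N}L^2_{d\tau}}\leq c(\tau_*)\bigl[\|z_{nres}\|_S+\|\partial_{\tilde{\tau}}^2\langle\partial_{\tilde{\tau}}^2\rangle^{-1}\tilde{\lambda}\|_{\tau^{-N}L^2_{d\tau}}\bigr]+\|(\tilde{\kappa}_1,\kappa_2)\|_{\tau^{-N}L^2_{d\tau}},
$$
with $c(\tau_*)\to 0$ as $\tau_*\to\infty$: the $\mathcal{K}$-type and $e_1^{\text{mod}}$-type contributions each already come with a $\log^{-2}\tau$ gain, while the $L$-, $R$-small pieces gain a polynomial power of $\tau_*^{-1}$ through the extra $\triangle$-operator sitting in \eqref{eq:Ltildelambda}, \eqref{eq:Rtildelambda}.

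I would then invoke Proposition~\ref{prop: solnforylambdaW} in its stronger form (data in $\log^{-2}\sigma\cdot\sigma^{-N}L^2_{d\sigma}$, output in $\sigma^{-N}L^2_{d\sigma}$) applied to the real-valued datum $\langle\partial_{\tilde{\tau}}^2\rangle F(\tau)$. This produces a solution $g(\sigma)$ of the corresponding integral equation with $\|g\|_{\sigma^{-N}L^2_{d\sigma}}\lesssim \|\langle\partial_{\tilde{\tau}}^2\rangle F\|_{\log^{-2}\sigma\cdot\sigma^{-N}L^2_{d\sigma}}$. I would then set $\delta\Phi^{(\tilde{\lambda})}(\sigma,0)$ to be the function whose $\langle\partial_{\tilde{\sigma}}^2\rangle$-image equals $g(\sigma)$, which is possible because the inverse constructed in Proposition~\ref{prop: solnforylambdaW} is essentially a time-translation-invariant multiplier in the wave time $\tilde{\tau}$ (modulated by the $\lambda$-rescaling) and therefore commutes with $\langle\partial_{\tilde{\sigma}}^2\rangle$ up to commutators supported near $\sigma=\tau_*$, whose contributions are absorbed by the $\sigma^{-N}$ weight.

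The main obstacle is verifying precisely this last commutation property: one needs that passing $\langle\partial_{\tilde{\sigma}}^2\rangle$ through the inverse of the operator $g\mapsto \int_\tau^\infty\int_0^\infty \xi^2\,S(\tau,\sigma;\xi)\,g(\sigma)\,\rho_1(\xi)\,d\xi\,d\sigma$ generates only perturbative errors. Once that is in hand, the norm bound on $F$ transfers directly to the asserted estimate on $\langle\partial_{\tilde{\sigma}}^2\rangle\delta\Phi^{(\tilde{\lambda})}$, and since $F$ depends linearly on the given data $(z_{nres}, \tilde\kappa_1, \kappa_2, \tilde\lambda, \tilde\alpha)$, the linearity of the inversion produces a $\delta\Phi^{(\tilde{\lambda})}$ with the same linear dependence, compatible with the fixed-point scheme later used to close the equation for $\tilde\lambda$ itself.
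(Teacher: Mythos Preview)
Your overall architecture is right: substitute \eqref{eq:Phitildelambdaansatz}, use the cancellation from Lemma~\ref{lem: tildelambdaeqnsourcebounds1} to kill the principal term, and then invert the scalar integral operator on $\delta\Phi^{(\tilde{\lambda})}(\sigma,0)$ via Proposition~\ref{prop: solnforylambdaW}. The paper does exactly this and writes the resulting equation as \eqref{eq:deltaPhitildelambda}.

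There is, however, a genuine gap in your treatment of the right-hand side. You list $L^{(\tilde{\lambda})}_{\mathcal{K}}$ and $R^{(\tilde{\lambda})}_{\mathcal{K}}$ among the terms in $F(\tau)$ and cite Lemmas~\ref{lem:LsmallcalKtildelambda} and~\ref{lem:Rsmalltildelambda} for their control. But those lemmas only bound $L^{(\tilde{\lambda})}_{\mathcal{K},\text{small}}$ and $R^{(\tilde{\lambda})}_{\mathcal{K},\text{small}}$, i.e.\ the ``small'' parts in the decomposition \eqref{eq:LtildelambdacalK}. The principal parts of $L^{(\tilde{\lambda})}_{\mathcal{K}}$ and $R^{(\tilde{\lambda})}_{\mathcal{K}}$ are $S_{\mathcal{K}}$ applied to $X^{(\tilde{\lambda})}(\cdot,0)$ and to $\mathcal{F}(\lambda^{-2}\tilde{y}_{\tilde{\lambda}}^{\text{mod}}\cdot W)(\cdot,0)$ respectively, and neither is covered by the lemmas you cite. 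After inserting the ansatz \eqref{eq:Phitildelambdaansatz}, the difference $L^{(\tilde{\lambda})}_{\mathcal{K}} - R^{(\tilde{\lambda})}_{\mathcal{K}}$ contains a contribution of the form $S_{\mathcal{K}}$ applied to $\delta\Phi^{(\tilde{\lambda})}(\cdot,0)$ itself. Thus $F(\tau)$ is \emph{not} independent of $\delta\Phi^{(\tilde{\lambda})}$, and the equation for $\delta\Phi^{(\tilde{\lambda})}$ is implicitly a fixed-point equation rather than a direct inversion.

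The paper closes this via an additional lemma (Lemma~\ref{Lem:LtildeK-RtildeKterm}) showing
\[
\big\|\langle\partial_{\tilde{\sigma}}^2\rangle(L^{(\tilde{\lambda})}_{\mathcal{K}} - R^{(\tilde{\lambda})}_{\mathcal{K}})\big\|_{\log^{-2}\tau\cdot\tau^{-N}L^2_{d\tau}} \ll_{\tau_*} (\text{data}) + \big\|\langle\partial_{\tilde{\tau}}^2\rangle\delta\Phi^{(\tilde{\lambda})}(\tau,0)\big\|_{\tau^{-N}L^2_{d\tau}},
\]
where the crucial smallness on the self-referential term comes from Lemma~\ref{lem:SKdeltaPhi}, which says $S_{\mathcal{K}}$ applied to any $g\in\tau^{-N}L^2_{d\tau}$ gains a factor $\ll_{\tau_*}1$ in the $\log^{-2}\tau\cdot\tau^{-N}L^2_{d\tau}$ norm. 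One then feeds the whole right-hand side, including this $\delta\Phi$-dependent piece, through Proposition~\ref{prop: solnforylambdaW} and closes by contraction. Your commutation concern about $\langle\partial_{\tilde{\sigma}}^2\rangle$ is not the real obstacle; the paper simply applies the inversion to the data and to its second $\tilde{\tau}$-derivative in parallel, without needing any commutator estimate.
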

		\begin{proof} To begin with, we observe that the equation satisfied by $ \delta\Phi^{(\tilde{\lambda})}(\sigma, 0)$ is the following:
			\begin{equation}\label{eq:deltaPhitildelambda}\begin{split}
					&-\int_{\tau}^\infty\int_0^\infty \xi^2\cdot S_1(\tau, \sigma;\xi)\cdot \delta\Phi^{(\tilde{\lambda})}(\sigma, 0)\rho_1(\xi)\,d\xi d\sigma\\
					& = Z(\tau) +  L^{(\tilde{\lambda})}_{\text{small}} -  R^{(\tilde{\lambda})}_{\text{small}} + L^{(\tilde{\lambda})}_{\mathcal{K}} -  R^{(\tilde{\lambda})}_{\mathcal{K}}\\
					&+  \Im \int_{\tau}^\infty\int_0^\infty \xi^2\cdot (S+S_{\mathcal{K}})(\tau, \sigma;\xi)\cdot \mathcal{F}\big(Q^{(\tilde{\tau})}_{<\tau^{\delta}}\big(e_1^{\text{mod}}\big)(\sigma, \frac{\lambda(\tau)}{\lambda(\sigma)}\xi)\rho(\xi)\,d\xi d\sigma\\& +\Im Q^{(\tilde{\tau})}_{<\tau^{\delta}}(e_1^{mod}),
			\end{split}\end{equation}
			where we recall Lemma~\ref{lem: tildelambdaeqnsourcebounds1} for the definition of $Z(\tau)$. Considering the difference term $L^{(\tilde{\lambda})}_{\mathcal{K}} -  R^{(\tilde{\lambda})}_{\mathcal{K}}$, we state 
			\begin{lem}\label{Lem:LtildeK-RtildeKterm} We have the difference bound 
				\begin{align*}
					\Big\|\langle\partial_{\tilde{\sigma}}^2\rangle(L^{(\tilde{\lambda})}_{\mathcal{K}} -  R^{(\tilde{\lambda})}_{\mathcal{K}})\Big\|_{\log^{-2}(\tau)\cdot\tau^{-N}L^2_{d\tau}}&\ll_{\tau_*}\big\|z_{nres}\big\|_{S} + \big\|\partial_{\tilde{\tau}}^2\langle\partial_{\tilde{\tau}}^2\rangle^{-1}\tilde{\lambda}\big\|_{\tau^{-N}L^2_{d\tau}}]\\&\hspace{0.3cm} + \big\|(\tilde{\kappa}_1,\kappa_2)\big\|_{\tau^{-N}L^2_{d\tau}} + \big\|\langle\partial_{\tilde{\tau}}^2\rangle  \delta\Phi^{(\tilde{\lambda})}(\tau, 0)\Big\|_{\tau^{-N}L^2_{d\tau}}
				\end{align*}
			\end{lem}
			\begin{proof} Recalling \eqref{eq:LtildelambdacalK} and the analogues for $R^{(\tilde{\lambda})}_{\mathcal{K}}, R^{(\tilde{\lambda})}_{\mathcal{K},\text{small}}$, and also recalling \eqref{eq:Phitildelambdaansatz}, this is a consequence of Lemma~\ref{lem:SKdeltaPhi} together with Lemma~\ref{lem:LsmallcalKtildelambda}, ~\ref{lem:Rsmalltildelambda}.
				
			\end{proof}
			
			The desired conclusion then follows by combining the preceding Proposition~\ref{prop: solnforylambdaW} together with Lemma~\ref{lem: tildelambdaeqnsourcebounds1} , Lemma~\ref{lem:Lsmalltildelambda}, Lemma~\ref{lem:Rsmalltildelambda} as well as Lemma~\ref{lem:e1modgooderrors}, ~\ref{lem:e1modgooderrorsSK}, Lemma~\ref{Lem:LtildeK-RtildeKterm}.
		\end{proof}
		
		{\bf{Step 4}}: {\it{Solving for $\tilde{\lambda}$; completion of the proof of Proposition~\ref{prop:solnoftildelambdaeqn}.}} Thanks to the preceding Lemma and \eqref{eq:Phitildelambdaansatz}, it now suffices to solve the equation 
		\begin{equation}\label{eq:Phitildelambdaf}
			\Phi^{(\tilde{\lambda})}(\sigma, 0) = f(\sigma),
		\end{equation}
		for $\tilde{\lambda}$, where we keep in mind \eqref{eq:Phitildelambdadef}. In fact, it suffices to do so in approximate fashion, as follows: call $\tilde{\lambda}$ an $E$-approximate solution of \eqref{eq:Phitildelambdaf}, provided we have 
		\begin{equation}\label{eq:PhitildelambdafEapprox}
			\Phi^{(\tilde{\lambda})}(\sigma, 0) = f(\sigma) + \partial_{\sigma}E
		\end{equation}
		Then we use the following auxiliary 
		\begin{prop}\label{prop:Phitildelambdasolution} Assume that $\langle\partial_{\tilde{\sigma}}^2\rangle f\in \sigma^{-N}L^2_{\sigma}([\tau_*,\infty))$. Then \eqref{eq:Phitildelambdaf} is satisfied $E$-approximately by a pair functions $\tilde{\lambda}, E$ on $[\tau_*,\infty)$ satisfying the bounds 
			\begin{align*}
				&\big\|\langle\partial_{\tilde{\sigma}}^2\rangle^{-2}\partial_{\tilde{\sigma}}^2\tilde{\lambda}\big\|_{\sigma^{-N}L^2_{d\sigma}}\lesssim_N \big\|\langle\partial_{\tilde{\sigma}}^2\rangle f\big\|_{\sigma^{-N}L^2_{d\sigma}}\\
				&\big\|E\big\|_{\tau^{-N-}L^2_{d\tau}}\lesssim\big\|\langle\partial_{\tilde{\sigma}}^2\rangle f\big\|_{\sigma^{-N}L^2_{d\sigma}}.\\
			\end{align*}
			More precisely, there is a bounded complex valued function $\beta(\hat{\tilde{\tau}})\in C^\infty\big(\R\backslash\{0\}\big)$ and complex valued $ c_3(\hat{\tilde{\tau}},\nu)$, compactly supported with respect to $\hat{\tilde{\tau}}$, smooth away from $\hat{\tilde{\tau}}=0$ and satisfying symbol type bounds, such that\footnote{This formula gives a global definition of $\tilde{\lambda}$, but only the behavior of the function on $[\tau_*,\infty)$ matter to us.}
			\begin{align*}
				\mathcal{F}_{\tilde{\tau}}\big(\tilde{\lambda}_{\tilde{\tau}\tilde{\tau}}\big)(\hat{\tilde{\tau}}) + c_3(\hat{\tilde{\tau}},\nu)\cdot\mathcal{F}_{\tilde{\tau}}\big(\frac{\tilde{\lambda}_{\tilde{\tau}}}{\tilde{\tau}}\big)(\hat{\tilde{\tau}}) = \langle\hat{\tilde{\tau}}^4\rangle\cdot \beta(\hat{\tilde{\tau}})\cdot\mathcal{F}_{\tilde{\tau}}\big(\Pi^{(\tilde{\tau})}f\big)(\hat{\tilde{\tau}}) + \delta\zeta(\hat{\tilde{\tau}})
			\end{align*}
			where $\Pi^{(\tilde{\tau})}$ is a certain projection operator ensuring a finite number of vanishing conditions, the function $\beta(\hat{\tilde{\tau}})$ is smooth and bounded on $(0,\infty)$ with imaginary part non-vanishing on $(0,\infty)\backslash \{\tau_*\}$ for a $\tau_*\in (0,\infty)$, and the error term $\delta\zeta$ satisfies the bound 
			\begin{align*}
				\big\|\langle\partial_{\tilde{\tau}}^2\rangle^{-1}\mathcal{F}_{\tilde{\tau}}^{-1}(\delta\zeta)\big\|_{\tau^{-N}L^2_{d\tau}}\ll_{\tau_*}\big\|\langle\partial_{\tilde{\tau}}^2\rangle f\big\|_{\tau^{-N}L^2_{d\tau}}
			\end{align*}
			Both $\beta$ and $c_3$ satisfy the conjugation symmetry property, and $\beta$ satisfies symbol bounds. We have the limiting relation 
			\[
			\lim_{\hat{\tilde{\tau}}\rightarrow 0}\beta(\hat{\tilde{\tau}}) = \Big(-\frac{1}{2}\cdot \int_0^\infty \triangle^{-1}\big(\Lambda W\cdot W)\cdot W^2 R^3\,dR\Big)^{-1}.
			\]
			
		\end{prop}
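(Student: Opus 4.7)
The plan is to pass to the Fourier side in the wave time $\tilde{\tau}$ and reduce \eqref{eq:Phitildelambdaf} to an algebraic equation in $\hat{\tilde{\tau}}$ that can be inverted outside a finite exceptional frequency set; the projection $\Pi^{(\tilde{\tau})}$ will serve to annihilate the obstruction on that set. First I would isolate the principal contribution of $\tilde{\lambda}$ to $\Phi^{(\tilde{\lambda})}(\sigma,0)$. Recalling \eqref{eq:Phitildelambdadef}, \eqref{eq:ytildemodmodified} and the first two lines of \eqref{eq:tildeE2mod}, the dominant source for $\tilde{y}_{\tilde{\lambda}}^{\mathrm{mod}}$ is the pair
\[
-2\tilde{\lambda}_{tt}\cdot\lambda^{2}\Lambda W\cdot W,\qquad -2 Q^{(\tilde{\tau})}_{<\tilde{\tau}^{10/\nu}}\bigl[\tilde{\lambda}_t\cdot \partial_t(\lambda^2\Lambda W\cdot W)\bigr].
\]
Applying $\Box^{-1}$ via \eqref{eq:wavepropagator} and pairing with $W$ at $R = 0$ (a weighted $\xi$-integral of the flat $\R^4$ Fourier transform) produces, after a further $\tilde{\tau}$-Fourier transform and the change of variable $\partial_t=-\lambda\partial_{\tilde{\tau}}$, an expression of the schematic form
\[
\Bigl[\mathcal{F}_{\tilde{\tau}}(\tilde{\lambda}_{\tilde{\tau}\tilde{\tau}})(\hat{\tilde{\tau}}) + c_3(\hat{\tilde{\tau}},\nu)\,\mathcal{F}_{\tilde{\tau}}(\tilde{\lambda}_{\tilde{\tau}}/\tilde{\tau})(\hat{\tilde{\tau}})\Bigr]\cdot I_1(\hat{\tilde{\tau}}),
\]
where $I_1(\hat{\tilde{\tau}}) = \int_0^\infty A(\xi)(\hat{\tilde{\tau}}^2-\xi^2)^{-1}\,d\xi$ is a principal-value integral whose kernel $A$ is built from the flat $\R^4$-Fourier coefficients of $\Lambda W\cdot W$ and $W$, and $c_3$ encodes the transition between $\partial_t$-derivatives and $\partial_{\tilde{\tau}}$-derivatives in the second source term, with its support in $|\hat{\tilde{\tau}}|<1$ inherited from the $Q^{(\tilde{\tau})}_{<\tilde{\tau}^{10/\nu}}$-cutoff. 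The $c_*$-weighted term in \eqref{eq:Phitildelambdadef}, with $c_*$ as in \eqref{eq:cstardef}, is chosen precisely to subtract the part of $I_1$ responsible for the singularity at $\hat{\tilde{\tau}}=0$, so that the combined symbol $\mathcal{M}(\hat{\tilde{\tau}})$ extends continuously through the origin with limit $-\tfrac12\int_0^\infty\triangle^{-1}(\Lambda W\cdot W)\cdot W^2 R^3 dR$, matching the announced $\lim_{\hat{\tilde{\tau}}\to 0}\beta^{-1}$.

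Next I would locate the zeros of $\mathcal{M}$. By the principal-value prescription from section~\ref{section:variablesandeqns}, $\operatorname{Im}\mathcal{M}(\hat{\tilde{\tau}})$ equals the residue at $\xi=|\hat{\tilde{\tau}}|$, proportional to $A(|\hat{\tilde{\tau}}|)$. The numerical non-degeneracy hypothesis of subsection~\ref{subsec:numerics} guarantees that $A$ has at most one further zero $\hat{\tilde{\tau}}_*\in(0,\infty)$ beyond $\hat{\tilde{\tau}}=0$. I then define $\Pi^{(\tilde{\tau})}$ to annihilate $\mathcal{F}_{\tilde{\tau}}(f)$ at $\{0,\pm\hat{\tilde{\tau}}_*\}$ to sufficient order and set $\beta:=\mathcal{M}^{-1}$ on the complement; by construction $\beta\in C^\infty(\R\setminus\{0\})$ is bounded, obeys symbol bounds, and satisfies $\beta(-\hat{\tilde{\tau}})=\overline{\beta(\hat{\tilde{\tau}})}$ because of the reality of the kernels involved.

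Having the symbol in hand, I would declare $\tilde{\lambda}$ via its $\tilde{\tau}$-Fourier transform by
\[
\mathcal{F}_{\tilde{\tau}}(\tilde{\lambda}_{\tilde{\tau}\tilde{\tau}})(\hat{\tilde{\tau}}) + c_3(\hat{\tilde{\tau}},\nu)\,\mathcal{F}_{\tilde{\tau}}(\tilde{\lambda}_{\tilde{\tau}}/\tilde{\tau})(\hat{\tilde{\tau}}) = \langle\hat{\tilde{\tau}}^4\rangle\,\beta(\hat{\tilde{\tau}})\cdot\mathcal{F}_{\tilde{\tau}}(\Pi^{(\tilde{\tau})}f)(\hat{\tilde{\tau}}),
\]
after smoothly extending $f$ from $[\tau_*,\infty)$ to $\R$. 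Plancherel together with $|\beta|\lesssim 1$ and the match between the weight $\langle\hat{\tilde{\tau}}^4\rangle$ and $\langle\partial_{\tilde{\sigma}}^2\rangle^{2}$ yields the claimed estimate $\|\langle\partial_{\tilde{\sigma}}^2\rangle^{-2}\partial_{\tilde{\sigma}}^2\tilde{\lambda}\|_{\sigma^{-N}L^2}\lesssim\|\langle\partial_{\tilde{\sigma}}^2\rangle f\|_{\sigma^{-N}L^2}$. The defect $\delta\zeta$ absorbs: (a) the contributions of lines 3--7 of \eqref{eq:tildeE2mod}, which are localised to $|\hat{\tilde{\tau}}|<1$ and produce only $c(\tau_*)$-small terms via Lemma~\ref{lem:wavebasicinhomstructure1} and Lemma~\ref{lem:largemodgeneralboxinverse}; (b) the difference between $\partial_{\tilde{\sigma}}^{-2}\triangle$ and $\Box^{-1}\triangle$, controlled through the formulation \eqref{eq:FintildetauRanalogue}; and (c) commutators between the frequency projectors and the $(t,r)\leftrightarrow(\tilde{\tau},R)$ changes of variable. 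The boundary defect from the restriction to $[\tau_*,\infty)$ is absorbed into $\partial_\sigma E$, yielding $\|E\|_{\tau^{-N-}L^2}\lesssim\|\langle\partial_{\tilde{\sigma}}^2\rangle f\|_{\sigma^{-N}L^2}$.

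The hard part is verifying that $\operatorname{Im}\mathcal{M}$ vanishes on no set larger than $\{0,\pm\hat{\tilde{\tau}}_*\}$: this is an explicit non-vanishing statement for a one-dimensional integral involving $\Lambda W\cdot W$ and $W$ at specific frequencies, precisely where the numerical non-degeneracy assumption of subsection~\ref{subsec:numerics} enters in an essential way and cannot be avoided by soft arguments. A secondary difficulty is bookkeeping: ensuring that each of the many small error contributions (from the truncations $Q^{(\tilde{\tau})}_{<\tau^{\delta}}$, from the lower lines of \eqref{eq:tildeE2mod}, and from change-of-variable commutators) fits inside the $c(\tau_*)\to 0$ slack without degrading the $\sigma^{-N}L^2$ temporal decay rate that the later sections require.
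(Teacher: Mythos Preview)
Your overall architecture --- isolate the principal source in $\tilde{E}_2^{\mathrm{mod}}$, pass to the $\tilde{\tau}$-Fourier side, and invert a multiplier --- matches the paper's route, but there are two genuine misidentifications that would derail the argument.

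First, you misread the role of $\Pi^{(\tilde{\tau})}$. In the paper (Lemma~\ref{lem:Pitildetaudef}) this operator does \emph{nothing} at $\pm\hat{\tilde{\tau}}_*$; it only forces $\partial_{\hat{\tilde{\tau}}}^l\mathcal{F}_{\tilde{\tau}}(\tilde f)(0)=0$ for $l\le\lfloor M\rfloor$, which is the compatibility condition needed to keep the result inside the weighted space $\tau^{-N}L^2_{d\tau}$ after multiplying by a non-smooth-at-zero symbol (cf.\ Lemma~\ref{lem:hatzcompatibility}). The multiplier $\tilde\beta$ is in fact bounded away from zero on all of $\R\setminus\{0\}$: its imaginary part vanishes at $\hat{\tilde{\tau}}_*$ (Lemmas~\ref{lem:FourierNV1},~\ref{lem:FourierNV2}), but the \emph{real} part does not, precisely by assumption {\bf(C2)}. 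So $\beta=\tilde\beta^{-1}$ (suitably normalised by $\langle\hat{\tilde{\tau}}^4\rangle$) is globally bounded, and no annihilation at $\pm\hat{\tilde{\tau}}_*$ is needed or performed. Relatedly, the $c_*$-weighted term in \eqref{eq:Phitildelambdadef} carries the cutoff $Q^{(\tilde{\sigma})}_{\gamma^{-1}<\cdot}$ and is supported at \emph{large} wave-time frequency: its purpose (see Lemma~\ref{lem:Phitildelambdahighfreqsimplfied} and the proof of Lemma~\ref{lem:Phitildelambdamodeleqn}) is to adjust $\tilde\beta$ so that $\tilde\beta(\hat{\tilde{\tau}})\sim\alpha_{**}\hat{\tilde{\tau}}^{-2}$ as $|\hat{\tilde{\tau}}|\to\infty$ (non-vanishing by {\bf(C3)}), not to fix anything near $\hat{\tilde{\tau}}=0$.

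Second, writing down the identity
\[
\mathcal{F}_{\tilde{\tau}}(\tilde{\lambda}_{\tilde{\tau}\tilde{\tau}})+c_3\,\mathcal{F}_{\tilde{\tau}}(\tilde{\lambda}_{\tilde{\tau}}/\tilde{\tau})=\langle\hat{\tilde{\tau}}^4\rangle\beta\cdot\mathcal{F}_{\tilde{\tau}}(\Pi^{(\tilde{\tau})}f)
\]
is not yet a definition of $\tilde\lambda$, since two different functionals of $\tilde\lambda$ appear on the left. The paper closes this by using $\hat{\tilde{\lambda}}=\hat{\tilde{\tau}}^{-1}\partial_{\hat{\tilde{\tau}}}\mathcal{F}_{\tilde{\tau}}(\tilde{\lambda}_{\tilde{\tau}}/\tilde{\tau})$ to convert the relation into a first-order ODE in $\hat{\tilde{\tau}}$ for $\mathcal{F}_{\tilde{\tau}}(\tilde{\lambda}_{\tilde{\tau}}/\tilde{\tau})$, which is then solved explicitly with an integrating factor $H(\hat{\tilde{\tau}},\nu)=\exp\bigl(\int^{\mathrm{sgn}(\hat{\tilde{\tau}})}_{\hat{\tilde{\tau}}}c_3(s,\nu)s^{-1}ds\bigr)$. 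The high-order vanishing of $\mathcal{F}_{\tilde{\tau}}(\Pi^{(\tilde{\tau})}f)$ at the origin is exactly what makes the resulting integral from $0$ converge and yields the $W^{M,2}_{\hat{\tilde{\tau}}}$ control that Plancherel turns into the $\tau^{-N}L^2$ bound. Without this step your ``declaration'' is circular.
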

		The technical proof of this proposition is deferred to section~\ref{sec:appendix}.
	\end{proof}
	
	\subsection{Main statement on solution of \eqref{eq:tildealpha1}} We next turn to solving the modulation equation \eqref{eq:tildealpha1}. We stress that this is in fact a delicate task, since the quantity 
	\[
	\Re\big(\mathcal{L}z\big)|_{R = 0}
	\]
	depends implicitly on $\tilde{\alpha}$ via its influence on \eqref{eq:E1mod}. Thus $\tilde{\alpha}$ will in any event be determined by application of a fixed point result. Let us denote by $E_{1}^{\text{mod},\tilde{\alpha}}$ the sum of the third, fourth and fifth term in \eqref{eq:E1mod}.
	\begin{lem}\label{lem:tildealphafixedpoint1} Given $f\in \tau^{-N}L^2_{d\tau}$ on $[\tau_*,\infty)$, the equation 
		\begin{align*}
			\tilde{\alpha}_{\tau} - \Re\int_0^\infty\xi^2\big(S+S_{\mathcal{K}}\big)\big(E_{1}^{\text{mod},\tilde{\alpha}}\big)\rho(\xi)\,d\xi = f(\tau)
		\end{align*}
		admits a solution $\alpha\in \tau^{1-N}L^2_{d\tau}$ with $\alpha_{\tau}\in \tau^{-N}L^2_{d\tau}$ and depending linearly on $f$. A similar conclusion applies if $f\in \log^{-j}(\tau)\tau^{-N}L^2_{d\tau}$, $j\geq 1$. 
	\end{lem}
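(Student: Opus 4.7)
\medskip

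\noindent\textbf{Proof proposal.} The plan is to solve the equation by a fixed point argument, exploiting the fact that $E_1^{\text{mod},\tilde{\alpha}}$ depends \emph{linearly} on the pair $(\tilde{\alpha},\tilde{\alpha}_\tau)$, and that all its constituents contain either a derivative of the spatial cutoff $\chi_1$ or, in the term $-\chi_1\cdot\partial_t\tilde\alpha\cdot\psi_*^{(\tilde\lambda)}$, a coefficient which after rescaling is placed behind the Schr\"odinger propagator $S+S_{\mathcal{K}}$ (thus gaining smallness from the $N^{-1/2}$ factor in Lemma~\ref{lem:basicL2}). Recalling the notation $e_1^{\text{mod}}=\lambda^{-3}E_1^{\text{mod}}$ and the relation $\partial_t\tilde{\alpha}=\lambda^2\tilde{\alpha}_\tau$, the terms constituting $e_1^{\text{mod},\tilde{\alpha}}$ are linear in $\tilde{\alpha}$ and $\tilde{\alpha}_\tau$ with coefficients that are spatially supported in $\{R\lesssim\tau^{\frac12-\frac{\tilde{\epsilon}}{2\nu}}\}$ and whose derivatives obey the polynomial bounds \eqref{eq:chi1bound}. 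I recover $\tilde{\alpha}$ from $\tilde{\alpha}_\tau$ by the formula $\tilde{\alpha}(\tau)=-\int_{\tau}^{\infty}\tilde{\alpha}_\tau(\sigma)\,d\sigma$ so that the unknowns are only $\tilde{\alpha}_\tau$, and I set up the problem in the space $X:=\tau^{-N}L^2_{d\tau}([\tau_*,\infty))$.

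First I would define the linear map
\[
\mathcal{T}(\tilde\alpha_\tau)(\tau):=\Re\int_0^\infty\xi^2\big(S+S_{\mathcal K}\big)\big(\mathcal F(e_1^{\text{mod},\tilde\alpha})\big)(\tau,\xi)\,\rho(\xi)\,d\xi,
\]
so that the equation becomes $\tilde{\alpha}_\tau=f+\mathcal T(\tilde{\alpha}_\tau)$. I would then estimate $\mathcal T$ as a bounded operator on $X$ by combining Plancherel on the distorted Fourier side, the basic propagator bound of Lemma~\ref{lem:basicL2} for $S$, and the convergent Neumann-type expansion \eqref{eq:formalexpansion}--\eqref{eq:ScalK2} together with the kernel bounds from Lemma~\ref{lem:KboundsKST} for $S_{\mathcal K}$. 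For each of the three kinds of source terms in $e_1^{\text{mod},\tilde\alpha}$ I would verify that, after Fourier transform in $R$, the product against $\tilde{u}_*^{(\tilde{\lambda})}$ (or $\partial_R\tilde{u}_*^{(\tilde{\lambda})}$) with the cutoff/derivative-of-cutoff coefficient yields an element of $L^2_{\rho d\xi}$ whose $\sigma^{-N}L^2_{d\sigma}$-norm is controlled by $\|\tilde{\alpha}_\tau\|_X$, with a factor of $\tau_*^{-\delta}$ (for some $\delta>0$) coming from the spatial support of $\chi_1$ and its derivative bounds, combined with the $N^{-1/2}$ Schur gain from the time integration in $S$.

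Once these estimates are in place, for $N$ large and $\tau_*$ large I conclude that $\|\mathcal T\|_{X\to X}\ll 1$, so that $(I-\mathcal T)^{-1}$ exists via Neumann series and
\[
\tilde{\alpha}_\tau=(I-\mathcal T)^{-1}f\in X
\]
is the desired solution, depending linearly on $f$. The estimate $\tilde{\alpha}\in\tau^{1-N}L^2_{d\tau}$ is then immediate from Hardy's inequality applied to $\tilde{\alpha}(\tau)=-\int_\tau^\infty\tilde{\alpha}_\tau(\sigma)\,d\sigma$ (which indeed gains a factor of $\tau$ in the $\tau$-weight, consistent with the statement). The logarithmic variant $f\in\log^{-j}(\tau)\tau^{-N}L^2_{d\tau}$ is handled identically, since multiplication by $\log^{-j}(\tau)$ commutes up to negligible errors with $\mathcal T$ in the range $\tau\geq\tau_*\gg1$: all estimates transfer by replacing $X$ with $\log^{-j}(\tau)\cdot X$.

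The main obstacle I expect is controlling the term $-\chi_1\cdot\partial_t\tilde{\alpha}\cdot\psi_*^{(\tilde{\lambda})}$, which after rescaling becomes essentially $\chi_1\cdot\tilde{\alpha}_\tau\cdot\tilde{u}_*^{(\tilde{\lambda})}$ and is therefore \emph{not small pointwise in time}; here the only source of smallness is the $N^{-1/2}$ gain from the Schur estimate for the propagator together with mild gains from $\chi_1$ being supported in a cone $R\lesssim\tau^{\frac12-\frac{\tilde\epsilon}{2\nu}}$. Checking carefully that, even after inclusion of the transference operator contributions in $S_{\mathcal K}$ (which come with a decaying weight $\frac{\lambda_\tau}{\lambda}\sim\tau^{-1}$ and further gain a small constant), the composed operator has norm strictly less than $1$ on $X$ is the crux. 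This is essentially the same mechanism used in Step~3 of the proof of Proposition~\ref{prop:solnoftildelambdaeqn} and the contraction principle closes just as there.
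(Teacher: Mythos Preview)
Your fixed point framework is correct in spirit, and the paper does proceed by showing the operator $\mathcal{T}$ is a contraction on $\tau^{-N}L^2_{d\tau}$. However, there is a genuine gap in your treatment of the dangerous term $-\chi_1\cdot\partial_t\tilde\alpha\cdot\psi_*^{(\tilde\lambda)}\sim -\chi_1\tilde\alpha_\tau\cdot W$. You write that ``the only source of smallness is the $N^{-1/2}$ gain from the Schur estimate'', but this does \emph{not} close: the weight $\xi^2$ in $\int_0^\infty\xi^2 S(\cdot)\rho\,d\xi$ precisely undoes the time integration in $S$ (since $\xi^2 S_1\sim\partial_\sigma S_2$), so a direct estimate via Lemma~\ref{lem:basicL2} would return an $O(1)$ multiple of $\|\tilde\alpha_\tau\|_{\tau^{-N}L^2}$ and the contraction fails.

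The paper's mechanism is different and structural. It decomposes the $S$-contribution into pieces $\Phi_1,\Phi_2,\Phi_3$ and observes that the purely imaginary source terms $i\triangle(\chi_1)\tilde\alpha\,W+2i\partial_R\chi_1\,\tilde\alpha\,\partial_R W=-i\tilde\alpha\,\mathcal{L}(\chi_1 W)$ pair with the $S_1$ (cosine) kernel and carry the factor $\mathcal{F}(-\mathcal{L}(\chi_1 W))=-\xi^2\mathcal{F}(\chi_1 W)$. Writing $\xi^2 S_1=\partial_\sigma S_2$ and integrating by parts in $\sigma$, the term where $\partial_\sigma$ hits $\tilde\alpha(\sigma)$ \emph{exactly cancels} the problematic contribution $\Phi_3$ coming from $-\chi_1\tilde\alpha_\sigma W$. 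What remains are only terms of the form $\frac{\tilde\alpha(\sigma)}{\sigma}\cdot\mathcal{F}(\chi_1 W)$ and $\frac{\tilde\alpha(\sigma)}{\sigma}\cdot(\xi\partial_\xi)\mathcal{F}(\chi_1 W)$. At this point the vanishing condition \eqref{eq:chi1vanishingcond}, namely $\mathcal{F}(\chi_1 W)(0)=\langle\chi_1 W,W\rangle=0$, permits a \emph{second} integration by parts, producing boundary terms $c\,\tilde\alpha(\tau)/\tau$ and integrals controlled by Lemma~\ref{lem:K_frefined}, all bounded by $\|\tilde\alpha/\tau\|_{\tau^{-N}L^2}\lesssim N^{-1}\|\tilde\alpha_\tau\|_{\tau^{-N}L^2}$. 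This is where the smallness actually comes from. You should incorporate both the $\Phi_2$--$\Phi_3$ cancellation and the use of \eqref{eq:chi1vanishingcond}; without them your argument does not close.
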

	\begin{proof}  We will show that the expression 
		\[
		\Re\int_0^\infty\xi^2\big(S+S_{\mathcal{K}}\big)\big(E_{1}^{\text{mod},\tilde{\alpha}}\big)\rho(\xi)\,d\xi
		\]
		is perturbative with respect to $\tilde{\alpha}$, and so the assertion follows by means of Banach's principle. We commence with the main part, which is 
		\[
		\Re\int_0^\infty\xi^2 S\big(E_{1}^{\text{mod},\tilde{\alpha}}\big)\rho(\xi)\,d\xi
		\]
		Recalling the terms in  \eqref{eq:E1mod}, as well as \eqref{eq:ImzR0}, \eqref{eq:ReR0},  we can write the preceding expression as 
		\[
		\sum_{j=1}^4  \Phi_j^{(\tilde{\alpha})}(\tau),
		\]
		where we set 
		\begin{align*}
			&\Phi_1^{(\tilde{\alpha})}(\tau): = \int_{\tau}^\infty\int_0^\infty \xi^2S_2(\tau,\sigma,\xi)\tilde{\alpha}(\sigma)\mathcal{F}\big(\frac{\lambda_{\sigma}}{\lambda}W\cdot(R\partial_R)\chi_1\big)(\frac{\lambda(\tau)}{\lambda(\sigma)}\xi)\rho(\xi)\,d\xi d\sigma,
		\end{align*}
		and further 
		\begin{align*}
			&\Phi_2^{(\tilde{\alpha})}(\tau): =\int_{\tau}^\infty\int_0^\infty \xi^2 S_1(\tau,\sigma,\xi)\tilde{\alpha}(\sigma)\mathcal{F}\big(-\mathcal{L}(\chi_1W)\big)(\sigma, \frac{\lambda(\tau)}{\lambda(\sigma)}\xi)\rho(\xi)\,d\xi d\sigma.
		\end{align*}
		Finally, we set 
		\begin{align*}
			&\Phi_3^{(\tilde{\alpha})}(\tau): =\int_{\tau}^\infty\int_0^\infty \xi^2 S_2(\tau,\sigma,\xi)\tilde{\alpha}_{\sigma}(\sigma)\mathcal{F}\big(\chi_1W\big)(\sigma, \frac{\lambda(\tau)}{\lambda(\sigma)}\xi)\rho(\xi)\,d\xi d\sigma,
		\end{align*}
		and we let $\Phi_4^{(\tilde{\alpha})}(\tau)$ incorporate all the errors which arose when we replaced $\chi_1\psi_*^{(\tilde{\lambda})}$ by $\chi_1\lambda W$. 
		\\
		For the first term $\Phi_1^{(\tilde{\alpha})}(\tau)$, we note that the Fourier coefficient is localized at small frequencies $\xi\lesssim \sigma^{-\frac12+}$, since in the complementary situation, repeated integration by parts leads to the bound\footnote{Recall the support properties of $\chi_1$ stated before \eqref{eq:chi1vanishingcond}.}
		\begin{align*}
			\big|\mathcal{F}\big(\frac{\lambda_{\sigma}}{\lambda}W\cdot(R\partial_R)\chi_1\big)(\frac{\lambda(\tau)}{\lambda(\sigma)}\xi)\big|\lesssim \sigma^{-10},\,\xi\gtrsim \sigma^{-\frac12+},
		\end{align*}
		provided $N$ is sufficiently large. 
		We may then further restrict to $\sigma-\tau\gtrsim \tau^{\frac14}$, say, since else using the vanishing of $S_2(\tau,\sigma;\xi)$ at $\xi = 0$ we infer the bound\footnote{The added subscripts refer to the additional localizations in the double integral.}
		\begin{align*}
			\big\|\Phi_{1,\,\xi<\sigma^{-\frac12+},\sigma<\tau+\tau^{\frac14}}^{(\tilde{\alpha})}(\tau)\big\|_{\tau^{-N-\frac14+}L^2_{d\tau}}\lesssim \big\|\frac{\tilde{\alpha}}{\tau}\big\|_{\tau^{-N}L^2_{d\tau}}, 
		\end{align*}
		To deal with the case $\sigma - \tau>\tau^{\frac14}$, we take advantage of Remark~\ref{rem:tildeKfcontrol} as well as the bound $\big|\mathcal{F}\big(W\cdot R\partial_R(\chi_1)\big)\big|\lesssim \log\sigma$, following from the asymptotics in subsection~\ref{subsec:basicfourier}, resulting in the bound 
		\begin{align*}
			\big\|\Phi_{1,\,\xi<\sigma^{-\frac12+},\sigma>\tau+\tau^{\frac14}}^{(\tilde{\alpha})}(\tau)\big\|_{\tau^{-N}L^2_{d\tau}}\lesssim \big\|\frac{\tilde{\alpha}}{\tau}\big\|_{\tau^{-N}L^2_{d\tau}}\lesssim \frac{1}{N}\cdot \big\|\tilde{\alpha}_{\tau}\big\|_{\tau^{-N}L^2_{d\tau}}.
		\end{align*}
		In order to control the second term $\Phi_2^{(\tilde{\alpha})}(\tau)$, we observe that 
		\begin{align*}
			&\frac{\lambda^2(\sigma)}{\lambda^2(\tau)}S_1(\tau,\sigma,\xi)\cdot\mathcal{F}\big(-\mathcal{L}(\chi_1W)\big)(\sigma, \frac{\lambda(\tau)}{\lambda(\sigma)}\xi)\\
			& = \frac{\lambda^2(\sigma)}{\lambda^2(\tau)}\partial_{\sigma}\big(S_2(\tau,\sigma,\xi)\big)\cdot \mathcal{F}\big(\chi_1W\big)(\sigma, \frac{\lambda(\tau)}{\lambda(\sigma)}\xi).
		\end{align*}
		Performing integration by parts with respect to $\sigma$, we can then replace the second term by a linear combination of the schematically written integrals
		\begin{align*}
			&\int_{\tau}^\infty\int_0^\infty \xi^2 S_2(\tau,\sigma,\xi)\frac{\tilde{\alpha}(\sigma)}{\sigma}\cdot\mathcal{F}\big(\chi_1W\big)(\sigma, \frac{\lambda(\tau)}{\lambda(\sigma)}\xi)\rho(\xi)\,d\xi d\sigma,\\
			&\int_{\tau}^\infty\int_0^\infty \xi^2 S_2(\tau,\sigma,\xi)\frac{\tilde{\alpha}(\sigma)}{\sigma}\cdot(\xi\partial_{\xi})\mathcal{F}\big(\chi_1W\big)(\sigma, \frac{\lambda(\tau)}{\lambda(\sigma)}\xi)\rho(\xi)\,d\xi d\sigma,
		\end{align*}
		while the is a cancellation with the third term $\Phi_3^{(\tilde{\alpha})}(\tau)$. But then taking advantage of the cancellation condition \eqref{eq:chi1vanishingcond} and a further integration by parts with respect to $\sigma$ for either expression, we arrive at boundary terms $c\frac{\tilde{\alpha}(\tau)}{\tau}$ as well as double integrals which can be handled by means of Lemma~\ref{lem:K_frefined} by
		\begin{align*}
			\lesssim \big\|\frac{\tilde{\alpha}}{\tau}\big\|_{\tau^{-N}L^2_{d\tau}} + \big\|\tilde{\alpha}_{\tau}\big\|_{\tau^{-N-}L^2_{d\tau}}\lesssim N^{-1}\big\|\tilde{\alpha}_{\tau}\big\|_{\tau^{-N}L^2_{d\tau}},
		\end{align*}
		A similar argument applies if we work with the norm of  $\log^{-j}(\tau)\cdot \tau^{-N}L^2_{d\tau}$ instead. The estimate for the contribution of $S_{\mathcal{K}}$ as well as the errors due to replacing $\psi_*^{(\tilde{\lambda})}$ by $\lambda W$ are relegated to section~\ref{sec:appendix}. 
	\end{proof}
	
	We next need to derive somewhat improved bounds for the source terms $f$ appearing in the preceding lemma, as furnished by those parts of \eqref{eq:tildealpha1} which are independent of $\tilde{\alpha}$. Here we state the 
	\begin{lem}\label{lem:tildealphasourceterms} Let $f$ be the right hand side of \eqref{eq:tildealpha1}, modified by letting $z$ be defined as solution of \eqref{eq:zeqn2} but with $e_1^{\text{mod}}$ replaced by 
		\[
		e_1^{\text{mod}} - e_1^{\text{mod},\tilde{\alpha}}.
		\]
		Then we have the bound 
		\begin{align*}
			\big\|f\big\|_{\log^{-1}\tau\cdot\tau^{-N}L^2_{d\tau}}\lesssim \big\|z_{nres}\big\|_{S} + \big\|\partial_{\tilde{\tau}}^2\langle\partial_{\tilde{\tau}}^2\rangle^{-1}\tilde{\lambda}\big\|_{\tau^{-N}L^2_{d\tau}} + \big\|(\tilde{\kappa}_1,\kappa_2)\big\|_{\tau^{-N}L^2_{d\tau}}
		\end{align*}
	\end{lem}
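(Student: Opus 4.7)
The plan is to estimate separately each of the three terms constituting $f$, namely $Q^{(\tilde\tau)}_{<0}\Re(\mathcal{L}z)|_{R=0}$, $Q^{(\tilde\tau)}_{<0}\Re(\lambda^{-2} y\,\tilde u_*^{(\tilde\lambda,\underline{\tilde\alpha})})|_{R=0}$, and $Q^{(\tilde\tau)}_{<0}\Re(e_1)|_{R=0}$, with the key point being that the outer low wave-time frequency cutoff $Q^{(\tilde\tau)}_{<0}$ is what produces the extra $\log^{-1}\tau$ gain over the generic $\tau^{-N}$ estimates: restricting to $|\hat{\tilde\tau}|\lesssim 1$ combined with integration by parts in $\sigma$ in the representations \eqref{eq:ImzR0}, \eqref{eq:ReR0} yields one extra $\tau^{-1}$ factor, far more than the $\log^{-1}\tau$ loss needed to beat the $\tau^{-N+}$ bounds coming from the $\langle\xi\partial_\xi\rangle^{1+\delta_0}$ weights.

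For the first term $\Re(\mathcal{L}z)|_{R=0}$, I exploit that $\mathcal{L}$ annihilates $z_{res}=\kappa\phi_0$, so this reduces to the Fourier representation \eqref{eq:ReR0} with source $E$ equal to the right-hand side of the $z$-equation in \eqref{eq:zeqn2}. The principal contribution $-\lambda^{-2}y_z\cdot W$ is estimated directly via Corollary~\ref{cor:yzW} (applied with source $F=\lambda^2\triangle\Re(W\bar z)$), producing a bound by $\|z\|_S\le \|z_{nres}\|_S + \|(\tilde\kappa_1,\kappa_2)\|_{\tau^{-N}L^2_{d\tau}}$ (with a small loss absorbed by the $Q^{(\tilde\tau)}_{<0}$ cutoff). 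The $y_{\tilde\lambda}$-type contributions from modulation in $\tilde\lambda$ are bounded via Lemma~\ref{lem:wavebasicinhom} and Lemma~\ref{lem:wavebasicinhomstructure1}, yielding control by $\|\partial_{\tilde\tau}^2\langle\partial_{\tilde\tau}^2\rangle^{-1}\tilde\lambda\|_{\tau^{-N}L^2_{d\tau}}$. The genuinely nonlinear and difference terms $-\lambda^{-2}yz$ and $(\lambda^{-2}n_*^{(\tilde\lambda,\tilde\alpha)}-W^2)z$ are handled perturbatively, using the improved temporal decay of $\lambda^{-2}n_*^{(\tilde\lambda,\tilde\alpha)}-W^2\to 0$ from the approximate solution construction in \cite{KrSchm}.

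For the second term $\Re(\lambda^{-2}(y\tilde u_*^{(\tilde\lambda,\underline{\tilde\alpha})}))|_{R=0}$, I use that at $R=0$ we have $\tilde u_*^{(\tilde\lambda,\underline{\tilde\alpha})}|_{R=0}=W(0)(1+O(\tau^{-1/2+}))$ (up to modulation phase), reducing matters to controlling $\lambda^{-2}y|_{R=0}$. Writing $y = y_z + y_2$ with $y_2$ solving the equation in \eqref{eq:zeqn2}, the contribution of $y_z$ falls under Lemma~\ref{lem:yzWnonosc} (with $\psi(R;\xi)$ chosen to reproduce evaluation at $R=0$ via the Fourier inversion formula), while the $y_2$ piece is handled by the wave parametrix estimates in Section~\ref{subse:waveparametrix} applied to the source $2\lambda^2\triangle\Re((\tilde u_*^{(\tilde\lambda,\underline{\tilde\alpha})}-W)\bar z)+\lambda^2\triangle|z|^2+E_2+E_2^{\text{mod}}$; the quadratic $|z|^2$ term is tiny, the difference $\tilde u_*-W$ is $O(\tau^{-1/2+})$, and the $E_2, E_2^{\text{mod}}$ terms contribute via $\tilde\lambda$ and the rapidly decaying approximate-solution error. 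Third, $\|e_1\|_{L^2_{R^3\,dR}}\lesssim t^N$ from Theorem~1.1 of \cite{KrSchm} gives $\Re(e_1)|_{R=0}$ trivial bounds after Sobolev embedding and rescaling.

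The main obstacle is not any individual estimate but rather the bookkeeping required to verify that the logarithmic improvement $\tau^{-N+}\to \log^{-1}\tau\cdot\tau^{-N}$ survives for \emph{all} contributing terms simultaneously, in particular for the most delicate piece $-\lambda^{-2}y_z\cdot W$ where Remark~\ref{rem:cor:yzW} already flags a logarithmic loss coming from the $\langle\xi\partial_\xi\rangle^{1+\delta_0}$ weight in Corollary~\ref{cor:yzW}. The resolution is that the outer $Q^{(\tilde\tau)}_{<0}$ effectively convolves in wave time with a smooth compactly supported kernel, which after one integration by parts in $\sigma$ in \eqref{eq:ReR0} (using the identity $\xi^2 S_1=\zeta(\tau,\sigma)\partial_\sigma S_2$ exactly as in the proof of Lemma~\ref{lem:Lsmalltildelambda}) generates an additional factor $\tau^{-1}\tilde\tau$, overwhelming any $\log\tau$ loss. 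One further needs the substitution of $e_1^{\text{mod}}$ by $e_1^{\text{mod}}-e_1^{\text{mod},\tilde\alpha}$ in the $z$-equation to decouple the estimate from $\tilde\alpha$, which is exactly what permits Lemma~\ref{lem:tildealphafixedpoint1} to be applied by Banach's principle afterwards.
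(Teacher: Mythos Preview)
Your proposal has a genuine gap in the treatment of the principal contribution $-\lambda^{-2}y_z\cdot W$. You estimate the two terms $Q^{(\tilde\tau)}_{<0}\Re(\mathcal{L}z)|_{R=0}$ and $Q^{(\tilde\tau)}_{<0}\Re(\lambda^{-2}y\,\tilde u_*^{(\tilde\lambda,\underline{\tilde\alpha})})|_{R=0}$ \emph{separately}, and then invoke the outer cutoff $Q^{(\tilde\tau)}_{<0}$ as the source of the $\log^{-1}\tau$ improvement. But $Q^{(\tilde\tau)}_{<0}$ is merely a bounded operator on $\tau^{-N}L^2_{d\tau}$; it does not by itself upgrade $\tau^{-N+}$ to $\log^{-1}\tau\cdot\tau^{-N}$. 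Your integration-by-parts-in-$\sigma$ argument also does not close: the identity $\xi^2\,[\text{trig}] = \zeta\,\partial_\sigma[\text{trig}]$ in \eqref{eq:ReR0} produces a boundary term at $\sigma=\tau$ equal to $-\int_0^\infty \mathcal{F}(E)(\tau,\xi)\rho(\xi)\,d\xi = -E(\tau,R)|_{R=0}$, whose $y_z$-contribution is exactly $\lambda^{-2}y_z\cdot W|_{R=0}$. This boundary term gains nothing from $Q^{(\tilde\tau)}_{<0}$ and lives only in $\tau^{-N}L^2_{d\tau}$, not $\log^{-1}\tau\cdot\tau^{-N}L^2_{d\tau}$.

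The paper's mechanism is different: it exploits the \emph{cancellation} between this boundary term and the direct term $-\Re(\lambda^{-2}y\tilde u_*^{(\tilde\lambda,\underline{\tilde\alpha})})|_{R=0}$ (whose leading part, since $\tilde u_*|_{R=0}=W(0)+O(\tau^{-M})$, is precisely $-\lambda^{-2}y_z\cdot W|_{R=0}$). This cancellation is packaged in Lemma~\ref{lem:Lfdifferencebound}, where the quantity $L_f$ combines the double integral from \eqref{eq:ReR0} with the single integral $\int_0^\infty f(\tau,\xi)\rho\,d\xi$, and it is precisely this combination that gains $\log^{-1}\tau$. Even then, the direct application of Lemma~\ref{lem:Lfdifferencebound} with $f=\mathcal{F}(\lambda^{-2}y_z\cdot W)$ suffers a $\tau^{0+}$ loss from the $\langle\xi\partial_\xi\rangle^{1+\delta_0}$ weight in Corollary~\ref{cor:yzW}, and the paper resolves this by a sharper phase-space analysis: Corollary~\ref{cor:yzWpartialtau} handles the $\partial_\sigma f$ terms with a genuine power gain $\tau^{-\frac12-\frac{1}{4\nu}+}$, while in the region $\sigma-\tau>\tau^\delta$ one localizes the inner product to $R<\tau^{\frac12-\frac{1}{4\nu}+}$ via Lemma~\ref{lem:wavebasicinhomstructure2} and then combines the oscillatory phases $e^{\pm i(\sigma-\tau)\xi^2}$ and $e^{\pm iR\xi}$ to stay non-stationary except on $\xi<\tau^{-\frac{1}{2\nu}+}$, where Lemma~\ref{lem:wavebasicinhomstructure2} applies again. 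Your proposal misses both the cancellation and this refined phase analysis; treating the two terms in isolation cannot recover the required logarithmic gain for the $y_z$ piece.
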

	\begin{proof} As usual the main difficulty comes from the contribution of the delicate term $-\lambda^{-2}y_z\cdot W$ to both $\mathcal{L}z|_{R = 0}$ (expressed via the Schr\"odinger propagator) as well as to 
		\[
		- \Re\big(\lambda^{-2}(y \tilde{u}_*^{(\tilde{\lambda}, \underline{\tilde{\alpha}})})\big)|_{R = 0}
		\]
		In light of \eqref{eq:ReR0} and Lemma~\ref{lem:approxsolasymptotics1}, the sum of this term and $-\lambda^{-2}y_z\cdot W$ can almost be bounded by means of Lemma~\ref{lem:Lfdifferencebound}, but we have to make sure not to incur a temporal decay loss due to application of Corollary~\ref{cor:yzW} and the remark following it.  This requires a sharper analysis of the proof of Lemma~\ref{lem:Lfdifferencebound}: notice that due to Corollary~\ref{cor:yzWpartialtau}, and choosing $f = \langle \lambda^{-2}y_z\cdot W, \phi(R;\xi)\rangle$, we may pick $\delta = \frac12+\frac{1}{4\nu}-$ in the proof, while the norm $\big\|\langle \xi\partial_{\xi}\rangle^{1+\delta_1}f\big\|_{\tau^{-N}L^2_{d\tau}L^2_{\rho(\xi)\,d\xi}}$ is only required in the last part of the proof, concerning the double integral over $\sigma - \tau>\tau^{\delta}$.  Now Lemma~\ref{lem:wavebasicinhomstructure2} allows us to reduce the variable $R$ in the inner product defining $f$ to size $<\tau^{\frac12-\frac{1}{4\nu}+}$ (up to contributions which can be handled directly by Lemma~\ref{lem:Lfdifferencebound}), and then combining the oscillatory phases $e^{\pm i(\sigma - \tau)\xi^2}, e^{\pm iR\xi}$, the combined phase $\pm i(\sigma - \tau)\xi^2 \pm iR\xi$ is in the non-stationary regime except when $\xi< \tau^{-\frac{1}{2\nu}+}$. In the non-stationary regime we can then perform the integration by parts with respect to $\xi$ while avoiding the bad term $\langle\xi\partial_{\xi}\rangle^{1+\delta_1} f$ completely, while on the stationary regime, we can rely again on Lemma~\ref{lem:wavebasicinhomstructure2}.

		The remaining technical contributions are treated in section~\ref{sec:appendix}. 
	\end{proof}
	
	Combining the preceding two lemmas, we can now formulate our conclusion concerning the solution of \eqref{eq:tildealpha1}:
	\begin{prop}\label{prop:tildealphomodeqn} Assume that $(z,y)$ in \eqref{eq:tildealpha1} satisfy \eqref{eq:zeqn2}, with $e_1^{\text{mod}}$ implicitly depending on $\tilde{\alpha}$ via \eqref{eq:E1mod}. Then \eqref{eq:tildealpha1} admits a solution $\tilde{\alpha}$ satisfying the bound 
		\begin{align*}
			\Big\|\tilde{\alpha}_{\tau}\Big\|_{\log^{-1}\tau\cdot\tau^{-N}L^2_{d\tau}}\lesssim  \big\|z_{nres}\big\|_{S} + \big\|\partial_{\tilde{\tau}}^2\langle\partial_{\tilde{\tau}}^2\rangle^{-1}\tilde{\lambda}\big\|_{\tau^{-N}L^2_{d\tau}} + \big\|(\tilde{\kappa}_1,\kappa_2)\big\|_{\tau^{-N}L^2_{d\tau}}.
		\end{align*}
	\end{prop}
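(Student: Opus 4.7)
The strategy is to recast \eqref{eq:tildealpha1} as a fixed point problem for $\tilde{\alpha}_{\tau}$ and then invoke the two preceding lemmas. First, isolate the implicit $\tilde{\alpha}$-dependence on the right hand side of \eqref{eq:tildealpha1}. This dependence enters only through $\mathcal{L}z|_{R = 0}$, specifically through the appearance of $e_1^{\text{mod},\tilde{\alpha}}$ (the sum of the third, fourth and fifth term of \eqref{eq:E1mod}) as a source in \eqref{eq:zeqn2}. Solving the Schr\"odinger equation via the (distorted) Fourier propagator $S + S_{\mathcal{K}}$ from subsection~\ref{subsec:Schrodingerpropagator}, and applying \eqref{eq:ReR0}, the contribution of $e_1^{\text{mod},\tilde{\alpha}}$ to $\Re(\mathcal{L}z)|_{R = 0}$ is precisely the quantity $\Re\int_0^\infty\xi^2\big(S+S_{\mathcal{K}}\big)\big(E_{1}^{\text{mod},\tilde{\alpha}}\big)\rho(\xi)\,d\xi$ appearing in Lemma~\ref{lem:tildealphafixedpoint1}. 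Denote by $f(\tau)$ the remaining part of the right-hand side of \eqref{eq:tildealpha1}, which is the expression obtained when $z$ is generated by \eqref{eq:zeqn2} with $e_1^{\text{mod}}$ replaced by $e_1^{\text{mod}} - e_1^{\text{mod},\tilde{\alpha}}$; by Lemma~\ref{lem:tildealphasourceterms} we then have the control
\begin{equation*}
  \big\|f\big\|_{\log^{-1}\tau\cdot\tau^{-N}L^2_{d\tau}}\lesssim \big\|z_{nres}\big\|_{S} + \big\|\partial_{\tilde{\tau}}^2\langle\partial_{\tilde{\tau}}^2\rangle^{-1}\tilde{\lambda}\big\|_{\tau^{-N}L^2_{d\tau}} + \big\|(\tilde{\kappa}_1,\kappa_2)\big\|_{\tau^{-N}L^2_{d\tau}}.
\end{equation*}

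Next, rewrite \eqref{eq:tildealpha1} as
\begin{equation*}
  \tilde{\alpha}_{\tau} - \Re\int_0^\infty\xi^2\big(S+S_{\mathcal{K}}\big)\big(E_{1}^{\text{mod},\tilde{\alpha}}\big)\rho(\xi)\,d\xi = f(\tau),
\end{equation*}
after absorbing the outer frequency cutoff $Q^{(\tilde{\tau})}_{<0}$ into $f$ (this cutoff merely strips a rapidly-decaying high-temporal-frequency remainder which is easily absorbed). Apply Lemma~\ref{lem:tildealphafixedpoint1} with $f$ as above in the scale $\log^{-1}\tau\cdot\tau^{-N}L^2_{d\tau}$: the lemma produces a solution $\tilde{\alpha}$, depending linearly on $f$, with $\tilde{\alpha}_{\tau}$ obeying the same norm bound as $f$. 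Concretely, the lemma shows that the linear operator $\tilde{\alpha}\mapsto \Re\int_0^\infty\xi^2(S+S_{\mathcal{K}})(E_{1}^{\text{mod},\tilde{\alpha}})\rho(\xi)\,d\xi$ is a contraction on $\log^{-j}\tau\cdot\tau^{-N}L^2_{d\tau}$ (with operator norm $\lesssim N^{-1}$ by combining the vanishing condition \eqref{eq:chi1vanishingcond}, the integration-by-parts cancellation arguments and the small-frequency localisation $\xi\lesssim \sigma^{-\frac12+}$ established in its proof). Banach's fixed point theorem therefore yields $\tilde{\alpha}$ with
\begin{equation*}
  \big\|\tilde{\alpha}_{\tau}\big\|_{\log^{-1}\tau\cdot\tau^{-N}L^2_{d\tau}}\lesssim \big\|f\big\|_{\log^{-1}\tau\cdot\tau^{-N}L^2_{d\tau}}.
\end{equation*}

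Combining the two displayed inequalities gives the stated bound. The main technical point is of course the implicit self-reference: the map $\tilde{\alpha}\mapsto f_\alpha$ (via the equation for $z$) and the map $\tilde{\alpha}\mapsto$ (correction term from $e_1^{\text{mod},\tilde{\alpha}}$) must both be controlled in the \emph{same} norm so that Banach's theorem applies. The logarithmic loss encoded in the norm $\log^{-1}\tau\cdot \tau^{-N}L^2_{d\tau}$ is essential here and comes from the $\log$-loss arising in the spectral representation near $\xi = 0$ through Lemma~\ref{lem:tildealphafixedpoint1}; it is compensated by the fact that Lemma~\ref{lem:tildealphasourceterms} gives source bounds in the stronger $\log^{-1}\tau$-weighted norm, which in turn relied on the refined analysis of $\lambda^{-2}y_z\cdot W$ via Corollary~\ref{cor:yzWpartialtau} and Lemma~\ref{lem:wavebasicinhomstructure2}. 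Once this norm alignment is in place, the fixed point argument closes routinely.
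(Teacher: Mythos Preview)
Your proposal is correct and follows essentially the same approach as the paper: the proposition is stated immediately after Lemmas~\ref{lem:tildealphafixedpoint1} and~\ref{lem:tildealphasourceterms} precisely because its proof amounts to combining them, with Lemma~\ref{lem:tildealphasourceterms} controlling the $\tilde{\alpha}$-independent source $f$ in the $\log^{-1}\tau\cdot\tau^{-N}L^2_{d\tau}$ norm and Lemma~\ref{lem:tildealphafixedpoint1} closing the fixed point for $\tilde{\alpha}_\tau$ in that same norm. Your handling of the cutoff $Q^{(\tilde{\tau})}_{<0}$ is a minor point the paper also leaves implicit; it is justified by the frequency-localization compatibility in Lemma~\ref{lem:freqlocalpropag2}.
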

	
	\subsection{Final conclusion on modulation parameters $(\tilde{\alpha}, \tilde{\lambda})$.} In the preceding subsections we considered the modulation equations \eqref{eq:tildelambda}, \eqref{eq:tildealpha1} separately. However, in the sequel we shall need to solve them simultaneously, assuming the data $(z_{nres}, \tilde{\kappa}_1,\kappa_2)$ as given. Thus we formulate 
	\begin{prop}\label{prop:tilealphatildelambdasimultaneous} Assume that $(z, y)$, with $z$ given by \eqref{eq:zedcomprefined}, solve \eqref{eq:zeqn2}, \eqref{eq:y2def}, \eqref{eq:yzdfn} on $[\tau_*,\infty)$ (with solutions vanishing at $\tau = +\infty$), where $e_1^{\text{mod}}, E_2^{\text{mod}}$ depend on $\tilde{\alpha}, \tilde{\lambda}$ via \eqref{eq:E2mod}, \eqref{eq:E1mod}, \eqref{eq:e1moddef}. Then the combined system \eqref{eq:tildealpha1}, \eqref{eq:tildelambda} admits a solution $(\tilde{\alpha}, \tilde{\lambda})$ satisfying for $\tilde{\lambda}$ the conclusion of Proposition~\ref{prop:solnoftildelambdaeqn} with the corresponding inequalities without the $\tilde{\alpha}$-terms, and for $\tilde{\alpha}$ the conclusion of Proposition~\ref{prop:tildealphomodeqn} without the $\tilde{\lambda}$-dependent term. 
	\end{prop}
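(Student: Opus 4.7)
The plan is to solve the coupled system by iteration, exploiting the fact that the mutual dependence of $\tilde\lambda$ on $\tilde\alpha$ and of $\tilde\alpha$ on $\tilde\lambda$ has already been shown to enter with a \emph{smallness gain} controlled by $\tau_*$. Specifically, I set up a map
\[
\Phi:(\tilde\alpha,\tilde\lambda)\longmapsto(\tilde\alpha',\tilde\lambda'),
\]
where $\tilde\lambda'$ is produced from Proposition~\ref{prop:solnoftildelambdaeqn} using the previous $(z,y,\tilde\alpha)$ (with $z,y$ regarded here as given data), and $\tilde\alpha'$ is produced from Proposition~\ref{prop:tildealphomodeqn} using $(z,y,\tilde\lambda)$. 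The target space is
\[
\mathcal{X}:=\{(\tilde\alpha,\tilde\lambda)\,:\,\|\tilde\alpha_\tau\|_{\log^{-1}\tau\,\tau^{-N}L^2_{d\tau}}+\|\partial_{\tilde\tau}^{2}\langle\partial_{\tilde\tau}^{2}\rangle^{-1}\tilde\lambda\|_{\tau^{-N}L^2_{d\tau}}<\infty\},
\]
and it will suffice to run a standard Banach fixed-point argument on a ball in $\mathcal{X}$ of radius $\lesssim \|z_{nres}\|_{S}+\|(\tilde\kappa_1,\kappa_2)\|_{\tau^{-N}L^2_{d\tau}}$.

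The crucial structural input is the refined form of Proposition~\ref{prop:solnoftildelambdaeqn}, which writes $\tilde\lambda=\tilde\lambda_{prin}+\delta\tilde\lambda$, where $\tilde\lambda_{prin}$ is given by an explicit Fourier-side formula depending only on $z_{nres}$ (through $Q^{(\tilde\tau)}_{<\tau^{1/2+}}\lambda^{-2}\Box^{-1}\triangle\Re(\lambda^2 z_{nres}W)\cdot W^2$), and $\delta\tilde\lambda$ satisfies the bound with coefficient $c(\tau_*)\to 0$ in front of the $\tilde\alpha_\tau$ term. A careful tracking of the argument in Proposition~\ref{prop:tildealphomodeqn} (together with the $\log^{-1}\tau$ weight built into the $\tilde\alpha_\tau$-norm and the $\frac{1}{N}$ gain from Lemma~\ref{lem:basicL2}) shows analogously that the $\tilde\lambda$-dependence of $\tilde\alpha'_\tau$ enters with a constant that can be made arbitrarily small by taking $\tau_*$ large. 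Concretely, for two inputs $(\tilde\alpha_i,\tilde\lambda_i)$, $i=1,2$, the difference estimate reads
\begin{align*}
\|\tilde\alpha'_{1,\tau}-\tilde\alpha'_{2,\tau}\|_{\log^{-1}\tau\,\tau^{-N}L^2}&\lesssim c(\tau_*)\|\partial_{\tilde\tau}^{2}\langle\partial_{\tilde\tau}^{2}\rangle^{-1}(\tilde\lambda_1-\tilde\lambda_2)\|_{\tau^{-N}L^2},\\
\|\partial_{\tilde\tau}^{2}\langle\partial_{\tilde\tau}^{2}\rangle^{-1}(\tilde\lambda'_1-\tilde\lambda'_2)\|_{\tau^{-N}L^2}&\lesssim c(\tau_*)\|\tilde\alpha_{1,\tau}-\tilde\alpha_{2,\tau}\|_{\log^{-1}\tau\,\tau^{-N}L^2},
\end{align*}
since the linear difference dynamics kill the $z_{nres}$ and $(\tilde\kappa_1,\kappa_2)$ contributions. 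Choosing $\tau_*$ large so that $c(\tau_*)\ll 1$ yields a contraction on $\mathcal{X}$.

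The bounds on the fixed point $(\tilde\alpha_\infty,\tilde\lambda_\infty)$ follow from the uniform-in-iterate a priori estimates: iterating Proposition~\ref{prop:solnoftildelambdaeqn} and Proposition~\ref{prop:tildealphomodeqn} and absorbing the $c(\tau_*)$-weighted cross-terms on the right into the left, one obtains
\[
\|\tilde\alpha_{\infty,\tau}\|_{\log^{-1}\tau\,\tau^{-N}L^2}+\|\partial_{\tilde\tau}^{2}\langle\partial_{\tilde\tau}^{2}\rangle^{-1}\tilde\lambda_\infty\|_{\tau^{-N}L^2}\lesssim \|z_{nres}\|_{S}+\|(\tilde\kappa_1,\kappa_2)\|_{\tau^{-N}L^2_{d\tau}},
\]
which is precisely the two stated bounds with the mutual cross-terms dropped. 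The finer structural decomposition of $\tilde\lambda_\infty$ into principal and perturbative parts, as well as the Fredholm formula for $\tilde\lambda_{prin}$ asserted in Proposition~\ref{prop:solnoftildelambdaeqn}, is inherited directly from the limiting process, since $\tilde\lambda_{prin}$ is a bounded linear function of $z_{nres}$ alone and the perturbative piece $\delta\tilde\lambda$ is the one that absorbs all the $\tilde\alpha$-dependence.

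\textbf{Principal obstacle.} The genuine difficulty is not the abstract iteration itself but rather verifying the \emph{linearized smallness} estimates quoted above with clean $c(\tau_*)\to 0$ constants; this requires reopening the proofs of Lemmas~\ref{lem: tildelambdaeqnsourcebounds1}--\ref{lem:e1modgooderrorsSK} and Lemma~\ref{lem:tildealphasourceterms} at the level of differences, checking that every $z_{nres}$ or $(\tilde\kappa_1,\kappa_2)$ contribution is linear in those inputs (so that the differences cancel exactly), and that every cross contribution either carries an explicit $c(\tau_*)$ through the projection onto $Q^{(\tilde\tau)}_{<\tau^{\delta}}$ or $Q^{(\tilde\tau)}_{\geq\gamma^{-1}}$ localisations, or a $\log^{-1}\tau$ gain from Lemma~\ref{lem:tildealphafixedpoint1}, or a $1/\sqrt N$ gain from Lemma~\ref{lem:basicL2}. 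Modulo this bookkeeping, the proposition follows.
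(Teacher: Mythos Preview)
Your approach is essentially the same as the paper's: a fixed point/iteration argument exploiting that the cross-dependence between $\tilde\alpha$ and $\tilde\lambda$ is perturbative. The paper's proof is a two-line remark that the $\tilde\alpha$-dependence of $\tilde\lambda$ carries the factor $c(\tau_*)$ (from the $\delta\tilde\lambda$ bound in Proposition~\ref{prop:solnoftildelambdaeqn}), so a simple iteration converges.

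One point of divergence: you claim \emph{both} difference estimates carry a $c(\tau_*)$ factor, and you justify the $\tilde\lambda\to\tilde\alpha'$ direction by appealing to ``careful tracking'' of Proposition~\ref{prop:tildealphomodeqn} together with a $\log^{-1}\tau$ or $1/\sqrt{N}$ gain. The paper does not establish this; in Lemma~\ref{lem:tildealphasourceterms} and Proposition~\ref{prop:tildealphomodeqn} the $\tilde\lambda$-term appears with an $O(1)$ constant, not $c(\tau_*)$. The paper's argument uses only the \emph{one-sided} smallness (the $\tilde\alpha$-dependence of $\tilde\lambda$), which suffices because the composition $\tilde\alpha\mapsto\tilde\lambda(\tilde\alpha)\mapsto\tilde\alpha'(\tilde\lambda(\tilde\alpha))$ contracts with factor $C\cdot c(\tau_*)$ --- equivalently, $\Phi^2$ is a contraction even if $\Phi$ is not. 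Your overclaim does not break the argument, but you should either drop the unverified smallness on the first difference estimate and invoke $\Phi^2$, or simply note (as the paper does) that one-sided smallness is enough for the iteration.
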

	\begin{proof} The final conclusion of Proposition~\ref{prop:solnoftildelambdaeqn}  shows that the $\tilde{\alpha}$-dependence of $\tilde{\lambda}$ is perturbative (due to the factor $c(\tau_*)$), and so the conclusion follows from a simple fixed point/iteration argument. 
		
	\end{proof}

	\section{Improved bounds for the resonant part; estimates for $(\tilde{\kappa}_1, \kappa_2)$}\label{sec:improvkappabounds}
	
	In this section we finally derive improved bounds on the parameters $(\tilde{\kappa}_1, \kappa_2)$, which, together with $\tilde{\lambda}$, describe the resonant part of $z$ by means of \eqref{eq:zedcomprefined}. The resulting proposition, in conjunction with Proposition~\ref{prop:tildealphomodeqn}  as well as Proposition~\ref{prop:solnoftildelambdaeqn} will then allow us to infer a priori bounds on the quadruple $\big(\tilde{\lambda}, \tilde{\alpha}, \tilde{\kappa}_1, \kappa_2\big)$ solely in terms of the remaining variable $z_{nres}(\tau, R)$, and reduce the problem to deriving a priori bounds on the latter. 
	We have the following 
	\begin{prop}\label{prop:tildekappa1kappa2apriori} Assume that $\tilde{\lambda}$ satisfies \eqref{eq:tildelambda} and $\tilde{\alpha}$ satisfies \eqref{eq:tildealpha1}, where it is understood that $(z, y)$ solve \eqref{eq:zeqn2}, \eqref{eq:y2def}, \eqref{eq:yzdfn}. Then the solutions of \eqref{eq:kappa1eqn}, \eqref{eq:kappa2eqn} satisfy the improved bound 
		\begin{align*}
			\Big\|(\tilde{\kappa}_1, \kappa_2)\Big\|_{\tau^{-N}L^2_{d\tau}}\leq c(\tau_*)\cdot\big[\big\|z_{nres}\big\|_{S} + \big\|(\tilde{\kappa}_1,\kappa_2)\big\|_{\tau^{-N}L^2_{d\tau}}\big]
		\end{align*}
		where we have the asymptotic relation $\lim_{\tau_*\rightarrow\infty}c(\tau_*) = 0$. In particular, we infer the bound 
		\begin{align*}
			\Big\|(\tilde{\kappa}_1, \kappa_2)\Big\|_{\tau^{-N}L^2_{d\tau}}\leq c(\tau_*)\cdot\big\|z_{nres}\big\|_{S} 
		\end{align*}
		for $\tau_*$ sufficiently large. 
	\end{prop}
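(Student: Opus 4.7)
The plan is to exploit the defining property of the modulation parameters, namely that \eqref{eq:tildealpha1} and \eqref{eq:tildelambda} were constructed precisely to subtract off the principal driving terms in the ODEs \eqref{eq:kappa1eqn}, \eqref{eq:kappa2eqn} governing $\kappa_{1,2}$. Since $\phi(0;\xi) = W(0) = 1$ for all $\xi\geq 0$ by the non-oscillatory expansion in subsection~\ref{subsec:basicfourier}, and since $z_{nres}(\tau, 0) = 0$, we have the identification $\kappa(\tau) = z(\tau, 0) = \int_0^\infty \hat z(\tau,\xi)\,\rho(\xi)\,d\xi$. Representing $\hat z = (S + S_{\mathcal{K}})(\mathcal{F}(E))$ with $E$ the full source on the right-hand side of the first equation of \eqref{eq:zeqn2}, the problem reduces to analyzing this double integral piece by piece and comparing with the modulation identities.

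For $\kappa_2$, I would use that $\tilde{\alpha}$ contributes to $e_1^{\text{mod}}|_{R=0}$ at leading order as $-\lambda^{-2}\partial_t\tilde{\alpha}$, which via \eqref{eq:ReR0} and Proposition~\ref{prop:linpropagator} feeds directly into the $\kappa_{2,\tau}$ channel; by construction \eqref{eq:tildealpha1} then cancels the $Q^{(\tilde{\tau})}_{<0}$-projected combination of real source terms appearing in \eqref{eq:kappa2eqn}. What remains is the complementary high-wave-frequency piece $(1-Q^{(\tilde{\tau})}_{<0})$ of those sources. Since $\kappa_2$ is only required to lie in the weaker norm $\tau^{-N+\frac12+\frac{1}{4\nu}+}L^2_{d\tau}$ per \eqref{eq:tildekappa1kappa2norm}, the decay gained from the temporal-frequency cutoff, combined with the wave-time estimates of Section~\ref{sec:linpropagators} (applied to $\mathcal{L}z|_{R=0}$ and to the modulated bulk interaction), yields the needed bound of the form $c(\tau_*)[\|z_{nres}\|_S + \|(\tilde{\kappa}_1,\kappa_2)\|_{\tau^{-N}L^2_{d\tau}}]$.

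The bound on $\tilde{\kappa}_1$ is more subtle. Using \eqref{eq:kapparefined}, write $\tilde{\kappa}_1 = \Re\kappa - c_*\cdot Q^{(\tilde{\tau})}_{\geq\gamma^{-1}}\tilde{\lambda}$ and insert the Schr\"odinger-propagator formula for $\Re\kappa$. The key point, already implicit in Step~1 of the proof of Proposition~\ref{prop:solnoftildelambdaeqn}, is that \eqref{eq:tildelambda} exactly enforces that the propagator image of $-Q^{(\tilde{\tau})}_{<\tau^{\frac12+}}(\lambda^{-2}y_z\cdot W)$, together with the companion pieces of $X^{(\tilde{\lambda})}$, cancels against the propagator image of $-\lambda^{-2}\tilde{y}_{\tilde{\lambda}}^{\text{mod}}\cdot W$ and the $e_1^{\text{mod}}$ source, up to $\partial_\tau E$ and the perturbative remainders $L_{\text{small}}^{(\tilde{\lambda})},\,R_{\text{small}}^{(\tilde{\lambda})},\,L_{\mathcal{K}}^{(\tilde{\lambda})},\,R_{\mathcal{K}}^{(\tilde{\lambda})}$. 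The constant $c_*$ in \eqref{eq:cstardef} is tuned precisely so that the propagator image at $R=0$ of the principal $\tilde{\lambda}$-driven source $-2\tilde{\lambda}_{tt}\cdot\lambda^2\Lambda W\cdot W$ from the first line of \eqref{eq:tildeE2mod} reproduces $c_*\cdot Q^{(\tilde{\tau})}_{\geq\gamma^{-1}}\tilde{\lambda}$ to principal order. What is left as $\tilde{\kappa}_1$ is then a sum of the perturbative remainders, which Lemmas \ref{lem: tildelambdaeqnsourcebounds1}, \ref{lem:Lsmalltildelambda}, \ref{lem:LsmallcalKtildelambda}, \ref{lem:Rsmalltildelambda}, \ref{lem:e1modgooderrors}, \ref{lem:e1modgooderrorsSK} together with Proposition~\ref{prop:solnoftildelambdaeqn} control by $c(\tau_*)[\|z_{nres}\|_S + \|(\tilde{\kappa}_1,\kappa_2)\|_{\tau^{-N}L^2_{d\tau}}]$; absorbing $\|(\tilde{\kappa}_1,\kappa_2)\|_{\tau^{-N}L^2_{d\tau}}$ from the right into the left for $\tau_*$ large enough then produces the final estimate. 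The main obstacle is to make this identification between $c_*\cdot Q^{(\tilde{\tau})}_{\geq\gamma^{-1}}\tilde{\lambda}$ and the low-distorted-frequency part of the propagator image rigorous, as both are nonlocal-in-time quantities; this requires combining the low-frequency asymptotic of $\mathcal{F}(\Lambda W\cdot W^2)$ with the asymptotic behavior of $\rho(\xi)$ recorded in subsection~\ref{subsec:basicfourier}, and checking that the residual tails from the $Q^{(\tilde{\tau})}_{<\tau^{\frac12+}}$ truncations and the $\rho$ versus $\rho_1$ discrepancies all fall into the already-named perturbative categories.
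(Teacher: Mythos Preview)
Your overall strategy is correct, and for $\kappa_2$ your outline matches the paper's: the residual is the $Q^{(\tilde{\tau})}_{\geq 0}$-part of the right-hand side of \eqref{eq:kappa2eqn}, and writing $Q^{(\tilde{\tau})}_{\geq 0}f = \partial_\tau\big(\tfrac{\partial\tau}{\partial\tilde{\tau}}\partial_{\tilde{\tau}}^{-1}Q^{(\tilde{\tau})}_{\geq 0}f\big) + \text{lower}$ gives exactly the $\tau^{\frac12+\frac{1}{4\nu}+}$ loss that the weaker $\kappa_2$-norm absorbs.

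For $\tilde{\kappa}_1$, however, there is a genuine misidentification of which errors need to be controlled. You cite Lemmas~\ref{lem: tildelambdaeqnsourcebounds1}--\ref{lem:e1modgooderrorsSK}, but those lemmas bound the terms that \emph{are} present in the modulation equation \eqref{eq:tildelambda}: the $Q^{(\tilde{\sigma})}_{<\sqrt{\gamma}^{-1}}$-localized $L_{\text{small}}^{(\tilde{\lambda})}$, $R_{\text{small}}^{(\tilde{\lambda})}$, and so on. What actually drives $\tilde{\kappa}_1$ is precisely the \emph{complementary} pieces---the parts of \eqref{eq:kappa1eqn} that were excised or modified in writing down \eqref{eq:tildelambda}. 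The paper enumerates these as five separate sources: (i) the $Q^{(\tilde{\tau})}_{\geq\tau^{\frac12+}}$-tails from \eqref{eq:Xdef}; (ii) the $Q^{(\tilde{\sigma})}_{\geq\sqrt{\gamma}^{-1}}$-parts \emph{omitted} from \eqref{eq:Ltildelambda}, \eqref{eq:Rtildelambda}; (iii) the cutoffs in \eqref{eq:tildeE2mod}; (iv) the $Q^{(\tilde{\tau})}_{\geq\tau^\delta}$-part of $e_1^{\text{mod}}$; (v) the $\partial_\tau E$ from the approximate solution of \eqref{eq:tildelambda}. Each requires fresh estimates (Lemmas~\ref{lem:tildekappaoneiicontrib1}, \ref{lem:tildekappaoneiicontrib2}, and the completions in Section~\ref{sec:appendix}), not the lemmas you cite.

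The identification of $c_*\cdot Q^{(\tilde{\tau})}_{\geq\gamma^{-1}}\tilde{\lambda}$ comes specifically from item (ii): it is $R^{(\tilde{\lambda})}_{\text{small},\geq\sqrt{\gamma}^{-1}}$ (the high-modulation complement of $R_{\text{small}}^{(\tilde{\lambda})}$) that produces this term. The mechanism in the paper (Lemma~\ref{lem:tildekappaoneiicontrib1}) is: at wave-frequency $\gtrsim\gamma^{-1}$, $\Box^{-1}\approx -\partial_{\tilde{\tau}}^{-2}$, so $\lambda^{-2}Q^{(\tilde{\sigma})}_{\geq\gamma^{-1}}\tilde{y}_{\tilde{\lambda}}^{\text{mod}}\cdot W$ collapses to $Q^{(\tilde{\sigma})}_{\geq\gamma^{-1}}\tilde{\lambda}\cdot\Lambda W\cdot W^2$; one then reduces to a simplified Schr\"odinger propagator and applies the \emph{Schr\"odinger-time} Fourier transform (via Lemma~\ref{lem:Fouriertransform1}), obtaining a multiplier acting on $\mathcal{F}_\tau(\tilde{\lambda}^{(\gamma^{-1})})$. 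Since this Fourier transform is supported at Schr\"odinger frequency $\ll\tau^{0-}$, and since $\mathcal{F}(\Lambda W\cdot W^2)(0)=0$, the multiplier reduces to $c_*\cdot i\hat\tau$ plus a remainder bounded by $\log^{-2}|\hat\tau|$, yielding $c_*\partial_\tau(Q^{(\tilde{\tau})}_{\geq\gamma^{-1}}\tilde{\lambda})$. Your sketch gestures at spectral asymptotics but misses this Schr\"odinger-Fourier step, which is the actual engine. Finally, one still has to integrate the ODE $\kappa_{1,\tau}+\tfrac{\lambda_\tau}{\lambda}\kappa_1 = c_*\partial_\tau(Q^{(\tilde{\tau})}_{\geq\gamma^{-1}}\tilde{\lambda}) + \partial_\tau(\text{small})$ and check that the $\tfrac{\lambda_\tau}{\lambda}$-term contributes only to $\delta\kappa_1$ (Corollary~\ref{cor:Rtildelambdageqgamma-1maineffect}); you have not addressed this.
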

	\begin{proof} We split things into the contribution to the real part of the resonant component, given by $\tilde{\kappa}_1$ (recall \eqref{eq:zedcomprefined}), as well as the imaginary part, given by $\kappa_2$.
		\\
		\subsubsection{Improved control over $\tilde{\kappa}_1$.}\label{subsubsec:tildekappaoneimprov} We need to analyze how the equation \eqref{eq:tildelambda} improves the right hand side of \eqref{eq:kappa1eqn}, keeping in mind \eqref{eq:ImzR0} which is valid for real-valued source terms $E$. Also recall the notation from Prop.~\ref{prop:linpropagator}. Now there are several sources which contribute to the fact of a non-trivial $\tilde{\kappa}_1$: 
		\begin{enumerate}
			\item Temporal frequency cutoffs introduced in \eqref{eq:Xdef}.
			\item Temporal frequency localizers $Q^{(\tilde{\sigma})}_{<\sqrt{\gamma}^{-1}}$ in \eqref{eq:Ltildelambda}, \eqref{eq:Rtildelambda}, and analogously for $L_{\mathcal{K},\text{small}}^{(\tilde{\lambda})}$, $R_{\mathcal{K},\text{small}}^{(\tilde{\lambda})}$.
			\item Temporal frequency cutoffs introduced in \eqref{eq:tildeE2mod}.
			\item The temporal frequency localization applied to $e_1^{\text{mod}}$  in  \eqref{eq:tildelambda}. 
			\item The fact that we solve \eqref{eq:tildelambda} only approximately, namely up to the term $\partial_{\tau}E$. 
		\end{enumerate}
		
		We start with the terms coming from (ii), which as far as $R_{\text{small}}^{(\tilde{\lambda})}$ is concerned in effect have the dominant effect and in are responsible for the term $c_*\cdot Q^{(\tilde{\tau})}_{\geq\gamma^{-1}}\tilde{\lambda}\cdot \phi_0(R)$ in the decomposition \eqref{eq:zedcomprefined}, up to a better error term which gets incorporated into $\tilde{\kappa}_1$:
		\begin{lem}\label{lem:tildekappaoneiicontrib1} Recalling \eqref{eq:Rtildelambda}, and defining 
			\[
			R^{(\tilde{\lambda})}_{\text{small},\geq\sqrt{\gamma}^{-1}}
			\]
			analogously but replacing $Q^{(\tilde{\sigma})}_{<\sqrt{\gamma}^{-1}}$ by $Q^{(\tilde{\sigma})}_{\geq \sqrt{\gamma}^{-1}}$, we have the relation 
			\begin{align*}
				R^{(\tilde{\lambda})}_{\text{small},\geq\sqrt{\gamma}^{-1}} = c_*\cdot \partial_{\tau}\big(Q^{(\tilde{\tau})}_{\geq \sqrt{\gamma}^{-1}}\tilde{\lambda}\big) + \partial_{\tau}\big(\delta R^{(\tilde{\lambda})}_{\text{small},\geq\sqrt{\gamma}^{-1}}\big)
			\end{align*}
			where the error satisfies the bound 
			\begin{align*}
				\Big\|\delta R^{(\tilde{\lambda})}_{\text{small},\geq\sqrt{\gamma}^{-1}}\Big\|_{\tau^{-N}L^2_{d\tau}}\leq c(\tau_*)\cdot \big\|\langle\partial_{\tilde{\tau}}^2\rangle^{-1}\partial_{\tilde{\tau}}^2\tilde{\lambda}\big\|_{\tau^{-N}L^2_{d\tau}}. 
			\end{align*}
			with $\lim_{\tau_*\rightarrow\infty}c(\tau_*) = 0$. 
		\end{lem}
		\begin{cor}\label{cor:Rtildelambdageqgamma-1maineffect} The solution of the following differential equation 
			\begin{align*}
				\kappa_{1,\tau}(\tau) + \frac{\lambda_{\tau}}{\lambda}\kappa_1(\tau) = R^{(\tilde{\lambda})}_{\text{small},\geq\sqrt{\gamma}^{-1}}
			\end{align*}
			and vanishing to order at least $\tau^{-(N-2)}$ at $\tau = +\infty$ satisfies 
			\[
			\kappa_1(\tau) =  c_*\cdot Q^{(\tilde{\tau})}_{\geq \sqrt{\gamma}^{-1}}\tilde{\lambda} + \delta\kappa_1
			\]
			where we have the bound 
			\[
			\big\| \delta\kappa_1\big\|_{\tau^{-N}L^2_{d\tau}}\leq c(\tau_*)\cdot \big\|\langle\partial_{\tilde{\tau}}^2\rangle^{-1}\partial_{\tilde{\tau}}^2\tilde{\lambda}\big\|_{\tau^{-N}L^2_{d\tau}}. 
			\]
		\end{cor}
		\begin{proof}(Cor.) We can write 
			\begin{align*}
				\kappa_1(\tau) &= \lambda^{-1}(\tau)\cdot\int_{\tau}^\infty \lambda(s)\cdot R^{(\tilde{\lambda})}_{\text{small},\geq\sqrt{\gamma}^{-1}}(s)\,ds\\
				& = c_*\cdot Q^{(\tilde{\tau})}_{\geq \sqrt{\gamma}^{-1}}\tilde{\lambda} + \delta\kappa_1,
			\end{align*}
			where we have 
			\begin{align*}
				\delta\kappa_1(\tau) &= -\lambda^{-1}(\tau)\cdot\int_{\tau}^\infty \lambda_s(s)\cdot Q^{(\tilde{\tau})}_{\geq \sqrt{\gamma}^{-1}}\tilde{\lambda}(s)\,ds\\
				& - \delta R^{(\tilde{\lambda})}_{\text{small},\geq\sqrt{\gamma}^{-1}}(\tau)  -\lambda^{-1}(\tau)\cdot\int_{\tau}^\infty \lambda_s(s)\cdot \delta R^{(\tilde{\lambda})}_{\text{small},\geq\sqrt{\gamma}^{-1}}(s)\,ds
			\end{align*}
			It is easy to see that the last two terms satisfy the bound stipulated for $\delta\kappa_1$ in the corollary. As for the first term on the right, observe that we can write 
			\begin{align*}
				Q^{(\tilde{\tau})}_{\geq \sqrt{\gamma}^{-1}}\tilde{\lambda} = \frac{\partial\tau}{\partial_{\tilde{\tau}}}\cdot \partial_{\tau}\big(\partial_{\tilde{\tau}}^{-1} Q^{(\tilde{\tau})}_{\geq \sqrt{\gamma}^{-1}}\tilde{\lambda}\big).
			\end{align*}
			Insert this into the integral (with $\tilde{\tau}, \tau$ replaced by $\tilde{s}, s$) and perform integration by parts with respect to $s$. This easily gives 
			\begin{align*}
				\Big\|\lambda^{-1}(\tau)\cdot\int_{\tau}^\infty \lambda_s(s)\cdot Q^{(\tilde{\tau})}_{\geq \sqrt{\gamma}^{-1}}\tilde{\lambda}(s)\,ds\Big\|_{\tau^{-N-\frac12+\frac{1}{4\nu}}L^2_{d\tau}}\lesssim \big\|Q^{(\tilde{\tau})}_{\geq \sqrt{\gamma}^{-1}}\tilde{\lambda}\big\|_{\tau^{-N}L^2_{d\tau}}, 
			\end{align*}
			which is easily seen to be compatible with the required bound for $\delta \kappa_1$. 
		\end{proof}
		\begin{proof}(Lemma~\ref{lem:tildekappaoneiicontrib1}) The key point is that we can replace $R^{(\tilde{\lambda})}_{\text{small},\geq\sqrt{\gamma}^{-1}}$ by 
			\begin{align*}
				&X(\tau): =\\
				& \int_{\tau}^\infty\int_0^\infty \xi^2\cdot \cos\big((\sigma - \tau)\xi^2\big)\cdot(\triangle)\mathcal{F}\big(\lambda^{-2}Q^{(\tilde{\sigma})}_{\geq\gamma^{-1}}\tilde{y}_{\tilde{\lambda}}^{\text{mod}}\cdot W\big)(\sigma, \frac{\lambda(\tau)}{\lambda(\sigma)}\xi)\rho(\xi)\,d\xi d\sigma
			\end{align*}
			and keeping \eqref{eq:tildeE2mod} in mind, we can in fact replace 
			\[
			\lambda^{-2}Q^{(\tilde{\sigma})}_{\geq\gamma^{-1}}\tilde{y}_{\tilde{\lambda}}^{\text{mod}}\cdot W
			\]
			by 
			\[
			Q^{(\tilde{\sigma})}_{\geq\gamma^{-1}}\tilde{\lambda}\cdot \Lambda W\cdot W^2, 
			\]
			and omit the scaling factor $\frac{\lambda(\tau)}{\lambda(\sigma)}$, 
			all up to errors which can be placed into 
			\[
			\partial_{\tau}\big(\delta R^{(\tilde{\lambda})}_{\text{small},\geq\sqrt{\gamma}^{-1}}\big)
			\]
			as verified in section~\ref{sec:appendix}. Furthermore, in light of Lemma~\ref{lem:wavetoSchrodfreqloc} and using the notation there, we can replace $Q^{(\tilde{\sigma})}_{\geq\gamma^{-1}}\tilde{\lambda}$ by $\tilde{\tilde{\lambda}}^{(\gamma^{-1})}$. But then proceeding as for the proof of Lemma~\ref{lem:Fouriertransform1} and applying the (Schr\"odinger) temporal Fourier transform, we arrive at  
			\begin{align*}
				\hat{X}(\tau) &= \Big(c_1\sqrt{|\hat{\tau}|}\rho_1(\sqrt{|\hat{\tau}|})\mathcal{F}\big( \Lambda W\cdot W^2\big)(\sqrt{|\hat{\tau}|})\\&\hspace{1cm}
				+ ic_2\int_0^\infty \frac{\hat{\tau}\sqrt{\xi}_1}{\hat{\tau}^2 - \xi_1^2}\rho_1(\sqrt{\xi}_1)\mathcal{F}\big( \Lambda W\cdot W^2\big)(\sqrt{\xi}_1)\,d\xi_1\Big)\cdot \mathcal{F}_{\tau}\big(\tilde{\tilde{\lambda}}^{(\gamma^{-1})}\big)(\hat{\tau}).
			\end{align*}
			Exploiting the fact that $\mathcal{F}_{\tau}\big(\tilde{\tilde{\lambda}}^{(\gamma^{-1})}\big)$ is supported at Schr\"odinger time frequency $\ll \tau^{0-}$, as well as the fact that 
			\begin{align*}
				\mathcal{F}\big( \Lambda W\cdot W^2\big)(0) = 0, 
			\end{align*}
			we write (recalling \eqref{eq:cstardef})
			\begin{align*}
				X(\tau)|_{\tau\geq \tau_*} = c_*\cdot \partial_{\tau}\big(\tilde{\tilde{\lambda}}^{(\gamma^{-1})}) + \partial_{\tau}Y(\tau), 
			\end{align*}
			where we set 
			\[
			Y(\tau) = \mathcal{F}_{\tau}^{-1}\big(\zeta(\hat{\tau})\cdot \mathcal{F}_{\tau}\big(\tilde{\tilde{\lambda}}^{(\gamma^{-1})}\big)(\hat{\tau})\big),
			\]
			and where we use the notation 
			\begin{align*}
				\zeta(\hat{\tau}): &= -ic_1\sqrt{|\hat{\tau}|}\rho_1(\sqrt{|\hat{\tau}|})\frac{\mathcal{F}\big( \Lambda W\cdot W^2\big)(\sqrt{|\hat{\tau}|})}{\hat{\tau}}\\
				& + c_2\int_0^\infty \frac{\sqrt{\xi}_1}{\hat{\tau}^2 - \xi_1^2}\rho_1(\sqrt{\xi}_1)\mathcal{F}\big( \Lambda W\cdot W^2\big)(\sqrt{\xi}_1)\,d\xi_1 - c_*
			\end{align*}
			Since $\big|\zeta(\hat{\tau})\big|\lesssim \log^{-2}(|\hat{\tau}|)$ as $|\hat{\tau}|\ll 1$, and enjoys symbol type bounds, we infer from Lemma~\ref{lem:wavetoSchrodfreqloc} the bound
			\begin{align*}
				\big\|Y(\tau)\big\|_{\log^{-2}(\tau)\cdot \tau^{-N}L^2_{d\tau}}\lesssim \big\|Q^{(\tilde{\tau})}_{\geq \gamma^{-1}}\tilde{\lambda}\big\|_{\tau^{-N}L^2_{d\tau}}\lesssim \big\|\langle\partial_{\tilde{\tau}}^2\rangle^{-2}\partial_{\tilde{\tau}}^2\tilde{\lambda}\big\|_{\tau^{-N}L^2_{d\tau}}. 
			\end{align*}
		\end{proof}
		To complete the contributions to the evolution of $\tilde{\kappa}_1$ contributed by (ii), we rely on 
		\begin{lem}\label{lem:tildekappaoneiicontrib2} Defining $L^{(\tilde{\lambda})}_{\text{small}, \geq \sqrt{\gamma}^{-1}}$ analogously to $R^{(\tilde{\lambda})}_{\text{small},\geq\sqrt{\gamma}^{-1}}$ (keeping in mind \eqref{eq:Ltildelambda}), we have 
			\begin{align*}
				L^{(\tilde{\lambda})}_{\text{small}, \geq \sqrt{\gamma}^{-1}} = \partial_{\tau}\big(\tilde{L}^{(\tilde{\lambda})}_{\text{small}, \geq \sqrt{\gamma}^{-1}}\big) + \frac{1}{\tau}\cdot \tilde{\tilde{L}}^{(\tilde{\lambda})}_{\text{small}, \geq \sqrt{\gamma}^{-1}}
			\end{align*}
			where we have the bound 
			\begin{align*}
				\Big\|\tilde{L}^{(\tilde{\lambda})}_{\text{small}, \geq \sqrt{\gamma}^{-1}}\Big\|_{\tau^{-N}L^2_{d\tau}}\leq c(\tau_*)\cdot \big[\big\|\langle\partial_{\tilde{\tau}}^2\rangle^{-1}\partial_{\tilde{\tau}}^2\tilde{\lambda}\big\|_{\tau^{-N}L^2_{d\tau}} + \big\|z_{nres}\big\|_{S}+\big\|(\tilde{\kappa}_1,\kappa_2)\big\|_{\tau^{-N}L^2_{d\tau}}\big]
			\end{align*}
			and similarly for $\tilde{\tilde{L}}^{(\tilde{\lambda})}_{\text{small}, \geq \sqrt{\gamma}^{-1}}$, where $c(\tau_*)\longrightarrow 0$ as $\tau_*\rightarrow\infty$. Analogous bounds apply to the quantities\footnote{These two terms are defined as in \eqref{eq:Rtildelambda} but with $Q^{(\tilde{\sigma})}_{<\sqrt{\gamma}^{-1}}$ replaced by $Q^{(\tilde{\sigma})}_{\geq \sqrt{\gamma}^{-1}}$.}
			\[
			L^{(\tilde{\lambda})}_{\mathcal{K},\text{small},\geq \sqrt{\gamma}^{-1}},\,R^{(\tilde{\lambda})}_{\mathcal{K},\text{small},\geq \sqrt{\gamma}^{-1}}.
			\]
			As a consequence, the differential equation 
			\[
			\kappa_1 + \frac{\lambda_{\tau}}{\lambda}\kappa_1 = \tilde{L}^{(\tilde{\lambda})}_{\text{small}, \geq \sqrt{\gamma}^{-1}}
			\]
			admits a solution satisfying the bound 
			\begin{align*}
				\big\|\kappa_1\big\|_{\tau^{-N}L^2_{d\tau}}\leq c(\tau_*)\cdot  \big[\big\|\langle\partial_{\tilde{\tau}}^2\rangle^{-1}\partial_{\tilde{\tau}}^2\tilde{\lambda}\big\|_{\tau^{-N}L^2_{d\tau}} + \big\|\tilde{\alpha}_{\tau}\big\|_{\log^{-1}\tau\cdot \tau^{-N}L^2_{d\tau}} + \big\|z_{nres}\big\|_{S}+\big\|(\tilde{\kappa}_1,\kappa_2)\big\|_{\tau^{-N}L^2_{d\tau}}\big],
			\end{align*}
			and similarly if we substitute $L^{(\tilde{\lambda})}_{\mathcal{K},\text{small},\geq \sqrt{\gamma}^{-1}}, R^{(\tilde{\lambda})}_{\mathcal{K},\text{small},\geq \sqrt{\gamma}^{-1}}$ on the right. 
		\end{lem}
		\begin{proof}(Lemma~\ref{lem:tildekappaoneiicontrib2}) Keeping in mind \eqref{eq:Xdef}, we shall treat here the contribution of the leading term $ -Q^{(\tilde{\tau})}_{<\tau^{\frac12+}}\big(\lambda^{-2}y_z\cdot W\big)$ to $L^{(\tilde{\lambda})}_{\text{small}, \geq \sqrt{\gamma}^{-1}}$, leaving the remaining terms to section~\ref{sec:appendix}. In light of \eqref{eq:Ltildelambda}, this consists of two terms, of which we consider 
			\begin{align*}
				\Im \int_{\tau}^\infty\int_0^\infty \xi^2\cdot S(\tau, \sigma,\xi)\cdot Q^{(\tilde{\sigma})}_{\geq\sqrt{\gamma}^{-1}}(\triangle)\mathcal{F}\Big(Q^{(\tilde{\sigma})}_{<\sigma^{\frac12+}}\big(\lambda^{-2}y_z\cdot W\big)\Big)(\sigma, \frac{\lambda(\tau)}{\lambda(\sigma)}\xi)\rho_1(\xi)\,d\xi d\sigma,
			\end{align*}
			the other term being treated similarly. Also recall \eqref{eq:triangleinparentheses}. Change the integration variable to $\tilde{\xi}: = \frac{\lambda(\tau)}{\lambda(\sigma)}\xi$, and use the relation 
			\begin{align*}
				\frac{\lambda^2(\sigma)}{\lambda^2(\tau)}\cdot\xi^2\cdot \Im S(\tau, \sigma,\xi) &= \frac{\lambda^2(\sigma)}{\lambda^2(\tau)}\cdot\tilde{\xi}^2\cdot \cos\big(\lambda^2(\sigma)\tilde{\xi}^2\int_{\sigma}^{\tau}\lambda^{-2}(s)\,ds\big)\\
				& = -\partial_{\tau}\Big(\sin\big(\lambda^2(\sigma)\tilde{\xi}^2\int_{\sigma}^{\tau}\lambda^{-2}(s)\,ds\big)\Big).
			\end{align*}
			We conclude that the preceding double integral can be written as 
			\begin{align*}
				&\partial_{\tau} \int_{\tau}^\infty\int_0^\infty \tilde{\xi}^2S_2(\sigma,\tau,\tilde{\xi})\cdot Z(\sigma,\tilde{\xi})\rho_1(\frac{\lambda(\sigma)}{\lambda(\tau)}\tilde{\xi})\,d\tilde{\xi} d\sigma\\
				& -  \int_{\tau}^\infty\int_0^\infty \tilde{\xi}^2S_2(\sigma,\tau,\tilde{\xi})\cdot Z(\sigma,\tilde{\xi})\partial_{\tau}\big(\rho_1(\frac{\lambda(\sigma)}{\lambda(\tau)}\tilde{\xi})\big)\,d\tilde{\xi} d\sigma\\
			\end{align*}
			where we put 
			\begin{align*}
				Z(\sigma,\tilde{\xi}) = \frac{Q^{(\tilde{\sigma})}_{\geq\sqrt{\gamma}^{-1}}(\triangle)\mathcal{F}\Big(Q^{(\tilde{\sigma})}_{<\sigma^{\frac12+}}\big(\lambda^{-2}y_z\cdot W\big)\Big)(\sigma,\tilde{\xi})}{\tilde{\xi}^2}. 
			\end{align*}
			The conclusion of the lemma is then a consequence of Lemma~\ref{lem:yzWbound3} in conjunction with Lemma~\ref{lem:tildeKfcontrol}. 
		\end{proof}
		
		We continue with the terms due to (i), again considering the most delicate term (see \eqref{eq:Xdef})
		\[
		Q^{(\tilde{\tau})}_{\geq\tau^{\frac12+}}\big(\lambda^{-2}y_z\cdot W\big)
		\]
		To treat its contribution to $\tilde{\kappa}_1$, in light of Lemma~\ref{lem:K_frefined}, it suffices to use that 
		\begin{align*}
			&\Big\|\langle \xi\partial_{\xi}\rangle^{1+\delta_0}\mathcal{F}\big(Q^{(\tilde{\tau})}_{\geq\tau^{\frac12+}}\big(\lambda^{-2}y_z\cdot W\big)\big)\Big\|_{\tau^{-N-1}L^2_{d\tau}L^2_{\rho(\xi)\,d\xi}}\\
			&\lesssim \big\|\langle\partial_{\tilde{\tau}}^2\rangle^{-1}\partial_{\tilde{\tau}}^2\tilde{\lambda}\big\|_{\tau^{-N}L^2_{d\tau}} + \big\|z_{nres}\big\|_{S}+\big\|(\tilde{\kappa}_1,\kappa_2)\big\|_{\tau^{-N}L^2_{d\tau}}.
		\end{align*}
		This in turn is a consequence of Lemma~\ref{lem:hightemprfreqyzdotW1}. We relegate the remaining terms from (i) to section~\ref{sec:appendix}. 
		\\
		
		Continuing with the terms contributed by (iii), consider the term 
		\begin{align*}
			\lambda^{-2}\Box^{-1}Q^{(\tilde{\tau})}_{\geq 1}\big(\tilde{\lambda}_{\tilde{\tau}\tilde{\tau}}\cdot\big(\partial_{\tilde{\lambda}}n_*^{(\tilde{\lambda})}-\lambda^2\Lambda W\cdot W\big)\big)\cdot W. 
		\end{align*}
		Here we use that 
		\begin{align*}
			&\Big\|\lambda^{-2}\big(\partial_{\tilde{\lambda}}n_*^{(\tilde{\lambda})}-\lambda^2\Lambda W\cdot W\big)\Big\|_{L^2_{R^3\,dR}}\lesssim\tau^{-1+O(\frac{1}{\nu})},\\
			&\Big\|\lambda^{-2}\partial_{\tilde{\tau}}\big(\partial_{\tilde{\lambda}}n_*^{(\tilde{\lambda})}-\lambda^2\Lambda W\cdot W\big)\Big\|_{L^2_{R^3\,dR}}\lesssim\tau^{-1-\frac12+O(\frac{1}{\nu})}.
		\end{align*}
		Further using that $\partial_{\tilde{\tau}} = \frac{\partial\tau}{\partial\tilde{\tau}}\cdot \partial_{\tau}$, we see that 
		\begin{align*}
			\lambda^{-2}\Box^{-1}Q^{(\tilde{\tau})}_{\geq 1}\big(\tilde{\lambda}_{\tilde{\tau}\tilde{\tau}}\cdot\big(\partial_{\tilde{\lambda}}n_*^{(\tilde{\lambda})}-\lambda^2\Lambda W\cdot W\big)\big)\cdot W = \partial_{\tau}Z(\tau, R), 
		\end{align*}
		where we have
		\begin{align*}
			\big\|Z\big\|_{\tau^{-N-\frac12+O(\frac{1}{\nu})}L^2_{d\tau}L^2_{R^3\,dR}}\lesssim \big\|\langle\partial_{\tilde{\tau}}^2\rangle^{-1}\partial_{\tilde{\tau}}^2\tilde{\lambda}\big\|_{\tau^{-N}L^2_{d\tau}}. 
		\end{align*}
		Recalling \eqref{eq:ImzR0}, and changing the integration variable to $\tilde{\xi} = \frac{\lambda(\tau)}{\lambda(\sigma)}\xi$, as well as substituting $\partial_{\sigma}Z$ for $E$ there, we infer after integration by parts with respect to $\sigma$ and application of Lemma~\ref{lem:K_frefined}, we can write 
		\begin{align*}
			\int_{\tau}^\infty\int_0^\infty \xi^2 S_1(\tau, \sigma,\xi)\cdot\mathcal{F}\big(\partial_{\sigma}Z\big)(\sigma, \frac{\lambda(\tau)}{\lambda(\sigma)}\xi)\rho(\xi)\,d\xi d\sigma = \partial_{\tau}\tilde{Z}(\tau),
		\end{align*}
		where we have the bound 
		\begin{align*}
			\Big\|Z\Big\|_{\tau^{-N-\frac12+O(\frac{1}{\nu})}L^2_{d\tau}}\lesssim  \big\|\langle\partial_{\tilde{\tau}}^2\rangle^{-1}\partial_{\tilde{\tau}}^2\tilde{\lambda}\big\|_{\tau^{-N}L^2_{d\tau}}. 
		\end{align*}
		Then the differential equation 
		\[
		\kappa_{1,\tau} + \frac{\lambda_{\tau}}{\lambda}\kappa_1(\tau) = \partial_{\tau}\tilde{Z}(\tau)
		\]
		admits a solution satisfying the bound 
		\[
		\big\|\kappa_1\big\|_{\tau^{-N-\frac12+O(\frac{1}{\nu})}L^2_{d\tau}}\lesssim  \big\|\langle\partial_{\tilde{\tau}}^2\rangle^{-1}\partial_{\tilde{\tau}}^2\tilde{\lambda}\big\|_{\tau^{-N}L^2_{d\tau}}. 
		\]
		Taking advantage of Prop.~\ref{prop:solnoftildelambdaeqn}  this is then consistent with the estimate in Prop.~\ref{prop:tildekappa1kappa2apriori}. 
		\\
		Note that the terms (v) immediately lead to the correct bound by replacing $\partial_{\tau}\tilde{Z}(\tau)$ by $\partial_{\tau}E$ in the immediately preceding, and we leave the remaining technical estimates for the terms in (iv) to section~\ref{sec:appendix}. 
	\end{proof}
	
	\subsubsection{Improved control over $\kappa_2$} Our point of departure is \eqref{eq:tildealpha1}, which was chosen in order to improve the right hand side of \eqref{eq:kappa2eqn}. The fact that $\kappa_2$ is non-vanishing is then due to 
	\begin{enumerate}
		\item The fact that we include the multiplier $Q^{(\tilde{\tau})}_{<0}$ in \eqref{eq:tildealpha1}. 
		\item The fact that we have omitted the terms 
		\[
		-\Re\big[\big(\lambda^{-2}n_*^{(\tilde{\lambda}, \underline{\tilde{\alpha}})} - W^2\big) z\big]|_{R = 0} - \Re\big(\lambda^{-2}yz\big)|_{R = 0}
		\]
		from \eqref{eq:kappa2eqn} in \eqref{eq:tildealpha1}.
		\item The fact that we have omitted the term $ \Re\big(e_1^{\text{mod}}\big)|_{R = 0}$ in  \eqref{eq:tildealpha1}. 
	\end{enumerate}
	
	Given the tools we have at our disposal, the contribution of each of these sources to the evolution of $\kappa_2$ will be straightforward to control. 
	\\
	To begin with, for the terms (i), we observe that the differential equation 
	\[
	\kappa_{2,\tau} + \frac{\lambda_{\tau}}{\lambda}\kappa_2(\tau) = Q^{(\tilde{\tau})}_{\geq 0}f,\,f\in \tau^{-N}L^2_{d\tau}
	\]
	and vanishing at $\tau = +\infty$ admits a solution satisfying 
	\begin{align*}
		\big\|\kappa_2\big\|_{\tau^{-N+\frac12+\frac{1}{4\nu}+}L^2_{d\tau}}\lesssim \big\|f\big\|_{\tau^{-N}L^2_{d\tau}}, 
	\end{align*}
	thus involving a power loss of decay. This is a straightforward consequence of the fact that
	\begin{align*}
		Q^{(\tilde{\tau})}_{\geq 0}f = \frac{\partial\tau}{\partial\tilde{\tau}}\cdot\partial_{\tau}\big(\partial_{\tilde{\tau}}^{-1}Q^{(\tilde{\tau})}_{\geq 0}f \big) = \partial_{\tau}\big(\frac{\partial\tau}{\partial\tilde{\tau}}\cdot\partial_{\tilde{\tau}}^{-1}Q^{(\tilde{\tau})}_{\geq 0}f \big) - \partial_{\tau}\big(\frac{\partial\tau}{\partial\tilde{\tau}} \big)\cdot\partial_{\tilde{\tau}}^{-1}Q^{(\tilde{\tau})}_{\geq 0}f
	\end{align*}
	in conjunction with the bounds 
	\begin{align*}
		\big\|\frac{\partial\tau}{\partial\tilde{\tau}}\cdot\partial_{\tilde{\tau}}^{-1}Q^{(\tilde{\tau})}_{\geq 0}f \big\|_{\tau^{-N+\frac12+\frac{1}{4\nu}+}L^2_{d\tau}} + \big\|\partial_{\tau}(\frac{\partial\tau}{\partial\tilde{\tau}})\cdot\partial_{\tilde{\tau}}^{-1}Q^{(\tilde{\tau})}_{\geq 0}f \big\|_{\tau^{-N-\frac12+\frac{1}{4\nu}+}L^2_{d\tau}}\lesssim \big\|f\big\|_{\tau^{-N}L^2_{d\tau}}.
	\end{align*}
	We then let 
	\[
	f = \Re\big(\mathcal{L}z\big)|_{R = 0} - \Re\big(\lambda^{-2}(y \tilde{u}_*^{(\tilde{\lambda}, \underline{\tilde{\alpha}})})\big)|_{R = 0} + \Re( e_1)|_{R = 0},
	\]
	where $z$ in the first term on the right is given via its distorted Fourier transform by \eqref{eq:formalexpansion}, with $E$ given in turn by \eqref{eq:recallE} (recalling the first equation of \eqref{eq:zeqn2}). The desired estimate on $\big\|f\big\|_{\tau^{-N}L^2_{d\tau}}$ for the first term on the right is then a consequence of Lemma~\ref{lem:tildeKfcontrol}, Lemma~\ref{lem:concatenation1} in conjunction with Lemma~\ref{lem:yzWbound1}, Lemma~\ref{lem:Xtildelambdaperturbterms}, Lemma~\ref{lem:basicboundsfore_1modandnonlinearterms}, Lemma~\ref{lem:Xtildelambdafinaltermcrudebound} and finally the estimate
	\begin{align*}
		\big\|\langle\xi\partial_{\xi}\rangle^{1+}\mathcal{F}\big(e_1^{\text{mod}}\big)\big\|_{\tau^{-N}L^2_{d\tau}L^2_{\rho(\xi)\,d\xi}}\lesssim \big\|\tilde{\alpha}_{\tau}\big\|_{\tau^{-N}L^2_{d\tau}} + \big\|\langle\partial_{\tilde{\tau}}^2\rangle^{-1}\tilde{\lambda}_{\tilde{\tau}\tilde{\tau}}\big\|_{\tau^{-N}L^2_{d\tau}},
	\end{align*}
	which follows from the symbol behavior of the terms forming $e_1^{\text{mod}}$ as well as Plancherel's theorem for the distorted Fourier transform. The required bound for the boundary terms $ \Re\big(\lambda^{-2}(y \tilde{u}_*^{(\tilde{\lambda}, \underline{\tilde{\alpha}})})\big)|_{R = 0}, \Re( e_1)|_{R = 0}$ is a straightforward consequence of Lemma~\ref{lem:wavebasicinhom}, recalling \eqref{eq:yzdfn}, \eqref{eq:y2def}, \eqref{eq:zeqn2}. The remaining terms (ii), (iii) are also straightforward to bound:
	\begin{align*}
		\big\|\Re\big[\big(\lambda^{-2}n_*^{(\tilde{\lambda}, \underline{\tilde{\alpha}})} - W^2\big) z\big]|_{R = 0}\big\|_{\tau^{-N}L^2_{d\tau}} + \big\| \Re\big(\lambda^{-2}yz\big)|_{R = 0}\big\|_{\tau^{-N}L^2_{d\tau}} + \big\|\Re\big(e_1^{\text{mod}}\big)|_{R = 0}\big\|
	\end{align*}

	\newpage 
	
	\section{The non-resonant part I; introducing the principal variables }\label{sec:nonres}
	
	\subsection{Preliminary reductions}\label{subsec:nonresprelimreduc}
	
	Keeping in mind the key decomposition \eqref{eq:zdecompbasic}, we now turn our attention to the {\it{non-resonant part}} $z_{nres}(\tau, R)$. Once we can improve the bounds for it, then the conclusion of Prop.~\ref{prop:tildekappa1kappa2apriori} together with Prop.~\ref{prop:solnoftildelambdaeqn}, Prop.~\ref{prop:tildealphomodeqn} give improved control over all parameters describing the solution. 
	\\
	
	Our point of departure is the first equation in \eqref{eq:zeqn2}, which we solve in terms of the distorted Fourier transform of $z_{nres}$ and then use \eqref{eq:znres}. To find the distorted Fourier transform, we take advantage of the propagator \eqref{eq:formalexpansion}, where we have 
	\begin{equation}\label{eq:recallE}\begin{split}
			&E: =\\& - \lambda^{-2}y_z\cdot W-\lambda^{-2}\big(y \tilde{u}_*^{(\tilde{\lambda}, \underline{\tilde{\alpha}})} -y_z\cdot W\big) -\big(\lambda^{-2}n_*^{(\tilde{\lambda}, \underline{\tilde{\alpha}})} - W^2\big) z - \lambda^{-2}yz +  e_1 + e_1^{\text{mod}}
	\end{split}\end{equation}
	Splitting the Schr\"odinger propagator as in \eqref{eq:ScalK1}, \eqref{eq:ScalK2}, the contribution of the part $S_{\mathcal{K}}$, which is of a more complicated algebraic structure, will turn out to be pertubative. Thus our strategy shall be to write 
	\begin{equation}\label{eq:hatzsplitting}
		\hat{z} = \hat{z}_* + \hat{z}_{**}, 
	\end{equation}
	where the main part $z_*$ solves 
	\begin{equation}\label{eq:hatzstareqn}
		-i\big(\partial_{\tau} - 2\frac{\lambda_{\tau}}{\lambda} - \frac{\lambda_{\tau}}{\lambda}\xi\partial_{\xi}\big)\hat{z}_*(\xi) - \xi^2\hat{z}_*(\xi) =  \mathcal{F}\big(E\big).
	\end{equation}
	This we can solve explicitly by means of Proposition~\ref{prop:linpropagator}. The main problem in improving bounds for $z_{nres}$, given in terms of $\hat{z}$ by \eqref{eq:znres}, comes from the linear term 
	\[
	-\lambda^{-2}y_z\cdot W.
	\]
	More specifically, in light of the decomposition \eqref{eq:zdecompbasic}, where the resonant part is already better, the main difficulty comes from the term 
	\[
	- \lambda^{-2}y_{z_{nres}}\cdot W.
	\]
	Our strategy shall be to 'peel off' further perturbative layers of this term to arrive at what we call the 'principal part' of $z_{nres}$, and associated with it the 'principal part' of the potential term $y$. The starting point is the relation 
	\begin{equation}\label{eq:snresdecomp}\begin{split}
			&z_{nres}(\tau, R) =\\&(-i)\int_{\tau}^\infty \int_0^\infty \big[\phi(R;\xi) - \phi(R;0)\big]\cdot S(\tau, \sigma,\xi)\cdot \mathcal{F}\big(E\big)(\sigma, \frac{\lambda(\tau)}{\lambda(\sigma)}\xi\big)\rho(\xi)\,d\xi d\sigma\\
			& + z_{nres,\mathcal{K}}(\tau, R),
	\end{split}\end{equation}
	where we define the last term $ z_{nres,\mathcal{K}}(\tau, R)$ by the first formula on the right but with $\mathcal{F}(E)$ replaced by 
	\[
	\sum_{j=1}^\infty \big(-i\frac{\lambda_{\tau}}{\lambda}\mathcal{K}\circ S\big)^j\big(\mathcal{F}(E)\big),
	\]
	and this term will again enjoy a smallness gain. We now consider the first term on the right hand side in \eqref{eq:snresdecomp}, which we call $z_{nres*}$. Fixing a small enough absolute constant $\epsilon_1>0$, we split it into 
	\begin{equation}\label{eq:znresstardecomp}
		z_{nres*}: = z_{nres*,<\epsilon_1}(\tau, R) + z_{nres*,\sim\epsilon_1}(\tau, R) + z_{nres*,>\epsilon_1^{-1}}(\tau, R),
	\end{equation}
	these terms being obtained by including smooth cutoffs $\chi_{\xi\lesssim \epsilon_1}$, $\chi_{\xi\sim \epsilon_1}$, $\chi_{\xi\gtrsim \epsilon_1^{-1}}$ into the integral expression on the right in \eqref{eq:snresdecomp}.
	The next lemma implies that the first and third of these terms are perturbative:
	\begin{lem}\label{lem:znresperturbative1} We have the bounds 
		\begin{align*}
			\Big\|z_{nres*,<\epsilon_1}\Big\|_{S}&\ll_{\epsilon_1}\big\|z\big\|_{S} + \big\|\langle\partial_{\tilde{\tau}}^2\rangle^{-1}\partial_{\tilde{\tau}}^2\tilde{\lambda}\big\|_{\tau^{-N}L^2_{d\tau}} + \big\|(\tilde{\kappa_1},\kappa_2)\big\|_{\tau^{-N}L^2_{d\tau}} + \big\|\tilde{\alpha}_{\tau}\big\|_{\tau^{-N}L^2_{d\tau}}\\& + \big
			\| e_1\big\|_{\tau^{-N-1}L^2_{d\tau}L^2_{R^3\,dR}},
		\end{align*}
		\begin{align*}
			\Big\|z_{nres*,>\epsilon_1^{-1}}\Big\|_{S}&\ll_{\epsilon_1}\big\|z\big\|_{S} + \big\|\langle\partial_{\tilde{\tau}}^2\rangle^{-1}\partial_{\tilde{\tau}}^2\tilde{\lambda}\big\|_{\tau^{-N}L^2_{d\tau}} + \big\|(\tilde{\kappa_1},\kappa_2)\big\|_{\tau^{-N}L^2_{d\tau}} + \big\|\tilde{\alpha}_{\tau}\big\|_{\tau^{-N}L^2_{d\tau}}\\& + \big
			\|\langle \nabla^4\rangle e_1\big\|_{\tau^{-N-1}L^2_{d\tau}L^2_{R^3\,dR}}.
		\end{align*}
	\end{lem}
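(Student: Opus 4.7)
The strategy is to use the explicit Schr\"odinger propagator from Proposition~\ref{prop:linpropagator} applied to the source term $E$ in \eqref{eq:recallE}, and to exploit the structural asymptotics of $\phi(R;\xi)-\phi(R;0)$ and of the spectral measure $\rho(\xi)$ recalled in subsection~\ref{subsec:basicfourier}. In both frequency regimes, the gain in $\epsilon_1$ will come from combining these asymptotics with the known size of $\mathcal{F}(E)$, reducing matters to the basic $L^2$ estimate of Lemma~\ref{lem:basicL2} and to the Sobolev-weighted/pointwise parts of the $S$-norm.

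For $z_{nres*,<\epsilon_1}$, the key observation is that in the non-oscillatory range $R\xi\lesssim 1$ one has
\[
\phi(R;\xi)-\phi(R;0) = \phi(R;0)\cdot (R\xi)^2\cdot(1+O((R\xi)^2)),
\]
while in the oscillatory range $R\xi\gtrsim 1$ one still has the absolute bound $|\phi(R;\xi)|\lesssim \xi^{1/2}R^{-3/2}\langle\log\xi\rangle$ and similar for $\phi(R;0)$; in either case the contribution on $\xi<\epsilon_1$ inherits an overall factor bounded by $\epsilon_1^{1-}$ after integrating against $\rho(\xi)\sim \xi\log^{-2}\xi$ and using Cauchy--Schwarz. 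I would therefore estimate each piece of $\mathcal F(E)$ separately: for the principal non-local piece $-\lambda^{-2}y_z\cdot W$ by Corollary~\ref{cor:yzW}; for $-\lambda^{-2}(y\tilde u_*^{(\tilde\lambda,\underline{\tilde\alpha})}-y_z W)$ and $-(\lambda^{-2}n_*^{(\tilde\lambda,\underline{\tilde\alpha})}-W^2)z$ by Lemma~\ref{lem:wavebasicinhom} together with the crude approximate--solution asymptotics (the difference $\tilde u_*^{(\tilde\lambda,\underline{\tilde\alpha})}-W$ and $\lambda^{-2}n_*^{(\tilde\lambda,\underline{\tilde\alpha})}-W^2$ are polynomially decaying in $\tau$); for the quadratic term $-\lambda^{-2}yz$ by a direct $L^2$--$L^\infty$ product estimate absorbing one factor $z$ into $\|z\|_S$; and for $e_1, e_1^{\mathrm{mod}}$ by the bounds already used in Lemma~\ref{lem:e1modgooderrors} and Theorem~1.1 of \cite{KrSchm}. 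Passing the resulting Schur bound through Lemma~\ref{lem:basicL2} and reintroducing the weighted $L^\infty_R$ and $L^{2+}\cap L^{8/3+}_{R^3\,dR}$ norms that build $S$ (using Sobolev embedding for the latter and the pointwise asymptotics above for the former) yields the first inequality with a prefactor $o_{\epsilon_1}(1)$.

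For $z_{nres*,>\epsilon_1^{-1}}$ the mechanism is dual: at high distorted frequency, $\phi(R;\xi)$ is bounded by $R^{-3/2}\xi^{-3/2}$ up to oscillations, and hence the operator $\mathcal L^{-k}$ translates to multiplication by $\xi^{-2k}$ on the Fourier side. Thus applying $\mathcal L^{-2}$ to $E$ on the right and $\mathcal L^{2}$ on the left one gains $\xi^{-4}$ on the high-frequency window, which more than compensates any growth of $\rho(\xi)\sim\xi^3$. To realise this I would integrate the transference identity by reading the $U$--component of the $S$-norm as an $\mathcal L^{1/4}$--bound on the Fourier side; for each constituent of $E$ one then needs $\|\mathcal L^2 E\|$ in a weak space, which for $-\lambda^{-2}y_z\cdot W$ follows from the Sobolev--type variant of Corollary~\ref{cor:yzW} (note $W$ is Schwartz), for the nonlinear/perturbative terms from the approximate-solution regularity combined with the $S$-norm control on $z$, and for $e_1$ from the stronger hypothesis $\|\langle\nabla^4\rangle e_1\|$ appearing on the right.

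The main obstacle is the non-local linear term $-\lambda^{-2}y_z\cdot W$, because $y_z$ depends on $z$ through the forward wave propagator and a priori carries no decay in time or regularity in frequency. In both regimes this is handled by the structural representation of $y_z$ produced in Lemma~\ref{lem:specialF1} and its frequency-localised refinements in Lemma~\ref{lem:wavebasicinhomstructure2}, which give the required $\langle\xi\partial_\xi\rangle^{1+\delta_0}$ control of the distorted Fourier coefficient of $y_z\cdot W$. Once this delicate term is absorbed, all remaining terms in $E$ are either genuinely small (the differences involving $\tilde u_*^{(\tilde\lambda,\underline{\tilde\alpha})}$ and $n_*^{(\tilde\lambda,\underline{\tilde\alpha})}$), quadratic in $z$ (and thus perturbative), or sourced by modulation parameters/$e_1,e_1^{\mathrm{mod}}$, all of which appear on the right-hand side of the desired inequalities. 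Summing the resulting estimates yields the two inequalities with factors $o_{\epsilon_1}(1)$, as claimed.
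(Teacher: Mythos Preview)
Your overall plan captures the right spirit but has two concrete gaps that would make the argument fail as written.

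First, routing the estimate through Lemma~\ref{lem:basicL2} is not sufficient: that lemma loses a full power of $\tau$ and only controls the $L^2_{\rho\,d\xi}$-part, whereas the $S$-norm in \eqref{eq:Snormdefi} also involves weighted $L^\infty_{dR}$ pieces and the $\|\mathcal{L}^2 z_{nres}\|_U$ component. The paper instead builds a dedicated propagator estimate, Lemma~\ref{lem:nonresbasicsmallfreq1}, which bounds the full $S$-norm of $z_{nres,<\epsilon_1}$ in terms of $\|\langle\xi\partial_\xi\rangle^{1+\delta_0}f\|$, $\|\langle\xi\partial_\xi\rangle^{1+\delta_0}\partial_\tau f\|$ and a higher-derivative piece. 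This is what actually produces the $\ll_{\epsilon_1}$ gain through the logarithmic structure of $\rho$, not a crude Cauchy--Schwarz argument. Relatedly, for the principal term $-\lambda^{-2}y_z\cdot W$, Corollary~\ref{cor:yzW} alone only gives control \emph{with a small $\tau$-loss}; the paper explicitly notes this and closes the gap by a further frequency dichotomy $\xi<\tau^{-\delta_1}$ versus $\xi\geq\tau^{-\delta_1}$, invoking Lemma~\ref{lem:wavebasicinhomstructure2} on the very small frequencies and Remark~\ref{rem:nonresbasicsmallfreq1} on the rest.

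Second, your claim that $\lambda^{-2}n_*^{(\tilde\lambda,\underline{\tilde\alpha})}-W^2$ is ``polynomially decaying in $\tau$'' glosses over the real difficulty: by Lemma~\ref{lem:approxsolasymptotics3} this factor has \emph{no useful spatial decay} in the wave radiation zone $R\gtrsim\tau^{\frac12-\frac{1}{4\nu}}$, only the bare $\lambda^{-2}\sim\tau^{-1-\frac{1}{2\nu}}$. A direct $L^2$ estimate there would not be absorbable into $\|z\|_S$ with the required smallness. The paper handles this by splitting at $R\sim\tau^{\frac12-\frac{1}{4\nu}}$, treating the inner region via Lemma~\ref{lem:Xtildelambdafinaltermcrudeboundrefined}, and on the outer region introducing the pseudo-transference operator of Lemma~\ref{lem:pseudotransferenceoperator2}, which reduces the contribution to the connecting-type term handled in Lemma~\ref{lem:nonresconnect1}. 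Your sketch of the high-frequency case via $\mathcal{L}^{-2}$ is closer to the paper's Lemma~\ref{lem:nonresbasiclargefreq1}, but the same two issues (the $y_z\cdot W$ loss and the $n_*-W^2$ spatial non-decay) must be addressed there as well.
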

	\begin{proof} We need to deal with the contributions of the various terms constituting $E$, the latter given by \eqref{eq:recallE}. 
		\\
		
		{\it{The estimate for the small frequency term $z_{nres*,<\epsilon_1}$.}}
		\\
		
		Our key technical tool to achieve the required estimate is Lemma~\ref{lem:nonresbasicsmallfreq1}, and establishing the present lemma reduces to bounding the various terms constituting $E$ in terms of the norm occuring in  Lemma~\ref{lem:nonresbasicsmallfreq1}. Start with the most delicate term 
		\[
		-\lambda^{-2}y_z\cdot W.
		\]
		where we intend to combine Lemma~\ref{lem:nonresbasicsmallfreq1} with Corollary~\ref{cor:yzW}, Corollary~\ref{cor:yzWpartialtau}. Observing the three terms on the right in Lemma~\ref{lem:nonresbasicsmallfreq1}, we note that Lemma~\ref{lem:nonresbasicsmallfreq1yzW} suffices to deal with the last two, but  Corollary~\ref{cor:yzW} allows us to deal with the first of the three terms only up to a small power loss in $\tau$, let alone a smallness gain. However, restricting the frequency $\xi$ to even smaller size $\xi<\tau^{-\delta_1}$, the desired bound follows from Lemma~\ref{lem:wavebasicinhomstructure2}, while if $\xi\geq \tau^{-\delta_1}$, we can use Remark~\ref{rem:nonresbasicsmallfreq1} in conjunction with the estimate $\big\|\mathcal{F}\big(\lambda^{-2}y_z\cdot W\big)\tau,\cdot)\big\|_{\tau^{-N}(L^2_{d\tau}L^2_{\rho\,d\xi}\cap L^\infty_{d\xi})}\lesssim \big\|z\big\|_{S}$, in turn a consequence of Corollary~\ref{cor:yzW} and Remark~\ref{rem:cor:yzW}.
		
		We next deal with the contribution of the first term on the right of the first equation in \eqref{eq:zeqn2}. This can be handled by means of Lemma~\ref{lem:nonresbasicsmallfreq1}, Remark~\ref{rem:nonresbasicsmallfreq1} together with Lemma~\ref{lem:Xtildelambdaperturbterms}, Lemma~\ref{lem:ytildelambdamodhightempfreq} .
		\\
		We continue with the contribution of the more delicate term $\big(\lambda^{-2}n_*^{(\tilde{\lambda}, \underline{\tilde{\alpha}})} - W^2\big) z$ in \eqref{eq:zeqn2}. We decompose this term into a 'good' part with plenty of decay, and a more difficult part with worse decay: 
		\begin{align*}
			\big(\lambda^{-2}n_*^{(\tilde{\lambda}, \underline{\tilde{\alpha}})} - W^2\big) z = \chi_{R\lesssim \tau^{\frac12 - \frac{1}{4\nu}}}\big(\lambda^{-2}n_*^{(\tilde{\lambda}, \underline{\tilde{\alpha}})} - W^2\big) z + \chi_{R\gtrsim\tau^{\frac12 - \frac{1}{4\nu}}}\big(\lambda^{-2}n_*^{(\tilde{\lambda}, \underline{\tilde{\alpha}})} - W^2\big) z
		\end{align*}
		The contribution of the first term on the right is handled by combining Lemma~\ref{lem:Xtildelambdafinaltermcrudeboundrefined} with Lemma~\ref{lem:nonresbasicsmallfreq1}. As for the second term on the right, we can use Lemma~\ref{lem:pseudotransferenceoperator2} to reduce this to the next Lemma~\ref{lem:nonresconnect1}.
		\\
		It remains to deal with the contribution of the last three terms in \eqref{eq:recallE}. For the term $-\lambda^{-2}yz$, the desired estimate is a consequence of the last part of Lemma~\ref{lem:nonresbasicsmallfreq1} in conjunction with Lemma~\ref{lem:basicboundsfore_1modandnonlinearterms} and Plancherel's theorem for the distorted Fourier transform. The contribution of $e_1$ is handled similarly as we suppose that $e_1\in \tau^{-N-2}L^2_{d\tau}L^2_{R^3\,dR}$. It remains to deal with the source term $e_1^{\text{mod}}$ (recall \eqref{eq:e1moddef}, \eqref{eq:E1mod}). This shall be done in section~\ref{sec:appendix}. 
		\\
		The high-frequency bound involving $z_{nres*,>\epsilon_1^{-1}}$ is obtained similarly by taking advantage of Lemma~\ref{lem:nonresbasiclargefreq1}. 
	\end{proof}
	
	In order to complete the first stage of the 'peeling process' for $z_{nres}$, we also need to control the term $z_{nres,\mathcal{K}}$, which we call of 'connecting type' since it arises by a number of applications of the operator $S\circ\big(\frac{\lambda_{\tau}}{\lambda}\mathcal{K}\big)$ to $E$. Here we have 
	\begin{lem}\label{lem:nonresconnect1} The following bound obtains: for $j\geq 1$
		\begin{align*}
			&\Big\|\int_{\tau}^\infty \int_0^\infty \big[\phi(R;\xi) - \phi(R;0)\big]\cdot S\big(-i\frac{\lambda_{\tau}}{\lambda}\mathcal{K}\circ S\big)^j\big(\mathcal{F}(E)\big)(\sigma, \frac{\lambda(\tau)}{\lambda(\sigma)}\xi\big)\rho(\xi)\,d\xi d\sigma\Big\|_{S}\\
			&\leq \delta^j(N,\tau_*)\cdot \Big[\big\|z\big\|_{S} + \big\|\langle\partial_{\tilde{\tau}}^2\rangle^{-1}\partial_{\tilde{\tau}}^2\tilde{\lambda}\big\|_{\tau^{-N}L^2_{d\tau}} + \big\|(\tilde{\kappa}_1,\kappa_2)\big\|_{\tau^{-N}L^2_{d\tau}} + \big\|\tilde{\alpha}_{\tau}\big\|_{\tau^{-N}L^2_{d\tau}}\\&\hspace{2cm} + \big
			\|\langle \nabla^4\rangle e_1\big\|_{\tau^{-N-1}L^2_{d\tau}L^2_{R^3\,dR}}\Big],
		\end{align*}
		where $\lim_{N,\,\tau_*\rightarrow\infty}\delta(N,\tau_*) = 0$. In particular, we conclude that 
		\begin{align*}
			&\big\|z_{nres,\mathcal{K}}\big\|_{S}\ll_{N,\tau_*}\big\|z\big\|_{S} + \big\|\langle\partial_{\tilde{\tau}}^2\rangle^{-1}\partial_{\tilde{\tau}}^2\tilde{\lambda}\big\|_{\tau^{-N}L^2_{d\tau}} + \big\|(\tilde{\kappa}_1,\kappa_2)\big\|_{\tau^{-N}L^2_{d\tau}} + \big\|\tilde{\alpha}_{\tau}\big\|_{\tau^{-N}L^2_{d\tau}}\\&\hspace{2cm} + \big
			\|\langle \nabla^4\rangle e_1\big\|_{\tau^{-N-1}L^2_{d\tau}L^2_{R^3\,dR}}
		\end{align*}
	\end{lem}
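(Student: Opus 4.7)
The plan is to establish geometric convergence of the series $\sum_{j\geq 1}S\big(-i\frac{\lambda_{\tau}}{\lambda}\mathcal{K}\circ S\big)^j\big(\mathcal{F}(E)\big)$ with respect to each of the component norms constituting $\|\cdot\|_S$, by showing that every single composition $S\circ\big(-i\frac{\lambda_{\tau}}{\lambda}\mathcal{K}\circ S\big)$ acting on the distorted Fourier side picks up a smallness factor $\delta(N,\tau_*)$ which can be made arbitrarily small by choosing $N$ large and $\tau_*$ large. The second claim is then an immediate consequence of the first after combining with Lemma~\ref{lem:znresperturbative1} (and its proof ingredients) to control the initial source $\mathcal{F}(E)$.

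The first step is to set up a suitable base space on the distorted Fourier side. Taking Lemma~\ref{lem:basicL2} as the model estimate, the natural choice is the mixed norm $\big\|\hat{z}\big\|_{\tau^{1-N}L^2_{d\tau}L^2_{\rho\,d\xi}}$ controlled by $\frac{1}{\sqrt{N}}\cdot \big\|G\big\|_{\tau^{-N}L^2_{d\tau}L^2_{\rho\,d\xi}}$ for the source $G$. On this space $\mathcal{K}$ is bounded (uniformly in $\tau$) by Lemma~\ref{lem:KboundsKST}, and the multiplier $\frac{\lambda_{\tau}}{\lambda}\sim\tau^{-1}$ converts $\tau^{1-N}L^2_{d\tau}$ back into $\tau^{-N}L^2_{d\tau}$. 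Composing, the operator $S\circ\big(-i\frac{\lambda_{\tau}}{\lambda}\mathcal{K}\circ S\big)$ maps $L^2_{\rho\,d\xi}$-valued sources to themselves with a gain of $\frac{1}{N}$, and iterating gives the required $\delta^j(N,\tau_*)$ smallness on the Fourier side. The inverse distorted Fourier transform then yields the corresponding bound in the $L^{2+}$ components of the $S$-norm (using the $L^p$-boundedness properties of $\mathcal{F}^{-1}$ and interpolation to handle $L^{2+},L^{\frac83+}$) as well as in the $U$-norm after absorbing powers of $\mathcal{L}^2$ into $\xi^4$ weights on the Fourier side; the factor $\xi^4$ is harmless since $\mathcal{K}$ preserves such weights in view of the kernel bounds in Lemma~\ref{lem:KboundsKST}.

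The main obstacle is controlling the weighted $L^\infty_{dR}$ components $\big\|\langle R\rangle^{-\delta_0}z_{nres}\big\|_{\tau^{-N}L^2_{d\tau}L^\infty_{dR}}$ and $\big\|\langle R\rangle^{\frac12-\delta_0}\nabla_R z_{nres}\big\|_{\tau^{-N}L^2_{d\tau}L^\infty_{dR}}$, because the pointwise reconstruction of $z_{nres}$ from $\hat z$ via $\int_0^\infty[\phi(R;\xi)-\phi(R;0)]\hat z(\xi)\rho(\xi)\,d\xi$ needs to be estimated uniformly in $R$, and this requires splitting into small, medium and large distorted frequency regimes exactly as in the proof of Lemma~\ref{lem:znresperturbative1}. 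The plan is to split dyadically in $\xi$ and apply in each regime the relevant Fourier-side estimate (Lemma~\ref{lem:nonresbasicsmallfreq1} for $\xi<\epsilon_1$, the standard oscillatory representation from subsection~\ref{subsec:basicfourier} for $\xi\sim 1$, and Lemma~\ref{lem:nonresbasiclargefreq1} for $\xi>\epsilon_1^{-1}$) to the iterated expression $\big(-i\frac{\lambda_{\tau}}{\lambda}\mathcal{K}\circ S\big)^{j}\big(\mathcal{F}(E)\big)$, using the preservation of symbol/derivative bounds under $\mathcal{K}$ from Lemma~\ref{lem:KboundsKST} (in particular the trace derivative bounds, which are what allows iteration without losing regularity in $\xi$).

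The last step is to verify the base case: the source $\mathcal{F}(E)$ satisfies
\[
\big\|\mathcal{F}(E)\big\|_{\tau^{-N}L^2_{d\tau}L^2_{\rho\,d\xi}}\lesssim \big\|z\big\|_S+\big\|\langle\partial_{\tilde\tau}^2\rangle^{-1}\partial_{\tilde\tau}^2\tilde\lambda\big\|_{\tau^{-N}L^2_{d\tau}}+\big\|(\tilde\kappa_1,\kappa_2)\big\|_{\tau^{-N}L^2_{d\tau}}+\big\|\tilde\alpha_\tau\big\|_{\tau^{-N}L^2_{d\tau}}+\big\|\langle\nabla^4\rangle e_1\big\|_{\tau^{-N-1}L^2_{d\tau}L^2_{R^3\,dR}},
\]
which follows term by term from Corollary~\ref{cor:yzW}, Lemma~\ref{lem:basicboundsfore_1modandnonlinearterms}, Lemma~\ref{lem:Xtildelambdaperturbterms}, Lemma~\ref{lem:Xtildelambdafinaltermcrudebound}, and Plancherel for the distorted Fourier transform applied to $e_1+e_1^{\text{mod}}$. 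Combining the base case with the geometric decay of $\delta^j(N,\tau_*)$ closes the proof, and summing the $j\geq 1$ series immediately gives the stated bound on $z_{nres,\mathcal{K}}$ with $\delta(N,\tau_*)$ replaced by $\sum_{j\geq 1}\delta^j(N,\tau_*)\leq 2\delta(N,\tau_*)$ for $\tau_*,N$ large enough.
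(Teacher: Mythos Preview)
Your proposal has a genuine gap. The basic iteration you set up via Lemma~\ref{lem:basicL2} only places $S\big(-i\tfrac{\lambda_{\tau}}{\lambda}\mathcal{K}\circ S\big)^j\big(\mathcal{F}(E)\big)$ into $\tau^{1-N}L^2_{d\tau}L^2_{\rho\,d\xi}$, not $\tau^{-N}L^2_{d\tau}L^2_{\rho\,d\xi}$; the paper in fact states explicitly after \eqref{eq:formalexpansion} that ``this simple estimate results in a loss of one power of $\tau$.'' This loss is fatal for the first three components of $\|\cdot\|_S$, all of which carry weight $\tau^{-N}$. Your attempted remedy, namely to apply Lemmas~\ref{lem:nonresbasicsmallfreq1} and~\ref{lem:nonresbasiclargefreq1} in the small/large frequency regimes, does not close: those lemmas take as input bounds of the form $\big\|\langle\xi\partial_{\xi}\rangle^{1+\delta_0}f\big\|_{\tau^{-N}L^2_{d\tau}(L^2_{\rho}\cap L^\infty)}$ and $\big\|\partial_{\tau}f\big\|_{\tau^{-N-\frac12-\frac{1}{4\nu}+}L^2_{d\tau}(\ldots)}$, and neither the $\xi\partial_{\xi}$-regularity nor the sharp $\tau^{-N}$ decay are preserved under naive iteration of $\mathcal{K}\circ S$. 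Applying $\xi\partial_{\xi}$ to $S(\tau,\sigma,\xi)$ brings down a factor $\sim (\sigma-\tau)\xi^2$, which is unbounded, so the ``preservation of symbol bounds under $\mathcal{K}$'' from Lemma~\ref{lem:KboundsKST} is not enough by itself.

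What the paper actually does is invoke the resonant/non-resonant splitting of $\mathcal{K}$ developed in the proof of Lemma~\ref{lem:concatenation1}: one writes the product $\prod_l(-i\tfrac{\lambda_{\tau}}{\lambda}\mathcal{K})\circ S$ via the expansion \eqref{eq:SKproductexpansion1}, and in each resonant string the oscillatory phases from the successive $S$-propagators combine into the single $\xi$-phase \eqref{eq:resprodxiphase1}, which is non-stationary because the cutoffs in $\mathcal{K}_{res}^{(l)}$ force $|\triangle\tilde{\eta}_l|\ll\xi$. Integration by parts in $\xi$ against this combined phase is what recovers the missing power of $\tau$ without losing the smallness gain $(\sqrt{N})^{-j}$; the non-resonant pieces are handled directly by the off-diagonal decay in $\mathcal{K}_{nres}^{(l)}$. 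This phase-combination idea is the essential mechanism you are missing.
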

	The proof of this lemma can be carried out analogously to the preceding one, invoking the arguments for Lemma~\ref{lem:nonresbasicsmallfreq1} and Lemma~\ref{lem:nonresbasiclargefreq1} but also taking advantage of relation \eqref{eq:SKproductexpansion1} and combining the phases as in \eqref{eq:resprodxiphase1}. The smallness comes from an additional integration over time and the rapid temporal decay. 
	\\
	
	The preceding lemmas~\ref{lem:nonresconnect1}, ~\ref{lem:znresperturbative1} allow us to reformulate \eqref{eq:snresdecomp} in the following more concise form:
	\begin{equation}\label{eq:snresdecomp1}\begin{split}
			&z_{nres}(\tau, R) =\\&(-i)\int_{\tau}^\infty \int_0^\infty\chi_{\epsilon_1\lesssim\xi\lesssim \epsilon_1^{-1}}\big[\phi(R;\xi) - \phi(R;0)\big]\cdot S(\tau, \sigma,\xi)\cdot \mathcal{F}\big(E\big)(\sigma, \frac{\lambda(\tau)}{\lambda(\sigma)}\xi\big)\rho(\xi)\,d\xi d\sigma\\
			& + z_{nres,\text{small}}(\tau, R),
	\end{split}\end{equation}
	where the last term enjoys the improved bound 
	\begin{align*}
		\big\| z_{nres,\text{small}}\big\|_{S}&\ll_{N,\tau_*}\big\|z\big\|_{S} + \big\|\langle\partial_{\tilde{\tau}}^2\rangle^{-1}\partial_{\tilde{\tau}}^2\tilde{\lambda}\big\|_{\tau^{-N}L^2_{d\tau}} + \big\|(\tilde{\kappa}_1,\kappa_2)\big\|_{\tau^{-N}L^2_{d\tau}} + \big\|\tilde{\alpha}_{\tau}\big\|_{\tau^{-N}L^2_{d\tau}}\\&\hspace{0.7cm} + \big
		\|\langle \nabla^4\rangle e_1\big\|_{\tau^{-N-1}L^2_{d\tau}L^2_{R^3\,dR}}
	\end{align*}
	
	\subsection{Introducing the principal parts of $z_{nres}$ and of $y_z$.} According to \eqref{eq:snresdecomp1} the main contribution to $z_{nres}$ comes from the integral expression on the right hand side. In turn recalling \eqref{eq:recallE} for $E$, the main contribution comes on the one hand from the first term
	\[
	-\lambda^{-2}y_z\cdot W,
	\]
	but also the delicate interaction of the modulated potential term $y_{\tilde{\lambda}}$ with the bulk term $W$, given by 
	\[
	-\lambda^{-2}y_{\tilde{\lambda}}\cdot W.
	\]
	Indeed, from Prop.~\ref{prop:solnoftildelambdaeqn}, we see that $\tilde{\lambda}$, when measured in the right norm, cannot be expected to be perturbative compared to $z_{nres}$. 
	It is thus natural to single out the contribution of these terms, which we then label the {\it{principal part}} of $z_{nres}$: define
\boxalign[12cm]{ \begin{align}\label{eq:znresprin}
			&z_{nres}^{prin}: = i\int_{\tau}^\infty \int_0^\infty\chi_{\epsilon_1\lesssim\xi\lesssim \epsilon_1^{-1}}\big[\phi(R;\xi) - \phi(R;0)\big]\\ \nonumber &\hspace{3cm}\cdot S(\tau, \sigma,\xi)\cdot \mathcal{F}\big(\lambda^{-2}(y_z + y_{\tilde{\lambda}})\cdot W\big)(\sigma, \frac{\lambda(\tau)}{\lambda(\sigma)}\xi\big)\rho(\xi)\,d\xi d\sigma
	\end{align}
}
	Also, call $z_{nres}^{rest}$ the contribution of the remaining terms in $E$, so that we have (recall \eqref{eq:znresstardecomp})
	\begin{equation}\label{eq: znresstarsimepsilon1decomp}
		z_{nres*,\sim\epsilon_1} = z_{nres}^{prin} + z_{nres}^{rest}. 
	\end{equation}
	We note right away
	\begin{lem}\label{lem:znresrestbound} We have the improved bound 
		\begin{align*}
			\big\|z_{nres}^{rest}\big\|_{S}\ll_{\tau_*}&\big\|z\big\|_{S} + \big\|\langle\partial_{\tilde{\tau}}^2\rangle^{-1}\partial_{\tilde{\tau}}^2\tilde{\lambda}\big\|_{\tau^{-N}L^2_{d\tau}} + \big\|(\tilde{\kappa}_1,\kappa_2)\big\|_{\tau^{-N}L^2_{d\tau}} + \big\|\tilde{\alpha}_{\tau}\big\|_{\tau^{-N}L^2_{d\tau}}\\& + \big
			\|e_1\big\|_{\tau^{-N-1}L^2_{d\tau}},
		\end{align*}
	\end{lem}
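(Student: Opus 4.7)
The plan is to apply the same type of frequency-localized propagator estimates used in the proof of Lemma~\ref{lem:znresperturbative1}, but now specialized to the intermediate frequency window $\epsilon_1\lesssim\xi\lesssim\epsilon_1^{-1}$ in which $z_{nres}^{rest}$ is supported. On this window the generalized Fourier basis $\phi(R;\xi)-\phi(R;0)$, the propagator kernel $S(\tau,\sigma,\xi)$, and the spectral measure $\rho(\xi)$ are all of unit size with symbol-type derivative bounds, so the mapping properties are essentially those of Lemma~\ref{lem:basicL2} with no low- or high-frequency degeneracies to contend with. Hence it suffices to control each of the source terms in $E$ constituting $z_{nres}^{rest}$ in an appropriate $\tau^{-N}L^2_{d\tau}L^2_{\rho\,d\xi}$-type norm, and to extract from each such term an explicit gain of $c(\tau_*)$.

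First I would dispatch the "difference" contributions involving the deviation of the bulk from the singular part. Namely, for the source pieces
\[
-\lambda^{-2}y_z\cdot(\tilde u_*^{(\tilde\lambda,\underline{\tilde\alpha})}-W),\qquad -\lambda^{-2}y_{\tilde\lambda}\cdot(\tilde u_*^{(\tilde\lambda,\underline{\tilde\alpha})}-W),
\]
the factor $\tilde u_*^{(\tilde\lambda,\underline{\tilde\alpha})}-W$ is pointwise $O(\tau^{-1+O(1/\nu)})$ on the Schr\"odinger zone (with symbol behavior in $R$), by the asymptotics of the approximate solution recalled in Lemma~\ref{lem:approxsolasymptotics1}; this produces the smallness $c(\tau_*)=\tau_*^{-1+}$, while the remaining factors are estimated as in Corollary~\ref{cor:yzW} (for $y_z$) and as in the proof of Lemma~\ref{lem:e1modgooderrors} (for $y_{\tilde\lambda}$, via Prop.~\ref{prop:solnoftildelambdaeqn}). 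The same strategy handles $-(\lambda^{-2}n_*^{(\tilde\lambda,\underline{\tilde\alpha})}-W^2)\,z$, on splitting the spatial support into $R\lesssim \tau^{\frac12-\frac{1}{4\nu}}$ (where $\lambda^{-2}n_*^{(\tilde\lambda,\underline{\tilde\alpha})}-W^2$ is itself $O(\tau^{-1+})$, giving smallness via Lemma~\ref{lem:Xtildelambdafinaltermcrudeboundrefined}) and the far region, handled as in Lemma~\ref{lem:nonresconnect1}.

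Next I would handle the remaining $y_2-y_{\tilde\lambda}$-contribution, i.e.\ $-\lambda^{-2}(y_2-y_{\tilde\lambda})\cdot \tilde u_*^{(\tilde\lambda,\underline{\tilde\alpha})}$. By definition of $y_{\tilde\lambda}$ in \eqref{eq:ytildelamba} and of $y_2$ in \eqref{eq:y2def}, the difference $y_2-y_{\tilde\lambda}$ solves an inhomogeneous wave equation with source given by the quadratic/nonlinear terms and lower-order pieces of $E_2+E_2^{\text{mod}}$ (excluding the principal $y_{\tilde\lambda}$-part), all of which either come with a quadratic smallness in $(z,\tilde\lambda)$ or an extra temporal decay; applying Lemma~\ref{lem:wavebasicinhom} and Corollary~\ref{cor:yzW} to this source produces a $Y$-type bound controlled by $\|z\|_S+\|\langle\partial_{\tilde\tau}^2\rangle^{-1}\partial_{\tilde\tau}^2\tilde\lambda\|_{\tau^{-N}L^2_{d\tau}}$ with a $c(\tau_*)$ gain. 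The purely nonlinear terms $-\lambda^{-2}yz$ are then handled by H\"older with $y\in Y$, $z\in S$, which gives a product bound in which one factor supplies the needed $c(\tau_*)$. The error $e_1$ is immediately perturbative since we are assuming $e_1\in \tau^{-N-1}L^2_{d\tau}L^2_{R^3\,dR}$.

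The main obstacle will be the contribution of $e_1^{\text{mod}}$, whose precise structure recalled in \eqref{eq:E1mod} contains the most singular pieces $\tilde\lambda_t\partial_{\tilde\lambda}(\psi_*^{(\tilde\lambda)})$ and $\chi_1\tilde\alpha_t\psi_*^{(\tilde\lambda)}$ that dictated the delicate high-temporal-frequency analysis in section~\ref{sec:modlnparam}. For those, I would localize in wave-time frequency as in the proof of Lemma~\ref{lem:e1modgooderrors} and Lemma~\ref{lem:e1modgooderrorsSK}: the high-frequency portion is already $\tau_*$-small by the $Q^{(\tilde\tau)}_{<\tau^\delta}$ cutoff chosen in the modulation equations \eqref{eq:tildelambda}, \eqref{eq:tildealpha1}, while the low-frequency portion is converted by integration by parts in $\sigma$ (as in the proof of Lemma~\ref{lem:Lsmalltildelambda}) into an expression gaining one factor of $\sigma^{-1}$, which then combines with Prop.~\ref{prop:tildealphomodeqn} and Prop.~\ref{prop:solnoftildelambdaeqn} to produce the $c(\tau_*)$ gain on the right-hand side of the asserted inequality. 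Summing these contributions yields the claim.
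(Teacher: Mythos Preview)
Your approach is essentially the paper's own: the proof there is a single sentence saying to argue exactly as in Lemma~\ref{lem:znresperturbative1} and observe that every source term in $E$ other than the two principal pieces $\lambda^{-2}y_z\cdot W$ and $\lambda^{-2}y_{\tilde\lambda}\cdot W$ already carries a $\tau_*$-smallness gain. Your term-by-term breakdown (difference terms via Lemma~\ref{lem:approxsolasymptotics1} and Lemma~\ref{lem:Xtildelambdaperturbterms}, the $(\lambda^{-2}n_*-W^2)z$ splitting, the nonlinear $\lambda^{-2}yz$ via H\"older, and $e_1$ by assumption) matches the reference argument in Lemma~\ref{lem:znresperturbative1} and the appendix completions.

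One correction to your handling of $e_1^{\text{mod}}$: the temporal cutoffs $Q^{(\tilde\tau)}_{<\tau^\delta}$ in \eqref{eq:tildelambda}, \eqref{eq:tildealpha1} are features of the \emph{modulation equations}, not of the source $E$ in the $z$-equation \eqref{eq:zeqn2}, where the full $e_1^{\text{mod}}$ appears uncut. So you cannot invoke those cutoffs to declare the high-frequency portion automatically small. The actual mechanism, spelled out in the appendix completion of Lemma~\ref{lem:znresperturbative1} (and in Lemma~\ref{lem:basicboundsfore_1modandnonlinearterms}, Lemma~\ref{lem:basicboundsfore_1modalphaterm}), is that the individual constituents of $e_1^{\text{mod}}$ in \eqref{eq:E1mod} already carry extra temporal decay: the $\tilde\lambda$-terms gain through $\tilde\lambda_\tau = (\tilde\tau/\tau)\tilde\lambda_{\tilde\tau}$ and the estimates on $\langle\partial_{\tilde\tau}^2\rangle^{-1}\tilde\lambda_{\tilde\tau\tilde\tau}$, while the $\tilde\alpha$-terms exploit the cancellation condition \eqref{eq:chi1vanishingcond} together with an integration by parts in $\sigma$ that replaces $\tilde\alpha$ by $\tilde\alpha/\sigma$ or $\tilde\alpha_\sigma$. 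These produce a $\log^{-1}\tau$ or $\tau^{0-}$ gain intrinsically, without appeal to the modulation-equation cutoffs. With that adjustment your sketch is complete and coincides with the paper's route.
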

	\begin{proof} One argues precisely as in the proof of Lemma~\ref{lem:znresperturbative1}, noting that one gains smallness depending on $\tau_*$ for all the source terms in $E$ (recall \eqref{eq:zeqn2}) except for $\lambda^{-2}y_z\cdot W, \lambda^{-2}y_{\tilde{\lambda}}\cdot W$. 
	\end{proof}
	The preceding lemma in conjunction with Lemma~\ref{lem:nonresconnect1}, Lemma~\ref{lem:znresperturbative1} imply that 
	\begin{equation}\label{eq:znresprinsimplebound}\begin{split}
			\big\|z_{nres}^{prin}\big\|_{S}\lesssim&\big\|z\big\|_{S} + \big\|\langle\partial_{\tilde{\tau}}^2\rangle^{-1}\partial_{\tilde{\tau}}^2\tilde{\lambda}\big\|_{\tau^{-N}L^2_{d\tau}} + \big\|(\tilde{\kappa}_1,\kappa_2)\big\|_{\tau^{-N}L^2_{d\tau}} + \big\|\tilde{\alpha}_{\tau}\big\|_{\tau^{-N}L^2_{d\tau}}\\& + \big
			\|e_1\big\|_{\tau^{-N-1}L^2_{d\tau}}
	\end{split}\end{equation}

	It shall next be our goal to derive the equation driving the evolution for $ z_{nres}^{prin}$, the term $ z_{nres}^{rest}$ playing a purely perturbative role. For this recall \eqref{eq:yzdfn}, whence formally we have 
	\begin{align*}
		\lambda^{-2}y_z\cdot W = 2\lambda^{-2}\Box^{-1}\big(\lambda^2\triangle \Re\big(W\bar{z}\big)\big).
	\end{align*}
	Denoting by $P_{<a}, P_{>a}$ etc localization operators with respect to the spectral parameter associated to $\mathcal{L} = -\triangle_{R} - W^2$, we now decompose the term 
	\[
	E_{main}: =  P_{\epsilon_1\lesssim\cdot\lesssim\epsilon_1^{-1}}\big(\lambda^{-2}y_z\cdot W\big)
	\]
	as follows: 
	\begin{equation}\label{eq:ndecompose}\begin{split}
			E_{main} &= 2\big[n_{prin} + n_{*,<\epsilon_1} + n_{*,>\epsilon_1^{-1}} + n_{rest} +n_{nres,\mathcal{K}} + n_{res}\big]\cdot W\\
			&-P_{<\epsilon_1}\big[\lambda^{-2}y_z\cdot W\big] - P_{>\epsilon_1^{-1}}\big[\lambda^{-2}y_z\cdot W\big].
	\end{split}\end{equation}
	In turn the individual terms occuring here are defined as follows: the principal contribution is 
	\begin{equation}\label{eq:nprindef}
		\boxed{n_{prin}: = \lambda^{-2}\tilde{n}_{prin}: = \lambda^{-2}\Box^{-1}\big(\lambda^2\triangle \Re\big(W\overline{z_{nres}^{prin}}\big)\big),}
	\end{equation}
	while those of perturbative character are 
	\begin{equation}\label{eq:ntermlist}\begin{split}
			&n_{rest}: = \lambda^{-2}\tilde{n}_{rest}: =  \lambda^{-2}\Box^{-1}\big(\lambda^2\triangle \Re\big(W\overline{z_{nres}^{rest}}\big)\big),\\
			&n_{nres,\mathcal{K}}: =  \lambda^{-2}\tilde{n}_{nres,\mathcal{K}}: = \lambda^{-2}\Box^{-1}\big(\lambda^2\triangle \Re\big(W\overline{z_{nres,\mathcal{K}}}\big)\big),\\
			&n_{*,<\epsilon_1}:= \lambda^{-2}\tilde{n}_{*,<\epsilon_1}:=  \lambda^{-2}\Box^{-1}\big(\lambda^2\triangle \Re\big(W\overline{z_{nres*,<\epsilon_1}}\big)\big),\\
			&n_{*,>\epsilon_1^{-1}}:= \lambda^{-2}\tilde{n}_{*,<\epsilon_1}:=  \lambda^{-2}\Box^{-1}\big(\lambda^2\triangle \Re\big(W\overline{z_{nres*,>\epsilon_1^{-1}}}\big)\big),\\
	\end{split}\end{equation}
	and finally, the remaining term $n_{res}$ reflects the contribution of the resonant part of $z$, recalling \eqref{eq:zdecompbasic}:
	\begin{align*}
		n_{res}: = \lambda^{-2}\tilde{n}_{res}:=  \lambda^{-2}\Box^{-1}\big(\lambda^2\triangle \Re\big(W\overline{z_{res}}\big)\big).\\
	\end{align*}
	It is to be expected that the term $n_{prin}$ is not perturbative compared to $z_{nres}^{prin}$, and correspondingly the left and right hand side of \eqref{eq:znresprin} are  both non-perturbative in terms of their dependence on $z_{nres}^{prin}$. This will require a reformulation of this equation, and in fact our strategy shall be to change the vantage point if needed and sometimes use the principal part of $y_z$, given by $n_{prin}$, as the primary dynamical variable. Once we have improved the bounds for it, it shall be fairly straightforward to improve control over all other components introduced above, and thence over $z_{nres}$.
	
	\subsection{Derivation of the 'effective equations' for $z_{nres}^{prin}, n_{prin}$}\label{subsec:znresprinnprineqns} Consider the relation \eqref{eq:znresprin}, and apply the operator $\mathcal{L}$ to it. This replaces $\phi(R;\xi)- \phi(R;0)$ by $\xi^2\phi(R;\xi)$. Recalling Prop.~\ref{prop:linpropagator}, we have that 
	\begin{align*}
		i\xi^2 S(\tau,\sigma,\xi) = -\partial_{\sigma}\big(e^{i\lambda^2(\tau)\xi^2\int_{\sigma}^{\tau}\lambda^{-2}(s)\,ds}\big). 
	\end{align*}
	This suggests applying integration by parts with respect to $\sigma$ to the expression, resulting in the following equation 
	\begin{equation}\label{eq:znreskeyreofrmulation1}
		\mathcal{L}z_{nres}^{prin} - 2n_{prin}\cdot W - \lambda^{-2}y_{\tilde{\lambda}}\cdot W = \tilde{E}_{main} + z_{nres, small}^{prin}
	\end{equation}
	where we use the notation $ \tilde{E}_{main} = E_{main}  - 2n_{prin}\cdot W$, as well as 
	\begin{equation}\label{eq:znressmall}\begin{split}
			&z_{nres,small}^{prin}: = \int_{\tau}^\infty \int_0^\infty\chi_{\epsilon_1\lesssim\xi\lesssim \epsilon_1^{-1}}\phi(R;\xi)\\&\hspace{3cm}\cdot S(\tau, \sigma,\xi)\cdot \partial_{\sigma}\Big(\mathcal{F}\big(\lambda^{-2}(y_z + y_{\tilde{\lambda}})\cdot W\big)(\sigma, \frac{\lambda(\tau)}{\lambda(\sigma)}\xi\big)\Big)\rho(\xi)\,d\xi d\sigma
	\end{split}\end{equation}
	The two terms on the right of \eqref{eq:znreskeyreofrmulation1} will in fact turn out to be perturbative, while we can interpret $n_{prin}, y_{\tilde{\lambda}}$ as 'functions' of $z_{nres,small}^{prin}$ in light of \eqref{eq:nprindef}, Prop.~\ref{prop:solnoftildelambdaeqn}. This point of view shall be useful in the {\it{small wave temporal frequency regime}} for $z_{nres,small}^{prin}$, where the operators $\Box^{-1}$ effectively become $\triangle^{-1}$ up to small errors. 
	\\
	
	Another view point is to think of $n_{prin}$ as the primary dynamical variable, and to reformulate \eqref{eq:znreskeyreofrmulation1} as a wave type equation for $n_{prin}$, a view point which shall turn out useful in the {\it{large wave temporal frequency regime}}.  Letting $\tilde{n}_{prin} = \lambda^2n_{prin}$, we find 
	\begin{equation}\label{eq:nprinequation1}
		\Box \tilde{n}_{prin}  = \lambda^2\triangle \Re\big(W\overline{z_{nres}^{prin}}\big) =  \lambda^2\triangle \Re\big(W\mathcal{L}^{-1}(\overline{\mathcal{L}z_{nres}^{prin})}\big),
	\end{equation}
	where $\mathcal{L}^{-1}$ is defined (via the usual variation of constants formula) by imposing vanishing\footnote{In fact, this follows from the definition of $z_{nres}^{prin}$.} at $R = 0$. To complete the equation, we then use \eqref{eq:znreskeyreofrmulation1} to replace $\mathcal{L}z_{nres}^{prin}$ by 
	\[
	2n_{prin}\cdot W + y_{\tilde{\lambda}}\cdot W + \tilde{E}_{main} + z_{nres, small}^{prin}.
	\]
	Observe that we can interpret $\tilde{\lambda}$ as a 'function' of $n_{prin}$ up to smaller error terms, in light of Prop.~\ref{prop:solnoftildelambdaeqn}. Then the main part of the wave equation \eqref{eq:nprinequation1} is 
	\begin{equation}\label{eq:nprinequation2}
		\Box \tilde{n}_{prin} - 2\triangle\big(W\cdot \mathcal{L}^{-1}(\tilde{n}_{prin}\cdot W)\big) - \triangle\big(W\cdot \mathcal{L}^{-1}( y_{\tilde{\lambda}}\cdot W)\big),
	\end{equation}
	which is in fact a wave operator with both local and non-local potential terms. 
	
	\section{The non-resonant part II; improved estimates for the principal variables }\label{sec:nonresII}
	
	As indicated in subsection~\ref{subsec:znresprinnprineqns}, we shall take advantage of either \eqref{eq:znreskeyreofrmulation1} or of \eqref{eq:nprinequation2}
	to improve the bounds for the non-resonant component $z_{nres}^{prin}$, depending on the wave temporal frequency. 
	
	\subsection{Improving the bound for small wave-temporal frequencies}\label{subsec:nonresIIsmallfreq} Here our starting point is \eqref{eq:znreskeyreofrmulation1}, to which we apply the frequency localizer $Q^{(\tilde{\tau})}_{<\gamma_1}$ for some $\gamma_1 = \gamma_1(\nu)\ll 1$.
	Then the main point is that the equation simplifies, upon using the following 
	\begin{lem}\label{lem:lowtempfreqznresprin1} We have the identity 
		\begin{align*}
			Q^{(\tilde{\tau})}_{<\gamma_1}\big(2n_{prin}\cdot W\big) = 2\big(W^2\cdot\Re(\overline{z_{nres}^{prin}})\big) + F_1, 
		\end{align*}
		where we have the error bound 
		\begin{align*}
			\big\|F_1\big\|_{\tau^{-N}L^2_{d\tau}L^2_{R^3\,dR}}\ll_{\gamma_1}\big\|z_{nres}^{prin}\big\|_{S}. 
		\end{align*}
	\end{lem}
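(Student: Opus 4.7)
\bigskip

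The approach rests on the algebraic identity
\[
\Box(\lambda^2 f)=\lambda^2\triangle f-\partial_t^2(\lambda^2 f),\qquad f:=\Re\bigl(W\overline{z_{nres}^{prin}}\bigr),
\]
which holds because $\lambda=\lambda(t)$ is spatially constant, so $\triangle(\lambda^2 f)=\lambda^2\triangle f$. Rearranging and applying $\Box^{-1}$ (the forward Duhamel propagator, vanishing at $t=0$, modulo a free-wave contribution which is absorbed into the rapidly-decaying error thanks to the $\tau^{-N}$ weight) yields
\[
n_{prin}=\lambda^{-2}\Box^{-1}(\lambda^2\triangle f)=f+\lambda^{-2}\Box^{-1}\bigl(\partial_t^2(\lambda^2 f)\bigr).
\]
Multiplying by $W$ (real-valued) and applying $2Q_{<\gamma_1}^{(\tilde{\tau})}$ we obtain the decomposition
\[
F_1=-2Q_{\geq\gamma_1}^{(\tilde{\tau})}\bigl(W^2\Re(\overline{z_{nres}^{prin}})\bigr)+2Q_{<\gamma_1}^{(\tilde{\tau})}\bigl(W\cdot\lambda^{-2}\Box^{-1}\partial_t^2(\lambda^2 f)\bigr)=:F_{1,a}+F_{1,b}.
\]

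For $F_{1,a}$ I would use the elementary bound $\|Q_{\geq\gamma_1}^{(\tilde{\tau})}g\|_{L^2_{d\tilde{\tau}}}\lesssim\gamma_1^{-2}\|\partial_{\tilde{\tau}}^2 g\|_{L^2_{d\tilde{\tau}}}$, combined with two wave-time derivatives of $z_{nres}^{prin}$ computed by differentiating the explicit representation \eqref{eq:znresprin}: since the $\xi$-support there is confined to $\epsilon_1\lesssim\xi\lesssim\epsilon_1^{-1}$, each $\partial_{\tilde{\tau}}$ hitting the Schr\"odinger phase $e^{i\lambda^2(\tau)\xi^2\int_\sigma^\tau\lambda^{-2}}$ produces at most a factor $\xi^2\cdot\partial\tau/\partial\tilde{\tau}\lesssim\epsilon_1^{-2}$; combined with the bound \eqref{eq:znresprinsimplebound} and the hierarchy $\epsilon_1\ll\gamma_1$, this gives the required $\ll_{\gamma_1}\|z_{nres}^{prin}\|_S$. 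For $F_{1,b}$ I would switch to wave-time coordinates, where $\partial_t^2=\lambda^2\partial_{\tilde{\tau}}^2+\lambda\lambda_t\partial_{\tilde{\tau}}$; the $Q_{<\gamma_1}^{(\tilde{\tau})}$ cutoff then restricts the symbol $\partial_{\tilde{\tau}}^2$ to size $O(\gamma_1^2)$, while the $W(R)$ factor confines the spatial variable to a compact region, so Lemma~\ref{lem:wavebasicinhom} and Lemma~\ref{lem:wavebasicinhomstructure} apply directly to bound $\lambda^{-2}\Box^{-1}\partial_t^2(\lambda^2 f)$ in terms of $\|z_{nres}^{prin}\|_S$ with an overall gain of $\gamma_1^2$.

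The main obstacle will be the careful bookkeeping of the interaction between the wave-time localizer $Q_{<\gamma_1}^{(\tilde{\tau})}$, the scalar factors $\lambda^{\pm 2}$ (which vary on the Schr\"odinger time scale, much faster than the wave-time scale), and $\Box^{-1}$. The commutator $[Q_{<\gamma_1}^{(\tilde{\tau})},\lambda^{\pm 2}]$ produces lower-order error terms that must be seen to be perturbative, using the relation $\partial\tau/\partial\tilde{\tau}=\lambda$ together with the tools of subsection~\ref{subsec:tempfreqloc}. Additionally, one must verify that the identity for $n_{prin}$ above really produces only negligible free-wave contributions at $t=0$; this follows from the rapid polynomial decay of $\lambda^2 f$ at $t=0$ enforced by the $\tau^{-N}$ weighting with $N$ sufficiently large, but needs to be tracked honestly in each norm. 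Once these technicalities are in hand, both $F_{1,a}$ and $F_{1,b}$ enjoy the claimed smallness, and the lemma follows.
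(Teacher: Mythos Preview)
Your treatment of $F_{1,a}$ contains a concrete error: you assert that each $\partial_{\tilde\tau}$ on the Schr\"odinger phase produces a factor $\xi^2\cdot\partial\tau/\partial\tilde\tau\lesssim\epsilon_1^{-2}$, but $\partial\tau/\partial\tilde\tau=\lambda(t)\sim\tau^{\frac12+\frac{1}{4\nu}}$ \emph{grows}, so the factor is $\xi^2\lambda$ and two wave-time derivatives lose $\tau^{1+\frac{1}{2\nu}}$ of decay. Even ignoring this, the inequality $\|Q_{\geq\gamma_1}g\|\lesssim\gamma_1^{-2}\|\partial_{\tilde\tau}^2 g\|$ carries the \emph{large} prefactor $\gamma_1^{-2}$; to conclude $\ll_{\gamma_1}$ you would need $\|\partial_{\tilde\tau}^2 g\|=o(\gamma_1^2)$, which the restriction $\xi\in[\epsilon_1,\epsilon_1^{-1}]$ cannot give (the hierarchy $\epsilon_1\ll\gamma_1$ makes $\epsilon_1^{-4}\gamma_1^{-2}$ even larger). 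In fact $z_{nres}^{prin}$ oscillates at wave-time frequency $\sim\xi^2\lambda\gg\gamma_1$, so $Q_{\geq\gamma_1}(W^2\Re\overline{z_{nres}^{prin}})$ is \emph{not} small and the lemma as literally stated cannot hold. Comparing with the application in \eqref{eq:smallfreqznresprineffective}, where only $Q_{<\gamma_1}\Re(z_{nres}^{prin})$ enters, the intended main term is $2W^2\cdot Q_{<\gamma_1}^{(\tilde\tau)}\Re(\overline{z_{nres}^{prin}})$; with this reading your $F_{1,a}$ simply vanishes.

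What remains is your $F_{1,b}=2Q_{<\gamma_1}[(n_{prin}-f)W]$, and there your idea is the same as the paper's elliptic approximation. The difference is that the paper makes it work via an additional \emph{spatial} Littlewood--Paley split $P_{<\sqrt{\gamma_1}}+P_{\geq\sqrt{\gamma_1}}$: the low spatial frequency piece of $n_{prin}\cdot W$ (and of $W^2\Re\overline{z}$) is small directly by Bernstein against the $L^{2-}$ bound from Remark~\ref{rem:cor:yzW}, while on the high spatial frequency piece one has the convergent expansion $\Box^{-1}\triangle=I+\sum_{k\geq1}(\triangle^{-1}\partial_{\tilde\tau}^2)^k$ with operator norm $O(\gamma_1)$ on $P_{\geq\sqrt{\gamma_1}}Q_{<\gamma_1}$. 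Your direct route, relying only on ``$Q_{<\gamma_1}$ restricts $\partial_{\tilde\tau}^2$ to $O(\gamma_1^2)$'', runs into the problem that $\Box^{-1}$ has no good bound near zero spatial frequency (Lemma~\ref{lem:wavebasicinhom} loses weights or $\tilde\tau$-factors there), so the $\gamma_1^2$ gain would be eaten by the wave propagator. The spatial split is not merely bookkeeping---it is what makes the elliptic approximation quantitative.
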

	\begin{proof} We use the decomposition $Q^{(\tilde{\tau})}_{<\gamma_1}\big(2n_{prin}\cdot W\big) = P_{<\sqrt{\gamma_1}}Q^{(\tilde{\tau})}_{<\gamma_1}\big(2n_{prin}\cdot W\big) + P_{\geq \sqrt{\gamma_1}}Q^{(\tilde{\tau})}_{<\gamma_1}\big(2n_{prin}\cdot W\big)$, where the frequency localizers $P_{<\sqrt{\gamma_1}}, P_{\geq \sqrt{\gamma_1}}$ are standard Littlewood-Paley localizers. Then we use Corollary~\ref{cor:yzW} and Remark~\ref{rem:cor:yzW} together with Bernstein's inequality $\big\|P_{<a}f\big\|_{L^2_{R^3\,dR}}\ll_a \big\|f\big\|_{L^{2-}_{R^3\,dR}}$ to infer the bound 
		\begin{align*}
			\big\|P_{<\sqrt{\gamma_1}}Q^{(\tilde{\tau})}_{<\gamma_1}\big(2n_{prin}\cdot W\big)\big\|_{\tau^{-N}L^2_{d\tau}L^2_{R^3\,dR}}\ll_{\gamma_1}\big\|z_{nres}^{prin}\big\|_{S}.
		\end{align*}
		For the high frequency term, we use that 
		\begin{align*}
			\big( \Box^{-1} - I\big)\circ \triangle^{1-}P_{\geq \sqrt{\gamma_1}}Q^{(\tilde{\tau})}_{<\gamma_1} = \big(\sum_{k=1}^\infty \triangle^{-k-1}\partial_{\tilde{\tau}\tilde{\tau}}^k\big)\circ  \triangle^{1-}P_{\geq \sqrt{\gamma_1}}Q^{(\tilde{\tau})}_{<\gamma_1},
		\end{align*}
		as well as the fact that the operator on the right maps $\tau^{-N}L^2_{d\tau}L^2_{R^3\,dR}$ into itself with operator norm $\ll_{\gamma_1}1$. Since 
		\begin{align*}
			\big\|\triangle^{0+}\big(W\cdot \overline{z_{nres}^{prin}}\big)\big\|_{\tau^{-N}L^2_{d\tau}L^2_{R^3\,dR}}\lesssim \big\|z_{nres}^{prin}\big\|_S, 
		\end{align*}
		we infer 
		\begin{align*}
			P_{\geq \sqrt{\gamma}}Q^{(\tilde{\tau})}_{<\gamma_1}\big(2n_{prin}\cdot W\big) = 2W^2\cdot \Re(\overline{z_{nres}^{prin}}) + o_{\tau^{-N}L^2_{d\tau}L^2_{R^3\,dR}}(\gamma_1^{0+}),
		\end{align*}
		and the lemma follows. 
	\end{proof}
	In order to simplify the third term on the left hand side of  \eqref{eq:znreskeyreofrmulation1} in the low temporal frequency regime, we have to take advantage of the precise formulation of Proposition~\ref{prop:solnoftildelambdaeqn}. In the following lemma we denote by $\triangle^{-1}$ the operator which is given by division by $\xi^2$ on the Fourier side.
	\begin{lem}\label{lem:lowtempfreqznresprin2} Assume that $\tilde{\lambda}$ solves \eqref{eq:tildelambda} according to Prop.~\ref{prop:solnoftildelambdaeqn}. Then we have the identity (for $\tau\in [\tau_*,\infty)$)
		\begin{align*}
			Q^{(\tilde{\tau})}_{<\gamma_1}\big(-\lambda^{-2}y_{\tilde{\lambda}}\cdot W\big) = T( Q^{(\tilde{\tau})}_{<\gamma_1}\Re(z_{nres}^{prin})) + F_2, 
		\end{align*}
		where we define the linear operator $T$ with one-dimensional range as 
		\begin{align*}
			&T(z): = \alpha_*\cdot \big(\int_0^\infty zW^3 R^3\,dR\big)\cdot \triangle^{-1}\big(\Lambda W\cdot W\big)\cdot W,\\
			&\alpha_*: =  \big(\frac{1}{2}\cdot\int_0^\infty\triangle^{-1}\big(\Lambda W\cdot W\big)\cdot W^2 R^3\,dR\big)^{-1},
		\end{align*}
		and where the error term $F_2$ enjoys the bound  
		\begin{align*}
			\big\|F_2\big\|_{\tau^{-N}L^2_{d\tau}L^2_{R^3\,dR}}\ll_{\epsilon_1, \gamma_1, N,\tau_*} \big\|z_{nres}\big\|_{S} + \big\|(\tilde{\kappa}_1,\kappa_2)\big\|_{\tau^{-N}L^2_{d\tau}},
		\end{align*}
	\end{lem}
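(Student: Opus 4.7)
The plan is to combine two low-frequency asymptotic reductions: an elliptic approximation of the wave propagator $\Box^{-1}$ defining $y_{\tilde{\lambda}}$, and the $\hat{\tilde{\tau}}\to 0$ limit of the Fourier-side identity for $\tilde{\lambda}$ supplied by Proposition~\ref{prop:solnoftildelambdaeqn}.

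First I would substitute \eqref{eq:ytildelamba} into $Q^{(\tilde{\tau})}_{<\gamma_1}(-\lambda^{-2}y_{\tilde{\lambda}}\cdot W)$ and use the transformed wave equation \eqref{eq:nFintildetauR}. For wave-time frequencies below $\gamma_1$, the $\partial_{\tilde{\tau}}$-derivatives in \eqref{eq:nFintildetauR} are perturbative relative to the spatial part $\triangle_R$, so inverting and commuting---in the spirit of the argument for Lemma~\ref{lem:tildekappaoneiicontrib1} and using also Lemma~\ref{lem:largemodgeneralboxinverse}---yields
\begin{equation*}
Q^{(\tilde{\tau})}_{<\gamma_1}\bigl(-\lambda^{-2}y_{\tilde{\lambda}}\cdot W\bigr) \;=\; c_1\,(\lambda^{-2}\tilde{\lambda}_{tt})\cdot\triangle_R^{-1}(\Lambda W\cdot W)\cdot W + F_2',
\end{equation*}
for an explicit constant $c_1$ (arising from $\lambda^{-2}\Box^{-1}F\approx-\lambda^{-4}\triangle_R^{-1}F$ on the support of $Q^{(\tilde{\tau})}_{<\gamma_1}$), with $F_2'$ satisfying the required $\ll_{\gamma_1}$-bound. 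The sub-leading source $2\tilde{\lambda}_t t^{-1}\partial_t(\lambda^2\Lambda W\cdot W)$ in $y_{\tilde{\lambda}}$ is absorbed into $F_2'$ by its extra $\tilde{\tau}^{-1}$-weight and the integrability properties of $\triangle_R^{-1}(\Lambda W\cdot W)\cdot W$.

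Next I would convert to wave time via $\partial_t=-\lambda\partial_{\tilde{\tau}}$, writing $\lambda^{-2}\tilde{\lambda}_{tt}=\tilde{\lambda}_{\tilde{\tau}\tilde{\tau}}+\lambda^{-1}(\partial_{\tilde{\tau}}\lambda)\tilde{\lambda}_{\tilde{\tau}}$, where the second summand carries an additional $\tilde{\tau}^{-1}$-factor and so is absorbable into $F_2$. Proposition~\ref{prop:solnoftildelambdaeqn} then supplies, on the Fourier side,
\begin{equation*}
\mathcal{F}_{\tilde{\tau}}\bigl(\langle\partial_{\tilde{\tau}}^2\rangle^{-1}\tilde{\lambda}_{prin,\tilde{\tau}\tilde{\tau}}\bigr)+c_3\,\mathcal{F}_{\tilde{\tau}}\bigl(\tfrac{\tilde{\lambda}_{prin,\tilde{\tau}}}{\tilde{\tau}}\bigr) \;=\; \langle\hat{\tilde{\tau}}^2\rangle\,\beta_*(\hat{\tilde{\tau}})\cdot\mathcal{F}_{\tilde{\tau}}\bigl(\Pi^{(\tilde{\tau})}\mathcal{I}(z_{nres})\bigr),
\end{equation*}
with $\mathcal{I}(z_{nres})=\int_0^\infty Q^{(\tilde{\tau})}_{<\tau^{1/2+}}\lambda^{-2}\Box^{-1}\triangle\Re(\lambda^2 z_{nres}W)\cdot W^2\,R^3\,dR$. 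Restricting $\hat{\tilde{\tau}}$ to $|\hat{\tilde{\tau}}|<\gamma_1$: the prefactor $\langle\hat{\tilde{\tau}}^2\rangle$ equals $1+O(\gamma_1^2)$; the difference $\beta_*(\hat{\tilde{\tau}})-\beta_*(0)$ is $O(\gamma_1^{0+})$ by the smoothness and symbol behavior of $\beta_*$ near the origin; the $c_3$-term is sub-leading in view of its $1/\tilde{\tau}$-weight; the projection $\Pi^{(\tilde{\tau})}$ reduces to the identity at $\hat{\tilde{\tau}}=0$; and the $\delta\zeta$-error from Proposition~\ref{prop:Phitildelambdasolution} already carries $c(\tau_*)$-smallness. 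A parallel low-frequency elliptic reduction, applied inside $\mathcal{I}(z_{nres})$, replaces $\lambda^{-2}\Box^{-1}\triangle(\lambda^2 z_{nres}W)$ by $z_{nres}W$ up to admissible error, so that $\Pi^{(\tilde{\tau})}\mathcal{I}(z_{nres})$ collapses to $\int_0^\infty \Re(z_{nres})W^3\,R^3\,dR$ at leading order.

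Finally I would split $z_{nres}=z_{nres}^{prin}+(z_{nres}-z_{nres}^{prin})$ and place the difference into $F_2$ by Lemma~\ref{lem:znresrestbound}, Lemma~\ref{lem:znresperturbative1}, and Lemma~\ref{lem:nonresconnect1}; assembling Steps~1--2 then produces the stated identity with $T$ carrying the prescribed coefficient $\alpha_*$, the normalization being matched by tracking constants through the definition of $\lim_{\hat{\tilde{\tau}}\to 0}\beta_*(\hat{\tilde{\tau}})$ in Proposition~\ref{prop:solnoftildelambdaeqn}. The main obstacle is the simultaneous bookkeeping of the off-diagonal error contributions---the $\Box^{-1}$-versus-$\triangle_R^{-1}$ discrepancy on the low-frequency regime (applied \emph{twice}, once to $y_{\tilde{\lambda}}$ and once to $\mathcal{I}(z_{nres})$), the $\beta_*(\hat{\tilde{\tau}})-\beta_*(0)$ tail, the contribution of the full projection $\Pi^{(\tilde{\tau})}$, the sub-leading terms in $\tilde{E}_2^{\mathrm{mod}}$ that are omitted from $\tilde{y}_{\tilde{\lambda}}^{\mathrm{mod}}$, and the mismatch between $Q^{(\tilde{\tau})}_{<\tau^{1/2+}}$ and $Q^{(\tilde{\tau})}_{<\gamma_1}$. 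The hierarchy $\gamma_1\ll\epsilon_1\ll\nu^{-1}$, together with the $c(\tau_*)$-smallness built into the estimate for $\delta\tilde{\lambda}$ in Proposition~\ref{prop:solnoftildelambdaeqn}, is precisely what absorbs all of these into the error term $F_2$ with the required small constant.
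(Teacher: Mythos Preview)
Your proposal is essentially correct and follows the same strategy as the paper: first replace $\Box^{-1}$ by $\triangle_R^{-1}$ at low wave-temporal frequency to reduce $Q^{(\tilde{\tau})}_{<\gamma_1}(-\lambda^{-2}y_{\tilde{\lambda}}\cdot W)$ to $\tilde{\lambda}_{\tilde{\tau}\tilde{\tau}}\cdot\triangle^{-1}(\Lambda W\cdot W)\cdot W$ plus error (the paper cites the argument of the preceding Lemma~\ref{lem:lowtempfreqznresprin1} rather than Lemma~\ref{lem:tildekappaoneiicontrib1}, but the mechanism is the same), then feed Proposition~\ref{prop:solnoftildelambdaeqn} at $\hat{\tilde{\tau}}\to 0$ to express $\tilde{\lambda}_{\tilde{\tau}\tilde{\tau}}$ in terms of $\int z_{nres}W^3\,R^3\,dR$, and finally peel off $z_{nres}-z_{nres}^{prin}$ via Lemmas~\ref{lem:znresrestbound}, \ref{lem:znresperturbative1}, \ref{lem:nonresconnect1}.

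Two small corrections. First, your description of $\Pi^{(\tilde{\tau})}$ is backwards: by construction $\mathcal{F}_{\tilde{\tau}}(\Pi^{(\tilde{\tau})}f)$ \emph{vanishes} to high order at $\hat{\tilde{\tau}}=0$, it does not reduce to the identity there. The correct reason $\Pi^{(\tilde{\tau})}$ is harmless is that $\Pi^{(\tilde{\tau})}f=f$ on $[\tau_*,\infty)$ in physical space (Lemma~\ref{lem:Pitildetaudef}), so after restricting to this region one may commute $Q^{(\tilde{\tau})}_{<\gamma_1}$ past $\Pi^{(\tilde{\tau})}$ up to rapidly decaying tails; the paper invokes Lemma~\ref{lem:fmodif} and Remark~\ref{rem:lem:Phitildelambdamodeleqnsmallness} for precisely this point. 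Second, the parameter hierarchy in the paper is $\epsilon_1\ll\gamma_1$, not $\gamma_1\ll\epsilon_1$ as you wrote, though this does not affect the argument.
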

	\begin{proof} Using that $\big\|Q^{(\tilde{\tau})}_{<\gamma_1}\frac{\tilde{\lambda}_{\tilde{\tau}}}{\tilde{\tau}}\big\|_{\tau^{-N}L^2_{d\tau}} \ll_{N}\big\|Q^{(\tilde{\tau})}_{<\gamma_1}(\tilde{\lambda}_{\tilde{\tau}\tilde{\tau}})\big\|_{\tau^{-N}L^2_{d\tau}}$, and recalling \eqref{eq:ytildelamba}, we can use the argument in the proof of the preceding lemma to conclude that
		\begin{align*}
			Q^{(\tilde{\tau})}_{<\gamma_1}\big(-\lambda^{-2}y_{\tilde{\lambda}}\cdot W\big) = 2\tilde{\lambda}_{\tilde{\tau}\tilde{\tau}}\Lambda W\cdot W^2 + \tilde{F},\,
		\end{align*}
		where $ \tilde{F}$ can be included into $F_2$. The assertion of the lemma then follows from Proposition~\ref{prop:solnoftildelambdaeqn} and Remark~\ref{rem:lem:Phitildelambdamodeleqnsmallness} ; in fact, in light of Lemma~\ref{lem:fmodif}  and its proof we can move $Q^{(\tilde{\tau})}_{<\gamma_1}$ past $\Pi^{(\tilde{\tau})}$ modulo errors in $\tau^{-100N}L^2_{d\tau}$, and the argument for the preceding lemma as well as Lemma~\ref{lem:znresrestbound}, Lemma~\ref{lem:nonresconnect1}, Lemma~\ref{lem:znresperturbative1}  yield 
		\begin{align*}
			\int_0^\infty Q^{(\tilde{\tau})}_{<\gamma_1}\lambda^{-2}\Box^{-1}\triangle\Re\big(\lambda^2 z_{nres}W\big)\cdot W^2 R^3\,dR =  \int_0^\infty Q^{(\tilde{\tau})}_{<\gamma_1}z_{nres}^{prin}W^3R^3\,dR + \tilde{E},
		\end{align*}
		where we have the error bound $\big\|\tilde{E}\big\|_{\tau^{-N}L^2_{d\tau}}\ll_{\epsilon_1, N,\tau_*}\big\|z_{nres}\big\|_{S} + \big\|(\tilde{\kappa}_1,\kappa_2)\big\|_{\tau^{-N}L^2_{d\tau}}$, where as usual all functions are restricted to $[\tau_*,\infty)$ or a sufficiently small dilate thereof. 
	\end{proof}
	
	The two preceding lemmas allow us to reformulate the small-frequency portion of \eqref{eq:znreskeyreofrmulation1}  as follows: introduce the operator 
	\begin{equation}\label{eq:tildeL}
		\tilde{\mathcal{L}}: = -\triangle - 3W^2. 
	\end{equation} 
	Then we have 
	\begin{equation}\label{eq:smallfreqznresprineffective}\begin{split}
			& \tilde{\mathcal{L}}Q^{(\tilde{\tau})}_{<\gamma_1}\Re(z_{nres}^{prin}) + T\big(Q^{(\tilde{\tau})}_{<\gamma_1}\Re(z_{nres}^{prin})\big) = F_3\\
			&\mathcal{L}Q^{(\tilde{\tau})}_{<\gamma_1}\Im(z_{nres}^{prin}) = F_4, 
	\end{split}\end{equation}   
	where we set 
	\begin{align*}
		F_3 = F_1 + F_2 + \Re\big(\tilde{E}_{main} + z_{nres, small}^{prin}\big),\,F_4 = \Im\big(\tilde{E}_{main} + z_{nres, small}^{prin}\big).
	\end{align*}
	Here $F_{1,2}$ are as in Lemma~\ref{lem:lowtempfreqznresprin1}, Lemma~\ref{lem:lowtempfreqznresprin2}. Then the following lemma provides the needed improved bound for $Q^{(\tilde{\tau})}_{<\gamma_1}\Re(z_{nres}^{prin})$:
	\begin{lem}\label{lem:smalltempfreqznresprinimprovedbound} The solution 
		\[
		Q^{(\tilde{\tau})}_{<\gamma_1}(z_{nres}^{prin}) = Q^{(\tilde{\tau})}_{<\gamma_1}\Re(z_{nres}^{prin}) + iQ^{(\tilde{\tau})}_{<\gamma_1}\Im(z_{nres}^{prin}) 
		\]
		of \eqref{eq:smallfreqznresprineffective} satisfies
		\begin{align*}
			\big\| Q^{(\tilde{\tau})}_{<\gamma_1}(z_{nres}^{prin}) \big\|_{S}\lesssim c(\epsilon_1,\gamma_1, \tau_*, N)\big\|z_{nres}\big\|_{S} + \big\|(\tilde{\kappa}_1,\kappa_2)\big\|_{\tau^{-N}L^2_{d\tau}} + \big\|e_1\big\|_{\tau^{-N-1}L^2_{d\tau}L^2_{R^3\,dR}}. 
		\end{align*}
		where $\lim_{\epsilon_1^{-1},\gamma_1^{-1}, N,\tau_*\rightarrow+\infty}c(\epsilon_1, \gamma_1, \tau_*, N) = 0$. In particular, using Proposition~\ref{prop:tildekappa1kappa2apriori}, we obtain 
		\begin{align*}
			\big\| Q^{(\tilde{\tau})}_{<\gamma_1}(z_{nres}^{prin}) \big\|_{S}\lesssim c(\epsilon_1,\tau_*, N)\big\|z_{nres}\big\|_{S}+ \big\|e_1\big\|_{\tau^{-N-1}L^2_{d\tau}L^2_{R^3\,dR}}. 
		\end{align*}
	\end{lem}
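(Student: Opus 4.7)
The plan is to invert the two stationary elliptic problems in \eqref{eq:smallfreqznresprineffective} on the level of the $S$-norm, treating the right-hand sides as perturbative source terms. Let me separate the imaginary and real parts, since the latter involves the rank-one perturbation $T$ and is the more delicate half.

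First, for the imaginary component, $\mathcal{L}Q^{(\tilde{\tau})}_{<\gamma_1}\Im(z_{nres}^{prin}) = F_4$, I would invert $\mathcal{L}$ using the distorted Fourier transform of subsection~\ref{subsec:basicfourier}, together with the fact that $Q^{(\tilde{\tau})}_{<\gamma_1}\Im(z_{nres}^{prin})$ is non-resonant by construction (its generalized Fourier support inherits the weighting $[\phi(R;\xi)-\phi(R;0)]$ from \eqref{eq:znresprin}). Thus division by $\xi^2$ is legitimate on the support in $\xi$, and this produces a bound on the $S$-norm by the Fourier/weighted $L^2$-norm of $F_4$. For the real component I invert $\tilde{\mathcal{L}} + T$: since $T$ has one-dimensional range and $\tilde{\mathcal{L}} = -\triangle - 3W^2$ is a standard Schrödinger operator of the type treated via the distorted Fourier theory (with suitable modifications of the spectral base), the operator $\tilde{\mathcal{L}}+T$ is a compact perturbation of $\tilde{\mathcal{L}}$ on the relevant weighted spaces, and its invertibility reduces to a finite-dimensional non-degeneracy condition on the explicit integral $\int_0^\infty \triangle^{-1}(\Lambda W\cdot W)\cdot W^2R^3\,dR$ appearing in the definition of $\alpha_*$. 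This non-degeneracy is exactly the type of numerical assumption flagged in subsection on numerics.

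Next I would estimate the source terms in the norms needed. The pieces $F_1, F_2$ come with explicit smallness $\ll_{\gamma_1}$ and $\ll_{\epsilon_1,\gamma_1,N,\tau_*}$ from Lemmas~\ref{lem:lowtempfreqznresprin1}, ~\ref{lem:lowtempfreqznresprin2}, so they are already perturbative. For $\tilde{E}_{main} = E_{main} - 2n_{prin}\cdot W$, I would use the decomposition \eqref{eq:ndecompose} and bound each of the contributions $n_{*,<\epsilon_1}\cdot W$, $n_{*,>\epsilon_1^{-1}}\cdot W$, $n_{rest}\cdot W$, $n_{nres,\mathcal{K}}\cdot W$, $n_{res}\cdot W$ together with the spectral-cutoff remainders $P_{<\epsilon_1}(\lambda^{-2}y_z\cdot W)$, $P_{>\epsilon_1^{-1}}(\lambda^{-2}y_z\cdot W)$, via Corollary~\ref{cor:yzW}, Lemma~\ref{lem:znresperturbative1}, Lemma~\ref{lem:nonresconnect1}, Lemma~\ref{lem:znresrestbound}, together with Proposition~\ref{prop:tildekappa1kappa2apriori} for the resonant piece. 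Each of these carries one of the smallness factors $c(\epsilon_1), c(\tau_*), \delta(N,\tau_*)$, producing the required $c(\epsilon_1,\gamma_1,\tau_*,N)$. Finally, $z_{nres,small}^{prin}$ in \eqref{eq:znressmall} gains one $\sigma$-derivative on the source term by the integration by parts already performed in subsection~\ref{subsec:znresprinnprineqns}; this derivative lands either on the wave propagator (giving the improved $\tau^{-N-\frac12-\frac{1}{4\nu}+}$ factor through Corollary~\ref{cor:yzWpartialtau}) or on $\tilde{\lambda}$ (where Proposition~\ref{prop:solnoftildelambdaeqn} again supplies the needed bound), in either case producing a smallness factor through a further application of Schur's criterion as in Lemma~\ref{lem:basicL2}.

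The main obstacle, in my view, is the invertibility of $\tilde{\mathcal{L}} + T$ on the precise function space encoded by the $S$-norm. Unlike $\mathcal{L}$, the operator $\tilde{\mathcal{L}} = -\triangle - 3W^2$ is not the one whose generalized Fourier basis defines $S$, so one must either transfer between the two spectral calculi or work directly in weighted Sobolev spaces and then re-express the answer in the distorted Fourier picture. The rank-one correction $T$ adds a Fredholm alternative that has to be resolved via the explicit numerical non-degeneracy of $\int_0^\infty \triangle^{-1}(\Lambda W\cdot W)\cdot W^2 R^3\,dR$; if this integral vanishes the whole scheme collapses, which is why the assumption list in the numerical section is essential here. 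A secondary nuisance is that the frequency cutoff $Q^{(\tilde{\tau})}_{<\gamma_1}$ does not commute with $\tilde{\mathcal{L}}$, but the commutator is of lower order in $\gamma_1$ and gets absorbed into $F_3$ via the same $\gamma_1^{0+}$ device used in the proof of Lemma~\ref{lem:lowtempfreqznresprin1}. Combining the invertibility estimates with the source bounds then yields the improved estimate on $\|Q^{(\tilde{\tau})}_{<\gamma_1}z_{nres}^{prin}\|_S$, and the second statement follows by substituting the conclusion of Proposition~\ref{prop:tildekappa1kappa2apriori}.
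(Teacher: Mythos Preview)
Your overall plan---invert the two stationary equations and bound the source terms $F_1,F_2,\tilde{E}_{main},z_{nres,small}^{prin}$---matches the paper's, and your identification of the relevant source-term lemmas (Lemmas~\ref{lem:lowtempfreqznresprin1}, \ref{lem:lowtempfreqznresprin2}, \ref{lem:znresperturbative1}, \ref{lem:nonresconnect1}, \ref{lem:znresrestbound}, Corollaries~\ref{cor:yzW}, \ref{cor:yzWpartialtau}) is correct. The difference lies in how the real-part equation is inverted.

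You propose to invert $\tilde{\mathcal{L}}+T$ directly via a Fredholm argument, but this is awkward because $\tilde{\mathcal{L}}$ itself has both a resonance at $\Lambda W$ and a negative eigenvalue, so ``compact perturbation of $\tilde{\mathcal{L}}$'' does not immediately yield the needed a~priori bound. The paper instead factors $T$ through $\tilde{\mathcal{L}}$: since $\int_0^\infty \Lambda W\cdot W^3\,R^3\,dR=0$, the equation $\tilde{\mathcal{L}}\phi=W^3$ has the explicit $L^2$ solution $\phi=-\tfrac{W}{2}-\tfrac{\Lambda W}{16}$, and hence $T(z)=\tilde{T}(\tilde{\mathcal{L}}z)$ with $\tilde{T}$ the rank-one map \eqref{eq:tildeT}. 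The equation becomes $(I+\tilde{T})(\tilde{\mathcal{L}}u)=F_3$, a rank-one perturbation of the \emph{identity} on $L^2$, which is inverted explicitly in Lemma~\ref{lem:oneplustildetinverse}. The numerical condition actually used there is \textbf{(B1)} (that $\beta_*=\langle\phi,\triangle^{-1}(\Lambda W\cdot W)\cdot W\rangle\neq 0$), not the integral defining $\alpha_*$ (which is \textbf{(C1)} and is verified analytically at the end of the paper). Once $\tilde{\mathcal{L}}u$ is controlled in $L^2$, the paper recovers the $S$-bound on $u$ via the explicit variation-of-constants right-inverse in Lemma~\ref{lem:basicSfromtildeL}, which works precisely because $u$ vanishes at $R=0$ by its non-resonant structure.

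One further detail you omit: the fourth component of the $S$-norm (the $\tau^{-N+1+}L^2_{d\tau}(L^{2+}+L^{8/3+})$ piece) is handled separately at the start of the paper's proof, directly from the integral representation \eqref{eq:znresprin}, since the elliptic inversion in Lemma~\ref{lem:basicSfromtildeL} only controls the first three components.
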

	\begin{proof} We observe right away that in order to improve the bound for the fourth norm in  \eqref{eq:Snormdefi} for $ Q^{(\tilde{\tau})}_{<\gamma_1}\big(z_{nres}^{prin}\big)$, we can directly refer to \eqref{eq:znresprin} and take advantage of the bound $\big\|\phi(R;\xi) - \phi(R;0)]\big\|_{L^{\frac83+}_{R^3\,dR}}\lesssim 1$, in conjunction with Corollary~\ref{cor:yzWpartialtau}, Lemma~\ref{lem:ytildelambdamodhightempfreq}, Plancherel's theorem for the distorted Fourier transform and the Cauchy-Schwarz inequality and finally Proposition~\ref{prop:tilealphatildelambdasimultaneous} to bound the $\xi$-integral, to bound this component by $\ll_{\tau_*}\big\|z_{nres}\big\|_{S} + \big\|(\tilde{\kappa}_1,\kappa_2)\big\|_{\tau^{-N}L^2_{d\tau}}$. Henceforth we shall work to improve the bounds for the remaining norms constituting $\big\|\cdot\big\|_{S}$.
		The only complication comes from the first equation in \eqref{eq:smallfreqznresprineffective}, which contains the operator $T$. As a first step, we observe that 
		\[
		\int_0^\infty \Lambda W\cdot W^3 R^3\,dR = 0, 
		\]
		which implies that the equation 
		\begin{align*}
			\tilde{\mathcal{L}}(\phi) = W^3
		\end{align*}
		admits a unique solution\footnote{This solution is easily seen to be a linear combination of $W$ and $\Lambda W$: $\phi = -\frac{W}{2} - \frac{\Lambda W}{16}$.} in $L^2_{R^3\,dR}$. We can then write 
		\begin{align*}
			T(z) = \tilde{T}(\tilde{\mathcal{L}}z), 
		\end{align*}
		where we set 
		\begin{equation}\label{eq:tildeT}
			\tilde{T}(z): = \alpha_*\cdot \big(\int_0^\infty z\phi R^3\,dR\big)\cdot  \triangle^{-1}\big(\Lambda W\cdot W\big)\cdot W.
		\end{equation}
		provided $z\in L^2_{R^3\,dR}$. We can then reformulate the first equation in \eqref{eq:smallfreqznresprineffective} as 
		\begin{align*}
			\big(I + \tilde{T}\big)\big(\tilde{\mathcal{L}}Q^{(\tilde{\tau})}_{<\gamma_1}(z_{nres}^{prin}) \big) = F_3. 
		\end{align*}
		Now taking advantage of Lemma~\ref{lem:oneplustildetinverse} in section~\ref{sec:appendix}, we infer 
		\begin{align*}
			\Big\|\tilde{\mathcal{L}}Q^{(\tilde{\tau})}_{<\gamma_1}(z_{nres}^{prin})\Big\|_{\tau^{-N}L^2_{d\tau}L^2_{R^3\,dR}}\lesssim \big\|F_3\big\|_{\tau^{-N}L^2_{d\tau}L^2_{R^3\,dR}}.
		\end{align*}
		The conclusion of the lemma then follows by combining Lemma~\ref{lem:lowtempfreqznresprin2}, Lemma~\ref{lem:lowtempfreqznresprin1} and Lemma~\ref{lem:basicSfromtildeL} with the bound (here $\mathcal{L}_*$ equals $\mathcal{L}$ or $\tilde{\mathcal{L}}$)
		\begin{equation}\label{eq:tildeEmainsmallness}\begin{split}
				\big\|\tilde{E}_{main}\big\|_{\tau^{-N}L^2_{d\tau}L^{2+}_{R^3\,dR}\cap \langle R\rangle^{\frac{\delta_0}{2}}L^2_{R^3\,dR}} + \big\|\mathcal{L}_*^{-1}(z_{nres, small}^{prin})\big\|_{\tilde{S}}&\ll_{N,\tau_*} \big\|z_{nres}\big\|_{S} + \big\|(\tilde{\kappa}_1,\kappa_2)\big\|_{\tau^{-N}L^2_{d\tau}}\\&\hspace{1cm}+
				\big\|e_1\big\|_{\tau^{-N-1}L^2_{d\tau}L^2_{R^3\,dR}}. 
		\end{split}\end{equation}
		where $\big\|\cdot\big\|_{\tilde{S}}$ is the sum of the first three norms in \eqref{eq:Snormdefi}. We prove this in section~\ref{sec:appendix}. 
	\end{proof}
	
	\subsection{Improving the bound for large wave-temporal frequencies}
	
	We next aim to improve the bound for $Q^{(\tilde{\tau})}_{>\gamma^{-1}}(z_{nres}^{prin})$. Thus we consider the equation 
	\begin{equation}\label{eq:znreskeyreofrmulationHighFreq}
		\mathcal{L}Q^{(\tilde{\tau})}_{>\gamma_1^{-1}}z_{nres}^{prin} - Q^{(\tilde{\tau})}_{>\gamma_1^{-1}}\big(2n_{prin}\cdot W\big) - Q^{(\tilde{\tau})}_{>\gamma_1^{-1}}\big(y_{\tilde{\lambda}}\cdot W\big) = Q^{(\tilde{\tau})}_{>\gamma_1^{-1}}\big(\tilde{E}_{main} + z_{nres, small}^{prin}\big). 
	\end{equation}
	In fact, we shall be able to treat the second and third term on the left as perturbative terms, taking advantage of Prop.~\ref{prop:solnoftildelambdaeqn} for the third term:
	\begin{lem}\label{lem:znresprinhighfreqimprov} Recalling \eqref{eq:nprindef} for the definition of $n_{prin}$, we have the bound 
		\begin{align*}
			\Big\|Q^{(\tilde{\tau})}_{>\gamma_1^{-1}}\big(2n_{prin}\cdot W\big)\Big\|_{\tau^{-N}L^2_{d\tau}L^2_{R^3\,dR}}\ll_{\gamma_1}\big\|z_{nres}^{prin}\big\|_{S}. 
		\end{align*}
		Furthermore,  we can write 
		\begin{align*}
			Q^{(\tilde{\tau})}_{>\gamma_1^{-1}}\big(y_{\tilde{\lambda}}\cdot W\big) = 2Q^{(\tilde{\tau})}_{>\gamma_1^{-1}}\tilde{\lambda}\cdot \Lambda W\cdot W^2 + G_1, 
		\end{align*}
		where the last term on the right satisfies the bound 
		\begin{align*}
			\big\|G_1\big\|_{\tau^{-N}L^2_{d\tau}L^2_{R^3\,dR}}\ll_{\gamma_1}\big\|\langle\partial_{\tilde{\tau}}^2\rangle^{-1}\tilde{\lambda}_{\tilde{\tau}\tilde{\tau}}\big\|_{\tau^{-N}L^2_{d\tau}}\lesssim \big\|z_{nres}\big\|_{S} + \big\|(\tilde{\kappa}_1,\kappa_2)\big\|_{\tau^{-N}L^2_{d\tau}},
		\end{align*}
		where the last inequality follows from Proposition~\ref{prop:tilealphatildelambdasimultaneous} .
	\end{lem}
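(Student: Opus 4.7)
The strategy is to exploit the smoothing of $\Box^{-1}$ at high wave-temporal frequency, in a manner dual to how Lemma~\ref{lem:lowtempfreqznresprin1} exploits the approximation $\Box^{-1}\triangle \approx I$ at high spatial / low wave-temporal frequency. From the representation \eqref{eq:nFintildetauR}, in rescaled wave-time coordinates $\Box$ equals $-\lambda^{2}\partial_{\tilde\tau}^{2}$ plus lower-order scaling and cross terms (which come with the coefficient $\lambda_{\tilde\tau}/\lambda \sim \tau^{-1}$). On the range of $Q^{(\tilde\tau)}_{\geq \gamma_1^{-1}}$ the formal inverse $-\partial_{\tilde\tau}^{-2}$ is a bounded Fourier multiplier with symbol $|\hat{\tilde\tau}|^{-2}<\gamma_1^{2}$, and iterating the resulting Neumann expansion provides the smallness gain. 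All of this is exactly the content of Lemma~\ref{lem:largemodgeneralboxinverse}, which will be the main workhorse.

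For the first estimate, set $\tilde n_{prin} := \lambda^{2} n_{prin}$, which by \eqref{eq:nprindef} satisfies $\Box \tilde n_{prin} = \lambda^{2}\triangle\Re\bigl(W\overline{z_{nres}^{prin}}\bigr)$. Applying Lemma~\ref{lem:largemodgeneralboxinverse} with $a = \gamma_1^{-1}$ yields
\begin{align*}
\bigl\|Q^{(\tilde\tau)}_{\geq \gamma_1^{-1}}\tilde n_{prin}\bigr\|_{\tau^{-N}L^{2}_{d\tau}\,\langle R\rangle^{1+\delta_{0}}L^{2}_{R^{3}dR}}
&\lesssim \gamma_1^{2}\,\bigl\|\langle R\rangle^{2}\langle\nabla\rangle\triangle\Re\bigl(W\overline{z_{nres}^{prin}}\bigr)\bigr\|_{\tau^{-N}L^{2}_{d\tau}L^{2}_{R^{3}dR}}.
\end{align*}
The right-hand side is controlled by $\|z_{nres}^{prin}\|_{S}$ through the $\|\mathcal{L}^{2}z_{nres}^{prin}\|_{U}$ component of the $S$-norm, since the rapid decay of $W$ and its derivatives absorbs $\langle R\rangle^{2}$ and since $\triangle\langle\nabla\rangle(W\bar z)$ corresponds to no more than three spatial derivatives, interpolated from $\mathcal{L}^{2}$. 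Multiplication by $W(R)$ then absorbs $\langle R\rangle^{1+\delta_{0}}$, and the factor $\lambda^{-2}$ converting $\tilde n_{prin}$ back to $n_{prin}$ may be commuted past $Q^{(\tilde\tau)}_{\geq \gamma_1^{-1}}$ at the cost of commutators of Fourier symbol $|\lambda_{\tilde\tau}/\lambda|\sim \tau^{-1}$, negligible compared to $\gamma_1^{-1}$, giving the first assertion.

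For the second identity, start from \eqref{eq:ytildelamba}, whose dominant source is $-2\tilde\lambda_{tt}\lambda^{2}\Lambda W\cdot W$. Using $\tilde\lambda_{tt} = \lambda^{2}\tilde\lambda_{\tilde\tau\tilde\tau} + \lambda\lambda_{\tilde\tau}\tilde\lambda_{\tilde\tau}$ together with the approximation $\Box^{-1}\approx -\lambda^{-2}\partial_{\tilde\tau}^{-2}$ (rigorously valid on the range of $Q^{(\tilde\tau)}_{\geq \gamma_1^{-1}}$ by Lemma~\ref{lem:largemodgeneralboxinverse}) and the algebraic identity
\begin{align*}
-\partial_{\tilde\tau}^{-2}\bigl(\tilde\lambda_{\tilde\tau\tilde\tau}\,g\bigr)
\;=\; -\tilde\lambda\,g \;+\; \partial_{\tilde\tau}^{-2}\bigl(2\tilde\lambda_{\tilde\tau}g_{\tilde\tau}+\tilde\lambda g_{\tilde\tau\tilde\tau}\bigr),
\end{align*}
applied with $g = \lambda^{2}\Lambda W\cdot W$, one obtains $y_{\tilde\lambda} = 2\lambda^{2}\tilde\lambda\cdot\Lambda W\cdot W + G_1'$, and hence $\lambda^{-2}y_{\tilde\lambda}\cdot W = 2\tilde\lambda\cdot\Lambda W\cdot W^{2} + G_1$ with $G_1 = \lambda^{-2}W\cdot G_1'$. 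The error $G_1$ aggregates (i) the difference between $\Box^{-1}$ and $-\lambda^{-2}\partial_{\tilde\tau}^{-2}$, which gains $\gamma_1^{2}$ via Lemma~\ref{lem:largemodgeneralboxinverse}; (ii) the commutators of $\partial_{\tilde\tau}^{-2}$ with the powers of $\lambda$, controlled through $|\lambda_{\tilde\tau}/\lambda|\sim \tau^{-1}$ and integration by parts in $\tilde\tau$; (iii) the subleading summands in $\tilde\lambda_{tt}$ and in \eqref{eq:ytildelamba}, each of which decays faster in $\tau$. Together these give $\|G_1\|_{\tau^{-N}L^{2}_{d\tau}L^{2}_{R^{3}dR}} \ll_{\gamma_1} \|\langle\partial_{\tilde\tau}^{2}\rangle^{-1}\tilde\lambda_{\tilde\tau\tilde\tau}\|_{\tau^{-N}L^{2}_{d\tau}}$, and Proposition~\ref{prop:tilealphatildelambdasimultaneous} bounds the latter by $\|z_{nres}\|_{S} + \|(\tilde\kappa_1,\kappa_2)\|_{\tau^{-N}L^{2}_{d\tau}}$, as required.

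The principal technical obstacle is to make rigorous the heuristic $\Box^{-1}\approx -\lambda^{-2}\partial_{\tilde\tau}^{-2}$ in the scaled coordinates where $\lambda$ is time dependent: one must carefully track the commutators $[\partial_{\tilde\tau}^{-2},\lambda^{k}]$ and verify that the cross terms in \eqref{eq:nFintildetauR} involving $R\partial_R$ mixed with $\partial_{\tilde\tau}$ remain perturbative when applied to profiles spatially localized by $W$ at wave-temporal frequencies $>\gamma_1^{-1}$. Once this is in place, the two claims of the lemma follow from the smallness of $|\hat{\tilde\tau}|^{-2}<\gamma_1^{2}$ on the support of $Q^{(\tilde\tau)}_{\geq \gamma_1^{-1}}$ and the decay of $W$.
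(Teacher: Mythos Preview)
Your overall heuristic --- exploit $\Box^{-1}\approx-\partial_{\tilde\tau}^{-2}$ at wave-temporal frequency $>\gamma_1^{-1}$ --- is exactly what the paper uses, and your treatment of the $y_{\tilde\lambda}$ part is correct since the source $\Lambda W\cdot W\sim\langle R\rangle^{-4}$ easily absorbs all polynomial weights and the commutator bookkeeping is routine.

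For the first estimate, however, the black-box use of Lemma~\ref{lem:largemodgeneralboxinverse} has a gap. That lemma's right-hand side requires $\langle R\rangle^{2}\langle\nabla\rangle(\lambda^{-2}F)\in L^{2}_{R^{3}\,dR}$, and for $\lambda^{-2}F=\triangle\Re\bigl(W\overline{z_{nres}^{prin}}\bigr)$ the Leibniz term $\langle R\rangle^{2}\,\nabla W\cdot\nabla\bar z$ does \emph{not} lie in $L^{2}_{R^{3}\,dR}$ from the $S$-norm alone: since $\langle R\rangle^{2}|\nabla W|\sim\langle R\rangle^{-1}$ while the $S$-norm only furnishes $|\nabla z|\lesssim\langle R\rangle^{-1/2+\delta_{0}}$, the resulting $L^{2}_{R^{3}\,dR}$-integral is $\int R^{2\delta_{0}}\,dR=\infty$ at infinity; the same issue recurs for $\langle R\rangle^{2}W\cdot\nabla\triangle\bar z$, as $\langle R\rangle^{2}W$ is merely bounded and three unweighted $L^{2}$-derivatives of $z$ are not directly available from $\|\cdot\|_{S}$. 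Lemma~\ref{lem:largemodgeneralboxinverse} carries the generous weight $\langle R\rangle^{2}$ because its other applications in the paper have rapidly decaying sources; for the present source that weight is too strong.

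The paper's route --- ``similar to Lemma~\ref{lem:lowtempfreqznresprin1}'', and in effect the proof of Lemma~\ref{lem:yzWbound3} --- avoids this by a \emph{spatial} Littlewood--Paley split rather than a single weighted estimate: on $P_{<\gamma_1^{-1/4}}Q^{(\tilde\tau)}_{>\gamma_1^{-1}}$ the multiplier $\Box^{-1}\triangle$ has symbol $|\xi|^{2}/(\hat{\tilde\tau}^{2}-|\xi|^{2})\lesssim\gamma_1^{3/2}$, while on $P_{\geq\gamma_1^{-1/4}}$ the high-frequency localization together with multiplication by $W$ ensures the required $R$-integrability (as in Lemma~\ref{lem:yzWbound1}). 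Either carry out this split, or first sharpen Lemma~\ref{lem:largemodgeneralboxinverse} to the weight $\langle R\rangle^{1+\delta_{0}}$ that its proof (via Lemma~\ref{lem:wavebasicinhom}) actually delivers --- with that weight the borderline term becomes $\int R^{-2+4\delta_{0}}\,dR<\infty$ and your argument goes through.
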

	The proof is similar to the one of Lemma~\ref{lem:lowtempfreqznresprin1}, we omit the simple details. Next, we take advantage of Proposition~\ref{prop:solnoftildelambdaeqn} in order to express $Q^{(\tilde{\tau})}_{>\gamma_1^{-1}}\tilde{\lambda}$ in terms of $Q^{(\tilde{\tau})}_{>\gamma_1^{-1}}z_{nres}$. The conclusion is the following 
	\begin{lem}\label{lem:znresprinhighfreqimprov1} We have the asymptotic formula (using \eqref{eq:alphastarstar})
		\begin{align*}
			2Q^{(\tilde{\tau})}_{>\gamma_1^{-1}}\tilde{\lambda}\cdot \Lambda W\cdot W^2 = 2\cdot \alpha_{**}^{-1}\cdot T_1\big(Q^{(\tilde{\tau})}_{>\gamma_1^{-1}}z_{nres}^{prin}\big)\cdot \Lambda W\cdot W^2 + G_2, 
		\end{align*}
		where $T_1(z): = \int_0^\infty z\cdot W\cdot\triangle(W^2)R^3\,dR$, and the error term $G_2$ satisfies the same bound as the term $F_2$ in Lemma~\ref{lem:lowtempfreqznresprin2}. 
	\end{lem}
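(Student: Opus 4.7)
The plan is to invoke Proposition~\ref{prop:solnoftildelambdaeqn}, which expresses $\tilde{\lambda}_{prin,\tilde{\tau}\tilde{\tau}}$ on the (standard) Fourier side in terms of $z_{nres}$, and to exploit the fact that the auxiliary symbol $c_3(\hat{\tilde{\tau}},\nu)$ is supported on $(-1,1)$ so that it vanishes identically on the cutoff region $\{|\hat{\tilde{\tau}}|>\gamma_1^{-1}\}$. First I would decompose $\tilde{\lambda} = \tilde{\lambda}_{prin} + \tilde{\lambda}_{err}$ according to Prop.~\ref{prop:solnoftildelambdaeqn}, note that $\|\langle \partial_{\tilde{\tau}\tilde{\tau}}\rangle^{-1}\delta\tilde\lambda\|_{\tau^{-N}L^2_{d\tau}} \leq c(\tau_*)[\|z_{nres}\|_S + \|(\tilde\kappa_1,\kappa_2)\|_{\tau^{-N}L^2_{d\tau}}]$ with $c(\tau_*) \to 0$, and therefore absorb $2Q^{(\tilde\tau)}_{>\gamma_1^{-1}}\tilde\lambda_{err}\cdot\Lambda W\cdot W^2$ directly into $G_2$.

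For the principal part, restricting the master equation to $|\hat{\tilde\tau}|>\gamma_1^{-1}$ — where $c_3\equiv 0$ — and dividing by $-\hat{\tilde\tau}^2$ to invert $\partial_{\tilde\tau}^2$, I obtain
\[
\mathcal{F}_{\tilde\tau}\big(Q^{(\tilde\tau)}_{>\gamma_1^{-1}}\tilde\lambda_{prin}\big)(\hat{\tilde\tau}) = -\hat{\tilde\tau}^{-2}\langle\hat{\tilde\tau}^2\rangle\beta_*(\hat{\tilde\tau}) \cdot \mathcal{F}_{\tilde\tau}\Big(\Pi^{(\tilde\tau)}\!\!\int_0^\infty Q^{(\tilde\tau)}_{<\tau^{\frac12+}}\lambda^{-2}\Box^{-1}\triangle\Re(\lambda^2 z_{nres}W)\cdot W^2 R^3\,dR\Big)(\hat{\tilde\tau}).
\]
Next I would expand the propagator $\lambda^{-2}\Box^{-1}$ at high wave-temporal frequency: using \eqref{eq:nFintildetauR}, $\lambda^{-2}\Box = -\partial_{\tilde\tau}^2 + \triangle_R + \mathcal{E}$ where $\mathcal{E}$ collects terms with coefficients of size $\frac{\lambda_{\tilde\tau}}{\lambda}\sim\tilde\tau^{-1}$; on the region $|\hat{\tilde\tau}|>\gamma_1^{-1}$ a Neumann expansion in $(\triangle_R + \mathcal{E})\partial_{\tilde\tau}^{-2}$ gives $\lambda^{-2}\Box^{-1}\triangle \approx -\partial_{\tilde\tau}^{-2}\triangle_R + O(\gamma_1)$, where each successive correction gains a factor of $\gamma_1^{1/2}$ or of $\tau^{-1/(4\nu)}$ and is controlled via the structural Lemmas~\ref{lem:wavebasicinhomstructure}, \ref{lem:wavebasicinhomstructure1}. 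Two integration by parts in $R$, permitted by the decay of $W$ and $W^2$ and the regularity of $z_{nres}^{prin}\cdot W$ at the origin, move the spatial Laplacian onto $W^2$:
\[
\int_0^\infty \triangle_R\Re(z_{nres}^{prin} W)\cdot W^2\,R^3\,dR = \int_0^\infty \Re(z_{nres}^{prin}) W\triangle(W^2)\,R^3\,dR = T_1\big(\Re z_{nres}^{prin}\big).
\]
Replacing $z_{nres}$ by $z_{nres}^{prin}$ in the right-hand side is perturbative by Lemmas~\ref{lem:znresperturbative1}, \ref{lem:nonresconnect1}, \ref{lem:znresrestbound}, giving to leading order $Q^{(\tilde\tau)}_{>\gamma_1^{-1}}\tilde\lambda_{prin} \approx \beta_*(\partial_{\tilde\tau})\cdot T_1(\Re Q^{(\tilde\tau)}_{>\gamma_1^{-1}}z_{nres}^{prin})$, which after multiplying by $\Lambda W\cdot W^2$ yields the asserted formula with $\alpha_{**}^{-1}$ identified as the appropriate limiting value of $\beta_*$ (analogous to the identification of $\alpha_*$ in Lemma~\ref{lem:lowtempfreqznresprin2}).

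The hard part will be making the asymptotic substitution $\lambda^{-2}\Box^{-1}\triangle\to -\partial_{\tilde\tau}^{-2}\triangle_R$ quantitative \emph{uniformly} down to merely bounded frequencies $|\hat{\tilde\tau}|\sim\gamma_1^{-1}$, rather than only in an asymptotic sense as $|\hat{\tilde\tau}|\to\infty$; this requires carrying out the Neumann expansion of the parametrix \eqref{eq:wavepropagator} to sufficiently many orders and invoking the temporal-frequency localized bounds of Lemma~\ref{lem:largemodgeneralboxinverse} on each remainder term. A second delicate point is commuting $Q^{(\tilde\tau)}_{>\gamma_1^{-1}}$ past the projector $\Pi^{(\tilde\tau)}$ in the Fourier formula: this is handled as in the proof of Lemma~\ref{lem:lowtempfreqznresprin2} (via the same manipulations as in Lemma~\ref{lem:fmodif}), incurring only $\tau^{-100N}L^2_{d\tau}$ errors absorbable into $G_2$. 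With these two ingredients in place the bound on $G_2$ reduces to combinations of estimates already established in Proposition~\ref{prop:solnoftildelambdaeqn}, Lemma~\ref{lem:wavebasicinhom} and Remark~\ref{rem:lem:Phitildelambdamodeleqnsmallness}, exactly as in the low-frequency counterpart Lemma~\ref{lem:lowtempfreqznresprin2}.
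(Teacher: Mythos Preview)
Your approach is correct and is essentially the same as the paper's: the paper simply says the lemma ``is proved in analogy to Lemma~\ref{lem:lowtempfreqznresprin2}, relying on the fact that the function $\beta_*(\hat{\tilde{\tau}})$ in Proposition~\ref{prop:solnoftildelambdaeqn} satisfies the limiting relation $\lim_{\hat{\tilde{\tau}}\rightarrow +\infty}\beta_*(\hat{\tilde{\tau}}) = \alpha_{**}^{-1}$, in turn a consequence of the proof of Lemma~\ref{lem:Phitildelambdamodeleqn}.'' You have correctly identified all of the ingredients behind that sentence --- Proposition~\ref{prop:solnoftildelambdaeqn} with the vanishing of $c_3$ on $|\hat{\tilde\tau}|>\gamma_1^{-1}$, the high-frequency reduction of $\lambda^{-2}\Box^{-1}\triangle$ to $-\partial_{\tilde\tau}^{-2}\triangle_R$, the integration by parts producing $T_1$, the replacement of $z_{nres}$ by $z_{nres}^{prin}$, and the commutation of $Q^{(\tilde\tau)}_{>\gamma_1^{-1}}$ past $\Pi^{(\tilde\tau)}$.

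One small algebraic slip: in your displayed formula for $\mathcal{F}_{\tilde\tau}\big(Q^{(\tilde\tau)}_{>\gamma_1^{-1}}\tilde\lambda_{prin}\big)$, the factor should be $-\hat{\tilde\tau}^{-2}\langle\hat{\tilde\tau}^2\rangle^2\beta_*$ rather than $-\hat{\tilde\tau}^{-2}\langle\hat{\tilde\tau}^2\rangle\beta_*$, since Proposition~\ref{prop:solnoftildelambdaeqn} has $\langle\partial_{\tilde\tau}^2\rangle^{-1}$ on the left and $\langle\hat{\tilde\tau}^2\rangle$ on the right. This does not affect the argument: at $|\hat{\tilde\tau}|>\gamma_1^{-1}$ one has $\hat{\tilde\tau}^{-2}\langle\hat{\tilde\tau}^2\rangle^2\sim\hat{\tilde\tau}^2$, which then cancels the $\hat{\tilde\tau}^{-2}$ coming from $\partial_{\tilde\tau}^{-2}$ in the Neumann expansion of $\Box^{-1}$, and the remaining multiplier is $\beta_*(\hat{\tilde\tau})\to\alpha_{**}^{-1}$ exactly as you say.
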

	This lemma is proved in analogy to Lemma~\ref{lem:lowtempfreqznresprin2}, relying on the fact that the function $\beta_*(\hat{\tilde{\tau}})$ in Proposition~\ref{prop:solnoftildelambdaeqn} satisfies the limiting relation 
	\begin{align*}
		\lim_{\hat{\tilde{\tau}}\rightarrow +\infty}\beta_*(\hat{\tilde{\tau}}) = \alpha_{**}^{-1}, 
	\end{align*}
	in turn a consequence of the proof of Lemma~\ref{lem:Phitildelambdamodeleqn}.
	
	Combining the conclusions of the preceding two lemmas with \eqref{eq:tildeEmainsmallness}, we can infer the following 
	\begin{lem}\label{lem:znresprinlargetemprfreqsmallness} We have the following bound for the large temporal frequency component of $z_{nres}^{prin}$:
		\begin{equation}\label{eq:Q>gamma-1intermediate}\begin{split}
				\big\| Q^{(\tilde{\tau})}_{>\gamma_1^{-1}}(z_{nres}^{prin}) \big\|_{S}\lesssim c_1(\epsilon_1, \gamma_1,\tau_*,N)\cdot\big\|z_{nres}\big\|_{S} + \big\|(\tilde{\kappa}_1,\kappa_2)\big\|_{\tau^{-N}L^2_{d\tau}} + \big\|e_1\big\|_{\tau^{-N-1}L^2_{d\tau}}. 
		\end{split}\end{equation}
		where $\lim_{\epsilon_1^{-1}, \gamma_1^{-1}, \tau_*, N\rightarrow\infty}c_1(\epsilon_1, \gamma_1, \tau_*,N) = 0$. 
	\end{lem}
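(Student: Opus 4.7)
The plan is to mimic the argument of Lemma~\ref{lem:smalltempfreqznresprinimprovedbound}, now with $\mathcal{L}$ playing the role of $\tilde{\mathcal{L}}$ and with a different rank-one perturbation. Inserting the decompositions furnished by Lemma~\ref{lem:znresprinhighfreqimprov} and Lemma~\ref{lem:znresprinhighfreqimprov1} into~\eqref{eq:znreskeyreofrmulationHighFreq}, and moving the terms $Q^{(\tilde{\tau})}_{>\gamma_1^{-1}}(2n_{prin}\cdot W)$, $G_1$, $G_2$ (each obeying a $\ll_{\gamma_1}$ or $\ll_{\epsilon_1,\gamma_1,N,\tau_*}$ bound) as well as the source $Q^{(\tilde{\tau})}_{>\gamma_1^{-1}}(\tilde{E}_{main}+z_{nres,small}^{prin})$ (controlled by~\eqref{eq:tildeEmainsmallness}) to the right-hand side, one reduces to the effective equation
\begin{equation*}
\mathcal{L}\bigl(Q^{(\tilde{\tau})}_{>\gamma_1^{-1}}z_{nres}^{prin}\bigr) - 2\alpha_{**}^{-1}\,T_1\bigl(Q^{(\tilde{\tau})}_{>\gamma_1^{-1}}z_{nres}^{prin}\bigr)\cdot \Lambda W\cdot W^2 = F,
\end{equation*}
in which the inhomogeneity $F$ satisfies $\|\mathcal{L}^{-1}F\|_{\tilde{S}}\leq c(\epsilon_1,\gamma_1,\tau_*,N)\|z_{nres}\|_{S}+\|(\tilde{\kappa}_1,\kappa_2)\|_{\tau^{-N}L^2_{d\tau}}+\|e_1\|_{\tau^{-N-1}L^2_{d\tau}}$ with $c\rightarrow 0$, where $\|\cdot\|_{\tilde{S}}$ denotes the sum of the first three norms in~\eqref{eq:Snormdefi} (as was used in the proof of Lemma~\ref{lem:smalltempfreqznresprinimprovedbound}). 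The use of Proposition~\ref{prop:tildekappa1kappa2apriori} in bounding the $\tilde{\lambda}$-dependence of these error terms mirrors the small-frequency case.

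The imaginary part of the effective equation decouples completely, since $T_1$ is real-valued and annihilates imaginary inputs: one obtains $\mathcal{L}\,Q^{(\tilde{\tau})}_{>\gamma_1^{-1}}\Im z_{nres}^{prin} = \Im F$, and the non-resonant nature of $z_{nres}^{prin}$ allows us to invert by applying $\mathcal{L}^{-1}$ uniquely determined by vanishing at $R=0$ (cf.~\eqref{eq:fnralternative}), after which Lemma~\ref{lem:basicSfromtildeL} gives the desired control. For the real part, in direct parallel with the proof of Lemma~\ref{lem:smalltempfreqznresprinimprovedbound}, I would factor the rank-one term through $\mathcal{L}$ by setting $w := \mathcal{L}\,\Re Q^{(\tilde{\tau})}_{>\gamma_1^{-1}}z_{nres}^{prin}$ and rewriting the equation as $(I+\tilde{T}_1)(w) = \Re F$, where
\[
\tilde{T}_1(w) := -2\alpha_{**}^{-1}\,T_1\bigl(\mathcal{L}^{-1}w\bigr)\cdot \Lambda W\cdot W^2
\]
is a rank-one operator on $\tau^{-N}L^2_{d\tau}L^2_{R^3\,dR}$. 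Invertibility of $I+\tilde{T}_1$ then reduces, by Fredholm theory exactly as in Lemma~\ref{lem:oneplustildetinverse}, to a single scalar nondegeneracy condition of the form $1+2\alpha_{**}^{-1}\langle \mathcal{L}^{-1}(\Lambda W\cdot W^2),\,W\,\triangle(W^2)\rangle_{L^2_{R^3\,dR}}\neq 0$, to be listed among the numerical hypotheses of subsection~\ref{subsec:numerics} and verified in~\cite{KSchmNum}.

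Granted this inversion, the bound $\|w\|_{\tau^{-N}L^2_{d\tau}L^2_{R^3\,dR}}\lesssim \|\Re F\|_{\tau^{-N}L^2_{d\tau}L^2_{R^3\,dR}}$ is converted into control of the first three $\|\cdot\|_S$-components via Lemma~\ref{lem:basicSfromtildeL}. The fourth component of $\|\cdot\|_S$, namely the $\tau^{-N+1+}L^2_{d\tau}(L^{2+}_{R^3\,dR}+L^{\frac{8}{3}+}_{R^3\,dR})$-norm, is not accessed through the effective equation but estimated directly from the defining formula~\eqref{eq:znresprin}, using $\|\phi(R;\xi)-\phi(R;0)\|_{L^{\frac{8}{3}+}_{R^3\,dR}}\lesssim 1$ on $\xi\in[\epsilon_1,\epsilon_1^{-1}]$, Plancherel for the distorted Fourier transform, and the ingredient bounds of Corollary~\ref{cor:yzW}, Corollary~\ref{cor:yzWpartialtau}, Lemma~\ref{lem:ytildelambdamodhightempfreq}, combined with Proposition~\ref{prop:tilealphatildelambdasimultaneous} to handle the $\tilde{\lambda}, \tilde{\alpha}$ components. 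I expect the main obstacle to be propagating the high-temporal-frequency smallness from Lemmas~\ref{lem:znresprinhighfreqimprov} and~\ref{lem:znresprinhighfreqimprov1} cleanly through the Fredholm inversion of $I+\tilde{T}_1$, so that the smallness survives as the $c_1(\epsilon_1,\gamma_1,\tau_*,N)\to 0$ factor in front of $\|z_{nres}\|_S$; a secondary point is checking that the projection $Q^{(\tilde{\tau})}_{>\gamma_1^{-1}}$ commutes (up to negligible errors) with the various spatial operators entering $F$ so that \eqref{eq:tildeEmainsmallness} really does control the high-frequency-localized source.
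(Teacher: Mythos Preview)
Your proposal is correct and follows essentially the same route as the paper. The only organizational difference is in how the rank-one term is factored through $\mathcal{L}$: the paper observes the explicit identity $\Lambda W\cdot W^2 = \mathcal{L}(\psi)$ with $\psi = 2\Lambda W + 16W \in L^2_{R^3\,dR}$, which lets one rewrite the effective equation directly as
\[
\mathcal{L}\bigl(Q^{(\tilde{\tau})}_{>\gamma_1^{-1}}z_{nres}^{prin} - 2\alpha_{**}^{-1}\,T_1(Q^{(\tilde{\tau})}_{>\gamma_1^{-1}}z_{nres}^{prin})\cdot\psi\bigr) = F,
\]
then invert $\mathcal{L}$ via Lemma~\ref{lem:basicSfromtildeL} and finally solve the rank-one equation $z - \tfrac{2}{\alpha_{**}}T_1(z)\psi = g$ in $S$ via the dedicated Lemma~\ref{lem:T1inversion}. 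You instead set $w = \mathcal{L}z$ and invert $I+\tilde{T}_1$ on the $w$-level first. The two orderings are equivalent: since $\mathcal{L}^{-1}(\Lambda W\cdot W^2) = \psi$, your scalar nondegeneracy condition is precisely the paper's assumption \textbf{(B2)}, namely $\tfrac{2}{\alpha_{**}}\int_0^\infty \psi\,W\,\triangle(W^2)\,R^3\,dR \neq 1$ (your stated sign should be $1-2\alpha_{**}^{-1}\langle\cdots\rangle\neq 0$, not $1+$). The paper's version is marginally cleaner because it makes the $L^2$-membership of the $\mathcal{L}$-preimage explicit and packages the rank-one inversion at the level of the $S$-norm rather than $L^2$.
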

	\begin{proof} This is analogous to the proof of Lemma~\ref{lem:smalltempfreqznresprinimprovedbound}.  We first reformulate \eqref{eq:znreskeyreofrmulationHighFreq} as 
		\begin{align*}
			\mathcal{L}Q^{(\tilde{\tau})}_{>\gamma_1^{-1}}z_{nres}^{prin}  - 2\cdot \alpha_{**}^{-1}\cdot T_1\big(Q^{(\tilde{\tau})}_{>\gamma_1^{-1}}z_{nres}^{prin}\big)\cdot \Lambda W\cdot W^2 &= Q^{(\tilde{\tau})}_{>\gamma_1^{-1}}\big(\tilde{E}_{main} + z_{nres, small}^{prin}\big)\\
			& + H, 
		\end{align*}
		where $H$ satisfies the same bound as $F_2$ in Lemma~\ref{lem:lowtempfreqznresprin2}. Then observe that 
		\begin{equation}\label{eq:psidefn}
			\Lambda W\cdot W^2 = \mathcal{L}(\psi),\,\psi := 2\Lambda W + 16W\in L^2_{R^3\,dR}. 
		\end{equation}
		Then we can write the preceding equation for $Q^{(\tilde{\tau})}_{>\gamma_1^{-1}}z_{nres}^{prin}$ as 
		\begin{align*}
			\mathcal{L}\big(Q^{(\tilde{\tau})}_{>\gamma_1^{-1}}z_{nres}^{prin} - 2\cdot \alpha_{**}^{-1}\cdot T_1\big(Q^{(\tilde{\tau})}_{>\gamma_1^{-1}}z_{nres}^{prin}\big)\cdot\psi\big) = &Q^{(\tilde{\tau})}_{>\gamma_1^{-1}}\big(\tilde{E}_{main} + z_{nres, small}^{prin}\big)\\
			& + H. 
		\end{align*}
		The desired bound for $Q^{(\tilde{\tau})}_{>\gamma_1^{-1}}z_{nres}^{prin}$ then follows by combining Lemma~\ref{lem:T1inversion} with Lemma~\ref{lem:basicSfromtildeL}.
	\end{proof}

	\subsection{Improving the bound for intermediate wave-temporal frequencies I; introducing a key Fredholm operator}
	
	This is the most difficult situation to deal with, as the second and third term on the left hand side of \eqref{eq:znreskeyreofrmulation1} are no longer perturbative in this regime. 
	Here it shall be advantageous to use \eqref{eq:nprinequation1}, \eqref{eq:nprinequation2} instead. The resulting full equation for $\tilde{n}_{prin}$ is then given by 
	\begin{equation}\label{eq:tildenprinfull}\begin{split}
			& \Box \tilde{n}_{prin} - 2\triangle\big(W\cdot \mathcal{L}^{-1}(\tilde{n}_{prin}\cdot W)\big) - \triangle\big(W\cdot \mathcal{L}^{-1}( y_{\tilde{\lambda}}\cdot W)\big)\\
			& =  \lambda^2\triangle \Re\big(W\mathcal{L}^{-1}(\overline{\tilde{E}_{main} + z_{nres, small}^{prin}}\big)
	\end{split}\end{equation}
	Here we keep in mind that $y_{\tilde{\lambda}}$ is given by \eqref{eq:ytildelamba}, while $\tilde{\lambda}$ is described in terms of Proposition~\ref{prop:solnoftildelambdaeqn}. To clarify the structure of the wave operator, introduce the operator
	\begin{equation}\label{eq:Kdefinition}
		K: = 2\triangle\big(W\cdot \mathcal{L}^{-1}(\tilde{n}_{prin}\cdot W)\big)  + 2W^2. 
	\end{equation}
	Then we can write 
	\begin{align*}
		\Box \tilde{n}_{prin} - 2\triangle\big(W\cdot \mathcal{L}^{-1}(\tilde{n}_{prin}\cdot W)\big) = \Box \tilde{n}_{prin} +2W^2 \tilde{n}_{prin} -K \tilde{n}_{prin}, 
	\end{align*}
	and the key point shall be to understand the propagator associated to the wave operator $\Box  +2W^2 -K$, when restricted to intermediate (wave) temporal frequency. Inspired by methods common in control theory, we shall apply the (wave) temporal Fourier transform to this operator and reduce the wave equation \eqref{eq:tildenprinfull} to an elliptic equation. Overall the basic strategy shall be to reduce the preceding operator to the much simpler operator $\Box + 2W^2$, whose propagator is straightforward to describe, in analogy to subsection~\ref{subse:waveparametrix}. 
	To achieve this reduction, we formally factorize 
	\begin{align*}
		\Box + 2W^2 - K = \big(\Box + 2W^2\big)\circ\big(I - (\Box + 2W^2)^{-1}\circ K\big)
	\end{align*}
	Inverting this relation formally, we arrive at 
	\begin{equation}\label{eq:formalinverseofBox+K}
		\big(\Box + 2W^2 - K\big)^{-1} = \big(I - (\Box + 2W^2)^{-1}\circ K\big)^{-1}\circ \big(\Box + 2W^2\big)^{-1}. 
	\end{equation}
	The key difficulty now consists in controlling the first expression on the right, which turns out to be the inverse of a Fredholm operator when restricting the wave temporal frequencies and applying the temporal Fourier transform. More precisely, for technical reasons we shall split 
	\begin{align*}
		(\Box + 2W^2)^{-1}\circ K = (\Box + 2W^2)^{-1}\circ K_{main} + (\Box + 2W^2)^{-1}\circ K_{small},
	\end{align*}
	where $K_{small}$ enjoys smallness properties which shall allow us to treat the operator $(\Box + 2W^2)^{-1}\circ K_{small}$ perturbatively, while $K_{main}$, while not gaining smallness, has certain inherent localization properties, see the next subsection for the precise definition. 
	Denoting by $\hat{\tau}$ the Fourier variable corresponding to eave time $\tilde{\tau} = \int_t^\infty \lambda(s)\,ds$, consider the following model operator
	\begin{equation}\label{eq:waveoperatormodel1} 
		I - \big(\hat{\tau}^2 + \triangle +2W^2\big)^{-1}\circ K_{main}
	\end{equation} 
	which is an approximation of the operator $I - (\Box + 2W^2)^{-1}\circ K_{main}$ after application of the temporal Fourier transform. 
	A key point shall be to use the correct definition of $\big(\hat{\tau}^2 + \triangle +2W^2\big)^{-1}$ via the spectral representation associated to $\mathcal{L}_*: = -\triangle -2W^2$. This shall be done in section~\ref{sec:appendix}, see \eqref{eq:goodinverseoftriangle+hattausquare+2Wsquare}. To emphasize this point in the sequel, we shall write the preceding model operator as 
	\begin{equation}\label{eq:waveoperatormodel1} 
		I - \big(\hat{\tau}^2 + \triangle +2W^2\big)_{good}^{-1}\circ K_{main}
	\end{equation}  
	
	\subsection{Improving the bound for intermediate wave-temporal frequencies II; inverse of Fredholm operator via Carleman estimate}
	
	Let us fill in some details: first, for a sufficiently large constant $M = M(\gamma_1)>0$, where we recall $\gamma_1$ is the constant used in defining the different wave-temporal frequency regimes, we define the truncated operators
	\begin{equation}\label{eq:Kmainandsmall}
		K_{main}: = \chi_{R\lesssim M}K\chi_{R\lesssim M},\,K_{small}: = K - K_{main}. 
	\end{equation}
	Then the following lemma is a consequence of a simple analogue of Lemma~\ref{lem:wavebasicinhom} in conjunction with Lemma~\ref{lem:tildeboxwpropagatorapproximate}. Let $(\Box + 2W^2)^{-1}$ be the Duhamel propagator vanishing at $\tilde{\tau} = +\infty$, where $\Box + 2W^2$ is given by \eqref{eq:truewaveoperatorBoxtildeW}: 
	\begin{lem}\label{lem:Ksmallbound} We have the following bound for the contribution of $K_{small}$:
		\begin{align*}
			\Big\|(\Box + 2W^2)^{-1}\circ K_{small}u\Big\|_{\tau^{-N}L^2_{d\tau}\langle R\rangle R^{\delta_0}L^2_{R^3\,dR}}\ll_M \big\|u\big\|_{\tau^{-N}L^2_{d\tau}\langle R\rangle R^{\delta_0}L^2_{R^3\,dR}}
		\end{align*}
	\end{lem}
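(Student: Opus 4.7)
The strategy is to combine explicit pointwise kernel analysis of $K_{small}$ with the boundedness of the Duhamel propagator for $\Box + 2W^2$ on the weighted norm. Writing $K = K_1 + K_2$ with $K_1 u = 2W^2 u$ diagonal and $K_2 u = 2\triangle\bigl(W\cdot \mathcal{L}^{-1}(uW)\bigr)$, and recalling that $\mathcal{L}^{-1}$ admits an explicit integral representation with Green's function $G(R,R')$ built from the two homogeneous solutions $W$ and $\Lambda W$ (with vanishing at $R = 0$), the Schwartz kernel of $K_2$ reads
\[
K_2(R,R') \;=\; 2\,\triangle_R\bigl[W(R)\,G(R,R')\bigr]\,W(R'),
\]
and carries a pointwise bound of the schematic form $|K_2(R,R')|\lesssim \langle R\rangle^{-2}\langle R'\rangle^{-2}\,P(R,R')$ for an at-worst polynomially growing remainder $P$, controllable by Schur's test against the weight $\langle R\rangle R^{\delta_0}$. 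Similarly $K_1$ is multiplication by $2W^2$ and hence is supported essentially where $W$ is of order one.

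\textbf{Smallness of $K_{small}$.} Decompose
\[
K_{small} \;=\; (1-\chi_{R\lesssim M})\,K \;+\; \chi_{R\lesssim M}\, K\, (1-\chi_{R'\lesssim M}).
\]
On the first summand, the outer cutoff forces $R\gtrsim M$, whereupon $W^2(R)\lesssim M^{-4}$ for $K_1$ and $\bigl|\triangle_R[W(R)G(R,R')]\bigr|\lesssim M^{-2}$ (tracked after differentiating $W(R)\sim R^{-2}$) for $K_2$. On the second summand, $K_1$ contributes nothing because it is diagonal, while for $K_2$ the inner cutoff forces $R'\gtrsim M$, whereupon $W(R')\lesssim M^{-2}$ gives the gain. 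In each case one recovers $M^{-c}$ for some $c>0$ relative to the kernel bounds of Step~1, so Schur's test in the weight $\langle R\rangle R^{\delta_0}$ yields
\[
\bigl\|K_{small}\,u(\tau,\cdot)\bigr\|_{\langle R\rangle R^{\delta_0} L^2_{R^3\,dR}} \;\lesssim\; M^{-c}\,\bigl\|u(\tau,\cdot)\bigr\|_{\langle R\rangle R^{\delta_0} L^2_{R^3\,dR}}
\]
uniformly in $\tau$, and hence also after integrating against $\tau^{-N}L^2_{d\tau}$.

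\textbf{Composition with the wave propagator.} Applying $(\Box+2W^2)^{-1}$, the Duhamel propagator vanishing at $\tilde{\tau}=+\infty$, to $K_{small}u$ and invoking the analogue of Lemma~\ref{lem:wavebasicinhom} furnished by Lemma~\ref{lem:tildeboxwpropagatorapproximate} for the operator $\Box+2W^2$, one controls the output in $\tau^{-N}L^2_{d\tau}\langle R\rangle R^{\delta_0}L^2_{R^3\,dR}$ by a constant independent of $M$ times the norm of $K_{small}u$ in the same space (possibly after trading one derivative gain against the weight, which is harmless thanks to Step~2). Combined with the preceding step, the gain $M^{-c}$ beats the absolute implicit constant, giving the asserted $\ll_M$ bound.

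\textbf{Principal obstacle.} The most delicate point is the analysis of $G(R,R')$: since $\mathcal{L}$ carries a zero-energy resonance at $W$, the second linearly independent homogeneous solution fails to decay and $G$ grows polynomially in $R_>$. All smallness in $M$ must come \emph{solely} from the flanking factors $W(R)$ (and its derivatives generated by $\triangle_R$) together with $W(R')$, and one must verify that the action of $\triangle_R$ on $W(R)G(R,R')$ does not cancel the $M^{-2}$ decay at the boundary $R\sim M$. Tracking derivatives carefully on both factors of the product $W(R)G(R,R')$, and checking that the remainder $P(R,R')$ remains Schur-admissible against $\langle R\rangle R^{\delta_0}$, is the technical heart of the proof.
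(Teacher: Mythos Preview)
Your overall approach is the same as the paper's—kernel analysis of $K_{small}$ followed by the wave propagator bound from the analogue of Lemma~\ref{lem:wavebasicinhom} provided through Lemma~\ref{lem:tildeboxwpropagatorapproximate}—and your identification of the ``principal obstacle'' is accurate. However, Step~3 as written contains a genuine gap.

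The claim that the wave propagator $(\Box+2W^2)^{-1}$ controls the output in $\tau^{-N}L^2_{d\tau}\langle R\rangle R^{\delta_0}L^2_{R^3\,dR}$ by the norm of $K_{small}u$ \emph{in the same space} is not what Lemma~\ref{lem:wavebasicinhom} or its analogue provides. That propagator does not act boundedly on $\langle R\rangle R^{\delta_0}L^2_{R^3\,dR}$; without further input decay the $\tilde{\sigma}$-integration in the Duhamel formula would cost a full power of $\tilde{\tau}$, destroying the $\tau^{-N}$ weight. What the lemma actually gives is a map from source terms satisfying conditions like $\xi^{-1-\delta_0}\langle\xi\rangle^{\delta_0}\mathcal{F}_*(F)\in L^2_{\rho_*\,d\xi}$ and $\langle\partial_\xi\rangle^{1+\delta_0}\mathcal{F}_*(F)\in L^2_{\rho_*\,d\xi}$—roughly, $F\in \langle R\rangle^{-1-\delta_0}L^2_{R^3\,dR}$—into $\langle R\rangle^{1+\delta_0}L^2_{R^3\,dR}$, with no time loss. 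So what you must establish in Step~2 is not merely that $K_{small}:\langle R\rangle R^{\delta_0}L^2\to\langle R\rangle R^{\delta_0}L^2$ with norm $\lesssim M^{-c}$, but that $K_{small}:\langle R\rangle R^{\delta_0}L^2\to\langle R\rangle^{-1-\delta_0}L^2$ with small norm, \emph{and} that the gain $M^{-c}$ beats the $M^{1+\delta_0}$ loss that the $\langle\partial_\xi\rangle^{1+\delta_0}$ term can incur when the source is localized at $R\sim M$. Your kernel bounds $|K_2(R,R')|\lesssim\langle R\rangle^{-2}\langle R'\rangle^{-2}P(R,R')$ are in fact strong enough for this (the extra $\langle R\rangle^{-2}$ supplies the required input decay and the net power is still negative in $M$ for small $\delta_0$), but you must make this explicit rather than hiding it behind the phrase ``trading one derivative gain against the weight.''
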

	Thanks to the preceding lemma and straightforward perturbation techniques, the invertibility of $I - \big(\Box +2W^2\big)^{-1}\circ K$ is now reduced to the invertibility of 
	\begin{equation}\label{eq:keyoperatortobeinverted}
		I - \big(\hat{\tau}^2 + \triangle +2W^2\big)_{good}^{-1}\circ K_{main}.
	\end{equation}
	after application of the wave temporal Fourier transform, where we refer to subsection~\ref{subsec:goodinverse} for the definition of the 'good inverse'. To proceed, we have the following 
	\begin{lem}\label{lem:Fredholmproperty} The operator \eqref{eq:keyoperatortobeinverted} maps $\langle R\rangle R^{\delta_0}L^2_{R^3\,dR}$ into itself and is a Fredholm operator there. 
	\end{lem}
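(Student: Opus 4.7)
The plan is to exhibit the operator as a compact perturbation of the identity on $X := \langle R\rangle R^{\delta_0} L^2_{R^3\,dR}$, whence Fredholmness of index zero will follow by the Riesz--Schauder theorem. Setting $T := \big(\hat{\tau}^2 + \triangle + 2W^2\big)_{good}^{-1}\circ K_{main}$, everything reduces to verifying two claims: (a) $T$ maps $X$ into $X$ boundedly; (b) $T : X \to X$ is compact.

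First I would analyze the mapping properties of $K_{main} = \chi_{R\lesssim M}\, K\,\chi_{R\lesssim M}$, where $K = 2\triangle\big(W \cdot \mathcal{L}^{-1}((\cdot)\cdot W)\big) + 2W^2$. The double spatial truncation confines the output to $\{R \lesssim M\}$. For $u \in X$ the product $\chi_{R\lesssim M} u\cdot W$ lies in $L^2_{R^3\,dR}$ with support in $\{R\lesssim M\}$; elliptic regularity for $\mathcal{L}^{-1}$ on radial $\mathbb{R}^4$ places $\mathcal{L}^{-1}(\chi_{R\lesssim M}\, uW)$ in $H^2_{loc}$, with polynomial decay at infinity inherited from the compact support of the source. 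Multiplication by the Schwartz-type factor $W$, followed by $\triangle$ and the outer cutoff, then shows $K_{main}:X\to L^2_{\{R\lesssim M\}}$ boundedly.

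Next I would invoke the mapping properties of the good inverse, as constructed in subsection~\ref{subsec:goodinverse} via the spectral representation of $\mathcal{L}_* = -\triangle - 2W^2$. For $\hat{\tau}$ in the intermediate frequency range $[\gamma_1,\gamma_1^{-1}]$, the principal value prescription entering \eqref{eq:goodinverseoftriangle+hattausquare+2Wsquare} yields a bounded operator on $X$ which gains two spatial derivatives, in the sense of mapping $L^2_{\{R\lesssim M\}}$ into $H^2_{loc}$ with controlled polynomial decay. The principal value interpretation is exactly what is needed to produce a bounded inverse when $\hat{\tau}^2$ is near or on the continuous spectrum of $\mathcal{L}_*$; away from the spectrum the inverse is bounded in the usual sense. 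Combining (a) and this mapping property verifies the boundedness claim $T : X \to X$.

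The main obstacle is the compactness in step (b). I would argue as follows: bounded subsets of $X$ are sent by $K_{main}$ to bounded subsets of $L^2$ compactly supported in $\{R \lesssim M\}$, which the good inverse then sends to bounded subsets of $H^2_{loc}$ having prescribed polynomial decay at infinity. The compact embedding $H^2_{loc} \hookrightarrow L^2_{loc}$ (Rellich--Kondrachov), combined with the decay tails being strong enough to absorb the weight $\langle R\rangle R^{\delta_0}$, delivers compactness of $T$ on $X$. The most delicate point to check is the decay of the image of the good inverse, since the principal value kernel carries oscillatory tails that only decay polynomially in $R$; here the compact support of $K_{main}u$ is essential, and the convolution structure must be analyzed carefully so that the resulting function, after subtraction of its asymptotic oscillatory part at infinity, gives a genuinely compact image in $X$. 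Once this is done, $I - T$ is Fredholm of index zero as claimed.
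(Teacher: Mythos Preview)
Your approach is correct and essentially identical to the paper's: both argue that $T = (\hat{\tau}^2 + \triangle + 2W^2)_{good}^{-1}\circ K_{main}$ is compact on $X$ via Rellich's theorem for local compactness (using the smoothing of $K_{main}$, whose output is supported in $\{R\lesssim M\}$) combined with smallness of the large-$R$ tail. The paper phrases the tail control as the operator norm estimate $\|\chi_{R\gtrsim L}\,T\|_{X\to X}\ll_L 1$, which is exactly the quantitative form of your ``decay tails strong enough to absorb the weight'' observation.
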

	\begin{proof} This is a straightforward consequence of Rellich's theorem, using the smoothing property of $K_{main}$, and the fact that 
		\begin{align*}
			\Big\|\chi_{R\gtrsim L}\big(\hat{\tau}^2 + \triangle +2W^2\big)_{good}^{-1}\circ K_{main}\Big\|_{\langle R\rangle R^{\delta_0}L^2_{R^3\,dR}\rightarrow \langle R\rangle R^{\delta_0}L^2_{R^3\,dR}}\ll_{L}1. 
		\end{align*}
	\end{proof}
	
	Let $\phi_{\hat{\tau}} = \phi_*(R;\hat{\tau})\in \langle R\rangle R^{\delta_0}L^2_{R^3\,dR}$ be the unique function in the kernel of $\hat{\tau}^2 + \triangle +2W^2$ satisfying the boundary condition $\phi_{\hat{\tau}}(0) = 1$, see also Proposition~\ref{prop:triangle+2W2}. For later use, we shall also introduce the 
	function $\theta_{\hat{\tau}}\in C^\infty(\R_+)$ which satisfies 
	\begin{align*}
		\big(\hat{\tau}^2 + \triangle +2W^2\big)\theta_{\hat{\tau}} = 0,\,W(\phi_{\hat{\tau}}, \theta_{\hat{\tau}}): = \partial_R\phi_{\hat{\tau}}\cdot  \theta_{\hat{\tau}} - \partial_R \theta_{\hat{\tau}}\cdot \phi_{\hat{\tau}} = R^{-3},
	\end{align*}
	whence $\{\phi_{\hat{\tau}}, \theta_{\hat{\tau}}\}$ form a fundamental system for $\partial_R^2 + \frac{3}{R}\partial_R + \hat{\tau}^2 + 2W^2$ on $\R_+$. Introduce the projection operator 
	\begin{equation}\label{eq:Piprojection}
		\Pi(f) = f - \frac{\langle \frac{f}{\langle R\rangle R^{\delta_0}},\, \frac{\phi_{\hat{\tau}}}{\langle R\rangle R^{\delta_0}}\rangle_{L^2_{R^3\,dR}}}{\big\|\frac{\phi_{\hat{\tau}}}{\langle R\rangle R^{\delta_0}}\big\|_{L^2_{R^3\,dR}}^2}\cdot \phi_{\hat{\tau}},\,f\in \langle R\rangle R^{\delta_0}L^2_{R^3\,dR}.
	\end{equation}
	We shall first consider the composition of $\Pi$ and $I - \big(\hat{\tau}^2 + \triangle +2W^2\big)^{-1}_{good}\circ K_{main}$. 
	\begin{prop}\label{prop:PiTsurjective} If $\hat{\tau}>0$ Then the operator composition
		\[
		\Pi\circ \big(I - (\hat{\tau}^2 + \triangle +2W^2\big)^{-1}_{good}\circ K_{main}\big): \langle R\rangle R^{\delta_0}L^2_{R^3\,dR}\longrightarrow \Pi\big(\langle R\rangle R^{\delta_0}L^2_{R^3\,dR}\big)
		\]
		is surjective. 
	\end{prop}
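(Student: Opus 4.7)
The plan is to reduce the surjectivity claim to a Fredholm-alternative argument combined with a unique continuation step of Carleman type applied to the dual operator. Write $T := I - (\hat{\tau}^2 + \triangle + 2W^2)_{good}^{-1}\circ K_{main}$. Because $K_{main}$ is the composition of $K$ with the compactly supported cutoff $\chi_{R \lesssim M}$, and the good inverse produces a gain of two derivatives, the operator $(\hat{\tau}^2 + \triangle + 2W^2)_{good}^{-1}\circ K_{main}$ is compact on $X := \langle R\rangle R^{\delta_0}L^2_{R^3\,dR}$; thus $T$ is Fredholm of index zero, and the claim that $\Pi\circ T$ is surjective onto $\Pi(X)$ is equivalent to the algebraic identity $\mathrm{Im}(T) + \C\cdot\phi_{\hat{\tau}} = X$. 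By the Fredholm alternative this reduces to showing (a) $\dim\mathrm{coker}(T) \le 1$ and (b) any nonzero cokernel functional $\psi$ satisfies $\langle \phi_{\hat{\tau}},\psi\rangle \ne 0$.

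For the dual analysis I would unfold the identity $T^*\psi = 0$ in the natural pairing between $X$ and its dual. Because $K_{main}^*$ factors through multiplication by $\chi_{R \lesssim M}$, any such $\psi$ must be supported in $\{R \lesssim M\}$, and the auxiliary function $v := (\hat{\tau}^2 + \triangle + 2W^2)_{good}^{-1,*}\psi$ belongs to $\Pi(X)$ and satisfies the equation
\[
(\hat{\tau}^2 + \triangle + 2W^2)v - K_{main}^* v \in \C\cdot\phi_{\hat{\tau}}.
\]
Outside the support of $K_{main}$ this reduces to the homogeneous ODE associated to the fundamental pair $\{\phi_{\hat{\tau}},\theta_{\hat{\tau}}\}$, and the weighted $L^2$-membership of $v$ forces the $\theta_{\hat{\tau}}$-component to vanish at infinity (since $\theta_{\hat{\tau}} \notin \langle R\rangle R^{\delta_0}L^2_{R^3\,dR}$ in view of the $R^{-3}$ Wronskian normalization), while the $\Pi$-orthogonality eliminates the $\phi_{\hat{\tau}}$-component of the homogeneous part.

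To propagate this vanishing inward across the region where $K_{main}$ is active, I would establish a Carleman estimate for the genuinely local operator $\hat{\tau}^2 + \triangle + 2W^2$ at positive energy $\hat{\tau}^2 > 0$, treating $K_{main}^* v$ as a known forcing supported in $\{R \lesssim M\}$. Choosing a radial Carleman weight adapted to the decay of the potential $W^2$, the resulting inequality would confine $v$ to $\C\cdot\phi_{\hat{\tau}}$, and therefore the cokernel has dimension at most one. A direct computation then reduces the pairing in (b) to a quantity controlled by $\langle K_{main}\phi_{\hat{\tau}},\phi_{\hat{\tau}}\rangle$, which is nonzero by one of the numerical non-degeneracy assumptions of subsection~\ref{subsec:numerics}. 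Together (a) and (b) complete the argument via the Fredholm alternative.

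The hardest step is the Carleman estimate itself: at positive energy $\hat{\tau} > 0$ the principal symbol of $\hat{\tau}^2 + \triangle$ is non-elliptic on the characteristic set $|\xi| = \hat{\tau}$, so classical Hörmander pseudoconvexity must be replaced by a radially adapted argument exploiting the decaying potential $2W^2 = 16(1+R^2/8)^{-2}$ to generate a favorable sign in the commutator identity between the symmetric and antisymmetric parts of the conjugated operator. The hypothesis $\hat{\tau} > 0$ enters precisely at this level, through the oscillatory asymptotics of $\phi_{\hat{\tau}}$ and $\theta_{\hat{\tau}}$ at infinity and the consequent failure of ellipticity, distinguishing this regime from the purely elliptic case $\hat{\tau}=0$.
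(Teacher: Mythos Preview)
Your overall architecture --- Fredholm reduction, dual equation, Carleman unique continuation --- matches the paper's. The genuine gap is in the Carleman step. The difficulty you flag (non-ellipticity at positive energy, failure of pseudoconvexity) is not the obstacle: the paper uses the elementary inequality $2\hat{\tau}\sqrt{\lambda}\,\|R^\lambda f\|_{L^2_{R^3\,dR}} \le \|R^{1+\lambda}(\triangle + \hat{\tau}^2)f\|_{L^2_{R^3\,dR}}$ with radial weight $R^\lambda$, and the local potential $2W^2$ is absorbed trivially since $\|R^{1+\lambda}W^2 h\| \le C_2\|R^\lambda h\|$ with $C_2$ independent of $\lambda$. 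The actual issue is absorbing the \emph{nonlocal} term $K_{main}^* h$, and your phrase ``treating $K_{main}^* v$ as a known forcing'' cannot work since this term depends on $v$. What you are missing is the structural observation that $K_{main}^*$ decomposes as a sum of Volterra operators of the form $R \mapsto a(R)\int_R^\infty b(s)h(s)\,s^3\,ds$ (made explicit in the paper's \eqref{eq:Kmainstructure}); because these kernels only see $h(s)$ for $s \ge R$, the increasing cutoff $1-\chi(\cdot/\rho)$ can be pushed inside and the weight satisfies $R^\lambda \le s^\lambda$ there, yielding $\|R^{1+\lambda}(1-\chi(R/\rho))K_{main}^* h\| \le C_3\|R^\lambda h_\rho\|$ with $C_3$ again independent of $\lambda$. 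Choosing $\lambda$ large beats both $C_2$ and $C_3$, and letting the inner cutoff $\rho \to 0$ forces $h = 0$. The paper in fact remarks that this same Volterra structure permits a direct iteration proof bypassing Carleman entirely.

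Your step (b) is also unnecessary here. Once the Carleman argument is run with the Volterra absorption and the rapid decay of $h$ at infinity (the paper's Lemma~\ref{lem:hvanishing1}), one obtains $h = 0$ outright rather than merely $h \in \C\phi_{\hat{\tau}}$, so no pairing condition and no numerical assumption are needed for Proposition~\ref{prop:PiTsurjective}. Assumption {\bf{(A1)}} enters only in the separate Lemma~\ref{lem:phitauhatinimage}, which concerns the stronger statement that $\phi_{\hat{\tau}}$ itself lies in the range of $T$.
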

	\begin{proof} This can be shown by means of a direct Volterra iteration argument (see next lemma), but we give here an alternative more conceptual proof relying on 
		the following standard Carleman estimate, valid for all radial $C^\infty(\R^4)$ functions $f$ compactly supported away from $0$ and which in addition to their first derivatives vanish rapidly toward $R = +\infty$:
		\begin{equation}\label{eq:Carleman}
			\boxed{2\hat{\tau}\sqrt{\lambda}\big\|R^{\lambda}f\big\|_{L^2_{R^3\,dR}}\leq \big\|R^{1+\lambda}(\triangle + \hat{\tau}^2)f\big\|_{L^2_{R^3\,dR}},}
		\end{equation}
		where $\lambda>0$ is arbitrary. 
		\\
		In order to prove the proposition, arguing by contradiction, we assume that there is $g\in \langle R\rangle^{-1} R^{-\delta_0}L^2_{R^3\,dR}$ with 
		\[
		\tilde{\Pi}(g)\neq 0,\,\langle \Pi\circ \big(I - \hat{\tau}^2 + \triangle +2W^2\big)^{-1}_{good}\circ K_{main}\big)(f),\,g\rangle_{L^2_{R^3\,dR}} = 0\,\forall\,f\in \langle R\rangle R^{\delta_0}L^2_{R^3\,dR}
		\]
		where $\tilde{\Pi}$ denotes the orthogonal projection in $ \langle R\rangle^{-1} R^{-\delta_0}L^2_{R^3\,dR}$ onto $\frac{\phi_{\hat{\tau}}}{\langle R\rangle^2 R^{2\delta_0}}$. It follows that 
		\begin{align*}
			\tilde{\Pi}(g) - K_{main}^*\circ(\hat{\tau}^2 + \triangle +2W^2)^{-1}_{good}(\tilde{\Pi}(g)) = 0.
		\end{align*}
		Letting $h: = (\hat{\tau}^2 + \triangle +2W^2)^{-1}_{good}(\tilde{\Pi}(g))\in \langle R\rangle R^{\delta_0}L^2_{R^3\,dR}$, we have 
		\begin{lem}\label{lem:hvanishing1} The function $h$ is in $L^2_{R^3\,dR}\cap C^\infty(\R_+)$ and satisfies the bounds 
			\begin{align*}
				\Big| D^{\alpha}h(R)\Big|\lesssim_N R^{-N},\,\alpha = 0, 1
			\end{align*}
			as $R\rightarrow +\infty$ for any $N>0$. 
		\end{lem}
		We relegate the proof to section~\ref{sec:appendix}.
		\\
		
		To complete the proof of the proposition, we follow an argument in \cite{Lerner}: introduce the function 
		\begin{align*}
			h_{\rho}: = \big(1-\chi(\frac{R}{\rho})\big)\cdot h(R), 
		\end{align*}
		where $\chi(x)$ is a smooth cutoff localizing to $|x|\leq 1$, while $\rho>0$ is a small parameter which we shall let converge toward zero. From \eqref{eq:Carleman} we infer 
		\begin{equation}\label{eq:carlemanapplied}
			2\hat{\tau}\sqrt{\lambda}\big\|R^{\lambda}h_{\rho}(R)\big\|_{L^2_{R^3\,dR}}\leq \Big\|R^{1+\lambda}(\triangle + \hat{\tau}^2)h_{\rho}\Big\|_{L^2_{R^3\,dR}}
		\end{equation}
		Taking advantage of the fact that
		\[
		(\hat{\tau}^2 + \triangle + 2W^2)h = K_{main}^*h,
		\]
		we can bound the right hand side of \eqref{eq:carlemanapplied} by 
		\begin{equation}\label{eq:delicateerrorsincarlemanmethod}
			2\big\|R^{1+\lambda}W^2h_{\rho}\big\|_{L^2_{R^3\,dR}} + \big\|R^{1+\lambda}\big(1-\chi(\frac{R}{\rho})\big)K_{main}^*h\big\|_{L^2_{R^3\,dR}} + C_1\rho^{\lambda - 1}, 
		\end{equation}
		where the constant $C_1$ depends implicitly on $h$, and we assume $\rho\ll 1$; observe that the last term control the errors due to differentiating the cutoff $\chi(\frac{R}{\rho})$. We then have the simple bound 
		\begin{align*}
			2\big\|R^{1+\lambda}W^2h_{\rho}\big\|_{L^2_{R^3\,dR}}\leq C_2\big\|R^{\lambda}h_{\rho}\big\|_{L^2_{R^3\,dR}}
		\end{align*}
		for an absolute constant $C_2$. As for the second term in \eqref{eq:delicateerrorsincarlemanmethod}, we note the fine structure of $K_{main}^*h$ given by \eqref{eq:Kmainstructure}. Since we may assume that $1-\chi(\frac{\cdot}{\rho})$ is increasing, we easily infer the bound 
		\begin{align*}
			\big\|R^{1+\lambda}\big(1-\chi(\frac{R}{\rho})\big)K_{main}^*h\big\|_{L^2_{R^3\,dR}} \leq C_3\big\|R^{\lambda}h_{\rho}\big\|_{L^2_{R^3\,dR}}, 
		\end{align*}
		where we observe the crucial feature that the operators $K_j^*$ in \eqref{eq:Kmainstructure} involve integrals between $R$ and $\infty$. Combining the preceding observations we infer the following consequence of the Carleman bound \eqref{eq:Carleman}:
		\begin{align*}
			2\hat{\tau}\sqrt{\lambda}\big\|R^{\lambda}h_{\rho}(R)\big\|_{L^2_{R^3\,dR}}\leq C_1\rho^{\lambda - 1} + C_4\big\|R^{\lambda}h_{\rho}\big\|_{L^2_{R^3\,dR}}, 
		\end{align*}
		where the crucial feature is that the constants $C_{1,4}$ are independent of $\rho,\lambda$. Letting $\rho\ll1$ and $\lambda_*\gg 1$ such that 
		\begin{align*}
			2\hat{\tau}\sqrt{\lambda_*} > 2C_4, 
		\end{align*}
		we deduce that 
		\begin{align*}
			\big\|R^{\lambda_*}h_{\rho}(R)\big\|_{L^2_{R^3\,dR}}\leq C_5\rho^{\lambda_* - 1}. 
		\end{align*}
		As $\big\|R^{\lambda_*}h_{\rho}(R)\big\|_{L^2_{R^3\,dR}}$ is a decreasing function of $\rho$, we conclude that
		\begin{align*}
			\big\|R^{\lambda_*}h_{\rho}(R)\big\|_{L^2_{R^3\,dR}}\leq \big\|R^{\lambda_*}h_{\rho'}(R)\big\|_{L^2_{R^3\,dR}}\leq C_5(\rho')^{\lambda_* - 1}
		\end{align*}
		for $\rho'<\rho$. Letting $\rho'\rightarrow 0$ we infer that 
		\begin{align*}
			\big\|R^{\lambda_*}h_{\rho}(R)\big\|_{L^2_{R^3\,dR}} = 0
		\end{align*}
		for any $\rho>0$, which in turn yields $h = 0$, whence $\tilde{\Pi}g = 0$, resulting in a contradiction as desired. 
	\end{proof}
	
	To ensure surjectivity of $I - \big(\hat{\tau}^2 + \triangle +2W^2\big)_{good}^{-1}\circ K_{main}$, it remains to show that $\phi_{\hat{\tau}}\in \langle R\rangle R^{\delta_0}L^2_{R^3\,dR}$ is in its image. The argument we have here is less elegant and requires finitely many non-degeneracy assumptions in principle amenable to numerical verification: 
	\begin{lem}\label{lem:phitauhatinimage} There is an absolute constant $\hat{\tau}_*>0$ such that assuming the non-degeneracy assumption {\bf{(A1)}} for $0\leq \hat{\tau}\leq \hat{\tau}_*$,, we have that 
		\[
		\phi_{\hat{\tau}}\in \text{range}\Big(I - \big(\hat{\tau}^2 + \triangle +2W^2\big)_{good}^{-1}\circ K_{main}\Big).
		\]
	\end{lem}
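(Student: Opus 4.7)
The plan is to promote the partial surjectivity of Proposition~\ref{prop:PiTsurjective} to full surjectivity by a Fredholm index count, and then to reduce the last piece of information needed to a finite-dimensional non-degeneracy condition that is exactly what the assumption \textbf{(A1)} is designed to encode. Set $T:=I-(\hat{\tau}^2+\triangle+2W^2)_{\text{good}}^{-1}\circ K_{\text{main}}$; since $K_{\text{main}}=\chi_{R\lesssim M}K\chi_{R\lesssim M}$ is compact on $\langle R\rangle R^{\delta_0}L^2_{R^3\,dR}$ (Rellich together with the smoothing of $\mathcal{L}^{-1}$ inside $K$) and the good resolvent is bounded on this space, $T=I+\text{compact}$ is Fredholm of index zero. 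Proposition~\ref{prop:PiTsurjective} already gives $\mathrm{Range}(T)+\mathrm{span}\{\phi_{\hat{\tau}}\}=\langle R\rangle R^{\delta_0}L^2_{R^3\,dR}$, so $\dim\mathrm{coker}(T)\leq 1$ and hence $\dim\ker(T)\leq 1$. A direct chase through the resulting dichotomy shows that $\phi_{\hat{\tau}}\in\mathrm{Range}(T)$ iff $T$ is an isomorphism, i.e.~iff $T$ is injective; this is the sole remaining point to verify.

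To establish injectivity, suppose $Tf=0$ with $f\in\langle R\rangle R^{\delta_0}L^2_{R^3\,dR}$. Applying $\hat{\tau}^2+\triangle+2W^2$ on both sides, and using that by construction (cf.~\eqref{eq:goodinverseoftriangle+hattausquare+2Wsquare}) the good inverse is a right inverse modulo projections onto the finite-dimensional threshold/resonance modes, one reduces to an elliptic equation of the form
\[
(\hat{\tau}^2+\triangle+2W^2 - K_{\text{main}})f \;=\; c(f)\,\phi_{\hat{\tau}}
\]
for a computable linear functional $c$. Since $K_{\text{main}}$ is compactly supported, outside a large ball $f$ must lie in the span of the fundamental system $\{\phi_{\hat{\tau}},\theta_{\hat{\tau}}\}$ of $\hat{\tau}^2+\triangle+2W^2$, and $L^2$-integrability together with the decay information available from the analogue of Lemma~\ref{lem:hvanishing1} forces $f$ to be proportional to $\phi_{\hat{\tau}}$ there. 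Matching across the interaction zone and testing against a finite basis then reduces the question $f=0$ to the non-vanishing of a determinant that depends analytically on $\hat{\tau}\in[0,\hat{\tau}_*]$, built from inner products of $\{\phi_{\hat{\tau}},\theta_{\hat{\tau}}\}$ against $K_{\text{main}}$-images of the same functions. The non-degeneracy assumption \textbf{(A1)} is tailored precisely to assert that this determinant does not vanish on the compact interval $[0,\hat{\tau}_*]$, and its explicit numerical verification is deferred to the companion paper \cite{KSchmNum}. With (A1) in hand, $\ker(T)=\{0\}$ for each $\hat{\tau}$ in the range, and Fredholm index zero promotes this to surjectivity, yielding $\phi_{\hat{\tau}}\in\mathrm{Range}(T)$.

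The main obstacle, and the reason that an assumption like (A1) appears to be unavoidable here, is that the Carleman estimate used in Proposition~\ref{prop:PiTsurjective} for the adjoint problem exploits in an essential way the fact that the integrals defining $K_{\text{main}}^*$ (see~\eqref{eq:Kmainstructure}) run from $R$ to $\infty$, so that the cutoff error term in \eqref{eq:Carleman} is controllable by a monotonicity argument against $1-\chi(\cdot/\rho)$. The kernel equation above instead involves $K_{\text{main}}$ itself, whose integrals run in the opposite direction; the monotonicity that made the Carleman bound close is lost, and no purely weighted-energy argument seems to exclude a one-dimensional kernel in the intermediate-frequency range. A quantitative numerical input on the compact set $[0,\hat{\tau}_*]$ therefore appears forced by the genuinely non-self-adjoint structure of the effective wave operator, rather than being an artifact of the proof strategy.
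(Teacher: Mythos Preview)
Your Fredholm framework is a genuinely different route from the paper's and the overall architecture is sound: the paper instead \emph{constructs} an explicit candidate preimage via a Volterra iteration
\[
v_{\hat{\tau}} := \sum_{j\geq 0}\Big(\big(\hat{\tau}^2+\triangle+2W^2\big)_0^{-1}\circ K_{\text{main}}\Big)^j\phi_{\hat{\tau}},
\]
where $(\cdot)_0^{-1}$ is the variation-of-constants inverse with integrals from $0$ to $R$, observes that $Tv_{\hat{\tau}}=\alpha_{\hat{\tau}}\phi_{\hat{\tau}}$ for some scalar $\alpha_{\hat{\tau}}$, and then shows $\alpha_{\hat{\tau}}\neq 0$ (directly for $\hat{\tau}>\hat{\tau}_*$ via oscillation, and via {\bf{(A1)}} on $[0,\hat{\tau}_*]$). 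Your index-zero reduction to $\ker T=\{0\}$ is equivalent at the end of the day, but the link to {\bf{(A1)}} as actually stated is not made: {\bf{(A1)}} is phrased entirely in terms of $v_{\hat{\tau}}$ and the pairings $\langle K_{\text{main}}v_{\hat{\tau}},\phi_{|\hat{\tau}|}\rangle$, $\langle K_{\text{main}}v_{\hat{\tau}},\theta_{|\hat{\tau}|}\rangle$, not in terms of an unspecified ``determinant'' built from your abstract kernel element $f$.

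The missing bridge is this. If $Tf=0$ then applying $\hat{\tau}^2+\triangle+2W^2$ gives $(\hat{\tau}^2+\triangle+2W^2-K_{\text{main}})f=0$ exactly (the good inverse is a genuine right inverse, so your $c(f)\phi_{\hat{\tau}}$ term is absent), and since $\mathcal{L}^{-1}$ inside $K$ integrates from $0$, this is a Volterra-type integro-differential equation with a unique regular-at-$0$ solution up to scalar; that solution is $v_{\hat{\tau}}$. Hence $f$ is a multiple of $v_{\hat{\tau}}$, and $\ker T\neq\{0\}$ iff $\alpha_{\hat{\tau}}=0$. The paper then argues: if $\alpha_{\hat{\tau}}=0$ then $v_{\hat{\tau}}\in L^2_{R^3\,dR}$ by Lemma~\ref{lem:goodinversewithK}, which forces $\mathcal{F}_*(K_{\text{main}}v_{\hat{\tau}})(|\hat{\tau}|)=0$ \emph{and} $\langle K_{\text{main}}v_{\hat{\tau}},\theta_{|\hat{\tau}|}\rangle=0$ (else the $\theta_{\hat{\tau}}$-component would prevent $L^2$ membership), contradicting {\bf{(A1)}}. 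You also omit the large-$\hat{\tau}$ regime, where the paper shows $v_{\hat{\tau}}=\phi_{\hat{\tau}}+O(|\hat{\tau}|^{-2})$ and hence $\alpha_{\hat{\tau}}\to 1$, which is what produces the absolute constant $\hat{\tau}_*$ and confines {\bf{(A1)}} to a compact interval. Finally, the asymptotic claim ``$f$ proportional to $\phi_{\hat{\tau}}$ at infinity'' is off: the good inverse selects the outgoing Jost solution $\psi_{*,+}$ (for $\hat{\tau}>0$), not $\phi_{\hat{\tau}}$, and it is this outgoing condition combined with regularity at $0$ that produces the one-dimensional constraint.
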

	\begin{proof} Introduce the operator (with $\{\phi_{\hat{\tau}}, \theta_{\hat{\tau}}\}$ the fundamental system introduced after Lemma~\ref{lem:Fredholmproperty})
		\[
		\big(\hat{\tau}^2 + \triangle + 2W^2\big)^{-1}_{0}f(R): = \phi_{\hat{\tau}}\cdot \int_0^R\theta_{\hat{\tau}}(s)f(s)s^3\,ds -  \theta_{\hat{\tau}}\cdot \int_0^R\phi_{\hat{\tau}}(s)f(s)s^3\,ds.
		\]
		Then set 
		\begin{equation}\label{eq:vtauhatiteration}
			v_{\hat{\tau}}: = \sum_{j=0}^\infty \Big(\big(\hat{\tau}^2 + \triangle + 2W^2\big)^{-1}_{0}\circ K_{main}\Big)^j(\phi_{\hat{\tau}}). 
		\end{equation}
		Recalling \eqref{eq:Kdefinition} and the definition of $\mathcal{L}^{-1}$ after \eqref{eq:nprinequation1}, we see that the preceding sum is a Volterra iteration, and hence converges rapidly (faster than exponentially) toward $v_{\hat{\tau}}\in H^2_{loc}(\R_+)\cap \langle R\rangle R^{\delta_0}L^2_{R^3\,dR}$. Furthermore, we have 
		\begin{align*}
			\big(I -  \big(\hat{\tau}^2 + \triangle +2W^2\big)_{0}^{-1}\circ K_{main}\big)(v_{\hat{\tau}}) = \phi_{\hat{\tau}}. 
		\end{align*}
		Observe that since $\big(\hat{\tau}^2 + \triangle +2W^2\big)_{0}^{-1}\circ K_{main}v_{\hat{\tau}})\in \langle R\rangle R^{\delta_0}L^2_{R^3\,dR},\,\big(\hat{\tau}^2 + \triangle +2W^2\big)_{good}^{-1}\circ K_{main}v_{\hat{\tau}})\in \langle R\rangle R^{\delta_0}L^2_{R^3\,dR}$, these expressions differ by a multiple of $\phi_{\hat{\tau}}$. It follows that 
		\begin{equation}\label{eq:goodinverseonvhattau}
			\big(I -  \big(\hat{\tau}^2 + \triangle +2W^2\big)_{good}^{-1}\circ K_{main}\big)(v_{\hat{\tau}}) = \alpha_{\hat{\tau}}\phi_{\hat{\tau}},\,\alpha_{\hat{\tau}}\in \C. 
		\end{equation}
		It remains to show that $\alpha_{\hat{\tau}}\neq 0$. This is the case provided $\hat{\tau}>\hat{\tau}_*$ where the latter is an absolute constant. In fact, the rapidly oscillatory character of $\phi_{\hat{\tau}}$ (see Proposition~\ref{prop:triangle+2W2} ) implies $v_{\hat{\tau}} = \phi_{\hat{\tau}} + O_{\langle R\rangle R^{\delta_0}L^2_{R^3\,dR}}(|\hat{\tau}|^{-2})$, $|\hat{\tau}|\gg 1$, and from here 
		\[
		\big(\hat{\tau}^2 + \triangle +2W^2\big)_{good}^{-1}\circ K_{main}(v_{\hat{\tau}}) = O_{\langle R\rangle R^{\delta_0}L^2_{R^3\,dR}}(|\hat{\tau}|^{-2}),\,|\hat{\tau}|\gg 1, 
		\]
		while we have $\big\|\phi_{\hat{\tau}}\big\|_{\langle R\rangle R^{\delta_0}L^2_{R^3\,dR}}\sim |\hat{\tau}|^{-\frac32}$, see Proposition~\ref{prop:triangle+2W2}. 
		The function
		\[
		\mathcal{F}_{\hat{\tau}}\big(K_{main}(v_{\hat{\tau}}))
		\]
		is analytic. By assumption {\bf{(A1)}} (see subsection~\ref{subsec:numerics}) for the finitely many values $\{\hat{\tau}_j\}_{j=1}^R\subset [0, \hat{\tau}_*]$ where $\mathcal{F}_{\hat{\tau}}\big(K_{main}(v_{\hat{\tau}}))$ may vanish, we have 
		\begin{align*}
			&v_{\hat{\tau}} = \phi_{\hat{\tau}} + \phi_{\hat{\tau}}\cdot \int_0^R\theta_{\hat{\tau}}(s)K_{main}v_{\hat{\tau}}(s)s^3\,ds -  \theta_{\hat{\tau}}\cdot \int_0^R\phi_{\hat{\tau}}(s)K_{main}v_{\hat{\tau}}(s)s^3\,ds\notin L^2_{R^3\,dR}\\&\tau\in \{\hat{\tau}_j\}_{j=1}^R.
		\end{align*}
		which then necessitates $\alpha_{\hat{\tau}}\neq 0$ due to Lemma~\ref{lem:goodinversewithK}. 
	\end{proof} 
	
	Thanks to the preceding proposition and lemma which imply the surjectivity of $I - \big(\hat{\tau}^2 + \triangle +2W^2\big)_{good}^{-1}\circ K_{main}$ on $\langle R\rangle R^{\delta_0}L^2_{R^3\,dR}$, we can now define the operator 
	\[
	\big(I - \big(\hat{\tau}^2 + \triangle +2W^2\big)_{good}^{-1}\circ K_{main}\big)^{-1}
	\]
	on this space by imposing orthogonality to the kernel of $\big(I - \big(\hat{\tau}^2 + \triangle +2W^2\big)_{good}^{-1}\circ K_{main}\big)$. Let us then say we have chosen the {\it{canonical inverse}}. There is still the somewhat technical issue as to whether the solution of 
	\begin{equation}\label{eq:hhattauuhatttau}
		\big(I - \big(\hat{\tau}^2 + \triangle +2W^2\big)_{good}^{-1}\circ K_{main}\big)u_{\hat{\tau}}(R) = f_{\hat{\tau}}(R), 
	\end{equation}
	where the subscripts in $u_{\hat{\tau}}, f_{\hat{\tau}}$ indicate a dependence on $\hat{\tau}$, which for $f_{\hat{\tau}}$ is of regularity $W^{N,2}$, is of the same regularity with respect to $\hat{\tau}$. To resolve this, we can invoke
	\begin{lem}\label{lem:smoothnesswithrespecttohattauforinverse} Given $f_{\hat{\tau}}(\cdot)\in W_{\hat{\tau}}^{M,2}\langle R\rangle R^{\delta_0}L^2_{R^3\,dR}$, $M = \frac{N}{\frac12-\frac{1}{4\nu}}$, with support in $[\gamma_1,\gamma_2]$, $0<\gamma_1<\gamma_2<\infty$, there exists a solution $u_{\hat{\tau}}$ satisfying
		\[
		\mathcal{F}_{\tilde{\tau}}^{-1}\big(u_{\hat{\tau}}\big)|_{[\tilde{\tau}_*,\infty)}\in \tau^{-N}L^2_{d\tau}\langle R\rangle R^{\delta_0}L^2_{R^3\,dR}
		\]
		satisfying the bound 
		\begin{align*}
			\big\|\mathcal{F}_{\tilde{\tau}}^{-1}\big(u_{\hat{\tau}}\big)|_{[\tilde{\tau}_*,\infty)}\big\|_{\tau^{-N}L^2_{d\tau}\langle R\rangle R^{\delta_0}L^2_{R^3\,dR}}\lesssim_{\gamma_1,\gamma_2}\big\|f_{\hat{\tau}}(\cdot)\big\|_{W_{\hat{\tau}}^{M,2}\langle R\rangle R^{\delta_0}L^2_{R^3\,dR}}.
		\end{align*}
	\end{lem}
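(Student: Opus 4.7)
The strategy proceeds in four steps. \emph{First}, for each fixed $\hat{\tau}\in[\gamma_1,\gamma_2]$ I would solve
\[
T_{\hat{\tau}} u_{\hat{\tau}} := \big(I - (\hat{\tau}^2 + \triangle + 2W^2)_{good}^{-1}\circ K_{main}\big)u_{\hat{\tau}} = f_{\hat{\tau}}
\]
by invoking Proposition~\ref{prop:PiTsurjective} and Lemma~\ref{lem:phitauhatinimage}, which under assumption {\bf(A1)} yield surjectivity of $T_{\hat{\tau}}$ on $X := \langle R\rangle R^{\delta_0}L^2_{R^3\,dR}$; Fredholmness (Lemma~\ref{lem:Fredholmproperty}) then supplies a bounded right inverse whose norm is controlled uniformly on the compact set $[\gamma_1,\gamma_2]$.

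\emph{Second}, I establish smooth dependence of a judiciously chosen right inverse $T_{\hat{\tau}}^{-1}$ on $\hat{\tau}\in[\gamma_1,\gamma_2]$. The map $\hat{\tau}\mapsto T_{\hat{\tau}}$ is analytic because the fundamental system $\{\phi_{\hat{\tau}},\theta_{\hat{\tau}}\}$ and hence $(\hat{\tau}^2 + \triangle + 2W^2)_{good}^{-1}$ depend analytically on $\hat{\tau}\in(0,\infty)$ (cf.\ subsection~\ref{subsec:goodinverse}). By analytic Fredholm theory, $\dim\ker T_{\hat{\tau}}$ is locally constant off a finite exceptional set $\{\hat{\tau}_j\}\subset[\gamma_1,\gamma_2]$, and on the complement I can take $T_{\hat{\tau}}^{-1}$ to be the Moore--Penrose pseudoinverse, which is then analytic. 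Near each $\hat{\tau}_j$ the patching is carried out by parametrizing the solution set as $u_{\hat{\tau}} = u_{\hat{\tau}}^{(0)} + \sum_\ell c_\ell(\hat{\tau})\psi_\ell(\hat{\tau})$, with $\{\psi_\ell(\hat{\tau})\}$ a locally analytic basis for $\ker T_{\hat{\tau}}$ provided by Kato's selection theorem, and choosing the coefficients $c_\ell(\hat{\tau})$ to match smoothly across $\hat{\tau}_j$. This patching step is the principal technical obstacle of the argument.

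\emph{Third}, differentiating $T_{\hat{\tau}}u_{\hat{\tau}} = f_{\hat{\tau}}$ repeatedly in $\hat{\tau}$ gives inductively
\[
\big\|\partial_{\hat{\tau}}^k u_{\hat{\tau}}\big\|_X \lesssim_{\gamma_1,\gamma_2}\sum_{j=0}^{k}\big\|\partial_{\hat{\tau}}^j f_{\hat{\tau}}\big\|_X,\qquad 0\leq k\leq M,
\]
and therefore $\|u_{\hat{\tau}}\|_{W^{M,2}_{\hat{\tau}}X}\lesssim \|f_{\hat{\tau}}\|_{W^{M,2}_{\hat{\tau}}X}$. By Plancherel in $\tilde{\tau}$ this translates into
\[
\big\|\langle\tilde{\tau}\rangle^M\mathcal{F}_{\tilde{\tau}}^{-1}(u_{\hat{\tau}})\big\|_{L^2_{d\tilde{\tau}}X}\lesssim \|f_{\hat{\tau}}\|_{W^{M,2}_{\hat{\tau}}X}.
\]

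\emph{Finally}, I convert from wave time to Schr\"odinger time using the algebraic relation $\tilde{\tau}\sim_\nu \tau^{a}$ with $a:=\tfrac12-\tfrac{1}{4\nu}$ recorded in the introduction. The change of variables gives $d\tau\sim\tilde{\tau}^{1/a-1}\,d\tilde{\tau}$ and $\tau^{2N}\sim\tilde{\tau}^{2N/a}$, so restricted to $[\tau_*,\infty)\leftrightarrow[\tilde{\tau}_*,\infty)$ one has, for any scalar $F$,
\[
\int_{\tau_*}^{\infty}\tau^{2N}|F(\tau)|^2\,d\tau \sim \int_{\tilde{\tau}_*}^{\infty}\tilde{\tau}^{(2N+1)/a-1}|F|^2\,d\tilde{\tau}\lesssim \big\|\langle\tilde{\tau}\rangle^M F\big\|_{L^2_{d\tilde{\tau}}}^2,
\]
valid precisely when $2M\geq (2N+1)/a - 1$, which is ensured by the choice $M = N/a = N/(\tfrac12-\tfrac{1}{4\nu})$ in the lemma; for large $N$ this explains the quantitative exponent relationship. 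Applying this inequality pointwise in $R$ and pulling the $X$-norm outside via Fubini yields the claimed bound
\[
\big\|\mathcal{F}_{\tilde{\tau}}^{-1}(u_{\hat{\tau}})|_{[\tilde{\tau}_*,\infty)}\big\|_{\tau^{-N}L^2_{d\tau}X}\lesssim_{\gamma_1,\gamma_2}\big\|f_{\hat{\tau}}\big\|_{W^{M,2}_{\hat{\tau}}X}.
\]
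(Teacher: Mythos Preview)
Your overall architecture (solve pointwise in $\hat{\tau}$, propagate smoothness, convert to time decay) is reasonable, but the paper's proof takes a quite different and more robust route, and your Step~2 has a real gap.

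The paper does \emph{not} attempt to build a globally smooth canonical inverse. Instead, for each base point $\hat{\tau}_*\in[\gamma_1,\gamma_2]$ it freezes the canonical inverse $\Phi_{\hat{\tau}_*}:=\big(I-(\hat{\tau}_*^2+\triangle+2W^2)_{good}^{-1}\circ K_{main}\big)^{-1}$ and writes, for $|\hat{\tau}-\hat{\tau}_*|<\gamma$ small,
\[
\Psi_{\hat{\tau}}=\Big(\sum_{j\geq 0}(\Phi_{\hat{\tau}_*}\circ T_{\hat{\tau},\hat{\tau}_*})^j\Big)\circ\Phi_{\hat{\tau}_*},\qquad T_{\hat{\tau},\hat{\tau}_*}:=\big[(\hat{\tau}_*^2+\cdots)_{good}^{-1}-(\hat{\tau}^2+\cdots)_{good}^{-1}\big]\circ K_{main},
\]
which converges by Lemma~\ref{lem:goodinverse2}. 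Smoothness in $\hat{\tau}$ comes \emph{only} from $T_{\hat{\tau},\hat{\tau}_*}$, never from varying the inverse itself, and the $\tau^{-N}L^2_{d\tau}$ bound is obtained directly from Lemma~\ref{lem:keygoodinverse} (which handles smooth $\hat{\tau}$-dependent operators) term by term with geometric gain $(C\gamma)^j$. Then $[\gamma_1,\gamma_2]$ is covered by finitely many such intervals with a smooth partition of unity $\{\chi_{I_j}\}$, and one sets $u_{\hat{\tau}}=\sum_j\chi_{I_j}(\hat{\tau})\Psi_{\hat{\tau}}^{(I_j)}(f_{\hat{\tau}})$; since each $\Psi_{\hat{\tau}}^{(I_j)}(f_{\hat{\tau}})$ solves the equation on $I_j$, so does the convex combination. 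This completely sidesteps any analysis of how $\ker T_{\hat{\tau}}$ varies.

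Your Step~2, by contrast, confronts this variation head-on. The Moore--Penrose pseudoinverse is analytic only where $\dim\ker T_{\hat{\tau}}$ is constant; at a point $\hat{\tau}_j$ where the dimension jumps it is genuinely discontinuous, and Kato's selection theorem (which concerns analytic families of eigenvectors for self-adjoint operators) does not directly apply to the non-self-adjoint $T_{\hat{\tau}}$, nor does it produce a parametrization that extends smoothly across a dimension change. The ``patching'' you describe is therefore not a routine matter, and as written it is a gap rather than merely a technicality. The paper's frozen-base-point Neumann series plus partition of unity is precisely the device that avoids this difficulty.

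A smaller point: in Step~4 your exponent bookkeeping is off by $O(1)$. With $a=\tfrac12-\tfrac{1}{4\nu}$ the change of variables gives the exponent $(2N+1)/a-1$, and the inequality $2M\geq (2N+1)/a-1$ with $M=N/a$ reduces to $a\geq 1$, which fails. This is harmless in spirit (one needs $M\geq N/a+O(1)$), but it shows that the stated value of $M$ in the lemma is being used morally rather than sharply; the paper does not perform this conversion explicitly but instead invokes Lemma~\ref{lem:keygoodinverse}, where the $\tau^{-N}L^2_{d\tau}$ norm appears directly.
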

	\begin{proof} For $\hat{\tau}_*\in [\gamma_1,\gamma_2]$, write for $\hat{\tau}$ close to $\hat{\tau}_*$ 
		\begin{align*}
			&I - \big(\hat{\tau}^2 + \triangle +2W^2\big)_{good}^{-1}\circ K_{main} = I - \big(\hat{\tau}_*^2 + \triangle +2W^2\big)_{good}^{-1}\circ K_{main} + T_{\hat{\tau},\hat{\tau}_*}\\
			&T_{\hat{\tau},\hat{\tau}_*}: = \Big[\big(\hat{\tau}_*^2 + \triangle +2W^2\big)_{good}^{-1}\circ K_{main} - \big(\hat{\tau}^2 + \triangle +2W^2\big)_{good}^{-1}\circ K_{main}\Big]
		\end{align*}
		Taking advantage of Lemma~\ref{lem:goodinverse2}, we see that for $|\hat{\tau} - \hat{\tau}_*|<\gamma$ we have 
		\begin{align*}
			\Big\|T_{\hat{\tau},\hat{\tau}_*}\Big\|_{\langle R\rangle R^{\delta_0}L^2_{R^3\,dR}\rightarrow \langle R\rangle R^{\delta_0}L^2_{R^3\,dR}}\ll_{\gamma} 1.
		\end{align*}
		Then letting 
		\begin{align*}
			\Phi_{\hat{\tau}_{*}}: = \big(I - \big(\hat{\tau}_*^2 + \triangle +2W^2\big)_{good}^{-1}\circ K_{main}\big)^{-1}
		\end{align*}
		be the canonical inverse, we can set 
		\begin{align*}
			\Psi_{\hat{\tau}}:= \Big(I - \big(\hat{\tau}^2 + \triangle +2W^2\big)_{good}^{-1}\circ K_{main}\Big)^{-1} = \Big(\sum_{j=0}^\infty \big(\Phi_{\hat{\tau}_{*}}\circ T_{\hat{\tau},\hat{\tau}_*}\big)^j\Big)\circ  \Phi_{\hat{\tau}_{*}},
		\end{align*}
		which is a bounded operator on $\langle R\rangle R^{\delta_0}L^2_{R^3\,dR}$ for $|\hat{\tau} - \hat{\tau}_*|<\gamma\ll 1$. Using Lemma~\ref{lem:keygoodinverse} the higher differentiability with respect to $\hat{\tau}$ follows, and more precisely we have the bound (with $C$ a suitable universal constant)
		\begin{align*}
			\Big\|\mathcal{F}_{\tilde{\tau}}^{-1}\circ\big(\Phi_{\hat{\tau}_{*}}\circ T_{\hat{\tau},\hat{\tau}_*}\big)^j(f)|_{[\tau_*,\infty)}\Big\|_{\tau^{-N}L^2_{d\tau}\langle R\rangle R^{\delta_0}L^2_{R^3\,dR}}\lesssim (C\gamma)^j\cdot \big\|f\big\|_{\tau^{-N}L^2_{d\tau}\langle R\rangle R^{\delta_0}L^2_{R^3\,dR}}.
		\end{align*}
		
		Now we cover $[\gamma_1,\gamma_2]$ with finitely many intervals of the form $I_j: = \big(\tau_{j} - \frac{\gamma}{2}, \tau_j + \frac{\gamma}{2}\big)$, $j = 1,\ldots, R$, and letting $\sum_{j=1}^R\chi_{I_j}(\hat{\tau}) = 1$ be partition of unity subordinate to $\{I_j\}_{j=1}^R$, we set 
		\begin{align*}
			u_{\hat{\tau}}: = \sum_{j=1}^R\chi_{I_j}(\hat{\tau})\cdot \Psi^{(I_j)}_{\hat{\tau}}(f_{\hat{\tau}}),
		\end{align*}
		where the $ \Psi^{(I_j)}_{\hat{\tau}}$ are constructed like $\Psi_{\hat{\tau}}$ on each $I_j$. 
	\end{proof}
	
	\subsection{Improving the bound for intermediate wave-temporal frequencies III; passing from the model \eqref{eq:waveoperatormodel1} to the true wave operator} Let us introduce the notation (recall \eqref{eq:nFintildetauR})
	\begin{equation}\label{eq:truewaveoperatorBoxtildeW}
		\tilde{\Box}_W: = -\big(\partial_{\tilde{\tau}} + \frac{\lambda_{\tilde{\tau}}}{\lambda}R\partial_R\big)^2 - \frac{\lambda_{\tilde{\tau}}}{\lambda}\big(\partial_{\tilde{\tau}} + \frac{\lambda_{\tilde{\tau}}}{\lambda}R\partial_R\big) + \triangle +2W^2
	\end{equation}
	for the 'true' wave operator, while we denote the 'model' wave operator by
	\begin{equation}\label{eq:modelwaveoperatortildebox}
		\tilde{\Box}: = -\partial_{\tilde{\tau}}^2 + \triangle +2W^2.
	\end{equation} 
	Then we define $Q^{(\tilde{\tau})}_{[\gamma_1, \gamma_1^{-1}]}\tilde{\Box}^{-1}$ by means of \eqref{eq:goodinverseoftriangle+hattausquare+2Wsquare} via the spectral representation and passage to the temporal Fourier transform, or alternatively from Remark~\ref{rem:lem:keygoodinverse}, via the inhomogeneous Duhamel propagator 
	\begin{equation}\label{eq:tildeboxduhamel}\begin{split}
			&\big( Q^{(\tilde{\tau})}_{[\gamma_1, \gamma_1^{-1}]}\tilde{\Box}^{-1}f \big)(\tilde{\tau}, R)\\&= \int_0^\infty\int_{\tilde{\tau}}^\infty U_*(\tilde{\tau}, \tilde{\sigma}, \xi)\cdot \phi_*(R;\xi)\cdot \mathcal{F}_*\big( Q^{(\tilde{\tau})}_{[\gamma_1, \gamma_1^{-1}]}f\big)(\tilde{\sigma}, \cdot)\rho_*(\xi)\,d\xi\\
			& - \frac{\xi_d\cdot\phi_d(R)}{2}\cdot \int_{\tilde{\tau}_*}^\infty e^{-\xi_d|\tilde{\tau} - \tilde{\sigma}|}\cdot \mathcal{F}_d\big( Q^{(\tilde{\tau})}_{[\gamma_1, \gamma_1^{-1}]}f(\tilde{\sigma}, \cdot)\big)\,d\tilde{\sigma} + a(\tilde{\tau})\cdot \phi_d(R)
	\end{split}\end{equation}
	where $|a(\tilde{\tau})|\lesssim e^{-\xi_d \tilde{\tau}}$ and hence can be neglected in the sequel, and we use the notation from Lemma~\ref{lem:reductionsteps2}. We need to understand the difference $[\tilde{\Box}_W^{-1} -  \tilde{\Box}^{-1}]\circ Q^{(\tilde{\tau})}_{[\gamma_1, \gamma_1^{-1}]}$, for which we need to get a sufficiently precise handle on the propagator $\tilde{\Box}_W^{-1}$, which as usual is defined to be vanishing at $\tilde{\tau} = +\infty$. The following is by now completely standard: 
	\begin{lem}\label{lem:tildeboxwpropagatorapproximate} We can write 
		\begin{align*}
			\tilde{\Box}_W^{-1}f &=  \int_0^\infty\int_{\tilde{\tau}}^\infty \tilde{U}_*(\tilde{\tau}, \tilde{\sigma}, \xi)\cdot \phi_*(R;\xi)\cdot \mathcal{F}_*\big(f\big)(\tilde{\sigma}, \frac{\lambda(\tilde{\tau})}{\lambda(\tilde{\sigma})}\xi)\rho_*(\xi)\,d\xi\\
			& - \frac{\xi_d\cdot\phi_d(R)}{2}\cdot \int_{\tilde{\tau}_*}^\infty e^{-\xi_d|\tilde{\tau} - \tilde{\sigma}|}\cdot \mathcal{F}_d\big(f(\tilde{\sigma}, \cdot)\big)\,d\tilde{\sigma} +  \tilde{\Box}_{W, small}^{-1}f , 
		\end{align*}
		where the propagator $\tilde{U}_*(\tilde{\tau}, \tilde{\sigma}, \xi)$ is given by $\frac{\rho_*^{\frac12}(\frac{\lambda(\tau)}{\lambda(\sigma)}\xi)}{\rho_*^{\frac12}(\xi)}\cdot U(\tilde{\tau}, \tilde{\sigma}, \xi)$ and $U$ is the propagator in \eqref{eq:wavepropagator}. For the error term at the end, we have the estimate 
		\begin{align*}
			\big\|\tilde{\Box}_{W, small}^{-1}f \big\|_{\tau^{-N}L^2_{d\tau}L^2_{R^3\,dR}}\ll_{N,\tau_*}\big\|f\big\|_{\tau^{-N}L^2_{d\tau}L^2_{R^3\,dR}}.
		\end{align*}
	\end{lem}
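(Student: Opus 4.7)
\medskip

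\noindent\textit{Proof proposal for Lemma~\ref{lem:tildeboxwpropagatorapproximate}.} The plan is to mimic the analysis of subsection~\ref{subse:waveparametrix}, but using the \emph{distorted} spectral representation adapted to the operator $\mathcal{L}_* = -\triangle - 2W^2$, whose continuous spectrum is parametrized by $\xi\in (0,\infty)$ with generalized eigenfunctions $\phi_*(R;\xi)$ and spectral measure $\rho_*(\xi)$, together with one discrete eigenfunction $\phi_d$ at eigenvalue $-\xi_d^2<0$ (see Proposition~\ref{prop:triangle+2W2}). Decomposing
\[
u(\tilde{\tau},R) = \int_0^\infty \phi_*(R;\xi)\,x_*(\tilde{\tau},\xi)\,\rho_*(\xi)\,d\xi + \phi_d(R)\,x_d(\tilde{\tau}),
\]
and similarly for the source $f$ via $\mathcal{F}_*,\mathcal{F}_d$, the equation $\tilde{\Box}_W u = f$ decouples up to a transference-type error into the continuous spectrum piece and the discrete piece.

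On the continuous piece, we first compute the action of the conjugated scaling operator. In analogy to \eqref{eq: transference1}, we derive
\[
\mathcal{F}_*\big((R\partial_R)g\big)(\xi) = -(\xi\partial_\xi)\mathcal{F}_*(g)(\xi) - c\,\mathcal{F}_*(g)(\xi) + \mathcal{K}_*\big(\mathcal{F}_*(g)\big)(\xi),
\]
where $\mathcal{K}_*$ is an integral transference operator whose kernel can be shown, by a variant of Lemma~\ref{lem:KboundsKST} applied to the spectral theory of $\mathcal{L}_*$, to be bounded on $L^p_{\rho_*\,d\xi}$, and the constant $c$ is read off from the asymptotics of $\phi_*$. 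Substituting into $\tilde{\Box}_W$ and moving the transference contributions to the right-hand side, the leading part becomes
\[
-\tilde{\mathcal{D}}_{\tilde{\tau}}^2 x_* -\frac{\lambda_{\tilde{\tau}}}{\lambda}\tilde{\mathcal{D}}_{\tilde{\tau}}x_* + \xi^2 x_* = -\mathcal{F}_*(f) + \mathrm{transf.\ errors},
\]
with $\tilde{\mathcal{D}}_{\tilde{\tau}} = \partial_{\tilde{\tau}} - \frac{\lambda_{\tilde{\tau}}}{\lambda}(\xi\partial_\xi + c)$. The homogeneous propagator for the left-hand side is the same as in \eqref{eq:wavepropagator}, namely $U(\tilde{\tau},\tilde{\sigma},\xi)$, and the asserted prefactor $\frac{\rho_*^{1/2}(\frac{\lambda(\tau)}{\lambda(\sigma)}\xi)}{\rho_*^{1/2}(\xi)}$ precisely accounts for the mismatch between the flat spectral measure implicit in \eqref{eq:wavepropagator} and the actual spectral measure $\rho_*$ in the present context, when one rescales $\xi \mapsto \frac{\lambda(\tilde{\tau})}{\lambda(\tilde{\sigma})}\xi$ inside the Duhamel integral. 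This yields the first of the two explicit summands in the statement.

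For the discrete spectrum piece, the eigenvalue equation $\mathcal{L}_*\phi_d = -\xi_d^2\phi_d$ converts $\tilde{\Box}_W$ (modulo transference contributions of $R\partial_R$ acting on $\phi_d$, which produce no purely discrete output) into the scalar ODE
\[
-\partial_{\tilde{\tau}}^2 x_d(\tilde{\tau}) - \xi_d^2\, x_d(\tilde{\tau}) = -\mathcal{F}_d(f)(\tilde{\tau}) + (\text{small lower-order terms}),
\]
whose inhomogeneous solution vanishing at $\tilde{\tau}=+\infty$ is precisely the second explicit summand in the statement, up to an exponentially decaying homogeneous piece absorbed into $a(\tilde{\tau})\phi_d(R)$ and thereafter negligible. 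Finally, the remainder $\tilde{\Box}_{W,small}^{-1}f$ collects (i) the transference error contributions on both the continuous and discrete spectrum, (ii) the cross coupling between discrete and continuous components generated by $R\partial_R$ applied to $\phi_d$ and to $\phi_*(R;\xi)$, and (iii) the lower-order factor $\frac{\lambda_{\tilde{\tau}}}{\lambda}\tilde{\mathcal{D}}_{\tilde{\tau}}$. Each of these contributions comes with at least one decaying weight $\frac{\lambda_{\tilde{\tau}}}{\lambda}\sim \tilde{\tau}^{-1}$, which when inserted into the Schur-type Duhamel bound of Lemma~\ref{lem:basicL2} (carried out on the wave side exactly as in Lemma~\ref{lem:wavebasicinhom}) produces an estimate with an arbitrarily small constant provided $\tau_*$ is large and $N$ sufficiently large.

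The main obstacle will be the bookkeeping for item (iii): showing that the formal iteration $\sum_j \big(\tilde{\Box}^{-1}\circ(\text{transf.})\big)^j$ actually converges in $\tau^{-N}L^2_{d\tau}L^2_{R^3\,dR}$ with a small constant, which in turn requires a variant of the $L^p_{\rho_*\,d\xi}$-boundedness for $\mathcal{K}_*$ (the analogue of Lemma~\ref{lem:KboundsKST} for the spectral theory of $\mathcal{L}_*$ in presence of the discrete eigenvalue) together with care at the endpoint where the continuous spectrum touches $\xi=0$. Once this convergence is established, the three classes of errors above are combined into $\tilde{\Box}_{W,small}^{-1}f$ and the claimed bound follows.
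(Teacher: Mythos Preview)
Your approach is essentially the same as the paper's: pass to the distorted Fourier side for $\mathcal{L}_*$, split into continuous and discrete pieces, identify the explicit propagator, and iterate away the transference-type remainder using the decaying weight $\frac{\lambda_{\tilde\tau}}{\lambda}\sim\tilde\tau^{-1}$. The paper packages this in vectorial form $\underline{\hat u}=(\mathcal{F}_{*,d}u,\mathcal{F}_*u)^T$ and writes the equation as $(\mathcal{D}_{*,\tilde\tau}^2+\frac{\lambda_{\tilde\tau}}{\lambda}\mathcal{D}_{*,\tilde\tau}+\underline{\xi^2})\underline{\hat u}=\mathcal{R}_*(\tilde\tau,\underline{\hat u})+\underline{\hat f}$, with the residual $\mathcal{R}_*$ a linear combination of $\beta_{\tilde\tau}^2\mathcal{K}_*,\ \beta_{\tilde\tau}^2\mathcal{K}_*^2,\ \beta_{\tilde\tau}\mathcal{K}_*\partial_{\tilde\tau},\ \beta_{\tilde\tau}^2[\mathcal{A},\mathcal{K}_*]$; your items (i)--(ii) are exactly this $\mathcal{R}_*$.

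One correction: your item (iii), the first-order term $\frac{\lambda_{\tilde\tau}}{\lambda}\tilde{\mathcal D}_{\tilde\tau}$, is \emph{not} part of the small remainder. The propagator $U$ from \eqref{eq:wavepropagator} already solves the full operator $\tilde{\mathcal D}_{\tilde\tau}^2+\beta_{\tilde\tau}\tilde{\mathcal D}_{\tilde\tau}+\xi^2$ including that damping term, and $\tilde U_*$ inherits this. So the explicit summands in the lemma statement are exact solutions of the principal part \emph{with} the first-order piece, and the only remainder is the transference error $\mathcal{R}_*$. Consequently, your final paragraph's ``main obstacle for item (iii)'' is a mislabeling: the convergence of $\sum_j(\tilde U_*\circ\mathcal{R}_*)^j$ is the iteration for items (i)--(ii), and it goes through exactly as you describe, using boundedness of $\mathcal{K}_*$ on $\C\times L^2_{\rho_*\,d\xi}$ (the analogue of Lemma~\ref{lem:KboundsKST}, established as in \cite{KST}, \cite{DS}) together with the factors $\beta_{\tilde\tau}\sim\tilde\tau^{-1}$.
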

	The proof is indicated in section~\ref{sec:appendix}. 
	
	The following lemma gives control over the difference between $\tilde{\Box}^{-1}, \tilde{\Box}_W^{-1}$, composed with $K_{main}$:
	\begin{lem}\label{lem:diffboxinverses1} We have the operator bound 
		\begin{align*}
			\Big\|[\tilde{\Box}_W^{-1} -  \tilde{\Box}^{-1}]\circ Q^{(\tilde{\tau})}_{[\gamma_1, \gamma_1^{-1}]}\circ K_{main}\Big\|_{\tau^{-N}L^2_{d\tau}\langle R\rangle R^{\delta_0}L^2_{R^3\,dR}\rightarrow \tau^{-N}L^2_{d\tau}\langle R\rangle R^{\delta_0}L^2_{R^3\,dR}}\ll_{\gamma_1,\tau_*}1.
		\end{align*}
	\end{lem}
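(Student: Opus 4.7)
The plan is to compare the explicit Fourier representations of the two propagators provided by \eqref{eq:tildeboxduhamel} and Lemma~\ref{lem:tildeboxwpropagatorapproximate}, and to exploit three structural gains: (a) the frequency restriction $\xi\in\mathrm{supp}\,\rho_*\cap[\text{non-degenerate interval}]$ enforced by $Q^{(\tilde{\tau})}_{[\gamma_1,\gamma_1^{-1}]}$ and the boundedness of $\mathcal{F}_*(K_{main}u)$ in $\xi$; (b) the compact spatial support $R\lesssim M$ of $K_{main}u$, which controls $\partial_\xi \mathcal{F}_*(K_{main}u)$ via $\|R\cdot K_{main}u\|_{L^2_{R^3\,dR}}\lesssim M\|u\|_{\langle R\rangle R^{\delta_0}L^2_{R^3\,dR}}$; and (c) the effective localization in $|\tilde{\sigma}-\tilde{\tau}|\lesssim\gamma_1^{-1}$ (up to rapidly decaying tails) coming from the temporal cutoff.

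First I would write the difference on the continuous spectral side as
\[
\int_{\tilde{\tau}}^\infty\!\!\int_0^\infty \phi_*(R;\xi)\Big[\tilde{U}_*(\tilde{\tau},\tilde{\sigma},\xi)\mathcal{F}_*(g)(\tilde{\sigma},\tfrac{\lambda(\tilde{\tau})}{\lambda(\tilde{\sigma})}\xi)-U_*(\tilde{\tau},\tilde{\sigma},\xi)\mathcal{F}_*(g)(\tilde{\sigma},\xi)\Big]\rho_*(\xi)\,d\xi d\tilde{\sigma},
\]
with $g:=Q^{(\tilde{\tau})}_{[\gamma_1,\gamma_1^{-1}]}K_{main}u$, and split it via a telescoping into the ``propagator'' difference $(\tilde{U}_*-U_*)\cdot\mathcal{F}_*(g)(\tilde{\sigma},\frac{\lambda(\tilde{\tau})}{\lambda(\tilde{\sigma})}\xi)$ and the ``symbol'' difference $U_*\cdot\big[\mathcal{F}_*(g)(\tilde{\sigma},\frac{\lambda(\tilde{\tau})}{\lambda(\tilde{\sigma})}\xi)-\mathcal{F}_*(g)(\tilde{\sigma},\xi)\big]$. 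For each, the mean-value theorem in $\xi$ produces a factor $\big|\tfrac{\lambda(\tilde{\tau})}{\lambda(\tilde{\sigma})}-1\big|\lesssim_{\nu}\tfrac{|\tilde{\sigma}-\tilde{\tau}|}{\tilde{\tau}}$, combined respectively with $\xi\partial_\xi$ acting on $\rho_*^{1/2}$ (symbol-bounded on the relevant range, see subsection~\ref{subsec:basicfourier}) and with $\xi\partial_\xi\mathcal{F}_*(g)$, the latter being controlled in $L^2_{\rho_*d\xi}$ by $M\|g\|_{L^2_{R^3\,dR}}\lesssim M\|u\|_{\langle R\rangle R^{\delta_0}L^2_{R^3\,dR}}$ using the spatial cutoff in $K_{main}$.

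Next I would insert the estimate $|\tilde{\sigma}-\tilde{\tau}|\lesssim\gamma_1^{-1}$ (plus a rapidly decaying tail) coming from $Q^{(\tilde{\tau})}_{[\gamma_1,\gamma_1^{-1}]}$, together with $\tilde{\tau}\geq \tilde{\tau}_*\gg 1$, to conclude a prefactor of size $\gamma_1^{-1}M\tilde{\tau}_*^{-1}=o_{\tau_*}(1)$ on each of the two pieces above. The remaining $\tilde{\sigma}$-integration is then handled by Schur's test in the same spirit as the proof of Lemma~\ref{lem:wavebasicinhom}, yielding the desired operator bound between $\tau^{-N}L^2_{d\tau}\langle R\rangle R^{\delta_0}L^2_{R^3\,dR}$ spaces. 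The discrete ($\phi_d$) contributions of the two propagators match up to the harmless boundary term $a(\tilde{\tau})\phi_d(R)$ with $|a(\tilde{\tau})|\lesssim e^{-\xi_d\tilde{\tau}}$, which is absorbed trivially.

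The error term $\tilde{\Box}_{W,small}^{-1}\circ Q^{(\tilde{\tau})}_{[\gamma_1,\gamma_1^{-1}]}\circ K_{main}$ is dispatched directly by the last estimate of Lemma~\ref{lem:tildeboxwpropagatorapproximate}, combined with the trivial bound $\|K_{main}u\|_{L^2_{R^3\,dR}}\lesssim_M\|u\|_{\langle R\rangle R^{\delta_0}L^2_{R^3\,dR}}$. The principal technical obstacle I anticipate is ensuring that the $\xi$-derivative bounds on $\mathcal{F}_*(K_{main}u)$ do not deteriorate near the endpoints $\xi=\gamma_1, \gamma_1^{-1}$ of the temporal frequency band and do not conflict with the zero/continuous edge of the spectrum of $\mathcal{L}_*$; this is dealt with by noting that $K_{main}$ already incorporates the spatial truncation $\chi_{R\lesssim M}$ on both sides, so $\mathcal{F}_*(K_{main}u)$ enjoys $C^\infty$ behavior in $\xi$ with symbol bounds on the whole interval $[\gamma_1,\gamma_1^{-1}]$.
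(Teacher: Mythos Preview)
Your proposal contains a genuine gap: claim (c), that the temporal frequency localizer $Q^{(\tilde{\tau})}_{[\gamma_1,\gamma_1^{-1}]}$ effectively restricts the Duhamel integration to $|\tilde{\sigma}-\tilde{\tau}|\lesssim\gamma_1^{-1}$, is false. The operator $Q^{(\tilde{\tau})}_{[\gamma_1,\gamma_1^{-1}]}$ is a Fourier multiplier in the \emph{temporal frequency} variable; it constrains the oscillation rate of $g(\tilde{\sigma},\cdot)$ in $\tilde{\sigma}$, not the support of $g$ or the range of the $\tilde{\sigma}$-integral. The Duhamel kernel $U_*(\tilde{\tau},\tilde{\sigma},\xi)=\xi^{-1}\sin\big((\tilde{\sigma}-\tilde{\tau})\xi\big)$ does not decay in $\tilde{\sigma}-\tilde{\tau}$, and for $\xi\in[\gamma_1,\gamma_1^{-1}]$ it can be fully resonant with the temporal oscillations of $g$, so no effective time cutoff arises. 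Your smallness mechanism, the prefactor $\gamma_1^{-1}M\tilde{\tau}_*^{-1}$, therefore does not follow.

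The paper's proof fixes this by splitting the $\tilde{\sigma}$-integral at $\tilde{\sigma}-\tilde{\tau}=\log\tau_*$. On the near-diagonal piece $\tilde{\sigma}-\tilde{\tau}<\log\tau_*$ your telescoping and mean-value argument is exactly right: the factor $|\lambda(\tilde{\tau})/\lambda(\tilde{\sigma})-1|\lesssim\log\tau_*/\tilde{\tau}_*$ is small, and Schur's test closes. On the far piece $\tilde{\sigma}-\tilde{\tau}\geq\log\tau_*$ one does \emph{not} take a difference; instead one bounds each of $\tilde{\Box}_W^{-1}$ and $\tilde{\Box}^{-1}$ separately, feeding in the bounds
\[
\big\|\xi^{-1-\delta_0}\mathcal{F}_*(K_{main}f)\big\|_{L^2_{\rho_*\,d\xi}}+\big\|\langle\partial_\xi\rangle^{1+\delta_0}\mathcal{F}_*(K_{main}f)\big\|_{L^2_{\rho_*\,d\xi}}\lesssim\big\|f\big\|_{\langle R\rangle R^{\delta_0}L^2_{R^3\,dR}}
\]
(which do hold thanks to the spatial truncation in $K_{main}$) into the weighted estimate of Lemma~\ref{lem:wavebasicinhom}. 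The resulting Schur kernel is of order $\langle\tilde{\sigma}-\tilde{\tau}\rangle^{-1-\delta_0}$, so integrating from $\log\tau_*$ yields the smallness $(\log\tau_*)^{-\delta_0}$. Your handling of the discrete spectral part and of $\tilde{\Box}_{W,small}^{-1}$ is correct and matches the paper.
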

	\begin{proof} In light of \eqref{eq:tildeboxduhamel}, Lemma~\ref{lem:tildeboxwpropagatorapproximate} it suffices to consider the difference 
		\begin{align*}
			&X_{\triangle}: = \int_0^\infty\int_{\tilde{\tau}}^\infty U_*(\tilde{\tau}, \tilde{\sigma}, \xi)\cdot \phi_*(R;\xi)\cdot \mathcal{F}_*\big( K_{main}(Q^{(\tilde{\tau})}_{[\gamma_1, \gamma_1^{-1}]}f)\big)(\tilde{\sigma}, \xi)\rho_*(\xi)\,d\xi\\
			& - \int_0^\infty\int_{\tilde{\tau}}^\infty \tilde{U}_*(\tilde{\tau}, \tilde{\sigma}, \xi)\cdot \phi_*(R;\xi)\cdot \mathcal{F}_*\big( K_{main}(Q^{(\tilde{\tau})}_{[\gamma_1, \gamma_1^{-1}]}f)\big)(\tilde{\sigma}, \frac{\lambda(\tilde{\tau})}{\lambda(\tilde{\sigma})}\xi)\rho_*(\xi)\,d\xi\\
		\end{align*}
		Using a straightforward analogues of Lemma~\ref{lem:wavebasicinhom} and its proof, we obtain the bound 
		\begin{align*}
			\big\|X_{\triangle,\,\tilde{\sigma}-\tilde{\tau}\geq \log \tau_*}\big\|_{\tau^{-N}L^2_{d\tau}\langle R\rangle R^{\delta_0}L^2_{R^3\,dR}}\ll_{\tau_*}\big\|f\big\|_{\tau^{-N}L^2_{d\tau}\langle R\rangle R^{\delta_0}L^2_{R^3\,dR}},
		\end{align*}
		where the modifcation of the subscript on the left indicates inclusion of an extra cutoff $\chi_{\tilde{\sigma}-\tilde{\tau}\geq \log \tau_*}$ in either of the double integrals. In fact, this is a consequence of the bound
		\begin{align*}
			\big\|\xi^{-1-\delta_0}\mathcal{F}_*\big(K_{main}f\big)\big\|_{L^2_{\rho_*(\xi)\,d\xi}} +  \big\|\langle\partial_{\xi}\rangle^{1+\delta_0}\mathcal{F}_*\big(K_{main}f\big)\big\|_{L^2_{\rho_*(\xi)\,d\xi}}\lesssim \big\|f\big\|_{\langle R\rangle R^{\delta_0}L^2_{R^3\,dR}}.
		\end{align*}
		For the remaining contribution $X_{\triangle,\,\tilde{\sigma}-\tilde{\tau}<\log \tau_*}$, one uses that 
		\begin{align*}
			&\big\|\chi_{0\leq \tilde{\sigma} - \tilde{\tau}<\log\tau_*} (U_*(\tilde{\tau}, \tilde{\sigma}, \xi) - \tilde{U}_*(\tilde{\tau}, \tilde{\sigma}, \xi))\big\|_{L^\infty_{d\tilde{\tau}(\tilde{\tau}\geq \tilde{\tau}_*)}L^1_{d\tilde{\sigma}}}\ll_{\tau_*}1,\\
			&\big\|\chi_{0\leq \tilde{\sigma} - \tilde{\tau}<\log\tau_*} (U_*(\tilde{\tau}, \tilde{\sigma}, \xi) - \tilde{U}_*(\tilde{\tau}, \tilde{\sigma}, \xi))\big\|_{L^\infty_{d\tilde{\sigma}}L^1_{d\tilde{\tau}(\tilde{\tau}\geq \tilde{\tau}_*)}}\ll_{\tau_*}1,
		\end{align*}
		together with the difference bound
		\begin{align*}
			&\big\|\xi^{-1-\delta_0}(\triangle)\mathcal{F}_*\big(K_{main}f\big)\big\|_{L^2_{\rho_*(\xi)\,d\xi}} +  \big\|\langle\partial_{\xi}\rangle^{1+\delta_0}(\triangle)\mathcal{F}_*\big(K_{main}f\big)\big\|_{L^2_{\rho_*(\xi)\,d\xi}}\\&\ll_{\tau_*}\big\|f\big\|_{\langle R\rangle R^{\delta_0}L^2_{R^3\,dR}},
		\end{align*}
		where we use the notation $(\triangle)f(\xi): = f(\frac{\lambda(\tilde{\tau})}{\lambda(\tilde{\sigma})}\xi) - f(\xi)$, and we restrict to the regime $0\leq \tilde{\sigma} - \tilde{\tau}<\log\tau_*$. The desired bound 
		\begin{align*}
			\big\|X_{\triangle,\,\tilde{\sigma}-\tilde{\tau}<\log \tau_*}\big\|_{\tau^{-N}L^2_{d\tau}\langle R\rangle R^{\delta_0}L^2_{R^3\,dR}}\ll_{\tau_*}\big\|f\big\|_{\tau^{-N}L^2_{d\tau}\langle R\rangle R^{\delta_0}L^2_{R^3\,dR}},
		\end{align*}
		then follows by using Schur's test as well as the simple modification of Lemma~\ref{lem:wavebasicinhom}.
	\end{proof}
	
	We can now gather the preceding developments to infer the following 
	\begin{prop}\label{prop:inverseoftrueoperatorBoxinverseWK} The operators
		\begin{align*}
			& I - (\Box + 2W^2)^{-1}\circ Q^{(\tilde{\tau})}_{[\gamma_1, \gamma_1^{-1}]}K_{main},\\
			& I - (\Box + 2W^2)^{-1}\circ Q^{(\tilde{\tau})}_{[\gamma_1, \gamma_1^{-1}]}K
		\end{align*}
		admits a bounded inverses on $\tau^{-N}L^2_{d\tau}\langle R\rangle R^{\delta_0}L^2_{R^3\,dR}$ provided $\tau\geq \tau_*$ with $\tau_*$ sufficiently large(depending on $N$). The operator norms of the inverses only depends on $\gamma$. 
	\end{prop}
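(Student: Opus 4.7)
The plan is to reduce the invertibility of $I-(\Box+2W^2)^{-1}\circ Q^{(\tilde\tau)}_{[\gamma_1,\gamma_1^{-1}]}\circ K_{main}$ on $\tau^{-N}L^2_{d\tau}\langle R\rangle R^{\delta_0}L^2_{R^3\,dR}$ to the invertibility of the fibrewise model operator $I-(\hat\tau^2+\triangle+2W^2)^{-1}_{good}\circ K_{main}$ on $\langle R\rangle R^{\delta_0}L^2_{R^3\,dR}$ for each $\hat\tau\in[\gamma_1,\gamma_1^{-1}]$, and then to restore the missing pieces ($\tilde\Box_W$ versus $\tilde\Box$, and $K_{small}$) perturbatively. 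First, translate to the wave-time picture: after conjugating by $\lambda^{-2}$ (cf.\ \eqref{eq:nFintildetauR}), the operator $(\Box+2W^2)^{-1}$ coincides with the Duhamel inverse $\tilde\Box_W^{-1}$ introduced in Lemma~\ref{lem:tildeboxwpropagatorapproximate}. Writing
\[
\tilde\Box_W^{-1}\circ Q^{(\tilde\tau)}_{[\gamma_1,\gamma_1^{-1}]}\circ K_{main}
=\tilde\Box^{-1}\circ Q^{(\tilde\tau)}_{[\gamma_1,\gamma_1^{-1}]}\circ K_{main}
+\bigl[\tilde\Box_W^{-1}-\tilde\Box^{-1}\bigr]\circ Q^{(\tilde\tau)}_{[\gamma_1,\gamma_1^{-1}]}\circ K_{main},
\]
Lemma~\ref{lem:diffboxinverses1} says the second term has operator norm $\ll_{\gamma_1,\tau_*}1$ on the target space, so it suffices to invert $I-\tilde\Box^{-1}\circ Q^{(\tilde\tau)}_{[\gamma_1,\gamma_1^{-1}]}\circ K_{main}$ and then close by a Neumann series.

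Next, pass to the temporal Fourier variable $\hat\tau$. Formula \eqref{eq:tildeboxduhamel}, combined with the spectral definition of $(\hat\tau^2+\triangle+2W^2)^{-1}_{good}$ in \eqref{eq:goodinverseoftriangle+hattausquare+2Wsquare}, shows that conjugation by $\mathcal F_{\tilde\tau}$ turns $Q^{(\tilde\tau)}_{[\gamma_1,\gamma_1^{-1}]}\tilde\Box^{-1}$ (modulo an exponentially decaying transient which is harmless on $[\tilde\tau_*,\infty)$) into multiplication by $\chi_{[\gamma_1,\gamma_1^{-1}]}(|\hat\tau|)\cdot(\hat\tau^2+\triangle+2W^2)^{-1}_{good}$. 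Consequently the question reduces, fibrewise in $\hat\tau$, to solving
\[
\bigl(I-(\hat\tau^2+\triangle+2W^2)^{-1}_{good}\circ K_{main}\bigr)u_{\hat\tau}=f_{\hat\tau}
\quad\text{on}\ \langle R\rangle R^{\delta_0}L^2_{R^3\,dR}.
\]
For each $\hat\tau\in[\gamma_1,\gamma_1^{-1}]$ Lemma~\ref{lem:Fredholmproperty} supplies the Fredholm property, Proposition~\ref{prop:PiTsurjective} (via the Carleman estimate \eqref{eq:Carleman}) gives surjectivity modulo $\phi_{\hat\tau}$, and Lemma~\ref{lem:phitauhatinimage} (using the numerical assumption (A1)) shows $\phi_{\hat\tau}$ itself is in the range. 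Hence one defines the canonical inverse $\Psi_{\hat\tau}$ by imposing orthogonality to the (finite-dimensional) kernel.

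To convert the pointwise fibrewise inverses into a bounded operator on $\tau^{-N}L^2_{d\tau}\langle R\rangle R^{\delta_0}L^2_{R^3\,dR}$, the main obstacle is regularity and uniformity of $\hat\tau\mapsto\Psi_{\hat\tau}$: here Lemma~\ref{lem:smoothnesswithrespecttohattauforinverse} is tailor-made, producing the needed $W^{M,2}_{\hat\tau}$ dependence with $M=N/(\tfrac12-\tfrac1{4\nu})$ via a localized Neumann series argument, and giving the required estimate with constant depending only on $\gamma_1$. Inverting the temporal Fourier transform and using Plancherel restricted to $\tilde\tau\geq\tilde\tau_*$ (which translates to Schr\"odinger time weights $\tau^{-N}$ through the algebraic relation $\tilde\tau\sim_\nu\tau^{1/2-1/(4\nu)}$, justifying the choice of $M$) then yields a bounded inverse of $I-\tilde\Box^{-1}\circ Q^{(\tilde\tau)}_{[\gamma_1,\gamma_1^{-1}]}\circ K_{main}$, with norm controlled by $\gamma_1$ alone. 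Composing with the Neumann series from the first paragraph completes the construction for $K_{main}$.

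Finally, to replace $K_{main}$ by the full operator $K$, write
\[
I-(\Box+2W^2)^{-1}\circ Q^{(\tilde\tau)}_{[\gamma_1,\gamma_1^{-1}]}\circ K
=\bigl(I-(\Box+2W^2)^{-1}\circ Q^{(\tilde\tau)}_{[\gamma_1,\gamma_1^{-1}]}\circ K_{main}\bigr)-(\Box+2W^2)^{-1}\circ Q^{(\tilde\tau)}_{[\gamma_1,\gamma_1^{-1}]}\circ K_{small}.
\]
Lemma~\ref{lem:Ksmallbound} gives that the last operator has norm $\ll_M1$ on $\tau^{-N}L^2_{d\tau}\langle R\rangle R^{\delta_0}L^2_{R^3\,dR}$, so composing with the already constructed inverse of the $K_{main}$-operator yields an operator of the form $I+B$ with $\|B\|\ll_{M,\gamma_1}1$ (choosing $M=M(\gamma_1)$ sufficiently large), and a final Neumann series closes the argument. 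The hard part, as already anticipated, is the bridge from fibrewise canonical inverses to an honest bounded operator on the weighted $\tau^{-N}L^2_{d\tau}$ space with quantitative control, which is why the numerical assumption (A1) (forcing $\alpha_{\hat\tau}\neq 0$ for the finitely many candidate singular values of $\hat\tau$) and the joint regularity mechanism of Lemma~\ref{lem:smoothnesswithrespecttohattauforinverse} play the pivotal role.
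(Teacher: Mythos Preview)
Your proof is correct and follows essentially the same approach as the paper: reduce from $K$ to $K_{main}$ via Lemma~\ref{lem:Ksmallbound} and a Neumann series, replace $\tilde\Box_W^{-1}$ by $\tilde\Box^{-1}$ via Lemma~\ref{lem:diffboxinverses1} and another Neumann series, and invoke Lemma~\ref{lem:smoothnesswithrespecttohattauforinverse} for the fibrewise invertibility with the required $\hat\tau$-regularity. The only cosmetic difference is that the paper strips off $K_{small}$ first and then handles the $\tilde\Box_W^{-1}-\tilde\Box^{-1}$ discrepancy, whereas you do these in the opposite order; your version is also more explicit in tracing the fibrewise Fredholm input (Proposition~\ref{prop:PiTsurjective}, Lemma~\ref{lem:phitauhatinimage}) that underlies Lemma~\ref{lem:smoothnesswithrespecttohattauforinverse}.
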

	\begin{proof} To begin with, we decompose
		\begin{align*}
			&I - (\Box + 2W^2)^{-1}\circ Q^{(\tilde{\tau})}_{[\gamma_1, \gamma_1^{-1}]}K\\& =  I - (\Box + 2W^2)^{-1}\circ Q^{(\tilde{\tau})}_{[\gamma_1, \gamma_1^{-1}]}K_{main} - (\Box + 2W^2)^{-1}\circ Q^{(\tilde{\tau})}_{[\gamma_1, \gamma_1^{-1}]}K_{small}
		\end{align*}
		The last term is perturbative on account of Lemma~\ref{lem:Ksmallbound}.
		Using a simple Neumann expansion, it then suffices to establish invertibility of 
		\begin{align*}
			I - (\Box + 2W^2)^{-1}\circ Q^{(\tilde{\tau})}_{[\gamma_1, \gamma_1^{-1}]}K_{main} &= I - \tilde{\Box}^{-1}\circ Q^{(\tilde{\tau})}_{[\gamma_1, \gamma_1^{-1}]}K_{main}\\
			& +  \big(\tilde{\Box}^{-1} - \tilde{\Box}_W^{-1}\big)\circ Q^{(\tilde{\tau})}_{[\gamma_1, \gamma_1^{-1}]}K_{main}.\\
		\end{align*}
		This in turn is a consequence of Lemma~\ref{lem:diffboxinverses1} in conjunction with Lemma~\ref{lem:smoothnesswithrespecttohattauforinverse} and another Neumann series expansion. 
		
	\end{proof}
	
	\subsection{Improving the bound for intermediate wave-temporal frequencies IV; return to \eqref{eq:tildenprinfull}} We shall now project \eqref{eq:tildenprinfull} to intermediate temporal frequencies belonging to the interval $[\gamma, \gamma^{-1}]$, and take advantage of the preceding subsections to control the left hand side. So far we have neglected the third term on the left, which arises due to modulating on $\tilde{\lambda}$. Keep in mind \eqref{eq:ytildelamba} for the definition of $y_{\tilde{\lambda}}$, where the function $\tilde{\lambda}$ in turn is described by Prop.~\ref{prop:solnoftildelambdaeqn}. To take advantage of the latter, we first have to deal with a slight technical complication, which comes from the fact that in \eqref{eq:ytildelamba} we encounter the product $\tilde{\lambda}_{tt}\cdot\lambda^2$, rather than just $\tilde{\lambda}_{tt}$. To deal with this discrepancy, we use
	\begin{lem}\label{lem:tildelambdatimeslambdasquared} We have the description 
		\begin{align*}
			&\mathcal{F}_{\tilde{\tau}}\big(\lambda^{-2}\langle\partial_{\tilde{\tau}}^2\rangle^{-1} Q^{(\tilde{\tau})}_{[\gamma_1, \gamma_1^{-1}]}\big(\tilde{\lambda}_{prin,\tilde{\tau}\tilde{\tilde{\tau}}}\cdot \lambda^2\big)\big)(\hat{\tilde{\tau}})\\& = \langle \hat{\tilde{\tau}}^2\rangle\cdot\beta_*(\hat{\tilde{\tau}})\cdot \mathcal{F}_{\tilde{\tau}}\big( Q^{(\tilde{\tau})}_{[\gamma_1, \gamma_1^{-1}]}\int_0^\infty \Box^{-1}\triangle\Re\big(\lambda^2 z_{nres}W\big)\cdot W^2 R^3\,dR\big)(\hat{\tilde{\tau}})\\
			& + \mathcal{F}_{\tilde{\tau}}\big(\delta\tilde{\lambda}_1\big), 
		\end{align*}
		where $ \beta_*(\hat{\tilde{\tau}})$ is as in Proposition~\ref{prop:solnoftildelambdaeqn}, and the error term $\delta\tilde{\lambda}_1$ satisfies the bound 
		\begin{align*}
			\Big\|\delta\tilde{\lambda}_1\Big\|_{\tau^{-N}L^2_{d\tau}}\leq c(\tau_*, \gamma_1)\big\|z_{nres}\big\|_{S} +  \big\|(\tilde{\kappa}_1, \kappa_2)\big\|_{\tau^{-N}L^2_{d\tau}}
		\end{align*}
		where $\lim_{\tau_*, \gamma_1^{-1}\rightarrow +\infty}c(\tau_*, \gamma_1) = 0$. As a consequence, we can write(recall \eqref{eq:nprinequation1}) the left hand side of the preceding equation in the form 
		\begin{align*}
			&\mathcal{F}_{\tilde{\tau}}\big(\lambda^{-2}\langle\partial_{\tilde{\tau}}^2\rangle^{-1} Q^{(\tilde{\tau})}_{[\gamma, \gamma^{-1}]}\big(\tilde{\lambda}_{prin,\tilde{\tau}\tilde{\tilde{\tau}}}\cdot \lambda^2\big)(\hat{\tilde{\tau}})\\& =  \langle \hat{\tilde{\tau}}^2\rangle\cdot\beta_*(\hat{\tilde{\tau}})\cdot\langle \mathcal{F}_{\tilde{\tau}}(Q^{(\tilde{\tau})}_{[\gamma, \gamma^{-1}]}\tilde{n}_{prin})((\hat{\tilde{\tau}}, \cdot), W^2\rangle_{L^2_{R^3\,dR}} + \mathcal{F}_{\tilde{\tau}}\big(\delta\tilde{\lambda}_2\big),
		\end{align*}
		where $\delta\tilde{\lambda}_2$ satisfies similar bounds as $\delta\tilde{\lambda}_1$, but with $c(\tau_*,\gamma_1)$ replaced by $c(\epsilon_1, \gamma_1, N,\tau_*)$. Finally, we have the bound 
		\begin{align*}
			\big\|Q^{(\tilde{\tau})}_{[\gamma_1, \gamma_1^{-1}]}\big(\frac{\tilde{\lambda}_{\tilde{\tau}}}{\tilde{\tau}}\big)|_{[\tau_*,\infty)}\big\|_{\tau^{-N}L^2_{d\tau}}\ll_{\tau_*,\gamma_1}\big\|\langle \partial_{\tilde{\tau}}^2\rangle^{-1}\tilde{\lambda}_{\tilde{\tau}\tilde{\tau}}\big\|_{\tau^{-N}L^2_{d\tau}} 
		\end{align*}
	\end{lem}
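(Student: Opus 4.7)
The plan is to start from the precise structural result for $\tilde{\lambda}_{prin,\tilde{\tau}\tilde{\tau}}$ furnished by Proposition~\ref{prop:solnoftildelambdaeqn} and propagate it through the three perturbative steps: (a) restricting to the bounded wave-temporal frequency window $[\gamma_1,\gamma_1^{-1}]$; (b) commuting $\lambda^2$ past both the smoothing operator $\lambda^{-2}\langle\partial_{\tilde{\tau}}^2\rangle^{-1}$ and the frequency localizer $Q^{(\tilde{\tau})}_{[\gamma_1,\gamma_1^{-1}]}$; and (c) replacing $z_{nres}$ by $z_{nres}^{prin}$ so as to expose $\tilde{n}_{prin}$ on the right-hand side. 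Throughout, the 'small' errors are collected into $\delta\tilde{\lambda}_1,\delta\tilde{\lambda}_2$.

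For step (a), on the Fourier side the identity in Proposition~\ref{prop:solnoftildelambdaeqn} reads
\[
\mathcal{F}_{\tilde{\tau}}\big(\langle\partial_{\tilde{\tau}}^2\rangle^{-1}\tilde{\lambda}_{prin,\tilde{\tau}\tilde{\tau}}\big)(\hat{\tilde{\tau}}) + c_3(\hat{\tilde{\tau}},\nu)\mathcal{F}_{\tilde{\tau}}\big(\tilde{\lambda}_{prin,\tilde{\tau}}/\tilde{\tau}\big)(\hat{\tilde{\tau}}) = \langle\hat{\tilde{\tau}}^2\rangle\beta_*(\hat{\tilde{\tau}})\mathcal{F}_{\tilde{\tau}}\big(\Pi^{(\tilde{\tau})}\textstyle\int_0^\infty Q^{(\tilde{\tau})}_{<\tau^{\frac12+}}\lambda^{-2}\Box^{-1}\triangle\Re(\lambda^2 z_{nres}W)\cdot W^2 R^3\,dR\big)(\hat{\tilde{\tau}}).
\]
Multiplying through by the symbol of $Q^{(\tilde{\tau})}_{[\gamma_1,\gamma_1^{-1}]}$ and using that the latter is supported where $|\hat{\tilde{\tau}}|\in[\gamma_1,\gamma_1^{-1}]$, the cutoff $Q^{(\tilde{\tau})}_{<\tau^{\frac12+}}$ acts as the identity on the right (up to tails that fit into $\delta\tilde{\lambda}_1$ via the rapid decay of $\check{\chi}$), and the projection $\Pi^{(\tilde{\tau})}$, which only imposes finitely many vanishing conditions concentrated at special frequencies (in particular at $\hat{\tilde{\tau}} = 0$), contributes at most a multiple of a fixed profile which is absorbed into the error. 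The term $c_3\mathcal{F}_{\tilde{\tau}}(\tilde{\lambda}_{prin,\tilde{\tau}}/\tilde{\tau})$ is estimated by the third conclusion of the lemma, itself a consequence of the identity $\tilde{\lambda}_{\tilde{\tau}}/\tilde{\tau} = \tilde{\tau}^{-1}\partial_{\tilde{\tau}}^{-1}(\tilde{\lambda}_{\tilde{\tau}\tilde{\tau}})$, the fact that on the support of $Q^{(\tilde{\tau})}_{[\gamma_1,\gamma_1^{-1}]}$ we have $|\partial_{\tilde{\tau}}^{-1}|\leq \gamma_1^{-1}$, and the smallness of $\tilde{\tau}^{-1}\leq \tilde{\tau}_*^{-1}$ on $[\tau_*,\infty)$, combined with Proposition~\ref{prop:tilealphatildelambdasimultaneous}.

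For step (b), I would write
\[
\lambda^{-2}\langle\partial_{\tilde{\tau}}^2\rangle^{-1}Q^{(\tilde{\tau})}_{[\gamma_1,\gamma_1^{-1}]}(\tilde{\lambda}_{prin,\tilde{\tau}\tilde{\tau}}\cdot\lambda^2) = \langle\partial_{\tilde{\tau}}^2\rangle^{-1}Q^{(\tilde{\tau})}_{[\gamma_1,\gamma_1^{-1}]}\tilde{\lambda}_{prin,\tilde{\tau}\tilde{\tau}} + [\lambda^{-2}\langle\partial_{\tilde{\tau}}^2\rangle^{-1}Q^{(\tilde{\tau})}_{[\gamma_1,\gamma_1^{-1}]},\lambda^2](\tilde{\lambda}_{prin,\tilde{\tau}\tilde{\tau}}),
\]
and similarly insert a factor $\lambda^{-2}$ on the right to create $\lambda^{-2}\Box^{-1}\triangle\Re(\lambda^2 z_{nres}W)$; the induced commutators gain $\lambda_{\tilde{\tau}}/\lambda\sim\tilde{\tau}^{-1}$ each time a derivative lands on $\lambda^{\pm 2}$, and since $Q^{(\tilde{\tau})}_{[\gamma_1,\gamma_1^{-1}]}$ restricts $\partial_{\tilde{\tau}}$ to a bounded symbol $\sim\gamma_1^{-1}$, these contributions are of size $\tilde{\tau}_*^{-1}$ in the relevant $\tau^{-N}L^2_{d\tau}$-norms, hence absorbed into $\delta\tilde{\lambda}_1$.

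Step (c) is then immediate: decomposing $z_{nres} = z_{nres}^{prin} + z_{nres}^{rest} + z_{nres,\text{small}} + z_{nres,\mathcal{K}}$ as in \eqref{eq: znresstarsimepsilon1decomp}, \eqref{eq:snresdecomp1}, and invoking Lemma~\ref{lem:znresrestbound}, Lemma~\ref{lem:znresperturbative1}, and Lemma~\ref{lem:nonresconnect1} (the latter of which produces factors $c(\epsilon_1, N,\tau_*)\to 0$), together with the boundedness of the linear map $u\mapsto \int_0^\infty \lambda^{-2}\Box^{-1}\triangle\Re(\lambda^2 u W)\cdot W^2 R^3\,dR$ as in Corollary~\ref{cor:yzW}, yields the Fourier-side identity with $z_{nres}$ replaced by $z_{nres}^{prin}$ up to an error of the claimed type. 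Finally, recalling $\tilde{n}_{prin} = \Box^{-1}(\lambda^2\triangle\Re(W\overline{z_{nres}^{prin}}))$ from \eqref{eq:nprindef} and that $\lambda^2$ commutes with $\triangle_R$, one identifies $\langle\mathcal{F}_{\tilde{\tau}}(Q^{(\tilde{\tau})}_{[\gamma_1,\gamma_1^{-1}]}\tilde{n}_{prin}), W^2\rangle$ with the desired inner product, yielding the second identity. The main obstacle is the commutator step (b): one must verify that all combinations of $\partial_{\tilde{\tau}}^k$ falling on $\lambda^{\pm 2}$ still leave the resulting symbol bounded on the frequency-localized Fourier side and produce gain $\tilde{\tau}_*^{-1}$ in the weighted norm, which requires the precise symbol bounds on $\lambda$ in the variable $\tilde{\tau}$ together with the Plancherel-type identity $\lambda^{-2}(\tilde{\tau})\sim \tau^{-1+1/(2\nu)}$ and the restriction $\tau\geq\tau_*$.
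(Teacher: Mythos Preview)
Your proposal is correct and follows essentially the same approach as the paper, which simply states that the lemma ``is similar to Lemma~\ref{lem:lowtempfreqznresprin2}, we omit the simple details.'' You have correctly identified the three ingredients: invoking the structural formula from Proposition~\ref{prop:solnoftildelambdaeqn} and absorbing the $c_3(\hat{\tilde{\tau}},\nu)\mathcal{F}_{\tilde{\tau}}(\tilde{\lambda}_{prin,\tilde{\tau}}/\tilde{\tau})$ term via the third conclusion of the lemma; the commutator step $[\lambda^{-2}\langle\partial_{\tilde{\tau}}^2\rangle^{-1}Q^{(\tilde{\tau})}_{[\gamma_1,\gamma_1^{-1}]},\lambda^2]$ gaining $\tilde{\tau}^{-1}$ (this is the one genuinely new wrinkle relative to Lemma~\ref{lem:lowtempfreqznresprin2}); and the replacement of $z_{nres}$ by $z_{nres}^{prin}$ via Lemmas~\ref{lem:znresrestbound}, \ref{lem:znresperturbative1}, \ref{lem:nonresconnect1}, exactly as in the proof of Lemma~\ref{lem:lowtempfreqznresprin2}. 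One minor point: your treatment of $\Pi^{(\tilde{\tau})}$ is slightly imprecise---the paper handles the commutation of $Q^{(\tilde{\tau})}_{[\gamma_1,\gamma_1^{-1}]}$ past $\Pi^{(\tilde{\tau})}$ by invoking Lemma~\ref{lem:fmodif} and its proof (the modification lives on $[\tau_*/2,\tau_*]$, hence contributes only $\tau^{-100N}L^2_{d\tau}$ errors after re-restricting to $[\tau_*,\infty)$)---but your conclusion is the same.
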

	This is similar to Lemma~\ref{lem:lowtempfreqznresprin2}, we omit the simple details. 
	We now write \eqref{eq:tildenprinfull} in the following frequency localized form:
	\begin{equation}\label{eq:tildenprinfull1}\begin{split}
			\tilde{\Box}_W\big(I - \tilde{\Box}^{-1}\circ K_{main} - \tilde{\Box}^{-1}Z\big)Q^{(\tilde{\tau})}_{[\gamma_1, \gamma_1^{-1}]}\tilde{n}_{prin} = \tilde{\Box}_W F_1 + F_2 + F_3 + F_4,
	\end{split}\end{equation}
	where $\tilde{\Box}^{-1}$ is defined via \eqref{eq:goodinverseoftriangle+hattausquare+2Wsquare}, while we let 
	\begin{align*}
		&Z(n): =  \triangle\big(W\cdot \mathcal{L}^{-1}( \chi_{R\lesssim M}W\cdot \Box^{-1}(\langle \zeta(n), W^2\rangle \chi_{R\lesssim M}\Lambda W\cdot W)\big),\\&\mathcal{F}_{\tilde{\tau}}\big(\zeta(n)\big)(\hat{\tilde{\tau}}) = \langle \hat{\tilde{\tau}}^2\rangle^2\cdot \beta_*(\hat{\tilde{\tau}})\cdot\mathcal{F}_{\tilde{\tau}}(n) \\&F_1 = \big(\tilde{\Box}_W^{-1} - \tilde{\Box}^{-1}\big)\circ K_{main}\big(Q^{(\tilde{\tau})}_{[\gamma_1, \gamma_1^{-1}]}\tilde{n}_{prin}\big) + \big(\tilde{\Box}_W^{-1} - \tilde{\Box}^{-1}\big)\circ Z\big(Q^{(\tilde{\tau})}_{[\gamma_1, \gamma_1^{-1}]}\tilde{n}_{prin}\big),\\
		&F_2 = K_{small}\big(Q^{(\tilde{\tau})}_{[\gamma_1, \gamma_1^{-1}]}\tilde{n}_{prin}\big),\\
		&F_3 = Q^{(\tilde{\tau})}_{[\gamma_1, \gamma_1^{-1}]}\Big(\lambda^2\triangle \Re\big(W\mathcal{L}^{-1}(\overline{\tilde{E}_{main} + z_{nres, small}^{prin}}\big)\Big),
	\end{align*}
	and finally, we also let 
	\begin{align*}
		F_4 = \triangle\big(W\cdot \mathcal{L}^{-1}( Q^{(\tilde{\tau})}_{[\gamma_1, \gamma_1^{-1}]}y_{\tilde{\lambda}}\cdot W)\big) - Z(Q^{(\tilde{\tau})}_{[\gamma_1, \gamma_1^{-1}]}\tilde{n}_{prin} ). 
	\end{align*}
	\begin{rem}\label{rem:F5forlater} For later reference, we also introduce the quantities
		\begin{align*}
			&\tilde{Z}(n): =  \chi_{R\lesssim M}W\cdot \Box^{-1}(\langle \zeta(n), W^2\rangle \chi_{R\lesssim M}\Lambda W\cdot W)\\
			&F_5: =  Q^{(\tilde{\tau})}_{[\gamma_1, \gamma_1^{-1}]}y_{\tilde{\lambda}}\cdot W - \tilde{Z}(Q^{(\tilde{\tau})}_{[\gamma_1, \gamma_1^{-1}]}\tilde{n}_{prin}).
		\end{align*}
		We already note the a priori estimate 
		\begin{align*}
			\big\|F_5\big\|_{\tau^{-N}L^2_{d\tau}L^2_{R^3\,dR}}\lesssim c(\tau_*,\gamma_1)\big\|z_{nres}\big\|_{S} + \big\|(\tilde{\kappa}_1,\kappa_2)\big\|_{\tau^{-N}L^2_{d\tau}},
		\end{align*}
		where $\lim_{\tau_*,\gamma_1^{-1}\rightarrow\infty}c(\tau_*,\gamma_1) = 0$. In fact, this is a consequence of the preceding lemma and simple estimates. 
	\end{rem}
	
	To solve this equation, it suffices alternatively to solve 
	\begin{equation}\label{eq:tildenprinfull2}\begin{split}
			\big(I - \tilde{\Box}^{-1}\circ K_{main} - \tilde{\Box}^{-1}Z\big)Q^{(\tilde{\tau})}_{[\gamma_1, \gamma_1^{-1}]}\tilde{n}_{prin} = F_1 +  \tilde{\Box}_W^{-1}\big(F_2 + F_3 + F_4\big). 
	\end{split}\end{equation}
	In principle we would like to use Prop.~\ref{prop:inverseoftrueoperatorBoxinverseWK} but we have to contend with the additional term $\tilde{\Box}^{-1}Z\big(Q^{(\tilde{\tau})}_{[\gamma_1, \gamma_1^{-1}]}\tilde{n}_{prin}\big)$ on the left. The structure of this operator gets simplified if we pass to the temporal Fourier transform: 
	\begin{lem}\label{lem:delicateonedimensionaloperator} We have the representation 
		\begin{align*}
			&\mathcal{F}_{\tilde{\tau}}\big( \tilde{\Box}^{-1}Z\big(Q^{(\tilde{\tau})}_{[\gamma_1, \gamma_1^{-1}]}\tilde{n}_{prin} \big)\big)(\hat{\tilde{\tau}}, R)\\& = g(\hat{\tilde{\tau}}, R)\cdot  \langle \hat{\tilde{\tau}}^2\rangle\cdot\beta_*(\hat{\tilde{\tau}})\cdot\langle \chi_{[\gamma_1, \gamma_1^{-1}]}\mathcal{F}_{\tilde{\tau}}(\tilde{n}_{prin} ), W^2\rangle 
		\end{align*}
		where we have $\mathcal{F}_{\tilde{\tau}}^{-1}g(\cdot, R)\in \tau^{-N}L^2_{d\tau}\langle R\rangle R^{\delta_0}L^2_{R^3\,dR}$ is independent of $\tilde{n}_{prin}$, and $\beta_*(\hat{\tilde{\tau}})$ is as in Proposition~\ref{prop:solnoftildelambdaeqn}.
	\end{lem}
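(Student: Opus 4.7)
The starting point is the observation that, by construction of $Z$, the dependence on $\tilde{n}_{prin}$ enters only through the scalar (in space) function
\[
c(\tilde{\tau}) := \langle \zeta(Q^{(\tilde{\tau})}_{[\gamma_1,\gamma_1^{-1}]}\tilde{n}_{prin})(\tilde{\tau},\cdot),\,W^2\rangle_{L^2_{R^3\,dR}},
\]
so that
\[
Z\big(Q^{(\tilde{\tau})}_{[\gamma_1,\gamma_1^{-1}]}\tilde{n}_{prin}\big) = \triangle\Big(W\cdot \mathcal{L}^{-1}\big(\chi_{R\lesssim M}W\cdot \Box^{-1}(c(\tilde{\tau})\,\chi_{R\lesssim M}\Lambda W\cdot W)\big)\Big).
\]
Since $c(\tilde{\tau})$ depends only on wave time, commuting the temporal Fourier transform past the spatial operators (and recalling the definition \eqref{eq:goodinverseoftriangle+hattausquare+2Wsquare} of $\tilde{\Box}^{-1}$) turns $\Box^{-1}$ into multiplication by the resolvent $(-\hat{\tilde{\tau}}^2+\triangle)^{-1}$, and $\tilde{\Box}^{-1}$ into $(-\hat{\tilde{\tau}}^2+\triangle+2W^2)^{-1}_{good}$, acting on the compactly supported data produced by the spatial cutoffs and the rapid decay of $W,\Lambda W$. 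This immediately gives the factorization
\[
\mathcal{F}_{\tilde{\tau}}\big(\tilde{\Box}^{-1}Z(Q^{(\tilde{\tau})}_{[\gamma_1,\gamma_1^{-1}]}\tilde{n}_{prin})\big)(\hat{\tilde{\tau}},R) = \hat{c}(\hat{\tilde{\tau}})\cdot g_0(\hat{\tilde{\tau}},R),
\]
where
\[
g_0(\hat{\tilde{\tau}},R) := (-\hat{\tilde{\tau}}^2+\triangle+2W^2)^{-1}_{good}\,\triangle\!\Big(W\cdot \mathcal{L}^{-1}\big(\chi_{R\lesssim M}W\cdot(-\hat{\tilde{\tau}}^2+\triangle)^{-1}(\chi_{R\lesssim M}\Lambda W\cdot W)\big)\Big)
\]
is independent of $\tilde{n}_{prin}$.

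Next, using the definition of $\zeta$ in \eqref{eq:tildenprinfull1} together with Plancherel's theorem for $\mathcal{F}_{\tilde{\tau}}$, we compute
\[
\hat{c}(\hat{\tilde{\tau}}) = \big\langle \mathcal{F}_{\tilde{\tau}}(\zeta(Q^{(\tilde{\tau})}_{[\gamma_1,\gamma_1^{-1}]}\tilde{n}_{prin}))(\hat{\tilde{\tau}},\cdot),\,W^2\big\rangle = \langle\hat{\tilde{\tau}}^2\rangle^2\cdot\beta_*(\hat{\tilde{\tau}})\cdot\big\langle\chi_{[\gamma_1,\gamma_1^{-1}]}\mathcal{F}_{\tilde{\tau}}(\tilde{n}_{prin}),\,W^2\big\rangle.
\]
Setting $g(\hat{\tilde{\tau}},R) := \langle\hat{\tilde{\tau}}^2\rangle\cdot g_0(\hat{\tilde{\tau}},R)$ produces precisely the claimed formula, so the only remaining task is to verify that $\mathcal{F}_{\tilde{\tau}}^{-1}g(\cdot,R)\in \tau^{-N}L^2_{d\tau}\langle R\rangle R^{\delta_0}L^2_{R^3\,dR}$.

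For this, note that the frequency restriction implicit in $\hat{c}$ through $Q^{(\tilde{\tau})}_{[\gamma_1,\gamma_1^{-1}]}\tilde{n}_{prin}$ lets us freely insert a cutoff $\chi_{[\gamma_1,\gamma_1^{-1}]}(|\hat{\tilde{\tau}}|)$ into $g$ without changing the product, so it suffices to analyze $g$ for $|\hat{\tilde{\tau}}|\in[\gamma_1,\gamma_1^{-1}]$. In that compact regime the extra factor $\langle\hat{\tilde{\tau}}^2\rangle$ and the two resolvents are smooth in $\hat{\tilde{\tau}}$ with values in $\langle R\rangle R^{\delta_0}L^2_{R^3\,dR}$: the inner resolvent $(-\hat{\tilde{\tau}}^2+\triangle)^{-1}$ applied to the compactly supported and rapidly decaying datum $\chi_{R\lesssim M}\Lambda W\cdot W$ is smooth and bounded in that space uniformly in $\hat{\tilde{\tau}}\in[\gamma_1,\gamma_1^{-1}]$, multiplication by $\chi_{R\lesssim M}W$ restores compact spatial support, the operator $\mathcal{L}^{-1}$ followed by multiplication by $W$ and $\triangle$ is bounded on $\langle R\rangle R^{\delta_0}L^2_{R^3\,dR}$ by standard Fourier estimates for $\mathcal{L}$, and finally $(-\hat{\tilde{\tau}}^2+\triangle+2W^2)^{-1}_{good}$ is smooth in $\hat{\tilde{\tau}}$ into the same space by Lemma~\ref{lem:keygoodinverse}/\ref{lem:smoothnesswithrespecttohattauforinverse} (note that on $[\gamma_1,\gamma_1^{-1}]$ we stay away from the bad values of $\hat{\tilde{\tau}}$). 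Consequently, repeated differentiation of $g$ in $\hat{\tilde{\tau}}$ produces functions uniformly bounded in $\langle R\rangle R^{\delta_0}L^2_{R^3\,dR}$, and by a routine application of the inversion formula for $\mathcal{F}_{\tilde{\tau}}^{-1}$ against a compactly supported, smooth symbol, the resulting $\mathcal{F}_{\tilde{\tau}}^{-1}g(\cdot,R)$ decays faster than any polynomial in $\tilde{\tau}$ (hence in $\tau$), yielding membership in $\tau^{-N}L^2_{d\tau}\langle R\rangle R^{\delta_0}L^2_{R^3\,dR}$ as claimed. The main subtlety lies in justifying the smoothness of the good inverse $(-\hat{\tilde{\tau}}^2+\triangle+2W^2)^{-1}_{good}$ in $\hat{\tilde{\tau}}$, which is precisely what Lemma~\ref{lem:smoothnesswithrespecttohattauforinverse} (in conjunction with the non-degeneracy assumption \textbf{(A1)}) provides on the compact interval $[\gamma_1,\gamma_1^{-1}]$.
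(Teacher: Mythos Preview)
Your proof is correct and follows essentially the same route as the paper: factor out the scalar $c(\tilde\tau)=\langle\zeta(Q^{(\tilde\tau)}_{[\gamma_1,\gamma_1^{-1}]}\tilde n_{prin}),W^2\rangle$, pass to the temporal Fourier side so that the outer $\tilde\Box^{-1}$ and inner $\Box^{-1}$ become the corresponding ``good'' resolvents, and then read off $g$ as the $\tilde n_{prin}$-independent factor. Two small remarks: (i) your resolvents should read $(\hat{\tilde\tau}^2+\triangle+2W^2)_{good}^{-1}$ and $(\hat{\tilde\tau}^2+\triangle)_{good}^{-1}$ rather than with $-\hat{\tilde\tau}^2$; (ii) for the smoothness of the good inverse in $\hat{\tilde\tau}$ the relevant reference is Lemma~\ref{lem:goodinverse1} (and Lemma~\ref{lem:keygoodinverse}), not Lemma~\ref{lem:smoothnesswithrespecttohattauforinverse} or assumption \textbf{(A1)}, which concern the Fredholm operator $I-(\cdot)_{good}^{-1}\circ K_{main}$ and are not needed here. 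Your bookkeeping of the $\langle\hat{\tilde\tau}^2\rangle$ factors (absorbing one into $g$) is in fact cleaner than the paper's brief formula.
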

	\begin{proof} It suffices to set 
		\begin{align*}
			&g(\hat{\tilde{\tau}}, R)\\& = \big(\hat{\tilde{\tau}}^2 + \triangle + 2W^2)\big)_{good}^{-1}\big[\triangle\big(W\cdot \mathcal{L}^{-1}\big[\chi_{R\lesssim M}W\cdot (\hat{\tilde{\tau}}^2 + \triangle)_{good}^{-1}\big( \chi_{R\lesssim M}\Lambda W\cdot W\big)\big]\big),
		\end{align*}
		where the operator $(\hat{\tilde{\tau}}^2 + \triangle)_{good}^{-1}$ is given by \eqref{eq:goodinverseoftriangle+hattausquare+2Wsquare}, but omitting the first term (involving $\phi_{*,d}$) and replacing $\phi_*(R;\xi), \mathcal{F}_*, \rho_*$ by $\phi_{\R^4}(R;\xi), \mathcal{F}_{\R^4}, \rho_{\R^4}$, see 
		subsection~\ref{subsec:subsec:standardFourieronR4}.
	\end{proof}
	If we now apply the temporal Fourier transform to \eqref{eq:tildenprinfull2}, we infer the following equation, writing $\mathcal{F}_{\tilde{\tau}}\big(Q^{(\tilde{\tau})}_{[\gamma_1, \gamma_1^{-1}]}\tilde{n}_{prin}\big) =: \hat{n}_{prin}^{(\gamma_1)}$:
	\begin{equation}\label{eq:hatnprinfinaleqn1}\begin{split}
			&\big(I - (\hat{\tilde{\tau}}^2 + \triangle + 2W^2)_{good}^{-1}\circ K_{main}\big)\hat{n}_{prin}^{(\gamma_1)} - g(\hat{\tilde{\tau}}, R)\cdot  \langle \hat{\tilde{\tau}}^2\rangle\cdot\beta_*(\hat{\tilde{\tau}})\cdot\langle \hat{n}_{prin}^{(\gamma_1)}, W^2\rangle\\
			& = \mathcal{F}_{\tilde{\tau}}\Big(F_1+\tilde{\Box}_W^{-1}\big(F_2 + F_3 + F_4\big)\Big).
	\end{split}\end{equation}
	Using Lemma~\ref{lem:smoothnesswithrespecttohattauforinverse}, which refers to \eqref{eq:hhattauuhatttau}, we can alternatively formulate the preceding equation in fixed point form as
	\begin{equation}\label{eq:hatnprinfinaleqn2}\begin{split}
			&\hat{n}_{prin}^{(\gamma_1)} - \tilde{g}(\hat{\tilde{\tau}}, R)\cdot  \langle \hat{\tilde{\tau}}^2\rangle\cdot\beta_*(\hat{\tilde{\tau}})\cdot\langle \hat{n}_{prin}^{(\gamma_1)}, W^2\rangle = G(\hat{\tilde{\tau}}, R),\\
			&G(\hat{\tilde{\tau}}, R) = \big(I - (\hat{\tilde{\tau}}^2 + \triangle + 2W^2)_{good}^{-1}\circ K_{main}\big)^{-1}\Big( \mathcal{F}_{\tilde{\tau}}\Big(F_1 + \tilde{\Box}_W^{-1}\big(\sum_{j=2}^4 F_j\big)\Big)\Big).
	\end{split}\end{equation}
	and we use the notation
	\begin{align*}
		\tilde{g}(\hat{\tilde{\tau}}, R) = \big(I - (\hat{\tilde{\tau}}^2 + \triangle + 2W^2)_{good}^{-1}\circ K_{main}\big)^{-1}g(\hat{\tilde{\tau}}, R)
	\end{align*}
	It remains to solve \eqref{eq:hatnprinfinaleqn2} for $\hat{n}_{prin}^{(\gamma_1)}$. Note that if 
	\[
	G(\hat{\tilde{\tau}}, R) =  \tilde{g}(\hat{\tilde{\tau}}, R),
	\]
	then we can set $\hat{n}_{prin}^{(\gamma)}(\hat{\tilde{\tau}}, R) = \kappa(\hat{\tilde{\tau}})\cdot \tilde{g}(\hat{\tilde{\tau}}, R)$, where\footnote{Note that here we have $\langle \tilde{g}(\hat{\tilde{\tau}}, \cdot), W^2(\cdot)\rangle = \int_0^\infty  \tilde{g}(\hat{\tilde{\tau}}, R)\cdot W^2(R)R^3\,dR$.} 
	\begin{equation}\label{eq:kappaofhattildetaudef}
		\kappa(\hat{\tilde{\tau}}) = \big(1 -  \langle \hat{\tilde{\tau}}^2\rangle\cdot\beta_*(\hat{\tilde{\tau}})\cdot\langle \tilde{g}(\hat{\tilde{\tau}}, \cdot), W^2(\cdot)\rangle\big)^{-1}
	\end{equation}
	which is well-defined by means of non-degeneracy assumption {\bf{(B3)}}. Furthermore, one checks, using Lemma~\ref{lem:goodinverse1}, that $\kappa\in C^\infty(\R\backslash \{0\})$, whence 
	\[
	\mathcal{F}_{\tilde{\tau}}^{-1}\big(\hat{n}_{prin}^{(\gamma)}\big)|_{[\tau_*,\infty)}\in \tau^{-N}L^2_{d\tau}\langle R\rangle R^{\delta_0}L^2_{R^3\,dR}. 
	\]
	
	For general $G(\hat{\tilde{\tau}}, R) $, we set $\hat{n}_{prin}^{(\gamma_1)} = G(\hat{\tilde{\tau}}, R)$, which leads to the error 
	\[
	- \tilde{g}(\hat{\tilde{\tau}}, R)\cdot  \langle \hat{\tilde{\tau}}^2\rangle\cdot\beta_*(\hat{\tilde{\tau}})\cdot\langle G(\hat{\tilde{\tau}}, \cdot), W^2(\cdot)\rangle,
	\]
	which reduces the problem to the first case considered. This reasoning leads to the following 
	\begin{prop}\label{prop:tildenprinfinalkeyprop} The equation \eqref{eq:tildenprinfull2} admits a solution $Q^{(\tilde{\tau})}_{[\gamma_1, \gamma_1^{-1}]}\tilde{n}_{prin}$ satisfying the bound 
		\begin{align*}
			&\big\|\lambda^{-2}Q^{(\tilde{\tau})}_{[\gamma_1, \gamma_1^{-1}]}\tilde{n}_{prin}\big\|_{\tau^{-N}L^2_{d\tau}\langle R\rangle R^{\delta_0}L^2_{R^3\,dR}}\\&\leq c(\epsilon_1, M, N, \tau_*)\cdot\big[\big\|\lambda^{-2}\tilde{n}_{prin}\big\|_{\tau^{-N}L^2_{d\tau}\langle R\rangle R^{\delta_0}L^2_{R^3\,dR}} + \big\|z_{nres}\big\|_{S}\big]+ \big\|(\tilde{\kappa}_1, \kappa_2)\big\|_{\tau^{-N}L^2_{d\tau}}\\& + \big\|e_1\big\|_{\tau^{-N-1}L^2_{d\tau}L^2_{R^3\,dR}},
		\end{align*}
		where $\lim_{\epsilon_1^{-1}, M, N, \tau_*\rightarrow\infty}c(\epsilon_1, N,  M,\tau_*) = 0$. As a consequence, we infer 
		\begin{align*}
			\big\|\lambda^{-2}\tilde{n}_{prin}\big\|_{\tau^{-N}L^2_{d\tau}\langle R\rangle R^{\delta_0}L^2_{R^3\,dR}}&\leq c(\epsilon_1, M, N, \tau_*)\cdot \big\|z_{nres}\big\|_{S} + \big\|(\tilde{\kappa}_1, \kappa_2)\big\|_{\tau^{-N}L^2_{d\tau}}\\&  + \big\|e_1\big\|_{\tau^{-N-1}L^2_{d\tau}L^2_{R^3\,dR}},
		\end{align*}
		provided $\epsilon_1^{-1}, N, M,  \tau_*$ are sufficiently large. 
	\end{prop}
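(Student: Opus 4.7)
The plan is to pass \eqref{eq:tildenprinfull2} to the wave-temporal Fourier side, invert the Fredholm operator from Proposition~\ref{prop:inverseoftrueoperatorBoxinverseWK} to arrive at \eqref{eq:hatnprinfinaleqn2}, reduce the remaining non-local effect of the $\tilde{\lambda}$-modulation (encoded in $\tilde{\Box}^{-1}Z$) to a rank-one perturbation via Lemma~\ref{lem:delicateonedimensionaloperator}, and then read off the unique fixed-point solution via the explicit one-dimensional coefficient $\kappa(\hat{\tilde{\tau}})$ of \eqref{eq:kappaofhattildetaudef}. Finally, each of the source terms $F_1,\ldots,F_4$ on the right of \eqref{eq:tildenprinfull2} is shown to satisfy a bound of the claimed form, either by a smallness factor $c(\epsilon_1, M, N,\tau_*)$ multiplying the unknown $\lambda^{-2}\tilde n_{prin}$ (perturbative terms) or by the target right-hand side expression, and closing a fixed-point argument gives the first estimate. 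The second (absorbed) inequality then follows by taking $\epsilon_1^{-1}, M, N,\tau_*$ large enough so that $c(\cdot)<1/2$ and moving the $\tilde n_{prin}$-term on the right into the left.

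More concretely, I would proceed as follows. Step one: apply $\mathcal{F}_{\tilde\tau}$ and combine Lemma~\ref{lem:tildelambdatimeslambdasquared} with Lemma~\ref{lem:delicateonedimensionaloperator} to rewrite the left-hand side of \eqref{eq:tildenprinfull2} in the form
\[
\bigl(I-(\hat{\tilde\tau}^2+\triangle+2W^2)^{-1}_{good}\circ K_{main}\bigr)\hat n_{prin}^{(\gamma_1)} - g(\hat{\tilde\tau},R)\cdot\langle\hat{\tilde\tau}^2\rangle\,\beta_*(\hat{\tilde\tau})\cdot\langle \hat n_{prin}^{(\gamma_1)},W^2\rangle.
\]
Invoking Proposition~\ref{prop:PiTsurjective}, Lemma~\ref{lem:phitauhatinimage} and the regularity-preserving inversion of Lemma~\ref{lem:smoothnesswithrespecttohattauforinverse}, composition with the canonical inverse reduces matters to the rank-one equation \eqref{eq:hatnprinfinaleqn2}, with $\tilde g(\hat{\tilde\tau},R)$ and the transformed source $G(\hat{\tilde\tau},R)$ as indicated there; the non-degeneracy assumption \textbf{(B3)} guarantees that $\kappa(\hat{\tilde\tau})$ in \eqref{eq:kappaofhattildetaudef} is well-defined on the support of $\chi_{[\gamma_1,\gamma_1^{-1}]}$, and the smooth, bounded behaviour of $\beta_*$ (away from $0$) together with Lemma~\ref{lem:goodinverse1} ensures that $\kappa\in C^\infty$ with symbol-type bounds on $[\gamma_1,\gamma_1^{-1}]$. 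The explicit solution
\[
\hat n_{prin}^{(\gamma_1)}(\hat{\tilde\tau},R)=G(\hat{\tilde\tau},R)+\kappa(\hat{\tilde\tau})\,\tilde g(\hat{\tilde\tau},R)\cdot\langle G(\hat{\tilde\tau},\cdot),W^2\rangle
\]
then satisfies, after inverse Fourier transform and restriction to $[\tau_*,\infty)$, the bound
\[
\bigl\|\lambda^{-2}Q^{(\tilde\tau)}_{[\gamma_1,\gamma_1^{-1}]}\tilde n_{prin}\bigr\|_{\tau^{-N}L^2_{d\tau}\langle R\rangle R^{\delta_0}L^2_{R^3\,dR}}\lesssim_{\gamma_1}\bigl\|G\bigr\|_{\tau^{-N}L^2_{d\tau}\langle R\rangle R^{\delta_0}L^2_{R^3\,dR}},
\]
where the implicit constant depends only on $\gamma_1$ through the cutoff.

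Step two is to estimate $G$, i.e.\ the four source terms. The term $F_1$ is perturbative by Lemma~\ref{lem:diffboxinverses1} applied to both $K_{main}$ and (a minor variant for) $Z$, giving a factor $\ll_{\gamma_1,\tau_*} 1$ times $\|\lambda^{-2}Q^{(\tilde\tau)}_{[\gamma_1,\gamma_1^{-1}]}\tilde n_{prin}\|$. The term $F_2$ is controlled by Lemma~\ref{lem:Ksmallbound}, yielding a factor $\ll_M 1$ times the same norm after applying $\tilde\Box_W^{-1}$ (whose boundedness in the relevant norm is the standard wave parametrix bound from Lemma~\ref{lem:wavebasicinhom} combined with Lemma~\ref{lem:tildeboxwpropagatorapproximate}). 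For $F_3$ one inserts \eqref{eq:tildeEmainsmallness} to gain the smallness $c(N,\tau_*)[\|z_{nres}\|_S+\|(\tilde\kappa_1,\kappa_2)\|_{\tau^{-N}L^2_{d\tau}}+\|e_1\|_{\tau^{-N-1}L^2_{d\tau}L^2_{R^3\,dR}}]$, using also Proposition~\ref{prop:tildekappa1kappa2apriori} where appropriate. Finally, $F_4$ is estimated via Remark~\ref{rem:F5forlater}: the difference $F_5=Q^{(\tilde\tau)}_{[\gamma_1,\gamma_1^{-1}]}y_{\tilde\lambda}\cdot W-\tilde Z(Q^{(\tilde\tau)}_{[\gamma_1,\gamma_1^{-1}]}\tilde n_{prin})$ is small in $\tau^{-N}L^2_{d\tau}L^2_{R^3\,dR}$, and $\triangle(W\cdot\mathcal{L}^{-1}(\cdot))$ is bounded on the requisite weighted $L^2$-space.

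Step three closes the argument: combining the four estimates,
\[
\bigl\|\lambda^{-2}Q^{(\tilde\tau)}_{[\gamma_1,\gamma_1^{-1}]}\tilde n_{prin}\bigr\|\le c(\epsilon_1,M,N,\tau_*)\bigl[\bigl\|\lambda^{-2}\tilde n_{prin}\bigr\|+\|z_{nres}\|_S\bigr]+\|(\tilde\kappa_1,\kappa_2)\|_{\tau^{-N}L^2_{d\tau}}+\|e_1\|_{\tau^{-N-1}L^2_{d\tau}L^2_{R^3\,dR}},
\]
with $c(\epsilon_1,M,N,\tau_*)\to 0$ under the indicated limits. For the second, unconditional estimate one observes that $\tilde n_{prin}=Q^{(\tilde\tau)}_{<\gamma_1}\tilde n_{prin}+Q^{(\tilde\tau)}_{[\gamma_1,\gamma_1^{-1}]}\tilde n_{prin}+Q^{(\tilde\tau)}_{>\gamma_1^{-1}}\tilde n_{prin}$; the low- and high-frequency pieces were already handled in subsections~\ref{subsec:nonresIIsmallfreq} (via Lemma~\ref{lem:smalltempfreqznresprinimprovedbound} and \eqref{eq:nprindef}) and the paragraph around Lemma~\ref{lem:znresprinlargetemprfreqsmallness}, both with a small prefactor. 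Absorbing the $\|\lambda^{-2}\tilde n_{prin}\|$-term into the left-hand side yields the second bound.

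The hard part will be the interplay between the rank-one modulation feedback and the Fredholm inversion: one must check that the composition with $(I-(\hat{\tilde\tau}^2+\triangle+2W^2)^{-1}_{good}\circ K_{main})^{-1}$ preserves both the $\langle R\rangle R^{\delta_0}L^2$-weighted decay and the $W_{\hat{\tilde\tau}}^{M,2}$-regularity needed so that inversion of $\mathcal{F}_{\tilde\tau}$ lands in $\tau^{-N}L^2_{d\tau}$; this is exactly the content of Lemma~\ref{lem:smoothnesswithrespecttohattauforinverse}, and non-degeneracy assumption \textbf{(B3)} is invoked to prevent the rank-one denominator in \eqref{eq:kappaofhattildetaudef} from vanishing on the cutoff support. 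Everything else is routine bookkeeping with the weighted wave parametrix bounds from Section~\ref{sec:linpropagators} and the structural estimates of Sections~\ref{sec:modulneqnsbounds}--\ref{sec:nonres}.
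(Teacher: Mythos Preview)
Your proposal is correct and follows essentially the same route as the paper: reduce via the rank-one Fourier-side equation \eqref{eq:hatnprinfinaleqn2} (which the paper sets up in the discussion immediately preceding the proposition), invert using \textbf{(B3)} and Lemma~\ref{lem:smoothnesswithrespecttohattauforinverse}, and then bound $F_1$ via Lemma~\ref{lem:diffboxinverses1}, $F_2$ via Lemma~\ref{lem:Ksmallbound}, $F_3$ via \eqref{eq:tildeEmainsmallness}, and $F_4$ via Lemma~\ref{lem:tildelambdatimeslambdasquared} (your citation of Remark~\ref{rem:F5forlater} is equivalent, since $F_4=\triangle(W\cdot\mathcal L^{-1}F_5)$). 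Two small remarks: the reference to Lemma~\ref{lem:tildelambdatimeslambdasquared} belongs in the $F_4$ estimate rather than in Step one (the left-hand side is rewritten by Lemma~\ref{lem:delicateonedimensionaloperator} alone); and in Step three, the results you cite from subsections~\ref{subsec:nonresIIsmallfreq} and the high-frequency paragraph control $z_{nres}^{prin}$, not $\tilde n_{prin}$ directly---but since $F_1,F_2$ in fact only involve the \emph{localized} $Q^{(\tilde\tau)}_{[\gamma_1,\gamma_1^{-1}]}\tilde n_{prin}$, straight absorption already gives the second bound for the localized piece, which is all that is used downstream (Lemma~\ref{lem:znresprinintermediatetemprfreqimprovbound}).
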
 
	\begin{proof} In light of the preceding considerations, \eqref{eq:hatnprinfinaleqn1},  as well as Proposition~\ref{prop:inverseoftrueoperatorBoxinverseWK}, the problem reduces to establishing the bound 
		\begin{align*}
			&\big\|\lambda^{-2}F_1\big\|_{\tau^{-N}L^2_{d\tau}\langle R\rangle R^{\delta_0}L^2_{R^3\,dR}} + \sum_{j=2}^4\big\|\lambda^{-2}\tilde{\Box}_W^{-1}F_j\big\|_{\tau^{-N}L^2_{d\tau}\langle R\rangle R^{\delta_0}L^2_{R^3\,dR}}\\&\leq c(\epsilon_1, M, N, \tau_*)\cdot\big[\big\|\lambda^{-2}\tilde{n}_{prin}\big\|_{\tau^{-N}L^2_{d\tau}} + \big\|z_{nres}\big\|_{S}\big]\\&\hspace{1.5cm} + \big\|(\tilde{\kappa}_1, \kappa_2)\big\|_{\tau^{-N}L^2_{d\tau}} + \big\|e_1\big\|_{\tau^{-N-1}L^2_{d\tau}L^2_{R^3\,dR}}. 
		\end{align*}
		Here the bound for $F_1$ is follows from Lemma~\ref{lem:diffboxinverses1} and an analogous bound for $ \big(\tilde{\Box}_W^{-1} - \tilde{\Box}^{-1}\big)\circ Z\big(Q^{(\tilde{\tau})}_{[\gamma_1, \gamma_1^{-1}]}\tilde{n}_{prin}\big)$. 
		Next, Lemma~\ref{lem:Ksmallbound} implies the desired bound for $\big\|\lambda^{-2}\tilde{\Box}_W^{-1}F_2\big\|_{\tau^{-N}L^2_{d\tau}\langle R\rangle R^{\delta_0}L^2_{R^3\,dR}}$. For $\lambda^{-2}\tilde{\Box}_W^{-1}F_3$, the desired bounds follow from the easily verified inequalities\footnote{As before we let $\big\|\cdot\big\|_{\tilde{S}}$ denote the sum of the first three norms in \eqref{eq:Snormdefi}.}, 
		\begin{align*}
			\big\|\triangle\Re \big(W\cdot u\big)\big\|_{\langle R\rangle^{-1-\delta_0}L^2_{R^3\,dR}}\lesssim \big\|u\big\|_{\tilde{S}},\,\big\|\triangle\Re \big(W\cdot \mathcal{L}^{-1}v\big)\big\|_{\langle R\rangle^{-1-\delta_0}L^2_{R^3\,dR}}\lesssim \big\|v\big\|_{L^{2+}_{R^3\,dR}}.
		\end{align*}
		in conjunction with \eqref{eq:tildeEmainsmallness}, Lemma~\ref{lem:tildeboxwpropagatorapproximate} and a straightforward analogue of Lemma~\ref{lem:wavebasicinhom} for its principal part. Finally, for the term $\lambda^{-2}\tilde{\Box}_W^{-1}F_4$, we use Lemma~\ref{lem:tildelambdatimeslambdasquared} and simple estimates to conclude that 
		\begin{align*}
			\big\|\lambda^{-2}F_4\big\|_{\tau^{-N}L^2_{d\tau}\langle R\rangle^{1+\delta_0}L^2_{R^3\,dR}}\leq c(\tau_*)\cdot \big\|z_{nres}\big\|_{S} + \big\|(\tilde{\kappa}_1,\kappa_2)\big\|_{\tau^{-N}L^2_{d\tau}},
		\end{align*}
		where $\lim_{\tau_*\rightarrow +\infty}c(\tau_*) = 0$. 
	\end{proof}
	
	The preceding proposition forms the crux for the improved estimates for the non-resonant part, as it allows to easily establish an analogue of Lemma~\ref{lem:znresprinlargetemprfreqsmallness}, ~\ref{lem:smalltempfreqznresprinimprovedbound}:
	\begin{lem}\label{lem:znresprinintermediatetemprfreqimprovbound} We have the estimate 
		\begin{align*}
			\big\| Q^{(\tilde{\tau})}_{[\gamma_1,\gamma_1^{-1}]}(z_{nres}^{prin}) \big\|_{S}\lesssim c(\epsilon_1,M,N,\tau_*)\big\|z_{nres}\big\|_{S} + \big\|(\tilde{\kappa}_1,\kappa_2)\big\|_{\tau^{-N}L^2_{d\tau}} + \big\|e_1\big\|_{\tau^{-N-1}L^2_{d\tau}L^2_{R^3\,dR}}. 
		\end{align*}
	\end{lem}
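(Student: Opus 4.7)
The plan is to apply $Q^{(\tilde{\tau})}_{[\gamma_1,\gamma_1^{-1}]}$ to the effective equation \eqref{eq:znreskeyreofrmulation1} and then invert $\mathcal{L}$ via Lemma~\ref{lem:basicSfromtildeL}, feeding in the control over $\tilde{n}_{prin}$ furnished by Proposition~\ref{prop:tildenprinfinalkeyprop}. This parallels the strategy of Lemmas~\ref{lem:smalltempfreqznresprinimprovedbound} and~\ref{lem:znresprinlargetemprfreqsmallness}, except that on the intermediate frequency band $[\gamma_1,\gamma_1^{-1}]$ the second and third terms on the left of \eqref{eq:znreskeyreofrmulation1} are no longer perturbative and must be handled through the wave/Fredholm machinery of Proposition~\ref{prop:tildenprinfinalkeyprop} rather than absorbed into the right-hand side.

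Concretely, applying $Q^{(\tilde{\tau})}_{[\gamma_1,\gamma_1^{-1}]}$ to \eqref{eq:znreskeyreofrmulation1} yields
\begin{equation*}
\mathcal{L}\,Q^{(\tilde{\tau})}_{[\gamma_1,\gamma_1^{-1}]} z_{nres}^{prin} = 2Q^{(\tilde{\tau})}_{[\gamma_1,\gamma_1^{-1}]}(n_{prin}\cdot W) + Q^{(\tilde{\tau})}_{[\gamma_1,\gamma_1^{-1}]}(\lambda^{-2}y_{\tilde{\lambda}}\cdot W) + Q^{(\tilde{\tau})}_{[\gamma_1,\gamma_1^{-1}]}\big(\tilde{E}_{main} + z_{nres,small}^{prin}\big).
\end{equation*}
First I would bound the term $Q^{(\tilde{\tau})}_{[\gamma_1,\gamma_1^{-1}]}(n_{prin}\cdot W)$ directly from Proposition~\ref{prop:tildenprinfinalkeyprop}: multiplication by $W$ supplies ample spatial decay to convert the $\langle R\rangle R^{\delta_0}L^2$ bound on $\lambda^{-2}\tilde{n}_{prin}=n_{prin}$ into an $L^{2+}\cap \langle R\rangle^{\delta_0/2} L^2$ bound on $n_{prin}\cdot W$. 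For the non-local term $Q^{(\tilde{\tau})}_{[\gamma_1,\gamma_1^{-1}]}(\lambda^{-2}y_{\tilde{\lambda}}\cdot W)$, I would invoke Lemma~\ref{lem:tildelambdatimeslambdasquared} (equivalently the decomposition recorded in Remark~\ref{rem:F5forlater}) which replaces it by the linear-in-$\tilde{n}_{prin}$ quantity $\tilde{Z}(Q^{(\tilde{\tau})}_{[\gamma_1,\gamma_1^{-1}]}\tilde{n}_{prin})\cdot W$ up to an error $F_5$ with $\|F_5\|_{\tau^{-N}L^2 L^2}\lesssim c(\tau_*,\gamma_1)\|z_{nres}\|_S + \|(\tilde{\kappa}_1,\kappa_2)\|_{\tau^{-N}L^2}$; the linear-in-$\tilde{n}_{prin}$ part is then again controlled by Proposition~\ref{prop:tildenprinfinalkeyprop}. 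Finally, $\tilde{E}_{main}$ and $z_{nres,small}^{prin}$ are perturbative by \eqref{eq:tildeEmainsmallness}.

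Applying Lemma~\ref{lem:basicSfromtildeL} to invert $\mathcal{L}$ then yields the $\tilde{S}$-norm bound, i.e.\ the first three components of $\|\cdot\|_S$. The remaining fourth component, of type $\tau^{-N+1+}L^2_{d\tau}(L^{2+}_{R^3\,dR}+L^{8/3+}_{R^3\,dR})$, is obtained directly from the integral representation \eqref{eq:znresprin} restricted to $\xi\in [\epsilon_1,\epsilon_1^{-1}]$, combining the pointwise bound $\|\phi(R;\xi)-\phi(R;0)\|_{L^{8/3+}_{R^3\,dR}}\lesssim 1$ with Plancherel in $\xi$, Corollary~\ref{cor:yzWpartialtau} for $y_z$, and Lemma~\ref{lem:ytildelambdamodhightempfreq} for $y_{\tilde{\lambda}}$, exactly as at the beginning of the proof of Lemma~\ref{lem:smalltempfreqznresprinimprovedbound}. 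Proposition~\ref{prop:tilealphatildelambdasimultaneous} then allows one to drop the $\tilde{\lambda}$ and $\tilde{\alpha}$ terms in favor of $\|z_{nres}\|_S + \|(\tilde{\kappa}_1,\kappa_2)\|_{\tau^{-N}L^2}$.

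The main obstacle here has in fact already been resolved in Proposition~\ref{prop:tildenprinfinalkeyprop}, namely the simultaneous resolution of $z_{nres}^{prin}$ and $\tilde{n}_{prin}$ through Fredholm inversion of $I-(\hat{\tilde{\tau}}^2+\triangle+2W^2)_{good}^{-1}\circ K_{main}$ together with the rank-one correction coming from the $\langle\hat{\tilde{\tau}}^2\rangle\beta_*(\hat{\tilde{\tau}})\langle\cdot,W^2\rangle$ term and the non-degeneracy assumption \textbf{(B3)}. Once that inversion is available, the present lemma is a clean post-processing step: the Schrödinger-side bound for $Q^{(\tilde{\tau})}_{[\gamma_1,\gamma_1^{-1}]}z_{nres}^{prin}$ is read off from the wave-side bound for $Q^{(\tilde{\tau})}_{[\gamma_1,\gamma_1^{-1}]}\tilde{n}_{prin}$ by one application of $\mathcal{L}^{-1}$, and the only new ingredient is the bookkeeping described above.
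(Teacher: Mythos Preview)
Your proposal is correct and follows essentially the same approach as the paper's proof, which consists of applying $Q^{(\tilde{\tau})}_{[\gamma_1,\gamma_1^{-1}]}$ to \eqref{eq:znreskeyreofrmulation1} and invoking Proposition~\ref{prop:tildenprinfinalkeyprop}, Lemma~\ref{lem:tildelambdatimeslambdasquared}, \eqref{eq:tildeEmainsmallness}, and Lemma~\ref{lem:basicSfromtildeL}. Your writeup simply unpacks these references with more detail (including the separate handling of the fourth $S$-norm component and the use of Remark~\ref{rem:F5forlater}), which is entirely in line with the paper's terse proof.
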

	\begin{proof} This is a consequence of applying the frequency localizer $Q^{(\tilde{\tau})}_{[\gamma_1,\gamma_1^{-1}]}$ to the relation \eqref{eq:znreskeyreofrmulation1} and taking advantage of Prop.~\ref{prop:tildenprinfinalkeyprop}, Lemma~\ref{lem:tildelambdatimeslambdasquared}, \eqref{eq:tildeEmainsmallness} and Lemma~\ref{lem:basicSfromtildeL}.
	\end{proof}
	
	Combining the preceding lemma with Lemma~\ref{lem:znresprinlargetemprfreqsmallness}, Lemma~\ref{lem:smalltempfreqznresprinimprovedbound}, Lemma~\ref{lem:znresrestbound}, Lemma~\ref{lem:nonresconnect1}, Lemma~\ref{lem:znresperturbative1} with Proposition~\ref{prop:tildekappa1kappa2apriori}, and picking the variables $\epsilon_1^{-1}, M, N, \tau_*$ sufficiently large, we finally infer 
	\begin{prop}\label{prop:keyapriori} Assume that $z$ is given by \eqref{eq:zdecompbasic} with $z_{nres}\in S$, $(\kappa_1,\kappa_2)\in \tau^{-N}L^2_{d\tau}$. Further assume $\tilde{\lambda}$ satisfies $\langle\partial_{\tilde{\tau}}^2\rangle^{-1}\tilde{\lambda}_{\tilde{\tau}\tilde{\tau}}\in \tau^{-N}L^2_{d\tau}$, $\tilde{\alpha}_{\tau}\in \log^{-1}\tau\cdot\tau^{-N}L^2_{d\tau}$, and $z$ solves \eqref{eq:zeqn2}, $(\kappa_1, \kappa_2)$ satisfy \eqref{eq:kappa1eqn}, \eqref{eq:kappa2eqn}, while $(\tilde{\lambda}, \tilde{\alpha}$ are chosen such that \eqref{eq:tildelambda}, \eqref{eq:tildealpha1}. Throughout we work on $[\tau_*,\infty)\times \R^4$. Finally, assume that $N, \tau_*$ are sufficiently large. Then we can infer the a priori bound
		\begin{align*}
			\big\|z_{nres}\big\|_{S} + \big\|(\kappa_1,\kappa_2)\big\|_{\tau^{-N}L^2_{d\tau}} + \big\|\langle\partial_{\tilde{\tau}}^2\rangle^{-1}\tilde{\lambda}_{\tilde{\tau}\tilde{\tau}}\big\|_{\tau^{-N}L^2_{d\tau}}  + \big\|\tilde{\alpha}_{\tau}\big\|_{\log^{-1}\tau\cdot\tau^{-N}L^2_{d\tau}}\lesssim \big\|e_1\big\|_{\tau^{-N-1}L^2_{d\tau}},
		\end{align*}
		where the implied constant is universal. 
	\end{prop}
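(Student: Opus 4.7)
(proposal for Proposition~\ref{prop:keyapriori}) The plan is a straightforward bootstrap/absorption argument that compiles the a priori estimates already developed: Proposition~\ref{prop:tildekappa1kappa2apriori} for $(\tilde{\kappa}_1,\kappa_2)$, Proposition~\ref{prop:tilealphatildelambdasimultaneous} (via Propositions~\ref{prop:solnoftildelambdaeqn} and~\ref{prop:tildealphomodeqn}) for $(\tilde{\lambda},\tilde{\alpha})$, and the hierarchy of frequency-localized bounds for $z_{nres}$ culminating in the three Lemmas~\ref{lem:smalltempfreqznresprinimprovedbound}, \ref{lem:znresprinlargetemprfreqsmallness}, \ref{lem:znresprinintermediatetemprfreqimprovbound}. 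First I would record that Proposition~\ref{prop:tildekappa1kappa2apriori} gives
\[
\big\|(\tilde{\kappa}_1,\kappa_2)\big\|_{\tau^{-N}L^2_{d\tau}}\leq c(\tau_*)\cdot \big\|z_{nres}\big\|_{S},
\]
with $c(\tau_*)\to 0$ as $\tau_*\to\infty$, so that $(\tilde{\kappa}_1,\kappa_2)$ is completely controlled by $\big\|z_{nres}\big\|_{S}$ once $\tau_*$ is taken large. Combining this with Proposition~\ref{prop:tilealphatildelambdasimultaneous} then yields the analogous bound
\[
\big\|\langle\partial_{\tilde{\tau}}^2\rangle^{-1}\tilde{\lambda}_{\tilde{\tau}\tilde{\tau}}\big\|_{\tau^{-N}L^2_{d\tau}}+\big\|\tilde{\alpha}_{\tau}\big\|_{\log^{-1}\tau\cdot\tau^{-N}L^2_{d\tau}}\leq c(\tau_*)\cdot\big\|z_{nres}\big\|_{S}+\big\|e_1\big\|_{\tau^{-N-1}L^2_{d\tau}L^2_{R^3\,dR}},
\]
where the $e_1$ term enters through the source terms in \eqref{eq:kappa1eqn}, \eqref{eq:kappa2eqn}, \eqref{eq:tildelambda}, \eqref{eq:tildealpha1}. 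The upshot is that all four modulation/resonance quantities are, up to the absolute source $e_1$, absorbed by $\big\|z_{nres}\big\|_{S}$ with a universal factor tending to zero.

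The main work is then to improve the estimate on $\big\|z_{nres}\big\|_{S}$ itself. Following the decomposition set up in Section~\ref{sec:nonres}, I would split
\[
z_{nres} = z_{nres,*,<\epsilon_1} + z_{nres,*,>\epsilon_1^{-1}} + z_{nres}^{rest} + z_{nres,\mathcal{K}} + z_{nres}^{prin},
\]
with the first four terms controlled perturbatively by Lemmas~\ref{lem:znresperturbative1}, \ref{lem:znresrestbound}, \ref{lem:nonresconnect1}, each of which delivers a bound with a factor of $c(\epsilon_1,N,\tau_*)\to 0$ times $\big\|z_{nres}\big\|_{S}+\big\|(\tilde{\kappa}_1,\kappa_2)\big\|_{\tau^{-N}L^2_{d\tau}}+\big\|\langle\partial_{\tilde{\tau}}^2\rangle^{-1}\tilde{\lambda}_{\tilde{\tau}\tilde{\tau}}\big\|_{\tau^{-N}L^2_{d\tau}}+\big\|\tilde{\alpha}_\tau\big\|_{\log^{-1}\tau\cdot\tau^{-N}L^2_{d\tau}}$, plus the source term $\big\|\langle\nabla^4\rangle e_1\big\|_{\tau^{-N-1}L^2_{d\tau}L^2_{R^3\,dR}}$. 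The remaining principal part $z_{nres}^{prin}$ is then split by wave-temporal frequency,
\[
z_{nres}^{prin}=Q^{(\tilde{\tau})}_{<\gamma_1}z_{nres}^{prin}+Q^{(\tilde{\tau})}_{[\gamma_1,\gamma_1^{-1}]}z_{nres}^{prin}+Q^{(\tilde{\tau})}_{>\gamma_1^{-1}}z_{nres}^{prin},
\]
and each piece estimated by the corresponding lemma: Lemma~\ref{lem:smalltempfreqznresprinimprovedbound} for the low frequency part, Lemma~\ref{lem:znresprinlargetemprfreqsmallness} for the high frequency part, and Lemma~\ref{lem:znresprinintermediatetemprfreqimprovbound} for the intermediate band. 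Each of these is of the form
\[
\big\|Q^{(\tilde{\tau})}_{\cdot}z_{nres}^{prin}\big\|_{S}\leq c(\epsilon_1,\gamma_1,M,N,\tau_*)\cdot\big\|z_{nres}\big\|_{S}+\big\|(\tilde{\kappa}_1,\kappa_2)\big\|_{\tau^{-N}L^2_{d\tau}}+\big\|e_1\big\|_{\tau^{-N-1}L^2_{d\tau}L^2_{R^3\,dR}},
\]
with $c\to 0$ as the indicated parameters go to infinity.

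Summing the three temporal-frequency contributions of $z_{nres}^{prin}$ together with the four perturbative pieces yields
\[
\big\|z_{nres}\big\|_{S}\leq c_*(\epsilon_1,\gamma_1,M,N,\tau_*)\cdot\big\|z_{nres}\big\|_{S}+C\big\|(\tilde{\kappa}_1,\kappa_2)\big\|_{\tau^{-N}L^2_{d\tau}}+C\big\|\langle\nabla^4\rangle e_1\big\|_{\tau^{-N-1}L^2_{d\tau}L^2_{R^3\,dR}},
\]
with $c_*\to 0$. After substituting the bound on $(\tilde{\kappa}_1,\kappa_2)$ from the first paragraph, the right-hand side retains only a (smaller) multiple of $\big\|z_{nres}\big\|_{S}$ plus the $e_1$ norm. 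Choosing the parameters in the hierarchy $\tau_*^{-1}\ll N^{-1}\ll\epsilon_1\ll\gamma_1\ll\nu^{-1}$ and $M=M(\gamma_1)$ sufficiently large (consistent with Proposition~\ref{prop:inverseoftrueoperatorBoxinverseWK}) forces $c_*<\tfrac12$, and the offending term is absorbed into the left-hand side, yielding $\big\|z_{nres}\big\|_{S}\lesssim \big\|e_1\big\|_{\tau^{-N-1}L^2_{d\tau}L^2_{R^3\,dR}}$. Feeding this back into the modulation and resonance bounds closes the full inequality. The main obstacle—already handled by Lemma~\ref{lem:znresprinintermediatetemprfreqimprovbound} and Proposition~\ref{prop:tildenprinfinalkeyprop}—is that in the intermediate frequency regime the term $\lambda^{-2}y_{\tilde{\lambda}}\cdot W$ is not perturbative, and one has to exploit Fredholm invertibility of $I-(\hat{\tau}^2+\triangle+2W^2)_{good}^{-1}K_{main}$ together with the one-dimensional correction from $\langle\tilde{g},W^2\rangle$; here no smallness is gained in $\gamma_1$ per se, so that the $c_*\to 0$ behavior in the intermediate band must come from $\epsilon_1^{-1},M,N,\tau_*\to\infty$, which is exactly what the statement of Proposition~\ref{prop:tildenprinfinalkeyprop} provides.
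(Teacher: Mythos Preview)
Your proposal is correct and follows essentially the same route as the paper. The paper's own proof is a single sentence combining Lemma~\ref{lem:znresprinintermediatetemprfreqimprovbound} with Lemmas~\ref{lem:znresprinlargetemprfreqsmallness}, \ref{lem:smalltempfreqznresprinimprovedbound}, \ref{lem:znresrestbound}, \ref{lem:nonresconnect1}, \ref{lem:znresperturbative1} and Proposition~\ref{prop:tildekappa1kappa2apriori}, and choosing $\epsilon_1^{-1}, M, N, \tau_*$ large; your write-up spells out the absorption argument in more detail but is otherwise identical in strategy and in the ingredients invoked.
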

	
	\section{Construction of the solution}\label{sec:solncompletion}
	
	Finally, we can complete the proof of Theorem~\ref{thm:main} by using an iterative scheme to construct the tuple $\big(z_{nres}, \kappa_1,\kappa_2, \tilde{\lambda}, \tilde{\alpha}\big)$.
	\begin{prop}\label{prop:existenceofsolution} The combined system \eqref{eq:zeqn2}, \eqref{eq:kappa1eqn}, \eqref{eq:kappa2eqn}, \eqref{eq:tildelambda}, \eqref{eq:tildealpha1} admits a solution on $[\tau_*,\infty)$ for $\tau_*$ sufficiently large, satisfying the bound 
		\begin{align*}
			&\big\|z_{nres}\big\|_{S} + \big\|(\kappa_1,\kappa_2)\big\|_{\tau^{-N}L^2_{d\tau}[\tau_*,\infty)} + \big\|\langle\partial_{\tilde{\tau}}^2\rangle^{-1}\tilde{\lambda}_{\tilde{\tau}\tilde{\tau}}\big\|_{\tau^{-N}L^2_{d\tau}[\tau_*,\infty)}  + \big\|\tilde{\alpha}_{\tau}\big\|_{\log^{-1}\tau\cdot\tau^{-N}L^2_{d\tau}[\tau_*,\infty)}\\&\lesssim \big\|\langle \nabla^4\rangle e_1\big\|_{\tau^{-N-1}L^2_{d\tau}L^2_{R^3\,dR}}.
		\end{align*}
		We similarly have the bound 
		\begin{align*}
			\big\|y\big\|_{Y}\lesssim \big\|\langle \nabla^4\rangle e_1\big\|_{\tau^{-N-1}L^2_{d\tau}L^2_{R^3\,dR}} + \big\|\langle \nabla^4\rangle e_2\big\|_{\tau^{-N-1}L^2_{d\tau}L^2_{R^3\,dR}}. 
		\end{align*}
		
	\end{prop}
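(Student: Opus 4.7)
The plan is to construct the solution by a standard iteration scheme whose convergence relies crucially on the a priori estimates encapsulated in Proposition~\ref{prop:keyapriori}, together with the individual solvability statements established in the preceding sections. Concretely, I would initialize with the trivial tuple $\big(z_{nres}^{(0)}, \kappa_1^{(0)}, \kappa_2^{(0)}, \tilde{\lambda}^{(0)}, \tilde{\alpha}^{(0)}\big) = 0$ and, given the $n$-th tuple, define $(z^{(n)}, y^{(n)})$ via \eqref{eq:zedcomprefined}, \eqref{eq:yeqn1}; then use Proposition~\ref{prop:tilealphatildelambdasimultaneous} to produce $(\tilde{\lambda}^{(n+1)}, \tilde{\alpha}^{(n+1)})$ solving \eqref{eq:tildelambda}, \eqref{eq:tildealpha1} driven by the data from step $n$; and finally define $(\kappa_1^{(n+1)}, \kappa_2^{(n+1)})$ from \eqref{eq:kappa1eqn}, \eqref{eq:kappa2eqn} (where the real part of the coefficient of $\phi_0$ is decomposed via \eqref{eq:kapparefined}), and $z_{nres}^{(n+1)}$ from the Duhamel/distorted Fourier representation \eqref{eq:snresdecomp} associated to the source $E$ of \eqref{eq:recallE}.

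The key observation is that each of the intermediate bounds established in sections~\ref{sec:modulneqnsbounds}--\ref{sec:nonresII} is in fact linear in the input data, and the constant $c(\epsilon_1, M, N, \tau_*)$ appearing there can be made arbitrarily small by choosing the hierarchy of constants as specified in the notation section. Applied to the difference of two iterates, the same estimates yield a contraction property of the form
\EQ{
\big\|\Delta^{(n+1)}\big\|_{\mathcal{X}} \le \tfrac12 \big\|\Delta^{(n)}\big\|_{\mathcal{X}} + c(\tau_*)\cdot\big\|\langle\nabla^4\rangle e_1\big\|_{\tau^{-N-1}L^2_{d\tau}L^2_{R^3\,dR}},
}
where $\mathcal{X}$ is the product norm on the left hand side of the proposition and $\Delta^{(n)}$ denotes the differences of successive iterates. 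Combined with the a priori bound of Proposition~\ref{prop:keyapriori} applied linearly (all equations being affine in the unknowns, with the nonlinear terms $\lambda^{-2}yz$ and $\lambda^2\triangle(|z|^2)$ quadratic and hence producing contraction gains automatically at small data), this yields uniform boundedness and a Cauchy property for the iterates in the product norm, hence convergence to a solution satisfying the asserted estimate.

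The estimate for $y$ in the $Y$-norm defined in \eqref{eq:ynorm} follows directly from the second equation in \eqref{eq:zeqn2} together with the decomposition $y = y_z + y_2$, using Lemma~\ref{lem:wavebasicinhom} and Lemma~\ref{lem:specialF1} applied to the various pieces, once the bound on $z$ is in hand; the contribution of $E_2$ is given by hypothesis, while that of $E_2^{\text{mod}}$ is controlled via the bound on $\tilde{\lambda}_{\tilde{\tau}\tilde{\tau}}$ and the structure of \eqref{eq:E2mod} exactly as in the estimation of $\tilde{y}_{\tilde{\lambda}}^{\text{mod}}$ throughout section~\ref{sec:modulneqnsbounds}.

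The main obstacle I anticipate is not any single estimate but rather verifying that the contraction property genuinely holds for the full coupled system: the modulation equation \eqref{eq:tildelambda} is solved only up to an $E$-error, and the parameter $\tilde{\lambda}$ enters non-locally into the source $y_{\tilde{\lambda}}$ of the equation for $z_{nres}^{prin}$, so some care is needed to track how the small constant $c(\tau_*)$ propagates around the feedback loop
\[
z_{nres}\rightsquigarrow (\tilde{\lambda},\tilde{\alpha})\rightsquigarrow(\tilde{\kappa}_1,\kappa_2)\rightsquigarrow z_{nres}
\]
without being amplified. The way around this is to note that Proposition~\ref{prop:keyapriori} has already been established as a fixed point statement on the a priori level, so difference bounds can be obtained by running the same chain of lemmas on the differences of iterates, which are driven by zero exterior source, and therefore each factor $c(\epsilon_1, M, N, \tau_*)$ appears as a genuine gain. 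Choosing $\tau_*$ larger than a fixed universal threshold determined by the sum of the implicit constants then closes the iteration.
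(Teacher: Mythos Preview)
Your overall strategy---an iteration scheme whose contraction is inherited from the a priori bounds---is correct in spirit, but the specific iteration you describe has a genuine gap. You propose to define $z_{nres}^{(n+1)}$ directly via the Duhamel representation \eqref{eq:snresdecomp} with source $E$ from \eqref{eq:recallE} evaluated at stage $n$. But $E$ contains the term $-\lambda^{-2}y_{z^{(n)}}\cdot W$, and this term carries \emph{no} smallness factor relative to $\|z^{(n)}\|_S$: that is precisely the ``key novel difficulty'' flagged in the introduction and the reason sections~\ref{sec:nonres}--\ref{sec:nonresII} exist. Applying the chain of lemmas to the difference $z^{(n+1)}-z^{(n)}$ would confront you with $\lambda^{-2}(y_{z^{(n)}}-y_{z^{(n-1)}})\cdot W$ on the right-hand side, and for the intermediate-frequency, intermediate-temporal-frequency piece this is of the same size as $\|z^{(n)}-z^{(n-1)}\|_S$. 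Proposition~\ref{prop:keyapriori} does not help here because its hypothesis is that $z$ already solves the full self-consistent system, whereas your iterate $z^{(n+1)}$ solves an equation with $y_{z^{(n)}}$ rather than $y_{z^{(n+1)}}$ on the right.

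The paper's iteration avoids this by \emph{not} iterating on \eqref{eq:snresdecomp} directly. Instead it mirrors the decomposition of sections~\ref{sec:nonres}--\ref{sec:nonresII} in the very definition of the iterates: $z_{nres,j}$ is split into $z_{nres,\mathcal{K},j}$, low/high spatial frequency pieces, a ``rest'' piece, and the principal part $z_{nres}^{prin}$; the principal part is further split by temporal frequency, and for the delicate range $Q_{[\gamma_1,\gamma_1^{-1}]}$ the iterate is defined \emph{through} the auxiliary wave variable $\tilde{n}_{prin,j}$ solving \eqref{eq:tildenprinfull2} with source at stage $j-1$, and then recovered via \eqref{eq:znreskeyreofrmulation1}. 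In this formulation the non-perturbative interaction $y_z\cdot W$ has been absorbed into the operator being inverted (the Fredholm operator \eqref{eq:keyoperatortobeinverted}), and every remaining source term genuinely carries a factor $c(\epsilon_1,\gamma_1,M,N,\tau_*)$. Once the iteration is set up this way, the difference estimates are indeed the same as the a priori ones and the scheme contracts---but the structure of the iterate, not just the estimates, is what makes this work.
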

	\begin{proof} This follows by setting up an iterative scheme, and using the a priori bounds from the preceding sections to conclude convergence of the scheme. In general, when a quantity in the preceding sections is affixed with a subscript $j$, this means all the functions $z, \tilde{\lambda}$ etc used to define it are replaced by their $j$-th iterate in the iteration scheme. To begin with, we set (recall \eqref{eq:recallE})
		\begin{align*}
			E_0 = e_1,\,z_{nres,0} = 0,\,\tilde{\alpha}_0 = 0,\,\tilde{\lambda}_0 = 1,\,\tilde{\kappa}_{1,0} = 0,\,\kappa_{2,0} = 0. 
		\end{align*}
		For $j\geq 1$, we write the $j$-th iterate $z_{nres, j}$ of the non-resonant part $z_{nres}$ as (recalling \eqref{eq:snresdecomp})
		\begin{align*}
			z_{nres, j} = z_{nres,\mathcal{K}, j} + z_{nres*, j},
		\end{align*}
		where we define $z_{nres,\mathcal{K}, j}$ as in \eqref{eq:snresdecomp} but with $E$ replaced by $E_{j-1}$. Next, write 
		\begin{align*}
			z_{nres*, j} =  z_{nres*, <\epsilon_1, j} + z_{nres*, \sim\epsilon_1, j} + z_{nres*, >\epsilon_1^{-1}, j},
		\end{align*}
		where the first and third term are given by the first expression on the right of \eqref{eq:snresdecomp} smoothly localized to $\xi<\epsilon_1, \xi>\epsilon_1^{-1}$, respectively, and with $E$ replaced by $E_{j-1}$. For the middle term at intermediate frequencies, we decompose it into 
		\begin{align*}
			z_{nres*, \sim\epsilon_1, j}  = z_{nres*, \sim\epsilon_1, j}^{prin} + z_{nres*, \sim\epsilon_1, j}^{rest}, 
		\end{align*}
		where the second term on the right is given by the first expression on the right of \eqref{eq:snresdecomp} , localized to $\xi\in [\epsilon_1, \epsilon_1^{-1}]$, and with $E$ replaced by $\big(E - \lambda^{-2}(y_z+y_{\tilde{\lambda}})\cdot W\big)_{j-1}$. Next, we write
		\begin{align*}
			z_{nres*, \sim\epsilon_1, j}^{prin} = Q_{<\gamma_1}\big(z_{nres*, \sim\epsilon_1, j}^{prin}\big) + Q_{[\gamma_1,\gamma_1^{-1}]}\big(z_{nres*, \sim\epsilon_1, j}^{prin}\big) + Q_{>\gamma_1^{-1}}\big(z_{nres*, \sim\epsilon_1, j}^{prin}\big),
		\end{align*}
		The first term on the right is defined as solution of \eqref{eq:smallfreqznresprineffective},  but with $F_{3,4}$ replaced by $F_{3,j-1}, F_{4,j-1}$, respectively. The third term on the right is defined as solution of the equation preceding \eqref{eq:psidefn}, but with all terms on the right at iteration stage $j-1$. At this stage, it only remains to define $z_{nres*, \sim\epsilon_1, j}^{prin} $ to completely determine $z_{nres,j}$, which we do via an auxiliary function $Q_{[\gamma,\gamma^{-1}]}\tilde{n}_{prin,j}$. Define the latter as solution of \eqref{eq:tildenprinfull2} with all terms on the right evaluated at stage $j-1$. Then, keeping in mind \eqref{eq:znreskeyreofrmulation1} as well as Remark~\ref{rem:F5forlater},
		\begin{align*}
			Q_{[\gamma_1,\gamma_1^{-1}]}\big(z_{nres}^{prin}\big) &=\mathcal{L}^{-1} \big(2\lambda^{-2}Q_{[\gamma_1,\gamma_1^{-1}]}\tilde{n}_{prin,j}\cdot W\big) + \lambda^{-2}\mathcal{L}^{-1}\tilde{Z}\big(Q_{[\gamma_1,\gamma_1^{-1}]}\tilde{n}_{prin,j}\big)\\
			&+\mathcal{L}^{-1}\big( F_{5,j-1}+ \tilde{E}_{main,j-1} + z_{nres, small,j-1}^{prin}\big).
		\end{align*}
		Here the last quantity is defined as in \eqref{eq:znressmall} but with $z, \tilde{\lambda}$ at iteration stage $j-1$. \\
		We next use \eqref{eq:tildelambda}, with the first, third and fourth lines, as well as the terms $R^{(\tilde{\lambda})}_{\text{small}}, R^{(\tilde{\lambda})}_{\mathcal{K}}$ evaluated at iterative stage $j-1$, to define $\tilde{\lambda}_j$, using Proposition~\ref{prop:solnoftildelambdaeqn}.  
		Further, use \eqref{eq:tildealpha1}, with right hand side evaluated at stage $j-1$, to define $\tilde{\alpha}_{j}$ via Proposition~\ref{prop:tildealphomodeqn}. Finally, we determine $\kappa_{1,j}, \kappa_{2,j}$ via \eqref{eq:kappa1eqn}, \eqref{eq:kappa2eqn}, while evaluating the right hand sides at stage $j-1$. Using \eqref{eq:kapparefined} with $\kappa_1,\,\tilde{\lambda}$ replaced by $\kappa_{1,j}, \tilde{\lambda}_j$, we also define $\tilde{\kappa}_{1,j}$, and now the iterates of all the dynamical variables have been defined. Now the same estimates as those leading to Proposition~\ref{prop:keyapriori}, in addition to Remark~\ref{rem:F5forlater} together with Lemma~\ref{lem:basicSfromtildeL}  imply that 
		\begin{align*}
			&\big\|z_{nres, j} - z_{nres, j-1}\big\|_{S} + \big\|(\kappa_{1,j} - \kappa_{1,j-1},\kappa_{2,j} - \kappa_{2,j-1})\big\|_{\tau^{-N}L^2_{d\tau}[\tau_*,\infty)}\\& + \big\|\langle\partial_{\tilde{\tau}}^2\rangle^{-1}(\tilde{\lambda}_{j,\tilde{\tau}\tilde{\tau}} - \tilde{\lambda}_{j-1,\tilde{\tau}\tilde{\tau}})\big\|_{\tau^{-N}L^2_{d\tau}[\tau_*,\infty)}  + \big\|\tilde{\alpha}_{j, \tau} - \tilde{\alpha}_{j-1, \tau} \big\|_{\log^{-1}\tau\cdot\tau^{-N}L^2_{d\tau}[\tau_*,\infty)}\\
			&\leq \delta^j(\gamma_1, \epsilon_1, N, \tau_*)\cdot \big\|\langle \nabla^4\rangle e_1\big\|_{\tau^{-N-1}L^2_{d\tau}L^2_{R^3\,dR}},
		\end{align*}
		where $\lim_{\gamma_1^{-1},\epsilon_1^{-1}, N, \tau_*\rightarrow +\infty}\delta(\gamma, \epsilon_1, N, \tau_*) = 0$. 
		We can then also infer the desired bound for $y$, which we recall is given by means of \eqref{eq:yzdfn}, \eqref{eq:y2def}, and the second equation of \eqref{eq:zeqn2}. The desired bound is then a consequence of Lemma~\ref{lem:wavebasicinhom} and the already established bounds on $z_{nres}, \tilde{\lambda}, \tilde{\alpha}, \tilde{\kappa}_1, \kappa_2$. This completes the proof of the proposition. 
	\end{proof}
	
	{\it{Proof of Theorem~\ref{thm:main}}}. In light of the preceding proposition, it suffices to show that the functions $(\psi, n)$ given by \eqref{eq:finalansatz}, where $z$ is given by \eqref{eq:zedcomprefined} while the functions $z_{nres}, \tilde{\kappa}_1, \kappa_2, \tilde{\lambda}$ are given by the preceding proposition, satisfy the conclusions of Theorem~\ref{thm:main}. From the statement of the latter and \eqref{eq:finalansatz}, we infer that 
	\begin{align*}
		&\tilde{\psi} = -W_{\lambda(t)} + \psi_*^{(\tilde{\lambda}, \underline{\tilde{\alpha}})}  + e^{i\alpha(t)}\lambda(t)\cdot z\\
		&\tilde{n} = -W_{\lambda(t)}^2 + n_*^{(\tilde{\lambda}, \underline{\tilde{\alpha}})} + y.
	\end{align*}
	Using Lemma~\ref{lem:approxsolasymptotics1} we infer for any $t\in (0, t_0]$ (recall $R = \lambda\cdot r$)
	\begin{align*}
		\big\| -W_{\lambda(t)} + \psi_*^{(\tilde{\lambda}, \underline{\tilde{\alpha}})} \big\|_{H^2_{r^3\,dr,\,loc}}\lesssim 1.
	\end{align*}
	As for the contribution involving $z$, we can use the first equation in \eqref{eq:zeqn2} together with Lemma~\ref{lem:Xtildelambdaperturbterms}, Lemma~\ref{lem:ytildelambdamodhightempfreq}, Lemma~\ref{lem:Xtildelambdafinaltermcrudebound}, and straightforward bounds applied to the terms in \eqref{eq:E1mod} after multiplication by $\lambda^{-3}$ (recall \eqref{eq:e1moddef}) that we have the crude bound (for some absolute constant $C$)
	\begin{align*}
		\big\|z_{\tau}\big\|_{\tau^{-N+C}L^2_{d\tau}H^2_{r^3\,dr,loc}}\lesssim 1. 
	\end{align*}
	Combined with $\big\|z\big\|_{\tau^{-N+C}L^2_{d\tau}H^2_{r^3\,dr,loc}}\lesssim 1$ which follows from the preceding proposition and a simple argument, we infer that we have 
	\[
	\big\|z(\tau,\cdot)\big\|_{H^2_{r^3\,dr,loc}}\lesssim 1
	\]
	for any $\tau\in [\tau_*,\infty)$, whence $\tilde{\psi}(t, \cdot)\in H^2_{\mathbb{R}^4,loc}$, $t\in (0, t_0]$. One shows similarly that $\tilde{n}(t,\cdot)\in H^1_{\mathbb{R}^4,loc}$, $t\in (0, t_0]$. The remaining assertions of the theorem are also straightforward consequences of the proposition.

	\section{Technical details}\label{sec:appendix}

	\subsection{Frequently used Technical tools}\label{sec:techtools}
	
	In order to control certain integrals appearing in the description of the source terms for the $\kappa_1$-evolution, \eqref{eq:kappa1eqn}, the following lemma shall be useful. Let us denote 
	\begin{align*}
		K_f(\tau): = \int_{\tau}^\infty\int_0^\infty \xi^2 S_1(\tau, \sigma,\xi)\cdot f(\sigma, \frac{\lambda(\tau)}{\lambda(\sigma)}\xi)\rho_1(\xi)\,d\xi d\sigma 
	\end{align*}
	
	\begin{lem}\label{lem:K_frefined} We have the bound 
		\begin{align*}
			\big\|K_f\big\|_{\log^{-2}(\tau)\cdot\tau^{-N} L^2_{d\tau}}&\lesssim_{\delta_0} \big\|\langle\xi^{(1-\delta_0)}\partial_{\xi}\rangle^{1+\delta_0}f\big\|_{\tau^{-N} L^2_{d\tau}L^\infty_{d\xi}} + \big\|\xi^{1-\delta_0}\partial_{\xi}f\big\|_{\tau^{-N+\frac{\delta_0^2}{4}} L^2_{d\tau}L^\infty_{d\xi}}\\&+\big\|\partial_{\tau}f\big\|_{\tau^{-N-\delta_0}L^2_{d\tau}L^\infty_{d\xi}} 
		\end{align*}
	\end{lem}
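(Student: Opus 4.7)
The proof rests on an integration by parts in the temporal variable $\sigma$, taking advantage of the key algebraic identity
\[
\xi^2 S_1(\tau,\sigma,\xi) = -\partial_\sigma\Big[\cos\big(\lambda^2(\tau)\xi^2\int_\sigma^\tau\lambda^{-2}(s)\,ds\big)\Big],
\]
which follows immediately from the definition of $S$ in Proposition~\ref{prop:linpropagator} (with $\alpha_0=0$). Since the phase vanishes at $\sigma=\tau$ and $f(\sigma,\cdot)$ decays rapidly as $\sigma\to\infty$, this yields the decomposition
\[
K_f(\tau) = \int_0^\infty f(\tau,\xi)\rho_1(\xi)\,d\xi + I_1(\tau) + I_2(\tau),
\]
where after expanding $\partial_\sigma f(\sigma,(\lambda(\tau)/\lambda(\sigma))\xi) = (\partial_\sigma f)(\sigma,\tilde\xi) - (\lambda_\sigma/\lambda)\tilde\xi(\partial_{\tilde\xi}f)(\sigma,\tilde\xi)$ (with $\tilde\xi=(\lambda(\tau)/\lambda(\sigma))\xi$), the term $I_1$ collects the contribution of $\partial_\sigma f$ and $I_2$ that of $\tilde\xi\partial_{\tilde\xi}f$ with a weight $\lambda_\sigma/\lambda\sim\sigma^{-1}$.

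For the boundary term, the plan is to split $f(\tau,\xi) = f(\tau,0) + (f(\tau,\xi)-f(\tau,0))$. The hypothesis $\langle \xi^{1-\delta_0}\partial_\xi\rangle^{1+\delta_0} f\in \tau^{-N}L^2_{d\tau}L^\infty_{d\xi}$ gives $|f(\tau,\xi)-f(\tau,0)|\lesssim \xi^{\delta_0}\cdot\|\xi^{1-\delta_0}\partial_\xi f\|_{L^\infty}$, whence the integral against the remainder is dominated by a convergent integral of $\xi^{\delta_0}/(\xi\log^2\xi)$ near the origin, yielding only the first term on the right hand side. For the principal piece $f(\tau,0)\int_0^\infty\rho_1(\xi)\,d\xi$, we use that the spectral measure $\rho_1$ has precisely the $L^1$-integrable singularity $1/(\xi\log^2\xi)$ near $\xi=0$, which produces the log-squared gain when the quantity $f(\tau,0)$ is controlled via the mean-value representation coming from the $(1+\delta_0)$-fractional derivative estimate.

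For $I_1$, one uses $|\cos\phi|\leq 1$, the uniform $L^1$-bound on $\rho_1$, and Schur's test applied to the kernel $\chi_{\sigma\geq\tau}$ weighted by $\tau^N\sigma^{-N-\delta_0}$; the extra $\sigma^{-\delta_0}$ decay hypothesized on $\partial_\sigma f$ yields integrability in $\sigma$ and produces the third term on the right. For $I_2$, one first changes variables to $\tilde\xi$, observing that $\tilde\xi\partial_{\tilde\xi}f = \tilde\xi^{\delta_0}\cdot\tilde\xi^{1-\delta_0}\partial_{\tilde\xi}f$, and that the factor $\lambda_\sigma/\lambda\sim \sigma^{-1}$ precisely compensates the non-integrability of the inner $\sigma$-integral. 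The singular measure $\rho_1$ restricted to the low-frequency regime provides an additional log decay through the substitution, matched against the $\tau^{-N+\delta_0^2/4}$ factor in the hypothesis on $\xi^{1-\delta_0}\partial_\xi f$, giving the second term on the right.

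The principal technical obstacle is obtaining the $\log^{-2}\tau$ enhancement in the target norm rather than merely $\tau^{-N}L^2$. This requires a careful coupling between the $\log^{-2}\xi$ structure of the spectral measure $\rho_1$ near $\xi=0$ and the change-of-variables factor $\log(\lambda(\sigma)/\lambda(\tau)) \sim (\tfrac12+\tfrac{1}{4\nu})\log(\sigma/\tau)$, which shifts the logarithmic argument of $\rho_1$. Tracking this coupling through the $(\sigma,\xi)$ double integral and applying a refined Schur-type estimate with a weight involving $\log(\sigma/\tau)$ is what ultimately converts the $\log^{-2}\xi$ decay of $\rho_1$ into the claimed $\log^{-2}\tau$ improvement; the fractional $\xi$-regularity hypothesis $\xi^{1-\delta_0}\partial_\xi$ is tuned precisely so that this coupling closes in both the boundary and the interior contributions.
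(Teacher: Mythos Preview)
Your strategy has a genuine gap. The core problem is that a single integration by parts in $\sigma$ over the whole interval $[\tau,\infty)$ does not produce any decay in $\sigma-\tau$, and without such decay the Schur test you invoke for $I_1$ simply fails. Concretely, after (the correct version of) the $\sigma$-integration by parts, the kernel acting on $\partial_\sigma f$ is bounded by $\chi_{\sigma\geq\tau}\cdot\int_0^\infty\rho_1(\xi)\,d\xi$, i.e.\ a constant in $\sigma$; the weighted Schur kernel $\chi_{\sigma\geq\tau}\tau^{N}\sigma^{-N-\delta_0}$ then has $L^\infty_\tau L^1_{d\sigma}$-norm $\sim\tau^{1-\delta_0}$, which diverges. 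The extra $\sigma^{-\delta_0}$ decay you cite is far too weak to make the $\sigma$-integral converge. The same issue afflicts $I_2$ once you track the target norm $\log^{-2}\tau\cdot\tau^{-N}L^2$ rather than merely $\tau^{-N}L^2$. (There is also a sign/trig error in your identity: one has $\xi^2 S_1=-\partial_\sigma\sin(\phi)$, not $-\partial_\sigma\cos(\phi)$, so the boundary term at $\sigma=\tau$ vanishes and your subsequent boundary-term analysis is moot.)

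The missing ingredient is an integration by parts in the \emph{frequency} variable $\xi$, which is what actually produces the $(\sigma-\tau)^{-1}$ gain needed for Schur in the long-time regime. The paper's proof accordingly splits at $\sigma-\tau\sim\tau^{\delta_1}$ for $\delta_1\ll\delta_0$: on the short-time piece $\sigma-\tau<\tau^{\delta_1}$ it uses your $\sigma$-integration by parts (the range $\tau^{\delta_1}$ is now short enough that the $\sigma^{-\delta_0}$ gain on $\partial_\sigma f$ suffices), while on the long-time piece $\sigma-\tau\geq\tau^{\delta_1}$ it writes $2\xi S_1=\partial_\xi S_2/[\lambda^2(\tau)\int_\sigma^\tau\lambda^{-2}(s)\,ds]$ and integrates by parts in $\xi$, picking up a factor $\sim(\sigma-\tau)^{-1}$. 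The $\log^{-2}\tau$ enhancement then comes directly from the low-frequency behaviour $\rho_1(\xi)\sim(\xi\log^2\xi)^{-1}$ after further splitting at $\xi^2(\sigma-\tau)\sim1$, not from the change-of-variables mechanism you sketch.
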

	\begin{proof} We decompose the expression into several pieces, by inclusion of cutoffs. For $0<\delta_1\ll \delta_0$ let $\chi_{\sigma-\tau<\tau^{\delta_1}}$ be a smooth cutoff localising to the indicated region: 
		\\
		
		{\it{(1): the regime $\sigma - \tau<\tau^{\delta_1}$.}} Including the cutoff $\chi_{\sigma-\tau<\tau^{\delta_1}}$ in the integral and performing integration by parts with respect to $\sigma$, we arrive at the term
		\begin{align*}
			\int_{\tau}^\infty\int_0^\infty \partial_{\sigma}\big(\chi_{\sigma-\tau<\tau^{\delta_1}}\big)\cdot S_2(\tau, \sigma,\xi)\cdot \frac{\lambda^2(\sigma)}{\lambda^2(\tau)}f(\sigma, \xi)\rho_1(\xi)\,d\xi d\sigma,
		\end{align*}
		as well as 
		\begin{align*}
			\int_{\tau}^\infty\int_0^\infty \chi_{\sigma-\tau<\tau^{\delta_1}}\cdot S_2(\tau, \sigma,\xi)\cdot \partial_{\sigma}\big(\frac{\lambda^2(\sigma)}{\lambda^2(\tau)}f(\sigma,  \frac{\lambda(\tau)}{\lambda(\sigma)}\xi)\big)\rho_1(\xi)\,d\xi d\sigma.
		\end{align*}
		The first integral is then supported in the region $\sigma - \tau\sim \tau^{\delta_1}$, and we have 
		\begin{align*}
			\big|\partial_{\sigma}\big(\chi_{\sigma-\tau<\tau^{\delta_1}}\big)\big|\lesssim \tau^{-\delta_1}. 
		\end{align*}
		To estimate this integral, we further distinguish between the regimes $\xi^2\lesssim \tau^{-\delta_1},\,\xi^2\gtrsim \tau^{-\delta_1}$. In the former regime, use the bound 
		\begin{align*}
			\Big|S_2(\tau, \sigma,\xi)\Big|\lesssim \xi^2\cdot\tau^{\delta_1}, 
		\end{align*}
		which in turn implies
		\begin{align*}
			\int_0^\infty \chi_{\xi^2<\tau^{-\delta_1}}\big|S_2(\tau, \sigma,\xi)\big|\cdot\big|f(\sigma,  \frac{\lambda(\tau)}{\lambda(\sigma)}\xi)\big|\rho_1(\xi)\,d\xi\lesssim_{\delta_1} \big\|f(\sigma, \cdot)\big\|_{L^\infty_{d\xi}}\cdot \log^{-2}(\tau).
		\end{align*}
		A similar bound is obtained in the region $\xi^2\gtrsim \tau^{-\delta_1}$ upon integrating by parts with respect to $\xi$ and recalling that $\sigma - \tau\sim \tau^{\delta_1}$. 
		Since we have the bounds 
		\begin{align*}
			\Big\|\chi_{\sigma-\tau>0}\partial_{\sigma}\big(\chi_{\sigma-\tau<\tau^{\delta_1}}\big)\Big\|_{L_{\tau}^\infty L_{\sigma}^1\cap L_{\sigma}^{\infty}L_{\tau}^1}\lesssim 1, 
		\end{align*}
		we easily infer from Schur's criterion the bound 
		\begin{align*}
			&\Big\|\int_{\tau}^\infty\int_0^\infty \partial_{\sigma}\big(\chi_{\sigma-\tau<\tau^{\delta_1}}\big)\cdot S_2(\tau, \sigma,\xi)\cdot \frac{\lambda^2(\sigma)}{\lambda^2(\tau)}f(\sigma, \xi)\rho_1(\xi)\,d\xi d\sigma\Big\|_{\log^{-2}\tau\cdot \tau^{-N}L^2_{d\tau}}\\
			&\lesssim \big\|f\big\|_{\tau^{-N}L^2_{d\tau}L^\infty_{d\xi}}, 
		\end{align*}
		verifying the claim of the lemma for this contribution. 
		\\
		As for the second integral expression generated by integration by parts with respect to $\sigma$, since we have 
		\begin{align*}
			\big\|\chi_{\sigma-\tau>0}\chi_{\sigma-\tau<\tau^{\delta_1}}\sigma^{-\delta_1}\big\|_{L_{\tau}^\infty L_{\sigma}^1\cap L_{\sigma}^{\infty}L_{\tau}^1}\lesssim 1, 
		\end{align*}
		we infer from Schur's criterion again the bound 
		\begin{align*}
			&\Big\|\int_{\tau}^\infty\int_0^\infty \chi_{\sigma-\tau<\tau^{\delta_1}}\cdot S_2(\tau, \sigma,\xi)\cdot \partial_{\sigma}\big(\frac{\lambda^2(\sigma)}{\lambda^2(\tau)}f(\sigma,  \frac{\lambda(\tau)}{\lambda(\sigma)}\xi)\big)\rho_1(\xi)\,d\xi d\sigma\Big\|_{\log^{-2}\tau\cdot \tau^{-N}L^2_{d\tau}}\\
			&\lesssim \big\|\sigma^{-1+\delta_1}f\big\|_{\sigma^{-N-}L^2_{d\sigma}L^\infty_{d\xi}} + \big\|\sigma^{\delta_1}\partial_{\sigma}f\big\|_{\sigma^{-N-}L^2_{d\sigma}L^\infty_{d\xi}} + \big\|\sigma^{-1+\delta_1}(\xi^{1-\delta_0}\partial_{\xi})f\big\|_{\sigma^{-N-}L^2_{d\sigma}L^\infty_{d\xi}}
		\end{align*}
		To arrive at this bound, we have used the more crude estimate 
		\begin{align*}
			\big\|S_2(\tau, \sigma,\xi)g(\sigma, \xi)\rho_1(\xi)\big\|_{L^1_{d\xi}}\lesssim \big\|g(\sigma, \cdot)\big\|_{L^\infty_{d\xi}}.
		\end{align*}
		The conclusion of the lemma is again implied provided $\delta_1<\min\{1,\delta_0\}$. 
		\\
		
		{\it{(2): the regime $\sigma - \tau\geq \tau^{\delta_1}$.}} Here we shall perform integration by parts with respect to $\xi$ instead, exploiting the simple identity 
		\begin{align*}
			2\xi\cdot S_1(\tau, \sigma, \xi) = \frac{\partial_{\xi} S_2(\tau, \sigma, \xi)}{\lambda^2(\tau)\cdot\int_{\sigma}^{\tau}\lambda^{-2}(s)\,ds}. 
		\end{align*}
		Carrying out the integration by parts, we arrive at the expression 
		\begin{align*}
			2\int_{\tau}^\infty\int_0^\infty \chi_{\sigma-\tau>\tau^{\delta_1}}\cdot \frac{S_2(\tau, \sigma,\xi)}{\lambda^2(\tau)\cdot\int_{\sigma}^{\tau}\lambda^{-2}(s)\,ds}\cdot \partial_{\xi}\big(f(\sigma, \frac{\lambda^2(\tau)}{\lambda^2(\sigma)}\xi)\xi\rho_1(\xi)\big)\,d\xi d\sigma
		\end{align*}
		Similarly to case 1, we split this into the regions $(\sigma - \tau)\xi^2\gtrsim 1, (\sigma-\tau)\xi^2\lesssim 1$. In the latter, we take advantage of the bound 
		\begin{align*}
			&\Big|\int_0^\infty \chi_{\sigma-\tau>\tau^{\delta_1}}\cdot \chi_{(\sigma-\tau)\xi^2\lesssim 1}\cdot \frac{S_2(\tau, \sigma,\xi)}{\lambda^2(\tau)\cdot\int_{\sigma}^{\tau}\lambda^{-2}(s)\,ds}\cdot \partial_{\xi}\big(f(\sigma, \frac{\lambda^2(\tau)}{\lambda^2(\sigma)}\xi)\xi\rho_1(\xi)\big)\,d\xi\Big|\\
			&\lesssim \big\|(\xi^{1-\delta_0}\partial_{\xi})f(\sigma, \cdot)\big\|_{L^\infty_{d\xi}}\cdot (\sigma-\tau)^{-1}\cdot\tau^{-\frac{\delta_1\cdot\delta_0}{2}}\log^{-2}\tau\\&\hspace{5cm}
			+ \big\|f(\sigma, \cdot)\big\|_{L^\infty_{d\xi}}\cdot (\sigma-\tau)^{-1}\cdot \log^{-3}(\tau).
		\end{align*}
		Then the desired bound follows for this contribution as in case 1 by taking advantage of Schur's criterion applied to the function 
		\[
		\chi_{\sigma-\tau>\tau^{\delta_1}}\big[(\sigma-\tau)^{-1}\cdot\tau^{-\frac{\delta_1\cdot\delta_0}{2}}\log^{-2}\tau + (\sigma-\tau)^{-1}\cdot \log^{-3}(\tau)\big].
		\]
		and setting $\delta_1 = \frac{2\delta_0}{3}$, say. 
		
		In the former case, we perform additional integration by parts with respect to $\xi$ to obtain a gain of the form $\big(\xi^2(\sigma - \tau)\big)^{-\delta_0}$, and the estimate is again easily concluded as in the preceding case.

	\end{proof}
	
	We further have the following slight variant of the preceding lemma: let 
	\begin{align*}
		&\tilde{K}_f: =  \int_{\tau}^\infty\int_0^\infty \xi^2\sin\big(\lambda^2(\tau)\xi^2\int_{\tau}^{\sigma}\lambda^{-2}(s)\,ds\big)\cdot f(\sigma, \frac{\lambda(\tau)}{\lambda(\sigma)}\xi)\rho_1(\xi)\,d\xi d\sigma,\\
		&\tilde{\tilde{K}}_f: =  \int_{\tau}^\infty\int_0^\infty \xi^2\cos\big(\lambda^2(\tau)\xi^2\int_{\tau}^{\sigma}\lambda^{-2}(s)\,ds\big)\cdot f(\sigma, \frac{\lambda(\tau)}{\lambda(\sigma)}\xi)\rho_1(\xi)\,d\xi d\sigma 
	\end{align*}
	\begin{lem}\label{lem:tildeKfcontrol} We have the following estimate: for any $\delta_1>0$,
		\begin{align*}
			\Big\|\tilde{K}_f\Big\|_{\tau^{-N} L^2_{d\tau}} + \Big\|\tilde{\tilde{K}}_f\Big\|_{\tau^{-N} L^2_{d\tau}} \lesssim_{\delta_1} \big\|\langle \xi\partial_{\xi}\rangle^{1+\delta_1} f\big\|_{\tau^{-N} L^2_{d\tau} L^{2+}_{\rho_1(\xi)\,d\xi}}.
		\end{align*}
		If $\gamma\ll 1$ and we include an extra smooth cutoff $\chi_{\xi<\gamma}$, we gain an extra smallness constant $c(\gamma)$ in this estimate where $\lim_{\gamma\rightarrow 0}c(\gamma) = 0$.  
	\end{lem}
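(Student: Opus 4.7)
The strategy closely mirrors that of Lemma~\ref{lem:K_frefined}, and the two estimates for $\tilde{K}_f$ and $\tilde{\tilde{K}}_f$ are proved in parallel; I describe the argument for $\tilde{K}_f$ since the one for $\tilde{\tilde{K}}_f$ is entirely analogous after swapping sine and cosine in the integration by parts. Fix a small parameter $0 < \delta' \ll \delta_1$, and abbreviate $A(\tau,\sigma) := \lambda^2(\tau)\int_\tau^\sigma \lambda^{-2}(s)\,ds$, so that $A(\tau,\sigma) \sim \sigma - \tau$ in the relevant regime and the phase reads $\sin(A\xi^2)$.

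The plan is to split the $\sigma$-integration into the near-diagonal regime $\sigma - \tau < \tau^{\delta'}$ and the far regime $\sigma - \tau \geq \tau^{\delta'}$, and within each regime to split the $\xi$-integration at $\xi^2 \sim (\sigma-\tau)^{-1}$. In the near-diagonal regime, at low frequency $A\xi^2 \lesssim 1$ one exploits the vanishing $|\sin(A\xi^2)| \lesssim A\xi^2$ together with $|\xi^2| \leq C$ on the support to write
\begin{align*}
\Big| \int_0^{(\sigma-\tau)^{-1/2}} \xi^2 \sin(A\xi^2)\, f(\sigma,\tfrac{\lambda(\tau)}{\lambda(\sigma)}\xi) \rho_1(\xi)\,d\xi \Big| \lesssim (\sigma-\tau) \cdot \|f(\sigma,\cdot)\|_{L^{2+}_{\rho_1\,d\xi}},
\end{align*}
where I use Hölder's inequality in $\xi$ against the (integrable against $\rho_1$) factor $\xi^4$. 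At high frequency $\xi^2 \gtrsim (\sigma-\tau)^{-1}$ in the near diagonal regime, a single integration by parts using $\xi \sin(A\xi^2) = -\frac{1}{2A}\partial_\xi\cos(A\xi^2)$ converts the integrand into a multiple of $A^{-1}\cos(A\xi^2)\partial_\xi(\xi f \rho_1)$, which is then controlled by Hölder's inequality against $\langle \xi\partial_\xi\rangle f$ in $L^{2+}_{\rho_1\,d\xi}$, paying a factor $(\sigma-\tau)^{-1}$ and the symbol-type bound on $\rho_1$. Combined with Schur's test applied to the kernel $\chi_{0 \leq \sigma - \tau < \tau^{\delta'}}(\sigma - \tau)^{-1+0-}$, this yields an acceptable contribution to $\tau^{-N}L^2_{d\tau}$.

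In the far regime $\sigma - \tau \geq \tau^{\delta'}$, the main tool is fractional integration by parts in $\xi$. Performing one integration by parts via the identity above gains a factor $(\sigma-\tau)^{-1}$ at the cost of converting $\xi^2\,f\,\rho_1$ into $\partial_\xi(\xi f \rho_1)$; iterating $\delta_1$-fractionally (for instance via a Littlewood--Paley decomposition in $\xi$ and interpolation between one and two integrations by parts) yields a kernel bound
\begin{align*}
|\tilde{K}_f|_{\chi_{\sigma - \tau \geq \tau^{\delta'}}} \lesssim \int_{\tau+\tau^{\delta'}}^\infty (\sigma - \tau)^{-1-\delta_1}\,\|\langle \xi\partial_\xi\rangle^{1+\delta_1} f(\sigma,\cdot)\|_{L^{2+}_{\rho_1\,d\xi}}\,d\sigma,
\end{align*}
where the factor $(\sigma-\tau)^{-1-\delta_1}$ arises by balancing the gain $A^{-1-\delta_1}$ against the symbol behavior of $\rho_1$ and the Hölder loss incurred by passing from $L^\infty$ to $L^{2+}$ on the dyadic frequency shell $\xi \sim (\sigma-\tau)^{-1/2}$. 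The main obstacle is keeping track of these weights cleanly at the endpoint cases $\xi \ll 1$ and $\xi \gtrsim 1$, where the differing asymptotics of $\rho_1$ (in particular the $\xi^{-2}$ behavior for $\xi \geq 2$) must be matched against the $L^{2+}$ scale; this is handled by splitting further into these two frequency ranges and using the high-frequency decay of $\rho_1$ to absorb the boundary contributions from the fractional integration by parts. Once this kernel bound is in hand, Schur's test applied to $\chi_{\sigma - \tau \geq \tau^{\delta'}}(\sigma-\tau)^{-1-\delta_1}\cdot (\tau/\sigma)^{-N}$ closes the estimate in the $\tau^{-N}L^2_{d\tau}$ norm.

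Finally, for the smallness gain claim when an additional cutoff $\chi_{\xi < \gamma}$ is inserted, one repeats the above argument but observes that in every instance the application of Hölder's inequality on the $\xi$-support yields an additional factor of
\begin{align*}
\|\chi_{\xi<\gamma}\|_{L^{q}_{\rho_1\,d\xi}} \longrightarrow 0 \text{ as } \gamma \to 0,
\end{align*}
for the Hölder conjugate $q$ of $2+$, which provides the required $c(\gamma)$ with $\lim_{\gamma \to 0} c(\gamma) = 0$. The whole proof is thus a direct adaptation of Lemma~\ref{lem:K_frefined}, replacing the $L^\infty_{d\xi}$-based Hölder step by an $L^{2+}_{\rho_1\,d\xi}$-based one and compensating the mismatch by one extra (fractional) integration by parts in $\xi$, which is exactly what the $\langle \xi\partial_\xi\rangle^{1+\delta_1}$ weight accommodates.
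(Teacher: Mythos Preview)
Your overall strategy---split at $A\xi^2\sim1$ (where $A=\lambda^2(\tau)\int_\tau^\sigma\lambda^{-2}\sim\sigma-\tau$), apply H\"older on the low side and integration by parts in $\xi$ on the high side, then close with Schur's test---matches the paper's. However, there is a genuine error in your near-diagonal low-frequency step, and the additional $\sigma$-split you import from Lemma~\ref{lem:K_frefined} is unnecessary here and leads you astray.

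The specific problem: in the region $A\xi^2\lesssim1$ with $\sigma-\tau$ small, you assert ``$|\xi^2|\leq C$ on the support'' and that $\xi^4$ is ``integrable against $\rho_1$''. Both are false. The constraint $A\xi^2\lesssim1$ only says $\xi\lesssim A^{-1/2}$, which for small $A$ allows arbitrarily large $\xi$; since $\rho_1(\xi)\sim\xi^{-2}$ for $\xi\geq2$, the factor $\xi^4$ is \emph{not} integrable against $\rho_1$ on this range. The actual H\"older bound there is $\|A\xi^4\chi_{\xi<A^{-1/2}}\|_{L^{2-}_{\rho_1}}\sim A^{-3/4}$ for $A\lesssim1$ (dominated by the high-$\xi$ tail), not $\sim A$ as you claim. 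Similarly, your far-regime kernel bound $(\sigma-\tau)^{-1-\delta_1}$ is too optimistic: the $A^{-\delta_1}$ gain from the extra fractional integration by parts is exactly cancelled by an $A^{\delta_1}$ loss from the $L^{2-}_{\rho_1}$ norm of $\xi^{-2\delta_1}$ concentrated near $\xi\sim A^{-1/2}$, leaving only the logarithmic gain $\frac{A^{-1}}{\log^{1+}A}$. This is still Schur-integrable, so the conclusion survives, but not by the mechanism you describe.

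The paper avoids these issues by dispensing with the $\sigma$-split entirely: it works with the single $\xi$-split at $A\xi^2\sim1$ and shows directly that both pieces contribute a kernel of size $\frac{A^{-1}}{\log^{1+}\langle A\rangle}$ (with the appropriate small-$A$ behavior), which is uniformly Schur-integrable. The $\sigma$-split in Lemma~\ref{lem:K_frefined} was useful because that lemma's hypothesis includes $\|\partial_\tau f\|$, enabling a \emph{temporal} integration by parts on the near-diagonal; here there is no such hypothesis, so the split buys nothing.
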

	\begin{proof} We give the proof for $\tilde{K}_f$, the one for $\tilde{\tilde{K}}_f$ following similarly. We start by observing that for $\tau\leq \sigma\lesssim \tau$ we have 
		\begin{align*}
			\lambda^2(\tau)\xi^2\int_{\tau}^{\sigma}\lambda^{-2}(s)\,ds\sim \xi^2(\sigma - \tau). 
		\end{align*}
		Then decompose 
		\begin{align*}
			\tilde{K}_f &= \int_{\tau}^\infty\int_0^\infty \chi_1(\sigma,\tau;\xi)\xi^2\sin\big(\lambda^2(\tau)\xi^2\int_{\tau}^{\sigma}\lambda^{-2}(s)\,ds\big)\cdot f(\sigma, \xi)\rho_1(\xi)\,d\xi d\sigma\\
			& +  \int_{\tau}^\infty\int_0^\infty (1-\chi_1(\sigma,\tau;\xi))\xi^2\sin\big(\lambda^2(\tau)\xi^2\int_{\tau}^{\sigma}\lambda^{-2}(s)\,ds\big)\cdot f(\sigma, \xi)\rho_1(\xi)\,d\xi d\sigma\\
			& =: \tilde{K}_{f1} + \tilde{K}_{f2}, 
		\end{align*}
		where $ \chi_1(\sigma,\tau;\xi)$ smoothly localizes to $\lambda^2(\tau)\xi^2\int_{\tau}^{\sigma}\lambda^{-2}(s)\,ds\lesssim 1$. To estimate the first term on the right, use that 
		\begin{align*}
			\Big\|\chi_1(\sigma,\tau;\xi)\xi^2\sin\big(\lambda^2(\tau)\xi^2\int_{\tau}^{\sigma}\lambda^{-2}(s)\,ds\big)\Big\|_{L^{2-}_{\rho_1(\xi)\,d\xi}}\lesssim \frac{\big(\lambda^2(\tau)\int_{\tau}^{\sigma}\lambda^{-2}(s)\,ds\big)^{-1}}{\log^{1+} \big[\langle \lambda^2(\tau)\int_{\tau}^{\sigma}\lambda^{-2}(s)\,ds\rangle\big]},
		\end{align*}
		whence from Holder's inequality we infer 
		\begin{align*}
			&\Big|\int_0^\infty \chi_1(\sigma,\tau;\xi)\xi^2\sin\big(\lambda^2(\tau)\xi^2\int_{\tau}^{\sigma}\lambda^{-2}(s)\,ds\big)\cdot f(\sigma, \xi)\rho_1(\xi)\,d\xi d\sigma\big|\\
			&\lesssim \frac{\big(\lambda^2(\tau)\int_{\tau}^{\sigma}\lambda^{-2}(s)\,ds\big)^{-1}}{\log^{1+} \big[\langle \lambda^2(\tau)\int_{\tau}^{\sigma}\lambda^{-2}(s)\,ds\rangle\big]}\cdot \big\|f(\sigma, \cdot)\big\|_{L^{2+}_{\rho_1(\xi)\,d\xi}}. 
		\end{align*}
		Using Schur's criterion we then easily deduce 
		\begin{align*}
			\Big\|\tilde{K}_{f1}\Big\|_{\tau^{-N} L^2_{d\tau}}\lesssim \big\|f\big\|_{\tau^{-N} L^2_{d\tau}L^2_{\rho_1(\xi)\,d\xi}}
		\end{align*}
		As for $\tilde{K}_{f2}$, we use integration by parts with respect to $\xi$. Thus write 
		\begin{align*}
			\xi^2\sin\big(\lambda^2(\tau)\xi^2\int_{\tau}^{\sigma}\lambda^{-2}(s)\,ds\big) = \frac{-\frac12 \xi\cdot \partial_\xi\Big(\cos\big(\lambda^2(\tau)\xi^2\int_{\tau}^{\sigma}\lambda^{-2}(s)\,ds\big)\Big)}{\lambda^2(\tau)\int_{\tau}^{\sigma}\lambda^{-2}(s)\,ds},
		\end{align*}
		whence writing $\tilde{K}_{f2} = \int_{\tau}^\infty \tilde{k}_{f2}(\sigma)\,d\sigma$, we have 
		\begin{align*}
			\tilde{k}_{f2}(\sigma) = \int_0^\infty\frac{\cos\big(\lambda^2(\tau)\xi^2\int_{\tau}^{\sigma}\lambda^{-2}(s)\,ds\big)}{\lambda^2(\tau)\int_{\tau}^{\sigma}\lambda^{-2}(s)\,ds}\cdot  \partial_{\xi}\Big((1-\chi_1(\sigma,\tau;\xi))f(\sigma, \xi)\frac12\xi\rho_1(\xi)\Big)\,d\xi 
		\end{align*}
		Further fractional integration by parts and application of the Holder's inequality lead to the bound 
		\begin{align*}
			\big| \tilde{k}_{f2}(\sigma) \big|\lesssim   \frac{\big(\lambda^2(\tau)\int_{\tau}^{\sigma}\lambda^{-2}(s)\,ds\big)^{-1}}{\log^{1+} \big[\langle \lambda^2(\tau)\int_{\tau}^{\sigma}\lambda^{-2}(s)\,ds\rangle\big]}\cdot \big\|\langle \xi\partial_{\xi}\rangle^{1+\delta_1} f(\sigma, \cdot)\big\|_{ L^{2+}_{\rho_1(\xi)\,d\xi}},
		\end{align*}
		and from here on one concludes again by means of Schur's criterion. 
	\end{proof}
	\begin{rem}\label{rem:tildeKfcontrol} The preceding proof also implies that the same expressions $\tilde{K}_f, \tilde{\tilde{K}}_f$ but with an extra cutoff $\chi_{|\sigma - \tau|\gtrsim \tau^{\delta}}$ for some $\delta>0$ map from $\tau^{-N}L^2_{d\tau}$ into $\log^{-1}\tau\cdot \tau^{-N}L^2_{d\tau}$, provided we replace the norm $L^{2+}_{\rho(\xi)\,d\xi}$ by $L^{\infty}_{\rho(\xi)\,d\xi}$.
	\end{rem}
	
	The following lemma gives a kind of 'interpolate' between the preceding two: let
	\begin{align*}
		L_f: = & \int_{\tau}^\infty\int_0^\infty \xi^2\frac{\lambda^2(\tau)}{\lambda^2(\sigma)}\sin\big(\lambda^2(\tau)\xi^2\int_{\sigma}^{\tau}\lambda^{-2}(s)\,ds\big)\cdot f(\sigma, \frac{\lambda(\tau)}{\lambda(\sigma)}\xi)\rho(\xi)\,d\xi d\sigma\\& + \int_0^\infty f(\tau,\xi)\rho(\xi)\,d\xi. 
	\end{align*}
	\begin{lem}\label{lem:Lfdifferencebound} We have the bound 
		\begin{align*}
			\big\|L_f\big\|_{\log^{-1}\tau\cdot\tau^{-N}L^2_{d\tau}}\lesssim \big\|\langle\xi\partial_{\xi}\rangle^{1+\delta_0}f\big\|_{\tau^{-N}L^2_{d\tau}L^\infty_{d\xi}} + \big\|\partial_{\tau}f\big\|_{\tau^{-N-\delta_0}L^2_{d\tau}L^\infty_{d\xi}}
		\end{align*}
	\end{lem}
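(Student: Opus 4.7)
\medskip

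\noindent\emph{Proof plan for Lemma~\ref{lem:Lfdifferencebound}.} The crucial observation is the algebraic identity
\begin{equation*}
\xi^2\,\frac{\lambda^2(\tau)}{\lambda^2(\sigma)}\,\sin\Big(\lambda^2(\tau)\xi^2\int_\sigma^\tau \lambda^{-2}(s)\,ds\Big) \;=\; \partial_\sigma \cos\Big(\lambda^2(\tau)\xi^2\int_\sigma^\tau \lambda^{-2}(s)\,ds\Big),
\end{equation*}
which follows from $\partial_\sigma\big(\lambda^2(\tau)\int_\sigma^\tau \lambda^{-2}(s)\,ds\big) = -\lambda^2(\tau)/\lambda^2(\sigma)$. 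Integrating the double-integral part of $L_f$ by parts in $\sigma$, the boundary term at $\sigma = +\infty$ vanishes by the rapid polynomial decay of $f$, while the boundary at $\sigma = \tau$ produces exactly $-\int_0^\infty f(\tau,\xi)\rho(\xi)\,d\xi$, which precisely cancels the second term in the definition of $L_f$. The plan is therefore to reduce matters to controlling
\begin{equation*}
L_f \;=\; -\int_\tau^\infty\!\!\int_0^\infty \cos\Big(\lambda^2(\tau)\xi^2\int_\sigma^\tau \lambda^{-2}\Big)\cdot \partial_\sigma\!\Big(f(\sigma,\tfrac{\lambda(\tau)}{\lambda(\sigma)}\xi)\Big)\,\rho(\xi)\,d\xi d\sigma.
\end{equation*}

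Next, I would expand the $\sigma$-derivative via the chain rule as
\begin{equation*}
\partial_\sigma\!\Big(f(\sigma,\tfrac{\lambda(\tau)}{\lambda(\sigma)}\xi)\Big) \;=\; (\partial_\sigma f)(\sigma,\tfrac{\lambda(\tau)}{\lambda(\sigma)}\xi) \;-\; \frac{\lambda_\sigma}{\lambda}(\sigma)\cdot (\eta \partial_\eta f)(\sigma,\eta)\big|_{\eta = \frac{\lambda(\tau)}{\lambda(\sigma)}\xi},
\end{equation*}
noting $\frac{\lambda_\sigma}{\lambda}(\sigma)\sim \sigma^{-1}$. The first piece is estimated directly in terms of $\|\partial_\tau f\|_{\tau^{-N-\delta_0}L^2_{d\tau}L^\infty_{d\xi}}$ and the second, after change of variables $\eta = \frac{\lambda(\tau)}{\lambda(\sigma)}\xi$ (which maps $\rho(\xi)\,d\xi$ to a comparable measure in view of the symbol behavior of $\rho$), in terms of $\|\xi\partial_\xi f\|_{\tau^{-N}L^2_{d\tau}L^\infty_{d\xi}}$ with the $\sigma^{-1}$ factor feeding the desired $\log^{-1}\tau$ gain.

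The remaining analytic task mirrors the proofs of Lemma~\ref{lem:tildeKfcontrol} and Lemma~\ref{lem:K_frefined}: split the $(\sigma,\xi)$-integral into the small-phase regime $\lambda^2(\tau)\xi^2\int_\sigma^\tau\lambda^{-2}\lesssim 1$ and the large-phase regime. In the small-phase regime the cosine is bounded and the logarithmic singularity of $\rho(\xi)$ near $\xi = 0$ combined with the constraint $\xi^2(\sigma-\tau)\lesssim 1$ provides the $\log^{-1}\tau$ gain. In the large-phase regime one performs fractional integration by parts in $\xi$, using $\cos(\Phi) = \big(2\lambda^2(\tau)\int_\sigma^\tau\lambda^{-2}\big)^{-1}\xi^{-1}\partial_\xi\sin(\Phi)$ to produce a gain of $(|\sigma-\tau|\xi^2)^{-\delta_0}$ at the cost of landing $\langle\xi\partial_\xi\rangle^{1+\delta_0}$ on $f$, which is precisely the reason for the norm appearing in the first term on the right of the lemma. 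Schur's test then closes the estimate in the $(\tau,\sigma)$-variables. The main technical obstacle I foresee is the transition zone $\xi^2|\sigma-\tau|\sim 1$, where neither strategy gives a strong gain; here one has to patch together the two regimes via a smooth partition of unity, which forces the appearance of the $\delta_0$-loss on the time-derivative norm, exactly as in Lemma~\ref{lem:K_frefined}.
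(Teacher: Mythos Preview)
Your integration-by-parts identity and the resulting cancellation of the single integral against the boundary term at $\sigma=\tau$ are correct, and this is indeed the mechanism behind the lemma. However, performing the $\sigma$-integration by parts on the \emph{entire} interval $[\tau,\infty)$ creates a gap that the available norms cannot close.

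After your global $\sigma$-integration by parts you are left with the integrand $\cos(\Phi)\cdot\partial_\sigma\big(f(\sigma,\tfrac{\lambda(\tau)}{\lambda(\sigma)}\xi)\big)\rho(\xi)$, and in particular the piece $\cos(\Phi)\,(\partial_\sigma f)\,\rho(\xi)$. In the small-phase region $\xi^2(\sigma-\tau)\lesssim 1$ the $\xi$-integral gives only $\int_0^{(\sigma-\tau)^{-1/2}}\rho(\xi)\,d\xi\sim \log^{-1}\langle\sigma-\tau\rangle$; the Schur kernel $\chi_{\sigma\ge\tau}\,\log\tau\cdot\tau^N\sigma^{-N-\delta_0}\log^{-1}\langle\sigma-\tau\rangle$ has $\int_\tau^{2\tau}(\cdot)\,d\sigma\sim \tau^{1-\delta_0}$, which diverges. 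In the large-phase region your proposed fractional $\xi$-integration by parts would land $\langle\xi\partial_\xi\rangle^{\delta_0}$ on $\partial_\sigma f$ (or on $\xi\partial_\xi f$ for the second chain-rule piece), and neither $\langle\xi\partial_\xi\rangle^{\delta_0}\partial_\tau f$ nor $\langle\xi\partial_\xi\rangle^{2+\delta_0}f$ is controlled by the hypothesis. The claim that the $\xi$-derivative ``lands on $f$'' is only true if you have \emph{not} already spent the $\xi^2$ via the $\sigma$-integration by parts.

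The paper's remedy is to split $\sigma-\tau\lessgtr\tau^\delta$ \emph{before} any integration by parts. On the short piece $L_f^1$ one performs your $\sigma$-integration by parts; the boundary at $\sigma=\tau$ cancels the single integral, the boundary at $\sigma=\tau+\tau^\delta$ is handled directly, and the $\partial_\sigma f$ contribution is harmless because the $\sigma$-interval has length $\tau^\delta$ with $\delta<\delta_0$. On the long piece $L_f^2$ one keeps the original $\xi^2\sin(\Phi)$ factor: the extra $\xi^2$ yields $\int_0^{(\sigma-\tau)^{-1/2}}\xi^2\rho(\xi)\,d\xi\sim(\sigma-\tau)^{-1}\log^{-2}(\sigma-\tau)$, which is integrable in $\sigma$ and supplies the $\log^{-1}\tau$ gain; in the complementary region $\xi^2(\sigma-\tau)\gtrsim 1$ one integrates by parts in $\xi$, and now the derivative does land on $f$ itself, explaining the $\langle\xi\partial_\xi\rangle^{1+\delta_0}f$ norm. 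The two norms in the conclusion thus come from the two \emph{disjoint} $\sigma$-regions, not from a chain-rule splitting of a single integrand.
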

	\begin{proof} We split the double integral into two portions $L_f = L_f^1 + L_f^2$, where we set 
		\begin{align*}
			L_f^1(\tau): =  \int_{\tau}^{\tau+\tau^{\delta}}\int_0^\infty \xi^2\frac{\lambda^2(\tau)}{\lambda^2(\sigma)}\sin\big(\lambda^2(\tau)\xi^2\int_{\sigma}^{\tau}\lambda^{-2}(s)\,ds\big)\cdot f(\sigma, \frac{\lambda(\tau)}{\lambda(\sigma)}\xi)\rho(\xi)\,d\xi d\sigma
		\end{align*}
		for some $\delta>0$. Performing integration by parts with respect to $\sigma$, we replace this by the difference term 
		\begin{align*}
			\Big(\int_0^\infty \cos\big(\lambda^2(\tau)\xi^2\int_{\sigma}^{\tau}\lambda^{-2}(s)\,ds\big)\cdot f(\sigma, \frac{\lambda(\tau)}{\lambda(\sigma)}\xi)\rho(\xi)\,d\xi\Big)|_{\sigma = \tau}^{\sigma = \tau+\tau^{\delta}}
		\end{align*}
		as well as the double integral 
		\begin{align*}
			- \int_{\tau}^{\tau+\tau^{\delta}}\int_0^\infty\cos\big(\lambda^2(\tau)\xi^2\int_{\sigma}^{\tau}\lambda^{-2}(s)\,ds\big)\cdot \partial_{\sigma}\big(f(\sigma, \frac{\lambda(\tau)}{\lambda(\sigma)}\xi)\big)\rho(\xi)\,d\xi d\sigma.
		\end{align*}
		The lower difference term cancels against the single integral 
		\[
		\int_0^\infty f(\tau,\xi)\rho(\xi)\,d\xi, 
		\]
		and so the difference reduces to 
		\[
		\int_0^\infty \cos\big(\lambda^2(\tau)\xi^2\int_{\tau+\tau^{\delta}}^{\tau}\lambda^{-2}(s)\,ds\big)\cdot f(\tau+\tau^{\delta}, \frac{\lambda(\tau)}{\lambda(\tau+\tau^{\delta})}\xi)\rho(\xi)\,d\xi
		\]
		But the latter expression is easily bounded by 
		\begin{align*}
			\Big\|\cdot\Big\|_{\log^{-1}(\tau)\tau^{-N}L^2_{d\tau}}\lesssim A, 
		\end{align*}
		where we call $A$ the expression on the right in the statement of the lemma. In fact, one obtains this bound by dividing into the cases $\xi^2\tau^{\delta}\lesssim 1$, $\xi^2\tau^{\delta}\gtrsim 1$, and performing integration by parts in the latter region. 
		As for the remaining double integral, it is also easy to bound provided we choose $\delta<\delta_0$.  
		\\
		We are now left with the task of bounding the remaining double integral 
		\begin{align*}
			&L_f^2(\tau): =\\& \int_{\tau+\tau^{\delta}}^\infty\int_0^\infty \xi^2\frac{\lambda^2(\tau)}{\lambda^2(\sigma)}\sin\big(\lambda^2(\tau)\xi^2\int_{\sigma}^{\tau}\lambda^{-2}(s)\,ds\big)\cdot f(\sigma, \frac{\lambda(\tau)}{\lambda(\sigma)}\xi)\rho(\xi)\,d\xi d\sigma
		\end{align*}
		To deal with it, we divide into the regions $(\sigma-\tau)\xi^2\lesssim 1,\,(\sigma - \tau)\xi^2\gtrsim 1$. In the former, we use the bound 
		\begin{align*}
			&\Big|\int_0^{(\sigma-\tau)^{-1}} \xi^2\frac{\lambda^2(\tau)}{\lambda^2(\sigma)}\sin\big(\lambda^2(\tau)\xi^2\int_{\sigma}^{\tau}\lambda^{-2}(s)\,ds\big)\cdot f(\sigma, \frac{\lambda(\tau)}{\lambda(\sigma)}\xi)\rho(\xi)\,d\xi\Big|\\
			&\lesssim (\sigma-\tau)^{-1}\cdot\log^{-2}(\sigma-\tau)\cdot \big\|f(\sigma,\cdot)\big\|_{L^{\infty}_{d\xi}}
		\end{align*}
		and the desired bound follows for this contribution by means of Schur's criterion applied to the function 
		\[
		\chi_{\sigma-\tau>\tau^{\delta}}\cdot  (\sigma-\tau)^{-1}\cdot\log^{-2}(\sigma-\tau). 
		\]
		The case $(\sigma - \tau)\xi^2\gtrsim 1$ is handled by integration by parts with respect to $\xi$. 
	\end{proof}
	\begin{rem}\label{rem:Lfdifferencebound} The preceding proof easily produces the following sharpened version if we restrict to frequencies $\xi$ away from zero: there exists $0<\delta_2\ll 1$ as well as $\delta_3>0$ such that letting $L_f^{(>\tau^{-\delta_2})}$ be defined as $L_f$ but with an extra cutoff $\chi_{\xi>\tau^{-\delta_2}}$, we can bound 
		\begin{align*}
			\big\|L_f^{(>\tau^{-\delta_2})}\big\|_{\tau^{-N-\delta_3}L^2_{d\tau}}
		\end{align*}
		in terms of the right hand side of the preceding lemma.
	\end{rem}
	
	To control the 'left-over' resonant part of $z$, we shall require a variant of the previous lemmas with a source term $f$ of a more special structure
	\begin{lem}\label{lem:derivativemovesthrough1} Let us define 
		\begin{align*}
			M_f(\tau): =  \int_{\tau}^\infty\int_0^\infty \xi^2 S(\tau, \sigma,\xi)\cdot (\partial_{\sigma}f)(\sigma, \frac{\lambda(\tau)}{\lambda(\sigma)}\xi)\rho_1(\xi)\,d\xi d\sigma 
		\end{align*}
		Then we can write 
		\begin{align*}
			M_f(\tau) = \partial_{\tau}M_f^{(1)}(\tau) + \tau^{-1}M_f^{(2)}(\tau), 
		\end{align*}
		where we have the bounds 
		\begin{align*}
			\sum_j\big\|M_f^{(j)}\big\|_{\tau^{-N}L^2_{d\tau}}\lesssim \big\|\langle\xi\partial_{\xi}\rangle^{1+\delta_0}f\big\|_{\tau^{-N}L^2_{d\tau}}
		\end{align*}
	\end{lem}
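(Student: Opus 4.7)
The plan is to convert the integrand into two pieces using a chain-rule decomposition, and then to integrate by parts in $\sigma$ using the exact-derivative identity for the oscillatory factor. Introduce the composed variable $\tilde{f}(\sigma,\xi):=f(\sigma,\tfrac{\lambda(\tau)}{\lambda(\sigma)}\xi)$ and observe the chain-rule identity
\[
(\partial_\sigma f)\bigl(\sigma,\tfrac{\lambda(\tau)}{\lambda(\sigma)}\xi\bigr)\;=\;\partial_\sigma\tilde{f}(\sigma,\xi)\;+\;\tfrac{\lambda_\sigma}{\lambda}(\xi\partial_\xi)\tilde{f}(\sigma,\xi),
\]
which splits $M_f=M_{f,A}+M_{f,B}$. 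The term $M_{f,B}$ carries the explicit temporal weight $\tfrac{\lambda_\sigma}{\lambda}\sim\sigma^{-1}$. Writing $\sigma^{-1}=\tau^{-1}\cdot(\tau/\sigma)$ with $\tau/\sigma\le 1$ on the integration domain, I would factor $\tau^{-1}$ in front and bound the remainder by a Schur-type argument patterned directly on the proof of Lemma~\ref{lem:K_frefined}, using $(\xi\partial_\xi)f$ in place of $f$. This identifies $M_{f,B}=\tau^{-1}M_f^{(2,B)}$ with $M_f^{(2,B)}\in\tau^{-N}L^2_{d\tau}$, controlled by $\|\langle\xi\partial_\xi\rangle^{1+\delta_0}f\|_{\tau^{-N}L^2}$.

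For the main piece $M_{f,A}=\int_\tau^\infty\!\int_0^\infty\xi^2 S\,\partial_\sigma\tilde{f}\,\rho_1\,d\xi\,d\sigma$, the crucial observation is the identity $\xi^2 S(\tau,\sigma,\xi)=i\partial_\sigma P(\tau,\sigma,\xi)$ with $P(\tau,\sigma,\xi):=e^{i\lambda^2(\tau)\xi^2\int_\sigma^\tau\lambda^{-2}(s)\,ds}$, valid when $\theta=2$ (the case $\theta=2+i\tilde{c}\alpha_0$ with $\alpha_0\neq 0$ introduces an extra $\tfrac{\lambda_\sigma}{\lambda}$-weighted term already of the required $\tau^{-1}$ type). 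Integrating by parts in $\sigma$ with primitive $P$, since $P(\tau,\tau,\xi)=1$ and the integrand decays at $\sigma=\infty$, the boundary contribution at $\sigma=\tau$ equals $-i\int_0^\infty[\partial_\sigma\tilde{f}]_{\sigma=\tau}\rho_1(\xi)\,d\xi$. A direct chain-rule computation gives $[\partial_\sigma\tilde{f}]_{\sigma=\tau}=\partial_\tau f(\tau,\xi)-\tfrac{\lambda_\tau}{\lambda}\xi\partial_\xi f(\tau,\xi)$, so the boundary splits cleanly as
\[
\partial_\tau\Bigl[-i\!\int_0^\infty f(\tau,\xi)\rho_1\,d\xi\Bigr]\;+\;\tau^{-1}\cdot i\tfrac{\tau\lambda_\tau}{\lambda}\!\int_0^\infty\xi\partial_\xi f(\tau,\xi)\rho_1\,d\xi,
\]
where the first summand supplies $\partial_\tau M_f^{(1)}$ and the second feeds into $\tau^{-1}M_f^{(2)}$. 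Both resulting coefficients lie in $\tau^{-N}L^2_{d\tau}$ since $\rho_1\in L^1(0,\infty)$ and $\tau\lambda_\tau/\lambda$ is bounded, with norms dominated by $\|\langle\xi\partial_\xi\rangle^{1+\delta_0}f\|_{\tau^{-N}L^2}$.

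The main obstacle is the remaining bulk term $-i\int\!\int P\,\partial_\sigma^2\tilde{f}\,\rho_1\,d\xi\,d\sigma$, which nominally carries two $\sigma$-derivatives of $\tilde{f}$; a naive second IBP in $\sigma$ is circular and returns the identity. My plan is to bypass the bulk term altogether by appealing to the alternative first-order IBP of $M_{f,A}$ in which $\partial_\sigma\tilde{f}$ is itself used as the derivative. Expanding $\partial_\sigma S=-2\tfrac{\lambda_\sigma}{\lambda}S-i\xi^2\tfrac{\lambda^2(\tau)}{\lambda^2(\sigma)}S$, this produces three pieces: the boundary $-\int\xi^2 f(\tau,\xi)\rho_1\,d\xi$, an interior piece $2\!\int\!\int\tfrac{\lambda_\sigma}{\lambda}\xi^2 S\tilde{f}\rho_1\,d\xi\,d\sigma$ of the same $\sigma^{-1}$ type already treated in $M_{f,B}$, and a delicate piece $i\!\int\!\int\xi^4\tfrac{\lambda^2(\tau)}{\lambda^2(\sigma)}S\tilde{f}\rho_1\,d\xi\,d\sigma$. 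This last piece is, up to the scaling factor absorbed in the change of variable $\zeta=\tfrac{\lambda(\tau)}{\lambda(\sigma)}\xi$, precisely $K_{\xi^2 f}(\tau)$ in the notation of Lemma~\ref{lem:K_frefined}, whose $\log^{-2}\tau\cdot\tau^{-N}L^2$ decay, together with the freedom to pull out an additional $\tau^{-1}$ using $\sigma^{-1}=\tau^{-1}(\tau/\sigma)$ inside the remaining factor $\tfrac{\lambda^2(\tau)}{\lambda^2(\sigma)}\le 1$, allows its inclusion in $\tau^{-1}M_f^{(2)}$. Equating the two expressions for $M_{f,A}$ reconciles the apparent discrepancy between the two boundary terms and fixes the final choice of $M_f^{(1)}$ and $M_f^{(2)}$; both satisfy the stated bound by combining the Schur estimates used for $M_{f,B}$ with the refined $K_{\xi^2 f}$ control from Lemma~\ref{lem:K_frefined}.
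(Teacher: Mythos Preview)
Your chain-rule splitting $M_f=M_{f,A}+M_{f,B}$ and the treatment of $M_{f,B}$ are fine, but the handling of $M_{f,A}$ breaks down at the ``delicate piece''
\[
i\int_\tau^\infty\!\!\int_0^\infty \xi^4\,\tfrac{\lambda^2(\tau)}{\lambda^2(\sigma)}\,S(\tau,\sigma,\xi)\,\tilde f(\sigma,\xi)\,\rho_1(\xi)\,d\xi\,d\sigma.
\]
This is \emph{not} $K_{\xi^2 f}$ in any useful sense: the hypothesis of the lemma only controls $\langle\xi\partial_\xi\rangle^{1+\delta_0}f$, so $\xi^2 f$ fails the $L^\infty_\xi$-type bounds that Lemma~\ref{lem:K_frefined} requires (note $\xi^4\rho_1\sim\xi^2$ for $\xi\gg 1$, so the integrand is not even absolutely convergent in $\xi$). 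Moreover, there is no factor $\sigma^{-1}$ here to convert into $\tau^{-1}$: the quotient $\tfrac{\lambda^2(\tau)}{\lambda^2(\sigma)}\le 1$ provides no decay, so your proposed inclusion of this term in $\tau^{-1}M_f^{(2)}$ is unjustified. The same objection applies to your first IBP route, which leaves the bulk term $\int\!\!\int P\,\partial_\sigma^2\tilde f\,\rho_1$ untreated; switching to the ``alternative IBP'' does not bypass it, it merely repackages the obstruction as the $\xi^4$ term.

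The missing idea is to first change variables to $\tilde\xi=\tfrac{\lambda(\tau)}{\lambda(\sigma)}\xi$, so that $(\partial_\sigma f)(\sigma,\tilde\xi)$ is a genuine $\sigma$-derivative and a single clean IBP in $\sigma$ applies. The resulting bulk term carries $\partial_\sigma\tilde S$ with $\tilde S=ie^{i\lambda^2(\sigma)\tilde\xi^2\int_\sigma^\tau\lambda^{-2}}$, and the crucial observation is the phase relation
\[
\partial_\sigma\tilde S \;=\; \zeta(\tau,\sigma)\,\partial_\tau\tilde S \;=\; \partial_\tau\bigl(\zeta\,\tilde S\bigr)\;-\;(\partial_\tau\zeta)\,\tilde S,
\]
where $\zeta(\tau,\tau)=1$ and $|\partial_\tau\zeta|\lesssim\tau^{-1}$. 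This immediately writes the bulk as an exact $\partial_\tau$-derivative (contributing to $M_f^{(1)}$, with the boundary at $\sigma=\tau$ combining with the earlier boundary term) plus a $\tau^{-1}$-weighted remainder (contributing to $M_f^{(2)}$). The remaining double integrals then have the form $\int\!\!\int\tilde\xi^2\,\tilde S\cdot f\cdot[\ldots]$ with bounded weights, and are estimated by the methods of Lemmas~\ref{lem:K_frefined} and~\ref{lem:tildeKfcontrol} applied to $f$ itself rather than to $\xi^2 f$.
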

	\begin{proof} We pass to the integration variable $\tilde{\xi} = \frac{\lambda(\tau)}{\lambda(\sigma)}\xi$ instead of $\xi$, and then perform integration by parts with respect to $\sigma$. This replaces $M_f(\tau)$ by 
		\begin{align*}
			-\int_0^\infty \xi^2 f(\tau,\xi)\rho_1(\xi)\,d\xi &-  \int_{\tau}^\infty\int_0^\infty \tilde{\xi}^2 \partial_{\sigma}\tilde{S}(\tau, \sigma,\tilde{\xi})\cdot f(\sigma, \tilde{\xi})\frac{\lambda(\sigma)}{\lambda(\tau)}\rho_1(\frac{\lambda(\sigma)}{\lambda(\tau)}\tilde{\xi})\,d\tilde{\xi} d\sigma\\
			& -  \int_{\tau}^\infty\int_0^\infty \tilde{\xi}^2 \tilde{S}(\tau, \sigma,\tilde{\xi})\cdot f(\sigma, \tilde{\xi})\partial_{\sigma}\big(\frac{\lambda(\sigma)}{\lambda(\tau)}\rho_1(\frac{\lambda(\sigma)}{\lambda(\tau)}\tilde{\xi})\big)\,d\tilde{\xi} d\sigma,
		\end{align*}
		where we have introduced the auxiliary function 
		\begin{align*}
			\tilde{S}(\tau, \sigma,\tilde{\xi}) = ie^{\lambda^2(\sigma)\tilde{\xi}^2\int_{\sigma}^{\tau}\lambda^{-2}(s)\,ds}
		\end{align*}
		Observing that 
		\begin{align*}
			\partial_{\sigma}\tilde{S}(\tau, \sigma,\tilde{\xi}) = \partial_{\tau}\big(\zeta(\tau,\sigma)\cdot\tilde{S}(\tau, \sigma,\tilde{\xi})\big) - \partial_{\tau}\big(\zeta(\tau,\sigma)\big)\cdot\tilde{S}(\tau, \sigma,\tilde{\xi}),
		\end{align*}
		where $\zeta(\tau,\sigma)$ satisfies $\zeta(\tau,\tau) = 1$, $\big|\partial_{\tau}\zeta(\tau,\sigma)\big|\lesssim \tau^{-1}$, we can write the sum of the first two terms in the previous sum of integrals as 
		\begin{align*}
			&- \partial_{\tau}\Big(\int_{\tau}^\infty\int_0^\infty \tilde{\xi}^2\zeta(\tau,\sigma)\tilde{S}(\tau, \sigma,\tilde{\xi})\cdot f(\sigma, \tilde{\xi})\frac{\lambda(\sigma)}{\lambda(\tau)}\rho_1(\frac{\lambda(\sigma)}{\lambda(\tau)}\tilde{\xi})\,d\tilde{\xi} d\sigma\Big)\\
			& + \int_{\tau}^\infty\int_0^\infty \tilde{\xi}^2\partial_{\tau}(\zeta(\tau,\sigma))\tilde{S}(\tau, \sigma,\tilde{\xi})\cdot f(\sigma, \tilde{\xi})\frac{\lambda(\sigma)}{\lambda(\tau)}\rho_1(\frac{\lambda(\sigma)}{\lambda(\tau)}\tilde{\xi})\,d\tilde{\xi} d\sigma.
		\end{align*}
		Denoting the first of these expressions, without the operator $\partial_{\tau}$, as $M_f^{(1)}(\tau)$, and the second expression, as well as the last expression in the above sum of three integrals as $\tilde{M}_f^{(2)}(\tau), \tilde{\tilde{M}}_f^{(2)}(\tau)$, respectively, we infer the bound 
		\begin{align*}
			\big\|M_f^{(1)}\big\|_{\tau^{-N}L^2_{d\tau}} + \big\|\tilde{M}_f^{(2)}\big\|_{\tau^{-N-1}L^2_{d\tau}} + \big\|\tilde{\tilde{M}}_f^{(2)}\big\|_{\tau^{-N-1}L^2_{d\tau}}\lesssim \big\|\langle\xi\partial_{\xi}\rangle^{1+\delta_0}f\big\|_{\tau^{-N}L^2_{d\tau}L^2_{\rho(\xi)\,d\xi}}
		\end{align*}
		by simple adaptation of the proofs of Lemma~\ref{lem:K_frefined}, ~\ref{lem:tildeKfcontrol}.
		
	\end{proof}
	
	In order to control the non-resonant part $z_{nres}$ of $z$, we shall have to bound the $\|\cdot\|_{S}$-norm of Schr\"odinger propagator terms. The following lemma helps bound the perturbative situations:
	\begin{lem}\label{lem:nonresbasicsmallfreq1} Define the term 
		\begin{align*}
			&z_{nres,<\epsilon_1}(\tau, R): =\\& (-i)\int_{\tau}^\infty \int_0^\infty \chi_{\xi\lesssim \epsilon_1}\big[\phi(R;\xi) - \phi(R;0)\big]\cdot S(\tau, \sigma,\xi)\cdot f(\sigma, \frac{\lambda(\tau)}{\lambda(\sigma)}\xi\big)\rho(\xi)\,d\xi d\sigma
		\end{align*}
		where the cutoff is smooth. Then we have the bound 
		\begin{equation}\label{eq:nonresbasicsmallfreq1}\begin{split}
				\Big\|z_{nres,<\epsilon_1}\Big\|_{S}\ll_{\epsilon_1}&\big\|\langle\xi\partial_{\xi}\rangle^{1+\delta_0} f\big\|_{\tau^{-N}L^2_{d\tau}(L^2_{\rho(\xi)\,d\xi}\cap L^\infty_{\rho(\xi)\,d\xi})}\\& + \big\|\langle\xi\partial_{\xi}\rangle^{1+\delta_0} \partial_{\tau}f\big\|_{\tau^{-N-\frac12-\frac{1}{4\nu}+}L^2_{d\tau}(L^2_{\rho(\xi)\,d\xi}\cap L^\infty_{\rho(\xi)\,d\xi})}\\
				& + \big\|\tau^{-1}\langle\xi\partial_{\xi}\rangle^{2+\delta_0} f\big\|_{\tau^{-N-\frac12-\frac{1}{4\nu}+}L^2_{d\tau}(L^2_{\rho(\xi)\,d\xi}\cap L^\infty_{\rho(\xi)\,d\xi})}\\
		\end{split}\end{equation}
		We also have the more crude estimate 
		\begin{align*}
			\Big\|z_{nres,<\epsilon_1}\Big\|_{S}\ll_{\tau_*}\big\|f\big\|_{\tau^{-N-2}L^2_{d\tau}L^2_{\rho(\xi)\,d\xi}}
		\end{align*}
	\end{lem}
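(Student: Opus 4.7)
The plan is to exploit two features of the integrand: first, the smallness of the spectral cutoff $\chi_{\xi \lesssim \epsilon_1}$; second, the vanishing of $\phi(R;\xi)-\phi(R;0)$ as $\xi \to 0$. Concretely, in the non-oscillatory regime $R\xi \lesssim 1$, the expansion recalled in subsection~\ref{subsec:basicfourier} yields
\[
\bigl|\phi(R;\xi)-\phi(R;0)\bigr| \lesssim W(R)\cdot (R\xi)^2,
\]
which in combination with $\xi \lesssim \epsilon_1$ supplies a pointwise smallness factor $\epsilon_1^2 R^2 W(R)$ that decays in $R$. In the complementary oscillatory regime $R\xi \gtrsim 1$, i.e.\ $R \gtrsim \epsilon_1^{-1}$, we shall use the weighted representations of $\phi(R;\xi)$ together with the weights $\langle R\rangle^{-\delta_0}$ appearing in the $S$-norm to absorb contributions there.

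The plan is to estimate each of the four seminorms in \eqref{eq:Snormdefi} separately. For the first two (the weighted $L^\infty_{dR}$ norms and its $\nabla_R$ version), I would split into $R \lesssim \epsilon_1^{-1/2}$ and $R \gtrsim \epsilon_1^{-1/2}$: in the inner region insert the pointwise bound above and apply Lemma~\ref{lem:basicL2} together with Cauchy--Schwarz in $\xi$ (using $\rho(\xi)\sim \xi^{-1}\log^{-2}\xi$ at the origin and $\int_0^{\epsilon_1} \xi^{3} \log^{-2}\xi\,d\xi \lesssim \epsilon_1^{3+}$) to gain a large power of $\epsilon_1$; in the outer region use the weight $\langle R\rangle^{-\delta_0}$ (or $\langle R\rangle^{1/2-\delta_0}$ after differentiating) and integrate by parts in $\xi$ against the phase of $\phi(R;\xi)$ to convert large $R$ into regularity in $\xi$, which is precisely what the hypothesis $\langle \xi\partial_\xi\rangle^{1+\delta_0} f$ supplies. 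For the third seminorm, apply the identity $\mathcal{F}(\mathcal{L}^2 z) = \xi^4 \mathcal{F}(z)$ so that the localization $\chi_{\xi \lesssim \epsilon_1}$ directly produces the prefactor $\epsilon_1^4$, and then bound via Lemma~\ref{lem:basicL2}; the dualized $\mathcal{L}^{-1/4}$ component is handled using the decay of $\rho(\xi)\xi^{1/2}$ for small $\xi$. The fourth seminorm follows from the integrated $L^{2+}$ bound $\|\phi(R;\xi)-\phi(R;0)\|_{L^{2+}_{R^3 dR}}\lesssim \xi^{0+}$ plus Minkowski.

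The time-derivative terms in the right hand side of \eqref{eq:nonresbasicsmallfreq1} enter through two mechanisms: differentiating $S(\tau,\sigma,\xi)$ in $\tau$ produces a factor $\lambda^2(\tau)\xi^2\int_\sigma^\tau \lambda^{-2}$, which with $\xi \lesssim \epsilon_1$ is harmless when paired with the integration identity converting $\partial_\tau$ on $S$ into $\partial_\sigma$ via an integration by parts; and differentiating the scaling argument $\frac{\lambda(\tau)}{\lambda(\sigma)}\xi$ in $f$ produces a factor $\frac{\lambda_\tau}{\lambda}\sim \tau^{-1}$ which, together with $\xi \partial_\xi f$, accounts for the second term on the right. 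This is why the two lossy (in $\tau$) contributions come with $\partial_\tau f$ and $\tau^{-1}\langle \xi\partial_\xi\rangle^{2+\delta_0} f$ respectively; the gain of $\tau^{-1/2-1/(4\nu)+}$ in their norms is absorbed by the extra $\tau$-weight in the $U$-norm of the $\mathcal{L}^2$-component.

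The main obstacle, as in similar arguments in \cite{KST2}, is the control of the first two $L^\infty_{dR}$ components of the $S$-norm in the transitional regime $R \sim \xi^{-1}$, where neither the Taylor expansion nor the oscillatory asymptotic is useful in isolation. Here I would use that the symbol bounds on $a_\pm(R;\xi)$ in subsection~\ref{subsec:basicfourier} allow a uniform estimate of $\phi(R;\xi)-\phi(R;0)$ by $\min\{\xi^2 R^2, 1\}\cdot W(R)^{1/2}\langle\log\xi\rangle$, so that the transitional contribution is controlled by $\epsilon_1^{0+}$ after integration in $\xi$ against $\rho$. The crude estimate at the end of the lemma is immediate from Lemma~\ref{lem:basicL2} combined with the trivial pointwise bound on $\phi(R;\xi)-\phi(R;0)$ and the smoothing loss of two powers of $\tau$ which is allowed by $\tau^{-N-2}$.
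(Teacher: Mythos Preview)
Your proposal misses the central difficulty and therefore has a genuine gap. The integrand contains \emph{two} oscillatory phases in $\xi$: one from the Schr\"odinger propagator $S(\tau,\sigma,\xi)\sim e^{i\zeta(\tau,\sigma)\xi^2}$ with $\zeta(\tau,\sigma)\sim\frac{\tau}{\sigma}(\sigma-\tau)$, and one from the oscillatory regime of $\phi(R;\xi)\sim e^{\pm iR\xi}$. These phases are in \emph{resonance} when $R\sim\xi\zeta(\tau,\sigma)$, and in that regime integrating by parts in $\xi$ ``against the phase of $\phi(R;\xi)$'' (your words) does not gain anything. Your identification of the main obstacle as the transitional zone $R\sim\xi^{-1}$ is therefore off target; the true obstacle is the resonant set $R\sim\xi\zeta$, which your decomposition $R\lessgtr\epsilon_1^{-1/2}$ does not see at all.

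A second, related issue: you repeatedly invoke Lemma~\ref{lem:basicL2} to handle the $\sigma$-integration, but that lemma loses a full power of $\tau$ (it maps $\tau^{-N}$ to $\tau^{1-N}$), which is fatal for the first two seminorms of $\|\cdot\|_S$ where input and output both sit at $\tau^{-N}$. The paper avoids this loss by a finer mechanism: it first splits according to whether $\xi^2\zeta(\tau,\sigma)\lesssim 1$ or $\gg 1$. In the former region the bound $|\phi(R;\xi)-\phi(R;0)|\lesssim\langle\log\langle R\rangle\rangle\xi^2$ combined with the measure $\rho(\xi)\sim(\xi\log^2\xi)^{-1}$ produces a kernel $\zeta^{-1}\min\{\log^{-2}\epsilon_1,\log^{-2}\zeta\}$ that is integrable in $\sigma$ via Schur with a $\log^{-1}\epsilon_1$ gain. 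In the latter region one further splits into the resonant case $R\sim\xi\zeta$ (handled by a direct pointwise bound exploiting $R^{-\delta_0}\zeta\lesssim\xi^{\delta_0/3}\zeta^{-\delta_0/3}R\xi^{-1}$) and the non-resonant cases $R\ll\xi\zeta$ or $R\gg\xi\zeta$ (where one combines both phases and integrates by parts in $\xi$, now gaining $\zeta^{-1-}$). Each branch thus yields a $\sigma$-integrable kernel without sacrificing a power of $\tau$. Your proposal does not set up this decomposition and so cannot recover the $\tau^{-N}$ output weight.

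Your treatment of the $U$-norm component is closer to correct: the extra $\xi^2$ from $\mathcal{L}$ is indeed converted via $\xi^2 S\sim\partial_\sigma S$ and integration by parts in $\sigma$ into a boundary term plus terms with $\partial_\sigma f$ and $\frac{\lambda_\sigma}{\lambda}(\xi\partial_\xi)f$, which is exactly where the second and third terms on the right of \eqref{eq:nonresbasicsmallfreq1} originate. But note this is integration by parts in $\sigma$, not in $\tau$ as you wrote.
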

	\begin{proof} The last part of the lemma is a straightforward consequence of the bound 
		\begin{align*}
			\Big\| \int_0^\infty \chi_{\xi\lesssim \epsilon_1}\big[\phi(R;\xi) - \phi(R;0)\big]\cdot f(\tau,\xi)\cdot \rho(\xi)\,d\xi\Big\|_{S}\lesssim \big\|f\big\|_{\tau^{-N-1}L^2_{d\tau} L^2_{\rho(\xi)\,d\xi}}. 
		\end{align*}
		We next turn to the more delicate second bound. We need to control the various parts of $\|\cdot\|_{S}$. 
		\\
		
		{\it{Control over $\big\|\langle R\rangle^{-\delta_0}z_{nres,<\epsilon_1}(\tau, R)\big\|_{\tau^{-N}L^2_{d\tau} L^\infty_{dR}}$.}} The main point is the basic estimate 
		\begin{align*}
			\Big|\phi(R;\xi) - \phi(R;0)\Big|\lesssim \langle\log \langle R\rangle\rangle\cdot \xi^2,\,\xi\lesssim 1. 
		\end{align*}
		Furthermore, we observe that (recall the definition of $S(\tau,\sigma,\xi)$ in Prop.~\ref{prop:linpropagator} )
		\begin{align*}
			\big|\lambda^2(\tau)\cdot\int_{\sigma}^{\tau}\lambda^{-2}(s)\,ds\big|\lesssim \frac{\tau}{\sigma}\cdot (\sigma - \tau)= :\zeta(\tau,\sigma).,\,\tau_*\leq \tau\leq \sigma  
		\end{align*}
		Then we decompose the integral in the lemma into a number of contributions: 
		\\
		
		{\it{(1): the region $\xi^2\lesssim \min\{\epsilon_1^2, \zeta^{-1}(\tau,\sigma)\}$.}} Observe that here we have 
		\begin{align*}
			\Big\|\langle R\rangle^{-\delta_0}\big[\phi(R;\xi) - \phi(R;0)\big]\cdot S(\tau, \sigma,\xi)\rho(\xi)\Big\|_{L^1_{d\xi}}\lesssim \zeta^{-1}\cdot\min\{\log^{-2}\epsilon_1, \log^{-2}\zeta\}. 
		\end{align*}
		Furthermore we have the bound 
		\begin{align*}
			\Big\|\chi_{\sigma\geq \tau}\frac{\tau^N}{\sigma^N}\cdot\zeta^{-1}\cdot\min\{\log^{-2}\epsilon_1, \log^{-2}\zeta\}\Big\|_{L_\tau^\infty L_{\sigma}^1\cap L_{\sigma}^\infty L_{\tau}^1}\lesssim \log^{-1}\epsilon_1. 
		\end{align*}
		We conclude from Holder's inequality and Schur's criterion that the contribution from this region to the integral is bounded by 
		\begin{align*}
			\big\|\langle R\rangle^{-\delta_0}\cdot \big\|_{\tau^{-N}L^2_{d\tau} L^\infty_{dR}}\lesssim \log^{-\frac12}\epsilon_1\cdot \Big\|f\Big\|_{\tau^{-N}L^2_{d\tau}L^\infty_{d\xi}}. 
		\end{align*}
		{\it{(2): the region $\epsilon_1^2\gtrsim\xi^2\gg\zeta^{-1}(\tau,\sigma)$.}} We intend to take advantage of integration by parts here, but have to carefully take into account the two oscillatory phases involving $\xi$. This requires us to distinguish between further situations:
		\\
		{\it{(2.a): resonant case $R\sim \xi\cdot\zeta(\tau,\sigma)$.}} We observe first that in this case $R\xi\sim \xi^2\zeta(\tau,\sigma)\gg 1$, whence $\phi(R;\xi)$ is in the oscillatory regime, and $R^{-1}\lesssim \xi$. It follows that 
		\begin{align*}
			R^{-\delta_0}\cdot\zeta&\lesssim \xi^{\frac{2\delta_0}{3}}\cdot (\xi\cdot\zeta)^{-\frac{\delta_0}{3}}\cdot \xi^{-1}\cdot (\xi\zeta)\\
			&\lesssim  \xi^{\frac{\delta_0}{3}}\cdot \zeta^{-\frac{\delta_0}{3}}\cdot R\xi^{-1},
		\end{align*}
		and so in this regime we have 
		\begin{align*}
			\Big|R^{-\delta_0}\cdot \big[\phi(R;\xi) - \phi(R;0)\big]\Big|\lesssim  \xi^{\frac{\delta_0}{3}}\cdot\zeta^{-1-\frac{\delta_0}{3}+}.
		\end{align*}
		on account of the bound $\big|\phi(R;\xi) - \phi(R;0)\big|\lesssim |\log \xi|\cdot R^{-\frac32}\xi^{\frac12}+R^{-2}$. We further have the bound
		\begin{align*}
			\Big\|\chi_{\sigma\geq \tau}\frac{\tau^N}{\sigma^N}\langle\zeta\rangle^{-1-\frac{\delta_0}{3}+}\Big\|_{L_{\tau}^\infty L_{\sigma}^1\cap L_{\sigma}^\infty L_{\tau}^1}\lesssim_{\delta_0} 1, 
		\end{align*}
		and so we infer from another application of Holder's inequality with respect to the $\xi$-integral and Schur's criterion, keeping in mind the restriction $\xi\lesssim \epsilon_1$, the following bound for this contribution:  
		\begin{align*}
			\big\|\langle R\rangle^{-\delta_0}\cdot \big\|_{\tau^{-N}L^2_{d\tau} L^\infty_{dR}}\lesssim \epsilon^{\frac{\delta_0}{3}}\cdot\big\|f\big\|_{\tau^{-N}L^2_{d\tau}L^\infty_{\rho(\xi)\,d\xi}}.
		\end{align*}
		{\it{(2.b): non-resonant case $R\ll \xi\cdot\zeta(\tau,\sigma)$.}} Here we further distinguish between the oscillatory region $R\xi\gtrsim 1$ and the non-oscillatory complement. In either case we shall perform integration by parts with respect to $\xi$, but in the former case we treat the contributions of $\phi(R;\xi)$ and $\phi(R;0)$ separately. In fact, assuming inclusion of a cutoff $\chi_{R\xi\gtrsim 1}$ and performing integration by parts with respect to $\xi$, using 
		\begin{align*}
			S(\tau,\sigma,\xi) = (2i\xi\cdot \lambda^2(\tau)\cdot\int_{\sigma}^{\tau}\lambda^{-2}(s)\,ds)^{-1}\cdot\partial_{\xi}S(\tau,\sigma,\xi), 
		\end{align*}
		the contribution of $\phi(R;0)$ can be reformulated as 
		\begin{align*}
			\int_{\tau}^\infty \int_0^\infty\phi(R;0)\cdot \frac{S(\tau, \sigma,\xi)}{\zeta_1(\tau,\sigma)}\cdot  \partial_{\xi}\Big(\frac{\chi_{R^{-1}\lesssim\xi\lesssim \epsilon_1}}{2\xi}f(\sigma, \frac{\lambda(\tau)}{\lambda(\sigma)}\xi\big)\rho(\xi)\Big)\,d\xi d\sigma,
		\end{align*}
		where we recall 
		\[
		-\zeta_1(\tau,\sigma) : = \lambda^2(\tau)\cdot\int_{\sigma}^{\tau}\lambda^{-2}(s)\,ds\sim \zeta(\tau,\sigma). 
		\]
		Additional integration by parts and use of the fact that $\chi_{R\xi\gtrsim 1}\phi(R;0)\lesssim \xi^2$ leads to the estimate 
		\begin{align*}
			&\Big|\langle R\rangle^{-\delta_0}\int_0^\infty\phi(R;0)\cdot \frac{S(\tau, \sigma,\xi)}{\zeta_1(\tau,\sigma)}\cdot  \partial_{\xi}\Big(\frac{\chi_{R^{-1}\lesssim\xi\lesssim \epsilon_1}}{2\xi}f(\sigma, \frac{\lambda(\tau)}{\lambda(\sigma)}\xi\big)\rho(\xi)\Big)\,d\xi\Big|\\
			&\lesssim \epsilon_0^{\frac{\delta_0}{2}}\zeta^{-1-\frac{\delta_0}{2}}(\tau, \sigma)\cdot \big\|\langle\xi\partial_{\xi}\rangle^{1+\delta_0}f(\sigma,\cdot)\big\|_{L^2_{\rho(\xi)\,d\xi}}. 
		\end{align*}
		The desired bound for this contribution follows again from Schur's criterion, as in the preceding situations. 
		\\
		It remains to consider the contribution of $\phi(R;\xi)$, which is of oscillatory character in the region $R\xi\gtrsim 1$. Hence we need to combine its phase with $S(\tau,\sigma,\xi)$ to perform integration by parts, and the procedure is otherwise completely analogous to the contribution of $\phi(R;0)$, taking advantage of the bound 
		\begin{align*}
			\big|\chi_{R\xi\gtrsim 1}\phi(R;\xi)\big|\lesssim \log\xi\cdot \xi^2. 
		\end{align*}
		We still need to deal with the non-oscillatory region $R\xi\lesssim 1$, but there we again proceed as for the contribution of $\phi(R;0)$ in the oscillatory regime, now taking advantage of the bound 
		\begin{align*}
			\Big|\chi_{R\xi\lesssim1}[\phi(R;\xi) - \phi(R;0)]\Big|\lesssim \langle\log \langle R\rangle\rangle\cdot \xi^2
		\end{align*}
		{\it{(2.c): non-resonant case $R\gg\xi\cdot\zeta(\tau,\sigma)$.}} Recalling that $\xi^2\zeta\gg 1$ in situation (2), we have $R\xi\gg 1$ and we are automatically in the oscillatory regime. Then proceed as in the preceding case {\it{(2.a)}}, distinguishing between the contributions of $\phi(R;\xi)$ and $\phi(R;0)$. 
		\\
		
		{\it{Control over $\big\|\langle R\rangle^{1-\delta_0}\nabla_R z_{nres,<\epsilon_1}(\tau, R)\big\|_{\tau^{-N}L^2_{d\tau} L^\infty_{dR}}$.}} Observe that $(R\partial_R)[\phi(R;\xi) - \phi(R;0)]$ obeys the same asymptotics as $\phi(R;\xi) - \phi(R;0)$ in the region $R\xi\lesssim 1$. On the other hand, we schematically have\footnote{Recall subsection~\ref{subsec:basicfourier}
		} 
		\begin{align*}
			(R\partial_R)\phi(R;\xi)\sim \sum_{\pm}\xi^{2}\cdot (R\xi)^{-\frac12}\cdot e^{\pm iR\xi}
		\end{align*}
		in the oscillatory region $R\xi\gtrsim 1$, and we can replicate the preceding argument to infer the desired bound. 
		\\
		
		{\it{Control over $\big\|\mathcal{L} z_{nres,<\epsilon_1}(\tau, R)\big\|_{U}$.}} Distinguish between the non-oscillatory case $R\xi\lesssim 1$ and the oscillatory regime $R\xi\gtrsim 1$. In the former we use that 
		\begin{align*}
			\big|\mathcal{L}\phi(R;\xi)\big|\lesssim \langle \log \langle R\rangle\rangle\cdot \xi^2\langle R\rangle^{-2},
		\end{align*}
		and replicate the preceding estimates. In the latter case we use the extra factor $\xi^2$ (from applying $\mathcal{L}$) to perform integration by parts with respect to $\sigma$, using 
		\[
		\xi^2\cdot e^{i\lambda^2(\tau)\xi^2\int_{\sigma}^{\tau}\lambda^{-2}(s)\,ds} =(-i)\frac{\lambda^2(\sigma)}{\lambda^2(\tau)}\cdot \partial_{\sigma}\big(e^{i\lambda^2(\tau)\xi^2\int_{\sigma}^{\tau}\lambda^{-2}(s)\,ds}\big). 
		\]
		This generates a boundary term 
		\[
		b(\tau, R): = \int_0^\infty \chi_{\xi\lesssim \epsilon_1}\chi_{R\xi\gtrsim 1}\phi(R;\xi)\cdot f(\tau,\xi)\rho(\xi)\,d\xi,
		\]
		for which using integration by parts and the asymptotics from subsection~\ref{subsec:basicfourier} we infer the bound 
		\begin{align*}
			\big|\langle R\rangle^2 b(\tau, R)\big|\lesssim |\log\epsilon_1|^{-\frac12}\cdot \big\|\langle \xi\partial_{\xi}\rangle f(\tau,\cdot)\big\|_{L^2_{\rho(\xi)\,d\xi}}
		\end{align*}
		The remaining terms arising when $\partial_{\sigma}$ hits $f(\sigma, \frac{\lambda(\tau)}{\lambda(\sigma)}\xi)$ are handled again as in the first part of the proof, leading to functions in 
		\[
		\tau^{-N-\frac12-\frac{1}{4\nu}}L^2_{d\tau}[L^\infty_{R^3\,dR}\cap \mathcal{L}^{-\frac34}(\langle R\rangle^{-\frac32}L^\infty_{R^3\,dR})]
		\]
	\end{proof}
	
	\begin{rem}\label{rem:nonresbasicsmallfreq1} If we first perform integration by parts with respect to $\sigma$ and then repeat the preceding proof, we can replace the right hand side by the following slightly modified norm: 
		\begin{align*}
			&\big\|f\big\|_{\tau^{-N}L^2_{d\tau}(L^2_{\rho(\xi)\,d\xi}\cap L^\infty_{\rho(\xi)\,d\xi})}+\big\|\xi^{-2}\langle\xi\partial_{\xi}\rangle^{1+\delta_0}\partial_{\tau}^2f\big\|_{\tau^{-N}L^2_{d\tau}(L^2_{\rho(\xi)\,d\xi}\cap L^\infty_{\rho(\xi)\,d\xi})}\\&  + \big\|\tau^{-1}\xi^{-2}\langle\xi\partial_{\xi}\rangle^{2+\delta_0}\partial_{\tau}f\big\|_{\tau^{-N-\frac12-\frac{1}{4\nu}+}L^2_{d\tau}(L^2_{\rho(\xi)\,d\xi}\cap L^\infty_{\rho(\xi)\,d\xi})}
		\end{align*}
		Finally, we also remark that we can avoid all temporal derivatives on the right hand side of \eqref{eq:nonresbasicsmallfreq1}; in effect, these only occur due to the improved bound for $\mathcal{L}z_{nres,<\epsilon_1}$, and using repeated integration by parts with respect to $\xi$ as well as Plancherel's theorem for the distorted Fourier transform we can use $\big\|\langle\xi\partial_{\xi}\rangle^2 f\big\|_{L^2_{\rho(\xi)\,d\xi}}$ instead of the last two terms in \eqref{eq:nonresbasicsmallfreq1}. 
	\end{rem}
	
	We shall apply the preceding lemma in particular in the context of the source term (recall \eqref{eq:yzdfn})
	\[
	f(\tau,\xi) = \mathcal{F}\big(y_z\cdot W)(\tau,\xi),
	\]
	whence the following lemma shall be useful:
	\begin{lem}\label{lem:nonresbasicsmallfreq1yzW} Let $f =  \mathcal{F}\big(y_z\cdot W)(\tau,\xi)$. Then we have the bounds
		\begin{align*}
			&\big\|\langle\xi\partial_{\xi}\rangle^{1+\delta_0} \partial_{\tau}f\big\|_{\tau^{-N-\frac12-\frac{1}{4\nu}+}L^2_{d\tau}(L^2_{\rho(\xi)\,d\xi}\cap L^\infty_{\rho(\xi)\,d\xi})}\ll_{\tau_*} \big\|z\big\|_{S},\\
			&\big\|\tau^{-1}\langle\xi\partial_{\xi}\rangle^{2+\delta_0} f\big\|_{\tau^{-N-\frac12-\frac{1}{4\nu}+}L^2_{d\tau}(L^2_{\rho(\xi)\,d\xi}\cap L^\infty_{\rho(\xi)\,d\xi})} \ll_{\tau_*} \big\|z\big\|_{S}.
		\end{align*}
	\end{lem}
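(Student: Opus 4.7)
\medskip

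\noindent\textit{Proof proposal.} The plan is to reduce both bounds to the basic estimates already established for $\lambda^{-2}\Box^{-1}(F)\cdot W$ in Corollary~\ref{cor:yzW}, Corollary~\ref{cor:yzWpartialtau}, Lemma~\ref{lem:wavebasicinhomstructure1}, and Lemma~\ref{lem:yzWnonosc}, where $F = \lambda^2\triangle\Re(W\bar z)$, the only new work being the promotion of $L^2_{\rho\,d\xi}$-type control to $L^\infty_{\rho\,d\xi}$-type control and the accounting of the time-derivative commutation.

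First I would deal with bound (1). Since both $W(R)$ and $\phi(R;\xi)$ are time-independent, we have the identity $\partial_\tau f(\tau,\xi) = \langle (\partial_\tau y_z)\cdot W,\phi(R;\xi)\rangle_{L^2_{R^3\,dR}}$, so that the $L^2_{\rho\,d\xi}$ version of (1) is \emph{exactly} Corollary~\ref{cor:yzWpartialtau} (absorbing the mismatch $\tau^{-N+}$ versus $\tau^{-N-\frac12-\frac{1}{4\nu}+}$ into the $\ll_{\tau_*}$ factor, using that the gain $\tau^{-\frac12-\frac{1}{4\nu}+}$ over the Schrödinger timescale is $\tau_*^{-\frac12-\frac{1}{4\nu}+}$-small when one restricts to $\tau\geq \tau_*$). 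To upgrade to the $L^\infty_{\rho\,d\xi}$ norm I would follow the splitting already used in the proof of Lemma~\ref{lem:specialF1}, writing $F = F_1+F_2$ with $F_1 = \lambda^2\triangle\Re(\chi_{R\lesssim\tau}W\bar z)$ and $F_2$ the complementary piece. The contribution of $F_2$ yields the polynomial gain $\tau^{-99+}$ exactly as in Lemma~\ref{lem:specialF1}, after which an $L^\infty_{\rho\,d\xi}$ bound is trivial. For the main part $F_1$, I would invoke the $L^\infty_{d\xi}$ estimates of Lemma~\ref{lem:wavebasicinhomstructure1} (with one Schrödinger-time derivative converted into a wave-time derivative via $\partial_\tau = \tfrac{\partial\tilde\tau}{\partial\tau}\partial_{\tilde\tau}$, which is where the temporal-decay improvement $\tau^{-\frac12-\frac{1}{4\nu}+}$ arises), combined with the bound $\|\langle R\rangle\lambda^{-2}\nabla F_1\|_{\tau^{-N}L^2_{d\tau}L^{1+}_{R^3\,dR}}\lesssim \|z\|_S$ that follows from Sobolev embedding and the definition of $\|\cdot\|_S$, together with the fact that $\langle R\rangle^{1+\delta_0}W\in L^{2+}_{R^3\,dR}$ which absorbs the cost of the operator $\langle \xi\partial_\xi\rangle^{1+\delta_0}$.

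For bound (2), the extra $\xi\partial_\xi$ derivative beyond what Corollary~\ref{cor:yzW} supplies costs a physical weight of $\langle R\rangle$, which is comfortably absorbed since $\langle R\rangle^{2+\delta_0}W(R)$ still lies in every $L^{p+}_{R^3\,dR}$ with room to spare. The key point is that the time weight on the right, $\tau^{-N-\frac12-\frac{1}{4\nu}+}$, combined with the prefactor $\tau^{-1}$, is equivalent to asking $\langle \xi\partial_\xi\rangle^{2+\delta_0}f\in \tau^{-N+\frac12-\frac{1}{4\nu}+}L^2_{d\tau}$, which is \emph{weaker} than the conclusion of Corollary~\ref{cor:yzW} by a large power of $\tau$. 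That slack is precisely the $\ll_{\tau_*}$ factor, and it also gives enough room to upgrade $L^2_{\rho\,d\xi}$ to $L^\infty_{\rho\,d\xi}$: one splits into $\xi<\tau^{-M}$ and $\xi>\tau^{-M}$, using Lemma~\ref{lem:wavebasicinhomstructure2} (and Remark~\ref{rem:wavebasicinhomstructure}) on the low-frequency piece and direct interpolation between $L^2_\rho$ and a trivial weighted $L^\infty$ bound on the high-frequency piece.

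The main technical obstacle is the $L^\infty_{\rho\,d\xi}$ upgrade for bound (1): unlike bound (2), the time-decay slack here is only $\tau^{-\frac12-\frac{1}{4\nu}+}$, so one cannot afford to lose an arbitrary power of $\tau$ in passing from $L^2_\rho$ to $L^\infty_\rho$. This forces the decomposition $F=F_1+F_2$ (rather than a Sobolev embedding applied globally) and a careful reuse of the integration-by-parts arguments of Lemma~\ref{lem:wavebasicinhomstructure1}, tracking that the single Schrödinger-time derivative of $\Box^{-1}(F_1)$ genuinely improves by $\tau^{-\frac12-\frac{1}{4\nu}+}$ because $\tilde\tau\sim\tau^{\frac12-\frac{1}{4\nu}}$, as already exploited in Corollary~\ref{cor:yzWpartialtau}.
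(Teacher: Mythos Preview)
Your treatment of bound~(1) is essentially aligned with the paper: the paper simply cites Corollary~\ref{cor:yzWpartialtau}, leaving the $L^\infty_{\rho\,d\xi}$ upgrade implicit (the corollary is stated ``with similar proof to the preceding,'' and Corollary~\ref{cor:yzW} does include both $L^2$ and $L^\infty$ versions). Your more explicit route via Lemma~\ref{lem:wavebasicinhomstructure1} and the $F_1+F_2$ splitting is reasonable.

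For bound~(2), however, your absorption argument fails. You claim that the extra $\xi\partial_\xi$ beyond Corollary~\ref{cor:yzW} costs a spatial weight $\langle R\rangle$, ``comfortably absorbed since $\langle R\rangle^{2+\delta_0}W(R)$ still lies in every $L^{p+}_{R^3\,dR}$.'' This is false: $W(R)\sim R^{-2}$, so $\langle R\rangle^{2+\delta_0}W(R)\sim R^{\delta_0}$ at infinity, which is not in \emph{any} $L^p_{R^3\,dR}$. The factor $W$ can absorb $\langle R\rangle^{1+\delta_0}$ (as used in Corollary~\ref{cor:yzW}) but not $\langle R\rangle^{2+\delta_0}$.

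The paper's argument for bound~(2) is genuinely different and supplies the missing mechanism. One splits into the non-oscillatory regime $R\xi\lesssim 1$ (where $\xi\partial_\xi$ acts harmlessly on $\phi(R;\xi)$ via its Taylor expansion) and the oscillatory regime $R\xi\gtrsim 1$. In the latter, one converts the extra $\xi\partial_\xi$ to $R\partial_R$ via the phase $e^{\pm iR\xi}$ and integrates by parts in $R$. When $R\partial_R$ falls on $W$, symbol behavior of $W$ lets one conclude via Corollary~\ref{cor:yzW}. When $R\partial_R$ falls on $y_z$, one passes to the wave Fourier representation \eqref{eq:wavepropagator}, \eqref{eq:nflatfourierrepresent}, uses $(R\partial_R)\phi_{\R^4}(R;\eta)=(\eta\partial_\eta)\phi_{\R^4}(R;\eta)$, and integrates by parts in $\eta$; the $\eta$-derivative of $U(\tilde\tau,\tilde\sigma,\eta)$ brings in $\lambda(\tilde\tau)\int_{\tilde\tau}^{\tilde\sigma}\lambda^{-1}(s)\,ds\lesssim\tilde\tau\sim\tau^{\frac12-\frac{1}{4\nu}}$. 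This \emph{temporal} loss (not a spatial one) is exactly matched by the $\tau^{-1}$ prefactor combined with the gap between $\tau^{-N+}$ and the target $\tau^{-N-\frac12-\frac{1}{4\nu}+}$, and one concludes again via Corollary~\ref{cor:yzW} with one fewer $\xi\partial_\xi$.
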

	\begin{proof} The first estimate follows from Corollary~\ref{cor:yzWpartialtau}. For the second estimate, denoting by $R$ the spatial variable in $y_z\cdot W$, restricting to $R\xi\lesssim 1$ (i. e. non-oscillatory regime), the operator $\xi\partial_{\xi}$ has no effect, and the desired estimate follows from 
		Corollary~\ref{cor:yzW}. Restricting to the oscillatory regime $R\xi\gtrsim 1$, we can replace $\xi\partial_{\xi}$ by $R\partial_R$ up to error terms which fall under the purview of Corollary~\ref{cor:yzW}. Integrating by parts with respect to $R$ in the inner product defining $f$, when $R\partial_R$ falls on $W$ we can again conclude via Corollary~\ref{cor:yzW}. If $R\partial_R$ falls on $y_z$, we use the Fourier representation \eqref{eq:wavepropagator}, \eqref{eq:nflatfourierrepresent} and further integration by parts with respect to the frequency variable, which leads at most to a loss of $\lesssim \tilde{\tau}\sim \tau^{\frac12-\frac{1}{4\nu}}$. Having reduced the number of operators $\xi\partial_{\xi}$ by one, one concludes again via Corollary~\ref{cor:yzW}. 
	\end{proof}
	
	The high-frequency analogue of Lemma~\ref{lem:nonresbasicsmallfreq1} is the following: 
	\begin{lem}\label{lem:nonresbasiclargefreq1} Define the term 
		\begin{align*}
			&z_{nres,>\epsilon_1^{-1}}(\tau, R): =\\& (-i)\int_{\tau}^\infty \int_0^\infty \chi_{\xi\gtrsim \epsilon_1^{-1}}\big[\phi(R;\xi) - \phi(R;0)\big]\cdot S(\tau, \sigma,\xi)\cdot f(\sigma, \frac{\lambda(\tau)}{\lambda(\sigma)}\xi\big)\rho(\xi)\,d\xi d\sigma
		\end{align*}
		where the cutoff is smooth. Then we have the bound 
		\begin{equation}\label{eq:nonresbasicsmallfreq1}\begin{split}
				\Big\|z_{nres,>\epsilon_1^{-1}}\Big\|_{S}&\ll_{\epsilon_1,\tau_*}\big\|\mathcal{F}\big(\mathcal{L}^{1+}f\big)\big\|_{\tau^{-N}L^2_{d\tau}L^2_{\rho(\xi)\,d\xi}} + \big\|\mathcal{F}\big(\mathcal{L}^{1+}\partial_{\tau}f\big)\big\|_{\tau^{-N-}L^2_{d\tau}L^2_{\rho(\xi)\,d\xi}}\\
				& + \big\|\langle \xi\partial_{\xi}\rangle\mathcal{F}\big(\mathcal{L}^{1+}f\big)\big\|_{\tau^{-N}L^2_{d\tau}L^2_{\rho(\xi)\,d\xi}}
		\end{split}\end{equation}
		We also have the simpler bound 
		\begin{align*}
			\Big\|z_{nres,>\epsilon_1^{-1}}\Big\|_{S}&\ll_{\epsilon_1,\tau_*}\big\|\mathcal{F}\big(\mathcal{L}^2f\big)\big\|_{\tau^{-N-1-}L^2_{d\tau}L^2_{\rho(\xi)\,d\xi}}. 
		\end{align*}
	\end{lem}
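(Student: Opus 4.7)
\medskip

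\noindent\textbf{Proof proposal for Lemma~\ref{lem:nonresbasiclargefreq1}.} The plan is to mirror the proof of Lemma~\ref{lem:nonresbasicsmallfreq1}, but with the asymptotic regimes of the spectral data inverted. In the high frequency region $\xi\gtrsim \epsilon_1^{-1}$, the data recorded in subsection~\ref{subsec:basicfourier} give
\[
\phi(R;\xi) = \sum_{\pm}\frac{e^{\pm iR\xi}}{R^{\frac32}\xi^{\frac32}}a_\pm(R;\xi), \quad \rho(\xi)\sim \xi^3,
\]
with $a_\pm$ symbols of order zero. Thus the combination $\phi(R;\xi)\cdot \rho(\xi)$ grows like $\xi^{\frac32}$ in $\xi$, and the game is to absorb this growth together with the $\xi^2$ arising after $\mathcal{L}$-differentiation using the smoothing built into the norms on the right, namely $\mathcal{F}(\mathcal{L}^{1+}f) \in L^2_{\rho d\xi}$ which effectively gives us a factor $\xi^{-2-}$ on $\mathcal{F}(f)$.

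First I would treat the two $L^\infty_{dR}$-type norms in $\|\cdot\|_S$. Splitting the $R$-range into the non-oscillatory region $R\xi\lesssim 1$, where one uses the absolutely convergent Taylor expansion for $\phi(R;\xi)-\phi(R;0)$ and the resulting small-$R$ factor, and the oscillatory region $R\xi\gtrsim 1$, where the explicit asymptotic above applies, the pointwise bound
\[
\big|\langle R\rangle^{-\delta_0}[\phi(R;\xi)-\phi(R;0)]\big|\lesssim_{\epsilon_1}\langle\xi\rangle^{\frac12},
\]
together with Cauchy--Schwarz in $\xi$ against the weight $\xi^{-2-}\langle\xi\rangle^{\frac12}\rho(\xi)^{\frac12}$, which is square-integrable on $\{\xi\gtrsim\epsilon_1^{-1}\}$, bounds the inner integral by $\epsilon_1^{0+}\|\mathcal{F}(\mathcal{L}^{1+}f)(\sigma,\cdot)\|_{L^2_{\rho d\xi}}$. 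Applying the standard Schur test as in (1) of the proof of Lemma~\ref{lem:nonresbasicsmallfreq1} against $\chi_{\sigma\geq\tau}(\tau/\sigma)^N$ then yields the desired control of the first two summands of $\|z_{nres,>\epsilon_1^{-1}}\|_S$. The $R$-derivative estimate is analogous since $\partial_R$ falling on $\phi(R;\xi)$ produces at worst an extra $\xi$ in the oscillatory regime, which is again swallowed by $\xi^{-2-}$.

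For the $\|\mathcal{L}^2 z_{nres,>\epsilon_1^{-1}}\|_U$ estimate, applying $\mathcal{L}^2$ replaces the factor $[\phi(R;\xi)-\phi(R;0)]$ by $\xi^4\phi(R;\xi)$. The essential maneuver, exactly as in the high-frequency part of the proof of Lemma~\ref{lem:nonresbasicsmallfreq1}, is to use two of these factors of $\xi$ to integrate by parts in $\sigma$ via
\[
\xi^2\cdot e^{i\lambda^2(\tau)\xi^2\int_{\sigma}^{\tau}\lambda^{-2}(s)\,ds}
=(-i)\tfrac{\lambda^2(\sigma)}{\lambda^2(\tau)}\,\partial_\sigma\big(e^{i\lambda^2(\tau)\xi^2\int_{\sigma}^{\tau}\lambda^{-2}(s)\,ds}\big),
\]
generating a boundary term at $\sigma=\tau$ of the form $\int_0^\infty\chi_{\xi\gtrsim\epsilon_1^{-1}}\,\xi^2\phi(R;\xi)f(\tau,\xi)\rho(\xi)\,d\xi$, which we control in $\mathcal{L}^{-\frac14}(\langle R\rangle^{-\frac12+\delta_0}L^\infty)\cap\langle R\rangle^{\delta_0}L^\infty$ by the same pointwise splitting used above, at the cost of one $\xi\partial_\xi$ (this is the origin of the third term on the right in the lemma). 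The $\partial_\sigma$-derivative falling on $f$ produces, after a scaling change of variable, the $\partial_\tau f$ term, while the remaining $\xi^2$ combines with the spectral asymptotics to be absorbed by $\mathcal{L}^{1+}f$. The second, cruder inequality follows by simply distributing the full $\mathcal{L}^2$ onto $f$ and using $\xi^{-4}\rho(\xi)^{\frac12}\in L^2_{d\xi}(\{\xi\gtrsim\epsilon_1^{-1}\})$.

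The main obstacle I foresee is the careful bookkeeping of the boundary term at $\sigma=\tau$ in the $\mathcal{L}^2 z$ estimate, which must be placed into $\mathcal{L}^{-\frac14}(\langle R\rangle^{-\frac12+\delta_0}L^\infty)$ rather than a simpler space. This requires one extra integration by parts in $\xi$ to produce $R^{-\frac12}$-decay uniformly in the oscillatory regime, which is the reason the norm $\|\langle\xi\partial_\xi\rangle \mathcal{F}(\mathcal{L}^{1+}f)\|$ appears on the right. The fourth component of the $S$-norm, involving $L^{2+}\cap L^{\frac83+}$, is handled directly by Plancherel's theorem and Sobolev embedding, with the $\xi^{-2-}$ gain from $\mathcal{L}^{1+}$ providing more than enough room.
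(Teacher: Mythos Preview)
Your treatment of the first two norms in $\|\cdot\|_S$ has a genuine gap. The Cauchy--Schwarz step in $\xi$ bounds the inner integral by $\epsilon_1^{0+}\|\mathcal{F}(\mathcal{L}^{1+}f)(\sigma,\cdot)\|_{L^2_{\rho\,d\xi}}$ \emph{uniformly in $\sigma$}, with no decay in $\sigma-\tau$. The Schur kernel you invoke, $\chi_{\sigma\geq\tau}(\tau/\sigma)^N$, then satisfies $\int_\tau^\infty (\tau/\sigma)^N\,d\sigma\sim\tau/N$, which is exactly the loss of one power of $\tau$ recorded in Lemma~\ref{lem:basicL2}; this places the output in $\tau^{-N+1}L^2_{d\tau}$, not $\tau^{-N}L^2_{d\tau}$ as required. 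The analogy with case~(1) of the proof of Lemma~\ref{lem:nonresbasicsmallfreq1} fails because there the inner $\xi$-integral produced an explicit $\zeta^{-1}(\tau,\sigma)\log^{-2}\zeta$ factor, which made the Schur kernel integrable. (Also, your pointwise bound $|\langle R\rangle^{-\delta_0}[\phi-\phi(0)]|\lesssim\langle\xi\rangle^{1/2}$ is the small-$\xi$ asymptotic; at $\xi\gtrsim\epsilon_1^{-1}$ the correct uniform bound is $\lesssim 1$, and with your stated weight $\xi^{-2-}\langle\xi\rangle^{1/2}\rho^{1/2}\sim\xi^{0-}$ the square is not integrable.) The fix is to do for norms one and two exactly what you already do for the $U$-norm: since $\xi\gtrsim\epsilon_1^{-1}$, write $S=\xi^{-2}\cdot\xi^2 S$ and integrate by parts in $\sigma$ via $(-i)\xi^2 S=\partial_\sigma[\tfrac{\lambda^2(\sigma)}{\lambda^2(\tau)}S]$, producing a boundary term at $\sigma=\tau$ (no integral) plus terms with $\partial_\sigma f$ and $\tfrac{\lambda_\sigma}{\lambda}\xi\partial_\xi f\sim\sigma^{-1}\xi\partial_\xi f$, the latter carrying the $\sigma^{-1}$ that makes Schur converge.

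For the $\|\mathcal{L}^2 z\|_U$ norm your strategy agrees with the paper's, but the paper's execution is cleaner and avoids the extra $\xi$-integration by parts you anticipate. Rather than placing the boundary term into $\mathcal{L}^{-1/4}(\langle R\rangle^{-1/2+\delta_0}L^\infty)\cap\langle R\rangle^{\delta_0}L^\infty$, the paper uses Sobolev's embedding to reduce $\|\mathcal{L}^2 z\|_U$ to control of $\|\mathcal{L}^2 z\|_{\tau^{-N}L^2_{d\tau}L^2_{R^3\,dR}}$ (the $0+$ regularity needed for the embedding into $L^{2+}$ is absorbed by the $\mathcal{L}^{1+}$ on the right), and then applies Plancherel directly: the boundary term $B_1=\int\xi^2\chi\phi f(\tau,\xi)\rho\,d\xi$ has $\|B_1\|_{L^2_{R^3\,dR}}=\|\xi^2\chi f(\tau,\cdot)\|_{L^2_\rho}\ll_{\epsilon_1}\|\xi^{2+}f(\tau,\cdot)\|_{L^2_\rho}$. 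The $\langle\xi\partial_\xi\rangle$ on the right-hand side then arises not from a $\xi$-IBP on the boundary term but from the identity $\partial_\sigma[f(\sigma,\tfrac{\lambda(\tau)}{\lambda(\sigma)}\xi)]=\partial_\sigma f-\tfrac{\lambda_\sigma}{\lambda}(\xi\partial_\xi f)$ in the bulk term $B_2$.
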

	\begin{proof} We give details for the control of the third norm on the right in \eqref{eq:Snormdefi}. By means of Sobolev's embedding, it suffices to control 
		\begin{align*}
			\Big\|\int_{\tau}^\infty \int_0^\infty \xi^4\chi_{\xi\gtrsim \epsilon_1^{-1}}\phi(R;\xi)\cdot S(\tau, \sigma,\xi)\cdot f(\sigma, \frac{\lambda(\tau)}{\lambda(\sigma)}\xi\big)\rho(\xi)\,d\xi d\sigma\Big\|_{\tau^{-N}L^2_{d\tau}L^2_{R^3\,dR}}. 
		\end{align*}
		Noting from Proposition~\ref{prop:linpropagator} (with $\alpha_0 = 0$) that 
		\[
		(-i)\xi^2\cdot S(\tau, \sigma,\xi) = \partial_{\sigma}\big[\frac{\lambda^2(\sigma)}{\lambda^2(\tau)}S(\tau, \sigma,\xi)\big], 
		\]
		and performing integration by parts with respect to $\sigma$, and using the triangle inequality for the outer norm we arrive at the boundary contribution
		\begin{align*}
			B_1: = \Big\|\int_0^\infty \xi^2\chi_{\xi\gtrsim \epsilon_1^{-1}}\phi(R;\xi)\cdot  f(\tau,\xi\big)\rho(\xi)\,d\xi\Big\|_{\tau^{-N}L^2_{d\tau}L^2_{R^3\,dR}}
		\end{align*}
		as well as the term $B_2$ given by
		\begin{align*}
			\Big\|\int_{\tau}^\infty \int_0^\infty \xi^2\chi_{\xi\gtrsim \epsilon_1^{-1}}\phi(R;\xi)\cdot S(\tau, \sigma,\xi)\cdot \frac{\lambda^2(\sigma)}{\lambda^2(\tau)}\partial_{\sigma}\big(f(\sigma, \frac{\lambda(\tau)}{\lambda(\sigma)}\xi\big)\big)\rho(\xi)\,d\xi d\sigma\Big\|_{\tau^{-N}L^2_{d\tau}L^2_{R^3\,dR}}.
		\end{align*}
		Using the Plancherel's theorem for the distorted Fourier transform, we can estimate $B_1$ by means of 
		\begin{align*}
			\big\|B_1\big\|_{\tau^{-N}L^2_{d\tau}L^2_{R^3\,dR}}\ll_{\epsilon_1}\big\|\mathcal{F}\big(\mathcal{L}^{1+}f\big)\big\|_{\tau^{-N}L^2_{d\tau}L^2_{\rho(\xi)\,d\xi}}.  
		\end{align*}
		The term $B_2$ is bounded similarly upon observing that 
		\begin{align*}
			\partial_{\sigma}\big(f(\sigma, \frac{\lambda(\tau)}{\lambda(\sigma)}\xi\big)\big) = \partial_{\sigma}f\big(\sigma, \frac{\lambda(\tau)}{\lambda(\sigma)}\xi\big)  - \frac{\lambda_{\sigma}}{\lambda}\cdot \big((\xi\partial_{\xi})f\big)\big(\sigma, \frac{\lambda(\tau)}{\lambda(\sigma)}\xi\big).
		\end{align*}
		The second estimate of the lemma is proved by observing that 
		\begin{align*}
			\big\|\xi^4\cdot f(\sigma, \frac{\lambda(\tau)}{\lambda(\sigma)}\xi\big)\big\|_{\sigma^{-N}L^2_{d\tau}L^2_{\rho(\xi)\,d\xi}}\lesssim \big(\frac{\sigma}{\tau}\big)^C\cdot \big\|\xi^4 f(\sigma, \cdot)\big\|_{\sigma^{-N}L^2_{d\tau}L^2_{\rho(\xi)\,d\xi}}
		\end{align*}
		for $C = C(\nu)\ll N$ and using Schur's criterion as usual to infer the desired estimate (where the smallness gain comes choosing $\tau_*$ sufficiently large).  
	\end{proof}
	
	We shall also require a basic lemma which recovers control over the $S$-norm in an 'elliptic situation':
	\begin{lem}\label{lem:basicSfromtildeL} Let $\tilde{\mathcal{L}}$ be as in \eqref{eq:tildeL}, and denote by $\tilde{\phi}_0, \tilde{\theta}_0$ a fundamental system for $\tilde{\mathcal{L}}u = 0$ with $\tilde{\phi}_0(0) = 1$, satisfying the normalization condition
		\[
		W(\tilde{\phi}_0, \tilde{\theta}_0)(R) = \partial_R\tilde{\phi}_0(R)\cdot \tilde{\theta}_0(R) -  \tilde{\phi}_0(R)\cdot \partial_R\tilde{\theta}_0(R) = R^{-3}. 
		\]
		Then defining 
		\[
		\tilde{\mathcal{L}}^{-1}f = \tilde{\phi}_0(R)\cdot \int_0^R \tilde{\theta}_0(s)\cdot f(s) s^3\,ds - \tilde{\theta}_0(R)\cdot \int_0^R \tilde{\phi}_0(s)\cdot f(s) s^3\,ds,
		\]
		we have the estimate 
		\begin{align*}
			\big\|\tilde{\mathcal{L}}^{-1}f \big\|_{S}\lesssim \big\|f\big\|_{\tau^{-N}L^2_{d\tau}(L^{2+}_{R^3\,dR}\cap \langle R\rangle^{\frac{\delta_0}{2}}L^2_{R^3\,dR})}. 
		\end{align*}
		An analogous statement applies to the operator $\mathcal{L}$. The space $L^{2+}_{R^3\,dR}\cap \langle R\rangle^{\frac{\delta_0}{2}}L^2_{R^3\,dR})$ on the right may be replaced by $L^2_{R^3\,dR}$. 
	\end{lem}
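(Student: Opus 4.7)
\medskip

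\noindent\emph{Proof proposal.} The plan is to work directly from the explicit variation-of-constants representation
\[
\tilde{\mathcal{L}}^{-1}f(R) = \tilde{\phi}_0(R)\int_0^R\tilde{\theta}_0(s)f(s)s^3\,ds - \tilde{\theta}_0(R)\int_0^R\tilde{\phi}_0(s)f(s)s^3\,ds
\]
after establishing quantitative asymptotics for $\tilde{\phi}_0, \tilde{\theta}_0$. Since the potential $-3W^2$ is $O(R^{-4})$ at infinity, the equation $\tilde{\mathcal{L}}u=0$ is a compact perturbation of the 4D radial free Laplacian, whose fundamental solutions are $1$ and $R^{-2}$. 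A standard Picard iteration from the origin (using $\tilde{\phi}_0(0)=1$) combined with matching against the free solutions at infinity gives $\tilde{\phi}_0 = 1 + O(R^2\log R)$ near zero and $\tilde{\phi}_0 = c_\infty + O(R^{-2})$ at infinity (with $c_\infty \neq 0$ generically). Imposing the Wronskian normalization then forces $\tilde{\theta}_0 \sim \tfrac12 R^{-2}$ near $R=0$ and $\tilde{\theta}_0 \sim c'_\infty R^{-2}$ at infinity (or with a logarithmic correction); in particular, one of the two solutions stays bounded at infinity while the other decays as $R^{-2}$.

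Inserting these asymptotics and applying Cauchy--Schwarz (absorbing growth into the weight $\langle s\rangle^{\delta_0/2}$ from the source norm) yields pointwise estimates of the schematic form
\begin{align*}
|\tilde{\mathcal{L}}^{-1}f(R)| &\lesssim \langle R\rangle^{\delta_0/2}\big\|f\big\|_{L^{2+}_{R^3\,dR}\cap \langle R\rangle^{\delta_0/2}L^2_{R^3\,dR}},\\
|\partial_R \tilde{\mathcal{L}}^{-1}f(R)| &\lesssim \langle R\rangle^{-1/2+\delta_0/2}\big\|f\big\|_{L^{2+}_{R^3\,dR}\cap \langle R\rangle^{\delta_0/2}L^2_{R^3\,dR}},
\end{align*}
since the piece of $\tilde{\theta}_0 \cdot \int_0^R \tilde{\phi}_0 f s^3\,ds$ that could grow at infinity is cancelled by the decay of $\tilde{\theta}_0$ there (in the case $\tilde\theta_0$ decays) or by pairing against the growth of the other kernel factor. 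Taking $\tau^{-N}L^2_{d\tau}$ of both sides immediately yields the first, second, and fourth (Lebesgue) components of the $S$-norm, where the last is improved further by Sobolev embedding applied to $\tilde{\mathcal{L}}\tilde{\mathcal{L}}^{-1}f=f$.

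The delicate term is $\|\mathcal{L}^2\tilde{\mathcal{L}}^{-1}f\|_U$. Here the algebraic identity $\mathcal{L}=\tilde{\mathcal{L}}+2W^2$ gives
\[
\mathcal{L}\tilde{\mathcal{L}}^{-1}f = f + 2W^2\tilde{\mathcal{L}}^{-1}f,
\]
whose right-hand side lies in $L^{2+}_{R^3\,dR}$: the first summand by hypothesis, the second because $W^2$ has rapid polynomial decay which more than overcomes the mild growth in the pointwise bound for $\tilde{\mathcal{L}}^{-1}f$. Applying $\mathcal{L}$ once more, $\mathcal{L}^2\tilde{\mathcal{L}}^{-1}f = \mathcal{L}f + 2\mathcal{L}(W^2\tilde{\mathcal{L}}^{-1}f)$; in the second summand, commuting $\mathcal{L}$ past $W^2$ produces only first-order differential operators in $\tilde{\mathcal{L}}^{-1}f$ against rapidly-decaying coefficients, bounded by the pointwise estimates above.

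The main obstacle is making sense of the $\mathcal{L}f$ contribution when $f$ has only the regularity allowed by the right-hand side of the lemma. The resolution is that this term need only be controlled in the second summand of the $U$-norm, whose defining space $\tau^{-1/2-1/(4\nu)+}\mathcal{L}^{-1/4}(\langle R\rangle^{-1/2+\delta_0}L^\infty)$ precisely absorbs a fractional-derivative loss and shifts the burden back onto the test function; concretely, duality against this space requires one to estimate $\langle \mathcal{L}f,g\rangle = \langle f,\mathcal{L}g\rangle$ for $g$ with the $\mathcal{L}^{1/4}$-smoothing already paid in, so no actual derivatives on $f$ are lost. The analogous statement for $\mathcal{L}^{-1}$ follows identically, and weakening the source to $L^2_{R^3\,dR}$ simply removes the $\langle R\rangle^{\delta_0/2}$ weight, whose only role in the argument above was to dominate the growth of the kernel factor $\tilde{\theta}_0$ (or its analogue $\theta_0$) against an expanding annulus in $s$.
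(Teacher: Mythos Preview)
Your overall strategy --- extract asymptotics of the fundamental system, then apply Cauchy--Schwarz/H\"older to the two pieces of the variation-of-constants kernel --- is the same as the paper's. Two points need correction.

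First, the asymptotics are swapped. The operator $\tilde{\mathcal{L}}=-\triangle-3W^2$ is the linearization of $-\triangle u-u^3=0$ about $W$, so its zero mode regular at the origin is the scaling generator $\tilde{\phi}_0=\Lambda W$, with $\Lambda W(0)=1$ and $\Lambda W(R)\sim -8R^{-2}$ as $R\to\infty$. The Wronskian normalization then forces $\tilde{\theta}_0\sim R^{-2}$ near zero and $\tilde{\theta}_0\sim 1$ at infinity --- the opposite of what your Picard-iteration heuristic produced. The kernel bounds still close with the correct roles (the paper gets $|\tilde\phi_0(R)|\cdot\|\chi_{s<R}\tilde\theta_0\|_{L^{2-}_{s^3ds}+L^2_{s^3ds}}\lesssim\langle R\rangle^{-2}\cdot\langle R\rangle^2\lesssim 1$ for the first piece and a harmless $\langle\log\langle R\rangle\rangle^{1/2}$ on the second), but your hedging should be replaced by this explicit identification.

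Second, your handling of the $U$-norm component does not work. The $U$-norm is not a dual norm; it is an infimum over splittings $g=g_1+g_2$ with $g_1\in L^{2+}_{R^3\,dR}$ and $g_2$ in the weighted $L^\infty$ space carrying one built-in $\mathcal{L}^{-1/4}$. Putting $\mathcal{L}f$ (for $f$ merely in $L^{2+}\cap\langle R\rangle^{\delta_0/2}L^2$) into the second summand would require $\mathcal{L}^{5/4}f\in\langle R\rangle^{-1/2+\delta_0}L^\infty$, which is far more regularity than the hypothesis provides; the duality manoeuvre you sketch does not correspond to the actual structure of the norm. The paper's own proof stops at $\|\mathcal{L}\tilde{\mathcal{L}}^{-1}f\|_{L^{2+}_{R^3\,dR}}$, which is exactly your identity $\mathcal{L}\tilde{\mathcal{L}}^{-1}f=f+2W^2\tilde{\mathcal{L}}^{-1}f$ placed in the first summand of $U$. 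Elsewhere in the paper (see the proof of Lemma~\ref{lem:nonresbasicsmallfreq1}) this component is written as $\|\mathcal{L}z\|_U$ rather than $\|\mathcal{L}^2 z\|_U$, so the exponent $2$ in the displayed $S$-norm is evidently a typo, and no further argument is needed.
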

	\begin{proof} In the following we omit the temporal part $\tau^{-N}L^2_{d\tau}$ of the norms for simplicity. We note that $\tilde{\phi}_0 = \Lambda W = \partial_{\lambda}\big(\lambda W(\lambda R)\big)|_{\lambda = 1}$. Further we have the asymptotic relations 
		\[
		\tilde{\theta}(R)\sim R^{-2},\,R\ll 1,\, \tilde{\theta}(R)\sim 1,\,R\gg 1.
		\]
		Using the Cauchy-Schwarz and Hoelder's inequality, we infer 
		\begin{align*}
			\Big|\tilde{\phi}_0(R)\cdot \int_0^R \tilde{\theta}_0(s)\cdot f(s) s^3\,ds\Big|\lesssim \big\|f\big\|_{L^{2+}_{R^3\,dR}\cap L^2_{R^3\,dR}},
		\end{align*}
		since $\big|\tilde{\phi}_0(R)\big|\cdot \big\|\chi_{s<R}\tilde{\theta}_0\big\|_{L^{2-}_{s^3\,ds} + L^2_{s^3\,ds}}\lesssim \big|\tilde{\phi}_0(R)\big|\cdot \langle R\rangle^2\lesssim 1$. Similarly we have 
		\begin{align*}
			\Big|\tilde{\theta}_0(R)\cdot \int_0^R \tilde{\phi}_0(s)\cdot f(s) s^3\,ds\Big|\lesssim \langle \log \langle R\rangle\rangle^{\frac12}\cdot \big\|f\big\|_{L^2_{R^3\,dR}}
		\end{align*}
		since $\big|\tilde{\theta}_0(R)\cdot\big\|\chi_{s<R}\tilde{\phi}_0\big\|_{L^2_{s^3\,ds}}\lesssim  \langle \log \langle R\rangle\rangle^{\frac12}$. Since we integrate over the region $s\leq R$, we have $\langle R\rangle^{-\delta_0}\leq \langle s\rangle^{-\delta_0}$. We conclude that 
		\[
		\big\|\langle R\rangle^{-\delta_0}\tilde{\mathcal{L}}^{-1}f\big\|_{L^\infty_{dR}}\lesssim \big\|\langle R\rangle^{-\frac{\delta_0}{2}}f\big\|_{L^{2+}_{R^3\,dR}\cap L^2_{R^3\,dR}}. 
		\]
		For the first derivative of $\tilde{\mathcal{L}}^{-1}f$, we use that 
		\begin{align*}
			\partial_R(\tilde{\mathcal{L}}^{-1}f) = \partial_R\tilde{\phi}_0(R)\cdot \int_0^R \tilde{\theta}_0(s)\cdot f(s) s^3\,ds - \partial_R\tilde{\theta}_0(R)\cdot \int_0^R \tilde{\phi}_0(s)\cdot f(s) s^3\,ds
		\end{align*}
		and the symbolic behavior of $\tilde{\phi}_0, \tilde{\theta}_0$ for $R\gg 1$ to conclude the bound as before: 
		\begin{align*}
			\big|\langle R\rangle^{1-\delta_0}\partial_R(\tilde{\mathcal{L}}^{-1}f)\big|\lesssim \big\|\langle R\rangle^{-\frac{\delta_0}{2}}f\big\|_{\tau^{-N}L^2_{d\tau}(L^{2+}_{R^3\,dR}\cap L^2_{R^3\,dR})}. 
		\end{align*}
		Finally, we also have $\big\|\mathcal{L}(\tilde{\mathcal{L}}^{-1}f)\|_{L^{2+}_{R^3\,dR}}\lesssim \big\|f\big\|_{\tau^{-N}L^2_{d\tau}(L^{2+}_{R^3\,dR}\cap L^2_{R^3\,dR})}$, which concludes the required bounds. The proof for $\mathcal{L}$ is analogous. 
	\end{proof}

	\subsection{Miscellaneous lemmas}
	
	\begin{lem}\label{lem:FourierNV1} Denoting\footnote{Recall subsection~\ref{subsec:subsec:standardFourieronR4}.} $\mathcal{F}_{\R^4}(f)(
		\xi) = \langle f, \phi_{\R^4}(R;\xi)\rangle_{L^2_{R^3\,dR}}$ for radial $f$ on $\R^4$ and $\xi>0$, we have 
		\begin{align*}
			\mathcal{F}_{\R^4}\big(W^2\big)(\hat{\tau})\neq 0\,\forall \hat{\tau}\neq 0.  
		\end{align*}
	\end{lem}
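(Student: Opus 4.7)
The plan is to compute $\mathcal{F}_{\mathbb{R}^4}(W^2)(\hat\tau)$ essentially in closed form and observe that the resulting expression is a strictly positive multiple of a modified Bessel function of the second kind, which never vanishes on $(0,\infty)$.

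First I would unwind the definitions. Using the formula $R^{3/2}\phi_{\mathbb{R}^4}(R;\xi) = \xi^{-1} R^{1/2} J_1(R\xi)$ recorded in subsection~\ref{subsec:subsec:standardFourieronR4}, together with $W^2(R) = (1+R^2/8)^{-2} = 64/(R^2+8)^2$, we have
\begin{align*}
\mathcal{F}_{\mathbb{R}^4}(W^2)(\hat\tau) = \int_0^\infty \frac{J_1(R\hat\tau)}{\hat\tau R}\cdot\frac{64}{(R^2+8)^2}\, R^3\, dR
 = \frac{64}{\hat\tau}\int_0^\infty \frac{R^2\, J_1(R\hat\tau)}{(R^2+8)^2}\, dR.
\end{align*}

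Next, I would apply the classical Hankel-type identity (Gradshteyn--Ryzhik 6.565.4)
\begin{align*}
\int_0^\infty \frac{x^{\nu+1} J_\nu(xy)}{(x^2+a^2)^{\mu+1}}\, dx = \frac{a^{\nu-\mu}\, y^\mu}{2^\mu\, \Gamma(\mu+1)}\, K_{\nu-\mu}(ay),\qquad a,y>0,\ -1<\nu<2\mu+\tfrac{3}{2},
\end{align*}
with $\nu=\mu=1$ and $a=2\sqrt 2$ (so that $a^2=8$). This gives
\begin{align*}
\int_0^\infty \frac{R^2\, J_1(R\hat\tau)}{(R^2+8)^2}\, dR = \frac{\hat\tau}{2}\, K_0(2\sqrt 2\, \hat\tau).
\end{align*}
Substituting back yields
\begin{align*}
\mathcal{F}_{\mathbb{R}^4}(W^2)(\hat\tau) = 32\, K_0(2\sqrt 2\, \hat\tau).
\end{align*}

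Finally, I would invoke the standard fact that $K_0(x) > 0$ for all $x>0$ (this is immediate from the integral representation $K_0(x) = \int_0^\infty e^{-x\cosh t}\, dt$), which yields $\mathcal{F}_{\mathbb{R}^4}(W^2)(\hat\tau) = 32\, K_0(2\sqrt 2\, \hat\tau) > 0$ for every $\hat\tau>0$, and the same (with the obvious even extension) for $\hat\tau<0$. There is no real obstacle in this argument; the only point requiring care is to match the normalisation of $\phi_{\mathbb{R}^4}$ and the constant $a^2=8$ so that the Gradshteyn--Ryzhik identity applies in the stated form.
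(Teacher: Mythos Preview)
Your proof is correct. The route differs from the paper's: the paper never invokes the Gradshteyn--Ryzhik identity or identifies the transform as a Bessel $K_0$, but instead writes $W^2$ (after a harmless rescaling to $W(R)=(1+R^2)^{-1}$) as a superposition of Gaussians via the double antiderivative $\frac{1}{(1+|x|^2)^2}\,e^{-a(1+|x|^2)} = \int_a^\infty\!\int_{a_1}^\infty (1+|x|^2)^0 e^{-a_2(1+|x|^2)}\,da_2\,da_1$ evaluated at $a=0$, computes the Fourier transform of each Gaussian explicitly, and arrives at
\[
\mathcal{F}_{\mathbb{R}^4}(W^2)(\hat\tau)=c\int_0^\infty\!\int_{a_1}^\infty e^{-a_2}\,a_2^{-2}\,e^{-\hat\tau^2/a_2}\,da_2\,da_1,
\]
which is manifestly positive. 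The two arguments are secretly the same computation: if you swap the order of integration in the paper's double integral you get $c\int_0^\infty a_2^{-1}e^{-a_2-\hat\tau^2/a_2}\,da_2$, which is exactly (a constant times) the standard integral representation of $K_0$. Your version is quicker but relies on a tabulated Hankel transform; the paper's is self-contained and reusable for the companion Lemma~\ref{lem:FourierNV2} on $\Lambda W\cdot W$, where no single clean Bessel identity applies.
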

	\begin{proof} By simple re-scaling we may assume that $W(R) = \frac{1}{1+R^2}$. Then consider 
		\begin{align*}
			\int_{\R^4}e^{-a|x|^2 -a}\cdot e^{-ix\cdot\xi}\,dx = ce^{-a}\cdot a^{-2}\cdot e^{-\frac{|\xi|^2}{a}}. 
		\end{align*}
		We have 
		\begin{align*}
			\int_{\R^4}\frac{e^{-a|x|^2 -a}}{(1+|x|^2)^2}\cdot e^{-ix\cdot\xi}\,dx &= c\int_a^\infty \int_{a_1}^\infty e^{-a_2}\cdot a_2^{-2}\cdot e^{-\frac{|\xi|^2}{a_2}}\,da_2 da_1,\\
		\end{align*}
		whence 
		\begin{align*}
			\mathcal{F}_{\R^4}\big(W^2\big)(\hat{\tau}) =  c\int_0^\infty \int_{a_1}^\infty e^{-a_2}\cdot a_2^{-2}\cdot e^{-\frac{\hat{\tau}^2}{a_2}}\,da_2 da_1,\,c\neq 0,
		\end{align*}
		whence a non-vanishing function. 
	\end{proof}
	
	\begin{lem}\label{lem:FourierNV2} There is one $\hat{\tau}_*\in \R_+$ such that we have 
		\begin{align*}
			\mathcal{F}_{\R^4}\big(\Lambda W\cdot W\big)(\hat{\tau}_*) = 0,\,\mathcal{F}_{\R^4}\big(\Lambda W\cdot W\big)(\hat{\tau})\neq 0,\,\hat{\tau}\in \R_+\backslash\{\hat{\tau}_*\}.  
		\end{align*}
	\end{lem}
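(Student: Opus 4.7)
The plan is to reduce the statement to the zero structure of an explicit combination of the modified Bessel functions $K_0, K_1$, which can then be analyzed by elementary monotonicity. A direct computation with $W(R) = (1+R^2/8)^{-1}$ yields $\Lambda W = W + RW' = (1-R^2/8)(1+R^2/8)^{-2}$, hence
\[
\Lambda W \cdot W = \frac{1-R^2/8}{(1+R^2/8)^3} = \frac{64(8-R^2)}{(R^2+8)^3}.
\]
Since $\phi_{\R^4}(R;\xi) = \xi^{-1}R^{-1}J_1(R\xi)$ (see subsection~\ref{subsec:subsec:standardFourieronR4}), the task reduces to analyzing the $\hat\tau$-dependence of $\hat\tau^{-1}\int_0^\infty (8-R^2)R^2(R^2+8)^{-3} J_1(R\hat\tau)\, dR$.

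The next step is to evaluate this integral in closed form via the classical Hankel--Bessel identity
\[
\int_0^\infty \frac{R^{\mu+1}}{(R^2+a^2)^{\nu+1}} J_\mu(bR)\, dR = \frac{a^{\mu-\nu} b^\nu}{2^\nu \Gamma(\nu+1)} K_{\mu-\nu}(ab),
\]
applied with $\mu = 1$, $a = \sqrt{8}$, after the decomposition $R^4/(R^2+8)^3 = R^2/(R^2+8)^2 - 8R^2/(R^2+8)^3$ that brings both pieces into the indicated form (with $\nu = 1$ and $\nu = 2$ respectively). After collecting constants this yields
\[
\mathcal{F}_{\R^4}(\Lambda W \cdot W)(\hat\tau) = c_*\bigl(\sqrt{8}\,\hat\tau\, K_1(\sqrt{8}\hat\tau) - 2K_0(\sqrt{8}\hat\tau)\bigr)
\]
for a nonzero real constant $c_*$. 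The substitution $t = \sqrt{8}\hat\tau$ reduces the lemma to showing that $g(t) := tK_1(t) - 2K_0(t)$ has exactly one zero on $(0,\infty)$.

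For the final step I would factor $g(t) = K_0(t)(h(t) - 2)$ with $h(t) := tK_1(t)/K_0(t)$ and prove that $h$ is strictly increasing from $0$ to $\infty$. Using the recurrences $(tK_1(t))' = -tK_0(t)$ and $K_0'(t) = -K_1(t)$, a direct computation gives $h'(t) = t(K_1(t)^2 - K_0(t)^2)/K_0(t)^2$, whose positivity is equivalent to $K_1(t) > K_0(t)$ on $\R_+$. The latter follows from the integral representation $K_\nu(t) = \int_0^\infty e^{-t\cosh s}\cosh(\nu s)\, ds$, which gives
\[
K_1(t) - K_0(t) = \int_0^\infty e^{-t\cosh s}(\cosh s - 1)\, ds > 0.
\]
Combined with the boundary asymptotics $h(0^+) = 0$ (since $tK_1(t) \to 1$ while $K_0(t) \to +\infty$) and $h(\infty) = \infty$ (since $K_\nu(t) \sim \sqrt{\pi/(2t)}e^{-t}$ with $K_1/K_0 \to 1$), this forces a unique $t_* \in (0,\infty)$ with $g(t_*) = 0$, and hence a unique $\hat\tau_* = t_*/\sqrt{8}$. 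The only delicate step in the whole argument is the bookkeeping of constants in the Bessel integral evaluation; once the explicit form of $\mathcal{F}_{\R^4}(\Lambda W\cdot W)$ is pinned down, the monotonicity analysis is routine.
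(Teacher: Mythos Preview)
Your argument is correct. The route differs from the paper's, though the two are secretly equivalent at the level of the underlying inequality.

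The paper does not invoke Bessel functions. Instead it writes $2\Lambda W\cdot W = \partial_{\tilde\lambda}\bigl(\tilde\lambda^2 W^2(\tilde\lambda R)\bigr)\big|_{\tilde\lambda=1}$, differentiates the Gaussian-integral representation of $\mathcal{F}_{\R^4}(W^2)$ obtained in the preceding lemma, and after some integration by parts arrives (up to rescaling $W$) at
\[
\tfrac{1}{c}\,\mathcal{F}_{\R^4}(\Lambda W\cdot W)(\hat\tau) = \phi(\hat\tau^2)+\phi'(\hat\tau^2),\qquad \phi(\eta)=\int_0^\infty e^{-a}e^{-\eta/a}\,da.
\]
Uniqueness of the zero is then reduced to strict monotonicity of $\phi'/\phi$, which follows from the Cauchy--Schwarz inequality $(\phi')^2<\phi\phi''$ applied to the integrals defining $\phi,\phi',\phi''$. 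Your explicit Bessel form is the same object in disguise: the substitution $t=2\hat\tau$ (after the paper's rescaling) gives $\phi(\eta)=tK_1(t)$ and $-\phi'(\eta)=2K_0(t)$, so the paper's $\phi+\phi'$ is exactly your $tK_1(t)-2K_0(t)$, and the Cauchy--Schwarz inequality $(\phi')^2<\phi\phi''$ is literally $4K_0^2<4K_1^2$, i.e.\ your $K_1>K_0$. What your approach buys is an immediately recognizable closed form and access to standard Bessel monotonicity; what the paper's approach buys is self-containment (no table lookup for the Hankel integral) and a one-line Cauchy--Schwarz in place of the $\cosh$ integral representation.
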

	\begin{proof} Recall that 
		\[
		2\Lambda W\cdot W = \partial_{\lambda}\big(\lambda^2\cdot W^2(\lambda R)\big)|_{\lambda = 1}. 
		\]
		Hence we have 
		\begin{align*}
			\frac{2}{c}\mathcal{F}_{\R^4}\big(\Lambda W\cdot W\big)(\hat{\tau}) = \partial_{\lambda}\Big(\lambda^{-2}\int_0^\infty \int_{a_1}^\infty e^{-a_2}\cdot a_2^{-2}\cdot e^{-\frac{\hat{\tau}^2}{\lambda^2 a_2}}\,da_2 da_1\Big)|_{\lambda = 1}
		\end{align*}
		It follows that we can write 
		\begin{align*}
			\frac{1}{c}\mathcal{F}_{\R^4}\big(\Lambda W\cdot W\big)(\hat{\tau}) &=-\int_0^\infty \int_{a_1}^\infty e^{-a_2}\cdot a_2^{-2}\cdot e^{-\frac{\hat{\tau}^2}{a_2}}\,da_2 da_1\\
			& + \int_0^\infty \int_{a_1}^\infty e^{-a_2}\cdot a_2^{-2}\cdot \frac{\hat{\tau}^2}{a_2}\cdot e^{-\frac{\hat{\tau}^2}{a_2}}\,da_2 da_1
		\end{align*}
		The last integral on the right can also be written as
		\begin{align*}
			&\int_0^\infty \int_{a_1}^\infty e^{-a_2}\cdot a_2^{-2}\cdot \frac{\hat{\tau}^2}{a_2}\cdot e^{-\frac{\hat{\tau}^2}{a_2}}\,da_2 da_1 = \int_0^\infty \int_{a_1}^\infty e^{-a_2}\cdot a_2^{-1}\cdot \partial_{a_2}\big(e^{-\frac{\hat{\tau}^2}{a_2}}\big)\,da_2 da_1\\
			& = - \int_0^\infty e^{-a_1}\cdot a_1^{-1}\cdot e^{-\frac{\hat{\tau}^2}{a_1}}\,da_1 + \int_0^\infty \int_{a_1}^\infty e^{-a_2}\cdot a_2^{-2}\cdot e^{-\frac{\hat{\tau}^2}{a_2}}\,da_2 da_1\\
			& + \int_0^\infty \int_{a_1}^\infty e^{-a_2}\cdot a_2^{-1}\cdot e^{-\frac{\hat{\tau}^2}{a_2}}\,da_2 da_1\\
		\end{align*}
		and we can apply another integration by parts to write the last integral as 
		\begin{align*}
			\int_0^\infty \int_{a_1}^\infty e^{-a_2}\cdot a_2^{-1}\cdot e^{-\frac{\hat{\tau}^2}{a_2}}\,da_2 da_1 = \int_0^\infty  e^{-a_1}\cdot e^{-\frac{\hat{\tau}^2}{a_1}}\, da_1.
		\end{align*}
		We conclude that in fact we have the relation 
		\begin{align*}
			\frac{1}{c}\mathcal{F}_{\R^4}\big(\Lambda W\cdot W\big)(\hat{\tau}) = \int_0^\infty  e^{-a_1}\cdot e^{-\frac{\hat{\tau}^2}{a_1}}\, da_1 -  \int_0^\infty e^{-a_1}\cdot a_1^{-1}\cdot e^{-\frac{\hat{\tau}^2}{a_1}}\,da_1.
		\end{align*}
		The second integral on the right is larger than the first for $0<\hat{\tau}\ll 1$ but less than the first one for $\hat{\tau}\gg 1$. It follows that there is at least one $\hat{\tau}_*>0$ for which 
		\[
		\mathcal{F}_{\R^4}\big(\Lambda W\cdot W\big)(\hat{\tau}_*) = 0.
		\]
		To see that there is exactly one such $\hat{\tau}_*>0$, we write the preceding relation as 
		\[
		\frac{1}{c}\mathcal{F}_{\R^4}\big(\Lambda W\cdot W\big)(\hat{\tau}) = \phi(\hat{\tau}^2) + \phi'(\hat{\tau}^2),
		\]
		where we set $\phi(\eta) = \int_0^\infty  e^{-a_1}\cdot e^{-\frac{\eta}{a_1}}\, da_1$, $\eta>0$, whence a positive function which is in $C^\infty(\R_+)$. Then note that (for arguments on $\R_+$)
		\begin{align*}
			\big(\frac{\phi'}{\phi}\big)' = \frac{\phi''\cdot\phi - (\phi')^2}{\phi^2}
		\end{align*}
		and we have 
		\begin{align*}
			&(\phi'(\eta))^2 = \big(\int_0^\infty e^{-a_1}\cdot a_1^{-1}\cdot e^{-\frac{\eta}{a_1}}\,da_1\big)^2\\&<\big(\int_0^\infty e^{-a_1}\cdot a_1^{-2}\cdot e^{-\frac{\eta}{a_1}}\,da_1\big)\cdot \big(\int_0^\infty e^{-a_1}\cdot e^{-\frac{\eta}{a_1}}\,da_1\big) = \phi(\eta)\cdot\phi''(\eta), 
		\end{align*}
		due to the Cauchy-Schwarz inequality, whence the function $\frac{\phi'}{\phi}$ is strictly monotonic on $\R_+$. Hence there is at most one $\hat{\tau}_*$ such that$
		\frac{\phi'(\hat{\tau}_*}{\phi(\hat{\tau}_*)} = -1$, and  the lemma follows.

	\end{proof}

	\begin{lem}\label{lem:freqlocalpropag1} The wave propagator \eqref{eq:wavepropagator} is approximately compatible with wave temporal frequency localisation in the following sense: letting $n(\tilde{\tau}, R)$ defined as in  \eqref{eq:wavepropagator} but with $F$ replaced by 
		\[
		Q_{>\gamma^{-1}}^{(\tilde{\sigma})}F, 
		\]
		we have 
		\[
		n = Q_{>\gamma^{-1+}}^{(\tilde{\tau})}n + n_e
		\]
		with the bound 
		\begin{align*}
			\Big\|\big\|\langle R\rangle^{-1-\delta_0}n_e\big\|_{L^2_{R^3\,dR}}\Big\|_{\tau^{-N}L^2_{d\tau}}\ll_{\tau_*,\gamma}\Big\|F\Big\|,
		\end{align*}
		where the generic norm on the right hand side denotes the expression on the right in the first inequality of Lemma~\ref{lem:wavebasicinhom}.  
	\end{lem}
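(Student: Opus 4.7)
The idea is to exploit that $n$ is an exact solution (vanishing at $\tilde{\tau}=+\infty$) of the inhomogeneous wave equation $\Box n = \tilde F$ with source $\tilde F := Q^{(\tilde\sigma)}_{>\gamma^{-1}}F$, which follows by construction of the parametrix \eqref{eq:wavepropagator}. Setting $n_e := Q^{(\tilde\tau)}_{<\gamma^{-1+}} n$, I will compute $\Box n_e$, show the resulting source is small, and then invert $\Box$ via Lemma~\ref{lem:wavebasicinhom} to recover the required bound on $n_e$.

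Applying $Q^{(\tilde\tau)}_{<\gamma^{-1+}}$ to the equation for $n$ yields the identity
\[
\Box n_e \;=\; Q^{(\tilde\tau)}_{<\gamma^{-1+}} \tilde F \;+\; [\Box,\, Q^{(\tilde\tau)}_{<\gamma^{-1+}}]\, n.
\]
The first term on the right vanishes identically: the smooth cutoffs defining $Q^{(\tilde\tau)}_{<\gamma^{-1+}}$ and $Q^{(\tilde\sigma)}_{>\gamma^{-1}}$ can be chosen with disjoint Fourier supports, which is possible because $\gamma^{-1+\delta_1}<\gamma^{-1}$ for sufficiently small $\gamma$. (If a slight overlap persists due to the choice of cutoff shape, it contributes a term of size $O_{\gamma}(\tau^{-\infty})$, which is absorbed into the final error.)

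The heart of the matter is the commutator. Recalling the explicit expression \eqref{eq:nFintildetauR} for $\lambda^{-2}\Box$ in the wave-time coordinates $(\tilde\tau,R)$, the only operators that do not commute with the Fourier multiplier $Q^{(\tilde\tau)}_{<\gamma^{-1+}}$ are the multiplication operators $\mu(\tilde\tau):=\tfrac{\lambda_{\tilde\tau}}{\lambda}$ and $\mu(\tilde\tau)^2$. For any such $\mu$ with $|\mu|\lesssim\tilde\tau^{-1}$, $|\mu'|\lesssim\tilde\tau^{-2}$, a Taylor expansion of $\mu(\tilde\tau)-\mu(\tilde s)$ inside the convolution integral defining $Q^{(\tilde\tau)}_{<\gamma^{-1+}}$—whose kernel is supported (up to rapid decay) on $|\tilde\tau-\tilde s|\lesssim\gamma^{1-}$—yields the operator bound
\[
\Big\|[\mu,\,Q^{(\tilde\tau)}_{<\gamma^{-1+}}]f\Big\|_{L^2_{d\tilde\tau}}\;\lesssim\;\gamma\,\tilde\tau^{-2}\,\|f\|_{L^2_{d\tilde\tau}}.
\]
Combined with the commutation of $\partial_{\tilde\tau}$, $\partial_R$, $R^{-1}\partial_R$, $R\partial_R$ with $Q^{(\tilde\tau)}_{<\gamma^{-1+}}$, this gives
\[
\lambda^{-2}\big[\Box,\,Q^{(\tilde\tau)}_{<\gamma^{-1+}}\big]\,n \;=\; \sum_j \gamma\,\tilde\tau^{-2}\,A_j\, n,
\]
where each $A_j$ is an operator of order at most two involving $\partial_{\tilde\tau}, R\partial_R$ and their composition. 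Standard a priori bounds on $n$ (a direct modification of Lemma~\ref{lem:wavebasicinhom} controlling one extra $\tilde\tau$- and one extra $R\partial_R$-derivative, proved by the same arguments) show that $\|A_j n\|$ is no larger than $\tilde\tau\cdot\|F\|$ with $\|F\|$ as in the statement.

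\textbf{Conclusion and main obstacle.} Since the commutator source is of size $\gamma\tilde\tau^{-2}$ (up to $\tilde\tau\cdot\|F\|$), applying Lemma~\ref{lem:wavebasicinhom} to $n_e$ as solution of the wave equation with this small source yields $\|\langle R\rangle^{-1-\delta_0}n_e\|_{\tau^{-N}L^2_{d\tau}L^2_{R^3 dR}}\ll_{\tau_*,\gamma}\|F\|$ for $\tau_*$ large, as desired. The main obstacle is tracking these commutator derivatives in the Fourier-side weighted norms of Lemma~\ref{lem:wavebasicinhom}: the terms $\partial_{\tilde\tau}n$ and $R\partial_R n$ need to be estimated, which requires differentiating the propagator $U(\tilde\tau,\tilde\sigma,\xi)$ and tolerating the temporal weight $\tilde\tau$ picked up; the net exponent $\gamma\cdot\tilde\tau^{-1+O(1/\nu)}$ remains arbitrarily small for $\tau_*$ large and $\gamma$ small, which is enough to close the estimate.
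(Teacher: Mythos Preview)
Your commutator strategy is reasonable in spirit, but there is a regularity gap that prevents it from closing with the norm $\|F\|$ as stated. The commutator $[\lambda^{-2}\Box,\,Q^{(\tilde\tau)}_{<\gamma^{-1+}}]\,n$ contains terms of the form $[\mu,Q]\,D\partial_{\tilde\tau}n$ and $\mu[\mu,Q]\,D^2 n$ (with $\mu=\lambda_{\tilde\tau}/\lambda$, $D=R\partial_R$). To feed these back into Lemma~\ref{lem:wavebasicinhom} as a source for $n_e$, you need them in the right-hand norm of that lemma, whose second component carries $\langle\partial_\xi\rangle^{1+\delta_0}$. For the $D^2 n$ contribution this unwinds on the Fourier side to $\langle\partial_\xi\rangle^{1+\delta_0}(\xi\partial_\xi)^2\hat n$, and tracing through the explicit propagator \eqref{eq:wavepropagator} this requires control of roughly $\langle\partial_\xi\rangle^{3+\delta_0}\hat F$. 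But $\|F\|$ only furnishes $\langle\partial_\xi\rangle^{1+\delta_0}\hat F$. The claimed ``direct modification of Lemma~\ref{lem:wavebasicinhom}'' therefore does not close with the same right-hand side; it would need a strictly stronger norm on $F$. (In the paper's actual applications the source happens to be smooth in $R$, so this is harmless there, but not at the generality of the lemma.)

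The paper omits the proof, but the argument given for the adjacent Schr\"odinger analogue, Lemma~\ref{lem:freqlocalpropag2}, indicates the intended route and avoids spatial derivatives of $n$ entirely. Write $Q^{(\tilde\sigma)}_{>\gamma^{-1}}F=\partial_{\tilde\sigma}\big(\partial_{\tilde\sigma}^{-1}Q^{(\tilde\sigma)}_{>\gamma^{-1}}F\big)$, gaining a factor $\gamma$, and integrate by parts in $\tilde\sigma$ directly inside \eqref{eq:wavepropagator}. The $\partial_{\tilde\sigma}$ that lands on the kernel $U(\tilde\tau,\tilde\sigma,\xi)$ can be traded for $\partial_{\tilde\tau}$ up to errors of order $\tilde\tau^{-1}$ (differentiate the phase $\lambda(\tilde\tau)\xi\int_{\tilde\tau}^{\tilde\sigma}\lambda^{-1}(s)\,ds$ in each variable and compare). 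Pulling $\partial_{\tilde\tau}$ outside and applying $Q^{(\tilde\tau)}_{<\gamma^{-1+}}$ costs at most $\gamma^{-1+}$, so each iteration nets a factor $\gamma^{0+}$; repeating sufficiently often gives the desired smallness without ever invoking $R\partial_R$ on $n$, hence without demanding extra $\partial_\xi$-regularity of $\hat F$.

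A minor point: in your step claiming $Q^{(\tilde\tau)}_{<\gamma^{-1+}}\tilde F=0$, note that in the $(\tilde\tau,R)$ formulation \eqref{eq:nFintildetauR} the actual source is $\lambda^{-2}(\tilde\tau)\,\tilde F$, so the slowly varying factor $\lambda^{-2}$ sits between the two temporal localizers. One further (harmless) commutator step is needed here before the supports separate.
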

	
	In a similar vein, we have 
	\begin{lem}\label{lem:freqlocalpropag2} Wave temporal frequency localization is compatible with the Schr\"odinger propagator in the following sense: we have for $j = 1,2$
		\begin{align*}
			&Q_{<0}^{(\tilde{\tau})}\int_{\tau}^\infty \int_0^\infty \xi^2 S_j(\tau,\sigma,\xi)\cdot \mathcal{F}(E)(\sigma,\frac{\lambda(\tau)}{\lambda(\sigma)}\xi)\rho(\xi)\,d\xi d\sigma\\
			& = Q_{<0}^{(\tilde{\tau})}\int_{\tau}^\infty \int_0^\infty \xi^2 S_j(\tau,\sigma,\xi)\cdot \mathcal{F}( Q^{(\tilde{\sigma})}_{<\sigma^{\delta}}E)(\sigma,\frac{\lambda(\tau)}{\lambda(\sigma)}\xi)\rho(\xi)\,d\xi d\sigma + F,\\
		\end{align*}
		for any $\delta>0$, where we have 
		\begin{align*}
			\Big\|F\big\|_{\sigma^{-N(1+\delta)}L^2_{d\sigma}}\lesssim_{N,\delta}\big\|E\big\|_{\sigma^{-N}L^2_{d\sigma}L^2_{R^3\,dR}}, 
		\end{align*}
	\end{lem}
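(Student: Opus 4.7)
The plan is to write the difference between the two expressions as
\[
F(\tau) = Q^{(\tilde{\tau})}_{<0}\int_{\tau}^\infty \int_0^\infty \xi^2 S_j(\tau,\sigma,\xi)\cdot \mathcal{F}\bigl(Q^{(\tilde{\sigma})}_{\geq \sigma^{\delta}}E\bigr)\bigl(\sigma,\tfrac{\lambda(\tau)}{\lambda(\sigma)}\xi\bigr)\rho(\xi)\,d\xi\,d\sigma,
\]
and to produce the required gain $\sigma^{-N\delta}$ between the norms on $E$ and on $F$ by exploiting the mismatch between the high input wave-temporal frequency ($\geq \sigma^\delta$) and the unit-scale output wave-temporal frequency imposed by $Q^{(\tilde{\tau})}_{<0}$.

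The approach will be to represent $Q^{(\tilde{\sigma})}_{\geq \sigma^{\delta}} E = \partial_{\tilde{\sigma}}^M G$, where $G := \partial_{\tilde{\sigma}}^{-M}\bigl(Q^{(\tilde{\sigma})}_{\geq \sigma^{\delta}}E\bigr)$ is defined via the obvious Fourier multiplier in $\tilde\sigma$ and, for $M\gtrsim N/\delta$, satisfies
\[
\bigl\|G\bigr\|_{\sigma^{-N-M\delta}L^2_{d\sigma}L^2_{R^3\,dR}} \lesssim_{M,\delta}\bigl\|E\bigr\|_{\sigma^{-N}L^2_{d\sigma}L^2_{R^3\,dR}},
\]
up to perturbative errors stemming from the slow $\sigma$-dependence of the cutoff scale $\sigma^\delta$. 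Converting $\partial_{\tilde{\sigma}} = \lambda(\sigma)\,\partial_\sigma$ and performing $M$ successive integrations by parts in $\sigma$ in the double integral, one generates (i) boundary terms at $\sigma = \tau$ and $\sigma = +\infty$, and (ii) interior integrals in which the derivatives have fallen on the kernel $\xi^2 S_j(\tau,\sigma,\xi)\cdot \lambda^{-1}(\sigma)\rho(\tfrac{\lambda(\tau)}{\lambda(\sigma)}\xi)\tfrac{\lambda(\tau)}{\lambda(\sigma)}$. The boundary contribution at $\sigma=+\infty$ vanishes by the polynomial decay of $E$. For the interior terms, each $\partial_\sigma$ hitting the kernel produces either an $O(\sigma^{-1})$ factor from the scaling terms or a factor of $-i\xi^2$ (plus lower-order terms) from differentiating the phase of $S_j$; the resulting $\xi^2$-factors are absorbed via Plancherel's theorem for the distorted Fourier transform applied to $\mathcal{F}(G)(\sigma,\cdot)$ together with the bound $E\in L^2_{R^3\,dR}$. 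Pairing the gain $\sigma^{-M\delta}$ inherent to $G$ with Schur's criterion in $(\sigma,\tau)$ in the spirit of Lemma~\ref{lem:K_frefined} delivers the desired estimate for these contributions.

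The main obstacle is the boundary term at $\sigma=\tau$, which after $k$ integrations by parts assumes the schematic form
\[
Q^{(\tilde{\tau})}_{<0}\Bigl[\int_0^\infty \xi^2 \cdot \mathcal{F}\bigl(\partial_{\tilde{\sigma}}^{-k}Q^{(\tilde{\sigma})}_{\geq \sigma^{\delta}}E\bigr)(\tau,\xi)\rho(\xi)\,d\xi\Bigr],\qquad 1\leq k\leq M.
\]
The idea is that the quantity inside the brackets, viewed as a function of $\tilde{\tau}$, inherits the high wave-temporal frequency support $(\gtrsim \tau^{\delta})$ of its progenitor evaluated on the diagonal $\tilde{\sigma}=\tilde{\tau}$, so that the outer smooth projection $Q^{(\tilde{\tau})}_{<0}$ reduces it to a rapidly decaying Schwartz-type tail of order $\tau^{-100N}$ courtesy of the smoothness of the convolution kernel $\check{\chi}$ defining the temporal localisers. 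The technical delicacy lies in making this frequency-support heuristic precise despite the fact that the cutoff scale $\sigma^\delta$ varies slowly with $\sigma$; the remedy is to partition $[\tau_*,\infty)$ into dyadic intervals on which $\sigma^\delta$ is essentially constant, handle the contribution from each piece by the fixed-scale frequency-separation argument, and dispatch the resulting commutator errors at the endpoints using symbolic calculus on $\check{\chi}$. Summing over dyadic scales then closes the bound for $F$ on all of $[\tau_*,\infty)$.
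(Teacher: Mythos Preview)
Your overall setup is right, and your treatment of the boundary terms at $\sigma=\tau$ is sound: those terms do inherit wave-temporal frequency $\gtrsim\tau^{\delta}$ when restricted to the diagonal, so $Q^{(\tilde{\tau})}_{<0}$ annihilates them up to rapidly decaying tails. The gap is in your handling of the \emph{interior} terms. When $\partial_\sigma$ hits the phase of $S_j$, it produces (in either the $\xi$ or $\tilde{\xi}$ variable) a genuine factor of $\xi^2$, so that after $M$ integrations by parts the worst interior term carries $\xi^{2M}\cdot S_j\cdot\mathcal{F}(G)$. You claim these $\xi^2$-factors are ``absorbed via Plancherel's theorem \ldots\ together with the bound $E\in L^2_{R^3\,dR}$,'' but Plancherel only identifies $\xi^{2k}\mathcal{F}(G)$ with $\mathcal{F}(\mathcal{L}^kG)$, and the hypothesis gives no control on $\mathcal{L}^kG$ (or $\mathcal{L}^kE$) beyond $k=0$. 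The $\xi$-integral in the lemma is already a pointwise evaluation of type $\mathcal{L}z|_{R=0}$; piling on further powers of $\xi^2$ requires spatial regularity you do not have.

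The paper avoids this by a different mechanism: after passing to $\tilde{\xi}=\tfrac{\lambda(\tau)}{\lambda(\sigma)}\xi$ one observes the key identity
\[
\partial_{\sigma}\tilde{S}_j(\tau,\sigma,\tilde{\xi}) \;=\; \zeta(\tau,\sigma)\,\partial_{\tau}\tilde{S}_j(\tau,\sigma,\tilde{\xi}) \;+\; O(\sigma^{-1}),
\]
with $\zeta$ a harmless symbol-type factor. Thus the $\sigma$-derivative of the propagator is traded for a $\tau$-derivative rather than a $\xi^2$; the $\partial_\tau$ is then pulled outside the integral and through $Q^{(\tilde{\tau})}_{<0}$ via $\partial_\tau Q^{(\tilde{\tau})}_{<0}=\tfrac{\partial\tilde{\tau}}{\partial\tau}\,\partial_{\tilde{\tau}}Q^{(\tilde{\tau})}_{<0}$, where $\partial_{\tilde{\tau}}Q^{(\tilde{\tau})}_{<0}$ is bounded at unit scale. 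The factor $\tfrac{\partial\tilde{\tau}}{\partial\tau}\sim\lambda^{-1}(\tau)$ cancels the $\tfrac{\partial\sigma}{\partial\tilde{\sigma}}\sim\lambda(\sigma)$ you picked up when writing $\partial_{\tilde{\sigma}}=\lambda(\sigma)\partial_\sigma$, using the $\tau^{-N}$ weights. Iterating this exchange is what produces the gain $\sigma^{-N\delta}$ without any loss of spatial regularity. Incorporating this $\partial_\sigma\leftrightarrow\partial_\tau$ swap into your integration-by-parts scheme fixes the argument.
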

	\begin{proof} We need to show that the function 
		\begin{align*}
			F = Q_{<0}^{(\tilde{\tau})}\int_{\tau}^\infty \int_0^\infty \xi^2 S_j(\tau,\sigma,\xi)\cdot \mathcal{F}( Q^{(\tilde{\sigma})}_{\geq \sigma^{\delta}}E)(\sigma,\frac{\lambda(\tau)}{\lambda(\sigma)}\xi)\rho(\xi)\,d\xi d\sigma
		\end{align*}
		enjoys better decay with respect to $\tau$. First pass to the new integration variable $\tilde{\xi}: = \frac{\lambda(\tau)}{\lambda(\sigma)}\xi$, and write 
		\begin{align*}
			\mathcal{F}( Q^{(\tilde{\sigma})}_{\geq \sigma^{\delta}}E)(\sigma,\tilde{\xi}) &= \partial_{\tilde{\sigma}}\big(\partial_{\tilde{\sigma}}^{-1} \mathcal{F}( Q^{(\tilde{\sigma})}_{\geq \sigma^{\delta}}E)(\sigma,\tilde{\xi})\big)\\
			& = \frac{\partial\sigma}{\partial\tilde{\sigma}}\cdot\partial_{\sigma}\big(\partial_{\tilde{\sigma}}^{-1} \mathcal{F}( Q^{(\tilde{\sigma})}_{\geq \sigma^{\delta}}E)(\sigma,\tilde{\xi})\big),
		\end{align*}
		where we observe the bound 
		\begin{align*}
			\big\|\big(\partial_{\tilde{\sigma}}^{-1} \mathcal{F}( Q^{(\tilde{\sigma})}_{\geq \sigma^{\delta}}E)(\sigma,\tilde{\xi})\big)\big\|_{\sigma^{-N-\delta}L^2_{d\sigma}L^2_{\rho(\tilde{\xi})}}\lesssim \big\|E\big\|_{\sigma^{-N}L^2_{d\sigma}L^2_{R^3\,dR}}. 
		\end{align*}
		We insert the preceding identity for 
		\[
		\mathcal{F}( Q^{(\tilde{\sigma})}_{\geq \sigma^{\delta}}E)(\sigma,\tilde{\xi})
		\]
		in the double integral and perform integration by parts, which in particular results in the factor (writing $S_j(\tau,\sigma,\xi) = \tilde{S}_j(\tau,\sigma, \tilde{\xi})$)
		\begin{align*}
			&\partial_{\sigma}\tilde{S}_j(\tau,\sigma, \tilde{\xi}) = \zeta(\tau,\sigma)\cdot \partial_{\tau}\tilde{S}_j(\tau,\sigma, \tilde{\xi}) + O(\sigma^{-1}),\\
			&\zeta(\tau,\sigma) = \frac{\lambda^2(\tau)}{\lambda^2(\sigma)}\cdot\big(1 + 2\lambda_{\sigma}\lambda(\sigma)\cdot\int_{\tau}^{\sigma}\lambda^{-2}(s)\,ds\big). 
		\end{align*}
		In turn we can write 
		\begin{align*}
			&Q_{<0}^{(\tilde{\tau})}\int_{\tau}^\infty \int_0^\infty \tilde{\xi}^2  \zeta(\tau,\sigma)\cdot \partial_{\tau}\tilde{S}_j(\tau,\sigma, \tilde{\xi}) \cdot \mathcal{F}( Q^{(\tilde{\sigma})}_{\geq \sigma^{\delta}}E)(\sigma,\tilde{\xi})\rho(\frac{\lambda(\sigma)}{\lambda(\tau)}\tilde{\xi})\,d\tilde{\xi} d\sigma\\
			& = \partial_{\tau}Q_{<0}^{(\tilde{\tau})}\int_{\tau}^\infty \int_0^\infty \tilde{\xi}^2  \zeta(\tau,\sigma)\cdot\tilde{S}_j(\tau,\sigma, \tilde{\xi}) \cdot \mathcal{F}( Q^{(\tilde{\sigma})}_{\geq \sigma^{\delta}}E)(\sigma,\tilde{\xi})\rho(\frac{\lambda(\sigma)}{\lambda(\tau)}\tilde{\xi})\,d\tilde{\xi} d\sigma +O(\tau^{-1})\\
		\end{align*}
		where we have 
		\[
		\partial_{\tau}Q_{<0}^{(\tilde{\tau})} = \frac{\partial\tilde{\tau}}{\partial\tau}\cdot \partial_{\tilde{\tau}}Q_{<0}^{(\tilde{\tau})}
		\]
		and the factor $\frac{\partial\tilde{\tau}}{\partial\tau}$ compensates for the factor $\frac{\partial\sigma}{\partial\tilde{\sigma}}$, taking advantage of the weights $\tau^{-N}$ in our norms. The lemma follows by reiterating this integration by parts sufficiently many times. 
		
	\end{proof}
	
	The next lemma deals with converting wave temporal frequency localization to Schr\"odinger temporal frequency localization
	\begin{lem}\label{lem:wavetoSchrodfreqloc} We can write 
		\begin{align*}
			Q^{(\tilde{\tau})}_{\geq a}f|_{[\tau_*,\infty)} &= Q^{(\tau)}_{\geq a\cdot\tau^{-\frac12-\frac{1}{4\nu}-}}\circ Q^{(\tilde{\tau})}_{\geq a}f|_{[\tau_*,\infty)} + O_{\tau^{-N-3}L^2_{d\tau}}(\big\|f\big\|_{\tau^{-N}L^2_{d\tau}})\\
			& =: \tilde{f}^{(a)} + O_{\tau^{-N-3}L^2_{d\tau}}(\big\|f\big\|_{\tau^{-N}L^2_{d\tau}})\\
		\end{align*}
		for a function $f$ supported on $[\frac{\tau_*}{2},\infty)$. Moreover, if $\zeta(\hat{\tau})\in C^\infty(\R\backslash\{0\})$ is globally bounded with symbol type bounds, then for $a\geq 0$
		\begin{align*}
			\Big\|\mathcal{F}_{\tau}^{-1}\big(\zeta(\hat{\tau})\cdot \mathcal{F}_{\tau}\big(\tilde{f}^{(a)}\big)(\hat{\tau})\Big\|_{\tau^{-N}L^2_{d\tau}[(\tau_*,\infty)}\lesssim_a \big\|f\big\|_{\tau^{-N}L^2_{d\tau}}.
		\end{align*}
		If $\zeta$ moreover satisfies the bound $\big|\zeta(\hat{\tau})\big|\leq |\log\hat{\tau}|^{-2}$, $0<|\hat{\tau}|\ll1$, then we have 
		\begin{align*}
			\Big\|\mathcal{F}_{\tau}^{-1}\big(\zeta(\hat{\tau})\cdot \mathcal{F}_{\tau}\big(\tilde{f}^{(a)}\big)(\hat{\tau})\Big\|_{\log^{-2}\tau\cdot \tau^{-N}L^2_{d\tau}[(\tau_*,\infty)}\lesssim \big\|f\big\|_{\tau^{-N}L^2_{d\tau}}.
		\end{align*}
	\end{lem}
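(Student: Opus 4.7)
The identity rests on the basic algebraic relation $\partial_\tau = \lambda^{-1}\partial_{\tilde\tau}$ together with $\lambda^{-1}\sim \tau^{-\frac12-\frac{1}{4\nu}}$, which says that one derivative in wave time corresponds to $\lambda$ derivatives in Schr\"odinger time; hence wave-time oscillations at scale $\ge a$ translate, up to error arising from the nonuniformity of the change of variables, into Schr\"odinger-time oscillations at scale $\gtrsim a\tau^{-\frac12-\frac{1}{4\nu}}$. To establish the first approximate identity, I would set $g := Q^{(\tilde\tau)}_{\ge a}f$ and bound the complementary piece
\[
E(\tau) := Q^{(\tau)}_{<b(\tau)}\,g(\tau),\qquad b(\tau) := a\tau^{-\frac12-\frac{1}{4\nu}-\epsilon},
\]
in $\tau^{-N-3}L^2_{d\tau}$. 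Using that $g$ has wave-time Fourier support $\ge a$, write $g = \partial_{\tilde\tau}^K h$ where $h = \partial_{\tilde\tau}^{-K}g$ satisfies $\|h\|_{\tau^{-N}L^2}\lesssim a^{-K}\|f\|_{\tau^{-N}L^2}$. Because $\partial_{\tilde\tau} = \lambda\,\partial_\tau$, one integration by parts in $\tau'$ in the defining convolution kernel $K(\tau,\tau') = b(\tau)\check\chi(b(\tau)(\tau-\tau'))$ brings in a factor of $b(\tau)\lambda(\tau')$. On the relevant range $|\tau-\tau'|\lesssim b(\tau)^{-1}\sim \tau^{\frac12+\frac{1}{4\nu}+\epsilon}/a$ we have $\tau'\sim\tau$, so $b(\tau)\lambda(\tau')/a\sim \tau^{-\epsilon}$, and each of the $K$ integrations by parts gains a net factor of $\tau^{-\epsilon}$ after the $a^K/a^K$ cancellation. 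Applying Schur's test to the resulting kernel (weighted by $\tau^{N+3}\tau'^{-N}\sim \tau^3$ in the relevant range) forces the requirement $K\epsilon \ge 3$, which is satisfied by taking $K$ large enough depending on $\nu$.

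For the multiplier estimates, standard $L^2$ boundedness gives $\|\mathcal{F}_\tau^{-1}\zeta\mathcal{F}_\tau\|_{L^2\to L^2}\lesssim\|\zeta\|_\infty$, and the weight $\tau^{-N}$ is carried through by commutator arguments: using the symbol bounds on $\zeta$, $[\tau^N,\mathcal{F}_\tau^{-1}\zeta\mathcal{F}_\tau]$ produces operators of strictly lower order in $\tau$, which can be iterated away. This yields the first multiplier bound with implied constant depending on $a$. For the sharper bound under $|\zeta(\hat{\tilde\tau})|\lesssim|\log\hat{\tilde\tau}|^{-2}$ near zero, I would split $\zeta = \zeta_{lo}+\zeta_{hi}$ where $\zeta_{lo}$ is supported in $|\hat\tau|\le 2a\tau_*^{-\frac12-\frac{1}{4\nu}-}$ and $\zeta_{hi}$ in its complement. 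On $\supp\zeta_{hi}$ we have $|\zeta_{hi}|\lesssim \log^{-2}\tau_*\lesssim \log^{-2}\tau$, giving the desired gain directly by $L^2$ boundedness. For the contribution of $\zeta_{lo}$, the point is that $\tilde f^{(a)}$ is, up to negligible errors, concentrated in Schr\"odinger-temporal frequency $\gtrsim a\tau^{-\frac12-\frac{1}{4\nu}-}$; a further pseudo-local argument (essentially the same integration by parts as in the first part, exploiting the fact that $\check\zeta_{lo}$ is Schwartz at scale $b(\tau)^{-1}$) shows that $\mathcal{F}_\tau^{-1}\zeta_{lo}\mathcal{F}_\tau\tilde f^{(a)}$ is $O_{\tau^{-N-1}L^2}$ of $\|f\|_{\tau^{-N}L^2}$, which is more than enough.

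The main obstacle will be the careful tracking of the $\tau$-dependence of the cutoff parameter $b(\tau)$ through both the integration-by-parts scheme and the Schur test: one must verify that the gain $\tau^{-\epsilon}$ per integration by parts genuinely accumulates and is not spoiled either by the terms where $\partial_{\tau'}$ falls on $\lambda(\tau')$ (which behave like additional factors of $\tau'^{-1}$ and hence are strictly favorable) or by the variation of $b(\tau)$ within the support of $\check\chi(b(\tau)(\tau-\tau'))$. A secondary difficulty is ensuring that the non-standard $\tau$-dependent frequency projector $Q^{(\tau)}_{\ge b(\tau)}$ interacts cleanly with the standard Fourier multiplier $\mathcal{F}_\tau^{-1}\zeta\mathcal{F}_\tau$; this is handled by the splitting $\zeta = \zeta_{lo}+\zeta_{hi}$ and pseudo-locality, but requires keeping track of error terms arising from the $\tau$-dependence of the localization scale.
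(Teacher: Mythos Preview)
Your argument for the first identity is correct and essentially matches the paper's: both iterate the relation $\partial_{\tilde\tau}=\lambda\partial_\tau$ against the smallness of $b(\tau)\lambda(\tau)/a\sim\tau^{-\epsilon}$ (the paper phrases this as repeated application of $\|\partial_{\tilde\tau}^{-1}Q^{(\tilde\tau)}_{\ge a}f\|\lesssim_a\|f\|$ and $\|Q^{(\tau)}_{<b(\tau)}\circ(\tfrac{\partial\tau}{\partial\tilde\tau}\partial_\tau)f\|_{\tau^{-N-}}\lesssim\|f\|_{\tau^{-N}}$).

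For the second multiplier bound, your commutator scheme and the paper's approach differ. The paper dyadically decomposes $\tilde f^{(a)}=\sum_{j\gtrsim\log\tau_*}Q^{(\tau)}_{\ge b_j}(\chi_j Q^{(\tilde\tau)}_{\ge a}f)$ with $b_j=a\cdot 2^{-(\frac12+\frac{1}{4\nu}+)j}$, freezing the $\tau$-dependent cutoff into a genuine Fourier multiplier on each piece, then records the mapping $\mathcal{F}^{-1}\partial_{\hat\tau}^\alpha(\zeta\cdot\mathcal{F}Q^{(\tau)}_{\ge b_j}):\tau^{-N}L^2\to 2^{\alpha(\frac12+\frac{1}{4\nu}+)j}\tau^{-N}L^2+\tau^{-N+\alpha}L^2$. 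Your commutator expansion $\tau^N M_\zeta=\sum_k\binom{N}{k}M_{(i\partial)^k\zeta}\tau^{N-k}$ runs into the issue that symbol bounds only give $|\zeta^{(k)}(\hat\tau)|\lesssim|\hat\tau|^{-k}$, so $M_{\zeta^{(k)}}$ is not $L^2$-bounded; controlling this requires the lower bound $|\hat\tau|\ge b(\tau)$ on $\tilde f^{(a)}$, and making that precise when $b$ depends on $\tau$ is exactly what the dyadic decomposition does. You flag this as a ``secondary difficulty'' but do not resolve it.

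There is a genuine gap in your treatment of the third inequality. Your splitting $\zeta=\zeta_{lo}+\zeta_{hi}$ at the \emph{fixed} scale $b(\tau_*)$ fails on both pieces. For $\zeta_{hi}$: the bound $|\zeta_{hi}|\lesssim(\log\tau_*)^{-2}$ is a constant, and since $\log^2\tau\ge\log^2\tau_*$ on $[\tau_*,\infty)$ it does not absorb the varying weight $\log^2\tau$ in the target norm --- the chain $\log^{-2}\tau_*\lesssim\log^{-2}\tau$ goes the wrong way. For $\zeta_{lo}$: because $b(\tau)<b(\tau_*)$ for $\tau>\tau_*$, the support of $\zeta_{lo}$ (at $|\hat\tau|\le 2b(\tau_*)$) genuinely overlaps the frequency support of $\tilde f^{(a)}$ (at $|\hat\tau|\ge b(\tau)$), so pseudo-locality gives nothing. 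What is needed is a $\tau$-dependent (equivalently, $j$-dependent) splitting: on the $j$-th dyadic piece the Fourier support is $|\hat\tau|\ge b_j\sim 2^{-(\frac12+\frac{1}{4\nu}+)j}$, and one must exploit that $|\zeta|\lesssim|\log\hat\tau|^{-2}$ there together with the physical localization $\tau\sim 2^j$ to extract the $j^{-2}\sim\log^{-2}\tau$ gain. The paper's proof sketch does not spell this out either, but its dyadic framework is the right starting point.
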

	\begin{proof} The first statement of the lemma follows by using that 
		\[
		\Big\|Q^{(\tau)}_{<a\cdot\tau^{-\frac12-\frac{1}{4\nu}-}}\circ Q^{(\tilde{\tau})}_{\geq a}f|_{[\tau_*,\infty)}\Big\|_{\tau^{-N-3}L^2_{d\tau}} \lesssim_a   \big\|f\big\|_{\tau^{-N}L^2_{d\tau}},
		\]
		in turn a consequence of repeated application of the relations
		\[
		\big\|\partial_{\tilde{\tau}}^{-1} Q^{(\tilde{\tau})}_{\geq a}f\big\|_{\tau^{-N}L^2_{d\tau}}\lesssim _a  \big\|f\big\|_{\tau^{-N}L^2_{d\tau}}, \Big\|\big(Q^{(\tau)}_{<a\cdot\tau^{-\frac12-\frac{1}{4\nu}-}}\circ \big(\frac{\partial\tau}{\partial_{\tilde{\tau}}}\partial_{\tau}\big)f\big)|_{[\tau_*,\infty)}\Big\|_{\tau^{-N-}L^2_{d\tau}}\lesssim _a  \big\|f\big\|_{\tau^{-N}L^2_{d\tau}}.
		\]
		For the second inequality, we write  (here $\chi_j(\tau)$ localizes smoothly to $\tau\sim 2^j$)
		\begin{align*}
			\tilde{f}^{(a)}|_{[\tau_*,\infty)} = \sum_{j\gtrsim \log\tau_*}Q^{(\tau)}_{\geq a\cdot 2^{-(\frac12+\frac{1}{4\nu}+)j}}\big(\chi_j(\tau)Q^{(\tilde{\tau})}_{\geq a}f\big)
		\end{align*}
		Then the desired inequality follows from Plancherel's theorem as well as the relation 
		\begin{align*}
			(\mathcal{F}^{(\tau)})^{-1}\partial_{\hat{\tau}}^{\alpha}\big(\zeta(\hat{\tau})\cdot \mathcal{F}^{(\tau)}Q^{(\tau)}_{\geq a\cdot 2^{-(\frac12+\frac{1}{4\nu}+)j}}(\cdot)\big): \tau^{-N}L^2_{d\tau}\longrightarrow 2^{\alpha\cdot (\frac12+\frac{1}{4\nu}+)j}\tau^{-N}L^2_{d\tau} + \tau^{-N+\alpha}L^2_{d\tau}
		\end{align*}
	\end{proof}

	\begin{lem}\label{lem:yzWbound1} Letting 
		\[
		f(\tau, R) := Q^{(\tilde{\tau})}_{<a}\big(\lambda^{-2}y_z\cdot W\big), 
		\]
		we have the bounds (for $l\geq 0$)
		\begin{align*}
			\Big\|\langle\xi\partial_{\xi}\rangle^{1+\delta_1}\partial_{\tau}^l\langle  f, \frac{\phi(R;\xi) - \phi(R;0)}{\xi^2}\rangle_{L^2_{R^3\,dR}} \Big\|_{a^{l}\tau^{-N-l(\frac12+\frac{1}{2\nu})+}L^2_{d\tau}L^2_{\rho(\xi)\,d\xi}}\lesssim \big\|z\big\|_{S}. 
		\end{align*}
		Also, we have the bounds (for $l\geq 0$)
		\begin{align*}
			\Big\|\langle\xi\partial_{\xi}\rangle^{1+\delta_1}\partial_{\tau}^l\langle  f, \phi(R;\xi)\rangle_{L^2_{R^3\,dR}} \Big\|_{a^{l}\tau^{-N-l(\frac12+\frac{1}{2\nu})+}L^2_{d\tau}L^2_{\rho(\xi)\,d\xi}}\lesssim \big\|z\big\|_{S}. 
		\end{align*}
	\end{lem}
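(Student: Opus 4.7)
The plan is to reduce both bounds to the machinery of Corollary~\ref{cor:yzW}, Corollary~\ref{cor:yzWpartialtau} and Lemma~\ref{lem:yzWnonosc}, exploiting that the wave-temporal localiser $Q^{(\tilde\tau)}_{<a}$ passes under the spatial pairing $\langle\cdot,\cdot\rangle_{L^2_{R^3\,dR}}$ and interacts transparently with $\partial_\tau$ via the chain rule $\partial_\tau=\tfrac{\partial\tilde\tau}{\partial\tau}\partial_{\tilde\tau}$.

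For the base case $l=0$, the second estimate is immediate from Corollary~\ref{cor:yzW} applied to $F=2\lambda^2\triangle\Re(W\bar z)$, combined with the fact that $Q^{(\tilde\tau)}_{<a}$ is a convolution operator in $\tilde\tau$, bounded uniformly in $a$ on every $\tau^{-N}L^2_{d\tau}([\tau_*,\infty))$. For the first estimate I would set $\psi(R;\xi):=\xi^{-2}\bigl(\phi(R;\xi)-\phi(R;0)\bigr)$. The expansion of $\phi$ about $\xi=0$ from subsection~\ref{subsec:basicfourier} shows $\psi$ to be smooth in the non-oscillatory regime $R\xi\lesssim 1$, while the oscillatory representation in $R\xi\gtrsim 1$ gives $\psi$ symbol-type bounds in $(R,\xi)$ save for a borderline $\xi^{-2}$ decay as $\xi\to\infty$. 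This borderline I would handle by splitting $\psi=\chi_{\xi\le 1}\psi+\chi_{\xi>1}\psi$: the low-frequency piece satisfies the stronger hypothesis of Lemma~\ref{lem:yzWnonosc} directly, while for the high-frequency piece I would invoke the $L^{2+}_{\rho(\xi)\,d\xi}$-variant of that lemma and interpolate to $L^2_{\rho(\xi)\,d\xi}$ using the extra $\xi$-decay of the $\xi^{-2}$ factor.

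For $l\ge 1$ I would write $h:=\lambda^{-2}y_z\cdot W$ and iterate the identity
\[
\partial_\tau\bigl(Q^{(\tilde\tau)}_{<a}h\bigr)=\tfrac{\partial\tilde\tau}{\partial\tau}\,Q^{(\tilde\tau)}_{<a}\bigl(\partial_{\tilde\tau}h\bigr),
\]
using that $Q^{(\tilde\tau)}_{<a}$ commutes with $\partial_{\tilde\tau}$ and $\partial_{\tilde\tau}\circ Q^{(\tilde\tau)}_{<a}$ is a Fourier multiplier in $\tilde\tau$ of operator norm $\lesssim a$. A Leibniz expansion then presents $\partial_\tau^l\langle Q^{(\tilde\tau)}_{<a}h,\psi\rangle$ as a finite sum of terms of the form
\[
\bigl(\tfrac{\partial\tilde\tau}{\partial\tau}\bigr)^{l-j}\partial_\tau^{j_1}(\tfrac{\partial\tilde\tau}{\partial\tau})\cdots\partial_\tau^{j_p}(\tfrac{\partial\tilde\tau}{\partial\tau})\cdot\bigl\langle Q^{(\tilde\tau)}_{<a}\partial_{\tilde\tau}^{k}h,\psi\bigr\rangle_{L^2_{R^3\,dR}}
\]
with $k\le l$. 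The frequency cutoff contributes the total factor $a^l$, while $\tfrac{\partial\tilde\tau}{\partial\tau}=\lambda^{-1}$ enjoys symbol bounds in $\tau$ so its $\tau$-derivatives only sharpen the decay. To upgrade the resulting natural weight $\tau^{-l(\frac12+\frac{1}{4\nu})}$ to the claimed $\tau^{-l(\frac12+\frac{1}{2\nu})+}$, those $\partial_{\tilde\tau}$'s that fall on the factor $y_z$ are rewritten as $\lambda^{-1}\partial_\tau$ and estimated via Corollary~\ref{cor:yzWpartialtau}, the supplementary $\lambda^{-1}$ providing the missing $\tau^{-\frac{1}{4\nu}}$ per derivative; this brings the estimate back to the base case already settled.

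The principal obstacle, as I see it, is the careful accounting of the two independent sources of temporal decay in the $l\ge 1$ case—the Jacobian factor $\partial\tilde\tau/\partial\tau=\lambda^{-1}$ produced by the chain rule, and the gain extracted from Corollary~\ref{cor:yzWpartialtau} when differentiating $y_z$—so as to assemble the precise weight $\tau^{-l(\frac12+\frac{1}{2\nu})+}$. A secondary obstacle is checking the symbol bounds for $\psi$ in the transition region $R\xi\sim 1$ and reconciling its borderline $\xi^{-2}$ decay with the full $L^2_{\rho(\xi)\,d\xi}$-norm on the right-hand side; once these bookkeeping steps are carried through, the proof is fully reduced to the already-established corollaries.
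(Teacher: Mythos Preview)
Your reduction of the second estimate at $l=0$ to Corollary~\ref{cor:yzW} is fine, and your chain-rule bookkeeping for $l\ge 1$ is essentially the paper's one-line observation $\partial_\tau^l Q^{(\tilde\tau)}_{<a}:\tau^{-N}L^2_{d\tau}\to a^l(\tilde\tau/\tau)^l\tau^{-N}L^2_{d\tau}$. The attempted upgrade from the natural weight $\tau^{-l(\frac12+\frac{1}{4\nu})}$ to $\tau^{-l(\frac12+\frac{1}{2\nu})}$ does not work as you describe (rewriting $\partial_{\tilde\tau}=\lambda\,\partial_\tau$ gives a factor $\lambda$, not $\lambda^{-1}$, so Corollary~\ref{cor:yzWpartialtau} returns you exactly to where you started), but this appears to be a typo in the stated exponent rather than a genuine obstruction; the paper's own proof only produces $(\tilde\tau/\tau)^l$.

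The real gap is in the first estimate at $l=0$. Your plan is to set $\psi(R;\xi)=\xi^{-2}(\phi(R;\xi)-\phi(R;0))$ and feed it to Lemma~\ref{lem:yzWnonosc}, splitting only in $\xi$. But that lemma requires the symbol bound $\|(R\partial_R)^{l_2}\psi\|_{L^\infty_{R,\xi}}\lesssim 1$, and in the oscillatory zone $R\xi\gtrsim 1$ the factor $e^{\pm iR\xi}$ in $\phi(R;\xi)$ makes $(R\partial_R)\psi$ grow like $R\xi$; this zone is present for every $\xi>0$, so neither $\chi_{\xi\le 1}\psi$ nor $\chi_{\xi>1}\psi$ satisfies the hypothesis. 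The paper instead splits by $R\xi$: the non-oscillatory piece $\chi_{R\xi\lesssim 1}\psi$ does satisfy the symbol bounds and goes through Lemma~\ref{lem:yzWnonosc}; in the oscillatory piece one writes
\[
\chi_{R\xi\gtrsim 1}\,\frac{\phi(R;\xi)-\phi(R;0)}{\xi^2}
=\chi_{R\xi\gtrsim 1}\sum_{\pm}\frac{e^{\pm iR\xi}}{R^{3/2}\xi^{3/2}}\;-\;\chi_{R\xi\gtrsim 1}\,\frac{1}{(1+R^2)\xi^2},
\]
handles the second (non-oscillatory) term again via Lemma~\ref{lem:yzWnonosc}, and observes that applying $\langle\xi\partial_\xi\rangle^{1+\delta_1}$ to the first term costs at most $(R\xi)^{1+\delta_1}$, after which the pairing with $f=Q^{(\tilde\tau)}_{<a}(\lambda^{-2}y_z\cdot W)$ has exactly the $\Phi_\pm$-structure of Lemma~\ref{lem:specialF1}, which supplies the required $L^2_{\rho(\xi)\,d\xi}$ bound. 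You are missing this last step; without invoking Lemma~\ref{lem:specialF1} (or an equivalent oscillatory-integral estimate combining the phases of $\phi(R;\xi)$ and of the wave propagator inside $y_z$), the argument for the oscillatory region does not close.
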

	\begin{proof} The second estimate is a straightforward consequence of Lemma~\ref{cor:yzW} and the fact that 
		\[
		\partial_{\tau}^l Q_{<a}^{(\tilde{\tau})}: \tau^{-N}L^2_{d\tau}\longrightarrow a^l (\frac{\tilde{\tau}}{\tau})^l\tau^{-N}L^2_{d\tau}. 
		\]
		For the first bound, we split the parenthesis into a non-oscillatory and an oscillatory part (with respect to $\xi$) by writing 
		\begin{equation}\label{eq:fphiR0decomp}\begin{split}
				\langle  f, \frac{\phi(R;\xi) - \phi(R;0)}{\xi^2}\rangle_{L^2_{R^3\,dR}} &= \langle  f, \chi_{R\xi\lesssim 1}\frac{\phi(R;\xi) - \phi(R;0)}{\xi^2}\rangle_{L^2_{R^3\,dR}}\\
				& +  \langle f, \chi_{R\xi\gtrsim 1}\frac{\phi(R;\xi) - \phi(R;0)}{\xi^2}\rangle_{L^2_{R^3\,dR}}\\
		\end{split}\end{equation}
		The desired estimate for the first term on the right follows from a straightforward modification of Lemma~\ref{lem:yzWnonosc} (namely to include the temporal smoothing operator $Q^{(\tilde{\tau})}_{<a}$), which upon setting 
		\[
		\psi(R;\xi): = \chi_{R\xi\lesssim 1}\frac{\phi(R;\xi) - \phi(R;0)}{\xi^2}
		\]
		and keeping in mind subsection~\ref{subsec:basicfourier} implies
		\begin{align*}
			\Big\|\langle\xi\partial_{\xi}\rangle^{1+\delta_1}\langle  f, \chi_{R\xi\lesssim 1}\frac{\phi(R;\xi) - \phi(R;0)}{\xi^2}\rangle_{L^2_{R^3\,dR}} \Big\|_{\tau^{-N+}L^2_{d\tau}L^2_{\rho(\xi)\,d\xi}}\lesssim \big\|z\big\|_{S}
		\end{align*}
		The bound including $\partial_{\tau}^l$ follows by the same argument, taking into account the definition of $Q^{(\tilde{\tau})}_{<a}$. 
		\\
		As for the second term on the right in \eqref{eq:fphiR0decomp}, which is in the oscillatory regime for $\phi(R;\xi)$, 
		we have the schematic expansion
		\begin{align*}
			\chi_{R\xi\gtrsim 1}\frac{\phi(R;\xi) - \phi(R;0)}{\xi^2} = \chi_{R\xi\gtrsim 1}\sum_{\pm}\frac{e^{\pm iR\xi}}{R^{\frac32}\xi^{\frac32}} - \chi_{R\xi\gtrsim 1}\frac{1}{(1+R^2)\xi^2}.
		\end{align*}
		The contribution of the second term on the right is again handled by means of Lemma~\ref{lem:yzWnonosc}, upon choosing $\psi(R;\xi) = \chi_{R\xi\gtrsim 1}\frac{1}{(1+R^2)\xi^2}$. As for the first term on the right, applying 
		$\langle\xi\partial_{\xi}\rangle^{1+\delta_1}$ 'costs' $(R\xi)^{1+\delta_1}$, and so taking the inner product with $f$ results in a term falling under the purview of Lemma~\ref{lem:specialF1}. 
	\end{proof}
	
	Still in the general context of refined estimates around the problematic term $f$ from previous lemma and certain variants, we have the following improvement of Lemma~\ref{lem:wavebasicinhom}: 
	\begin{lem}\label{lem:refinedwavepropagatorwithphysicallocalization} Let $n$ be the wave evolution given by \eqref{eq:nflatfourierrepresent}, \eqref{eq:wavepropagator}. Then we have the bound 
		\begin{align*}
			\big\|\chi_{R\gtrsim \tau^{\frac12-}}n\big\|_{\tau^{-N}L^2_{d\tau}\dot{H}^1_{R^3\,dR}}\lesssim \big\|\lambda^{-2}\langle R\rangle F\big\|_{\tau^{-N-}L^2_{d\tau}L^2_{R^3\,dR}} + \big\|\lambda^{-2}\nabla^{-1}F\big\|_{\tau^{-N}L^2_{d\tau}L^2_{R^3\,dR}}
		\end{align*}
		In particular, if $F = \lambda^2\triangle \Re(W\bar{z})$, we obtain the bound 
		\begin{align*}
			&\big\|\chi_{R\gtrsim \tau^{\frac12-}}n\big\|_{\tau^{-N}L^2_{d\tau}L^{2+}_{R^3\,dR}}\lesssim \big\|z\big\|_{S},\\
			&\big\|R\partial_R\big(\chi_{R\gtrsim \tau^{\frac12-}}n\big)\big\|_{\tau^{-N}L^2_{d\tau}L^{2+}_{R^3\,dR}}\lesssim \big\|z\big\|_{S}
		\end{align*}
	\end{lem}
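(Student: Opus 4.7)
The plan is to refine the proof of Lemma~\ref{lem:wavebasicinhom} by exploiting the physical localization $\chi_{R\gtrsim\tau^{\frac12-}}$. Since $\tilde\tau\sim_\nu\tau^{\frac12-\frac{1}{4\nu}}$, this localization forces $R\gg\tilde\tau$, and for $\tilde\sigma$ in the effective temporal support of $F$ (essentially $\tilde\sigma\sim\tilde\tau$ up to rapidly decaying tails from the $\sigma^{-N}$-weight) we also have $R\gg\tilde\sigma-\tilde\tau$. Thus we are always in the outer cone region, analogous to the $n_3$ piece in the earlier proof, and the combined phase appearing in \eqref{eq:wavepropagator}--\eqref{eq:nflatfourierrepresent} is non-stationary in $\xi$ on the relevant support.

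Starting from \eqref{eq:nflatfourierrepresent} with an extra $\nabla_R\phi_{\R^4}(R;\xi)$ in the integrand, I split the $\xi$-integral according to whether $R\xi\lesssim 1$ or $R\xi\gtrsim 1$. In the oscillatory regime the factor $\nabla_R\phi_{\R^4}(R;\xi)$ is of the form $e^{\pm iR\xi}$ times a symbol; combined with the $\sin$-factor of $U(\tilde\tau,\tilde\sigma,\xi)$, the full $\xi$-derivative of the phase has magnitude $\sim R\pm(\tilde\sigma-\tilde\tau)\sim R$ by our hypothesis. One integration by parts in $\xi$ gains $R^{-1}$; a second is arranged either to gain a further $R^{-1}$ or to transfer a $\xi$-derivative onto $\mathcal F_{\R^4}(F)$, producing either $\nabla^{-1}F$ or, after pairing the $R^{-1}$-gain against $\langle R\rangle$, the source norm $\|\lambda^{-2}\langle R\rangle F\|$ on the right-hand side. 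The non-oscillatory range $\xi\lesssim R^{-1}\lesssim\tau^{-\frac12+}$ is handled by a direct Cauchy--Schwarz estimate in $\xi$ (using $\rho_{\R^4}(\xi)\sim\xi^3$ near the origin) combined with Schur's test against the temporal kernel $\chi_{\tilde\sigma\geq\tilde\tau}\langle\tilde\sigma-\tilde\tau\rangle^{-1-\delta_0}$, after one preparatory integration by parts transferring a derivative onto $F$.

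The main obstacle is the on-cone contribution $R\sim\tilde\sigma-\tilde\tau$, where integration by parts in $\xi$ fails. In this resonant regime, however, $\tilde\sigma\gtrsim R\gtrsim\tau^{\frac12-}$ exceeds the natural support $\tilde\sigma\sim\tilde\tau\sim\tau^{\frac12-\frac{1}{4\nu}}$ of $F$ by a power of $\tau$, so the off-diagonal Schur kernel yields a temporal gain $\sim\tau^{-\frac{1}{2\nu}+}$. This gain is exactly what is absorbed into the discrepancy between the $\tau^{-N-}$ and $\tau^{-N}$ weights on the two source norms on the right, explaining the small temporal loss in the first factor.

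For the consequence with $F=\lambda^2\triangle\Re(W\bar z)$, we expand $\lambda^{-2}\langle R\rangle F=\langle R\rangle\triangle\Re(W\bar z)$ and $\lambda^{-2}\nabla^{-1}F=\nabla\Re(W\bar z)$. Leibniz's rule and the rapid decay of $W,\nabla W,\triangle W$ paired with the weighted $L^\infty$ norms and the $\|\mathcal L^2 z_{nres}\|_U$-component of \eqref{eq:Snormdefi} give control by $\|z\|_S$. The $L^{2+}$ bounds on $\chi_{R\gtrsim\tau^{\frac12-}}n$ and $R\partial_R(\chi_{R\gtrsim\tau^{\frac12-}}n)$ then follow by interpolating the newly established $\dot H^1$-bound with the $L^2$-bound from Lemma~\ref{lem:wavebasicinhom}, invoking the radial Sobolev embedding $\dot H^1_{\R^4,\mathrm{rad}}\hookrightarrow L^4_{\R^4}$, and noting that on the support of $\chi_{R\gtrsim\tau^{\frac12-}}$ the effective size of $R$ is $\lesssim\tilde\tau\lesssim\tau^{\frac12-\frac{1}{4\nu}+}$ by finite speed of propagation combined with the $\sigma^{-N}$-decay of $F$; this absorbs the extra $R$-weight needed for the second $L^{2+}$ bound.
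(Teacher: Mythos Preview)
Your approach is essentially the same as the paper's --- integration by parts in $\xi$ exploiting that the combined phase $R\xi\pm\lambda(\tilde\tau,\tilde\sigma)\xi$ is non-stationary --- but you overcomplicate the non-resonance step. The key structural fact (already recorded in the proof of Lemma~\ref{lem:wavebasicinhom}) is that $\lambda(\tilde\tau,\tilde\sigma)=\lambda(\tilde\tau)\int_{\tilde\tau}^{\tilde\sigma}\lambda^{-1}(s)\,ds\lesssim\tilde\tau$ for \emph{all} $\tilde\sigma\geq\tilde\tau$: for $\tilde\sigma\lesssim\tilde\tau$ it behaves like $\tilde\sigma-\tilde\tau$, but for $\tilde\sigma\gg\tilde\tau$ it saturates at $\sim\tilde\tau$, not at $\tilde\sigma-\tilde\tau$. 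Since $R\gtrsim\tau^{\frac12-}\gg\tau^{\frac12-\frac{1}{4\nu}}\sim\tilde\tau$, the phase derivative satisfies $|R\pm\lambda(\tilde\tau,\tilde\sigma)|\sim R$ unconditionally, and your ``on-cone obstacle'' $R\sim\tilde\sigma-\tilde\tau$ is never actually realized in the phase. Your workaround via the off-diagonal Schur gain is therefore unnecessary (and the justification there is shaky, since the gain you claim depends on converting $\tilde\sigma\gtrsim\tau^{\frac12-}$ back to Schr\"odinger time, which is delicate). The paper's proof is simply the observation that one integration by parts in $\xi$ already gains $R^{-1}\ll\tilde\tau^{-1}$, compensating the wave-time integration directly. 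Your derivation of the $L^{2+}$ consequences via interpolation and Sobolev is more elaborate than the paper's direct source bound but not incorrect.
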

	\begin{proof} Recalling \eqref{eq:wavepropagator}, in conjunction with \eqref{eq:nflatfourierrepresent}, we observe that the two oscillatory phases $\phi_{\R^4}(R;\xi)$ and $\sin\big(\lambda(\tilde{\tau})\xi\int_{\tilde{\sigma}}^{\tilde{\tau}}\lambda^{-1}(s)\,ds\big)$ cannot be in resonance under the assumption $R\gtrsim \tau^{\frac12-}$, due to the fact that 
		\begin{align*}
			\big|\lambda(\tilde{\tau})\int_{\tilde{\sigma}}^{\tilde{\tau}}\lambda^{-1}(s)\,ds\big|\lesssim \tilde{\tau}\sim \tau^{\frac12-\frac{1}{4\nu}}\ll \tau^{\frac12-}. 
		\end{align*}
		The first inequality of the lemma then follows in straightforward manner via integration by parts with respect to $\xi$ in \eqref{eq:nflatfourierrepresent}, which produces a gain of $\ll \tilde{\tau}^{-1}$, ensuring we can compensate the (wave) temporal integration. To deduce the second bound of the lemma, it suffices to observe that 
		\begin{align*}
			\big\|\langle R\rangle\nabla \Re(W\bar{z})\big\|_{\tau^{-N}L^2_{d\tau}L^{2+}_{R^3\,dR}} + \big\|\Re(W\bar{z})\big\|_{\tau^{-N}L^2_{d\tau}L^{2+}_{R^3\,dR}}\lesssim \big\|z\big\|_{S}. 
		\end{align*}
		The third bound follows by using another integration by parts with respect to $\xi$ in \eqref{eq:nflatfourierrepresent}.
	\end{proof}
	
	In a similar vein and using an entirely analogous proof, we also have the following 'dual' version:
	\begin{lem}\label{lem:refinedwavepropagatorwithphysicallocalization2} The following bound obtains:
		\begin{align*}
			\Big\|\big(\nabla\Box^{-1}\big(\chi_{R\gtrsim \tau^{\frac12-}}\cdot F\big)\big)\Big\|_{\tau^{-N}L^2_{d\tau}\dot{H}^1_{R^3\,dR}}\lesssim \big\|\tilde{\tau}\cdot \langle R\rangle^{-1}F\big\|_{\tau^{-N}L^2_{d\tau}L^2_{R^3\,dR}}
		\end{align*}
	\end{lem}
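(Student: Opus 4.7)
The plan is to carry out exactly the same non-resonance argument as for Lemma~\ref{lem:refinedwavepropagatorwithphysicallocalization}, but now exploiting it in the ``inner'' Fourier pairing that defines $\mathcal{F}_{\R^4}(\chi_{R\gtrsim \sigma^{1/2-}}F)$ inside the Duhamel formula \eqref{eq:wavepropagator}. Precisely, by Plancherel's theorem for the standard radial Fourier transform on $\R^4$,
\[
\Big\|\nabla\Box^{-1}\big(\chi_{R\gtrsim \tau^{\frac12-}}F\big)\Big\|_{\dot H^1_{R^3\,dR}}\lesssim \big\|\xi^2 x(\tilde\tau,\xi)\big\|_{L^2_{\rho_{\R^4}(\xi)\,d\xi}},
\]
where $x(\tilde\tau,\xi)$ is given by \eqref{eq:wavepropagator} with source $\chi_{R\gtrsim \sigma^{\frac12-}}F$. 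Substituting the definition of the inner Fourier transform yields the representation
\[
x(\tilde\tau,\xi) = \int_{\tilde\tau}^\infty U(\tilde\tau,\tilde\sigma,\xi)\,\lambda^{-2}(\tilde\sigma)\int_0^\infty \phi_{\R^4}\!\big(R;\tfrac{\lambda(\tilde\tau)}{\lambda(\tilde\sigma)}\xi\big)\chi_{R\gtrsim \sigma^{\frac12-}}F(\tilde\sigma,R)\,R^3\,dR\,d\tilde\sigma,
\]
which makes the spatial localization explicit in the inner integral.

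Next, write $\xi' := \tfrac{\lambda(\tilde\tau)}{\lambda(\tilde\sigma)}\xi$ and split the inner $R$-integral into the oscillatory regime $R\xi'\gtrsim 1$, where we use the asymptotic $\phi_{\R^4}(R;\xi')\sim \sum_\pm \frac{e^{\pm i R\xi'}}{R^{3/2}(\xi')^{3/2}}$, and the non-oscillatory regime $R\xi'\lesssim 1$, where $\phi_{\R^4}$ is bounded and the spectral weight $\rho_{\R^4}(\xi)\sim \xi^3$ supplies all needed smallness. In the oscillatory regime, combining the phase $e^{\pm i R\xi'}$ with the propagator phase $\sin\!\big(\lambda(\tilde\tau)\xi\int_{\tilde\tau}^{\tilde\sigma}\lambda^{-1}(s)\,ds\big) = \sin\!\big(\lambda(\tilde\sigma)\xi'\int_{\tilde\tau}^{\tilde\sigma}\lambda^{-1}(s)\,ds\big)$ produces combined phases of the form $\pm R\xi'\pm \xi'\cdot \lambda(\tilde\sigma)\int_{\tilde\tau}^{\tilde\sigma}\lambda^{-1}(s)\,ds$. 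Since $\lambda(\tilde\sigma)\int_{\tilde\tau}^{\tilde\sigma}\lambda^{-1}(s)\,ds\lesssim \tilde\sigma\sim \sigma^{\frac12-\frac{1}{4\nu}}$ while $R\gtrsim \sigma^{\frac12-}$, the two phases are never in resonance and the combined phase oscillates at rate $\sim R$. One integration by parts in the $\xi$-variable (equivalently $\xi'$) inside the double integral therefore gains a factor $R^{-1}\lesssim \tilde\tau^{-1}$, which is exactly the factor needed to turn the $d\tilde\sigma$-integration into an $L^2$-bounded operator and also supplies the $\tilde\tau$-weight appearing on the right-hand side.

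Finally, after the integration by parts we split $F = \langle R\rangle \cdot \big(\langle R\rangle^{-1}F\big)$ and apply the Cauchy--Schwarz inequality in $R$ to pair the $R^{-1}$-gain with the $\langle R\rangle$-factor, leaving the $L^2_{R^3\,dR}$-norm of $\langle R\rangle^{-1}F$; the two factors combine precisely into the weight $\tilde\tau\cdot\langle R\rangle^{-1}$. The remaining iterated integral in $(\tilde\tau,\tilde\sigma)$ is then controlled by Schur's test exactly as in the proof of Lemma~\ref{lem:wavebasicinhom}, using the polynomial weights $\tau^{-N}$ and $\sigma^{-N}$. The main obstacle is to verify that the non-resonant integration by parts in $\xi$ can be iterated without producing logarithmic losses from the symbol behavior of $\phi_{\R^4}$ near $R\xi'\sim 1$ and from derivatives of $\rho_{\R^4}$; this is resolved by treating the transition region $R\xi'\sim 1$ with a smooth cutoff and absorbing the corresponding contribution into the non-oscillatory regime, where the $\xi^3$ spectral measure and the $\langle R\rangle$-weight together suffice. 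The non-oscillatory contribution $R\xi'\lesssim 1$ is then estimated directly using $\big|\phi_{\R^4}(R;\xi')\big|\lesssim 1$, the decay $\rho_{\R^4}(\xi)\sim \xi^3$, and the same Schur-type argument.
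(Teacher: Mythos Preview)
Your overall strategy is exactly the one the paper has in mind: the localization $R\gtrsim\sigma^{\frac12-}$ on the \emph{source} variable forces the inner phase $e^{\pm iR\xi'}$ and the propagator phase $\sin\big(\lambda(\tilde\sigma)\xi'\int_{\tilde\tau}^{\tilde\sigma}\lambda^{-1}\big)$ to be non-resonant, since $R\gtrsim\sigma^{\frac12-}\gg\tilde\sigma\sim\sigma^{\frac12-\frac{1}{4\nu}}$, and this is precisely the dual of the mechanism in Lemma~\ref{lem:refinedwavepropagatorwithphysicallocalization}.

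However, there is a genuine gap in your execution. Once you apply Plancherel and reduce to $\|\xi^2 x(\tilde\tau,\xi)\|_{L^2_{\rho_{\R^4}d\xi}}$, the variable $\xi$ is the $L^2$-norm integration variable; inside the expression $x(\tilde\tau,\xi)$ itself, $\xi$ is a \emph{parameter}, and your ``double integral'' is over $(\tilde\sigma,R)$ only. There is therefore no $\xi$-integral in which to integrate by parts, and the sentence ``one integration by parts in the $\xi$-variable inside the double integral'' has no meaning as written. The non-resonance you correctly identified cannot be exploited in this formulation.

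The fix is to \emph{not} apply Plancherel at the outset. Keep the physical-space representation
\[
\triangle n(\tilde\tau,R)=-\int_0^\infty\!\!\int_{\tilde\tau}^\infty\!\!\int_0^\infty \xi^2\,\phi_{\R^4}(R;\xi)\,U(\tilde\tau,\tilde\sigma,\xi)\,\phi_{\R^4}(R_1;\xi')\,\chi_{R_1\gtrsim\sigma^{\frac12-}}\lambda^{-2}F(\tilde\sigma,R_1)\,\rho_{\R^4}(\xi)\,R_1^3\,dR_1\,d\tilde\sigma\,d\xi,
\]
which \emph{does} contain a genuine $\xi$-integral. The combined $\xi$-phase has derivative $\pm R\pm\lambda(\tilde\tau)\int_{\tilde\tau}^{\tilde\sigma}\lambda^{-1}\pm R_1\tfrac{\lambda(\tilde\tau)}{\lambda(\tilde\sigma)}$, and since the middle term is $\lesssim\tilde\tau$ while the last is $\gtrsim\sigma^{\frac12-}$, one can integrate by parts in $\xi$ after splitting into the regimes $R\ll R_1\tfrac{\lambda(\tilde\tau)}{\lambda(\tilde\sigma)}$ (where the $R_1$-term dominates and you gain $R_1^{-1}$, which is the $\langle R\rangle^{-1}$ on the right-hand side) and $R\gtrsim R_1\tfrac{\lambda(\tilde\tau)}{\lambda(\tilde\sigma)}$ (where the output itself satisfies $R\gtrsim\tau^{\frac12-}$ and one is back in the setting of Lemma~\ref{lem:refinedwavepropagatorwithphysicallocalization}). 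This is the ``entirely analogous'' argument the paper alludes to; your outline collapses these two steps and loses the integration variable in the process.
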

	
	We shall also require a {\it{high-temporal frequency version}} of Lemma~\ref{lem:yzWbound1}, which avoids the small loss of temporal decay, and even gains smallness:
	\begin{lem}\label{lem:yzWbound3} Letting $0<\gamma = \gamma(\tau_*)$ with $\lim_{\tau_*\rightarrow+\infty}\gamma(\tau_*) = 0$, and setting $f(\tau, R): = Q_{>\gamma^{-1}}^{(\tilde{\tau})}\big(\lambda^{-2}y_z\cdot W\big)$, we have 
		\begin{align*}
			\Big\|\langle\xi\partial_{\xi}\rangle^{2+\delta_1}\langle  f, \frac{\phi(R;\xi) - \phi(R;0)}{\xi^2}\rangle_{L^2_{R^3\,dR}} \Big\|_{\tau^{-N}L^2_{d\tau}L^p_{\rho(\xi)\,d\xi}(\xi\lesssim1)}\ll_{\tau_*} \big\|z\big\|_{S},\,2\leq p\leq \infty. 
		\end{align*}
		For the high frequency regime $\xi\gtrsim 1$, setting $g(\sigma,\xi): = \chi_{\xi\gtrsim 1}\langle f, \phi(R;\xi)\rangle_{L^2_{R^3\,dR}}$, we have the bound 
		\begin{align*}
			\Big\|\int_{\tau}^\infty S(\tau,\sigma,\xi)\cdot g(\sigma, \frac{\lambda(\tau)}{\lambda(\sigma)}\xi)\,d\sigma\Big\|_{\tau^{-N}L^2_{d\tau}L^2_{\rho(\xi)\,d\xi}}\ll_{\tau_*} \big\|z\big\|_{S}.
		\end{align*}
	\end{lem}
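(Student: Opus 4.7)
\medskip

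\noindent\textbf{Proof proposal for Lemma~\ref{lem:yzWbound3}.}
The plan is to exploit the restriction to wave-temporal frequencies $\ge \gamma^{-1}$ via the elementary spectral identity
\[
Q^{(\tilde\tau)}_{>\gamma^{-1}} \;=\; \partial_{\tilde\tau}^{-2}\circ Q^{(\tilde\tau)}_{>\gamma^{-1}}\circ\partial_{\tilde\tau}^{2},
\]
on whose Fourier side the operator $\partial_{\tilde\tau}^{-2}\circ Q^{(\tilde\tau)}_{>\gamma^{-1}}$ is multiplication by $-|\hat{\tilde\tau}|^{-2}\chi_{|\hat{\tilde\tau}|>\gamma^{-1}}$ and hence has $L^2_{d\tilde\tau}\to L^2_{d\tilde\tau}$ norm $\lesssim \gamma^2$. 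Since $\gamma=\gamma(\tau_*)\to 0$, any quantity we can control by $\gamma^2$ times an a priori $\lesssim\|z\|_{S}$ estimate automatically gains the $\ll_{\tau_*}$ smallness. The second key input is the wave equation \eqref{eq:yzdfn}, which allows us to replace $\partial_{\tilde\tau}^2 y_z$ by an expression of the same regularity as $y_z$ itself (plus the source), taking advantage of the smoothing of $\Box^{-1}$ in wave time.

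For the low-frequency bound ($\xi\lesssim 1$, $p\in[2,\infty]$), first apply the identity above to extract a factor $\gamma^2$ and reduce matters to estimating
\[
\big\|\langle\xi\partial_\xi\rangle^{2+\delta_1}\big\langle \partial_{\tilde\tau}^2(\lambda^{-2}y_z\cdot W),\,\tfrac{\phi(R;\xi)-\phi(R;0)}{\xi^2}\big\rangle_{L^2_{R^3\,dR}}\big\|_{\tau^{-N}L^2_{d\tau}L^p_{\rho\,d\xi}(\xi\lesssim 1)}.
\]
Expanding $\partial_{\tilde\tau}^2$ by Leibniz produces three terms; the only genuinely non-trivial one is $\lambda^{-2}(\partial_{\tilde\tau}^2 y_z)\cdot W$, while the two terms involving $\partial_{\tilde\tau}^j\lambda^{-2}$ with $j\ge 1$ come with $\tilde\tau^{-1}$-factors and are handled by Corollary~\ref{cor:yzWpartialtau} directly. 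For the main term, the identity $\partial_t^2 y_z=\triangle y_z-2\lambda^2\triangle\Re(W\bar z)$ together with $\partial_t=-\lambda\partial_{\tilde\tau}$ gives $\lambda^{-2}\partial_{\tilde\tau}^2 y_z = \lambda^{-2}\triangle y_z-2\triangle\Re(W\bar z)+\text{lower}$, which reduces the inner product to objects already controlled by Corollary~\ref{cor:yzW}, Lemma~\ref{lem:wavebasicinhomstructure1} and Lemma~\ref{lem:yzWnonosc}. The decomposition
\[
\tfrac{\phi(R;\xi)-\phi(R;0)}{\xi^2}=\chi_{R\xi\lesssim 1}\tfrac{\phi(R;\xi)-\phi(R;0)}{\xi^2}+\chi_{R\xi\gtrsim 1}\tfrac{\phi(R;\xi)-\phi(R;0)}{\xi^2}
\]
is then used exactly as in the proof of Lemma~\ref{lem:yzWbound1}: the non-oscillatory piece satisfies the symbol bounds required by Lemma~\ref{lem:yzWnonosc} (after noting $(\phi(R;\xi)-\phi(R;0))/\xi^2\sim \log R\cdot W$ when $R\xi\lesssim1$), and the oscillatory piece splits into a $(1+R^2)^{-1}\xi^{-2}$ tail handled by Lemma~\ref{lem:yzWnonosc} and pure oscillatory terms $R^{-3/2}\xi^{-3/2}e^{\pm iR\xi}$ on which $\langle\xi\partial_\xi\rangle^{2+\delta_1}$ acts as $(R\xi)^{2+\delta_1}$, reducing to an application of Corollary~\ref{cor:yzW} with an extra polynomial weight absorbed by $W$. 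Finally, the $L^\infty_{\rho\,d\xi}$-version follows from the $L^2_{\rho\,d\xi}$-version via the second estimate in Corollary~\ref{cor:yzW} and Remark~\ref{rem:cor:yzW}, interpolated against Remark~\ref{rem:wavebasicinhomstructure}.

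For the high-frequency bound ($\xi\gtrsim 1$) we combine the same $\partial_{\tilde\tau}^{-2}$-trick with the basic Schr\"odinger propagator estimate Lemma~\ref{lem:basicL2}. Concretely, apply Lemma~\ref{lem:basicL2} to the source $g$, yielding
\[
\Big\|\int_\tau^\infty S(\tau,\sigma,\xi)\cdot g(\sigma,\tfrac{\lambda(\tau)}{\lambda(\sigma)}\xi)\,d\sigma\Big\|_{\tau^{1-N}L^2_{d\tau}L^2_{\rho\,d\xi}}\lesssim \tfrac{1}{\sqrt N}\big\|g\big\|_{\tau^{-N}L^2_{d\tau}L^2_{\rho\,d\xi}}.
\]
Then use the spectral identity to write $g=\partial_{\tilde\tau}^{-2}Q^{(\tilde\tau)}_{>\gamma^{-1}}\partial_{\tilde\tau}^2g$, gaining $\gamma^2$. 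The remaining estimate for $\partial_{\tilde\tau}^2 g$ on the high frequency side $\xi\gtrsim 1$ is essentially the high-frequency analogue of Corollary~\ref{cor:yzWpartialtau}, obtained by inserting the substitution $\lambda^{-2}\partial_{\tilde\tau}^2 y_z=\lambda^{-2}\triangle y_z-2\triangle\Re(W\bar z)+\text{lower}$ into the inner product, integrating by parts in $R$ to move the two derivatives onto the rapidly decaying $\phi(R;\xi)$ (using the symbol expansions of subsection~\ref{subsec:basicfourier}), and then applying Corollary~\ref{cor:yzW}. One minor wrinkle: the propagator bound from Lemma~\ref{lem:basicL2} loses one power of $\tau$, but this is more than compensated by the $\gamma^2$ and by choosing $\tau_*$ large depending on $N,\nu$.

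The main obstacle is the interplay of the two oscillatory phases at low frequency — namely $e^{\pm iR\xi}$ coming from $\phi(R;\xi)$ in the regime $R\xi\gtrsim 1$ and the forward-wave phase in $y_z=\Box^{-1}(\lambda^2\triangle\Re(W\bar z))$ — when we pull two wave-time derivatives onto $y_z$. Naively this costs one spatial derivative, but we need the estimate to survive with the full weight $\langle\xi\partial_\xi\rangle^{2+\delta_1}$, which in the oscillatory regime morally equals $(R\xi)^{2+\delta_1}$ and therefore threatens to consume the decay of $W(R)\sim R^{-2}$. This is resolved by preserving one of the spatial derivatives as a $\triangle$ inside $\Box^{-1}$ (via the identity $\lambda^{-2}\partial_{\tilde\tau}^2\Box^{-1}=I+\lambda^{-2}\triangle\Box^{-1}$) so that Lemma~\ref{lem:wavebasicinhomstructure1} rather than its weaker analogue can be applied; the budget $\nabla_R F$ available in that lemma exactly matches what we generate.
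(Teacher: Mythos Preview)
For the low-frequency part your route differs from the paper's. The paper does not pull $\partial_{\tilde\tau}^2$ through globally but instead applies a spatial Littlewood--Paley splitting $f=P_{<\gamma^{-1/4}}f+P_{\geq\gamma^{-1/4}}f$: on the first piece temporal frequency $>\gamma^{-1}$ dominates spatial frequency squared $<\gamma^{-1/2}$, so $\Box^{-1}\sim\partial_{\tilde\tau}^{-2}$ gains $\gamma^2$ directly; on the second piece one reruns the proof of Lemma~\ref{lem:yzWbound1}, and the additional oscillation at spatial frequency $\geq\gamma^{-1/4}$ both makes the $R$-integral converge despite the heavier weight $\langle\xi\partial_\xi\rangle^{2+\delta_1}$ and supplies the $\ll_{\tau_*}$ smallness. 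Your approach via the wave-equation substitution $\lambda^{-2}\partial_{\tilde\tau}^2 y_z=\lambda^{-2}\triangle y_z-2\triangle\Re(W\bar z)+\text{lower}$ is plausible in spirit, but the lemmas you invoke (Corollary~\ref{cor:yzW}, Lemmas~\ref{lem:wavebasicinhomstructure1} and~\ref{lem:yzWnonosc}) are all stated with $\langle\xi\partial_\xi\rangle^{1+\delta_0}$, not $\langle\xi\partial_\xi\rangle^{2+\delta_1}$; the extra scaling derivative in the oscillatory regime costs an additional factor $R\xi$ that must be absorbed somewhere, and your ``budget $\nabla_R F$'' remark does not close this. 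The paper's spatial splitting sidesteps this bookkeeping entirely.

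For the high-frequency part your argument has a genuine gap. Lemma~\ref{lem:basicL2} maps $\tau^{-N}L^2_{d\tau}$ into $\tau^{1-N}L^2_{d\tau}$, losing one full power of $\tau$ in the weight. The $\gamma^2$ you extract is a \emph{constant} (depending only on $\tau_*$); multiplying an element of $\tau^{1-N}L^2_{d\tau}([\tau_*,\infty))$ by any constant cannot place it in $\tau^{-N}L^2_{d\tau}([\tau_*,\infty))$, since $\|h\|_{\tau^{-N}L^2}=\|\tau\cdot\tau^{N-1}h\|_{L^2}$ is not controlled by $\|\tau^{N-1}h\|_{L^2}$ on an unbounded interval. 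So ``compensated by the $\gamma^2$ and by choosing $\tau_*$ large'' is simply false. The paper avoids Lemma~\ref{lem:basicL2} here entirely: since $\tfrac{\lambda(\tau)}{\lambda(\sigma)}\xi\gtrsim 1$ on the support of $g$, one integrates by parts repeatedly in $\sigma$ via the identity $(-i)\xi^2 S(\tau,\sigma,\xi)=\partial_\sigma\bigl[\tfrac{\lambda^2(\sigma)}{\lambda^2(\tau)}S(\tau,\sigma,\xi)\bigr]$. Each boundary term lands directly in $\tau^{-N}L^2_{d\tau}L^2_{\rho\,d\xi}$, and each bulk term acquires a $\partial_\sigma$ on $g$, which by the third estimate of Lemma~\ref{lem:wavebasicinhom} gains $\tau^{-\frac12-\frac{1}{4\nu}}$ in decay. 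After two such integrations the surplus decay absorbs the propagator integral and yields $\ll_{\tau_*}$-smallness on $[\tau_*,\infty)$.
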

	\begin{proof} {\it{First part}}. One splits $f$ into $f = P_{<\gamma^{-\frac14}}f + P_{\geq \gamma^{-\frac14}}f$, where the frequency localizers are standard Littlewood-Paley frequency cutoffs. For the first term, the action of $\Box^{-1}$ is then essentially given by $\partial_{\tilde{\tau}}^{-2}$, which gains $\gamma^2$ and causes no loss of temporal decay. For the second term $P_{\geq \gamma^{-\frac14}}f$, one follows the proof of Lemma~\ref{lem:yzWbound1} and exploits the fact that the high spatial frequency localization ensures convergence of the $R$-integral. 
		\\
		{\it{Second part}}. This follows by repeated integration by parts with respect to $\sigma$, taking advantage of Lemma~\ref{lem:wavebasicinhom} and the fact that $\frac{\lambda(\tau)}{\lambda(\sigma)}\xi\gtrsim 1$.
	\end{proof}

	The delicate term $\lambda^{-2}y_z\cdot W$ is counterbalanced in some sense by the term $\lambda^{-2}y_{\tilde{\lambda}}^{mod}\cdot W$, which differs subtly from the former as there is no more $\triangle$-operator present in the definition of $y_{\tilde{\lambda}}$, see \eqref{eq:ylamndatildemod}, \eqref{eq:E2mod}. We shall require the following precise result:
	\begin{lem}\label{lem:ytildelambdatimesWlocalizedbound} Given $\delta_*>0$, and letting $P_{<a}^{(\mathcal{L})}$ denote frequency localization with respect to the operator $\mathcal{L}$ we have the bound
		\begin{align*}
			\Big\|\langle\partial_{\xi}\rangle\mathcal{F}\Big(P_{<\tau^{-\delta_*}}^{(\mathcal{L})}\big(W\cdot \lambda^{-2}y_{\tilde{\lambda}}^{mod}\Big)\Big\|_{\tau^{-N}L^2_{d\tau}L^\infty_{d\xi}}\lesssim \big\|\langle\partial_{\tilde{\tau}}^2\rangle^{-1}\tilde{\lambda}_{\tilde{\tau}\tilde{\tau}}\big\|_{\tau^{-N}L^2_{d\tau}}.
		\end{align*}
		We also have the more crude bound 
		\begin{align*}
			\Big\|\langle\partial_{\xi}\rangle\mathcal{F}\Big(W\cdot \lambda^{-2}y_{\tilde{\lambda}}^{mod}\Big)\Big\|_{\tau^{-N+}L^2_{d\tau}L^\infty_{d\xi}}\lesssim \big\|\langle\partial_{\tilde{\tau}}^2\rangle^{-1}\tilde{\lambda}_{\tilde{\tau}\tilde{\tau}}\big\|_{\tau^{-N}L^2_{d\tau}}.
		\end{align*}
	\end{lem}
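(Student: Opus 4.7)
The plan is to decompose $\lambda^{-2}y_{\tilde\lambda}^{\text{mod}} = \lambda^{-2}\Box^{-1}E_2^{\text{mod}}$ according to \eqref{eq:E2mod} into the principal part
$$-2\tilde\lambda_{tt}\cdot \lambda^2\Lambda W\cdot W - 2\frac{\tilde\lambda_t}{t}(t\partial_t)(\lambda^2\Lambda W\cdot W),$$
which is exactly the source for $y_{\tilde\lambda}$ in \eqref{eq:ytildelamba}, plus a perturbative remainder involving $\partial_{\tilde\lambda}n_*^{(\tilde\lambda)}-2\lambda^2\Lambda W\cdot W$ together with the $\chi_3$-transition terms. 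Using the relation $\lambda^{-2}\tilde\lambda_{tt} = \tilde\lambda_{\tilde\tau\tilde\tau}+\frac{\lambda_{\tilde\tau}}{\lambda}\tilde\lambda_{\tilde\tau}$, the principal source reduces to $\tilde\lambda_{\tilde\tau\tilde\tau}$ times spatially localized multiples of $\Lambda W\cdot W\in L^{1+}_{R^3\,dR}$, so the input into the wave propagator is controlled by the norm on the right-hand side. For the perturbative remainder, the asymptotic behavior of $\partial_{\tilde\lambda}n_*^{(\tilde\lambda)}$ recalled in Lemma~\ref{lem:approxsolasymptotics1} together with the support and regularity of $\chi_3$ give an analogous $L^{1+}_{R^3\,dR}$ bound with additional smallness gain.

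For the second, cruder bound (without the localizer), one applies Lemma~\ref{lem:wavebasicinhomstructure1} with $F=E_2^{\text{mod}}$; the operator $\langle\partial_\xi\rangle$ is controlled by $\langle\xi\partial_\xi\rangle^{1+\delta_0}$ away from $\xi=0$ with a harmless factor, and the $L^\infty_{d\xi}$ norm is preserved at the cost of the small $\tau^{-N+}$ loss already present in that lemma. For the refined first bound with $P^{(\mathcal{L})}_{<\tau^{-\delta_*}}$, I plan to use the low-frequency Taylor expansion $\phi(R;\xi)=W(R)[1+\sum_{j\geq1}(R\xi)^{2j}\phi_j(R^2)]$ valid for $R\xi\lesssim 1$, so that $\partial_\xi\phi(R;\xi) = 2\xi R^2\phi_1(R^2)W(R)+O(\xi^3 R^4 W)$. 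Splitting the $R$-integration in $\mathcal{F}(W\cdot\lambda^{-2}y_{\tilde\lambda}^{\text{mod}})$ at $R\sim\tau^{\delta_*/2}$, the non-oscillatory inner region yields a $\xi$-gain that combines with the $W^2$-localization to give a clean bound via Lemma~\ref{lem:wavebasicinhomstructure1}. In the outer oscillatory regime $R\xi\gtrsim 1$, which forces $R>\tau^{\delta_*/2}$, one uses the oscillatory asymptotics of $\phi(R;\xi)$ together with the Fourier representation \eqref{eq:wavepropagator} of $\Box^{-1}$; the combined phases $\pm iR\xi$ and $\sin(\lambda(\tilde\tau)\xi\int_{\tilde\sigma}^{\tilde\tau}\lambda^{-1}\,ds)$ are non-stationary in this regime since $R\gg\tilde\tau$, and integration by parts with respect to $R$ produces inverse powers of $R$ that suppress the $\tau^{0+}$ loss of Lemma~\ref{lem:wavebasicinhomstructure1}, giving the sharper $\tau^{-N}$ bound.

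The main obstacle will be in the oscillatory-regime analysis of the refined bound, where the interaction between the distorted Fourier representation with $\phi(R;\xi)$ and the standard-Fourier structure of $\Box^{-1}$ via \eqref{eq:wavepropagator} needs to be tracked carefully; one must verify that the combined-phase integration by parts does not redistribute derivatives onto $\tilde\lambda$ in a way that destroys the smoothing gain from $\langle\partial_{\tilde\tau}^2\rangle^{-1}$. A secondary technical point, handled by arguments analogous to the proof of Lemma~\ref{lem:yzWnonosc}, is the treatment of the $Q^{(\tilde\tau)}_{<1}$-localized error terms in $\tilde E_2^{\text{mod}}$ (cf.~\eqref{eq:tildeE2mod}), which carry only limited decay but contribute through the smoothing action of $\Box^{-1}$; the bookkeeping here must match the multiplier structure used in Proposition~\ref{prop:solnoftildelambdaeqn}.
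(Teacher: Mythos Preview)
Your overall strategy for the second (crude) bound via Lemma~\ref{lem:wavebasicinhomstructure1} is fine, and the decomposition of $E_2^{\text{mod}}$ into principal plus remainder is reasonable bookkeeping. The gap is in your treatment of the refined first estimate in the outer oscillatory regime.

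You claim that for $R\xi\gtrsim 1$ the combined phases are non-stationary ``since $R\gg\tilde\tau$'', and that integration by parts in $R$ produces inverse powers of $R$. Neither statement is justified. First, the regime $R\xi\gtrsim 1$ with $\xi<\tau^{-\delta_*}$ only forces $R>\tau^{\delta_*}$, whereas $\tilde\tau\sim\tau^{\frac12-\frac{1}{4\nu}}$; for $\delta_*$ small (which is the generic case in the lemma's hypothesis) you do \emph{not} have $R\gg\tilde\tau$. Second, the $\sin$ kernel in \eqref{eq:wavepropagator} depends on the wave-propagator frequency $\eta$, not on $\xi$ or $R$; integration by parts in $R$ against $e^{\pm iR\xi}\cdot e^{\pm iR\eta}$ gains $|\xi\pm\eta|^{-1}$, not $R^{-1}$, so you do not directly recover inverse powers of $R$ this way.

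The mechanism the paper actually uses is different and you are missing it: in the low-frequency oscillatory regime the Fourier basis carries an explicit prefactor $\xi^{\frac12}$ (recall $\phi(R;\xi)\sim \xi^{\frac12}R^{-\frac32}e^{\pm iR\xi}a_\pm$ for $\xi\lesssim 1$, see subsection~\ref{subsec:basicfourier}). Combined with the localizer $\chi_{\xi<\tau^{-\delta_*}}$, this yields a genuine power gain $\tau^{-\delta_*/2}$. The paper splits $R$ at a very large threshold $R\sim\tau^{1000}$ (handling the far region crudely via $L^2$), and in the bounded-$R$ region performs integration by parts in $R$ combining the phases of $\phi(R;\xi)$ and $\phi_{\R^4}(R;\eta)$; the resulting $R$-integral diverges only as $R^{0+}$, hence loses only $\tau^{0+}$, which is then absorbed by the $\xi^{\frac12}\lesssim\tau^{-\delta_*/2}$ gain. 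This is the step that converts the $\tau^{-N+}$ bound to the sharp $\tau^{-N}$ bound, and it is absent from your argument.
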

	\begin{proof}(sketch, first estimate) Write the Fourier coefficient schematically\footnote{Here $\phi(R;\xi)$ refers to the Fourier basis of subsection~\ref{subsec:basicfourier}} as 
		\begin{align*}
			\chi_{\xi<\tau^{-\delta_*}}\int_0^\infty \phi(R;\xi)\cdot W\cdot\lambda^{-2}y_{\tilde{\lambda}}^{mod}\cdot R^3\,dR, 
		\end{align*}
		and observe that applying the operator $\xi^{1-\delta_1}\partial_{\xi}$ 'costs' $\xi^{1-\delta_1}\cdot R$. Then we treat various cases:
		\\
		{\it{(1): large $R$ case, $R>\tau^{1000}$.}} Here we can use the crude bound
		\begin{align*}
			&\Big\|\langle\partial_{\xi}\rangle\big(\chi_{\xi<\tau^{-\delta_*}}\int_0^\infty \chi_{R>\tau^{1000}}\phi(R;\xi)\cdot W\cdot\lambda^{-2}y_{\tilde{\lambda}}^{mod}\cdot R^3\,dR\big)\Big\|_{\tau^{-N-1}L^2_{d\tau}L^\infty_{d\xi}}\\
			&\lesssim\big\| \tau^2\chi_{R>\tau^{1000}}\phi(R;\xi)\cdot W\cdot R\big\|_{L^\infty_{d\tau}L^2_{R^3\,dR}}\cdot \big\|\lambda^{-2}y_{\tilde{\lambda}}^{mod}\big\|_{\tau^{-(N-1)}L^2_{d\tau}L^2_{R^3\,dR}}\\
			&\ll_{\tau_*}\big\|\langle \partial_{\tilde{\tau}}^2\rangle^{-1}\tilde{\lambda}_{\tilde{\tau}\tilde{\tau}}\big\|_{\tau^{-N}L^2_{d\tau}},
		\end{align*}
		where we have taken advantage of Lemma~\ref{lem:ytildelambdaE2}.
		\\
		{\it{(2): $R$ bounded}}.  Consider next the contribution of 
		\begin{align*}
			\chi_{\xi<\tau^{-\delta_*}}\int_0^\infty \chi_{R<\tau^{1000}}\phi(R;\xi)\cdot W\cdot\lambda^{-2}y_{\tilde{\lambda}}^{mod}\cdot R^3\,dR,
		\end{align*}
		Expand $\lambda^{-2}y_{\tilde{\lambda}}^{mod}$ as in \eqref{eq:nflatfourierrepresent}, \eqref{eq:wavepropagator}, and label $\eta$ the frequency variable in this Fourier representation. To control the time integration in \eqref{eq:wavepropagator}, we have to 'spend' slightly more than one power of $R$ (see the proof of Lemma~\ref{lem:wavebasicinhom}), and the operator $\xi^{1-\delta_1}\partial_{\xi}$ 'costs' another factor $R$. Then perform integration by parts with respect to $R$, by combining the oscillatory phases $\phi(R;\xi), \phi_{\R^4}(R;\eta)$. Since either of these functions decay like $R^{-\frac32}$ towards $R = +\infty$, we thereby arrive at an $R$-integral which only diverges by $R^{0+}$, and so we lose $\tau^{1000\cdot 0+} = \tau^{0+}$ due to out restriction on $R$. The $\eta$-integral then converges, using the simple estimate (following from Lemma~\ref{lem:ytildelambdaE2} )
		\begin{align*}
			\big\|\mathcal{F}_{\R^4}\big(\lambda^{-4}E_2^{mod}\big)(\tau,\eta)\big\|_{\tau^{-N}L^2_{d\tau}\langle \eta\rangle^{10}L^M_{d\eta}}\lesssim \big\|\tilde{\lambda}_{\tilde{\tau}\tilde{\tau}}\big\|_{\tau^{-N}L^2_{d\tau}}.
		\end{align*}
		The extra factor $\xi^{\frac12}$ coming from $\phi(R;\xi)$ and our restriction on $\xi$ compensate for small loss $\tau^{0+}$ due to the $R$-integral (see subsection~\ref{subsec:basicfourier}). This gives the desired estimate in the low temporal frequency regime for $\tilde{\lambda}$, while in the high temporal frequency regime, the operator $\Box^{-1}$  allows us to replace the right hand side by $\big\|\langle \partial_{\tilde{\tau}}^2\rangle^{-1}\tilde{\lambda}_{\tilde{\tau}\tilde{\tau}}\big\|_{\tau^{-N}L^2_{d\tau}}$.

	\end{proof}
	
	The next lemma deals with inversion of the operator $I + \tilde{T}$ which arose at the end of subsection~\ref{subsec:nonresIIsmallfreq} 
	\begin{lem}\label{lem:oneplustildetinverse} Recalling \eqref{eq:tildeT} for the definition of $\tilde{T}$, the equation 
		\begin{align*}
			\big(I + \tilde{T}\big)u = f
		\end{align*}
		with $f\in L^2_{R^3\,dR}$ admits a unique solution $u\in L^2_{R^3\,dR}$. 
	\end{lem}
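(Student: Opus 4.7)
The plan is to exploit the fact that $\tilde T$, as defined in \eqref{eq:tildeT}, is a \emph{rank-one} operator on $L^2_{R^3\,dR}$, and to reduce the invertibility of $I+\tilde T$ to a scalar non-vanishing condition via the Sherman--Morrison formula. Precisely, setting $\psi := \triangle^{-1}(\Lambda W\cdot W)\cdot W$, we have $\tilde T(z) = \alpha_*\langle z,\phi\rangle_{L^2_{R^3\,dR}}\cdot\psi$, with $\phi\in L^2_{R^3\,dR}$ (by the footnote after \eqref{eq:tildeT}) and $\psi\in L^2_{R^3\,dR}$ (since $\triangle^{-1}(\Lambda W\cdot W)=O(R^{-2})$ and $W=O(R^{-2})$, so $\psi=O(R^{-4})$ at infinity). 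Hence $\tilde T$ is a bounded rank-one operator and $(I+\tilde T)$ is invertible on $L^2_{R^3\,dR}$ iff
\[
\Delta_* := 1 + \alpha_*\langle\psi,\phi\rangle_{L^2_{R^3\,dR}} \;\neq\; 0,
\]
in which case the inverse is given explicitly by $(I+\tilde T)^{-1}f = f - \Delta_*^{-1}\alpha_*\langle f,\phi\rangle_{L^2_{R^3\,dR}}\cdot\psi$.

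The main work is thus to verify $\Delta_*\neq 0$. Using the explicit form $\phi = -\tfrac{W}{2} - \tfrac{\Lambda W}{16}$ from the footnote, and writing $I_0 := \int_0^\infty \triangle^{-1}(\Lambda W\cdot W)\cdot W^2\,R^3\,dR$ so that $\alpha_* = 2/I_0$, I would compute
\begin{align*}
\langle \psi,\phi\rangle_{L^2_{R^3\,dR}}
&= -\tfrac{1}{2}\int_0^\infty \triangle^{-1}(\Lambda W\cdot W)\cdot W^2\,R^3\,dR
- \tfrac{1}{16}\int_0^\infty \triangle^{-1}(\Lambda W\cdot W)\cdot \Lambda W\cdot W\,R^3\,dR\\
&= -\tfrac{I_0}{2} - \tfrac{1}{16}\big\langle \triangle^{-1}g,\,g\big\rangle_{L^2_{R^3\,dR}},
\end{align*}
with $g := \Lambda W\cdot W$. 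The crucial input is the integration-by-parts identity $\langle \triangle^{-1}g,g\rangle_{L^2_{R^3\,dR}} = -\|\nabla \triangle^{-1}g\|_{L^2_{R^3\,dR}}^2 =: -J$, which is strictly negative since $g = \Lambda W\cdot W\not\equiv 0$ (indeed $\Lambda W$ vanishes only on a single sphere $R = 2\sqrt{2}$, while $W>0$). Substituting,
\[
\Delta_* \;=\; 1 + \tfrac{2}{I_0}\Big(-\tfrac{I_0}{2} + \tfrac{J}{16}\Big) \;=\; \frac{J}{8\,I_0},
\]
which is nonzero because $J>0$ and $I_0\neq 0$ (the latter is implicit in the very definition of $\alpha_*$).

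The only point deserving care is the justification of the integration by parts that computes $\langle\triangle^{-1}g,g\rangle$; this requires decay of $\triangle^{-1}g$ and $\nabla\triangle^{-1}g$ at infinity together with the boundary condition at $R=0$ built into the definition of $\triangle^{-1}$ (vanishing at the origin, in the convention analogous to \eqref{eq:fnralternative}). Both are consequences of the explicit radial Green's function representation in $\mathbb R^4$ applied to $g\in \langle R\rangle^{-4-0}L^\infty$, and pose no genuine difficulty. Once $\Delta_*\neq 0$ has been established, the Sherman--Morrison formula above produces the unique solution $u\in L^2_{R^3\,dR}$ of $(I+\tilde T)u = f$, with the norm bound $\|u\|_{L^2_{R^3\,dR}} \lesssim \|f\|_{L^2_{R^3\,dR}}$ that is used in the proof of Lemma~\ref{lem:smalltempfreqznresprinimprovedbound}.
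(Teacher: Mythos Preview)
Your proof is correct and follows the same computational core as the paper: both recognize that $\tilde T$ is rank one, reduce invertibility to the scalar condition $1+\alpha_*\beta_*\neq 0$ with $\beta_*=\langle\psi,\phi\rangle$, use the explicit $\phi=-\tfrac{W}{2}-\tfrac{\Lambda W}{16}$ from the footnote, and verify nonvanishing via the identity $\langle\triangle^{-1}g,g\rangle=-\|\nabla\triangle^{-1}g\|_{L^2}^2$ with $g=\Lambda W\cdot W$. The one genuine difference is packaging: the paper proceeds in two steps (first the special case $f=\psi$, then general $f$ by subtracting a multiple of $\psi$ to kill $\langle\phi,\cdot\rangle$), and the second step explicitly requires $\beta_*\neq 0$, which is the numerical assumption \textbf{(B1)}. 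Your Sherman--Morrison inversion needs only $\Delta_*=1+\alpha_*\beta_*\neq 0$, so your argument actually dispenses with assumption \textbf{(B1)} entirely.

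One small caveat worth flagging: the $\phi$ given in the footnote satisfies $\phi\sim -\tfrac{7}{2}R^{-2}$ at infinity and is therefore \emph{not} in $L^2_{R^3\,dR}$ (the $L^2$ solution of $\tilde{\mathcal L}\phi=W^3$ is in fact $\phi=-W^2$, since $W+\Lambda W=2W^2$). This is a loose point in the paper as well; it does not affect your computation of $\Delta_*$ because $\psi\sim R^{-4}$ makes the pairing $\langle\psi,\phi\rangle$ absolutely convergent, and with the $L^2$ choice $\phi=-W^2$ the same calculation gives $\Delta_*=J/I_0$, still nonzero. But strictly speaking your claim that $\tilde T$ is bounded on all of $L^2$ needs $\phi\in L^2$, so either substitute $\phi=-W^2$ or restrict to the class of $f$ (such as those arising in Lemma~\ref{lem:smalltempfreqznresprinimprovedbound}) for which $\langle f,\phi\rangle$ converges.
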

	\begin{proof} First consider the special case 
		\[
		f = \triangle^{-1}\big(\Lambda W\cdot W\big)\cdot W
		\]
		Then we make the ansatz 
		\[
		u = \gamma\cdot  \triangle^{-1}\big(\Lambda W\cdot W\big)\cdot W
		\]
		for suitable $\gamma\in \R$ which leads to the condition 
		\begin{align*}
			\gamma\cdot \big(1 + \alpha_*\beta*\big) = 1, 
		\end{align*}
		where we recall Lemma~\ref{lem:lowtempfreqznresprin2} and its proof for the definition of $\alpha_*,\phi$ and we set 
		\[
		\beta_*: = \int_0^\infty \phi\cdot \triangle^{-1}\big(\Lambda W\cdot W\big)\cdot W R^3\,dR. 
		\]
		Since in fact $\phi = -\frac{W}{2} - \frac{1}{16}\Lambda W$, we infer from the explicit value of $\alpha_*$ (see e. g. Lemma~\ref{lem:lowtempfreqznresprin2}) that 
		\begin{align*}
			1 + \alpha_*\beta_*&=  -\frac{1}{16}\int_0^\infty \Lambda W\cdot \triangle^{-1}\big(\Lambda W\cdot W\big)\cdot W R^3\,dR\\
			& = \frac{\alpha_*}{16}\int_0^\infty \big|\nabla\triangle^{-1}\big(\Lambda W\cdot W\big)\big|^2 R^3\,dR\neq 0, 
		\end{align*}
		which allows to determine $\gamma$ uniquely. 
		\\
		Next, for general $f$, determine $\gamma_1$ such that 
		\begin{align*}
			\gamma_1\int_0^\infty \phi\cdot \triangle^{-1}\big(\Lambda W\cdot W\big)\cdot W R^3\,dR = \int_0^\infty \phi\cdot fR^3\,dR, 
		\end{align*}
		which can be done since $\beta_*\neq 0$, thanks to numerical assumption {\bf{(B1)}} in subsection~\ref{subsec:numerics}. Then setting 
		\[
		\tilde{u}: = f - \gamma_1\cdot  \triangle^{-1}\big(\Lambda W\cdot W\big)\cdot W,
		\]
		we have $\tilde{T}(\tilde{u}) = 0$, whence 
		\[
		\big(I + \tilde{T}\big)(\tilde{u}) = \tilde{u}.
		\]
		Finally, it suffices to set 
		\begin{align*}
			u = \tilde{u} + \gamma_1\gamma\cdot  \triangle^{-1}\big(\Lambda W\cdot W\big)\cdot W.
		\end{align*}
	\end{proof}
	
	The following lemma is in a similar vein:
	\begin{lem}\label{lem:T1inversion} Let $T_1(z): = \int_0^\infty z\cdot W\cdot\triangle(W^2)R^3\,dR$. Then assuming non-degeneracy condition {\bf{(B2)}}, there is a linear continuous map $\Phi: S\longrightarrow S$ such that the equation (recall \eqref{eq:psidefn})
		\[
		z - \frac{2}{\alpha_{**}}\cdot T_1(z)\cdot \psi = g\in S
		\]
		is solved by $z = \Phi(g)$. 
	\end{lem}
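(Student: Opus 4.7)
\textbf{Proof proposal for Lemma \ref{lem:T1inversion}.} The operator $z \mapsto z - \tfrac{2}{\alpha_{**}} T_1(z)\cdot\psi$ is a rank-one perturbation of the identity, since $\psi = 2\Lambda W + 16 W$ is a fixed (time-independent) element of $L^2_{R^3\,dR}$ and $T_1$ is a linear functional in the spatial variable only. My plan is therefore to invert it explicitly by the standard Sherman--Morrison trick and then verify that the resulting formula defines a bounded linear map on $S$.

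First, I would apply $T_1$ to both sides of $z - \tfrac{2}{\alpha_{**}} T_1(z)\cdot\psi = g$, obtaining the scalar identity (in the variable $\tau$)
\begin{equation*}
\bigl(1 - \tfrac{2}{\alpha_{**}} T_1(\psi)\bigr)\cdot T_1(z)(\tau) = T_1(g)(\tau).
\end{equation*}
The non-degeneracy assumption {\bf (B2)} (which is designed precisely to ensure this) gives $1 - \tfrac{2}{\alpha_{**}} T_1(\psi) \neq 0$, so $T_1(z)(\tau) = \bigl(1 - \tfrac{2}{\alpha_{**}} T_1(\psi)\bigr)^{-1} T_1(g)(\tau)$. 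Substituting back into the original equation yields the explicit formula
\begin{equation*}
\Phi(g)(\tau,R) := g(\tau,R) + \frac{2/\alpha_{**}}{1 - \tfrac{2}{\alpha_{**}} T_1(\psi)} \cdot T_1(g)(\tau)\cdot \psi(R),
\end{equation*}
and a direct substitution confirms $\Phi(g)$ solves the equation, while uniqueness follows from the scalar identity above.

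It remains to verify $\Phi : S \to S$ is continuous. The first summand $g$ trivially satisfies $\|g\|_S \leq \|g\|_S$; the second summand is a tensor product of the scalar function $T_1(g)(\tau)$ and the fixed profile $\psi$. Since $\triangle(W^2)\in L^{2}_{R^3\,dR}\cap \langle R\rangle^{-C}L^\infty$ decays rapidly and $W(R)\triangle(W^2)(R)$ is a Schwartz-class weight on $\mathbb{R}_+$, Cauchy--Schwarz gives
\begin{equation*}
|T_1(g)(\tau)| \lesssim \bigl\|\langle R\rangle^{-\delta_0} g(\tau,\cdot)\bigr\|_{L^\infty_{dR}},
\end{equation*}
so that $\|T_1(g)\|_{\tau^{-N}L^2_{d\tau}}\lesssim \|g\|_S$ in view of the first norm defining $\|\cdot\|_S$ in \eqref{eq:Snormdefi}. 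Finally, since $\psi$ is a fixed smooth function with $\psi, R\partial_R\psi, \mathcal{L}^2\psi \in L^2_{R^3\,dR}\cap L^\infty_{loc}$ (in fact $\mathcal{L}\psi = \Lambda W\cdot W^2$ by \eqref{eq:psidefn}), each of the four norms entering the definition of $\|\cdot\|_S$ applied to $T_1(g)(\tau)\cdot\psi(R)$ factors and is controlled by $\|T_1(g)\|_{\tau^{-N}L^2_{d\tau}}$ times a fixed spatial constant depending on $\psi$. Thus $\|\Phi(g)\|_S \lesssim \|g\|_S$.

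The only genuinely non-trivial input is the non-vanishing of $1 - \tfrac{2}{\alpha_{**}} T_1(\psi)$, which is exactly the content of assumption {\bf (B2)}; there is no PDE-level obstacle here, as the entire argument is algebraic plus routine norm bookkeeping against the $S$-norm.
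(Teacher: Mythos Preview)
Your proof is correct and follows essentially the same route the paper intends: the lemma is stated as being ``in a similar vein'' to Lemma~\ref{lem:oneplustildetinverse}, whose proof is likewise a rank-one perturbation / Sherman--Morrison argument (written there as ``first handle the special direction, then subtract to land in the kernel''), and condition {\bf (B2)} is precisely $\tfrac{2}{\alpha_{**}}T_1(\psi)\neq 1$. The only mild stylistic difference is that you write down the closed Sherman--Morrison formula directly, whereas the paper's model proof decomposes $f$ into a multiple of the range vector plus a piece annihilated by the functional; these are of course equivalent.
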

	
	The next couple of lemmas deal with the control of the transference operator \eqref{eq: transference1}. The following lemma shall help deal with certain resonant situations:
	\begin{lem}\label{lem:oscillatoryintegral1} There is an absolute constant $C$ such that for any $\xi\in\R_+, \kappa\in \R_+$, $a\in (0,\frac12)$, we have 
		\begin{align*}
			\big|\int_{-a\cdot\xi}^{a\cdot\xi} \frac{e^{i(\eta^2 + 2\xi\cdot\eta)\kappa}}{\eta}\,d\eta\big|\leq C,
		\end{align*}
		where we use the principal value interpretation of the integral. 
		Furthermore, calling the integral $g_a(\xi,\kappa)$, we have 
		\begin{align*}
			\big|\partial_{\xi}g_a(\xi,\kappa)\big|\leq C_1\xi^{-1}
		\end{align*}
		for a universal constant $C_1$, provided $0<\xi\lesssim 1$. We can also obtain similar higher derivative bounds, provided we extend the integral over $(-\infty, \infty)$ and use a smooth even cutoff $\chi_{|\eta|<a\xi}$ instead. 
	\end{lem}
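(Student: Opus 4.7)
The plan is to separate the phase via $e^{i(\eta^2+2\xi\eta)\kappa} = e^{2i\xi\eta\kappa} + (e^{i\eta^2\kappa}-1)e^{2i\xi\eta\kappa}$, which splits the principal value integral as $g_a(\xi,\kappa) = I_1 + I_2$, where
\[
I_1 = \mathrm{p.v.}\int_{-a\xi}^{a\xi} \frac{e^{2i\xi\eta\kappa}}{\eta}\,d\eta, \qquad I_2 = \int_{-a\xi}^{a\xi} \frac{(e^{i\eta^2\kappa}-1)\,e^{2i\xi\eta\kappa}}{\eta}\,d\eta.
\]
The term $I_1$ is handled by the substitution $u = 2\xi\eta\kappa$, giving $I_1 = 2i \int_0^{2a\xi^2\kappa}\frac{\sin u}{u}\,du$, a sine integral uniformly bounded by an absolute constant. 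In $I_2$ the integrand is genuinely integrable since $(e^{i\eta^2\kappa}-1)/\eta$ vanishes at $\eta=0$, so there is no principal-value issue there.

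The key obstacle is that a crude bound on $I_2$ based only on $|e^{i\eta^2\kappa}-1|\leq \min(\eta^2\kappa,2)$ produces a logarithmic divergence $\sim \log(a\xi\sqrt{\kappa})$ when $a\xi\sqrt{\kappa}\gg 1$, so one must exploit the oscillation of $e^{2i\xi\eta\kappa}$. Since $|\eta|\leq a\xi\leq \xi/2$, the phase derivative $\partial_\eta[(\eta^2+2\xi\eta)\kappa] = 2\kappa(\eta+\xi)$ satisfies $|\eta+\xi|\gtrsim \xi$ throughout the integration domain, i.e.\ we are in a non-stationary regime. I would perform a single integration by parts against $\frac{d}{d\eta}\big(e^{2i\xi\eta\kappa}/(2i\xi\kappa)\big) = e^{2i\xi\eta\kappa}$. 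The boundary terms evaluate to
\[
\Big[\frac{e^{i\eta^2\kappa}-1}{\eta}\cdot\frac{e^{2i\xi\eta\kappa}}{2i\xi\kappa}\Big]_{-a\xi}^{a\xi} = \frac{(e^{ia^2\xi^2\kappa}-1)\,\sin(2a\xi^2\kappa)}{a\xi^2\kappa},
\]
which is bounded by $C$ using the interpolation $\bigl|(e^{ia^2\xi^2\kappa}-1)/(a\xi)\bigr|\cdot |\sin(2a\xi^2\kappa)|/(\xi\kappa)\leq (2/(a\xi))\cdot(2a\xi) = 4$. The interior term involves $\frac{d}{d\eta}\bigl(\tfrac{e^{i\eta^2\kappa}-1}{\eta}\bigr) = 2i\kappa e^{i\eta^2\kappa} - \tfrac{e^{i\eta^2\kappa}-1}{\eta^2}$; each summand is dominated by $\kappa$ (resp.\ $\min(\kappa,2/\eta^2)$), and the resulting integral divided by $2\xi\kappa$ is bounded by a universal constant after splitting on $|\eta|\lessgtr \sqrt{2/\kappa}$.

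For the derivative bound, I split $\partial_\xi g_a$ into a boundary contribution from differentiating the endpoints $\pm a\xi$, which yields $\xi^{-1}(e^{i(a^2+2a)\xi^2\kappa}-e^{i(a^2-2a)\xi^2\kappa})$ and is clearly $\lesssim \xi^{-1}$, plus an interior contribution $2i\kappa\int_{-a\xi}^{a\xi} e^{i(\eta^2+2\xi\eta)\kappa}\,d\eta$. Completing the square $\eta^2+2\xi\eta=(\eta+\xi)^2-\xi^2$ and shifting $\mu = \eta+\xi\in[(1-a)\xi,(1+a)\xi]$, the phase $\mu^2\kappa$ has derivative $|2\mu\kappa|\gtrsim \xi\kappa$ on the integration range; one application of van der Corput (integration by parts against $\mu\kappa$) yields $\bigl|\int e^{i\mu^2\kappa}\,d\mu\bigr|\lesssim (\xi\kappa)^{-1}$, hence the interior piece is $\lesssim \kappa\cdot(\xi\kappa)^{-1} = \xi^{-1}$, as claimed. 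The higher derivative bounds with a smooth cutoff $\chi_{|\eta|<a\xi}$ in place of the sharp cutoff follow by the same mechanism: each differentiation in $\xi$ produces either an $\eta$-factor (absorbed by the same non-stationary phase integration by parts) or a factor from $\chi$, whose smoothness removes the boundary terms entirely and allows repeated integration by parts in $\eta$.
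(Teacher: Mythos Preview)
Your proof is correct in substance, with a minor computational slip: the boundary term arising from the integration by parts in $I_2$ should carry $\cos(2a\xi^2\kappa)$ rather than $\sin(2a\xi^2\kappa)$, since $\frac{e^{i\eta^2\kappa}-1}{\eta}$ is odd in $\eta$ while $e^{2i\xi\eta\kappa}$ contributes the even part $\cos$. This does not affect the boundedness conclusion, which follows from $|e^{ia^2\xi^2\kappa}-1|\leq\min(a^2\xi^2\kappa,2)$ and $|\cos|\leq 1$.

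Your decomposition is genuinely different from the paper's. The paper splits the \emph{integration domain} at $|\eta|=(\xi\kappa)^{-1}$: on the inner piece it symmetrizes to get $e^{i\eta^2\kappa}\sin(2\xi\kappa\eta)/\eta$ and bounds crudely, while on the outer piece it integrates by parts against the \emph{full} phase $(\eta^2+2\xi\eta)\kappa$, using $|\eta+\xi|\gtrsim\xi$. You instead split the \emph{integrand} via $e^{i(\eta^2+2\xi\eta)\kappa}=e^{2i\xi\eta\kappa}+(e^{i\eta^2\kappa}-1)e^{2i\xi\eta\kappa}$, reducing $I_1$ to a sine integral and handling $I_2$ by integration by parts against only the \emph{linear} phase $2\xi\eta\kappa$. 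Both routes ultimately rely on the same non-stationary structure ($a<\tfrac12$ keeps $\eta$ away from the stationary point $-\xi$), and both handle the derivative bound identically via $|\int e^{i\mu^2\kappa}\,d\mu|\lesssim(\xi\kappa)^{-1}$ on $\mu\sim\xi$. Your approach is slightly cleaner in that the sine integral $I_1$ absorbs the principal-value singularity in one stroke; the paper's approach is slightly more uniform in that it uses a single integration-by-parts identity for the oscillatory tail.
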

	\begin{proof} Split the integral into 
		\begin{align*}
			\int_0^{\min\{(\xi\kappa)^{-1}, a\xi\}} e^{i\eta^2\kappa}\cdot \frac{\sin(2\xi\kappa\eta)}{\eta}\,d\eta + \sum_{\pm}\int_{\pm\min\{(\xi\kappa)^{-1},a\xi\}}^{\pm a\cdot\xi} \frac{e^{i(\eta^2 + 2\xi\cdot\eta)\kappa}}{\eta}\,d\eta
		\end{align*}
		The first integral is bounded by 
		\begin{align*}
			\big|\int_0^{\min\{(\xi\kappa)^{-1},a\xi\}} e^{i\eta^2\kappa}\cdot \frac{\sin(2\xi\kappa\eta)}{\eta}\,d\eta \big|\leq (\xi\kappa)^{-1}\cdot C\cdot (\xi\kappa) = C. 
		\end{align*}
		For the second integral we perform integration by parts, using that 
		\begin{align*}
			e^{i(\eta^2 + 2\xi\cdot\eta)\kappa} = \frac{1}{2i(\eta + \xi)\kappa}\cdot\partial_{\eta}\big(e^{i(\eta^2 + 2\xi\cdot\eta)\kappa}\big),
		\end{align*}
		and we have 
		\begin{align*}
			&\big|\frac{1}{2i(\eta + \xi)\kappa\cdot\eta}\cdot e^{i(\eta^2 + 2\xi\cdot\eta)\kappa}|_{\pm\min\{(\xi\kappa)^{-1},a\xi\}}^{\pm a\cdot\xi} + \big|\int_{\pm\min\{(\xi\kappa)^{-1},a\xi\}}^{\pm a\cdot\xi}e^{i(\eta^2 + 2\xi\cdot\eta)\kappa}\cdot \partial_{\eta}\big( \frac{1}{2i\eta(\eta + \xi)\kappa}\big)\,d\eta\big|\\
			&\leq C
		\end{align*}
		due to our assumption on $a$. For the derivative bound, observe that 
		\begin{align*}
			\big|\int_{-a\cdot\xi}^{a\cdot\xi} \kappa\cdot e^{i(\eta^2 + 2\xi\cdot\eta)\kappa}\,d\eta\big| = \big|\int_{-a\cdot\xi}^{a\cdot\xi} \frac{\kappa}{2(\eta+\xi)\kappa}\cdot \partial_{\eta}\big(e^{i(\eta^2 + 2\xi\cdot\eta)\kappa}\big)\,d\eta\big|\leq C_2\xi^{-1}
		\end{align*}
		using integration by parts. 
	\end{proof}
	
	\begin{rem}\label{rem:oscillatoryintegral1} The same bound obtains if we also include a factor $g(\eta)$ into the integral where $g$ is a bounded $C^1$-function satisfying a bound $\big| g'(\eta)\big|\lesssim \langle\eta\rangle^{-\delta}$ for some $\delta>1$. In particular, we can infer the bound 
		\begin{align*}
			&\big|\int_{-\infty}^{\infty} \chi_{|\eta|<\frac{\xi}{2}}F(\xi,\eta+\xi)\frac{e^{i(\eta^2 + 2\xi\cdot\eta)\kappa}}{\eta}\,d\eta\big|\leq C,\\&\big|\partial_{\xi}\int_{-\infty}^{\infty} \chi_{|\eta|<\frac{\xi}{2}}F(\xi,\eta+\xi)\frac{e^{i(\eta^2 + 2\xi\cdot\eta)\kappa}}{\eta}\,d\eta\big|\leq \frac{C}{\xi},
		\end{align*}
		where $F(\cdot, \cdot)$ is as in the kernel of $\mathcal{K}$, see \eqref{eq:Kformula}, where $C$ does not depend on $\xi,\kappa$, and analogously for higher order derivatives. 
	\end{rem}
	
	In order to deal with the propagator $S_{\mathcal{K}}$, defined in \eqref{eq:ScalK2}, we shall require a kind of {\it{concatenation lemma}}, which helps deal with strings of many $\mathcal{K}$'s:
	\begin{lem}\label{lem:concatenation1} Let $j\geq 1$ We have the bound 
		\begin{align*}
			&\Big\|\int_0^\infty \xi^2\cdot S\big(-i\frac{\lambda_{\tau}}{\lambda}\mathcal{K}\circ S(G)\big)^j\rho(\xi)\,d\xi\Big\|_{\tau^{-N}L^2_{d\tau}}\\&\hspace{5cm}\lesssim (\sqrt{N})^{-j}\cdot \big\|\langle \xi\partial_{\xi}\rangle G\big\|_{\tau^{-N}L^2_{d\tau}L^2_{\rho(\xi)\,d\xi}}. 
		\end{align*}
	\end{lem}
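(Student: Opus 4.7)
The plan is to write the $j$-fold iterate as a multiple integral in $(\sigma_1,\ldots,\sigma_j)$ and $(\eta_1,\ldots,\eta_{j-1})$ variables, to absorb the principal-value singularities of each $\mathcal{K}$ into oscillatory estimates of the type provided by Lemma~\ref{lem:oscillatoryintegral1} and Remark~\ref{rem:oscillatoryintegral1}, and finally to close with $j$ applications of the Schur bound underlying Lemma~\ref{lem:basicL2}, each contributing a factor $1/\sqrt{N}$.

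First I would unwind the definitions. Writing $\beta(\sigma)=\lambda_{\sigma}/\lambda(\sigma)\sim\sigma^{-1}$, the $j$-fold composition $S\circ(\beta\mathcal{K}\circ S)^j(G)$ evaluated at $(\tau,\xi)$ becomes, after inserting \eqref{eq:Kformula} at each $\mathcal{K}$, a $(2j+1)$-fold integral of the schematic form
\begin{equation*}
\int_{\tau<\sigma_{1}<\cdots<\sigma_{j+1}}\!\!\!\!\prod_{k=1}^{j}\beta(\sigma_{k})\,d\sigma_{k}\cdot d\sigma_{j+1}\int_{\R_{+}^{j}}\!\!\prod_{k=1}^{j}\frac{F(\xi_{k-1},\eta_{k})\rho(\eta_{k})}{\xi_{k-1}-\eta_{k}}\,d\eta_{k}\cdot S(\tau,\sigma_{1},\xi_{0})\prod_{k=1}^{j}S(\sigma_{k},\sigma_{k+1},\eta_{k})\cdot G(\sigma_{j+1},\xi_{j}),
\end{equation*}
where $\xi_{0}=\xi$ and successive frequencies are related by the dilation $\xi_{k}=(\lambda(\sigma_{k})/\lambda(\sigma_{k+1}))\eta_{k}$ built into the propagator (see Prop.~\ref{prop:linpropagator}). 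The outer $\int_{0}^{\infty}\xi^{2}\rho(\xi)\,d\xi$ is then taken last.

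Next I would handle the $\eta_{k}$-integrals. For each $k$, split the $d\eta_{k}$ integral into the resonant window $|\eta_{k}-\xi_{k-1}|<\tfrac12\xi_{k-1}$ and its complement. In the resonant window, after the substitution $\eta_{k}=\xi_{k-1}+\tilde{\eta}_{k}$, the combined phase from the two neighbouring Schr\"odinger propagators is precisely of the form $(\tilde{\eta}_{k}^{2}+2\xi_{k-1}\tilde{\eta}_{k})\kappa_{k}$ for some $\kappa_{k}>0$ built from $\int\lambda^{-2}$, and Remark~\ref{rem:oscillatoryintegral1} (together with the $C^{2}$ symbol bounds of Lemma~\ref{lem:KboundsKST} for $F$) produces a uniform bound for the principal-value integral and its first $\xi_{k-1}$-derivative with loss $\xi_{k-1}^{-1}$. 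In the non-resonant region, the kernel $F(\xi_{k-1},\eta_{k})\rho(\eta_{k})/(\xi_{k-1}-\eta_{k})$ becomes regular and by the $L^{p}_{\rho\,d\eta}$-mapping property of $\mathcal{K}$ (Lemma~\ref{lem:KboundsKST}) is trivially controlled. The net effect is that each $\eta_{k}$-integration produces a kernel in $(\sigma_{k},\sigma_{k+1})$ of size bounded uniformly in the remaining frequency variables, with the symbol estimates for $\rho$ ensuring integrability for small and large $\xi$.

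Having reduced to a purely temporal multiple integral with symbol-type weights in $\xi$, I would then apply Schur's test $j$ times exactly as in Lemma~\ref{lem:basicL2}, using the basic bound
\begin{equation*}
\Big\|\chi_{\tau\le\sigma}\beta(\sigma)\big(\tfrac{\sigma}{\tau}\big)^{C}\tau^{N-1}\sigma^{-N}\Big\|_{L^{\infty}_{\sigma}L^{1}_{d\tau}\cap L^{\infty}_{\tau}L^{1}_{d\sigma}}\lesssim \frac{1}{N},
\end{equation*}
valid for $N\gg_{\nu}1$; each temporal composition therefore gains a factor $N^{-1/2}$, accounting for the power $(\sqrt{N})^{-j}$ on the right. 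The outer $\int_{0}^{\infty}\xi^{2}S(\tau,\sigma_{1},\xi)\rho(\xi)\,d\xi$ is handled by a single integration by parts in $\xi$, using $\xi^{2}S(\tau,\sigma_{1},\xi)=-i\partial_{\sigma_{1}}[(\lambda(\sigma_{1})/\lambda(\tau))^{2}S(\tau,\sigma_{1},\xi)]$ followed by relocating the $\sigma_{1}$-derivative onto $G$ as $\partial_{\sigma_{1}}$ combined with the dilation $\xi_{1}\partial_{\xi_{1}}$ commuting through the chain: this is the source of the single $\langle\xi\partial_{\xi}\rangle$ on the right-hand side, and crucially does not grow with $j$ since each subsequent $\mathcal{K}$ already carries an $\xi^{2}$-free structure through its kernel bounds.

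The main obstacle will be the simultaneous control of the principal-value singularities of $\mathcal{K}$ and the oscillations of the propagators at every intermediate stage, because the naive application of the $L^{p}_{\rho\,d\eta}$-boundedness of $\mathcal{K}$ loses the oscillatory cancellation needed to close the $\xi$-integrations at both endpoints $\xi\to 0$ and $\xi\to\infty$ uniformly in $j$; Remark~\ref{rem:oscillatoryintegral1} together with the trace-derivative bound on $F$ in Lemma~\ref{lem:KboundsKST} is what makes this uniform control possible, and managing this uniformity while keeping the $(\sqrt{N})^{-j}$ gain is the point where the argument must be executed carefully.
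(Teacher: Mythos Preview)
Your sketch assembles the right ingredients (resonant/non-resonant splitting of each $\mathcal{K}$, the oscillatory estimate of Lemma~\ref{lem:oscillatoryintegral1} and Remark~\ref{rem:oscillatoryintegral1} in the resonant window, and Schur-type gains of $N^{-1/2}$ per temporal integration), but there is a genuine gap in how you close the outer $\xi$-integral, and this is precisely where the paper's argument has content that your sketch does not supply.

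First, your final paragraph is internally inconsistent: you announce ``a single integration by parts in $\xi$'' but then invoke the identity $\xi^{2}S(\tau,\sigma_{1},\xi)=-i\partial_{\sigma_{1}}[(\lambda(\sigma_{1})/\lambda(\tau))^{2}S(\tau,\sigma_{1},\xi)]$, which is a $\sigma_{1}$-identity, and you then claim to ``relocate the $\sigma_{1}$-derivative onto $G$''. But $G$ lives at time $\sigma_{j+1}$, not $\sigma_{1}$; to reach $G$ the derivative would have to be commuted through $j$ intermediate propagators and $\mathcal{K}$-kernels, generating at each stage new terms (dilation pieces $\xi\partial_{\xi}$ and time-derivatives of the kernels), so the assertion that a single $\langle\xi\partial_{\xi}\rangle$ on $G$ suffices is unjustified. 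Second, and more seriously, your plan to integrate out each $\eta_{k}$ \emph{first} via Remark~\ref{rem:oscillatoryintegral1} leaves you with $\int_{0}^{\infty}\xi^{2}S(\tau,\sigma_{1},\xi)\cdot[\text{bounded}(\xi,\sigma_{\bullet})]\cdot\rho(\xi)\,d\xi$, and the joint $\xi$-oscillation coming from the full chain of propagators has been destroyed; without it there is no mechanism to gain the $(\sigma-\tau)^{-1}$ factors needed to close the $\xi$-integral and the subsequent Schur bounds simultaneously.

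The paper's proof avoids this by \emph{not} integrating the $\eta_{k}$'s out first. Instead it passes to the variables $\triangle\tilde\eta_{l}$ and observes that the product of \emph{all} the propagator phases in the resonant string combines into a single $\xi$-dependent phase \eqref{eq:resprodxiphase1}. The resonant cutoffs are chosen with shrinking windows $|\triangle\tilde\eta_{l}|<\xi/(10l^{2})$ precisely so that the $\triangle\tilde\eta$-perturbations sum to at most $\xi/6$ of the leading term, guaranteeing that this combined phase is non-stationary in $\xi$ \emph{uniformly in $j$}. One then performs the $\xi$-integration by parts against this combined phase, disentangles $\xi$ from the $\triangle\tilde\eta_{l}$'s by power-series expansion of the reciprocal phase derivative, and only afterwards applies the $L^{2}_{\rho}$ mapping bounds $\|(-i\tfrac{\lambda_{\tau}}{\lambda}\mathcal{K}^{(l)}_{res})\circ S\|\lesssim N^{-1}$ factor by factor. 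The telescoping decomposition \eqref{eq:SKproductexpansion1}, which locates the \emph{first} non-resonant factor $\mathcal{K}^{(a+1)}_{nres}$, is what allows the single $\langle\xi\partial_{\xi}\rangle$ on the right: that factor absorbs $\langle\xi\partial_{\xi}\rangle$ with bound $a^{2}/N$, and in the fully resonant case $a=j$ the $\xi$-derivative ultimately lands on $G$. This combined-phase step and the $l^{-2}$-shrinking windows are the missing ideas in your sketch.
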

	\begin{proof} Observe that the operator 
		\[
		S\big(-i\frac{\lambda_{\tau}}{\lambda}\mathcal{K}\circ S\big)
		\]
		involves integration against $\sigma,\sigma_1,\eta$ of the kernel
		\begin{align*}
			-i\frac{\lambda_{\sigma}}{\lambda}S(\tau,\sigma,\xi)\cdot \frac{F(\frac{\lambda(\tau)}{\lambda(\sigma)}\xi,\eta)\rho(\eta)}{\frac{\lambda(\tau)}{\lambda(\sigma)}\xi - \eta}\cdot S(\sigma,\sigma_1,\eta). 
		\end{align*}
		Here it is natural to introduce the variables
		\begin{align*}
			\tilde{\eta}: = \frac{\lambda(\sigma)}{\lambda(\tau)}\cdot \eta,\,\triangle \tilde{\eta}: = \tilde{\eta} - \xi,
		\end{align*}
		by means of which the preceding kernel can be re-expressed in the form
		\begin{equation}\label{eq:SKfirsttermexplicit}
			-i\frac{\lambda_{\sigma}}{\lambda}S(\tau,\sigma_1,\xi)\cdot \frac{ F\big(\frac{\lambda(\tau)}{\lambda(\sigma)}\xi,\frac{\lambda(\tau)}{\lambda(\sigma)}(\xi + \triangle \tilde{\eta})\big)\rho(\frac{\lambda(\tau)}{\lambda(\sigma)}(\xi + \triangle \tilde{\eta})}{\frac{\lambda(\tau)}{\lambda(\sigma)}\cdot \triangle \tilde{\eta}}\cdot e^{i(\triangle \tilde{\eta}^2 + 2\triangle \tilde{\eta}\cdot\xi)\kappa}
		\end{equation}
		where in the final exponential we have 
		\[
		\kappa = \kappa(\tau,\sigma,\sigma_1) = \lambda^2(\tau)\cdot\int_{\sigma_1}^{\sigma}\lambda^{-2}(s)\,ds. 
		\]
		Write
		\begin{equation}\label{eq:SKproduct1}
			S\big(-i\frac{\lambda_{\tau}}{\lambda}\mathcal{K}\circ S\big)^j = S\circ \prod_{l=1}^j (-i\frac{\lambda_{\tau}}{\lambda}\mathcal{K})\circ S,
		\end{equation}
		and decompose the $l$-th term in the right hand product into 
		\begin{align*}
			(-i\frac{\lambda_{\tau}}{\lambda}\mathcal{K})\circ S = (-i\frac{\lambda_{\tau}}{\lambda}\mathcal{K}_{res}^{(l)})\circ S +  (-i\frac{\lambda_{\tau}}{\lambda}\mathcal{K}_{nres}^{(l)})\circ S
		\end{align*}
		where we define the kernel of $\mathcal{K}_{res}^{(l)}$ as\footnote{Here we have replaced the integration variables $\sigma, \triangle\tilde{\eta}$ in the preceding by $\sigma_l, \triangle\tilde{\eta}_l$, while the variables $\xi, \tau$ get replaced by $\eta_{l-1}, \sigma_{l-1}$. The $\xi$ in the additional cutoff refers to the 'output' frequency of the entire product.}
		\[
		\frac{F(\eta_{l-1}, \eta_{l-1} + \frac{\lambda(\sigma_{l-1})}{\lambda(\sigma_l)}\triangle\tilde{\eta}_{l})}{ \frac{\lambda(\sigma_{l-1})}{\lambda(\sigma_l)}\triangle\tilde{\eta}_{l}}\cdot\rho(\eta_{l-1} + \frac{\lambda(\sigma_{l-1})}{\lambda(\sigma_l)}\triangle\tilde{\eta}_{l})\cdot\chi_{\frac{\lambda(\sigma_{l})}{\lambda(\tau)}\triangle\tilde{\eta}_{l}<\frac{\xi}{10 l^2}}
		\]
		Then we expand 
		\begin{equation}\label{eq:SKproductexpansion1}\begin{split}
				&S\circ \prod_{l=1}^j (-i\frac{\lambda_{\tau}}{\lambda}\mathcal{K})\circ S\\& = S\circ\sum_{a=0}^j\prod_{l=1}^a(-i\frac{\lambda_{\tau}}{\lambda}\mathcal{K}_{res}^{(l)})\circ S\circ (-i\frac{\lambda_{\tau}}{\lambda}\mathcal{K}_{nres}^{(a+1)})\circ S\circ \prod_{l=a+2}^k  (-i\frac{\lambda_{\tau}}{\lambda}\mathcal{K})\circ S
		\end{split}\end{equation}
		where by definition we set $\prod_{l=1}^0(-i\frac{\lambda_{\tau}}{\lambda}\mathcal{K}_{res}^{(l)})\circ S := \text{id}$. The main point then is to understand the composition of the first five operators on the left. Inductively using 
		\begin{align*}
			\eta_l = \eta_{l-1} + \frac{\lambda(\sigma_{l-1})}{\lambda(\sigma_l)}\triangle\tilde{\eta}_{l},
		\end{align*}
		and combining all exponential phases in the composition of the first three operators, we arrive at the following $\xi$-dependent phase function: 
		\begin{equation}\label{eq:resprodxiphase1}
			e^{i\lambda^2(\tau)[\xi^2\int_{\sigma_a}^{\tau}\lambda^{-2}(s)\,ds + 2\xi\sum_{j=0}^{a}\frac{\lambda(\sigma_j)}{\lambda(\tau)}\triangle\tilde{\eta}_{j}\cdot\int_{\sigma_a}^{\sigma_{j}}\lambda^{-2}(s)\,ds]}
		\end{equation}
		Note that the cutoffs in the kernels for $\mathcal{K}_{res}^{(l)}$ and the fact that the we have $\tau\leq \sigma\leq \sigma_l\leq \sigma_{l+1}$ 
		imply that 
		\begin{align*}
			\Big|\sum_{j=0}^{a}\frac{\lambda(\sigma_j)}{\lambda(\tau)}\triangle\tilde{\eta}_{j}\cdot\int_{\sigma_a}^{\sigma_{j}}\lambda^{-2}(s)\,ds\Big|\leq \frac{\xi}{6}\cdot \int_{\sigma_a}^{\tau}\lambda^{-2}(s)\,ds.
		\end{align*}
		This in turn implies that the phase in \eqref{eq:resprodxiphase1} is in the non-stationary case for its dependence on $\xi$. Next, from the definition of the non-resonant kernel we deduce the bound 
		\begin{align*}
			\Big\|(-i\frac{\lambda_{\tau}}{\lambda}\langle\xi\partial_{\xi}\rangle\mathcal{K}_{nres}^{(a+1)})\circ S\Big\|_{\tau^{-N}L^2_{d\tau}L^2_{\rho(\xi)\,d\xi}\rightarrow \tau^{-N}L^2_{d\tau}L^2_{\rho(\xi)\,d\xi}}\lesssim \frac{a^2}{N}. 
		\end{align*}
		Performing integration by parts with respect to $\xi$ in the integral displayed in the lemma, and 'disentangling' the variables $\xi, \triangle\tilde{\eta}_{j}$ by a power series expansion for \footnote{These remarks apply on the support of the integrand where the variables $\triangle\tilde{\eta}_{j}$ are restricted as indicated above.}
		\begin{align*}
			&\Big(\partial_{\xi}\big(i\lambda^2(\tau)[\xi^2\int_{\sigma_a}^{\tau}\lambda^{-2}(s)\,ds + 2\xi\sum_{j=0}^{a}\frac{\lambda(\sigma_j)}{\lambda(\tau)}\triangle\tilde{\eta}_{j}\cdot\int_{\sigma_a}^{\sigma_{j}}\lambda^{-2}(s)\,ds]\big)\Big)^{-1}\\
			&\sim (-i)\xi^{-1}\cdot\big(\lambda^2(\tau)\int_{\sigma_a}^{\tau}\lambda^{-2}(s)\,ds\big)^{-1}, 
		\end{align*}
		the lemma is then a consequence of repeated application of the bounds 
		\begin{align*}
			\Big\|(-i\frac{\lambda_{\tau}}{\lambda}\mathcal{K}_{res}^{(l)})\circ S\Big\|_{\tau^{-N}L^2_{d\tau}L^2_{\rho(\xi)\,d\xi}\rightarrow \tau^{-N}L^2_{d\tau}L^2_{\rho(\xi)\,d\xi}}\lesssim \frac{1}{N}, 
		\end{align*}
		and similarly when $\mathcal{K}_{res}^{(l)}$ is replaced by $\mathcal{K}$. 
	\end{proof}
	
	For the proof of Proposition~\ref{prop:solnoftildelambdaeqn}, and more specifically Lemma~\ref{lem:deltaPhibound}, we shall need the following lemma: 
	\begin{lem}\label{lem:SKdeltaPhi} Let $g(\tau)\in \tau^{-N}L^2_{d\tau}$. Then we have  
		\begin{align*}
			\Big\|\int_0^\infty \xi^2\cdot S_{\mathcal{K}}\big(g(\cdot)\big)(\tau,\xi)\rho_1(\xi)\,d\xi\Big\|_{\log^{-2}\tau\cdot\tau^{-N}L^2_{d\tau}}\ll_{\tau_*}\big\|g\big\|_{ \tau^{-N}L^2_{d\tau}}
		\end{align*}
	\end{lem}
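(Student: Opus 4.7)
The strategy is to expand $S_{\mathcal{K}}$ as a geometric series and estimate each term by combining two of the tools already developed: Lemma~\ref{lem:K_frefined} (to generate the $\log^{-2}\tau$ gain from the outermost $S$-propagator acting against the measure $\rho_1(\xi)\,d\xi$) and the concatenation machinery of Lemma~\ref{lem:concatenation1} (to obtain a geometric factor $(\sqrt{N})^{-j}$ per iteration). Specifically, I would write $S_{\mathcal{K}}(g) = \sum_{j\geq 1} T_j(g)$ with $T_j := S\circ\bigl(-i\tfrac{\lambda_\tau}{\lambda}\mathcal{K}\circ S\bigr)^j$, and observe that the $j$-th summand has the same structure as in Lemma~\ref{lem:concatenation1}, save for three modifications: (i) the outer measure is $\rho_1(\xi)$ rather than $\rho(\xi)$, which is only helpful since $\rho_1$ decays at infinity where $\rho$ grows, while coinciding with $\rho$ on $(0,1]$ where all the delicate integration-by-parts analysis takes place; (ii) the input $g(\tau)$ is $\xi$-independent, so $\langle\xi\partial_\xi\rangle g=g$ and $\|g\|_{\tau^{-N}L^2_{d\tau}L^2_{\rho_1(\xi)\,d\xi}}\lesssim \|g\|_{\tau^{-N}L^2_{d\tau}}$ by virtue of $\int_0^\infty \rho_1(\xi)\,d\xi < \infty$ (integrability at zero follows from $\rho_1\sim \xi^{-1}\log^{-2}\xi$ and at infinity from $\rho_1\sim \xi^{-2}$); and (iii) we require the stronger output weight $\log^{-2}\tau\cdot\tau^{-N}$.

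To implement (iii), I would rewrite $T_j(g) = S(f_j)$ where $f_j := \bigl(-i\tfrac{\lambda_\tau}{\lambda}\mathcal{K}\circ S\bigr)^j(g)$, split $S = \Re(iS) + i\Im(iS) = S_1 + iS_2$, and apply Lemma~\ref{lem:K_frefined} (with $\rho_1$ in place of $\rho$ in the definition of $K_f$, which is the form already used there) to the real and imaginary parts of $f_j$. This reduces the problem to establishing
\[
\|\langle\xi^{1-\delta_0}\partial_\xi\rangle^{1+\delta_0} f_j\|_{\tau^{-N}L^2_{d\tau}L^\infty_{d\xi}} + \|\partial_\tau f_j\|_{\tau^{-N-\delta_0}L^2_{d\tau}L^\infty_{d\xi}} \lesssim (\sqrt{N})^{-(j-1)}\cdot c_0\cdot \|g\|_{\tau^{-N}L^2_{d\tau}},
\]
for a universal $c_0$. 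The bound for $j=1$ follows from the direct pointwise estimate $|S(g)(\tau,\xi)|\lesssim \int_\tau^\infty (\tau/\sigma)^C |g(\sigma)|\,d\sigma$ combined with the boundedness of $\mathcal{K}$ on $L^\infty_{d\xi}$ coming from Lemma~\ref{lem:KboundsKST} (the pointwise bounds on $F(\xi,\eta)$ and its derivatives yield boundedness after principal-value interpretation). The inductive step is then carried out exactly as in the proof of Lemma~\ref{lem:concatenation1}: each additional factor $(-i\tfrac{\lambda_\tau}{\lambda}\mathcal{K}\circ S)$ contributes a small operator norm on account of the $\tfrac{\lambda_\tau}{\lambda}\sim \tau^{-1}$ weight, and Schur's criterion as in Lemma~\ref{lem:basicL2} produces the $(\sqrt N)^{-1}$ gain per iteration.

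Summing the resulting geometric series over $j\geq 1$ yields
\[
\Big\|\int_0^\infty \xi^2 S_{\mathcal{K}}(g)\rho_1(\xi)\,d\xi\Big\|_{\log^{-2}\tau\cdot \tau^{-N}L^2_{d\tau}} \lesssim \sum_{j\geq 1}(\sqrt N)^{-j}\|g\|_{\tau^{-N}L^2_{d\tau}}\lesssim (\sqrt N)^{-1}\|g\|_{\tau^{-N}L^2_{d\tau}},
\]
which becomes $\ll_{\tau_*}$ after invoking the standing hierarchy $\tau_*^{-1}\ll N^{-1}$ (so that $N$ can be chosen arbitrarily large relative to $\tau_*$). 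The main obstacle will be the iterative control of the $L^\infty_{d\xi}$ norms on $f_j$ together with their $\xi\partial_\xi$ derivatives: each application of $\mathcal{K}$ involves a singular integral whose principal-value must be preserved through the iteration, and differentiating the resonant/non-resonant decomposition (cf.\ the cutoffs $\chi_{|\triangle\tilde{\eta}_l|<\xi/(10l^2)}$ from the proof of Lemma~\ref{lem:concatenation1}) produces boundary-type terms. These are precisely handled using Lemma~\ref{lem:oscillatoryintegral1} and Remark~\ref{rem:oscillatoryintegral1}, which give the crucial uniform bound on the oscillatory principal-value integral whose kernel involves $F(\xi,\eta+\xi)$, ensuring that the combinatorial factors from resonance cancellation at stage $l$ are absorbed into the geometric $(\sqrt N)^{-j}$ decay.
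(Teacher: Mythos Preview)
Your overall plan—expand $S_{\mathcal{K}}$, peel off the outermost $S$ as $K_{f_j}$, and appeal to Lemma~\ref{lem:K_frefined}—is different from the paper's route, and it has a genuine gap at the step ``boundedness of $\mathcal{K}$ on $L^\infty_{d\xi}$ coming from Lemma~\ref{lem:KboundsKST}''. That lemma only asserts $L^p$-boundedness for $1<p<\infty$; the operator $\mathcal{K}$ has a Hilbert-transform-type kernel $\frac{F(\xi,\eta)\rho(\eta)}{\xi-\eta}$ and is \emph{not} bounded on $L^\infty_{d\xi}$ in general. For the very first iterate you are saved by the specific oscillatory structure of $S(g)$ (with $g$ scalar) together with Lemma~\ref{lem:oscillatoryintegral1}, as you note. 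But once $j\geq 2$ the input $f_{j-1}$ is itself a nested oscillatory principal-value expression, and controlling $\langle\xi^{1-\delta_0}\partial_\xi\rangle^{1+\delta_0} f_j$ in $L^\infty_{d\xi}$ iteratively is exactly the hard part you have not addressed. You would essentially have to re-derive the combined-phase formula \eqref{eq:resprodxiphase1} to make each principal-value integral land in the regime where Lemma~\ref{lem:oscillatoryintegral1} applies, and the $\xi$-derivative bounds there are only symbol-type ($|\partial_\xi|\lesssim\xi^{-1}$), which interacts poorly with the fractional operator $\langle\xi^{1-\delta_0}\partial_\xi\rangle^{1+\delta_0}$ required by Lemma~\ref{lem:K_frefined}.

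The paper sidesteps this entirely: rather than bounding each $f_j$ and then invoking Lemma~\ref{lem:K_frefined}, it works with the full concatenated integral. For the resonant part $\mathcal{K}_{res}^{(l)}$ it first restricts to $\xi<\tau^{-\delta}$ (harvesting the $\log^{-2}\tau$ gain from the two factors $\rho_1(\xi),\rho(\eta)\sim(\xi\log^2\xi)^{-1}$), combines all oscillatory phases into \eqref{eq:resprodxiphase1}, treats the resulting $\xi$-integral via Lemma~\ref{lem:tildeKfcontrol}, and bounds the remaining $\triangle\tilde{\eta}$-integral via Lemma~\ref{lem:oscillatoryintegral1}. For the non-resonant part $\mathcal{K}_{nres}^{(1)}$ it localizes $\eta<\tau^{-\delta}$ (gaining one $\log^{-1}\tau$), simplifies the outermost $S$ to the flat propagator $e^{i(\sigma-\tau)\xi^2}$ via Lemma~\ref{lem:reductionsteps1}, applies the projection $\Pi$, and then computes the Schr\"odinger-time Fourier transform explicitly as in Lemma~\ref{lem:Fouriertransform1}, landing in $\log^{-2}\hat{\tau}\cdot\hat{\tau}^N L^2\cap W^{N,2}$. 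Neither branch attempts an $L^\infty_{d\xi}$ iteration.
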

	\begin{proof}(sketch) It suffices to show that 
		\begin{align*}
			\Big\||\int_0^\infty \xi^2\cdot S\circ \Big(\prod_{l=1}^j (-i\frac{\lambda_{\tau}}{\lambda}\mathcal{K})\circ S\Big)(g)\rho_1(\xi)\,d\xi\Big\|_{\log^{-2}\tau\cdot\tau^{-N}L^2_{d\tau}}\ll_{\tau_*}(\sqrt{N})^{-(j-1)}\big\|g\big\|_{ \tau^{-N}L^2_{d\tau}},
		\end{align*}
		since then summing over $j$ furnishes the desired result. We first consider the case $j = 1$. As in the preceding proof split $\mathcal{K}$ into $\mathcal{K}_{res}^{(1)} + \mathcal{K}_{nres}^{(1)}$. 
		\\
		{\it{Contribution of $\mathcal{K}_{res}^{(1)}$.}}
		Calling $\sigma,\sigma_1$ the time variables in the two propagators $S$, while $\tau$ is the 'output time', it is straightforward to see that we may assume $\sigma_1 - \tau \geq \tau^{\delta}$ for some small $\delta>0$ as otherwise the factor $\frac{\lambda_{\tau}}{\lambda}$ compensates for the two time integrations and even results in a power gain in $\tau^{-1}$: denoting by $\tau$ as usual the 'output Schr\"odinger time variable', we have 
		\begin{align*}
			&\Big|\int_0^\infty \xi^2\cdot S\circ (-i\frac{\lambda_{\tau}}{\lambda}\mathcal{K}_{res}^{(1)})\circ S(\chi_{|\sigma_1 - \tau|<\tau^{\delta}}g)\rho_1(\xi)\,d\xi\Big|\\
			&\lesssim \tau^{\delta}\cdot \sup_{|\sigma-\tau|<\tau^{\delta}}\big\| (-i\frac{\lambda_{\sigma}}{\lambda}\mathcal{K}_{res}^{(1)})\circ S(\chi_{|\sigma_1 - \tau|<\tau^{\delta}}g)\big\|_{L^2_{d\xi}}
		\end{align*}
		and from here we deduce 
		\begin{align*}
			&\Big\|\int_0^\infty \xi^2\cdot S\circ (-i\frac{\lambda_{\tau}}{\lambda}\mathcal{K}_{res}^{(1)})\circ S(\chi_{|\sigma_1 - \tau|<\tau^{\delta}}g)\rho_1(\xi)\,d\xi\Big\|_{\tau^{-N-1+2\delta}}\lesssim \big\|g\big\|_{\tau^{-N}L^2_{d\tau}}. 
		\end{align*}
		Letting $\eta$ be the integration variable in $\mathcal{K}_{res}^{(1)}$, we have by assumption $\eta\sim \frac{\lambda(\tau)}{\lambda(\sigma)}\xi$. We may further assume $\xi<\tau^{-\delta}$. For if not, recalling that the $\xi$-dependent phase coming from the two propagators $S$ can be written as in \eqref{eq:resprodxiphase1}, performing integration by parts twice with respect to $\xi$ (after including a smooth cutoff $\chi_{\xi>\tau^{-\delta}}$) compensates for the time integrations in both $S$ propagators, at the cost of $\tau^{2\delta+}$, which in turn gets more than compensated by the factor $\frac{\lambda_{\tau}}{\lambda}$; here it suffices again to work with a simple $L^2_{d\xi}$ estimate for the transference operator. We can henceforth include a smooth cutoff $\chi_{\xi<\tau^{-\delta}}$. Finally, to complete the contribution of $\mathcal{K}_{res}^{(1)}$, we observe that there is a factor $\rho(\eta)\sim \rho(\xi)$ in the kernel of $\mathcal{K}_{res}^{(1)}$, and so in total we have two factors $\rho(\xi)$, if we replace $\rho(\eta)$ in the kernel by $\frac{\rho(\eta)}{\rho(\xi)}\sim 1$. If we then again use the parameters $\xi, \triangle\tilde{\eta}$ from the preceding proof and collect the $\xi$-dependent phases according to \eqref{eq:resprodxiphase1} and treat the $\xi$-integral by splitting into two cases as in the proof of Lemma~\ref{lem:tildeKfcontrol}, and finally take advantage of the fact that $\sigma_1 - \tau \geq \tau^{\delta}$, we gain a factor $\log^{-3}\tau$ after executing the $\sigma_1$-integral. The remaining integral over $\triangle\tilde{\eta}$ can then be performed by taking advantage of Lemma~\ref{lem:oscillatoryintegral1} and Remark~\ref{rem:oscillatoryintegral1}.
		This is easily seen to lead to the estimate claimed in the lemma for this contribution with a $\log^{-1}\tau_*$-gain. More specifically, we recall \eqref{eq:SKfirsttermexplicit} where we include an additional smooth cutoff $\chi_{\big|\triangle\tilde{\eta}\big|\leq \frac{\xi}{10}}$, and apply Lemma~\ref{lem:tildeKfcontrol}, Remark~\ref{rem:tildeKfcontrol} where we replace $\sigma$ by $\sigma_1$ and we let 
		\begin{align*}
			&f(\tau,\sigma_1, \xi): = g(\sigma_1)\int_{\tau}^{\sigma_1} \frac{\lambda_{\sigma}}{\lambda}\cdot \int_{-\infty}^{\infty} \frac{\tilde{\chi} F\big(\xi,\xi + \triangle \tilde{\eta}\big)\rho(\xi + \triangle \tilde{\eta})}{\triangle \tilde{\eta}}\cdot e^{i(\triangle \tilde{\eta}^2 + 2\triangle \tilde{\eta}\cdot\xi)\kappa}\,d(\triangle \tilde{\eta}) d\sigma, 
		\end{align*}
		and where the unspecified cutoff $\tilde{\chi}$ localizes simultaneously to $\big|\triangle\tilde{\eta}\big|\leq \frac{\xi}{10}, \xi<\tau^{-\delta}$. We can then obtain the desired estimate as described before by using first Lemma~\ref{lem:oscillatoryintegral1} and Remark~\ref{rem:oscillatoryintegral1}, and then Lemma~\ref{lem:tildeKfcontrol}, Remark~\ref{rem:tildeKfcontrol}, by observing that 
		\begin{align*}
			\big|\langle\xi\partial_{\xi}\rangle^{1+\delta_1}\big(\tilde{\chi}F\big(\xi,\xi + \triangle \tilde{\eta}\big)\rho(\xi + \triangle \tilde{\eta})\big)\big|\lesssim \log^{-2}\tau.
		\end{align*}
		
		{\it{Contribution of $\mathcal{K}_{nres}^{(1)}$.}} Here the strategy is to simplify the phase of the left most propagator $S$ to the simpler form $e^{i\xi^2(\tau-\sigma)}$ and further remove all factors $\frac{\lambda(\tau)}{\lambda(\sigma)}$, so that it becomes simple to compute the (Schr\"odinger) temporal Fourier transform of the expression. To begin with, we claim that we can restrict the variable $\eta$ in $\mathcal{K}_{nres}^{(1)}$ to size $\eta<\tau^{-\delta}$ via a smooth cutoff. For smoothly restricting to the regime $\eta>\tau^{-\delta}$ we can perform integration by parts with respect to $\eta$ in $\mathcal{K}_{nres}^{(1)}\circ S(g)$, which up to a logarithmic loss compensates for the time integration over $\sigma_1$. Then invoking Lemmas~\ref{lem:K_frefined}, ~\ref{lem:tildeKfcontrol}, and taking advantage of the extra factor $\frac{\lambda_{\sigma}}{\lambda}$, we obtain a polynomial gain in $\tau^{-1}$ for this contribution. 
		Once $\eta$ is restricted to $\eta<\tau^{-\delta}$, the $\eta$-integral gains $\log^{-1}\tau$ due to the factor $\rho(\eta)\sim \frac{1}{\eta\log^2\eta}$. By following the proof of Lemma~\ref{lem:reductionsteps1}, we remove the scaling factor $\frac{\lambda(\tau)}{\lambda(\sigma)}$, and simplify kernel of the propagator $S$ on the left to  $e^{i(\sigma - \tau)\xi^2}$. Observe that the modified expression $S\circ (-i\frac{\lambda_{\tau}}{\lambda}\mathcal{K})\circ S(g)$ is now given by 
		\begin{equation}\label{eq:Knres1Sgsimplified}
			\int_{\tau}^\infty e^{i(\sigma - \tau)\xi^2}\cdot (-i)\frac{\lambda_{\sigma}}{\lambda}\cdot \big(\mathcal{K}_{nres}^{(1)}\circ S\big)(g)(\sigma,\xi)\,\rho(\xi)\,d\xi,
		\end{equation}
		Since $\tau$ is restricted to $[\tau_*,\infty)$, we may replace the expression\footnote{The extra $\log^{-1}\sigma$ comes from the localization of the variable $\eta$} $\frac{\lambda_{\sigma}}{\lambda}\cdot \big(\mathcal{K}_{nres}^{(1)}\circ S\big)(g)\in \log^{-1}\sigma\cdot\sigma^{-N}L^2_{d\sigma}L^2_{\rho(\xi)\,d\xi}$ by 
		\begin{align*}
			\Pi\Big(\frac{\lambda_{\sigma}}{\lambda}\cdot \big(\mathcal{K}_{nres}^{(1)}\circ S\big)(g)\Big), 
		\end{align*}
		where the operator $\Pi$ is defined in \eqref{eq:Pidefinition}. We can then interpret \eqref{eq:Knres1Sgsimplified} as convolution of two functions with respect to (Schr\"odinger) time, and compute the temporal Fourier transform as in the proof of Lemma~\ref{lem:Fouriertransform1}, leading to a function which lives in $\log^{-2}\hat{\tau}\cdot\hat{\tau}^NL^2_{d\hat{\tau}}\cap W^{N,2}_{\hat{\tau}}$, and which actually gains a smallness factor $\log^{-1}\tau_*$ when evaluating the corresponding norm. 
		This implies the assertion of the lemma for this contribution, concluding the case $j = 1$. 
		\\
		The case $j>1$ is handled similarly, taking advantage of the decomposition \eqref{eq:SKproductexpansion1}. If the product $\prod_{l=1}^a(-i\frac{\lambda_{\tau}}{\lambda}\mathcal{K}_{res}^{(l)})$ is non-trivial, we proceed in analogy to the first situation in the case $j = 1$, using the $\xi$-dependent phase \eqref{eq:resprodxiphase1}. On the other hand, for the term 
		\begin{align*}
			S\circ (-i\frac{\lambda_{\tau}}{\lambda}\mathcal{K}_{nres}^{(1)})\circ S\circ \prod_{l=2}^k  (-i\frac{\lambda_{\tau}}{\lambda}\mathcal{K})\circ S
		\end{align*}
		if the first operator $\mathcal{K}$in the final product  is replaced by $\mathcal{K}_{nres}^{(2)}$ we can replicate the argument for the second situation in the case $j = 1$. If the first operator there is replaced by $\mathcal{K}_{res}^{(2)}$, we form the maximal string of operators of this form, say
		\begin{align*}
			\prod_{l=2}^{r}  (-i\frac{\lambda_{\tau}}{\lambda}\mathcal{K}_{res}^{(l)})\circ S,
		\end{align*}
		and write the resulting oscillatory phase in the output frequency in analogy to \eqref{eq:resprodxiphase1} before replicating the argument in the first situation of the case $j = 1$. We observe that the power gains in $N^{-1}$ simply result from integrating the functions $\sigma^{-N}$, which result from the weight of the norm used. 
	\end{proof}
	
	In order to complete the control of the real resonant part $\tilde{\kappa}_1$, we shall also require the fact that we can 'move' Schr\"odinger time derivatives across concatenations of the operators $-i\frac{\lambda_{\tau}}{\lambda}\mathcal{K}\circ S$. The following lemma results from a combination of the proof of Lemma~\ref{lem:concatenation1} and Lemma~\ref{lem:derivativemovesthrough1}: 
	\begin{lem}\label{lem:derivativemovesthrough2} We can write 
		\begin{align*}
			\int_0^\infty \xi^2\cdot S\big(-i\frac{\lambda_{\tau}}{\lambda}\mathcal{K}\circ S(\partial_{\sigma}G)\big)^j\rho(\xi)\,d\xi = \partial_{\tau}M_G^{(1,j)} + \tau^{-1}M_{G}^{(2,j)}, 
		\end{align*}
		where the terms on the right enjoy the bound
		\begin{align*}
			\big\|M_G^{(1,j)}\big\|_{\tau^{-N}L^2_{d\tau}} + \big\|M_G^{(2,j)}\big\|_{\tau^{-N}L^2_{d\tau}}\lesssim (\sqrt{N})^{-j}\cdot \big\|\langle \xi\partial_{\xi}\rangle G\big\|_{\tau^{-N}L^2_{d\tau}L^2_{\rho(\xi)\,d\xi}}.
		\end{align*}
		We also have the relation 
		\begin{align*}
			\partial_{\tau}\int_0^\infty \xi^2\cdot S\big(-i\frac{\lambda_{\tau}}{\lambda}\mathcal{K}\circ S(G)\big)^j\rho(\xi)\,d\xi &= \int_0^\infty \xi^2\cdot S\big(-i\frac{\lambda_{\tau}}{\lambda}\mathcal{K}\circ S(\partial_{\sigma}G)\big)^j\rho(\xi)\,d\xi\\& +N_G^{(j)},
		\end{align*}
		Finally, wave temporal frequency localization is essentially passed to the interior of the integral:
		\begin{align*}
			&Q^{(\tilde{\tau})}_{>\tau^{\frac12+}}\big(\int_0^\infty \xi^2\cdot S\big(-i\frac{\lambda_{\tau}}{\lambda}\mathcal{K}\circ S(G)\big)^j\rho(\xi)\,d\xi\big)\\& = \int_0^\infty \xi^2\cdot S\big(-i\frac{\lambda_{\tau}}{\lambda}\mathcal{K}\circ S(Q^{(\tilde{\sigma})}_{>\sigma^{\frac12+}}G)\big)^j\rho(\xi)\,d\xi + O_G^{(j)},
		\end{align*}
		where we have error bound 
		\begin{align*}
			\big\| O_G^{(j)}\big\|_{\tau^{-N-K}L^2_{d\tau}}\lesssim (\sqrt{N})^{-j}\cdot \big\|\langle \xi\partial_{\xi}\rangle G\big\|_{\tau^{-N}L^2_{d\tau}L^2_{\rho(\xi)\,d\xi}}.
		\end{align*}
		where $K\geq 1$ is arbitrary and $N\geq N(K)$ is sufficiently large. 
	\end{lem}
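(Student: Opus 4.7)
The plan is to combine the iterative structure from the proof of Lemma~\ref{lem:concatenation1} with the integration-by-parts trick developed for Lemma~\ref{lem:derivativemovesthrough1}, performing the key manipulation at the innermost layer and then letting the resulting derivative propagate outward through the concatenation. Throughout, we exploit the fact that the extra factors $\frac{\lambda_\sigma}{\lambda}\sim \sigma^{-1}$ attached to every copy of $\mathcal{K}$, together with the temporal integrability of $\sigma^{-N}$, deliver a factor $N^{-\frac12}$ per level, which reproduces the asserted $(\sqrt{N})^{-j}$ smallness gain.

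For the first identity, I pass to the scaled frequency variables $\tilde\xi_l := \frac{\lambda(\sigma_{l-1})}{\lambda(\sigma_l)}\xi_{l-1}$ used in the proof of Lemma~\ref{lem:concatenation1}, so that the source $\partial_{\sigma_{j+1}}G$ is integrated against the innermost propagator $S(\sigma_j,\sigma_{j+1},\eta_{j+1})$. I then perform integration by parts in $\sigma_{j+1}$ exactly as in Lemma~\ref{lem:derivativemovesthrough1}, producing a boundary contribution at $\sigma_{j+1}=\sigma_j$ plus a bulk term in which $\partial_{\sigma_{j+1}}$ falls on $\tilde S(\sigma_j,\sigma_{j+1},\tilde\eta_{j+1})$. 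Applying the identity
\[
\partial_{\sigma_{j+1}}\tilde S(\sigma_j,\sigma_{j+1},\tilde\eta_{j+1}) = \partial_{\sigma_j}\bigl(\zeta(\sigma_j,\sigma_{j+1})\tilde S\bigr) - \partial_{\sigma_j}\zeta(\sigma_j,\sigma_{j+1})\cdot \tilde S
\]
with $\zeta(\sigma_j,\sigma_j)=1$ and $|\partial_{\sigma_j}\zeta|\lesssim \sigma_j^{-1}$, we convert the $\partial_{\sigma_{j+1}}$ into a $\partial_{\sigma_j}$ plus a $\sigma_j^{-1}$-weighted remainder. Iterating this level by level, the $\partial$ travels outward through $S(\sigma_{l-1},\sigma_l,\cdot)$ and the transference kernels (which do not depend on the relevant time variable) until it emerges as a $\partial_\tau$ acting on the entire iterated integral, while each iteration spawns a $\sigma_l^{-1}$ correction. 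The $\sigma^{-1}$ corrections are grouped into $\tau^{-1}M_G^{(2,j)}$, and the leading piece into $\partial_\tau M_G^{(1,j)}$. The $L^2_{\rho\,d\xi}$ mapping bounds for each factor $(-i\frac{\lambda_\sigma}{\lambda})\mathcal K\circ S$, together with the outer integration in $\xi$ handled as in Lemma~\ref{lem:concatenation1} via the splitting \eqref{eq:SKproductexpansion1}, yield the asserted $(\sqrt N)^{-j}$ smallness.

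The second identity is the formal dual and amounts to differentiating under the integral. Directly computing $\partial_\tau$ of the left-hand side distributes a derivative onto the leftmost $S(\tau,\sigma_1,\xi)$; using the analogous relation $\partial_\tau\tilde S = -\partial_{\sigma_1}(\zeta\tilde S) + (\partial_\tau\zeta)\tilde S$ and then integrating by parts in $\sigma_1$, the $-\partial_{\sigma_1}$ propagates inward through all intermediate levels exactly reversing the procedure of the first identity. What eventually arises is a $\partial_{\sigma_{j+1}}$ falling on $G$, which is precisely the expression on the right, plus a string of $\sigma_l^{-1}$ and boundary errors collected into $N_G^{(j)}$, with the same $(\sqrt N)^{-j}$ smallness by Lemma~\ref{lem:concatenation1}.

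For the third identity, which commutes the localizer $Q^{(\tilde\tau)}_{>\tau^{\frac12+}}$ with the propagator chain, I follow the strategy of Lemma~\ref{lem:freqlocalpropag2}: write $G = Q^{(\tilde\sigma)}_{>\sigma^{\frac12+}}G + Q^{(\tilde\sigma)}_{\le\sigma^{\frac12+}}G$ and show that the contribution of the low-frequency component, after application of the full chain and the output localizer, enjoys arbitrarily large polynomial decay in $\tau$. This is achieved by writing $Q^{(\tilde\sigma)}_{\le\sigma^{\frac12+}}G = \tfrac{\partial\sigma}{\partial\tilde\sigma}\partial_\sigma(\partial_{\tilde\sigma}^{-1}Q^{(\tilde\sigma)}_{\le\sigma^{\frac12+}}G)$, invoking the first identity of the present lemma to pull the $\partial_\sigma$ out as a $\partial_\tau$, and then using that $Q^{(\tilde\tau)}_{>\tau^{\frac12+}}\partial_\tau^{-1}$ on $\tau^{-N}L^2_{d\tau}$ gains a power $\tau^{-\frac12-\frac1{4\nu}}$ of decay. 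Iterating this $K$ times gains $K$ such powers, which after absorbing the crude $(\sqrt N)^{-j}$ bound from Lemma~\ref{lem:concatenation1} yields a remainder in $\tau^{-N-K}L^2_{d\tau}$. The main obstacle throughout will be the careful bookkeeping of the many layers of scaled frequency variables $\tilde\xi_l$ together with the correct identification of the boundary terms produced at each integration by parts, and verifying that the $\sigma_l^{-1}$ weights arising from $\partial_{\sigma_l}\zeta$ and from $\frac{\lambda_{\sigma_l}}{\lambda}$ can always be harmlessly absorbed into the outer $\sigma^{-N}$ weights without destroying the smallness factor $(\sqrt N)^{-j}$.
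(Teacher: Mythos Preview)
Your approach is correct and matches the paper's own treatment, which consists of the single sentence ``the following lemma results from a combination of the proof of Lemma~\ref{lem:concatenation1} and Lemma~\ref{lem:derivativemovesthrough1}.'' You have supplied exactly that combination with appropriate detail: propagating the $\partial_\sigma$ through the concatenation layer by layer via the $\zeta$-identity of Lemma~\ref{lem:derivativemovesthrough1}, and invoking the $(\sqrt{N})^{-j}$ smallness mechanism from Lemma~\ref{lem:concatenation1}.

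One small point on the third identity: the error $O_G^{(j)}$ decomposes as
\[
O_G^{(j)} = Q^{(\tilde\tau)}_{>\tau^{\frac12+}}\bigl[\text{chain}(Q^{(\tilde\sigma)}_{\le\sigma^{\frac12+}}G)\bigr] - Q^{(\tilde\tau)}_{\le\tau^{\frac12+}}\bigl[\text{chain}(Q^{(\tilde\sigma)}_{>\sigma^{\frac12+}}G)\bigr],
\]
and you only explicitly treat the first piece. The second piece (high-frequency input, low-frequency output) is handled by the direct analogue of Lemma~\ref{lem:freqlocalpropag2}: writing $Q^{(\tilde\sigma)}_{>\sigma^{\frac12+}}G = \tfrac{\partial\sigma}{\partial\tilde\sigma}\partial_\sigma\bigl(\partial_{\tilde\sigma}^{-1}Q^{(\tilde\sigma)}_{>\sigma^{\frac12+}}G\bigr)$, where now $\partial_{\tilde\sigma}^{-1}Q^{(\tilde\sigma)}_{>\sigma^{\frac12+}}$ genuinely gains $\sigma^{-\frac12-}$, then pulling the $\partial_\sigma$ out as $\partial_\tau$ via the first identity, and finally using that $\partial_\tau Q^{(\tilde\tau)}_{\le\tau^{\frac12+}} = \tfrac{\partial\tilde\tau}{\partial\tau}\partial_{\tilde\tau}Q^{(\tilde\tau)}_{\le\tau^{\frac12+}}$ with the factor $\tfrac{\partial\tilde\tau}{\partial\tau}$ compensating $\tfrac{\partial\sigma}{\partial\tilde\sigma}$. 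Your mechanism for the first piece (low-frequency input, high-frequency output) is dual to this, though the claim that $Q^{(\tilde\tau)}_{>\tau^{\frac12+}}\partial_\tau^{-1}$ directly gains $\tau^{-\frac12-\frac{1}{4\nu}}$ should be phrased more carefully via the same $\partial_{\tilde\tau}$-conversion; the net gain per iteration is $\tau^{0-}$ rather than a full half-power, which is still sufficient to reach $\tau^{-N-K}$ after enough repetitions.
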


	When estimating the contribution of the source term $\big(\lambda^{-2}n_*^{(\tilde{\lambda}, \underline{\tilde{\alpha}})} - W^2\big) z$ in \eqref{eq:zeqn2} we shall have to resort to an analogue of the transference operator, as the factor $\big(\lambda^{-2}n_*^{(\tilde{\lambda}, \underline{\tilde{\alpha}})} - W^2\big)$ is too large in the wave  radiation regime $R\gtrsim \tau^{\frac12-\frac{1}{4\nu}}$. The following lemma shall turn out to be useful for this:
	\begin{lem}\label{lem:pseudotransferenceoperator1} 
		Let $f(R)$ be a smooth function on $(0,\infty)$ which is bounded and has symbol type behavior with respect to $R$ for $R\gtrsim 1$. Then for $\lambda\gg 1$ and setting 
		\begin{align*}
			\tilde{F}(\xi, \eta) = \langle \phi(R;\xi),\,\chi_{\lesssim\lambda}(R)f(R)\cdot \phi(R;\eta)\rangle_{L^2_{R^3\,dR}},
		\end{align*}
		we have the bounds 
		\begin{align*}
			\Big|\tilde{F}(\xi, \eta)\big|\lesssim_N \frac{\xi^{\frac12}}{\langle \xi\rangle^2}\cdot \frac{\eta^{\frac12}}{\langle \eta\rangle^2}\cdot \min\{\frac{1}{|\xi - \eta|}, \lambda\}\cdot\langle \xi - \eta\rangle^{-N}
		\end{align*}
		for arbitrary $N\geq 1$. In particular, we have the bound 
		\begin{align*}
			\Big\|\tilde{F}(\xi, \eta)\Big\|_{L^1_{\rho\,d\xi}} + \Big\|F(\xi, \eta)\Big\|_{L^1_{\rho\,d\eta}}\lesssim \log\lambda. 
		\end{align*}
		Setting 
		\[
		\tilde{F}(\xi, \eta) = G(\xi, \eta - \xi) =: G(\xi, \tilde{\eta})
		\]
		the partially differentiated functions $\partial_{\xi}^{\alpha}G(\xi, \tilde{\eta})$ have symbol behavior with respect to $\xi$ and otherwise obey the same pointwise bounds. The integral operators 
		\[
		f\longrightarrow \int_0^\infty (\xi\partial_{\xi})^{\alpha}G(\xi,\eta-\xi)f(\eta)\rho(\eta)\,d\eta,\alpha = 0, 1,
		\]
		act boundedly on $L^p_{\rho\,d\xi}$, $2\leq p<\infty$.
	\end{lem}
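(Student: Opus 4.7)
The plan is to derive the pointwise bound first by a case analysis on the oscillatory behavior of the Fourier basis recalled in subsection~\ref{subsec:basicfourier}, then to obtain the $L^1$ bounds by directly integrating the pointwise bound against $\rho(\xi)\,d\xi$ (resp.\ $\rho(\eta)\,d\eta$), and finally to derive the $L^p$ boundedness of the integral operator via Schur's test with kernel $G(\xi,\eta-\xi)\rho(\eta)$. The symbol behavior of $G(\xi,\tilde\eta)$ in $\xi$ will be extracted by commuting the derivative $\partial_\xi|_{\tilde\eta}$ through the $R$-integral using the decomposition of $\phi(R;\xi)$ into a non-oscillatory part and oscillatory parts $e^{\pm iR\xi}\cdot a_\pm(R;\xi)/R^{3/2}\cdot \xi^{1/2}/\langle\xi\rangle^2$ with $a_\pm$ enjoying symbol bounds.

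For the pointwise bound, I would partition the $R$-integral into three regions: (a) $R\xi\lesssim 1$ and $R\eta\lesssim 1$ (both non-oscillatory), (b) exactly one of $R\xi$, $R\eta$ is $\gtrsim 1$, (c) both are $\gtrsim 1$. In (a), the convergent series expansion of $\phi(R;\xi)$ around $\phi(R;0)$ yields $|\phi(R;\xi)\phi(R;\eta)|\lesssim \langle\log\langle R\rangle\rangle^2 \cdot \langle R\rangle^{-4}$, whence the $R$-integral is bounded by a constant (note $R\lesssim \min\{\xi^{-1},\eta^{-1}\}$). In (c), substituting the oscillatory expansions produces integrands of schematic form
\[
\frac{\xi^{1/2}}{\langle\xi\rangle^2}\cdot\frac{\eta^{1/2}}{\langle\eta\rangle^2}\cdot \frac{e^{\pm iR(\xi\pm\eta)}}{R^{3/2}\cdot R^{3/2}}\cdot a_{\pm}(R;\xi)\,a_{\pm}(R;\eta)\,\chi_{\lesssim\lambda}(R)\,f(R)\,R^3\,dR;
\]
a single integration by parts in $R$ (using the symbol bounds on $a_\pm$ and on $f$) furnishes the $|\xi\pm\eta|^{-1}$ factor, while repeated integration by parts for $|\xi-\eta|\gtrsim 1$ yields the $\langle\xi-\eta\rangle^{-N}$ decay; the trivial bound obtained without integration by parts yields the $\lambda$ truncation. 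Region (b) is handled similarly with one non-oscillatory factor absorbed into the amplitude.

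The $L^1$ bound follows by noting that $\xi^{1/2}/\langle\xi\rangle^2$ compensates for the singularities $\rho(\xi)\sim (\xi\log^2\xi)^{-1}$ at $\xi\to 0$ and $\rho(\xi)\sim\xi^3$ at $\xi\to\infty$, so the integral reduces to computing $\int\min\{|\xi-\eta|^{-1},\lambda\}\,\langle\xi-\eta\rangle^{-N}\,d\xi$, which is $\lesssim \log\lambda$ uniformly in $\eta$. The $L^p$ boundedness for $2\le p<\infty$ of the operator $f\mapsto \int G(\xi,\eta-\xi)f(\eta)\rho(\eta)\,d\eta$ then follows from Schur's test, since the kernel satisfies symmetric $L^1$ bounds in both $\xi$ and $\eta$.

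Finally, the symbol behavior in $\xi$ of $G(\xi,\tilde\eta)$ rests on the observation that in the resonant oscillatory contribution with opposite signs, the combined phase is $e^{\pm iR\tilde\eta}$ and is independent of $\xi$, so $\partial_\xi|_{\tilde\eta}$ only hits the amplitude factor $\xi^{1/2}\eta^{1/2}/\langle\xi\rangle^2\langle\eta\rangle^2 \cdot a_\pm(R;\xi)\,a_\mp(R;\xi+\tilde\eta)$; the symbol bounds on $a_\pm$ with respect to $\xi$ then preserve the structure and pointwise bounds for $(\xi\partial_\xi)^\alpha G$. In the non-resonant oscillatory contribution with combined phase $e^{\pm iR(2\xi+\tilde\eta)}$, differentiation in $\xi$ produces factors of $R$, but the non-stationarity $|2\xi+\tilde\eta|\gtrsim \xi$ allows an additional integration by parts in $R$ to absorb them without generating a $\lambda$ loss. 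The main obstacle I anticipate is precisely this last step: one must verify that the repeated integration by parts in regime (c), together with the $R$ factors from $\xi$-differentiation, does not destroy the factor $\min\{|\xi-\eta|^{-1},\lambda\}\langle\xi-\eta\rangle^{-N}$, which requires carefully tracking how the symbol bounds on $a_\pm$ propagate when $R$-derivatives are shifted onto them via repeated integration by parts.
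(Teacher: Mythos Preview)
Your approach is essentially the same as the paper's, which gives only a one-line proof referring to the asymptotic expansion of $\phi(R;\xi)$ from subsection~\ref{subsec:basicfourier} and integration by parts, citing the proof of Theorem~6.1 in \cite{KST2} for the analogous computation for the transference kernel $F(\xi,\eta)$. Your detailed case analysis (non-oscillatory/oscillatory regions, integration by parts in $R$ to extract $|\xi\pm\eta|^{-1}$ and $\langle\xi-\eta\rangle^{-N}$, Schur's test for $L^p$-boundedness, and the observation that in the resonant phase $e^{\pm iR\tilde\eta}$ the variable $\xi$ only appears in the symbol-type amplitudes) is precisely the fleshing-out of that reference.
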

	\begin{proof} This is a consequence of the asymptotic expansion of the distorted Fourier basis $\phi(R;\xi)$, see subsection~\ref{subsec:basicfourier}, and integration by parts, see for example the proof of Theorem 6.1 in \cite{KST2}.  
	\end{proof}

	\subsection{Proofs of some technical lemmas}
	
	\subsubsection{Proof of Lemma~\ref{lem:wavebasicinhomstructure}} Our point of departure is the Fourier representation 
	\begin{align*}
		n(\tau, R) = \int_0^\infty \phi_{\R^4}(R;\eta)x(\tilde{\tau};\eta)\rho_{\R^4}(\eta)\,d\eta, 
	\end{align*}
	with $x(\tilde{\tau};\eta)$ given by \eqref{eq:wavepropagator}. Then we decompose the resulting double integral (over $\eta, \tilde{\sigma}$) into a number of contributions: 
	\begin{equation}\label{eq:nparts}
		n = n_0 + n_{I1} + n_{I2} + n_{II} + n_{III} + n_{IV}
	\end{equation}
	where we set 
	\begin{align*}
		&n_{I1}(\tau, R): =\\& \sum_{\log\tau\geq j\geq 0}\chi_{R\sim 2^j}\cdot \int_0^\infty \int_{\tilde{\tau}}^{\infty}\chi_{2^j\eta\gtrsim 1}\phi_{\R^4}(R;\eta)\cdot \chi_{\lambda(\tilde{\tau})\int_{\tilde{\tau}}^{\tilde{\sigma}}\lambda^{-1}(\tilde{s})\,d\tilde{s}\sim 2^j}\\&\hspace{3cm}\cdot U(\tilde{\tau}, \tilde{\sigma}, \eta)\cdot \lambda^{-2}(\tilde{\sigma})\mathcal{F}_{\R^4}(F)(\tilde{\sigma},\frac{\lambda(\tilde{\tau})}{\lambda(\tilde{\sigma})}\eta)\rho_{\R^4}(\eta)\,d\tilde{\sigma}d\eta
	\end{align*}
	while $n_{I2}(\tau, R)$ is defined analogously but by inclusion of a cutoff $\chi_{2^j\eta\lesssim 1}$, while for $n_{II}, n_{III}$, we include $\chi_{R\ll 2^j}, \chi_{R\gg 2^j}$, respectively, and finally we set 
	\begin{align*}
		n_{IV} = \chi_{R\gtrsim\tau}\cdot n. 
	\end{align*}
	Then we can formally write 
	\begin{align*}
		n_{I1}(\tau, R) = \sum_{\pm} \int_0^\infty e^{\pm iR\eta}\cdot N_{\pm}(R,\eta;\tau)\,d\eta,
	\end{align*}
	where we set 
	\begin{align*}
		&N_{\pm}(R,\eta;\tau): =  \sum_{\log\tau\geq j\geq 0}N_{\pm}^{(j)}(R,\eta;\tau)\\&= \sum_{\log\tau\geq j\geq 0}\chi_{R\sim 2^j}\cdot \int_{\tilde{\tau}}^{\infty}\frac{\chi_{2^j\eta\gtrsim 1}}{(R\eta)^{\frac32}}\cdot \sigma_{\pm}(R,\eta)\cdot\chi_{\lambda(\tilde{\tau})\int_{\tilde{\tau}}^{\tilde{\sigma}}\lambda^{-1}(\tilde{s})\,d\tilde{s}\sim 2^j}\\&\hspace{3cm}\cdot U(\tilde{\tau}, \tilde{\sigma}, \eta)\cdot \lambda^{-2}(\tilde{\sigma})\mathcal{F}_{\R^4}(F)(\tilde{\sigma},\frac{\lambda(\tilde{\tau})}{\lambda(\tilde{\sigma})}\eta)\rho_{\R^4}(\eta)\,d\tilde{\sigma}
	\end{align*}
	Further, we define 
	\[
	n_2: =  n_0 + n_{I2} + n_{II} + n_{III} + n_{IV}. 
	\]
	To prove the first inequality of the lemma, observe that inclusion of $\triangle^{-1}$ in front of $F$ allows us to gain a factor $\eta^2$. Then write formally\footnote{The function $ \sigma^{(1)}_{\pm}(R,\eta)$ has the same properties as $\sigma_{\pm}(R,\eta)$.}
	\begin{align*}
		&\frac{1}{R^{\frac12}\xi}\cdot W(R)N_{\pm}(R,\eta;\tau)\cdot R^3 =\\& \sum_{\log\tau\geq j\geq 0}\chi_{R\sim 2^j}\cdot \int_{\tilde{\tau}}^{\infty}\frac{R^{\frac12}}{\xi}\cdot \frac{\chi_{2^j\eta\gtrsim 1}}{(R\eta)^{\frac32}}\cdot \sigma^{(1)}_{\pm}(R,\eta)\cdot \chi_{\lambda(\tilde{\tau})\int_{\tilde{\tau}}^{\tilde{\sigma}}\lambda^{-1}(\tilde{s})\,d\tilde{s}\sim 2^j}\\&\hspace{3cm}\cdot \eta^2\tilde{U}(\tilde{\tau}, \tilde{\sigma}, \eta)\cdot \lambda^{-2}(\tilde{\sigma})\mathcal{F}_{\R^4}(\triangle^{-1}F)(\tilde{\sigma},\frac{\lambda(\tilde{\tau})}{\lambda(\tilde{\sigma})}\eta)\rho_{\R^4}(\eta)\,d\tilde{\sigma}
	\end{align*}
	Using one factor $R^{-1}\sim 2^{-j}$ to ensure time integrability, we claim that setting
	\begin{align*}
		\frac{\eta}{\xi}\cdot\int_0^\infty  \tilde{\sigma}(R, \xi)\cdot\chi_{R\xi\gtrsim 1}\cdot\sigma^{(1)}(R,\eta)\cdot e^{iR(\pm\xi\pm\eta)}\cdot \chi_{R\sim 2^j}\cdot\frac{2^j}{R}\,dR =: g_j(\xi, \eta),
	\end{align*}
	we have that (throughout the variables $\xi, \eta$ are restricted to $\R_+$)
	\begin{align*}
		\big\|\int_0^\infty g_j(\xi, \eta) f(\eta)\,d\eta\big\|_{L^2_{d\xi}}\lesssim \big\|f\big\|_{L^2_{d\eta}}. 
	\end{align*}
	To see this, note that in case $\xi\ll\eta$, using twofold integration by parts in the $R$ integral and the fact that $R\gtrsim \xi^{-1}$ on the support of the integrand, we can replace the factor $\frac{\eta}{\xi}$ by $\frac{\xi}{\eta}$ while replacing the integral by an analogous one. Fixing now as we may the regime $\eta\lesssim \xi$, first consider the diagonal case $\xi\sim \eta$. 
	By orthogonality, to get $L^2$-boundedness of the operator with kernel $\chi_{\xi\sim \eta}g_j(\xi, \eta)$, it suffices to show this for the localized version $\chi_{\xi\sim\eta\sim 2^k}g_j(\xi, \eta)$. This is a consequence of the easily verified $L^2$-boundedness of the operators 
	\begin{align*}
		&T_1f(R): =  \chi_{R\sim 2^j}\cdot\frac{2^j}{R}\cdot\int_0^\infty e^{\pm iR\eta}\cdot \sigma^{(1)}(R,\eta)\cdot \chi_{\eta\sim 2^k} f(\eta)\,d\eta\\
		&T_2f(\xi): = \chi_{\xi\sim 2^k}\int_0^\infty  \tilde{\sigma}(R, \xi)\cdot e^{\pm iR\xi}\cdot \chi_{R\xi\gtrsim 1}\cdot f(R)\,dR. 
	\end{align*}
	In the case $\eta\ll\xi$ we use the bound 
	\begin{align*}
		\big|\chi_{\eta\ll\xi}g_j(\xi,\eta)\big|\lesssim \chi_{\eta\ll\xi}\cdot\frac{\eta}{\xi^2}, 
	\end{align*}
	and so the desired $L^2$-boundedness follows from Schur's criterion. 
	Since 
	\begin{align*}
		&\Big\|2^{-j}\int_{\tilde{\tau}}^{\infty}\chi_{\lambda(\tilde{\tau})\int_{\tilde{\tau}}^{\tilde{\sigma}}\lambda^{-1}(\tilde{s})\,d\tilde{s}\sim 2^j}\cdot \eta \tilde{U}(\tilde{\tau}, \tilde{\sigma}, \eta)\cdot\frac{\mathcal{F}_{\R^4}(\triangle^{-1}F)(\tilde{\sigma},\frac{\lambda(\tilde{\tau})}{\lambda(\tilde{\sigma})}\eta)\rho_{\R^4}(\eta)}{\eta^{\frac32}}\,d\tilde{\sigma}\Big\|_{\tau^{-N} L^2_{d\tau} L^2_{d\eta}}\\
		&\lesssim \big\|\triangle^{-1}F\big\|_{\tau^{-N} L^2_{d\tau} L^2_{R^3\,dR}}, 
	\end{align*}
	we infer the bound 
	\begin{align*}
		&\Big\|\int_0^\infty\Phi_{\pm}(R,\xi,\eta)\cdot W(R)\cdot N_{\pm}^{(j)}(R,\eta;\tau) R^3\,dR d\eta\Big\|_{\tau^{-N+} L^2_{d\tau} L^2_{d\xi}}\\&\hspace{7cm}\lesssim 
		\big\|\triangle^{-1}\big(\lambda^{-2}F\big)\big\|_{\tau^{-N} L^2_{d\tau}L^2_{R^3\,dR}}, 
	\end{align*}
	where we recall the statement of Lemma~\ref{lem:wavebasicinhomstructure} fo the notation. The first estimate of the lemma is then obtained by summing over $j\in [0,\log\tau]$. 
	\\
	Continuing with the inequality for $n_2$, we first consider the contribution of $n_{I2}$, which is handled by using 
	\begin{align*}
		\big|n_{I2}\big|\lesssim& \sum_{0\leq j\leq\log\tau}\chi_{R\sim 2^j}\cdot R^{-2}\cdot 2^{-j}\cdot  \int_{\tilde{\tau}}^{\infty}\chi_{2^j\eta\lesssim 1}\chi_{\lambda(\tilde{\tau})\int_{\tilde{\tau}}^{\tilde{\sigma}}\lambda^{-1}(\tilde{s})\,d\tilde{s}\sim 2^j}\\&\hspace{5cm}\cdot\big\|\lambda^{-2}(\tilde{\sigma})\mathcal{F}_{\R^4}(\triangle^{-1}F)(\tilde{\sigma},\frac{\lambda(\tilde{\tau})}{\lambda(\tilde{\sigma})}\eta)\big\|_{L^2_{\rho_{\R^4}(\eta)\,d\eta}}\,d\tilde{\sigma}.
	\end{align*}
	The inequality 
	\begin{align*}
		\Big\|W \cdot n_{I2}(\tau, R)\Big\|_{\tau^{-N+} L^2_{d\tau}L^1_{R^3\,dR}}\lesssim \big\|\triangle^{-1}\big(\lambda^{-2}F\big)\big\|_{\tau^{-N} L^2_{d\tau}L^2_{R^3\,dR}}\\
	\end{align*}
	results. 
	In order to deal with the terms $n_{II}, n_{III}$, we perform integration by parts with respect to $\eta$, as the oscillatory factors $U(\tilde{\tau},\tilde{\sigma}, \eta), \phi_{\R^4}(R;\eta)$ are out of phase. Schematically we arrive at either the expression 
	\begin{align*}
		&A: = \sum_{\log\tau\geq j\geq 0} \int_0^\infty \int_{\tilde{\tau}}^{\infty}2^{-j}\phi(R;\eta)\cdot \chi_{\lambda(\tilde{\tau})\int_{\tilde{\tau}}^{\tilde{\sigma}}\lambda^{-1}(\tilde{s})\,d\tilde{s}\sim 2^j}\\&\hspace{3cm}\cdot U(\tilde{\tau}, \tilde{\sigma}, \eta)\cdot \lambda^{-2}(\tilde{\sigma})\partial_{\eta}\mathcal{F}_{\R^4}(F)(\tilde{\sigma},\frac{\lambda(\tilde{\tau})}{\lambda(\tilde{\sigma})}\eta)\rho_{\R^4}(\eta)\,d\tilde{\sigma}d\eta
	\end{align*}
	where the extra $2^{-j}$ and $\partial_{\eta}$ come from the integration by parts, or else the expression 
	\begin{align*}
		&B: = \sum_{\log\tau\geq j\geq 0} \int_0^\infty \int_{\tilde{\tau}}^{\infty}2^{-j}\phi(R;\eta)\cdot \chi_{\lambda(\tilde{\tau})\int_{\tilde{\tau}}^{\tilde{\sigma}}\lambda^{-1}(\tilde{s})\,d\tilde{s}\sim 2^j}\\&\hspace{3cm}\cdot \eta^{-1}U(\tilde{\tau}, \tilde{\sigma}, \eta)\cdot \lambda^{-2}(\tilde{\sigma})\mathcal{F}_{\R^4}(F)(\tilde{\sigma},\frac{\lambda(\tilde{\tau})}{\lambda(\tilde{\sigma})}\eta)\rho_{\R^4}(\eta)\,d\tilde{\sigma}d\eta.
	\end{align*}
	The contribution of $A$ is then handled via 
	\begin{align*}
		\big\|A\big\|_{\tau^{-N+} L^2_{d\tau}L^2_{R^3\,dR}}\lesssim \big\|\triangle^{-1}\nabla\big(\langle R\rangle\cdot\lambda^{-2}F\big)\big\|_{\tau^{-N} L^2_{d\tau}L^2_{R^3\,dR}},
	\end{align*}
	leading to 
	\begin{align*}
		\Big\|W \cdot A(\tau, R)\Big\|_{\tau^{-N+} L^2_{d\tau}L^1_{R^3\,dR}}\lesssim \big\|\triangle^{-1}\nabla\big(\langle R\rangle\cdot\lambda^{-2}F\big)\big\|_{\tau^{-N} L^2_{d\tau}L^2_{R^3\,dR}}.
	\end{align*}
	
	The estimate for the contribution of $B$ is similar. It remains to estimate the contribution of the remaining term $n_{IV}$. Here we again perform integration by parts with respect to $\eta$. Note that on account of 
	\begin{align*}
		\lambda(\tilde{\tau})\cdot\int_{\tilde{\tau}}^{\tilde{\sigma}}\lambda^{-1}(s)\,ds\lesssim \tilde{\tau}\ll\tau, 
	\end{align*}
	the oscillating terms $U(\tilde{\tau},\tilde{\sigma}, \eta), \phi_{\R^4}(R;\eta)$ will be out of phase, and integration by parts gains a factor $\tau^{-1}\ll\tilde{\tau}^{-1}$. Proceeding as for the terms $n_{II}, n_{III}$, we then obtain a much stronger bound 
	\begin{align*}
		\Big\|W \cdot n_{IV}(\tau, R)\Big\|_{\tau^{-N-} L^2_{d\tau}L^1_{R^3\,dR}}&\lesssim  \big\|\triangle^{-1}\nabla\big(\langle R\rangle\cdot\lambda^{-2}F\big)\big\|_{\tau^{-N} L^2_{d\tau}L^2_{R^3\,dR}}\\
		& +  \big\|\triangle^{-1}\big(\lambda^{-2}F\big)\big\|_{\tau^{-N} L^2_{d\tau}L^2_{R^3\,dR}}\\
	\end{align*}
	The contribution of $n_0$ is straightforward to bound since $R\lesssim 1$ on its support
	The last statement of the lemma follows since the factors $\langle R\rangle$ are due to partial integration with respect to $\eta$, in turn required to force integrability with respect to wave time $\tilde{\sigma}$.

	\subsubsection{Completion of the proof of Corollary~\ref{cor:yzW}} We need to prove the second estimate of the corollary, which is done by using the asymptotic structure of $\phi(R;\xi)$ given in subsection~\ref{subsec:basicfourier}. Note that if we include an extra smooth cutoff $\chi_{R\xi\lesssim 1}$ in front of $\phi(R;\xi)$, the operator $\langle \xi\partial_{\xi}\rangle^{1+\delta_0}$ can be 'absorbed' by $\phi(R;\xi)$ since $(\xi\partial_{\xi})^k\phi(R;\xi)$ has a similar expansion as $\phi(R;\xi)$, and so the desired bound follows simply from 
	\begin{align*}
		&\Big\|\langle \lambda^{-2}\Box^{-1}(F)\cdot W, \phi(R;\xi)\rangle_{L^2_{R^3\,dR}}\Big\|_{\tau^{-N}L^2_{d\tau}L^\infty_{d\xi}}\\&\lesssim \big\|\lambda^{-2}\Box^{-1}(F)\big\|_{\tau^{-N}L^2_{d\tau}(\langle R\rangle^{1+\delta_0}L^2_{R^3\,dR}+L^4_{R^3\,dR})}\cdot \Big\|\big\|W\cdot \phi(R;\xi)\big\|_{\langle R\rangle^{-(1+\delta_0)}L^2_{R^3\,dR}\cap L^{\frac43}_{R^3\,dR}}\Big\|_{L^\infty_{d\xi}}\\
		&\lesssim  \big\|\lambda^{-2}\Box^{-1}(F)\big\|_{\tau^{-N}L^2_{d\tau}(\langle R\rangle^{1+\delta_0}L^2_{R^3\,dR}+L^4_{R^3\,dR})},
	\end{align*}
	where the final expression can be bounded by $\lesssim \big\|z\big\|_{S}$ by the first part of the proof. 
	If instead we include a smooth localizer $\chi_{R\xi\gtrsim 1}$, the same argument also gives the desired bound for the contribution when $R\xi\gtrsim 1$, provided we omit the operator $\langle \xi\partial_{\xi}\rangle^{1+\delta_0}$. Henceforth we shall apply $(\xi\partial_{\xi})^{1+\delta_0}$. Arguing as for the proof of Lemma~\ref{lem:specialF1} we see that
	\begin{align*}
		&\Big\|(\xi\partial_{\xi})^{1+\delta_0}\langle \lambda^{-2}\Box^{-1}(\chi_{R\gtrsim \tau}F)\cdot W, \phi(R;\xi)\rangle_{L^2_{R^3\,dR}}\Big\|_{\tau^{-N}L^2_{d\tau}L^\infty_{d\xi}}\\&\lesssim \big\|\tilde{\tau}\lambda^{-2}\nabla^{-1}\big(\chi_{R\gtrsim \tau}F\big)\big\|_{\tau^{-N}L^2_{d\tau}L^2_{R^4\,dR}}\cdot \big\|\langle R\rangle^{1+\delta_0}W(R)\cdot \phi(R;\xi)\big\|_{L^2_{R^3\,dR}L^\infty_{d\xi}}\\
		&\lesssim \big\|z\big\|_{S}. 
	\end{align*}
	The preceding argument also applies if we replace $F$ by $\lambda^2\chi_{R\lesssim \tau}W\cdot \triangle z$, so from now on we shall replace $F$ by $F' = \triangle W\bar{z} + 2\nabla W\cdot \nabla \bar{z}$. 
	Further, arguing as in the first part of the proof of the corollary, the case when we include a cutoff $\chi_{R\gtrsim \tau^{100}}$ in front of $W$ can be handled using straightforward integration by parts with respect to the frequency in the wave propagator, so we shall include a cutoff $\chi_{R\lesssim \tau^{100}}$ in front of $W$ and restrict $F'$ to $\chi_{R\lesssim \tau}F'$. 
	Then write $\chi_{R\xi\gtrsim 1}\phi(R;\xi) =\sum_{\pm}a_{\pm}(\xi)\frac{e^{\pm iR\xi}}{R^{\frac32}}\cdot\sigma_{\pm}(R; R\xi^{\frac12})$ where the factors $\sigma_{\pm}(R;q)$ are smooth, bounded, and have symbol behavior with respect to their arguments, and $|a_{\pm}(\xi)|\sim \xi^{\frac12}, \xi\ll 1$, $|a_{\pm}(\xi)|\sim \xi^{-\frac32},\,\xi\gg 1$, and is smooth with symbol behavior on $\R_+$. In the following we treat the more difficult case of small frequencies $\xi\lesssim 1$. Further use \eqref{eq:wavepropagator} together with \eqref{eq:nflatfourierrepresent} to expand out the wave propagator $ \lambda^{-2}\Box^{-1}F$. Calling $\eta$ the frequency in the Fourier representation of $\lambda^{-2}\Box^{-1}F$, we can gain $\min\{\frac{\eta}{\xi}, \frac{\xi}{\eta}\}^M + R^{-M}$ by integrating by parts with respect to $R$. This allows us to reduce to the situation $\xi\sim \eta$. Combining the oscillatory $R$-dependent phases in the Fourier representation of $\lambda^{-2}\Box^{-1}F$ and in $\phi(R;\xi)$ and performing integration by parts with respect to $R$ allows us to gain $R^{-\frac12+}$ at the expense of $(\xi-\eta)^{-\frac12+}$. Then we trade the factor $\xi$ from $(\xi\partial_{\xi})^{1+\delta_0}$ to abolish the $\eta^{-1}$ in  \eqref{eq:wavepropagator} (where the frequency is now $\eta$) and we use an extra factor $\eta^{\frac12}$ from $|a_{\pm}(\xi)|\sim \xi^{\frac12}\sim \eta^{\frac12}$ for the estimate 
	\begin{align*}
		&\Big\|\eta^{\frac12}\cdot \mathcal{F}_{\R^4}\big(\chi_{R\lesssim\tau}(\triangle W\bar{z} + 2\nabla W\cdot \nabla \bar{z})\big)\Big\|_{\tau^{-N+}L^2_{d\tau}L^\infty_{d\eta}}\\&\lesssim \Big\|R^{-\frac12}\cdot \chi_{R\lesssim\tau}(\triangle W\bar{z}+ 2\nabla W\cdot \nabla \bar{z})\Big\|_{\tau^{-N+}L^2_{d\tau}L^1_{R^3\,dR}}\lesssim \big\|z\big\|_{S},
	\end{align*}
	which in turn implies the following 
	\begin{align*}
		\Big\|\eta^{-1}\cdot(\xi - \eta)^{-\frac12+}\eta^{\frac12}\cdot \mathcal{F}_{\R^4}\big(\chi_{R\lesssim\tau}(\triangle W\bar{z} + 2\nabla W\cdot \nabla \bar{z})\big)\Big\|_{\tau^{-N+}L^2_{d\tau}L^2_{\eta^3 d\eta}(\eta\lesssim1)}\lesssim \big\|z\big\|_{S}.
	\end{align*}
	Using integration by parts with respect to the frequency $\eta$ in the Fourier representation of $\lambda^{-2}\Box^{-1}F$ or a factor $R^{-1-\delta_0}$ to compensate for the time integration, as in the proof of Lemma~\ref{lem:wavebasicinhom}, the desired estimate follows finally from the bound (recall the assumption $\xi\lesssim 1$)
	\begin{align*}
		\big\|\chi_{R\lesssim \tau^{100}}R^{-\frac12+}\cdot\frac{\phi(R;\xi)}{\xi^{\frac12}}\big\|_{L^2_{R^3\,dR}}\lesssim \tau^{0+}. 
	\end{align*}

	\subsubsection{Proof of Lemma~\ref{lem:yzWnonosc}} Using interpolation it suffices to treat the case of integral $\delta_1$ and the symbol behavior of $\psi(R;\xi)$ allows us to suppress $\langle \xi\partial_{\xi}\rangle^{1+\delta_1}$. Write 
	\begin{align*}
		\Box^{-1}F = \int_0^\infty \int_{\tilde{\tau}}^\infty \phi_{\R^4}(R;\eta)U(\tilde{\tau}, \tilde{\sigma}, \eta)\cdot \lambda^{-2}(\tilde{\sigma})\mathcal{F}_{\R^4}(F)(\tilde{\sigma},\frac{\lambda(\tilde{\tau})}{\lambda(\tilde{\sigma})}\eta)\rho_{\R^4}(\eta)\,d\tilde{\sigma}d\eta.
	\end{align*}
	where $U(\tilde{\tau}, \tilde{\sigma}, \eta)$ is as in the proof of Lemma~\ref{lem:wavebasicinhom}. From the definition of $\|\cdot\|_{S}$ (see \eqref{eq:Snormdefi}), we get 
	\begin{align*}
		A: = \big\|\eta^{-(2-)}\mathcal{F}_{\R^4}(F)(\sigma, \eta)\big\|_{\sigma^{-N}L^2_{d\sigma}L^2_{\rho_{\R^4}d\eta}} + \big\|\mathcal{F}_{\R^4}(F)(\sigma, \eta)\big\|_{\sigma^{-N}L^2_{d\sigma}L^2_{\rho_{\R^4}d\eta}} \lesssim \big\|z\big\|_{S}. 
	\end{align*}
	We first consider the case $\big|\tilde{\sigma} - \tilde{\tau}\big| + R\lesssim \eta^{-1}$. Calling the corresponding term $\big(\Box^{-1}F\big)_1$, changing the integration order we can write 
	\begin{align*}
		\langle \big(\Box^{-1}F\big)_1\cdot W, \psi(R;\xi)\rangle_{L^2_{R^3\,dR}} = \int_0^\infty\int_{\tilde{\tau}}^\infty U_1(\tilde{\tau}, \tilde{\sigma}, \eta)\cdot \mathcal{F}_{\R^4}(\lambda^{-2}F)\cdot g(\eta,\xi)\rho_{\R^4}(\eta)\,d\tilde{\sigma}d\eta,
	\end{align*}
	where we set $g(\eta,\xi) = \int_0^\infty \chi_{R\eta\lesssim 1}\psi(R;\xi)\cdot W\cdot  \phi_{\R^4}(R;\eta) R^3\,dR$ and $U_1 = \chi_{|\tilde{\sigma} - \tilde{\tau}|<\eta^{-1}}U$. Hence $\big|g(\eta,\xi)\big|\lesssim \langle\xi\rangle^{-2-}\min\{\eta^{-2}, \eta^{-4}\}$. Using $\eta^{1-}\lesssim |\tilde{\sigma} - \tilde{\tau}|^{-1+}$, we infer from the Cauchy-Schwarz inequality and Schur's criterion that 
	\begin{align*}
		\big\|\langle\xi\rangle^{2+}\langle \big(\Box^{-1}F\big)_1\cdot W, \psi(R;\xi)\rangle_{L^2_{R^3\,dR}}\big\|_{\tau^{-N+}L^2_{d\tau}L^\infty_{\rho(\xi)\,d\xi}}\lesssim A \lesssim \big\|z\big\|_{S}. 
	\end{align*}
	This bound suffices to establish the bound of the lemma for this contribution due to the asymptotics of $\rho(\xi)$.
	In the case $\big|\tilde{\sigma} - \tilde{\tau}\big| \sim R\gg\eta^{-1}$, we use $\big|g(\eta,\xi)\big|\lesssim \big|\tilde{\sigma} - \tilde{\tau}\big|^{-(1-)}\cdot \eta^{-3+}$, and one concludes as before by taking advantage of Cauchy-Schwarz and Schur's criterion. The remaining situations $\big|\tilde{\sigma} - \tilde{\tau}\big| \gg R, R\gg \big|\tilde{\sigma} - \tilde{\tau}\big|$ are handled by integration by parts, we omit the similar details.

	\subsubsection{Proof completion of Lemma~\ref{lem: tildelambdaeqnsourcebounds1}} In light of the definition \eqref{eq:Xdef}, the following steps conclude the proof: combining Lemma~\ref{lem:Xtildelambdaperturbterms2} with Lemmas~\ref{lem:K_frefined}, ~\ref{lem:tildeKfcontrol} gives the desired bound for the contributions of the terms  
	\begin{align*}
		Q^{(\tilde{\tau})}_{<\tau^{\frac12+}}\big(\lambda^{-2}y_z\cdot (\tilde{u}_*^{(\tilde{\lambda}, \tilde{\alpha})}-W)\big),\,Q^{(\tilde{\tau})}_{<\tau^{\frac12+}}\big(\lambda^{-2}(y - y_z)\cdot \tilde{u}_*^{(\tilde{\lambda}, \tilde{\alpha})} -  \lambda^{-2}y^{\text{mod}}_{\tilde{\lambda}}\cdot W\big)
	\end{align*}
	to the integral 
	\begin{align*}
		\Im \int_{\tau}^\infty\int_0^\infty \xi^2S(\tau, \sigma;\xi)\cdot \tilde{X}^{(\tilde{\lambda})}(\sigma, 0)\rho_1(\xi)\,d\xi d\sigma.
	\end{align*}
	Dealing with the contribution of the remaining term 
	\begin{align*}
		\mathcal{F}\Big(Q^{(\tilde{\tau})}_{<\tau^{\frac12+}}\big((\lambda^{-2}n_*^{(\tilde{\lambda}, \tilde{\alpha})} - W^2)z\big)\Big),
	\end{align*}
	is accomplished by means of Lemma~\ref{lem:Xtildelambdafinaltermcrudebound} in conjunction with Lemma~\ref{lem:K_frefined} as well as Lemma~\ref{lem:tildeKfcontrol}. More precisely, this argument furnishes the bound for this contribution to $Z$ without the operator $\langle\partial_{\tilde{\tau}}^2\rangle$. To also allow for such derivatives, note that if both of these fall on the factor $z$, we can use 
	\begin{align*}
		\partial_{\tilde{\tau}}^2\big(Q_{<\tau^{\frac12+}}^{(\tilde{\tau})}z\big) = \partial_{\tilde{\tau}}\big(Q_{<\tau^{\frac12+}}^{(\tilde{\tau})}(\frac{\partial\tau}{\partial\tilde{\tau}}\cdot\partial_{\tau}z)\big).
	\end{align*}
	as well as the fact that by definition $\big\|\partial_{\tau}z\big\|_{L^2_{R^3\,dR}}\lesssim \big\|z\big\|_{S}$. 
	The operator $\partial_{\tilde{\tau}}Q_{<\tau^{\frac12+}}^{(\tilde{\tau})}$ 'costs' $\tau^{\frac12+}$, while $\frac{\partial\tau}{\partial\tilde{\tau}}\sim \tau^{\frac12+\frac{1}{4\nu}}$, and one can then use the direct bound 
	\begin{align*}
		\Big\|\mathcal{F}\Big(Q^{(\tilde{\tau})}_{<\tau^{\frac12+}}\big((\lambda^{-2}n_*^{(\tilde{\lambda}, \tilde{\alpha})} - W^2)\partial_{\tilde{\tau}}^2z\big)\Big)(\tau, 0)\Big\|_{\tau^{-N-\frac{1}{4\nu}+}L^2_{d\tau}}\lesssim \big\|z\big\|_{S}. 
	\end{align*}
	The cases when fewer derivatives $\partial_{\tilde{\tau}}$ fall on $z$ are handled similarly, also taking advantage of Lemma~\ref{lem:approxsolasymptotics3}.

	\subsubsection{Proof completion of Lemma~\ref{lem:Lsmalltildelambda}} Recalling the definition \eqref{eq:Xdef} of $X^{(\tilde{\lambda})}(\tau,\xi)$, we can control the contributions of 
	\begin{align*}
		Q^{(\tilde{\tau})}_{<\tau^{\frac12+}}\big(\lambda^{-2}y_z\cdot (\tilde{u}_*^{(\tilde{\lambda}, \tilde{\alpha})}-W)\big),\,Q^{(\tilde{\tau})}_{<\tau^{\frac12+}}\big(\lambda^{-2}(y - y_z)\cdot \tilde{u}_*^{(\tilde{\lambda}, \tilde{\alpha})} -  \lambda^{-2}y^{\text{mod}}_{\tilde{\lambda}}\cdot W\big)
	\end{align*}
	to $L_{small}^{(\tilde{\lambda})}$ by combining Lemmas~\ref{lem:Xtildelambdaperturbterms}, ~\ref{lem:Xtildelambdaperturbterms2} with Lemmas~\ref{lem:K_frefined}, ~\ref{lem:tildeKfcontrol}. In order to control the contribution of the more delicate term 
	\begin{align*}
		\mathcal{F}\Big(Q^{(\tilde{\tau})}_{<\tau^{\frac12+}}\big((\lambda^{-2}n_*^{(\tilde{\lambda}, \tilde{\alpha})} - W^2)z\big)\Big),
	\end{align*}
	we proceed as in the proof in the next sub-subsection. 
	
	\subsubsection{Proof outline for Lemma~\ref{lem:LsmallcalKtildelambda}} We shall treat the contributions of the various terms constituting $X^{(\tilde{\lambda})}(\tau,\xi)$ (recall \eqref{eq:Xdef}) to $L^{(\tilde{\lambda})}_{\mathcal{K},\text{small}}$, the latter as in \eqref{eq:LtildelambdacalK}.
	\\
	{\it{Contribution of leading term $ -Q^{(\tilde{\tau})}_{<\tau^{\frac12+}}\big(\lambda^{-2}y_z\cdot W\big)$.}}
	Recalling \eqref{eq:LtildelambdacalK} as well as the definition \eqref{eq:ScalK2}, it suffices to combine Lemma~\ref{lem:concatenation1} with Lemma~\ref{lem:yzWbound1}. Observe that division by the frequency $\xi^{-2}$ frees an extra factor $\xi^2$ which can be used to implement integration by parts with respect to time for the propagator $S$. This in turn either results in a boundary term or a term where an additonal time derivative falls on the source term, and Lemma~\ref{lem:yzWbound1} then furnishes improved temporal decay. In particular, this contribution can be placed into $\tau^{-N-\frac12-\frac{1}{2\nu}+}L^2_{d\tau}$, which is much better than what is needed. 
	\\
	{\it{Contributions of remaining terms}}. The contribution of the terms 
	\begin{align*}
		Q^{(\tilde{\tau})}_{<\tau^{\frac12+}}\big(\lambda^{-2}y_z\cdot (\tilde{u}_*^{(\tilde{\lambda}, \tilde{\alpha})}-W)\big),\,Q^{(\tilde{\tau})}_{<\tau^{\frac12+}}\big(\lambda^{-2}(y - y_z)\cdot \tilde{u}_*^{(\tilde{\lambda}, \tilde{\alpha})} -  \lambda^{-2}y^{\text{mod}}_{\tilde{\lambda}}\cdot W\big)
	\end{align*}
	is handled by combining Lemma~\ref{lem:Xtildelambdaperturbterms} with Lemma~\ref{lem:concatenation1}. The contribution of the remaining term
	\[
	Q^{(\tilde{\tau})}_{<\tau^{\frac12+}}\big((\lambda^{-2}n_*^{(\tilde{\lambda}, \tilde{\alpha})} - W^2)z\big)
	\]
	is handled by re-iterating the equation for $z$ one more time. Precisely, we express 
	\begin{align*}
		\mathcal{F}\Big(Q^{(\tilde{\tau})}_{<\tau^{\frac12+}}\big((\lambda^{-2}n_*^{(\tilde{\lambda}, \tilde{\alpha})} - W^2)z\big)\Big)
	\end{align*}
	by means of Lemma~\ref{lem:pseudotransferenceoperator2}, where we express $\mathcal{F}(z)$ by means of \eqref{eq:formalexpansion}, with $E$ given by \eqref{eq:recallE}. Then we take advantage of Lemma~\ref{lem:concatenation2} in the case of the presence of at least one $\mathcal{K}_*$, and 
	$G = \mathcal{F}(E)$, and we repeat application of Lemma~\ref{lem:pseudotransferenceoperator2} together with \eqref{eq:formalexpansion} for the third source term of $E$. For the remaining source terms of $E$, we control their contribution by means of Lemma~\ref{lem:Xtildelambdaperturbterms}, Lemma~\ref{lem:basicboundsfore_1modandnonlinearterms}, as well as Lemma~\ref{lem:basicboundsfore_1modalphaterm}.
	
	\subsubsection{Completion of the proof of Lemma~\ref{lem:e1modgooderrors}} Keeping in mind \eqref{eq:E1mod}, we need to bound the contributions of the third to sixth terms there, in addition to the error term $O(|\tilde{\alpha}|^2)$, noticing that each of these terms contributes to both the double integral as well as the final term forming $E(\tau)$. Now the third to fifth terms contribute the {\it{real terms}} (where we approximate $\psi^{(\tilde{\lambda})}$ by $\lambda\cdot W$)
	\begin{align*}
		\lambda^{-2}\cdot\partial_{t}\big(\chi_1\big)\tilde{\alpha}\cdot W,\,-\chi_1\cdot\partial_{\tau}\tilde{\alpha}\cdot W
	\end{align*}
	to $e_1^{mod}$; these terms of course only contribute to the double integral in $E(\tau)$, and not the boundary term at $R = 0$. By definition of $E(\tau)$ these terms get further localized by applying $Q^{(\tilde{\tau})}_{<\tau^{\delta}}$. The contributions of these terms to the double integral in $E(\tau)$ cancel against two terms arising upon applying integration by parts with respect to $\sigma$ to the Schr\"odinger propagator of two purely imaginary terms treated further below. 
	\\
	As for the terms 
	\begin{align*}
		\lambda^{-2}\cdot\partial_{t}\big(\chi_1\big)\tilde{\alpha}\cdot \big(W - \lambda^{-1}\psi^{(\tilde{\lambda})}\big),\,-\chi_1\cdot\partial_{\tau}\tilde{\alpha}\cdot \big(W - \lambda^{-1}\psi^{(\tilde{\lambda})}\big)
	\end{align*}
	their contribution to $E(\tau)$ can be controlled by cruder estimates taking advantage of Lemma~\ref{lem:approxsolasymptotics1}, which allows us to infer the bounds 
	\begin{align*}
		&\Big\|\langle\xi\partial_{\xi}\rangle^{1+\delta_0}\mathcal{F}\Big(\lambda^{-2}\cdot\partial_{t}\big(\chi_1\big)\tilde{\alpha}\cdot \big(W - \lambda^{-1}\psi^{(\tilde{\lambda})}\big)\Big)\Big\|_{\tau^{-N-}L^2_{d\tau}L^\infty_{d\xi}}\lesssim \big\|\frac{\tilde{\alpha}}{\tau}\big\|_{\tau^{-N}L^2_{d\tau}},\\
		&\Big\|\langle\xi\partial_{\xi}\rangle^{1+\delta_0}\mathcal{F}\Big(\chi_1\cdot\partial_{\tau}\tilde{\alpha}\cdot \big(W - \lambda^{-1}\psi^{(\tilde{\lambda})}\big)\Big)\Big\|_{\tau^{-N-}L^2_{d\tau}L^\infty_{d\xi}}\lesssim \big\|\tilde{\alpha}_{\tau}\big\|_{\tau^{-N}L^2_{d\tau}}.
	\end{align*} 
	Then the desired bound for these contributions to $E(\tau)$ is a consequence of Lemmas~\ref{lem:K_frefined}, ~\ref{lem:tildeKfcontrol}.
	\\
	We next consider the following two {\it{purely imaginary terms}} which are contributed by the third and fourth term in \eqref{eq:E1mod}, namely
	\begin{align*}
		i\tilde{\alpha}\big(\triangle_R(\chi_1)\cdot W + 2\partial_R(\chi_1)\cdot\partial_RW\big) = -i\tilde{\alpha}\cdot\mathcal{L}\big(\chi_1\cdot W),
	\end{align*}
	where we have again replaced $ \lambda^{-1}\psi^{(\tilde{\lambda})}$ by $W$. Thus the imaginary part of the Schr\"odinger propagator \eqref{prop:linpropagator} applied to these terms is given by the kernel 
	\[
	\sin\big(\lambda^2(\tau)\xi^2\int_{\sigma}^{\tau}\lambda^{-2}(s)\,ds\big),
	\]
	and the corresponding contribution to $\Im\mathcal{L}z|_{R = 0}$ is given by 
	\begin{align*}
		&-\int_0^\infty\int_{\tau}^\infty \xi^2\frac{\lambda^2(\tau)}{\lambda^2(\sigma)}\cdot\sin\big(\lambda^2(\tau)\xi^2\int_{\sigma}^{\tau}\lambda^{-2}(s)\,ds\big)\\&\hspace{4cm}\cdot\tilde{\alpha}(\sigma)\cdot\mathcal{F}\big(\mathcal{L}(\chi_1\cdot W)\big)(\sigma, \frac{\lambda(\tau)}{\lambda(\sigma)}\xi)\rho(\xi)\,d\sigma d\xi.
	\end{align*}
	Performing integration by parts with respect to $\sigma$ does not produce a boundary term at $\sigma = \tau$ since $\mathcal{L}(\chi_1\cdot W)|_{R = 0} = 0$. We then arrive at the double integral 
	\begin{align*}
		&-\int_0^\infty\int_{\tau}^\infty\cos\big(\lambda^2(\tau)\xi^2\int_{\sigma}^{\tau}\lambda^{-2}(s)\,ds\big)\\&\hspace{4cm}\cdot\partial_{\sigma}\Big(\tilde{\alpha}(\sigma)\cdot\mathcal{F}\big(\mathcal{L}(\chi_1\cdot W)\big)(\sigma, \frac{\lambda(\tau)}{\lambda(\sigma)}\xi)\Big)\rho(\xi)\,d\sigma d\xi.
	\end{align*}
	When $\partial_{\sigma}$ falls on either $\tilde{\alpha}(\sigma)$ or $\chi_1$ we obtain terms which cancel exactly against corresponding double integrals contributed from the real part of the third and fifth terms, as asserted earlier. This means we can replace the preceding expression up to a constant by 
	\begin{align*}
		&-\int_0^\infty\int_{\tau}^\infty\cos\big(\lambda^2(\tau)\xi^2\int_{\sigma}^{\tau}\lambda^{-2}(s)\,ds\big)\\&\hspace{4cm}\cdot\frac{\tilde{\alpha}(\sigma)}{\sigma}\cdot(\xi\partial_{\xi})\mathcal{F}\big(\mathcal{L}(\chi_1\cdot W)\big)(\sigma, \frac{\lambda(\tau)}{\lambda(\sigma)}\xi)\rho(\xi)\,d\sigma d\xi.
	\end{align*}
	The operator $\mathcal{L}$ gains an additional factor $\frac{\lambda^2(\tau)}{\lambda^2(\sigma)}\cdot\xi^2$ which allows us to perform another integration by parts with respect to $\sigma$, again without generating a boundary term at $\sigma = \tau$, and which allows us to replace the previous double integral by 
	\begin{align*}
		&-\int_0^\infty\int_{\tau}^\infty\sin\big(\lambda^2(\tau)\xi^2\int_{\sigma}^{\tau}\lambda^{-2}(s)\,ds\big)\\&\hspace{3.5cm}\cdot\frac{\partial_{\sigma}^{\kappa}\tilde{\alpha}(\sigma)}{\sigma^{2-\kappa}}\cdot(\xi\partial_{\xi})^{\kappa_1}\mathcal{F}\big(\chi_1\cdot W\big)(\sigma, \frac{\lambda(\tau)}{\lambda(\sigma)}\xi)\rho(\xi)\,d\sigma d\xi,
	\end{align*}
	where $\kappa\in \{0, 1\}$ and $\kappa_1\in \{0, 1, 2\}$. Our assumption \eqref{eq:chi1vanishingcond} implies that we can write 
	\begin{align*}
		\mathcal{F}\big(\chi_1\cdot W\big)(\sigma,\xi) = \langle \chi_1\cdot W, \phi(R;\xi) - W(R)\rangle_{L^2_{R^3\,dR}}, 
	\end{align*}
	and the asymptotics in subsection~\ref{subsec:basicfourier} together with our choice for $\chi_1$ imply that inclusion of an extra cutoff $\chi_{\xi\lesssim \sigma^{-\frac12+}}$ allows us to bound the double integral by 
	\begin{align*}
		\Big\|\cdot\Big\|_{\tau^{-N-}L^2_{d\tau}}\lesssim \big\|\partial_{\tau}\tilde{\alpha}\big\|_{\tau^{-N}L^2_{d\tau}} + \big\|\frac{\tilde{\alpha}(\tau)}{\tau}\big\|_{\tau^{-N}L^2_{d\tau}}
	\end{align*}
	It remains to deal with the double integral with an extra cutoff $\chi_{\xi\gtrsim \sigma^{-\frac12+}}$. Dividing into the regions $\sigma - \tau\lesssim \xi^{-2}$, $\sigma - \tau\gtrsim \xi^{-2}$ and performing integration by parts with respect to $\xi$ in the latter, we see that we can restrict to $\sigma - \tau\lesssim \sigma^{1-}$ for all intents and purposes. Thus the $\sigma$ integral 'costs $\tau^{1-}$ but we have gained $\tau^{-1}$ before, resulting in a similar bound 
	\begin{align*}
		\Big\|\cdot\Big\|_{\tau^{-N-}L^2_{d\tau}}\lesssim \big\|\partial_{\tau}\tilde{\alpha}\big\|_{\tau^{-N}L^2_{d\tau}} + \big\|\frac{\tilde{\alpha}(\tau)}{\tau}\big\|_{\tau^{-N}L^2_{d\tau}}
	\end{align*}
	for the remaining double integral restricted to $\xi\gtrsim \sigma^{-\frac12+}$. 
	\\
	In the preceding we replaced $ \lambda^{-1}\psi^{(\tilde{\lambda})}$ by $W$, so we still need to account for the contribution of the error term 
	\begin{align*}
		&i\tilde{\alpha}\big(\triangle_R(\chi_1)\cdot (W - \lambda^{-1}\psi^{(\tilde{\lambda})}) + 2\partial_R(\chi_1)\cdot\partial_R(W - \lambda^{-1}\psi^{(\tilde{\lambda})})\big)\\
		& =- i\tilde{\alpha}\mathcal{L}\big(\chi_1\cdot (W - \lambda^{-1}\psi^{(\tilde{\lambda})})\big) - i\tilde{\alpha}\chi_1\cdot \partial_{RR}(W - \lambda^{-1}\psi^{(\tilde{\lambda})})\\
		& - i\tilde{\alpha}\chi_1\cdot W^2\cdot (W - \lambda^{-1}\psi^{(\tilde{\lambda})})
	\end{align*}
	Here we take advantage of the final part of Lemma~\ref{lem:approxsolasymptotics1}. For the last term on the right, using that the principal part of $W - \lambda^{-1}\psi^{(\tilde{\lambda})}$ is purely imaginary, whence the corresponding contribution is {\it{real valued}}, we can use Lemma~\ref{lem:K_frefined} to show that its contribution to $E(\tau)$ satisfies 
	\begin{align*}
		\Big\|\cdot\Big\|_{\log^{-3}\tau\cdot\tau^{-N}L^2_{d\tau}}\lesssim \big\|\frac{\tilde{\alpha}}{\tau}\big\|_{\log^{-1}\tau\cdot\tau^{-N}L^2_{d\tau}}. 
	\end{align*}
	For the first term on the right, again using that the principal part of $W - \lambda^{-1}\psi^{(\tilde{\lambda})}$ is purely imaginary, we use $\mathcal{L}$ to gain a factor $\frac{\lambda^2(\tau)}{\lambda^2(\sigma)}\cdot \xi^2$ and perform integration by parts with respect to $\sigma$ in the double integral contributing to $E(\tau)$ (which does not generate a boundary term at $\sigma = \tau$), arriving at the schematically written double integrals 
	\begin{align*}
		&\int_0^\infty\int_{\tau}^\infty\frac{\lambda^2(\tau)}{\lambda^2(\sigma)}\cdot \xi^2\sin\big(\lambda^2(\tau)\xi^2\int_{\sigma}^{\tau}\lambda^{-2}(s)\,ds\big)
		\\&\hspace{3cm}\cdot\frac{\partial_{\sigma}^{\kappa}\tilde{\alpha}}{\sigma^{1-\kappa}}\cdot \mathcal{F}\big(\chi_1\cdot \frac{\log R}{\sigma}\big)(\sigma, \frac{\lambda(\tau)}{\lambda(\sigma)}\xi)\rho(\xi)\,d\sigma d\xi,\,\kappa\in \{0, 1\}.
	\end{align*}
	Using Lemma~\ref{lem:K_frefined} as well as the definition of $\chi_1$, we easily find that the corresponding contribution is in $\tau^{-N-}L^2_{d\tau}$, bounded in terms of $\lesssim \big\|\frac{\partial_{\sigma}^{\kappa}\tilde{\alpha}}{\sigma^{1-\kappa}}\big\|_{\tau^{-N}L^2_{d\tau}}$, $\kappa\in \{0, 1\}$. 
	\\
	For the remaining term $i\tilde{\alpha}\chi_1\cdot \partial_{RR}(W - \lambda^{-1}\psi^{(\tilde{\lambda})})$, which to leading order is again {\it{real valued}}, we can approximate it up to a term(see Lemma~\ref{lem:approxsolasymptotics1}) in $O\big(\chi_1\frac{\log R}{\tau^{1+}}\big)$ by 
	\begin{align*}
		\tilde{\alpha}\chi_1\cdot R^{-2}\tau^{-1} = \tilde{\alpha}\chi_1\cdot W\tau^{-1}  + \tilde{\alpha}\cdot O\big(\chi_1 R^{-4}\tau^{-1}\big). 
	\end{align*}
	Each type of term was treated earlier, and so we are done with the control of the contribution of the first to fifth terms in \eqref{eq:E1mod}.
	\\
	
	It remains to treat the contribution of the term $E_{nl}^{\text{mod}}$ to $E(\tau)$. Write 
	\begin{align*}
		E_{nl}^{\text{mod}} &= \big(\chi_3^2 - \chi_3\big)\cdot \big(n_*^{(\tilde{\lambda})} - n_*\big)\cdot \psi_*^{(\tilde{\lambda})}\\
		& +  \big(\chi_3^2 - \chi_3\big)\cdot n_*\cdot \big(\psi_*^{(\tilde{\lambda})} - \psi_*\big)
	\end{align*}
	To control the contribution of the first term on the right to $E(\tau)$, we use the estimate 
	\begin{align*}
		&\Big\|\langle \xi\partial_\xi\rangle^{1+\delta_0}\mathcal{F}\Big(\big(\chi_3^2 - \chi_3\big)\cdot \big(n_*^{(\tilde{\lambda})} - n_*\big)\cdot \psi_*^{(\tilde{\lambda})}\Big)\Big\|_{\tau^{-N-}L^2_{d\tau}L^\infty_{d\xi}}\\
		&\lesssim \big\|\frac{\tilde{\lambda}}{\lambda^2}\big\|_{\tau^{-N-}L^2_{d\tau}}\lesssim \big\|\langle\partial_{\tilde{\tau}}^2\rangle^{-1}\tilde{\lambda}_{\tilde{\tau}\tilde{\tau}}\big\|_{\tau^{-N}L^2_{d\tau}},
	\end{align*}
	the last inequality on account of $\lambda^2\sim \tau^{1+\frac{1}{2\nu}},\,\tilde{\tau}^2\sim \tau^{1-\frac{1}{2\nu}}$. Here we have also taken advantage of Lemma~\ref{lem:approxsolasymptotics3} and the fact that the support of $\chi_3^2 - \chi_3$ is contained in the set $R\gtrsim \tau^{\frac12-}$. 
	The estimate for the second term on the right constituting $E_{nl}^{\text{mod}}$ is similar. The desired bound for the contribution of $E_{nl}^{\text{mod}}$ to $E(\tau)$ is then a consequence of the preceding bounds and Lemmas~\ref{lem:K_frefined}, ~\ref{lem:tildeKfcontrol}. 
	
	\subsubsection{Completion of the proof of Lemma~\ref{lem:tildealphafixedpoint1}} In order to bound the integral 
	\begin{align*}
		\Re\int_0^\infty \xi^2 S_{\mathcal{K}}\big(e_1^{\text{mod},\tilde{\alpha}}\big)\rho(\xi)\,d\xi,
	\end{align*}
	and recalling \eqref{eq:ScalK2}, we use integration by parts with respect to time in the rightmost propagator $S(E_1^{\text{mod},\tilde{\alpha}})$ in order to only have factors $\tilde{\alpha}_{\sigma}$ or $\frac{\tilde{\alpha}}{\sigma}$, as in the first part of the proof. Then we take advantage of Lemma~\ref{lem:concatenation1} but with the space $\tau^{-N}L^2_{d\tau}$ replaced by $\log^{-1}(\tau)\cdot \tau^{-N}L^2_{d\tau}$. 
	As for the contributions of the error terms due to replacing $\psi_*^{(\tilde{\lambda})}$ by $\lambda W$, recalling \eqref{eq:E1mod}, these are of the form 
	\begin{align*}
		\Re\int_0^\infty \xi^2 (S + S_{\mathcal{K}})\mathcal{F}\big((-\partial_{\tau}+ i\triangle_R)(\chi_1)\cdot\tilde{\alpha}\cdot(\lambda^{-1}\psi_*^{(\tilde{\lambda})} - W)\big)\,d\xi
	\end{align*}
	as well as similar expressions for the fourth and fifth terms in \eqref{eq:E1mod}. Taking advantage of Lemma~\ref{lem:approxsolasymptotics1}, as well as Lemma~\ref{lem:K_frefined}, ~\ref{lem:tildeKfcontrol}, ~\ref{lem:concatenation1}, we can bound the $\big\|\cdot\big\|_{\log^{-1}(\tau)\cdot\tau^{-N}L^2_{d\tau}}$-norm, of all these expressions by
	\[
	\lesssim\big\|\frac{\tilde{\alpha}}{\tau}\big\|_{\log^{-1}(\tau)\cdot \tau^{-N}L^2_{d\tau}} + \big\|\tilde{\alpha}_{\tau}\big\|_{\log^{-1}(\tau)\cdot\tau^{-N-1}L^2_{d\tau}}\ll_{N}\big\|\tilde{\alpha}_{\tau}\big\|_{\log^{-1}(\tau)\cdot\tau^{-N}L^2_{d\tau}}
	\]
	
	\subsubsection{Completion of the proof of Lemma~\ref{lem:tildealphasourceterms}} 
	
	Recalling the right hand side of the first equation of \eqref{eq:zeqn2}, we need to bound the remaining contributions of it to the right hand side of \eqref{eq:tildealpha1} both via the term $\mathcal{L}z|_{R = 0}$, expressed via the Schr\"odinger propagator, as well as to $- \Re\big(\lambda^{-2}(y \tilde{u}_*^{(\tilde{\lambda}, \underline{\tilde{\alpha}})})\big)|_{R = 0}$, and finally we also need to control the contribution of  $\Re( e_1)|_{R = 0}$. 
	\\
	To begin with, the contribution to $\mathcal{L}z|_{R = 0}$ of the terms 
	\begin{align*}
		\lambda^{-2}y_z\cdot (\tilde{u}_*^{(\tilde{\lambda}, \tilde{\alpha})}-W),\,\lambda^{-2}(y - y_z)\cdot \tilde{u}_*^{(\tilde{\lambda}, \tilde{\alpha})} -  y^{\text{mod}}_{\tilde{\lambda}}\cdot W,
	\end{align*}
	whose sum forms the difference of the first term on the right in \eqref{eq:zeqn2} and $y^{\text{mod}}_{\tilde{\lambda}}\cdot W$, is handled by combining Lemma~\ref{lem:Xtildelambdaperturbterms} with Lemmas~\ref{lem:K_frefined}, ~\ref{lem:tildeKfcontrol}, resulting in a bound which even replaces the $\log^{-1}(\tau)\cdot\tau^{-N}L^2_{d\tau}$ by $\tau^{-N-}L^2_{d\tau}$. We then also need to control the contribution of the term $\lambda^{-2}y^{\text{mod}}_{\tilde{\lambda}}\cdot W$ (recalling \eqref{eq:ylamndatildemod}) to both $\mathcal{L}z|_{R = 0}$ (via the Schr\"odinger propagator) as well as to $- \Re\big(\lambda^{-2}(y \tilde{u}_*^{(\tilde{\lambda}, \underline{\tilde{\alpha}})})\big)|_{R = 0}$; here we again take advantage of Lemma~\ref{lem:Lfdifferencebound} (keeping in mind \eqref{eq:ReR0}), and more specifically the following Remark~\ref{rem:Lfdifferencebound} in the case of not too small frequencies, in conjunction with Lemma~\ref{lem:ytildelambdatimesWlocalizedbound}. 
	\\
	As for the second term on the right of  \eqref{eq:zeqn2}, its contribution to $\mathcal{L}z|_{R = 0}$ is controlled by means of Lemma~\ref{lem:concatenation2} (re-writing $z$ as Schr\"odinger propagator applied to the source term on the right in  \eqref{eq:zeqn2}), and using Lemma~\ref{lem:yzWbound1}, repeating the argument in the preceding paragraph to control the contribution of the first term on the right of \eqref{eq:zeqn2}, using the first bound of Lemma~\ref{lem:Xtildelambdafinaltermcrudebound} to control the contribution of the second term on the right in \eqref{eq:zeqn2}, and finally Lemmas~\ref{lem:basicboundsfore_1modandnonlinearterms}, ~\ref{lem:basicboundsfore_1modalphaterm} to control the remaining terms in \eqref{eq:zeqn2}.
	\\
	It remains to deal with the last three terms on the right of the first hand side of \eqref{eq:zeqn2} with the term $e_1^{\text{mod}}$ replaced by $e_1^{\text{mod}} - e_1^{\text{mod}, \tilde{\alpha}}$ , which is accomplished by combining Lemma~\ref{lem:basicboundsfore_1modandnonlinearterms} together with Lemmas~\ref{lem:K_frefined}, ~\ref{lem:tildeKfcontrol}. 
	
	\subsubsection{Completion of the proof of Lemma~\ref{lem:tildekappaoneiicontrib1}} We start with the following 
	\begin{lem}\label{lem:tildeylambdaWhighmodulationreduction} Recalling \eqref{eq:ytildemodmodified}, and letting 
		\begin{align*}
			\tilde{R}_{\text{small},\geq{\sqrt{\gamma}^{-1}}}^{(\tilde{\lambda})}
		\end{align*}
		be defined like $R_{\text{small},\geq{\sqrt{\gamma}^{-1}}}^{(\tilde{\lambda})}$ but with $\lambda^{-2}Q^{(\tilde{\sigma})}_{\geq\sqrt{\gamma}^{-1}}\tilde{y}_{\tilde{\lambda}}^{\text{mod}}\cdot W$ replaced by
		\begin{align*}
			Z(\tau, R): = \big(\lambda^{-2}Q^{(\tilde{\sigma})}_{\geq\sqrt{\gamma}^{-1}}\tilde{y}_{\tilde{\lambda}}^{\text{mod}}\cdot W - Q^{(\tilde{\sigma})}_{\geq\sqrt{\gamma}^{-1}}\tilde{\lambda}\cdot \Lambda W\cdot W^2\big),
		\end{align*}
		we can write 
		\begin{align*}
			\tilde{R}_{\text{small},\geq{\sqrt{\gamma}^{-1}}}^{(\tilde{\lambda})} = \partial_{\tau}\big(\delta \tilde{R}_{\text{small},\geq{\sqrt{\gamma}^{-1}}}^{(\tilde{\lambda})}\big) + \frac{1}{\tau}\cdot \delta \tilde{\tilde{R}}_{\text{small},\geq{\sqrt{\gamma}^{-1}}}^{(\tilde{\lambda})}
		\end{align*}
		where the terms on the right enjoy the bound 
		\begin{align*}
			\big\|\delta \tilde{R}_{\text{small},\geq{\sqrt{\gamma}^{-1}}}^{(\tilde{\lambda})}\big\|_{\tau^{-N}L^2_{d\tau}} + \big\|\delta \tilde{\tilde{R}}_{\text{small},\geq{\sqrt{\gamma}^{-1}}}^{(\tilde{\lambda})}\big\|_{\tau^{-N}L^2_{d\tau}}\ll_{\tau_*}\big\|\langle\partial_{\tilde{\tau}}^2\rangle^{-2}\tilde{\lambda}_{\tilde{\tau}\tilde{\tau}}\big\|_{\tau^{-N}L^2_{d\tau}}.
		\end{align*}
	\end{lem}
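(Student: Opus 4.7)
The plan is to reduce the lemma, which asserts that after subtracting the leading-order approximation $Q^{(\tilde{\sigma})}_{\geq\sqrt{\gamma}^{-1}}\tilde{\lambda}\cdot\Lambda W\cdot W^2$ from $\lambda^{-2}Q^{(\tilde{\sigma})}_{\geq\sqrt{\gamma}^{-1}}\tilde{y}_{\tilde{\lambda}}^{\text{mod}}\cdot W$, the resulting defect $Z$ produces only perturbative contributions to the Schr\"odinger propagator integral $\tilde{R}^{(\tilde{\lambda})}_{\text{small},\geq\sqrt{\gamma}^{-1}}$, to two sub-claims. First I would establish a pointwise-in-$\sigma$ smallness of $Z$, namely that the Fourier transform of $Z$ and its $\xi$- and $\sigma$-derivatives satisfy a bound of the form $\ll_{\tau_*}\|\langle\partial_{\tilde{\tau}}^2\rangle^{-2}\tilde{\lambda}_{\tilde{\tau}\tilde{\tau}}\|_{\tau^{-N}L^2_{d\tau}}$ in the norms appearing on the right-hand sides of Lemma~\ref{lem:K_frefined}, Lemma~\ref{lem:tildeKfcontrol}. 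Once this is in place, I insert it into the definition of $\tilde{R}^{(\tilde{\lambda})}_{\text{small},\geq\sqrt{\gamma}^{-1}}$ (the second line of \eqref{eq:Rtildelambda} with $Q^{(\tilde{\sigma})}_{<\sqrt{\gamma}^{-1}}$ replaced by $Q^{(\tilde{\sigma})}_{\geq\sqrt{\gamma}^{-1}}$) and replicate the integration-by-parts argument from the proof of Lemma~\ref{lem:tildekappaoneiicontrib2}: after substituting $\tilde{\xi}=\lambda(\tau)/\lambda(\sigma)\cdot\xi$ one writes $\frac{\lambda^2(\sigma)}{\lambda^2(\tau)}\xi^2\cdot\Im S(\tau,\sigma,\xi) = -\partial_\tau\sin\bigl(\lambda^2(\sigma)\tilde{\xi}^2\int_\sigma^\tau\lambda^{-2}(s)\,ds\bigr)$, pulls $\partial_\tau$ out of the double integral to obtain the term $\partial_\tau(\delta\tilde{R}^{(\tilde{\lambda})}_{\text{small},\geq\sqrt{\gamma}^{-1}})$, and gathers the remainders, in which $\partial_\tau$ instead lands on $\rho_1(\lambda(\sigma)/\lambda(\tau)\,\tilde{\xi})$ and gains a factor $\lambda_\tau/\lambda\sim\tau^{-1}$, into the $\tau^{-1}\cdot\delta\tilde{\tilde{R}}^{(\tilde{\lambda})}_{\text{small},\geq\sqrt{\gamma}^{-1}}$ contribution.

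The core technical work lies in the first sub-claim. I decompose $\tilde{E}_2^{\text{mod}}$ into its principal part (the first two lines of \eqref{eq:tildeE2mod}) plus a corrector. The corrector terms all come with extra polynomial decay in $\tau$ via Lemma~\ref{lem:approxsolasymptotics3}, since they involve either $\partial_{\tilde{\lambda}}n_*^{(\tilde{\lambda})}-\lambda^2\Lambda W\cdot W$ (enjoying $\tau^{-1+O(1/\nu)}$ decay in its $\lambda^{-2}$-normalization) or $\chi_3$-cutoff errors supported at $R\gtrsim\tau^{1/2-}$; their contribution to $\lambda^{-2}\tilde{y}_{\tilde{\lambda}}^{\text{mod}}\cdot W$ is then directly bounded by $\ll_{\tau_*}\|\langle\partial_{\tilde{\tau}}^2\rangle^{-2}\tilde{\lambda}_{\tilde{\tau}\tilde{\tau}}\|_{\tau^{-N}L^2_{d\tau}}$ via Lemma~\ref{lem:wavebasicinhom}. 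For the principal source $-2\tilde{\lambda}_{tt}\lambda^2\Lambda W\cdot W-2Q^{(\tilde{\tau})}_{<\tilde{\tau}^{10/\nu}}[\tilde{\lambda}_t\cdot\partial_t(\lambda^2\Lambda W\cdot W)]$, the idea is to approximate $\Box^{-1}$ by $-\partial_{\tilde{\tau}}^{-2}$ in the high temporal frequency regime. Recalling the expression \eqref{eq:nFintildetauR} for $\lambda^{-2}\Box$ in $(\tilde{\tau},R)$-coordinates, and composing with $Q^{(\tilde{\sigma})}_{\geq\sqrt{\gamma}^{-1}}$, one verifies on Littlewood-Paley pieces (both spatial regimes $\lesssim\sqrt{\gamma}^{-1/2}$ and $\gtrsim\sqrt{\gamma}^{-1/2}$, the latter handled directly via the Fourier representation since $W$ suppresses large-frequency contributions) that $\lambda^{-2}\Box^{-1}-(-\partial_{\tilde{\tau}}^{-2})$ carries a factor $\sqrt{\gamma}^{1/2}\cdot\lambda_{\tilde{\tau}}/\lambda$ in its operator norm on the relevant weighted $L^2$-spaces, which is $\ll_{\tau_*}1$. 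Then using $\tilde{\lambda}_{tt}=\lambda^2\tilde{\lambda}_{\tilde{\tau}\tilde{\tau}}-\lambda_t\tilde{\lambda}_{\tilde{\tau}}$ together with $\partial_t=-\lambda\partial_{\tilde{\tau}}+\lambda_t/\lambda\cdot R\partial_R$ and the commutator estimate $[\partial_{\tilde{\tau}}^{-2},\lambda^2]$ of the same $\ll_{\tau_*}1$ size on high-temporal-frequency inputs, the two principal source terms combine in the correct proportion so that $\lambda^{-2}\Box^{-1}(\text{principal source})\cdot W$ indeed reduces to $\tilde{\lambda}\Lambda W\cdot W^2$ modulo a perturbative error, yielding cancellation with the subtracted term up to a remainder of size $\ll_{\tau_*}\|\langle\partial_{\tilde{\tau}}^2\rangle^{-2}\tilde{\lambda}_{\tilde{\tau}\tilde{\tau}}\|_{\tau^{-N}L^2_{d\tau}}$.

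The main obstacle will be the simultaneous and careful tracking of the several microlocal operators in play: the wave-time cutoff $Q^{(\tilde{\sigma})}_{\geq\sqrt{\gamma}^{-1}}$, the intermediate cutoff $Q^{(\tilde{\tau})}_{<\tilde{\tau}^{10/\nu}}$ internal to $\tilde{E}_2^{\text{mod}}$, the inverse operator $\partial_{\tilde{\tau}}^{-2}$, and multiplication by $\lambda^2$ (and its reciprocal). Each commutator a priori produces an error of order $\lambda_{\tilde{\tau}}/\lambda\sim\tilde{\tau}^{-1}$, but since we also need an inverse power of the frequency cutoff $\sqrt{\gamma}^{-1}$ to achieve the $\ll_{\tau_*}1$ gain (recalling $\gamma=(\log\log\tau_*)^{-1}$), the expansion must be organized so that each commutator genuinely captures both an inverse frequency and a temporal decay factor. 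A secondary subtlety is verifying that the second principal term of $\tilde{E}_2^{\text{mod}}$, localized by $Q^{(\tilde{\tau})}_{<\tilde{\tau}^{10/\nu}}$, can be assumed intact under $Q^{(\tilde{\sigma})}_{\geq\sqrt{\gamma}^{-1}}$ since $\sqrt{\gamma}^{-1}\ll\tilde{\tau}^{10/\nu}$; the intermediate-frequency regime $\sqrt{\gamma}^{-1}\leq|\hat{\tilde{\tau}}|\leq\tilde{\tau}^{10/\nu}$ is handled with the same commutator techniques as the principal analysis.
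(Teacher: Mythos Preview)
Your proposal is correct and follows essentially the same approach as the paper. Both arguments first establish a smallness bound on $Z$ via the approximation $\Box^{-1}\approx -\partial_{\tilde{\tau}}^{-2}$ on high-temporal-frequency inputs (implemented through a spatial Littlewood--Paley split at threshold $\gamma^{-1/4}$, with the low-frequency piece handled by a Neumann expansion and the high-frequency piece by the rapid decay of $\mathcal{F}_{\R^4}(\Lambda W\cdot W)$), and then insert this bound into the $\xi^2 S_1$ integral and integrate by parts in $\tau$ to produce the $\partial_\tau(\cdot)+\tau^{-1}(\cdot)$ decomposition, exactly as in Lemma~\ref{lem:tildekappaoneiicontrib2}. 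One sharpening worth noting: the paper phrases the intermediate bound as control of $\langle\xi\partial_\xi\rangle^{1+\delta_0}\langle Z,\,(\phi(R;\xi)-\phi(R;0))/\xi^2\rangle$ rather than of $\mathcal{F}(Z)$ itself, since it is precisely this difference-quotient pairing that supplies the extra $\xi^2$ needed to convert $\xi^2 S_1$ into $\xi^4 S_1=\partial_\tau(\xi^2\tfrac{\lambda^2(\sigma)}{\lambda^2(\tau)}S_2)+\tfrac{\lambda_\tau}{\lambda}(\ldots)$, after which Lemma~\ref{lem:tildeKfcontrol} applies directly.
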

	\begin{proof} The first step is to establish the following analogue of the bound in Lemma~\ref{lem:yzWbound1}: 
		\begin{equation}\label{eq:intermediateZbound}
			\Big\|\langle \xi\partial_{\xi}\rangle^{1+\delta_0}\langle Z(\tau, R), \frac{\phi(R;\xi) - \phi(R; 0)}{\xi^2}\rangle_{L^2_{R^3\,dR}}\Big\|_{\tau^{-N}L^2_{d\tau}}\ll_{\tau_*}\big\|\langle\partial_{\tilde{\tau}}^2\rangle^{-2}\tilde{\lambda}_{\tilde{\tau}\tilde{\tau}}\big\|_{\tau^{-N}L^2_{d\tau}}.
		\end{equation}
		To see this, we decompose 
		\begin{align*}
			&Q^{(\tilde{\sigma})}_{\geq\sqrt{\gamma}^{-1}}\big(\lambda^{-2}\tilde{y}_{\tilde{\lambda}}^{\text{mod}}\cdot W - \tilde{\lambda}\cdot \Lambda W\cdot W^2\big)\\&= \lambda^{-2}Q^{(\tilde{\sigma})}_{\geq\sqrt{\gamma}^{-1}}P_{<\gamma^{-\frac14}}\big(\lambda^{-2}\tilde{y}_{\tilde{\lambda}}^{\text{mod}}\cdot W -  \tilde{\lambda}\cdot \Lambda W\cdot W^2\big)\\
			& + \lambda^{-2}Q^{(\tilde{\sigma})}_{\geq\sqrt{\gamma}^{-1}}P_{\geq \gamma^{-\frac14}}\big(\lambda^{-2}\tilde{y}_{\tilde{\lambda}}^{\text{mod}}\cdot W - \tilde{\lambda}\cdot \Lambda W\cdot W^2\big),\\
		\end{align*}
		where the additional frequency localizers $P_{<>\gamma^{-\frac14}}$ refer to the standard Littlewood-Paley frequency with respect to the physical variable $R$. 
		To control the contribution of the first term on the right, recalling that $\tilde{y}_{\tilde{\lambda}}^{\text{mod}}$ involves inversion of $\Box$, we write 
		\begin{align*}
			Q^{(\tilde{\tau})}_{\geq\sqrt{\gamma}^{-1}}P_{<\gamma^{-\frac14}}\Box^{-1} &= Q^{(\tilde{\tau})}_{\geq\sqrt{\gamma}^{-1}}P_{<\gamma^{-\frac14}}(-\partial_{\tilde{\tau}}^2 + L)^{-1}\\
			& = Q^{(\tilde{\tau})}_{\geq\sqrt{\gamma}^{-1}}P_{<\gamma^{-\frac14}}(-I + \partial_{\tilde{\tau}}^{-2}L)^{-1}\partial_{\tilde{\tau}}^{-2},\\
		\end{align*}
		where the operator $\partial_{\tilde{\tau}}^{-2}$ is given by division by the symbol $\hat{\tilde{\tau}}^2$ on the wave temporal Fourier side. The effect of $(-I + \partial_{\tilde{\tau}}^{-2}L)^{-1}$ on $\partial_{\tilde{\tau}}^{-2}\tilde{E}_2^{\text{mod}}$ (see \eqref{eq:tildeE2mod}) is determined by means of a Neumann series expansion, taking advantage of the bounds 
		\begin{align*}
			&\Big\|\langle \xi\partial_{\xi}\rangle^{1+\delta_0}\langle Q^{(\tilde{\tau})}_{\geq\sqrt{\gamma}^{-1}}P_{<\gamma^{-\frac14}}(\partial_{\tilde{\tau}}^{-2}L)^k \partial_{\tilde{\tau}}^{-2}\lambda^{-4}\tilde{E}_2^{\text{mod}},\,\frac{\phi(R;\xi) - \phi(R; 0)}{\xi^2}\rangle_{L^2_{R^3\,dR}}\Big\|_{\tau^{-N}L^2_{d\tau}}\\
			&\lesssim \delta(\gamma)^k\cdot \big\|\langle\partial_{\tilde{\tau}}^2\rangle^{-2}\tilde{\lambda}_{\tilde{\tau}\tilde{\tau}}\big\|_{\tau^{-N}L^2_{d\tau}},\,k\geq 1,\,\lim_{\gamma\rightarrow 0}\delta(\gamma) = 0,
		\end{align*}
		which follows from Lemma~\ref{lem:ytildelambdaE2}. This bound, together with the easily verified bound 
		\begin{align*}
			\Big\|\langle Q^{(\tilde{\sigma})}_{\geq\sqrt{\gamma}^{-1}}P_{\geq\gamma^{-\frac14}}\big(\tilde{\lambda}\cdot \Lambda W\cdot W^2\big),  \frac{\phi(R;\xi) - \phi(R; 0)}{\xi^2}\rangle_{L^2_{R^3\,dR}}\Big\|_{\tau^{-N}L^2_{d\tau}}\ll_{\tau_*}\big\|\langle\partial_{\tilde{\tau}}^2\rangle^{-2}\tilde{\lambda}_{\tilde{\tau}\tilde{\tau}}\big\|_{\tau^{-N}L^2_{d\tau}}.
		\end{align*}
		as well as the following consequence of Lemma~\ref{lem:ytildelambdaE2}
		\begin{align*}
			&\Big\|\langle \xi\partial_{\xi}\rangle^{1+\delta_0}\langle Q^{(\tilde{\tau})}_{\geq\sqrt{\gamma}^{-1}}P_{<\gamma^{-\frac14}}\big(\partial_{\tilde{\tau}}^{-2}\lambda^{-4}\tilde{E}_2^{\text{mod}} - \tilde{\lambda}\cdot \Lambda W\cdot W\big),\,\frac{\phi(R;\xi) - \phi(R; 0)}{\xi^2}\rangle_{L^2_{R^3\,dR}}\Big\|_{\tau^{-N}L^2_{d\tau}}\\
			&\ll_{\tau_*}\big\|\langle\partial_{\tilde{\tau}}^2\rangle^{-2}\tilde{\lambda}_{\tilde{\tau}\tilde{\tau}}\big\|_{\tau^{-N}L^2_{d\tau}}
		\end{align*}
		gives the desired bound if we replace $Z(\tau, R)$ in \eqref{eq:intermediateZbound} by 
		\begin{align*}
			\lambda^{-2}Q^{(\tilde{\sigma})}_{\geq\sqrt{\gamma}^{-1}}\big(P_{<\gamma^{-\frac14}}\tilde{y}_{\tilde{\lambda}}^{\text{mod}}\cdot W\big) - Q^{(\tilde{\sigma})}_{\geq\sqrt{\gamma}^{-1}}\tilde{\lambda}\cdot \Lambda W\cdot W^2.
		\end{align*}
		It remains to establish the bound when we replace $Z(\tau, R)$ by 
		\[
		\lambda^{-2}Q^{(\tilde{\sigma})}_{\geq\sqrt{\gamma}^{-1}}\big(P_{\geq \gamma^{-\frac14}}\tilde{y}_{\tilde{\lambda}}^{\text{mod}}\cdot W\big),
		\]
		which is accomplished by replicating the argument for Lemma~\ref{lem:yzWbound1} after performing integration by parts twice with respect to $\tilde{\sigma}$ in the Duhamel propagator to shift the temporal derivatives away from $\tilde{\lambda}_{\tilde{\sigma}\tilde{\sigma}}$, and using the fact that the localizer $P_{\geq \gamma^{-\frac14}}$ when applied to the source term $\Lambda W\cdot W$ gains $\gamma^M$ for arbitrary $M>0$. In fact, the preceding argument even implies the bound \eqref{eq:intermediateZbound} with $\delta_0 = 1+$. 
		\\
		With the bound \eqref{eq:intermediateZbound} in hand, we now come back to \eqref{eq:Rtildelambda} where we have to replace $\lambda^{-2}Q^{(\tilde{\sigma})}_{<\gamma^{-1}}\tilde{y}_{\tilde{\lambda}}^{\text{mod}}\cdot W$ by $Z(\sigma, R)$. Then we observe the general identity 
		\begin{align*}
			&\int_{\tau}^\infty\int_0^\infty \xi^4 S_1(\tau,\sigma,\xi)\cdot f(\sigma, \frac{\lambda(\tau}{\lambda(\sigma)}\xi)\rho_1(\xi)\,d\xi = A_1(\tau) + A_2(\tau)\\
			&A_1(\tau) = \partial_{\tau}\int_{\tau}^\infty\int_0^\infty \xi^2 \frac{\lambda^2(\sigma)}{\lambda^2(\tau)}S_2(\tau,\sigma,\xi)\cdot f(\sigma, \frac{\lambda(\tau}{\lambda(\sigma)}\xi)\rho_1(\xi)\,d\xi,\\
			&A_2(\tau) = -\frac{\lambda_{\tau}}{\lambda}\int_{\tau}^\infty\int_0^\infty \xi^2 \frac{\lambda^2(\sigma)}{\lambda^2(\tau)}S_2(\tau,\sigma,\xi)\cdot f(\sigma, \frac{\lambda(\tau}{\lambda(\sigma)}\xi)\tilde{\rho}_1(\xi)\,d\xi,
		\end{align*}
		where we set $\tilde{\rho}_1(\xi) = 1 + \frac{\rho_1'(\xi)\xi}{\rho_1(\xi)}$. Then the required estimate for the (modified) second term in \eqref{eq:Rtildelambda} is obtained by replacing $f$ by the term in parentheses on the left of \eqref{eq:intermediateZbound}, and combining \eqref{eq:intermediateZbound}
		with Lemma~\ref{lem:tildeKfcontrol}. The (modified) first term of \eqref{eq:Rtildelambda} is handled similarly as we have $\xi\geq 1$ on its support and so we can always extract an extra factor $\xi^2$. 
	\end{proof}
	
	The preceding lemma allows us to replace $\lambda^{-2}Q^{(\tilde{\sigma})}_{\geq\sqrt{\gamma}^{-1}}\tilde{y}_{\tilde{\lambda}}^{\text{mod}}\cdot W$ by $Q^{(\tilde{\sigma})}_{\geq\sqrt{\gamma}^{-1}}\tilde{\lambda}\cdot \Lambda W\cdot W^2$ in the definition of $R_{\text{small},\geq{\sqrt{\gamma}^{-1}}}^{(\tilde{\lambda})}$ as far as the proof of Lemma~\ref{lem:tildekappaoneiicontrib1} is concerned. 
	\\
	We still need to reduce this to the expression $X(\tau)$ displayed at the beginning of the proof of Lemma~\ref{lem:tildekappaoneiicontrib1}. For this the propagator $\xi^2S_1(\tau,\sigma,\xi)$ needs to be replaced by the simpler propagator $\xi^2\cos\big([\tau - \sigma]\xi^2\big)$ up to errors satisfying the conclusion of the lemma. The main point here is to gain smallness ($\ll_{\tau_*}$) in the estimates, which is no longer a consequence of the temporal frequency localization in $Q^{(\tilde{\sigma})}_{\geq\sqrt{\gamma}^{-1}}\tilde{\lambda}$ due to the norm used for $\tilde{\lambda}$. Still the reduction to the simpler propagator follows by replicating the argument for Lemma~\ref{lem:reductionsteps1}. At this stage observe that first expression in of  \eqref{eq:Rtildelambda} but with propagator $\xi^2\cos\big([\tau - \sigma]\xi^2\big)$ and $\lambda^{-2}Q^{(\tilde{\sigma})}_{\geq\sqrt{\gamma}^{-1}}\tilde{y}_{\tilde{\lambda}}^{\text{mod}}\cdot W$  replaced by $Q^{(\tilde{\sigma})}_{\geq\sqrt{\gamma}^{-1}}\tilde{\lambda}\cdot \Lambda W\cdot W^2$ has (Schr\"odinger) temporal Fourier transform which gains $\gamma^M$ due to smallness of the large frequency part of $ \Lambda W\cdot W^2$ (see the first part of the proof of Lemma~\ref{lem:tildekappaoneiicontrib1}), and since 
	\[
	(\triangle)\mathcal{F}\big(Q^{(\tilde{\sigma})}_{\geq\sqrt{\gamma}^{-1}}\tilde{\lambda}\cdot \Lambda W\cdot W^2\big) = \mathcal{F}\big(Q^{(\tilde{\sigma})}_{\geq\sqrt{\gamma}^{-1}}\tilde{\lambda}\cdot \Lambda W\cdot W^2\big),
	\]
	the reductions outlined at the beginning of the proof of  Lemma~\ref{lem:tildekappaoneiicontrib1}) are rigorously justified. 
	\\
	
	\subsubsection{Completion of the proof of Lemma~\ref{lem:tildekappaoneiicontrib2}}. Recalling \eqref{eq:Ltildelambda}, \eqref{eq:Xdef}, to complete the proof for the structure of $L^{(\tilde{\lambda})}_{\text{small}, \geq \sqrt{\gamma}^{-1}}$ stated in the lemma, we need to analyze the contributions of the second, third and fourth terms in \eqref{eq:Xdef}. For the second and third terms, this is achieved by taking advantage of Lemma~\ref{lem:Xtildelambdaperturbtermshighmod} and arguing as in the beginning of the proof of the lemma to write the corresponding double integrals as derivative term or a double integral weighted with a factor $\tau^{-1}$, and then taking advantage of Lemma~\ref{lem:tildeKfcontrol} to control the double integrals, and the last part of Lemma~\ref{lem:Xtildelambdaperturbtermshighmod} to control the boundary terms thus arising (which did not occur for the contribution of $Q^{(\tilde{\tau})}_{<\tau^{\frac12+}}\big(\lambda^{-2}y_z\cdot W\big)$ at the beginning of the proof).
	\\
	It remains to deal with the fourth term in \eqref{eq:Xdef}, which in light of Lemma~\ref{lem:approxsolasymptotics3} we split into two terms, corresponding to $R\lesssim \tau^{\frac12-\frac{1}{4\nu}},\,R\gtrsim \tau^{\frac12-\frac{1}{4\nu}}$. The former regime is easier to deal with due to the better asymptotics for $\lambda^{-2}n_*^{(\tilde{\lambda})} - W^2$ there. In fact, we can estimate 
	\begin{align*}
		\big\|\langle R\partial_R\rangle^{1+}Q^{(\tilde{\tau})}_{>\gamma^{-1}}\big(\chi_{R\lesssim \tau^{\frac12-\frac{1}{4\nu}}}(\lambda^{-2}n_*^{(\tilde{\lambda}, \tilde{\alpha})} - W^2)z\big)\big\|_{\tau^{-N-1+O(\frac{1}{\nu})}L^2_{d\tau}L^2_{R^3\,dR}}\lesssim \big\|z\big\|_{S}. 
	\end{align*}
	Using $Q^{(\tilde{\tau})}_{>\gamma^{-1}} = \frac{\partial\tau}{\partial_{\tilde{\tau}}}\cdot \partial_{\tau}\circ \partial_{\tilde{\tau}}^{-1}Q^{(\tilde{\tau})}_{>\gamma^{-1}}$, we can write  
	\begin{align*}
		Q^{(\tilde{\tau})}_{>\gamma^{-1}}\big(\chi_{R\lesssim \tau^{\frac12-\frac{1}{4\nu}}}(\lambda^{-2}n_*^{(\tilde{\lambda}, \tilde{\alpha})} - W^2)z\big) = \partial_{\tau}g, 
	\end{align*}
	where $g$ satisfies the bound $\big\|\langle R\partial_R\rangle^{1+}g\big\|_{\tau^{-N-\frac12+O(\frac{1}{\nu})}L^2_{d\tau}L^2_{R^3\,dR}}\lesssim \big\|z\big\|_{S}$. The desired conclusion for this contribution to $L^{(\tilde{\lambda})}_{\text{small}, \geq \sqrt{\gamma}^{-1}}$ then follows from Lemma~\ref{lem:derivativemovesthrough2}.  \\
	In order to handle the remaining term $Q^{(\tilde{\tau})}_{>\gamma^{-1}}\big(\chi_{R\gtrsim \tau^{\frac12-\frac{1}{4\nu}}}(\lambda^{-2}n_*^{(\tilde{\lambda}, \tilde{\alpha})} - W^2)z\big)$, we use Lemma~\ref{lem:pseudotransferenceoperator2} and express the distorted Fourier transform of $z$ by means of \eqref{eq:ScalK1}, \eqref{eq:ScalK2}, with $E$ being given by the difference of the right hand side of the first equation in \eqref{eq:zeqn2} and $\lambda^{-2}y_z\cdot W$. For the source term $(\lambda^{-2}n_*^{(\tilde{\lambda}, \tilde{\alpha})} - W^2)z$, we repeat the preceding procedure. Taking advantage of the fact that the frequency localizer $Q^{(\tilde{\tau})}_{>\gamma^{-1}}$ can be moved to the source term up to negligible errors (Lemma~\ref{lem:derivativemovesthrough2}), the desired conclusion then follows by combining Lemma~\ref{lem:transferenceKstarhightempfreqgeneral} (with $j = 0$) with Lemma~\ref{lem:yzWbound3}, Lemma~\ref{lem:Xtildelambdaperturbtermshighmod}, Lemma~\ref{lem:basicboundsfore_1modandnonlinearterms}, Lemma~\ref{lem:ytildelambdamodhightempfreq} and Lemma~\ref{lem:E1hightempfreq}. The same lemmas also imply the conclusion for $L^{(\tilde{\lambda})}_{\mathcal{K},\text{small},\geq \sqrt{\gamma}^{-1}},\,R^{(\tilde{\lambda})}_{\mathcal{K},\text{small},\geq \sqrt{\gamma}^{-1}}$, where now one uses the general version of Lemma~\ref{lem:transferenceKstarhightempfreqgeneral}.

	\subsubsection{Completion of contribution of the terms (i) in subsubsection~\ref{subsubsec:tildekappaoneimprov} } Here our task consists in bounding the contributions of the second to fourth terms in \eqref{eq:Xdef}, but with $Q^{(\tilde{\tau})}_{<\tau^{\frac12+}}$ replaced by $Q^{(\tilde{\tau})}_{>\tau^{\frac12+}}$, to the evolution of $z$ via the Schr\"odinger propagator and thence to the right hand side of \eqref{eq:kappa1eqn}.This shall as usual rely on Lemmas~\ref{lem:K_frefined}, ~\ref{lem:tildeKfcontrol}, Lemma~\ref{lem:concatenation1}, as well as the asymptotic structure of the approximate solution. Using that for $1\leq p\leq \infty$
	\begin{equation}\label{eq:hightempfreqgain}
		\big\|Q^{(\tilde{\tau})}_{>\tau^{\frac12+}}f(\tilde{\tau},\xi)\big\|_{\tau^{-N-1+}L^2_{d\tau}L^p_{\rho(\xi)\,d\xi}}\lesssim \big\|\partial_{\tilde{\tau}}^2f\big\|_{\tau^{-N}L^2_{d\tau}L^p_{\rho(\xi)\,d\xi}}, 
	\end{equation}
	and also taking advantage of Lemma~\ref{lem:wavebasicinhom} as well as Lemma~\ref{lem:approxsolasymptotics1} to handle the region $R\lesssim \tau^{\frac12-}$ and further Lemma~\ref{lem:refinedwavepropagatorwithphysicallocalization} to handle the case $R\gtrsim \tau^{\frac12-}$, we find
	\begin{align*}
		&\Big\|\langle\xi\partial_{\xi}\rangle^{1+\delta_0}Q^{(\tilde{\tau})}_{>\tau^{\frac12+}}\mathcal{F}\big(\lambda^{-2}y_z\cdot (\tilde{u}_*^{(\tilde{\lambda}, \tilde{\alpha})}-W)\big)\Big\|_{\tau^{-N-1-}L^2_{d\tau}L^2_{\rho(\xi)\,d\xi}}\\
		&\lesssim  \big\|z_{nres}\big\|_{S} + \big\|\langle\partial_{\tilde{\tau}}^2\rangle^{-2}\partial_{\tilde{\tau}}^2\tilde{\lambda}\big\|_{\tau^{-N}L^2_{d\tau}} + \big\|(\tilde{\kappa}_1,\kappa_2)\big\|_{\tau^{-N}L^2_{d\tau}}.
	\end{align*}
	Taking advantage of Lemmas~\ref{lem:K_frefined}, ~\ref{lem:tildeKfcontrol},, the the corresponding contribution to the right hand side of \eqref{eq:kappa1eqn}, can then be bounded by 
	\begin{align*}
		\big\|\cdot\big\|_{\tau^{-N-1}L^2_{d\tau}}\lesssim c(\tau_*)\cdot\big(\big\|z_{nres}\big\|_{S} + \big\|\langle\partial_{\tilde{\tau}}^2\rangle^{-2}\partial_{\tilde{\tau}}^2\tilde{\lambda}\big\|_{\tau^{-N}L^2_{d\tau}} + \big\|(\tilde{\kappa}_1,\kappa_2)\big\|_{\tau^{-N}L^2_{d\tau}}\big), 
	\end{align*}
	which in turn leads to the desired improved bound for $\tilde{\kappa}_1$. 
	\\
	To handle the contribution of the third term in \eqref{eq:Xdef} localized to high wave temporal frequencies, it suffices to control the contributions of 
	\begin{align*}
		Q^{(\tilde{\tau})}_{>\tau^{\frac12+}}\big(y_{\tilde{\lambda}}^{\text{mod}}\cdot(\tilde{u}_*^{(\tilde{\lambda}, \tilde{\alpha})}-W)\big),\,Q^{(\tilde{\tau})}_{>\tau^{\frac12+}}\big(\lambda^{-2}(y - y_z - y_{\tilde{\lambda}}^{\text{mod}})\cdot \tilde{u}_*^{(\tilde{\lambda}, \tilde{\alpha})}\big). 
	\end{align*}
	Of these the contribution of the first term is handled in analogy to the immediately preceding term, we omit the details. As for the second term, we take advantage of \eqref{eq:hightempfreqgain} as well as the second inequality in Lemma~\ref{lem:Xtildelambdaperturbterms}.
	\\
	This leaves us to bound the contribution of the fourth term in \eqref{eq:Xdef}, but localized by means of $Q^{(\tilde{\tau})}_{>\tau^{\frac12+}}$.  In light of the estimates in Lemma~\ref{lem:approxsolasymptotics3}, we split this term into two:
	\begin{equation}\label{eq:thirdterminXdefcontribtotildekappaone}\begin{split}
			Q^{(\tilde{\tau})}_{>\tau^{\frac12+}}\Big(\big(\lambda^{-2}n_*^{(\tilde{\lambda}, \underline{\tilde{\alpha}})} - W^2\big) z\Big)  &=  Q^{(\tilde{\tau})}_{>\tau^{\frac12+}}\Big(\chi_{R<\tau^{\frac12-}}\big(\lambda^{-2}n_*^{(\tilde{\lambda}, \underline{\tilde{\alpha}})} - W^2\big) z\Big)\\
			& +   Q^{(\tilde{\tau})}_{>\tau^{\frac12+}}\Big(\chi_{R\geq \tau^{\frac12-}}\big(\lambda^{-2}n_*^{(\tilde{\lambda}, \underline{\tilde{\alpha}})} - W^2\big) z\Big)\\
	\end{split}\end{equation}
	{\it{First term on the right}}: We can write this term in the form $\partial_{\tau}A_1+ \tau^{-1}A_2$ where 
	\begin{align*}
		&\big\|\langle\xi\partial_\xi\rangle^{1+\delta_0}\mathcal{F}(A_j)\big\|_{\tau^{-N-}L^2_{d\tau}L^2_{\rho(\xi)\,d\xi}}\\&\hspace{1.5cm}\lesssim \big(\big\|z_{nres}\big\|_{S} + \big\|\langle\partial_{\tilde{\tau}}^2\rangle^{-2}\partial_{\tilde{\tau}}^2\tilde{\lambda}\big\|_{\tau^{-N}L^2_{d\tau}} + \big\|(\tilde{\kappa}_1,\kappa_2)\big\|_{\tau^{-N}L^2_{d\tau}}\big),\,j = 1, 2. 
	\end{align*}
	To see this, we observe the general identity (where $f = f(\tau,\xi)$)
	\begin{equation}\label{eq:highmodgivesgradientstructure}\begin{split}
			Q^{(\tilde{\tau})}_{>\tau^{\frac12+}}f &= \partial_{\tilde{\tau}}\big(\partial_{\tilde{\tau}}^{-1}Q^{(\tilde{\tau})}_{>\tau^{\frac12+}}\big)f\\
			& = \partial_{\tau}\big(\frac{\partial\tau}{\partial\tilde{\tau}}\cdot \partial_{\tilde{\tau}}^{-1}Q^{(\tilde{\tau})}_{>\tau^{\frac12+}}\big)f - \partial_{\tau}\big(\frac{\partial\tau}{\partial\tilde{\tau}}\big)\cdot \partial_{\tilde{\tau}}^{-1}Q^{(\tilde{\tau})}_{>\tau^{\frac12+}}f\\
			& = : \partial_{\tau}f_1 -  f_2, 
	\end{split}\end{equation}
	where we have the estimates 
	\begin{align*}
		\big\|f_1\big\|_{\tau^{-N+\frac{1}{4\nu}-}L^2_{d\tau}Y}\lesssim \big\|f\big\|_{\tau^{-N}L^2_{d\tau}Y},\,\big\|f_2\big\|_{\tau^{-N-1+\frac{1}{4\nu}-}L^2_{d\tau}Y}\lesssim  \big\|f\big\|_{\tau^{-N}L^2_{d\tau}Y},
	\end{align*}
	where $\|\cdot\|_{Y}$ refers to a norm with respect to the $\xi$ variable.  
	We apply this to the first term $F_1$ on the right in \eqref{eq:thirdterminXdefcontribtotildekappaone} but with $N$ replaced by $N+\frac14$, and obtain (with $A_1 = f_1$, $A_2 = f_2$)
	\begin{align*}
		\big\|\langle \xi\partial_{\xi}\rangle^{1+\delta_0}\mathcal{F}(A_1)\big\|_{\tau^{-N-}L^2_{d\tau}L^2_{\rho(\xi)\,d\xi}}&\lesssim  \big\|\langle \xi\partial_{\xi}\rangle^{1+\delta_0}\mathcal{F}(A_1)\big\|_{\tau^{-N-\frac14+\frac{1}{4\nu}-}L^2_{d\tau}L^2_{\rho(\xi)\,d\xi}}\\
		&\lesssim  \big\|\langle \xi\partial_{\xi}\rangle^{1+\delta_0}\mathcal{F}(F_1)\big\|_{\tau^{-N-\frac14}L^2_{d\tau}L^2_{\rho(\xi)\,d\xi}}\\
	\end{align*}
	and similarly 
	\begin{align*} 
		\big\|\langle \xi\partial_{\xi}\rangle^{1+\delta_0}\mathcal{F}(A_2)\big\|_{\tau^{-N-1-}L^2_{d\tau}L^2_{\rho(\xi)\,d\xi}}\lesssim  \big\|\langle \xi\partial_{\xi}\rangle^{1+\delta_0}\mathcal{F}(F_1)\big\|_{\tau^{-N-\frac14}L^2_{d\tau}L^2_{\rho(\xi)\,d\xi}}.
	\end{align*}
	It then suffices to verify the basic estimate 
	\begin{align*}
		&\big\|\langle \xi\partial_{\xi}\rangle^{1+\delta_0}\mathcal{F}(F_1)\big\|_{\tau^{-N-\frac14}L^2_{d\tau}L^2_{\rho(\xi)\,d\xi}}\\&\hspace{1.5cm}\lesssim \big(\big\|z_{nres}\big\|_{S} + \big\|\langle\partial_{\tilde{\tau}}^2\rangle^{-2}\partial_{\tilde{\tau}}^2\tilde{\lambda}\big\|_{\tau^{-N}L^2_{d\tau}} + \big\|(\tilde{\kappa}_1,\kappa_2)\big\|_{\tau^{-N}L^2_{d\tau}}\big),
	\end{align*}
	which is indeed a consequence of Lemma~\ref{lem:approxsolasymptotics3} and the definition of $\|\cdot\|_{S}$. This implies the bounds for $A_j, j =  1, 2$. 
	\\
	The desired bound for the contribution of the first term on the right hand side to the Schr\"odinger evolution of $z$ and thence to the term $\mathcal{L}z|_{R = 0}$ in \eqref{eq:kappa1eqn} is now a consequence of Lemma~\ref{lem:derivativemovesthrough1}.
	\\
	
	{\it{Second term on the right of \eqref{eq:thirdterminXdefcontribtotildekappaone}}} Here we have to re-iterate the equation for $z$ until we get to a source term such as the first, second, fourth, fifth or sixth term in \eqref{eq:recallE}, or the third term with a cutoff $\chi_{R<\tau^{\frac12-}}$ (otherwise continue iterating the equation), and take advantage of Lemma~\ref{lem:concatenation2}. The last part of the latter (see also Lemma~\ref{lem:derivativemovesthrough2}) allows us to move the frequency localizer $Q^{(\tilde{\tau})}_{>\tau^{\frac12+}}$ from the outside of the term inside the iteration, and again taking advantage of Lemma~\ref{lem:derivativemovesthrough2} we obtain the desired bound for this contribution of $\mathcal{L}z|_{R = 0}$ by exploiting the already established estimates for the other source terms contributing to (i), as well as the bound for the contributions of the frequency localized error term $e_1^{\text{mod}}$ treated in (iv) in the next sub-subsection. 
	
	\subsubsection{Treatment of the terms in (iv) from subsubsection~\ref{subsubsec:tildekappaoneimprov} } Here we need to derive bounds for the expressions 
	\begin{equation}\label{eq:highmodeoneSchrodprop}\begin{split}
			&\Im \int_{\tau}^\infty\int_0^\infty \xi^2\cdot (S+S_{\mathcal{K}})(\tau, \sigma;\xi)\cdot \mathcal{F}\big(Q^{(\tilde{\tau})}_{\geq \tau^{\delta}}\big(e_1^{\text{mod}}\big)(\sigma, \frac{\lambda(\tau)}{\lambda(\sigma)}\xi)\rho(\xi)\,d\xi d\sigma,\\
			& \Im Q^{(\tilde{\tau})}_{\geq\tau^{\delta}}(e_1^{\text{mod}})|_{R = 0}.
	\end{split}\end{equation}
	Starting with the Schr\"odinger propagator expression, we need to control the contributions coming from the various terms constituting $E_1^{\text{mod}}$ in \eqref{eq:E1mod}. 
	\\
	{\it{Contribution of first term in $E_1^{\text{mod}}$}}. Recalling the passage from $E_1^{\text{mod}}$ to $e_1^{\text{mod}}$, see \eqref{eq:e1moddef}, we reduce to bounding the contribution of 
	\begin{align*}
		Q^{(\tilde{\tau})}_{\geq \tau^{\delta}}\Big(\big(i\partial_{\tau} + \triangle_R\big)\chi_3\cdot \lambda^{-1}(\psi_*^{(\tilde{\lambda})} - \psi_*)\Big)
	\end{align*}
	Using a simple analogue of \eqref{eq:highmodgivesgradientstructure}, we can equate the preceding term with a sum 
	\[
	\partial_{\tau}G_1 + G_2
	\]
	where we have the bound
	\begin{align*}
		\big\|G_1\big\|_{\tau^{-N-\frac12+\frac{1}{4\nu}-\delta}L^2_{d\tau}} + \big\|G_2\big\|_{\tau^{-N-\frac32+\frac{1}{4\nu}-\delta}L^2_{d\tau}}\lesssim \big\|\langle\partial_{\tilde{\tau}}^2\rangle^{-1}\tilde{\lambda}_{\tilde{\tau}\tilde{\tau}}\big\|_{\tau^{-N}L^2_{d\tau}}
	\end{align*}
	Using Lemma~\ref{lem:derivativemovesthrough2}, we infer that the contribution of this term to the Schr\"odinger propagator in \eqref{eq:highmodeoneSchrodprop} is again of the form $\partial_{\tau}\tilde{G}_1 + \tilde{G}_2$ with analogous bounds, from which the desired bound for the corresponding contribution to $\tilde{\kappa}_1$ follows as in the proof of Corollary~\ref{cor:Rtildelambdageqgamma-1maineffect}.
	\\
	{\it{Contribution of second term in $E_1^{\text{mod}}$}}. This term is analogous to the preceding one and omitted,
	\\
	{\it{Contribution of third to fifth term in $E_1^{\text{mod}}$}}. These terms all involve a factor $\tilde{\alpha}$ or $\tilde{\alpha}_{\tau}$. Since $\tilde{\alpha}$ is at wave temporal frequency $<1$, we easily see that the contribution of all these terms to $\tilde{\kappa}_1$ is much better, and in fact in $\tau^{-2N+1}L^2_{d\tau}$. 
	\\
	{\it{Contribution of sixth term in $E_1^{\text{mod}}$}}. Here we again use a simple analogue of \eqref{eq:highmodgivesgradientstructure} to deduce that (recall the definition of $E_{\text{nl}}^{\text{mod}}$ after \eqref{eq:E1mod}) 
	\begin{align*}
		Q^{(\tilde{\tau})}_{\geq \tau^{\delta}}\big(E_{\text{nl}}^{\text{mod}}\big) = \partial_{\tau}H, 
	\end{align*}
	where we have the bound 
	\begin{align*}
		\big\|H\big\|_{\tau^{-N-\frac12+\frac{1}{4\nu}+}L^2_{d\tau}}\lesssim \big\|\langle\partial_{\tilde{\tau}}^2\rangle^{-1}\tilde{\lambda}_{\tilde{\tau}\tilde{\tau}}\big\|_{\tau^{-N}L^2_{d\tau}}.
	\end{align*}
	The desired bound for the contribution to $\tilde{\kappa}_1$ then follows as before via Lemma~\ref{lem:derivativemovesthrough2} and the proof of Corollary~\ref{cor:Rtildelambdageqgamma-1maineffect}.
	\\
	{\it{Contribution of seventh term in $E_1^{\text{mod}}$}}. This contribution is more delicate, and in fact we require a cancellation between the term involving the Schr\"odinger propagator $S$ in the first expression in \eqref{eq:highmodeoneSchrodprop} and the second term 
	$ \Im Q^{(\tilde{\tau})}_{\geq\tau^{\delta}}(e_1^{mod})|_{R = 0}$: to begin with, we note that 
	\begin{align*}
		\lambda^{-1}\cdot \partial_{\tilde{\lambda}}\psi_*^{(\tilde{\lambda})} =  \chi_{R\lesssim\tau^{\frac12-}}W + \chi_{R\lesssim\tau^{\frac12-}}\tau^{-1}\cdot O(\log R),
	\end{align*}
	where we have taken advantage of Lemma~\ref{lem:approxsolasymptotics1}. The seventh term  in $E_1^{\text{mod}}$ involving the additional factor $i\tilde{\lambda}_{\tau}$, we take advantage of 
	\begin{align*}
		Q^{(\tilde{\tau})}_{\geq \tau^{\delta}}\big(\tilde{\lambda}_{\tau}\cdot\chi_{R\lesssim\tau^{\frac12-}}\tau^{-1}\cdot O(\log R)\big) = \partial_{\tau}M_1 + M_2,
	\end{align*}
	where the terms on the right enjoy the bound
	\begin{align*}
		&\big\|\langle\xi\partial_{\xi}\rangle^{1+\delta_0}\mathcal{F}(M_1)\big\|_{\tau^{-N-}L^2_{d\tau}L^2_{\rho(\xi)\,d\xi}} + \big\|\langle\xi\partial_{\xi}\rangle^{1+\delta_0}\mathcal{F}(M_2)\big\|_{\tau^{-N-1-}L^2_{d\tau}L^2_{\rho(\xi)\,d\xi}}\\&\lesssim \big\|\langle\partial_{\tilde{\tau}}^2\rangle^{-1}\tilde{\lambda}_{\tilde{\tau}\tilde{\tau}}\big\|_{\tau^{-N}L^2_{d\tau}}.
	\end{align*}
	The contribution to $\tilde{\kappa}_1$ of the 'error term' the preceding formula for $\lambda^{-1}\cdot \partial_{\tilde{\lambda}}\psi_*^{(\tilde{\lambda})}$ is then handled by means of Lemma~\ref{lem:derivativemovesthrough1}, ~\ref{lem:derivativemovesthrough2}, in conjunction with the the proof of Corollary~\ref{cor:Rtildelambdageqgamma-1maineffect}. It remains to deal with the contribution of the main term $i\tilde{\lambda}_{\tau}\cdot  \chi_{R\lesssim\tau^{\frac12-}}W(R) =: e_{1,\text{main}}^{\text{mod}}$. Here we exploit a cancelation, and specifically, we claim that 
	\begin{align*}
		&\Big\|\Im \int_{\tau}^\infty\int_0^\infty \xi^2\cdot (S+S_{\mathcal{K}})(\tau, \sigma;\xi)\cdot \mathcal{F}\big(Q^{(\tilde{\tau})}_{\geq \tau^{\delta}}\big(e_{1,\text{main}}^{\text{mod}}\big)(\sigma, \frac{\lambda(\tau)}{\lambda(\sigma)}\xi)\rho(\xi)\,d\xi d\sigma,\\
		& +\Im Q^{(\tilde{\tau})}_{\geq\tau^{\delta}}(e_{1,\text{main}}^{\text{mod}})|_{R = 0}\Big\|_{\tau^{-N}L^2_{d\tau}}\ll_{\tau*}\big\|\langle\partial_{\tilde{\tau}}^2\rangle^{-1}\tilde{\lambda}_{\tilde{\tau}\tilde{\tau}}\big\|_{\tau^{-N}L^2_{d\tau}}.
	\end{align*}
	The main contribution from the first integral expression comes from the propagator the $S$, and it is this contribution which partly cancels against the last term, in analogy to Lemma~\ref{lem:Lfdifferencebound}, although the structure of the source term is slightly different here. Since $e_{1,\text{main}}^{\text{mod}}$ is purely imaginary, we may replace $S$ by $S_2$, and performing integration by parts with respect to $\sigma$, using $S_2 = \partial_{\sigma}\big(\cos(\lambda^2(\tau)\xi^2\int_{\sigma}^{\tau}\lambda^{-2}(s)\,ds)\big)$, the boundary term cancels against 
	\[
	\Im Q^{(\tilde{\tau})}_{\geq\tau^{\delta}}(e_{1,\text{main}}^{\text{mod}})|_{R = 0},
	\]
	leading to  
	\begin{align*}
		&\Im \int_{\tau}^\infty\int_0^\infty \xi^2\cos(\lambda^2(\tau)\xi^2\int_{\sigma}^{\tau}\lambda^{-2}(s)\,ds)\\&\hspace{5cm} \cdot\partial_{\sigma}\big(\mathcal{F}\big(Q^{(\tilde{\tau})}_{\geq \tau^{\delta}}\big(e_{1,\text{main}}^{\text{mod}}\big)(\sigma, \frac{\lambda(\tau)}{\lambda(\sigma)}\xi)\big)\rho(\xi)\,d\xi d\sigma
	\end{align*}
	We then observe that 
	\begin{align*}
		\big\|\langle\xi\partial_{\xi}\rangle^{1+\delta_0}Q^{(\tilde{\tau})}_{\geq \tau^{\delta}}\mathcal{F}\big(e_{1,\text{main}}^{\text{mod}}\big)\big\|_{\tau^{-N-\frac{1}{4\nu}+}L^2_{d\tau}L^2_{R^3\,dR}}\lesssim  \big\|\langle\partial_{\tilde{\tau}}^2\rangle^{-1}\tilde{\lambda}_{\tilde{\tau}\tilde{\tau}}\big\|_{\tau^{-N}L^2_{d\tau}}.
	\end{align*}
	Using Lemma~\ref{lem:derivativemovesthrough1} we can 'move' the operator $\partial_{\sigma}$ to the outside of the preceding double integral, and infer the desired bound for the corresponding contribution to $\tilde{\kappa}_1$ as usual. 
	\\
	It remains to deal with the contribution of the propagator $S_{\mathcal{K}}$, i. e. the expression 
	\begin{align*}
		\Im \int_{\tau}^\infty\int_0^\infty \xi^2\cdot S_{\mathcal{K}}(\tau, \sigma;\xi)\cdot \mathcal{F}\big(Q^{(\tilde{\tau})}_{\geq \tau^{\delta}}\big(e_{1,\text{main}}^{\text{mod}}\big)(\sigma, \frac{\lambda(\tau)}{\lambda(\sigma)}\xi)\rho(\xi)\,d\xi d\sigma
	\end{align*}
	This term can be written as $\partial_{\tau}M_1 + M_2$ where we have the bound
	\begin{align*}
		\big\|M_1\big\|_{\tau^{-N}L^2_{d\tau}} + \big\|M_2\big\|_{\tau^{-N-1}L^2_{d\tau}}\ll_{\tau_*} \big\|\langle\partial_{\tilde{\tau}}^2\rangle^{-1}\tilde{\lambda}_{\tilde{\tau}\tilde{\tau}}\big\|_{\tau^{-N}L^2_{d\tau}},  
	\end{align*}
	by essentially replicating the argument for Lemma~\ref{lem:SKdeltaPhi}.
	
	\subsubsection{Completion of the proof of Lemma~\ref{lem:znresperturbative1}, small frequency part} Here we need to control the contribution to the first term on the right in \eqref{eq:snresdecomp}, and we use $e_1^{\text{mod}}$ for $E$, see \eqref{eq:Modbulkeqns}. While for some of these terms this can be achieved relatively easily by means of Lemma~\ref{lem:nonresbasicsmallfreq1}, the terms involving $\tilde{\alpha}$ require more care. Let us start with the latter, distinguishing between the contribution of the imaginary and real valued sources, and their contribution to the real and imaginary parts of $z_{nres*,<\epsilon_1}$, where the identities \eqref{eq:ImzR0}, \eqref{eq:ReR0} shall be useful. Recalling \eqref{eq:E1mod}, write 
	\begin{equation}\label{eq:lemma61remainingtildealphaterms1}\begin{split}
			i\triangle\chi_1\tilde{\alpha}\cdot \psi_*^{(\tilde{\lambda})} + 2i\partial_r(\chi_1)\cdot\tilde{\alpha}\cdot \partial_r \psi_*^{(\tilde{\lambda})} &= i\triangle\chi_1\tilde{\alpha}\cdot \lambda W + 2i\partial_r(\chi_1)\cdot\tilde{\alpha}\cdot \partial_r \lambda W\\
			& + i\triangle\chi_1\tilde{\alpha}\cdot \big(\psi_*^{(\tilde{\lambda})} - \lambda W) \\&\hspace{2cm}+ 2i\partial_r(\chi_1)\cdot\tilde{\alpha}\cdot \partial_r \big(\psi_*^{(\tilde{\lambda})} - \lambda W\big) 
	\end{split}\end{equation}
	We take advantage of Lemma~\ref{lem:approxsolasymptotics1} to handle the contribution of the third and fourth terms on the right: note that the sum of the last two terms on the right and divided by $\lambda^3$ equals 
	\begin{equation}\label{eq:lemma61remainingtildealphaterms2}
		i\mathcal{L}\Big(\chi_1\tilde{\alpha}\cdot \big(\lambda^{-1}\psi_*^{(\tilde{\lambda})} - W)\Big) - i\chi_1\frac{\tilde{\alpha}}{\tau}\cdot W(R) + O\big(\chi_1\big|\frac{\tilde{\alpha}}{\tau}\big|\cdot W^2(R)\big)
	\end{equation}
	To estimate the contribution of the first term on the right, we essentially use  Lemma~\ref{lem:nonresbasicsmallfreq1},  Remark~\ref{rem:nonresbasicsmallfreq1}, except we have to argue more carefully for the boundary term at $\sigma = \tau$ arising after integration by parts with respect to $\sigma$ in the Schr\"odinger propagator. This boundary term equals 
	\begin{align*}
		\int_0^\infty \chi_{\xi<\epsilon_1}[\phi(R;\xi) - \phi(R;0)]\cdot \mathcal{F}\big(\chi_1\tilde{\alpha}\cdot \big(\lambda^{-1}\psi_*^{(\tilde{\lambda})} - W)\big)(\tau,\xi)\rho(\xi)\,d\xi. 
	\end{align*}
	The Fourier coefficient vanishes rapidly beyond $\xi = \tau^{-\frac12+\frac{\epsilon}{2\nu}}$ by choice of $\chi_1$, and we have (using $\big|\chi_1\cdot \big(\lambda^{-1}\psi_*^{(\tilde{\lambda})} - W)\big)\big|\lesssim \frac{\log R}{\tau}$)
	\[
	\big\|\mathcal{F}\big(\chi_1\tilde{\alpha}\cdot \big(\lambda^{-1}\psi_*^{(\tilde{\lambda})} - W)\big)\big\|_{\log\tau\cdot \tau^{-N+1-\frac{\epsilon}{\nu}}L^2_{d\tau}L^\infty_{d\xi}}\lesssim \big\|\frac{\tilde{\alpha}}{\tau}\big\|_{\log^{-1}(\tau)\cdot\tau^{-N}L^2_{d\tau}}.
	\] 
	Using the low frequency asymptotics of $\rho$ from subsection~\ref{subsec:basicfourier}, we then infer 
	\begin{align*}
		&\big\|\int_0^\infty \chi_{\xi<\epsilon_1}[\phi(R;\xi) - \phi(R;0)]\cdot \mathcal{F}\big(\chi_1\tilde{\alpha}\cdot \big(\lambda^{-1}\psi_*^{(\tilde{\lambda})} - W)\big)(\tau,\xi)\rho(\xi)\,d\xi\big\|_{\tau^{-N}L^2_{d\tau}\langle R\rangle^{\delta_0}L^\infty_{dR}}\\
		&\lesssim \big\|\chi_{\xi\lesssim  \tau^{-\frac12+\frac{\epsilon}{2\nu}}}\xi^2\rho(\xi)\cdot |\mathcal{F}\big(\chi_1\tilde{\alpha}\cdot \big(\lambda^{-1}\psi_*^{(\tilde{\lambda})} - W)\big)\big\|_{\tau^{-N}L^2_{d\tau}L^1_{d\xi}}\ll_{\tau_*}\big\|\frac{\tilde{\alpha}}{\tau}\big\|_{\log^{-1}(\tau)\cdot\tau^{-N}L^2_{d\tau}}.
	\end{align*}
	The bound for the remaining norms in the definition of $\|\cdot\|_{S}$ proceeds similarly: for example, applying $\mathcal{L}$ gains an additional $\xi^2\lesssim \tau^{-1+\frac{\epsilon}{\nu}}$, which is better than required. For the second term in \eqref{eq:lemma61remainingtildealphaterms2}, we use the cancellation condition $\langle \chi_1W, W\rangle = 0$ to write 
	\begin{align*}
		\mathcal{F}\big(\chi_1\frac{\tilde{\alpha}}{\tau}\cdot W(R)\big) &= \langle \chi_1\frac{\tilde{\alpha}}{\tau}\cdot W(R), \phi(R;\xi) - \phi(R;0)\rangle_{L^2_{R^3\,dR}}\\
		& = \xi^2\cdot  \langle \chi_1\frac{\tilde{\alpha}}{\tau}\cdot W(R), \frac{\phi(R;\xi) - \phi(R;0)}{\xi^2}\rangle_{L^2_{R^3\,dR}},\\
	\end{align*}
	at which point one can repeat the argument for the first term in \eqref{eq:lemma61remainingtildealphaterms2}, the factor $\xi^2$ being equivalent to the action of $\mathcal{L}$. For the last term in \eqref{eq:lemma61remainingtildealphaterms2}, calling its distorted Fourier transform $f$ it suffices to note that the right hand side in \eqref{lem:nonresbasicsmallfreq1} is bounded by $\ll_{\tau_*}\big\|\frac{\tilde{\alpha}}{\tau}\big\|_{\tau^{-N}L^2_{d\tau}}$, which suffices due to Lemma~\ref{lem:nonresbasicsmallfreq1}.
	\\
	
	{\it{Completion of the proof of Lemma~\ref{lem:smalltempfreqznresprinimprovedbound}.}} {\it{(1): the bound for $\mathcal{L}_*^{-1}(z_{nres, small}^{prin})$.}} Write 
	\begin{equation}\label{eq:lstarinverseznressmallprin}\begin{split}
			\mathcal{L}_*^{-1}(z_{nres, small}^{prin})(R) &= \phi_{0,*}(R)\cdot \int_0^R \theta_{0,*}(s)z_{nres, small}^{prin}(\cdot, s)s^3\,ds\\&\hspace{2cm} - \theta_{0,*}(R)\cdot \int_0^R \phi_{0,*}(s)z_{nres, small}^{prin}(\cdot, s)s^3\,ds,
	\end{split}\end{equation}
	where $\{\phi_{0,*}, \theta_{0,*}\}$ is a fundamental system for $\mathcal{L}*\in \{\mathcal{L}, \tilde{\mathcal{L}}\}$. Recalling \eqref{eq:znressmall} express $z_{nres, small}^{prin} = \mathcal{L}(\tilde{z}_{nres, small}^{prin})$, where $\tilde{z}_{nres, small}^{prin}$ is given by the same double integral as $z_{nres, small}^{prin}$ except with an extra factor $\xi^{-2}$ (but we have the range restriction $\epsilon_1\lesssim \xi\lesssim \epsilon_1^{-1}$). Performing integration by parts with respect to $s$ in the preceding variation of constants formula, we see that to control the first two norms in the definition of $\|\cdot\|_{S}$ in \eqref{eq:Snormdefi}, it suffices to bound $\big\|R^{\frac32-\delta_0}\cdot \partial_R^{\iota}(\tilde{z}_{nres, small}^{prin})\big\|_{L^\infty_{dR}}$, $\iota\in \{0, 1\}$. For the contribution of $y_z$ to \eqref{eq:znressmall} (with an extra $\xi^{-2}$), this follows by means of one integration by parts with respect to $\sigma$ and application of Corollary~\ref{cor:yzWpartialtau}, Lemma~\ref{lem:wavebasicinhomstructure1} but with $\partial_{\tilde{\tau}}^2$ replaced by $\partial_{\tau}^2$ which results in a gain of $\tau^{-1-\frac{1}{2\nu}}$,  Plancherel's theorem for the distorted Fourier transform and the asymptotics of the Fourier basis $\phi(R;\xi)$ in subsection~\ref{subsec:basicfourier}. This bounds the sum of the first two norms in $\|\cdot\|_{S}$ of this contribution by $\ll_{\tau_*}\big\|z_{nres}\big\|_{S} + \big\|(\tilde{\kappa_1}, \kappa_2)\big\|_{\tau^{-N}L^2_{d\tau}}$. For the contribution of $y_{\tilde{\lambda}}$, one uses Lemma~\ref{lem:ytildelambdamodhightempfreq} and multiple integrations by parts with respect to $\sigma$, each of which results in a small power gain in $\sigma$ due to the wave temporal frequency localization of $\tilde{\lambda}$ to frequencies $<\tau^{\frac12+}$ up to rapidly decaying tails. This allows us to bound the corresponding contribution by $\ll_{\tau_*}\big\|\langle\partial_{\tilde{\tau}}^2\rangle^{-2}\tilde{\lambda}_{\tilde{\tau}\tilde{\tau}}\big\|_{\tau^{-N}L^2_{d\tau}}$, which in turn is bounded by $\lesssim \big\|z_{nres}\big\|_{S} + \big\|(\tilde{\kappa}_1,\kappa_2)\big\|_{\tau^{-N}L^2_{d\tau}}$ by Proposition~\ref{prop:tilealphatildelambdasimultaneous}. It now remains to bound the third component (see  \eqref{eq:Snormdefi}) of $\big\|\cdot\big\|_{S}$ for \eqref{eq:lstarinverseznressmallprin}. 
	Writing $\mathcal{L} = \mathcal{L}_* + c\cdot W^2$, we can estimate 
	\begin{align*}
		\big\|c\cdot W^2\cdot \mathcal{L}_*^{-1}(z_{nres, small}^{prin})\big\|_{\tau^{-N}L^2_{d\tau}L^{2+}_{R^3\,dR}}\lesssim \big\|\mathcal{L}_*^{-1}(z_{nres, small}^{prin})\big\|_{\tau^{-N}L^2_{d\tau}L^{\infty}_{R^3\,dR}},
	\end{align*}
	and so the preceding estimates yield the desired bound for this contribution. On the other hand, we can bound 
	\begin{align*}
		\mathcal{L}_*\big(\mathcal{L}_*^{-1}(z_{nres, small}^{prin})\big) = z_{nres, small}^{prin}
	\end{align*}
	in $\tau^{-\frac12-\frac{1}{4\nu}+}\cdot \tau^{-N}L^2_{d\tau}(\langle R\rangle^{\delta_0}L^\infty_{R^3\,dR}\cap \mathcal{L}^{-\frac14}\langle R\rangle^{-\frac12+\delta_0}L^\infty_{R^3\,dR})$ by using Lemma~\ref{lem:nonresbasicsmallfreq1}, Corollary~\ref{cor:yzWpartialtau}, Lemma~\ref{lem:ytildelambdamodhightempfreq}.  
	\\
	{\it{(2): the bound for $\tilde{E}_{main}$.}} Here we need to estimate the various terms in \eqref{eq:ndecompose}, \eqref{eq:ntermlist}, with respect to $\tau^{-N}L^2_{d\tau}L^{2+}_{R^3\,dR}\cap \langle R\rangle^{\frac{\delta_0}{2}}L^2_{R^3\,dR}$. Observe that (with $P_{<\epsilon}$ the spectral localization operator associated to $\mathcal{L}$)
	\begin{align*}
		\big\|P_{<\epsilon_1}f\big\|_{L^{2+}_{R^3\,dR}\cap \langle R\rangle^{\frac{\delta_0}{2}}L^2_{R^3\,dR}}\lesssim \big\|P_{<\epsilon_1}f\big\|_{L^2_{R^3\,dR}}^{(1-\beta)}\cdot \big\|P_{<\epsilon_1}f\big\|_{L^\infty_{R^3\,dR}}^{\beta}
	\end{align*}
	for suitable $\beta\in (0,1)$, and further $\big\|P_{<\epsilon_1}f\big\|_{L^\infty}\lesssim (|\log\epsilon_1|)^{-\frac12}\cdot \big\|f\big\|_{L^2_{R^3\,dR}}$.  We hence conclude from Corollary~\ref{cor:yzW} that (recall \eqref{eq:yzdfn}) 
	\[
	\big\||P_{<\epsilon_1}\big(\lambda^{-2}y_z\cdot W\big)\big\|_{\tau^{-N}L^2_{d\tau}L^{2+}_{R^3\,dR}\cap \langle R\rangle^{\frac{\delta_0}{2}}L^2_{R^3\,dR}}\ll_{\epsilon_1}\big\|z_{nres}\big\|_{S} + \big\|(\tilde{\kappa}_1, \kappa_2)\big\|_{\tau^{-N}L^2_{d\tau}}. 
	\]
	For the high frequency term $P_{>\epsilon_1^{-1}}\big(\lambda^{-2}y_z\cdot W\big)$, it suffices to observe that we may include a standard Littlewood -Paley cutoff localizing to (Littlewood-Paley) frequency $\gtrsim \epsilon_1^{-1+}$ in front of $y_z$, up to errors of order $O(\epsilon_1^M)$, and to invoke Lemma~\ref{lem:wavebasicinhom}  in conjunction with the definition of $\big\|\cdot\big\|_{S}$ and arguments as in the proof of Corollary~\ref{cor:yzW} to conclude 
	\begin{align*}
		\big\|P_{>\epsilon_1^{-1}}\big(\lambda^{-2}y_z\cdot W\big)\big\|_{\tau^{-N}L^2_{d\tau}L^2_{R^3\,dR}}\ll_{\epsilon_1}\big\|z\big\|_{S}\lesssim \big\|z_{nres}\big\|_{S} + \big\|(\tilde{\kappa}_1, \kappa_2)\big\|_{\tau^{-N}L^2_{d\tau}}. 
	\end{align*}
	We conclude by combining Lemma~\ref{lem:znresperturbative1}, Lemma~\ref{lem:nonresconnect1}, Lemma~\ref{lem:znresrestbound} with Corollary~\ref{cor:yzW} and Remark~\ref{rem:cor:yzW}, namely
	\begin{align*}
		&\big\|n_{*,<\epsilon_1}\cdot W\big\|_{\tau^{-N}L^2_{d\tau}L^2_{R^3\,dR}} + \big\|n_{*,<\epsilon_1}\cdot W\big\|_{\tau^{-N}L^2_{d\tau}L^2_{R^3\,dR}} +  \big\|n_{rest}\cdot W\big\|_{\tau^{-N}L^2_{d\tau}L^2_{R^3\,dR}}
		\\&+ \big\|n_{nres, \mathcal{K}}\cdot W\big\|_{\tau^{-N}L^2_{d\tau}L^2_{R^3\,dR}} +  \big\|n_{res}\cdot W\big\|_{\tau^{-N}L^2_{d\tau}L^2_{R^3\,dR}}\lesssim c(\tau_*,\epsilon_1,N)\cdot \big\|z_{nres}\big\|_{S}\\&\hspace{7.5cm} + \big\|(\tilde{\kappa}_1,\kappa_2)\big\|_{\tau^{-N}L^2_{d\tau}} + \big\|e_1\big\|_{\tau^{-N}L^2_{d\tau}L^2_{R^3\,dR}}. 
	\end{align*}
	where we have the limiting relation $\lim_{\epsilon_1^{-1}\tau_*, N\rightarrow +\infty}c(\tau_*,\epsilon_1,N) = 0$. 
	\\
	
	{\it{Proof outline for Lemma~\ref{lem:tildeboxwpropagatorapproximate}.}} To begin with, we translate the equation $\tilde{\Box}_W u = f$ to the distorted Fourier side in relation to the operator $\mathcal{L}_*$, see subsection~\ref{subsec:Lstarbasics}. Using vectorial notation for to encode the discrete and continuous spectral part of $u$, $\underline{\hat{u}} = \left(\begin{array}{c} \mathcal{F}_{*,d}(u)\\ \mathcal{F}_*(u)\end{array}\right)$, we infer the following equation on the Fourier side 
	\begin{equation}\label{eq:tildeboxwtranslatedtofouriervectorial}
		\big(\mathcal{D}_{*,\tilde{\tau}}^2 + \frac{\lambda_{\tilde{\tau}}}{\lambda}\mathcal{D}_{*,\tilde{\tau}} + \underline{\xi^2}\big)\underline{\hat{u}} = \mathcal{R}_*(\tilde{\tau}, \underline{\hat{u}}) + \underline{\hat{f}},\,\underline{\xi^2} = \left(\begin{array}{c}-\xi_d^2\\ \xi^2\end{array}\right)
	\end{equation}
	where we set $\mathcal{D}_{*,\tilde{\tau}}: = \partial_{\tilde{\tau}} + \frac{\lambda_{\tilde{\tau}}}{\lambda}\mathcal{A}_*$, $\mathcal{A}_* = \left(\begin{array}{cc}0& 0\\ 0&\mathcal{A}_{*,c}\end{array}\right)$ and $\mathcal{A}_{*,c} = -\xi\partial_{\xi} - 4 - \frac{\rho_*'(\xi)\cdot\xi}{\rho_*(\xi)}$. The matrix valued operator $\mathcal{R}_*$ is expressed in terms of the {\it{transference operator}} $\mathcal{K}_*$ associated to $\mathcal{L}_*$, and given by 
	\begin{align*}
		\widehat{\big(\underline{(R\partial_R) f}\big)}(\xi) = -(\xi\partial_{\xi})\underline{\hat{f}}(\xi)- 4\underline{\hat{f}}(\xi) + \mathcal{K}_*\big(\underline{\hat{f}}\big)(\xi)
	\end{align*}
	Then as in \cite{KST}, \cite{DS}, one shows that $\mathcal{K}_*$ is a bounded operator on $\C\times L^2_{\rho_*(\xi)\,d\xi}$, and the operator $ \mathcal{R}_*(\tilde{\tau}, \underline{\hat{u}})$ can be expressed as linear combination of 
	\begin{align*}
		\beta_{\tilde{\tau}}^2\mathcal{K}_*,\,\beta_{\tilde{\tau}}^2\mathcal{K}_*^2,\,\beta_{\tilde{\tau}}\mathcal{K}_*\partial_{\tilde{\tau}},\,\beta_{\tilde{\tau}}^2[\mathcal{A},\mathcal{K}_*]. 
	\end{align*}
	The operator on the left in \eqref{eq:tildeboxwtranslatedtofouriervectorial} has the vector valued propagator 
	\begin{align*}
		\underline{\tilde{U}}_*(\underline{\hat{f}}): = \left(\begin{array}{c}\int_{\tilde{\tau}}^\infty\tilde{U}_*(\tilde{\tau},\tilde{\sigma},\xi)\cdot\hat{f}(\frac{\lambda(\tilde{\tau})}{\lambda(\tilde{\sigma})}\xi)\,d\tilde{\sigma}\\ -\frac{\xi_d}{2}\int_{\tilde{\tau}}^\infty H_{*,d}(\tilde{\tau}, \tilde{\sigma})\hat{f}_d(\tilde{\sigma})d\tilde{\sigma} \end{array}\right).
	\end{align*}
	Further, one checks the key smallness gains in the following estimate:
	\begin{align*}
		\Big\|\underline{\tilde{U}}_*\big(\mathcal{R}_*(\tilde{\tau}, \underline{\hat{u}})\big)\Big\|_{\tau^{-N}L^2_{d\tau}(\C\times L^2_{\rho_*(\xi)\,d\xi})}\ll_{\tau_*, N}\big\|\underline{\hat{u}}\big\|_{\tau^{-N}L^2_{d\tau}(\C\times L^2_{\rho_*(\xi)\,d\xi}}
	\end{align*}
	This estimate shows that the first term on the right in \eqref{eq:tildeboxwtranslatedtofouriervectorial} can be iterated away by successive applications of $\underline{\tilde{U}}_*$ and the statement of Lemma~\ref{lem:tildeboxwpropagatorapproximate} follows.

	\subsection{The proof of Proposition~\ref{prop: solnforylambdaW}}
	
	The strategy shall be to re-write \eqref{eq:zfrelation} in terms of a standard Schr\"odinger propagator on the left hand side and moving the errors thereby generated to the right hand side. A key aspect is that we only require the solution to solve the equation on $[\tau_*,\infty)$, allowing us to modify functions (when globally defined) arbitrarily below time $\tau_*$. The following lemma will allow us to force certain vanishing conditions, which shall be useful to treat the error terms when working with the temporal Fourier transform:  
	
	\begin{lem}\label{lem:fmodif} Let $f\in \tau^{-N}L^2_{d\tau}([\tau_*,\infty))$. Then there is $\tilde{f}\in \tau^{-N} L^2_{d\tau}([\frac{\tau_*}{2},\infty])$ such that $\tilde{f}|_{[\tau_*,\infty)} = f$ and further 
		\begin{align*}
			\partial_{\hat{\tau}}^l\hat{\tilde{f}}(0) = 0,\,l = 0, 1, \ldots, N-1,\,\big\|\tilde{f}\big\|_{\tau^{-N} L^2_{d\tau}([\frac{\tau_*}{2},\infty])}\lesssim_N \big\|f\big\|_{\tau^{-N} L^2_{d\tau}([\tau_*, \infty))}.
		\end{align*}
		Moreover, the function $\tilde{f}$ may be chosen to depend linearly on $f$. 
	\end{lem}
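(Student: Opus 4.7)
The plan is to observe that the vanishing conditions $\partial_{\hat\tau}^l\hat{\tilde f}(0)=0$ for $l=0,1,\dots,N-1$ are equivalent (up to constants) to the moment conditions $\int_{\tau_*/2}^\infty \tau^l \tilde f(\tau)\,d\tau=0$ for $l=0,1,\dots,N-1$. Thus the task reduces to extending $f$ from $[\tau_*,\infty)$ to $[\tau_*/2,\infty)$ in such a way that the $N$ indicated moments vanish, while retaining linear dependence on $f$ and a controlled $\tau^{-N}L^2$ norm.

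First I would check that these moments are actually well-defined as bounded linear functionals on $\tau^{-N}L^2_{d\tau}([\tau_*,\infty))$. By Cauchy--Schwarz,
\[
\Big|\int_{\tau_*}^\infty \tau^l f(\tau)\,d\tau\Big|\le\Big(\int_{\tau_*}^\infty \tau^{2l-2N}\,d\tau\Big)^{1/2}\big\|f\big\|_{\tau^{-N}L^2_{d\tau}([\tau_*,\infty))},
\]
and the first factor is finite precisely when $2l-2N<-1$, i.e.\ $l\le N-1$, with a bound depending only on $N$ and $\tau_*$. So the moment map $f\mapsto (m_l(f))_{l=0}^{N-1}$ with $m_l(f):=\int_{\tau_*}^\infty \tau^l f\,d\tau$ is a bounded linear map from $\tau^{-N}L^2_{d\tau}([\tau_*,\infty))$ to $\mathbb C^N$.

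Next I would fix, once and for all, a smooth bump $\chi$ supported in $[\tau_*/2,\tau_*]$ with $\chi>0$ on $(\tau_*/2,\tau_*)$, and set $\phi_j(\tau):=\tau^j\chi(\tau)$ for $j=0,\dots,N-1$. The $N\times N$ matrix $\mathbf M=(M_{lj})$ with entries $M_{lj}=\int \tau^l\phi_j(\tau)\,d\tau=\int \tau^{l+j}\chi(\tau)\,d\tau$ is a positive-definite Gram matrix (it equals $\langle \tau^l\sqrt\chi,\tau^j\sqrt\chi\rangle_{L^2}$), hence invertible. Solving $\mathbf M\,\mathbf c=-\mathbf m(f)$ for $\mathbf c=(c_0,\dots,c_{N-1})$ produces coefficients depending linearly and continuously on $f$, with $|\mathbf c|\lesssim_N \|f\|_{\tau^{-N}L^2([\tau_*,\infty))}$. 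I then define
\[
\tilde f(\tau):=\begin{cases} \sum_{j=0}^{N-1} c_j\,\phi_j(\tau), & \tau\in[\tau_*/2,\tau_*],\\ f(\tau), & \tau\ge \tau_*.\end{cases}
\]
By construction $\tilde f|_{[\tau_*,\infty)}=f$, the moment conditions are satisfied, and
\[
\big\|\tilde f\big\|_{\tau^{-N}L^2_{d\tau}([\tau_*/2,\infty))}\le \big\|f\big\|_{\tau^{-N}L^2_{d\tau}([\tau_*,\infty))}+\sum_{j=0}^{N-1}|c_j|\,\|\phi_j\|_{\tau^{-N}L^2_{d\tau}}\lesssim_N \big\|f\big\|_{\tau^{-N}L^2_{d\tau}([\tau_*,\infty))},
\]
where the implicit constant depends only on $N$ and $\tau_*$ through the fixed data $\chi,\mathbf M^{-1}$.

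There is essentially no obstacle here; the only subtlety worth flagging is the convergence of the moments, which forces the sharp cutoff $l\le N-1$ and is precisely the reason one cannot impose more than $N$ vanishing conditions with this argument. The linearity of $f\mapsto\tilde f$ is manifest since $f\mapsto \mathbf m(f)\mapsto \mathbf c\mapsto\tilde f$ is a composition of linear maps.
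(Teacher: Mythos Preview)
Your proof is correct and follows essentially the same approach as the paper: correct the first $N$ moments of $f$ by adding a compactly supported function on $[\tau_*/2,\tau_*]$. The only cosmetic difference is that the paper writes the correction as $\tau_*^{-N-1/2}\chi(\tau/\tau_*)$ for a single $\chi$ chosen to have prescribed moments $\int x^l\chi\,dx=a_l$, whereas you fix $\chi$ once and expand the correction in the basis $\{\tau^j\chi\}$, making the linear algebra (and hence the linear dependence on $f$) explicit via the Gram matrix; the content is the same.
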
 
	\begin{proof} We shall set
		\[
		\tilde{f} = f(\tau) - \kappa(\tau)
		\]
		where we choose 
		\[
		\kappa(\tau) = \tau_*^{-N-\frac12}\chi\big(\frac{\tau}{\tau_*}\big).
		\]
		for a suitable $\chi\in C_0^\infty$ supported on $[\frac12, 1]$. Then extending $f$ by $0$ below $\tau = \tau_*$, we obtain the conditions 
		\begin{align*}
			\int_{-\infty}^{\infty}\tau^l \kappa(\tau)\,d\tau = \int_{-\infty}^{\infty}\tau^l f(\tau)\,d\tau,\,l = 0, 1,\ldots, N-1, 
		\end{align*}
		and we have 
		\begin{align*}
			\int_{-\infty}^{\infty}\tau^l \kappa(\tau)\,d\tau = \tau_*^{\frac12 - (N-l)}\cdot \int_{-\infty}^{\infty} x^l \chi(x)\,dx, 
		\end{align*}
		while we have the bounds
		\begin{align*}
			\big| \int_{-\infty}^{\infty}\tau^l f(\tau)\,d\tau\big|\lesssim \big\|f\big\|_{\tau^N L^2_{d\tau}}\cdot\tau_*^{l-N+\frac12},\,l = 0, 1,\ldots, N-1. 
		\end{align*}
		We conclude that it suffices to impose $N$ conditions 
		\begin{align*}
			\int_{-\infty}^{\infty} x^l \chi(x)\,dx = a_l,\,l = 0, 1, \ldots, N-1,\,|a_l|\lesssim \big\|f\big\|_{\tau^{-N} L^2_{d\tau}}\,\forall l,
		\end{align*}
		and then we have 
		\begin{align*}
			\big\|\kappa(\tau)\big\|_{\tau^{-N} L^2_{d\tau}}\lesssim_N  \big\|f\big\|_{\tau^{-N} L^2_{d\tau}}. 
		\end{align*}
	\end{proof}
	
	In the sequel, we shall write
	\begin{equation}\label{eq:Pidefinition}
		f(\tau) - \kappa(\tau) =:\Pi(f). 
	\end{equation}
	Returning to the solution of \eqref{eq:zfrelation}, we now consider a simpler model problem which we can solve by means of the standard Fourier transform. We recall the paragraph following \eqref{eq:zedcomprefined} for the definition of $\rho_1(\xi)$:
	\begin{lem}\label{lem:Fouriertransform1} Consider the equation 
		\begin{align*}
			\int_{\tau}^\infty \int_0^\infty \eta^2\cos\big([\tau-\sigma]\eta^2\big)\cdot \phi(\sigma)\rho_1(\eta)\,d\eta d\sigma = \psi(\tau). 
		\end{align*}
		Then denoting by $\hat{\phi}(\hat{\tau})$ the standard Fourier transform of $\phi$ evaluated in $\hat{\tau}$, we have 
		\begin{align*}
			\hat{\psi}(\hat{\tau}) = \Big(c_1\sqrt{|\hat{\tau}|}\rho_1(\sqrt{|\hat{\tau}|}) + ic_2\int_0^\infty \frac{\hat{\tau}\sqrt{\xi}_1}{\hat{\tau}^2 - \xi_1^2}\rho_1(\sqrt{\xi}_1)\,d\xi_1\Big)\cdot \hat{\phi}(\hat{\tau}). 
		\end{align*}
		for suitable constants $c_{1,2}\in \R\backslash\{0\}$. More generally, if we replace $\phi(\sigma)$ by $\phi(\sigma,\eta)$ depending Holder continuously on $\eta$, we have 
		\begin{align*}
			\hat{\psi}(\hat{\tau}) = \Big(c_1\sqrt{|\hat{\tau}|}\rho_1(\sqrt{|\hat{\tau}|})\cdot \hat{\phi}(\hat{\tau},|\hat{\tau}|) + ic_2\int_0^\infty \frac{\hat{\tau}\sqrt{\xi}_1\hat{\phi}(\hat{\tau},\sqrt{\xi}_1)}{\hat{\tau}^2 - \xi_1^2}\rho_1(\sqrt{\xi}_1)\,d\xi_1\Big).
		\end{align*}
		The singular integral is always in the principal value sense. 
	\end{lem}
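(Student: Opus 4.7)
The plan is to view the double integral as a convolution (up to reflection) in the time variable and pass to the standard Fourier transform in $\tau$. First I would change variables $s = \sigma-\tau$ in the inner integral, so that
\begin{equation*}
\psi(\tau) = \int_0^\infty K(s)\,\phi(\tau+s)\,ds,\qquad K(s) := \int_0^\infty \eta^2\cos(s\eta^2)\rho_1(\eta)\,d\eta,
\end{equation*}
where $K$ is interpreted as an oscillatory integral. Concretely I would write $K = \partial_s F$ with $F(s) = \int_0^\infty \sin(s\eta^2)\rho_1(\eta)\,d\eta$; this latter integral converges absolutely since $\rho_1 \in L^1(\mathbb{R}_+)$ in view of the asymptotics from subsection~\ref{subsec:basicfourier}. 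Setting $H(t) := \chi_{t<0}K(-t)$ and extending $\phi$ from $[\tau_*,\infty)$ to all of $\R$ via Lemma~\ref{lem:fmodif}, the identity becomes $\psi = H \ast \phi$, and so on the Fourier side $\hat\psi(\hat\tau) = \hat H(\hat\tau)\,\hat\phi(\hat\tau)$.

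The second step is to compute $\hat H(\hat\tau) = \int_0^\infty K(s) e^{i\hat\tau s}\,ds$ as a tempered distribution. Writing $\cos(s\eta^2) = \tfrac12(e^{is\eta^2}+e^{-is\eta^2})$ and using the classical identities
\begin{equation*}
\int_0^\infty e^{i\alpha s}\,ds = \pi\delta(\alpha) - i\,\mathrm{P.V.}\tfrac{1}{\alpha}, \qquad \int_0^\infty e^{-i\alpha s}\,ds = \pi\delta(\alpha) + i\,\mathrm{P.V.}\tfrac{1}{\alpha},
\end{equation*}
I would interchange the $s$ and $\eta$ integrals, justified by introducing a regulator $e^{-\varepsilon s}$, exploiting the symbol bounds on $\rho_1$, and passing $\varepsilon \downarrow 0$. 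For $\hat\tau > 0$ only the $\delta(\eta^2 - \hat\tau)$ term contributes, picking out $\eta = \sqrt{\hat\tau}$, while the two principal-value pieces combine through the identity $\tfrac{1}{\eta^2-\hat\tau} - \tfrac{1}{\eta^2+\hat\tau} = \tfrac{2\hat\tau}{\eta^4 - \hat\tau^2}$. After the change of variable $\xi_1 = \eta^2$ (so that $\eta^2\,d\eta = \tfrac{\sqrt{\xi_1}}{2}\,d\xi_1$), one reads off the asserted formula with explicit nonzero constants (concretely $c_1 = \pi/4$, $c_2 = -1/2$).

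For the generalization in which $\phi = \phi(\sigma,\eta)$ depends Hölder continuously on $\eta$, the variable $\eta$ is merely a parameter during the Fourier-analytic steps. The only additional point is that, when evaluating the $\delta$-contribution, one collapses $\eta$ onto $\sqrt{|\hat\tau|}$ inside the factor $\hat\phi(\hat\tau, \eta)$, producing the term $c_1\sqrt{|\hat\tau|}\rho_1(\sqrt{|\hat\tau|})\hat\phi(\hat\tau, |\hat\tau|^{1/2})$; here the Hölder continuity in $\eta$ is exactly what makes this pointwise evaluation meaningful.

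The principal obstacle is making the distributional manipulations rigorous when $\phi$ lives only in the weighted space $\tau^{-N}L^2_{d\tau}$ and $K$ is a tempered distribution rather than a function. I plan to handle this by first computing the Fourier transform of the smoother quantity $\chi_{t<0}F(-t)$ (which is absolutely convergent and allows an immediate application of Fubini after the regulator $e^{-\varepsilon s}$), and then transferring the $\partial_s$ to multiplication by $i\hat\tau$ on the Fourier side. This avoids any direct regularization of $K$ itself and simultaneously fixes the principal-value interpretation of the singular integral at $\xi_1 = \hat\tau^2$, matching the convention announced in the ``Remarks on notation'' section of the paper.
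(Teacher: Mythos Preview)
Your approach is essentially the same as the paper's: both recognize the expression as a time-convolution, pass to the standard Fourier transform, and compute the multiplier via the distributional identities for $\int_0^\infty e^{\pm i\alpha s}\,ds$ together with the substitution $\tilde\eta=\eta^2$. The paper carries this out slightly more compactly by writing $\chi_{\sigma>\tau}=\tfrac12(1+H(\sigma-\tau))$ and substituting $\tilde\eta=\eta^2$ at the outset, whereas you first isolate the kernel $K(s)$ and regularize through $K=\partial_s F$; both routes land on the same $\delta$-plus-principal-value structure, and since the lemma only asserts $c_{1,2}\neq 0$, the minor sign/factor discrepancies in your displayed identities (e.g.\ the sign in $\int_0^\infty e^{i\alpha s}\,ds=\pi\delta(\alpha)+i\,\mathrm{P.V.}\tfrac1\alpha$, and the resulting value of $c_1$) are immaterial.
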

	\begin{proof}
		Write 
		\begin{align*}
			&\int_{\tau}^\infty \int_0^\infty \eta^2\cos\big([\tau-\sigma]\eta^2\big)\cdot \phi(\sigma)\rho_1(\eta)\,d\eta d\sigma\\
			& = \sum_{\pm}\frac14\int_{-\infty}^{\infty}\int_0^\infty\big(1+H(\sigma - \tau)\big)\cdot e^{\pm i(\sigma - \tau)\tilde{\eta}}\sqrt{\tilde{\eta}}\rho_1(\sqrt{\tilde{\eta}})\phi(\sigma)\,d\tilde{\eta} d\sigma, 
		\end{align*}
		where $H$ denotes the sign function. If we apply the Fourier transform of the expression on the right, we obtain 
		\begin{align*}
			&\sum_{\pm}\Big[\frac{\pi}{2}\cdot \big(\int_0^\infty\delta_0(\hat{\tau} \mp \tilde{\eta})\sqrt{\tilde{\eta}}\rho_1(\sqrt{\tilde{\eta}})\,d\tilde{\eta}\big) -  i\frac12\cdot \int_0^\infty\frac{\sqrt{\tilde{\eta}}\rho_1(\sqrt{\tilde{\eta}})}{\hat{\tau} \mp \tilde{\eta}}\,d\tilde{\eta}\Big]\cdot \hat{\phi}(\hat{\tau}).\\
		\end{align*}
		The first part lemma follows with 
		\[
		c_1 = \frac{\pi}{2},\,c_2 = -\frac12. 
		\]
		The second part is similar. 
	\end{proof}
	
	We observe that the multiplier expressing $\hat{\psi}(\hat{\tau})$ in the preceding lemma in terms of $\hat{\phi}(\hat{\tau})$ vanishes at $\hat{\tau} = 0$ like $\log^{-2}\hat{\tau}$. It is this logarithmic 'degeneracy' which is responsible for the $\log^2\tau$-loss in Proposition~\ref{prop: solnforylambdaW}. 
	\\
	We now reduce the propagator in Proposition~\ref{prop: solnforylambdaW} to the one in the preceding lemma. For this it shall be useful to work in terms of the Fourier transform with respect to Schr\"odinger time $\tau$. The singularity at $\hat{\tau} =0$ just mentioned will force us to work with functions vanishing rapidly at the origin, which we can achieve by means of the projector $\Pi$. The following lemma describes the appropriate functions in terms of their Fourier transform: we shall use the space 
	\begin{equation}\label{eq:hatzspace}
		\hat{z}(\hat{\tau})\in \hat{\tau}^N L^2_{d\hat{\tau}}\cap W^{N, 2}. 
	\end{equation}
	We note 
	\begin{lem}\label{lem:hatzcompatibility} Assume that \eqref{eq:hatzspace} holds, with $z$ supported on $[\frac{\tau_*}{2},\infty)$. Then letting 
		\begin{align*}
			z(\tau): = \int_{-\infty}^{\infty} e^{i\hat{\tau}\tau}\cdot \hat{z}(\hat{\tau})\,d\hat{\tau}, 
		\end{align*}
		we have 
		\begin{align*}
			\big\|z\big\|_{\tau^{-N} L^2_{d\tau}}\lesssim \big\|\hat{z}\big\|_{W^{N,2}}\leq \big\|\hat{z}\big\|_{\hat{\tau}^N L^2_{d\hat{\tau}}\cap W^{N, 2}}.
		\end{align*}
		Furthermore, if $z(\tau)\in  \log^{-2}\tau\cdot \tau^{-N} L^2_{d\tau}$ and moreover $\int_{-\infty}^{\infty} \tau^l z(\tau)\,d\tau = 0,\,l = 0, 1,\ldots, N-1$, then 
		\begin{align*}
			\big\|\hat{z}\big\|_{\log^{-2}\hat{\tau}\cdot (\hat{\tau}^N L^2_{d\hat{\tau}}\cap W^{N, 2})}\lesssim \big\|z\big\|_{\log^{-2}\tau\cdot \tau^{-N} L^2_{d\tau}}.
		\end{align*}
	\end{lem}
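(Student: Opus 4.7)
The first inequality is a direct application of Plancherel's identity. Since $z$ is the inverse Fourier transform of $\hat{z}$, we have $\mathcal{F}(\tau^N z)(\hat{\tau}) = i^N \partial_{\hat{\tau}}^N \hat{z}(\hat{\tau})$, so
\[
\big\|\tau^N z\big\|_{L^2_{d\tau}} = \big\|\partial_{\hat{\tau}}^N \hat{z}\big\|_{L^2_{d\hat{\tau}}} \leq \big\|\hat{z}\big\|_{W^{N,2}} \leq \big\|\hat{z}\big\|_{\hat{\tau}^N L^2_{d\hat{\tau}}\cap W^{N,2}}.
\]
The support condition on $z$ plays no role here.

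For the second inequality, I would first translate the moment conditions $\int \tau^l z\, d\tau = 0$ for $l = 0, \ldots, N-1$ into the vanishing of $\partial_{\hat{\tau}}^l\hat{z}(0) = 0$ for the same range of $l$, by differentiating $\hat{z}(\hat{\tau}) = \int e^{-i\hat{\tau}\tau}z(\tau)\,d\tau$ under the integral. I then split the $\hat{\tau}$-space into $|\hat{\tau}|\geq 1$ and $|\hat{\tau}|\leq 1$.

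On the high-frequency regime $|\hat{\tau}|\geq 1$, the factor $\hat{\tau}^{-N}$ is uniformly bounded and $\log^2\hat{\tau}$ is dominated by $\langle\hat{\tau}\rangle^\epsilon$ for any $\epsilon > 0$. Plancherel applied to $\tau^k z$ gives $\|\partial^k \hat{z}\|_{L^2} = \|\tau^k z\|_{L^2}$ for $0\leq k \leq N$, and the logarithmic weight on the Fourier side can be handled by a Littlewood--Paley decomposition: on the dyadic piece $|\hat{\tau}|\sim 2^j$ with $j\geq 0$, the weight is bounded by $Cj^2$ and grows subpolynomially, so it is absorbed via a small $\epsilon$-loss into the integrability of $\tau^{N}z$ (using that $\log^2\tau$ is bounded below by a positive constant on the support of $z$, which lies in $[\tau_*/2,\infty)$).

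On the low-frequency regime $|\hat{\tau}|\leq 1$, the vanishing conditions allow the Taylor remainder expression
\[
\hat{z}(\hat{\tau}) = \frac{1}{(N-1)!}\int_0^{\hat{\tau}}(\hat{\tau}-s)^{N-1}\partial_{\hat{\tau}}^N\hat{z}(s)\,ds,
\]
and Cauchy--Schwarz yields the pointwise estimate
\[
\big|\hat{\tau}^{-N}\hat{z}(\hat{\tau})\big|\lesssim |\hat{\tau}|^{-1/2}\cdot \big\|\partial_{\hat{\tau}}^N\hat{z}\big\|_{L^2(|s|\leq|\hat{\tau}|)}.
\]
Since $\int_{|\hat{\tau}|\leq 1}\log^4(\hat{\tau})\cdot|\hat{\tau}|^{-1}\,d\hat{\tau}$ is finite, integrating the square of the above against $\log^4\hat{\tau}$ and applying Plancherel to identify $\|\partial^N\hat{z}\|_{L^2} = \|\tau^N z\|_{L^2}\leq C\|\log^2\tau\cdot \tau^N z\|_{L^2}$ gives the desired bound for the $\hat{\tau}^NL^2$ part. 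The $W^{N,2}$ part is handled analogously: $\partial_{\hat{\tau}}^k\hat{z}$ is the Fourier transform of $(-i\tau)^kz$, and the logarithmic weight on the Fourier side is controlled as in the high-frequency argument.

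The main anticipated obstacle is the careful management of the Fourier multiplier $\log^2\hat{\tau}$, which is not of Mikhlin--H\"ormander type. I expect this is circumvented by the Littlewood--Paley/$\epsilon$-loss argument outlined above, exploiting the fact that the tight $\log^2\tau$ weight in the physical-side norm provides just enough room to absorb the corresponding slowly-varying weight on the Fourier side. A secondary technical point is that the low-frequency analysis relies on the cleanness of the Taylor remainder, which in turn depends on the absolute integrability $\tau^{N-1}z\in L^1$; this is a consequence of the hypothesis that $z$ is supported on $[\tau_*/2,\infty)$ combined with $\log^2\tau\cdot\tau^Nz\in L^2$, via Cauchy--Schwarz against $\log^{-2}\tau\cdot\tau^{-1}\in L^2([\tau_*/2,\infty))$.
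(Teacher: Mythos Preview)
Your first inequality is fine and matches the paper. The problem is in the second inequality, specifically your low-frequency argument.

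You claim that $\int_{|\hat{\tau}|\leq 1}\log^4(\hat{\tau})\cdot|\hat{\tau}|^{-1}\,d\hat{\tau}$ is finite. It is not: the substitution $u=-\log\hat{\tau}$ turns it into $\int_0^\infty u^4\,du = \infty$. Even if you try to salvage this by keeping the local norm $\|\partial^N\hat{z}\|_{L^2(|s|\leq |\hat{\tau}|)}$ in the estimate and applying Fubini, you get
\[
\int_0^1 \frac{\log^4\hat{\tau}}{\hat{\tau}}\Big(\int_0^{\hat{\tau}}|\partial^N\hat{z}(s)|^2\,ds\Big)\,d\hat{\tau}
= \frac{1}{5}\int_0^1 |\partial^N\hat{z}(s)|^2\,|\log s|^{5}\,ds,
\]
which carries one more power of the logarithm than you can afford. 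Since $\partial^N\hat{z}$ is the Fourier transform of $\tau^N z$, and you only control $\log^2\tau\cdot \tau^N z$ in $L^2$, there is no mechanism in your scheme to recover $|\log s|^5$ on the Fourier side. The Taylor remainder plus Cauchy--Schwarz is simply too crude here; it converts the weight $\hat{\tau}^{-N}$ into $|\hat{\tau}|^{-1/2}$, and the resulting Hardy-type integral eats one logarithm.

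The paper avoids this by not splitting at $|\hat{\tau}|=1$ but rather at $\tau\hat{\tau}\sim 1$, i.e.\ writing
\[
\hat{z}(\hat{\tau}) = \int \chi_{\tau\hat{\tau}\lesssim 1}\,e^{-i\tau\hat{\tau}}z(\tau)\,d\tau + \int \chi_{\tau\hat{\tau}\gtrsim 1}\,e^{-i\tau\hat{\tau}}z(\tau)\,d\tau,
\]
and in the first piece subtracting the Taylor polynomial of $e^{-i\tau\hat{\tau}}$ (not of $\hat{z}$) to degree $N-1$, which the moment conditions allow. The point of this hyperbolic cutoff is that on the set $\tau\hat{\tau}\sim 1$ one has $\log^2\hat{\tau}\sim\log^2\tau$, so the logarithmic weight can be \emph{transferred} from the Fourier side to the physical side, and the remaining sums over dyadic shells $\tau\hat{\tau}\sim 2^j$ are handled by orthogonality. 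Your decomposition at $|\hat{\tau}|=1$ decouples the two scales and gives you no way to make that transfer.

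Your high-frequency argument is also not fully justified: saying that $\log^2\hat{\tau}\lesssim |\hat{\tau}|^\epsilon$ for $|\hat{\tau}|\geq 1$ and then ``absorbing via an $\epsilon$-loss'' would require control of $\||\hat{\tau}|^\epsilon\partial^k\hat{z}\|_{L^2}$, i.e.\ a fractional derivative of $\tau^k z$ on the physical side, which the weighted norm $\|\log^2\tau\cdot\tau^N z\|_{L^2}$ does not obviously provide. But the decisive failure is the low-frequency divergence above.
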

	\begin{proof} The first part of the lemma follows from Plancherel's theorem. 
		\\
		
		Now consider the second inequality of the lemma. Write 
		\begin{align*}
			\hat{z}(\hat{\tau}) &= \int_{-\infty}^{\infty}\chi_{\tau\hat{\tau}\lesssim 1}e^{-i\tau\hat{\tau}} z(\tau)\,d\tau\\
			& + \int_{-\infty}^{\infty}\chi_{\tau\hat{\tau}\gtrsim 1}e^{-i\tau\hat{\tau}} z(\tau)\,d\tau\\
		\end{align*}
		Write the first term on the right as 
		\begin{align*}
			& \int_{-\infty}^{\infty}\chi_{\tau\hat{\tau}\lesssim 1}e^{-i\tau\hat{\tau}} z(\tau)\,d\tau\\
			& =  \int_{-\infty}^{\infty}\chi_{\tau\hat{\tau}\lesssim 1}[e^{-i\tau\hat{\tau}} - \sum_{l=0}^{N-1}\frac{(-i\tau\hat{\tau})^l}{l!}] z(\tau)\,d\tau\\
			& -  \int_{-\infty}^{\infty}\chi_{\tau\hat{\tau}\gtrsim1}\sum_{l=0}^{N-1}\frac{(-i\tau\hat{\tau})^l}{l!}z(\tau)\,d\tau.\\
		\end{align*}
		Using that 
		\begin{align*}
			\chi_{\tau\hat{\tau}\lesssim 1}[e^{-i\tau\hat{\tau}} - \sum_{l=0}^{N-1}\frac{(-i\tau\hat{\tau})^l}{l!}] = O\big((\tau\hat{\tau})^N\big), 
		\end{align*}
		we can write 
		\begin{align*}
			\int_{-\infty}^{\infty}\chi_{\tau\hat{\tau}\lesssim 1}[e^{-i\tau\hat{\tau}} - \sum_{l=0}^{N-1}\frac{(-i\tau\hat{\tau})^l}{l!}] z(\tau)\,d\tau = \log^{-2}\hat{\tau}\cdot \hat{\tau}^N\cdot g(\hat{\tau})
		\end{align*}
		where 
		\[
		\big\|g\big\|_{L^2_{d\hat{\tau}}}\lesssim \big\|z\big\|_{\log^{-2}(\tau)\cdot \tau^{-N}L^2_{d\tau}}. 
		\]
		Furthermore, by simple differentiation we infer for $j\leq N$
		\begin{align*}
			&\big\| \log^2\hat{\tau}\cdot \partial_{\hat{\tau}}^j\big[\int_{-\infty}^{\infty}\chi_{\tau\hat{\tau}\lesssim 1}[e^{-i\tau\hat{\tau}} - \sum_{l=0}^{N-1}\frac{(-i\tau\hat{\tau})^l}{l!}] z(\tau)\,d\tau\big]\big\|_{L^2_{d\hat{\tau}}}\\
			&\leq \big\| \partial_{\hat{\tau}}^j\big[\int_{-\infty}^{\infty}\chi_{\tau\hat{\tau}\lesssim 1} e^{-i\tau\hat{\tau}}  z(\tau)\,d\tau\big]\big\|_{ \log^{-2}\hat{\tau}\cdot L^2_{d\hat{\tau}}} + \big\|\partial_{\hat{\tau}}^j\big[\int_{-\infty}^{\infty}\chi_{\tau\hat{\tau}\lesssim 1}\sum_{l=0}^{N-1}\frac{(-i\tau\hat{\tau})^l}{l!} z(\tau)\,d\tau\big]\big\|_{ \log^{-2}\hat{\tau}\cdot L^2_{d\hat{\tau}}}\\
			&\lesssim \big\|z\big\|_{\log^{-2}(\tau)\cdot\tau^{-N}L^2_{d\tau}}. 
		\end{align*}
		Next, we have 
		\begin{align*}
			&\Big\|\log^2(\hat{\tau})\cdot\hat{\tau}^{-N}\int_{-\infty}^{\infty}\chi_{\tau\hat{\tau}\gtrsim1}\sum_{l=0}^{N-1}\frac{(-i\tau\hat{\tau})^l}{l!}z(\tau)\,d\tau\Big\|_{L^2_{d\hat{\tau}}}\\
			&\lesssim \sum_{l=0}^{N-1}\Big\|\log^2(\hat{\tau})\cdot\int_{-\infty}^{\infty}\chi_{\tau\hat{\tau}\gtrsim1}\frac{\tau^l \hat{\tau}^{l-N}}{l!}z(\tau)\,d\tau\Big\|_{L^2_{d\hat{\tau}}},\\
		\end{align*}
		and a simple orthogonality argument allows us to bound the last term by 
		\[
		\lesssim \big\|z\big\|_{\log^{-2}(\tau)\cdot\tau^{-N}L^2_{d\tau}}. 
		\]
		The norm 
		\begin{align*}
			\Big\|\log^2(\hat{\tau})\cdot\int_{-\infty}^{\infty}\chi_{\tau\hat{\tau}\gtrsim1}\sum_{l=0}^{N-1}\frac{(-i\tau\hat{\tau})^l}{l!}z(\tau)\,d\tau\Big\|_{W^{N,2}}
		\end{align*}
		is bounded analogously. 
		\\
		Finally, we have 
		\begin{align*}
			&\big\|\log^2\hat{\tau}\cdot\hat{\tau}^{-N}\int_{-\infty}^{\infty}\chi_{\tau\hat{\tau}\gtrsim 1}e^{-i\tau\hat{\tau}} z(\tau)\,d\tau\big\|_{L^2_{d\hat{\tau}}}\\
			&=\big\|\int_{-\infty}^{\infty}\frac{\log^2\hat{\tau}}{\log^2\tau}\cdot\frac{\chi_{\tau\hat{\tau}\gtrsim 1}}{(\tau\hat{\tau})^N}e^{-i\tau\hat{\tau}} \log^2\tau\cdot\tau^N z(\tau)\,d\tau\big\|_{L^2_{d\hat{\tau}}}\\
			&\lesssim \sum_{j\geq 0}\big\|\int_{-\infty}^{\infty}\frac{\log^2\hat{\tau}}{\log^2\tau}\cdot\frac{\chi_{\tau\hat{\tau}\sim 2^j}}{(\tau\hat{\tau})^N}e^{-i\tau\hat{\tau}}  \log^2\tau\cdot\tau^N z(\tau)\,d\tau\big\|_{L^2_{d\hat{\tau}}},\\
		\end{align*}
		and by orthogonality we have 
		\begin{align*}
			&\big\|\int_{-\infty}^{\infty}\frac{\log^2\hat{\tau}}{\log^2\tau}\cdot\frac{\chi_{\tau\hat{\tau}\sim 2^j}}{(\tau\hat{\tau})^N}e^{-i\tau\hat{\tau}}  \log^2\tau\cdot\tau^N z(\tau)\,d\tau\big\|_{L^2_{d\hat{\tau}}(0,1)}\\
			&\lesssim \Big(\sum_{k\geq 0}\big\|\int_{-\infty}^{\infty}\frac{\log^2\hat{\tau}}{\log^2\tau}\cdot\frac{\chi_{\tau\hat{\tau}\sim 2^j}}{(\tau\hat{\tau})^N}e^{-i\tau\hat{\tau}}  \log^2\tau\cdot\tau^N z(\tau)\,d\tau\big\|_{L^2_{d\hat{\tau}}(\hat{\tau}\sim 2^{-k})}^2\Big)^{\frac12}\\
			&\lesssim \Big(\sum_{k\geq 0} 2^{-(2N-1)j}\cdot\big\| \log^2\tau\cdot\tau^N z(\tau)\big\|_{L^2_{d\tau}(\tau\sim 2^{j+k})}^2\Big)^{\frac12}\\
			&\lesssim 2^{-(N-\frac12)j}\cdot \big\| \log^2\tau\cdot\tau^N z(\tau)\big\|_{L^2_{d\tau}}, 
		\end{align*}
		which can be summed over $j\geq 0$. 
		\\
		Control over the logarithmically weighted $W^{N, 2}$-norm follows in similar fashion. 
	\end{proof}
	
	We shall now reduce the solution of \eqref{eq:zfrelation} to the solution of a fixed point problem by means of the following lemma. To formulate it, introduce 
	\begin{align*}
		&  U_1z(\tau): = \int_{\tau}^\infty\int_0^\infty \chi_{\sigma - \tau\lesssim \tau}\cdot \xi^2 S_1(\tau, \sigma;\xi)\cdot z(\sigma)\rho_1(\xi)\,d\xi d\sigma\\
		& U_2z(\tau): =  \int_{\tau}^\infty\int_0^\infty \chi_{\sigma - \tau\gtrsim\tau}\cdot \xi^2 S_1(\tau, \sigma;\xi)\cdot z(\sigma)\rho_1(\xi)\,d\xi d\sigma\\
	\end{align*}
	
	\begin{lem}\label{lem:reductionsteps1} The second term $U_2z$ is perturbative in the sense that 
		\begin{align*}
			\big\|U_2z(\tau)\big\|_{\log^{-2}(\tau)\cdot\tau^{-N}L^2_{d\tau}}\lesssim (\log\tau_*)^{-1}\cdot \big\|\hat{z}\big\|_{\hat{\tau}^NL^2_{d\hat{\tau}}\cap W^{N,2}}.  
		\end{align*}
		As for the first term $ U_1z$, denoting 
		\[
		U_*z(\tau): = \int_{\tau}^\infty \int_0^\infty \xi^2\cos\big([\tau - \sigma]\xi^2\big)\cdot z(\sigma)\rho_1(\xi)\,d\xi d\sigma,
		\]
		we have 
		\begin{align*}
			\big\|U_1z(\tau)  - U_*z(\tau)\big\|_{\log^{-2}(\tau)\cdot\tau^{-N}L^2_{d\tau}}\lesssim (\log\tau_*)^{-1}\cdot \big\|\hat{z}\big\|_{\hat{\tau}^NL^2_{d\hat{\tau}}\cap W^{N,2}}.  
		\end{align*}
	\end{lem}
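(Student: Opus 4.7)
\smallskip

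\noindent\emph{Proof proposal.} My plan is to treat $U_2 z$ and $U_1 z - U_* z$ by different mechanisms, but in both cases the key will be extracting an extra factor of $\tfrac{\sigma-\tau}{\sigma}$ (or the logarithmic measure of the spectral integration) to upgrade the bound by two logarithms, and then closing with Schur's test in Schr\"odinger time. Recall from Proposition~\ref{prop:linpropagator} that
\[
S_1(\tau,\sigma;\xi) \;=\; \tfrac{\lambda^2(\tau)}{\lambda^2(\sigma)}\cos\!\big(\lambda^2(\tau)\xi^2\!\!\int_{\sigma}^{\tau}\!\!\lambda^{-2}(s)\,ds\big),
\]
and $\tfrac{\lambda^2(\tau)}{\lambda^2(\sigma)}\sim (\tau/\sigma)^{1+\tfrac{1}{2\nu}}$ for $\sigma\geq \tau$.

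For the tail piece $U_2 z$ where $\sigma-\tau\gtrsim \tau$, I would estimate the $\xi$-integral pointwise in $(\tau,\sigma)$ by mimicking the proof of Lemma~\ref{lem:tildeKfcontrol}: split into the region $\xi^2(\sigma-\tau)\lesssim 1$, where the bound $|\xi^2 S_1| \lesssim (\tau/\sigma)^{1+\frac{1}{2\nu}}\xi^2$ combined with the singular spectral measure $\rho_1(\xi)\sim\tfrac{1}{\xi\log^2\xi}$ yields an integrand bounded by $(\tau/\sigma)^{1+\frac{1}{2\nu}}(\sigma-\tau)^{-1}\log^{-2}(\sigma-\tau)$, and the oscillatory region $\xi^2(\sigma-\tau)\gtrsim 1$, which I would handle by fractional integration by parts in $\xi$ exactly as in Lemma~\ref{lem:tildeKfcontrol}. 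Since $\sigma-\tau\sim\sigma\gtrsim \tau_*$ on this piece, summing the resulting kernel $K(\tau,\sigma) = (\tau/\sigma)^{1+\frac{1}{2\nu}}\sigma^{-1}\log^{-2}\sigma$ in $\sigma$ gives $\log^{-2}\tau_*$ smallness; invoking Lemma~\ref{lem:hatzcompatibility} to pass from $\hat{z}\in\hat\tau^N L^2\cap W^{N,2}$ to $z\in\tau^{-N}L^2_{d\tau}$ and applying Schur's criterion to $\chi_{\sigma\geq\tau}(\tau/\sigma)^{N}K(\tau,\sigma)$ closes the first inequality.

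For the near-diagonal difference $U_1 z - U_* z$ on $0\leq \sigma-\tau\lesssim \tau$, I would decompose
\[
\xi^2 S_1(\tau,\sigma;\xi) - \xi^2\cos\!\big((\tau-\sigma)\xi^2\big)\;=\;A(\tau,\sigma;\xi)+B(\tau,\sigma;\xi),
\]
where $A = \xi^2\big[\tfrac{\lambda^2(\tau)}{\lambda^2(\sigma)}-1\big]\cos(\phi_1)$ and $B = \xi^2\big[\cos(\phi_1)-\cos((\tau-\sigma)\xi^2)\big]$, with $\phi_1=\lambda^2(\tau)\xi^2\!\int_\sigma^\tau\!\lambda^{-2}$. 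The factor $\tfrac{\lambda^2(\tau)}{\lambda^2(\sigma)}-1$ is $O\!\big((\sigma-\tau)/\tau\big)$, while Taylor-expanding $\lambda^{-2}(s)$ around $s=\tau$ gives $\phi_1-(\tau-\sigma)\xi^2 = O\!\big(\xi^2(\sigma-\tau)^2/\tau\big)$, so $|B|\lesssim \xi^2\min\{1,\xi^2(\sigma-\tau)^2/\tau\}$. For the $A$-contribution I would run the same split-and-integrate-by-parts argument as for $U_2z$, now gaining the extra weight $(\sigma-\tau)/\tau$, which after integrating against $\rho_1$ produces a kernel of size $\tau^{-1}\log^{-1}\tau$; for the $B$-contribution, I would split further according to whether $\xi^2(\sigma-\tau)\lesssim 1$ or $\gtrsim 1$, use the Taylor bound in the former and the trigonometric difference formula combined with integration by parts in $\xi$ in the latter, again gaining one power of $(\sigma-\tau)/\tau$. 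In each case, the $\log^{-2}$ weight on the left is supplied partly by $\rho_1\sim 1/(\xi\log^2\xi)$ and partly by the time decay gain, and Schur's criterion closes the estimate.

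The main obstacle I anticipate is the $B$-term in the borderline regime $\xi^2(\sigma-\tau)\sim 1$, where neither the Taylor estimate nor the oscillatory bound is sharp on its own; a careful dyadic decomposition in the quantity $\xi^2(\sigma-\tau)$ will be needed, and one has to verify that the logarithmic gain survives this decomposition. A secondary technical point is that the hypothesis on $z$ is stated in Fourier variables $(\hat\tau^N L^2\cap W^{N,2})$ rather than in $\tau^{-N}L^2_{d\tau}$ directly, but the first inequality of Lemma~\ref{lem:hatzcompatibility} converts the former to the latter, which suffices for a purely kernel-level Schur argument.
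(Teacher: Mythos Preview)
Your broad strategy is sound, and the $A/B$ decomposition for $U_1z-U_*z$ is a reasonable alternative to the paper's route, but your logarithmic bookkeeping is off by one, and for the near-diagonal piece this becomes a genuine gap. The issue is that ``mimicking Lemma~\ref{lem:tildeKfcontrol}'' means splitting the $\xi$-integral first and then integrating by parts only in the oscillatory region; this yields only $(\sigma-\tau)^{-1}\log^{-2}(\sigma-\tau)$ from the spectral integral (exactly your non-oscillatory computation). Multiplying by the $(\sigma-\tau)/\tau$ you gain from $A$ gives a kernel $\tau^{-1}\log^{-2}(\sigma-\tau)$ on $[\tau,2\tau]$, whose weighted Schur $L^1_\sigma$-bound is
\[
\log^2\tau\cdot\tau^{-1}\int_0^{\tau}\langle\log u\rangle^{-2}\,du \;\sim\;\log^2\tau\cdot\tau^{-1}\cdot\tau\log^{-2}\tau \;=\; O(1),
\]
i.e.\ no smallness at all: the $\log^2\tau$ in the target norm exactly eats the $\log^{-2}$ produced by $\rho_1$. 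The same miscount affects your $U_2z$ estimate --- your kernel $(\tau/\sigma)^{1+\frac{1}{2\nu}}\sigma^{-1}\log^{-2}\sigma$ gives Schur bound $\sim 1/N$, not the claimed $\log^{-2}\tau_*$, since the $\log^{-2}\sigma$ is again absorbed by the norm weight.

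The paper extracts the missing logarithm differently in the two pieces. For $U_2z$ it performs one \emph{global} integration by parts in $\xi$ \emph{before} splitting, converting $\xi^2\rho_1(\xi)\,d\xi$ into $\partial_\xi(\xi\rho_1)\,d\xi\sim\xi^{-1}\log^{-3}\xi\,d\xi$; the extra $\log^{-1}$ hidden in the derivative of $\xi\rho_1$ is exactly what upgrades the bound to $\tau^{-1}\log^{-3}\tau$ and hence gives $(\log\tau_*)^{-1}$ after Schur. For $U_1z-U_*z$ the paper first changes variable $\xi=\eta\cdot\zeta(\tau,\sigma)$ with $\zeta^2=1+O((\sigma-\tau)/\tau)$ to normalize the phase to $(\sigma-\tau)\eta^2$ --- this packages your $A$ and $B$ into a single density perturbation --- and then splits the $\sigma$-range: on $\sigma-\tau\lesssim\tau^{1-\epsilon}$ the factor $\zeta^2-1\lesssim\tau^{-\epsilon}$ gives a power gain (much stronger than any log), while the tail $\sigma-\tau\gtrsim\tau^{1-\epsilon}$ is handled like $U_2z$. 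Your $A/B$ route can be salvaged without the change of variables, but only by replacing the Lemma~\ref{lem:tildeKfcontrol} template with the global-IBP-first trick from the paper's $U_2z$ argument, so that the $A$-kernel becomes $\tau^{-1}\log^{-3}(\sigma-\tau)$. A smaller technical point: on $\sigma\gg\tau$ the effective phase is $\sim\xi^2\tau$, not $\xi^2(\sigma-\tau)$, so your split point there is mislocated.
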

	\begin{proof} 
		{\it{First inequality}}. We observe that 
		\begin{align*}
			&\int_0^\infty \chi_{\sigma - \tau\gtrsim\tau}\xi^2\cos\big(\lambda^2(\tau)\xi^2\int_{\sigma}^{\tau}\lambda^{-2}(s)\,ds\big)\cdot\rho_1(\xi)\,d\xi\\
			& = -\frac12 \int_0^\infty \chi_{\sigma - \tau\gtrsim\tau}\frac{\sin\big(\lambda^2(\tau)\xi^2\int_{\sigma}^{\tau}\lambda^{-2}(s)\,ds\big)}{\lambda^2(\tau)\int_{\sigma}^{\tau}\lambda^{-2}(s)\,ds}\cdot\partial_{\xi}\big(\xi\rho_1(\xi)\big)\,d\xi\\
		\end{align*}
		The last expression can be bounded by 
		\begin{align*}
			&\Big|\frac12 \int_0^\infty \chi_{\sigma - \tau\gtrsim\tau}\frac{\sin\big(\lambda^2(\tau)\xi^2\int_{\sigma}^{\tau}\lambda^{-2}(s)\,ds\big)}{\lambda^2(\tau)\int_{\sigma}^{\tau}\lambda^{-2}(s)\,ds}\cdot\partial_{\xi}\big(\xi\rho_1(\xi)\big)\,d\xi\Big|\\
			&\lesssim \tau^{-1}\log^{-3}\tau. 
		\end{align*}
		If we then apply Lemma~\ref{lem:hatzcompatibility} as well as Schur's test, the first inequality of the lemma easily follows. 
		\\
		
		For the second inequality, write for $0\leq \sigma-\tau<\tau$ 
		\[
		\lambda^2(\tau)\cdot\int_{\tau}^{\sigma}\lambda^{-2}(s)\,ds = (\sigma - \tau)\cdot \big(1+s(\tau, \sigma)\cdot \frac{\sigma - \tau}{\tau}\big)^{-1},
		\]
		where the function $s(\tau, \sigma)$ is uniformly bounded and has symbol type behavior with respect to its arguments. Taking advantage of a simple change of variables, we now infer 
		\begin{align*}
			U_1z(\tau) = \int_{\tau}^\infty\int_0^\infty z(\sigma)\chi_{\sigma - \tau\lesssim \tau}\eta\cos\big([\sigma - \tau]\eta^2\big)\cdot \zeta^2(\tau, \sigma)\cdot \tilde{\rho}\big(\eta\cdot \zeta(\tau, \sigma)\big)\,d\eta,
		\end{align*}
		where we set $\tilde{\rho}(\xi): = \xi\cdot\rho(\xi)$, $\zeta(\tau, \sigma): =  \sqrt{\big(1+s(\tau, \sigma)\cdot \frac{\sigma - \tau}{\tau}\big)}$. Then note that 
		\begin{align*}
			\tilde{\rho}\big(\eta\cdot \zeta(\tau, \sigma)\big) - \tilde{\rho}(\eta) = \frac{\frac{\sigma - \tau}{\tau}}{\nu(\eta, \tau, \sigma)}, 
		\end{align*}
		where we have the symbol type bounds 
		\begin{align*}
			\big|\partial_{\eta}^{j+1}\big(\frac{1}{\nu(\eta, \tau, \sigma)}\big)\big|\lesssim \frac{1}{\eta^j\cdot\langle\log\rangle^{4+j}\eta},\,0<\eta\lesssim 1,\,j\geq 0. 
		\end{align*}
		Consider then then difference 
		\begin{align*}
			&U_1z(\tau) -  U_*z(\tau)\\& = \int_{\tau}^\infty\int_0^\infty z(\sigma)\chi_{\sigma - \tau\lesssim \tau}\eta\cos\big([\sigma - \tau]\eta^2\big)\cdot [\zeta^2(\tau, \sigma)-1]\cdot \tilde{\rho}_1\big(\eta,\tau, \sigma\big)\,d\eta\\
			& - \int_{\tau}^\infty\int_0^\infty z(\sigma) \chi_{\sigma - \tau\gtrsim \tau}\eta\cos\big([\sigma - \tau]\eta^2\big)\cdot \tilde{\rho}(\eta)\,d\eta\\
		\end{align*}
		where the function $\tilde{\rho}_1$ satisfies the same symbol bounds as does $\tilde{\rho}$ with respect to $\eta$. To estimate the first term on the right, we note that using integration by parts with respect to $\eta$ as in the proof of the first inequality of the lemma and exploiting the bound 
		\[
		\big|\chi_{\sigma - \tau\lesssim \tau^{1-\epsilon}}\cdot [\zeta^2(\tau, \sigma)-1]\big|\lesssim \tau^{-\epsilon}, \epsilon>0, 
		\]
		we infer the bound 
		\begin{align*}
			&\Big\|\int_{\tau}^\infty\int_0^\infty z(\sigma)\chi_{\sigma - \tau\lesssim \tau^{1-\epsilon}}\eta\cos\big([\sigma - \tau]\eta^2\big)\cdot [\zeta^2(\tau, \sigma)-1]\cdot \tilde{\rho}_1\big(\eta,\tau, \sigma\big)\,d\eta\Big\|_{\tau^{-N-\epsilon+}L^2_{d\tau}}\\
			&\lesssim \big\|z\big\|_{\tau^{-N}L^2_{d\tau}}, 
		\end{align*}
		which in light of Lemma~\ref{lem:hatzcompatibility}  is even better than what we need. As for the term with the cutoff $\chi_{\sigma - \tau\gtrsim\tau^{1-\epsilon}}$
		as well as  the term 
		\[
		\int_{\tau}^\infty\int_0^\infty z(\sigma) \chi_{\sigma - \tau\gtrsim \tau}\eta\cos\big([\sigma - \tau]\eta^2\big)\cdot \tilde{\rho}(\eta)\,d\eta,
		\]
		they are handled analogously to $U_2z(\tau)$. 
	\end{proof}
	
	We now write \eqref{eq:zfrelation} in the form 
	\begin{equation}\label{eq:Ustarfixedpoint}
		U_*z(\tau) = \Pi\big(U_* - U_1\big)z(\tau) -\Pi U_2 z(\tau) + \Pi(f)(\tau),
	\end{equation}
	valid as long as $\tau\in [\tau_*, \infty)$.
	Assuming $f(\tau)\in \log^{-2}(\tau)\cdot \tau^{-N}L^2_{d\tau}$, and taking advantage of Lemma~\ref{lem:reductionsteps1} as well as Lemma~\ref{lem:hatzcompatibility}, we find 
	\begin{align*}
		&\big\|\mathcal{F}_{\tau}\big(\Pi\big(U_* - U_1\big)z\big)\big\|_{\log^{-2}\hat{\tau}\cdot(\hat{\tau}^NL^2_{d\hat{\tau}}\cap W^{N,2})}\\&\hspace{2cm}+ \big\|\mathcal{F}_{\tau}\big(\Pi (U_2 z)\big)\big\|_{\log^{-2}\hat{\tau}\cdot(\hat{\tau}^NL^2_{d\hat{\tau}}\cap W^{N,2})}\ll_{\tau_*}\big\|z\big\|_{\tau^{-N}L^2_{d\tau}}. 
	\end{align*}
	If we then define the inverse operator $U_*^{-1}$ by division by the multiplier in Lemma~\ref{lem:Fouriertransform1} on the Fourier side, we get 
	\begin{align*}
		z = U_*^{-1}\circ  \Pi\big(U_* - U_1\big)z(\tau) -U_*^{-1}\circ \Pi U_2 z(\tau) + U_*^{-1}\circ\Pi(f)(\tau),
	\end{align*}
	where we have the bound 
	\[
	\big\|U_*^{-1}\circ  \Pi\big(U_* - U_1\big)z(\tau)\big\|_{\tau^{-N}L^2_{d\tau}}\ll_{\tau_*}\big\|z\big\|_{\tau^{-N}L^2_{d\tau}},
	\]
	and similarly for the second term on the right, while we also have the estimate 
	\begin{align*}
		\big\|U_*^{-1}\circ\Pi(f)(\tau)\big\|_{\tau^{-N}L^2_{d\tau}}\lesssim \big\|f\big\|_{\log^{-2}(\tau)\cdot \tau^{-N}L^2_{d\tau}}.
	\end{align*}
	
	The proof of Proposition~\ref{prop: solnforylambdaW} is now completed in the case $f(\tau)\in \log^{-2}(\tau)\cdot \tau^{-N}$ by using a fixed point argument in $\tau^{-N}L^2_{d\tau}$ for $z$. The case $f\in \tau^{-N}L^2_{d\tau}$ is analogous.

	\subsection{The proof of Proposition~\ref{prop:Phitildelambdasolution}} The key point will be to reduce the implicitly defined quantity $\tilde{y}_{\tilde{\lambda}}^{\text{mod}}$ (recall \eqref{eq:ytildemodmodified}) to a principal term, contributed by the first two terms in \eqref{eq:tildeE2mod}, as well as a number of perturbative terms. Moreover, a key step shall consist in simplifying the propagator $U(\tilde{\tau}, \tilde{\sigma},\xi)$ as defined in \eqref{eq:wavepropagator} to the standard wave propagator which is amenable to simple Fourier techniques. The following lemmas provide the key reduction steps:
	
	\begin{lem}\label{lem:reductionsteps2} Let 
		\[
		U_*(\tilde{\tau},\tilde{\sigma}, \xi): = \frac{\sin\big([\tilde{\sigma} - \tilde{\tau}]\xi\big)}{\xi}
		\]
		the inhomogeneous free wave propagator on the (free) Fourier side,  while we recall \eqref{eq:wavepropagator} for the dynamically dilated wave propagator. Then we have the estimate 
		\begin{align*}
			&\Big\|\langle\int_{\tilde{\tau}}^\infty \int_0^\infty \lambda^{-2}(\tilde{\tau})\big[U(\tilde{\tau}, \tilde{\sigma}, \xi) - U_*(\tilde{\tau},\tilde{\sigma}, \xi)\big]\cdot \phi_{\R^4}(R;\xi)\\&\hspace{2cm}\cdot \phi(\tilde{\sigma})\mathcal{F}_{\R^4}\big(\lambda^2(\tilde{\sigma})\Lambda W\cdot W\big)(\frac{\lambda(\tilde{\tau})}{\lambda(\tilde{\sigma})}\xi)\rho_{\R^4}(\xi)\,d\xi d\tilde{\sigma},\,W^2\rangle_{L^2_{R^3\,dR}}\Big\|_{\tau^{-N-\frac12+\frac{1}{2\nu}}L^2_{d\tau}}\\
			&\lesssim \big\|\phi\big\|_{\tau^{-N}L^2_{d\tau}}. 
		\end{align*}
		Furthermore, we can write 
		\begin{align*}
			&\langle\int_{\tilde{\tau}}^\infty \int_0^\infty \lambda^{-2}(\tilde{\tau})\big[U(\tilde{\tau}, \tilde{\sigma}, \xi) - U_*(\tilde{\tau},\tilde{\sigma}, \xi)\big]\cdot \phi_{\R^4}(R;\xi)\\&\hspace{2cm}\cdot \phi(\tilde{\sigma})\mathcal{F}_{\R^4}\big(\lambda^2(\tilde{\sigma})\Lambda W\cdot W\big)(\frac{\lambda(\tilde{\tau})}{\lambda(\tilde{\sigma})}\xi)\rho_{\R^4}(\xi)\,d\xi d\tilde{\sigma},\,W^2\rangle_{L^2_{R^3\,dR}} = F_1 + \partial_{\tau}F_2, 
		\end{align*}
		where we have the bounds 
		\begin{align*}
			&\Big\|\langle\partial_{\tilde{\tau}}^2\rangle F_1\Big\|_{\tau^{-N-\frac12+O(\frac{1}{\nu})}L^2_{d\tau}}\lesssim \big\|\langle\partial_{\tilde{\sigma}}^2\rangle^{-1}\phi\big\|_{\sigma^{-N}L^2_{d\sigma}}\\
			&\big\|F_2\big\|_{\tau^{-N-\frac{1}{\nu}}L^2_{d\tau}}\lesssim \big\|\langle\partial_{\tilde{\sigma}}^2\rangle^{-1}\phi\big\|_{\sigma^{-N}L^2_{d\sigma}}\\
		\end{align*}
		
	\end{lem}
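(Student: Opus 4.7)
\textbf{Proof proposal for Lemma~\ref{lem:reductionsteps2}.} The strategy parallels that of Lemma~\ref{lem:reductionsteps1}, but adapted to the wave propagator. The first step is to split the $\tilde\sigma$ integration into a near region $\tilde\sigma - \tilde\tau \lesssim \tilde\tau^{1-\epsilon}$ and a far region $\tilde\sigma - \tilde\tau \gtrsim \tilde\tau^{1-\epsilon}$, for some small $\epsilon>0$. In the near region, I use the Taylor-type expansion
\[
\lambda(\tilde\tau)\int_{\tilde\tau}^{\tilde\sigma}\lambda^{-1}(s)\,ds = (\tilde\sigma - \tilde\tau)\cdot\bigl(1 + s(\tilde\tau,\tilde\sigma)\cdot\tfrac{\tilde\sigma-\tilde\tau}{\tilde\tau}\bigr)^{-1},
\]
with $s(\tilde\tau,\tilde\sigma)$ bounded and symbolic. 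Together with the analogous expansions for $\lambda^3(\tilde\tau)/\lambda^3(\tilde\sigma)$ and $\lambda(\tilde\tau)/\lambda(\tilde\sigma)$, the difference $U - U_*$ acquires an overall factor of order $\frac{\tilde\sigma-\tilde\tau}{\tilde\tau}$ (up to corrections of higher order in $(\tilde\sigma-\tilde\tau)/\tilde\tau$), and similarly the mismatch between $\mathcal{F}_{\R^4}(\lambda^2\Lambda W\cdot W)(\tfrac{\lambda(\tilde\tau)}{\lambda(\tilde\sigma)}\xi)$ and $\mathcal{F}_{\R^4}(\lambda^2\Lambda W\cdot W)(\xi)$ is $O(\tfrac{\tilde\sigma-\tilde\tau}{\tilde\tau})$ thanks to the smoothness and decay of the Fourier coefficient (here the pairing against $W^2$, whose $\R^4$-Fourier transform is in the Schwartz class by Lemma~\ref{lem:FourierNV1}, is essential to force integrability in $\xi$ uniformly).

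In the far region $\tilde\sigma-\tilde\tau\gtrsim\tilde\tau^{1-\epsilon}$, I integrate by parts once in $\xi$, using the oscillatory phase $\sin\bigl(\lambda(\tilde\tau)\xi\int_{\tilde\tau}^{\tilde\sigma}\lambda^{-1}(s)\,ds\bigr)$ and its counterpart for $U_*$; the resulting factor $(\tilde\sigma-\tilde\tau)^{-1}$ provides time integrability after pairing against $W^2$ (whose spatial localization ensures that the $\xi$-derivative lands on a smooth, rapidly decreasing function). The outer temporal integral then produces the gain $\tilde\tau^{-(1-\epsilon)}\sim \tau^{-\frac12+\frac{1}{4\nu}+}$, which, combined with the factor $\tfrac{\tilde\sigma-\tilde\tau}{\tilde\tau}$ obtained in the near region, yields the first estimate by Schur's criterion (after keeping in mind that $\tilde\tau\sim\tau^{\frac12-\frac{1}{4\nu}}$, so the loss $\tau^{\frac{1}{2\nu}}$ in the exponent of the claimed bound accounts for the conversion).

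For the decomposition $F_1 + \partial_\tau F_2$, I exploit an identity analogous to
\[
\xi\sin\bigl([\tilde\sigma-\tilde\tau]\xi\bigr) = -\partial_{\tilde\tau}\cos\bigl([\tilde\sigma-\tilde\tau]\xi\bigr),
\]
and its nonlinear counterpart built out of the $U$-propagator. Integrating by parts in $\tilde\tau$ (using $\partial_\tau = \tfrac{\partial\tilde\tau}{\partial\tau}\partial_{\tilde\tau}$) transfers one derivative outside the $\tilde\tau$-integral, producing $\partial_\tau F_2$, while the residual term is $F_1$. The $\langle\partial_{\tilde\sigma}^2\rangle^{-1}$ regularization of $\phi$ on the right-hand side absorbs the cost of this manipulation: by wave-time frequency analysis, $\langle\partial_{\tilde\sigma}^2\rangle^{-1}\phi$ controls exactly the expressions obtained after this integration by parts. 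The improved temporal decay in the bound on $F_1$ comes by reapplying the near/far decomposition to the boundary-free term and using the gain $\tfrac{\tilde\sigma-\tilde\tau}{\tilde\tau}$ together with the smoothing effect of $\partial_{\tilde\sigma}^{-2}$.

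The main technical obstacle will be controlling the low-frequency regime $\xi \ll 1$ in the far region, where the denominator of $U_*$ is singular and where $\mathcal{F}_{\R^4}(\Lambda W\cdot W)$ vanishes only to finite order (Lemma~\ref{lem:FourierNV2}). Here I expect to need an additional integration by parts in $\xi$ restricted to $\xi \lesssim (\tilde\sigma-\tilde\tau)^{-1}$, exploiting the fact that the $W^2$-pairing localizes the $R$-integral, so that after moving to physical variables the relevant integrals become standard oscillatory integrals on $\R^4$ with smooth, rapidly decaying amplitudes, allowing the needed cancellation between $U$ and $U_*$ to be made quantitative.
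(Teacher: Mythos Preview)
Your near/far splitting in $\tilde\sigma$, modeled on Lemma~\ref{lem:reductionsteps1}, has a genuine gap in the near region. You correctly observe that $U-U_*$ carries a factor $\frac{\tilde\sigma-\tilde\tau}{\tilde\tau}$, and that the pairing with $W^2$ together with the Schwartz decay of $\mathcal{F}_{\R^4}(\Lambda W\cdot W)$ makes the $\xi$- and $R$-integrals bounded. But that leaves you with a Schur kernel of size $\sim\frac{\tilde\sigma-\tilde\tau}{\tilde\tau}$, which \emph{grows} in $\tilde\sigma-\tilde\tau$; integrating over $\tilde\sigma-\tilde\tau\lesssim\tilde\tau^{1-\epsilon}$ picks up $\tilde\tau^{1-2\epsilon}$ rather than a gain, and Schur's test fails outright. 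In the far region one integration by parts in $\xi$ gives only $(\tilde\sigma-\tilde\tau)^{-1}$, which is marginally non-integrable and in any case no longer sees the difference structure. What is missing is a mechanism that produces \emph{decay} in $\tilde\sigma-\tilde\tau$ uniformly, not just smallness relative to $\tilde\tau$.

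The paper's proof supplies exactly this, but via a different decomposition: it does not split in $\tilde\sigma$ at all. Instead it splits the $R$-integral into the three regions $R\sim\tilde\sigma-\tilde\tau$, $R\ll\tilde\sigma-\tilde\tau$, $R\gg\tilde\sigma-\tilde\tau$, and in each region performs two integrations by parts (in $R$ in the first and third regions, in $\xi$ in the second, using the non-resonance of the combined phase $e^{\pm iR\xi}\sin((\tilde\sigma-\tilde\tau)\xi)$) to extract a factor $(\tilde\sigma-\tilde\tau)^{-2}$. The $W^2(R)\sim R^{-4}$ weight and $\phi_{\R^4}\sim R^{-3/2}\xi^{-3/2}$ asymptotics are what make this integration by parts productive. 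Combined with the $\frac{\tilde\sigma-\tilde\tau}{\tilde\tau}$ from $U-U_*$, the Schur kernel becomes $\sim\tilde\tau^{-1}(\tilde\sigma-\tilde\tau)^{-1}$, which does integrate and delivers the $\tilde\tau^{-1}$ gain.

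For the decomposition $F_1+\partial_\tau F_2$, your $\partial_{\tilde\tau}$-integration-by-parts idea is not what the paper does and is underspecified (the integrand has no spare factor of $\xi$ to invoke $\xi\sin=-\partial_{\tilde\tau}\cos$). The paper instead splits $\phi=Q^{(\tilde\sigma)}_{<\sigma^{10/\nu}}\phi+Q^{(\tilde\sigma)}_{\geq\sigma^{10/\nu}}\phi$: the low-frequency piece is handled by the first part of the lemma directly, and for the high-frequency piece one integrates by parts twice in $\tilde\sigma$, turning $\phi$ into $\partial_{\tilde\sigma}^{-2}Q^{(\tilde\sigma)}_{\geq\sigma^{10/\nu}}\phi$; the boundary terms from this give $\partial_\tau F_2$ and the interior terms give $F_1$.
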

	\begin{proof}(sketch) For the first part of the lemma, we observe to begin with that 
		\begin{align*}
			\Big|U(\tilde{\tau}, \tilde{\sigma}, \xi) - U_*(\tilde{\tau},\tilde{\sigma}, \xi)\Big|\lesssim\frac{\langle\xi\rangle}{\xi}\cdot \frac{\tilde{\sigma} - \tilde{\tau}}{\tilde{\tau}}.
		\end{align*}
		Note that the Fourier coefficient $\mathcal{F}_{\R^4}\big(\lambda^2(\tilde{\sigma})\Lambda W\cdot W\big)(\xi)$ decays rapidly with respect to $\xi\gg 1$, so in effect we may restrict to frequencies $\xi\lesssim 1$. 
		Then we split the $R$ integral in the expression of the lemma into three regions:
		\\
		
		{\it{$R\sim \tilde{\sigma} - \tilde{\tau}$.}} Recalling the asymptotic structure(see subsection~\ref{subse:waveparametrix}
		) of $\phi_{\R^4}(R;\xi)$, we arrive at the schematically written $R$- integral 
		\begin{align*}
			\int_0^\infty \chi_{R\sim \tilde{\sigma} - \tilde{\tau}}\cdot \frac{e^{\pm iR\xi}}{R^{\frac32}\xi^{\frac32}}\cdot R^{-4}\cdot R^3\,dR. 
		\end{align*}
		The spectral measure $\rho_{\R^4}(\xi)\sim \xi^3$. We have to 'spend' one factor $R^{-1}$ to force integrability in $\tilde{\sigma}$, and we shall gain additional factors $R^{-1}\sim (\tilde{\sigma} - \tilde{\tau})^{-1}$ by means of integration by parts (which 'costs' factors $\xi^{-1}$). Precisely, we integrate by parts twice with respect to $R$, which 'costs' $\xi^{-2}$, and leads to
		\begin{align*}
			\Big|\int_0^\infty \chi_{R\sim \tilde{\sigma} - \tilde{\tau}}\cdot \frac{e^{\pm iR\xi}}{R^{\frac32}\xi^{\frac32}}\cdot R^{-4}\cdot R^3\,dR\Big|\lesssim \xi^{-2}\cdot(\tilde{\sigma} - \tilde{\tau})^{-2}. 
		\end{align*}
		Combining all these observations, it is easily seen that the contribution of this case to the integral in the lemma is in $\tau^{-N-\frac12+\frac{1}{2\nu}}L^2_{d\tau}$. This means in effect we gain $\tilde{\tau}^{-1}$ decay. 
		\\
		
		{\it{$R\ll \tilde{\sigma} - \tilde{\tau}$.}} Here we first reduce the variable $R$ in the inside expression $\Lambda W\cdot W$ to size $\ll  \tilde{\sigma} - \tilde{\tau}$. In fact, observe that 
		\begin{align*}
			\Big|\mathcal{F}_{\R^4}\big(\chi_{R\gtrsim \tilde{\sigma} - \tilde{\tau}}\Lambda W\cdot W\big)(\frac{\lambda(\tilde{\tau})}{\lambda(\tilde{\sigma})}\xi)\Big|\lesssim \xi^{-2}(\tilde{\sigma }- \tilde{\tau})^{-2}, 
		\end{align*}
		and from here one completes the estimate as in the preceding case. On the other hand, if we include a cutoff $\chi_{R\ll\tilde{\sigma} - \tilde{\tau}}$ in front of $\Lambda W\cdot W$, we can perform integration by parts in the $\xi$ integral twice, thereby gaining 
		\[
		\xi^{-2}(\tilde{\sigma }- \tilde{\tau}-R)^{-2}\sim \xi^{-2}(\tilde{\sigma }- \tilde{\tau})^{-2}, 
		\]
		and the estimate is again completed as before. 
		\\
		
		{\it{$R\gg \tilde{\sigma} - \tilde{\tau}$.}} This case is handled analogously to the first one. 
		\\
		
		To see the last assertion of the lemma, we decompose $\phi$ into a low-frequency part and a high frequency part with respect to wave time, and specifically
		\begin{align*}
			\phi(\sigma) = Q^{(\tilde{\sigma})}_{<\sigma^{10\nu^{-1}}}\phi + Q^{(\tilde{\sigma})}_{\geq\sigma^{10\nu^{-1}}}\phi. 
		\end{align*}
		The contribution of the first term can be handled by means of the already proven part of the lemma. As for the contribution of the second term, we perform integration by parts with respect to $\tilde{\sigma}$ twice, which replaces this term by 
		\begin{align*}
			\partial_{\tilde{\sigma}}^{-2}Q^{(\tilde{\sigma})}_{\geq\sigma^{10\nu^{-1}}}\phi
		\end{align*}
		and produces boundary terms which are easily seen to be of the form $\partial_{\tau}F_2$ as claimed in the lemma, or double integrals which can be included into $F_1$. 
		
	\end{proof}
	
	The following lemma follows the same pattern of proof: 
	\begin{lem}\label{lem:reductionsteps3} We have the estimate 
		\begin{align*}
			&\Big\|\langle\int_{\tilde{\tau}}^\infty \int_0^\infty \lambda^{-2}(\tilde{\tau})U_*(\tilde{\tau},\tilde{\sigma}, \xi)\cdot \phi_{\R^4}(R;\xi) \phi(\tilde{\sigma})\\&\cdot\big[\mathcal{F}_{\R^4}\big(\lambda^2(\tilde{\sigma})\Lambda W\cdot W\big)(\frac{\lambda(\tilde{\tau})}{\lambda(\tilde{\sigma})}\xi) - \lambda^2(\tilde{\tau})\mathcal{F}_{\R^4}\big(\Lambda W\cdot W\big)(\xi)\big]\rho_{\R^4}(\xi)\,d\xi d\tilde{\sigma},\,W^2\rangle_{L^2_{R^3\,dR}}\Big\|_{\tau^{-N-\frac12+\frac{1}{2\nu}}L^2_{d\tau}}\\
			&\lesssim \big\|\phi\big\|_{\tau^{-N}L^2_{d\tau}}. 
		\end{align*}
		Furthermore, we can write 
		\begin{align*}
			&\langle\int_{\tilde{\tau}}^\infty \int_0^\infty \lambda^{-2}(\tilde{\tau})U_*(\tilde{\tau},\tilde{\sigma}, \xi)\cdot \phi_{\R^4}(R;\xi) \phi(\tilde{\sigma})\\&\cdot\big[\mathcal{F}_{\R^4}\big(\lambda^2(\tilde{\sigma})\Lambda W\cdot W\big)(\frac{\lambda(\tilde{\tau})}{\lambda(\tilde{\sigma})}\xi) - \lambda^2(\tilde{\tau})\mathcal{F}_{\R^4}\big(\Lambda W\cdot W\big)(\xi)\big]\rho_{\R^4}(\xi)\,d\xi d\tilde{\sigma},\,W^2\rangle_{L^2_{R^3\,dR}} \\
			&= F_1 + \partial_{\tau}F_2, 
		\end{align*}
		where we have the bounds 
		\begin{align*}
			&\Big\|\langle\partial_{\tilde{\tau}}^2\rangle F_1\Big\|_{\tau^{-N-\frac12+O(\frac{1}{\nu})}L^2_{d\tau}}\lesssim \big\|\langle\partial_{\tilde{\sigma}}^2\rangle^{-1}\phi\big\|_{\sigma^{-N}L^2_{d\sigma}}\\
			&\big\|F_2\big\|_{\tau^{-N-\frac{1}{\nu}}L^2_{d\tau}}\lesssim \big\|\langle\partial_{\tilde{\sigma}}^2\rangle^{-1}\phi\big\|_{\sigma^{-N}L^2_{d\sigma}}\\
		\end{align*}
	\end{lem}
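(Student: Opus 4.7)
The plan is to reduce the lemma to the framework already developed in Lemma~\ref{lem:reductionsteps2} by extracting a gain of $\frac{\tilde{\sigma}-\tilde{\tau}}{\tilde{\tau}}$ from the difference of Fourier coefficients. Since $\Lambda W\cdot W$ is time-independent, we can pull the prefactor out and write, with $g(\eta) := \mathcal{F}_{\R^4}(\Lambda W\cdot W)(\eta)$,
\[
D(\tilde{\tau},\tilde{\sigma},\xi) := \lambda^2(\tilde{\sigma})\, g\!\bigl(\tfrac{\lambda(\tilde{\tau})}{\lambda(\tilde{\sigma})}\xi\bigr) - \lambda^2(\tilde{\tau})\,g(\xi) = \lambda^2(\tilde{\sigma})\bigl[g\!\bigl(\tfrac{\lambda(\tilde{\tau})}{\lambda(\tilde{\sigma})}\xi\bigr) - g(\xi)\bigr] + \bigl[\lambda^2(\tilde{\sigma}) - \lambda^2(\tilde{\tau})\bigr] g(\xi).
\]
The first bracketed term, by the fundamental theorem of calculus in the scaling parameter, is $\int_1^{\lambda(\tilde{\tau})/\lambda(\tilde{\sigma})}(\xi\partial_\xi g)(s\xi)\,\frac{ds}{s}$, and since $\bigl|\frac{\lambda(\tilde{\tau})}{\lambda(\tilde{\sigma})} - 1\bigr| \lesssim \frac{\tilde{\sigma}-\tilde{\tau}}{\tilde{\tau}}$ for $\tilde{\sigma} \sim \tilde{\tau}$ (and is simply $O(1)$ otherwise), this piece carries a factor $\tfrac{\tilde{\sigma}-\tilde{\tau}}{\tilde{\tau}}\cdot(\xi\partial_\xi g)(\tilde{\xi})$ at some intermediate $\tilde{\xi}\sim\xi$. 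The second piece obeys the analogous estimate $|\lambda^2(\tilde{\sigma}) - \lambda^2(\tilde{\tau})| \lesssim \lambda^2(\tilde{\tau})\cdot\tfrac{\tilde{\sigma}-\tilde{\tau}}{\tilde{\tau}}$ in the near-diagonal regime, and the far regime $\tilde{\sigma}\gg\tilde{\tau}$ is controlled separately by polynomial decay of $g$ and $\lambda^2$-ratios. In this way $D$ plays exactly the role that the propagator difference $U - U_*$ played in Lemma~\ref{lem:reductionsteps2}.

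Next, with the factor $\frac{\tilde{\sigma}-\tilde{\tau}}{\tilde{\tau}}$ extracted, I would execute the same three-region analysis as in Lemma~\ref{lem:reductionsteps2}: restrict to $\xi\lesssim 1$ (since $g$ and $\xi\partial_\xi g$ are Schwartz at high frequency by Lemma~\ref{lem:FourierNV2} and Lemma~\ref{lem:FourierNV1}), then split the $R$-integral produced by pairing with $W^2$ and with $\phi_{\R^4}(R;\xi)$ into $R\sim \tilde{\sigma}-\tilde{\tau}$, $R\ll\tilde{\sigma}-\tilde{\tau}$, and $R\gg\tilde{\sigma}-\tilde{\tau}$. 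In the resonant region $R\sim\tilde{\sigma}-\tilde{\tau}$, I would substitute the oscillatory asymptotics $\phi_{\R^4}(R;\xi) \sim (R\xi)^{-3/2}e^{\pm iR\xi}\cdot\xi^{-1}$ (with symbol coefficient) and integrate by parts twice in $R$, producing a gain of $\xi^{-2}(\tilde{\sigma}-\tilde{\tau})^{-2}$. In the two non-resonant regions, the combined phase $U_*(\tilde{\tau},\tilde{\sigma},\xi)\cdot\phi_{\R^4}(R;\xi)$ has non-stationary $\xi$-derivative, so integration by parts in $\xi$ again yields $(\tilde{\sigma}-\tilde{\tau})^{-2}$ at the cost of $\xi^{-2}$, absorbed by $\rho_{\R^4}(\xi)\sim\xi^3$. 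Combining with the prefactor $\frac{\tilde{\sigma}-\tilde{\tau}}{\tilde{\tau}}$ and Schur's test on the resulting kernel, plus the identity $\tilde{\tau}\sim\tau^{1/2-1/(4\nu)}$, yields the claimed $\tau^{-N-1/2+1/(2\nu)}L^2_{d\tau}$ bound.

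For the second assertion, I would decompose $\phi = Q^{(\tilde{\sigma})}_{<\sigma^{10/\nu}}\phi + Q^{(\tilde{\sigma})}_{\geq\sigma^{10/\nu}}\phi$ exactly as at the end of the proof of Lemma~\ref{lem:reductionsteps2}. The low-frequency piece is handled directly by the first estimate and goes into $F_1$, since $\|Q^{(\tilde{\sigma})}_{<\sigma^{10/\nu}}\phi\|_{\sigma^{-N}L^2_{d\sigma}}$ and its second wave-time derivative are both controlled by $\|\langle\partial_{\tilde{\sigma}}^2\rangle^{-1}\phi\|_{\sigma^{-N}L^2_{d\sigma}}$ after absorbing factors of $\sigma^{20/\nu}\ll \tau^{O(1/\nu)}$. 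For the high-frequency piece, I would integrate by parts twice in $\tilde{\sigma}$: this shifts $\phi$ to $\partial_{\tilde{\sigma}}^{-2}Q^{(\tilde{\sigma})}_{\geq\sigma^{10/\nu}}\phi\in\sigma^{-N}L^2_{d\sigma}$, creates a boundary contribution at $\tilde{\sigma}=\tilde{\tau}$ of the schematic form $\partial_\tau F_2$ (using $\partial_\tau = \frac{\partial\tau}{\partial\tilde{\tau}}\cdot\partial_{\tilde{\tau}}^{-1}\circ\partial_{\tilde{\tau}}$ as in Lemma~\ref{lem:reductionsteps2}), and leaves a residual double integral in which the prefactor $\frac{\tilde{\sigma}-\tilde{\tau}}{\tilde{\tau}}$-type gain from the first part still applies, producing the $F_1$ contribution with the desired weight $\tau^{-N-1/2+O(1/\nu)}L^2_{d\tau}$.

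The main technical obstacle I anticipate is keeping track of the powers of $\xi^{-1}$ that are spent in the integrations by parts (both in $R$ and in $\xi$) versus those generated by the spectral density $\rho_{\R^4}(\xi)\sim\xi^3$ and by the smoothness of the Schwartz function $g$ near $\xi=0$. A naive bookkeeping would lose an endpoint power; the restriction $\xi\lesssim 1$ combined with the fact that $g$ vanishes at infinity together with all derivatives (so that $\xi\partial_\xi g$ is also rapidly decaying) must be exploited carefully. A second subtle point is the far regime $\tilde{\sigma}\gg\tilde{\tau}$ where the linearization leading to $\tfrac{\tilde{\sigma}-\tilde{\tau}}{\tilde{\tau}}$ in $\lambda^2(\tilde{\sigma})-\lambda^2(\tilde{\tau})$ is not available; there one instead controls each term in $D$ separately, using that the $\tilde{\sigma}$-integration is tame because of the temporal weight $\sigma^{-N}$ and the $R$-integral is handled exactly as for $n_2, n_3$ in the proof of Lemma~\ref{lem:wavebasicinhom}.
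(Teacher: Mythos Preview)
Your proposal is correct and follows essentially the same approach as the paper, which simply states that the lemma ``follows the same pattern of proof'' as Lemma~\ref{lem:reductionsteps2}. You have correctly identified that the difference $D(\tilde{\tau},\tilde{\sigma},\xi)$ of Fourier coefficients supplies the same $\frac{\tilde{\sigma}-\tilde{\tau}}{\tilde{\tau}}$ gain that the propagator difference $U-U_*$ supplied there, after which the three-region $R$-analysis and the high/low temporal-frequency decomposition of $\phi$ proceed identically.
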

	
	To continue the eventual determination of the variable $\tilde{\lambda}$, we now consider the simplified model equation 
	\begin{equation}\label{eq:modelfortildelambda}
		\mathcal{F}\big( y^{**}_{\tilde{\lambda}}\cdot W\big)(\tilde{\tau}, 0) = f(\tilde{\tau}),
	\end{equation}
	where $y^{**}_{\tilde{\lambda}}$ is defined via the free wave propagator as\footnote{Recall \eqref{eq:tildelambdaleadcontrib}
		.} 
	\begin{equation}\label{eq:ydoublestardef}\begin{split}
			y^{**}_{\tilde{\lambda}} &= -\Box_{\tilde{\tau}, R}^{-1}\big((\tilde{\lambda}_{\tilde{\tau}\tilde{\tau}}\cdot \Lambda W\cdot W\big)\\
			& - \Box_{\tilde{\tau}, R}^{-1}\big(Q^{(\tilde{\tau})}_{<\tilde{\tau}^{\frac{10}{\nu}}}\big(\frac{\tilde{\lambda}_{\tilde{\tau}}}{\tilde{\tau}}\cdot c\Lambda(\Lambda W\cdot W)\big)\big),\,c = c(\nu)
	\end{split}\end{equation}
	and we use the notation $ c\Lambda(\Lambda W\cdot W) :=\lambda^{-2}(t\partial_t)\big(\lambda^2 \Lambda W(\lambda r)\cdot W(\lambda r)\big)$
	\[
	\Box_{\tilde{\tau}, R}: = -\partial_{\tilde{\tau}\tilde{\tau}} + \partial_{RR} + \frac{3}{R}\partial_R. 
	\]
	We shall  resort to Fourier methods to solve \eqref{eq:modelfortildelambda}. Write 
	\begin{align*}
		\mathcal{F}\big(y^{**}_{\tilde{\lambda}}\cdot W\big)(\tilde{\tau}, 0) = \int_0^\infty y^{**}_{\tilde{\lambda}}\cdot W^2 R^3\,dR. 
	\end{align*}
	
	\begin{lem}\label{lem:Fouriertransform2} Denoting by $\hat{f}= \mathcal{F}_{\tilde{\tau}}(f)$ the (standard) one dimensional Fourier transform with respect to the wave time $\tilde{\tau}$, and using the notation $\hat{\tilde{\tau}}$ for the corresponding frequency,  the equation  
		\[
		\mathcal{F}\big( y^{**}_{\tilde{\lambda}}\cdot W\big)(\tilde{\tau}, 0) = f(\tilde{\tau})
		\]
		can be written as 
		\begin{align*}
			&\mathcal{F}_{\tilde{\tau}}\big(\tilde{\lambda}_{\tilde{\tau}\tilde{\tau}}\big) + c_3(\hat{\tilde{\tau}},\nu)\cdot \mathcal{F}_{\tilde{\tau}}\big(\frac{\tilde{\lambda}_{\tilde{\tau}}}{\tilde{\tau}}\big)(\hat{\tilde{\tau}})\cdot\big[ic_1\cdot \mathcal{F}_{\R^4}\big(W^2\big)(|\hat{\tilde{\tau}}|)\cdot \mathcal{F}_{\R^4}\big(\Lambda W\cdot W\big)(|\hat{\tilde{\tau}}|)\cdot \frac{\rho_{\R^4}(|\hat{\tilde{\tau}}|)}{\hat{\tilde{\tau}}}\\&\hspace{3cm} - c_2 \int_{0}^\infty \frac{1}{\hat{\tilde{\tau}}^2 - \xi^2}\cdot \mathcal{F}_{\R^4}\big(W^2\big)(\xi)\cdot \mathcal{F}_{\R^4}\big(\Lambda W\cdot W\big)(\xi)\rho_{\R^4}(\xi)\,d\xi\big]\\
			& = \hat{f}(\hat{\tilde{\tau}}) + \delta\zeta(\hat{\tilde{\tau}})
		\end{align*}
		Here $c_{1,2}$ are non-vanishing real constants(in fact, $c_1 = \frac\pi2, c_2 = \frac{1}{2}$), while $c_3$ is as in the statement of Proposition~\ref{prop:Phitildelambdasolution}, and we can write 
		\begin{align*}
			\delta\zeta(\hat{\tilde{\tau}}) = \hat{\tilde{\tau}}^{-2}\cdot \kappa(\hat{\tilde{\tau}})\cdot \mathcal{F}_{\tilde{\tau}}\big(Q^{(\tilde{\tau})}_{<\tilde{\tau}^{\frac{10}{\nu}}}\big(\frac{\tilde{\lambda}_{\tilde{\tau}}}{\tilde{\tau}}\big)\big)(\hat{\tilde{\tau}}),
		\end{align*}
		where the function $ \kappa(\hat{\tilde{\tau}})\in C^\infty(\R_+)$ is bounded, supported away from $\hat{\tilde{\tau}} = 0$, and satisfies the conjugation symmetry relation $\kappa(-\hat{\tilde{\tau}}) = \overline{\kappa(\hat{\tilde{\tau}})}$, as well as symbol type bounds. 
	\end{lem}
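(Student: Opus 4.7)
The strategy is to pass to the joint Fourier side in the radial variable (using the flat $\mathbb{R}^4$ spectral representation from subsection~\ref{subsec:subsec:standardFourieronR4}) and in the wave time $\tilde{\tau}$ (standard one-dimensional Fourier transform). First I would spell out the Duhamel parametrix
\[
\Box_{\tilde{\tau},R}^{-1}F(\tilde{\tau},R) = \int_{\tilde{\tau}}^\infty\!\int_0^\infty \frac{\sin((\tilde{\sigma}-\tilde{\tau})\xi)}{\xi}\,\phi_{\mathbb{R}^4}(R;\xi)\,\mathcal{F}_{\mathbb{R}^4}(F)(\tilde{\sigma};\xi)\,\rho_{\mathbb{R}^4}(\xi)\,d\xi\,d\tilde{\sigma},
\]
pair against $W^2 R^3\,dR$, and apply Plancherel in $R$ so the spatial integral produces the factor $\mathcal{F}_{\mathbb{R}^4}(W^2)(\xi)$. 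Since each source in \eqref{eq:ydoublestardef} factors as a tensor product of a function of $\tilde{\tau}$ and a function of $R$, the $\xi$-integral is diagonal and the $\tilde{\sigma}$-integral is a causal convolution in $\tilde{\tau}$, which becomes multiplicative after applying $\mathcal{F}_{\tilde{\tau}}$.

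The core computation is the sine-kernel analogue of Lemma~\ref{lem:Fouriertransform1}. Writing $\sin(s\xi)/\xi = (e^{is\xi}-e^{-is\xi})/(2i\xi)$ and using the Sokhotski--Plemelj identities $\int_0^\infty e^{isa}\,ds = \pi\delta(a)+i\,\mathrm{PV}\,a^{-1}$, a direct computation yields
\[
\mathcal{F}_{\tilde{\tau}}\!\Bigl(\int_{\tilde{\tau}}^\infty\!\int_0^\infty\tfrac{\sin((\tilde{\sigma}-\tilde{\tau})\xi)}{\xi}\,h(\xi)\,\phi(\tilde{\sigma})\,d\xi\,d\tilde{\sigma}\Bigr)(\hat{\tilde{\tau}}) = M_h(\hat{\tilde{\tau}})\,\hat{\phi}(\hat{\tilde{\tau}}),
\]
with $M_h(\hat{\tilde{\tau}}) = ic_1\, h(|\hat{\tilde{\tau}}|)/\hat{\tilde{\tau}} - c_2\int_0^\infty h(\xi)/(\hat{\tilde{\tau}}^2-\xi^2)\,d\xi$ (principal value). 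The signed $1/\hat{\tilde{\tau}}$ in the delta contribution arises because the two delta functions $\delta(\xi\pm\hat{\tilde{\tau}})$ appear with opposite signs from the sine, and when combined with $1/\xi$ produce $\mathrm{sign}(\hat{\tilde{\tau}})/|\hat{\tilde{\tau}}|$. Applying the formula with $h(\xi) = \mathcal{F}_{\mathbb{R}^4}(W^2)(\xi)\mathcal{F}_{\mathbb{R}^4}(\Lambda W\cdot W)(\xi)\rho_{\mathbb{R}^4}(\xi)$ and $\phi = \tilde{\lambda}_{\tilde{\tau}\tilde{\tau}}$ handles the first summand of \eqref{eq:ydoublestardef} and produces the bracketed multiplier of the lemma, times $\mathcal{F}_{\tilde{\tau}}(\tilde{\lambda}_{\tilde{\tau}\tilde{\tau}})$.

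For the second summand of \eqref{eq:ydoublestardef}, I expand $c\Lambda(\Lambda W\cdot W) = \lambda^{-2}(t\partial_t)(\lambda^2\Lambda W\cdot W)$ by the product rule: the factor $\lambda^{-2}(t\partial_t)\lambda^2$ contributes a $\nu$-dependent scalar, while $(t\partial_t)$ acting on $\Lambda W(\lambda r)\cdot W(\lambda r)$ gives a linear combination of $\Lambda W\cdot W$ and $\partial_\lambda(\Lambda W\cdot W)|_{\lambda=1}$. After the spatial pairing, the resulting $\hat{\tilde{\tau}}$-dependent multiplier is divided by the bracket already computed to define the ratio $c_3(\hat{\tilde{\tau}},\nu)$. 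Its smoothness away from $\hat{\tilde{\tau}}=0$, symbol behavior near zero, and conjugation symmetry $c_3(-\hat{\tilde{\tau}},\nu) = \overline{c_3(\hat{\tilde{\tau}},\nu)}$ all follow from the analogous properties of $\mathcal{F}_{\mathbb{R}^4}(W^2)$ and $\mathcal{F}_{\mathbb{R}^4}(\Lambda W\cdot W)$, which are real and even as Fourier transforms of real radial functions (cf.\ Lemmas~\ref{lem:FourierNV1}--\ref{lem:FourierNV2}); the compact support of $c_3$ can be imposed by an auxiliary smooth cutoff, with any excluded high-frequency piece absorbed into $\delta\zeta$.

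It remains to pin down the error $\delta\zeta$. The operator $Q^{(\tilde{\tau})}_{<\tilde{\tau}^{10/\nu}}$ is not a genuine Fourier multiplier in $\hat{\tilde{\tau}}$ because its cutoff scale depends on $\tilde{\tau}$; commuting it past $\mathcal{F}_{\tilde{\tau}}$ in the contribution of the second source produces a residual concentrated at comparatively high $|\hat{\tilde{\tau}}|$, away from zero. Quantifying this residual by integration by parts in $\tilde{\tau}$ in the spirit of Lemma~\ref{lem:reductionsteps2}, and accounting for the two extra wave-temporal derivatives that separate the two summands of \eqref{eq:ydoublestardef} (which produces the prefactor $\hat{\tilde{\tau}}^{-2}$), yields the claimed representation $\delta\zeta(\hat{\tilde{\tau}}) = \hat{\tilde{\tau}}^{-2}\kappa(\hat{\tilde{\tau}})\,\mathcal{F}_{\tilde{\tau}}(Q^{(\tilde{\tau})}_{<\tilde{\tau}^{10/\nu}}(\tilde{\lambda}_{\tilde{\tau}}/\tilde{\tau}))$ with $\kappa \in C^\infty$ bounded, supported away from zero, and carrying the conjugation symmetry. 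The main obstacle is the careful execution of the Sokhotski--Plemelj step and the bookkeeping of symmetries, supports and principal-value interpretations required to extract the common bracket, define $c_3$ cleanly, and isolate the residual constituting $\delta\zeta$.
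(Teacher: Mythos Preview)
Your overall strategy matches the paper's: write $\mathcal{F}(y^{**}_{\tilde\lambda}\cdot W)(\tilde\tau,0)$ as a $\tilde\sigma$-convolution against the sine kernel, pair in $R$ to extract the factor $\mathcal{F}_{\R^4}(W^2)(\xi)$, and apply the sine-kernel analogue of Lemma~\ref{lem:Fouriertransform1}. Your multiplier formula $M_h(\hat{\tilde\tau})=ic_1\,h(|\hat{\tilde\tau}|)/\hat{\tilde\tau}-c_2\int_0^\infty h(\xi)/(\hat{\tilde\tau}^2-\xi^2)\,d\xi$ is exactly what the paper obtains; with $h=g$ this is the function the paper calls $\beta_2$, and with $h=\tilde g$ it is the paper's $\beta_1$. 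The definition of $c_3$ as the ratio $\beta_1/\beta_2$ times a low-frequency cutoff is also correct.

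Where you go astray is the origin of $\delta\zeta$. You attribute it to a commutator between $\mathcal{F}_{\tilde\tau}$ and the time-dependent localizer $Q^{(\tilde\tau)}_{<\tilde\tau^{10/\nu}}$, to be quantified by integration by parts in $\tilde\tau$. In the paper's proof there is no such commutator step: the temporal Fourier transform is simply applied to the function $Q^{(\tilde\tau)}_{<\tilde\tau^{10/\nu}}(\tilde\lambda_{\tilde\tau}/\tilde\tau)$ as it stands, giving $\beta_1(\hat{\tilde\tau},\nu)\cdot\mathcal{F}_{\tilde\tau}\bigl(Q^{(\tilde\tau)}_{<\tilde\tau^{10/\nu}}(\tilde\lambda_{\tilde\tau}/\tilde\tau)\bigr)$. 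One then splits $\beta_1=\chi_{|\hat{\tilde\tau}|\lesssim 1}\beta_1+\chi_{|\hat{\tilde\tau}|\gtrsim 1}\beta_1$; the low-frequency piece defines $c_3=\chi_{|\hat{\tilde\tau}|\lesssim 1}\,\beta_1/\beta_2$, and the entire high-frequency piece becomes $\delta\zeta$ with $\kappa(\hat{\tilde\tau}):=\chi_{|\hat{\tilde\tau}|\gtrsim 1}\,\hat{\tilde\tau}^2\beta_1(\hat{\tilde\tau},\nu)$. The prefactor $\hat{\tilde\tau}^{-2}$ is not the trace of ``two extra wave-temporal derivatives'' but simply a normalization: since the principal-value integral in $\beta_1$ decays like $\hat{\tilde\tau}^{-2}$ for large $|\hat{\tilde\tau}|$ (and the $\delta$-contribution is rapidly decaying), $\hat{\tilde\tau}^2\beta_1$ is bounded with symbol behavior, which is exactly the claimed property of $\kappa$. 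So the mechanism is far more elementary than the one you propose, and your invocation of Lemma~\ref{lem:reductionsteps2} here is unnecessary.
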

	\begin{proof} Write 
		\begin{align*}
			-\mathcal{F}\big( y^{**}_{\tilde{\lambda}}\cdot W\big)(\tilde{\tau}, 0)  &= \int_{\tilde{\tau}}^{\infty}\tilde{\lambda}_{\tilde{\sigma}\tilde{\sigma}}\cdot\int_0^\infty \frac{\sin\big([\tilde{\sigma} - \tilde{\tau}]\xi\big)}{\xi}\cdot g(\xi)\,d\xi d\tilde{\sigma}\\
			& +  \int_{\tilde{\tau}}^{\infty}Q^{(\tilde{\sigma})}_{<\tilde{\sigma}^{\frac{10}{\nu}}}\big(\frac{\tilde{\lambda}_{\tilde{\sigma}}}{\tilde{\sigma}}\big)\cdot\int_0^\infty \frac{\sin\big([\tilde{\sigma} - \tilde{\tau}]\xi\big)}{\xi}\cdot \tilde{g}(\xi)\,d\xi d\tilde{\sigma}\\
		\end{align*}
		where we set (recall \eqref{eq:ydoublestardef})
		\begin{align*}
			&g(\xi): = \mathcal{F}_{\R^4}\big(\Lambda W\cdot W\big)(\xi)\rho_{\R^4}(\xi)\cdot \int_0^\infty \phi_{\R^4}(R;\xi)\cdot W^2\cdot R^3\,dR,\\
			&\tilde{g}(\xi): = c\mathcal{F}_{\R^4}\big(\Lambda(\Lambda W\cdot W)\big)(\xi)\rho_{\R^4}(\xi)\cdot \int_0^\infty \phi_{\R^4}(R;\xi)\cdot W^2\cdot R^3\,dR
		\end{align*}
		Interpret the time integrals as convolution of the functions $\tilde{\lambda}_{\tilde{\sigma}\tilde{\sigma}}, Q^{(\tilde{\sigma})}_{<\tilde{\sigma}^{\frac{10}{\nu}}}\big(\frac{\tilde{\lambda}_{\tilde{\sigma}}}{\tilde{\sigma}}\big)$ with the function\footnote{Recall that $H$ denotes the sign function.} 
		\[
		\frac12 (1+H)\cdot\sin (\tilde{\sigma}\xi),
		\]
		and proceed in analogy to the proof of Lemma~\ref{lem:Fouriertransform1}. It then suffices to set 
		\begin{align*}
			c_3: = \chi_{|\hat{\tilde{\tau}}|\lesssim 1}\cdot \frac{\beta_1(\hat{\tilde{\tau}},\nu)}{\beta_2(\hat{\tilde{\tau}})}, 
		\end{align*}
		where the function $\chi_{|\hat{\tilde{\tau}}|\lesssim 1}$ is a smooth cutoff, and further (recall \eqref{eq:ydoublestardef})
		\begin{equation}\label{eq:beta1}\begin{split}
				\beta_1(\hat{\tilde{\tau}},\nu) &= ic\cdot c_1\cdot \mathcal{F}_{\R^4}\big(W^2\big)(|\hat{\tilde{\tau}}|)\cdot \mathcal{F}_{\R^4}\big(\Lambda(\Lambda W\cdot W)\big)(|\hat{\tilde{\tau}}|)\cdot \frac{\rho_{\R^4}(|\hat{\tilde{\tau}}|)}{\hat{\tilde{\tau}}}\\&\hspace{1cm} - c\cdot c_2 \int_{0}^\infty \frac{1}{\hat{\tilde{\tau}}^2 - \xi^2}\cdot \mathcal{F}_{\R^4}\big(W^2\big)(\xi)\cdot \mathcal{F}_{\R^4}\big(\Lambda(\Lambda W\cdot W)\big)(\xi)\rho_{\R^4}(\xi)\,d\xi
		\end{split}\end{equation}
		\begin{equation}\label{eq:beta2}\begin{split}
				\beta_2(\hat{\tilde{\tau}}) &= ic_1\cdot \mathcal{F}_{\R^4}\big(W^2\big)(|\hat{\tilde{\tau}}|)\cdot \mathcal{F}_{\R^4}\big(\Lambda W\cdot W\big)(|\hat{\tilde{\tau}}|)\cdot \frac{\rho_{\R^4}(|\hat{\tilde{\tau}}|)}{\hat{\tilde{\tau}}}\\&\hspace{1cm} - c_2 \int_{0}^\infty \frac{1}{\hat{\tilde{\tau}}^2 - \xi^2}\cdot \mathcal{F}_{\R^4}\big(W^2\big)(\xi)\cdot \mathcal{F}_{\R^4}\big(\Lambda W\cdot W\big)(\xi)\rho_{\R^4}(\xi)\,d\xi,
		\end{split}\end{equation}
		see Lemma~\ref{lem:FourierNV1}, ~\ref{lem:FourierNV2}, and numerical assumption {\bf{(C2)}}. Finally it suffices to define $ \kappa(\hat{\tilde{\tau}})$ by means of 
		\[
		\kappa(\hat{\tilde{\tau}}) =  \chi_{|\hat{\tilde{\tau}}|\gtrsim1}\hat{\tilde{\tau}}^2\cdot \beta_1(\hat{\tilde{\tau}},\nu)
		\]
	\end{proof}
	
	Next, we take into account the precise definition of $\Phi^{(\tilde{\lambda})}$, which also involves the high-frequency term (see \eqref{eq:Phitildelambdadef})
	\[
	-c_*\lambda^{-2}\mathcal{F}\big(\partial_{\tilde{\sigma}}^{-2}\triangle\big(\lambda^2Q^{(\tilde{\sigma})}_{\gamma^{-1}<\cdot<\sigma^{\frac12+}}\tilde{\lambda}W^2\big)\cdot W\big)(\sigma, 0)  .
	\]
	To simplify this term, we note the simple 
	\begin{lem}\label{lem:Phitildelambdahighfreqsimplfied} Letting $X_1(\sigma)$ denote the preceding expression, and $\tilde{X}_1$ the same expression except without the factors $\lambda^{-2}, \lambda^2$, then we have the bound 
		\begin{align*}
			\Big\|\langle\partial_{\tilde{\sigma}}^2\rangle (X_1 - \tilde{X}_1)\Big\|_{\sigma^{-N}L^2_{d\sigma}}\ll_{\tau_*}\big\|\langle\partial_{\tilde{\sigma}}^2\rangle^{-1}\partial_{\tilde{\sigma}}^2\tilde{\lambda}\big\|_{\sigma^{-N}L^2_{d\sigma}}. 
		\end{align*}
		We can also suppress the upper frequency localizer $Q_{\cdot <\sigma^{\frac12+}}$ in $X_1$ at the expense of an error term of the form 
		\[
		\partial_{\sigma}E,\,\big\|E\big\|_{\sigma^{-N-}L^2_{d\sigma}}\lesssim \big\|\langle\partial_{\tilde{\sigma}}^2\rangle^{-1}\partial_{\tilde{\sigma}}^2\tilde{\lambda}\big\|_{\sigma^{-N}L^2_{d\sigma}}.
		\]
	\end{lem}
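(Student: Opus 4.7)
\textbf{Proof plan for Lemma~\ref{lem:Phitildelambdahighfreqsimplfied}.} The statement decomposes into two independent assertions: (a) the $\lambda^{\pm 2}$ factors can be commuted through $\partial_{\tilde\sigma}^{-2}\triangle$ with a gain of $\ll_{\tau_*}$; (b) the upper temporal-frequency cutoff $Q^{(\tilde\sigma)}_{\cdot<\sigma^{\frac12+}}$ can be dropped modulo a term of the form $\partial_\sigma E$. I will treat them separately.

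For (a), note that $\lambda^{\pm 2}$ commutes with the spatial Laplacian $\triangle$, so the only obstruction is the commutator of $\lambda^{-2}$ with $\partial_{\tilde\sigma}^{-2}$. The key identity is
\begin{equation*}
[\lambda^{-2},\,\partial_{\tilde\sigma}^{-2}] \;=\; \partial_{\tilde\sigma}^{-2}\circ\bigl[\partial_{\tilde\sigma}^{2},\,\lambda^{-2}\bigr]\circ\partial_{\tilde\sigma}^{-2},\qquad [\partial_{\tilde\sigma}^2,\lambda^{-2}]f \;=\; (\partial_{\tilde\sigma}^2\lambda^{-2})f + 2(\partial_{\tilde\sigma}\lambda^{-2})\partial_{\tilde\sigma}f,
\end{equation*}
so that $X_1-\tilde X_1$ is a linear combination of terms schematically of the shape $\partial_{\tilde\sigma}^{-2}\bigl[(\partial_{\tilde\sigma}^k\lambda^{-2})\partial_{\tilde\sigma}^{2-k}\bigr]\partial_{\tilde\sigma}^{-2}$ (for $k=1,2$) applied to $\triangle\bigl(\lambda^{2}\,Q^{(\tilde\sigma)}_{\gamma^{-1}<\cdot<\sigma^{\frac12+}}\tilde\lambda\,W^{2}\bigr)$, paired with $W$. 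Since $\lambda\sim \tilde\sigma^{-(1/2+\nu)/(1/2-1/(4\nu))}$ has symbol behaviour, $|\partial_{\tilde\sigma}^k\lambda^{-2}|\lesssim_k \lambda^{-2}\tilde\sigma^{-k}$. The frequency localiser $Q^{(\tilde\sigma)}_{\gamma^{-1}<\cdot<\sigma^{\frac12+}}$ permits the bound $\|\partial_{\tilde\sigma}^{-2}Q^{(\tilde\sigma)}_{\gamma^{-1}<\cdot}\|\lesssim \gamma^{2}$, and each additional $\partial_{\tilde\sigma}^{-1}$ supplies at least one factor of $\tilde\sigma$ that is overwhelmed by the companion $\tilde\sigma^{-1}$ coming from $\partial_{\tilde\sigma}\lambda^{-2}$, producing a net gain of $\tilde\sigma^{-1}\sim \tau^{-\frac12+\frac{1}{4\nu}}\ll_{\tau_*}1$. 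A straightforward adaptation of Lemma~\ref{lem:wavebasicinhom} (here we use the standard $L^2_{R^3\,dR}$ bound together with $\|W\|_{L^2}<\infty$ to absorb the pairing with $W$) then yields the claimed estimate for $\|\langle\partial_{\tilde\sigma}^2\rangle(X_1-\tilde X_1)\|_{\sigma^{-N}L^2_{d\sigma}}$; the outer $\langle\partial_{\tilde\sigma}^2\rangle$ is absorbed by a parallel argument using that the additional derivatives act on the same frequency-localised expression.

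For (b), denote by $X_1^{\geq}$ the analogue of $X_1$ with $Q^{(\tilde\sigma)}_{\gamma^{-1}<\cdot<\sigma^{\frac12+}}$ replaced by $Q^{(\tilde\sigma)}_{\cdot\geq\sigma^{\frac12+}}$. The plan is to write $X_1^{\geq}=\partial_\sigma E$ using the high-frequency identity already exploited in \eqref{eq:highmodgivesgradientstructure}:
\begin{equation*}
Q^{(\tilde\sigma)}_{\geq\sigma^{\frac12+}}\tilde\lambda \;=\; \tfrac{\partial\tau}{\partial\tilde\tau}\bigg|_{\sigma}\!\!\partial_\sigma\bigl(\partial_{\tilde\sigma}^{-1}Q^{(\tilde\sigma)}_{\geq\sigma^{\frac12+}}\tilde\lambda\bigr)-\partial_\sigma\bigl(\tfrac{\partial\tau}{\partial\tilde\tau}\bigr)\partial_{\tilde\sigma}^{-1}Q^{(\tilde\sigma)}_{\geq\sigma^{\frac12+}}\tilde\lambda.
\end{equation*}
Inserting this into the definition of $X_1^{\geq}$, pulling $\partial_\sigma$ outside the $\mathcal F$-pairing and using symbol bounds on $\lambda^{\pm 2}$ and on $\partial\tau/\partial\tilde\tau$, one arrives at $X_1^{\geq}=\partial_\sigma E + E'$, where $E'$ carries an additional factor $\sigma^{-1}$ relative to $E$ and can be absorbed into the derivative term by writing $E'=\partial_\sigma\int_\sigma^\infty E'$. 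The required norm bound for $E$ follows from the estimate $\|\partial_{\tilde\sigma}^{-1}Q^{(\tilde\sigma)}_{\geq\sigma^{\frac12+}}\tilde\lambda\|_{\sigma^{-N-}L^2_{d\sigma}}\lesssim \|\langle\partial_{\tilde\sigma}^2\rangle^{-1}\partial_{\tilde\sigma}^2\tilde\lambda\|_{\sigma^{-N}L^2_{d\sigma}}$ (a one-step Bernstein-type gain from the high-modulation localisation) combined with the spatial pairing $\langle\partial_{\tilde\sigma}^{-2}\triangle(\lambda^2\cdot W^2)\cdot W, 1\rangle_{L^2_{R^3\,dR}}$ being uniformly bounded in wave time once the frequency is $\gtrsim\sigma^{\frac12+}$.

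The main technical obstacle is verifying that the outer operator $\langle\partial_{\tilde\sigma}^2\rangle$ in part (a) does not spoil the smallness gain: the two extra $\tilde\sigma$-derivatives land on products involving $\lambda^{-2}$ and the frequency-localised factor, and one must carefully use the symbol behaviour together with the identity $[\partial_{\tilde\sigma}^2,\lambda^{-2}]$ recursively so that each additional derivative either hits $\lambda^{-2}$ (giving an extra $\tilde\sigma^{-1}$) or the frequency-localised factor (where it is controlled by $\sigma^{\frac12+}$ but overwhelmed by the $\gamma^2$ gain from the two $\partial_{\tilde\sigma}^{-1}$'s, provided $\tau_*$ is chosen large enough relative to $\gamma^{-1}$). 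Once this bookkeeping is done, part (a) follows cleanly; part (b) is then purely algebraic once the high-modulation identity above is in place.
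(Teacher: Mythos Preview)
Your plan for part (b) matches the paper's: use $\partial_\sigma = \tfrac{\partial\tilde\sigma}{\partial\sigma}\,\partial_{\tilde\sigma}$ together with the high-frequency gain $\|\partial_{\tilde\sigma}^{-1}Q^{(\tilde\sigma)}_{\geq\sigma^{1/2+}}\|\lesssim\sigma^{-1/2-}$ to extract $\partial_\sigma$. That part is fine.

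For part (a), the commutator route can be made to work, but the bookkeeping you state does not close. After writing $X_1-\tilde X_1$ via $\lambda^{-2}[\partial_{\tilde\sigma}^{-2},\lambda^{2}]$ acting on $Q^{(\tilde\sigma)}_{\gamma^{-1}<\cdot}\tilde\lambda$ (times a spatial constant), the leading term is
\[
\lambda^{-2}\,\partial_{\tilde\sigma}^{-2}\bigl[(\partial_{\tilde\sigma}\lambda^{2})\,\partial_{\tilde\sigma}^{-1}Q^{(\tilde\sigma)}_{\gamma^{-1}<\cdot}\tilde\lambda\bigr].
\]
Your count (inner $\partial_{\tilde\sigma}^{-1}Q$ gains $\gamma$; the outer $\partial_{\tilde\sigma}^{-2}$ ``supplies $\tilde\sigma^2$''; one $\tilde\sigma^{-1}$ from $\partial_{\tilde\sigma}\lambda^2$) gives a net factor $\gamma\tilde\sigma$, which is large. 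The missing point is that multiplication by the slowly varying symbol $\partial_{\tilde\sigma}^k\lambda^{2}$ preserves the temporal frequency localization to $>\gamma^{-1-}$ up to negligible tails, so the \emph{outer} $\partial_{\tilde\sigma}^{-2}$ also gains $\gamma^{2}$ rather than costing $\tilde\sigma^{2}$. Equivalently, you could iterate the commutator to push all $\partial_{\tilde\sigma}^{-1}$'s onto the frequency-localized input, but you did not say this, and without it the claimed ``net gain of $\tilde\sigma^{-1}$'' is not justified.

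The paper sidesteps the commutator expansion. It first observes that one may insert a cutoff $Q^{(\tilde\sigma)}_{\gamma^{-1-}<\cdot<\sigma^{1/2+}}$ in front of the entire expression (again because $\lambda^{\pm2}$ is slowly varying in $\tilde\sigma$). Then $\langle\partial_{\tilde\sigma}^2\rangle\,\partial_{\tilde\sigma}^{-2}\,Q^{(\tilde\sigma)}_{\gamma^{-1-}<\cdot<\sigma^{1/2+}}$ is a temporal convolution whose kernel has $L^1$-mass $\lesssim 1$ and is concentrated on scale $\gamma$. On that scale $|\lambda^{-2}(\tilde\sigma)\lambda^{2}(\tilde s)-1|\lesssim|\tilde\sigma-\tilde s|/\tilde\sigma\lesssim\gamma/\tilde\tau_*\ll_{\tau_*}1$, which immediately gives the bound with the outer $\langle\partial_{\tilde\sigma}^2\rangle$ already absorbed. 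This is shorter and avoids the iterated-commutator bookkeeping; what your approach buys, once repaired, is that it never needs the explicit kernel description, only symbol estimates on $\lambda$.
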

	\begin{proof} Referring to $X_1$, we can include a frequency cutoff $Q^{(\tilde{\sigma})}_{\gamma^{-1-}<\cdot<\sigma^{\frac12+}}$ to the left of $\triangle$ up to an error of size $O(\sigma^{-M})$ for any $M\gg 1$. The operator 
		\[
		\langle \partial_{\tilde{\sigma}}^2\rangle \partial_{\tilde{\sigma}}^{-2}Q^{(\tilde{\sigma})}_{\gamma^{-1-}<\cdot<\sigma^{\frac12+}}
		\]
		is given by convolution with a function decaying rapidly beyond scale $\gamma\ll 1$ and of $L^1_{d\tilde{\sigma}}$-mass $\lesssim 1$. The first part of the lemma follows easily from this. For the second part, observe that 
		\[
		\partial_{\sigma} = \frac{\partial\tilde{\sigma}}{\partial\sigma}\cdot \partial_{\tilde{\sigma}}, 
		\]
		and application of $ \partial_{\tilde{\sigma}}^{-1}$ to a function at $\tilde{\sigma}$-frequency $>\sigma^{\frac12+}$ gains $\sigma^{-\frac12-}$. 
	\end{proof}
	The preceding lemmas suggest that we replace $\Phi^{(\tilde{\lambda})}$ by the simpler expression 
	\begin{equation}\label{eq:Phitildelambdamodel}\begin{split}
			\Phi^{(\tilde{\lambda})}_{\text{model}} =\mathcal{F}\big(y_{\tilde{\lambda}}^{**}\cdot W\big)(\sigma, 0)-c_*\mathcal{F}\big(\partial_{\tilde{\sigma}}^{-2}\triangle\big(Q^{(\tilde{\sigma})}_{\gamma^{-1}<\cdot}\tilde{\lambda}W^2\big)\cdot W\big)(\sigma, 0) 
	\end{split}\end{equation}
	To formulate the next result, we need to introduce the projection operator $\Pi^{(\tilde{\tau})}$:
	\begin{lem}\label{lem:Pitildetaudef} Let $f\in \tau^{-N}L^2_{d\tau}$, and let $M = \frac{N}{\frac12-\frac{1}{4\nu}}$. Then there exists 
		\[
		\tilde{f}\in \tau^{-N}L^2_{d\tau}([\frac{\tau_*}{2},\infty))
		\]
		with $\tilde{f}|_{[\tau_*,\infty)} = f|_{[\tau_*,\infty)}$, and such that 
		\[
		\partial_{\hat{\tilde{\tau}}}^l\mathcal{F}_{\tilde{\tau}}(\tilde{f})(0) = 0,\,l = 0, 1,\ldots, \lfloor M\rfloor. 
		\]
		Furthermore, we have the bounds
		\begin{align*}
			&\big\|\tilde{f}\big\|_{\tau^{-N}L^2_{d\tau}}\lesssim _N \big\|f\big\|_{\tau^{-N}L^2_{d\tau}},\\
			&\big\|\mathcal{F}_{\tilde{\tau}}\tilde{f}\big\|_{W^{M,2}\cap \hat{\tilde{\tau}}^M L^2_{d\hat{\tilde{\tau}}}}\lesssim _N \big\|f\big\|_{\tau^{-N}L^2_{d\tau}},
		\end{align*}
		where as usual we denote the Fourier variable associated to the wave time $\tilde{\tau}$ by $\hat{\tilde{\tau}}$. 
		The function $\tilde{f}$ can be chosen to depend linearly on $f$, and we set 
		\[
		\tilde{f} = : \Pi^{(\tilde{\tau})}(f).
		\]
	\end{lem}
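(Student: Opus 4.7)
The approach is a direct adaptation of the construction in Lemma~\ref{lem:fmodif}, now implemented in wave time $\tilde{\tau}$ rather than Schr\"odinger time $\tau$. The point of exchanging variables is that the desired vanishing $\partial_{\hat{\tilde{\tau}}}^l \mathcal{F}_{\tilde{\tau}}(\tilde f)(0) = 0$ is equivalent to the moment conditions $\int \tilde\tau^l \tilde f(\tilde\tau)\, d\tilde\tau = 0$ for $l = 0, 1,\ldots,\lfloor M\rfloor$, so the only task is to subtract from $f$ a correction supported below $\tilde\tau_*$ (hence below $\tau_*$, guaranteeing agreement on $[\tau_*,\infty)$) which cancels the first $\lfloor M\rfloor +1$ moments of $f$ in wave time.

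The first step is to extend $f$ by zero to the whole real line and write $\tilde f = f - \kappa$ where
\[
\kappa(\tilde\tau) = \tilde\tau_*^{-M-\tfrac12}\cdot \chi\!\Big(\frac{\tilde\tau}{\tilde\tau_*}\Big),\qquad \chi\in C_0^\infty\big([\tfrac12,1]\big).
\]
Using finitely many free parameters in $\chi$ (e.g., writing $\chi$ as a linear combination of $\lfloor M\rfloor + 1$ fixed bump functions), the moment conditions
\[
\int \tilde\tau^l \kappa(\tilde\tau)\,d\tilde\tau \;=\; \int \tilde\tau^l f(\tilde\tau)\,d\tilde\tau,\qquad l = 0,1,\ldots,\lfloor M\rfloor,
\]
become a Vandermonde-type linear system whose solvability is standard. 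The choice of $\chi$ is linear in the prescribed right-hand sides, so $\tilde f$ depends linearly on $f$.

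The crucial numerology lies in comparing the two temporal variables: from $\tilde\tau \sim \tau^{\frac12-\frac{1}{4\nu}}$ and $\frac{d\tilde\tau}{d\tau} = \lambda^{-1}\sim \tau^{-\frac12-\frac{1}{4\nu}}$, the Cauchy-Schwarz estimate
\[
\Big|\int \tilde\tau^l f\,d\tilde\tau\Big| = \Big|\int \tilde\tau(\tau)^l f(\tau)\lambda^{-1}\,d\tau\Big|\;\lesssim\; \|f\|_{\tau^{-N}L^2_{d\tau}}\cdot \tilde\tau_*^{l - M + \frac12}
\]
holds precisely when $l\leq \lfloor M\rfloor$, which matches the scaling $\tilde\tau_*^{-M-\frac12}$ of $\kappa$ and yields $|a_l|\lesssim \|f\|_{\tau^{-N}L^2_{d\tau}}\cdot \tilde\tau_*^{l-M+\frac12}$ for the coefficients $a_l$ of the Vandermonde system. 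Consequently, a direct change of variables gives $\|\kappa\|_{\tau^{-N}L^2_{d\tau}}\lesssim \|f\|_{\tau^{-N}L^2_{d\tau}}$, and the bound $\|\tilde f\|_{\tau^{-N}L^2_{d\tau}}$ follows by the triangle inequality.

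For the Fourier-side bound $\|\mathcal{F}_{\tilde\tau}\tilde f\|_{W^{M,2}\cap \hat{\tilde\tau}^M L^2_{d\hat{\tilde\tau}}}$, the $W^{M,2}$ part is just Plancherel applied to $\tilde\tau^l \tilde f\in L^2_{d\tilde\tau}$ for $l\leq M$, which is controlled by another change of variables from $\tilde\tau^l L^2_{d\tilde\tau}$ back to $\tau^{-N}L^2_{d\tau}$ exactly as above. The $\hat{\tilde\tau}^M L^2$ bound is then obtained via Taylor expansion around $\hat{\tilde\tau}=0$ following the same splitting argument used in Lemma~\ref{lem:hatzcompatibility}: one writes $e^{-i\hat{\tilde\tau}\tilde\tau} - \sum_{l\leq M}(-i\tilde\tau\hat{\tilde\tau})^l/l!$ in the regime $\tilde\tau\hat{\tilde\tau}\lesssim 1$ (using the vanishing moments to absorb the Taylor polynomial) and orthogonality in the complementary regime $\tilde\tau\hat{\tilde\tau}\gtrsim 1$.

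The main obstacle, and really the only nontrivial bookkeeping item, is tracking the weight factor $\lambda^{-1}$ between the two time variables carefully enough to verify that \emph{precisely} $\lfloor M\rfloor + 1$ moments can be prescribed with control by $\|f\|_{\tau^{-N}L^2_{d\tau}}$; the threshold $M = N/(\frac12-\frac{1}{4\nu})$ is dictated by this Cauchy-Schwarz endpoint, and in the borderline case where $M$ is an integer one may need to lose an arbitrarily small $\delta>0$ in the index, which is harmless for all subsequent applications.
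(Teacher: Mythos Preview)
Your approach is exactly what the paper intends: its proof is the single line ``Analogous to the one of Lemma~\ref{lem:fmodif}'', and you have correctly transplanted the moment-correction construction to wave time. There is, however, a bookkeeping slip at precisely the point you flag as delicate. Because $d\tilde\tau=\lambda^{-1}\,d\tau$ with $\lambda\sim\tau^{\frac12+\frac{1}{4\nu}}$, the norm $\tau^{-N}L^2_{d\tau}$ is equivalent not to $\tilde\tau^{-M}L^2_{d\tilde\tau}$ but to $\tilde\tau^{-M'}L^2_{d\tilde\tau}$ with $M'=M+\frac{\beta}{2\alpha}\approx M+\tfrac12$ (here $\alpha=\tfrac12-\tfrac1{4\nu}$, $\beta=\tfrac12+\tfrac1{4\nu}$). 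Consequently the sharp Cauchy--Schwarz moment bound is $\tilde\tau_*^{\,l-M'+\frac12}$, not $\tilde\tau_*^{\,l-M+\frac12}$, and the correct normalization is $\kappa=\tilde\tau_*^{-M'-\frac12}\chi(\tilde\tau/\tilde\tau_*)$. With your normalization and your stated bound $|a_l|\lesssim\|f\|$, a direct computation gives $\|\kappa\|_{\tau^{-N}L^2_{d\tau}}\sim\tilde\tau_*^{\,\beta/(2\alpha)}\|f\|$, which grows; the missing half-power is exactly what is needed to close. Running Lemma~\ref{lem:fmodif} verbatim in the equivalent $\tilde\tau^{-M'}L^2_{d\tilde\tau}$ space fixes this with no change to the method, and since $M'>M$ one still obtains vanishing moments up to order $\lfloor M\rfloor$ as required. (A second, truly minor point: the support $[\tfrac12,1]$ in $\tilde\tau/\tilde\tau_*$ corresponds to roughly $[\tau_*/4,\tau_*]$ in Schr\"odinger time since $\tau\sim\tilde\tau^{1/\alpha}$; narrow it slightly to land inside $[\tau_*/2,\tau_*]$.)
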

	\begin{proof} Analogous to the one of Lemma~\ref{lem:fmodif}. 
		
	\end{proof}
	
	\begin{lem}\label{lem:Phitildelambdamodeleqn} The equation 
		\begin{align*}
			\Phi^{(\tilde{\lambda})}_{\text{model}}(\tilde{\tau}) = f(\tilde{\tau})
		\end{align*}
		admits a solution $\tilde{\lambda}$ on $[\tau_*,\infty)$, which can be written on the Fourier side in the form
		\begin{align*}
			\mathcal{F}_{\tilde{\tau}}\big(\tilde{\lambda}_{\tilde{\tau}\tilde{\tau}}\big)(\hat{\tilde{\tau}}) + c_3(\hat{\tilde{\tau}},\nu)\cdot\mathcal{F}_{\tilde{\tau}}\big(\frac{\tilde{\lambda}_{\tilde{\tau}}}{\tilde{\tau}}\big)(\hat{\tilde{\tau}}) = \langle\hat{\tilde{\tau}}^4\rangle\cdot \beta(\hat{\tilde{\tau}})\cdot\mathcal{F}_{\tilde{\tau}}\big(\Pi^{(\tilde{\tau})}f\big)(\hat{\tilde{\tau}}) + \delta\zeta(\hat{\tilde{\tau}}),
		\end{align*}
		where we have the bounds 
		\begin{align*}
			&\big\|\langle\partial_{\tilde{\tau}}^2\rangle^{-1}\partial_{\tilde{\tau}}^2\tilde{\lambda}\big\|_{\tau^{-N}L^2_{d\tau}}\lesssim_N \big\|\langle\partial_{\tilde{\tau}}^2\rangle f\big\|_{\tau^{-N}L^2_{d\tau}},\\
			&\big\|\langle\partial_{\tilde{\tau}}^2\rangle\mathcal{F}_{\tilde{\tau}}^{-1}(\delta\zeta)\big\|_{\tau^{-N}L^2_{d\tau}}\ll_{\tau_*}\big\|\langle\partial_{\tilde{\tau}}^2\rangle f\big\|_{\tau^{-N}L^2_{d\tau}}.\\
		\end{align*}
		The complex valued function $\beta(\hat{\tilde{\tau}})$ is smooth on $\R\backslash \{0\}$, satisfies the conjugation symmetry 
		\[
		\beta(-\hat{\tilde{\tau}}) = \overline{\beta(\hat{\tilde{\tau}})},
		\]
		and is bounded from above and from belpw, all subject to by a positive constant subject to numerical non-degeneracy assumption {\bf{(C3)}}. Furthermore, its imaginary part is nonzero on $\R\backslash\{0, \pm\tau_*\}$ for some $\tau_*\in \R_+$. The function $c_3$ is as in Lemma~\ref{lem:Fouriertransform2}.
	\end{lem}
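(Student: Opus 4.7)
\medskip

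The plan is to translate the equation $\Phi^{(\tilde\lambda)}_{\text{model}}(\tilde\tau)=f(\tilde\tau)$ to the (wave-time) Fourier side, where the two constituents of $\Phi^{(\tilde\lambda)}_{\text{model}}$ in \eqref{eq:Phitildelambdamodel} act as simple multipliers on $\mathcal F_{\tilde\tau}(\tilde\lambda)$. First I would apply Lemma~\ref{lem:Fouriertransform2} to the first term $\mathcal F(y^{**}_{\tilde\lambda}\cdot W)(\sigma,0)$, which, up to the factor $\beta_{2}(\hat{\tilde\tau})$ defined in \eqref{eq:beta2}, contributes $-\beta_2(\hat{\tilde\tau})\cdot\bigl[\mathcal F_{\tilde\tau}(\tilde\lambda_{\tilde\tau\tilde\tau})+c_3(\hat{\tilde\tau},\nu)\mathcal F_{\tilde\tau}(\tilde\lambda_{\tilde\tau}/\tilde\tau)\bigr]$ together with the perturbative piece $\delta\zeta$ produced by that lemma. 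Next, exploiting that $\tilde\lambda$ depends only on time in the second term of \eqref{eq:Phitildelambdamodel}, a direct calculation using $\phi(R;0)=W(R)$ shows that this second term contributes on the Fourier side exactly $c_*\cdot \hat{\tilde\tau}^{-2}\cdot \chi_{|\hat{\tilde\tau}|>\gamma^{-1}}\cdot A\cdot \mathcal F_{\tilde\tau}(\tilde\lambda)(\hat{\tilde\tau})$ with $A:=\int_0^\infty \triangle(W^2)\cdot W^2\,R^3\,dR$, which after rewriting $\mathcal F_{\tilde\tau}(\tilde\lambda)$ in terms of $\mathcal F_{\tilde\tau}(\tilde\lambda_{\tilde\tau\tilde\tau})$ provides a multiplier behaving like $\hat{\tilde\tau}^{-4}$ at high frequencies and absent at low frequencies.

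Collecting the two contributions, the equation becomes
\[
M(\hat{\tilde\tau})\cdot \Bigl[\mathcal F_{\tilde\tau}(\tilde\lambda_{\tilde\tau\tilde\tau})(\hat{\tilde\tau})+c_3(\hat{\tilde\tau},\nu)\mathcal F_{\tilde\tau}(\tilde\lambda_{\tilde\tau}/\tilde\tau)(\hat{\tilde\tau})\Bigr] \;=\; \widehat{f}(\hat{\tilde\tau})+\text{(errors)},
\]
where $M$ interpolates between $\beta_{2}(\hat{\tilde\tau})$ for $|\hat{\tilde\tau}|\lesssim 1$ (non-vanishing by numerical assumption \textbf{(C2)}, which secures $\beta_2(0)\neq 0$ via the identity $\beta_2(0)=-\tfrac12\int_0^\infty\triangle^{-1}(\Lambda W\cdot W)\cdot W^2 R^3\,dR$ obtained from Plancherel) and the $\hat{\tilde\tau}^{-4}$ high-frequency tail for $|\hat{\tilde\tau}|\geq\gamma^{-1}$. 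I would then define $\beta(\hat{\tilde\tau}):=\langle \hat{\tilde\tau}^4\rangle^{-1}\cdot M(\hat{\tilde\tau})^{-1}$, so that the equation can be solved, modulo $\delta\zeta$, by the stated formula. The conjugation symmetry $\beta(-\hat{\tilde\tau})=\overline{\beta(\hat{\tilde\tau})}$ is inherited from the corresponding symmetries of $\beta_1,\beta_2$ (which have purely imaginary principal part and real principal-value integrals), smoothness on $\mathbb R\setminus\{0\}$ follows from the symbol-type bounds on the constituents, and the symbol-type bounds on $\beta$ near $\hat{\tilde\tau}=0$ come from the $|\hat{\tilde\tau}|^2\log^2|\hat{\tilde\tau}|$ behaviour of $\beta_2$ near the origin produced by $\rho_{\mathbb R^4}(|\hat{\tilde\tau}|)/\hat{\tilde\tau}\sim\hat{\tilde\tau}^2$.

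To handle the apparent singularity of $\beta^{-1}$ at $\hat{\tilde\tau}=0$ (where $M$ vanishes at a controlled rate), I would replace $f$ by $\Pi^{(\tilde\tau)}f$ as in Lemma~\ref{lem:Pitildetaudef}, ensuring that $\widehat{\Pi^{(\tilde\tau)}f}$ vanishes to the required order at the origin so that $\beta(\hat{\tilde\tau})\cdot\widehat{\Pi^{(\tilde\tau)}f}(\hat{\tilde\tau})$ is integrable and obeys the stipulated bound $\|\langle\partial_{\tilde\tau}^2\rangle^{-1}\partial_{\tilde\tau}^2\tilde\lambda\|_{\tau^{-N}L^2_{d\tau}}\lesssim_N \|\langle\partial_{\tilde\tau}^2\rangle f\|_{\tau^{-N}L^2_{d\tau}}$; the correction $f-\Pi^{(\tilde\tau)}f$, supported in $[\tau_*/2,\tau_*]$, can be absorbed into $\delta\zeta$ and contributes $\ll_{\tau_*}$. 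The remaining content of $\delta\zeta$ collects the error from Lemma~\ref{lem:Fouriertransform2}, the replacement carried out in Lemma~\ref{lem:Phitildelambdahighfreqsimplfied}, and the smooth matching in the intermediate annulus $|\hat{\tilde\tau}|\sim 1\,$--$\,\gamma^{-1}$; each of these is perturbative with the quantitative gain $\ll_{\tau_*}$, yielding the second bound in the lemma.

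The hard part will be establishing that $\operatorname{Im}\beta(\hat{\tilde\tau})\neq 0$ on $\mathbb R\setminus\{0,\pm\tau_*\}$, i.e. the non-degeneracy of the Fredholm-type multiplier. This reduces to studying $\operatorname{Im}\beta_2(\hat{\tilde\tau})=c_1\sqrt{|\hat{\tilde\tau}|}\rho_{\mathbb R^4}(|\hat{\tilde\tau}|)\mathcal F_{\mathbb R^4}(W^2)(|\hat{\tilde\tau}|)\cdot\mathcal F_{\mathbb R^4}(\Lambda W\cdot W)(|\hat{\tilde\tau}|)/\hat{\tilde\tau}$; by Lemmas~\ref{lem:FourierNV1}, \ref{lem:FourierNV2} the first factor vanishes only at $\hat{\tilde\tau}=0$ and the second only at a single value $\hat{\tilde\tau}=\pm\hat{\tau}_*$, which isolates precisely the exceptional points identified in the statement. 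The non-vanishing of the analogous expression for the high-frequency multiplier is then numerical assumption \textbf{(C3)}. The limiting identity $\lim_{\hat{\tilde\tau}\to 0}\beta(\hat{\tilde\tau})=\bigl(-\tfrac12\int_0^\infty\triangle^{-1}(\Lambda W\cdot W)\cdot W^2 R^3\,dR\bigr)^{-1}$ follows by evaluating the principal-value integral defining $\beta_2(0)$ via Plancherel, as indicated above. Putting everything together, this yields the representation formula and the two estimates in the statement.
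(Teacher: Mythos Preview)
Your overall framework---Fourier-transform in wave time, identify the combined multiplier $M=\tilde\beta$ from Lemma~\ref{lem:Fouriertransform2} plus the high-frequency piece, then invert---matches the paper's approach. But there is a genuine gap: you never explain how to actually \emph{construct} $\tilde\lambda$ from the representation formula. Writing
\[
\mathcal F_{\tilde\tau}(\tilde\lambda_{\tilde\tau\tilde\tau})+c_3\,\mathcal F_{\tilde\tau}(\tilde\lambda_{\tilde\tau}/\tilde\tau)=\text{RHS}
\]
is a relation, not a solution: the two terms on the left depend on $\tilde\lambda$ in different ways, and on the support of $c_3$ (namely $|\hat{\tilde\tau}|\lesssim 1$) neither dominates the other. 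The paper resolves this by using $\hat{\tilde\lambda}(\hat{\tilde\tau})=\hat{\tilde\tau}^{-1}\partial_{\hat{\tilde\tau}}\mathcal F_{\tilde\tau}(\tilde\lambda_{\tilde\tau}/\tilde\tau)$ to recast the relation as a first-order ODE in the Fourier variable for $\hat g:=\mathcal F_{\tilde\tau}(\tilde\lambda_{\tilde\tau}/\tilde\tau)$, namely $\partial_{\hat{\tilde\tau}}\hat g+c_3\hat{\tilde\tau}^{-1}\hat g=\tilde\beta^{-1}\hat{\tilde\tau}\,\mathcal F_{\tilde\tau}(\Pi^{(\tilde\tau)}f)$, which is then solved explicitly via an integrating factor $H(\hat{\tilde\tau},\nu)=\exp\bigl(\int c_3(s,\nu)s^{-1}\,ds\bigr)$. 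This is also where $\Pi^{(\tilde\tau)}$ is actually needed: not because $\beta$ is singular at $0$ (it is not; $\tilde\beta(0)\neq 0$ by \textbf{(C1)}), but because the integrating factor $H$ has power-law behavior at the origin and the variation-of-constants integral $\int_0^{\hat{\tilde\tau}}H(s)\tilde\beta^{-1}(s)\,s\,\widehat{\Pi^{(\tilde\tau)}f}(s)\,ds$ requires high-order vanishing of $\widehat{\Pi^{(\tilde\tau)}f}$ at $0$ to converge and yield the $W^{M,2}$ bound that translates back to $\tau^{-N}L^2_{d\tau}$. After this, a fixed-point argument absorbs $\delta\zeta$.

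A few smaller points: the non-vanishing of $\tilde\beta$ at $\hat{\tilde\tau}=0$ is \textbf{(C1)}, not \textbf{(C2)}; assumption \textbf{(C2)} is used at the isolated point $\hat\tau_*$ where $\operatorname{Im}\beta_2$ vanishes (cf.\ Lemma~\ref{lem:FourierNV2}), to ensure $\operatorname{Re}\beta_2(\hat\tau_*)\neq 0$ so that $\tilde\beta$ itself does not vanish there. Also, the paper allows $\chi_{>\gamma^{-1}}$ to be complex-valued in the transition annulus precisely to avoid accidental zeros of the combined multiplier; you should note this when asserting $\beta$ is bounded below.
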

	\begin{proof} To begin with, we compute the Fourier transform of the second expression on the right in \eqref{eq:Phitildelambdamodel}:
		\begin{align*}
			&\mathcal{F}_{\tilde{\tau}}\Big(c_*\mathcal{F}\big(\partial_{\tilde{\tau}}^{-2}\triangle\big(Q^{(\tilde{\tau})}_{\gamma^{-1}<\cdot<\tau^{\frac12+}}\tilde{\lambda}W^2\big)\cdot W\big)(\cdot, 0)\Big)(\hat{\tilde{\tau}})\\
			& = \alpha_*\cdot \hat{\tilde{\lambda}}(\hat{\tilde{\tau}})\cdot\frac{\chi_{>\gamma^{-1}}(\hat{\tilde{\tau}})}{\hat{\tilde{\tau}}^2}
		\end{align*}
		Here, the constant $\alpha_*$ is given by the explicit product(recall \eqref{eq:cstardef})
		\begin{equation}\label{eq:alphastar}
			\alpha_* = -c_*\cdot \int_0^\infty \triangle(W^2)\cdot W^2\,R^3\,dR.
		\end{equation}
		We note that the function $\chi_{>\gamma^{-1}}(\hat{\tilde{\tau}})$ will be chosen complex valued in the 'transition region' where its real value changes between $0$ and $1$, and equals $1$ in the region $\big|\hat{\tilde{\tau}}\big|\gg \gamma^{-1}$. Also, it is assumed to satisfy the customary conjugation symmetry. Then recalling Lemma~\ref{lem:Fouriertransform2}, we observe that for $\big|\hat{\tilde{\tau}}\big|\gg 1$, we have 
		\begin{align*}
			&- c_2 \int_{0}^\infty \frac{1}{\hat{\tilde{\tau}}^2 - \xi^2}\cdot \mathcal{F}_{\R^4}\big(W^2\big)(\xi)\cdot \mathcal{F}_{\R^4}\big(\Lambda W\cdot W\big)(\xi)\rho_{\R^4}(\xi)\,d\xi\\
			& = -c_2\hat{\tilde{\tau}}^{-4}\cdot \int_0^\infty \triangle(W^2)\cdot \Lambda W\cdot W\cdot R^3\,dR + O(\hat{\tilde{\tau}}^{-6}). 
		\end{align*}
		We make the numerical non-degeneracy assumption {\bf{(C3)}} that 
		\begin{equation}\label{eq:alphastarstar}
			\alpha_{**}: = -\alpha_* + c_2\cdot \int_0^\infty \triangle(W^2)\cdot \Lambda W\cdot W\cdot  R^3\,dR\neq 0
		\end{equation}
		Recalling Lemma~\ref{lem:Fouriertransform2} and its proof, it follows that 
		\begin{align*}
			&\mathcal{F}_{\tilde{\tau}}\big(\Phi^{(\tilde{\lambda})}_{\text{model}}\big)(\hat{\tilde{\tau}})\\& = 
			-\hat{\tilde{\tau}}^2\big(\hat{\tilde{\lambda}}(\hat{\tilde{\tau}}) + c_3(\hat{\tilde{\tau}},\nu)\cdot\hat{\tilde{\tau}}^{-2}\mathcal{F}_{\tilde{\tau}}\big(\frac{\tilde{\lambda}_{\tilde{\tau}}}{\tilde{\tau}}\big)(\hat{\tilde{\tau}})\big)\cdot\beta_2(\hat{\tilde{\tau}})\\
			& -  \alpha_*\cdot \big(\hat{\tilde{\lambda}}(\hat{\tilde{\tau}}) + c_3\hat{\tilde{\tau}}^{-2}\mathcal{F}_{\tilde{\tau}}\big(\frac{\tilde{\lambda}_{\tilde{\tau}}}{\tilde{\tau}}\big)(\hat{\tilde{\tau}})\big)\cdot\frac{\chi_{>\gamma^{-1}}(\hat{\tilde{\tau}})}{\hat{\tilde{\tau}}^2} -\delta_1\zeta(\hat{\tilde{\tau}})\\
			&\delta_1\zeta(\hat{\tilde{\tau}})= \delta\zeta(\hat{\tilde{\tau}}) -  \alpha_*\cdot c_3\hat{\tilde{\tau}}^{-2}\mathcal{F}_{\tilde{\tau}}\big(\frac{\tilde{\lambda}_{\tilde{\tau}}}{\tilde{\tau}}\big)(\hat{\tilde{\tau}})\cdot\frac{\chi_{>\gamma^{-1}}(\hat{\tilde{\tau}})}{\hat{\tilde{\tau}}^2} 
		\end{align*}
		which can be equated with 
		\begin{align*}
			\tilde{\beta}(\hat{\tilde{\tau}})\cdot  \big(\hat{\tilde{\lambda}}(\hat{\tilde{\tau}}) + c_3(\hat{\tilde{\tau}},\nu)\hat{\tilde{\tau}}^{-2}\mathcal{F}_{\tilde{\tau}}\big(\frac{\tilde{\lambda}_{\tilde{\tau}}}{\tilde{\tau}}\big)(\hat{\tilde{\tau}})\big) - \delta_1\zeta(\hat{\tilde{\tau}}), 
		\end{align*}
		where 
		\begin{align*}
			\tilde{\beta}(\hat{\tilde{\tau}}) = \alpha_{**}\cdot \hat{\tilde{\tau}}^{-2} + O(\hat{\tilde{\tau}}^{-4}),\,\delta_1\zeta(\hat{\tilde{\tau}})
			&= \hat{\tilde{\tau}}^{-2}\cdot \kappa(\hat{\tilde{\tau}})\cdot \mathcal{F}_{\tilde{\tau}}\big(Q^{(\tilde{\tau})}_{<\tilde{\tau}^{\frac{10}{\nu}}}\big(\frac{\tilde{\lambda}_{\tilde{\tau}}}{\tilde{\tau}}\big)\big)(\hat{\tilde{\tau}})\\
			& -  \alpha_*\cdot c_3\hat{\tilde{\tau}}^{-2}\mathcal{F}_{\tilde{\tau}}\big(\frac{\tilde{\lambda}_{\tilde{\tau}}}{\tilde{\tau}}\big)(\hat{\tilde{\tau}})\cdot\frac{\chi_{>\gamma^{-1}}(\hat{\tilde{\tau}})}{\hat{\tilde{\tau}}^2} 
		\end{align*}
		for $\big| \hat{\tilde{\tau}}\big|\gg 1$. It is straightforward to verify that $\tilde{\beta}(\hat{\tilde{\tau}})$ is $C^\infty$ away from zero, and furthermore the imaginary part is non-zero away from the origin. Also, Lemma~\ref{lem:FourierNV1}, Lemma~\ref{lem:FourierNV2} and non-degeneracy assumptions {\bf{(C2)}}, {\bf{(C3)}} as well as suitable choice of the (complex valued) cutoff $\chi_{<\gamma^{-1}}$ give a positive lower bound for the absolute value of $\tilde{\beta}(\hat{\tilde{\tau}})$, independent of all parameter choices. Taking advantage of Lemma~\ref{lem:Fouriertransform2} and the Fourier localization of the first term constituting $\delta_1\zeta(\hat{\tilde{\tau}})$ to frequencies $<\tilde{\tau}^{\frac{10}{\nu}}$, we check that for $\nu$ sufficiently large, we have the bound 
		\begin{equation}\label{eq:deltazetabound}
			\Big\|\langle \hat{\tilde{\tau}}^2\rangle\cdot\delta\zeta(\hat{\tilde{\tau}})\Big\|_{W^{M,2}}\ll_{\tau_*}\big\|\langle\partial_{\tilde{\tau}}^2\rangle^{-1}\tilde{\lambda}_{\tilde{\tau}\tilde{\tau}}\big\|_{\tau^{-N}L^2_{d\tau}}. 
		\end{equation}
		Since 
		\[
		\Pi^{(\tilde{\tau})}f
		\]
		co-incides with $f$ on $[\tau_*, \infty)$, it suffices to solve 
		\begin{equation}\label{eq:solnPhitildelambdamodelonFourierside}
			\tilde{\beta}(\hat{\tilde{\tau}})\cdot  \big(\hat{\tilde{\lambda}}(\hat{\tilde{\tau}}) + c_3(\hat{\tilde{\tau}},\nu)\hat{\tilde{\tau}}^{-2}\mathcal{F}_{\tilde{\tau}}\big(\frac{\tilde{\lambda}_{\tilde{\tau}}}{\tilde{\tau}}\big)(\hat{\tilde{\tau}})\big) - \delta\zeta(\hat{\tilde{\tau}}) = \mathcal{F}_{\tilde{\tau}}\big(\Pi^{(\tilde{\tau})}f\big)(\hat{\tilde{\tau}}). 
		\end{equation}
		In order to solve the simpler model equation 
		\begin{align*}
			\tilde{\beta}(\hat{\tilde{\tau}})\cdot  \big(\hat{\tilde{\lambda}}(\hat{\tilde{\tau}}) + c_3(\hat{\tilde{\tau}},\nu)\hat{\tilde{\tau}}^{-2}\mathcal{F}_{\tilde{\tau}}\big(\frac{\tilde{\lambda}_{\tilde{\tau}}}{\tilde{\tau}}\big)(\hat{\tilde{\tau}})\big) = \mathcal{F}_{\tilde{\tau}}\big(\Pi^{(\tilde{\tau})}f\big)(\hat{\tilde{\tau}}),
		\end{align*}
		use that
		\[
		\hat{\tilde{\lambda}}(\hat{\tilde{\tau}}) = \hat{\tilde{\tau}}^{-1}\cdot \partial_{\hat{\tilde{\tau}}}\mathcal{F}_{\tilde{\tau}}\big(\frac{\tilde{\lambda}_{\tilde{\tau}}}{\tilde{\tau}}\big)(\hat{\tilde{\tau}}),
		\]
		and so we infer the ordinary differential equation 
		\begin{align*}
			\partial_{\hat{\tilde{\tau}}}\mathcal{F}_{\tilde{\tau}}\big(\frac{\tilde{\lambda}_{\tilde{\tau}}}{\tilde{\tau}}\big)(\hat{\tilde{\tau}})+ c_3(\hat{\tilde{\tau}},\nu)\hat{\tilde{\tau}}^{-1}\mathcal{F}_{\tilde{\tau}}\big(\frac{\tilde{\lambda}_{\tilde{\tau}}}{\tilde{\tau}}\big)(\hat{\tilde{\tau}}) = \tilde{\beta}^{-1}(\hat{\tilde{\tau}})\cdot \hat{\tilde{\tau}}\cdot \mathcal{F}_{\tilde{\tau}}\big(\Pi^{(\tilde{\tau})}f\big).
		\end{align*}
		We note that $\tilde{\beta}^{-1}(\hat{\tilde{\tau}})$ is bounded away from zero for $\hat{\tilde{\tau}}\rightarrow 0$ due to assumption {\bf{(C1)}} in subsection~\ref{subsec:numerics}. 
		The preceding equation is solved explicitly by means of 
		\begin{align*}
			&\mathcal{F}_{\tilde{\tau}}\big(\frac{\tilde{\lambda}_{\tilde{\tau}}}{\tilde{\tau}}\big)(\hat{\tilde{\tau}}) = H^{-1}(\hat{\tilde{\tau}},\nu)\cdot \int_0^{\hat{\tilde{\tau}}}H(s,\nu)\cdot \tilde{\beta}^{-1}(s)\cdot s\cdot \mathcal{F}_{\tilde{\tau}}\big(\Pi^{(\tilde{\tau})}f\big)(s)\,ds,\\
			&H(\hat{\tilde{\tau}},\nu) = e^{\int_{\hat{\tilde{\tau}}}^{\text{sgn}(\hat{\tilde{\tau}})} c_3(s,\nu)s^{-1}\,ds},
		\end{align*}
		and we observe that there is no problem with convergence of the integral near $\hat{\tilde{\tau}} = 0$ due to the high order of vanishing of $\mathcal{F}_{\tilde{\tau}}\big(\Pi^{(\tilde{\tau})}f\big)$ there. In turn we can write 
		\begin{align*}
			\hat{\tilde{\lambda}}(\hat{\tilde{\tau}}) = \hat{\tilde{\tau}}^{-1}\partial_{\hat{\tilde{\tau}}}\big(\mathcal{F}_{\tilde{\tau}}\big(\frac{\tilde{\lambda}_{\tilde{\tau}}}{\tilde{\tau}}\big)\big), 
		\end{align*}
		and the compactness of the support of $c_3(\hat{\tilde{\tau}},\nu)$ as well as the above large frequency asymptotics for $\tilde{\beta}(s)$ then easily imply the estimate 
		\begin{align*}
			\big\|\hat{\tilde{\lambda}}(\hat{\tilde{\tau}}) \big\|_{W_{\hat{\tilde{\tau}}}^{M,2}}\lesssim \big\|\langle\partial_{\tilde{\tau}}^2\rangle f\big\|_{\tau^{-N}L^2_{d\tau}}. 
		\end{align*}
		Using Plancherel's theorem this translates into the bound asserted in the Lemma, but for the solution of the model equation stated below \eqref{eq:solnPhitildelambdamodelonFourierside}. To solve the latter equation, it suffices to use the preceding argument, the bound \eqref{eq:deltazetabound} and a standard fixed point argument. 
	\end{proof}
	\begin{rem}\label{rem:lem:Phitildelambdamodeleqnsmallness}  The operator $\Pi^{(\tilde{\tau})}f$ has very large bounds dependent on $N$, and we shall have to apply the preceding construction to functions $f$ which only gain $N^{-1}$ at times. In order not to lose this smallness gain due to application of the projection operator, the following observation is important: from Lemma~\ref{lem:Fouriertransform2} we infer that for $|\hat{\tilde{\tau}}|\ll 1$, the factor $\beta(\hat{\tilde{\tau}}) = \tilde{\beta}^{-1}(\hat{\tilde{\tau}})$, with 
		\begin{align*}
			\tilde{\beta}(\hat{\tilde{\tau}}) &= [ic_1\cdot \mathcal{F}_{\R^4}\big(W^2\big)(|\hat{\tilde{\tau}}|)\cdot \mathcal{F}_{\R^4}\big(\Lambda W\cdot W\big)(|\hat{\tilde{\tau}}|)\cdot \frac{\rho_{\R^4}(|\hat{\tilde{\tau}}|)}{\hat{\tilde{\tau}}}\\&\hspace{2cm} - c_2 \int_{0}^\infty \frac{1}{\hat{\tilde{\tau}}^2 - \xi^2}\cdot \mathcal{F}_{\R^4}\big(W^2\big)(\xi)\cdot \mathcal{F}_{\R^4}\big(\Lambda W\cdot W\big)(\xi)\rho_{\R^4}(\xi)\,d\xi
		\end{align*}
		This can be written in the form $\tilde{\beta}(\hat{\tilde{\tau}}) = \alpha_* + O(|\log\hat{\tilde{\tau}}|^2\cdot |\hat{\tilde{\tau}}|^2)$, with the constant $\alpha_*$ as in the statement of Lemma~\ref{lem:lowtempfreqznresprin2}, and the error term has symbol behavior. Then decompose
		\begin{align*}
			\big(\beta(\hat{\tilde{\tau}}) - \alpha_*^{-1}\big)\cdot \mathcal{F}_{\tilde{\tau}}\big(\Pi^{(\tilde{\tau})}f\big) &= \chi_{\hat{\tilde{\tau}}<\log^{-1}\tau_*}\cdot \big(\beta(\hat{\tilde{\tau}}) - \alpha_*^{-1}\big)\cdot \mathcal{F}_{\tilde{\tau}}\big(\Pi^{(\tilde{\tau})}f\big)\\
			& + \chi_{\hat{\tilde{\tau}}\geq \log^{-1}\tau_*}\cdot \big(\beta(\hat{\tilde{\tau}}) - \alpha_*^{-1}\big)\cdot \mathcal{F}_{\tilde{\tau}}\big(\Pi^{(\tilde{\tau})}f\big)\\
		\end{align*}
		where the cutoffs are smoothly localizing to the indicated regions, and we then have the bounds 
		\begin{align*}
			&\Big\| \mathcal{F}_{\tilde{\tau}}^{-1}\Big(\chi_{\hat{\tilde{\tau}}\geq \log^{-1}\tau_*}\cdot \big(\beta(\hat{\tilde{\tau}}) - \alpha_*^{-1}\big)\cdot \mathcal{F}_{\tilde{\tau}}\big(\Pi^{(\tilde{\tau})}f\big)\Big)\Big\|_{\tau^{-N}L^2_{d\tau}}\lesssim \big\|f\big\|_{\tau^{-N}L^2_{d\tau}},\\
			&\Big\| \mathcal{F}_{\tilde{\tau}}^{-1}\Big(\chi_{\hat{\tilde{\tau}}<\log^{-1}\tau_*}\cdot \big(\beta(\hat{\tilde{\tau}}) - \alpha_*^{-1}\big)\cdot \mathcal{F}_{\tilde{\tau}}\big(\Pi^{(\tilde{\tau})}f\big)\Big)\Big\|_{\tau^{-N}L^2_{d\tau}}\ll_{\tau_*} \big\|\Pi^{(\tilde{\tau})}f\big\|_{\tau^{-N}L^2_{d\tau}},
		\end{align*}
		and for $\tau_*\gg N$ we can replace $\big\|\Pi^{(\tilde{\tau})}f\big\|_{\tau^{-N}L^2_{d\tau}}$ by $\big\|f\big\|_{\tau^{-N}L^2_{d\tau}}$ at the end. Finally choosing a cutoff $\tilde{\chi}(\tilde{\tau})$ which equals $1$ on $[\tau_*,\infty)$ and satisfies $\tilde{\chi}f = \tilde{\chi}\Pi^{(\tilde{\tau})}f$, we infer that 
		\begin{align*}
			\Big\|\tilde{\chi}\cdot\mathcal{F}_{\tilde{\tau}}^{-1}\big(\beta(\hat{\tilde{\tau}})\mathcal{F}_{\tilde{\tau}}\big(\Pi^{(\tilde{\tau})}f\big)\big)\Big\|_{\tau^{-N}L^2_{d\tau}}\lesssim \big\|f\big\|_{\tau^{-N}L^2_{d\tau}}
		\end{align*}
		where the implied constant is independent of $N$. 
		
	\end{rem}
	
	Finally we have all the tools to complete the proof of Proposition~\ref{prop:Phitildelambdasolution}:
	\begin{proof}(Prop.~\ref{prop:Phitildelambdasolution})  Keeping in mind \eqref{eq:Phitildelambdadef}, as well as \eqref{eq:tildeE2mod}, \eqref{eq:ytildemodmodified}, we shall first show that all the terms in  \eqref{eq:tildeE2mod} except the first one are perturbative:
		\begin{lem}\label{lem:E2modperturbativeterms} Let us denote the sum of all terms in \eqref{eq:tildeE2mod} with the exception of the first two ones as well as the term 
			\[
			-Q^{(\tilde{\tau})}_{<\tilde{\tau}^{\frac{10}{\nu}}}\Big(\tilde{\lambda}_t\cdot \partial_t\big(\partial_{\tilde{\lambda}}n_*^{(\tilde{\lambda})}-\lambda^2\Lambda W\cdot W\big)\Big)
			\]
			as $\tilde{E}_{2*}^{\text{mod}}$, and further set 
			\begin{align*}
				\tilde{y}_{\tilde{\lambda}*}^{\text{mod}}: = \Box^{-1}\tilde{E}_{2*}^{\text{mod}}. 
			\end{align*}
			Then we have the bound 
			\begin{align*}
				\Big\|\langle\partial_{\tilde{\tau}}^2\rangle\mathcal{F}\big(\lambda^{-2}\tilde{y}_{\tilde{\lambda}*}^{\text{mod}}\cdot W\big)(\tau, 0)\Big\|_{\tau^{-N}L^2_{d\tau}}\ll_{\tau_*}\big\|\langle\partial_{\tilde{\tau}}^2\rangle^{-1}\partial_{\tilde{\tau}}^2\tilde{\lambda}\big\|_{\tau^{-N}L^2_{d\tau}}
			\end{align*}
		\end{lem}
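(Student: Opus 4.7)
The plan is to treat each of the terms constituting $\tilde{E}_{2*}^{\text{mod}}$ separately, using in each case the wave propagator bounds from Lemma~\ref{lem:wavebasicinhom} (or its variant Lemma~\ref{lem:refinedwavepropagatorwithphysicallocalization}), together with the fine asymptotics of the approximate solution and the localization properties of the cutoffs. In each case the goal is to show that the contribution enjoys a smallness gain $\ll_{\tau_*}1$ relative to $\bigl\|\langle\partial_{\tilde{\tau}}^2\rangle^{-1}\partial_{\tilde{\tau}}^2\tilde{\lambda}\bigr\|_{\tau^{-N}L^2_{d\tau}}$, so that they play no role in the structural analysis of Proposition~\ref{prop:Phitildelambdasolution}.

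First I would handle the term $-Q^{(\tilde{\tau})}_{<1}\bigl[\tilde{\lambda}_{tt}\cdot(\partial_{\tilde{\lambda}}n_*^{(\tilde{\lambda})}-\lambda^2\Lambda W\cdot W)\bigr]$. Using the asymptotic expansion from Lemma~\ref{lem:approxsolasymptotics3}, one has $\bigl\|\lambda^{-2}(\partial_{\tilde{\lambda}}n_*^{(\tilde{\lambda})}-\lambda^2\Lambda W\cdot W)\bigr\|_{L^2_{R^3 dR}}\lesssim \tau^{-1+O(1/\nu)}$, so relative to the principal source $\tilde{\lambda}_{tt}\cdot\lambda^2\Lambda W\cdot W$ one gains $\tau^{-1+O(1/\nu)}\ll_{\tau_*}1$. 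Applying Lemma~\ref{lem:wavebasicinhom} to this source, multiplying by $W$, evaluating the Fourier coefficient at $\xi=0$ and using the localizer $Q^{(\tilde{\tau})}_{<1}$ to control two $\partial_{\tilde{\tau}}$ derivatives yields the claimed bound. The wave-temporal frequency truncation is essential, since after two $\tilde{\tau}$-derivatives the expression would otherwise fail to lie in $\tau^{-N}L^2_{d\tau}$; the truncation is harmless because the $\tilde\tau$-Fourier multiplier $\langle\hat{\tilde{\tau}}^2\rangle\cdot\chi_{<1}(\hat{\tilde{\tau}})$ is bounded.

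The terms involving $\partial_t(\chi_3)$, $\Box(\chi_3)$, $\partial_r(\chi_3)$, $\triangle_r(\chi_3)$ are all supported in the annular region $r\sim t^{\frac12+\epsilon}$, i.e.\ $R\gtrsim \tau^{\frac12-\frac{1}{4\nu}+}$, which lies in the self-similar wave zone. For each of these I would use that the differentiated cutoffs obey $|\partial_t\chi_3|\lesssim \tau^{-1},\,|\triangle_r\chi_3|\lesssim \tau^{-1+O(1/\nu)}$, combined with the asymptotics $|n_*^{(\tilde{\lambda})}-n_*|\lesssim \tilde{\lambda}\cdot\lambda^2 W^2$ and $||\psi_*^{(\tilde{\lambda})}|^2-|\psi_*|^2|\lesssim \tilde{\lambda}\cdot\lambda^2 W^2$ from Lemma~\ref{lem:approxsolasymptotics1}. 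Combined with the spatial restriction of the source to $R\gtrsim\tau^{\frac12-\frac{1}{4\nu}+}$, where $W^2\lesssim R^{-4}$, Lemma~\ref{lem:refinedwavepropagatorwithphysicallocalization2} then places $\lambda^{-2}\Box^{-1}$ of each such term into $\tau^{-N-\delta}L^2_{d\tau}L^2_{R^3\,dR}$ for some $\delta=\delta(\nu)>0$. After pairing with $W$ (which more than compensates any mild weight loss in $R$) and evaluating at $\xi=0$, one obtains the required smallness gain. The operator $\langle\partial_{\tilde{\tau}}^2\rangle$ on the outside is absorbed by the cutoff $Q^{(\tilde{\tau})}_{<1}$ and the extra $\tau$-decay already gained.

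The main subtlety, and the step I expect to require the most care, is the term $-Q^{(\tilde{\tau})}_{<1}\bigl[2\partial_t(\chi_3)\tilde{\lambda}_t\cdot\partial_{\tilde{\lambda}}n_*^{(\tilde{\lambda})}\bigr]$, since it is linear in $\tilde{\lambda}_t$ (rather than $\tilde{\lambda}_{tt}$) and requires converting $\tilde{\lambda}_t$ into $\langle\partial_{\tilde{\tau}}^2\rangle^{-1}\partial_{\tilde{\tau}}^2\tilde{\lambda}$. The strategy is to exploit the identity $\tilde{\lambda}_t=\lambda\tilde{\lambda}_{\tilde{\tau}}$ together with the bound $\|\tilde{\lambda}_{\tilde{\tau}}\|_{\tau^{-N+\frac12+\frac{1}{4\nu}}L^2_{d\tau}}\lesssim \bigl\|\langle\partial_{\tilde{\tau}}^2\rangle^{-1}\partial_{\tilde{\tau}}^2\tilde{\lambda}\bigr\|_{\tau^{-N}L^2_{d\tau}}$, which holds on the support of $Q^{(\tilde{\tau})}_{<1}$; the extra $\lambda$ is compensated by $\tau^{-1}$ from $\partial_t\chi_3$ and the spatial weight from $\partial_{\tilde{\lambda}}n_*^{(\tilde{\lambda})}$ restricted to $R\gtrsim\tau^{\frac12-\frac{1}{4\nu}+}$. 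Summing all the bounds, the restriction $\tau\geq\tau_*$ produces the desired smallness factor, completing the argument.
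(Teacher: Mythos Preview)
Your approach is essentially the same as the paper's: treat the constituents of $\tilde{E}_{2*}^{\text{mod}}$ individually, feed each into the wave propagator estimates of Lemma~\ref{lem:wavebasicinhom}, and extract smallness either from the extra $\tau^{-1+O(1/\nu)}$ decay of $\partial_{\tilde{\lambda}}n_*^{(\tilde{\lambda})}-\lambda^2\Lambda W\cdot W$ or from the spatial localization of the $\chi_3$-derivative terms, while using the low temporal frequency cutoffs to absorb the outer $\langle\partial_{\tilde{\tau}}^2\rangle$ and to convert $\tilde{\lambda}_{tt}$ or $\tilde{\lambda}_t$ into $\langle\partial_{\tilde{\tau}}^2\rangle^{-1}\partial_{\tilde{\tau}}^2\tilde{\lambda}$.

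Two small points. First, the paper actually \emph{includes} the displayed term $-Q^{(\tilde{\tau})}_{<\tilde{\tau}^{10/\nu}}\bigl(\tilde{\lambda}_t\cdot\partial_t(\partial_{\tilde{\lambda}}n_*^{(\tilde{\lambda})}-\lambda^2\Lambda W\cdot W)\bigr)$ in $\tilde{E}_{2*}^{\text{mod}}$ and treats it as a separate case; the mechanism there is that $\partial_t$ hitting the second factor gains $\tilde{\tau}^{-1}$, which together with $\|\tilde{\tau}^{-1}Q^{(\tilde{\tau})}_{<\tilde{\tau}^{10/\nu}}\tilde{\lambda}_{\tilde{\tau}}\|_{\tau^{-N}L^2_{d\tau}}\lesssim\|\langle\partial_{\tilde{\tau}}^2\rangle^{-1}\partial_{\tilde{\tau}}^2\tilde{\lambda}\|_{\tau^{-N}L^2_{d\tau}}$ closes the estimate. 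Your reading of the lemma statement (that this term is excluded) is grammatically defensible, but the paper's proof handles it, so for completeness you should too; the argument is a minor variant of what you already wrote for the $\partial_t(\chi_3)\tilde{\lambda}_t$ term. Second, for the $\chi_3$-derivative terms the paper proceeds by direct Fourier-side estimates via Lemma~\ref{lem:wavebasicinhom} rather than Lemma~\ref{lem:refinedwavepropagatorwithphysicallocalization2}; either route works since these sources live at $R\gtrsim\tau^{\frac12-}$ and decay like $R^{-4}$ or better.
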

		\begin{proof} We deal with the contributions of the third and fourth term in \eqref{eq:tildeE2mod} as well as the term explicitly displayed in the lemma, the others following a similar pattern. 
			\\
			
			{\it{(1): contribution of the term $Q^{(\tilde{\tau})}_{<1}\Big[\tilde{\lambda}_{tt}\cdot \big(\partial_{\tilde{\lambda}}n_*^{(\tilde{\lambda})}-\lambda^2\Lambda W\cdot W\big)\Big]$.}} Explicitly we need to bound 
			\begin{align*}
				\mathcal{F}\Big(\lambda^{-2}\Box^{-1}Q^{(\tilde{\tau})}_{<1}\Big[\lambda^2\cdot \tilde{\lambda}_{\tilde{\tau}\tilde{\tau}}\cdot \big(\partial_{\tilde{\lambda}}n_*^{(\tilde{\lambda})}-\lambda^2\Lambda W\cdot W\big)\Big]\cdot W\Big)(\tau, 0). 
			\end{align*}
			Here the operator $Q^{(\tilde{\tau})}_{<1}$ 'neutralizes' the derivatives in $\tilde{\lambda}_{\tilde{\tau}\tilde{\tau}}$, and also ensures we can hit the expression with an operator $\partial_{\tilde{\tau}}^2$. Observe the bound (see Lemma~\ref{lem:approxsolasymptotics3})
			\begin{align*}
				\big\|\partial_{\tilde{\lambda}}n_*^{(\tilde{\lambda})}-\lambda^2\Lambda W\cdot W\big\|_{L^{1+}_{R^3\,dR}}\lesssim \tau^{0-}\cdot \lambda^2(\tau), 
			\end{align*}
			and from there
			\begin{align*}
				&\Big\|\mathcal{F}_{\R^4}Q^{(\tilde{\tau})}_{<1}\Big[\tilde{\lambda}_{\tilde{\tau}\tilde{\tau}}\cdot \big(\partial_{\tilde{\lambda}}n_*^{(\tilde{\lambda})}-\lambda^2\Lambda W\cdot W\big)\Big]\Big\|_{\lambda^2(\tau)\cdot\tau^{-N-}L^2_{d\tau}L^M_{\rho(\xi) d\xi}\cap L^2_{\rho(\xi)\,d\xi}}\\
				&\lesssim \big\|\langle\partial_{\tilde{\tau}}^2\rangle^{-2}\partial_{\tilde{\tau}}^2\tilde{\lambda}\big\|_{\tau^{-N}L^2_{d\tau}}. 
			\end{align*}
			In a similar vein, we also have the bound 
			\begin{align*}
				&\Big\|\langle\partial_{\xi}\rangle^{1+\delta_0}\mathcal{F}_{\R^4}Q^{(\tilde{\tau})}_{<1}\Big[\lambda^2\cdot \tilde{\lambda}_{\tilde{\tau}\tilde{\tau}}\cdot \big(\partial_{\tilde{\lambda}}n_*^{(\tilde{\lambda})}-\lambda^2\Lambda W\cdot W\big)\Big]\Big\|_{\lambda^2(\tau)\cdot\tau^{-N-}L^2_{d\tau} L^2_{\rho(\xi)\,d\xi}}\\
				&\lesssim \big\|\langle\partial_{\tilde{\tau}}^2\rangle^{-2}\partial_{\tilde{\tau}}^2\tilde{\lambda}\big\|_{\tau^{-N}L^2_{d\tau}}. 
			\end{align*}
			
			Taking advantage of Lemma~\ref{lem:wavebasicinhom} and the remarks above, we then infer 
			\begin{align*}
				&\Big\|\langle\partial_{\tilde{\tau}}^2\rangle \mathcal{F}\Big(\lambda^{-2}\Box^{-1}Q^{(\tilde{\tau})}_{<1}\Big[\lambda^2\cdot \tilde{\lambda}_{\tilde{\tau}\tilde{\tau}}\cdot \big(\partial_{\tilde{\lambda}}n_*^{(\tilde{\lambda})}-\lambda^2\Lambda W\cdot W\big)\Big]\cdot W\Big)(\tau, 0)\Big\|_{\tau^{-N-}L^2_{d\tau}}\\
				&\lesssim  \big\|\langle\partial_{\tilde{\tau}}^2\rangle^{-2}\partial_{\tilde{\tau}}^2\tilde{\lambda}\big\|_{\tau^{-N}L^2_{d\tau}}. 
			\end{align*}
			
			{\it{(2): contribution of the term $2\partial_t(\chi_3)\cdot  \tilde{\lambda}_t\cdot \partial_{\tilde{\lambda}}n_*^{(\tilde{\lambda})}$.}} Explicitly we need to bound 
			\begin{align*}
				\mathcal{F}\Big(\lambda^{-2}\Box^{-1}Q^{(\tilde{\tau})}_{<1}\Big[2\lambda^2\partial_{\tilde{\tau}}(\chi_3)\cdot  \tilde{\lambda}_{\tilde{\tau}}\cdot \partial_{\tilde{\lambda}}n_*^{(\tilde{\lambda})}\Big]\cdot W\Big)(\tau, 0).
			\end{align*}
			Exploiting the definition of $\chi_3$, we have 
			\begin{align*}
				&\Big\|\mathcal{F}_{\R^4}Q^{(\tilde{\tau})}_{<1}\Big[\partial_{\tilde{\tau}}(\chi_3)\cdot  \tilde{\lambda}_{\tilde{\tau}}\cdot \partial_{\tilde{\lambda}}n_*^{(\tilde{\lambda})}\Big]\Big\|_{\lambda^2(\tau)\cdot\tau^{-N-}L^2_{d\tau}L^M_{\rho(\xi) d\xi}\cap L^2_{\rho(\xi)\,d\xi}}\\
				&\lesssim  \big\|\langle\partial_{\tilde{\tau}}^2\rangle^{-2}\partial_{\tilde{\tau}}^2\tilde{\lambda}\big\|_{\tau^{-N}L^2_{d\tau}},
			\end{align*}
			as well as 
			\begin{align*}
				&\Big\|\langle\partial_{\xi}\rangle^{1+\delta_0}\mathcal{F}_{\R^4}Q^{(\tilde{\tau})}_{<1}\Big[\partial_{\tilde{\tau}}(\chi_3)\cdot  \tilde{\lambda}_{\tilde{\tau}}\cdot \partial_{\tilde{\lambda}}n_*^{(\tilde{\lambda})}\Big]\Big\|_{\lambda^2(\tau)\cdot\tau^{-N-}L^2_{d\tau} L^2_{\rho(\xi)\,d\xi}}\\
				&\lesssim \big\|\langle\partial_{\tilde{\tau}}^2\rangle^{-2}\partial_{\tilde{\tau}}^2\tilde{\lambda}\big\|_{\tau^{-N}L^2_{d\tau}}. 
			\end{align*}
			Taking advantage of Lemma~\ref{lem:wavebasicinhom} we obtain that 
			\begin{align*}
				&\Big\|\langle\partial_{\tilde{\tau}}^2\rangle \mathcal{F}\Big(\lambda^{-2}\Box^{-1}Q^{(\tilde{\tau})}_{<1}\Big[2\lambda^2\partial_{\tilde{\tau}}(\chi_3)\cdot  \tilde{\lambda}_{\tilde{\tau}}\cdot \partial_{\tilde{\lambda}}n_*^{(\tilde{\lambda})}\Big]\cdot W\Big)(\tau, 0)\Big\|_{\tau^{-N-}L^2_{d\tau}}\\
				&\lesssim  \big\|\langle\partial_{\tilde{\tau}}^2\rangle^{-2}\partial_{\tilde{\tau}}^2\tilde{\lambda}\big\|_{\tau^{-N}L^2_{d\tau}}. 
			\end{align*}
			
			{\it{(3): contribution of the term $-Q^{(\tilde{\tau})}_{<\tilde{\tau}^{\frac{10}{\nu}}}\Big(\tilde{\lambda}_t\cdot \partial_t\big(\partial_{\tilde{\lambda}}n_*^{(\tilde{\lambda})}-\lambda^2\Lambda W\cdot W\big)\Big)$.}} Explicitly we need to bound 
			\begin{align*}
				\mathcal{F}\Big(\lambda^{-2}\Box^{-1}Q^{(\tilde{\tau})}_{<\tilde{\tau}^{\frac{10}{\nu}}}\Big(\lambda^2\tilde{\lambda}_{\tilde{\tau}}\cdot \partial_{\tilde{\tau}}\big(\partial_{\tilde{\lambda}}n_*^{(\tilde{\lambda})}-\lambda^2\Lambda W\cdot W\big)\Big)\cdot W\Big)(\tau, 0). 
			\end{align*}
			Observe that the operator $\partial_{\tilde{\tau}}$ gains an extra factor $\tilde{\tau}^{-1}$, and we have 
			\begin{align*}
				\Big|\partial_{\tilde{\lambda}}n_*^{(\tilde{\lambda})}-\lambda^2\Lambda W\cdot W\Big|\lesssim \chi_3\cdot \frac{\lambda^2}{(\lambda t)^2}\cdot\frac{\log^2 R}{\langle R\rangle^2}.
			\end{align*}
			Furthermore, we have the inequality 
			\begin{align*}
				\big\|\frac{Q^{(\tilde{\tau})}_{<\tilde{\tau}^{\frac{10}{\nu}}}\tilde{\lambda}_{\tilde{\tau}}}{\tilde{\tau}}\big\|_{\tau^{-N}L^2_{d\tau}}\lesssim \big\|\langle\partial_{\tilde{\tau}}^2\rangle^{-1}\partial_{\tilde{\tau}}^2\tilde{\lambda}\big\|_{\tau^{-N}L^2_{d\tau}}. 
			\end{align*}
			We then infer in analogy to the cases {\it{(1), (2)}} the bound
			\begin{align*}
				&\Big\|\langle\partial_{\tilde{\tau}}^2\rangle \mathcal{F}\Big(\lambda^{-2}\Box^{-1}Q^{(\tilde{\tau})}_{<\tilde{\tau}^{\frac{10}{\nu}}}\Big(\lambda^2\tilde{\lambda}_{\tilde{\tau}}\cdot \partial_{\tilde{\tau}}\big(\partial_{\tilde{\lambda}}n_*^{(\tilde{\lambda})}-\lambda^2\Lambda W\cdot W\big)\Big)\cdot W\Big)(\tau, 0)\Big\|_{\tau^{-N-}L^2_{d\tau}}\\
				&\lesssim  \big\|\langle\partial_{\tilde{\tau}}^2\rangle^{-2}\partial_{\tilde{\tau}}^2\tilde{\lambda}\big\|_{\tau^{-N}L^2_{d\tau}}. 
			\end{align*}
		\end{proof}
		
		Recalling \eqref{eq:Phitildelambdadef} which in turn relies on \eqref{eq:ytildemodmodified}, \eqref{eq:tildeE2mod}, and taking advantage of the preceding lemma as well as Lemmas~\ref{lem:Phitildelambdahighfreqsimplfied}, ~\ref{lem:reductionsteps3}, ~\ref{lem:reductionsteps2}, we can write 
		\eqref{eq:Phitildelambdaf} in the modified form 
		\begin{equation}\label{eq:Phitildelambdafmodified}
			\Phi^{(\tilde{\lambda})}_{\text{model}}(\sigma) = f(\sigma) + \Phi^{(\tilde{\lambda})}_{\text{small}} + \partial_{\sigma}E, 
		\end{equation}
		where we have the bounds 
		\begin{align*}
			&\Big\|\langle\partial_{\tilde{\sigma}}^2\rangle\Phi^{(\tilde{\lambda})}_{\text{small}}\Big\|_{\sigma^{-N}L^2_{d\sigma}}\ll_{\tau_*}\big\|\langle\partial_{\tilde{\sigma}}^2\rangle^{-1}\partial_{\tilde{\sigma}}^2\tilde{\lambda}\big\|_{\sigma^{-N}L^2_{d\sigma}},\\
			&\Big\|E\Big\|_{\sigma^{-N-}L^2_{d\sigma}}\ll_{\tau_*}\big\|\langle\partial_{\tilde{\sigma}}^2\rangle^{-1}\partial_{\tilde{\sigma}}^2\tilde{\lambda}\big\|_{\sigma^{-N}L^2_{d\sigma}}.
		\end{align*}
		Neglecting the term $\partial_{\sigma}E$ as an error term, we observe that we can replace the remaining equation, which we only need to satisfy on $[\tau_*, \infty)$, by the following:
		\begin{align*}
			\Phi^{(\tilde{\lambda})}_{\text{model}}(\sigma) = f(\sigma) + \Pi^{(\tilde{\tau})}\big(\Phi^{(\tilde{\lambda})}_{\text{small}}\big). 
		\end{align*}
		But if we recall Lemma~\ref{lem:Pitildetaudef}, then the conclusion of Proposition~\ref{prop:Phitildelambdasolution} follows from Lemma~\ref{lem:Phitildelambdamodeleqn} and a simple fixed point argument. The last statement of Proposition~\ref{prop:Phitildelambdasolution} is a consequence of Lemma~\ref{lem:Fouriertransform2}.
	\end{proof}
	
	\subsection{Proof of Lemma~\ref{lem:hvanishing1}}
	
	The fact that $h\in L^2_{R^3\,dR}$ follows from Lemma~\ref{lem:goodinversewithK} together with the fact that $\mathcal{F}_*(\tilde{\Pi}g)(|\hat{\tau}|) = 0$, and higher regularity follows from standard elliptic theory.  Since $(\hat{\tau}^2 + \triangle +2W^2)h = K^*_{main}h$, using the variation of constants formula in terms of the fundamental system $\{ \phi_{\hat{\tau}}, \theta_{\hat{\tau}}\}$ introduced right after Lemma~\ref{lem:Fredholmproperty}, we find 
	\begin{equation}\label{eq:goodhrepres}
		h(R) = \phi_{\hat{\tau}}(R)\int_{R}^{\infty}\theta_{\hat{\tau}}(s)H(s)s^3\,ds - \theta_{\hat{\tau}}(R)\int_{R}^{\infty}\phi_{\hat{\tau}}(s)H(s)s^3\,ds,\,R\gg 1. 
	\end{equation}
	where we use the notation 
	\begin{align*}
		H(R): = K^*_{main}h. 
	\end{align*}
	The term on the right can be made explicit upon recalling \eqref{eq:Kdefinition}: thus we can set 
	\begin{equation}\label{eq:Kmainstructure}
		K_{main}^*h = \sum_{j=1}^3 K_j^*h,
	\end{equation}
	with the following terms on the right:
	\begin{align*}
		&\frac12 K_1^*h(R):\\& = W(R)\cdot \Big[\theta_0(R)\int_R^\infty 2\triangle W(s)h(s)\phi_0(s)s^3\,ds\\&\hspace{6cm} - \phi_0(R)\int_R^\infty 2\triangle W(s)h(s)\theta_0(s)s^3\,ds\Big]\\
		&\frac14 K_2^*h(R):\\& = W(R)\cdot \Big[\nabla\theta_0(R)\int_R^\infty 4\nabla W(s)h(s)\phi_0(s)s^3\,ds\\&\hspace{6cm} - \nabla\phi_0(R)\int_R^\infty 2\nabla W(s)h(s)\theta_0(s)s^3\,ds\Big]\\
		&\frac12K_3^*h(R):\\& = -W(R)\cdot \Big[\theta_0(R)\int_R^\infty W^3(s)h(s)\phi_0(s)s^3\,ds - \phi_0(R)\int_R^\infty W^3(s)h(s)\theta_0(s)s^3\,ds\Big]\\
	\end{align*}
	Here by abuse of notation of label by $\{\phi_0, \theta_0\}$ a fundamental system for $\mathcal{L}$ analogous to $\{\phi_{\hat{\tau}}, \theta_{\hat{\tau}}\}$. The rapid decay of $h$ then follows from a straightforward induction argument. 
	
	\subsection{Some spectral theory}\label{subsec:Lstarbasics}
	
	Here we discuss the basic spectral representation associated with the operators $\tilde{\mathcal{L}}$, $-\triangle - 2W^2$. To begin with, observe that we can conjugate the $4$-d radial Laplacian to the one-dimensional operator 
	\[
	\partial_{RR} - \frac34 R^{-2}. 
	\]
	on the half line $(0, \infty)$. 
	Furthermore, the operator $-\triangle - 2W^2$ can be conjugated into
	\[
	\mathcal{L}_*: = -\partial_{RR} + \frac34 R^{-2} - \frac{2}{(1+\frac{R^2}{8})^2}.
	\]
	The following proposition can be proven in close analogy to \cite{KST}, \cite{KST1}, \cite{KST2}. 
	\begin{prop}\label{prop:triangle+2W2} The operator $\mathcal{L}_*$, defined on $(0,\infty)$ with domain 
		\begin{align*}
			\text{Dom}(\mathcal{L}_*) = \{f\in L^2\big((0,\infty)\big)\,;\,f, f'\in \text{AC}_{loc}(0,\infty),\,\mathcal{L}_*f\in L^2(0,\infty)\},
		\end{align*}
		is self-adjoint. There exists a unique $\xi_d>0$ such that 
		\begin{align*}
			\text{spec}(\mathcal{L}_*) = \{-\xi_d^2\}\cup (0,\infty), 
		\end{align*}
		and $0$ is neither an eigenvalue nor a resonance. For a function $f\in L^2\big((0,\infty)\big)$ we have the spectral representation 
		\begin{align*}
			f(R) = \phi_{*d}(R)\cdot \hat{f}_*(\xi_d) + \int_0^\infty \phi_*(R;\xi)\cdot \hat{f}_*(\xi)\rho_*(\xi)\,d\xi. 
		\end{align*}
		The generalized Fourier basis $\phi_*(R;\xi)$ is uniformly bounded on $(0,\infty)\times (0, \infty)$ and 
		\begin{itemize}
			\item for $R\xi\lesssim 1$ admits a Taylor expansion 
			\[
			\phi_*(R;\xi) = \sum_{j\geq 0}(R\xi)^{2j}\phi_{*,j}(R^2)
			\]
			where the $\phi_{*,j}$ are smooth and bounded by $\big|\phi_{*,j}\big|\lesssim \frac{C_*^j}{j!}$. 
			\item In the region $R\xi\gtrsim 1$, we can write 
			\begin{align*}
				\phi_*(R;\xi) = a_*(\xi)\cdot \psi_{*,+}(R;\xi) + \overline{a_*(\xi)}\cdot \psi_{*,-}(R;\xi), 
			\end{align*}
			where $a_*(\xi)$ is bounded and analytic on $(0,\infty)$ with symbol type bounds, while $ \psi_{*,\pm}(R;\xi)$ admit the asymptotic representation 
			\[
			\psi_{*,\pm}(R;\xi) = \frac{e^{\pm iR\xi}}{\xi^{\frac32}}\cdot \sigma_{*,\pm}(R\xi, R)
			\]
			where the coefficient functions $\sigma_{*,\pm}$ are uniformly bounded and admit symbol type bounds with respect to either variable. 
		\end{itemize}
	\end{prop}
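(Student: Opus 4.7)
\medskip

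The plan of proof closely parallels the analogous construction carried out for the operator $\mathcal{L} = -\partial_{RR} + \frac{3}{4R^2} - \frac{1}{(1+R^2/8)^2}$ in \cite{KST2}, the only structural change being the factor $2$ in front of the potential. I would proceed in four steps.

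\emph{Step 1: self-adjointness and essential spectrum.} The operator $\mathcal{L}_*$ is a Sturm--Liouville operator on $(0,\infty)$ with the limit-point condition holding at both endpoints: at $R=0$ the singular term $\tfrac{3}{4R^2}$ is above the Weyl limit-point threshold $\tfrac14$, and at $R=\infty$ the potential is bounded. Hence $\mathcal{L}_*$ with the stated domain is self-adjoint by standard Weyl theory. Since the perturbation $-\tfrac{2}{(1+R^2/8)^2}$ is $\mathcal{L}_0$-compact relative to the unperturbed operator $\mathcal{L}_0 = -\partial_{RR} + \tfrac{3}{4R^2}$ (whose spectrum is $[0,\infty)$), Weyl's theorem yields $\sigma_{\text{ess}}(\mathcal{L}_*) = [0,\infty)$ and the discrete spectrum may only accumulate at $0$.

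\emph{Step 2: discrete spectrum and exclusion of a zero resonance.} The key observation is that $W_\lambda(R) = \lambda W(\lambda R)$ is a one-parameter family of solitons for $\triangle u + u^3 = 0$, whence differentiation in $\lambda$ shows $\Lambda W$ is annihilated by $-\triangle - 3W^2$. For $\mathcal{L}_*= -\triangle - 2W^2$ the zero-energy fundamental system can then be written down by the standard reduction-of-order formula starting from solutions at energy $0$; a direct check using the explicit form of $W$ (that is, $W(R) = (1+R^2/8)^{-1}$) shows that neither fundamental solution lies in $L^2_{R^3\,dR}$ nor satisfies the boundary behavior associated with a threshold resonance. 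Similarly, Sturm oscillation theory applied to the conjugated one-dimensional equation, together with the rescaling comparison $-2W^2 = -2 W^2$ (strictly smaller in magnitude than the potential $-3W^2$ for which $\Lambda W$ is a true zero-mode), shows that $\mathcal{L}_*$ carries exactly one negative eigenvalue $-\xi_d^2$; uniqueness follows from counting sign changes of the associated eigenfunction. The hard part here is verifying the absence of a zero resonance rigorously from the explicit form of the $2$-coefficient zero-energy solutions; this amounts to an explicit computation with hypergeometric-type expressions which I would relegate, as the rest of the paper does for similar statements, to the numerical non-degeneracy verification in \cite{KSchmNum}.

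\emph{Step 3: generalized Fourier basis in the non-oscillatory regime $R\xi \lesssim 1$.} One constructs $\phi_*(R;\xi)$ as the unique solution of $\mathcal{L}_* \phi_* = \xi^2 \phi_*$ that is regular at $R=0$, normalized so that the leading behaviour at the origin matches $R^{3/2}$ (equivalently, so that the de-conjugated function equals $1$ at $R=0$). Writing $\phi_*(R;\xi) = \sum_{j\ge 0}(R\xi)^{2j}\phi_{*,j}(R^2)$, one derives the recursion $\mathcal{L}_0 \phi_{*,j+1}(R^2) = \phi_{*,j}(R^2) + (\text{potential})\phi_{*,j}$ for the successive coefficients; inductively inverting $\mathcal{L}_0$ and using the rapid decay of the potential $W^2$ yields the factorial bound $|\phi_{*,j}| \lesssim C_*^j/j!$ and convergence of the series for $R\xi \lesssim 1$. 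These are the same steps carried out in Section~4 of \cite{KST2}.

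\emph{Step 4: asymptotic representation in the oscillatory regime $R\xi \gtrsim 1$.} For $R\xi \gtrsim 1$ one constructs two Jost-type solutions $\psi_{*,\pm}(R;\xi)$ of the form $\xi^{-3/2} e^{\pm iR\xi}\sigma_{*,\pm}(R\xi, R)$, where $\sigma_{*,\pm}$ solve a transport-type equation obtained by factoring the oscillation out of $\mathcal{L}_* \psi = \xi^2 \psi$. The coefficient equation is solved by Volterra iteration starting from $R=+\infty$, using the $\langle R\rangle^{-4}$ decay of $W^2$ to gain uniform bounds and symbol-type control in both variables. Matching $\phi_*(R;\xi)$ to this Jost basis in the transition region $R\xi \sim 1$ gives $\phi_*(R;\xi) = a_*(\xi)\psi_{*,+} + \overline{a_*(\xi)}\psi_{*,-}$ with an analytic, bounded connection coefficient $a_*(\xi)$ satisfying symbol bounds. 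The absence of a zero resonance from Step~2 ensures that $a_*(\xi)$ does not vanish near $\xi = 0$, which in turn produces the asserted asymptotic $\rho_*(\xi) \sim \xi^3$ of the spectral density (via $\rho_* = \tfrac{1}{\pi}|a_*|^{-2}$ up to a normalization). The completeness relation and the spectral representation formula then follow from standard Weyl--Titchmarsh theory, with the discrete contribution coming from the projector onto the single negative eigenfunction $\phi_{*,d}$ normalized in $L^2_{R^3\,dR}$.

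The main technical obstacle is the rigorous exclusion of a zero resonance in Step~2: unlike for the operator $\mathcal{L}$ in \cite{KST2}, where $W$ itself is an explicit resonant zero-mode, here the $-2W^2$ coefficient does not match any natural symmetry, and one cannot write down a closed-form zero-energy solution that settles the question by inspection. This is the point where the numerical non-degeneracy assumptions feed in, and the remaining steps are essentially bookkeeping adaptations of the arguments already present in \cite{KST2}, \cite{KST1}.
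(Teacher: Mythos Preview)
Your proposal is essentially correct and tracks the paper's approach closely: the paper likewise delegates the self-adjointness, the construction of the Fourier basis in both regimes, and the spectral representation to the analogous arguments in \cite{KST2}, \cite{KST1}, \cite{KST}, and it explicitly invokes the numerical assumption {\bf{(S1)}} for the absence of a zero eigenvalue or resonance, exactly as you anticipate at the end of your Step~2.

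There is one point where your Step~2 is thinner than the paper. Your comparison with the potential $-3W^2$ (for which $\Lambda W$ is the zero-energy threshold state) gives \emph{at most} one negative eigenvalue via monotonicity of quadratic forms, but you do not supply an argument for \emph{existence}. The paper handles this by an explicit variational test: with $\chi_M$ a smooth cutoff to $[0,M]$ one computes
\[
Z_M := \langle \mathcal{L}_*(\chi_M R^{3/2}W),\, \chi_M R^{3/2}W\rangle_{L^2_{dR}} < \langle \mathcal{L}(\chi_M R^{3/2}W),\, \chi_M R^{3/2}W\rangle_{L^2_{dR}} = o(1)
\]
as $M\to\infty$, using that $R^{3/2}W$ is the exact zero-energy resonance of $\mathcal{L}$; the strict inequality from the extra $-W^2$ forces $Z_M<0$ for $M$ large, producing a negative direction. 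For uniqueness, the paper uses the same monotonicity you invoke but phrases it via quadratic forms: on the orthogonal complement of the unique negative eigenfunction $\tilde{\phi}_d$ of $\tilde{\mathcal{L}}=-\triangle-3W^2$ one has $\langle\mathcal{L}_*\phi,\phi\rangle\ge\langle\tilde{\mathcal{L}}\phi,\phi\rangle\ge 0$. Your Sturm-oscillation formulation is equivalent, but the paper's version makes the sandwich $\mathcal{L}<\mathcal{L}_*<\tilde{\mathcal{L}}$ between two operators with known threshold behavior fully explicit.
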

	
	We note that the absence of a root mode or resonance at $\xi = 0$ follows from the first of the numerical assumptions ${\bf{(S1)}}$ stated at the end of this paper. The presence of a negative eigenvalue is implied by considering $-\xi_d^2$ by considering 
	\begin{align*}
		Z_M: = \langle \mathcal{L}_*\big(\chi_M R^{\frac32}W\big),\,\chi_M R^{\frac32}W\rangle_{L^2_{dR}};
	\end{align*}
	here $M\ll 1$ and $\chi_M$ is a smooth cutoff which equals $1$ on $0\leq R\leq M$ and vanishes for $R\geq 2M$, with $|\nabla^k_R\chi_M|\lesssim M^{-k}$. One easily checks that 
	\begin{align*}
		Z_M <\langle \mathcal{L}\big(\chi_M R^{\frac32}W\big),\,\chi_M R^{\frac32}W\rangle_{L^2_{dR}} = o(\frac{1}{M}), 
	\end{align*}
	which implies that $Z_M$ is negative fo $M$ sufficiently large. \\
	On the other hand, letting $\tilde{\phi}_d(R)$ be an eigenfunction corresponding to the unique negative eigenvalue of $\tilde{\mathcal{L}} =  -\partial_{RR} + \frac34 R^{-2} - \frac{3}{(1+\frac{R^2}{8})^2}$, then for $\phi\in \text{Dom}(\mathcal{L}_*) $ with $\langle \phi, \tilde{\phi}_d\rangle_{L^2_{dR}} = 0$ we have 
	\begin{align*}
		\langle  \mathcal{L}_*\phi, \phi\rangle_{L^2_{dR}} \geq \langle  \tilde{\mathcal{L}}\phi, \phi\rangle_{L^2_{dR}}\geq 0 
	\end{align*}
	according to the characterization of the spectrum of $\tilde{\mathcal{L}}$ given below. This implies that the negative spectrum consists of a unique negative eigenvalue, as asserted. 
	\\
	
	The following proposition is analogous to the results in section 4 of \cite{KST}, and we also refer to \cite{DS}. 
	\begin{prop}\label{prop:triangle+3W2} The operator $\tilde{\mathcal{L}} = -\partial_{RR} + \frac34 R^{-2} - \frac{3}{(1+\frac{R^2}{8})^2}$ defined on $(0,\infty)$ with domain 
		\begin{align*}
			\text{Dom}(\tilde{\mathcal{L}}) = \{f\in L^2\big((0,\infty)\big)\,;\,f, f'\in \text{AC}_{loc}(0,\infty),\,\tilde{\mathcal{L}}f\in L^2(0,\infty)\},
		\end{align*}
		is self-adjoint. There exists a unique $\tilde{\xi}_d>0$ such that 
		\begin{align*}
			\text{spec}(\mathcal{L}_*) = \{-\tilde{\xi}_d^2\}\cup (0,\infty), 
		\end{align*}
		Passing to the conjugate operator $-\triangle  - 3W^2$ in the context of $\mathbb{R}^4$, we have the spectral representation 
		\begin{align*}
			f(R) =  \mathcal{F}(f)(-\tilde{\xi}_d^2)\cdot \tilde{\phi}_d(R) + \int_0^\infty \tilde{\phi}(R;\xi)\mathcal{F}(f)(\xi)\cdot\tilde{\rho}(\xi)\,d\xi
		\end{align*}
		where $\tilde{\phi}_d\in \mathcal{S}(\mathbb{R}^4)$ is the unique normalized eigenfunction associated to eigenvalue $-\tilde{\xi}_d^2$, while the Fourier basis $\tilde{\phi}(R;\xi)$ and spectral measure $\tilde{\rho}(\xi)$ admit the same asymptotics as the functions $\phi(R;\xi), \rho(\xi)$ in subsection~\ref{subsec:basicfourier}, except that $\tilde{\phi}(R; 0) = \Lambda W$. 
	\end{prop}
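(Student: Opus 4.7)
\medskip

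\textbf{Proof proposal for Proposition~\ref{prop:triangle+3W2}.} The plan is to follow closely the template developed for the operator $\mathcal{L}$ in \cite{KST2} (and recalled in subsection~\ref{subsec:basicfourier}), as well as the preceding Proposition~\ref{prop:triangle+2W2}, which treats the analogous operator $\mathcal{L}_* = -\triangle - 2W^2$. The only structural differences stem from (a) the coefficient $3$ in front of $W^2$, which changes the zero-energy resonance from $W$ (the root mode of $-\triangle - W^2$) to $\Lambda W$ (the scaling mode of the stationary equation $-\triangle W = W^3$), and (b) the corresponding modification of connection coefficients and spectral measure, which turn out to be governed by the same symbol-type asymptotics.

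First I would establish self-adjointness of $\tilde{\mathcal{L}}$ by a standard Weyl limit-point analysis at both endpoints of $(0,\infty)$: at $R=0$ the inverse-square potential $\tfrac{3}{4R^2}$ with coupling $\geq \tfrac34$ guarantees the limit-point case, while the potential $-3(1+R^2/8)^{-2}$ decays like $R^{-4}$ at infinity, placing us in the limit-point case there as well. This produces a unique self-adjoint realization on the indicated domain. Next I would compute the essential spectrum: since the potential $-3(1+R^2/8)^{-2}$ is relatively compact with respect to $-\partial_{RR}+\tfrac{3}{4R^2}$, we have $\sigma_{ess}(\tilde{\mathcal{L}})=[0,\infty)$. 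The existence of at least one negative eigenvalue follows by producing a trial function $\phi\in \text{Dom}(\tilde{\mathcal{L}})$ with $\langle \tilde{\mathcal{L}}\phi,\phi\rangle<0$ (for example $\chi_M R^{\frac32} \Lambda W$ modified to sit in the domain), and uniqueness follows from Sturm oscillation applied to the zero-energy solution $R^{\frac32}\Lambda W$, which has precisely one simple zero on $(0,\infty)$. This identifies $\sigma_{pp}(\tilde{\mathcal{L}})=\{-\tilde{\xi}_d^2\}$.

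The core of the argument is the construction of the generalized Fourier basis $\tilde{\phi}(R;\xi)$. I would construct it in two pieces, in direct parallel to \cite{KST2}. For $R\xi\lesssim 1$ (non-oscillatory regime), start from the regular solution at the origin, normalized so that $\tilde{\phi}(R;0)=\Lambda W$ after conjugation back to $\R^4$, and iterate via a Volterra series in the parameter $\xi^2$, taking $\Lambda W$ as the seed and using the two linearly independent solutions at $\xi=0$ (namely $\Lambda W$ and a logarithmically growing companion) to invert the operator. The convergence of this series yields the Taylor expansion asserted and the bounds $|\phi_j|\lesssim C^j/j!$ by the standard Volterra estimate. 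For $R\xi\gtrsim 1$ (oscillatory regime), construct outgoing/incoming Jost solutions $\psi_{\pm}(R;\xi)$ by a Volterra iteration starting from $e^{\pm i R\xi}R^{-\frac32}\xi^{-\frac32}$ on the half-line $[R_0,\infty)$, with $R_0\sim \xi^{-1}$; the rapid decay of the potential ensures convergence and the symbol behavior of the amplitudes $\sigma_{\pm}$ in both variables. One then connects the two regimes by expressing $\tilde{\phi}(R;\xi)=a(\xi)\psi_+(R;\xi)+\overline{a(\xi)}\psi_-(R;\xi)$ and analyzing the Wronskian matching to show that $a(\xi)$ and $|a(\xi)|^{-2}$ have symbol behavior, with $|a(\xi)|^{-2}\sim |\log\xi|^{-2}$ as $\xi\to 0^+$ (reflecting the presence of the root mode at threshold) and $|a(\xi)|^{-2}\sim \xi^2$ as $\xi\to\infty$; the spectral density then satisfies $\tilde{\rho}(\xi)=\pi^{-1}|a(\xi)|^{-2}$ times a harmless factor, reproducing the asymptotics of $\rho(\xi)$ in subsection~\ref{subsec:basicfourier}.

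Finally, the spectral representation follows by standard functional calculus for singular Sturm--Liouville problems: writing the resolvent kernel as a combination of the regular solution and the Jost solutions, one reads off the spectral measure from the jump of the resolvent across the positive real axis, and then inverts to obtain the expansion formula stated. The discrete contribution is just the projection onto $\tilde{\phi}_d$, normalized in $L^2$; Schwartz regularity of $\tilde{\phi}_d$ follows from elliptic regularity and exponential decay at infinity via the standard Combes--Thomas argument or direct ODE estimates, since the eigenvalue is strictly negative. The main obstacle in this program is the rigorous control of the connection problem relating the $R\xi\lesssim 1$ and $R\xi\gtrsim 1$ regimes and the extraction of the precise logarithmic behavior of $a(\xi)$ near $\xi=0$; this is delicate because $\Lambda W$ and its companion solution at $\xi=0$ decay only algebraically, in contrast to the case of $\mathcal{L}_*$ where the threshold is non-resonant. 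However, since $\Lambda W$ is bounded and behaves like $-R^{-2}$ at infinity, the argument carries through in exactly the same way as for $\mathcal{L}$ in \cite{KST2}, yielding the stated asymptotics for $\tilde{\rho}$.
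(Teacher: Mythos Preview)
Your proposal is correct and follows essentially the same approach the paper indicates: the paper does not give a self-contained proof of this proposition but simply declares it ``analogous to the results in section~4 of \cite{KST}'' and refers also to \cite{DS}, and your sketch is precisely the standard Weyl--Titchmarsh/Volterra program carried out in those references (and in \cite{KST2} for the closely related operator $\mathcal{L}$). Your identification of $\Lambda W$ as the zero-energy root mode, the Sturm-oscillation count of exactly one negative eigenvalue via the single zero of $R^{3/2}\Lambda W$, and the matching of the non-oscillatory and oscillatory regimes to recover the $\rho$-asymptotics of subsection~\ref{subsec:basicfourier} are all on target; the only minor imprecision is in the bookkeeping of the factor of $\xi$ in $\tilde\rho(\xi)\sim(\xi\log^2\xi)^{-1}$ near zero, which you absorb into an unspecified ``harmless factor,'' but this does not affect the argument.
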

	
	\subsection{The operator $\big(\hat{\tau}^2 + \triangle +2W^2\big)_{good}^{-1}\circ K_{main}$.}\label{subsec:goodinverse} Using the spectral representation associated to $\mathcal{L}_*$ developed in the preceding subsection, we shall specify the construction of 
	\[
	\big(\hat{\tau}^2 + \triangle +2W^2\big)_{good}^{-1}\circ K_{main}.
	\]
	To begin with, we note that a simple choice of $\big(\hat{\tau}^2 + \triangle +2W^2\big)^{-1}f$ is given by the following formula:
	\begin{equation}\label{eq:simpleinverseoftriangle+hattausquare+2Wsquare}
		\big(\hat{\tau}^2 + \triangle +2W^2\big)^{-1}f(R) = \frac{\mathcal{F}_*(\xi_d)}{\hat{\tau}^2 +\xi_d^2}\cdot\phi_{*d}(R) + \int_0^\infty \phi_*(R;\xi)\cdot\frac{\mathcal{F}_*(f)(\xi)}{\hat{\tau}^2 - \xi^2}\rho_*(\xi)\,d\xi. 
	\end{equation}
	This definition, however, is not adequate since the right hand expression is not smooth with respect to $\hat{\tau}$, a necessary requirement for us if we intend to recover rapid decay with respect to time. To correct for this, we shall subtract a suitable linear combination of the Jost functions $\psi_{\pm}(R;\hat{\tau})$, which are of course in the kernel of $\hat{\tau}^2 + \triangle +2W^2$. Specifically, we recall from the preceding subsection that 
	\begin{equation}\label{eq:Jostrep}
		\phi_*(R;\xi) = a_*(\xi)\psi_{*,+}(R;\xi) + \overline{a_*(\xi)}\psi_{*,-}(R;\xi). 
	\end{equation}
	We shall first define a 'preliminary inverse' replacing \eqref{eq:simpleinverseoftriangle+hattausquare+2Wsquare} by an operator which has better properties for large $R$, but loses regularity at $R = 0$, as follows:
	\begin{equation}\label{eq:oneinverseoftriangle+hattausquare+2Wsquare}\begin{split}
			&\big(\hat{\tau}^2 + \triangle +2W^2\big)_{1}^{-1}f(R):\\&= \frac{\mathcal{F}_*(f)(\xi_d)}{\hat{\tau}^2 +\xi_d^2}\cdot\phi_{*d}(R) + \int_0^\infty \phi_*(R;\xi)\cdot\frac{\mathcal{F}_*(f)(\xi)}{\hat{\tau}^2 - \xi^2}\rho_*(\xi)\,d\xi\\
			& + \pi i\cdot \big[a_*(|\hat{\tau}|)\psi_{*,+}(R;|\hat{\tau}|) - \overline{a_*(|\hat{\tau})|}\psi_{*,-}(R;|\hat{\tau}|)\big]\cdot \frac{\mathcal{F}_*(f)(|\hat{\tau}|)}{2|\hat{\tau}|}\rho_*(|\hat{\tau}|). 
	\end{split}\end{equation}
	\begin{lem}\label{lem:goodinverse1} Assuming that $|\hat{\tau}|\in (\gamma_1,\gamma_2)$ for some $0<\gamma_1<\gamma_2$, we have the bound 
		\begin{align*}
			\Big\|\partial_{\hat{\tau}}^k\big(\hat{\tau}^2 + \triangle +2W^2\big)_{1}^{-1}\big(\chi_{R\lesssim M}f\big)(R)\Big\|_{\langle R\rangle^{1+\delta_0}R^{-\delta_0}L^2_{R^3\,dR}}\lesssim_{k,M,\gamma_{1,2}}\big\|f\big\|_{L^2_{R^3\,dR}},
		\end{align*}
		where $k\geq 0$ is arbitrary. We also have the bound 
		\begin{align*}
			\Big\|\partial_{\hat{\tau}}^k(R\partial_R)^l\big(\hat{\tau}^2 + \triangle +2W^2\big)_{1}^{-1}\big(\chi_{R\lesssim M}f\big)(R)\Big\|_{\langle R\rangle^{1+\delta_0}R^{-\delta_0}L^2_{R^3\,dR}}\lesssim_{k,l,M,\gamma_{1,2}}\big\|f\big\|_{L^2_{R^3\,dR}},
		\end{align*}
	\end{lem}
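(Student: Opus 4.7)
The plan is to combine a Plemelj--Sokhotski type analysis of the principal-value integral in the second summand of \eqref{eq:oneinverseoftriangle+hattausquare+2Wsquare} with the compensating Jost-function contribution (the third summand), and to exploit the fact that the spatial cutoff $\chi_{R\lesssim M}$ makes $\mathcal{F}_*(\chi_{R\lesssim M}f)(\xi)$ a smooth symbol in $\xi$. First I would decompose $(\hat\tau^2+\triangle+2W^2)_1^{-1}(\chi_{R\lesssim M}f) = A + B + C$ into discrete, continuous-spectral, and Jost-correction parts. For $A$, smoothness in $\hat\tau\in(\gamma_1,\gamma_2)$ is immediate since $\xi_d>0$ is fixed and $\phi_{*d}\in\mathcal{S}(\mathbb{R}^4)$ by Proposition~\ref{prop:triangle+2W2}, and $|\mathcal{F}_*(\chi_{R\lesssim M}f)(\xi_d)|\lesssim_M \|f\|_{L^2_{R^3\,dR}}$ because $\phi_*(R;\xi_d)$ is uniformly bounded on $R\lesssim M$.

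For $B+C$, I would introduce a smooth cutoff separating the $\xi$-integration in $B$ into a regime $|\xi-|\hat\tau||\gtrsim \gamma_1/10$ and its complement. On the first regime the integrand is smooth in $\hat\tau$, $\partial_{\hat\tau}^k$ acts harmlessly, and the weighted-$L^2$ bound in $R$ follows from Cauchy--Schwarz in $\xi$ together with the uniform Jost/Taylor asymptotics of $\phi_*(R;\xi)$ in Proposition~\ref{prop:triangle+2W2}. On the near-pole regime I would insert the Jost decomposition \eqref{eq:Jostrep}, write $\frac{1}{\hat\tau^2-\xi^2}=\frac{1}{(\hat\tau+\xi)(\hat\tau-\xi)}$, and apply the Sokhotski--Plemelj identity in the principal-value sense. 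The resulting $i\pi$-contribution is precisely of the form that the third summand $C$ was engineered to produce, so $B+C$ reduces on the near-pole piece to a non-singular integral of a smooth function of $(\hat\tau,\xi)$ against the compactly-supported-in-$R$ data.

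Once $B+C$ is rewritten as a smooth-in-$\hat\tau$ expression, each $\partial_{\hat\tau}^k$ falls either on $\psi_{*,\pm}(R;\hat\tau)$ and $a_*(\hat\tau)$ -- both admitting symbol-type bounds on $(\gamma_1,\gamma_2)$ by Proposition~\ref{prop:triangle+2W2} -- or on $\mathcal{F}_*(\chi_{R\lesssim M}f)(|\hat\tau|)\rho_*(|\hat\tau|)$, which is smooth in $\hat\tau$ uniformly in $f$ with the estimate $\big|\partial_{\hat\tau}^j\mathcal{F}_*(\chi_{R\lesssim M}f)(\hat\tau)\big|\lesssim_{j,M}\|f\|_{L^2_{R^3\,dR}}$ obtained by moving $\partial_\xi$ onto the bounded function $\phi_*(R;\xi)$ on $R\lesssim M$. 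The dilation powers $(R\partial_R)^l$ acting on the Jost factors produce additional factors of $R\hat\tau$, controlled in the oscillatory regime $R\hat\tau\gtrsim 1$ by the symbol bounds on $\sigma_{*,\pm}$, and handled by the Taylor expansion in $R\xi$ in the complementary region $R\hat\tau\lesssim 1$. The spatial weight $\langle R\rangle^{-1-\delta_0}R^{\delta_0}$ then renders $\|\psi_{*,\pm}(\cdot;\hat\tau)\|_{\langle R\rangle^{1+\delta_0}R^{-\delta_0}L^2_{R^3\,dR}}$ finite and uniformly bounded for $\hat\tau\in(\gamma_1,\gamma_2)$, since $|\psi_{*,\pm}(R;\hat\tau)|\lesssim \hat\tau^{-3/2}R^{-3/2}$ for $R\hat\tau\gtrsim 1$.

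The main obstacle will be bookkeeping the Plemelj--Sokhotski cancellation between $B$ and $C$ through \emph{arbitrarily many} $\hat\tau$-derivatives: a naive differentiation of the principal-value integral in $B$ successively raises the order of the singularity at $\xi=|\hat\tau|$. To avoid this one must first integrate by parts in $\xi$ near $\xi=|\hat\tau|$ -- transferring derivatives onto the smooth density $\mathcal{F}_*(\chi_{R\lesssim M}f)(\xi)\rho_*(\xi)$ and onto $a_*(\xi)\psi_{*,\pm}(R;\xi)/(\hat\tau+\xi)$ -- and only then apply $\partial_{\hat\tau}^k$. A subtler technical point is the low-frequency tail $\xi\ll\gamma_1$, where $\rho_*(\xi)\sim\xi^3$ vanishes but the Jost representation degenerates and one must use the Taylor series for $\phi_*(R;\xi)$ in $R\xi$ instead; this regime is far from the pole and ultimately harmless, but writing the estimate uniformly across the gluing to the oscillatory regime requires some care.
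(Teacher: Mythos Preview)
Your proposal is correct and follows essentially the same route as the paper: isolate the pole at $\xi=|\hat\tau|$ via the Jost decomposition, recognise that the $i\pi$-contribution from the principal-value integral is precisely what the added term $C$ cancels, and then use the smoothness of $\mathcal{F}_*(\chi_{R\lesssim M}f)$ together with the symbol bounds on $a_*$, $\sigma_{*,\pm}$ to control all $\hat\tau$-derivatives. The only notable implementation difference is that the paper, rather than integrating by parts in $\xi$ to tame higher-order poles, Taylor-expands the factors $a_*(\xi)\sigma_{*,\pm}(R\xi,R)$ and $\frac{\mathcal{F}_*(\chi_{R\lesssim M}f)(\xi)\rho_*(\xi)}{\hat\tau+\xi}$ around $\xi=\hat\tau$ (the $(\xi-\hat\tau)$-prefactors in the remainders kill the pole outright) and then computes $\int_0^\infty \frac{e^{\pm iR\xi}}{\hat\tau-\xi}\,d\xi$ explicitly, obtaining $\mp i\pi e^{\pm iR\hat\tau}$ plus a remainder $H(R,\hat\tau)$ with symbol-type bounds in $\hat\tau$ uniformly in $R$; this makes the cancellation with $C$ and the symbol behaviour in $\hat\tau$ immediate without iterated integration by parts.
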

	\begin{proof} We may restrict to $\hat{\tau}>0$. We shall again use the representation of $\phi_*$ in terms of the Jost functions $\psi_{\pm}$, conjugated back to the radial $4d$ setting, as well as the asymptotic structure of the latter in the oscillatory regime $R\xi\gtrsim 1$, given by 
		\begin{align*}
			\psi_{*,\pm}(R;\xi) = \frac{e^{\pm iR\xi}}{R^{\frac32}\xi^{\frac32}}\cdot \sigma_{*,\pm}(R\xi, R).
		\end{align*}
		With this normalisation we have $a(\xi)\sim 1$ and the functions $\sigma_{\pm}(R\xi, R), a(\xi)$ have symbol type behavior with respect to their arguments $R,\xi$. We then conclude that
		\begin{align*}
			\phi_*(R;\xi) =& a_*(\xi)\cdot \sigma_+(R\xi, R) \frac{e^{iR\xi}}{R^{\frac32}\xi^{\frac32}} +  \overline{a_*(\xi)}\cdot \sigma_-(R\xi, R) \frac{e^{-iR\xi}}{R^{\frac32}\xi^{\frac32}}\\
			& =  a_*(\hat{\tau})\cdot \sigma_+(R\hat{\tau}, R) \frac{e^{iR\xi}}{R^{\frac32}\xi^{\frac32}} +  \overline{a_*(\hat{\tau})}\cdot \sigma_-(R\hat{\tau}, R) \frac{e^{-iR\xi}}{R^{\frac32}\xi^{\frac32}}\\
			& + (\xi - \hat{\tau})\cdot\big[\Gamma_+(\hat{\tau},\xi, R)\cdot \frac{e^{iR\xi}}{R^{\frac32}\xi^{\frac32}} +  \Gamma_-(\hat{\tau},\xi, R)\cdot \frac{e^{-iR\xi}}{R^{\frac32}\xi^{\frac32}}\big],
		\end{align*}
		where, provided $|\xi - \hat{\tau}|\leq \frac{\hat{\tau}}{10}$ and $\hat{\tau}\in [\gamma_1, \gamma_2]$, we have 
		\begin{align*}
			\Big|\partial_{\hat{\tau}}^k \partial_{\xi}^l \Gamma_{\pm}(\hat{\tau},\xi, R)\Big|\lesssim_{k,l,\gamma_{1,2}}1. 
		\end{align*}
		We also observe that (still for $\hat{\tau}\in[\gamma_{1},\gamma_2]$)
		\begin{align*}
			\frac{\mathcal{F}_*(\chi_{R\lesssim M}f)(\xi)}{\hat{\tau} + \xi}\rho_*(\xi) - \frac{\mathcal{F}_*(\chi_{R\lesssim M}f)(\hat{\tau})}{2\hat{\tau}}\rho_*(\hat{\tau}) = (\xi - \hat{\tau})\cdot \tilde{\Gamma}(\hat{\tau},\xi,M), 
		\end{align*}
		where we have analogous bounds for $ \tilde{\Gamma}$ as for $\Gamma_{\pm}$, with the implicit constant also depending on $M$. Then we observe that 
		\begin{align*}
			&\Big\|\partial_{\hat{\tau}}^k\int_0^\infty \chi_{R\xi\gtrsim 1}\big(a_*(\xi) \sigma_{*,+}(R\xi, R)\frac{e^{iR\xi}}{R^{\frac32}\xi^{\frac32}} + \overline{a_*(\xi)}\sigma_{*,-}(R\xi, R)\frac{e^{-iR\xi}}{R^{\frac32}\xi^{\frac32}}\big)\cdot \tilde{\Gamma}(\hat{\tau},\xi,M)\,d\xi\Big\|_{\langle R\rangle^{1+\delta_0}L^2_{R^3\,dR}}\\
			&\lesssim_{k,M,\gamma_{1,2}}\big\|f\big\|_{L^2_{R^3\,dR}},
		\end{align*}
		which allows us to replace $\frac{\mathcal{F}_*(\chi_{R\lesssim M}f)(\xi)}{\hat{\tau} + \xi}\rho_*(\xi)$ by $ \frac{\mathcal{F}_*(\chi_{R\lesssim M}f)(\hat{\tau})}{2\hat{\tau}}\rho_*(\hat{\tau}) $ in \eqref{eq:simpleinverseoftriangle+hattausquare+2Wsquare}, and similarly we can replace 
		$ \phi_*(R;\xi)$ by $ \frac{a_*(\hat{\tau})}{\hat{\tau}^{\frac32}}\cdot \sigma_{*,+}(R\hat{\tau}, R) \frac{e^{iR\xi}}{R^{\frac32}} +  \overline{\frac{a_*(\hat{\tau})}{\hat{\tau}^{\frac32}}}\cdot \sigma_{*,-}(R\hat{\tau}, R) \frac{e^{-iR\xi}}{R^{\frac32}}$, up to error terms which satisfy the bound of the lemma. We have then reduced the integral term in \eqref{eq:oneinverseoftriangle+hattausquare+2Wsquare} to the expression 
		\begin{align*}
			\frac{\mathcal{F}_*(\chi_{R\lesssim M}f)(\hat{\tau})}{2\hat{\tau}}\rho_*(\hat{\tau})\cdot&\Big[ \frac{a_*(\hat{\tau})}{\hat{\tau}^{\frac32}}\cdot \frac{\sigma_{*,+}(R\hat{\tau}, R)}{R^{\frac32}}\cdot \int_0^\infty \frac{e^{iR\xi}}{\hat{\tau} - \xi}\,d\xi\\&\hspace{2cm} +  \overline{\frac{a_*(\hat{\tau})}{\hat{\tau}^{\frac32}}}\cdot \frac{\sigma_{*,-}(R\hat{\tau}, R)}{R^{\frac32}}\cdot \int_0^\infty \frac{e^{-iR\xi}}{\hat{\tau} - \xi}\,d\xi\Big]
		\end{align*}
		To conclude we observe that, restricting to $R\geq 1$ as we may, we have  
		\begin{align*}
			\int_0^\infty \frac{e^{iR\xi}}{\hat{\tau} - \xi}\,d\xi = -e^{iR\hat{\tau}}\cdot \int_{-\hat{\tau}}^\infty \frac{e^{i Rx}}{x}\,dx =& -e^{iR\hat{\tau}}\cdot\lim_{L\rightarrow +\infty}\int_{-L}^L \frac{e^{i Rx}}{x}\,dx\\
			& + e^{iR\hat{\tau}}\cdot\lim_{L\rightarrow +\infty}\int_{-L}^{-\hat{\tau}} \frac{e^{i Rx}}{x}\,dx.\\
		\end{align*}
		Then the function 
		\begin{align*}
			H(R,\hat{\tau}): =  e^{iR\hat{\tau}}\cdot\lim_{L\rightarrow +\infty}\int_{-L}^{-\hat{\tau}} \frac{e^{i Rx}}{x}\,dx = \lim_{L\rightarrow +\infty}\int_{-L+\hat{\tau}}^{0} \frac{e^{i Ry}}{y-\hat{\tau}}\,dy
		\end{align*}
		satisfies symbol type bounds with respect to $\hat{\tau}$ uniformly in $R$, and we also have $\lim_{L\rightarrow +\infty}\int_{-L}^L \frac{e^{i Rx}}{x}\,dx = i\pi$, from which the first conclusion of the lemma easily follows. To obtain the second inequality, we note that in the preceding we have written 
		\[
		\big(\hat{\tau}^2 + \triangle +2W^2\big)_{good}^{-1}\big(\chi_{R\lesssim M}f\big)(R)
		\]
		as a sum of terms which either have symbol behavior with respect to $R$, or where we can invoke integration by parts with respect to $\xi$ to absorb additional factors of $R$. 
	\end{proof}
	
	\begin{rem}\label{rem:lem:goodinverse1} The following slight modification of the preceding lemma shall also be useful: replacing $f$ by $f_{\hat{\tau}\in \langle R\rangle^{-1-\delta_0}L^2_{R^3\,dR}}W^{L,2}_{\hat{\tau}}$, $L\gg 1$, with $\mathcal{F}_{\tilde{\tau}}^{-1}(f_{\hat{\tau}})$ supported on $[\tau_*,\infty)$, we have that for $|\hat{\tau}|\in [\gamma_1,\,\gamma_2],\,\gamma_{1,2}>0$ 
		\begin{align*}
			\Big\|\big(\hat{\tau}^2 + \triangle +2W^2\big)_{1}^{-1}\big(\chi_{R\lesssim M}f_{\hat{\tau}}\big)(R)\Big\|_{\langle R\rangle^{1+\delta_0}R^{-\delta_0}L^2_{R^3\,dR}W^{L,2}_{\hat{\tau}}}\lesssim_{\gamma_{1,2}}\big\|f_{\hat{\tau}}\big\|_{L^2_{\langle R\rangle^{-1-\delta_0}R^3\,dR}W^{L,2}_{\hat{\tau}}},
		\end{align*}
		provided $\tau_*\geq \tau_*(M,L)$. 
	\end{rem}
	
	We also include here the following useful 
	\begin{lem}\label{lem:goodinversewithK} If $\mathcal{F_*}\big(\chi_{R\lesssim M}f\big)(|\hat{\tau}|) = 0$, $\hat{\tau}\in \R\backslash\{0\}$, $f\in L^2_{R^3\,dR}$, then 
		\begin{align*}
			\frac{\mathcal{F}_*(\chi_{R\lesssim M}f)(\xi_d)}{\hat{\tau}^2 +\xi_d^2}\cdot\phi_{*d}(R) + \int_0^\infty \phi_*(R;\xi)\cdot\frac{\mathcal{F}_*(\chi_{R\lesssim M}f)(\xi)}{\hat{\tau}^2 - \xi^2}\rho_*(\xi)\,d\xi\in L^2_{R^3\,dR}. 
		\end{align*}
	\end{lem}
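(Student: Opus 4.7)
\smallskip

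The plan is to exploit the extra regularity afforded by the spatial cutoff $\chi_{R\lesssim M}$, which makes $\chi_{R\lesssim M}f$ compactly supported in $R$, and to use the vanishing hypothesis at $|\hat\tau|$ to remove the singularity of the kernel $(\hat\tau^2-\xi^2)^{-1}$. Concretely, first I would observe that since $\chi_{R\lesssim M}f$ has compact spatial support and the generalized Fourier basis $\phi_*(R;\xi)$ depends smoothly (in fact analytically) on $\xi\in(0,\infty)$ by Proposition~\ref{prop:triangle+2W2}, differentiation under the integral sign in
\[
\mathcal{F}_*(\chi_{R\lesssim M}f)(\xi)=\int_0^\infty \phi_*(R;\xi)\,\chi_{R\lesssim M}(R)\,f(R)\,R^3\,dR
\]
shows that $\xi\mapsto \mathcal{F}_*(\chi_{R\lesssim M}f)(\xi)$ is $C^\infty$ on $(0,\infty)$. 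Together with the vanishing condition $\mathcal{F}_*(\chi_{R\lesssim M}f)(|\hat\tau|)=0$, this yields
\[
h(\xi):=\frac{\mathcal{F}_*(\chi_{R\lesssim M}f)(\xi)}{\hat\tau^2-\xi^2}=-\frac{1}{|\hat\tau|+\xi}\cdot \frac{\mathcal{F}_*(\chi_{R\lesssim M}f)(\xi)-\mathcal{F}_*(\chi_{R\lesssim M}f)(|\hat\tau|)}{\xi-|\hat\tau|},
\]
which by Taylor's theorem is bounded (indeed continuous) in a neighborhood of $\xi=|\hat\tau|$. Hence the principal value prescription collapses to an honest integral near that point.

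Next I would verify that $h\in L^2_{\rho_*(\xi)\,d\xi}$. Away from $\xi=|\hat\tau|$ and away from $\xi=0$ the factor $(\hat\tau^2-\xi^2)^{-1}$ is locally bounded. For small $\xi$, the factor is bounded by $\hat\tau^{-2}$, and $\mathcal{F}_*(\chi_{R\lesssim M}f)$ is bounded there, while $\rho_*$ integrates near $0$. For large $\xi$ we gain the decay $|\hat\tau^2-\xi^2|^{-1}\lesssim \xi^{-2}$, so
\[
\int_A^\infty |h(\xi)|^2\rho_*(\xi)\,d\xi\lesssim \int_A^\infty \xi^{-4}|\mathcal{F}_*(\chi_{R\lesssim M}f)(\xi)|^2\rho_*(\xi)\,d\xi,
\]
which is finite because $\mathcal{F}_*(\chi_{R\lesssim M}f)\in L^2_{\rho_*(\xi)\,d\xi}$ by the Plancherel identity of Proposition~\ref{prop:triangle+2W2}. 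Near $\xi=|\hat\tau|$ the bound from Step~1 gives boundedness of $h$, hence local square-integrability against $\rho_*$.

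Having established $h\in L^2_{\rho_*(\xi)\,d\xi}$, Plancherel for the distorted Fourier transform associated to $\mathcal{L}_*$ (Proposition~\ref{prop:triangle+2W2}) yields that
\[
R\longmapsto \int_0^\infty \phi_*(R;\xi)\,h(\xi)\,\rho_*(\xi)\,d\xi\in L^2_{R^3\,dR}.
\]
The discrete mode contribution $\tfrac{\mathcal{F}_*(\chi_{R\lesssim M}f)(\xi_d)}{\hat\tau^2+\xi_d^2}\cdot\phi_{*d}(R)$ is trivially in $L^2_{R^3\,dR}$ since $\phi_{*d}\in\mathcal{S}(\mathbb{R}^4)$ and the denominator is bounded below. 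Adding the two contributions gives the lemma.

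The only genuinely delicate point is the first step: one must know that the compact spatial support of $\chi_{R\lesssim M}f$ is enough to turn a mere $L^2$ Fourier coefficient into a smooth function of $\xi$ on $(0,\infty)$, so that the simple cancellation argument at $\xi=|\hat\tau|$ can be invoked. This is where the analytic dependence of $\phi_*(R;\xi)$ on $\xi$ from Proposition~\ref{prop:triangle+2W2} is used in an essential way; everything else is bookkeeping on the asymptotic behavior of $\rho_*$ and $\mathcal{F}_*(\chi_{R\lesssim M}f)$ at the endpoints $\xi=0$ and $\xi=\infty$.
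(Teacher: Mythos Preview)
Your proof is correct and follows essentially the same approach as the paper's own argument: use the vanishing hypothesis to rewrite the numerator as the difference $\mathcal{F}_*(\chi_{R\lesssim M}f)(\xi)-\mathcal{F}_*(\chi_{R\lesssim M}f)(|\hat\tau|)$, verify that the resulting quotient lies in $L^2_{\rho_*\,d\xi}$, and apply Plancherel for the distorted Fourier transform. The paper's proof is a one-line sketch of exactly this; your version simply fills in the endpoint checks and the smoothness justification at $\xi=|\hat\tau|$ afforded by the compact spatial support.
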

	In fact, it suffices to replace $\mathcal{F}_*(\chi_{R\lesssim M}f)(\xi)$ by $\mathcal{F}_*(\chi_{R\lesssim M}f)(\xi) - \mathcal{F}_*(\chi_{R\lesssim M}f)(|\hat{\tau}|)$ and to observe that $\frac{\mathcal{F}_*(\chi_{R\lesssim M}f)(\xi) -  \mathcal{F}_*(\chi_{R\lesssim M}f)(|\hat{\tau}|)}{\hat{\tau}^2 - \xi^2}\in L^2_{\rho_*\,d\xi}$ and use Plancherel's theorem for the distorted Fourier transform. 
	\\
	
	For the 'good inverse', we shall sacrifice decay in one of the temporal directions (namely $\tilde{\tau}\rightarrow -\infty$, which is irrelevant for us) for smoothness at the origin, as follows: with $\eta(\hat{\tau}): = -i\pi ,\,\hat{\tau}>0,\,\eta(\hat{\tau}): = \pi i,\,\hat{\tau}<0$, set
	\begin{equation}\label{eq:goodinverseoftriangle+hattausquare+2Wsquare}\begin{split}
			&\boxed{\big(\hat{\tau}^2 + \triangle +2W^2\big)_{good}^{-1}f(R):}\\&= \boxed{\frac{\mathcal{F}_*(f)(\xi_d)}{\hat{\tau}^2 +\xi_d^2}\cdot\phi_{*d}(R) + \int_0^\infty \phi_*(R;\xi)\cdot\frac{\mathcal{F}_*(f)(\xi)}{\hat{\tau}^2 - \xi^2}\rho_*(\xi)\,d\xi}\\
			&\hspace{0.6cm} \boxed{+\eta(\hat{\tau})\cdot \frac{\mathcal{F}_*(f)(|\hat{\tau}|)}{2|\hat{\tau}|}\rho_*(|\hat{\tau}|)\cdot \phi_*(R;\,|\hat{\tau}|)}, 
	\end{split}\end{equation}
	This expression now is regular at the origin $R = 0$ and in light of \eqref{eq:Jostrep} it differs from $\big(\hat{\tau}^2 + \triangle +2W^2\big)_{1}^{-1}f(R)$ by a multiple of $\psi_{*,+}(R;|\hat{\tau}|)$ in the region $\hat{\tau}>0$ and by a multiple of $\psi_{*,-}(R;|\hat{\tau}|)$ for $\hat{\tau}<0$.  
	Then we have 
	\begin{lem}\label{lem:keygoodinverse} Let $f\in \tau^{-N}L^2_{d\tau}\langle R\rangle^{-1-\delta_0}L^2_{R^3\,dR}$, supported on $[\tau_*,\infty)$, and let $\mathcal{F}_{\tilde{\tau}}$ the Fourier transform with respect to wave time. Then for $\gamma>0$ we have 
		\begin{align*}
			&\Big\|\mathcal{F}_{\tilde{\tau}}^{-1}\big[\big(\hat{\tau}^2 + \triangle +2W^2\big)_{good}^{-1}\big(\chi_{R\lesssim M}\mathcal{F}_{\tilde{\tau}}(Q^{(\tilde{\tau})}_{[\gamma, \gamma^{-1}]}f)\big)\big]|_{[\tau_*,\infty)}\Big\|_{\tau^{-N}L^2_{d\tau}\langle R\rangle R^{\delta_0}L^2_{R^3\,dR}}\\
			&\lesssim_{\gamma} \big\|f\big\|_{\tau^{-N}L^2_{d\tau}\langle R\rangle^{-1-\delta_0}L^2_{R^3\,dR}},
		\end{align*}
		provided $\tau_*\geq \tau_*(\gamma, M)$. The same bound obtains if we replace $f$ by $T_{\hat{\tau}}(f)$, where $\hat{\tau}\rightarrow T_{\hat{\tau}}$ is a smooth family of bounded operators from $\langle R\rangle^{-1-\delta_0}L^2_{R^3\,dR}$ to itself. 
	\end{lem}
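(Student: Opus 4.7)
The plan is to combine the smoothness of the ``good'' inverse in $\hat{\tau}$ (established in Lemma~\ref{lem:goodinverse1} together with Remark~\ref{rem:lem:goodinverse1}) with a Plancherel-type argument for the wave-time Fourier transform, reducing the $\tau^{-N}L^2_{d\tau}$ bound on the physical side to a $W^{M,2}_{\hat{\tau}}$-type bound on the wave-frequency side. First, I would use that any function supported on $[\tau_*,\infty)$ satisfies $\|g\|_{\tau^{-N}L^2_{d\tau}}\sim_\nu \|g\|_{\tilde{\tau}^{-M}L^2_{d\tilde{\tau}}}$ with $M=N/(\tfrac12-\tfrac{1}{4\nu})$, since $\tilde{\tau}\sim_\nu \tau^{\frac12-\frac{1}{4\nu}}$; this converts temporal polynomial decay in $\tau$ to polynomial decay in $\tilde{\tau}$, which by the Plancherel/Sobolev duality corresponds to $W^{M,2}$-regularity in $\hat{\tau}$ modulo the projection argument from Lemma~\ref{lem:fmodif}/Lemma~\ref{lem:Pitildetaudef}.

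Second, since the frequency localizer $Q^{(\tilde{\tau})}_{[\gamma,\gamma^{-1}]}$ restricts $\hat{\tau}$ to a compact set bounded away from $0$, I would cover $[\gamma,\gamma^{-1}]$ by finitely many intervals $I_j=(\hat{\tau}_j-\gamma/4,\hat{\tau}_j+\gamma/4)$, with a subordinate smooth partition of unity $\sum_{j=1}^R\chi_{I_j}(\hat{\tau})\equiv 1$ on $[\gamma,\gamma^{-1}]$. On each $I_j$, the relevant frequencies lie in a compact range bounded below by $\gamma/2$ and above by $2\gamma^{-1}$, so Remark~\ref{rem:lem:goodinverse1} applies with $(\gamma_1,\gamma_2)=(\gamma/2,2\gamma^{-1})$ and yields, for $\tau_*$ sufficiently large depending on $(M,\gamma)$, the uniform mapping bound
\[
\Big\|(\hat{\tau}^2+\triangle+2W^2)^{-1}_{good}\big(\chi_{R\lesssim M}\,\chi_{I_j}(\hat{\tau})g_{\hat{\tau}}\big)\Big\|_{\langle R\rangle R^{\delta_0}L^2_{R^3\,dR}W^{M,2}_{\hat{\tau}}}\lesssim_\gamma\Big\|\chi_{I_j}(\hat{\tau})g_{\hat{\tau}}\Big\|_{\langle R\rangle^{-1-\delta_0}L^2_{R^3\,dR}W^{M,2}_{\hat{\tau}}}.
\]
Summing in $j$ and passing through $\mathcal{F}_{\tilde{\tau}}^{-1}$ yields the assertion with the spatial weight $\langle R\rangle R^{\delta_0}$ on the left and $\langle R\rangle^{-1-\delta_0}$ on the right, uniformly in $\gamma$ (with the implicit constant of course depending on $\gamma$).

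Third, the restriction to $[\tau_*,\infty)$ at the very end is what reconciles the ``good'' inverse with a genuine $L^2_{d\tilde{\tau}}$ bound. By construction \eqref{eq:goodinverseoftriangle+hattausquare+2Wsquare}, $(\hat{\tau}^2+\triangle+2W^2)^{-1}_{good}$ differs from the preliminary inverse $(\hat{\tau}^2+\triangle+2W^2)^{-1}_1$ by a multiple of $\psi_{*,+}(R;|\hat{\tau}|)$ for $\hat{\tau}>0$ and a multiple of $\psi_{*,-}(R;|\hat{\tau}|)$ for $\hat{\tau}<0$; these corrections remove the singularity at $\hat{\tau}=0$ (needed for $\hat{\tau}$-regularity) but are not $L^2$ in $R$. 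The point is that in wave time these corrections correspond to outgoing waves which grow in one direction of $\tilde{\tau}$ and decay in the other, and the cutoff $|_{[\tau_*,\infty)}$ discards precisely the growing tail. An extension argument (using Lemma~\ref{lem:smoothnesswithrespecttohattauforinverse} as a template) shows that replacing $f$ by a suitable smooth extension off $[\tau_*,\infty)$ does not affect the final bound.

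The main technical obstacle is quantifying the dependence on $\tau_*$: one must check that $\tau_*$ can be chosen large enough (as a function of $\gamma$ and $M$) to absorb both the loss coming from the change of variables between $\tau$ and $\tilde{\tau}$ and the exponentially (in wave time) localized ``bad'' tail of the good inverse, while retaining the fixed $\gamma$-dependence of the partition-of-unity bound. The extension of the statement to a smooth family $T_{\hat{\tau}}$ of bounded operators is then immediate, since composition with such a family preserves the $W^{M,2}_{\hat{\tau}}$-regularity, with norm controlled by the derivatives of $\hat{\tau}\mapsto T_{\hat{\tau}}$, all of which are uniformly bounded on the compact annulus $[\gamma,\gamma^{-1}]$.
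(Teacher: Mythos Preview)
Your reduction of the contribution of the preliminary inverse $(\hat\tau^2+\triangle+2W^2)^{-1}_1$ to a $W^{M,2}_{\hat\tau}$ estimate via Plancherel is fine and matches what the paper does implicitly. The gap is in your treatment of the correction term taking the ``1'' inverse to the ``good'' inverse. Lemma~\ref{lem:goodinverse1} and Remark~\ref{rem:lem:goodinverse1} are stated for the \emph{preliminary} inverse, with output space $\langle R\rangle^{1+\delta_0}R^{-\delta_0}L^2_{R^3\,dR}$, not for the good inverse in $\langle R\rangle R^{\delta_0}L^2_{R^3\,dR}$; your displayed $W^{M,2}_{\hat\tau}$ bound for $(\hat\tau^2+\triangle+2W^2)^{-1}_{good}$ in the latter space is actually false. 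The difference good $-$ preliminary is a multiple of $\psi_{*,\pm}(R;|\hat\tau|)\sim e^{\pm iR|\hat\tau|}R^{-3/2}$, and $\partial_{\hat\tau}^k$ of this brings down $R^k$; the resulting $R^{k-3/2}$ is not in $\langle R\rangle R^{\delta_0}L^2_{R^3\,dR}$ once $k\ge 2$. So you cannot close the argument by a uniform $W^{M,2}_{\hat\tau}$ bound on the good inverse.

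Your outgoing-wave heuristic is correct, but it has to be implemented at the level of the inverse Fourier transform, not as an a~priori $W^{M,2}$ estimate. The paper does exactly this: it isolates the correction term $\frac{e^{iR\hat\tau}}{R^{3/2}}\sigma_{*,\pm}(R\hat\tau,R)\cdot\chi_{[\gamma,\gamma^{-1}]}(\hat\tau)\zeta(\hat\tau)\cdot\mathcal{F}_*\big(\chi_{R_1\lesssim M}\mathcal{F}_{\tilde\tau}(f)\big)(|\hat\tau|)$ (note the phase is $e^{iR\hat\tau}$ in both half-lines of $\hat\tau$, since $\pm=\mathrm{sign}(\hat\tau)$), expands the smooth compactly-supported-in-$\hat\tau$ factors $\sigma_{*,\pm}(R\hat\tau,R)\chi\zeta$ and $\phi_*(R_1;|\hat\tau|)\chi_{R_1\lesssim M}$ as discrete Fourier series $\sum b_p(R)e^{ip\gamma\hat\tau}$, $\sum c_q(R_1)e^{iq\gamma\hat\tau}$ with coefficients rapidly decaying uniformly in $R,R_1$, and then applies $\mathcal{F}_{\tilde\tau}^{-1}$. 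Each exponential $e^{i(R+(p+q)\gamma)\hat\tau}$ becomes a forward translation of $f$ in wave time, yielding $f(\tilde\tau+R+(p+q)\gamma,R_1)$, and the resulting double sum is bounded directly in $\tau^{-N}L^2_{d\tau}\langle R\rangle R^{\delta_0}L^2_{R^3\,dR}$ after restriction to $[\tau_*,\infty)$, using the summability of $b_p,c_q$. This is where the one-sidedness and the restriction $|_{[\tau_*,\infty)}$ actually enter; the $W^{M,2}$ framework does not see it.
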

	\begin{proof} In light of Lemma~\ref{lem:goodinverse1} and the standard Plancherel's theorem, as well as the fact that our choice of $\big(\hat{\tau}^2 + \triangle +2W^2\big)_{good}^{-1}$ differs from $\big(\hat{\tau}^2 + \triangle +2W^2\big)_{1}^{-1}$ by a multiple of $\frac{e^{iR\hat{\tau}}}{R^{\frac32}}\cdot \sigma_{*,\pm}(R\hat{\tau}, R)$, where the sign $\pm = \sign(\hat{\tau})$, it suffices to check that
		\begin{align*}
			&\Big\|\mathcal{F}_{\tilde{\tau}}^{-1}\big[\frac{e^{iR\hat{\tau}}}{R^{\frac32}}\sigma_{*,\pm}(R\hat{\tau}, R)\cdot\chi_{[\gamma, \gamma^{-1}]}(\hat{\tau})\cdot\zeta(\hat{\tau})\cdot \mathcal{F}_*(\chi_{R\lesssim M}\mathcal{F}_{\tilde{\tau}}(f))(|\hat{\tau}|)\Big\|_{\tau^{-N}L^2_{d\tau}\langle R\rangle R^{\delta_0}L^2_{R^3\,dR}}\\&\lesssim \big\|f\big\|_{\tau^{-N}L^2_{d\tau}\langle R\rangle^{-1-\delta_0}L^2_{R^3\,dR}}.
		\end{align*}
		Here the function $\zeta(\cdot)\in C^\infty(\R)$ and $\sigma_{*,\pm}(R\hat{\tau}, R)$ is as in the asymptotic expansion of the Jost solutions. Expanding out 
		\begin{align*}
			\mathcal{F}_*(\chi_{R\lesssim M}\mathcal{F}_{\tilde{\tau}}(f))(|\hat{\tau}|) = \langle \phi_*(R; |\hat{\tau}|),\,\chi_{R\lesssim M}\mathcal{F}_{\tilde{\tau}}(f)(R)\rangle_{L^2_{R^3\,dR}}, 
		\end{align*}
		we can develop the smooth function $\kappa_R(\hat{\tau}): = \tilde{\chi}_{[\gamma, \gamma^{-1}]}(\hat{\tau})\phi_*(R; |\hat{\tau}|)\cdot \chi_{R\lesssim M}$, where the smooth cutoff $\tilde{\chi}$ satisfies $\tilde{\chi}\chi = \chi$, into a discrete Fourier series $\sum_{n\in \Z} a_n(R)\cdot e^{in\gamma\hat{\tau}}$,  
		where we have the bounds $\big|a_n(R)\big|\lesssim_{M,\gamma,L}\langle n\rangle^{-L}$ for any $L\in \N$, uniformly in $R$. Using a similar discrete Fourier expansion for the function $\hat{\tau}\rightarrow \sigma_{*,\pm}(R\hat{\tau}, R)\cdot\chi_{[\gamma, \gamma^{-1}]}(\hat{\tau})\cdot\zeta(\hat{\tau})$, 
		we infer 
		\begin{equation}\label{eq:nowundowFouriertransform}\begin{split}
				&\frac{e^{iR\hat{\tau}}}{R^{\frac32}}\sigma_{*,\pm}(R\hat{\tau}, R)\cdot\chi_{[\gamma, \gamma^{-1}]}(\hat{\tau})\cdot\zeta(\hat{\tau})\cdot \mathcal{F}_*(\chi_{R\lesssim M}\mathcal{F}_{\tilde{\tau}}(f))(|\hat{\tau}|)\\
				& = \frac{e^{iR\hat{\tau}}}{R^{\frac32}}\cdot \sum_{n\in \Z}e^{in\gamma\hat{\tau}}\cdot \sum_{p+q = n}b_p(R)\cdot \langle c_q(R_1),\,\chi_{R_1\lesssim M}\mathcal{F}_{\tilde{\tau}}(f)(R_1)\rangle_{L^2_{R_1^3\,dR_1}},
		\end{split}\end{equation}
		where we have the estimates (uniformly in $R>0$)
		\begin{align*}
			&\big|b_p(R)\big|\lesssim_{\gamma} \langle p\rangle^{-1-},\,\frac{\big|c_q(R)\big|}{\langle R\rangle^{1+}}\lesssim_{\gamma}\langle q\rangle^{-1-}\cdot\langle R\rangle^{-\frac32}\cdot \langle R - \gamma |q|\rangle^{-1},\\
			&\big|c_q\big|\lesssim_{\gamma, M, L} \langle q\rangle^{-L}
		\end{align*}
		Applying the inverse temporal Fourier transform to \eqref{eq:nowundowFouriertransform}, we arrive at the double sum 
		\begin{align*}
			Y(\tau, R): = R^{-\frac32}\sum_{p, q\in \Z}b_p(R)\cdot \langle c_q(R_1),\,\chi_{R_1\lesssim M}f(\tilde{\tau} + R + \gamma(p+q),\,R_1)\rangle_{L^2_{R_1^3\,dR_1}},
		\end{align*}
		and we have the estimate 
		\begin{align*}
			&\big\|Y|_{[\tau_*,\infty)}\big\|_{\tau^{-N}L^2_{d\tau}\langle R\rangle R^{\delta_0}L^2_{R^3\,dR}}\\&\lesssim \sum_p\big\|\frac{b_p(R)}{R^{\frac32}}\big\|_{\langle R\rangle^{1+\delta_0}L^2_{R^3\,dR}}\cdot \Big\|\sum_q\langle c_q(R_1),\,\chi_{R_1\lesssim M}f(\tilde{\tau} + R + \gamma(p+q),\,R_1)\rangle_{L^2_{R_1^3\,dR_1}}\big|\Big\|_{L^\infty_{dR}\tau^{-N}L^2_{d\tau}}\\
			&\lesssim_{\gamma}\big\|f\big\|_{\tau^{-N}L^2_{d\tau}\langle R\rangle^{-1-\delta_0}L^2_{R^3\,dR}},
		\end{align*}
		provided $\tau_*\geq \tau_*(\gamma, M)$, as desired. 
	\end{proof}
	\begin{rem}\label{rem:lem:keygoodinverse} The preceding lemma is quite natural as one alternatively obtains \eqref{eq:goodinverseoftriangle+hattausquare+2Wsquare} as the temporal Fourier transform of the inhomogeneous Duhamel propagator vanishing at temporal $+\infty$ and associated to the wave operator $\tilde{\Box}: = -\partial_{\tilde{\tau}}^2 + \triangle + 2W^2$, up to an arbitrary multiple of the exponentially decaying mode $\phi_d(R)$. 
		
	\end{rem}
	
	In a similar vein, we also mention the following useful lemma, we omit the simple proof: 
	\begin{lem}\label{lem:goodinverse2} For $\hat{\tau}_1>0, \hat{\tau}_2>0$ we have the difference bound 
		\begin{align*}
			&\Big\|\big(\hat{\tau}_1^2 + \triangle +2W^2\big)_{good}^{-1}\big(\chi_{R\lesssim M}f\big)(R) - \big(\hat{\tau}_2^2 + \triangle +2W^2\big)_{good}^{-1}\big(\chi_{R\lesssim M}f\big)(R)\Big\|_{\langle R\rangle R^{\delta_0}L^2_{R^3\,dR}}\\
			&\ll_{|\hat{\tau}_1 - \hat{\tau}_2|}\big\|f\big\|_{\langle R\rangle^{-1-\delta_0}L^2_{R^3\,dR}},
		\end{align*}
	\end{lem}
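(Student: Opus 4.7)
The plan is to decompose the good inverse via formula \eqref{eq:goodinverseoftriangle+hattausquare+2Wsquare} into its three constituents: the discrete eigenvalue term, the principal value integral, and the boundary term. I will estimate the difference $D(R)$ by estimating each constituent separately. The discrete eigenvalue term $\frac{\mathcal{F}_*(\chi_{R \lesssim M} f)(\xi_d)}{\hat{\tau}^2 + \xi_d^2} \phi_{*d}(R)$ depends smoothly on $\hat{\tau}$ and $\phi_{*d}\in \mathcal{S}(\mathbb{R}^4)$, so its difference contributes $\lesssim |\hat{\tau}_1 - \hat{\tau}_2| \cdot \|f\|_{\langle R\rangle^{-1-\delta_0}L^2_{R^3\,dR}}$ in the target norm, which is stronger than required.

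The remaining two pieces must be analyzed jointly, since each is individually singular at $\xi = |\hat{\tau}|$. Following the reduction performed in the proof of Lemma~\ref{lem:goodinverse1}, I will use the Jost decomposition $\phi_*(R;\xi) = a_*(\xi)\psi_{*,+}(R;\xi) + \overline{a_*(\xi)}\psi_{*,-}(R;\xi)$ in the oscillatory regime $R\xi \gtrsim 1$, together with a Taylor expansion in $(\xi - \hat{\tau})$ near the singularity. The $(\xi - \hat{\tau})$ factor absorbs the principal value singularity into a remainder with symbol-type $\hat{\tau}$-dependence, while the residual piece, when combined with the boundary term and simplified using the identity $\int_0^\infty \frac{e^{iR\xi}}{\hat{\tau}-\xi}\,d\xi = -i\pi e^{iR\hat{\tau}} + e^{iR\hat{\tau}} H(R,\hat{\tau})$ with $H$ enjoying symbol bounds in $\hat{\tau}$ uniformly in $R$, reduces (for $\hat{\tau}>0$) to a symbol-like scalar in $\hat{\tau}$ multiplying $\psi_{*,+}(R;|\hat{\tau}|)$. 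The scalar coefficient $\frac{\mathcal{F}_*(\chi_{R\lesssim M}f)(|\hat{\tau}|)\rho_*(|\hat{\tau}|)}{2|\hat{\tau}|}$ is Lipschitz on the bounded interval containing $\hat{\tau}_{1,2}$, with Lipschitz constant controlled by $\|f\|_{\langle R\rangle^{-1-\delta_0}L^2_{R^3\,dR}}$ via the boundedness of $\phi_*(R;\xi)$ and $\partial_\xi \phi_*(R;\xi)$ tested against $f$.

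The core technical step is to verify that the map $\hat{\tau} \mapsto \psi_{*,+}(\cdot;\hat{\tau})$ is H\"older continuous in the target norm $\langle R\rangle R^{\delta_0} L^2_{R^3\,dR}$. Naively, $\partial_{\hat{\tau}}\psi_{*,+} \sim R^{-1/2}$ at $R\hat{\tau}\gtrsim 1$, which is \emph{not} in the target norm, so a fundamental theorem of calculus argument applied termwise fails. Instead, using the elementary bound $|e^{iR\hat{\tau}_1} - e^{iR\hat{\tau}_2}| \leq \min\{2, R|\hat{\tau}_1 - \hat{\tau}_2|\}$ and splitting the $R$-integral at scale $R \sim |\hat{\tau}_1 - \hat{\tau}_2|^{-1}$, a direct computation gives
\[
\|\psi_{*,+}(\cdot; \hat{\tau}_1) - \psi_{*,+}(\cdot;\hat{\tau}_2)\|_{\langle R\rangle R^{\delta_0} L^2_{R^3\,dR}} \lesssim |\hat{\tau}_1 - \hat{\tau}_2|^{\frac12+\delta_0},
\]
which, being strictly stronger than linear, suffices for the $\ll_{|\hat{\tau}_1 - \hat{\tau}_2|}$ conclusion. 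An analogous argument governs $\phi_*(R;|\hat{\tau}|)$ appearing in the boundary term as well as the remainder terms produced by the Taylor expansion above.

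The main obstacle is the careful bookkeeping of the cancellation between the principal value integral and the boundary term (essentially redoing and sharpening the reduction of Lemma~\ref{lem:goodinverse1}, but with the output controlled in the weaker weighted norm $\langle R\rangle R^{\delta_0} L^2_{R^3\,dR}$ rather than the $\langle R\rangle^{1+\delta_0}R^{-\delta_0}L^2_{R^3\,dR}$ norm used there), together with ensuring that the only residual oscillatory factor is $\psi_{*,+}(R;|\hat{\tau}|)$ (for $\hat{\tau}>0$), whose H\"older continuity in $\hat{\tau}$ in the target norm is precisely the quantitative gain from oscillatory cancellation in $R$ that is unavailable at the level of the individual $\hat{\tau}$-derivative. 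This switch of norm is essential, since it is exactly the additional power $R^{\delta_0}$ of room at infinity in the target weight that makes the H\"older estimate integrable.
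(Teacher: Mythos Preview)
The paper does not give a proof of this lemma; it is stated immediately after Lemma~\ref{lem:keygoodinverse} with the phrase ``In a similar vein, we also mention the following useful'' and no argument. So there is no paper proof to compare against, only your proposal to assess.

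Your approach is correct in spirit and handles the genuinely nontrivial part (large $R$), but the claim that $\hat{\tau}\mapsto\psi_{*,+}(\cdot;\hat{\tau})$ is H\"older continuous in the target norm $\langle R\rangle R^{\delta_0}L^2_{R^3\,dR}$ is false as stated. The Jost solution $\psi_{*,+}(R;\hat{\tau})$, viewed as a radial function on $\mathbb{R}^4$, has a $\theta$-type singularity $\sim R^{-2}$ near $R=0$ (it is a linear combination of the regular $\phi_*(\cdot;\hat{\tau})$ and the singular solution), and $R^{-2}\notin R^{\delta_0}L^2_{R^3\,dR}$ near the origin. The difference $\psi_{*,+}(\cdot;\hat{\tau}_1)-\psi_{*,+}(\cdot;\hat{\tau}_2)$ retains a $|\hat{\tau}_1-\hat{\tau}_2|\cdot R^{-2}$ singularity (the coefficient of the singular solution in $\psi_{*,+}$ is not constant in $\hat{\tau}$), so the norm you compute is actually infinite. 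Your explicit calculation using $|e^{iR\hat{\tau}_1}-e^{iR\hat{\tau}_2}|\leq\min\{2,R|\hat{\tau}_1-\hat{\tau}_2|\}$ uses only the asymptotic $\psi_{*,+}\sim e^{iR\hat{\tau}}/(R^{3/2}\hat{\tau}^{3/2})$, which holds for $R\hat{\tau}\gtrsim 1$; it says nothing about $R\lesssim 1$. The same issue afflicts the $1$-inverse: it too carries a $R^{-2}$ singularity at the origin (coming from the $a_*\psi_{*,+}-\overline{a_*}\psi_{*,-}$ correction in \eqref{eq:oneinverseoftriangle+hattausquare+2Wsquare}), so neither piece of your splitting is individually in the target space.

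The fix is to split the $R$-range. For $R\gtrsim 1$ your argument goes through: the $1$-inverse is Lipschitz in the norm of Lemma~\ref{lem:goodinverse1}, which for large $R$ carries weight $\sim R$ and hence embeds into the target norm (weight $\sim R^{1+\delta_0}$); and your H\"older computation for the $\psi_{*,+}$ piece is correct on $R\gtrsim 1$. For $R\lesssim 1$, work directly with \eqref{eq:goodinverseoftriangle+hattausquare+2Wsquare}: the discrete term and the boundary term involve only $\phi_{*d}$ and $\phi_*(R;|\hat{\tau}|)$, both bounded and $C^1$ in $\hat{\tau}$ uniformly on $R\lesssim 1$; for the principal value term, the integrand $G_R(\xi):=\phi_*(R;\xi)\mathcal{F}_*(\chi_{R\lesssim M}f)(\xi)\rho_*(\xi)$ is $C^2$ in $\xi$ uniformly for $R\lesssim 1$ (since $\chi_{R\lesssim M}f$ is compactly supported and $\phi_*$ is smooth), and one integration by parts against $\log|\hat{\tau}-\xi|$ shows the PV is $C^1$ in $\hat{\tau}$ with derivative $\lesssim_M\|f\|_{\langle R\rangle^{-1-\delta_0}L^2_{R^3\,dR}}$ uniformly on $R\lesssim 1$. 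This gives pointwise Lipschitz control on $R\lesssim 1$, hence control in the target norm there since $R^{-\delta_0}\in L^2_{R^3\,dR}(R\lesssim 1)$.

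One minor slip: you write that $|\hat{\tau}_1-\hat{\tau}_2|^{\frac12+\delta_0}$ is ``strictly stronger than linear''; it is of course weaker than Lipschitz for small differences. This is harmless since the conclusion $\ll_{|\hat{\tau}_1-\hat{\tau}_2|}$ only requires $o(1)$.
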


	\subsection{Fine structure of some lower order terms}
	
	Considering the definition of $X^{(\tilde{\lambda})}$ in \eqref{eq:Xdef}, we encounter the term $Q^{(\tilde{\tau})}_{<\tau^{\frac12+}}\big((\lambda^{-2}n_*^{(\tilde{\lambda}, \tilde{\alpha})} - W^2)z\big)$. While we expect the factor $\lambda^{-2}n_*^{(\tilde{\lambda}, \tilde{\alpha})} - W^2$ to lead to a perturbative contribution, we have to deal with the poor spatial decay of this function, see Lemma~\ref{lem:approxsolasymptotics3}. This poses a difficulty for delicate estimates involving the Fourier transform of this term, such as the ones in Lemma~\ref{lem: tildelambdaeqnsourcebounds1},~\ref{lem:LsmallcalKtildelambda}. In fact, it appears that we cannot simply invoke Lemma~\ref{lem:wavebasicinhom}  to handle this term. Instead, we directly study the effect of multiplication by the problematic factor $\lambda^{-2}n_*^{(\tilde{\lambda}, \tilde{\alpha})} - W^2$ on the distorted Fourier side via the following 
	\begin{lem}\label{lem:pseudotransferenceoperator2} Setting 
		\begin{align*}
			\tilde{H}(\xi, \eta; \tau): =  \langle \phi(R;\xi),\,\chi_{R\gtrsim \tau^{\frac12-}}(\lambda^{-2}n_*^{(\tilde{\lambda}, \tilde{\alpha})} - W^2)\cdot \phi(R;\eta)\rangle_{L^2_{R^3\,dR}},
		\end{align*}
		the same kernel bounds as in Lemma~\ref{lem:pseudotransferenceoperator1} obtain, but with an extra decay factor $\lambda^{-2}\ll_{\tau_*}\tau^{-1}$.
	\end{lem}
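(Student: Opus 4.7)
The overall strategy is to factor out the decay $\lambda^{-2}$ from the multiplier and then reduce the remaining kernel estimate to the argument already used in Lemma~\ref{lem:pseudotransferenceoperator1}. From the (referenced) asymptotic description of the approximate ion density in Lemma~\ref{lem:approxsolasymptotics3}, the function $\lambda^{-2}n_*^{(\tilde{\lambda},\tilde{\alpha})}-W^2$ admits on the wave radiation regime $R\gtrsim \tau^{\frac12-}$ a bound of the schematic form $|\lambda^{-2}n_*^{(\tilde{\lambda},\tilde{\alpha})}-W^2|(R)\lesssim \lambda^{-2}\cdot g(R)$, with $g$ smooth and of symbol type satisfying $|\partial_R^k g(R)|\lesssim_k \langle R\rangle^{-k}$ together with polynomial decay at infinity. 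Since $\lambda^{-2}\ll_{\tau_*}\tau^{-1}$ (as $\lambda^{-2}/\tau^{-1}\sim t\to 0$), this scalar factor accounts precisely for the advertised gain.

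\textbf{Execution.} With the constant $\lambda^{-2}$ extracted, the problem reduces to establishing the kernel bounds of Lemma~\ref{lem:pseudotransferenceoperator1} for the auxiliary expression
\[
\langle \phi(R;\xi),\,\chi_{R\gtrsim \tau^{\frac12-}}\,g(R)\,\phi(R;\eta)\rangle_{L^2_{R^3\,dR}}.
\]
I would follow the pattern of Theorem 6.1 in \cite{KST2}: split the $R$-integral according to the non-oscillatory region $\max\{R\xi,R\eta\}\lesssim 1$ (where the Taylor expansions of $\phi(R;\xi),\phi(R;\eta)$ from subsection~\ref{subsec:basicfourier} furnish direct estimates), the fully oscillatory region $R\xi,R\eta\gtrsim 1$, and mixed regimes. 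In the fully oscillatory region I substitute the WKB expansion $\phi(R;\xi)=\sum_{\pm}a_{\pm}(\xi)\xi^{\frac12}R^{-\frac32}e^{\pm iR\xi}\sigma_{\pm}(R;\xi)$ and the analogous one for $\phi(R;\eta)$; combining the phases yields four oscillatory integrals with phase $(\pm\xi\pm\eta)R$, and repeated integration by parts against the combined frequency $\pm\xi\pm\eta$ generates the off-diagonal factor $\langle\xi-\eta\rangle^{-N}$ (for the resonant pairing $(+,-)$ giving phase $(\xi-\eta)R$) and a much stronger decay in the non-resonant pairings. Each integration by parts loses a factor of $R$ through the amplitude but gains one through $g^{(k)}(R)$ and the weight $R^3$, and convergence at infinity is ensured by the polynomial decay of $g$. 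The factors $\xi^{\frac12}\eta^{\frac12}/\langle\xi\rangle^2\langle\eta\rangle^2$ come from the explicit $\xi$- and $\eta$-dependence in the amplitudes $a_{\pm}$.

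\textbf{Main obstacle.} The delicate diagonal regime $\xi\sim\eta$ is the principal hurdle. Here the resonant phase $(\xi-\eta)R$ yields, after one integration by parts, the factor $|\xi-\eta|^{-1}$, while a direct $L^1$-bound on the amplitude in $R$ produces the alternative bound that in Lemma~\ref{lem:pseudotransferenceoperator1} was given by the support size $\lambda$; in the present setting the role of that support size is played by a constant depending only on $\int_{\tau^{\frac12-}}^\infty |g(R)|\, R^3\,dR$, which is finite uniformly in $\tau$ provided one uses the polynomial decay of $g$ (and in fact can be made as small as one wishes by sending $\tau_*\to\infty$ if one takes advantage of the lower cutoff $R\gtrsim\tau^{\frac12-}$). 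The symbol behavior of $g$ and the smoothness of the amplitudes in $\xi,\eta$ then imply, just as in Lemma~\ref{lem:pseudotransferenceoperator1}, the corresponding derivative statement for $G(\xi,\tilde\eta):=\tilde H(\xi,\xi+\tilde\eta;\tau)$ with respect to $\xi$, which upgrades the kernel bound to the $L^p_{\rho\,d\xi}$-boundedness of the associated integral operator stated at the end of Lemma~\ref{lem:pseudotransferenceoperator1}, now with the overall gain $\lambda^{-2}$.
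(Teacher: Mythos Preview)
Your approach is correct and matches the paper's proof, which is a one-line appeal to Lemma~\ref{lem:approxsolasymptotics3} (to extract the factor $\lambda^{-2}$ and obtain a bounded symbol-type multiplier on $R\gtrsim\tau^{\frac12-}$) followed by Lemma~\ref{lem:pseudotransferenceoperator1}. One minor point: Lemma~\ref{lem:approxsolasymptotics3} only asserts boundedness and symbol behavior of $\lambda^2 h$ in the region $R\gtrsim\tau^{\frac12-\frac{1}{4\nu}}$, not polynomial decay; the role of an upper spatial cutoff (producing the $\min\{|\xi-\eta|^{-1},\lambda\}$ in the diagonal bound) is played here by the implicit support of the approximate solution near $R\lesssim\lambda$ together with the $R^{-4}$ decay of $-W^2$ beyond, so you should phrase the reduction accordingly rather than invoking decay of $g$ itself.
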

	\begin{proof} This is a consequence of Lemma~\ref{lem:approxsolasymptotics3} in conjunction with Lemma~\ref{lem:pseudotransferenceoperator1}.
	\end{proof}
	
	The preceding lemma shall be used in the context of the following technical lemma comprising generalisations of both Lemma~\ref{lem:concatenation1} as well as Lemma~\ref{lem:SKdeltaPhi}: let $\mathcal{K}_*$ be the operator defined in analogy to \eqref{eq:Kformula} but with $F(\xi,\eta)$ replaced by
	$\tilde{H}(\xi, \eta; \tau)$. 
	\begin{lem}\label{lem:concatenation2} 
		Let $j\geq 1$ We have the bound 
		\begin{align*}
			&\Big\|\int_0^\infty \xi^2\cdot S\big([\prod_{l=1}^j\mathcal{K}_l\circ S](G)\big)\rho(\xi)\,d\xi\Big\|_{\tau^{-N+}L^2_{d\tau}}\\&\hspace{5cm}\lesssim (\sqrt{N})^{-j}\cdot \big\|\langle \xi\partial_{\xi}\rangle G\big\|_{\tau^{-N}L^2_{d\tau}L^2_{\rho(\xi)\,d\xi}}. 
		\end{align*}
		where for each $l$ we have either $\mathcal{K}_l = \frac{\lambda_{\tau}}{\lambda}\cdot\mathcal{K}$ or $\mathcal{K}_l =\mathcal{K}_*$. Furthermore we can improve the inequality by replacing $\tau^{-N+}L^2_{d\tau}$ by $\tau^{-N-}L^2_{d\tau}$ provided there is at least one operator $\mathcal{K}_*$ present. 
		\\
		There is an analogous variation on Lemma~\ref{lem:SKdeltaPhi}, as well as on Lemma~\ref{lem:derivativemovesthrough2}.
	\end{lem}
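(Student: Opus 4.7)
The plan is to mimic the structure of the proof of Lemma~\ref{lem:concatenation1} (and the extension in Lemma~\ref{lem:SKdeltaPhi}) essentially verbatim, exploiting the key fact that the kernel $\tilde{H}(\xi,\eta;\tau)$ of $\mathcal{K}_*$, by Lemma~\ref{lem:pseudotransferenceoperator2}, obeys exactly the same pointwise and derivative bounds as the kernel $F(\xi,\eta)$ of $\mathcal{K}$ provided by Lemma~\ref{lem:KboundsKST}, up to a harmless logarithmic factor $\log\lambda\lesssim \log\tau$ from Lemma~\ref{lem:pseudotransferenceoperator1}, but multiplied by the additional temporal decay factor $\lambda^{-2}(\tau)\ll_{\tau_*}\tau^{-1}$. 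Since the factor $\frac{\lambda_\tau}{\lambda}\sim\tau^{-1}$ accompanying the plain $\mathcal{K}$ played the role of providing the $N^{-1/2}$ gain per factor in Lemma~\ref{lem:concatenation1}, this built-in $\lambda^{-2}$ for $\mathcal{K}_*$ plays the identical role; in particular each $\mathcal{K}_*$ gives a power of $\tau^{-1}$ to absorb into Schur's criterion on the time variables, and when at least one $\mathcal{K}_*$ appears in the product we obtain an extra $\tau^{-1}$ gain beyond what the proof of Lemma~\ref{lem:concatenation1} provides, which converts $\tau^{-N+}$ to $\tau^{-N-}$.

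Concretely, I would write the product $\prod_{l=1}^j\mathcal{K}_l\circ S$ and decompose each $\mathcal{K}_l$ into a resonant and a non-resonant part, $\mathcal{K}_l = \mathcal{K}_l^{res,(l)} + \mathcal{K}_l^{nres,(l)}$, where the cutoff defining $\mathcal{K}_l^{res,(l)}$ is the same as in \eqref{eq:SKproduct1}, namely restricting the new frequency increment $\triangle\tilde{\eta}_l$ to size $\ll \frac{\xi}{10 l^2}$ with respect to the output frequency $\xi$. This works uniformly for both types of operators since both kernels obey the same decay profile away from the diagonal. Expanding the product as in \eqref{eq:SKproductexpansion1} into a telescoping sum indexed by the largest initial block of consecutive resonant pieces, I collect the $\xi$-dependent phases from the propagators $S$ between the resonant factors into the combined phase \eqref{eq:resprodxiphase1}; the usual cutoff then forces the phase to be in the non-stationary regime in $\xi$, and integration by parts in $\xi$ in the outer integral $\int_0^\infty\xi^2\,\cdots\,\rho(\xi)\,d\xi$ together with the single non-resonant factor $\mathcal{K}_l^{nres,(l+1)}$ at the switching index yields the required $N^{-1}$ bound for that block. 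The temporal $L^2$ bounds at each stage follow from Schur's criterion on the time variable using the $\tau^{-N}\sigma^{-N}$ weights together with the factor $\tau^{-1}$ coming either from $\frac{\lambda_\tau}{\lambda}$ (for the $\mathcal{K}$ pieces) or from $\lambda^{-2}$ (for the $\mathcal{K}_*$ pieces, by Lemma~\ref{lem:pseudotransferenceoperator2}), giving the $(\sqrt{N})^{-j}$ gain claimed.

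The main obstacle, as in Lemma~\ref{lem:SKdeltaPhi}, is not the purely oscillatory (non-resonant) case but rather the case where a long initial run of resonant operators is followed by a non-resonant one; here one must carefully track that the telescoped phase \eqref{eq:resprodxiphase1} remains non-stationary in $\xi$, and that the accumulated derivatives of the kernels $F$ and $\tilde{H}$ when applying the $\xi$-integrations by parts obey the same symbol bounds in the shifted variables $\xi,\triangle\tilde{\eta}_l$ (as in Remark~\ref{rem:oscillatoryintegral1}). Since $\mathcal{K}_*$ introduces explicit time dependence through $\tilde{H}(\xi,\eta;\tau)$, I also need to verify that the proof of Lemma~\ref{lem:pseudotransferenceoperator1} (which yields symbol behavior of $G(\xi,\tilde{\eta})$ in $\xi$ uniformly in $\tau$) is compatible with the $\sigma$-integration steps; this is straightforward because any additional $\sigma$-derivatives acting on $\tilde{H}(\xi,\eta;\sigma)$ only produce powers of $\lambda^{-2}(\sigma)\cdot\sigma^{-1}$ which are strictly better than needed. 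The analogous variants on Lemmas~\ref{lem:SKdeltaPhi} and \ref{lem:derivativemovesthrough2} follow by the same modifications applied respectively to their proofs, where in the latter case one additionally moves the $\sigma$-derivative through via the same integration-by-parts scheme that produced Lemma~\ref{lem:derivativemovesthrough1}.
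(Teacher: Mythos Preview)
Your proposal is correct and follows the same approach as the paper, which simply states that the proofs are identical to those of Lemma~\ref{lem:concatenation1} and Lemma~\ref{lem:SKdeltaPhi}, with the extra gain in the presence of $\mathcal{K}_*$ coming from $\lambda^{-2}(\tau)\sim\tau^{-1-\frac{1}{2\nu}}\ll_{\tau_*}\tau^{-1}$. One small quantitative correction: the extra gain from a single $\mathcal{K}_*$ over $\frac{\lambda_\tau}{\lambda}\mathcal{K}$ is $\tau^{-\frac{1}{2\nu}}$ (the difference between $\lambda^{-2}\sim\tau^{-1-\frac{1}{2\nu}}$ and $\frac{\lambda_\tau}{\lambda}\sim\tau^{-1}$), not a full $\tau^{-1}$ as you wrote, but this is already enough to absorb the $\log\tau$ loss from Lemma~\ref{lem:pseudotransferenceoperator1} and convert $\tau^{-N+}$ to $\tau^{-N-}$.
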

	The proofs of these assertions are identical to the ones of Lemma~\ref{lem:concatenation1} and Lemma~\ref{lem:SKdeltaPhi}. Note that the extra gain in the presence of one operator $\mathcal{K}_*$ comes from the fact that $\lambda^{-2}(\tau)\sim \tau^{-1-\frac{1}{2\nu}}\ll_{\tau_*}\tau^{-1}$. 
	We also make the following simple
	\begin{rem}\label{rem:concatenation2} One has a similar estimate as in the preceding lemma if one omits the factor $\xi^2$ and the operator $\langle\xi\partial_{\xi}\rangle$ and replaces $\tau^{-N+}$ by $\tau^{-N+1+}$. In fact, the estimate is then much more elementary. 
	\end{rem}
	
	For dealing with the high temporal frequency regime, we shall require the following 
	\begin{lem}\label{lem:transferenceKstarhightempfreq} Letting $0<\gamma = \gamma(\tau_*),\,\lim_{\tau_*\rightarrow\infty}\gamma(\tau_*) = 0$, we and letting $\mathcal{K}_*$ be defined as in the preceding lemma, we have the bounds (recall Proposition~\ref{prop:linpropagator}, we let $S$ denote the propagator in \eqref{eq:inhomprop})
		\begin{align*}
			\int_0^\infty \xi^2\cdot S\big(Q^{(\tilde{\tau})}_{>\gamma^{-1}}\mathcal{K}_*\circ S\big)G\cdot \rho(\xi)\,d\xi|_{[\tau_*,\infty)} = \partial_{\tau}G_1,
		\end{align*}
		where we have the estimate 
		\begin{align*}
			\big\|G_1\big\|_{\tau^{-N}L^2_{d\tau}}\ll_{\tau_*} \big\|\langle\xi\partial_{\xi}\rangle^{2+}G\big\|_{\tau^{-N}L^2_{d\tau}L^p_{\rho(\xi)\,d\xi}},\,2\leq p<\infty.
		\end{align*}
		We also have the estimate 
		\begin{align*}
			\big\|G_1\big\|_{\tau^{-N-}L^2_{d\tau}}\ll_{\tau_*} \big\|S G\big\|_{\tau^{-N}L^2_{d\tau}L^p_{\rho(\xi)\,d\xi}},\,2\leq p<\infty.
		\end{align*}
	\end{lem}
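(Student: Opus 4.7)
The plan is to first obtain a pointwise-in-$\tau$ decay bound on
\[
I(\tau):=\int_0^\infty \xi^2\cdot S\bigl(Q^{(\tilde\tau)}_{>\gamma^{-1}}\mathcal K_*\circ S\bigr)G\cdot\rho(\xi)\,d\xi,
\]
and then set $G_1(\tau):=-\int_\tau^\infty I(s)\,ds$, so that $\partial_\tau G_1=I$ on $[\tau_*,\infty)$ automatically. For the first estimate I will establish $\|I\|_{\tau^{-N-1}L^2_{d\tau}}\ll_{\tau_*}\|\langle\xi\partial_\xi\rangle^{2+}G\|_{\tau^{-N}L^2_{d\tau}L^p_{\rho\,d\xi}}$, and for the second $\|I\|_{\tau^{-N-1-}L^2_{d\tau}}\ll_{\tau_*}\|SG\|_{\tau^{-N}L^2_{d\tau}L^p_{\rho\,d\xi}}$; a Schur test with kernel $(\tau/s)^N s^{-1}\mathbf{1}_{s\geq\tau}$ converts the former into the target bound on $\|G_1\|_{\tau^{-N}L^2_{d\tau}}$ with only a harmless $O(N^{-1})$ constant, while the slight extra decay of $I$ in the latter allows the stronger conclusion $\|G_1\|_{\tau^{-N-}L^2_{d\tau}}$.

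The pointwise bounds on $I$ rest on two ingredients. First, by Lemma~\ref{lem:pseudotransferenceoperator2} the kernel of $\mathcal K_*$ is that of Lemma~\ref{lem:pseudotransferenceoperator1} multiplied by the extra factor $\lambda^{-2}(\sigma)\ll_{\tau_*}\sigma^{-1-\frac{1}{2\nu}}$, so $\mathcal K_*$ maps $L^p_{\rho(\eta)\,d\eta}$ and its weighted variant $\langle\xi\partial_\xi\rangle^{1+\delta_0}L^p_{\rho\,d\eta}$ into themselves with this small norm. Second, the high-wave-time localizer admits the decomposition $Q^{(\tilde\sigma)}_{>\gamma^{-1}}=\lambda(\sigma)\partial_\sigma\circ\bigl[\partial_{\tilde\sigma}^{-1}Q^{(\tilde\sigma)}_{>\gamma^{-1}}\bigr]$ via the chain rule $\partial_{\tilde\sigma}=\lambda(\sigma)\partial_\sigma$, with $\partial_{\tilde\sigma}^{-1}$ gaining a factor $\gamma$; this is either used directly to place the source in a slightly better class or, mimicking the proof of Lemma~\ref{lem:derivativemovesthrough1}, to convert the $\partial_\sigma$ into a $\partial_\tau$ after integration by parts in the outer propagator, producing an additional $\tau^{-1}$-gain from $\partial_\tau\zeta$ in the identity $\partial_\sigma\tilde S=\partial_\tau(\zeta\tilde S)-\partial_\tau\zeta\cdot\tilde S$. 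The outer integral $\int_0^\infty\xi^2\cdot S(\cdot)\rho\,d\xi$ is then controlled by Lemma~\ref{lem:K_frefined} and Lemma~\ref{lem:tildeKfcontrol}, each of which consumes one factor of $\langle\xi\partial_\xi\rangle^{1+\delta_0}$ on the source.

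The need for $\langle\xi\partial_\xi\rangle^{2+}G$ in the first estimate stems from the fact that one application of Lemma~\ref{lem:tildeKfcontrol} for the outer propagator spends one $\langle\xi\partial_\xi\rangle^{1+\delta_0}$, and when this operator is commuted past the inner $S$ to act on $G$ the oscillatory phase of $S$ forces it to effectively differentiate the rescaling $\tilde\xi=\frac{\lambda(\tau)}{\lambda(\sigma)}\xi$, which in the Schur estimates in the proof of Lemma~\ref{lem:concatenation1} accounts for the second power of $\langle\xi\partial_\xi\rangle$. The second estimate is cleaner because $SG$ is given as a single input, so only one such commutation is required and no $\langle\xi\partial_\xi\rangle$ on the right-hand side survives; the resulting slack in the estimate produces the stronger decay $\tau^{-N-}$ on $G_1$. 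The main technical obstacle is keeping track of the $\sigma$-dependence of the kernel $\tilde H(\xi,\eta;\sigma)$ of $\mathcal K_*$, which enters both through the factor $\lambda^{-2}(\sigma)$ and the spatial cutoff $\chi_{R\gtrsim\sigma^{1/2-}}$: when $\partial_\sigma$ (via the $\partial_{\tilde\sigma}$ decomposition) or $\xi\partial_\xi$ acts on $\mathcal K_*(SG)$, one must verify that the perturbed kernel still satisfies the pointwise and symbol bounds of Lemma~\ref{lem:pseudotransferenceoperator2}; differentiating the estimates of Lemma~\ref{lem:pseudotransferenceoperator1} produces at most additional $\sigma^{-1}$ weights, comfortably absorbed by the $\ll_{\tau_*}$ smallness.
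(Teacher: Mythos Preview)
There is a genuine gap in your argument for the first estimate. Your plan hinges on the intermediate bound $\|I\|_{\tau^{-N-1}L^2_{d\tau}}\ll_{\tau_*}\|\langle\xi\partial_\xi\rangle^{2+}G\|_{\tau^{-N}L^2_{d\tau}L^p_{\rho\,d\xi}}$, but this cannot be obtained from the ingredients you list. The factor $\lambda^{-2}(\sigma)\sim\sigma^{-1-\frac{1}{2\nu}}$ in $\mathcal K_*$ is almost entirely cancelled by the loss of one power of $\sigma$ from the inner propagator $S$ (Lemma~\ref{lem:basicL2}): $SG\in\sigma^{-N+1}L^2$, so $\mathcal K_*\circ S(G)\in\sigma^{-N-\frac{1}{2\nu}}L^2$, and after $Q^{(\tilde\sigma)}_{>\gamma^{-1}}$ and the outer $\int_0^\infty\xi^2 S(\cdot)\rho\,d\xi$ (Lemma~\ref{lem:tildeKfcontrol}) one only reaches $I\in\tau^{-N-\frac{1}{2\nu}}L^2$. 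Integrating this gives $G_1\in\tau^{-N+1-\frac{1}{2\nu}}L^2$, which is strictly weaker than the claim. The $\gamma$-gain from $\partial_{\tilde\sigma}^{-1}Q^{(\tilde\sigma)}_{>\gamma^{-1}}$ is a constant, not a power of $\sigma$, so it does not close this gap either.

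You do mention the correct mechanism---mimicking Lemma~\ref{lem:derivativemovesthrough1} to convert $\partial_\sigma$ into $\partial_\tau$---but you misidentify its output. That procedure does \emph{not} yield an improved pointwise bound on $I$; it produces a decomposition $I=\partial_\tau M_1+\tau^{-1}M_2$ in which $\partial_\tau M_1$ need not lie in $\tau^{-N-1}L^2$ at all. The paper uses exactly this structure, but applied to the factor $\xi^2$ in the \emph{outer} propagator: writing $\xi^2 S(\tau,\sigma,\xi)=-\partial_\sigma(\cdots)$ and integrating by parts in $\sigma$ gives \eqref{eq:listofabstractterms1}, a direct $\partial_\tau$-derivative plus a boundary term and $\tau^{-1}$-better remainders. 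The boundary term (no $\xi^2$ available) is where the high-frequency identity $Q^{(\tilde\tau)}_{>\gamma^{-1}}=\frac{\partial\tau}{\partial\tilde\tau}\partial_\tau\circ\partial_{\tilde\tau}^{-1}Q^{(\tilde\tau)}_{>\gamma^{-1}}$ is actually used, twice, so that the two factors $\frac{\partial\tau}{\partial\tilde\tau}\sim\lambda$ exactly cancel the $\lambda^{-2}$ in $\mathcal K_*$; see \eqref{eq:listofabstractterms2}. The twofold integration by parts in $\xi$ that this forces is the origin of the $\langle\xi\partial_\xi\rangle^{2+}$. Your second-estimate argument, by contrast, is essentially sound: with $SG$ given directly there is no inner-$S$ loss, $\mathcal K_*$ genuinely places the source in $\sigma^{-N-1-}L^2$, and then either your integration route or the paper's $\xi^2$-integration-by-parts route closes.
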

	\begin{proof} The second estimate is a straighforward consequence of the fact that $\mathcal{K}_*: \tau^{-N}L^2_{d\tau}L^p_{\rho(\xi)\,d\xi}\longrightarrow \tau^{-N-1-}L^2_{d\tau}L^p_{\rho(\xi)\,d\xi}$, see Lemma~\ref{lem:pseudotransferenceoperator1},  and that the extra $\xi^2$ allows us to write the expression as time derivative. We now consider the more delicate first inequality, which is a consequence of the relation $Q^{(\tilde{\tau})}_{>\gamma^{-1}} = \frac{\partial\tau}{\partial\tilde{\tau}}\cdot \partial_{\tau}\circ \partial_{\tilde{\tau}}^{-1}Q^{(\tilde{\tau})}_{>\gamma^{-1}}$, the mapping property
		\[
		\partial_{\tilde{\tau}}^{-1}Q^{(\tilde{\tau})}_{>\gamma^{-1}}: \tau^{-N}L^2_{d\tau}\rightarrow \gamma^2\cdot  \tau^{-N}L^2_{d\tau}, 
		\]
		and repeated integrations by parts. Specifically, write 
		\begin{equation}\label{eq:listofabstractterms1}\begin{split}
				\int_0^\infty \xi^2\cdot S\big(Q^{(\tilde{\tau})}_{>\gamma^{-1}}\mathcal{K}_*\circ S\big)G\cdot \rho(\xi)\,d\xi &= \partial_{\tau}\int_0^\infty \tilde{S}\big(Q^{(\tilde{\tau})}_{>\gamma^{-1}}\mathcal{K}_*\circ S\big)G\cdot \rho(\xi)\,d\xi\\
				& +  \int_0^\infty \tau^{-1}\tilde{\tilde{S}}\big(Q^{(\tilde{\tau})}_{>\gamma^{-1}}\mathcal{K}_*\circ S\big)G\cdot \rho(\xi)\,d\xi\\
				& + \int_0^\infty \xi^2\cdot S\frac{(\xi\partial_{\xi})}{\tau}\big(Q^{(\tilde{\tau})}_{>\gamma^{-1}}\mathcal{K}_*\circ S\big)G\cdot \rho(\xi)\,d\xi\\
				& - \int_0^\infty \big(Q^{(\tilde{\tau})}_{>\gamma^{-1}}\mathcal{K}_*\circ S\big)G\cdot \rho(\xi)\,d\xi,
		\end{split}\end{equation}
		where the propagators $\tilde{S}, \tilde{\tilde{S}}$ have a similar form and the same mapping properties as $S$. To handle the boundary term at the end, write
		\begin{equation}\label{eq:listofabstractterms2}\begin{split}
				\int_0^\infty \big(Q^{(\tilde{\tau})}_{>\gamma^{-1}}\mathcal{K}_*\circ S\big)G\cdot \rho(\xi)\,d\xi &= \partial_{\tau} \int_0^\infty \frac{\partial\tau}{\partial\tilde{\tau}}\cdot\big(\partial_{\tilde{\tau}}^{-1}Q^{(\tilde{\tau})}_{>\gamma^{-1}}\mathcal{K}_*\circ S\big)G\cdot \rho(\xi)\,d\xi\\
				& -   \int_0^\infty  \partial_{\tau}(\frac{\partial\tau}{\partial\tilde{\tau}})\cdot\big(\partial_{\tilde{\tau}}^{-1}Q^{(\tilde{\tau})}_{>\gamma^{-1}}\mathcal{K}_*\circ S\big)G\cdot \rho(\xi)\,d\xi.\\
		\end{split}\end{equation}
		Then we continue for the first term on the right by writing 
		\begin{align*}
			&\int_0^\infty \frac{\partial\tau}{\partial\tilde{\tau}}\cdot\big(\partial_{\tilde{\tau}}^{-1}Q^{(\tilde{\tau})}_{>\gamma^{-1}}\mathcal{K}_*\circ S\big)G\cdot \rho(\xi)\,d\xi\\
			& = \int_0^\infty \frac{\partial\tau}{\partial\tilde{\tau}}\cdot\big(\partial_{\tilde{\tau}}^{-2}Q^{(\tilde{\tau})}_{>\gamma^{-1}} \frac{\partial\tau}{\partial\tilde{\tau}}\mathcal{K}_*\circ \partial_{\tau}S\big)G\cdot \rho(\xi)\,d\xi,\\
		\end{align*}
		and using a simple variant of Lemma~\ref{lem:concatenation2} and the fact that $(\frac{\partial\tau}{\partial\tilde{\tau}})^2\sim \lambda^2$ to conclude that 
		\begin{align*}
			\Big\| \int_0^\infty \frac{\partial\tau}{\partial\tilde{\tau}}\cdot\big(\partial_{\tilde{\tau}}^{-2}Q^{(\tilde{\tau})}_{>\gamma^{-1}} \frac{\partial\tau}{\partial\tilde{\tau}}\mathcal{K}_*\circ \partial_{\tau}S\big)G\cdot \rho(\xi)\,d\xi\Big\|_{\tau^{-N}L^2_{d\tau}}\ll_{\tau_*}\big\|\langle\xi\partial_{\xi}\rangle^{1+}G\big\|_{\tau^{-N}L^2_{d\tau}L^p_{\rho(\xi)\,d\xi}}. 
		\end{align*}
		The second term in \eqref{eq:listofabstractterms2} is handled similarly. The procedure for the remaining terms in \eqref{eq:listofabstractterms1} is similar. One uses twofold integration by parts with respect to the time variables in the propagators $\tilde{S}, S$, which due to the operator $Q^{(\tilde{\tau})}_{>\gamma^{-1}} $ 'only costs' $\lambda^2$, and thereby either replaces either propagator by $\partial_{\tau}S, \partial_{\tau}\tilde{S}$. The resulting expression can then be handled in analogy to the proof of Lemma~\ref{lem:concatenation1}. Observe that we have to integrate by parts with respect to the spatial frequency twice, and this is responsible for the presence of the operator $\langle \xi\partial_{\xi}\rangle^{2+}$ on the right hand side.  
	\end{proof}
	The following lemma is obtained by similar arguments: 
	\begin{lem}\label{lem:transferenceKstarhightempfreqgeneral} Let  for each $l$ either $\mathcal{K}_l = \frac{\lambda_{\tau}}{\lambda}\cdot\mathcal{K}$ or $\mathcal{K}_l =\mathcal{K}_*$. Then 
		\begin{align*}
			&\int_0^\infty \xi^2\cdot S\big([\prod_{l=1}^j\mathcal{K}_l\circ S\circ Q^{(\tilde{\tau})}_{>\gamma^{-1}}\big(\mathcal{K}_*\circ S\circ[\prod_{l=j+1}^{k}\mathcal{K}_l\circ S\big)(G)\big)\rho(\xi)\,d\xi = \partial_{\tau}G_1,\\&
			\big\|G_1\big\|_{\tau^{-N}L^2_{d\tau}}\lesssim (\sqrt{N})^{-j-k}\cdot \big\|\langle \xi\partial_{\xi}\rangle^{2+} G\big\|_{\tau^{-N}L^2_{d\tau}L^2_{\rho(\xi)\,d\xi}}. 
		\end{align*}
		We can also improve the norm on the left to $\|\cdot\|_{\tau^{-N-}L^2_{d\tau}}$ provided we use $\big\|SG\big\|_{\tau^{-N}L^2_{d\tau}L^2_{\rho(\xi)\,d\xi}}$ on the right instead. 
	\end{lem}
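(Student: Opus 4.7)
\medskip

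\noindent\textbf{Proof plan for Lemma~\ref{lem:transferenceKstarhightempfreqgeneral}.}
The plan is to combine the two-sided concatenation estimate of Lemma~\ref{lem:concatenation2} with the gradient-structure extraction from Lemma~\ref{lem:transferenceKstarhightempfreq}, inductively reducing the chain to the case already handled. First I would introduce the abbreviations
\begin{align*}
A: = [\prod_{l=1}^j\mathcal{K}_l\circ S], \quad B: = [\prod_{l=j+1}^{k}\mathcal{K}_l\circ S](G),
\end{align*}
so that the quantity in question is $\int_0^\infty \xi^2\cdot S\big(A\circ Q^{(\tilde{\tau})}_{>\gamma^{-1}}(\mathcal{K}_*\circ S\circ B)\big)\rho(\xi)\,d\xi$. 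The gain $(\sqrt{N})^{-k+j}$ for the rightmost block $B$ is already furnished by Lemma~\ref{lem:concatenation2} and Remark~\ref{rem:concatenation2}, giving $\big\|\langle\xi\partial_\xi\rangle^{2+}B\big\|_{\tau^{-N}L^2_{d\tau}L^2_{\rho(\xi)\,d\xi}}\lesssim (\sqrt{N})^{-(k-j)}\big\|\langle\xi\partial_\xi\rangle^{2+}G\big\|_{\tau^{-N}L^2_{d\tau}L^2_{\rho(\xi)\,d\xi}}$, with the caveat that one needs to also propagate the factor $\langle\xi\partial_\xi\rangle^{2+}$ through the chain, which is handled by the standard commutator arguments already used in Lemma~\ref{lem:concatenation1} (each composition $\mathcal{K}_l\circ S$ maps $L^2_{\rho\,d\xi}$ with symbol-type weights into itself with the same gain of $N^{-1/2}$).

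Next I would write $Q^{(\tilde{\tau})}_{>\gamma^{-1}} = \frac{\partial\tau}{\partial\tilde\tau}\partial_\tau\circ\partial_{\tilde\tau}^{-1}Q^{(\tilde{\tau})}_{>\gamma^{-1}}$ as in the proof of Lemma~\ref{lem:transferenceKstarhightempfreq}, and perform integration by parts in the outermost propagator $S$ using the identity $\xi^2 S = \pm\partial_\sigma(\text{something of the same form})$ from Proposition~\ref{prop:linpropagator}. This produces the desired prefactor $\partial_\tau$ outside the integral, together with boundary and commutator remainders entirely analogous to those listed in \eqref{eq:listofabstractterms1}, \eqref{eq:listofabstractterms2}. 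For each of these remainders one either gains $\lambda^{-2}\ll_{\tau_*}1$ from the $\mathcal{K}_*$ factor, or the double integration by parts in $\tilde\tau$ forced by $Q^{(\tilde{\tau})}_{>\gamma^{-1}}$ costs $\lambda^2$ which is absorbed (again using $\lambda^{-2}\ll\tau^{-1}$ from $\mathcal{K}_*$), and the remaining action of the $A$-chain on the left is controlled by the standard $N^{-j/2}$ gain from the concatenation estimate of Lemma~\ref{lem:concatenation2}. Putting these two gains together gives the full factor $(\sqrt{N})^{-(j+k)}$ advertised in the statement.

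The improved version with $\tau^{-N-}L^2_{d\tau}$ on the left and $\|SG\|_{\tau^{-N}L^2_{d\tau}L^2_{\rho(\xi)\,d\xi}}$ on the right is obtained by not expending integrations by parts in $\xi$ for the rightmost $\mathcal{K}_*\circ S\circ B$ block; instead one simply uses the raw mapping property $\mathcal{K}_*: \tau^{-N}L^2_{d\tau}L^p_{\rho\,d\xi}\longrightarrow \tau^{-N-1-}L^2_{d\tau}L^p_{\rho\,d\xi}$ from Lemma~\ref{lem:pseudotransferenceoperator1} combined with Lemma~\ref{lem:pseudotransferenceoperator2} (which is where the extra $\tau^{0-}$ gain lives), exactly as in the second bound of Lemma~\ref{lem:transferenceKstarhightempfreq}.

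The main obstacle, as in the proof of Lemma~\ref{lem:transferenceKstarhightempfreq}, will be the careful bookkeeping of the boundary terms generated when one passes $Q^{(\tilde{\tau})}_{>\gamma^{-1}}$ through the chain via the $\partial_{\tilde\tau}^{-1}$ identity: each use of the relation $Q^{(\tilde{\tau})}_{>\gamma^{-1}}=\frac{\partial\tau}{\partial\tilde\tau}\partial_\tau\circ\partial_{\tilde\tau}^{-1}Q^{(\tilde{\tau})}_{>\gamma^{-1}}$ splits into a genuine $\partial_\tau$-term (the one that contributes to $\partial_\tau G_1$) and a lower-order commutator with time weights (analogues of the second and third terms in \eqref{eq:listofabstractterms1}), and one has to verify that after composition with the $A$-block on the left all these lower-order terms are absorbable by $(\sqrt{N})^{-j}$-gains, since they are not themselves of gradient form. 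This is where the factor $\langle\xi\partial_\xi\rangle^{2+}$ (rather than $\langle\xi\partial_\xi\rangle^{1+}$) on the right hand side is consumed, matching the cost in Lemma~\ref{lem:transferenceKstarhightempfreq}.
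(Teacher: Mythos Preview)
Your proposal is correct and matches the paper's approach. The paper offers no proof beyond the single sentence ``obtained by similar arguments'' referring back to Lemma~\ref{lem:transferenceKstarhightempfreq}, and what you have written is precisely the natural unfolding of that remark: combine the gradient-extraction and boundary-term bookkeeping of Lemma~\ref{lem:transferenceKstarhightempfreq} (the identities in \eqref{eq:listofabstractterms1}, \eqref{eq:listofabstractterms2}) with the concatenation gains of Lemma~\ref{lem:concatenation2} for the left and right blocks $A$ and $B$, and for the improved $\tau^{-N-}$ version invoke the raw decay of $\mathcal{K}_*$ from Lemmas~\ref{lem:pseudotransferenceoperator1}--\ref{lem:pseudotransferenceoperator2} just as in the second estimate of Lemma~\ref{lem:transferenceKstarhightempfreq}.
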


	\subsection{Bounds of lower order error terms} Here we collect bounds on error terms which enjoy additional temporal decay, in particular those arising from the perturbative corrections used to build the approximate solution which serves as our starting point. To begin with, we state a lemma dealing with the perturbative terms in $X^{(\tilde{\lambda})}(\tau, \xi)$, see \eqref{eq:Xdef}:
	\begin{lem}\label{lem:Xtildelambdaperturbterms} Assuming the representation \eqref{eq:zdecompbasic} and further \eqref{eq:kapparefined}, we have the bounds
		\begin{align*}
			&\Big\|\langle\xi\partial_{\xi}\rangle\mathcal{F}\Big(Q^{(\tilde{\tau})}_{<\gamma^{-1}}\big(\lambda^{-2}y_z\cdot (\tilde{u}_*^{(\tilde{\lambda}, \tilde{\alpha})}-W)\big)\Big)\Big\|_{\tau^{-N-}L^2_{d\tau}L^2_{\rho(\xi)\,d\xi}}\\&
			\lesssim\big\|z_{nres}\big\|_{S} + \big\|\langle\partial_{\tilde{\tau}}^2\rangle^{-2}\partial_{\tilde{\tau}}^2\tilde{\lambda}\big\|_{\tau^{-N}L^2_{d\tau}} + \big\|(\tilde{\kappa}_1,\kappa_2)\big\|_{\tau^{-N}L^2_{d\tau}},\\
			&\Big\|\langle\xi\partial_{\xi}\rangle\mathcal{F}\big(\lambda^{-2}(y - y_z)\cdot \tilde{u}_*^{(\tilde{\lambda}, \tilde{\alpha})} -  y^{\text{mod}}_{\tilde{\lambda}}\cdot W\big)\Big\|_{\tau^{-N-}L^2_{d\tau}L^2_{\rho(\xi)\,d\xi}}\\&
			\lesssim\big\|z_{nres}\big\|_{S} + \big\|\langle\partial_{\tilde{\tau}}^2\rangle^{-2}\partial_{\tilde{\tau}}^2\tilde{\lambda}\big\|_{\tau^{-N}L^2_{d\tau}} + \big\|(\tilde{\kappa}_1,\kappa_2)\big\|_{\tau^{-N}L^2_{d\tau}}.
		\end{align*}
		The same bounds obtain without the multiplier $Q^{(\tilde{\tau})}_{<\gamma^{-1}}$. We can also replace $\langle\xi\partial_{\xi}\rangle$ by $\langle\xi\partial_{\xi}\rangle^{1+}$ and $L^2_{\rho(\xi)\,d\xi}$ by $L^{2+}_{\rho(\xi)\,d\xi}$, and apply $\partial_{\tau}$ in front. Finally, the same bounds hold with an extra factor $\xi^4$ included on the left. 
	\end{lem}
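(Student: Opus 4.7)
\smallskip

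The plan is to exploit the fact that both terms being estimated carry extra temporal decay relative to the principal source $\lambda^{-2}y_z\cdot W$ whose Fourier side was controlled in Corollary~\ref{cor:yzW} and Corollary~\ref{cor:yzWpartialtau}: in the first case the decay comes from $\tilde u_*^{(\tilde\lambda,\tilde\alpha)}-W$, in the second from cancellation of the $\tilde\lambda_{tt}$--driven contribution to $y$. The temporal localizer $Q^{(\tilde\tau)}_{<\gamma^{-1}}$ in the first bound is inessential: the identity $Q^{(\tilde\tau)}_{>\gamma^{-1}}=\tfrac{\partial\tau}{\partial\tilde\tau}\partial_{\tau}\circ\partial_{\tilde\tau}^{-1}Q^{(\tilde\tau)}_{>\gamma^{-1}}$ shows that removing it costs at most a $\partial_\tau$ which is absorbed by the additional temporal weight in the conclusion.

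For the first estimate, by Lemma~\ref{lem:approxsolasymptotics1} we have the pointwise bound $|\tilde u_*^{(\tilde\lambda,\tilde\alpha)}-W|\lesssim \chi_3\cdot \tau^{-1+O(1/\nu)}\cdot \langle\log R\rangle$ on the Schrödinger support, with symbol behavior in $R$. This factor can be pulled into the argument of Corollary~\ref{cor:yzW} in place of $W$: retracing the splitting $F=F_1+F_2$ from the proof of Lemma~\ref{lem:specialF1}, the estimates $\|\triangle^{-1}(\lambda^{-2}F_j)\|$ used there are only improved when one multiplies by $\tilde u_*^{(\tilde\lambda,\tilde\alpha)}-W$ rather than $W$, since the logarithmic blowup at infinity is eaten by the rapid decay of $W$ and the extra $\tau^{-1+O(1/\nu)}$ supplies the $\tau^{-}$ gain. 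The operator $\langle\xi\partial_\xi\rangle$ is handled exactly as in the completion of the proof of Corollary~\ref{cor:yzW}: in the non-oscillatory regime $R\xi\lesssim 1$, $\xi\partial_\xi$ is absorbed into the expansion of $\phi(R;\xi)$ of subsection~\ref{subsec:basicfourier}; in the oscillatory regime one trades $\xi\partial_\xi$ for $R\partial_R$ up to errors, integrates by parts, and uses the spatial decay of $W$.

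For the second estimate, write
\[
\lambda^{-2}(y-y_z)\tilde u_*^{(\tilde\lambda,\tilde\alpha)} - \lambda^{-2} y_{\tilde\lambda}^{\text{mod}}\cdot W
=\lambda^{-2}(y-y_z-y_{\tilde\lambda}^{\text{mod}})\tilde u_*^{(\tilde\lambda,\tilde\alpha)}
+\lambda^{-2}y_{\tilde\lambda}^{\text{mod}}\cdot(\tilde u_*^{(\tilde\lambda,\tilde\alpha)}-W).
\]
The second piece is estimated by the first--estimate argument combined with Lemma~\ref{lem:ytildelambdatimesWlocalizedbound}, which gives pointwise control of the Fourier transform of $\lambda^{-2}y_{\tilde\lambda}^{\text{mod}}\cdot W$ in terms of $\|\langle\partial_{\tilde\tau}^2\rangle^{-1}\tilde\lambda_{\tilde\tau\tilde\tau}\|_{\tau^{-N}L^2_{d\tau}}$, the extra factor $\tilde u_*^{(\tilde\lambda,\tilde\alpha)}-W$ providing the needed decay gain. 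The first piece is governed by the wave equation for $y-y_z-y_{\tilde\lambda}^{\text{mod}}$, whose right-hand side consists of $2\lambda^2\triangle\mathrm{Re}((\tilde u_*^{(\tilde\lambda,\tilde\alpha)}-W)\bar z)$, the quadratic term $\lambda^2\triangle(|z|^2)$, and the remaining parts of $E_2+E_2^{\text{mod}}-\tilde E_2^{\text{mod}}$, each of which either carries an extra $\tau^{-1+O(1/\nu)}$ (from $\tilde u_*-W$ or from the cutoffs in $\chi_3$) or is quadratic in $z$. Applying Lemma~\ref{lem:wavebasicinhom} and Remark~\ref{rem:wavepropagatormorerefined}, multiplying by $\tilde u_*^{(\tilde\lambda,\tilde\alpha)}$ (which is essentially $W$ up to lower order), and pairing with $\phi(R;\xi)$ as in the proof of Corollary~\ref{cor:yzW} then yields the required bound, with the RHS controlled by $\|z_{nres}\|_S+\|\langle\partial_{\tilde\tau}^2\rangle^{-2}\partial_{\tilde\tau}^2\tilde\lambda\|_{\tau^{-N}L^2_{d\tau}}+\|(\tilde\kappa_1,\kappa_2)\|_{\tau^{-N}L^2_{d\tau}}$.

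The main obstacle will be the last assertion — obtaining the bound with an extra factor $\xi^4$ and with $\langle\xi\partial_\xi\rangle^{1+}$ in place of $\langle\xi\partial_\xi\rangle$, while keeping the $L^{2+}_{\rho(\xi)\,d\xi}$ temporal norm intact. The two extra powers of $\xi^2$ must be traded against $\sigma$--integrations by parts in the Duhamel representation of $y_z, y-y_z-y_{\tilde\lambda}^{\text{mod}}$, each of which costs one power of $\lambda^2\sim \tau^{1+1/(2\nu)}$; this is the exact budget afforded by the extra temporal decay gained from $\tilde u_*-W$. In the oscillatory regime $R\xi\gg 1$ the additional half derivative in $\langle\xi\partial_\xi\rangle^{1+}$ requires a careful fractional integration-by-parts argument against the combined phase from $\phi(R;\xi)$ and the wave parametrix $\phi_{\mathbb R^4}(R;\eta)$, of exactly the type appearing in the second part of the proof of Corollary~\ref{cor:yzW}; for the quadratic contribution $\lambda^2\triangle(|z|^2)$ one additionally invokes Sobolev embedding and Bernstein's inequality to close the estimate in $L^{2+}_{\rho(\xi)\,d\xi}$ using the last norm in the definition \eqref{eq:Snormdefi} of $\|z\|_S$.
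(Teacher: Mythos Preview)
Your argument for the first inequality has a genuine gap in the large--$R$ region. The pointwise bound $|\tilde u_*^{(\tilde\lambda,\tilde\alpha)}-W|\lesssim \tau^{-1+O(1/\nu)}\langle\log R\rangle$ from Lemma~\ref{lem:approxsolasymptotics1} is valid only for $R\lesssim\tau^{\frac12-}$; in the complementary region the bound degrades to $|\tilde u_*^{(\tilde\lambda,\tilde\alpha)}-W|\lesssim\lambda^{-1}\sim\tau^{-\frac12-\frac{1}{4\nu}}$, with no spatial decay. You therefore cannot simply ``pull this factor into Corollary~\ref{cor:yzW} in place of $W$'': that corollary relies essentially on the $R^{-2}$ decay of $W$ to close the $L^2_{R^3\,dR}$ estimate (e.g.\ via $\|W\cdot\phi(R;\xi)\|_{L^{4/3}_{R^3\,dR}}\lesssim 1$), and a merely bounded factor destroys this. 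The paper instead splits into $R\lesssim\tau^{\frac12-}$ and $R\gtrsim\tau^{\frac12-}$; your argument handles the former correctly, but for the latter one needs Lemma~\ref{lem:refinedwavepropagatorwithphysicallocalization}, which exploits the non-resonance of the wave propagator phases at large $R$ to put $\chi_{R\gtrsim\tau^{\frac12-}}\lambda^{-2}y_z$ directly into $\tau^{-N}L^2_{d\tau}L^{2+}_{R^3\,dR}$ without temporal loss. The same issue recurs in your treatment of the second inequality, where the paper uses the finer decomposition $g=g_3+g_4$ of Lemma~\ref{lem:approxsolasymptotics2} together with Lemma~\ref{lem:refinedwavepropagatorwithphysicallocalization2}.

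Your approach to the final assertion is also confused. The extra factor $\xi^4$ lives on the \emph{distorted} Fourier side of $\mathcal L$, not in the wave parametrix for $y_z$; there is no Duhamel $\sigma$--integration in which $\xi^2$ appears as a phase that can be traded for a $\partial_\sigma$. On the physical side $\xi^4$ corresponds to $\mathcal L^2$, which amounts to up to four spatial derivatives on $\lambda^{-2}y_z\cdot(\tilde u_*-W)$. The paper handles this (and the $\langle\xi\partial_\xi\rangle^{1+}$, $L^{2+}$ upgrades) in one line: run the same proof with $\langle\xi\partial_\xi\rangle^2$ to get a cruder bound, then interpolate. The symbol behavior of $\tilde u_*-W$ and the smoothness of $y_z$ make this routine.
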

	\begin{proof} {\it{First inequality}}. Using the identity \eqref{eq: transference1}, as well as the $L^2_{\rho(\xi)\,d\xi}$-boundedness of the transference operator and Plancherel's theorem for the distorted Fourier transform,  we see that it suffices to estimate the norms
		\begin{align*}
			\Big\|\langle R\partial_R\rangle Q^{(\tilde{\tau})}_{<\gamma^{-1}}\big(\lambda^{-2}y_z\cdot (\tilde{u}_*^{(\tilde{\lambda}, \tilde{\alpha})}-W)\big)\Big\|_{\tau^{-N-}L^2_{d\tau}L^2_{R^3\,dR}}. 
		\end{align*}
		From the asymptotic expansion of the difference $\tilde{u}_*^{(\tilde{\lambda}, \tilde{\alpha})}-W$ we infer the bound 
		\begin{align*}
			\Big|\langle R\partial_R\rangle\big(\tilde{u}_*^{(\tilde{\lambda}, \tilde{\alpha})}-W\big)\Big|\lesssim \tau^{-\frac12-\frac{1}{4\nu}}, 
		\end{align*}
		see Lemma~\ref{lem:approxsolasymptotics1}. Next we split 
		\begin{align*}
			\lambda^{-2}y_z\cdot (\tilde{u}_*^{(\tilde{\lambda}, \tilde{\alpha})}-W)\big) &= \chi_{R\lesssim \tau^{\frac12-}}\cdot \lambda^{-2}y_z\cdot (\tilde{u}_*^{(\tilde{\lambda}, \tilde{\alpha})}-W)\big)\\
			& +  \chi_{R\gtrsim \tau^{\frac12-}}\cdot \lambda^{-2}y_z\cdot (\tilde{u}_*^{(\tilde{\lambda}, \tilde{\alpha})}-W)\big).\\
		\end{align*}
		For the latter term we use the preceding bound on $\tilde{u}_*^{(\tilde{\lambda}, \tilde{\alpha})}-W$ in conjunction with Lemma~\ref{lem:refinedwavepropagatorwithphysicallocalization} to infer an even better bound with $\tau^{-N-}$ the replaced by $\tau^{-N-\frac12-\frac{1}{4\nu}+}$. 
		As for the first term in the region $R\lesssim \tau^{\frac12-}$, by construction we have $\big|\tilde{u}_*^{(\tilde{\lambda}, \tilde{\alpha})}-W\big|\lesssim \log R\cdot \tau^{-1}$, which gains a factor $\log R\cdot\frac{R^2}{\tau}\lesssim \tau^{0-}$ over $W(R)$. The desired estimate then follows by repeating the proof of Lemma~\ref{lem:yzWbound1}.
		\\
		{\it{Second inequality}}. To begin with, we write
		\begin{equation}\label{eq:Xtildelambdaperturbtermseqn1}\begin{split}
				\lambda^{-2}(y - y_z)\cdot \tilde{u}_*^{(\tilde{\lambda}, \tilde{\alpha})} -  y^{\text{mod}}_{\tilde{\lambda}}\cdot W &=  \lambda^{-2}(y - y_z)\cdot \big(\tilde{u}_*^{(\tilde{\lambda}, \tilde{\alpha})} - W\big)\\
				& +  \big(\lambda^{-2}(y - y_z) -  \lambda^{-2}y^{\text{mod}}_{\tilde{\lambda}}\big)\cdot W
		\end{split}\end{equation}
		Keeping in mind \eqref{eq:y2def}, \eqref{eq:zeqn2} as well as \eqref{eq:ylamndatildemod}, we can estimate the contribution of the second term on the right as follows:
		\begin{align*}
			&\Big\|\langle \xi\partial_{\xi}\rangle \mathcal{F}\big(\big(\lambda^{-2}(y - y_z) -  \lambda^{-2}y^{\text{mod}}_{\tilde{\lambda}}\big)\cdot W\big)\Big\|_{\tau^{-N-}L^2_{d\tau}L^2_{\rho(\xi)\,d\xi}}\\&
			\lesssim \Big\|\langle \xi\partial_{\xi}\rangle \mathcal{F}\big(\lambda^{-2}\Box^{-1}\big(\lambda^2\triangle\Re\big[(\tilde{u}_*^{(\tilde{\lambda}, \tilde{\alpha})}-W)\bar{z}\big]\big)\cdot W\big)\Big\|_{\tau^{-N-}L^2_{d\tau}L^2_{\rho(\xi)\,d\xi}}\\&
			+  \Big\|\langle \xi\partial_{\xi}\rangle \mathcal{F}\big(\lambda^{-2}\Box^{-1}\triangle\big(\lambda^2|z|^2\big)\cdot W\big)\Big\|_{\tau^{-N-}L^2_{d\tau}L^2_{\rho(\xi)\,d\xi}}.
		\end{align*}
		For the first term on the right, we split 
		\begin{align*}
			(\tilde{u}_*^{(\tilde{\lambda}, \tilde{\alpha})}-W) = \chi_{R\gtrsim \tau^{\frac12-}}\cdot (\tilde{u}_*^{(\tilde{\lambda}, \tilde{\alpha})}-W) +  \chi_{R\lesssim \tau^{\frac12-}}\cdot (\tilde{u}_*^{(\tilde{\lambda}, \tilde{\alpha})}-W),
		\end{align*}
		and take advantage of Lemma~\ref{lem:refinedwavepropagatorwithphysicallocalization2}, resulting in the bound 
		\begin{align*}
			\Big\|\langle \xi\partial_{\xi}\rangle \mathcal{F}\big(\lambda^{-2}\Box^{-1}\big(\lambda^2\triangle\Re\big[ \chi_{R\gtrsim \tau^{\frac12-}}\cdot (\tilde{u}_*^{(\tilde{\lambda}, \tilde{\alpha})}-W)\bar{z}\big]\big)\cdot W\big)\Big\|_{\tau^{-N-}L^2_{d\tau}L^2_{\rho(\xi)\,d\xi}}\lesssim \big\|z\big\|_{S}. 
		\end{align*}
		Indeed, it suffices to check that 
		\begin{align*}
			\big\|\langle R\rangle^{-1}\triangle\Big(\chi_{R\gtrsim \tau^{\frac12-}}\cdot (\tilde{u}_*^{(\tilde{\lambda}, \tilde{\alpha})}-W)\bar{z}\Big)\big\|_{L^2_{R^3\,dR}}\lesssim \tilde{\tau}^{-1-}\cdot \big\|z\big\|_{S}, 
		\end{align*}
		which results from the definition \eqref{eq:Snormdefi} together with Lemma~\ref{lem:approxsolasymptotics1} . 
		On the other hand, for the contribution of $\chi_{R\lesssim \tau^{\frac12-}}\cdot (\tilde{u}_*^{(\tilde{\lambda}, \tilde{\alpha})}-W)\lesssim \chi_{R\lesssim \tau^{\frac12-}}\cdot\log R\cdot\frac{R^2}{\tau}\cdot W(R)$, one replicates the proof of  Lemma~\ref{lem:yzWbound1} but gains 
		\[
		\chi_{R\lesssim \tau^{\frac12-}}\cdot\log R\cdot\frac{R^2}{\tau}\lesssim \tau^{0-}
		\]
		in the process. 
		\\
		It remains to bound the term which is nonlinear in $z$, for which we can use the crude estimate 
		\begin{align*}
			\big\|\lambda^{-2}\Box^{-1}\triangle\big(\lambda^2|z|^2\big)\big\|_{\tau^{-2N+2}H^1_{d\tau}L^2_{R^3\,dR}}\lesssim \big\|z\big\|_{S}^2,
		\end{align*}
		where we 'spend' almost one power of $\tau$ to control the inhomogeneous wave propagator. This easily implies the bound 
		\begin{equation}\label{eq:easyquadraticwavepropagatorbound101}
			\Big\|\langle \xi\partial_{\xi}\rangle \mathcal{F}\big(\lambda^{-2}\Box^{-1}\triangle\big(\lambda^2|z|^2\big)\cdot W\big)\Big\|_{\tau^{-2N+2}L^2_{d\tau}L^2_{\rho(\xi)\,d\xi}}\lesssim  \big\|z\big\|_{S}^2
		\end{equation}
		by invoking Plancherel's theorem for the distorted Fourier transform, the $L^2_{\rho(\xi)\,d\xi}$-boundedness of the transference operator \eqref{eq:zFourier2} and the presence of the additional factor $W$ to absorb an extra weight $R$. The estimate in the lemma follows by invoking \eqref{eq:zedcomprefined}.
		\\
		We next estimate the first term in \eqref{eq:Xtildelambdaperturbtermseqn1}, for which we split 
		\begin{equation}\label{eq:technicalsplit101}\begin{split}
				\lambda^{-2}(y - y_z)\cdot \big(\tilde{u}_*^{(\tilde{\lambda}, \tilde{\alpha})} - W\big) &=  \lambda^{-2}(y - y_z)\cdot \chi_{R\gtrsim\tau^{\frac12-}}\big(\tilde{u}_*^{(\tilde{\lambda}, \tilde{\alpha})} - W\big)\\
				& +  \lambda^{-2}(y - y_z)\cdot \chi_{R\lesssim\tau^{\frac12-}}\big(\tilde{u}_*^{(\tilde{\lambda}, \tilde{\alpha})} - W\big)\\
		\end{split}\end{equation}
		For the first term on the right, we further decompose as before $\lambda^{-2}(y - y_z) = \lambda^{-2}(y - y_z - y_{\tilde{\lambda}}^{\text{mod}}) + \lambda^{-2}y_{\tilde{\lambda}}^{\text{mod}}$. Again recalling \eqref{eq:y2def}, \eqref{eq:zeqn2}, we now take advantage of several technical bounds, starting with
		\begin{equation}\label{eq:tecnhicalequation101}\begin{split}
				&\Big\|\langle R\partial_R\rangle\Big(\chi_{R\gtrsim\tau^{\frac12-}}\big(\tilde{u}_*^{(\tilde{\lambda}, \tilde{\alpha})} - W\big)\cdot\lambda^{-2}\Box^{-1}\big(\lambda^2\triangle\Re\big[ \chi_{R\gtrsim \tau^{\frac12-}}\cdot (\tilde{u}_*^{(\tilde{\lambda}, \tilde{\alpha})}-W)\bar{z}\big]\big)\Big)\Big\|_{\tau^{-N-}L^2_{d\tau}L^2_{R^3\,dR}}\\
				&\lesssim \big\|z\big\|_{S}.
		\end{split}\end{equation}
		To see this, we take advantage of Lemma~\ref{lem:approxsolasymptotics2}. To begin with, the latter implies the estimate
		\begin{equation}\label{eq:tildeustart-Wsimplebound}
			\big|\langle R\partial_R\rangle\big(\chi_{R\gtrsim\tau^{\frac12-}}(\tilde{u}_*^{(\tilde{\lambda}, \tilde{\alpha})} - W)\big)\big|\lesssim \lambda^{-1}. 
		\end{equation}
		Next, we decompose the factor $(\tilde{u}_*^{(\tilde{\lambda}, \tilde{\alpha})}-W)$ inside the wave propagator into $g = g_3 + g_4$, as in Lemma~\ref{lem:approxsolasymptotics2}. First consider the contribution of $g_4$. Calling $R_{1,2}$ the $R$ variable of the whole expression, respectively the $R$ variable of the function inside the wave propagator, we distinguish between (i) $\big|R_1- R_2\big|\gtrsim R_1$, as well as (ii) $\big|R_1-R_2\big|\ll R_1$ whence $R_1\sim R_2$. Note that if the operator $R\partial_R$ falls on $\Box^{-1}\big(\ldots\big)$, this 'costs' a factor $R_1\xi$ in the explicit wave propagator \eqref{eq:nflatfourierrepresent}, \eqref{eq:wavepropagator}. To compensate for it, we perform integration by parts with respect to $\xi$ in situation (i) by combining all three oscillatory phases, which compensates for this extra factor. In case (ii) it suffices to perform integration by parts with respect to $R_2$ inside the $R_2$-integral inside the wave propagator, which gains $(R_2\xi)^{-1}\sim (R_1\xi)^{-1}$ due to the symbol behavior of $g_4$. The writing $\triangle = \nabla\cdot\nabla$ one compensates for the wave propagator by means of one operator $\nabla$ and used the bound 
		\begin{align*}
			\big\|\lambda^{-2}\nabla\Box^{-1}\nabla \lambda^2\big(\chi_{R\gtrsim\tau^{\frac12-}}g_4\cdot z\big)\big\|_{\tau^{-N+\frac12-\frac{1}{4\nu}+}L^2_{d\tau}L^2_{R^3\,dR}}\lesssim \big\|z\big\|_{S}. 
		\end{align*}
		Then using \eqref{eq:tildeustart-Wsimplebound} for the first factor $\chi_{R\gtrsim\tau^{\frac12-}}(\tilde{u}_*^{(\tilde{\lambda}, \tilde{\alpha})} - W)$ easily compensates for the temporal decay loss in the previous estimate since $\lambda^{-1}\ll_{\tau_*}\tau^{-\frac12+\frac{1}{4\nu}-1}$. 
		Next assume we substitute $g_3$ for the  factor $(\tilde{u}_*^{(\tilde{\lambda}, \tilde{\alpha})}-W)$ inside the wave propagator. If the operator $R\partial_R$ falls on $\Box^{-1}\big(\ldots\big)$, which results in a loss $R_1\xi$, we absorb the extra $R_1$ by using the bound 
		\begin{align*}
			\big|R\cdot \chi_{R\gtrsim\tau^{\frac12-}}(\tilde{u}_*^{(\tilde{\lambda}, \tilde{\alpha})} - W)\big|\lesssim 1,
		\end{align*}
		and we use the extra factor $\xi$ to compensate for the $\xi^{-1}$ in the wave propagator. Then we use the bound 
		\begin{align*}
			\big\|\triangle\big(\chi_{R\gtrsim \tau^{\frac12-}}\cdot g_3z\big)\big\|_{\tau^{-N-\frac12+\frac{1}{4\nu}-}L^2_{d\tau}L^2_{R^3\,dR}}\lesssim \big\|z\big\|_{S}, 
		\end{align*}
		which is a consequence of Lemma~\ref{lem:approxsolasymptotics2}. The additional temporal decay in this bound more than compensates for the time integration in the wave propagator, giving the desired bound for this contribution. The case when $R\partial_R$ falls on the first factor $\chi_{R\gtrsim\tau^{\frac12-}}(\tilde{u}_*^{(\tilde{\lambda}, \tilde{\alpha})} - W)$ is handled in analogy to the corresponding contribution of $g_4$, and we are done proving \eqref{eq:tecnhicalequation101}.
		\\
		Next we have the estimate 
		\begin{equation}\label{eq:tecnhicalequation102}\begin{split}
				&\Big\|\langle R\partial_R\rangle\Big(\chi_{R\gtrsim\tau^{\frac12-}}\big(\tilde{u}_*^{(\tilde{\lambda}, \tilde{\alpha})} - W\big)\cdot\lambda^{-2}\Box^{-1}\big(\lambda^2\triangle\Re\big[ \chi_{R\lesssim\tau^{\frac12-}}\cdot (\tilde{u}_*^{(\tilde{\lambda}, \tilde{\alpha})}-W)\bar{z}\big]\big)\Big)\Big\|_{\tau^{-N-}L^2_{d\tau}L^2_{R^3\,dR}}\\
				&\lesssim \big\|z\big\|_{S}.
		\end{split}\end{equation}
		Here it suffices to replicate the argument for a similar term occuring in the estimate for the second term in \eqref{eq:Xtildelambdaperturbtermseqn1}, we omit thevery similar  details. 
		Again recalling \eqref{eq:zeqn2}, in order to complete the bound for the first term on the right in \eqref{eq:technicalsplit101} with $y-y_z$ replaced by $y - y_z - y_{\tilde{\lambda}}^{\text{mod}}$, we make use of the estimate 
		\begin{align*}
			\Big\|\langle R\partial_R\rangle\Big(\lambda^{-2}\Box^{-1}\triangle\big(\lambda^2|z|^2\big)\cdot \chi_{R\gtrsim\tau^{\frac12-}}(\tilde{u}_*^{(\tilde{\lambda}, \tilde{\alpha})} - W)\Big)\Big\|_{\tau^{-N-}L^2_{d\tau}L^2_{R^3\,dR}}\lesssim \big\|z\big\|_{S}^2,
		\end{align*}
		which is a straightforward consequence of \eqref{eq:easyquadraticwavepropagatorbound101} in conjunction with Lemma~\ref{lem:approxsolasymptotics1}.
		In order to finish the bound for the first term on the right in \eqref{eq:technicalsplit101}, we now need to control the norm 
		\begin{align*}
			\Big\|\langle R\partial_R\rangle\Big(\lambda^{-2}y_{\tilde{\lambda}}^{\text{mod}}\cdot \chi_{R\gtrsim\tau^{\frac12-}}\big(\tilde{u}_*^{(\tilde{\lambda}, \tilde{\alpha})} - W\big)\Big)\Big\|_{\tau^{-N-}L^2_{d\tau}L^2_{R^3\,dR}}
		\end{align*}
		This is accomplished by means of Lemma~\ref{lem:E2modbound1} as well as \eqref{eq:tildeustart-Wsimplebound}, bounding the preceding norm by $\lesssim \big\|\langle\partial_{\tilde{\tau}}^2\rangle^{-2}\partial_{\tilde{\tau}}^2\tilde{\lambda}\big\|_{\tau^{-N}L^2_{d\tau}}$. 
		We are left with bounding the second term in \eqref{eq:technicalsplit101}, which is accomplished by using Lemma~\ref{lem:approxsolasymptotics1} in conjunction with Lemma~\ref{lem:refinedwavepropagatorwithphysicallocalization2} as well as the proof of Lemma~\ref{lem:yzWbound1}, the latter being useful for the situation where the term to which the wave propagator gets applied is restricted to the region $R\lesssim \tau^{\frac12-}$.  
		\\
		The last part of the lemma is obtained by interpolation with cruder estimates obtained upon applying $\langle\xi\partial_{\xi}\rangle^2$. 
		
	\end{proof}
	
	In a similar vein, we also record the following lemma, which is proved analogously:
	\begin{lem}\label{lem:Xtildelambdaperturbterms2} Assuming the representation \eqref{eq:zdecompbasic} and further \eqref{eq:kapparefined}, we have the bounds
		\begin{align*}
			&\Big\|\mathcal{F}\Big(Q^{(\tilde{\tau})}_{<\gamma^{-1}}\big(\lambda^{-2}y_z\cdot (\tilde{u}_*^{(\tilde{\lambda}, \tilde{\alpha})}-W)\big)\Big)(\tau,0)\Big\|_{\tau^{-N-}L^2_{d\tau}}\\&
			\lesssim\big\|z_{nres}\big\|_{S} + \big\|\langle\partial_{\tilde{\tau}}^2\rangle^{-2}\partial_{\tilde{\tau}}^2\tilde{\lambda}\big\|_{\tau^{-N}L^2_{d\tau}} + \big\|(\tilde{\kappa}_1,\kappa_2)\big\|_{\tau^{-N}L^2_{d\tau}},\\
			&\Big\|\mathcal{F}\big(\lambda^{-2}(y - y_z)\cdot \tilde{u}_*^{(\tilde{\lambda}, \tilde{\alpha})} -  y^{\text{mod}}_{\tilde{\lambda}}\cdot W\big)(\tau,0)\Big\|_{\tau^{-N-}L^2_{d\tau}}\\&
			\lesssim\big\|z_{nres}\big\|_{S} + \big\|\langle\partial_{\tilde{\tau}}^2\rangle^{-2}\partial_{\tilde{\tau}}^2\tilde{\lambda}\big\|_{\tau^{-N}L^2_{d\tau}} + \big\|(\tilde{\kappa}_1,\kappa_2)\big\|_{\tau^{-N}L^2_{d\tau}}.
		\end{align*}
		The same estimate bound obtains if one replaces $Q^{(\tilde{\tau})}_{<\gamma^{-1}}$ by $Q^{(\tilde{\tau})}_{<\tau^{\frac12+}}$ or if one suppresses this operator. 
	\end{lem}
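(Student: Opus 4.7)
The strategy is to mirror the proof of Lemma~\ref{lem:Xtildelambdaperturbterms}, exploiting the fact that evaluating $\mathcal{F}(f)(\tau,0)$ amounts to pairing $f(\tau,\cdot)$ against $\phi(R;0) = W(R)$. Since $W$ provides rapid spatial decay, the pairing is much simpler than the full Fourier coefficient and we can dispense with the operator $\langle\xi\partial_\xi\rangle^{1+\delta_0}$ and with Plancherel for the distorted transform. Concretely, in both estimates it suffices to bound
\[
\Big\|\langle F(\tau,\cdot),\,W\rangle_{L^2_{R^3\,dR}}\Big\|_{\tau^{-N-}L^2_{d\tau}}
\]
by the stated right hand side, where $F$ denotes each of the two expressions in the statement. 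The fact that the outer temporal localizer can either be taken as $Q^{(\tilde\tau)}_{<\gamma^{-1}}$, $Q^{(\tilde\tau)}_{<\tau^{1/2+}}$, or absent is irrelevant for the argument since all such multipliers are bounded on $\tau^{-N-}L^2_{d\tau}$.

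For the first estimate, the plan is to split the difference $\tilde u_*^{(\tilde\lambda,\underline{\tilde\alpha})}-W$ via the cutoff $\chi_{R\lesssim \tau^{1/2-}}+\chi_{R\gtrsim \tau^{1/2-}}$. In the inner region, Lemma~\ref{lem:approxsolasymptotics1} gives $|\tilde u_*^{(\tilde\lambda,\underline{\tilde\alpha})}-W|\lesssim \log R\cdot \tau^{-1}$, and the factor $W(R)$ from pairing produces an $L^1_{R^3\,dR}$-integrable weight, so we can run the argument of Corollary~\ref{cor:yzW} combined with Lemma~\ref{lem:yzWbound1}, gaining an extra factor $\tau^{-1}$ (in fact even $\log R\cdot R^2\tau^{-1}\lesssim \tau^{0-}$ on the support) that converts the inherent $\tau^{-N+}$ loss into $\tau^{-N-}$. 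In the outer region we invoke Lemma~\ref{lem:refinedwavepropagatorwithphysicallocalization} to place $\chi_{R\gtrsim \tau^{1/2-}}\cdot\lambda^{-2}y_z$ into $\tau^{-N}L^2_{d\tau} L^{2+}_{R^3\,dR}$, and then use $|\chi_{R\gtrsim \tau^{1/2-}}(\tilde u_*^{(\tilde\lambda,\underline{\tilde\alpha})}-W)|\lesssim \lambda^{-1}\sim \tau^{-1/2-1/(4\nu)}$ from Lemma~\ref{lem:approxsolasymptotics2} to absorb the lost decay and produce the $\tau^{-N-}$ bound.

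For the second estimate, I use the same decomposition as in Lemma~\ref{lem:Xtildelambdaperturbterms}:
\[
\lambda^{-2}(y-y_z)\cdot\tilde u_*^{(\tilde\lambda,\underline{\tilde\alpha})} - \lambda^{-2}y^{\text{mod}}_{\tilde\lambda}\cdot W = \lambda^{-2}(y-y_z)(\tilde u_*^{(\tilde\lambda,\underline{\tilde\alpha})}-W) + \big(\lambda^{-2}(y-y_z)-\lambda^{-2}y^{\text{mod}}_{\tilde\lambda}\big)\cdot W.
\]
For the second term on the right, the identity $y-y_z - y^{\text{mod}}_{\tilde\lambda}=\Box^{-1}\big(2\lambda^2\triangle\Re[(\tilde u_*^{(\tilde\lambda,\underline{\tilde\alpha})}-W)\bar z]+\lambda^2\triangle|z|^2\big)$ reduces matters to bounding the pairing against $W^2$. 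The linear piece is handled exactly as the corresponding term in Lemma~\ref{lem:Xtildelambdaperturbterms} via Lemma~\ref{lem:refinedwavepropagatorwithphysicallocalization2} (outer region) and the modification of Lemma~\ref{lem:yzWbound1} driven by the smallness of $\tilde u_*^{(\tilde\lambda,\underline{\tilde\alpha})}-W$ (inner region). The quadratic piece obeys the crude estimate \eqref{eq:easyquadraticwavepropagatorbound101} which, paired with $W$, gives a bound by $\|z\|_S^2$ in $\tau^{-2N+2}L^2_{d\tau}\ll_{\tau_*}\tau^{-N-}L^2_{d\tau}$. For the first term on the right, I further split $y-y_z = (y-y_z-y^{\text{mod}}_{\tilde\lambda}) + y^{\text{mod}}_{\tilde\lambda}$. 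The contribution of $y-y_z-y^{\text{mod}}_{\tilde\lambda}$ is controlled via estimates \eqref{eq:tecnhicalequation101}, \eqref{eq:tecnhicalequation102} paired with the bound $|\chi_{R\gtrsim\tau^{1/2-}}(\tilde u_*^{(\tilde\lambda,\underline{\tilde\alpha})}-W)|\lesssim \lambda^{-1}$; the contribution of $y^{\text{mod}}_{\tilde\lambda}$ is controlled via Lemma~\ref{lem:E2modbound1} (which gives the needed $\|\langle\partial_{\tilde\tau}^2\rangle^{-2}\partial_{\tilde\tau}^2\tilde\lambda\|$ bound).

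The main obstacle will be securing the small additional decay past $\tau^{-N}$ in each step without relying on the $\langle\xi\partial_\xi\rangle$-weights that were available in Lemma~\ref{lem:Xtildelambdaperturbterms}. This is precisely where the pairing against $W$ becomes essential: every time we would be forced to integrate by parts in $\xi$ (to compensate for a slow $R$-decay), the factor $W(R)$ already provides the required spatial localization, so the corresponding computation reduces to a straightforward application of the Schur-type estimates underlying the proofs of Corollary~\ref{cor:yzW} and Lemma~\ref{lem:yzWbound1}. Once this observation is in place, each individual contribution is handled by a verbatim adaptation of the corresponding step in the proof of Lemma~\ref{lem:Xtildelambdaperturbterms}.
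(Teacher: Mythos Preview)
Your proposal is correct and follows essentially the same approach as the paper: the paper's proof simply observes that $\mathcal{F}(f)(\tau,0)=\langle f,\phi(R;0)\rangle_{L^2_{R^3\,dR}}=\langle f,W\rangle_{L^2_{R^3\,dR}}$ and then states that one ``follows essentially the estimates of the preceding proof'' (i.e., Lemma~\ref{lem:Xtildelambdaperturbterms}). Your write-up spells out in detail exactly the steps the paper leaves implicit, with the same inner/outer splitting, the same auxiliary lemmas, and the same observation that the extra factor $W$ from the pairing supplies the spatial decay that obviates the $\langle\xi\partial_\xi\rangle$-weights.
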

	\begin{proof} One writes 
		\begin{align*}
			\mathcal{F}\Big(Q^{(\tilde{\tau})}_{<\gamma^{-1}}\big(\lambda^{-2}y_z\cdot (\tilde{u}_*^{(\tilde{\lambda}, \tilde{\alpha})}-W)\big)\Big)(\tau,0)
			=\langle Q^{(\tilde{\tau})}_{<\gamma^{-1}}\big(\lambda^{-2}y_z\cdot (\tilde{u}_*^{(\tilde{\lambda}, \tilde{\alpha})}-W)\big), \phi(R;0)\rangle_{L^2_{R^3\,dR}}
		\end{align*}
		and follows essentially the estimates of the preceding proof. 
		
	\end{proof}
	
	For the same terms occuring in the preceding two lemmas, we shall also require a high temporal frequency analogue of Lemma~\ref{lem:yzWbound3}: 
	\begin{lem}\label{lem:Xtildelambdaperturbtermshighmod} Letting $f$ denote either one of 
		\[
		Q^{(\tilde{\tau})}_{>\gamma^{-1}}\big(\lambda^{-2}y_z\cdot (\tilde{u}_*^{(\tilde{\lambda}, \tilde{\alpha})}-W)\big),\,Q^{(\tilde{\tau})}_{>\gamma^{-1}}\big(\lambda^{-2}(y - y_z)\cdot \tilde{u}_*^{(\tilde{\lambda}, \tilde{\alpha})} -  y^{\text{mod}}_{\tilde{\lambda}}\cdot W\big),
		\]
		where $0<\gamma = \gamma(\tau_*)$ with $\lim_{\tau_*\rightarrow+\infty}\gamma(\tau_*) = 0$, then the following bound obtains:
		\begin{align*}
			\Big\|\langle \xi\partial_{\xi}\rangle^{1+}\langle f, \frac{\phi(R;\xi) - \phi(R;0)}{\xi^2}\rangle_{L^2_{R^3\,dR}}\Big\|_{\tau^{-N}L^2_{d\tau}L^p_{\rho(\xi)\,d\xi}}\ll_{\tau_*}\big\|z\big\|_{S},\,2\leq p\leq\infty. 
		\end{align*}
		Further, writing
		\begin{align}
			&\int_0^\infty\langle f,\,\phi(R;\xi) - \phi(R;0)\rangle_{L^2_{R^3\,dR}}\rho_1(\xi)\,d\xi = \partial_{\tau}\kappa,\label{eq::Xtildelambdaperturbtermshighmod1}\\&\int_0^\infty\langle f,\,\phi(R;\xi)\rangle_{L^2_{R^3\,dR}}[\rho(\xi) - \rho_1(\xi)]\,d\xi = \partial_{\tau}\tilde{\kappa}, \label{eq::Xtildelambdaperturbtermshighmod2}
		\end{align}
		we have the bounds 
		\[
		\big\|\kappa\big\|_{\tau^{-N}L^2_{d\tau}} + \big\|\tilde{\kappa}\big\|_{\tau^{-N}L^2_{d\tau}} \ll_{\gamma}\big\|z\big\|_{S}.
		\]
		Finally, for the distorted Fourier coefficients of $f$, we can write in the low frequency regime
		\begin{align*}
			\chi_{\xi\lesssim 1}\langle f, \phi(R;\xi)\rangle_{L^2_{R^3\,dR}} = f_1 + f_2, 
		\end{align*}
		where we have $\big\|\langle \xi\partial_{\xi}\rangle^{1+}(\xi^{-2}f_1)\big\|_{\tau^{-N+}L^2_{d\tau}L^p_{\rho(\xi)\,d\xi}}\ll_{\tau_*}\big\|z\big\|_{S}$, $\big\|\langle \xi\partial_{\xi}\rangle^{2+}f_2\big\|_{\tau^{-N}L^2_{d\tau}L^p_{\rho(\xi)\,d\xi}}\ll_{\tau_*}\big\|z\big\|_{S}$, $2\leq p\leq \infty$. In the high frequency regime $\xi\gtrsim 1$, writing $g: = \chi_{\xi\gtrsim 1}\langle f, \phi(R;\xi)\rangle_{L^2_{R^3\,dR}}$, the same bound as at the end of Lemma~\ref{lem:yzWbound3} applies. 
	\end{lem}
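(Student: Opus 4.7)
The strategy is to combine the approach of Lemma~\ref{lem:yzWbound3} (which handled the high-temporal-frequency part of $\lambda^{-2}y_z \cdot W$) with the weighted estimates from Lemma~\ref{lem:Xtildelambdaperturbterms}. First I would split $f$ using a Littlewood-Paley cutoff with respect to the spatial frequency associated to $\mathcal{L}$ at scale $\gamma^{-\frac14}$, writing $f = P_{<\gamma^{-\frac14}}f + P_{\geq \gamma^{-\frac14}}f$. On the high-spatial-frequency piece $P_{\geq \gamma^{-\frac14}}f$ the pointwise bound $\big|\frac{\phi(R;\xi)-\phi(R;0)}{\xi^2}\big|\lesssim \langle\log\langle R\rangle\rangle$ in the non-oscillatory regime plus the oscillatory expansion elsewhere allow a direct Plancherel/Cauchy–Schwarz argument coupled to the proof of Lemma~\ref{lem:Xtildelambdaperturbterms}, since the factor $P_{\geq \gamma^{-\frac14}}$ forces additional spatial decay that absorbs the weight $\langle R\rangle^{1+\delta_0}$ arising from the transference operator; the resulting estimate gains a factor $\gamma^{M}$ for any $M$. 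On the low-spatial-frequency piece $P_{<\gamma^{-\frac14}}f$ the temporal localiser $Q^{(\tilde{\tau})}_{>\gamma^{-1}}$ does the crucial work: since $-\partial_{\tilde{\tau}}^2 - \mathcal{L}_{R^4}$ agrees with $\Box$ up to operators bounded by $\tilde{\tau}^{-1}\partial_{\tilde{\tau}}$ terms, I can repeat the Neumann-series trick used in the proof of Lemma~\ref{lem:tildekappaoneiicontrib1}, namely
\[
Q^{(\tilde{\tau})}_{>\gamma^{-1}}P_{<\gamma^{-\frac14}}\Box^{-1}=Q^{(\tilde{\tau})}_{>\gamma^{-1}}P_{<\gamma^{-\frac14}}(-I+\partial_{\tilde{\tau}}^{-2}\mathcal{L}_{R^4})^{-1}\partial_{\tilde{\tau}}^{-2},
\]
which converges with smallness $\delta(\gamma)$ and replaces $\Box^{-1}$ effectively by $\partial_{\tilde{\tau}}^{-2}$ up to $o_{\gamma}(1)$ errors.

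For the first form of $f$, I would use the bounds on $\tilde{u}_*^{(\tilde{\lambda},\tilde{\alpha})}-W$ from Lemma~\ref{lem:approxsolasymptotics1}, and for the contribution where the spatial cutoff separates into $\chi_{R\gtrsim \tau^{\frac12-}}$ I would invoke Lemma~\ref{lem:refinedwavepropagatorwithphysicallocalization} as in the proof of Lemma~\ref{lem:Xtildelambdaperturbterms}, combined with the smallness obtained by inserting the high-temporal cutoff via $\partial_{\tilde{\tau}}^{-2}$. For the second form, I would use the decomposition $\lambda^{-2}(y-y_z) = \lambda^{-2}(y-y_z-y_{\tilde{\lambda}}^{\mathrm{mod}})+\lambda^{-2}y_{\tilde{\lambda}}^{\mathrm{mod}}$, recalling \eqref{eq:zeqn2} and \eqref{eq:ylamndatildemod}; the $y-y_z-y_{\tilde{\lambda}}^{\mathrm{mod}}$ part is a wave evolution of a source quadratic in $z$ or involving $\tilde{u}_*^{(\tilde{\lambda},\tilde{\alpha})}-W$, which falls under the same Lemma~\ref{lem:Xtildelambdaperturbterms} treatment, while the contribution of $\lambda^{-2}y_{\tilde{\lambda}}^{\mathrm{mod}}\cdot (\tilde{u}_*^{(\tilde{\lambda},\tilde{\alpha})}-W)$ is handled by Lemma~\ref{lem:E2modbound1} and bounded by $\big\|\langle\partial_{\tilde{\tau}}^2\rangle^{-2}\partial_{\tilde{\tau}}^2\tilde{\lambda}\big\|_{\tau^{-N}L^2_{d\tau}}$, which in turn is controlled by $\|z\|_{S}$ via Proposition~\ref{prop:tilealphatildelambdasimultaneous}.

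For the two derivative identities \eqref{eq::Xtildelambdaperturbtermshighmod1}--\eqref{eq::Xtildelambdaperturbtermshighmod2}, the plan is to exploit the general identity
\[
Q^{(\tilde{\tau})}_{>\gamma^{-1}} = \tfrac{\partial\tau}{\partial\tilde{\tau}}\cdot\partial_\tau\circ\partial_{\tilde{\tau}}^{-1}Q^{(\tilde{\tau})}_{>\gamma^{-1}},
\]
which was used repeatedly in the proofs of Corollary~\ref{cor:Rtildelambdageqgamma-1maineffect} and Lemma~\ref{lem:transferenceKstarhightempfreq}. Applying $\partial_{\tilde{\tau}}^{-1}$ in front of the $Q^{(\tilde{\tau})}_{>\gamma^{-1}}$ projector gains a factor $\gamma$ in all relevant norms, pulls a $\partial_\tau$ outside, and the remaining expression satisfies the same bounds as above (with $\gamma^{-1}$ replaced by $\gamma$ as the smallness constant). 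Boundary terms at $\tau=\infty$ vanish thanks to the rapid decay of $z$. The decomposition $\chi_{\xi\lesssim 1}\langle f,\phi(R;\xi)\rangle = f_1 + f_2$ in the low-spatial-frequency regime is obtained by writing $\phi(R;\xi) = \phi(R;0) + \xi^2\cdot\frac{\phi(R;\xi)-\phi(R;0)}{\xi^2}$, so that $f_1$ corresponds to the $\phi(R;0)$ contribution and $f_2$ to the remainder; the main bound proven for $f$ then yields the $\langle\xi\partial_{\xi}\rangle^{1+}(\xi^{-2}f_1)$ estimate, while the $\langle\xi\partial_{\xi}\rangle^{2+}f_2$ estimate is a consequence of the pointwise bound $|\phi(R;\xi)-\phi(R;0)|\lesssim\langle\log\langle R\rangle\rangle\cdot\xi^2$ and simple Plancherel considerations. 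The high-frequency part $g$ is treated exactly as in the last part of Lemma~\ref{lem:yzWbound3}, using repeated integration by parts in $\sigma$.

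The main obstacle I anticipate is managing the interaction between the high temporal frequency cutoff $Q^{(\tilde{\tau})}_{>\gamma^{-1}}$ and the spatial region $R \gtrsim \tau^{\frac12-}$ in the second form of $f$: here the slowly decaying factor $\tilde{u}_*^{(\tilde{\lambda},\tilde{\alpha})} - W$ and the nonlocal structure of $y-y_z-y_{\tilde{\lambda}}^{\mathrm{mod}}$ both contribute large weights in $R$, and I must ensure that the smallness gain $\delta(\gamma)$ from the high-frequency localiser is not lost in absorbing the transference operator's $\langle R\partial_R\rangle$. The precise way to balance these factors is to run the Neumann series reduction \emph{before} applying the transference operator bound, which means justifying that the commutator $[P_{<\gamma^{-\frac14}},\mathcal{L}_{R^4}\partial_{\tilde{\tau}}^{-2}]$ produces only perturbative terms in the relevant weighted norms — this is where the argument requires the most care and closely mirrors the proof of Lemma~\ref{lem:tildekappaoneiicontrib1}, particularly the step following \eqref{eq:FintildetauRanalogue}.
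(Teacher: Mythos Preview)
Your overall strategy for the first two parts of the lemma is essentially the paper's: the spatial Littlewood--Paley splitting (the paper uses threshold $\gamma^{-\frac12}$ and applies it to $y_z$ rather than to $f$, after first reducing $(\tilde{u}_*^{(\tilde{\lambda},\tilde{\alpha})}-W)$ itself to temporal frequency $\ll\gamma^{-1}$ so that $Q^{(\tilde{\tau})}_{>\gamma^{-1}}$ lands on $y_z$), the Neumann-series replacement of $\Box^{-1}$ by $\partial_{\tilde{\tau}}^{-2}$ on the low-frequency piece, and the identity $Q^{(\tilde{\tau})}_{>\gamma^{-1}} = \tfrac{\partial\tau}{\partial\tilde{\tau}}\partial_\tau\circ\partial_{\tilde{\tau}}^{-1}Q^{(\tilde{\tau})}_{>\gamma^{-1}}$ for the $\kappa,\tilde\kappa$ bounds are all the same ingredients. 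Your version applies the cutoff to $f$ rather than to $y_z$; this is workable but less direct, since the Neumann series lives on $\Box^{-1}$ and you then have to pass the cutoff through the product with $(\tilde{u}_*^{(\tilde{\lambda},\tilde{\alpha})}-W)$.

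There is, however, a genuine error in your treatment of the $f_1,f_2$ decomposition. As you have written it, $f_1 = \chi_{\xi\lesssim 1}\langle f,\phi(R;0)\rangle$ is independent of $\xi$ (apart from the cutoff), so $\xi^{-2}f_1$ behaves like $\xi^{-2}$ near the origin and cannot lie in $L^p_{\rho(\xi)\,d\xi}$ for any $p<\infty$ (recall $\rho(\xi)\sim (\xi\log^2\xi)^{-1}$ for $\xi\ll 1$). The ``main bound'' you invoke controls $\langle f,\tfrac{\phi(R;\xi)-\phi(R;0)}{\xi^2}\rangle$, which is your $\xi^{-2}f_2$, not $\xi^{-2}f_1$; so at minimum the labels are swapped. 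But even after swapping, your justification for the remaining piece via the pointwise bound $|\phi(R;\xi)-\phi(R;0)|\lesssim\langle\log\langle R\rangle\rangle\xi^2$ is incomplete: that bound holds only in the non-oscillatory regime $R\xi\lesssim 1$, and applying $\langle\xi\partial_\xi\rangle^{2+}$ in the oscillatory regime $R\xi\gtrsim 1$ produces factors $(R\xi)^{2+}$ that are not absorbed by ``simple Plancherel considerations.'' The paper's decomposition is different: it sets $f_1 = \chi_{\xi\lesssim 1}\langle f,\chi_{R\xi\lesssim 1}\phi(R;\xi)\rangle$ and $f_2 = \chi_{\xi\lesssim 1}\langle f,\chi_{R\xi\gtrsim 1}\phi(R;\xi)\rangle$, i.e.\ it splits according to the oscillatory versus non-oscillatory regime for $\phi(R;\xi)$ rather than subtracting $\phi(R;0)$. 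This is what allows the $\xi^{-2}$ weight to be absorbed into $f_1$ via the non-oscillatory Taylor expansion, and the extra $\langle\xi\partial_\xi\rangle$ in $f_2$ to be handled using the $R^{-3/2}$ decay of the oscillatory piece.
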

	\begin{proof} {\it{First part of lemma}}. We outline the proof for the first expression, the argument for the second expression following similar reasoning. We first reduce the factor $ (\tilde{u}_*^{(\tilde{\lambda}, \tilde{\alpha})}-W)$ to temporal frequency $\ll \gamma^{-1}$ since else writing
		\[
		Q^{(\tilde{\tau})}_{\gtrsim \gamma^{-1}}(\tilde{u}_*^{(\tilde{\lambda}, \tilde{\alpha})}-W) = (Q^{(\tilde{\tau})}_{\gtrsim\gamma^{-1}}\partial_{\tilde{\tau}}^{-l})\circ \partial_{\tilde{\tau}}^l(\tilde{u}_*^{(\tilde{\lambda}, \tilde{\alpha})}-W) 
		\]
		for $l>10$, say, and using Lemma~\ref{lem:approxsolasymptotics1}, the desired bound easily follows. Assuming this temporal frequency reduction implicitly in the following, we may let the operator $Q^{(\tilde{\tau})}_{>\gamma^{-1}}$ act directly on $y_z$. Next we split 
		\begin{equation}\label{eq:yzhighmodsplit}
			\lambda^{-2}Q^{(\tilde{\tau})}_{>\gamma^{-1}}y_z = \lambda^{-2}Q^{(\tilde{\tau})}_{>\gamma^{-1}}P_{<\gamma^{-\frac12}}y_z + \lambda^{-2}Q^{(\tilde{\tau})}_{>\gamma^{-1}}P_{\geq \gamma^{-\frac12}}y_z
		\end{equation}
		Arguing as in the proof of Lemma~\ref{lem:tildeylambdaWhighmodulationreduction}, the operator $Q^{(\tilde{\tau})}_{>\gamma^{-1}}P_{<\gamma^{-\frac12}}\Box^{-1}$ satisfies the same bounds as the operator $Q^{(\tilde{\tau})}_{>\gamma^{-1}}P_{<\gamma^{-\frac12}}\partial_{\tilde{\tau}}^{-2}$, and so we can replace the first expression on the right by $Q^{(\tilde{\tau})}_{>\gamma^{-1}}P_{<\gamma^{-\frac12}}\partial_{\tilde{\tau}}^{-2}\triangle\Re(Wz)$, recalling \eqref{eq:yzdfn}. Then perform integration by parts 
		\begin{align*}
			&\langle Q^{(\tilde{\tau})}_{>\gamma^{-1}}P_{<\gamma^{-\frac12}}\partial_{\tilde{\tau}}^{-2}\triangle\Re(Wz)\cdot  (\tilde{u}_*^{(\tilde{\lambda}, \tilde{\alpha})}-W), \frac{\phi(R;\xi) - \phi(R;0)}{\xi^2}\rangle_{L^2_{R^3\,dR}}\\
			& = -\langle Q^{(\tilde{\tau})}_{>\gamma^{-1}}P_{<\gamma^{-\frac12}}\partial_{\tilde{\tau}}^{-2}\nabla\Re(Wz), \nabla \big((\tilde{u}_*^{(\tilde{\lambda},\tilde{\alpha})}-W)\cdot \frac{\phi(R;\xi) - \phi(R;0)}{\xi^2}\big)\rangle_{L^2_{R^3\,dR}}\\
		\end{align*}
		Taking advantage of Lemma~\ref{lem:approxsolasymptotics1} as well as subsection~\ref{subsec:basicfourier}, we infer the bound 
		\begin{align*}
			\big\|R^{\frac32-}\cdot\langle \xi\partial_{\xi}\rangle^{1+}\nabla \big((\tilde{u}_*^{(\tilde{\lambda},\tilde{\alpha})}-W)\cdot \frac{\phi(R;\xi) - \phi(R;0)}{\xi^2}\big)\big\|_{L^p_{\rho(\xi)\,d\xi}L^\infty_{dR}}\lesssim 1, \,2\leq p\leq \infty
		\end{align*}
		while from \eqref{eq:Snormdefi} we have $\big\|Q^{(\tilde{\tau})}_{>\gamma^{-1}}P_{<\gamma^{-\frac12}}\partial_{\tilde{\tau}}^{-2}\nabla\Re(Wz)\big\|_{\tau^{-N}L^2_{d\tau}L^{\frac{4}{3}+}_{R^3\,dR}}\ll_{\gamma} \big\|z\big\|_{S}$. These observations imply that 
		\begin{align*}
			&\Big\|\langle \xi\partial_{\xi}\rangle^{1+}\langle \lambda^{-2}Q^{(\tilde{\tau})}_{>\gamma^{-1}}P_{<\gamma^{-\frac12}}y_z\cdot (\tilde{u}_*^{(\tilde{\lambda}, \tilde{\alpha})}-W),  \frac{\phi(R;\xi) - \phi(R;0)}{\xi^2}\rangle_{L^2_{R^3\,dR}}\Big\|_{\tau^{-N}L^2_{d\tau}L^p_{\rho(\xi)\,d\xi}}\\&\ll_{\gamma} \big\|z\big\|_{S},\,2\leq p\leq\infty,
		\end{align*}
		yielding the desired conclusion for the contribution of the first term on the right in \eqref{eq:yzhighmodsplit}. For the contribution of the second term, we take advantage of Lemma~\ref{lem:approxsolasymptotics1}: to begin with, if $\xi\lesssim 1$, then repeated integration by parts with respect to $R$ and repeating the proof of Corollary~\ref{cor:yzW} easily implies the result, due to non-resonance of the $R$ oscillatory phases. For $\xi\gg 1$, we distinguish between the cases (i) $R\lesssim \tau^{\frac12-\frac{1}{4\nu}+}$ and (ii) $R\gtrsim\tau^{\frac12-\frac{1}{4\nu}+}$. In case (i) the term $(\tilde{u}_*^{(\tilde{\lambda}, \tilde{\alpha})}-W)$ is given to leading order by $\frac{\log R}{\tau}\lesssim R^{-2}\cdot\tau^{-\frac{1}{2\nu}+}$, whence a simple modification of Corollary~\ref{cor:yzW} in fact results in a small power gain in $\tau^{-1}$. In case (ii), using the representation \eqref{eq:wavepropagator}, \eqref{eq:nflatfourierrepresent}  for the Duhamel propagator $y_z$ (recall \eqref{eq:yzdfn}), the oscillatory factors $\phi_{\R^4}(R;\xi)$, $\sin\big[\lambda(\tilde{\tau})\xi\int_{\tilde{\tau}}^{\tilde{\sigma}}\lambda^{-1}(s)\,ds\big]$ are out of phase (with respect to their $\xi$-dependence), and the required convergence of the $R$-integral follows by integrating by parts with respect to the frequency in the Fourier representation of $y_z$ and otherwise repeating the proof of Lemma~\ref{lem:yzWbound1} . 
		\\
		{\it{Second part of lemma}}. Again treating the first of the expressions, one uses the splitting \eqref{eq:yzhighmodsplit}. For the contribution of the first term on the right there, we use (for the second estimate see Lemma~\ref{lem:approxsolasymptotics1})
		\begin{align*}
			&\big\|\lambda^{-2}Q^{(\tilde{\tau})}_{>\gamma^{-1}}P_{<\gamma^{-\frac12}}y_z\big\|_{\tau^{-N}L^2_{d\tau}L^2_{R^3\,dR}}\ll_{\gamma}\big\|z\big\|_{S},\,\big|\tilde{u}_*^{(\tilde{\lambda}, \tilde{\alpha})}-W\big|\lesssim \tau^{-(\frac12+\frac{1}{4\nu})}\cdot \langle \frac{R}{\lambda}\rangle^{-2}\\
			&\big\|\lambda^{-2}Q^{(\tilde{\tau})}_{>\gamma^{-1}}P_{<\gamma^{-\frac12}}y_z\cdot (\tilde{u}_*^{(\tilde{\lambda}, \tilde{\alpha})}-W)\big\|_{L^{2-}_{R^3\,dR}}\lesssim \tau^{-(\frac12+\frac{1}{4\nu})+}\cdot\big\|\lambda^{-2}Q^{(\tilde{\tau})}_{>\gamma^{-1}}P_{<\gamma^{-\frac12}}y_z\big\|_{L^2_{R^3\,dR}}. 
		\end{align*}
		
		If we combine this with the identity\footnote{In this identity we define $\partial_{\tilde{\tau}}^{-1} = \partial_{\tilde{\tau}}^{-1} Q^{(\tilde{\tau})}_{>\frac{\gamma^{-1} }{2}} + \partial_{\tilde{\tau}}^{-1} Q^{(\tilde{\tau})}_{<\frac{\gamma^{-1} }{2}}$ where in the first term on the right $\partial_{\tilde{\tau}}^{-1} $ acts via division by the symbol on the Fourier side while for the second term on the right we define this via integration $\int_{\tilde{\tau}}^\infty\,d\tilde{\tau}$.}
		\begin{equation}\label{eq:partialtaupartialtildetau}
			Q^{(\tilde{\tau})}_{>\gamma^{-1}}g = \partial_{\tau}\big(\partial_{\tilde{\tau}}^{-1}(\frac{\partial\tau}{\partial\tilde{\tau}}\cdot Q^{(\tilde{\tau})}_{>\gamma^{-1}} g)\big)
		\end{equation}
		and the bound $\big\|\text{LHS}\eqref{eq::Xtildelambdaperturbtermshighmod1}\big\|_{\tau^{-N}L^2_{d\tau}} + \big\|\text{LHS}\eqref{eq::Xtildelambdaperturbtermshighmod2}\big\|_{\tau^{-N}L^2_{d\tau}}\lesssim \big\|f\big\|_{\tau^{-N}L^2_{d\tau}L^{2-}_{R^3\,dR}}$, the second conclusion of the lemma follows for the term $Q^{(\tilde{\tau})}_{>\gamma^{-1}}\big(\lambda^{-2}P_{<\gamma^{-\frac12}}y_z\cdot (\tilde{u}_*^{(\tilde{\lambda}, \tilde{\alpha})}-W)\big)$. The contribution of the second term in \eqref{eq:yzhighmodsplit} is handled by arguing as in the proof of the first part of the lemma, splitting into cases (i), (ii) and performing integrations by parts as needed to achieve convergence of the $R$-integral, and also using \eqref{eq:partialtaupartialtildetau}.
		\\
		For the last part of the lemma, it suffices to set $f_1: = \chi_{\xi\lesssim 1}\langle f, \chi_{R\xi\lesssim 1}\phi(R;\xi)\rangle_{L^2_{R^3\,dR}},\,f_2: = \chi_{\xi\lesssim 1}\langle f, \chi_{R\xi\gtrsim1}\phi(R;\xi)\rangle_{L^2_{R^3\,dR}}$ in the low frequency regime. For the assertion in the high-frequency regime $\xi\gtrsim 1$, this is obtained by integrating by parts with respect to $\sigma$ (cf. Lemma~\ref{lem:yzWbound3})
	\end{proof}

	As for the remaining term constituting $X^{(\tilde{\lambda})}(\tau,\xi)$, we have the following more crude estimate 
	\begin{lem}\label{lem:Xtildelambdafinaltermcrudebound} We have the estimate 
		\begin{align*}
			&\Big\|\langle\xi\partial_{\xi}\rangle\mathcal{F}\Big(Q^{(\tilde{\tau})}_{<\tau^{\frac12+}}\big((\lambda^{-2}n_*^{(\tilde{\lambda}, \tilde{\alpha})} - W^2)z\big)\Big)\Big\|_{\log\tau\cdot \tau^{-N}L^2_{d\tau}L^2_{\rho(\xi)\,d\xi}}\\
			&\lesssim \big\|z_{nres}\big\|_{S} + \big\|\langle\partial_{\tilde{\tau}}^2\rangle^{-2}\partial_{\tilde{\tau}}^2\tilde{\lambda}\big\|_{\tau^{-N}L^2_{d\tau}} + \big\|(\tilde{\kappa}_1,\kappa_2)\big\|_{\tau^{-N}L^2_{d\tau}}
		\end{align*}
		We also have the bound 
		\begin{align*}
			&\Big\|\mathcal{F}\Big(Q^{(\tilde{\tau})}_{<\tau^{\frac12+}}\big((\lambda^{-2}n_*^{(\tilde{\lambda}, \tilde{\alpha})} - W^2)z\big)\Big)(\tau,0)\Big\|_{\tau^{-N-}L^2_{d\tau}}\\
			&\lesssim \big\|z_{nres}\big\|_{S} + \big\|\langle\partial_{\tilde{\tau}}^2\rangle^{-2}\partial_{\tilde{\tau}}^2\tilde{\lambda}\big\|_{\tau^{-N}L^2_{d\tau}} + \big\|(\tilde{\kappa}_1,\kappa_2)\big\|_{\tau^{-N}L^2_{d\tau}}
		\end{align*}
		The same bounds apply without the temporal frequency cutoffs. 
	\end{lem}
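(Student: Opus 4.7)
The plan is to split the troublesome factor by spatial location and reduce each piece to a bound already available in the paper. Concretely, write
\[
\lambda^{-2}n_*^{(\tilde{\lambda},\tilde{\alpha})} - W^2 = \chi_{R\lesssim \tau^{\frac12-\frac{1}{4\nu}}}\big(\lambda^{-2}n_*^{(\tilde{\lambda},\tilde{\alpha})} - W^2\big) + \chi_{R\gtrsim \tau^{\frac12-\frac{1}{4\nu}}}\big(\lambda^{-2}n_*^{(\tilde{\lambda},\tilde{\alpha})} - W^2\big),
\]
and similarly split $z$ via the decomposition \eqref{eq:zedcomprefined} into its resonant part $(c_*\cdot Q^{(\tilde{\tau})}_{\geq\gamma^{-1}}\tilde\lambda + \tilde\kappa_1 + i\kappa_2)\phi_0$ and non-resonant part $z_{nres}$. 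The multiplier $Q^{(\tilde{\tau})}_{<\tau^{\frac12+}}$ is bounded on $\tau^{-N}L^2_{d\tau}$ uniformly for $\tau\geq\tau_*$ large, so we can treat it as a harmless cutoff; the operator $\langle\xi\partial_{\xi}\rangle$ on the Fourier side is absorbed via \eqref{eq: transference1} together with the $L^2_{\rho\,d\xi}$-boundedness of the transference operator in Lemma~\ref{lem:KboundsKST}, at the cost of at most one factor $\langle R\partial_R\rangle$ on the physical side.

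\textbf{Inner region.} For $R\lesssim \tau^{\frac12-\frac{1}{4\nu}}$, Lemma~\ref{lem:approxsolasymptotics3} provides pointwise decay of the form
\[
\big|\chi_{R\lesssim\tau^{\frac12-\frac{1}{4\nu}}}\langle R\partial_R\rangle\big(\lambda^{-2}n_*^{(\tilde\lambda,\tilde\alpha)} - W^2\big)\big|\lesssim \tau^{-1+O(1/\nu)}\langle R\rangle^{-2}\log R,
\]
which, when combined with the norms bundled in $\|z_{nres}\|_S$, $\|(\tilde\kappa_1,\kappa_2)\|_{\tau^{-N}L^2_{d\tau}}$, and $\|\langle\partial_{\tilde{\tau}}^2\rangle^{-1}\tilde\lambda_{\tilde\tau\tilde\tau}\|_{\tau^{-N}L^2_{d\tau}}$ (the latter controlling $Q^{(\tilde\tau)}_{\geq\gamma^{-1}}\tilde\lambda\cdot\phi_0$ in an $L^2_{R^3\,dR}$-type norm), yields the desired bound after invoking Plancherel's theorem for the distorted Fourier transform. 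The spatial decay gained beats the weight $W(R)$ that would otherwise be needed for convergence of $R$-integrals, so this contribution is comfortably perturbative.

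\textbf{Outer region and main obstacle.} This is the genuinely delicate part. Here $\lambda^{-2}n_*^{(\tilde{\lambda},\tilde\alpha)} - W^2$ is no longer pointwise small, and one cannot close the estimate by a direct physical-side computation since the factor lacks rapid spatial decay. The key tool is Lemma~\ref{lem:pseudotransferenceoperator2}: the kernel
\[
\tilde H(\xi,\eta;\tau) = \langle \phi(R;\xi),\,\chi_{R\gtrsim\tau^{\frac12-\frac{1}{4\nu}}}\big(\lambda^{-2}n_*^{(\tilde\lambda,\tilde\alpha)} - W^2\big)\cdot\phi(R;\eta)\rangle_{L^2_{R^3\,dR}}
\]
satisfies the off-diagonal decay bounds of Lemma~\ref{lem:pseudotransferenceoperator1} \emph{with an additional decay factor $\lambda^{-2}\ll_{\tau_*}\tau^{-1}$}. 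Hence convolution with $\tilde H$ is a bounded operator on $L^p_{\rho\,d\xi}$ (and after one application of $\xi\partial_\xi$ remains so, at the price of at most a $\log\tau$ factor coming from the logarithmic blow-up of $\rho$ near $\xi=0$); applied to the distorted Fourier transform of $z$, this yields the outer-region bound. The gain $\tau^{-1}$ more than compensates for the extra $\log\tau$ on the left-hand side.

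\textbf{Assembly and variant at $R=0$.} Combining the inner and outer estimates, with the transference operator absorbing the one $\langle\xi\partial_\xi\rangle$, produces the first displayed bound of the lemma. For the second (the value at $\xi=0$), one tests against $\phi(R;0)=W(R)$ rather than $\phi(R;\xi)$; then $\langle\xi\partial_\xi\rangle$ is no longer needed, and the factor $W(R)$ provides additional decay in $R$, allowing us to upgrade the outer norm from $\log\tau\cdot\tau^{-N}L^2_{d\tau}$ to $\tau^{-N-}L^2_{d\tau}$. The final statement that the cutoffs $Q^{(\tilde\tau)}_{<\tau^{\frac12+}}$ may be suppressed follows since the complementary projector $Q^{(\tilde\tau)}_{\geq\tau^{\frac12+}}$, combined with the identity $Q^{(\tilde\tau)}_{\geq\tau^{\frac12+}} = \partial_{\tilde\tau}\circ\partial_{\tilde\tau}^{-1}Q^{(\tilde\tau)}_{\geq\tau^{\frac12+}}$ and the bounds on the approximate solution from Lemma~\ref{lem:approxsolasymptotics1}, produces a strictly better error; this is completely analogous to the manipulation \eqref{eq:highmodgivesgradientstructure} used repeatedly above. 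The main obstacle throughout is the outer region, where only the subtle pseudo-transference mechanism of Lemma~\ref{lem:pseudotransferenceoperator2} supplies the needed smallness.
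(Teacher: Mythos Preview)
Your treatment of the first inequality is essentially sound and close in spirit to the paper's: both pass to the physical side via the transference identity \eqref{eq: transference1} and invoke Lemma~\ref{lem:approxsolasymptotics3}. The paper handles the outer region slightly differently --- rather than appealing to the pseudo-transference kernel, it simply uses the observation $\|\partial_R z_{nres}\|_{L^4_{R^3\,dR}}\lesssim\|z_{nres}\|_S$ (a consequence of the $U$-norm component of $S$ and Sobolev embedding) together with the $L^\infty$ bound $\lambda^{-2}$ on the factor in the outer region. Your route through Lemma~\ref{lem:pseudotransferenceoperator2} is slightly more elaborate but should also close.

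The second inequality, however, has a genuine gap. Your claim that ``the factor $W(R)$ provides additional decay in $R$, allowing us to upgrade \ldots to $\tau^{-N-}L^2_{d\tau}$'' does not go through: the extra $R^{-1/2}$ gained by replacing $\phi(R;\xi)$ with $W(R)\sim R^{-2}$ in the oscillatory regime is not enough to convert a $\log\tau$-loss into a power gain in $\tau^{-1}$. In the outer region the factor $\lambda^{-2}n_*^{(\tilde\lambda,\tilde\alpha)}-W^2$ is merely $O(\lambda^{-2})$ with no spatial decay, and the $S$-norm of $z_{nres}$ only yields $\langle R\rangle^{\delta_0}L^\infty$-type control, so direct pairing against $W$ leaves you with integrals that do not gain the required $\tau^{0-}$. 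The paper's proof instead \emph{reiterates the equation for $z$}: it writes $\mathcal{F}(z)=(S+S_{\mathcal{K}})(\mathcal{F}(E))$ via \eqref{eq:formalexpansion}, applies Lemma~\ref{lem:pseudotransferenceoperator2} to express the outer contribution as $\mathcal{K}_*$ acting on this, and then uses Remark~\ref{rem:concatenation2} together with the fact that $\lambda^{-2}\sim\tau^{-1-\frac{1}{2\nu}}$ (strictly better than $\tau^{-1}$) to obtain the $\tau^{-N-}$ bound. Each source term in $E$ is then bounded by Lemma~\ref{lem:Xtildelambdaperturbterms}, the first inequality of the present lemma, and Lemmas~\ref{lem:basicboundsfore_1modandnonlinearterms},~\ref{lem:basicboundsfore_1modalphaterm}. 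This reiteration step --- replacing $z$ by the propagator of its source --- is the essential idea you are missing; without it one cannot extract the extra $\tau^{-\frac{1}{2\nu}}$ needed for the second bound.
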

	\begin{proof} Due to the $L^2$-boundedness of the transference operator it suffices to bound $\big\|\langle R\partial_R\rangle \Big(Q^{(\tilde{\tau})}_{<\tau^{\frac12+}}\big((\lambda^{-2}n_*^{(\tilde{\lambda}, \tilde{\alpha})} - W^2)z\big)\Big)\Big\|_{\log\tau\cdot \tau^{-N}L^2_{d\tau}L^2_{R^3\,dR}}$. 
		We decompose $z$ according to \eqref{eq:zedcomprefined}. Taking advantage of Lemma~\ref{lem:approxsolasymptotics3}, we easily bound the contribution of the {\it{resonant part}} of $z$ by 
		\[
		\lesssim \big\|\langle\partial_{\tilde{\tau}}^2\rangle^{-2}\partial_{\tilde{\tau}}^2\tilde{\lambda}\big\|_{\tau^{-N}L^2_{d\tau}} + \big\|(\tilde{\kappa}_1,\kappa_2)\big\|_{\tau^{-N}L^2_{d\tau}}.
		\]
		In order to bound the contribution of the {\it{non-resonant part}} of $z$, we also observe the inequality $\big\|\partial_R z_{nres}\big\|_{L^4_{R^3\,dR}}\lesssim \big\|z_{nres}\big\|_{S}$, which furnishes the required bound by again using Lemma~\ref{lem:approxsolasymptotics3}.
		The final bound of the lemma is proved by taking advantage of Lemma~\ref{lem:pseudotransferenceoperator2}, and reiteration of the equation for $z$ by using \eqref{eq:formalexpansion}, the observation that $\lambda^{-2}\sim \tau^{-1-\frac{1}{2\nu}}$, as well as Remark~\ref{rem:concatenation2} and Lemma~\ref{lem:Xtildelambdaperturbterms}, the first bound in this lemma, as well as Lemma~\ref{lem:basicboundsfore_1modandnonlinearterms}  and Lemma~\ref{lem:basicboundsfore_1modalphaterm} to bound the source terms in $E$. 
	\end{proof}
	The preceding lemma can be refined considerably if we localize the source term a bit more. We shall also need a version without temporal Fourier localization: 
	\begin{lem}\label{lem:Xtildelambdafinaltermcrudeboundrefined} We have the estimate 
		\begin{align*}
			&\Big\|\langle\xi\partial_{\xi}\rangle^{2+}\mathcal{F}\Big(\chi_{R\lesssim \tau^{\frac12-\frac{1}{4\nu}}}\big((\lambda^{-2}n_*^{(\tilde{\lambda}, \tilde{\alpha})} - W^2)z\big)\Big)(\tau,\cdot)\Big\|_{\tau^{-N-1+O(\frac{1}{\nu})}L^2_{d\tau}L^2_{\rho\,d\xi}\cap L^{\infty}_{d\xi}}\\
			&\lesssim \big\|z_{nres}\big\|_{S} + \big\|\langle\partial_{\tilde{\tau}}^2\rangle^{-2}\partial_{\tilde{\tau}}^2\tilde{\lambda}\big\|_{\tau^{-N}L^2_{d\tau}} + \big\|(\tilde{\kappa}_1,\kappa_2)\big\|_{\tau^{-N}L^2_{d\tau}}\\
			&\Big\|\langle\xi\partial_{\xi}\rangle^{1+}\partial_{\tau}\mathcal{F}\Big(\chi_{R\lesssim \tau^{\frac12-\frac{1}{4\nu}}}\big((\lambda^{-2}n_*^{(\tilde{\lambda}, \tilde{\alpha})} - W^2)z\big)\Big)(\tau,\cdot)\Big\|_{\tau^{-N-1+O(\frac{1}{\nu})}L^2_{d\tau}L^2_{\rho\,d\xi}\cap L^{\infty}_{d\xi}}\\
			&\lesssim \big\|z_{nres}\big\|_{S} + \big\|\langle\partial_{\tilde{\tau}}^2\rangle^{-2}\partial_{\tilde{\tau}}^2\tilde{\lambda}\big\|_{\tau^{-N}L^2_{d\tau}} + \big\|(\tilde{\kappa}_1,\kappa_2)\big\|_{\tau^{-N}L^2_{d\tau}}
		\end{align*}
	\end{lem}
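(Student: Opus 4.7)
The plan is to trade the distorted Fourier weight $\langle \xi\partial_\xi\rangle^{2+}$ for the physical-space weight $\langle R\partial_R\rangle^{2+}$ via the transference operator identity \eqref{eq: transference1} and Plancherel for the distorted Fourier transform (together with Lemma~\ref{lem:KboundsKST} guaranteeing $L^2_{\rho\,d\xi}$-boundedness of $\mathcal{K}$), thereby reducing the first inequality to showing
\[
\Big\|\langle R\partial_R\rangle^{2+}\big(\chi_{R\lesssim\tau^{\frac12-\frac{1}{4\nu}}}(\lambda^{-2}n_*^{(\tilde\lambda,\tilde\alpha)}-W^2)\,z\big)\Big\|_{\tau^{-N-1+O(\nu^{-1})}L^2_{d\tau}L^2_{R^3\,dR}}
\]
is controlled by the right-hand side. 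The $L^\infty_{d\xi}$ component is then obtained from the $L^2_{\rho\,d\xi}$ bound by Sobolev embedding on the $\xi$-variable, using the extra power in $\langle\xi\partial_\xi\rangle^{2+}$ (and the symbol bounds on $\phi(R;\xi)$ from subsection~\ref{subsec:basicfourier}) to close $\|\mathcal{F}(g)\|_{L^\infty_{d\xi}}\lesssim \|\langle R\rangle^{2+}g\|_{L^2_{R^3\,dR}}$, the extra $R$-weight being absorbed by the physical cutoff $\chi_{R\lesssim \tau^{\frac12-\frac{1}{4\nu}}}$ at the cost of $\tau^{1-\frac{1}{2\nu}+}$ which is negligible in view of the allowed $O(\nu^{-1})$ loss.

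Next I would exploit Lemma~\ref{lem:approxsolasymptotics3}, which supplies in the inner Schr\"odinger zone the pointwise bound $\big|\langle R\partial_R\rangle^k\big(\lambda^{-2}n_*^{(\tilde\lambda,\tilde\alpha)}-W^2\big)\big|\lesssim_k \tau^{-1+O(\nu^{-1})}\cdot \langle R\rangle^{O(1)}$ with symbol behavior in $R$; crucially the derivatives falling on the cutoff $\chi_{R\lesssim\tau^{\frac12-\frac{1}{4\nu}}}$ also produce $\tau^{0-}$ gains due to its characteristic scale. The key gain $\tau^{-1+O(\nu^{-1})}$, compared to the crude $O(1)$ available globally, is what upgrades the $\log\tau\cdot \tau^{-N}$ bound of Lemma~\ref{lem:Xtildelambdafinaltermcrudebound} to $\tau^{-N-1+O(\nu^{-1})}$.

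I would then split $z$ according to \eqref{eq:zedcomprefined} as
\[
z = \big(c_*\cdot Q^{(\tilde\tau)}_{\geq \gamma^{-1}}\tilde\lambda + \tilde\kappa_1 + i\kappa_2\big)\phi_0(R) + z_{nres}.
\]
For the resonant piece, $\phi_0(R)=W(R)$ has symbol behavior with respect to $R\partial_R$, so the Leibniz rule combined with the $\tau^{-1+O(\nu^{-1})}$ bound above together with the hypothesis $\big\|\langle\partial_{\tilde\tau}^2\rangle^{-2}\partial_{\tilde\tau}^2\tilde\lambda\big\|_{\tau^{-N}L^2_{d\tau}}+\|(\tilde\kappa_1,\kappa_2)\|_{\tau^{-N}L^2_{d\tau}}$ yields the claimed bound directly. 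For the non-resonant piece, I use the definition \eqref{eq:Snormdefi}, which gives pointwise control $|\langle R\rangle^{-\delta_0}z_{nres}|+ |\langle R\rangle^{\frac12-\delta_0}\nabla_R z_{nres}|$ in $\tau^{-N}L^2_{d\tau}L^\infty_{dR}$ as well as $\mathcal{L}^2 z_{nres}\in U$, so that $\langle R\partial_R\rangle^{2+} z_{nres}$ can be bounded in an appropriate weighted norm; the spatial $L^2_{R^3\,dR}$ integration is then rendered finite (again with only an $O(\nu^{-1})$ loss) by the cutoff $\chi_{R\lesssim\tau^{\frac12-\frac{1}{4\nu}}}$.

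For the second inequality, $\partial_\tau$ falling on the cutoff or on $\lambda^{-2}n_*^{(\tilde\lambda,\tilde\alpha)}-W^2$ contributes only additional $\tau^{-1}$ factors which are harmless. The only delicate case is $\partial_\tau z$: here I would substitute from the first equation of \eqref{eq:zeqn2} and estimate term by term, the terms $-\lambda^{-2}y_z\cdot W$, $-\lambda^{-2}\big(y\tilde u_*^{(\tilde\lambda,\underline{\tilde\alpha})}-y_z W\big)$, $-(\lambda^{-2}n_*^{(\tilde\lambda,\underline{\tilde\alpha})}-W^2)z$, $-\lambda^{-2}yz$, $e_1$, $e_1^{\mathrm{mod}}$ being controlled respectively by Corollary~\ref{cor:yzWpartialtau}, Lemma~\ref{lem:Xtildelambdaperturbterms}, the first inequality of the present lemma reapplied (an acceptable bootstrap since it gains $\tau^{-1+O(\nu^{-1})}$), Lemma~\ref{lem:basicboundsfore_1modandnonlinearterms}, and the smallness of $e_1,\,e_1^{\mathrm{mod}}$. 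The reduction in the exponent of $\langle \xi\partial_\xi\rangle$ from $2+$ to $1+$ more than compensates for the weaker information one has on $\partial_\tau z$ compared to $z$ itself. I expect the main technical obstacle to be precisely the $\partial_\tau$-case together with the $L^\infty_{d\xi}$ control: one needs to ensure that passing to $L^\infty_{d\xi}$ does not cost a power of $\tau$ larger than the gain $\tau^{-1+O(\nu^{-1})}$ provides, which requires using the Schr\"odinger-zone cutoff carefully at the Sobolev embedding step.
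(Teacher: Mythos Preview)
Your overall strategy matches the paper's: their proof is a one-line reference to Lemma~\ref{lem:approxsolasymptotics3}, the definition of $\|\cdot\|_S$, and decomposition~\eqref{eq:zedcomprefined}, which is precisely the physical-side reduction via transference plus the inner-cone bound $|h|\lesssim\tau^{-1+1/(2\nu)}R^{-2}\log R$ that you spell out. Your handling of the second inequality by substituting equation~\eqref{eq:zeqn2} for $z_\tau$ is also the natural way to flesh this out, since the $S$-norm does not directly control time derivatives of $z$.

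There is, however, a quantitative gap in your $L^\infty_{d\xi}$ step. You propose $\|\mathcal{F}(g)\|_{L^\infty_{d\xi}}\lesssim\|\langle R\rangle^{2+}g\|_{L^2_{R^3\,dR}}$ and absorb the extra weight via the cutoff ``at the cost of $\tau^{1-\frac{1}{2\nu}+}$ which is negligible in view of the allowed $O(\nu^{-1})$ loss.'' But $\tau^{1-1/(2\nu)}$ is a loss of essentially a \emph{full} power of $\tau$, not an $O(\nu^{-1})$ perturbation of the exponent: it exactly cancels the $\tau^{-1+1/(2\nu)}$ gain coming from $h$, so your Sobolev route only delivers the $L^\infty_{d\xi}$ bound at level $\tau^{-N+O(\nu^{-1})}$, not $\tau^{-N-1+O(\nu^{-1})}$. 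To recover the stated level one must use more than the crude $|\phi(R;\xi)|\lesssim 1$: in the non-oscillatory zone $R\xi\lesssim 1$ the decay $|\phi(R;\xi)|\lesssim W(R)\sim\langle R\rangle^{-2}$ (subsection~\ref{subsec:basicfourier}) combines with the $R^{-2}\log R$ from $h$ to make the pairing $L^1_{R^3\,dR}$-integrable at cost only $\tau^{O(\delta_0)}$; the oscillatory zone $R\xi\gtrsim 1$ requires integration by parts in $R$ using the phase $e^{\pm iR\xi}$ together with the $\langle R\rangle^{\frac12-\delta_0}\nabla_R z_{nres}\in L^\infty$ component of the $S$-norm.

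A minor citation slip: for the $-\lambda^{-2}y_z\cdot W$ contribution arising in $z_\tau$ you cite Corollary~\ref{cor:yzWpartialtau}, which controls $\partial_\tau\big(\lambda^{-2}\Box^{-1}F\big)\cdot W$; what you actually need there is Corollary~\ref{cor:yzW} (and Remark~\ref{rem:cor:yzW}) for $\lambda^{-2}y_z\cdot W$ itself.
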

	\begin{proof} This follows from Lemma~\ref{lem:approxsolasymptotics3} as well as the definition of $\|\cdot\|_{S}$ and decomposition~\ref{eq:zedcomprefined}.
		
	\end{proof}
	
	\begin{lem}\label{lem:basicboundsfore_1modandnonlinearterms} The third and fourth terms on the right hand side of the first equality in \eqref{eq:zeqn2} satisfy the following estimates:
		\begin{align*}
			&\Big\|\lambda^{-2}yz\Big\|_{\tau^{-2N+2}L^2_{d\tau}L^2_{R^3\,dR}}\\&\hspace{2cm}\lesssim  \big(\big\|z_{nres}\big\|_{S} + \big\|\langle\partial_{\tilde{\tau}}^2\rangle^{-2}\tilde{\lambda}_{\tilde{\tau}\tilde{\tau}}\big\|_{\tau^{-N}L^2_{d\tau}} + \big\|(\tilde{\kappa}_1,\kappa_2)\big\|_{\tau^{-N}L^2_{d\tau}}
			\big)\cdot \big\|y\big\|_{Y},\\
			&\Big\|\langle\xi\partial_{\xi}\rangle\mathcal{F}(e_1^{\text{mod}} - e_1^{\text{mod}, \tilde{\alpha}})\Big\|_{\log^{-1}(\tau)\cdot\tau^{-N}L^2_{d\tau}L^2_{\rho_{d\xi}}}\lesssim  \big\|\langle\partial_{\tilde{\tau}}^2\rangle^{-2}\tilde{\lambda}_{\tilde{\tau}\tilde{\tau}}\big\|_{\tau^{-N}L^2_{d\tau}} + \big\|\tilde{\alpha}_{\tau}\big\|_{\log^{-1}(\tau)\cdot\tau^{-N}L^2_{d\tau}}.
		\end{align*}
		where we recall that $e_1^{\text{mod},\tilde{\alpha}}$ denote the terms in $e_1^{\text{mod}}$ depending on $\tilde{\alpha}$. One may also include an operator $\langle\nabla^4\rangle$ in front of $\lambda^{-2}yz$. 
	\end{lem}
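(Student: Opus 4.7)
The plan for the first inequality is a pointwise-in-time Hölder estimate combined with Sobolev embedding. In $\R^4$ one has $W^{2,2}\hookrightarrow L^8$, so at each fixed time
\[
\|\lambda^{-2}yz\|_{L^2_{R^3\,dR}} \leq \|\lambda^{-2}y\|_{L^8_{R^3\,dR}}\cdot\|z\|_{L^{8/3}_{R^3\,dR}} \lesssim \|\lambda^{-2}\langle\nabla^2\rangle y\|_{L^2_{R^3\,dR}}\cdot\|z\|_{L^{8/3+}_{R^3\,dR}}.
\]
The first factor is controlled in $\tau^{-N+1}L^2_{d\tau}$ directly by $\|y\|_Y$. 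For the second, I split $z = z_{res} + z_{nres}$ as in \eqref{eq:zedcomprefined}: the non-resonant part is handled by the fourth component of $\|\cdot\|_S$ (which contains $z_{nres}$ in $\tau^{-N+1+}L^2_{d\tau}L^{8/3+}_{R^3\,dR}$), while for $z_{res} = \kappa(\tau)W(R)$ I use $W\in L^{8/3+}(\R^4)$ (since $W\sim R^{-2}$ and $\int W^{8/3+}R^3\,dR <\infty$) together with the refined decomposition $\kappa = c_*\,Q^{(\tilde\tau)}_{\geq\gamma^{-1}}\tilde\lambda + \tilde\kappa_1 + i\kappa_2$ and Proposition~\ref{prop:tilealphatildelambdasimultaneous} to bound $\|Q^{(\tilde\tau)}_{\geq\gamma^{-1}}\tilde\lambda\|_{\tau^{-N}L^2_{d\tau}}$ by the $\langle\partial_{\tilde\tau}^2\rangle^{-2}\tilde\lambda_{\tilde\tau\tilde\tau}$ norm. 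Combining the two time-weighted $L^2_{d\tau}$ factors yields the $\tau^{-2N+2}L^2_{d\tau}$ target, with the slight Cauchy--Schwarz deficit absorbed by a harmless $\tau_*^{-\epsilon}$ for $\tau_*$ large. The case with $\langle\nabla^4\rangle$ in front is identical using the higher regularity of $y$ inherited from the wave equation for $y_2$.

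For the second inequality, I split $e_1^{\mathrm{mod}} - e_1^{\mathrm{mod},\tilde\alpha}$ into four types of contributions coming from \eqref{eq:E1mod}, \eqref{eq:e1moddef}: the cutoff derivatives $\lambda^{-3}(i\partial_t+\triangle)\chi_3\cdot(\psi_*^{(\tilde\lambda)}-\psi_*)$ and $\lambda^{-3}\cdot 2\partial_r\chi_3\cdot\partial_r(\psi_*^{(\tilde\lambda)}-\psi_*)$; the nonlinear piece $-\lambda^{-3}E_{\mathrm{nl}}^{\mathrm{mod}}$; the $\tilde\lambda$-modulation term $i\lambda^{-3}\tilde\lambda_t\cdot\partial_{\tilde\lambda}\psi_*^{(\tilde\lambda)}$; and the quadratic remainder $\lambda^{-3}\,O(|\tilde\alpha|^2)$. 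The transference identity \eqref{eq: transference1} together with Lemma~\ref{lem:KboundsKST} and Plancherel reduce the claim to bounding $\|\langle R\partial_R\rangle(\cdot)\|_{L^2_{R^3\,dR}}$ of each piece in $\log^{-1}\tau\cdot\tau^{-N}L^2_{d\tau}$. For the first and third types, I invoke Lemma~\ref{lem:approxsolasymptotics1}, specifically $\lambda^{-1}(\psi_*^{(\tilde\lambda)}-\psi_*) = \tilde\lambda(W + O(\log R/\tau))$ on $\operatorname{supp}\chi_3$, together with $\tilde\lambda_t = \lambda\cdot\tilde\lambda_{\tilde\tau}$; the cutoff $\chi_3$ to $R\lesssim \tau^{1/2-}$ renders $W\cdot\chi_3\in L^2(\R^4)$ up to a $\log\tau$ factor which is precisely absorbed into the target weight $\log^{-1}\tau$. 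For the nonlinear piece, the spatial localization $\chi_3^2-\chi_3$ to $R\gtrsim \tau^{1/2-}$ gives very rapid decay via Lemma~\ref{lem:approxsolasymptotics3}. For the $O(|\tilde\alpha|^2)$ term, a one-dimensional Sobolev argument in $\tau$ combined with the vanishing of $\tilde\alpha$ at $\tau = +\infty$ gives $\|\tilde\alpha\|_{L^\infty_{d\tau}}\lesssim \|\tilde\alpha_\tau\|_{\tau^{0-}L^2_{d\tau}}$, whence the quadratic bound $\|\tilde\alpha_\tau\|_{\log^{-1}\tau\cdot\tau^{-N}L^2_{d\tau}}^2$ falls well within the target for $N$ large.

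The principal obstacle is the estimate for the $\tilde\lambda$-modulation term. One must convert the Schrödinger-time derivative $\tilde\lambda_t$ to $\lambda\cdot\tilde\lambda_{\tilde\tau}$ and then further relate $\tilde\lambda_{\tilde\tau}$ to $\tilde\lambda_{\tilde\tau\tilde\tau}$ via the soft multiplier $\langle\partial_{\tilde\tau}^2\rangle^{-2}$, all while commuting $\langle R\partial_R\rangle$ through the product structure $\chi_3\cdot\Lambda W$. The function $\Lambda W$ itself is not in $L^2(\R^4)$ globally (decaying only as $R^{-2}$), so the $\chi_3$-cutoff is essential and produces the logarithmic factor that is precisely accommodated by the $\log^{-1}\tau$ weight.
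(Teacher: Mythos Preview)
Your approach to the first inequality differs from the paper's and carries a small but genuine gap. The paper uses the weighted pointwise Hölder split
\[
\|\lambda^{-2}yz\|_{L^2_{R^3\,dR}} \le \|\langle R\rangle^{\delta_0}\lambda^{-2}y\|_{L^2_{R^3\,dR}}\cdot\|\langle R\rangle^{-\delta_0}z\|_{L^\infty_{dR}},
\]
placing $z$ in the \emph{first} component of $\|\cdot\|_S$ (at time weight $\tau^{-N}$) and $y$ in the weighted $L^{2+}$ part of $\|\cdot\|_Y$ (at $\tau^{-N+1}$). Your $L^8\times L^{8/3}$ split forces you to use the \emph{fourth} component of $\|\cdot\|_S$ for $z_{nres}$, which only sits at $\tau^{-N+1+}$. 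The resulting combined time exponent is $\tau^{-2N+2+}$, strictly weaker than the claimed $\tau^{-2N+2}$; your remark that the deficit is ``absorbed by a harmless $\tau_*^{-\epsilon}$'' is incorrect, since on $[\tau_*,\infty)$ the factor $\tau^{\epsilon}$ is unbounded, not small. The fix is simply to use the paper's pairing.

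Your handling of the $\langle\nabla^4\rangle$ variant is also off: the $Y$-norm only controls $\langle\nabla^2\rangle y$, so there is no ``higher regularity of $y$ inherited from the wave equation'' to invoke directly. The paper instead distributes the four derivatives by Leibniz: when $\nabla^4$ lands on $z$ one places $\nabla^4 z$ into $\tau^{-N}L^2_{d\tau}(L^{2+}+\langle R\rangle^{\delta_0}L^\infty)$ via the $\mathcal{L}^2 z_{nres}$ part of $\|\cdot\|_S$, and places $\lambda^{-2}y$ (now undifferentiated) into $L^M$ via Sobolev from $\|\cdot\|_Y$; the mixed terms $\nabla^k y\,\nabla^l z$ are treated similarly.

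For the second inequality your decomposition is essentially the same as the paper's (which is much terser and only spells out the first cutoff term, noting the second and sixth are similar). Your identification of the $i\tilde\lambda_\tau\cdot\partial_{\tilde\lambda}(\lambda^{-1}\psi_*^{(\tilde\lambda)})\approx i\tilde\lambda_\tau\cdot\chi_3\Lambda W$ contribution as the one producing the logarithm is correct: $\|\chi_3\Lambda W\|_{L^2_{R^3\,dR}}\sim(\log\tau)^{1/2}$, which fits inside the $\log^{-1}\tau$ weight with room to spare. The remaining step you sketch, bounding $\|\tilde\lambda_\tau\|_{\tau^{-N}L^2_{d\tau}}$ by $\|\langle\partial_{\tilde\tau}^2\rangle^{-2}\tilde\lambda_{\tilde\tau\tilde\tau}\|_{\tau^{-N}L^2_{d\tau}}$, is what the paper does via $\tilde\lambda_\tau\sim\tilde\tau\tau^{-1}\tilde\lambda_{\tilde\tau}$ and $\|\tilde\lambda/\tilde\tau^2\|_{\tau^{-N}L^2_{d\tau}}\lesssim\|\langle\partial_{\tilde\tau}^2\rangle^{-2}\tilde\lambda_{\tilde\tau\tilde\tau}\|_{\tau^{-N}L^2_{d\tau}}$.
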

	\begin{proof} For the first inequality, we use the fixed time estimate 
		\begin{align*}
			\big\|\lambda^{-2}yz\big\|_{L^2_{R^3\,dR}}\leq \big\|\langle R\rangle^{\delta_0}\lambda^{-2}y\big\|_{L^2_{R^3\,dR}}\cdot \big\|\langle R\rangle^{-\delta_0}z\big\|_{L^\infty_{dR}}, 
		\end{align*}
		and as a consequence 
		\begin{align*}
			\Big\|\lambda^{-2}yz\Big\|_{\tau^{-2N+2}L^2_{d\tau}L^2_{R^3\,dR}}&\leq \Big\|\langle R\rangle^{\delta_0}\lambda^{-2}y\big\|_{\tau^{-N+2}L^2_{d\tau}L^{2+}_{R^3\,dR}}\cdot \Big\|\langle R\rangle^{-\delta_0}z\big\|_{\tau^{-N}L^2_{d\tau}L^M_{dR}}\\
			&\leq \big(\big\|z_{nres}\big\|_{S} + \big\|\langle\partial_{\tilde{\tau}}^2\rangle^{-2}\tilde{\lambda}_{\tilde{\tau}\tilde{\tau}}\big\|_{\tau^{-N}L^2_{d\tau}} + \big\|(\tilde{\kappa}_1,\kappa_2)\big\|_{\tau^{-N}L^2_{d\tau}}\big) \cdot \big\|y\big\|_{Y}. 
		\end{align*}
		If an operator $\nabla^4$ hits $z$, then we place this factor into $\tau^{-N}L^2_{d\tau}(L^{2+}_{R^3\,dR} + \langle R\rangle^{\delta_0}L^\infty_{R^3\,dR})$ instead, while we place $\lambda^{-2}y$ into $\tau^{-N+1}L^2_{d\tau}(\langle R\rangle^{-\delta_0}L^{2+}_{R^3\,dR}\cap L^M_{R^3\,dR})$, and similarly for $\lambda^{-2}\nabla^k y \nabla^lz$. 
		For the second inequality of the lemma, we consider the various terms forming $e_1^{\text{mod}}$: note that 
		\begin{align*}
			\Big\|\langle\xi\partial_{\xi}\rangle\mathcal{F}\Big(\lambda^{-3} \big(i\partial_t + \triangle\big)( \chi_3)\cdot \big(\psi_*^{(\tilde{\lambda})} - \psi_*\big)\Big)(\tau,\xi)\Big\|_{\tau^{-N-}L^2_{d\tau}L^2_{\rho(\xi)\,d\xi}}\lesssim \big\|\frac{\tilde{\lambda}}{\tau^{1-}}\big\|_{\tau^{-N-}L^2_{d\tau}}
		\end{align*}
		and we further have 
		\begin{align*}
			\big\|\frac{\tilde{\lambda}}{\tau^{1-}}\big\|_{\tau^{-N-}L^2_{d\tau}}\lesssim \big\|\langle\partial_{\tilde{\tau}}^2\rangle^{-2}\tilde{\lambda}_{\tilde{\tau}\tilde{\tau}}\big\|_{\tau^{-N}L^2_{d\tau}}
		\end{align*}
		since $\big\|\frac{\tilde{\lambda}}{\tau^{1-}}\big\|_{\tau^{-N-}L^2_{d\tau}}\lesssim \big\|\frac{\tilde{\lambda}}{\tilde{\tau}^2}\big\|_{\tau^{-N}L^2_{d\tau}}$, and the latter expression is bounded by 
		\[
		\big\|\langle\partial_{\tilde{\tau}}^2\rangle^{-2}\tilde{\lambda}_{\tilde{\tau}\tilde{\tau}}\big\|_{\tau^{-N}L^2_{d\tau}}.
		\]
		The second and sixth terms in \eqref{eq:E1mod} are handled similarly. 
	\end{proof}
	
	The preceding lemma needs to be complemented with the following one:
	\begin{lem}\label{lem:basicboundsfore_1modalphaterm} Letting $e_1^{\text{mod}, \tilde{\alpha}}$ denote the terms in $e_1^{\text{mod}}$ depending on $\tilde{\alpha}$, we have the bound
		\begin{align*}
			&\Big\|\int_0^\infty \xi^2\cdot S\big([\prod_{l=1}^j\mathcal{K}_l\circ S](G)\big)\rho(\xi)\,d\xi\Big\|_{\tau^{-N+}L^2_{d\tau}}\\&\hspace{5cm}\lesssim (\sqrt{N})^{-j}\cdot \big\|\tilde{\alpha}_{\tau}\big\|_{\tau^{-N}L^2_{d\tau}},\, G = \mathcal{F}\big(e_1^{\text{mod}, \tilde{\alpha}}\big), 
		\end{align*}
		where for each $l$ we have either $\mathcal{K}_l = \frac{\lambda_{\tau}}{\lambda}\cdot\mathcal{K}$ or $\mathcal{K}_l =\mathcal{K}_*$. Furthermore we can improve the inequality by replacing $\tau^{-N+}L^2_{d\tau}$ by $\tau^{-N-}L^2_{d\tau}$ provided there is at least one operator $\mathcal{K}_*$ present, the latter as defined just before Lemma~\ref{lem:concatenation2} .
	\end{lem}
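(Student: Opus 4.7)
\medskip

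\noindent\textbf{Proof plan for Lemma~\ref{lem:basicboundsfore_1modalphaterm}.} The plan is to reduce the estimate to an application of Lemma~\ref{lem:concatenation1} and Lemma~\ref{lem:concatenation2}, after first replacing the source term $G = \mathcal{F}(e_1^{\text{mod},\tilde{\alpha}})$ by an expression that is genuinely controlled by $\|\tilde{\alpha}_\tau\|_{\tau^{-N}L^2_{d\tau}}$. Recalling the definition \eqref{eq:E1mod}, the relevant $\tilde{\alpha}$-dependent contributions split into three types: the term $i(i\partial_t + \triangle)(\chi_1)\tilde{\alpha}\cdot\psi_*^{(\tilde{\lambda})}$, the term $2i\partial_r(\chi_1)\cdot\tilde{\alpha}\cdot\partial_r\psi_*^{(\tilde{\lambda})}$, and the term $-\chi_1\partial_t\tilde{\alpha}\cdot\psi_*^{(\tilde{\lambda})}$. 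As in the proof of Lemma~\ref{lem:tildealphafixedpoint1}, I would first replace $\psi_*^{(\tilde{\lambda})}$ by $\lambda W$, the correction being handled separately via Lemma~\ref{lem:approxsolasymptotics1} and satisfying even better bounds.

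\smallskip

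\noindent After this reduction, the key step is to perform integration by parts with respect to $\sigma$ in the rightmost propagator $S$ on the imaginary contributions of the form $i\tilde{\alpha}\cdot\mathcal{L}(\chi_1 W)$, using that $-i\xi^2 S_2(\tau,\sigma,\xi)$ is a $\sigma$-derivative of $\cos(\lambda^2(\tau)\xi^2\int_\sigma^\tau\lambda^{-2}(s)\,ds)$. The resulting boundary term at $\sigma = \tau$ cancels exactly against the real contribution from $-\chi_1\partial_\tau\tilde{\alpha}\cdot \lambda W$, as in the proof of Lemma~\ref{lem:e1modgooderrors}. Here the vanishing condition \eqref{eq:chi1vanishingcond} is essential: it allows us to rewrite $\mathcal{F}(\chi_1 W)(\sigma,\xi) = \langle \chi_1 W, \phi(R;\xi)-\phi(R;0)\rangle$, forcing the Fourier coefficient to vanish quadratically at $\xi = 0$, which is what one needs to absorb the factor $\xi^2$ at the front and to gain a second integration by parts with respect to $\sigma$. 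After these manipulations, the original source $G = \mathcal{F}(e_1^{\text{mod},\tilde{\alpha}})$ in the double integral on the left-hand side effectively gets replaced by source terms all of the form $c\tilde{\alpha}_\tau \cdot g(\sigma,\xi)$ or $c\tau^{-1}\tilde{\alpha}\cdot g(\sigma,\xi)$, where $g$ enjoys pointwise bounds compatible with $\langle\xi\partial_\xi\rangle^{1+}$ in $L^2_{\rho(\xi)d\xi}$ uniformly in $\sigma$, and $\|\tau^{-1}\tilde{\alpha}\|_{\tau^{-N}L^2_{d\tau}}\lesssim N^{-1}\|\tilde{\alpha}_\tau\|_{\tau^{-N}L^2_{d\tau}}$ via integration.

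\smallskip

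\noindent With the source term recast in this form, the estimate becomes a direct application of the concatenation framework of Lemma~\ref{lem:concatenation1} when all $\mathcal{K}_l$ equal $\frac{\lambda_\tau}{\lambda}\mathcal{K}$, and of Lemma~\ref{lem:concatenation2} (together with Lemma~\ref{lem:pseudotransferenceoperator2}) when one or more $\mathcal{K}_l$ equals $\mathcal{K}_*$. Each factor $\mathcal{K}_l$ of the first type gains $N^{-1/2}$ from the $\frac{\lambda_\tau}{\lambda}\sim\tau^{-1}$ weight and the $\tau^{-N}$ temporal weight, while each factor $\mathcal{K}_*$ gains the additional factor $\lambda^{-2}\sim\tau^{-1-\frac{1}{2\nu}}\ll_{\tau_*}\tau^{-1}$, which converts the $\tau^{-N+}$ bound into $\tau^{-N-}$ as claimed. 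The resonant versus non-resonant decomposition \eqref{eq:SKproductexpansion1} and the combined phase \eqref{eq:resprodxiphase1} are used in the usual way: for the string of resonant operators one exploits that the phase is non-stationary in $\xi$ thanks to the cutoff restricting $|\triangle\tilde{\eta}_l| < \xi/(10l^2)$, while the non-resonant factor furnishes the required $L^2_{\rho\,d\xi}\to L^2_{\rho\,d\xi}$ bound with the $N^{-1/2}$ gain.

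\smallskip

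\noindent The main obstacle I anticipate is the careful tracking of the boundary terms produced when one integrates by parts in $\sigma$ in the rightmost propagator: one must verify that the various boundary terms at $\sigma = \tau$ cancel as in the $j=0$ case treated in Lemma~\ref{lem:e1modgooderrors}, and that the residual boundary and bulk terms are all controlled by $\|\tilde{\alpha}_\tau\|_{\tau^{-N}L^2_{d\tau}}$ rather than $\|\tilde{\alpha}\|$ (which would be too weak). The cancellation condition \eqref{eq:chi1vanishingcond} is crucial here, and one has to verify that it survives the composition with the operators $\mathcal{K}$ and $\mathcal{K}_*$; this follows because both operators preserve the property that the Fourier coefficient vanishes to second order at $\xi = 0$ when paired with the spectral measure $\rho$, a fact implicit in the symbol bounds of Lemma~\ref{lem:KboundsKST} and Lemma~\ref{lem:pseudotransferenceoperator1}. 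Any logarithmic loss accumulated in this process is absorbed by the smallness factor $(\sqrt{N})^{-j}$ for $j\geq 1$.
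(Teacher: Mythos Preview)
Your proposal is correct and follows essentially the same route as the paper's own (very terse) proof: perform integration by parts with respect to the time variable in the rightmost propagator $S$, using the fine structure of the $\tilde{\alpha}$-terms displayed in the completion of the proof of Lemma~\ref{lem:e1modgooderrors} (and Lemma~\ref{lem:tildealphafixedpoint1}), so that the effective source carries factors $\tilde{\alpha}_\tau$ or $\tau^{-1}\tilde{\alpha}$, and then invoke the concatenation Lemmas~\ref{lem:concatenation1}, \ref{lem:concatenation2}. One small remark on your final paragraph: the worry that the cancellation condition \eqref{eq:chi1vanishingcond} must ``survive composition with $\mathcal{K}$ and $\mathcal{K}_*$'' is not really an issue, since the integration by parts and the associated cancellations all take place in the \emph{rightmost} $S$ before any $\mathcal{K}_l$ acts; the modified source is then fed uniformly through the chain, so no further structural compatibility is needed.
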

	\begin{proof} One uses integration by parts with respect to the time variable in the right-most Schr\"odinger propagator $S$ to force the presence of either a factor $\frac{\tilde{\alpha}}{\tau}$ or $\tilde{\alpha}_{\tau}$, where one uses the fine structure of the terms in $e_1^{\text{mod}, \tilde{\alpha}}$ displayed in the completion of the proof of Lemma~\ref{lem:e1modgooderrors}. Then one repeats the proof of Lemma~\ref{lem:concatenation1}.
		
	\end{proof}
	
	In order to control the temporal high-frequency terms contributing to the evolution of $\tilde{\kappa}_1, \kappa_2$, the following lemmas shall be useful:
	\begin{lem}\label{lem:hightemprfreqyzdotW1} Recalling $y_z$ as defined in \eqref{eq:yzdfn}, we have under the assumption \eqref{eq:zedcomprefined} 
		\begin{align*}
			&\Big\|\langle\xi\partial_{\xi}\rangle^{1+\delta_0}\mathcal{F}\Big(Q^{(\tilde{\tau})}_{>\tau^{\frac12+}}y_z\cdot W\Big)\Big\|_{\tau^{-N-1-}L^2_{d\tau}L^2_{\rho(\xi)\,d\xi}}\\&\lesssim 
			\big\|z_{nres}\big\|_{S} + \big\|\langle\partial_{\tilde{\tau}}^2\rangle^{-2}\tilde{\lambda}_{\tilde{\tau}\tilde{\tau}}\big\|_{\tau^{-N}L^2_{d\tau}} + \big\|(\tilde{\kappa}_1,\kappa_2)\big\|_{\tau^{-N}L^2_{d\tau}}
		\end{align*}
	\end{lem}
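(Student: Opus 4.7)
The strategy follows the pattern of Lemma~\ref{lem:yzWbound3} but now exploits the extra decay afforded by localization to wave temporal frequency $> \tau^{\frac12 +}$. Recalling $y_z = \Box^{-1}\big(2\lambda^2 \triangle \Re(W\bar z)\big)$, I would split the source via a standard spatial Littlewood--Paley decomposition at threshold $a = \tau^{\frac{1}{10}}$, say:
\begin{align*}
Q^{(\tilde{\tau})}_{>\tau^{\frac12+}} y_z \cdot W &= P_{<a}^{(R)} Q^{(\tilde{\tau})}_{>\tau^{\frac12+}} y_z \cdot W + P_{\geq a}^{(R)} Q^{(\tilde{\tau})}_{>\tau^{\frac12+}} y_z \cdot W.
\end{align*}

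For the low spatial frequency contribution $P_{<a}^{(R)} Q^{(\tilde{\tau})}_{>\tau^{\frac12+}} y_z$, I would follow the argument used in the first part of the proof of Lemma~\ref{lem:tildeylambdaWhighmodulationreduction}: on the joint support of $P_{<a}^{(R)}$ and $Q^{(\tilde{\tau})}_{>\tau^{\frac12+}}$, the operator $\Box^{-1}$ can be replaced by $-\partial_{\tilde{\tau}}^{-2}$ up to a Neumann series in $\partial_{\tilde{\tau}}^{-2} L$, where $L$ denotes the spatial part of $\Box$ (measured against $\tilde{\tau}$ derivatives); each power of $\partial_{\tilde{\tau}}^{-2} L$ gains a factor $a^2 / \tau^{1+} \ll \tau^{-1+}$, so the series converges. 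Then $Q^{(\tilde{\tau})}_{>\tau^{\frac12+}} \partial_{\tilde{\tau}}^{-2}$ acting on an $\tau^{-N} L^2_{d\tau}$ function produces a gain of $\tau^{-1-}$ relative to the basic estimate on $y_z \cdot W$ from Corollary~\ref{cor:yzW}. Combined with the decomposition \eqref{eq:zedcomprefined} of $z$ and Proposition~\ref{prop:tilealphatildelambdasimultaneous}, this produces the desired bound with the base $\tau^{-N}$ upgraded to $\tau^{-N-1-}$.

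For the high spatial frequency contribution, I would not invoke the wave temporal frequency localization at all. Instead, I would observe that
\[
P_{\geq a}^{(R)} \Box^{-1}\big(\lambda^2 \triangle \Re(W\bar z)\big) = P_{\geq a}^{(R)} \Box^{-1} \triangle \big(\lambda^2 \Re(W\bar z)\big),
\]
and since $\Re(W\bar z)$ is spatially smooth at scales $\lesssim 1$, standard Littlewood--Paley theory gives that $P_{\geq a}^{(R)}$ applied to this expression gains $a^{-M}$ for arbitrary $M$, much more than needed. The distorted Fourier weight $\langle \xi\partial_\xi\rangle^{1+\delta_0}$ is absorbed as in Corollary~\ref{cor:yzW}: in the non-oscillatory regime $R\xi \lesssim 1$ it is harmless by the symbol behavior of $\phi(R;\xi)$, while in the oscillatory regime $R\xi \gtrsim 1$ one trades the resulting factor of $R$ against the decay of $W(R)$, with cross-terms handled by integration by parts with respect to $R$ against the combined phases, exactly as in the proofs of Lemma~\ref{lem:yzWnonosc} and Lemma~\ref{lem:specialF1}.

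The main obstacle is reconciling the wave temporal frequency localizer $Q^{(\tilde{\tau})}_{>\tau^{\frac12+}}$, which is naturally defined with respect to $\tilde{\tau}$, with the outer norm in Schr\"odinger time $\tau$, particularly when distributing derivatives $\partial_{\tilde{\tau}}^{-2}$ through the definition of $y_z$. This is handled by the chain rule $\partial_{\tilde{\tau}} = (\partial\tau/\partial\tilde{\tau}) \partial_\tau$ together with Lemma~\ref{lem:wavetoSchrodfreqloc} to convert the wave frequency restriction into a (slightly weaker) Schr\"odinger frequency restriction of the form $\tau^{\frac{1}{4\nu}-}$; for $\nu \gg 1$ this loss is negligible compared with the gain $\tau^{-1-}$ extracted above. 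Finally, the $(\tilde{\kappa}_1,\kappa_2)$ and $\langle\partial_{\tilde{\tau}}^2\rangle^{-2}\tilde{\lambda}_{\tilde{\tau}\tilde{\tau}}$ terms on the right enter through \eqref{eq:zedcomprefined} when bounding the contribution of the resonant part of $z$ to $y_z$.
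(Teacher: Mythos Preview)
Your low spatial frequency argument is along the right lines and mirrors what is done in the proofs of Lemma~\ref{lem:yzWbound3} and Lemma~\ref{lem:tildeylambdaWhighmodulationreduction}. The genuine gap is in your high spatial frequency piece: the claim that $P_{\geq a}^{(R)}$ applied to $\Re(W\bar z)$ (or to $\Box^{-1}\triangle(\lambda^2\Re(W\bar z))$) gains $a^{-M}$ for arbitrary $M$ is false. The $S$-norm only controls $\mathcal{L}^2 z_{nres}$ (see \eqref{eq:Snormdefi}), i.e.\ roughly four derivatives of $z$; hence $P_{\geq a}^{(R)}$ gains at most a fixed finite power $a^{-k}$. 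With your threshold $a = \tau^{1/10}$ this yields at best a gain of order $\tau^{-2/5}$, and since you explicitly discard the temporal frequency localizer $Q^{(\tilde\tau)}_{>\tau^{1/2+}}$ on this piece, there is nothing left to supply the required $\tau^{-1-}$ improvement. Even optimizing over $a$ (pushing $a$ up to nearly $\tau^{1/2}$ to keep your Neumann series convergent) leaves you just short of $\tau^{-1-}$.

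The paper's proof avoids this difficulty by not splitting in spatial frequency at all. It starts from the elementary inequality
\[
\big\|Q^{(\tilde{\tau})}_{>\tau^{\frac12+}} f\big\|_{\tau^{-N-1-} L^2_{d\tau} L^2_{\rho\,d\xi}} \lesssim \big\|\partial_{\tilde{\tau}}^2 f\big\|_{\tau^{-N} L^2_{d\tau} L^2_{\rho\,d\xi}},
\]
and then computes $\partial_{\tilde{\tau}}^2 y_z$ directly on the Duhamel representation \eqref{eq:wavepropagator}, \eqref{eq:nflatfourierrepresent}: two $\tilde\tau$-derivatives hitting the propagator either produce an extra $\xi^2$ (so one is back to a term of the shape $y_z$ but with an additional $\triangle$ on the source) or land on the integration limits and produce the boundary term $2\lambda^2\triangle\Re(W\bar z)$. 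Both contributions are then controlled via Lemma~\ref{lem:wavebasicinhomstructure} together with the straightforward bounds $\|\triangle(W\bar z)\|_{\tau^{-N}L^2_{d\tau}L^2_{R^3\,dR}}$ and $\|\nabla(\langle R\rangle\triangle(W\bar z))\|_{\tau^{-N}L^2_{d\tau}L^2_{R^3\,dR}}$, which follow from \eqref{eq:Snormdefi} and \eqref{eq:zedcomprefined}. This requires only the finite regularity actually available for $z$.
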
 
	\begin{proof} Observe that we have the following inequality:
		\begin{align*}
			\big\|Q^{(\tilde{\tau})}_{>\tau^{\frac12+}}f(\tau,\xi)\big\|_{\tau^{-N-1-}L^2_{d\tau}L^2_{\rho(\xi)\,d\xi}}\lesssim \big\|\partial_{\tilde{\tau}}^2f\big\|_{\tau^{-N}L^2_{d\tau}L^2_{\rho(\xi)\,d\xi}}
		\end{align*}
		Applying $\partial_{\tilde{\tau}}^2$ to $y_z$, as expressed by the Duhamel parametrix \eqref{eq:wavepropagator}, \eqref{eq:nflatfourierrepresent}, one either gains a factor $\xi^2$ or replaces $Q^{(\tilde{\tau})}_{>\tau^{\frac12+}}y_z\cdot W$ by 
		\[
		2Q^{(\tilde{\tau})}_{>\tau^{\frac12+}}\triangle \lambda^2\Re(W\bar{z})\cdot W. 
		\]
		The desired bound then follows from combining the inequalities
		\begin{align*}
			&\big\|\nabla\big(\langle R\rangle\triangle (W\bar{z})\big)\big\|_{\tau^{-N}L^2_{d\tau}L^2_{R^3\,dR}}\lesssim \big\|z_{nres}\big\|_{S} + \big\|\langle\partial_{\tilde{\tau}}^2\rangle^{-2}\tilde{\lambda}_{\tilde{\tau}\tilde{\tau}}\big\|_{\tau^{-N}L^2_{d\tau}} + \big\|(\tilde{\kappa}_1,\kappa_2)\big\|_{\tau^{-N}L^2_{d\tau}}\\
			&\big\|\triangle (W\bar{z})\big\|_{\tau^{-N}L^2_{d\tau}L^2_{R^3\,dR}}\lesssim \big\|z_{nres}\big\|_{S} + \big\|\langle\partial_{\tilde{\tau}}^2\rangle^{-2}\tilde{\lambda}_{\tilde{\tau}\tilde{\tau}}\big\|_{\tau^{-N}L^2_{d\tau}} + \big\|(\tilde{\kappa}_1,\kappa_2)\big\|_{\tau^{-N}L^2_{d\tau}}\\
		\end{align*}
		combined with Lemma~\ref{lem:wavebasicinhomstructure}.  
	\end{proof}
	
	The following lemma deals with the component $y_{\tilde{\lambda}}^{mod}$ of $y$, as with the source term $E_2^{mod}$ (recall \eqref{eq:E2mod}, \eqref{eq:ylamndatildemod}): 
	\begin{lem}\label{lem:ytildelambdaE2} We have the estimate 
		\begin{align*}
			\big\|\lambda^{-4}\langle\nabla_R\rangle^{10}E_2^{mod}\big\|_{\tau^{-N}L^2_{d\tau}L^{1+}_{R^3\,dR}\cap L^2_{R^3\,dR}}\lesssim \big\|\tilde{\lambda}_{\tilde{\tau}\tilde{\tau}}\big\|_{\tau^{-N}L^2_{d\tau}} + \big\|\frac{\tilde{\lambda}_{\tilde{\tau}}}{\tilde{\tau}}\big\|_{\tau^{-N}L^2_{d\tau}}
		\end{align*}
		where $1+ = 1+\frac{10}{\nu}$. 
		Furthermore, we can bound 
		\begin{align*}
			\big\|y_{\tilde{\lambda}}^{mod}\big\|_{\tau^{-N}L^2_{d\tau}\langle R\rangle^{1+\delta_0}L^2_{R^3\,dR}} + \big\|y_{\tilde{\lambda}}^{mod}\big\|_{\tau^{-(N-1)}L^2_{d\tau}L^2_{R^3\,dR}}\lesssim \big\|\langle\partial_{\tilde{\tau}}^2\rangle^{-1}\tilde{\lambda}_{\tilde{\tau}\tilde{\tau}}\big\|_{\tau^{-N}L^2_{d\tau}} 
		\end{align*}
		Finally we have the bound 
		\begin{align*}
			&\big\|\lambda^{-4}\langle\nabla_R\rangle^{10}Q^{(\tilde{\tau})}_{>\sqrt{\gamma}^{-1}}P_{<\gamma^{-\frac14}}\partial_{\tilde{\tau}}^{-2}\big(E_2^{mod} - 2\tilde{\lambda}_{tt}\lambda^2\Lambda W\cdot W\big)\big\|_{\tau^{-N}L^2_{d\tau}L^{1+}_{R^3\,dR}\cap L^2_{R^3\,dR}}\\&\ll_{\tau_*} \big\|\langle\partial_{\tilde{\tau}}^2\rangle^{-1}\tilde{\lambda}_{\tilde{\tau}\tilde{\tau}}\big\|_{\tau^{-N}L^2_{d\tau}} 
		\end{align*}
	\end{lem}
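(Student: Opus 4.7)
\medskip
\noindent\emph{Proof proposal for Lemma~\ref{lem:ytildelambdaE2}.}
My plan is to handle the three inequalities in order, treating $E_2^{\text{mod}}$ via its term-by-term decomposition \eqref{eq:E2mod}. For the first inequality, I would split $E_2^{\text{mod}}$ into its seven constituent contributions. The first two terms $-\tilde{\lambda}_{tt} \partial_{\tilde{\lambda}} n_*^{(\tilde{\lambda})}$ and $-\tilde{\lambda}_t \partial_t(\partial_{\tilde{\lambda}} n_*^{(\tilde{\lambda})})$ form the principal block. Using the chain rules $\tilde{\lambda}_{tt} = \lambda^2 \tilde{\lambda}_{\tilde{\tau}\tilde{\tau}} + \lambda\lambda_t \tilde{\lambda}_{\tilde{\tau}}$ and $\partial_t = \lambda \partial_{\tilde{\tau}}$, together with the pointwise size estimates from Lemma~\ref{lem:approxsolasymptotics3} (which give $|\lambda^{-2}\partial_{\tilde{\lambda}} n_*^{(\tilde{\lambda})}|\lesssim \langle R\rangle^{-2}$ and analogous bounds for its space derivatives, so that after the dilation the profile lies uniformly in $L^{1+}_{R^3 dR}\cap L^2_{R^3 dR}$), one factors each of these terms as a product of $\tilde{\lambda}_{\tilde{\tau}\tilde{\tau}}$ or $\tilde{\lambda}_{\tilde{\tau}}/\tilde{\tau}$ with a bounded spatial profile; dividing by $\lambda^4$ absorbs exactly the scaling powers. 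The operator $\langle\nabla_R\rangle^{10}$ applied to these smooth profiles is harmless.

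The remaining five terms of $E_2^{\text{mod}}$ all involve derivatives of the cutoff $\chi_3$, hence are supported in the wave zone $r\sim t^{\frac12+\epsilon}$. I would bound the differences $n_*^{(\tilde{\lambda})} - n_*$ and $|\psi_*^{(\tilde{\lambda})}|^2 - |\psi_*|^2$ by $|\tilde{\lambda}|$ times $\partial_{\tilde{\lambda}}$-derivatives of $n_*^{(\tilde{\lambda})}, |\psi_*^{(\tilde{\lambda})}|^2$ via the mean value theorem, then invoke Lemma~\ref{lem:approxsolasymptotics3} together with the support properties and derivative bounds of $\chi_3$ (costing only $\tau^{-1+O(\nu^{-1})}$ per derivative). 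The shrinking support of $\chi_3'$ and the polynomial decay of $n_*^{(\tilde{\lambda})}$ give a large power-saving $\tau^{-M}|\tilde{\lambda}|$, and since $|\tilde{\lambda}|\lesssim \tilde{\tau}^2 \cdot \|\langle\partial_{\tilde{\tau}}^2\rangle^{-1}\tilde{\lambda}_{\tilde{\tau}\tilde{\tau}}\|$ (by double integration), these are absorbed into the right-hand side with room to spare.

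For the second inequality, I would apply $\Box^{-1}$ term by term using Lemma~\ref{lem:wavebasicinhom} (and its corollaries for the improved $L^2$-estimate involving one power of $\tilde\tau$), feeding in the bounds just obtained. The key point here is that the principal two contributions to $E_2^{\text{mod}}$ have the schematic form $(\tilde{\lambda}_{\tilde{\tau}\tilde{\tau}} \text{ or } \tilde{\lambda}_{\tilde{\tau}}/\tilde{\tau})\cdot g(R)$ with time-independent, Schwartz-class $g$, so that the Duhamel operator effectively smooths in time by two derivatives — this is precisely what converts the bare $\|\tilde{\lambda}_{\tilde{\tau}\tilde{\tau}}\|$ norm on the right of (i) into the $\|\langle\partial_{\tilde{\tau}}^2\rangle^{-1}\tilde{\lambda}_{\tilde{\tau}\tilde{\tau}}\|$ norm. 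The cutoff terms give polynomially better contributions by the power-saving noted above, so they pose no issue.

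For the third inequality, the essential cancellation is that Lemma~\ref{lem:approxsolasymptotics3} yields $\partial_{\tilde{\lambda}} n_*^{(\tilde{\lambda})} = 2\lambda^2 \Lambda W\cdot W + R_*$, with $\|\lambda^{-2}\langle\nabla\rangle^{10}R_*\|_{L^{1+}\cap L^2}\lesssim \tau^{-1+O(\nu^{-1})}$. Hence $E_2^{\text{mod}} - 2\tilde{\lambda}_{tt}\lambda^2\Lambda W\cdot W$ inherits an extra $\tau^{-1+}$ gain in the principal term while the five cutoff terms are already $\tau^{-M}$-small; the second term $-\tilde{\lambda}_t\partial_t(\partial_{\tilde{\lambda}} n_*^{(\tilde{\lambda})})$ naturally carries a $\tilde{\tau}^{-1}$ gain via $\partial_{\tilde{\tau}}$. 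Next, on the high wave-frequency/low spatial-frequency regime selected by $Q^{(\tilde{\tau})}_{>\sqrt{\gamma}^{-1}}P_{<\gamma^{-\frac14}}$ we have the operator identity
\[
\partial_{\tilde{\tau}}^{-2} = -\Box^{-1}\circ (I - \partial_{\tilde{\tau}}^{-2}\triangle)^{-1},
\]
and on the relevant frequencies $\|\partial_{\tilde{\tau}}^{-2}\triangle\|\lesssim \gamma^{\frac12}\ll 1$, so the Neumann series for $(I - \partial_{\tilde{\tau}}^{-2}\triangle)^{-1}$ converges with $O(1)$ bound and the composition with the already-small source gives a $\ll_{\tau_*} \|\langle\partial_{\tilde{\tau}}^2\rangle^{-1}\tilde{\lambda}_{\tilde{\tau}\tilde{\tau}}\|$ estimate. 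The dynamical correction terms (those in \eqref{eq:nFintildetauR} beyond the flat $\Box$) are also perturbative in the high wave-frequency window by the argument used in the proof of Lemma~\ref{lem:tildekappaoneiicontrib1}. The main obstacle will be making the cancellation $\partial_{\tilde{\lambda}} n_*^{(\tilde{\lambda})} = 2\lambda^2\Lambda W\cdot W + R_*$ quantitatively precise enough (including the $\langle\nabla_R\rangle^{10}$ bounds on $R_*$); this is a technical examination of the construction of $n_*^{(\tilde{\lambda})}$ in the companion paper, but no new conceptual ingredient is needed.
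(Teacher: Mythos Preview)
Your proposal is correct and follows essentially the same route as the paper: term-by-term decomposition of $E_2^{\text{mod}}$ using Lemma~\ref{lem:approxsolasymptotics3} for the first estimate, then Lemma~\ref{lem:wavebasicinhom} plus the spatial-frequency localization of the profiles (so that $\Box^{-1}$ acts like $\langle\partial_{\tilde{\tau}}^2\rangle^{-1}$ at high temporal frequency) for the second, and finally the subtraction of the principal $\tilde{\lambda}_{tt}$-term combined with the high-temporal/low-spatial frequency gain for the third.

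One minor comment: your treatment of the third inequality is more elaborate than needed. You invoke the operator identity $\partial_{\tilde{\tau}}^{-2} = -\Box^{-1}\circ(I - \partial_{\tilde{\tau}}^{-2}\triangle)^{-1}$ with a Neumann series, but the statement already involves $\partial_{\tilde{\tau}}^{-2}$ directly (not $\Box^{-1}$), so one can simply use that $Q^{(\tilde{\tau})}_{>\sqrt{\gamma}^{-1}}\partial_{\tilde{\tau}}^{-2}$ gains $\gamma$ on the temporal Fourier side, while $P_{<\gamma^{-1/4}}$ is a harmless spatial truncation. The remaining $\tilde{\lambda}_{tt}\cdot R_*$ contribution (where $R_* = \partial_{\tilde{\lambda}}n_*^{(\tilde{\lambda})} - 2\lambda^2\Lambda W\cdot W$) carries the extra $\tau^{-1+O(\nu^{-1})}$ decay you note, which furnishes the $\ll_{\tau_*}$ smallness for that piece. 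The paper's proof is correspondingly brief here, just citing ``this observation and the fact that we have removed the second derivatives of $\tilde{\lambda}$.'' Your Neumann-series detour is the argument used elsewhere (e.g.\ in the proof of Lemma~\ref{lem:tildeylambdaWhighmodulationreduction}) to compare $\Box^{-1}$ with $\partial_{\tilde{\tau}}^{-2}$, but it is not required at this spot.
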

	\begin{proof} {\it{First estimate}}. Taking advantage of the construction \eqref{eq:psinmodulated} as well as Lemma~\ref{lem:approxsolasymptotics3}, we can estimate the contribution of the first two terms in \eqref{eq:E2mod} directly by 
		\begin{align*}
			\big\|\lambda^{-4}\big(-\tilde{\lambda}_{tt}\cdot \partial_{\tilde{\lambda}}n_*^{(\tilde{\lambda})} - \tilde{\lambda}_t\cdot \partial_t\big(\partial_{\tilde{\lambda}}n_*^{(\tilde{\lambda})}\big)\big)\big\|_{\tau^{-N}L^2_{d\tau}L^{1+\frac{10}{\nu}}_{R^3\,dR}\cap L^2_{R^3\,dR}}\lesssim \big\|\tilde{\lambda}_{\tilde{\tau}\tilde{\tau}}\big\|_{\tau^{-N}L^2_{d\tau}} + \big\|\frac{\tilde{\lambda}_{\tilde{\tau}}}{\tilde{\tau}}\big\|_{\tau^{-N}L^2_{d\tau}}.
		\end{align*}
		The remaining terms forming $E_2^{mod}$ are handled similarly. For example for the fifth term, we have 
		\begin{align*}
			\big|\lambda^{-4}\cdot 2\partial_r(\chi_3)\cdot \partial_r\big(n_*^{(\tilde{\lambda})} - n_*\big)\big|\lesssim \tilde{\chi}_3\cdot R^{-2}\cdot \big|\tilde{\lambda} - 1\big|\cdot R^{-4+\frac{2}{\nu}}
		\end{align*}
		where the smooth cutoff $\tilde{\chi}_3$ localizes to the region $R\sim \tau^{\frac12-}$, and hence we have the bound 
		\begin{align*}
			\Big|\tilde{\chi}_3\cdot R^{-2}\cdot \big|\tilde{\lambda} - 1\big|\Big|\lesssim \tau^{-1+}\cdot \big|\tilde{\lambda} - 1\big|,
		\end{align*}
		whence 
		\begin{align*}
			\Big\|\tilde{\chi}_3\cdot R^{-2}\cdot \big|\tilde{\lambda} - 1\big|\Big\|_{\tau^{-N}L^2_{d\tau}}\lesssim \big\|\tilde{\lambda}_{\tilde{\tau}\tilde{\tau}}\big\|_{\tau^{-N}L^2_{d\tau}}. 
		\end{align*}
		This together with $\big\| \langle R\rangle^{-4+\frac{2}{\nu}}\big\|_{L^{1+\frac{10}{\nu}}_{R^3\,dR}}\lesssim 1$ implies the desired bound for the fifth term. The remaining terms in $E_2^{mod}$ are handled similarly, as is the estimate for the norm $L^2_{R^3\,dR}$.
		\\
		{\it{Second estimate}}.  This is essentially a direct consequence of the preceding bound and Lemma~\ref{lem:wavebasicinhom}, except that we need to replace $\big\|\tilde{\lambda}_{\tilde{\tau}\tilde{\tau}}\big\|_{\tau^{-N}L^2_{d\tau}}$ by $\big\|\langle\partial_{\tilde{\tau}}^2\rangle^{-1}\tilde{\lambda}_{\tilde{\tau}\tilde{\tau}}\big\|_{\tau^{-N}L^2_{d\tau}} $. This follows from the fact that all terms forming $E_2^{mod}$ are localized with respect to spatial frequency up to rapidly decaying tails, and hence $\Box^{-1}$ acts like $\langle\partial_{\tilde{\tau}}^2\rangle^{-1}$ in the high frequency regime. 
		\\
		The final estimate is also a consequence of this observation and the fact that we have removed the second derivatives of $\tilde{\lambda}$. 
	\end{proof}
	
	We shall also require a more detailed structural result in the high temporal frequency regime, whose proof follows from the frequency localization of $y_{\tilde{\lambda}}$ around spatial frequency $\sim 1$ up to rapidly decaying tails: 
	\begin{lem}\label{lem:ytildelambdamodhightempfreq} Let $0<\gamma\ll 1$. We can write 
		\begin{align*}
			Q_{>\gamma^{-1}}^{(\tilde{\tau})}\big(\lambda^{-2}y_{\tilde{\lambda}}\cdot W\big) = 2Q_{>\gamma^{-1}}^{(\tilde{\tau})}(\tilde{\lambda})\Lambda W\cdot W^2 +  H, 
		\end{align*}
		where the distorted Fourier coefficient $h: = \chi_{\xi\lesssim 1}\langle H, \phi(R;\xi)\rangle $ satisfies the estimate 
		\begin{align*}
			\big\|\langle\xi\partial_{\xi}\rangle^{2+}h\big\|_{\tau^{-N}L^2_{d\tau}L^p_{\rho(\xi)\,d\xi}}\ll_{\tau_*}\big\|\langle\partial_{\tilde{\tau}}\rangle^{-2}\tilde{\lambda}_{\tilde{\tau}\tilde{\tau}}\big\|_{\tau^{-N}L^2_{d\tau}},\,2\leq p\leq \infty.
		\end{align*}
		On the other hand, the high frequency part $g: = \chi_{\xi\gtrsim 1}\langle H, \phi(R;\xi)\rangle $ satisfies the final conclusion of Lemma~\ref{lem:yzWbound3}.\\
		Furthermore, we have the estimate (here $P_{<\sqrt{\gamma}^{-1}}$ denotes a standard Littlewood Paley multiplier)
		\begin{align*}
			\big\|\langle\xi\partial_{\xi}\rangle^{2+}\mathcal{F}\big(P_{<\sqrt{\gamma}^{-1}}Q_{>\gamma^{-1}}^{(\tilde{\tau})}(\lambda^{-2}y_{\tilde{\lambda}}\cdot W)\big)\big\|_{\tau^{-N}L^2_{d\tau}L^2_{\rho(\xi)\,d\xi}\cap L^\infty_{\rho(\xi)\,d\xi}}\lesssim \big\|\langle\partial_{\tilde{\tau}}\rangle^{-2}\tilde{\lambda}_{\tilde{\tau}\tilde{\tau}}\big\|_{\tau^{-N}L^2_{d\tau}}.
		\end{align*}
		Finally, we have the estimate (without added temporal frequency localization)
		\begin{align*}
			&\Big\|\langle\xi\partial_{\xi}\rangle^{1+\delta_0}\mathcal{F}\Big(\lambda^{-2}y_{\tilde{\lambda}}\cdot W\Big)\Big\|_{\tau^{-N+}L^2_{d\tau}(L^2_{\rho(\xi)\,d\xi}\cap L^\infty_{d\xi})}\lesssim \big\|\langle\partial_{\tilde{\tau}}\rangle^{-2}\tilde{\lambda}_{\tilde{\tau}\tilde{\tau}}\big\|_{\tau^{-N}L^2_{d\tau}},\\
			&\Big\|\langle\xi\partial_{\xi}\rangle^{1+\delta_0}\mathcal{F}\Big(\partial_{\tau}\big(\lambda^{-2}y_{Q^{(\tilde{\tau})}_{<\tau^{\frac12+}}\tilde{\lambda}}\cdot W\big)\Big)\Big\|_{\tau^{-N}L^2_{d\tau}(L^2_{\rho(\xi)\,d\xi}\cap L^\infty_{d\xi})}\ll_{\tau_*}\big\|\langle\partial_{\tilde{\tau}}\rangle^{-2}\tilde{\lambda}_{\tilde{\tau}\tilde{\tau}}\big\|_{\tau^{-N}L^2_{d\tau}}.
		\end{align*}
		The same bounds apply if we include an operator $\langle\nabla^4\rangle$ in front of $\lambda^{-2}y_{\tilde{\lambda}}\cdot W$. 
		We also have a small spatial frequency gain: 
		\begin{align*}
			&\Big\|\chi_{\xi<\tau^{-\delta_1}}\langle\xi\partial_{\xi}\rangle^{1+\delta_0}\mathcal{F}\Big(\lambda^{-2}y_{\tilde{\lambda}}\cdot W\Big)\Big\|_{\tau^{-N-}L^2_{d\tau}(L^2_{\rho(\xi)\,d\xi}\cap L^M_{d\xi})}\lesssim \big\|\langle\partial_{\tilde{\tau}}\rangle^{-2}\tilde{\lambda}_{\tilde{\tau}\tilde{\tau}}\big\|_{\tau^{-N}L^2_{d\tau}},\,M<\infty
		\end{align*}
	\end{lem}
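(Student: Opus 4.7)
The plan is to exploit the spatial frequency localisation of the source $\Lambda W\cdot W$, which is Schwartz up to scaling, so that the Duhamel operator $\Box^{-1}$ behaves like $-\partial_{\tilde{\tau}}^{-2}$ on its low spatial-frequency component once wave time frequencies exceed $\gamma^{-1}$. More precisely, first I would re-express $y_{\tilde{\lambda}}$ from \eqref{eq:ytildelamba} in wave time: using $\lambda_t/t\sim \tau^{-1}$ conversions we have $\lambda^{-2}y_{\tilde{\lambda}}=-\lambda^{-2}\Box^{-1}\bigl(2\lambda^2\tilde{\lambda}_{\tilde{\tau}\tilde{\tau}}\,\Lambda W\cdot W + 2\lambda^2\frac{\tilde{\lambda}_{\tilde{\tau}}}{\tilde{\tau}}\,\tilde{c}\,\Lambda(\Lambda W\cdot W)\bigr)$ up to harmless commutators. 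Split the source by $P_{<\sqrt{\gamma}^{-1}} + P_{\geq\sqrt{\gamma}^{-1}}$. Because $\Lambda W\cdot W$ decays rapidly in standard Fourier, the $P_{\geq\sqrt{\gamma}^{-1}}$ part contributes only $O(\gamma^M)$ for any $M$, which we freely place in $H$.

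Next, for the $P_{<\sqrt{\gamma}^{-1}}Q^{(\tilde{\tau})}_{>\gamma^{-1}}$ portion, the identity $Q^{(\tilde{\tau})}_{>\gamma^{-1}}P_{<\sqrt{\gamma}^{-1}}\Box^{-1}=-Q^{(\tilde{\tau})}_{>\gamma^{-1}}P_{<\sqrt{\gamma}^{-1}}\bigl(I-\partial_{\tilde{\tau}}^{-2}L\bigr)^{-1}\partial_{\tilde{\tau}}^{-2}$ (analogous to the expansion used in the proof of Lemma~\ref{lem:tildeylambdaWhighmodulationreduction}) yields a convergent Neumann series whose $k\geq 1$ terms supply $\delta(\gamma)^k$ smallness upon application of $L$. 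The leading $k=0$ term gives $\lambda^{-2}\cdot \partial_{\tilde{\tau}}^{-2}\bigl(2\lambda^2\tilde{\lambda}_{\tilde{\tau}\tilde{\tau}}\Lambda W\cdot W\bigr)\cdot W\approx 2\tilde{\lambda}\,\Lambda W\cdot W^2$ modulo commutators involving $\partial_{\tilde{\tau}}$ applied to $\lambda^2$; these commutators gain $\tilde{\tau}^{-1}\ll_{\tau_*}1$ relative to $\tilde{\lambda}$ and can be absorbed into $H$. The second source term from $\frac{\tilde{\lambda}_{\tilde{\tau}}}{\tilde{\tau}}$ is already perturbative because $\big\|Q^{(\tilde{\tau})}_{>\gamma^{-1}}\frac{\tilde{\lambda}_{\tilde{\tau}}}{\tilde{\tau}}\big\|\ll_{\tau_*}\|\langle\partial_{\tilde{\tau}}^2\rangle^{-1}\tilde{\lambda}_{\tilde{\tau}\tilde{\tau}}\|$ and the same $\partial_{\tilde{\tau}}^{-2}$ reduction applies. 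This produces the claimed decomposition with $H$ collecting the Neumann tail, the commutators, and the high spatial frequency tail.

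For the distorted Fourier bounds on $H$ at low $\xi$, I would apply Lemma~\ref{lem:wavebasicinhom} and in particular Remark~\ref{rem:wavebasicinhomstructure}/Lemma~\ref{lem:wavebasicinhomstructure2}, using that $\Lambda W\cdot W$ and its derivatives lie in $\tilde{\tau}\cdot\tau^{-N}L^2_{d\tau}L^2_{R^3\,dR}$ with arbitrary spatial regularity — this is exactly the input supplied by Lemma~\ref{lem:ytildelambdaE2}. The operator $\langle\xi\partial_\xi\rangle^{2+}$ costs $\langle R\rangle^{2+}$ which is absorbed by the Schwartz decay of the source times $W(R)$. For the high frequency tail $g$ at $\xi\gtrsim 1$, integration by parts in $\tilde{\sigma}$ inside the Duhamel formula as in Lemma~\ref{lem:yzWbound3} trades each $\partial_{\tilde{\sigma}}$ for a factor $\tilde{\sigma}^{-1}$ while producing $\partial_{\tilde{\tau}}$ derivatives acting on $\tilde{\lambda}$, gaining both smallness and the correct $\partial_{\tilde{\tau}}^2$ structure on the right hand side. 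The Littlewood–Paley truncated estimate with $P_{<\sqrt{\gamma}^{-1}}$ is immediate from the preceding reduction since $\partial_{\tilde{\tau}}^{-2}$ is bounded on the high-$\tilde{\tau}$-frequency range.

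For the final three unlocalised estimates, I would drop the $Q^{(\tilde\tau)}_{>\gamma^{-1}}$ and instead apply Lemma~\ref{lem:wavebasicinhomstructure1} (plus Lemma~\ref{lem:wavebasicinhomstructure} and Remark~\ref{rem:wavebasicinhomstructure} for the $\xi$-weighted coefficient bound) directly to $F=\Box y_{\tilde\lambda}$. The second of the three, involving $\partial_\tau$ on $Q^{(\tilde\tau)}_{<\tau^{\frac12+}}\tilde\lambda$, uses that $\partial_\tau=\frac{\partial\tilde\tau}{\partial\tau}\partial_{\tilde\tau}$ and the frequency localisation to gain a factor $\tau^{-\frac12-\frac{1}{4\nu}}\cdot\tau^{\frac12+}=\tau^{-\frac{1}{4\nu}+}\ll_{\tau_*}1$. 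The small-spatial-frequency refinement exploits the high order of vanishing of $\mathcal{F}_{\R^4}(\Lambda W\cdot W)(0)=0$ combined with the low-frequency asymptotics of $\rho(\xi)$ to gain a small power of $\tau^{-1}$. The main obstacle throughout is arranging each argument so that the two derivatives $\partial_{\tilde\tau}^2$ falling on $\tilde\lambda$ stay cancelled by the $\langle\partial_{\tilde\tau}^2\rangle^{-1}$ norm on the right; this is why the Neumann expansion must use $\partial_{\tilde\tau}^{-2}$ extracted at the outset rather than inverting $\Box$ directly.
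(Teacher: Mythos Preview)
Your proposal is correct and follows essentially the same strategy as the paper. The paper's own argument is extremely terse: it only records that the second displayed bound follows from the reduction $P_{<\sqrt{\gamma}^{-1}}Q_{>\gamma^{-1}}^{(\tilde{\tau})}\Box^{-1}\sim P_{<\sqrt{\gamma}^{-1}}Q_{>\gamma^{-1}}^{(\tilde{\tau})}\partial_{\tilde{\tau}}^{-2}$ together with the transference identity \eqref{eq: transference1} and Lemma~\ref{lem:KboundsKST} to absorb $\langle\xi\partial_{\xi}\rangle^{2+}$, that the third and fourth bounds are handled as in Corollary~\ref{cor:yzW}, and that the final small-frequency bound follows by first estimating the coefficient in $L^\infty_{d\xi}$ and then using H\"older on the cutoff region $\xi<\tau^{-\delta_1}$; your Neumann expansion (borrowed from Lemma~\ref{lem:tildeylambdaWhighmodulationreduction}), your appeal to Lemma~\ref{lem:ytildelambdaE2} for the source regularity, and your treatment of the unlocalised estimates via Lemma~\ref{lem:wavebasicinhomstructure1} are exactly the ingredients the paper is pointing to, so the two arguments coincide with yours merely being the fully unpacked version.
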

	The second bound follows as usual by using that 
	\[
	P_{<\sqrt{\gamma}^{-1}}Q_{>\gamma^{-1}}^{(\tilde{\tau})}\Box^{-1}\sim P_{<\sqrt{\gamma}^{-1}}Q_{>\gamma^{-1}}^{(\tilde{\tau})}\partial_{\tilde{\tau}}^{-2},
	\]
	recalling \eqref{eq:ytildelamba}, and using \eqref{eq: transference1} as well as Lemma~\ref{lem:KboundsKST} to deal with the effect of $\langle \xi\partial_{\xi}\rangle^{2+}$. The third and fourth bound are similar to Corollary~\ref{cor:yzW}. The last bound follows by first estimating the Fourier coefficient in $L^\infty_{d\xi}$ and using Holder's inequality.

	\subsection{Basic facts on the approximate solution}\label{subsec:approxsolnbasics}
	
	We summarise here some asymptotic bounds for corrections used to build the approximate solution $(\psi_*, n_*)$. These follow from Lemma 2.29, Corollary 3.33 and Corollary 4.3 in \cite{KrSchm}. 
	\begin{lem}\label{lem:approxsolasymptotics1} Interpreting the component $\psi_*$ of the approximate solution as function of $(\tau, R)$, we can write (with $\lambda = \lambda(\tau)\sim \tau^{\frac12+\frac{1}{4\nu}}$)
		\begin{align*}
			\psi_*(\tau, R) = e^{i\alpha(t)}\cdot \lambda\cdot u_*(\tau, R), \,\alpha(t) = \alpha_0\log t,
		\end{align*}
		where we can set 
		\[
		u_*(\tau, R) = W(R) + g(\tau, R), 
		\]
		and the correction function $g$ satisfies the bounds 
		\begin{align*}
			\Big\|\partial_{\tau}^{l_1}\partial_R^{l_2}g(\tau, R)\Big\|_{L^\infty_{dR}}\lesssim_{l_1, l_2}\lambda^{-1}\cdot (\tau^{\frac12-\frac{1}{4\nu}})^{-l_2}\cdot (\tau^{1-\frac{1}{2\nu}})^{-l_1},\,l_1+l_2\ll \nu. 
		\end{align*}
		Furthermore, we can decompose $g = g_1 + g_2$, where we can refine the preceding estimate as follows:
		\begin{align*}
			&\Big\|\partial_{\tau}^{l_1}\partial_R^{l_2}g_1(\tau, R)\Big\|_{L^\infty_{dR}}\lesssim_{l_1, l_2}\lambda^{-1}\cdot (\tau^{\frac12-\frac{1}{4\nu}})^{-l_2}\cdot (\tau^{1-\frac{1}{2\nu}})^{-l_1}\chi_{R\lesssim \lambda},\\
			& \Big\|\partial_{\tau}^{l_1}\partial_R^{l_2}g_2(\tau, R)\Big\|_{L^\infty_{dR}}\lesssim_{l_1, l_2}\lambda^{-1}\cdot(\tau^{\frac12-\frac{1}{4\nu}})^{-l_2}\cdot (\tau^{1-\frac{1}{2\nu}})^{-l_1}\cdot \langle \frac{R}{\lambda}\rangle^{-c\nu},\,c>0,\,,l_1+l_2\ll \nu.
		\end{align*}
		The function $g(\tau, R)$ satisfies the better bounds 
		\begin{align*}
			\big|g(\tau, R)\big|\lesssim \tau^{-1}\log R
		\end{align*}
		in the region $R\lesssim \tau^{\frac12-}$. More precisely, in this region we can split
		\begin{align*}
			g(\tau,R) = g_5(\tau, R) + g_6(\tau, R),
		\end{align*}
		where $g_5$ is purely imaginary and satisfies the preceding bound, while we have the better bound. 
		\[
		\big|g_6(\tau, R)\big|\lesssim \frac{\log R\cdot R^2}{\tau^2}
		\]
		Finally, we note that we may assumer $g(\tau, 0) = O(\tau^{-M})$ for $M\gg N$.  
	\end{lem}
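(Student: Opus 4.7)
\subsection*{Proof proposal for Lemma~\ref{lem:approxsolasymptotics1}}

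The approach is to directly import the construction of the approximate solution $(\psi_*, n_*)$ carried out in the companion paper \cite{KrSchm}, and to translate the structural bounds established there into the specific form required here. Recall that in \cite{KrSchm} the solution is built by a Perelman-type hierarchy of corrections: the inner Schr\"odinger-zone part is expanded around the soliton profile $\lambda W(\lambda r)$ with corrections at successive orders of $\tau^{-1/(2\nu)}$, matched to a wave-zone tail supported on $R \gtrsim \lambda$, with symbol-type regularity with respect to both $\tau$ and $R$. The plan is to organize the argument in four short steps, each of which extracts a specific quantitative consequence from that construction.

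First, I would invoke Lemma 2.29 of \cite{KrSchm}, which provides the explicit algebraic representation $\psi_*(\tau, R) = e^{i\alpha(t)} \lambda u_*(\tau, R)$ with $u_* = W + g$ and lists the symbol-type bounds on each term contributing to $g$. Summing these contributions yields the global bound $\|\partial_\tau^{l_1} \partial_R^{l_2} g\|_{L^\infty_{dR}} \lesssim \lambda^{-1} (\tau^{1/2-1/(4\nu)})^{-l_2} (\tau^{1-1/(2\nu)})^{-l_1}$, where the factor $\lambda^{-1}$ reflects that the first nontrivial correction is one order smaller than the leading term, and the remaining factors track derivative costs in the Schr\"odinger variables. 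The restriction $l_1 + l_2 \ll \nu$ comes from the fact that only finitely many correction terms are constructed in \cite{KrSchm}.

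Second, for the refined splitting $g = g_1 + g_2$ I would appeal to Corollary 3.33 of \cite{KrSchm}, which precisely separates the inner Schr\"odinger correction $g_1$ (smoothly cut off to $R \lesssim \lambda$) from the wave-zone remainder $g_2$ that carries symbol-type polynomial decay $\langle R/\lambda\rangle^{-c\nu}$ for some universal $c > 0$. Each of the individual corrections building $g_1$ and $g_2$ already satisfies the required derivative bounds, and summing them preserves the decay profile.

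Third, for the finer bounds inside $R \lesssim \tau^{1/2-}$, I would unpack Corollary 4.3 of \cite{KrSchm}. The key observation is that in the inner zone, the leading correction to $W$ produced by the Schr\"odinger equation has amplitude at most $\tau^{-1} \log R$, the logarithm arising from the resonant interaction at the edge of the soliton. The purely imaginary nature of the leading profile $g_5$ reflects the structure of the first iteration step (where $\text{Im}(\mathcal{L} z)|_{R=0}$ absorbs the imaginary modulational correction), while the next iterate contributing to $g_6$ is real with improved $R^2/\tau^2$ decay coming from two extra factors of $R/\tau^{1/2}$ gained in solving the elliptic remainder. The boundary condition $g(\tau, 0) = O(\tau^{-M})$ is imposed at the construction stage by subtracting a suitable polynomial in $\tau^{-1}$.

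The main obstacle is essentially bookkeeping: the companion paper organizes its corrections in variables $(R, t)$ and in terms of specific profile functions which must be carefully repackaged into the form $(\tau, R)$ used here, verifying that the derivative bounds convert correctly under $\partial_t \leftrightarrow \lambda^2 \partial_\tau$ and that the cut-offs $\chi_3$ and the modulation ansatz $\psi_*^{(\tilde{\lambda}, \underline{\tilde{\alpha}})}$ do not degrade the pointwise decay of any constituent term. Once this translation is set up, each of the four bounds reduces to a straightforward inspection of the explicit formulas from \cite{KrSchm}.
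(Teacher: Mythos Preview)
Your proposal is correct and matches the paper's approach exactly: the paper does not give a self-contained proof but simply states that the lemma follows from Lemma 2.29, Corollary 3.33 and Corollary 4.3 of the companion paper \cite{KrSchm}, which are precisely the three results you invoke. Your write-up is in fact more detailed than the paper's own treatment, which amounts to a one-line citation.
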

	\begin{rem}\label{rem:approxsolasymptotics1} We recall that the construction of $g$ involves four regions, with the inner two regions contained in the inner Schr\"odinger region $r\lesssim t^{\frac12+\epsilon}$. This translates to 
		\begin{align*}
			R = \lambda(t)\cdot r = t^{-\frac12-\nu}\cdot r \lesssim t^{\epsilon - \nu}\sim \tau^{\frac12 - \frac{\epsilon}{2\nu}}. 
		\end{align*}
	\end{rem}
	We also have the following 
	\begin{lem}\label{lem:approxsolasymptotics2}
		In the context of the preceding lemma, we can decompose 
		\begin{align*}
			g = g_3 + g_4
		\end{align*}
		where we have the estimates (for suitable $\zeta>0$)
		\begin{align*}
			&\big|\partial_R^{l+1}\big(\chi_{R\gtrsim\tau^{\frac12-}}\cdot g_3(\tau, R)\big)\big|\lesssim \lambda^{-1}\cdot R^{-1}\cdot \big(\frac{R}{\tau^{\frac12}}\big)^{-\zeta}\cdot (\tau^{\frac12-\frac{1}{4\nu}})^{-l} ,\,\zeta>0,\,l\geq 0\\
			&\big|\partial_R^lg_4(\tau, R)\big|\lesssim \lambda^{-1}R^{-l},\,l\geq 0. 
		\end{align*}
		In particular, we have the bound 
		\begin{align*}
			\big\|\partial_R\big(\chi_{R\gtrsim\tau^{\frac12-}}\cdot g_3(\tau, R)\big)\big\|_{L^2_{R^3\,dR}}\lesssim \tau^{0-}. 
		\end{align*}
	\end{lem}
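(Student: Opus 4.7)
\textbf{Proof Proposal for Lemma~\ref{lem:approxsolasymptotics2}.} The plan is to revisit the construction of the Schr\"odinger correction $g$ carried out in the companion paper~\cite{KrSchm} and to isolate from $g$ the slowly-decaying scale-invariant profile in the (wave) radiation zone. More precisely, I will define $g_4$ to be the leading-order, non-perturbative piece of $g$ in the outer region of the four-region iterative construction of \cite{KrSchm}; this piece has the form $g_4(\tau,R) = \lambda^{-1}\cdot h(\tau,R)$ where $h$ is a symbol with respect to $R$ (with no improved $\tau$-decay), inherited from the fact that modulating in $\tilde{\lambda}$ about the self-similar blow up profile produces a bulk contribution $\sim \lambda^{-1}\Lambda W$ plus smoother errors; equivalently, $g_4$ is the radiation-zone portion of $g_2$ in Lemma~\ref{lem:approxsolasymptotics1} up to its leading-order scale-invariant part. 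The symbol bound $|\partial_R^l g_4|\lesssim \lambda^{-1}R^{-l}$ then follows from the symbol behavior of the profiles used in the outer-region ansatz of \cite{KrSchm}.

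Having set $g_3 := g - g_4$, the claimed pointwise bound for $g_3$ is obtained by extracting one additional iterate in the outer-region expansion for $g$: writing the equation satisfied by $g_3$ as an inhomogeneous equation whose source is a smoother (higher-order) quantity, one reads off (as in the companion paper) that $g_3$ picks up an extra factor of the form $(R/\tau^{\frac12})^{-\zeta}$ for some $\zeta>0$ depending only on $\nu$. The key point is that the operator used to invert the linearized Schr\"odinger part in the outer region is a smoothing operator at the level of symbols, and each application gains a power of $(R/\tau^{\frac12})^{-\zeta}$, which is exactly the statement that the sub-leading corrections are smaller than the principal profile in the wave zone. The additional derivative gain $(\tau^{\frac12-\frac{1}{4\nu}})^{-l}$ in $R$ is then a direct consequence of the symbol behavior of the outer-region corrections, and follows the same pattern as in Lemma~\ref{lem:approxsolasymptotics1}.

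Once these two pointwise bounds are in hand, the $L^2_{R^3\,dR}$ estimate for $\partial_R(\chi_{R\gtrsim\tau^{\frac12-}}\cdot g_3)$ is a direct computation: using that the $R$-support of the approximate solution is contained in $R\lesssim \lambda\cdot t\sim \tau^{\frac12-\frac{1}{4\nu}}$, one has
\begin{align*}
\Big\|\partial_R\bigl(\chi_{R\gtrsim\tau^{\frac12-}}g_3\bigr)\Big\|_{L^2_{R^3\,dR}}^2
&\lesssim \lambda^{-2}\tau^{\zeta}\int_{\tau^{\frac12-}}^{\tau^{\frac12-\frac{1}{4\nu}}} R^{1-2\zeta}\,dR \\
&\lesssim \lambda^{-2}\cdot \tau^{1-\frac{1}{2\nu}+O(\zeta/\nu)}\;\lesssim\;\tau^{-\frac{1}{\nu}+O(\zeta/\nu)},
\end{align*}
which gives $\tau^{0-}$ after taking the square root for $\zeta$ sufficiently small relative to $\nu^{-1}$. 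Note that the derivative hitting the cutoff $\chi_{R\gtrsim\tau^{\frac12-}}$ only produces a contribution localized to $R\sim \tau^{\frac12-}$ where $g_3$ itself obeys a bound analogous to the $l=0$ case and the argument is in fact easier.

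The main obstacle in carrying out this plan is the first step: to cleanly identify which piece of the correction in the outer-region ansatz of \cite{KrSchm} is the ``leading profile'' $g_4$ and to verify that the remainder $g_3$ indeed satisfies the claimed symbol-improved bound with the crucial $(R/\tau^{\frac12})^{-\zeta}$ gain uniformly in all derivatives. This requires careful bookkeeping across the two outer regions in the construction (regions III and IV of \cite{KrSchm}) and checking that the derivative loss in $R$ scales correctly as $(\tau^{\frac12-\frac{1}{4\nu}})^{-l}$, matching the natural derivative scale in those regions. This is essentially a refinement of the proof of Lemma~\ref{lem:approxsolasymptotics1} and should follow from tracking one additional level of the iteration scheme already present in~\cite{KrSchm}.
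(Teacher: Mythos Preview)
Your proposal misidentifies the decomposition and, as a result, misses the mechanism behind the $g_3$ bound. In the paper, the split $g=g_3+g_4$ is \emph{not} by iterative order (leading vs.\ sub-leading) but by structure: $g_4$ collects all terms in the expansion of $g$ which are genuine symbols in $R$, while $g_3$ collects precisely the terms carrying an oscillatory factor of the form $e^{im\,r^2/(4t)}=e^{im c(\nu)R^2/(4\tau)}$. The bound $|\partial_R^l g_4|\lesssim \lambda^{-1}R^{-l}$ is then immediate by definition. The $g_3$ bound, however, does \emph{not} come from symbol behavior --- $g_3$ has none --- but from a direct computation: each oscillatory term comes with a prefactor $t\sim\tau^{-1/(2\nu)}$ and an amplitude bounded by $(R/\tau^{1/2})^{-\zeta}$; differentiating the phase produces $R/\tau$, and one uses
\[
t\cdot\partial_R\bigl(e^{imc(\nu)R^2/(4\tau)}\bigr)\sim \tau^{-\frac{1}{2\nu}}\cdot\frac{R}{\tau}\cdot e^{imc(\nu)R^2/(4\tau)},\qquad \Bigl|\tau^{-\frac{1}{2\nu}}\cdot\frac{R}{\tau}\Bigr|\lesssim \tau^{-\frac12-\frac{1}{4\nu}}\lesssim R^{-1}
\]
on the support $R\lesssim\lambda$, which is exactly the claimed $\lambda^{-1}R^{-1}$ factor.

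Your scheme breaks at the first step: the ``leading-order, non-perturbative'' piece in the outer Schr\"odinger region already contains oscillatory factors $e^{ir^2/(4t)}$, so it cannot satisfy the symbol bound $|\partial_R^l g_4|\lesssim\lambda^{-1}R^{-l}$ (each $\partial_R$ on the phase costs $R/\tau$, not $R^{-1}$). Conversely, your claim that the derivative gain $(\tau^{\frac12-\frac{1}{4\nu}})^{-l}$ for $g_3$ follows from ``symbol behavior of the outer-region corrections'' is exactly what fails: the sub-leading corrections are just as oscillatory as the leading ones. The $(R/\tau^{1/2})^{-\zeta}$ gain is not produced by an extra iterate but is already present as part of the amplitude attached to the oscillatory factors in the construction of \cite{KrSchm}. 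Your $L^2$ computation at the end is essentially correct once the pointwise bound is in hand (though the support is $R\lesssim\lambda\sim\tau^{\frac12+\frac{1}{4\nu}}$, and the conclusion holds for any $\zeta>0$, not only small $\zeta$).
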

	\begin{proof} We only need to consider the terms in the expansion of $g$ which do not have symbol behavior with respect to $R$, which constitute $g_3$. These are the terms with an exponential term $e^{m\frac{ir^2}{4t}} = e^{imc(\nu)\cdot \frac{R^2}{4\tau}}$, recalling that $t\cdot\lambda^2\sim \tau$.
		Each such exponential term comes with at least one factor $t$ and a further factor bounded by $\big(\frac{R}{\tau^{\frac12}}\big)^{-\zeta}$ with symbol behavior, and we have 
		\begin{align*}
			t\cdot\partial_R\big(e^{imc(\nu)\cdot \frac{R^2}{4\tau}}\big)\sim \tau^{-\frac{1}{2\nu}}\cdot \frac{R}{\tau}\cdot e^{imc(\nu)\cdot \frac{R^2}{4\tau}}.
		\end{align*}
		Given that $R\lesssim \lambda\sim \tau^{\frac12+\frac{1}{4\nu}}$ on the support of these functions, we have 
		\begin{align*}
			\big|\tau^{-\frac{1}{2\nu}}\cdot \frac{R}{\tau}\cdot e^{imc(\nu)\cdot \frac{R^2}{4\tau}}\big|\lesssim \tau^{-\frac12 - \frac{1}{4\nu}}\lesssim R^{-1}, 
		\end{align*}
		as desired. 
		The second inequality is an easy consequence of the first one since $R\lesssim\lambda$ on the support. 
	\end{proof}
	
	\begin{lem}\label{lem:approxsolasymptotics3} We can write 
		\begin{align*} 
			\lambda^{-2}n_* = W^2 + h(\tau, R), 
		\end{align*}
		where we have the bounds 
		\begin{align*}
			\big|\langle R\partial_R\rangle\big(\chi_{R\lesssim\tau^{\frac12 - \frac{1}{4\nu}}}h\big)\big|\lesssim \tau^{-1+\frac{1}{2\nu}}R^{-2}\log R,\,\big|\langle R\partial_R\rangle\big(\chi_{R\gtrsim\tau^{\frac12 - \frac{1}{4\nu}}}h\big)\big|\lesssim \lambda^{-2}\sim \tau^{-1-\frac{1}{2\nu}}. 
		\end{align*}
		We also have symbol behavior with respect to $\tilde{\tau}$ provided we apply $\ll \nu$ many derivatives. 
	\end{lem}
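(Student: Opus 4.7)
The plan is to deduce this essentially as a translation of the layer-by-layer construction of $n_*$ from the companion paper \cite{KrSchm} into the form needed here, treating the inner Schr\"odinger region $R \lesssim \tau^{1/2-1/(4\nu)}$ and the outer wave region $R \gtrsim \tau^{1/2-1/(4\nu)}$ separately. In both regions, after subtracting the leading piece $W^2$ the resulting error $h$ is the sum of (a) the discrepancy between $\lambda^{-2}|\psi_*|^2$ and $W^2$, (b) the finitely many elliptic and wave iterates that correct $n_*$ away from $|\psi_*|^2$, and (c) boundary terms from the cutoffs that glue the two regions.

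In the inner region, the dominant contribution comes from (a). Using Lemma~\ref{lem:approxsolasymptotics1} and $\psi_* = e^{i\alpha}\lambda(W+g)$ with $g = g_5 + g_6$, $g_5$ purely imaginary, one has
\[
\lambda^{-2}|\psi_*|^2 - W^2 = 2W\,\Re g_6 + |g|^2,
\]
and the bounds $|g_6| \lesssim \tau^{-2}R^2 \log R$, $|g| \lesssim \tau^{-1}\log R$, together with $W \lesssim R^{-2}$, yield pointwise control by $\tau^{-2}\log^2 R + \tau^{-2}\log R$, which is absorbed in $\tau^{-1+1/(2\nu)}R^{-2}\log R$ throughout the region (since there $R^{-2} \gtrsim \tau^{-1+1/(2\nu)}$). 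The iterates in (b) are explicit in \cite{KrSchm}: each such correction in the inner zone comes with an extra factor $\tau^{-1}$ relative to the preceding iterate together with symbol behavior in $R$, so an elementary induction gives the same pointwise bound. Boundary terms from (c) are supported near $R \sim \tau^{1/2-\epsilon/(2\nu)}$ and carry a rapidly decaying factor, so they are much smaller than the stated bound. The $R\partial_R$ estimate then follows because every constituent function enjoys symbol behavior with respect to $R$, so $R\partial_R$ is neutral on the relevant scales.

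In the outer region the argument is cruder. The wave construction in \cite{KrSchm} gives $n_*$ via an explicit Duhamel parametrix for $\Box$ applied to source terms supported (up to rapidly decaying tails) in the inner zone, and one reads off directly from that parametrix a uniform bound $|n_*(\tau,R)| \lesssim 1$ (not better, because of the interaction with the slow decay of $W^2$ transported outward). Dividing by $\lambda^2$ yields $|\lambda^{-2}n_*| \lesssim \lambda^{-2}$, and since in this region $W^2 \lesssim R^{-4} \lesssim \tau^{-2+1/\nu} \ll \lambda^{-2}$, the same bound controls $h$ itself. The $R\partial_R$ derivative is again neutral because the Fourier representation of the wave propagator is scaling-compatible (cf.\ subsection~\ref{subse:waveparametrix}).

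Finally, the symbol-type bounds in the wave time $\tilde\tau$, limited to $\ll \nu$ derivatives, follow because each application of $\partial_{\tilde\tau}$ to the iterates of \cite{KrSchm} costs exactly one factor of $\tilde\tau^{-1}$ until the number of derivatives exceeds the number of layers used in the construction, which is proportional to $\nu$. The main obstacle is purely bookkeeping: one must verify that, for every iterate contributing to $n_*$, the subtraction of the precise main term $W^2$ produces no unexpectedly large residual at the inner boundary $R \sim \tau^{1/2-1/(4\nu)}$, where the two types of bounds must match up consistently — and this is exactly the compatibility that is guaranteed by the matched-asymptotics construction in \cite{KrSchm}.
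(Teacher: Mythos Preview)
Your proposal follows the same route as the paper, which simply records that these bounds ``follow from Lemma~2.29, Corollary~3.33 and Corollary~4.3 in \cite{KrSchm}'' without further argument; your sketch of how the inner and outer zones are handled separately from the layered construction in \cite{KrSchm} is in the right spirit.

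One inaccuracy worth flagging: in the inner region you call contribution (a), namely $\lambda^{-2}|\psi_*|^2 - W^2$, the dominant one, but your own computation shows it is of size $\tau^{-2}\log R$, which is strictly smaller than the claimed bound $\tau^{-1+1/(2\nu)}R^{-2}\log R$. The factor $\tau^{-1+1/(2\nu)}$ is exactly $(\lambda t)^{-2}$, and this is the scale of the \emph{first genuine wave correction} to $n_*$ (the difference between $n_*$ and $|\psi_*|^2$), not of the Schr\"odinger correction $g$ to $\psi_*$; see for instance the bound $\big|\partial_{\tilde{\lambda}}n_*^{(\tilde{\lambda})}-\lambda^2\Lambda W\cdot W\big|\lesssim \chi_3\cdot \frac{\lambda^2}{(\lambda t)^2}\cdot\frac{\log^2 R}{\langle R\rangle^2}$ used elsewhere in the paper. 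So it is your item (b), with gain $(\lambda t)^{-2}$ rather than $\tau^{-1}$, that actually saturates the stated inner bound. This does not affect the validity of the conclusion, only the attribution of which term is sharp.
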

	
	\subsection{Basic estimates on modulation errors}
	
	Here we derive some basic estimates on the terms $E_1^{\text{mod}}, E_2^{\text{mod}}$, as defined in \eqref{eq:E1mod}, \eqref{eq:E2mod}. 
	\begin{lem}\label{lem:E2modbound1} We have the estimate
		\begin{align*}
			&\big\|\lambda^{-4}\nabla^{-(2-)}Q^{(\tilde{\tau})}_{<1}E_2^{\text{mod}}\big\|_{\tau^{-N}L^2_{d\tau}L^2_{R^3\,dR}} + \big\|\lambda^{-4}\nabla^{-(1-)}\big(\langle R\rangle Q^{(\tilde{\tau})}_{<1}E_2^{\text{mod}}\big)\big\|_{\tau^{-N}L^2_{d\tau}L^2_{R^3\,dR}}\\&\lesssim \big\|\langle\partial_{\tilde{\tau}}^2\rangle^{-1}\partial_{\tilde{\tau}}^2\tilde{\lambda}\big\|_{\tau^{-N}L^2_{d\tau}}\\
		\end{align*}
		which implies the wave propagator bound 
		\begin{align*}
			\big\|\langle R\partial_R\rangle\big(\chi_{R\gtrsim\tau^{\frac12-}}\lambda^{-2}\Box^{-1}Q^{(\tilde{\tau})}_{<1}E_2^{\text{mod}}\big)\big\|_{\tau^{-N+}L^2_{d\tau}L^2_{R^3\,dR}}\lesssim \big\|\langle\partial_{\tilde{\tau}}^2\rangle^{-1}\partial_{\tilde{\tau}}^2\tilde{\lambda}\big\|_{\tau^{-N}L^2_{d\tau}}
		\end{align*}
		Furthermore we have the bound
		\begin{align*}
			\big\|\langle R\partial_R\rangle\big(\chi_{R\gtrsim\tau^{\frac12-}}\lambda^{-2}\Box^{-1}Q^{(\tilde{\tau})}_{\geq 1}E_2^{\text{mod}}\big)\big\|_{\tau^{-N+}L^2_{d\tau}L^2_{R^3\,dR}}\lesssim \big\|\langle\partial_{\tilde{\tau}}^2\rangle^{-1}\partial_{\tilde{\tau}}^2\tilde{\lambda}\big\|_{\tau^{-N}L^2_{d\tau}}\\
		\end{align*}
	\end{lem}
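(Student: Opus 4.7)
The strategy has three stages: first establish fixed-time bounds on $\lambda^{-4} E_2^{\text{mod}}$ which improve upon the basic estimate of Lemma~\ref{lem:ytildelambdaE2} by extracting spatial derivatives, then feed these into the wave parametrix estimates of Lemma~\ref{lem:wavebasicinhom} and Lemma~\ref{lem:refinedwavepropagatorwithphysicallocalization}, and finally exploit the wave-temporal frequency localization to handle the high-frequency component by replacing $\Box^{-1}$ with $\partial_{\tilde{\tau}}^{-2}$ modulo perturbative errors.

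For the first inequality, I would inspect each term constituting $E_2^{\text{mod}}$ in \eqref{eq:tildeE2mod} in analogy with the proof of Lemma~\ref{lem:ytildelambdaE2}. The principal terms $-2\tilde{\lambda}_{tt}\lambda^2\Lambda W\cdot W$ and $-2Q^{(\tilde{\tau})}_{<\tilde{\tau}^{10/\nu}}(\tilde{\lambda}_t\partial_t(\lambda^2\Lambda W\cdot W))$ have the spatial structure $\Lambda W\cdot W$ which lies in $\dot{H}^{-1}$ (in fact in $\dot{H}^{-k}$ for any $k$), yielding the $\nabla^{-(2-)}$ bound after stripping the temporal coefficients (for which $Q^{(\tilde{\tau})}_{<1}$ permits controlling $\tilde{\lambda}_{tt}$ and $\tilde{\lambda}_t/t$ in $\tau^{-N}L^2_{d\tau}$ by $\langle\partial_{\tilde{\tau}}^2\rangle^{-1}\tilde{\lambda}_{\tilde{\tau}\tilde{\tau}}$ in the same norm). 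The remaining terms in \eqref{eq:tildeE2mod}, all controlled by $Q^{(\tilde{\tau})}_{<1}$, carry the spatial cutoff $\chi_3$ which localizes to $R\sim \tau^{\frac12-}$, so applying $\nabla^{-1}$ just multiplies by $\langle R\rangle \lesssim \tau^{\frac12+}$ in the worst case, which is compensated by the decay already present in Lemma~\ref{lem:ytildelambdaE2} plus the $\lambda^{-4}\sim\tau^{-2-\frac{1}{\nu}}$ factor. The bound with the additional $\langle R\rangle$ and only $\nabla^{-(1-)}$ follows from the same considerations.

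For the second inequality, I would combine the first inequality with Lemma~\ref{lem:refinedwavepropagatorwithphysicallocalization2}: decomposing $F = \lambda^2\cdot(\lambda^{-2}Q^{(\tilde{\tau})}_{<1}E_2^{\text{mod}})$ and using that in the region $R\gtrsim \tau^{\frac12-}$ the wave parametrix gains a factor $\tilde{\tau}^{-1}$ due to non-resonance of the oscillatory phases with the spatial variable at such large $R$, after writing $\triangle^{-1}\nabla$ by absorbing one derivative into the source term. The $\langle R\partial_R\rangle$ weight is handled by noting that $R\partial_R$ acting on $\Box^{-1}$ either falls on a cutoff (gain) or costs an $R\xi$ factor in the Fourier representation, which integration by parts in $\xi$ against the non-resonant phase compensates, precisely as in the proof of \eqref{eq:tecnhicalequation101}.

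The third inequality is the main conceptual point. Here we use that for $Q^{(\tilde{\tau})}_{\geq 1}$ localized functions whose spatial frequency support is essentially compact (which is the case for $E_2^{\text{mod}}$ up to rapidly decaying tails, thanks to the factors $\Lambda W\cdot W$, $\partial_{\tilde{\lambda}}n_*^{(\tilde{\lambda})}$, and the localized cutoffs $\chi_3$), the operator $\Box^{-1}\circ Q^{(\tilde{\tau})}_{\geq 1}$ agrees with $\partial_{\tilde{\tau}}^{-2}\circ Q^{(\tilde{\tau})}_{\geq 1}$ up to smoothing error terms controlled as in Lemma~\ref{lem:Ksmallbound}. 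More precisely, I would split
\[
Q^{(\tilde{\tau})}_{\geq 1}E_2^{\text{mod}} = P_{<\gamma_0^{-1}}Q^{(\tilde{\tau})}_{\geq 1}E_2^{\text{mod}} + P_{\geq \gamma_0^{-1}}Q^{(\tilde{\tau})}_{\geq 1}E_2^{\text{mod}}
\]
for a large spatial-frequency threshold $\gamma_0^{-1}$, where $P_{<\gamma_0^{-1}}$ is a standard Littlewood--Paley projector. For the high spatial frequency piece, the source term itself is $O(\tau^{-M})$ for any $M$ by the rapid decay of the Fourier transforms of $\Lambda W\cdot W$ and variants. For the low spatial frequency piece, we write $\Box^{-1}\circ P_{<\gamma_0^{-1}}Q^{(\tilde{\tau})}_{\geq 1} = (-I+\partial_{\tilde{\tau}}^{-2}L)^{-1}\partial_{\tilde{\tau}}^{-2}\circ P_{<\gamma_0^{-1}}Q^{(\tilde{\tau})}_{\geq 1}$ and expand via Neumann series; each application of $\partial_{\tilde{\tau}}^{-2}$ on $Q^{(\tilde{\tau})}_{\geq 1}$ gains boundedly, and the final bound reduces to controlling $\partial_{\tilde{\tau}}^{-2}E_2^{\text{mod}}$ in $L^2_{R^3\,dR}$, which is exactly $\lesssim \big\|\langle\partial_{\tilde{\tau}}^2\rangle^{-1}\tilde{\lambda}_{\tilde{\tau}\tilde{\tau}}\big\|_{\tau^{-N}L^2_{d\tau}}$ by the first part of Lemma~\ref{lem:ytildelambdaE2}. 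The main obstacle I anticipate is keeping track of the spatial cutoff $\chi_{R\gtrsim\tau^{\frac12-}}$ interacting with the Neumann expansion: here one observes that this cutoff commutes with $\partial_{\tilde{\tau}}^{-2}$ up to commutator terms involving $\partial_{\tilde{\tau}}\chi_{R\gtrsim\tau^{\frac12-}}$ which carry the smallness $\tau^{-1}\langle R\partial_R\rangle\chi$, easily absorbed after passing a small power of $\tau$ in the norm (accounting for the $\tau^{-N+}$ rather than $\tau^{-N}$ on the left-hand side).
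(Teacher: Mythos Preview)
Your treatment of the first two inequalities is essentially the paper's approach: term-by-term inspection of the constituents of $E_2^{\text{mod}}$ using Sobolev-type embeddings, followed by the non-resonance wave bound of Lemma~\ref{lem:refinedwavepropagatorwithphysicallocalization} (the paper cites this rather than its dual Lemma~\ref{lem:refinedwavepropagatorwithphysicallocalization2}, but the mechanism is the same). One minor slip: you refer to \eqref{eq:tildeE2mod} for the definition of $E_2^{\text{mod}}$, but that equation defines the modified $\tilde{E}_2^{\text{mod}}$; the relevant expression is \eqref{eq:E2mod}, which carries no built-in temporal localizers.

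The third inequality is where your plan has a genuine gap. Your Neumann expansion $\Box^{-1}\circ P_{<\gamma_0^{-1}}Q^{(\tilde{\tau})}_{\geq 1} = (-I+\partial_{\tilde{\tau}}^{-2}L)^{-1}\partial_{\tilde{\tau}}^{-2}\circ P_{<\gamma_0^{-1}}Q^{(\tilde{\tau})}_{\geq 1}$ requires $\|\partial_{\tilde{\tau}}^{-2}L\|<1$ on the relevant function class. But with temporal frequency merely $\geq 1$, the operator $\partial_{\tilde{\tau}}^{-2}$ gains only an $O(1)$ factor, while $L$ (containing $\triangle_R$) costs $\gamma_0^{-2}$ on $P_{<\gamma_0^{-1}}$; since you have chosen $\gamma_0^{-1}$ \emph{large} so that the high-frequency remainder is negligible, the series diverges. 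This mechanism only works when the temporal frequency threshold is large compared to the spatial one, as in the proof of Lemma~\ref{lem:tildeylambdaWhighmodulationreduction} where one has $Q^{(\tilde{\tau})}_{\geq\sqrt{\gamma}^{-1}}P_{<\gamma^{-1/4}}$.

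The paper instead argues directly in the Duhamel representation \eqref{eq:wavepropagator}: for those source terms carrying $\tilde{\lambda}_{\tilde{\sigma}\tilde{\sigma}}\sim\lambda^{-2}\tilde{\lambda}_{tt}$, one integrates by parts twice with respect to $\tilde{\sigma}$, shifting the two temporal derivatives onto the propagator kernel $U(\tilde{\tau},\tilde{\sigma},\xi)$ (whose $\tilde{\sigma}$-derivatives are harmless) and leaving an undifferentiated $\tilde{\lambda}$ in the source. After this the source is in the scope of the first inequality, and one finishes by reapplying Lemma~\ref{lem:refinedwavepropagatorwithphysicallocalization} exactly as for the low-temporal-frequency case. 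No spatial frequency decomposition or Neumann series is needed.
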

	\begin{proof} The first bound of the lemma is a straightforward application of Sobolev's lemma applied to the various terms forming $E_2^{\text{mod}}$ in \eqref{eq:E2mod}, and simple bounds for $n_*^{(\tilde{\lambda})}$, $\psi^{(\tilde{\lambda})}$. Thus for the leading part of $\tilde{\lambda}_{tt}\cdot\partial_{\tilde{\lambda}}n_*^{(\tilde{\lambda})}$ we obtain the bound 
		\begin{align*}
			\big\|\nabla^{-(2-)}Q^{(\tilde{\tau})}_{<1}\big(\tilde{\lambda}_{\tilde{\tau}\tilde{\tau}}\Lambda W\cdot W\big)\big\|_{\tau^{-N}L^2_{d\tau}L^2_{R^3\,dR}}&\lesssim  \big\|\langle\partial_{\tilde{\tau}}^2\rangle^{-1}\partial_{\tilde{\tau}}^2\tilde{\lambda}\big\|_{\tau^{-N}L^2_{d\tau}}\cdot \big\|\Lambda W\cdot W\big\|_{L^{1+}_{R^3\,dR}}\\
			&\lesssim  \big\|\langle\partial_{\tilde{\tau}}^2\rangle^{-1}\partial_{\tilde{\tau}}^2\tilde{\lambda}\big\|_{\tau^{-N}L^2_{d\tau}}.
		\end{align*}
		Furthermore, we have 
		\begin{align*}
			\big\|\lambda^{-4}\nabla^{-(2-)}Q^{(\tilde{\tau})}_{<1}\big(\Box \chi_3\cdot (n_*^{(\tilde{\lambda})} - n_*) \big)\big\|_{\tau^{-N}L^2_{d\tau}L^2_{R^3\,dR}}&\lesssim \big\|\tilde{\tau}^{-2}\tilde{\lambda}\big\|_{\tau^{-N}L^2_{d\tau}}\cdot\big\|\lambda^{-2}\frac{(n_*^{(\tilde{\lambda})} - n_*)}{\tilde{\lambda}}\big\|_{L^{1+}_{R^3\,dR}}\\
			&\lesssim  \big\|\langle\partial_{\tilde{\tau}}^2\rangle^{-1}\partial_{\tilde{\tau}}^2\tilde{\lambda}\big\|_{\tau^{-N}L^2_{d\tau}}.
		\end{align*}
		The estimates with $\nabla^{-(2-)}$ replaced by $\nabla^{-(1-)}\langle R\rangle$ are simple consequences of Holder's and Sobolev's inequality. 
		The contributions from the remaining terms of $E_2^{\text{mod}}$ are estimated similarly, and the second estimate of the lemma is then a straightforward application of Lemma~\ref{lem:refinedwavepropagatorwithphysicallocalization} and its proof, as well as interpolation. To obtain the final bound of the lemma, we  use the propagator in \eqref{eq:wavepropagator} and perform integration by parts twice with respect to the variable $\tilde{\sigma}$ for those terms involving $\tilde{\lambda}_{\tilde{\sigma}\tilde{\sigma}} \sim \lambda^{-2}\tilde{\lambda}_{tt}$. Then we repeat application of Lemma~\ref{lem:refinedwavepropagatorwithphysicallocalization} and its proof.
	\end{proof}
	
	Concerning the remaining modulation error $E_1^{\text{mod}}$, we have the following lemma, which is useful for the high temporal frequency regime:
	\begin{lem}\label{lem:E1hightempfreq} Recalling \eqref{eq:E1mod}, \eqref{eq:e1moddef} and letting $0<\gamma = \gamma(\tau_*)$, $\lim_{\tau_*\rightarrow\infty}\gamma(\tau_*) = 0$, we can write
		\begin{align*}
			&Q_{>\gamma^{-1}}^{(\tilde{\tau})}\big(\int_\tau^\infty S(\tau,\sigma,\xi)\cdot \mathcal{F}\big(e_1^{\text{mod}}\big)(\sigma, \frac{\lambda(\tau)}{\lambda(\sigma)}\xi)\,d\sigma\big)\\& = \lambda^2\cdot \partial_{\tau}g_1 + Q_{>\gamma^{-1}}^{(\tilde{\tau})}\big(\int_\tau^\infty S(\tau,\sigma,\xi)\cdot g_2(\sigma,\xi)\,d\sigma\big), 
		\end{align*}
		where we have the estimates ($2\leq p\leq\infty$)
		\begin{align*}
			&\big\|g_1\big\|_{\tau^{-N}L^2_{d\tau}L^p_{\rho(\xi)\,d\xi}}\ll_{\tau_*} \big\|\tilde{\alpha}_{\tau}\big\|_{\log^{-1}(\tau)\cdot\tau^{-N}L^2_{d\tau}} + \big\|\langle\partial_{\tilde{\tau}}^2\rangle^{-2}\tilde{\lambda}_{\tilde{\tau}\tilde{\tau}}\big\|_{\tau^{-N}L^2_{d\tau}},\\
			&\big\|g_2\big\|_{\tau^{-N}L^2_{d\tau}L^p_{\rho(\xi)\,d\xi}}\lesssim \big\|\tilde{\alpha}_{\tau}\big\|_{\log^{-1}(\tau)\cdot\tau^{-N}L^2_{d\tau}} + \big\|\langle\partial_{\tilde{\tau}}^2\rangle^{-2}\tilde{\lambda}_{\tilde{\tau}\tilde{\tau}}\big\|_{\tau^{-N}L^2_{d\tau}}.
		\end{align*}
	\end{lem}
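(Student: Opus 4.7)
The plan is to exploit the identity $\partial_{\tilde{\tau}} = \lambda^{-1}\partial_\tau$ together with integration by parts in $\sigma$ inside the Schr\"odinger propagator. Since $\lambda^2\partial_\tau = \lambda\partial_{\tilde\tau}$, any expression of the form $Q^{(\tilde\tau)}_{>\gamma^{-1}}F$ can formally be written as $\lambda^2\partial_\tau \circ \mathcal{O}_\gamma(F)$, where $\mathcal{O}_\gamma := \lambda^{-1}\partial_{\tilde\tau}^{-1}Q^{(\tilde\tau)}_{>\gamma^{-1}}$ is an operator which gains a factor of $\gamma$ in the $\tau^{-N}L^2_{d\tau}$-norm by the usual symbol calculus, up to lower-order errors coming from the commutator $[\partial_\tau, \lambda^{-1}]$. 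Applied to the double integral defining the propagator (which depends on $\tau$ through both the dilation factor $\frac{\lambda(\tau)}{\lambda(\sigma)}\xi$ and the oscillating phase of $S(\tau,\sigma,\xi)$), this representation produces a boundary contribution at $\sigma = \tau$ that is naturally of the form $\lambda^2\partial_\tau g_1$, together with a bulk remainder involving $\partial_\sigma$ of the source and of the rescaling; the latter can be repackaged, after one more integration by parts, as a new source $g_2$ propagated by $S$.

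I would then verify the estimates term by term against the seven contributions to $E_1^{\text{mod}}$ in \eqref{eq:E1mod}, \eqref{eq:e1moddef}. The three $\tilde\alpha$-dependent terms are controlled essentially as in the proof of Lemma~\ref{lem:e1modgooderrors} case (3), now benefiting from the $\gamma$-gain of $\mathcal{O}_\gamma$ to produce the stated $\ll_{\tau_*}$ bound for $g_1$, while the bulk $g_2$-bound follows without the $\tau_*$-gain from the a priori bound on $\tilde\alpha_\tau$. The three $\tilde\lambda$-dependent terms (the first, second, and seventh of \eqref{eq:E1mod}) use the approximate solution asymptotics from Lemma~\ref{lem:approxsolasymptotics1} together with the spatial localization of $\partial_t\chi_3$ and $\partial_r\chi_3$ to pick out factors vanishing polynomially away from the transition region, which combined with the representation $\tilde\lambda = \langle\partial_{\tilde\tau}^2\rangle^{-1}\langle\partial_{\tilde\tau}^2\rangle\tilde\lambda$ supplies the required bound in terms of $\big\|\langle\partial_{\tilde\tau}^2\rangle^{-2}\tilde\lambda_{\tilde\tau\tilde\tau}\big\|_{\tau^{-N}L^2_{d\tau}}$. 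The quadratic error $O(|\tilde\alpha|^2)$ and the nonlinear modulation error $E_{\text{nl}}^{\text{mod}}$ are subdominant and handled by crude H\"older estimates combined with the already-established $g_2$-contribution from the $\tilde\alpha$ and $\tilde\lambda$ terms.

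The principal obstacle will be the distorted-Fourier-side bookkeeping when converting the boundary-term contribution into the precise form $\lambda^2\partial_\tau g_1$. Because $Q^{(\tilde\tau)}_{>\gamma^{-1}}$ does not commute with the argument rescaling $\xi\mapsto \frac{\lambda(\tau)}{\lambda(\sigma)}\xi$ inside the propagator, nor with the action of the transference operator $\mathcal{K}$ from Lemma~\ref{lem:KboundsKST}, the resulting commutator errors must be inserted into $g_2$ rather than $g_1$. Controlling these simultaneously in $L^2_{\rho(\xi)d\xi}$ and $L^\infty_{\rho(\xi)d\xi}$ (as the conclusion demands for all $2\leq p\leq \infty$) requires combining the symbol bounds on $\mathcal{K}$ with the Fourier-side regularity of $\mathcal{F}(e_1^{\text{mod}})$ recorded in Lemma~\ref{lem:basicboundsfore_1modandnonlinearterms} and Lemma~\ref{lem:basicboundsfore_1modalphaterm}. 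Once this bookkeeping is carried out, the stated bounds follow.
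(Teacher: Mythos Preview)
Your overall plan does not match the structure that actually produces the decomposition, and the mechanism you describe for extracting $g_1$ would not yield the stated form. You propose to write $Q^{(\tilde\tau)}_{>\gamma^{-1}}$ acting on the whole propagated expression as $\lambda^2\partial_\tau\mathcal{O}_\gamma(\cdot)$ and then read off a boundary term as $g_1$. But the lemma asks for a splitting in which $g_2$ is a \emph{source} fed back into the same propagator with the outer $Q^{(\tilde\tau)}_{>\gamma^{-1}}$ still present; your scheme, which converts the outer localizer into $\lambda^2\partial_\tau$, does not naturally leave such a term behind. Moreover, the commutator bookkeeping you anticipate with the transference operator $\mathcal{K}$ is irrelevant here: the statement concerns only the basic propagator $S$, not $S_{\mathcal{K}}$.

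The paper's argument is organized around the source, not the outer localizer. One first checks directly that the first, second, sixth, seventh, and eighth terms of $E_1^{\text{mod}}$ already land in $\tau^{-N-}L^2_{d\tau}L^p_{\rho(\xi)\,d\xi}$ and can therefore be absorbed into $g_2$ without any use of the high-frequency cutoff; in particular the $\tilde\lambda$-dependent terms require no special treatment here. The entire difficulty sits in the third, fourth, and fifth terms. For these one uses the algebraic identity
\[
i\triangle(\chi_1)\tilde\alpha\,\psi_*^{(\tilde\lambda)} + 2i\partial_r(\chi_1)\,\tilde\alpha\,\partial_r\psi_*^{(\tilde\lambda)} = -i\tilde\alpha\,\mathcal{L}\big(\chi_1\psi_*^{(\tilde\lambda)}\big) + i\chi_1\tilde\alpha\,\mathcal{L}\big(\psi_*^{(\tilde\lambda)}-W\big),
\]
which is the key step you are missing. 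The second piece on the right is harmless (it lands in $g_2$ via the asymptotics of Lemma~\ref{lem:approxsolasymptotics1}). The first piece carries an $\mathcal{L}$, hence a factor $\xi^2$ on the Fourier side, which permits integration by parts in $\sigma$ \emph{inside} the propagator. This produces three things: a boundary term $Q^{(\tilde\tau)}_{>\gamma^{-1}}\big(-\tilde\alpha\,\mathcal{F}(\chi_1 W)(\tau,\xi)\big)$; a bulk term that \emph{cancels exactly} against the contribution of the fifth term $-\chi_1\partial_t\tilde\alpha\,\psi_*^{(\tilde\lambda)}$; and residual terms with factors $\tfrac{\tilde\alpha}{\sigma}$ or $(\xi\partial_\xi)$-derivatives of $\mathcal{F}(\chi_1 W)$, which go into $g_2$. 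Only the boundary term is then converted to the form $\lambda^2\partial_\tau g_1$ by applying the identity $Q^{(\tilde\tau)}_{>\gamma^{-1}} = \tfrac{\partial\tau}{\partial\tilde\tau}\,\partial_\tau\big(\partial_{\tilde\tau}^{-1}Q^{(\tilde\tau)}_{>\gamma^{-1}}\big)$ twice, using $\tfrac{\partial\tau}{\partial\tilde\tau}\sim\lambda$. Without the $\mathcal{L}$-identity and the ensuing cancellation with the fifth term, you have no way to isolate a boundary contribution of this specific shape. (A minor point: your reference to ``Lemma~\ref{lem:e1modgooderrors} case (3)'' for the $\tilde\alpha$ terms is off; that case treats the seventh, $\tilde\lambda$-dependent term.)
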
 
	\begin{proof} It is straightforward to check that the sum of the first, second, sixth, seventh and eight terms in \eqref{eq:E1mod} can be incorporated into $g_2$, leading in fact to a term in the better spaces $\tau^{-N-}L^2_{d\tau}L^p_{\rho(\xi)\,d\xi}$, $2\leq p\leq \infty$. For the contribution of the sum of the third and fourth terms in \eqref{eq:E1mod}, we use the identity
		\begin{align*}
			i\triangle(\chi_1)\tilde{\alpha}\cdot \psi_*^{(\tilde{\lambda})} + 2i\partial_r(\chi_1)\cdot\tilde{\alpha}\cdot \partial_r \psi_*^{(\tilde{\lambda})} = -i\tilde{\alpha}\cdot\mathcal{L}\big(\chi_1\psi_*^{(\tilde{\lambda})}\big) + i\chi_1\tilde{\alpha}\mathcal{L}\big(\psi_*^{(\tilde{\lambda})}-W\big).
		\end{align*}
		Using the asymptotics in Lemma~\ref{lem:approxsolasymptotics1}, se see that (for a suitable constant $c_1$)
		\[
		i\chi_1\tilde{\alpha}\mathcal{L}\big(\psi_*^{(\tilde{\lambda})}-W\big) = i\chi_1\frac{\tilde{\alpha}}{\tau}\cdot \big(c_1W(R) + O(\frac{\log R}{\langle R\rangle^4})\big),
		\]
		and the asymptotics of the Fourier basis $\phi(R;\xi)$ in the subsection~\ref{subsec:basicfourier} show that the Fourier coefficients of these expressions are indeed in $\tau^{-N}L^2_{d\tau}L^p_{\rho(\xi)\,d\xi},\,2\leq p\leq \infty$. As for the first term on the right, the operator $-\mathcal{L}$ results in a factor $\xi^2$ for the Fourier coefficient, and performing integration by parts with respect to $\sigma$ for this contribution, we arrive at the boundary term 
		\begin{equation}\label{eq:bdrytermlargemodulation}
			Q_{>\gamma^{-1}}^{(\tilde{\tau})}\big(-\tilde{\alpha}\cdot \mathcal{F}\big(\chi_1 W\big)(\tau,\xi)\big),
		\end{equation}
		and up to a term cancelling the contribution of the fifth term in \eqref{eq:E1mod} to the $\sigma$-integral, we arrive at the schematically written terms 
		\begin{align*}
			Q_{>\gamma^{-1}}^{(\tilde{\tau})}\big(\int_\tau^\infty S(\tau,\sigma,\xi)\cdot h(\sigma, \frac{\lambda(\tau)}{\lambda(\sigma)}\xi)\,d\sigma, \,h\in\{ \frac{\tilde{\alpha}}{\sigma}\mathcal{F}\big(\tilde{\chi}_1W\big),  \frac{\tilde{\alpha}}{\sigma}(\xi\partial_{\xi})\mathcal{F}\big(\chi_1W\big)\}.
		\end{align*}
		The functions $h$ here are of type $g_2$, while for the boundary term \eqref{eq:bdrytermlargemodulation}, it suffices to use the identity $Q_{>\gamma^{-1}}^{(\tilde{\tau})} = \frac{\partial\tau}{\partial\tilde{\tau}}\cdot\partial_{\tau}\big(\partial_{\tilde{\tau}}^{-1}Q_{>\gamma^{-1}}^{(\tilde{\tau})}\big)$  twice, together with the observation that $\frac{\partial\tau}{\partial\tilde{\tau}}\sim \lambda$, to identify it with $\lambda^2\partial_{\tau}g_1$. 
	\end{proof}
	
	\subsection{Numerical non-degeneracy assumptions}\label{subsec:numerics}
	
	\subsubsection{Absence of resonance/root mode at the edge of spectrum}
	
	We make
	\begin{itemize}
		\item {\bf{(S1)}} The operator $\mathcal{L}_*$ from subsection~\ref{subsec:Lstarbasics} has neither a root mode nor a resonance at the origin $\xi = 0$. 
	\end{itemize}

	\subsubsection{Assumptions concerning the function $v_{\hat{\tau}}$ given by \eqref{eq:vtauhatiteration}.}
	
	We make 
	\begin{itemize}
		\item {\bf{(A1)}} The function $\mathcal{F}_{|\tau|}(K_{main}v_{\hat{\tau}})$ does not vanish identically. If 
		\[
		\mathcal{F}(K_{main}v_{\hat{\tau}_1})(|\hat{\tau}_1|) = \langle K_{main}v_{\hat{\tau}_1}, \phi_{|\hat{\tau}_1|}\rangle_{L^2_{R^3\,dR}} = 0, 
		\]
		then $\langle K_{main}v_{\hat{\tau}_1}, \theta_{|\hat{\tau}_1|}\rangle_{L^2_{R^3\,dR}}\neq 0$. 
	\end{itemize}
	We observe that the function $\mathcal{F}_{|\tau|}(K_{main}v_{\hat{\tau}})$ is analytic, hence its vanishing set will be discrete. Since we only use this assumptions for $|\hat{\tau}|\in [0, \hat{\tau}_*]$ for some absolute constant $\hat{\tau}_*$, the preceding in effect amounts to a condition for finitely many points.  
	
	\subsubsection{Assumptions pertaining to one dimensional projection operators}
	\begin{itemize}
		\item {\bf{(B1)}} We have the relation 
		\begin{align*}
			\int_0^\infty (\frac{W}{2} + \frac{\Lambda W}{16})\cdot \triangle^{-1}\big(\Lambda W\cdot W\big)\cdot W R^3\,dR\neq 0. 
		\end{align*}
		\item {\bf{(B2)}} We have the relation $\frac{2}{\alpha_{**}}\cdot \int_0^\infty \psi W\cdot \triangle(W^2)R^3\,dR\neq 1$, where $\psi = 2\Lambda W + 16W$ and $\alpha_{**}$ is given by \eqref{eq:alphastarstar} with $c_2 = \frac12$, \eqref{eq:alphastar}, and \eqref{eq:cstardef}.
		\item {\bf{(B3)}} The range of the complex valued function $ \beta_*(\hat{\tilde{\tau}})\cdot\langle \tilde{g}(\hat{\tilde{\tau}}, \cdot), W^2(\cdot)\rangle$ omits the value $1$ for $\hat{\tilde{\tau}}\in \R_+$. 
	\end{itemize}
	
	\subsubsection{Assumptions pertaining to the modulation parameter control}
	\begin{itemize}
		\item {\bf{(C1)}} We have the non-vanishing relation $\int_0^\infty \triangle^{-1}(\Lambda W\cdot W)\cdot W^2R^3\,dR\neq 0$. 
		\item {\bf{(C2)}} We have the non-vanishing relation $\Re(\beta_2(\hat{\tau}_*))\neq 0$ where $\beta_2$ is as in the proof of Lemma~\ref{lem:Fouriertransform2} and $\hat{\tau}_*$ is as in Lemma~\ref{lem:FourierNV2}.
		\item {\bf{(C3)}} We have relation \eqref{eq:alphastarstar} where $c_2 = \frac12$ and $\alpha_*$ is given by \eqref{eq:alphastar} with $c_*$ given by \eqref{eq:cstardef}.
	\end{itemize}
	
	We note that ${\bf{(C1)}}$ is a direct consequence of a scaling argument: We observe that (with $\big|\nabla\big| = \sqrt{-\triangle}$)
	\begin{align*}
	\int_0^\infty \triangle^{-1}(\Lambda W\cdot W)\cdot W^2R^3\,dR = -\int_0^\infty \big|\nabla\big|^{-1}(\Lambda W\cdot W)\cdot \big|\nabla\big|^{-1}\big(W^2\big)R^3\,dR,
	\end{align*}
	and further 
	\begin{align*}
	&4\cdot \int_0^\infty \big|\nabla\big|^{-1}(\Lambda W\cdot W)\cdot \big|\nabla\big|^{-1}\big(W^2\big)R^3\,dR\\
	& = \partial_{\tilde{\lambda}}\Big(\int_0^\infty \big|\nabla\big|^{-1}(W_{\tilde{\lambda}}^2)\cdot \big|\nabla\big|^{-1}\big(W_{\tilde{\lambda}}^2\big)R^3\,dR\Big)\big|_{\tilde{\lambda} = 1}\\
	& = -2\cdot \int_0^\infty \big|\nabla\big|^{-1}(W^2)\cdot \big|\nabla\big|^{-1}\big(W^2\big)R^3\,dR\\
	&\neq 0. 
	\end{align*}
	
	The other numerical assumptions will be verified in \cite{KSchmNum}.


\begin{thebibliography}{10}
	
		
		
		
		\bibitem{Bej-Herr-Holmer-Tataru} Bejenaru, I., Herr, S., Holmer, J. and Tataru, D. \emph{On the 2D Zakharov system with L2-Schr\"odinger data}. Nonlinearity 22, 2009, p 1063-1089.
		
		
		\bibitem{Borgain-Coll} Bourgain, J. and Colliander, J. \emph{On wellposedness of the Zakharov system}. Internat. Math. Res. Notices, 1996, p 515-546.
		
		\bibitem{Candy-Herr-Nakanishi-Global} Candy, T., Herr, S. and Nakanishi, K. \emph{Global wellposedness for the energy-critical Zakharov system below the ground state}. Adv. Math. 384, art. 107746, 2021, 57 pp. 
	
	\bibitem{Candy-Herr-Nakanishi-sharp} Candy, T., Herr, S. and Nakanishi, K. \emph{The Zakharov system in dimension $ d\geq 4$.}, J. Eur. Math. Soc. 25, no. 8, 2023, p 3177-3228.
	
	
	
	\bibitem{Ginibre-Tsutsumi-Velo} Ginibre, J., Tsutsumi, Y. and Velo, G. \emph{On the Cauchy problem for the Zakharov system}. J. Funct. Anal. 151, 1997, p 384-436.
	
	\bibitem{GM1} Glangetas, L. and Merle, F. \emph{Existence of self-similar blow-up solutions for Zakharov equation in dimension two. I}. Comm. Math. Phys. 160, 1994, p 173-215. 
	
	\bibitem{GM2} Glangetas, L. and Merle, F. \emph{Concentration properties of blow-up solutions and instability results for Zakharov equation in dimension two. II}. Comm. Math. Phys. 160, 1994, p 349-389.
	
	\bibitem{Guo-Nakanishi1} Guo, Z. and Nakanishi, K. \emph{Small energy scattering for the Zakharov system with radial symmetry}. Int. Math. Res. Notices, 2014, p 2327-2342.
	
	\bibitem{Guo-Nakanishi2(Threshold)} Guo, Z. and Nakanishi, K. \emph{The Zakharov system in 4D radial energy space below the ground state}. Amer. J. Math. 143, 2021, p 1527-1600. 
	
	
	\bibitem{Kenig-Pon-veg-limit} Kenig, C. E., Ponce, G. and Vega, L. \emph{On the Zakharov and Zakharov–Schulman systems}. J. Funct. Anal. 127, 1995, p 204-234.

		
		\bibitem{KS} 
		\newblock J.\ Krieger,   W.\  Schlag (MR2325106)
		\newblock \emph{  On the focusing critical semi-linear wave equation.}  
		\newblock Amer.\ J.\ Math.,  no.~3, \textbf{  129}  (2007), 843--913.
		
		\bibitem{KMS} 
		\newblock J.\ Krieger, S.\ Miao,  W.\  Schlag 
		\newblock \emph{A stability theory beyond the co-rotational setting for critical wave maps blow up}
		\newblock preprint 2022
		
		\bibitem{KST} 
		\newblock J.\ Krieger,   W.\  Schlag, D.\ Tataru  (MR2494455)
		\newblock \emph{   Slow blow-up solutions for the $H^1(\R^3)$ critical focusing semilinear wave equation.}  
		\newblock Duke Math.\ J., no.~1, \textbf{ 147}  (2009),   1--53.
		
		\bibitem{KST1} 
		\newblock J.\ Krieger,   W.\  Schlag, D.\ Tataru 
		\newblock \emph{ Renormalization and blow up for the critical Yang-Mills problem.} 
		\newblock Advances In Mathematics, vol. 221, p. 1445-1521, 2009.
		
		\bibitem{KST2} 
		\newblock J.\ Krieger,   W.\  Schlag, D.\ Tataru 
		\newblock \emph{ Renormalization and blow up for charge one equivariant critical wave maps}, 
		\newblock Inventiones Mathematicae, vol. 171, p. 543-615, 2008.
		
		\bibitem{KrSchm} 
		\newblock J.\ Krieger, T.\ Schmid
		\newblock \emph{Finite time blow up for the energy critical Zakharov system I: approximate solutions}, 
		\newblock arXiv preprint 2024
		
		\bibitem{KSchmNum} 
		\newblock J.\ Krieger, T.\ Schmid
		\newblock \emph{Verification of numerical assumptions},
		\newblock forthcoming
		
		
		\bibitem{Lerner}
		\newblock N.\ Lerner
		\newblock \emph{Carleman Inequalities: an Introduction and More}
		\newblock Springer-Verlag Series Grundlehren der Mathematischen Wissenschaften, 2019.  
		
	
	\bibitem{Merle} Merle, F. \emph{Blow-up phenomena for critical nonlinear Schr\"odinger and Zakharov equations}. In:Proceedings of the Intern. Cong. of Math., Vol. III, Berlin, 1998, Doc.
	Math. Extra Vol. III, p 57-66. 
	
	
	
	
	\bibitem{Perelman1} Perelman, G., \emph{Blow up dynamics for equivariant critical Schr\"odinger maps}, Comm. Math. Phys. 330 (2014), no. 1, 69--105.

		
		 \bibitem{DS}
		\newblock D.\ Samuelian
		\newblock \emph{Construction of blow-up solutions for the focusing critical-energy nonlinear wave equation in $\mathbb{R}^4$ and $\mathbb{R}^5$}, 
		\newblock arXiv preprint

		
		\bibitem{Schm} 
		\newblock T.\ Schmid
		\newblock \emph{Blow up dynamics for the 3D energy-critical Nonlinear Schr\"odinger equation},
		\newblock arXiv preprint, arXiv:2308.01883,  2023
		
	
	\bibitem{zakharov} Zakharov, V. E. \emph{Collapse of Langmuir waves}. Sov. Phys. JETP 35.5, 1972, p 908-914.

		
		
		
		
	\end{thebibliography}
\end{document}